\documentclass[11pt,twoside]{article} % use larger type; default would be 10pt
\usepackage[utf8]{inputenc} % set input encoding (not needed with XeLaTeX)

% \overfullrule=2cm

%%%% PACKAGE FOR COMMENTING THE FILE
% 
% \usepackage{comment}
% 
% \includecomment{comment}
%% \excludecomment{comment}
% 
% \specialcomment{supplementary}
%    {\colorlet{savedcolor}{.} \color{blue} \begingroup \ttfamily \bigskip \smallskip  \noindent \underline{Supplementary details:} \newline \newline \footnotesize }{\endgroup \smallskip \color{savedcolor}}
%%%To add/remove comments to be excluded, uncomment/comment the next line out
% %\excludecomment{supplementary}
%

%%% PAGE DIMENSIONS
\usepackage[margin=1.25in]{geometry} % to change the page dimensions
\geometry{letterpaper} % or a4paper (Europe) or a5paper or....
\usepackage{graphicx} % support the \includegraphics command and options
% \usepackage[parfill]{parskip} % Activate to begin paragraphs with an empty line rather than an indent
%\usepackage[sorting=nty,style=alphabetic]{biblatex}
%%hello are you updateing?

\usepackage{booktabs} % for much better looking tables
\usepackage{array} % for better arrays (eg matrices) in maths
\usepackage{paralist} % very flexible & customisable lists (eg. enumerate/itemize, etc.)
\usepackage{verbatim} % adds environment for commenting out blocks of text & for better verbatim
\usepackage{mathrsfs} 
\usepackage{amssymb}
\usepackage{amsthm}
\usepackage{amsmath,amsfonts,amssymb}
\usepackage{esint}
\usepackage{graphics}
\usepackage{enumerate}
\usepackage{mathtools}
\usepackage{xfrac}
\usepackage{bbm}
\usepackage{subcaption}
\usepackage{stmaryrd} % \llbracket
\usepackage[nice]{nicefrac}

\let\oldsquare\square % Must go before mathabx

\usepackage{mathabx}

\renewcommand{\square}{\oldsquare}

\usepackage{tocloft}
 
\setlength\cftparskip{0pt}
\setlength\cftbeforesecskip{1pt}
\setlength\cftaftertoctitleskip{2pt}

%%% For Hyperlinks inside the PDF file
\usepackage[usenames,dvipsnames]{xcolor}
\usepackage[colorlinks=true, pdfstartview=FitV, linkcolor=blue, citecolor=blue, urlcolor=blue]{hyperref}
%%Just for the strikethrough, use \sout
\usepackage[normalem]{ulem}

%%% EQUATION Numbering Starts Fresh at each Section

\numberwithin{equation}{section}

%%% Theorems etc.
\newtheorem{theorem}{Theorem}[section]
\newtheorem{conjecture}[theorem]{Conjecture}
\newtheorem{corollary}[theorem]{Corollary}
\newtheorem{proposition}[theorem]{Proposition}
\newtheorem{lemma}[theorem]{Lemma}
\theoremstyle{definition}
\newtheorem{definition}[theorem]{Definition}
\newtheorem{exercise}[theorem]{Exercise}
\newtheorem{remark}[theorem]{Remark}
\newtheorem{example}[theorem]{Example}

\newcommand{\dataref}{\hyperref[e.data.def]{\textcolor{blue}{\ensuremath{\mathrm{data}}}}}
\newcommand{\datareff}{\hyperref[e.data.def.f]{\textcolor{blue}{\ensuremath{\mathrm{data}_{\mathbf{f}}}}}}

\let\originalleft\left
\let\originalright\right
\renewcommand{\left}{\mathopen{}\mathclose\bgroup\originalleft}
\renewcommand{\right}{\aftergroup\egroup\originalright}

\newcommand{\vertiii}{\vert\kern-0.3ex\vert\kern-0.25ex\vert}

\newcommand{\norm}[1]{\left\|#1\right\|}

\newcommand*{\Id}{\ensuremath{\mathrm{I}_d}}
\newcommand*{\Itwod}{\ensuremath{\mathrm{I}_{2d}}}

\newcommand*{\N}{\ensuremath{\mathbb{N}}}
\newcommand*{\Z}{\ensuremath{\mathbb{Z}}}
\newcommand*{\R}{\ensuremath{\mathbb{R}}}
\newcommand*{\Zd}{\ensuremath{\mathbb{Z}^d}}
\newcommand*{\Rd}{\ensuremath{\mathbb{R}^d}}
\newcommand{\eps}{\varepsilon}

\renewcommand*{\tilde}{\widetilde}
\renewcommand*{\hat}{\widehat}

\renewcommand{\b}{\ensuremath{\mathbf{b}}}
\newcommand{\qand}{\quad \mbox{and} \quad }
\newcommand{\per}{\mathrm{per}}
\newcommand{\sym}{\mathrm{sym}}
\renewcommand{\skew}{\mathrm{skew}}

\newcommand{\g}{\mathbf{g}}
\newcommand{\f}{\mathbf{f}}
\newcommand{\s}{\mathbf{s}}
\NewDocumentCommand{\bfs}{e{^_}}{{\boldsymbol{\sigma}}\IfValueT{#1}{^{#1}}\IfValueT{#2}{_{\!#2}}}
\newcommand{\ep}{\eps}

\newcommand{\pot}{\mathrm{pot}}
\newcommand{\sol}{\mathrm{sol}}

\newcommand{\inv}{\mathrm{inv}}

%% This is for the abar thickness
\DeclareSymbolFont{boldoperators}{OT1}{cmr}{bx}{n}
\SetSymbolFont{boldoperators}{bold}{OT1}{cmr}{bx}{n}
\renewcommand{\a}{\mathbf{a}}

\renewcommand{\k}{\mathbf{k}}
\newcommand{\ahom}{\bar{\a}}
\newcommand{\bhom}{\bar{\mathbf{b}}}
\newcommand{\shom}{\bar{\mathbf{s}}}

\newcommand{\khom}{\bar{\mathbf{k}}}

\newcommand{\bfzero}{\boldsymbol{0}}

\newcommand{\cu}{\square}

\newcommand{\F}{\mathcal{F}}
\renewcommand{\P}{\mathbb{P}}
\newcommand{\E}{\mathbb{E}}
\newcommand{\X}{\mathcal{X}}
\renewcommand{\S}{\mathcal{S}}

\renewcommand{\O}{\mathcal{O}}

\newcommand{\Y}{\ensuremath{\mathcal{Y}}}

\newcommand{\indc}{\boldsymbol{1}}

\newcommand{\data}{\mathrm{data}}
\newcommand{\minscale}{\chi}

\DeclareMathOperator{\dist}{dist}

\DeclareMathOperator*{\osc}{osc}
\DeclareMathOperator{\var}{var}
\DeclareMathOperator{\cov}{cov}
\DeclareMathOperator{\diam}{diam}

\DeclareMathOperator{\supp}{supp}
\DeclareMathOperator{\spn}{span}

% AVSUM = sum-slash

%\def\dashh{\,\smash{\rule[.23em]{8.5pt}{0.8pt} \relax}}
%\def\dashhh{\,\smash{\rule[.23em]{6pt}{0.7pt} \relax}}

\newcommand{\avsum}{\mathop{\mathpalette\avsuminner\relax}\displaylimits}

\makeatletter
\newcommand\avsuminner[2]{%
  {\sbox0{$\m@th#1\sum$}%
   \vphantom{\usebox0}%
   \ooalign{%
     \hidewidth
     \smash{\,\rule[.23em]{8.8pt}{1.1pt} \relax}%
     \hidewidth\cr
   ~$\m@th#1\sum$\cr
   }%
  }%
}
\makeatother

\newcommand{\avsumtext}{\mathop{\mathpalette\avsuminnerr\relax}\displaylimits}

\makeatletter
\newcommand\avsuminnerr[2]{%
  {\sbox0{$\m@th#1\sum$}%
   \vphantom{\usebox0}%
   \ooalign{%
     \hidewidth
     \smash{\,\rule[.23em]{6pt}{0.7pt} \relax}%
     \hidewidth\cr
   ~$\m@th#1\sum$\cr
   }%
  }%
}
\makeatother

%%%MAKING A NICE DASHED INTEGRAL

\def\Xint#1{\mathchoice
{\XXint\displaystyle\textstyle{#1}}%
{\XXint\textstyle\scriptstyle{#1}}%
{\XXint\scriptstyle\scriptscriptstyle{#1}}%
{\XXint\scriptscriptstyle\scriptscriptstyle{#1}}%
\!\int}
\def\XXint#1#2#3{{\setbox0=\hbox{$#1{#2#3}{\int}$}
\vcenter{\hbox{$#2#3$}}\kern-.5\wd0}}

\def\fint{\Xint-}

%%%% THIS IS FOR FIXING THE H^{-1} HAT PROBLEMS......
\makeatletter 
\newcommand{\negphantom}{\v@true\h@true\negph@nt} 
\newcommand{\neghphantom}{\v@false\h@true\negph@nt} 
\newcommand{\negph@nt}{\ifmmode\expandafter\mathpalette 
  \expandafter\mathnegph@nt\else\expandafter\makenegph@nt\fi} 
\newcommand{\makenegph@nt}[1]{% 
  \setbox\z@\hbox{\color@begingroup#1\color@endgroup}\finnegph@nt} 
\newcommand{\finnegph@nt}{% 
  \setbox\tw@\null 
  \ifv@ \ht\tw@\ht\z@\dp\tw@\dp\z@\fi \ifh@\wd\tw@-\wd\z@\fi\box\tw@} 
\newcommand{\mathnegph@nt}[2]{% 
  \setbox\z@\hbox{$\m@th #1{#2}$}\finnegph@nt} 
\makeatother

\newcommand{\Hminusul}{\hat{\phantom{H}}\negphantom{H}\underline{H}^{-1}}

\newcommand{\Hminusulk}{\hat{\phantom{H}}\negphantom{H}\underline{H}^{-k}}

%Citation keys in blue, small on the side
%\providecommand*\showkeyslabelformat[1]{{\normalfont \tiny#1}}
%\usepackage[notref,notcite,color]{showkeys}
%\definecolor{labelkey}{rgb}{0,0,1}

\newcommand{\Ent}{\mathsf{Ent}}
\newcommand{\CFS}{\mathsf{CFS}}
\newcommand{\FRD}{\mathsf{FRD}}
\newcommand{\LSI}{\mathsf{LSI}}
\newcommand{\SG}{\mathsf{SG}}
\newcommand{\AFRD}{\mathsf{AFRD}}
\newcommand{\A}{\mathcal{A}}
\newcommand{\Ahom}{\bar{\A}} 

\newcommand{\Lsol}{L^2_{\mathrm{sol}}} 
\newcommand{\Lsolo}{L^2_{\mathrm{sol,0}}} 
\newcommand{\Lpot}{L^2_{\mathrm{pot}}} 
\newcommand{\Lpoto}{L^2_{\mathrm{pot,0}}} 

\newcommand{\bfA}{\mathbf{A}}
\newcommand{\bfAhom}{\overline{\mathbf{A}}}
\NewDocumentCommand{\bfJ}{e{^_}}{{\mathbf{J}}\IfValueT{#1}{^{#1}}\IfValueT{#2}{_{\!#2}}}
\newcommand{\bfF}{\mathbf{F}}
\newcommand{\rota}{\mathbf{R}}

%%% Macros for the main characters in this saga

\usepackage{titlesec}

%\titleformat*{\title}{\LARGE\bfseries\sffamily}
% \titleformat*{\section}{\large\bfseries}
% \titleformat*{\subsection}{ \bfseries}
% \titleformat*{\subsubsection}{\bfseries}
% \titleformat*{\paragraph}{\bfseries}
% \titleformat*{\subparagraph}{\large\bfseries}

\newcommand{\addperiod}[1]{#1.}
\titleformat{\section}
   {\normalfont\Large}{\thesection.}{0.5em}{}
\titleformat*{\subsection}{\normalfont\large}%{\bfseries}
\titleformat{\subsubsection}[runin]
  {\bfseries}
  {\thesubsubsection.}
  {0.5em}
  {\addperiod}
%\titleformat{\subsection}[runin]
%   {\normalfont\bfseries}{\thesubsection.}{0.5em}{\addperiod}
\titleformat*{\subsubsection}{\bfseries}
\titleformat*{\paragraph}{\bfseries}
\titleformat*{\subparagraph}{\large\bfseries}

\usepackage{fancyhdr}
\pagestyle{fancy}
\fancyhf{}
\newcommand{\sectiontitle}{}
\newcommand{\subsectiontitle}{}

\fancyhead[LE]{\thepage}
\fancyhead[RE]{\sectiontitle}
\fancyhead[RO]{\thepage}
\fancyhead[LO]{\subsectiontitle}
	\setlength{\headheight}{14pt}

\title{\bf \Large Elliptic homogenization from qualitative to quantitative}

\author{Scott Armstrong
\thanks{Courant Institute of Mathematical Sciences, New York University.
{\footnotesize \href{mailto:scotta@cims.nyu.edu}{scotta@cims.nyu.edu}.}
}
\and 
Tuomo Kuusi
\thanks{Department of Mathematics and Statistics, University of Helsinki.
{\footnotesize \href{mailto:tuomo.kuusi@helsinki.fi}{tuomo.kuusi@helsinki.fi}.}
}
}
\date{\today}

\usepackage[nottoc,notlot,notlof]{tocbibind}

\begin{document}

\maketitle

\begin{abstract}
We give a self-contained introduction to the theory of elliptic homogenization for random coefficient fields, starting from classical qualitative homogenization. The presentation also contains new results, such as optimal estimates (both in terms of stochastic moments and scaling of the error) for coefficient fields which are local functions of Gaussian random fields.  
\end{abstract}

\setcounter{tocdepth}{2}  
\tableofcontents

\setcounter{section}{-1}

\section{Preface}
  
The purpose of this manuscript is to present the theory of elliptic homogenization in a self-contained manner, starting from the basic qualitative theory before motivating and building the quantitative theory. We hope that graduate students and other non-experts will find an accessible text that can, in finite time, bring them up to speed on the main ideas and techniques used in the subfield, allowing them to read the latest research articles. We assume a background in basic analysis and probability theory; a knowledge of elliptic PDE cannot hurt but is not required. 

\smallskip

Homogenization is a topic at the interface of mathematical analysis, PDEs, probability and statistical physics. It is about inventing analytic tools, often using PDE methods, for studying disordered systems exhibiting behavior on several very different length scales.
Applications of homogenization techniques are potentially vast, but in most cases, what is needed is more than just the soft asymptotic limits given by the classical homogenization theory built in the 1970s and 80s. What is needed is a quantitative theory that is more robust to perturbations in the assumptions. We want to apply our ideas not just to one specific model; rather, we want to develop methods that can be used on many different models that share an underlying structure. 
This motivated the need for a quantitative theory of stochastic homogenization, which has been rapidly developed in the last decade. There is now a large number of long, very technical articles. This literature is difficult for outsiders to penetrate, so we have written this manuscript in an attempt to make the theory more accessible.
 
 \smallskip
 
The beginning starts very gently in Chapter~\ref{s.qualitativetheory}, where we cover the essentials of qualitative homogenization, starting from the periodic case before giving two complete proofs of stochastic homogenization for stationary coefficient fields. We also include complete proofs of the multiparameter ergodic theorems which are used. 
In Chapter~\ref{s.probability}, we review the probabilistic interpretation of homogenization, giving two different proofs of the central limit theorem for symmetric diffusions in random environments and explaining how this is equivalent to homogenization of the corresponding PDEs. 

\smallskip

The rest of the text is about the quantitative theory. We have made a considerable effort to compress the proofs and simplify the presentation as much as possible. We had the same goal in writing our previous book~\cite{AKMBook}, which is only a few years old, but we discovered in writing this text that a great deal of improvements were left to be made. 
In order to make our results more quotable, we formalized our quantitative theory under a new and very general mixing condition we call \emph{concentration for sums}. In Chapter~\ref{s.CFS} we introduce this mixing condition and give many examples of random environments satisfying it (this includes many often-used assumptions like finite range dependence or spectral gap-type concentration inequalities). 
The core of the text consists of Chapters~\ref{s.subadd},~\ref{s.nonsymm},~\ref{s.regularity}, and the first five sections of Chapter~\ref{s.renormalization}, which contains a proof of the optimal quantitative estimates for the first-order correctors for general uniformly elliptic coefficient fields. 

\smallskip

It will come as no surprise that we have chosen to emphasize the coarse-graining and renormalization approach to quantitative homogenization, which has been largely pioneered by ourselves and our collaborators Jean-Christophe Mourrat and Charles Smart. As we show here, these methods give better estimates in terms of stochastic moments, even on the most basic examples like a random checkerboard or a Gaussian random field with power-like decay of correlations. In fact, for a general class of Gaussian fields we obtain quantitative estimates which are sharp both in terms of the scaling of the error and stochastic integrability, a result appearing here for the first time. However, we have also incorporated the ideas of Antoine Gloria \& Felix Otto and their collaborators into this text by showing how nonlinear concentration inequalities may be used as an alternative to getting corrector bounds. 

\smallskip

We will eventually include more material and have specific plans to add chapters on, for example, scaling limits for the correctors and numerical methods for computing homogenized coefficients. In the future, we will treat this like a software project by continually fixing typos, improving the presentation, adding chapters, etc. Readers will always find the latest version on our webpages and in this GitHub repository:
\begin{center}
{\tt https://github.com/scottnarmstrong/Homogenization}
\end{center}

\smallskip

This manuscript began as a set of notes to accompany a course given by the first author at the summer school ``Journ\'ees \'equations aux d\'eriv\'ees partielles" in Obernai, France, in June 2019. It took quite a bit longer than expected to complete. 

\smallskip

The first author was partially supported by NSF grant DMS-2000200. The second author was supported by the Academy of Finland and the European Research Council (ERC) under the European Union's Horizon 2020 research and innovation programme (grant agreement No 818437).

\subsection*{Notation}

Denote~$\N:=\{0,1,2,\ldots\}$. The set of real numbers is written~$\R$. When we write~$\R^m$ we implicitly assume that~$m\in\N\setminus\{0\}$. For each~$x,y\in\R^m$, the scalar product of~$x$ and~$y$ is denoted by~$x\cdot y$ and the Euclidean norm on~$\R^m$ is~$\left| \,\cdot\, \right|$. The canonical basis of~$\R^m$ is written as~$\{e_1,\ldots,e_m\}$. A domain is an open and connected subset of~$\R^m$.  The set of~$m\times n$ matrices with real entries is denoted by~$\R^{m\times n}$. The identity matrix is denoted~$\Id$. The transpose of a matrix~$A \in \R^{m\times n}$ is denoted by~$A^t$. The sets of symmetric and anti-symmetric~$n$-by-$n$ matrices are denoted, respectively, by
\begin{equation}
\R_{\mathrm{sym}}^{n \times n} := \{ A \in \R^{n\times n} \,:\, A = A^t\}
\quad \mbox{and} \quad 
\R_{\mathrm{skew}}^{n \times n} := \{ A \in \R^{n\times n} \,:\, A = - A^t\}
\end{equation}
We use the usual Loewner ordering for symmetric matrices. If~$A,B\in \R_{\mathrm{sym}}^{n \times n}$, the inequality~$A \leq B$ means that~$B-A$ is nonnegative definite; equivalently,~$e \cdot (B-A)e \geq 0$ for every~$e\in\R^n$. 
If~$r,s\in \R$, then we denote~$r\vee s := \max\{ r,s \}$ and~$r\wedge s := \min\{r,s\}$. If~$S$ is a finite set, then~$|S|$ denotes the cardinality of~$S$ and we write
\begin{equation}
\label{e.slashedsum}
\avsum_{i \in S} a_i := \frac{1}{|S|} \sum_{i\in S} a_i 
\,.
\end{equation}

\smallskip

The boundary of a subset~$U\subseteq \R^m$ is denoted by~$\partial U$ and its closure by~$\overline{U}$. A domain is a \emph{Lipschitz} domain if the boundary is locally a graph of a Lipschitz function. A~$C^{k,\alpha}$ domain is defined analogously.  The open ball of radius~$r>0$ centered at~$x\in\R^m$ is denoted by~$B_r(x):= \left\{ y\in\R^m\,:\, |x-y|<r \right\}$. The distance from a point to a set~$V\subseteq \R^m$ is written~$\dist(x,V):= \inf\left\{ |x-y|\,:\, y\in V\right\}$ and between sets,~$\dist(U,V):= \inf\left\{ |x-y|\,:\, x \in U,\, y\in V\right\}$.  For~$t >0$, we set~$t U:= \left\{ t x \,:\, x\in U\right\}$. Triadic cubes are also used throughout the notes. For each~$m\in\N$, we denote 
\begin{equation} 
\label{e.triadiccubes}
\cu_m := \Bigl( -\frac12 3^m, \frac12 3^m \Bigr)^d\subseteq \Rd. 
\end{equation}

If~$U\subseteq\Rd$ and~$f: U \to \R$, the partial derivatives of~$f$ are denoted by~$\partial_{x_i} f$. The gradient of~$f$ is~$\nabla f:= \left( \partial_{x_1} f ,\ldots, \partial_{x_d} f \right)$. The Hessian of~$f$ is denoted by~$\nabla^2f:= ( \partial_{x_i} \partial_{x_j} f )_{i,j\in\{1,\ldots,d\}}$, and higher derivatives are denoted similarly:~$\nabla^k f := ( \partial_{x_{i_1}}\cdots\partial_{x_{i_k}} f )_{i_1,\ldots,i_k\in\{1,\ldots,d\}}$. The divergence of a vector field~$\f$ is~$\nabla \cdot \f = \sum_{k=1}^d \partial_{x_i} f_i$, where~$(f_i)$ are the entries of~$\f$. 

\smallskip

Let~$C^k(U)$,~$k\in\N\cup\{\infty\}$, be the collection of~$k$ times continuously differentiable functions. Let~$C_0^k(U)$ be a collection of~$C^k(U)$ functions vanishing on the boundary. For~$k\in\N$ and~$\alpha \in (0,1]$, we denote by~$C^{k,\alpha}(U)$ classical H\"older spaces, which are the functions~$u\in C^k(U)$ for which the norm 
\begin{equation*} \label{}
\left\| u \right\|_{C^{k,\alpha}(U)} :=
\bigl[\nabla^ku\bigr]_{C^{0,\alpha}(U)}
+
\sum_{n=0}^k \sup_{x\in U} \, \left| \nabla^n u(x) \right| 
\,,
\quad
\left[ f \right]_{C^{0,\alpha}(U)} 
:=  \sup_{x,y\in U,\,x\neq y} \frac{ \left|  f(x) - f(y) \right|}{|x-y|^{\alpha}}\,,
\end{equation*}
is finite.

\smallskip

By~$|U|$, we denote the Lebesgue measure of a measurable set~$U$. We often suppress the variable from the integration, and denote, for an integrable function~$f: U \to \R$,
\begin{equation*} \label{}
\int_U f:=  \int_U f(x)\,dx. 
\end{equation*}
For~$U\subseteq \Rd$ and~$p\in[1,\infty]$,~$L^p(U)$ stands for the standard Lebesgue space on~$U$ with exponent~$p$, with the norm
\begin{equation*}  
\| f \|_{L^p(U)} := \Bigl( \int_U | f |^p \Bigr)^{\!\nicefrac1p} \,.
\end{equation*}
For the~$\R^m$-valued functions, whose components are all in~$L^p(U)$, we denote by~$L^p(U;\R^m)$. 
By the space~$L_{\mathrm{loc}}^p(U)$ we mean that~$f$ belongs to~$L_{\mathrm{loc}}^p(U)$ if and only if~$f \in L^p(V)$ whenever~$V$ is a bounded subset of~$U$ such that the closure of~$V$ belongs to the interior of~$U$. If~$0< |U| < \infty$ and~$f \in L^1(U)$, set
\begin{equation*}  %\label{e.}
\left( f \right)_U = \fint_U f := \frac 1 {|U|} \int_U f.
\end{equation*}
For keeping track of volume factors in our computations, it is convenient to use the following volume-normalized~$L^p$ norms:
\begin{equation} 
\label{e.volume.normalized}
\|f\|_{\underline{L}^p(U)} := \biggl( \fint_U |f|^p \biggr) ^{\!\nicefrac 1 p} = | U |^{-\nicefrac 1p} \left\| f \right\|_{L^p(U)}.
\end{equation}
For~$f\in L^p(\Rd)$ and~$g\in L^{p'}(\Rd)$ with~$\frac1p+\frac1{p'}=1$, we denote the convolution of~$f$ and~$g$ by
$
( f\ast g )(x) := \int_{\Rd} f(x-y)g(y)\,dy.
$

\smallskip

Next, for~$x,p\in\Rd$, we denote~$\ell_p(x):= p\cdot x$.  The heat kernel is denoted by
\begin{equation*} \label{}
\Phi(t,x):= \left( 4\pi t\right)^{-\nicefrac d2} \exp\left( -\frac{|x|^2}{4t} \right)
\end{equation*}
and, for each~$z\in\Rd$ and~$r>0$, we denote
\begin{equation*} \label{}
\Phi_{z,r} (x):= \Phi(r^2, x-z) \quad \mbox{and} \quad \Phi_r := \Phi_{0,r}. 
\end{equation*}
In the context of the heat kernels, we use the following notation, for~$f \in L_{\mathrm{loc}}^p(\R^d)$, 
\begin{equation} 
\label{e.L2.Psi}
\| f  \|_{L^p(\Phi_{z,r})} 
:=
\biggl( \int_{\R^d} |f(x)|^p \, \Phi( r^2, x-z ) \, dx \biggr)^{\!\nicefrac1p}
\,.
\end{equation}

We develop the theory using the following notation for Sobolev spaces. For every~$U\subseteq\Rd$ with~$|U|<\infty$,~$k\in\N$ and~$p\in [1,\infty)$, we define the following volume-normalized norms:
\begin{equation}
\label{e.underlinednorms}
\left\{ 
\begin{aligned}
&
\left\| f \right\|_{\underline{W}^{k,p}(U)} 
:=
\sum_{j=0}^k
|U|^{\nicefrac {(j{-}k)}d} \left\| \nabla^j f \right\|_{\underline{L}^p(U)},
\\ & 
\left\| f \right\|_{\underline{W}^{-k,p}(U)} 
:=
\sup\left\{ 
\fint_U fg \,:\, 
g\in W^{k,p}_0(U), \, 
\left\| g \right\|_{\underline{W}^{k,p}(U)} 
\leq 1 \right\}, 
\\ & 
\left\| f \right\|_{\hat{\underline{W}}^{-k,p}(U)} 
:=
\sup\left\{ 
\fint_U fg \,:\, 
g\in W^{k,p}(U), \, 
\left\| g \right\|_{\underline{W}^{k,p}(U)} 
\leq 1 \right\} \,.
\end{aligned}
\right.
\end{equation}
We also denote~$\left\| \cdot \right\|_{\underline{H}^k(U)}:= \left\| \cdot \right\|_{\underline{W}^{k,2}(U)}$,~$\left\| \cdot \right\|_{\underline{H}^{-k}(U)}:= \left\| \cdot \right\|_{\underline{W}^{-k,2}(U)}$ and~$\left\| \cdot \right\|_{\Hminusulk(U)}:= \left\| \cdot \right\|_{\hat{\underline{W}}^{-k,2}(U)}$.
Observe that for every~$t>0$, 
\begin{equation} 
\label{e.Sobolev.scaling} 
\left\| u\left( \tfrac{\cdot}{t} \right) \right\|_{\underline{W}^{k,p}(t U)}  
= t^{-k} \left\| u \right\|_{\underline{W}^{k,p}(U)} 
\qand
\left\| u \left( \tfrac{\cdot}{t} \right) \right\|_{\underline{W}^{-k,p}(t U)} = t^{k} \left\| u  \right\|_{\underline{W}^{-k,p}(U)}
\,.
\end{equation}
The function space~$W^{k,p}(U)$,~$k\in \Z$, is a collection of functions having bounded~$\| \cdot\|_{\underline{W}^{k,p}(U)}$ norm. By the classical theory,~$W^{k,p}(U)$ is a closure of~$C^\infty(U)$ under~$\| \cdot\|_{\underline{W}^{k,p}(U)}$. Analogously, the space~$W_0^{k,p}(U)$ is the closure of~$C_{\mathrm{c}}^\infty(U)$ under~$\| \cdot\|_{\underline{W}^{k,p}(U)}$, where~$C_{\mathrm{c}}^\infty(U)$ is the space of smooth function compactly supported in~$U$. 
The notation~$W^{k,p}_{\mathrm{loc}}(U)$ and~$H^{k}_{\mathrm{loc}}(U)$ is used analogously to the definition of~$L^p_{\mathrm{loc}}(U)$.  

\smallskip

Furthermore, by~$\Lpot(U)$ and~$\Lsol(U)$ we denote the closed subspaces of~$L^2(U;\Rd)$ 
of \emph{potentials} (gradients) and \emph{solenoidal} (divergence-free) vector fields. Set 
\begin{equation}
\label{e.Lpot.not}
\Lpot(U):= \left\{ \nabla u \,:\,  u \in H^1(U) \right\}
\end{equation}
and
\begin{equation}
\label{e.Ls.not}
\Lsol(U):= \left\{ \g \in L^2(U;\Rd) \,:\,  \forall \phi\in H^1_0(U)  \,, \ \int_U \g\cdot \nabla \phi = 0 \right\}.
\end{equation}
The condition in the definition of~$\Lsol(U)$ just means~$\nabla \cdot \g = 0$ in the sense of distributions.  We also define 
\begin{equation} 
\label{e.def.Lp0}
\Lpoto(U):= \left\{ \nabla u \,:\, u\in H^1_0(U) \right\}
\end{equation}
and
\begin{equation}
\label{e.def.Ls0}
\Lsolo(U):= \left\{ \g \in L^2(U;\Rd) \,:\, \forall \phi\in H^1(U), \ \int_U \g\cdot \nabla \phi = 0 \right\}
\,.
\end{equation}
Given a coefficient field~$\a$ with  and an open subset~$U\subseteq \Rd$, the set of weak solutions of the equation
\begin{equation*} \label{}
-\nabla \cdot \left( \a\nabla u \right)  = 0 \quad \mbox{in} \ U
\end{equation*}
is denoted by 
\begin{equation} \label{e.def.A(U)}
\A(U) := \left\{ u\in H^1(U)\,:\,\a\nabla u \in L^2_{\mathrm{sol}}(U) \right\}.
\end{equation}
Likewise, we denote the solutions of the adjoint equation by 
\begin{equation} \label{e.def.A.t(U)}
\A^*(U) := \left\{ u\in H^1(U)\,:\,\a^t\nabla u \in L^2_{\mathrm{sol}}(U) \right\}.
\end{equation}
and those of the homogenized equation by
\begin{equation} \label{e.def.Ahom(U)}
\Ahom(U) := \left\{ u\in H^1(U)\,:\,\ahom\nabla u \in L^2_{\mathrm{sol}}(U) \right\}.
\end{equation}
For~$k \in \N$, we denote by~$\Ahom_{k}$ the polynomials of degree at most~$k$ in~$\Ahom(\Rd)$. 

\smallskip

Throughout the notes, the symbols~$c$ and~$C$ denote positive constants which may change from line to line. When we declare the dependence of~$C$ and~$c$ on the various parameters, we use parentheses~$(\cdots)$.

\section{Qualitative homogenization}
\label{s.qualitativetheory}

We cover the classical theory of qualitative homogenization for uniformly elliptic,~$\Zd$-stationary coefficient fields. 

\smallskip

We consider a coefficient field~$\a(\cdot)$ which is a map from~$\Rd$ into the set of (possibly nonsymmetric) matrices satisfying, for given constants~$0 < \lambda\leq \Lambda <\infty$, the uniform ellipticity condition
\begin{equation}
\label{e.ue}
e\cdot \a(x) e
\geq \lambda |e|^2 
\qquad \mbox{and} \qquad
e \cdot \a^{-1} (x) e 
\geq \Lambda^{-1}  |e|^2 
\,,
\qquad 
\forall x,e\in\R^d. 
\end{equation}
Notice that the second condition in~\eqref{e.ue} implies that 
\begin{align*}
| \a(x) e |^2
\leq\Lambda e \cdot \a(x) e 
\leq 
\Lambda |e| |\a(x) e| \implies | \a(x) e | \leq \Lambda |e|.
\end{align*}
For symmetric matrices, this uniform upper bound on~$\a(\cdot)$ is equivalent to the second condition in~\eqref{e.ue}; for nonsymmetric matrices, it is weaker in the sense that it implies the latter with a larger constant.\footnote{See also~\eqref{e.ellipticity.nonsymm}, below, which is an equivalent way of writing~\eqref{e.ue}.} 
The condition~\eqref{e.ue} is the more natural one to use for homogenization\footnote{In fact, we think that the second condition~\eqref{e.ue} is the most natural ``uniform ellipticity upper bound'' assumption when working with nonsymmetric coefficients, generally speaking. For instance, regularity estimates starting with Caccioppoli's inequality will have the same dependence in~$\Lambda$ as in the symmetric case if~\eqref{e.ue} is used, but not if it is replaced by the condition~$|\a(x)e| \leq \Lambda |e|$.} to because, as we will see it is closed under homogenization---the effective matrix~$\ahom$ will satisfy the same bound, with the same constants.

\smallskip

The program of homogenization theory in the elliptic setting is to study the \emph{large-scale} behavior of the elliptic operator~$-\nabla \cdot \a\nabla$ under appropriate assumptions on the field~$\a(\cdot)$. 
In the next section, we will suppose that
\begin{equation}
\a(\cdot) \ \  \mbox{is} \ \mbox{$\Zd$--periodic,}
\end{equation}
while in Section~\ref{ss.random}, we will consider more general stationary random coefficient fields.  
It is customary to scale the operator by introducing an artificial parameter~$\ep>0$ and denoting~$\a^\ep:=\a\bigl(\tfrac \cdot \ep \bigr)$. This goal of homogenization can be rephrased as that of understanding the behavior of the operator~$-\nabla \cdot \a^\ep \nabla$ for~$0< \ep \ll 1$.

\subsection{Periodic homogenization: a warmup}
\label{s.periodic}

In this section, we consider periodic coefficient fields to get a feel for the problem. We start with a heuristic, informal derivation of the ``correctors,'' which are functions appearing in an asymptotic ansatz for solutions of the equation and play a fundamental role in the theory.

\subsubsection{Heuristic derivation of the correctors}

A basic idea in perturbation theory is to attempt to obtain an expansion in powers of the perturbation parameter, in this case~$\ep>0$. A customary way to proceed is to ``work backward'' by first trying to guess what the terms in the expansion should be by performing some informal computations. If one can identify the correct terms in the expansion, then the expansion can often be justified rigorously. 

\smallskip

In the setting of periodic, elliptic homogenization, it is natural to look for an expansion for a solution~$u^\ep$ of the equation
\begin{equation}
\label{e.pde}
-\nabla \cdot \a^\ep \nabla u^\ep = 0
\end{equation}
having the form 
\begin{equation}
\label{e.ansatz}
u^\ep(x) 
= u_0\bigl(x,\tfrac x\ep\bigr) + \ep u_1\bigl(x,\tfrac x\ep \bigr) + \ep^2 u_2 \bigl(x,\tfrac x\ep \bigr) + \ldots 
= \sum_{i=0}^\infty \ep^i u_i\bigl(x,\tfrac x\ep \bigr)\,,
\end{equation}
where the functions~$u_i(x,y)$ are~$\Zd$--periodic in the second variable~$y$.
This ansatz for~$u^\ep$ can be plugged into the equation for~$u^\ep$ so that a system of equations for the functions~$u_i$ can be discovered by matching powers of~$\ep$. The function~$u_0$ is discovered in this process to be a function of~$x$ only and shown to satisfy an equation of the form
\begin{equation}
-\nabla \cdot \ahom\nabla u_0 = 0,
\end{equation}
where~$\ahom$ is a constant matrix whose definition is given in terms of~$u_1$ and relies on the solvability of the equation for~$u_2$. The function~$u_i$ for~$i\geq 1$ are characterized in terms of~$\nabla^i u_0$ and a periodic function called \emph{the~$i$th order corrector} which is characterized in terms of an equation with periodic boundary conditions. 

\smallskip

In most expositions of homogenization theory in the mathematics literature, 
this informal, heuristic derivation is followed by a discussion of the difficulties arising in justifying the ansatz~\eqref{e.ansatz} mathematically, aptly referred to as~\emph{the problem of justification}. The informal derivation is then thanked for revealing the corrector equations before being promptly discarded, with the problem of justification approached via an entirely different line of thought. There are a handful of methods that are most commonly used, such as \emph{the oscillating test function method}, \emph{two-scale convergence}, \emph{$G$-convergence or~$H$-convergence},  \emph{the periodic unfolding method}, and so on. 
These methods share the characteristic that they are indirect---because they are based on compactness---and, therefore, lose quantitative information about the rate of homogenization. 
We will not give detailed accounts of them here because we want to explore a different approach. Still, the reader can find a good exposition in many other books on homogenization (see, for instance,~\cite{T,Allaire,CD}).

\smallskip

We will proceed by following a more direct approach that Zhikov and the Russian school have called ``the method of first approximation,'' which is more faithful to the derivation of the corrector equations from the ansatz~\eqref{e.ansatz}. It is also quantitative, extends easily to other situations, and is closer to the original informal argument, leading to the guess for the corrector equations. Basically, we plug the ansatz into the equation and check that the error can be made small. 

\smallskip

We begin with an ansatz like~\eqref{e.ansatz}, but we only keep the~$O(\ep)$ term. The twist is that we are not trying to make an ansatz for an exact solution, only a guess for an \emph{approximate} solution. We give ourselves the task of finding a function of the form
\begin{equation}
\label{e.guess}
w^\ep (x) = u_0(x) + \ep u_1\bigl( x,\tfrac x\ep \bigr), 
\end{equation}
where~$u_1=u_1(x,y)$ is periodic in the~$y$ variable, such that~$w^\ep$ is \emph{almost} a solution of the equation~\eqref{e.pde}. What should ``almost'' mean in this context? Elliptic equations like~\eqref{e.pde} should be understood in the sense of~$H^{-1}$. So we should demand that
\begin{equation}
\label{e.demand}
\limsup_{\ep\to 0}
\left\| \nabla\cdot \a^\ep\nabla w^\ep \right\|_{H^{-1}}  
=0.
\end{equation}
This will imply that~$w^\ep$ is close in the~$H^1$-norm to a true, exact solution of the equation. At this stage, we are  vague about the underlying domain on which~$w^\ep$ lives, but this is unimportant (and can be taken to be~$B_1$ for concreteness). 

\smallskip

So let us plug the guess~\eqref{e.guess} into the operator~$\nabla \cdot \a^\ep\nabla$ and try to get the~$H^{-1}$ norm of the result to be small by making good choices of~$u_0(x)$ and~$u_1(x,y)$. We compute:
\begin{equation}
%\label{e.}
\nabla w^\ep(x) = 
\nabla u_0(x) 
+ \nabla_y u_1\bigl(x,\tfrac x\ep \bigr)
+ \ep \nabla_xu_1\bigl(x,\tfrac x\ep \bigr).
\end{equation}
The quantity we need to be small in~$H^{-1}$ is therefore
\begin{equation}
\label{e.realansatz}
\nabla \cdot \a^\ep \nabla w^\ep 
=
\nabla \cdot \left( 
\a^\ep \left( 
\nabla u_0(x) 
+ \nabla_y u_1\bigl(x,\tfrac x\ep \bigr)
\right)
+ \ep \a^\ep \nabla_xu_1\bigl(x,\tfrac x\ep \bigr)
\right).
\end{equation}
Since
\begin{equation}
\label{e.Oeps}
\left\| \nabla \cdot \left( \ep \a^\ep \nabla_xu_1\bigl(x,\tfrac x\ep \bigr)
\right) \right\|_{H^{-1}} 
\lesssim
\left\|  \ep \a^\ep \nabla_xu_1\bigl(x,\tfrac x\ep \bigr)
\right\|_{L^2} 
\lesssim O(\ep), 
\end{equation}
we can discard the last term inside the divergence in~\eqref{e.realansatz}. 
Since the variable~$x$ is moving much more slowly than~$\frac x\ep$, to keep a shred of hope alive, it seems that we must demand that
\begin{equation}
\nabla_y \cdot 
\left( 
\a(y) \left( 
\nabla u_0(x) 
+ \nabla_y u_1\bigl(x,y \bigr)
\right)\right)
= 0 \,. 
\end{equation}
From the above equation, we see that the dependence of~$u_1(x,y)$ in the variable~$x$ should involve only~$\nabla u_0(x)$, and it should be linear in its dependence on~$\nabla u_0(x)$. Let us therefore substitute~$e:= \nabla u_0(x)$ and write~$u_1(x,y) =: \phi_e(y)$ and then observe that~$\phi_e$ should satisfy the problem
\begin{equation}
\label{e.corr.prob}
\left\{
\begin{aligned}
&-\nabla \cdot \a \left( e + \nabla \phi_e  \right) = 0 \quad \mbox{in}  \ \R^d, \\
& \phi_e  \ \mbox{is~$\Z^d$--periodic and} \ \langle \phi_e  \rangle = 0. 
\end{aligned}
\right.
\end{equation}
The problem~\eqref{e.corr.prob} possesses a unique solution~$\phi_e\in H^1_{\mathrm{loc}}(\Rd)$, with the condition~$\langle \phi_e  \rangle = 0$ being imposed in order to have uniqueness. The solutions~$\phi_e$ are called the \emph{first-order correctors}. The problem~\eqref{e.corr.prob} is sometimes called \emph{the cell problem} since it is posed on a single period of the coefficient field (one  ``cell'').

\smallskip

We are not done yet, as we have just narrowed our guess for~$w^\ep$ to 
\begin{equation}
\label{e.intro.wep}
w^\ep (x) = u_0(x) + \ep \phi_{\nabla u_0(x)}\bigl(\tfrac x\ep \bigr) 
= 
u_0(x) + \ep \sum_{k=1}^d \partial_{x_k}u_0(x) \phi_{e_k}\bigl(\tfrac x\ep \bigr),
\end{equation}
with the second equality in~\eqref{e.intro.wep} due to the evident linearity of~$e\mapsto \phi_e(y)$. We still must make a good choice of~$u_0$ in order to ensure that~$w^\ep$ is an ``almost-solution.'' We return to~\eqref{e.realansatz}, discard the last term of order~$O(\ep)$ using~\eqref{e.Oeps}, plug in the expression for~$u_1$ and then use the equation for~$\phi_e$ to get:
\begin{align}
\label{e.heurthreelines}
\nabla \cdot\a^\ep \nabla w^\ep 
&
=
\sum_{i,j=1}^d
\partial_{x_i} 
\biggl(
\a_{ij}\bigl(\tfrac x\ep \bigr) 
\biggl(
\partial_{x_j} u_0(x) 
+
\sum_{k=1}^d \partial_{x_k}u_0(x) \partial_{x_j} \phi_{e_k} \bigl(\tfrac x\ep \bigr)
\biggr)
\biggr)
+O(\ep)
\notag \\ &
=
\sum_{i,j=1}^d
\a_{ij}\bigl(\tfrac x\ep \bigr)
\biggl( \partial_{x_i}\partial_{x_j} u_0(x) + 
\sum_{k=1}^d \partial_{x_i}\partial_{x_k}u_0(x) \partial_{x_j} \phi_{e_k} \bigl(\tfrac x\ep \bigr) \biggr)
+O(\ep)
\notag \\ &
=
\sum_{i,j,k=1}^d
\a_{ij}\bigl(\tfrac x\ep \bigr) 
\Bigl( \delta_{jk} 
+ 
\partial_{x_j} \phi_{e_k} \bigl(\tfrac x\ep \bigr) \Bigr)
\partial_{x_i}\partial_{x_k} u_0(x) 
+ O(\ep).
\end{align}
As we send~$\ep \to 0$, the first term on the right side of~\eqref{e.heurthreelines} should converge in~$H^{-1}$ to 
\begin{equation}
\label{e.quan}
\sum_{i,j,k=1}^d
\Bigl\langle \!
\a_{ij}\left( \delta_{jk} 
+ 
\partial_{x_j} \phi_{e_k} \right)
\! \Bigr\rangle \,
\partial_{x_i}\partial_{x_k} u_0(x)\,,
\end{equation}
where the brackets~$\langle \cdot \rangle$ denote the mean of a periodic function.
Indeed, it is clear that we should expect this convergence to hold \emph{weakly} in~$L^2$, provided that~$u_0$ is smooth enough (say~$C^2$). But weak convergence in~$L^2$ is equivalent to strong convergence in~$H^{-1}$ for any bounded sequence of~$L^2$ functions. Therefore, we should expect this convergence to hold locally in~$H^{-1}$. 

\smallskip

We define the \emph{homogenized matrix}~$\ahom\in \R^{d\times d}$ by
\begin{equation}
%\label{e.}
\ahom_{ij} 
:= 
\sum_{k=1}^d 
\left\langle \a_{ik}\left( \delta_{kj} 
+ 
\partial_{x_k} \phi_{e_j} \right)
\right\rangle.
\end{equation}
The need for the expression in~\eqref{e.quan} to be small in~$H^{-1}$ translates to the condition
\begin{equation}
\label{e.pde.homoge}
-\nabla \cdot \ahom \nabla u_0 = 0. 
\end{equation}
This is the \emph{homogenized equation}. 

\smallskip

It is probably better to write the homogenized matrix in a coordinate-free way by recognizing it as the mean of the flux of the correctors (compare to~\eqref{e.corr.prob}): 
\begin{equation}
\label{e.intro.ahom}
\ahom e
:=
\left\langle \a \left( e+ \nabla \phi_e    \right) \right\rangle, \quad e\in\R^d. 
\end{equation}
The matrix~$\ahom$ is uniformly elliptic and satisfies~\eqref{e.ue}, that is, 
\begin{equation}
\label{e.ahom.ue}
e\cdot \ahom e\geq \lambda |e|^2 
\qquad \mbox{and} \qquad 
e\cdot \ahom^{-1} e\geq \Lambda^{-1} |e|^2 \,,
\qquad 
\forall e\in\R^d. 
\end{equation}
The proof of~\eqref{e.ahom.ue} is left for the reader (or see below in~\eqref{e.ahom.boundsme}).

\smallskip

Let us summarize our conclusions from this heuristic derivation:
\begin{itemize}
\item We should  define the correctors~$\phi_e$ and homogenized matrix~$\ahom$ by~\eqref{e.corr.prob} and~\eqref{e.intro.ahom}, respectively.

\item If we select any~$C^2$ function~$u_0$ satisfying the homogenized equation~\eqref{e.pde.homoge} and define~$w^\ep$ by~\eqref{e.intro.wep}, then~$w^\ep$ should be close in the~$H^1$ norm to a true solution of the original equation~\eqref{e.pde}.

\item The function~$w^\ep$ is clearly close to~$u_0$ itself, at least in the~$L^2$ norm, and therefore we should expect that solutions of~\eqref{e.pde} should be close to those of~\eqref{e.pde.homoge}, at least in~$L^2$. 

\end{itemize}

This clearly allows us to build a huge family of approximate solutions. But does it tell us anything about a particular, exact solution~$u^\ep$ of the equation~\eqref{e.pde}? This is, after all, what we set out to study. 
The answer is yes, because nearly every particular solution~$u^\ep$ will be close in~$H^1$ to one of the~$w^\ep$'s constructed above, because~$-\nabla\cdot \a^\ep\nabla (u^\ep - w^\ep) = \nabla\cdot \a^\ep\nabla w^\ep$ is small in~$H^{-1}$. One needs to choose an appropriate function~$u_0$ to build~$w^\ep$ around, which depends on the particular~$u^\ep$ we are working with. The next section will give a more precise and rigorous demonstration.

\subsubsection{Two-scale expansion computations}

Turning to the problem of justification, we should ask ourselves how rigorous the computations in the previous section are. Can they be easily turned into a proof of the bullet points above, or must we introduce new mathematical ideas to rigorously establish the approximation of solutions of~\eqref{e.pde} by those of~\eqref{e.pde.homoge}? As we check carefully below, the above calculation is essentially already a proof of homogenization. Moreover, this argument is \emph{quantitative}---it will immediately give us a convergence rate for the homogenization limit---as well as \emph{robust}---the method we sketched above is applicable and can be straightforwardly modified to homogenize other versions of~\eqref{e.pde} as well as other types of equations.

Before presenting the rigorous version of the argument above, we introduce the \emph{flux correctors}. 
We  take~$\bfs_e$ to be a \emph{stream matrix} (or \emph{matrix potential}) for the divergence-free vector field
\begin{equation}
\g_e := \a\left( e + \nabla\phi_e   \right) - \ahom e\,,
\end{equation}
which is the difference between the flux of the correctors and their average, the homogenized flux. Such stream matrices are unique up to the specification of a \emph{gauge}, but this does not matter for our purposes. We simply take the function~$\bfs_e \in H^1_{\per}(\R^d;\R^{d\times d}_{\skew})$ to be the periodic function valued in the~$d$-by-$d$ skew-symmetric matrices, with the entry~$\bfs_{e,ij}$ defined for each~$i,j\in\{1,\ldots,d\}$ as the solution of  
\begin{equation}
\label{e.fluxcorrect}
\left\{
\begin{aligned}
&
-\Delta \bfs_{e,ij} = 
\partial_{x_i} \g_{e,j} 
-
\partial_{x_j} \g_{e,i}
\quad \mbox{in}  \ \R^d, \\
& \bfs_{e,ij}  \ \mbox{is~$\Z^d$--periodic and} \ \langle \bfs_{e,ij} \rangle = 0. 
\end{aligned}
\right. 
\end{equation}
One can check by a direct computation that~$\nabla \cdot \bfs_e  = \g_e$. where we define the divergence of a matrix-valued function~$\bfA$ by~$(\nabla \cdot \bfA)_i:= \sum_{j=1}^d \partial_{x_j} \bfA_{ij}$. 
Indeed, apply~$\partial_{x_j}$ to both sides of~\eqref{e.fluxcorrect} and sum over~$j \in \{1,\ldots,d\}$ to see that~$\nabla \cdot \bfs_e$ and~$\g_e$ have the same Laplacian in the sense of distributions. Since they are each mean-zero periodic functions, they must be equal. We carefully note that this relies on the fact that~$\g_e$ has zero mean, which is how the homogenized matrix~$\ahom$ will enter the stage. 

\smallskip

The weak convergence of the fluxes was used implicitly in the heuristic derivation above in the passage from the last line of~\eqref{e.heurthreelines} to~\eqref{e.quan}. The elements of the family~$\{ \bfs_e \}_{e\in\R^d}$ are called the \emph{flux correctors} because they capture this weak convergence in a more convenient form, since roughly speaking we have~$\| \g_e\bigl(\tfrac \cdot \ep\bigr) \|_{H^{-1}} \simeq \ep \left\| \bfs_e\bigl(\tfrac \cdot \ep\bigr) \right\|_{L^2} = O(\ep)$. In other words, we can write the~$H^{-1}$ norm as the~$L^2$ norm of something else, and~$L^2$ spaces are more familiar and easier to work with than spaces of negative regularity. We can, of course, already do this for the gradient fields~$\nabla \phi_e$, and the flux correctors allow us to do it for the fluxes~$\g_e$. 

\smallskip

We introduce the rescaled correctors and flux correctors defined for~$\ep >0$ by
\begin{equation} \label{e.scaledcorrectors}
\phi^\ep_e  :=  \ep \phi_e \bigl(\tfrac \cdot\ep \bigr)
\qand
\bfs_e^\ep  := \ep  \bfs_e \bigl(\tfrac \cdot\ep \bigr).
\end{equation}
The main use of the assumption of periodicity in periodic homogenization is found in the above construction of the correctors~$\phi_e$ and~$\bfs_e$ and the fact that an easy energy estimate yields, for a constant~$C(d,\lambda,\Lambda)<\infty$, 
\begin{equation}
\label{e.correctorsbounded}
\left\| \phi_e \right\|_{H^1((0,1)^d)} 
+
\left\| \bfs_e \right\|_{H^1((0,1)^d)}
\leq 
C|e|.
\end{equation}
This estimate is sometimes referred to informally as ``the boundedness of correctors."
It yields that, for any bounded Lipschitz domain~$U\subseteq\Rd$, there exists~$C(U,d,\lambda,\Lambda)<\infty$ such that, for every~$\ep\in (0,1]$, 
\begin{equation}
\label{e.correctorsbounded2}
\sum_{k=1}^d 
\Bigl( \bigl\| \phi_{e_k}^\ep  \bigr\|_{L^2(U)}  + \bigl\| \bfs_{e_k}^\ep  \bigr\|_{L^2(U)} \Bigr)
\leq 
C \ep
\,.
\end{equation}

We turn to the rigorous justification of the heuristic computations given in the previous subsection, which compare the heterogeneous operator~$-\nabla \cdot \a^\ep\nabla$ to the homogenized operator~$-\nabla\cdot \ahom\nabla$ via the function~$w^\ep$. The proof of the following lemma is essentially a repetition of these heuristic computations, written more formally so that they are no longer just heuristic and rearranged somewhat to take advantage of the flux correctors. Moreover, since the following computation is valid provided that the corrector and flux correctors satisfy the corresponding equations, we give a slightly more general statement since the \emph{same} computation is valid also in more general cases such as \emph{stochastic homogenization}.  Notice, however, that in the periodic case, one gets the quantitative estimate by simply taking correctors from~\eqref{e.corr.prob} and~\eqref{e.fluxcorrect} and then combining~\eqref{e.twoscale.explicit},~\eqref{e.twoscale.explicit2} and~\eqref{e.twoscale.strip} with~\eqref{e.correctorsbounded2}.

\begin{lemma}[Basic two-scale expansion computation]
\label{l.twoscale}
Let~$U\subseteq \Rd$ be open,~$\ep \in \bigl(0,\tfrac12\bigr]$ and~$u\in W^{2,\infty}(U)$. 
%Suppose that, for every~$k \in \{1,\ldots,d\}$,~$\phi_{e_k} \in H^1(\ep^{-1} U)$ and~$\bfs_{e_k} \in H^1(\ep^{-1} U\,; \R^{d\times d}_{\mathrm{skew}})$ are weak solutions of the equations 
%\begin{align*}  
%-\nabla \cdot \a \left( e_k + \nabla \phi_{e_k}  \right) = 0 
%\qand
%\nabla \cdot \bfs_{e_k} = \a \left( e_k + \nabla \phi_{e_k}  \right)  - \ahom e_k  \quad \mbox{ in }  \ep^{-1} U 
%\,,
%\end{align*}
%respectively. Let~$\phi^\ep_{e_k}  :=  \ep \phi_{e_k} \bigl(\tfrac \cdot\ep \bigr)$  and~$\bfs^\ep_{e_k}  := \ep \bfs_{e_k} \bigl(\tfrac \cdot\ep \bigr)$, and define the function~$w^\ep$ by
Define the function~$w^\ep \in H^1_{\mathrm{loc}}(U)$ by
\begin{equation}
\label{e.twoscale.wep}
w^\ep (x) 
:= 
% u(x) + \ep\phi_{\nabla u(x)}\bigl(\tfrac x\ep \bigr) =
u(x) + \sum_{k=1}^d \partial_{x_k}u(x) \phi_{e_k}^\ep(x)
\,. 
\end{equation}
Then, we have the identities
\begin{equation}
\label{e.twoscale.flux}
\a^\ep \nabla w^\ep - \ahom\nabla u
=
\sum_{k=1}^d \Bigl( 
\nabla \cdot \left( \bfs^\ep_{e_k}  \partial_{x_k} u  \right)
+
\bigl( - \bfs^\ep_{e_k} + \phi_{e_k}^\ep \a^\ep \bigr) \nabla \partial_{x_k} u
\Bigr)
\end{equation}
and
\begin{align}
\label{e.twoscale.error}
\nabla \cdot \a^\ep \nabla w^\ep 
-
\nabla\cdot \ahom \nabla u
= 
\nabla\cdot \left( \sum_{k=1}^d 
\left(
- \bfs_{e_k}^\ep 
+
\phi_{e_k}^\ep \a^\ep 
\right) \nabla \partial_{x_k} u \right).
\end{align}
\end{lemma}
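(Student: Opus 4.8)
The plan is to prove the flux identity~\eqref{e.twoscale.flux} directly by a computation, and then obtain~\eqref{e.twoscale.error} simply by taking the divergence of~\eqref{e.twoscale.flux} and observing that the term $\nabla \cdot \nabla \cdot (\s_{e_k}^\ep \partial_{x_k} u)$ contributes a term $\partial_{x_i}\partial_{x_j}(\s_{e_k,ij}^\ep \partial_{x_k}u)$ and that $\s_e$ being skew-symmetric makes the \emph{constant-coefficient} part (the part where both derivatives land on $\s^\ep$) vanish by antisymmetry in $(i,j)$. More precisely, for a skew-symmetric matrix field $\mathbf{A}$ one has $\nabla \cdot (\nabla \cdot \mathbf{A}) = \sum_{i,j}\partial_{x_i}\partial_{x_j}\mathbf{A}_{ij} = 0$ since $\partial_{x_i}\partial_{x_j}$ is symmetric in $i,j$ while $\mathbf{A}_{ij}$ is antisymmetric; so actually $\nabla\cdot\nabla\cdot(\s_{e_k}^\ep\partial_{x_k}u)$ does not vanish identically because of the $\partial_{x_k}u$ factor, but expanding by the product rule the ``doubly-hit-$\s$'' piece drops and one is left with first-derivative-of-$\s$ times first-derivative-of-$\partial_{x_k}u$ plus $\s$ times second derivatives of $\partial_{x_k}u$; these recombine into the stated right-hand side. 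I should double-check the recombination, but it is forced by~\eqref{e.twoscale.flux}.

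For~\eqref{e.twoscale.flux} itself, first I would compute $\nabla w^\ep$ by the product rule applied to~\eqref{e.twoscale.wep}:
\[
\nabla w^\ep = \nabla u + \sum_{k=1}^d \partial_{x_k}u \,\nabla\phi_{e_k}^\ep + \sum_{k=1}^d \phi_{e_k}^\ep\,\nabla\partial_{x_k}u.
\]
Here I use the scaling relation, namely that $\nabla \phi_{e_k}^\ep(x) = (\nabla\phi_{e_k})(x/\ep)$ from~\eqref{e.scaledcorrectors}, so no stray factor of $\ep$ or $1/\ep$ appears in the gradient of the corrector term. Multiplying by $\a^\ep$ and using $\nabla u = \sum_k \partial_{x_k}u\, e_k$, the first two sums combine into $\sum_k \partial_{x_k}u\,\a^\ep(e_k + \nabla\phi_{e_k}^\ep) = \sum_k \partial_{x_k}u\,\a^\ep(e_k+(\nabla\phi_{e_k})(\cdot/\ep))$, which by the definition of $\g_{e_k}$ equals $\sum_k \partial_{x_k}u\,\bigl(\g_{e_k}(\cdot/\ep) + \ahom e_k\bigr)$. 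The term $\sum_k \partial_{x_k}u\,\ahom e_k = \ahom\nabla u$, which is exactly what we subtract on the left. So
\[
\a^\ep\nabla w^\ep - \ahom\nabla u = \sum_{k=1}^d \partial_{x_k}u\,\g_{e_k}(\cdot/\ep) + \sum_{k=1}^d \phi_{e_k}^\ep\,\a^\ep\nabla\partial_{x_k}u.
\]

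It remains to rewrite $\partial_{x_k}u\,\g_{e_k}(\cdot/\ep)$ using the flux corrector. From $\nabla\cdot\s_e = \g_e$ and the skew-symmetry of $\s_e$, the rescaled version satisfies $\nabla\cdot\s_e^\ep = \g_e(\cdot/\ep)$ (again the $\ep$ scaling works out by~\eqref{e.scaledcorrectors}: one factor of $\ep$ from the prefactor cancels one factor of $1/\ep$ from the chain rule). Then by the product rule for the divergence of a matrix field times a scalar, $\nabla\cdot(\s_{e_k}^\ep\,\partial_{x_k}u) = (\nabla\cdot\s_{e_k}^\ep)\partial_{x_k}u + \s_{e_k}^\ep\nabla\partial_{x_k}u = \g_{e_k}(\cdot/\ep)\,\partial_{x_k}u + \s_{e_k}^\ep\nabla\partial_{x_k}u$, where I use skew-symmetry to identify $(\s_{e_k}^\ep\nabla\partial_{x_k}u)_i = \sum_j \s_{e_k,ij}^\ep\partial_{x_j}\partial_{x_k}u$ with the correct sign. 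Hence $\partial_{x_k}u\,\g_{e_k}(\cdot/\ep) = \nabla\cdot(\s_{e_k}^\ep\partial_{x_k}u) - \s_{e_k}^\ep\nabla\partial_{x_k}u$, and substituting gives precisely~\eqref{e.twoscale.flux}.

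The main obstacle I anticipate is purely bookkeeping: tracking the $\ep$-powers correctly through the rescaling~\eqref{e.scaledcorrectors} (ensuring that $\nabla$ of a rescaled corrector has \emph{no} $\ep$ prefactor while the corrector itself carries one factor of $\ep$), and keeping the index conventions for $\nabla\cdot\mathbf{A}$ and the sign from skew-symmetry consistent so that the terms $-\s_{e_k}^\ep$ and $+\phi_{e_k}^\ep\a^\ep$ appear with exactly the signs claimed. There is no analytic difficulty here — $u\in W^{2,\infty}(U)$ and the periodicity/ellipticity already guarantee everything is in the right function spaces so all product-rule manipulations are justified in the distributional sense — so the ``proof'' is really a careful verification that the algebra closes.
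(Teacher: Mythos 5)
Your derivation of the flux identity~\eqref{e.twoscale.flux} is correct and is essentially the paper's own computation, written in vector form rather than componentwise: compute $\nabla w^\ep$, use $\a^\ep(e_k+\nabla\phi^\ep_{e_k})-\ahom e_k = \g_{e_k}(\cdot/\ep)=\nabla\cdot\s^\ep_{e_k}$ (your bookkeeping of the $\ep$-scaling in~\eqref{e.scaledcorrectors} is right), and trade $\partial_{x_k}u\,\g_{e_k}(\cdot/\ep)$ for $\nabla\cdot(\s^\ep_{e_k}\partial_{x_k}u)-\s^\ep_{e_k}\nabla\partial_{x_k}u$ by the product rule.

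Your treatment of~\eqref{e.twoscale.error}, however, contains a concrete error. You assert that $\nabla\cdot\nabla\cdot(\s^\ep_{e_k}\partial_{x_k}u)$ ``does not vanish identically because of the $\partial_{x_k}u$ factor.'' It does vanish identically: the matrix field $\mathbf{A}_{ij}:=\s^\ep_{e_k,ij}\,\partial_{x_k}u$ is still skew-symmetric in $(i,j)$, since multiplying a skew matrix by a scalar function does not disturb antisymmetry in the matrix indices, and therefore your own general principle applies verbatim to give $\sum_{i,j}\partial_{x_i}\partial_{x_j}\bigl(\s^\ep_{e_k,ij}\partial_{x_k}u\bigr)=0$. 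This is exactly the paper's one-line justification, and with it~\eqref{e.twoscale.error} is an immediate consequence of taking the divergence of~\eqref{e.twoscale.flux}. Your proposed salvage is also wrong as stated: if you do expand by the product rule, the term $\sum_{i,j}\s^\ep_{e_k,ij}\partial_{x_i}\partial_{x_j}\partial_{x_k}u$ vanishes because a skew matrix is contracted against a symmetric Hessian, and the two cross terms $\sum_{i,j}(\partial_{x_j}\s^\ep_{e_k,ij})(\partial_{x_i}\partial_{x_k}u)$ and $\sum_{i,j}(\partial_{x_i}\s^\ep_{e_k,ij})(\partial_{x_j}\partial_{x_k}u)$ cancel each other upon relabeling $i\leftrightarrow j$ and using $\s^\ep_{e_k,ji}=-\s^\ep_{e_k,ij}$; nothing survives to ``recombine into the stated right-hand side.'' Note also that the vanishing is not ``forced by~\eqref{e.twoscale.flux}'': if the double divergence genuinely failed to vanish,~\eqref{e.twoscale.error} would simply not follow from~\eqref{e.twoscale.flux}, so this step must be proved, not inferred. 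With that one correction your argument coincides with the paper's proof.
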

\begin{proof}
We  compute the difference of the fluxes:
\begin{align*}
%\label{e.twoscale.computeflux}
\left( \a^\ep \nabla w^\ep - \ahom \nabla u \right)_i
& = 
\sum_{j=1}^d
\left(
\a^\ep_{ij} \partial_{x_j} w^\ep 
- \ahom_{ij} \partial_{x_j} u
\right)
\notag \\ &
=
\sum_{j,k=1}^d
\left( \a^\ep_{ij}
\left( \delta_{jk} + \partial_{x_j} \phi^\ep_{e_k} \right)
- \ahom_{ik}\right)
\partial_{x_k}u
+\!
\sum_{j,k=1}^d 
\a^\ep_{ij} \partial_{x_j}\partial_{x_k} u \phi^\ep_{e_k}
\notag \\ & 
= 
\sum_{j,k=1}^d
\partial_{x_j} \bfs_{e_k,ij}^\ep \partial_{x_k}u
+\!
\sum_{j,k=1}^d 
\a^\ep_{ij} \partial_{x_j}\partial_{x_k} u \phi^\ep_{e_k}
\notag \\ & 
=
\sum_{k=1}^d \Bigl( 
\nabla \cdot \left( \bfs^\ep_{e_k}  \partial_{x_k} u  \right)
+
\bigl( - \bfs^\ep_{e_k} + \phi_{e_k}^\ep \a^\ep \bigr) \nabla \partial_{x_k} u
\Bigr)_i
\,.
\end{align*}
This is~\eqref{e.twoscale.flux}.

\smallskip

Next, we take the divergence of~\eqref{e.twoscale.flux}. By the skew-symmetry of~$\bfs_{e_k}$, we have that 
\begin{align*}  
\nabla \cdot  \left( \nabla \cdot \left( \bfs^\ep_{e_k}  \partial_{x_k} u  \right) \right) = \sum_{i,j=1}^d \partial_{x_i} \partial_{x_j} \left( \bfs_{e_k,ij}^\ep\partial_{x_k}u\right) = 0, 
\end{align*}
and thus, we get that
\begin{align*}
%\label{e.}
\nabla \cdot 
\left( \a^\ep \nabla w^\ep - \ahom \nabla u \right)
&
= 
\nabla\cdot \left( \sum_{k=1}^d 
\left(
- \bfs_{e_k}^\ep 
+
\phi_{e_k}^\ep \a^\ep 
\right) \nabla \partial_{x_k} u \right)  \,.
\end{align*}
This is~\eqref{e.twoscale.error}, finishing the proof. 
\end{proof}

\begin{corollary} \label{c.twoscale}
Suppose~$U\subseteq\Rd$ is a bounded Lipschitz domain,~$u\in W^{2,\infty}(U)$ and~$\ep >0$. Let~$w^\ep$ be defined by~\eqref{e.twoscale.wep}. 
Then there exists a constant~$C(U,d,\lambda,\Lambda)<\infty$ such that
\begin{align}
\label{e.twoscale.explicit}
\left\|
\nabla \cdot \a^\ep \nabla w^\ep 
-
\nabla\cdot \ahom \nabla u
\right\|_{H^{-1}(U)}  
\leq
C 
\sum_{k=1}^d 
\left( \left\| \bfs_{e_k}^\ep  \right\|_{L^2(U)} {+} \left\| \phi_{e_k}^\ep  \right\|_{L^2(U)}   \right)
\left\| \nabla^2 u \right\|_{L^{\infty}(U)}
\end{align}
and, moreover,  
\begin{align} \label{e.twoscale.explicit2}
\left\| 
\a^\ep \nabla w^\ep - \ahom \nabla u \right\|_{H^{-1}(U)} 
\leq 
C \sum_{k=1}^d 
\left( \left\| \bfs_{e_k}^\ep  \right\|_{L^2(U)} {+} \left\| \phi_{e_k}^\ep  \right\|_{L^2(U)}   \right) 
\left(\left\| \nabla u \right\|_{L^{\infty}(U)}  {+} \left\| \nabla^2 u \right\|_{L^{d}(U)} \right)
\end{align}
and 
\begin{align}
\label{e.twoscale.strip}
\left\| 
w^\ep - u 
\right\|_{L^2(U)} 
+
\left\| 
\nabla w^\ep - \nabla u \right\|_{H^{-1}(U)} 
\leq
C  \sum_{k=1}^d 
\left\| \phi_{e_k}^\ep  \right\|_{L^2(U)} 
\left\| \nabla u \right\|_{L^\infty(U)} \,.
\end{align}
\end{corollary}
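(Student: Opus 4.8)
The plan is to derive all three estimates as essentially routine consequences of the two identities \eqref{e.twoscale.flux} and \eqref{e.twoscale.error} proved in Lemma~\ref{l.twoscale}, combined with three standard facts: (i) the operator norm bound $\|\a^\ep\| \le \Lambda$ from \eqref{e.ue}; (ii) the elementary inequality $\|\nabla \cdot \bfF\|_{H^{-1}(U)} \le C\|\bfF\|_{L^2(U)}$, valid for a bounded Lipschitz domain since $H^1_0(U)$ pairs with $H^{-1}(U)$ and the divergence is dual to the gradient; and (iii) the multiplicative structure of the right-hand sides, where each term is a product of a rescaled corrector (or flux corrector) with a derivative of $u$. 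The point is that each right-hand side is a sum over $k$ of terms of the form (corrector)$\times$(derivative of $u$) or $\nabla\cdot$((flux corrector)$\times$(derivative of $u$)), so one just needs to track which Sobolev norm of $u$ and which $L^p$ norm of the corrector appear.

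For \eqref{e.twoscale.explicit}, I would start from \eqref{e.twoscale.error}: the left-hand side equals $\nabla\cdot\bigl(\sum_k(-\s_{e_k}^\ep + \phi_{e_k}^\ep\a^\ep)\nabla\partial_{x_k}u\bigr)$, so by fact (ii) it is bounded by $C\sum_k\|(-\s_{e_k}^\ep + \phi_{e_k}^\ep\a^\ep)\nabla\partial_{x_k}u\|_{L^2(U)}$, and then by H\"older (pulling out $\|\nabla^2 u\|_{L^\infty(U)}$) and fact (i) this is at most $C\sum_k(\|\s_{e_k}^\ep\|_{L^2(U)} + \|\phi_{e_k}^\ep\|_{L^2(U)})\|\nabla^2 u\|_{L^\infty(U)}$, which is the claim. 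For \eqref{e.twoscale.strip}, from the definition \eqref{e.twoscale.wep} we have $w^\ep - u = \sum_k \partial_{x_k}u\,\phi_{e_k}^\ep$, so $\|w^\ep - u\|_{L^2(U)} \le \sum_k\|\phi_{e_k}^\ep\|_{L^2(U)}\|\partial_{x_k}u\|_{L^\infty(U)}$ immediately. For the second piece, $\nabla w^\ep - \nabla u = \sum_k(\partial_{x_k}u\,\nabla\phi_{e_k}^\ep + \phi_{e_k}^\ep\nabla\partial_{x_k}u)$; the second summand is bounded directly in $L^2$ (hence in $H^{-1}$), while the first summand one rewrites using $\partial_{x_k}u\,\nabla\phi_{e_k}^\ep = \nabla(\partial_{x_k}u\,\phi_{e_k}^\ep) - \phi_{e_k}^\ep\nabla\partial_{x_k}u$ and controls the gradient term via fact (ii), again pulling out $\|\nabla u\|_{L^\infty(U)}$ and using the $L^2$ norm of $\phi_{e_k}^\ep$.

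The estimate \eqref{e.twoscale.explicit2} is the one requiring a little more care, because the term $\phi_{e_k}^\ep\a^\ep\nabla\partial_{x_k}u$ appears without a divergence in front of it in \eqref{e.twoscale.flux}, so to bound it in $H^{-1}(U)$ one cannot simply take its $L^2$ norm against $\|\nabla^2 u\|_{L^\infty}$ if one wants the $\|\nabla^2 u\|_{L^d(U)}$ dependence advertised. The trick is the Sobolev embedding duality: for a function of the form $h\,v$ with $h \in L^2$ and $v \in L^d$, one has $\|hv\|_{H^{-1}(U)} \le C\|h\|_{L^2(U)}\|v\|_{L^d(U)}$ on a bounded Lipschitz domain, because testing against $g \in H^1_0(U)$ and using H\"older with exponents $(2, d, \tfrac{2d}{d-2})$ together with $\|g\|_{L^{2d/(d-2)}} \le C\|g\|_{H^1_0}$ (Sobolev) closes the estimate; one then splits $\nabla\partial_{x_k}u$ into its $L^\infty$ part near the boundary-free interior estimate and its $L^d$ part, though in fact the clean statement is just to apply this bound with $h = \phi_{e_k}^\ep$ (or $\s_{e_k}^\ep$, noting $\|\s\| \le$ its $L^2$ norm pointwise times $|\nabla^2 u|$) and $v = |\nabla^2 u|$, and separately handle the lower-order contribution using $\|\nabla u\|_{L^\infty(U)}$. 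The $\nabla\cdot(\s_{e_k}^\ep\partial_{x_k}u)$ term is handled as in \eqref{e.twoscale.explicit}, contributing $\|\s_{e_k}^\ep\|_{L^2(U)}\|\nabla u\|_{L^\infty(U)}$ after pulling the gradient out; but one must be slightly careful that $\nabla\cdot(\s_{e_k}^\ep\partial_{x_k}u) = (\nabla\cdot\s_{e_k}^\ep)\partial_{x_k}u + \s_{e_k}^\ep\nabla\partial_{x_k}u$ is not literally of the form $\nabla\cdot(L^2 \text{ field})$ unless one keeps it unexpanded, so the cleanest route is to leave it as the divergence of $\s_{e_k}^\ep\partial_{x_k}u$ and bound $\|\s_{e_k}^\ep\partial_{x_k}u\|_{L^2(U)} \le \|\s_{e_k}^\ep\|_{L^2(U)}\|\nabla u\|_{L^\infty(U)}$. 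I expect this bookkeeping of which term gets which norm of $u$ — and in particular getting the $L^d$ versus $L^\infty$ split right in \eqref{e.twoscale.explicit2} — to be the only genuine obstacle; everything else is mechanical.
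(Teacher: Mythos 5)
Your treatment of \eqref{e.twoscale.explicit} and \eqref{e.twoscale.explicit2} is correct and is essentially the paper's own argument: both start from the identities \eqref{e.twoscale.error} and \eqref{e.twoscale.flux}, bound divergences of $L^2$ fields in $H^{-1}(U)$, and for \eqref{e.twoscale.explicit2} use exactly the Sobolev--H\"older duality you describe (the paper phrases it as $\|f\|_{H^{-1}(U)} \leq C\|f\|_{L^{2d/(d+2)}(U)}$ followed by H\"older with exponents $(2,d)$, which is the same computation as your testing against $g\in H^1_0(U)$ with the exponents $(2,d,\tfrac{2d}{d-2})$). Your handling of the term $\nabla\cdot\bigl(\s_{e_k}^\ep\,\partial_{x_k}u\bigr)$ — keep it unexpanded and bound $\|\s_{e_k}^\ep\,\partial_{x_k}u\|_{L^2(U)}$ — is also exactly what the paper does.

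There is, however, a slip in your argument for \eqref{e.twoscale.strip}. You expand
$\nabla w^\ep - \nabla u = \sum_k\bigl(\partial_{x_k}u\,\nabla\phi_{e_k}^\ep + \phi_{e_k}^\ep\,\nabla\partial_{x_k}u\bigr)$,
bound the second summand directly in $L^2$, and then rewrite the first summand as $\nabla\bigl(\partial_{x_k}u\,\phi_{e_k}^\ep\bigr) - \phi_{e_k}^\ep\,\nabla\partial_{x_k}u$. As written, you are estimating the two copies of $\phi_{e_k}^\ep\,\nabla\partial_{x_k}u$ separately instead of letting them cancel, and each such estimate costs a factor $\|\nabla^2 u\|_{L^\infty(U)}$; the resulting bound therefore contains an extra term $C\sum_k\|\phi_{e_k}^\ep\|_{L^2(U)}\|\nabla^2 u\|_{L^\infty(U)}$ which is \emph{not} present on the right side of \eqref{e.twoscale.strip}, so you have proved a strictly weaker inequality than the one stated. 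The fix is one line and is what the paper does: since by \eqref{e.twoscale.wep} one has $w^\ep - u = \sum_k \partial_{x_k}u\,\phi_{e_k}^\ep$, simply write $\nabla w^\ep - \nabla u = \nabla(w^\ep-u)$ and apply the duality bound $\|\nabla f\|_{H^{-1}(U)} \leq C\|f\|_{L^2(U)}$ (your fact (ii), stated for gradients of scalar $L^2$ functions) with $f = w^\ep - u$, whose $L^2$ norm you have already controlled by $\sum_k\|\phi_{e_k}^\ep\|_{L^2(U)}\|\nabla u\|_{L^\infty(U)}$; no product rule is needed and no second-derivative term appears.
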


\begin{proof}
We first compare~$u$ to~$w^\ep$, which is straightforward: by~\eqref{e.twoscale.wep}, 
\begin{equation*}
\| w^\ep - u  \|_{L^2(U)} 
\leq
\sum_{k=1}^d  
\| \phi_{e_k}^\ep\bigl( \tfrac \cdot \ep\bigr) \|_{L^2(U)}
\| \nabla u \|_{L^\infty(U)}
\,.
\end{equation*}
By the Poincar\'e inequality and a duality argument, there exists a constant~$C(U,d)<\infty$ such that~$\| \nabla f \|_{H^{-1}(U)} \leq C \| f  \|_{L^2(U)}$ for every~$f\in L^2(U)$. Therefore,
\begin{equation*}
\left\| 
\nabla w^\ep - \nabla u \right\|_{H^{-1}(U)} 
\leq
C
\sum_{k=1}^d  
\left\| \phi_{e_k}^\ep\bigl( \tfrac \cdot \ep\bigr) \right\|_{L^2(U)}
\left\| \nabla u \right\|_{L^\infty(U)}
\,.
\end{equation*}
Combining the above two displays yields~\eqref{e.twoscale.strip}.

\smallskip

Next, we deduce~\eqref{e.twoscale.explicit2} from~\eqref{e.twoscale.flux}:
\begin{align*}
\left\| 
\a^\ep \nabla w^\ep {-} \ahom \nabla u \right\|_{H^{{-}1}(U)} 
&
\leq
\sum_{k=1}^d 
\bigl\| 
\nabla \cdot \left( \bfs^\ep_{e_k}  \partial_{x_k} u  \right)
+
\bigl( - \bfs^\ep_{e_k} {+} \phi_{e_k}^\ep \a^\ep \bigr) \nabla \partial_{x_k} u
\bigr\|_{H^{-1}(U)} 
\\  &
\leq
C \! \! \sum_{k=1}^d \!
\left( 
\| \bfs_{e_k}^\ep  \|_{L^2(U)} \| \nabla u \|_{L^{\infty}(U)}  
 {+} 
\bigl( 
\| \bfs_{e_k}^\ep  \|_{L^2(U)} {+} \| \phi_{e_k}^\ep  \|_{L^2(U)}
\bigr)  
\left\| \nabla^2 u \right\|_{L^{d}(U)}  
 \right) 
\,.
\end{align*}
Here we also used the embedding~$\| f \|_{H^{-1}} \leq C \| f \|_{L^{2_\ast}}$ with~$2_\ast := \frac{2d}{d+2}$ for~$d>2$ and any number~$(1,2)$ for~$d=2$ (provided by the Sobolev inequality and duality) and H\"older's inequality.
Finally, the estimate~\eqref{e.twoscale.explicit} then immediately follows from~\eqref{e.twoscale.error} and
\begin{align*}
\left\|
\nabla \cdot \a^\ep \nabla w^\ep 
-
\nabla\cdot \ahom \nabla u
\right\|_{H^{-1}(U)} 
&
\leq
\sum_{k=1}^d 
\left\| 
\left(
- \bfs_{e_k}^\ep 
+
\phi_{e_k}^\ep \a^\ep 
\right) \nabla \partial_{x_k} u 
\right\|_{L^2(U)}
\, .
\end{align*}
The proof is complete. 
\end{proof}

\subsubsection{Homogenization of boundary-value problems}

The following lemma presents a quantitative homogenization result for the Dirichlet problem with smooth data. The proof is to compare~$u^\ep$ with~$w^\ep$, using the previous lemma, and then compare~$w^\ep$ to~$u$.  Here, we formalize and quantify the idea that \emph{$w^\ep$ is almost a solution and, therefore, it is close to the true exact solution~$u^\ep$.}

\begin{lemma}[Quantitative homogenization for Dirichlet problem]
\label{l.DP.2infty}
Suppose that~$U\subseteq\Rd$ is a bounded Lipschitz domain,~$u\in W^{2,\infty}(U)$,~$\ep\in (0,\tfrac12]$ and~$w^\ep\in H^1_{\mathrm{loc}}(\Rd)$ is defined by~\eqref{e.twoscale.wep}. Let~$u^\ep \in H^1(U)$ be the solution of the Dirichlet problem 
\begin{align}
\label{e.eq.DP.2infty}
\left\{
\begin{aligned}
& -\nabla \cdot \a^\ep \nabla u^\ep = - \nabla \cdot \ahom\nabla u & \mbox{in} & \ U, 
\\
& u^\ep = u & \mbox{on} & \ \partial U. 
\end{aligned}
\right.
\end{align}
Then there exists~$C(U,d,\lambda,\Lambda)<\infty$ such that 
\begin{equation}
\label{e.ee.easy}
\left\| \nabla u^\ep - \nabla w^\ep \right\|_{{L}^2(U)} 
\leq
C \ep^{\frac12} \left(  \left\| \nabla u \right\|_{L^\infty(U)}
+
\ep^{\frac12} \left\| \nabla^2 u \right\|_{L^\infty(U)}
\right)
\end{equation}
and, consequently, 
\begin{multline}
\label{e.ee.easy2}
\left\| 
u^\ep - u 
\right\|_{L^2(U)} 
+
\left\| 
\nabla u^\ep - \nabla u \right\|_{H^{-1}(U)} 
+
\left\| 
\a^\ep \nabla u^\ep - \ahom \nabla u \right\|_{H^{-1}(U)}
\\
\leq 
C \ep^{\frac12} \left(  \left\| \nabla u \right\|_{L^\infty(U)}
+
\ep^{\frac12} \left\| \nabla^2 u \right\|_{L^\infty(U)}
\right).
\end{multline}
\end{lemma}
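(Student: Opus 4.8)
The plan is to use the energy method (Caccioppoli / Lax--Milgram type estimate) to control $u^\ep - w^\ep$ by the $H^{-1}$ norm of $\nabla \cdot \a^\ep \nabla w^\ep$ plus a boundary-layer term, and then invoke Corollary~\ref{c.twoscale} together with the corrector bound~\eqref{e.correctorsbounded2} to make everything explicit. First I would observe that both $u^\ep$ and $w^\ep$ solve $-\nabla\cdot\a^\ep\nabla(\cdot)$ against the \emph{same} right-hand side modulo an error: indeed $u^\ep$ solves~\eqref{e.eq.DP.2infty}, while Lemma~\ref{l.twoscale} gives $\nabla\cdot\a^\ep\nabla w^\ep - \nabla\cdot\ahom\nabla u = \nabla\cdot\bigl(\sum_k(-\s_{e_k}^\ep + \phi_{e_k}^\ep\a^\ep)\nabla\partial_{x_k}u\bigr)$. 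Hence $v^\ep := u^\ep - w^\ep$ satisfies $-\nabla\cdot\a^\ep\nabla v^\ep = \nabla\cdot\bigl(\sum_k(-\s_{e_k}^\ep + \phi_{e_k}^\ep\a^\ep)\nabla\partial_{x_k}u\bigr)$ in $U$, but with \emph{nonzero} boundary data: $v^\ep = u^\ep - w^\ep = u - w^\ep = -\sum_k \partial_{x_k}u\,\phi_{e_k}^\ep$ on $\partial U$.

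The main obstacle is precisely this boundary mismatch: $v^\ep$ is not in $H^1_0(U)$, so one cannot test the equation directly with $v^\ep$. The standard fix is to subtract off a lift of the boundary data. I would let $\eta = \eta_\ep$ be a cutoff that is $1$ outside an $\ep^{1/2}$-neighborhood of $\partial U$ and $0$ away from $\partial U$, with $|\nabla\eta|\lesssim \ep^{-1/2}$, and set $\tilde v^\ep := v^\ep + \sum_k \eta\,\partial_{x_k}u\,\phi_{e_k}^\ep \in H^1_0(U)$. Testing the equation for $\tilde v^\ep$ against itself and using ellipticity~\eqref{e.ue}, one gets $\|\nabla \tilde v^\ep\|_{L^2(U)}^2 \lesssim \|\sum_k(-\s_{e_k}^\ep + \phi_{e_k}^\ep\a^\ep)\nabla\partial_{x_k}u\|_{L^2(U)}\|\nabla\tilde v^\ep\|_{L^2(U)}$ plus terms coming from differentiating the lift, namely $\|\nabla(\eta\,\partial_{x_k}u\,\phi_{e_k}^\ep)\|_{L^2(U)}$. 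The dangerous contribution there is $\|\nabla\eta\,\phi_{e_k}^\ep\partial_{x_k}u\|_{L^2(U)} \lesssim \ep^{-1/2}\|\phi_{e_k}^\ep\|_{L^2(U\cap\{\text{strip}\})}\|\nabla u\|_{L^\infty}$; since the strip has width $\ep^{1/2}$ and $\|\phi_{e_k}^\ep\|_{L^\infty}\lesssim\ep$ (from~\eqref{e.correctorsbounded}, after noting $\phi_{e_k}$ is bounded in the periodic case — or more carefully, $\|\phi_{e_k}^\ep\|_{L^2(\text{strip})}\lesssim \ep\cdot\ep^{1/4}$ by the volume of the strip and boundedness), this yields a bound of order $\ep^{1/2}\|\nabla u\|_{L^\infty}$. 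The term $\|\phi_{e_k}^\ep\nabla\partial_{x_k}u\|$ contributes $\lesssim\ep\|\nabla^2 u\|_{L^\infty}$ and $\|\s_{e_k}^\ep\nabla\partial_{x_k}u\|$ likewise, all of which are lower order. Collecting, $\|\nabla v^\ep\|_{L^2(U)} \lesssim \ep^{1/2}\|\nabla u\|_{L^\infty(U)} + \ep\|\nabla^2 u\|_{L^\infty(U)}$, which is~\eqref{e.ee.easy} after absorbing the $\ep$ into $\ep^{1/2}\cdot\ep^{1/2}$.

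Finally, for~\eqref{e.ee.easy2} I would simply combine~\eqref{e.ee.easy} with the triangle inequality and Corollary~\ref{c.twoscale}: $\|u^\ep - u\|_{L^2(U)} \leq \|u^\ep - w^\ep\|_{L^2(U)} + \|w^\ep - u\|_{L^2(U)}$, where the first term is controlled by Poincar\'e and~\eqref{e.ee.easy} (using that $v^\ep$ vanishes on a large part of the boundary, or just $H^1$-Poincar\'e for the lifted function plus the strip estimate), and the second by~\eqref{e.twoscale.strip}. Similarly $\|\nabla u^\ep - \nabla u\|_{H^{-1}(U)} \leq \|\nabla u^\ep - \nabla w^\ep\|_{H^{-1}(U)} + \|\nabla w^\ep - \nabla u\|_{H^{-1}(U)} \leq \|\nabla u^\ep - \nabla w^\ep\|_{L^2(U)} + \text{(Corollary)}$, and for the flux, $\|\a^\ep\nabla u^\ep - \ahom\nabla u\|_{H^{-1}(U)} \leq \|\a^\ep(\nabla u^\ep - \nabla w^\ep)\|_{H^{-1}(U)} + \|\a^\ep\nabla w^\ep - \ahom\nabla u\|_{H^{-1}(U)} \leq \Lambda\|\nabla u^\ep - \nabla w^\ep\|_{L^2(U)} + \text{(Corollary~\eqref{e.twoscale.explicit2})}$, using $\|\a^\ep\nabla v^\ep\|_{H^{-1}} \lesssim \|\a^\ep\nabla v^\ep\|_{L^2} \le \Lambda\|\nabla v^\ep\|_{L^2}$. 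Each of the Corollary terms is $\lesssim (\|\s_{e_k}^\ep\|_{L^2(U)} + \|\phi_{e_k}^\ep\|_{L^2(U)})(\ldots) \lesssim \ep(\|\nabla u\|_{L^\infty} + \|\nabla^2 u\|_{L^\infty})$ by~\eqref{e.correctorsbounded2}, so the whole right-hand side is $\lesssim \ep^{1/2}(\|\nabla u\|_{L^\infty(U)} + \ep^{1/2}\|\nabla^2 u\|_{L^\infty(U)})$, as claimed.
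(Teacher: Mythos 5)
Your overall strategy is the same as the paper's: write $v^\ep=u^\ep-w^\ep$, note it solves $-\nabla\cdot\a^\ep\nabla v^\ep=\nabla\cdot\f^\ep$ with the corrector boundary mismatch, correct the boundary values with a cutoff supported in a thin layer near $\partial U$, and run the energy estimate. (The paper phrases the boundary correction through an auxiliary $\a^\ep$-harmonic function $z^\ep$ with data $w^\ep-u$, but that is equivalent to your direct lift. Also, your description of $\eta$ is garbled -- it must equal $1$ \emph{on} $\partial U$ and vanish in the interior -- but this is clearly just a slip.)

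The genuine gap is in the estimate of $\|\nabla L\|_{L^2(U)}$ for the lift $L=\eta\sum_k\partial_{x_k}u\,\phi^\ep_{e_k}$. You only account for the terms where the gradient falls on $\eta$ or on $\partial_{x_k}u$, but you omit the term $\eta\,\partial_{x_k}u\,\nabla\phi^\ep_{e_k}$, and this is precisely the term that dictates the width of the boundary layer. Since $\nabla\phi^\ep_{e_k}(x)=\nabla\phi_{e_k}(x/\ep)$ is only of unit size in $L^2$ on average (by \eqref{e.correctorsbounded}; it is \emph{not} small), its $L^2$ norm over a strip of width $w$ is of order $w^{1/2}$. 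With your choice $w=\ep^{1/2}$ this contributes $\ep^{1/4}\left\|\nabla u\right\|_{L^\infty(U)}$, which is larger than the right side of \eqref{e.ee.easy}, so your argument as written does not close. The correct balance is obtained by taking the strip of width $w=\ep$ (as in the paper's proof, where $\zeta\equiv1$ on $U_\ep$ and $\|\nabla\zeta\|_{L^\infty}\leq2\ep^{-1}$): then the $\nabla\eta$ term is of order $\ep^{-1}\cdot\|\phi^\ep_{e_k}\|_{L^2(\mathrm{strip})}\lesssim\ep^{-1}\cdot\ep^{3/2}=\ep^{1/2}$ and the $\nabla\phi^\ep_{e_k}$ term is of order $\ep^{1/2}$ as well, giving \eqref{e.ee.easy}. (Incidentally, with $w=\ep$ you also do not need any $L^\infty$ bound on $\phi_e$ -- which \eqref{e.correctorsbounded} does not provide and which fails for systems in general -- since the per-cell $H^1$ bound already gives $\|\phi^\ep_{e_k}\|_{L^2(U\setminus U_\ep)}\leq C\ep^{3/2}$.) Your deduction of \eqref{e.ee.easy2} from \eqref{e.ee.easy} is fine, provided the Poincar\'e inequality is applied to the $H^1_0$ function $\tilde v^\ep$ (not to $v^\ep$ itself) together with the smallness of $\|L\|_{L^2(U)}$, which is how the paper handles the corresponding term.
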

\begin{proof}
By Lemma~\ref{l.twoscale}, the function~$u^\ep - w^\ep$ satisfies the equation
\begin{equation}
\label{e.uepwep}
-\nabla \cdot \a^\ep \nabla (u^\ep - w^\ep) = \nabla \cdot \f^\ep \quad \mbox{in} \ U,
\end{equation}
with
\begin{equation}
\label{e.def.fep}
\f^\ep := \sum_{k=1}^d 
\left(
\bfs_{e_k}^\ep 
-
\phi_{e_k}^\ep \a^\ep 
\right)
\nabla \partial_{x_k} u 
\,.
\end{equation}
Notice that 
\begin{align}
\label{e.beforecorrectors}
\left\| \f^\ep \right\|_{{L}^2(U)} 
& 
\leq 
C \left\| \nabla^2 u \right\|_{L^\infty(U)} 
\sup_{|e| =1} \Bigl( \| \phi^\ep_e \|_{L^2(U)} + \| \bfs_e^\ep \|_{L^2(U)} \Bigr)
\,.
\end{align}
%
%The right side of this equation is small in the sense that 
%\begin{equation*}
%\left\| \f^\ep \right\|_{{L}^2(U)} 
%\leq
%asdf
%\leq C\ep \left\| \nabla^2 u \right\|_{L^\infty(U)}. 
%\end{equation*}
The function~$u^\ep - w^\ep$ equals~$u-w^\ep$ on~$\partial U$ (in the sense of the trace). To correct the boundary values, we take~$z^\ep\in H^1(U)$ to satisfy 
\begin{equation*}
\left\{
\begin{aligned}
& -\nabla \cdot \a^\ep \nabla z^\ep = 0 & \mbox{in} & \ U,  \\
& z^\ep = w^\ep - u & \mbox{on} & \ \partial U. 
\end{aligned}
\right.
\end{equation*}
Since~$-\nabla \cdot \a^\ep  \nabla (u^\ep - w^\ep + z^\ep) = \nabla \cdot\f^\ep$ and~$u^\ep - w^\ep + z^\ep\in H^1_0(U)$, we have
\begin{equation}
\label{e.uepwepzep.0}
\left\| \nabla (u^\ep - w^\ep + z^\ep) \right\|_{L^2(U)} 
\leq
C \left\| \f^\ep \right\|_{L^2(U)} 
\leq
C \left\| \nabla^2 u \right\|_{L^\infty(U)} 
\sup_{|e| =1} \bigl( \| \phi^\ep_e \|_{L^2(U)} + \| \bfs_e^\ep \|_{L^2(U)} \bigr)
\,.
%\leq 
%C\ep \left\| \nabla^2 u \right\|_{L^\infty(U)}. 
\end{equation}
Using the bound~\eqref{e.correctorsbounded2} on the correctors, we obtain
\begin{equation}
\label{e.uepwepzep}
\left\| \nabla (u^\ep - w^\ep + z^\ep) \right\|_{L^2(U)} 
\leq
C \ep \left\| \nabla^2 u \right\|_{L^\infty(U)} 
\,.
%\leq 
%C\ep \left\| \nabla^2 u \right\|_{L^\infty(U)}. 
\end{equation}
We next estimate~$z^\ep$, which represents a boundary layer error. 
We select~$\zeta\in C^\infty_{\mathrm{c}}(U)$ which satisfies~$\zeta \equiv 1$ on~$U_{\ep}:= \left\{ x\in U\,:\, B_\ep(x) \subseteq U \right\}$ and~$\left\| \nabla \zeta \right\|_{L^\infty} \leq 2\ep^{-1}$.
Denote
\begin{equation*}
\tilde{z}^\ep
:= 
(1-\zeta)(w^\ep-u) 
= 
(1-\zeta) \sum_{j=1}^d \phi_{e_j}^\ep  \partial_{x_j}u \,.
\end{equation*}
Using H\"older's inequality,~\eqref{e.correctorsbounded}, the fact that~$1-\zeta$ is supported in~$U \setminus U_\ep$ and~$|U\setminus U_\ep| \leq C\ep$, we estimate
\begin{equation*}
\left\| \tilde{z}^\ep \right\|_{L^2(U)} 
\leq 
\left\| \nabla u \right\|_{L^\infty(U)} 
\sup_{|e| =1} \| \phi^\ep_e \|_{L^2(U\setminus U_\ep)}
\leq
C\ep^{\frac32} \left\| \nabla u \right\|_{L^\infty(U)}
\,.
\end{equation*}
Similarly, we have
\begin{equation*}
\left\| \nabla \tilde{z}^\ep \right\|_{{L}^2(U)} 
\leq 
C\ep^{\frac12} \left( \left\| \nabla u \right\|_{L^\infty(\Rd)} 
+
\ep \left\| \nabla^2 u \right\|_{L^\infty(\Rd)} \right). 
\end{equation*}
Testing the equation for~$z^\ep$ with~$z^\ep - \tilde{z}^\ep \in H^1_0(U)$ yields
$0 = \int_U \nabla (z^\ep - \tilde{z}^\ep) \cdot \a^\ep \nabla z^\ep$, 
which implies 
\begin{equation*}
\left\| \nabla z^\ep \right\|_{{L}^2(U)} 
\leq
C\left\| \nabla \tilde{z}^\ep \right\|_{{L}^2(U)}
\leq 
C\ep^{\frac12} \left( \left\| \nabla u \right\|_{L^\infty(\Rd)} 
+
\ep \left\| \nabla^2 u \right\|_{L^\infty(\Rd)} \right).
\end{equation*}
The triangle inequality,~\eqref{e.uepwepzep} and the previous line yield~\eqref{e.ee.easy}. Notice also that, by the Poincar\'e inequality, 
\begin{align}
\label{e.crude}
\left\| z^\ep\right\|_{L^2(U)}
&
\leq 
\left\| \tilde{z}^\ep \right\|_{L^2(U)} 
+ \left\| z^\ep - \tilde{z}^\ep\right\|_{L^2(U)}
\notag \\ & 
\leq C\ep^{\frac32} \left\| \nabla u \right\|_{L^\infty(U)}
+ C \left\| \nabla (z^\ep - \tilde{z}^\ep)\right\|_{L^2(U)}
\notag \\ &  
\leq
C\ep^{\frac12} \left\| \nabla u \right\|_{L^\infty(U)} 
+
C\ep^{\frac 32} \left\| \nabla^2 u \right\|_{L^\infty(U)}.
\end{align}
The estimates for the last two terms on the left side of~\eqref{e.ee.easy2} follow from~\eqref{e.twoscale.strip},~\eqref{e.ee.easy} and the triangle inequality. To estimate the first term, we use the triangle and Poincar\'e inequalities and~\eqref{e.uepwepzep},~\eqref{e.twoscale.strip}, to obtain
\begin{align*}
\left\| u^\ep - u \right\|_{L^2(U)}
&
\leq
\left\| u^\ep - w^\ep+z^\ep \right\|_{L^2(U)}
+
\left\| z^\ep \right\|_{L^2(U)} 
+ \left\| w^\ep - u\right\|_{L^2(U)}
\notag \\ & 
\leq
C
\left\| \nabla( u^\ep - w^\ep+z^\ep) \right\|_{L^2(U)}
+
\left\| z^\ep \right\|_{L^2(U)} 
+ \left\| w^\ep - u\right\|_{L^2(U)}
\notag \\ & 
\leq
C \ep^{\frac12} \bigl (  \left\| \nabla u \right\|_{L^\infty(U)}
+
\ep^{\frac12} \left\| \nabla^2 u \right\|_{L^\infty(U)}\bigr )
+
\left\| z^\ep\right\|_{L^2(U)}.
\end{align*}
Combining this with~\eqref{e.crude} finishes the proof. 
\end{proof}

The previous lemmas assumed that~$u\in W^{2,\infty}= C^{1,1}$. However, an easy approximation argument yields a general qualitative homogenization result for the Dirichlet problem with arbitrary boundary data in~$H^1(U)$.

\begin{corollary}[Homogenization of the Dirichlet problem]
\label{c.DP}
Assume~$U\subseteq\Rd$ is a bounded Lipschitz domain and~$u\in H^1(U)$. For each~$\ep\in (0,1]$, denote by~$u^\ep \in H^1(U)$ the solution of the Dirichlet problem 
\begin{align}
\label{e.eq.DP}
\left\{
\begin{aligned}
& -\nabla \cdot \a^\ep \nabla u^\ep = - \nabla \cdot \ahom\nabla u & \mbox{in} & \ U, 
\\
& u^\ep = u & \mbox{on} & \ \partial U. 
\end{aligned}
\right.
\end{align}
Then
\begin{equation}
\label{e.Dp.homog}
\limsup_{\ep\to 0} 
\Bigl( 
\left\| u^\ep - u \right\|_{L^2(U)}
+ 
\left\| \nabla u^\ep - \nabla u \right\|_{H^{-1}(U)} 
+
\left\| \a^\ep \nabla u^\ep - \ahom \nabla u \right\|_{H^{-1}(U)} 
\Bigr)
= 0.
\end{equation}
\end{corollary}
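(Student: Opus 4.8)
The plan is a routine density-and-stability argument on top of Lemma~\ref{l.DP.2infty}: approximate the $H^1$ boundary datum $u$ by a $W^{2,\infty}$ datum, apply the quantitative estimate there, and show that the solution map for~\eqref{e.eq.DP} is stable under perturbation of the datum \emph{uniformly in $\ep$}, so that the approximation error does not degrade as $\ep\to0$.

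First I would fix $\eta>0$ and choose $v\in W^{2,\infty}(U)$ with $\|u-v\|_{H^1(U)}\leq\eta$; this is possible since $C^\infty(\overline U)\subseteq W^{2,\infty}(U)$ is dense in $H^1(U)$ for a bounded Lipschitz domain (classical theory). Let $v^\ep\in H^1(U)$ solve~\eqref{e.eq.DP} with $v$ in place of $u$. Lemma~\ref{l.DP.2infty} applied to $v$ gives
\begin{equation*}
\left\| v^\ep - v \right\|_{L^2(U)}
+ \left\| \nabla v^\ep - \nabla v \right\|_{H^{-1}(U)}
+ \left\| \a^\ep \nabla v^\ep - \ahom \nabla v \right\|_{H^{-1}(U)}
\leq
C \ep^{\frac12}\bigl(\|\nabla v\|_{L^\infty(U)}+\ep^{\frac12}\|\nabla^2 v\|_{L^\infty(U)}\bigr),
\end{equation*}
so for this fixed $v$ the left-hand side tends to $0$ as $\ep\to0$.

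The crucial step is the uniform stability estimate
\begin{equation*}
\left\|\nabla u^\ep-\nabla v^\ep\right\|_{L^2(U)}+\left\|u^\ep-v^\ep\right\|_{L^2(U)}\leq C\|u-v\|_{H^1(U)},
\qquad C=C(U,d,\lambda,\Lambda).
\end{equation*}
To prove it, note that by linearity $r^\ep:=(u^\ep-v^\ep)-(u-v)\in H^1_0(U)$ solves $-\nabla\cdot\a^\ep\nabla r^\ep=\nabla\cdot\bigl((\a^\ep-\ahom)\nabla(u-v)\bigr)$; testing with $r^\ep$, using~\eqref{e.ue}, \eqref{e.ahom.ue}, the consequent bounds $|\a^\ep e|\leq\Lambda|e|$ and $|\ahom e|\leq\Lambda|e|$, and then the Poincar\'e inequality, bounds $\|\nabla r^\ep\|_{L^2(U)}$ and $\|r^\ep\|_{L^2(U)}$ by $C\|\nabla(u-v)\|_{L^2(U)}$; the triangle inequality gives the claim. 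Combining this with the elementary embeddings $\|\bfF\|_{H^{-1}(U)}\leq C\|\bfF\|_{L^2(U)}$ for vector fields and $\|\nabla f\|_{H^{-1}(U)}\leq C\|f\|_{L^2(U)}$ (as used in the proof of Corollary~\ref{c.twoscale}), the bounds $\|\a^\ep\nabla g\|_{L^2(U)}\leq\Lambda\|\nabla g\|_{L^2(U)}$ and $\|\ahom\nabla g\|_{L^2(U)}\leq\Lambda\|\nabla g\|_{L^2(U)}$, and the triangle inequality, I would get, for every $\ep\in(0,1]$,
\begin{multline*}
\left\| u^\ep - u \right\|_{L^2(U)} + \left\| \nabla u^\ep - \nabla u \right\|_{H^{-1}(U)} + \left\| \a^\ep \nabla u^\ep - \ahom \nabla u \right\|_{H^{-1}(U)}
\\
\leq
\left\| v^\ep - v \right\|_{L^2(U)} + \left\| \nabla v^\ep - \nabla v \right\|_{H^{-1}(U)} + \left\| \a^\ep \nabla v^\ep - \ahom \nabla v \right\|_{H^{-1}(U)} + C\|u-v\|_{H^1(U)}.
\end{multline*}
Taking $\limsup_{\ep\to0}$ and using the previous display, the right-hand side is at most $C\eta$; since $\eta>0$ is arbitrary, \eqref{e.Dp.homog} follows.

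The only delicate point is that the three comparison estimates between the $u$- and $v$-solutions must hold uniformly in $\ep$; this is the sole place where uniform ellipticity is used beyond Lemma~\ref{l.DP.2infty}, and it amounts to nothing more than the standard Lax--Milgram energy estimate. Aside from this, the proof is pure bookkeeping with the triangle inequality, so I do not anticipate any genuine obstacle.
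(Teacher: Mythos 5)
Your proof is correct and follows essentially the same route as the paper: approximate the $H^1$ datum by a $W^{2,\infty}$ function, invoke Lemma~\ref{l.DP.2infty} for the regular datum, and use a uniform-in-$\ep$ energy (Lax--Milgram) stability estimate to transfer the conclusion back to $u$. The only difference is cosmetic: you spell out the stability estimate that the paper dismisses as "easy to check," and you approximate directly in $U$ rather than extending $u$ to $\Rd$ first.
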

\begin{proof}
By the Sobolev extension theorem, we may suppose that~$u\in H^1(\Rd)$. Let~$\delta>0$ and select a function~$\tilde{u} \in W^{2,\infty}(\Rd)$ such that 
$\left\| u - \tilde{u} \right\|_{H^1(\Rd)} \leq \delta$. If we define~$\tilde{u}^\ep$ as the solution of ~\eqref{e.eq.DP} with~$\tilde{u}$ in place of~$u$, then it is easy to check that
\begin{equation}
%\label{e.}
\left\| \tilde{u}^\ep - u^\ep \right\|_{H^1(U)} 
\leq 
C\left\| \tilde{u} - u \right\|_{H^1(U)} 
\leq C\delta\,. 
\end{equation}
Therefore, by the triangle inequality, 
\begin{align*}
& \limsup_{\ep\to 0} 
\Bigl( 
\left\| u^\ep - u \right\|_{L^2(U)}
+ 
\left\| \nabla u^\ep - \nabla u \right\|_{H^{-1}(U)} 
+
\left\| \a^\ep \nabla u^\ep - \ahom \nabla u \right\|_{H^{-1}(U)} 
\Bigr)
\\ & 
\leq
\limsup_{\ep\to 0} 
\Bigl( 
\left\|  \tilde{u}^\ep -  \tilde{u} \right\|_{L^2(U)}
+ 
\left\| \nabla  \tilde{u}^\ep - \nabla  \tilde{u} \right\|_{H^{-1}(U)} 
+
\left\| \a^\ep \nabla  \tilde{u}^\ep - \ahom \nabla  \tilde{u} \right\|_{H^{-1}(U)} 
\Bigr)
+ C\delta\,. 
\end{align*}
Since~$\delta>0$ is arbitrary, it suffices to prove the proposition with~$\tilde{u}$ in place of~$u$. In other words, we may suppose that~$u \in W^{2,\infty}(\Rd)$. The proposition reduces, therefore, to Lemma~\ref{l.DP.2infty}. 
\end{proof}

Further remarks on the arguments we have just seen are now in order:

\begin{itemize}

\item \emph{Lemma~\ref{l.twoscale} is where homogenization happened.} It represents the point in the string of arguments and computations in which we can see that our definitions of the correctors are good and going to give us an understanding of the operator~$-\nabla \cdot \a^\ep\nabla$ at leading-order. This is not very surprising because it is here that the two-scale ansatz~$w^\ep$ is (quite literally) plugged into the equation. 

\item Lemma~\ref{l.DP.2infty} represents merely an unraveling of the implications of Lemma~\ref{l.twoscale} as well as an estimate of the boundary layer error (the effect of the boundary condition being perturbed by the correctors). The proof of Corollary~\ref{c.DP} is then just a little approximation exercise. 
This is not particular to the Dirichlet problem: essentially, any well-posed problem involving the operator~$-\nabla \cdot \a^\ep \nabla$ can be homogenized routinely using Lemma~\ref{l.twoscale}. We present the case of the Neumann problem below in Lemma~\ref{l.NP.2infty}. Any uniquely defined object associated with the operator~$-\nabla \cdot \a^\ep \nabla$, such as the elliptic or parabolic Green functions, can also be homogenized like this. \emph{If you can prove uniqueness for your problem and stability around this uniqueness, then you can use Lemma~\ref{l.DP.2infty} to prove homogenization.}

\item 
Later, we will be interested in generalizing the arguments above to equations with quasiperiodic or random coefficients. Therefore, it is noteworthy that the \emph{only place where periodicity was used was in the existence and ``boundedness'' of the correctors~$\phi_e$ and flux correctors~$\bfs_e$.} In particular, periodicity was not used in the proofs of Lemma~\ref{l.twoscale} and Corollary~\ref{c.twoscale}, and so these statements generalize to the stationary random setting.

\item
The~$W^{2,\infty}(\Rd)$ regularity assumed in Lemmas~\ref{l.twoscale} and~\ref{l.DP.2infty} is much stronger than needed. The statements of the lemmas can be slightly modified to accommodate~$u\in H^2_{\mathrm{loc}}(\Rd)$ (and even weaker) without any loss in the strength of the conclusions. We need to slightly modify the definition of~$w^\ep$ by mollifying the macroscopic function~$u$. Remember that we can choose the function~$u$ in the formula for~$w^\ep$! This is explored in detail below in Remark~\ref{r.twoscale.H2}.

\item
The estimate~\eqref{e.ee.easy} is sharp. Indeed, we should expect that~$\nabla u^\ep$ and~$\nabla w^\ep$ differ by at least~$O(1)$ in a boundary layer of thickness at least~$O(\ep)$, since~$w^\ep$ is not well-adapted to the boundary condition. This alone suggests that~$\left\| \nabla (u^\ep-w^\ep) \right\|_{L^2(U)} \gtrsim O(\ep^{\frac12})$. We can see this in the proof of Lemma~\ref{l.DP.2infty}, as the function~$z^\ep$ represents the boundary layer error. On the other hand, since we have estimated~$\left\| z^\ep \right\|_{L^2(U)}$ quite crudely---it should be~$O(\ep)$ rather than~$O(\ep^{\frac12})$---the estimate~\eqref{e.ee.easy2} can be improved to~$O(\ep)$. This can be found, for instance, in~\cite[Section 6.4]{AKMBook}.
\end{itemize} 

\begin{remark}[{Relaxing the regularity for~$u\in H^2_{\mathrm{loc}}(\Rd)$}]
\label{r.twoscale.H2}
The reason we assumed that~$u\in W^{2,\infty}(\Rd)$ in Lemmas~\ref{l.twoscale} and~\ref{l.DP.2infty} is because this is needed to ensure that~$w^\ep \in H^1_{\mathrm{loc}}$, since the gradient of the correctors~$\nabla \phi_e^\ep$ have only~$L^2$ integrability on the small scales (smaller than~$\ep$) without further regularity assumptions on the coefficient field~$\a(\cdot)$. However, this is easily fixed by simply altering the definition~$w^\ep$ to
\begin{equation}
\label{e.twoscale.mollify}
w^\ep (x)
:=
u(x) 
+ 
\ep \sum_{j=1}^d \partial_{x_j}( u \ast \eta_\ep) (x) \phi_{e_j}\bigl( \tfrac x\ep\bigr),
\end{equation}
where~$\{ \eta_\ep \}_{\ep>0}$ is the standard mollifier. Notice that, in the case~$u\in W^{2,\infty}$, this~$w^\ep$ agrees with the previous one up to~$O(\ep)$ in~$H^1$, which is a permissible error! But this definition gives us that~$w^\ep\in H^1$ even in the case that~$u$ is merely in~$H^2_{\mathrm{loc}}$. Applying Lemma~\ref{l.twoscale} with~$u \ast \eta_\ep$ in place of~$u$ and rearranging the result yields
\begin{align}
\label{e.twoscale.flux.H2}
\a^\ep \nabla w^\ep - \ahom\nabla u
&
=
\sum_{k=1}^d \left( 
\nabla \cdot \bigl ( \bfs^\ep_{e_k}  \partial_{x_k} (u\ast\eta_\ep)  \bigr) 
{+}
(- \bfs^\ep_{e_k} {+} \phi_{e_k}^\ep \a^\ep ) \nabla \partial_{x_k} (u\ast\eta_\ep)  
\right)
\notag \\ 
& \qquad  
+ (\ahom-\a^\ep) \nabla \bigl( (u\ast\eta_\ep) -u \bigr)
\end{align}
and
\begin{equation}
\label{e.twoscale.error.H2}
\nabla \cdot \a^\ep \nabla w^\ep 
-
\nabla\cdot \ahom \nabla u
= 
\nabla\cdot \f^\ep
\,,
\end{equation}
where 
\begin{equation}
\label{e.def.fep.H2}
\f^\ep
:=
\sum_{k=1}^d 
\bigl(
- \bfs_{e_k}^\ep 
+
\phi_{e_k}^\ep \a^\ep 
\bigr) 
\nabla \partial_{x_k} (u\ast\eta_\ep)
+
(\ahom-\a^\ep) \nabla \bigl( (u\ast\eta_\ep) -u \bigr).
\end{equation}
We therefore obtain, for any bounded Lipschitz domain~$U\subseteq \Rd$,  
%$C(U,d,\lambda,\Lambda)<\infty$ such that  
\begin{equation}
\label{e.twoscale.explicit.rem}
\left\|
\nabla \cdot \a^\ep \nabla w^\ep 
-
\nabla\cdot \ahom \nabla u
\right\|_{H^{-1}(U)} 
\leq 
\left\| \f^\ep \right\|_{L^2(U)}
\leq
C \ep \left\| \nabla^2 u \right\|_{L^2(U)}
\end{equation}
and
\begin{equation}
\label{e.twoscale.strip.rem}
\left\| 
w^\ep - u 
\right\|_{L^2(U)} 
+
\left\| 
\nabla w^\ep - \nabla u \right\|_{H^{-1}(U)} 
+
\left\| 
\a^\ep \nabla w^\ep - \ahom \nabla u \right\|_{H^{-1}(U)}
\leq 
C \ep \left\| \nabla u \right\|_{H^1(U)}.
\end{equation}
The proof of these estimates requires the additional observation that for any bounded Lipschitz domain~$U\subseteq\Rd$,~$\ep\in(0,1]$,~$\Zd$--periodic function~$p \in L^2_{\mathrm{loc}}(\Rd)$ and~$f\in H^1_{\mathrm{loc}}(\Rd)$, we have, for a constant~$C(U,d)<\infty$,
\begin{align}
\label{e.cubescutout}
\left\| (f\ast \eta_\ep) p\bigl(\tfrac\cdot\ep\bigr)\right\|_{L^2(U)}
\leq
C \left\| f \right\|_{L^2(U^\ep)}
\left\| p \right\|_{L^2(\cu_0)}\,,
\end{align}
where~$U^\ep := \cup \left\{ z+\ep \cu_0 \, :\, z\in \ep\Zd, \, (z+\ep \cu_0) \cap U \neq \emptyset \right\}$
and~$\cu_0 := (-\nicefrac12, \nicefrac12)^d$.  
We leave the proof of~\eqref{e.cubescutout} as well as those of~\eqref{e.twoscale.explicit.rem} and~\eqref{e.twoscale.strip.rem} as exercises for the reader (they can also be found in~\cite[Chapter 6]{AKMBook}). 
\end{remark}

We next present a version of Lemma~\ref{l.DP.2infty} for the Neumann problem. 
We denote the outward-pointing unit normal to~$\partial U$ at a boundary point~$x\in\partial U$ by~$\mathbf{n}_{U}(x)$. 

\begin{lemma}[Neumann problem]
\label{l.NP.2infty}
Let~$\ep\in (0,1]$,~$u\in W^{2,\infty}(\Rd)$ and~$w^\ep\in H^1_{\mathrm{loc}}(\Rd)$ be defined by~\eqref{e.twoscale.wep}. Let~$U\subseteq\Rd$ be a bounded Lipschitz domain and~$u^\ep \in H^1(U)$ satisfy the Neumann problem 
\begin{align}
\label{e.eq.NP.2infty}
\left\{
\begin{aligned}
& -\nabla \cdot \a^\ep \nabla u^\ep = - \nabla \cdot \ahom\nabla u & \mbox{in} & \ U, 
\\
& \mathbf{n}_{U} \cdot \a^\ep \nabla u^\ep = \mathbf{n}_{U} \cdot \ahom\nabla u & \mbox{on} & \ \partial U. 
\end{aligned}
\right.
\end{align}
Then there exists~$C(U,d,\lambda,\Lambda)<\infty$ such that 
\begin{equation}
\label{e.ee.easy.NP}
\left\| \nabla u^\ep - \nabla w^\ep \right\|_{{L}^2(U)} 
\leq
C \ep^{\frac12} \left(  \left\| \nabla u \right\|_{L^\infty(U)}
+
\ep^{\frac12} \left\| \nabla^2 u \right\|_{L^\infty(U)}
\right).
\end{equation}
Consequently, 
\begin{equation}
\label{e.ee.easy2.NP}
\left\| 
\nabla u^\ep {-} \nabla u \right\|_{H^{-1}(U)} 
+
\left\| 
\a^\ep \nabla u^\ep{ -} \ahom \nabla u \right\|_{H^{-1}(U)}
\leq 
C \ep^{\frac12} \left(  \left\| \nabla u \right\|_{L^\infty(U)}
+
\ep^{\frac12} \left\| \nabla^2 u \right\|_{L^\infty(U)}
\right).
\end{equation}
\end{lemma}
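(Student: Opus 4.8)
The plan is to follow the proof of Lemma~\ref{l.DP.2infty} almost verbatim, replacing the Dirichlet boundary-layer corrector by one adapted to the Neumann condition. Set $v^\ep := u^\ep - w^\ep$. Because $u^\ep$ solves the Neumann problem~\eqref{e.eq.NP.2infty}, its weak formulation holds against \emph{every} $\psi\in H^1(U)$ (not merely $\psi\in H^1_0(U)$), and the prescribed boundary flux $\mathbf{n}_U\cdot\ahom\nabla u$ exactly cancels the corresponding boundary term, so that
\begin{equation*}
\int_U \nabla\psi\cdot\a^\ep\nabla v^\ep
=
-\int_U \nabla\psi\cdot\bigl(\a^\ep\nabla w^\ep - \ahom\nabla u\bigr)
\qquad \text{for all }\psi\in H^1(U).
\end{equation*}
By the flux identity~\eqref{e.twoscale.flux} the integrand on the right equals $\nabla\cdot\mathbf{M}^\ep + \f^\ep$, where $\mathbf{M}^\ep := \sum_{k=1}^d \s_{e_k}^\ep\,\partial_{x_k}u$ is skew-symmetric and $\f^\ep := \sum_{k=1}^d(-\s_{e_k}^\ep + \phi_{e_k}^\ep \a^\ep)\nabla\partial_{x_k}u$, with $\|\mathbf{M}^\ep\|_{L^2(U)}\leq C\ep\|\nabla u\|_{L^\infty(U)}$ and $\|\f^\ep\|_{L^2(U)}\leq C\ep\|\nabla^2 u\|_{L^\infty(U)}$ by~\eqref{e.correctorsbounded2}.

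Next I would introduce $z^\ep\in H^1(U)$, unique up to an additive constant, solving $-\nabla\cdot\a^\ep\nabla z^\ep = 0$ in $U$ with $\mathbf{n}_U\cdot\a^\ep\nabla z^\ep = \mathbf{n}_U\cdot\nabla\cdot\mathbf{M}^\ep$ on $\partial U$; the compatibility condition holds because $\nabla\cdot(\nabla\cdot\mathbf{M}^\ep)=0$ by skew-symmetry (the same computation as in the proof of Lemma~\ref{l.twoscale}), and in weak form $\int_U\nabla\psi\cdot\a^\ep\nabla z^\ep = \int_U\nabla\psi\cdot\nabla\cdot\mathbf{M}^\ep$ for all $\psi\in H^1(U)$. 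Adding this to the identity above, $v^\ep+z^\ep$ satisfies $\int_U\nabla\psi\cdot\a^\ep\nabla(v^\ep+z^\ep) = -\int_U\nabla\psi\cdot\f^\ep$ for all $\psi\in H^1(U)$; testing with $\psi=v^\ep+z^\ep$ and using uniform ellipticity gives $\|\nabla(v^\ep+z^\ep)\|_{L^2(U)}\leq \lambda^{-1}\|\f^\ep\|_{L^2(U)}\leq C\ep\|\nabla^2 u\|_{L^\infty(U)}$. Hence \eqref{e.ee.easy.NP} follows by the triangle inequality once I establish the boundary-layer bound $\|\nabla z^\ep\|_{L^2(U)}\leq C\ep^{\sfrac12}\bigl(\|\nabla u\|_{L^\infty(U)}+\ep^{\sfrac12}\|\nabla^2 u\|_{L^\infty(U)}\bigr)$; and then \eqref{e.ee.easy2.NP} follows routinely by combining \eqref{e.ee.easy.NP} with~\eqref{e.twoscale.strip} and~\eqref{e.twoscale.explicit2}, exactly as in Corollary~\ref{c.twoscale} and the proof of Lemma~\ref{l.DP.2infty}.

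The bound on $\nabla z^\ep$ is where the real work lies, and I expect it to be the main obstacle. The structural input is that $\nabla\cdot\mathbf{M}^\ep$ is solenoidal in $U$, so $\int_U\nabla\phi\cdot\nabla\cdot\mathbf{M}^\ep=0$ for every $\phi\in H^1_0(U)$. Fix $\zeta\in C^\infty_c(U)$ with $\zeta\equiv 1$ on $U_{2\ep}$, $\supp\zeta\subseteq U_\ep$ and $\|\nabla\zeta\|_{L^\infty}\leq C\ep^{-1}$ (the sets $U_r$ as in the proof of Lemma~\ref{l.DP.2infty}), and normalize $z^\ep$ to have mean zero over the $O(\ep)$-thick collar $U_\ep\setminus U_{2\ep}$. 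Testing the $z^\ep$-equation with $z^\ep$ and writing $\nabla z^\ep=\zeta\nabla z^\ep+(1-\zeta)\nabla z^\ep=\bigl(\nabla(\zeta z^\ep)-z^\ep\nabla\zeta\bigr)+(1-\zeta)\nabla z^\ep$, the term $\int_U\nabla(\zeta z^\ep)\cdot\nabla\cdot\mathbf{M}^\ep$ vanishes since $\zeta z^\ep\in H^1_0(U)$, leaving
\begin{equation*}
\lambda\|\nabla z^\ep\|_{L^2(U)}^2 \leq \int_U\nabla z^\ep\cdot\a^\ep\nabla z^\ep = -\int_U z^\ep\,\nabla\zeta\cdot\nabla\cdot\mathbf{M}^\ep + \int_U (1-\zeta)\,\nabla z^\ep\cdot\nabla\cdot\mathbf{M}^\ep \,,
\end{equation*}
both terms supported in the collar. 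Using the Poincar\'e inequality on the collar (constant $\leq C\ep$, absorbing $\|\nabla\zeta\|_{L^\infty}\leq C\ep^{-1}$ in the first term) reduces the right side to $C\|\nabla z^\ep\|_{L^2(U)}\,\|\nabla\cdot\mathbf{M}^\ep\|_{L^2(U\setminus U_{2\ep})}$, and then one estimates $\|\nabla\cdot\mathbf{M}^\ep\|_{L^2(U\setminus U_{2\ep})}$ via $\nabla\cdot\s_{e_k}^\ep=\g_{e_k}(\tfrac\cdot\ep)$ with $\g_{e_k}:=\a(e_k+\nabla\phi_{e_k})-\ahom e_k$: since $\g_{e_k}$ and $\s_{e_k}$ are $\Zd$-periodic with unit-cell $L^2$ norm $\leq C|e_k|$ (by~\eqref{e.correctorsbounded}), a periodic function sampled at scale $\ep$ carries $L^2$ mass of order $\ep^{\sfrac12}$ on any $O(\ep)$-collar (cf.~\eqref{e.cubescutout}), so $\|\nabla\cdot\s_{e_k}^\ep\|_{L^2(U\setminus U_{2\ep})}\leq C\ep^{\sfrac12}$ and $\|\s_{e_k}^\ep\|_{L^2(U\setminus U_{2\ep})}\leq C\ep^{\sfrac32}$, whence $\|\nabla\cdot\mathbf{M}^\ep\|_{L^2(U\setminus U_{2\ep})}\leq C\ep^{\sfrac12}\bigl(\|\nabla u\|_{L^\infty(U)}+\ep\|\nabla^2 u\|_{L^\infty(U)}\bigr)$. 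Dividing by $\|\nabla z^\ep\|_{L^2(U)}$ gives the claimed bound. The genuinely delicate points are the bookkeeping of the Poincar\'e and cutoff arguments on the collar (choice of normalizing constant for $z^\ep$, and the fact, used implicitly, that these collars of a Lipschitz domain are regular enough for Poincar\'e with an $O(\ep)$ constant), together with the observation that $\s_{e_k}$ and its divergence $\g_{e_k}$---of unit size in $H^1$ and $L^2$ respectively---contribute only $O(\ep^{\sfrac12})$ once localized to an $\ep$-neighborhood of $\partial U$.
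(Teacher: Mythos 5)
Your overall architecture is the right one and matches the paper's: test the Neumann problem against all of $H^1(U)$, split the flux error via~\eqref{e.twoscale.flux} into the skew-symmetric piece $\nabla\cdot\mathbf{M}^\ep$ plus an $O(\ep)$ remainder $\f^\ep$, introduce an auxiliary boundary-layer function $z^\ep$, and exploit the smallness of $\phi^\ep_{e_k}$, $\s^\ep_{e_k}$ and $\g_{e_k}(\tfrac\cdot\ep)$ on an $O(\ep)$-collar. The corrector estimates you use on the collar ($\|\nabla\cdot\s^\ep_{e_k}\|_{L^2(U\setminus U_{2\ep})}\leq C\ep^{\sfrac12}$, $\|\s^\ep_{e_k}\|_{L^2(U\setminus U_{2\ep})}\leq C\ep^{\sfrac32}$) are correct. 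However, the boundary-layer estimate for $z^\ep$, which you yourself identify as the crux, has a genuine gap: it hinges on a Poincar\'e inequality on the $O(\ep)$-thick collar with constant $C\ep$, and no such inequality holds. The collar is thin only in the normal direction but has diameter of order one tangentially; with the mean-zero normalization on the collar the Poincar\'e constant is $O(1)$ (take $z=x_1$ on a collar of the unit ball: $\|z\|_{L^2(\mathrm{collar})}\simeq\|\nabla z\|_{L^2(\mathrm{collar})}\simeq\ep^{\sfrac12}$), and even measured against the full $\|\nabla z^\ep\|_{L^2(U)}$ the best available bound is the trace-type $\|z^\ep\|_{L^2(U\setminus U_{2\ep})}\leq C\ep^{\sfrac12}\|\nabla z^\ep\|_{L^2(U)}$. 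Feeding the true constant into your first term gives $\bigl|\int_U z^\ep\,\nabla\zeta\cdot\nabla\cdot\mathbf{M}^\ep\bigr|\leq C\ep^{-1}\cdot\ep^{\sfrac12}\cdot\ep^{\sfrac12}\|\nabla z^\ep\|_{L^2(U)}\|\nabla u\|_{L^\infty(U)}$, i.e.\ only $O(1)\|\nabla z^\ep\|_{L^2(U)}$, so after absorption you obtain merely $\|\nabla z^\ep\|_{L^2(U)}\leq C$ and no rate in~\eqref{e.ee.easy.NP}.

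The structural reason for the loss is that your splitting puts the cutoff on the test function, so the factor $\nabla\zeta\sim\ep^{-1}$ ends up multiplying $\nabla\cdot\mathbf{M}^\ep$, whose amplitude is $O(1)$ (it is only small through the collar's measure). The repair — and this is exactly what the paper's $\mathbf{h}^\ep$ accomplishes — is to put the cutoff on $\mathbf{M}^\ep$ itself: since $\zeta\mathbf{M}^\ep$ is skew-symmetric, $H^1$ and compactly supported in $U$, one has $\int_U\nabla\psi\cdot\nabla\cdot(\zeta\mathbf{M}^\ep)=0$ for every $\psi\in H^1(U)$, and the leftover $\nabla\cdot\bigl((1-\zeta)\mathbf{M}^\ep\bigr)=(1-\zeta)\nabla\cdot\mathbf{M}^\ep-\mathbf{M}^\ep\nabla\zeta$ is an honest $L^2$ field of size $C\ep^{\sfrac12}\bigl(\|\nabla u\|_{L^\infty(U)}+\ep\|\nabla^2u\|_{L^\infty(U)}\bigr)$, because now $\nabla\zeta$ multiplies $\s^\ep_{e_k}=\ep\,\s_{e_k}(\tfrac\cdot\ep)$, which carries the extra factor $\ep$. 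With that replacement your $z^\ep$ (defined through the weak formulation with this cut-off right-hand side) obeys the plain energy estimate $\|\nabla z^\ep\|_{L^2(U)}\leq C\ep^{\sfrac12}\bigl(\|\nabla u\|_{L^\infty(U)}+\ep\|\nabla^2u\|_{L^\infty(U)}\bigr)$, and the rest of your argument (energy estimate for $u^\ep-w^\ep+z^\ep$, triangle inequality, then~\eqref{e.twoscale.strip} and~\eqref{e.twoscale.explicit2} for~\eqref{e.ee.easy2.NP}) goes through as you describe.
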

\begin{proof}
We give a sketch of the argument, leaving some details as an exercise for the reader.
In view of~\eqref{e.twoscale.flux}, let us define 
\begin{equation}
%\label{e.}
\mathbf{h}^\ep (x)
:= 
\sum_{k=1}^d 
\left( 
\partial_{x_k} u  \nabla \cdot \left( (1-\zeta) \bfs^\ep_{e_k} \right)
+
 (1-\zeta)\phi^\ep_{e_k} \a^\ep \nabla \partial_{x_k} u 
\right),
\end{equation}
where~$\zeta$ is as in the proof of Lemma~\ref{l.DP.2infty}. 
We define~$z^\ep\in H^1(U)$ to be the solution of 
\begin{align*}
%\label{e.}
\left\{
\begin{aligned}
& -\nabla \cdot \a^\ep \nabla z^\ep = - \nabla \cdot \mathbf{h}_\ep
& \mbox{in} & \ U,  \\
& \mathbf{n}_{U} \cdot \a^\ep \nabla z^\ep = \mathbf{n}_{U} \cdot \mathbf{h}_\ep 
& \mbox{on} & \ \partial U. 
\end{aligned}
\right. 
\end{align*}
It is straightforward to check that 
\begin{equation}
\left\| \nabla z^\ep \right\|_{L^2(U)}
\leq 
C\left\| \mathbf{h}^\ep \right\|_{L^2(U)}
\leq
C \ep^{\frac12} \left\| \nabla u \right\|_{L^\infty(U)}
+
C \ep^{\frac32} \left\| \nabla^2 u \right\|_{L^\infty(U)}.
\end{equation}
Testing the equations for~$z^\ep$ and~$u^\ep$ and~\eqref{e.twoscale.error} with~$(u^\ep {-} w^\ep {+} z^\ep)$. we find that 
\begin{equation}
%\label{e.}
\int_U 
\nabla (u^\ep {-} w^\ep {+} z^\ep) 
\cdot \a^\ep 
\nabla (u^\ep {-} w^\ep {+} z^\ep)
=
\int_U 
\left(\mathbf{h}^\ep {-} \f^\ep \right)
\cdot
\nabla (u^\ep {-} w^\ep {+} z^\ep),
\end{equation}
with~$\f^\ep$ as in~\eqref{e.def.fep}. We deduce that 
\begin{align*}
\left\| 
\nabla (u^\ep {-} w^\ep {+} z^\ep)
\right\|_{L^2(U)}
& 
\leq 
C \bigl( 
\left\| \mathbf{h}^\ep \right\|_{L^2(U)}
+\left\| \f^\ep \right\|_{L^2(U)}
\bigr)
\leq
C \ep^{\frac12} \left\| \nabla u \right\|_{L^\infty(U)}
+
C \ep \left\| \nabla^2 u \right\|_{L^\infty(U)}.
\end{align*}
Thus 
\begin{align*}
\left\| 
\nabla (u^\ep {-} w^\ep)
\right\|_{L^2(U)}
&
\leq
\left\| 
\nabla (u^\ep {-} w^\ep {+} z^\ep)
\right\|_{L^2(U)}
+
\left\| 
\nabla z^\ep
\right\|_{L^2(U)}
\notag \\ &
\leq 
C \ep^{\frac12} \left\| \nabla u \right\|_{L^\infty(U)}
+
C \ep \left\| \nabla^2 u \right\|_{L^\infty(U)}.
\end{align*}
This is~\eqref{e.ee.easy.NP} and~\eqref{e.ee.easy2.NP} follows from this and~\eqref{e.twoscale.strip}. 
\end{proof}

\subsection{Ergodic theorems in~\texorpdfstring{$\Rd$}{{Rd}} and~\texorpdfstring{$\Zd$}{{Zd}}}
\label{ss.ergodic}

This section presents versions of the multiparameter ergodic and subadditive ergodic theorems suitable for proving qualitative homogenization in a general stochastic setting. Homogenization involves, at some point in the argument, taking a limit---usually of a large-scale spatial average of a field, such as the gradient or flux of a solution, perhaps a corrector. 
Ergodic theorems are statements saying that if certain conditions are satisfied, then we can take such limits.

\smallskip

To state the ergodic theorems, we require some terminology from the theory of dynamical systems. We let~$(\Omega,\F)$ denote a measurable space endowed with a group action~$\{T_y\}_{y\in\Rd}$ of~$(\Rd,+)$ on~$(\Omega,\F)$. In other words, we assume that, for each~$y\in\Rd$,
\begin{equation}
T_y:\Omega \to \Omega
\end{equation}
is an~$\F$--measurable transformation such that
\begin{equation}
%\label{e.}
T_y \circ T_{y'} = T_{y+y'}, \qquad \forall y,y'\in\Rd.
\end{equation}
We extend~$T_y$ so that~$T_y :\F \to \F$ by defining~$T_yE := \{ T_y \omega \,:\, \omega \in E \}$. 
An event~$E \in \F$ is called \emph{invariant} if~$T_z E = E$ for every~$z\in\Zd$. 
A \emph{random variable} is an~$\F$--measurable map~$X:\Omega \to \R$. A \emph{random element} is an~$(\F,\mathcal{S})$--measurable map~$\xi : \Omega \to S$ where~$(S,\mathcal{S})$ is some measurable space. 
We extend~$T_y$ to random elements~$\xi$ on~$\Omega$ by defining~$(T_y\xi)(\omega) := \xi(T_y\omega)$ for each~$y\in\Rd$. 
We denote the Borel~$\sigma$-algebra on~$\Rd$ by~$\mathscr{B}$. An \emph{$S$--valued random field} is an~$(\F \times \mathscr{B} , \mathcal{S})$--measurable map~$F: \Omega \times \Rd \to S$. A random field~$F$ is called~$\Zd$--\emph{stationary}, or simply \emph{stationary}, if it satisfies~$F(\omega,x+z) = F(T_z\omega,x)$, for every~$x\in\Rd$,~$z\in \Zd$ and~$\omega\in\Omega$. 
If~$\xi$ is a random element on~$\Omega$, then the \emph{stationary extension of~$\xi$} is the random field~$\overline{\xi}$ defined by~$\overline{\xi}(\omega,x) := \xi(T_x\omega)$. Note that~$\overline{\xi}$ is~$\Zd$--stationary by definition. 

\smallskip

Throughout this section, we assume that~$\P$ is probability measure on~$(\Omega,\F)$ which is~\emph{$\Zd$--stationary}. The latter means that  
\begin{equation}
\label{e.Zdstat}
\P \circ T_z = \P, \qquad \forall z\in\Zd\,.
\end{equation}
We denote the expectation with respect to~$\P$ by~$\E$. 
The probability measure~$\P$ is called \emph{ergodic} with respect to the translation group~$\{T_z : z\in\Zd\}$ if
\begin{equation}
\label{e.Zdergodic}
\forall E \in\F,
\qquad
E = \bigcap_{z\in \Zd} T_z E 
\quad\implies \quad
\P[E] \in \{0,1\}
\,.
\end{equation}
If~$\P$ satisfies~\eqref{e.Zdergodic} in addition to~\eqref{e.Zdstat} and, then, we say that it is \emph{stationary-ergodic}, for short. Most of the results in this section assume~$\P$ is stationary but do not require~$\P$ to be ergodic. 

\smallskip

We denote the \emph{mean} of a real-valued stationary random field~$f$ by
\begin{equation}
%\label{e.}
\left\langle f \right\rangle
:=
\E \biggl[ \int_{\cu_0} f(x)\,dx \biggr]. 
\end{equation}
To be more precise, we first define~$\langle \cdot \rangle$ for nonnegative~$f$ and then extend the definition to~$f$ satisfying~$\langle |f| \rangle <\infty$. Notice that~$\langle T_y f \rangle$ is independent of~$y\in\Rd$ for any~$\Zd$--stationary random field~$f$. 
If~$\xi$ is extended random variable on~$\Omega$, that is, an~$\F$--measurable random variable valued in~$\R \cup \{\pm \infty\}$, then we write~$\xi \in S_{\mathrm{inv}}$ if~$\P [ T_y \xi = \xi\,, \forall y\in\Rd ] = 1$. Equivalently,~$\xi \in S_{\mathrm{inv}}$ if its stationary extension~$\overline{\xi}$ satisfies~$\overline{\xi}(\omega,x) = \xi(\omega)$ for every~$x\in\Rd$. By a slight abuse of notation, we say that a~$\Zd$--stationary random field belongs to~$S_{\mathrm{inv}}$ if it is the stationary extension of an extended random variable belonging to~$S_{\mathrm{inv}}$. 

\smallskip

For each~$p\in [1,\infty)$, we let~$S^p(\P)$ denote the linear space of real-valued stationary random fields~$f$ satisfying~$\| f \|_{S^p(\P)} := \left\langle |f|^p \right\rangle^{\nicefrac1p} <\infty$. The definition is extended to~$p=\infty$ in the obvious way. It is clear that~$(S^p(\P), \| \cdot\|_{S^p(\P)})$ is a Banach space for every~$1\leq p \leq \infty$. 
Throughout, we let~$p':=p/(p-1)$ denote the H\"older conjugate of~$p\in[1,\infty]$. 

\smallskip

The translation group~$\{ T_y \,:\, y\in \Rd \}$ acts on~$S^p(\P)$ in a natural way. We introduce the closed subspace
\begin{equation}
S^p_{\mathrm{inv}}(\P) := S^p(\P) \cap S_{\mathrm{inv}} \,.
\end{equation}
This consists of the elements of~$S^p(\P)$ which are~$\P$--a.s.~constant in space. Note that these can be identified with the constant (deterministic) fields if and only if~$\P$ is ergodic:
\begin{equation}
\label{e.ergodicityduh}
S^p_{\mathrm{inv}}(\P) = \R \quad \iff \quad 
\text{$\P$ is ergodic.}
\end{equation}
We also define
\begin{equation}
%\label{e.}
S^p_{0}(\P):=
\left\{ f \in S^p(\P) \,:\,  
\left\langle f g \right\rangle = 0, \quad 
\forall g \in S^{p'}_{\mathrm{inv}}(\P)
\right\}.
\end{equation}
In the ergodic case, these are the mean-zero random fields. Otherwise, a member of~$S^p_{0}(\P)$ must have zero mean on every  element of~$\F$ which is invariant with respect to~$T_z$ for every~$z\in\Zd$. 

\subsubsection{A multiparameter version of the Wiener ergodic theorem}
The following result is analogous to the Lebesgue differentiation theorem. The main difference is that we are zooming out and looking at large scales rather than zooming in and looking at infinitesimal scales. 

\begin{proposition}[Wiener Ergodic theorem, multiparameter version]
\label{p.ergodic}
Suppose that~$\P$ is a~$\Zd$--stationary probability measure on~$(\Omega,\F)$. 
For every~$f\in S^1(\P)$, there exists~$\overline{f}\in S^1_{\mathrm{inv}}(\P)$ such that, for every bounded open set~$U\subseteq \Rd$, 
\begin{equation}
\label{e.ergodic}
\limsup_{t\to\infty} \
\left| \fint_{tU} f - \overline{f} \right| = 0, 
\qquad 
\mbox{$\P$--a.s.}
\end{equation}
\end{proposition}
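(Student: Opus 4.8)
The plan is to prove the multiparameter Wiener ergodic theorem by reducing to the classical one-parameter pointwise ergodic theorem applied successively in each coordinate direction, combined with a maximal inequality to upgrade the resulting averages over rectangles to averages over an arbitrary bounded open set $U$. The key structural fact is that the continuous group action $\{T_y\}_{y\in\Rd}$ can be decomposed: the averages $\fint_{tU} f$ over large dilates of $U$ should, up to small error, look like box averages, so it suffices to understand $\fint_{Q} f$ for large cubes $Q$ and then transfer.

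First I would establish the result for box (rectangular) averages $\fint_{(-t,t)^d} f$. Write this as an iterated average: averaging in the $x_1$-direction, then $x_2$, and so on. For each fixed direction $k$, the one-parameter family $\{T_{se_k}\}_{s\in\R}$ is a measure-preserving flow, and the classical (continuous-time) Birkhoff/Wiener ergodic theorem gives pointwise a.e.\ convergence of $\frac{1}{2t}\int_{-t}^t (T_{se_k}g)\,ds$ to the conditional expectation of $g$ onto the $\sigma$-algebra of $T_{\cdot e_k}$-invariant sets, for any $g\in L^1$. Iterating this $d$ times — using that each ergodic average is bounded in $L^1$ and applying dominated/a.e.\ convergence arguments at each stage (one must be slightly careful that the limit of the $k$-th average is still integrable so the $(k+1)$-st theorem applies, which follows from the $L^1$-contraction property of conditional expectation) — produces a limit $\overline f$ which is invariant under $T_{se_k}$ for every $k$ and every $s$, hence under all of $\{T_y\}_{y\in\Rd}$, so $\overline f \in S^1_{\mathrm{inv}}(\P)$. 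That $\overline f$ does not depend on which order we average, and coincides with the object one gets from, say, averaging over dyadic cubes, follows by uniqueness of limits.

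Next I would pass from cubes (or axis-parallel boxes) to a general bounded open set $U$. Cover $tU$ from inside and outside by finitely many disjoint cubes of side length $\delta t$ for small $\delta>0$; the averages over $tU$ differ from a convex combination of the cube averages by an error controlled by $|t\partial U\text{-neighborhood}|/|tU|$, which, since $U$ is bounded and open with, say, Lipschitz or merely finite-perimeter-type boundary — or more robustly, since one only needs $|\{x: \dist(x,\partial U)<\eta\}|\to 0$ as $\eta\to 0$ — tends to $0$ as $\delta\to 0$ uniformly in $t$. To make this rigorous on a set of full probability simultaneously for all $U$, one invokes a \emph{maximal inequality}: the multiparameter maximal function $f^*(\omega):=\sup_{t>0}\fint_{(-t,t)^d}|f|(T_\cdot\omega)$ is weak-$(1,1)$ bounded (this again follows by iterating the one-dimensional Hardy–Littlewood maximal inequality along each coordinate, or by a Vitali covering argument on $\Rd$), which lets one control the cube-approximation error in $L^1$ and hence a.s.

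The step I expect to be the main obstacle is the clean iteration of the one-dimensional ergodic theorem across the $d$ directions while keeping track of \emph{pointwise a.e.} (not just $L^1$ or in-measure) convergence: naively, the limit after averaging in direction $1$ is only defined a.e., and one then wants to apply the direction-$2$ theorem to it, but the exceptional null sets could in principle conspire. The standard fix — which I would follow — is to run the maximal inequality first to dominate all the partial averages by an $L^1$ function, then use a density argument (prove convergence for a dense class, e.g.\ bounded random variables depending on finitely many coordinates or continuous compactly-supported-in-time averages, where everything is manifestly fine, and extend by the maximal inequality in the usual $\eps/3$ fashion). A secondary technical point is the measurability of $\overline f$ and of the maximal function, which is routine given the joint measurability assumption on random fields built into the setup. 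I would also remark that ergodicity of $\P$ is not needed here: without it, $\overline f$ is simply the projection onto $S^1_{\mathrm{inv}}(\P)$, and \eqref{e.ergodicityduh} identifies when this is deterministic.
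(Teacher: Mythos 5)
There is a genuine gap, and it sits exactly where you flagged it. Iterating the one–parameter Birkhoff theorem coordinate by coordinate computes an \emph{iterated} limit, whereas $\fint_{(-t,t)^d}f$ is a \emph{joint} limit in which all parameters grow simultaneously; for $f\in L^1$ the two are not interchangeable without a maximal inequality for the joint averages. The Dunford--Zygmund device that makes the iteration work composes one-dimensional maximal operators, and that composition is \emph{not} weak-$(1,1)$ (it is only bounded on $L(\log L)^{d-1}$, by Jessen--Marcinkiewicz--Zygmund), so your first suggested route to the maximal bound fails at $L^1$; only the Vitali-type cube maximal inequality you mention as an alternative is usable, and once you have that, the directional iteration is superfluous anyway. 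More seriously, your fallback "maximal inequality $+$ dense class" is the right skeleton but the dense classes you name do not do the job: for bounded random variables (or for functions depending on "finitely many coordinates", which is not even meaningful for a general stationary field $\a$), almost sure convergence of the ergodic averages is in no way "manifestly fine" --- boundedness gives domination, not convergence. The whole content of the theorem is concentrated in exhibiting a dense class with trivial convergence, and that class is the coboundaries: the paper shows, by a short Hilbert-space orthogonality computation, that $S^2_0(\P)$ is the $S^2$-closure of $\{g-T_yg : g\in S^\infty(\P),\ y\in\Rd\}$, and for $h=g-T_yg$ one has $\bigl|\fint_{tU}h\bigr|\le \|g\|_{S^\infty(\P)}\,|U\triangle(t^{-1}y+U)|/|U|\to 0$ deterministically. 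Convergence for general $f\in S^2(\P)$ (with $\overline f=Pf$, the projection onto $S^2_{\mathrm{inv}}(\P)$) and then $f\in S^1(\P)$ follows by two density passes through the maximal ergodic theorem. Your proposal never identifies this class, so the density step has no base case.

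A secondary point: your reduction from cubes to a general bounded open $U$ by inner/outer coverings needs $|\{x:\dist(x,\partial U)<\eta\}|\to 0$, i.e.\ $|\partial U|=0$, which is \emph{not} assumed in the statement (a bounded open set can have boundary of positive Lebesgue measure). The coboundary argument avoids this entirely, since $|U\triangle(s+U)|\to 0$ as $s\to 0$ holds for every set of finite measure by $L^1$-continuity of translation, and the density step only compares $\fint_{tU}$ with cube averages through the harmless volume ratio $C|\cu_m|/|U|$ for any $\cu_m\supseteq U$. Your closing remarks (that ergodicity is not needed and that $\overline f$ is the projection onto the invariant fields) are correct and agree with the paper.
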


We begin by introducing a version of the Hardy-Littlewood maximal function. For every~$f \in S^1(\P)$, we define the maximal function (random field) of~$f$ by
\begin{equation}
%\label{e.}
Mf(x) := \sup_{n\in\N} \ \fint_{x+\cu_n} |f(y)|\,dy. 
\end{equation}
Recall that~$\cu_n$ denotes the cube centered at the origin with side length~$3^n$. 

\smallskip

The following is an ergodic version of the weak and strong-type Hardy-Littlewood maximal inequalities.

\begin{lemma}[Maximal ergodic theorem]
\label{l.maximal.ergodic}
There exists~$C(d)<\infty$ such that, for every~$f\in S^1(\P)$ and~$x\in \Rd$, 
\begin{equation}
%\label{e.}
\P \left[ Mf(x) > \lambda \right] \leq \frac{C}{\lambda} \bigl \langle |f| \bigr\rangle\,. 
\end{equation}
For each~$p\in (1,\infty)$, there exists~$C(p,d)<\infty$ such that, for every~$f \in S^p(\P)$ and~$x\in\Rd$, 
\begin{equation}
\label{e.maximal.ergodic.p}
\E \left[ |Mf(x)|^p \right] 
\leq 
C \bigl\langle |f|^p \bigr\rangle\,. 
\end{equation}
\end{lemma}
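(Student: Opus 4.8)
The plan is to follow the classical Hardy--Littlewood argument, transported to the probabilistic setting, exploiting the fact that the triadic cubes $\cu_n$ form a nested filtration of averaging scales with bounded overlap at each generation. First I would establish the weak-type $(1,1)$ bound. Fix $\lambda>0$ and $x\in\Rd$. For each $\omega$ in the event $\{Mf(x)>\lambda\}$ there is a smallest $n=n(\omega)$ with $\fint_{x+\cu_n}|f(y,\omega)|\,dy>\lambda$. The key observation is a Vitali-type covering/stopping-time structure: because the $\cu_n$ are dyadic-like (here triadic) cubes, two cubes $x+\cu_n$ and $x'+\cu_{n'}$ with $x,x'$ in a fixed reference cell either are nested or disjoint, so one can select a disjoint subcollection whose union captures a fixed fraction of the bad set. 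The cleanest route, however, is the transference argument: integrate the indicator of $\{Mf(x)>\lambda\}$ over $x$ ranging in a large triadic cube $\cu_N$ and use Fubini together with the deterministic Hardy--Littlewood weak-type inequality applied to the function $y\mapsto |f(y,\omega)|$ on $\R^d$, then take $N\to\infty$ and use the Wiener ergodic theorem (Proposition~\ref{p.ergodic}) or direct stationarity to identify the spatial average $\fint_{\cu_N}|f(y,\omega)|\,dy$ with $\langle|f|\rangle$ in expectation. This yields $\P[Mf(x)>\lambda]\le \frac{C}{\lambda}\langle|f|\rangle$ with $C$ depending only on $d$ (the constant in the deterministic maximal inequality).

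More concretely, the transference step goes as follows. For $\omega\in\Omega$ define the deterministic maximal function $M_{\mathrm{det}}g(y):=\sup_{n}\fint_{y+\cu_n}|g|$ for $g\in L^1_{\mathrm{loc}}(\Rd)$; the classical weak-type bound gives $|\{y\in\Rd: M_{\mathrm{det}}g(y)>\lambda\}|\le \frac{C(d)}{\lambda}\|g\|_{L^1(\Rd)}$ for $g\in L^1$, and a standard localization/truncation argument extends a usable version of this to $g\in L^1_{\mathrm{loc}}$ when one only integrates over a bounded region. Note $Mf(x)(\omega)=M_{\mathrm{det}}\big(f(\cdot,\omega)\big)(x)$. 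By stationarity, $\P[Mf(x)>\lambda]=\P[Mf(0)>\lambda]$, so it suffices to bound $\P[Mf(0)>\lambda]$. Averaging over a cell:
\begin{equation*}
\P[Mf(0)>\lambda] = \E\biggl[\fint_{\cu_0} \mathbbm{1}_{\{Mf(z)>\lambda\}}\,dz\biggr] = \E\biggl[\fint_{\cu_0} \mathbbm{1}_{\{M_{\mathrm{det}}(f(\cdot,\omega))(z)>\lambda\}}\,dz\biggr],
\end{equation*}
using stationarity of the event. The inner integral is $\big|\{z\in\cu_0: M_{\mathrm{det}}(f(\cdot,\omega))(z)>\lambda\}\big|$, which by the (localized) deterministic weak-type inequality is at most $\frac{C(d)}{\lambda}\int_{K}|f(y,\omega)|\,dy$ for a fixed enlarged cube $K\supseteq\cu_0$ depending only on $d$ (one must control the tails of the averages, but since cubes centered in $\cu_0$ of side $3^n$ are contained in $3^{n+1}\cu_0$, a telescoping/annular decomposition handles this). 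Taking expectations and using $\E\int_K|f|=|K|\,\langle|f|\rangle$ (by stationarity, the mean density of $|f|$ is $\langle|f|\rangle$) gives the claim.

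For the strong-type $(p,p)$ bound with $p\in(1,\infty)$, I would run the Marcinkiewicz interpolation argument in its expectation form. The operator $f\mapsto Mf(x)$ is sublinear and pointwise-in-$x$ bounded by $\|f\|_{L^\infty}$-type estimates, so it is trivially of strong type $(\infty,\infty)$: $Mf(x)\le \|f\|_{S^\infty(\P)}$ a.s. Combined with the weak-type $(1,1)$ bound just proved, real interpolation (applied to the single fixed point $x$, so really just the layer-cake formula plus the two endpoint estimates) gives $\E[|Mf(x)|^p]\le C(p,d)\langle|f|^p\rangle$. Explicitly, split $f=f\mathbbm{1}_{\{|f|>\lambda/2\}}+f\mathbbm{1}_{\{|f|\le\lambda/2\}}$, note $Mf\le M(f\mathbbm{1}_{\{|f|>\lambda/2\}})+\lambda/2$, apply the weak-type bound to the first piece, integrate $\lambda^{p-1}\P[Mf(x)>\lambda]$, and use Fubini; this is entirely routine once the weak-type inequality is in hand.

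The main obstacle I anticipate is the localization in the transference step: unlike the finite-range continuous maximal function over small balls, the ergodic maximal function here averages over arbitrarily large cubes $x+\cu_n$, so when $x$ ranges over the fixed cell $\cu_0$ the relevant cubes are unbounded and the naive deterministic weak-type inequality (which needs $g\in L^1(\Rd)$) does not apply directly. The fix is the annular/telescoping decomposition: a cube $z+\cu_n$ with $z\in\cu_0$ satisfies $z+\cu_n\subseteq \cu_{n+1}$, and one writes $\fint_{z+\cu_n}|f| \lesssim \sum_{k\le n} 3^{-(n-k)d}\,\fint_{\cu_k}|f|\cdot\mathbbm{1}(\cdots)$ or, more cleanly, dominates $M_{\mathrm{det}}(f(\cdot,\omega))(z)$ for $z\in\cu_0$ by a sum of a "local" maximal function (over $\cu_1$, where $L^1$ theory applies) and a "global" dyadic average $\sup_k \fint_{\cu_k}|f|$, the latter controlled in expectation by $\langle|f|\rangle$ via Doob's maximal inequality for the reverse martingale $(\fint_{\cu_k}|f|)_k$ or simply by stationarity and summability. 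Once this decomposition is set up carefully, everything else is standard.
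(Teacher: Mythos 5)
Your $L^p$ step is fine and is essentially identical to the paper's argument (truncate at height $\lambda/2$, apply the weak-type bound to $f\indc_{\{|f|>\lambda/2\}}$, then layer-cake). The gap is in the weak-type bound, and it sits exactly at the obstacle you flagged. First, the reduction ``$\P[Mf(x)>\lambda]=\P[Mf(0)>\lambda]$ by stationarity'' and the identity $\P[Mf(0)>\lambda]=\E\bigl[\fint_{\cu_0}\indc_{\{Mf(z)>\lambda\}}\,dz\bigr]$ are not available here: $\P$ is only $\Zd$-stationary, so these probabilities are translation-invariant only along lattice shifts and may genuinely vary with $x$ inside the unit cell. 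What is true, and what the paper uses instead, is the deterministic comparison $3^{-d}Mf(x)\le Mf(0)\le 3^{d}Mf(x)$ for $x\in\cu_0$ (any cube centered at $0$ is contained in the next-larger cube centered at $x$), which lets one pass between the pointwise probability and the expected Lebesgue measure of the bad set in $\cu_0$, at the cost of a factor $3^d$ in $\lambda$ absorbed into $C$.

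Second, your fix for the unbounded averaging scales does not work as stated. You split $M_{\mathrm{det}}$ into a local part and the global averages $\sup_k\fint_{\cu_k}|f|$, claiming the latter is controlled in expectation by $\langle|f|\rangle$ via Doob's inequality for a reverse martingale. But $\bigl(\fint_{\cu_k}|f|\bigr)_k$ is not a reverse martingale for a general $\Zd$-stationary field, and the strong bound $\E\bigl[\sup_k\fint_{\cu_k}|f|\bigr]\le C\langle|f|\rangle$ is false in general for $f\in S^1(\P)$ (maximal functions of $L^1$ data need not be integrable); even a weak-type bound for $\sup_k\fint_{\cu_k}|f|$ is itself an instance of the maximal ergodic inequality being proved, so this route is circular. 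The paper's resolution is simpler: truncate the maximal function at scale $k$, setting $M_kf(x)=\sup_{n\in\N\cap[0,k]}\fint_{x+\cu_n}|f|$, and apply the classical Hardy--Littlewood weak-type inequality for $x$ ranging over $\cu_k$, noting that all relevant cubes lie inside $\cu_{k+1}$, so only $\fint_{\cu_{k+1}}|f|$ appears on the right; then $\Zd$-stationarity converts the normalized average over $\cu_k$ into $\E\bigl[|\{x\in\cu_0:\,M_kf(x)>\lambda\}|\bigr]\le\frac{C}{\lambda}\langle|f|\rangle$, and monotone convergence in $k$ removes the truncation. With this and the $3^{\pm d}$ comparison above, your transference strategy closes.
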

\begin{proof}

\emph{Step 1.}
We show there exists~$C(d)<\infty$ such that, for~~$f\in S^1(\P)$,
\begin{equation}
\label{e.maximal1}
\P \left[ Mf(x) > \lambda \right] \leq \frac{C}{\lambda} \bigl \langle |f| \bigr\rangle, \quad \forall x\in \Rd.
\end{equation}
Define, for~$k\in\N$  and~$f\in S^1(\P)$, the approximate maximal random field of~$f$ by 
\begin{equation}
%\label{e.}
(M_kf)(x):= 
\sup_{n\in \N \cap [0,k]} 
\fint_{x+\cu_n} |f| \, \quad x\in \Rd.
\end{equation}
Since the standard Hardy-Littlewood maximal function dominates~$M_kf$, we get by the classical weak-type maximal inequality (\cite[Theorem 3.17]{Folland}), that there exists a constant~$C(d)<\infty$ such that 
\begin{equation}
%\label{e.}
\frac{\left| \left\{ x \in \cu_k \,:\, M_kf(x) > \lambda \right\} \right|}
{|\cu_k|}
\leq 
\frac{C}{\lambda}
\fint_{\cu_{k+1}} |f|\,. 
\end{equation}
Thus 
\begin{equation}
%\label{e.}
\E\left[ \frac{\left| \left\{ x \in \cu_k \,:\, M_kf(x) > \lambda \right\} \right|}
{|\cu_k|} \right]
\leq 
\frac{C}{\lambda}
\E \biggl[ 
\fint_{\cu_{k+1}} |f| \biggr]\,.
\end{equation}
By stationarity, this inequality is the same as 
\begin{equation}
\E\bigl[ \left| \left\{ x \in \cu_0 \,:\, M_kf(x) > \lambda \right\} \right| \bigr]
\leq
\frac{C}{\lambda} \bigl \langle |f| \bigr\rangle\,. 
\end{equation}
Sending~$k\to \infty$ yields, by the monotone convergence theorem, that
\begin{equation}
\label{e.slideup}
\E\bigl[  \left| \left\{ x \in \cu_0 \,:\, Mf(x) > \lambda \right\} \right| \bigr]
\leq
\frac{C}{\lambda} \bigl \langle |f| \bigr\rangle. 
\end{equation}
From~\eqref{e.slideup} and the simple fact that
\begin{equation}
3^{-d} Mf (x) \leq Mf(0) \leq 3^d Mf(x), \quad \forall x\in \cu_0, 
\end{equation}
we obtain
\begin{equation}
\label{e.maximal11}
\P \left[ Mf(x) > \lambda \right] \leq \frac{C}{\lambda} \bigl \langle |f| \bigr\rangle, \quad \forall x\in \cu_0.
\end{equation}
Stationarity and the above estimate imply the claim~\eqref{e.maximal1}. 

\smallskip

\emph{Step 2.} We improve~\eqref{e.maximal1} to the bound
\begin{equation}
\label{e.maximal2}
\P \left[ Mf(x) > \lambda \right] 
\leq 
\frac{C}{\lambda} \bigl\langle \left| f \right| \indc_{\{ |f| > \frac12\lambda \} } \bigr\rangle, \quad \forall x\in \Rd.
\end{equation}
For~$\lambda>0$, define~$f^\lambda:= f \indc_{\{ |f| > \frac12\lambda\}}$ and~$f_\lambda := f \indc_{\{ |f| \leq \frac12\lambda\}}$ so that~$f= f^\lambda+f_\lambda$. 
By the sublinearity of the maximal operator~$M$, we have 
\begin{equation*}
Mf 
\leq M f^\lambda + Mf_\lambda
\leq Mf^\lambda + \frac12\lambda
\end{equation*}
and thus~$\{ Mf > \lambda \} \subseteq \{ M f^\lambda > \frac12\lambda\}$. Applying Step~1 above to~$f^{\lambda}$, we obtain, for every~$x\in\Rd$, 
\begin{equation*}
\P \bigl[ Mf(x) > \lambda \bigr]
\leq 
\P \bigl[ Mf^{\lambda} (x) > \tfrac12\lambda \bigr]
\leq
\frac{C}{\lambda} \bigl\langle \left| f \right| \indc_{\{ |f| > \frac12\lambda \} } \bigr\rangle\,.
\end{equation*}

\emph{Step 3.} We prove~\eqref{e.maximal.ergodic.p}.
Compute, for~$f\in S^p(\P)$ with~$p\in (1,\infty)$, 
\begin{align*}
\E \bigl[ |Mf(x)|^p \bigr]
&
=
p\int_0^\infty\lambda^{p-1} \P \bigl[ Mf(x) >\lambda \bigr] \,d\lambda
\\ & 
\leq
Cp
\int_0^\infty \lambda^{p-2}
\bigl\langle \left| f \right| \indc_{\{ |f| > \frac12\lambda \} } \bigr\rangle \, d\lambda
\\ & 
= 
Cp \,
\biggl\langle
|f| 
\int_0^{2|f|} \lambda^{p-2} \,d\lambda
\biggr\rangle
=
\frac{Cp 2^{p-1}}{p-1}
\bigl\langle |f|^p \bigr\rangle\,.
\end{align*}
In the second line, we used~\eqref{e.maximal2}. 
The proof is now complete. 
\end{proof}

Next, we will provide the proof of the multiparameter Wiener ergodic theorem. The idea is that, restricted to~$S^2(\P)$, the mapping~$f\mapsto \overline{f}$ should simply be the linear projection onto~$S^2_\inv(\P)$. The ergodic theorem can then be proved by extending this projection operator to~$S^1(\P)$ by an easy density argument. 

\begin{proof}[Proof of Proposition~\ref{p.ergodic}]
A convenient way to represent~$S^2_{0}(\P)$ is via
\begin{equation}
\label{e.V0rep}
S^2_0(\P) = \overline{K},
\end{equation}
where~$\overline{K}$ is the closure with respect to~$S^2(\P)$ of the subspace 
\begin{equation*}
K:= \bigl\{ f - T_y f \,:\, f\in S^\infty(\P), \ y \in\Rd \bigr\}\,.
\end{equation*}
To see this, we use the fact that~$\bigl\langle (g - T_{-y}g) f  \bigr\rangle
=
\bigl\langle g(f-T_{y}f) \bigr\rangle$ for every~$f,g\in S^2(\P)$ to observe that 
\begin{align*}
g \in K^\perp
&
\iff 
\bigl\langle g(f-T_{y}f) \bigr\rangle 
= 0, \quad \forall f\in S^\infty(\P), \, y\in \Rd, 
\\ & 
\iff
\bigl\langle (g - T_{-y}g) f  \bigr\rangle 
= 0, \quad \forall f\in S^\infty(\P), \, y\in \Rd, 
\\ & 
\iff
g = T_{-y}g, \quad \forall y\in\Rd
\\ & 
\iff 
g\in S^2_{\inv}(\P). 
\end{align*}
Thus~$\overline{K} = (K^{\perp})^\perp = S^2_{\inv}(\P)^\perp = S^2_{0}(\P)$, which proves~\eqref{e.V0rep}. 

\smallskip

We next show that, for every~$h\in K$ and bounded open subset~$U\subseteq\Rd$,
\begin{equation}
\label{e.ergodicforS}
\limsup_{t \to \infty} \
\left| \fint_{tU} h \right| = 0, \quad \mbox{$\P$--a.s.}
\end{equation}
To prove~\eqref{e.ergodicforS}, we first use~\eqref{e.V0rep} to write~$h= f-T_yf$ for~$f\in S^\infty(\P)$ and~$y\in\Rd$, and then compute
\begin{align*}
\limsup_{t\to \infty} \
\left| \fint_{tU} h  \right|
&
= 
\limsup_{t\to \infty} 
\left| \fint_{tU} (\indc_{tU}(x) -  \indc_{y+tU}(x)) f(x)  \,dx \right| 
\\ & 
\leq 
\left\| f \right\|_{S^\infty(\P)}
\limsup_{t\to \infty} \
\frac{\left| (tU) \triangle (y+tU) \right|}{|tU|}
\\ &
=
\left\| f \right\|_{S^\infty(\P)}
\limsup_{t\to \infty} \
\frac{\left| U \triangle (t^{-1}y+U) \right| }{|U|}
= 0 \,,
\end{align*}
where~$\triangle$ stands for the symmetric difference of sets. This proves~\eqref{e.ergodicforS}. 

\smallskip

By~\eqref{e.ergodicforS} for~$h\in K$ and a density argument, using the maximal ergodic theorem (Lemma~\ref{l.maximal.ergodic}), we obtain that~\eqref{e.ergodicforS} is valid for every~$h\in S^2_0(\P)$. 
Indeed, given~$h\in S^2_0(\P)$ and~$\ep>0$, we use~\eqref{e.V0rep} to select~$h_\ep \in K$ with~$\| h - h_\ep \|_{S^2(\P)} \leq \ep$. Since~$U$ is bounded, we can select~$m\in\N$ such that~$U\subseteq \cu_m$. By the triangle inequality and~\eqref{e.ergodicforS} for~$h\in K$, we obtain,~$\P$--a.s., 
\begin{align*}
\limsup_{t\to \infty} \
\left| \fint_{tU} h  \right|
&
\leq 
\limsup_{t\to \infty} \
\left| \fint_{tU} h_\ep  \right| 
+
\limsup_{t\to \infty} \
\fint_{tU} | h - h_\ep | 
\\ & 
\leq 
0 + \frac{C|\cu_m|}{|U|} \sup_{n \geq m}\fint_{\cu_n} | h - h_\ep | 
\leq
\frac{C|\cu_m|}{|U|} M(h-h_\ep)(0)
\,.
\end{align*}
Therefore by Lemma~\ref{l.maximal.ergodic}, for every~$\lambda>0$, 
\begin{align*}
\P \left[ 
\limsup_{t\to \infty} \
\left| \fint_{tU} h  \right| > \lambda \right]
\leq 
\P \left[ 
M(h-h_\ep)(0) > \frac{\lambda |U|}{C|\cu_m|}\right]
\leq
\frac{C3^{dm}}{\lambda|U|} \langle | h - h_\ep | \rangle 
\leq 
\frac{C3^{dm}\ep}{\lambda|U|}
\,.
\end{align*}
Sending~$\ep \to 0$ yields~\eqref{e.ergodicforS} for every~$h\in S^2_0(\P)$. 

Let~$P$ be the orthogonal projection of~$S^2(\P)$ onto~$S^2_{\mathrm{inv}}(\P) = S^2_0(\P)^\perp$. For every~$f\in S^2(\P)$, we have that~$f-Pf \in S^2_0(\P)$ and hence
\begin{equation}
\label{e.ergodicforS.P}
\limsup_{t \to \infty} \
\left| \fint_{tU} (f-Pf) \right| = 0, \quad \mbox{$\P$--a.s.}
\end{equation}
Since~$P$ is a projection, it may be extended to a projection from~$S^1(\P)$ to~$S^1_{\mathrm{inv}}(\P)$. We may, therefore, use the density of~$S^2(\P)$ in~$S^1(\P)$ and of~$S^2_{\mathrm{inv}}$ in~$S^1_{\mathrm{inv}}$ and another density argument, as above, with the help of the maximal ergodic theorem, to obtain~\eqref{e.ergodicforS} for~$f\in S^1(\P)$. 
\end{proof}

We next present a convenient form of the Wiener ergodic theorem which is stated in terms of the~$H^{-1}$ norms of elements of~$S^2(\P)$. The~$H^{-1}$ norm is a natural way to quantify the weak topology in~$L^2$.
Note that we work with the volume-normalized and scale-invariant Sobolev norms defined in~\eqref{e.volume.normalized} and~\eqref{e.underlinednorms}. Recall that we use the notation~\eqref{e.slashedsum} for normalizing sums with slashes. 

\begin{corollary}[Wiener ergodic theorem,~$H^{-1}$ version]
\label{c.stat.Hminusone}
For every~$f \in S^2_0(\P)$,
\begin{equation}
\label{e.stat.Hminusone}
\P \biggl[
\limsup_{m\to \infty} 
3^{-m}\left\| f   \right\|_{\Hminusul(\cu_m)} 
= 0
\biggr] = 1
\,.
\end{equation}
\end{corollary}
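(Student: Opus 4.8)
The plan is to deduce the $\Hminusul$-version of the ergodic theorem from the $L^2$-averaging version already proved in Proposition~\ref{p.ergodic}, applied not to $f$ itself but to a carefully chosen auxiliary field. The key observation is that the negative Sobolev norm $3^{-m}\|f\|_{\Hminusul(\cu_m)}$ is, by scaling, comparable to $\|\nabla^{-1}f\|_{\underline{L}^2(\cu_m)}$ for an appropriate antiderivative of $f$; more precisely, we should solve an auxiliary PDE at each scale. So first I would introduce, on each triadic cube $\cu_m$, the function $v_m \in H^1(\cu_m)$ solving $-\Delta v_m = f$ with, say, homogeneous Neumann (or Dirichlet) boundary conditions, whence by the standard energy estimate and the definition of the $\hat{\underline{W}}^{-1,2}$ norm in~\eqref{e.underlinednorms}, we have $3^{-m}\|f\|_{\Hminusul(\cu_m)} \lesssim \|\nabla v_m\|_{\underline{L}^2(\cu_m)}$.

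The difficulty with this direct approach is that $v_m$ depends on the scale $m$ in a way that is not stationary, so Proposition~\ref{p.ergodic} does not apply to it directly. The cleaner route—and the one I expect the authors take—is to use the Helmholtz--Hodge-type decomposition on the whole space: since $f \in S^2_0(\P)$ has ``mean zero'' in the ergodic sense, one can construct a stationary field $\bfF$ valued in antisymmetric matrices (a stationary stream function / vector potential) with $\nabla \cdot \bfF = f$ and $\bfF \in S^2(\P)$ — this is exactly the kind of stationary potential one gets by Fourier analysis / the Lax--Milgram construction in the probability space, analogous to the flux correctors $\s_e$ from~\eqref{e.fluxcorrect}. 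Actually, even simpler: it suffices to find a stationary vector field $\bfF \in S^2(\P;\Rd)$ with $\nabla \cdot \bfF = f$; this exists because $f \in S^2_0(\P)$ means $f$ is orthogonal to all invariant fields, which is precisely the solvability condition for $-\Delta \psi = f$ in the probability space, after which $\bfF = \nabla\psi$ works (with $\psi$ having stationary gradient). Testing against $g \in W^{1,2}_0(\cu_m)$ and integrating by parts gives $\fint_{\cu_m} f g = -\fint_{\cu_m} \bfF \cdot \nabla g$, so $\|f\|_{\Hminusul(\cu_m)} \leq \|\bfF\|_{\underline{L}^2(\cu_m)}$, hence $3^{-m}\|f\|_{\Hminusul(\cu_m)} \leq 3^{-m}\|\bfF\|_{\underline{L}^2(\cu_m)}$.

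The gain of the factor $3^{-m}$ is what makes the statement work: I would then argue that $3^{-m}\|\bfF\|_{\underline{L}^2(\cu_m)} \to 0$ almost surely. This follows because $\|\bfF\|_{\underline{L}^2(\cu_m)}^2 = \fint_{\cu_m} |\bfF|^2$ converges almost surely to a finite limit (namely $\langle |\bfF|^2 \rangle_{\mathrm{inv}}$, the conditional expectation onto the invariant $\sigma$-algebra) by the Wiener ergodic theorem (Proposition~\ref{p.ergodic}) applied to the scalar stationary field $|\bfF|^2 \in S^1(\P)$, with $U = \cu_0$ so that $tU$ runs through a continuum containing the cubes $\cu_m$ (take $t = 3^m$). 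Since that limit is finite, multiplying by $3^{-m} \to 0$ kills it. The main obstacle is the construction of the stationary potential $\bfF$ with $\nabla\cdot\bfF = f$ and $\bfF \in S^2(\P)$: this is the one place requiring genuine input beyond the ergodic theorem, and it uses the characterization of $S^2_0(\P)$ as the orthogonal complement of the invariant fields (equivalently, the representation $S^2_0(\P) = \overline{K}$ from~\eqref{e.V0rep}) together with the solvability of the Poisson equation in $L^2(\P)$ — one solves $\min_{\psi} \langle |\nabla\psi + \text{(something)}|^2\rangle$ or, most directly, uses spectral calculus for the generator of the translation group. A minor technical point to handle carefully is that the $\Hminusul$ norm in~\eqref{e.underlinednorms} is the $\hat{\underline{W}}^{-1,2}$ norm, taken over test functions in $W^{1,2}(\cu_m)$ rather than $W^{1,2}_0(\cu_m)$, so the integration-by-parts identity above needs the boundary term $\int_{\partial\cu_m} (\bfF\cdot\mathbf{n}) g$ to be controlled; one either chooses the antisymmetric-matrix potential (for which this term vanishes by skew-symmetry, exactly as in the proof of Lemma~\ref{l.twoscale}) or absorbs the boundary contribution via a trace estimate, which costs only harmless constants after volume normalization.
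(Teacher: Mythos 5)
There is a genuine gap at the heart of your argument: the existence of a stationary vector field $\bfF \in S^2(\P;\Rd)$ with $\nabla\cdot\bfF = f$ does \emph{not} follow from $f\in S^2_0(\P)$. Orthogonality to the invariant fields is the condition that makes the range of the divergence operator (acting on stationary $L^2$ fields) \emph{dense} in $S^2_0(\P)$, but that range is not closed, so it is only a necessary condition for solvability, not a sufficient one. Concretely, by spectral calculus for the translation group, the minimal-norm candidate is $\bfF=\nabla(-\Delta)^{-1}f$, and $\langle|\bfF|^2\rangle=\int|\xi|^{-2}\,d\mu_f(\xi)$ where $\mu_f$ is the spectral measure of $f$; membership in $S^2_0(\P)$ only prevents an atom at $\xi=0$, while solvability requires quantitative decay of $\mu_f$ near the origin. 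This fails even for the friendliest examples (a finite-range-dependence field in dimension $d\le 2$ has $\mu_f$ with positive density at $0$, so the integral diverges), and for merely stationary, non-mixing fields in every dimension. This obstruction is not a technicality: the non-existence of stationary $L^2$ potentials/correctors under qualitative assumptions is exactly why only the \emph{gradients} $\nabla\phi_e$, $\nabla\s_e$ are constructed as stationary objects in Proposition~\ref{p.qual.correctors} and Lemma~\ref{l.stationary.stream}, and why the quantitative theory later in the paper is needed at all. (Your fallback of a skew-symmetric matrix potential does not apply here either, since $f$ is scalar, and in any case skew-symmetry only kills boundary terms after a second divergence.)

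The paper's proof avoids any inversion in the probability space. It applies the multiscale Poincar\'e inequality (Proposition~\ref{p.MSP}) to bound, for each fixed $k$,
\begin{equation*}
3^{-m}\|f\|_{\Hminusul(\cu_m)}
\leq
C3^{-k}\|f\|_{\underline{L}^2(\cu_m)}
+
C\max\Bigl\{ \bigl|(f)_{z+\cu_n}\bigr| : m-k\le n\le m,\ z\in 3^n\Zd\cap\cu_m \Bigr\}\,,
\end{equation*}
and then uses Proposition~\ref{p.ergodic} twice: applied to $|f|^2$, it shows $\|f\|_{\underline{L}^2(\cu_m)}$ converges a.s.\ to a finite limit $F$ (this part of your plan is fine); applied to $f$ itself together with $f\in S^2_0(\P)=S^2_{\inv}(\P)^\perp$, it shows the mesoscopic averages $(f)_{z+\cu_n}$ tend to zero a.s. One concludes $\limsup_m 3^{-m}\|f\|_{\Hminusul(\cu_m)}\le C3^{-k}F$ a.s.\ and sends $k\to\infty$. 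If you want to salvage your scheme, this is the point: the gain over the crude bound $3^{-m}\|f\|_{\Hminusul(\cu_m)}\le C\|f\|_{\underline{L}^2(\cu_m)}$ must come from the vanishing of spatial averages of $f$ on large scales, not from a stationary antiderivative, which in general does not exist.
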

\begin{proof}
The argument is a simple consequence of Proposition~\ref{p.ergodic} and the 
multiscale Poincar\'e inequality, which is stated below in Proposition~\ref{p.MSP}.
The latter implies,  for every~$m,k\in\N$,
\begin{align*}
3^{-m} \left\| f \right\|_{\Hminusul(\cu_{m})} 
&
\leq
C3^{-k} \left\| f \right\|_{\underline{L}^2(\cu_{m})}
+
C\sum_{n=m-k}^{m}
3^{n-m}
\biggl( \avsum_{z\in 3^n\Zd \cap \cu_{m}} \left| \left( f \right)_{z+\cu_n} \right|^2 \biggr)^{\!\nicefrac12}
\\ & 
\leq 
C3^{-k} \left\| f \right\|_{\underline{L}^2(\cu_{m})}
+
C \max \bigl\{ 
\left| \left( f \right)_{z+\cu_n} \right|  : m-k\leq n \leq m, \, z\in 3^n\Zd\cap \cu_{m} \bigr\}.
\end{align*}
By Proposition~\ref{p.ergodic}, there exists a nonnegative random variable~$F \in S^2_{\mathrm{inv}}(\P)$ such that 
\begin{equation*}
%\label{e.}
\lim_{m\to \infty} \left\| f \right\|_{\underline{L}^2(\cu_m)} = 
\lim_{m\to \infty} \biggl( \fint_{\cu_m} |f|^2 \biggr)^{\!\!\nicefrac12}
= F \,,
%\left\langle |f|^2 \right\rangle^{\nicefrac12},
\quad \mbox{$\P$--a.s.,} 
\end{equation*}
and, using also that~$f \in S_0^2(\P)= S^2_{\inv}(\P)^\perp$, we have, for each fixed~$k\in\N$, 
\begin{equation*}
%\label{e.}
\limsup_{m\to \infty} \, \max\left\{
\left| \left( f \right)_{z+\cu_n} \right| \,:\, m-k\leq n \leq m, \ z\in 3^n\Zd\cap \cu_{m} \right\}
= 0,
\quad \mbox{$\P$--a.s.}
\end{equation*}
We deduce that, for each fixed~$k\in\N$,
\begin{equation}
\limsup_{m\to\infty}
3^{-m}\left\| f \right\|_{\Hminusul(\cu_{m})} 
\leq 
C3^{-k} F\,, \quad \mbox{$\P$--a.s.}
\end{equation}
Sending~$k\to \infty$ yields~\eqref{e.stat.Hminusone}.
\end{proof}

In the proof of Corollary~\ref{c.stat.Hminusone} we used the following estimate, which roughly asserts that the constant in the Poincar\'e inequality (which is  proportional to the size of the cube) need not multiply the full~$L^2$ norm of the gradient, but rather only its weaker spatial average. Every triadic scale~$n$ has its own Poincar\'e constant~$C3^n$ which multiplies only the~$\ell_2$-norm of the spatial averages on that scale. For functions with gradients exhibiting many spatial oscillations, this evidently gives a much better estimate than the usual Poincar\'e inequality, and this is the reason it is a useful tool in deriving Corollary~\ref{c.stat.Hminusone} from Proposition~\ref{p.ergodic}. 

\begin{proposition}[Multiscale Poincar\'e inequality]
\label{p.MSP}
There exists a constant~$C(d)<\infty$ such that, for every~$m \in \N$ and~$f\in L^2(\cu_m)$, 
\begin{equation}
\label{e.MSP}
\left\| f \right\|_{\Hminusul(\cu_m)}
\leq
C 
\sum_{n=-\infty}^{{m}} 3^{n} 
\biggl( \avsum_{z\in 3^n\Zd\cap \cu_{m}} 
\bigl|  ( f  )_{z+\cu_n} \bigr|^2 \biggr)^{\!\!\nicefrac12}. 
\end{equation}
Moreover, for every~$m \in \N$ and~$w \in H^1(\cu_{m})$, we have
\begin{equation} 
\label{e.MSPw}
\left\| w - \left( w \right)_{\cu_{m}} \right\|_{\underline{L}^2(\cu_{m}) }
\leq 
C \sum_{n=-\infty}^{{m}} 3^{n} 
\biggl( 
\avsum_{z\in 3^n\Zd\cap \cu_{m}} 
\bigl|  ( \nabla w  )_{z+\cu_n} \bigr|^2
\biggr)^{\!\!\nicefrac12}
\,.
\end{equation}

\end{proposition}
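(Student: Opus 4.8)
The plan is to prove both estimates by duality against the test functions defining the norms on the left-hand sides, combined with a triadic, martingale-type decomposition of a test function into its piecewise averages at each scale. The first bound \eqref{e.MSP} is proved directly; the second bound \eqref{e.MSPw} is then deduced from \eqref{e.MSP} by testing against $\nabla w$ after solving an auxiliary Neumann problem.

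For \eqref{e.MSP}, fix $g\in H^1(\cu_m)$ with $\|g\|_{\underline{H}^1(\cu_m)}\le 1$; the task is to bound $\fint_{\cu_m}fg$. Let $\Pi_n g$ be the function equal to the average $(f)_{y+\cu_n}$-type quantity $(g)_{y+\cu_n}$ on each triadic subcube $y+\cu_n$ of $\cu_m$ (with $y\in 3^n\Zd\cap\cu_m$), noting that these cubes are nested, so $\Pi_m g=(g)_{\cu_m}$. Telescoping gives
\[
\fint_{\cu_m}fg = (f)_{\cu_m}(g)_{\cu_m} + \sum_{n=0}^{m-1}\fint_{\cu_m}f\,(\Pi_n g-\Pi_{n+1}g) + \fint_{\cu_m}f\,(g-\Pi_0 g).
\]
The last term is at most $\|f\|_{\underline{L}^2(\cu_m)}\|g-\Pi_0 g\|_{\underline{L}^2(\cu_m)}\lesssim\|f\|_{\underline{L}^2(\cu_m)}$, using Cauchy--Schwarz and the Poincaré inequality on each unit cube (so $\|g-\Pi_0 g\|_{\underline{L}^2(\cu_m)}\lesssim\|\nabla g\|_{\underline{L}^2(\cu_m)}\le 1$). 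Since $\Pi_n g-\Pi_{n+1}g$ is constant on each $y+\cu_n$, the $n$th summand equals $\avsum_{y\in 3^n\Zd\cap\cu_m}(f)_{y+\cu_n}\bigl((g)_{y+\cu_n}-(g)_{z'(y)+\cu_{n+1}}\bigr)$, with $z'(y)$ the scale-$(n{+}1)$ parent cube; by Cauchy--Schwarz it is bounded by $\bigl(\avsum_y|(f)_{y+\cu_n}|^2\bigr)^{1/2}$ times $\bigl(\avsum_y|(g)_{y+\cu_n}-(g)_{z'(y)+\cu_{n+1}}|^2\bigr)^{1/2}$, and the key estimate is that this second factor is $\lesssim 3^n\|\nabla g\|_{\underline{L}^2(\cu_m)}\le 3^n$: this follows from applying the Poincaré inequality to $g-(g)_{z'+\cu_{n+1}}$ on each parent cube (side length $3^{n+1}$) and summing over parent cubes. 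Finally $|(f)_{\cu_m}(g)_{\cu_m}|\le 3^m|(f)_{\cu_m}|$ since $\|g\|_{\underline{L}^2(\cu_m)}\le 3^m$, and $3^m|(f)_{\cu_m}|\lesssim\sum_{n=0}^{m-1}3^n\bigl(\avsum_y|(f)_{y+\cu_n}|^2\bigr)^{1/2}$ because $|(f)_{\cu_m}|\le\bigl(\avsum_y|(f)_{y+\cu_n}|^2\bigr)^{1/2}$ by Jensen (the case $m=0$ is immediate from $\|g\|_{\underline{L}^2(\cu_0)}\le 1$). Taking the supremum over $g$ gives \eqref{e.MSP}.

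For \eqref{e.MSPw}, write $\|w-(w)_{\cu_m}\|_{\underline{L}^2(\cu_m)}$ as the supremum of $\fint_{\cu_m}(w-(w)_{\cu_m})g$ over $g$ with $\|g\|_{\underline{L}^2(\cu_m)}\le 1$; since the left factor has zero mean we may assume $(g)_{\cu_m}=0$. Let $\psi$ solve the Neumann problem $-\Delta\psi=g$ in $\cu_m$ with $\partial_\nu\psi=0$ on $\partial\cu_m$ and $(\psi)_{\cu_m}=0$ (solvable since $(g)_{\cu_m}=0$); integrating by parts, $\fint_{\cu_m}(w-(w)_{\cu_m})g=\fint_{\cu_m}\nabla w\cdot\nabla\psi=\sum_{i=1}^d\fint_{\cu_m}\partial_{x_i}w\,\partial_{x_i}\psi$. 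Each term is $\le\|\partial_{x_i}w\|_{\Hminusul(\cu_m)}\|\partial_{x_i}\psi\|_{\underline{H}^1(\cu_m)}$ by the definition of the $\Hminusul$-norm, and $\|\partial_{x_i}\psi\|_{\underline{H}^1(\cu_m)}\le\|\psi\|_{\underline{H}^2(\cu_m)}\lesssim\|g\|_{\underline{L}^2(\cu_m)}\le 1$, where the first inequality is immediate from the definitions of the volume-normalized norms and the second is the $H^2$-estimate for the Neumann Laplacian on a cube (valid with a dimension-only constant, by convexity and a scaling argument, after using Poincaré to absorb the lower-order terms of $\|\psi\|_{\underline{H}^2(\cu_m)}$ into $\|\nabla^2\psi\|_{\underline{L}^2(\cu_m)}$). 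Hence $\fint_{\cu_m}(w-(w)_{\cu_m})g\lesssim\sum_{i=1}^d\|\partial_{x_i}w\|_{\Hminusul(\cu_m)}$, and applying \eqref{e.MSP} to each $f:=\partial_{x_i}w$ and summing over $i$ (using $\sum_i|(\partial_{x_i}w)_{y+\cu_n}|^2=|(\nabla w)_{y+\cu_n}|^2$ with Cauchy--Schwarz in the $d$ components) produces \eqref{e.MSPw}.

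The main obstacle is, in \eqref{e.MSP}, the cube-wise Poincaré step that yields precisely the factor $3^n$ (and not $3^m$) in front of the scale-$n$ spatial average; everything else there is bookkeeping with the nested triadic grid. For \eqref{e.MSPw}, the conceptual point is that one cannot simply run the same telescoping on $w$ directly: the discrepancy $(w)_{y+\cu_n}-(w)_{z'(y)+\cu_{n+1}}$ is not controlled by the scale-$n$ averages of $\nabla w$ (a symmetric bump inside $y+\cu_n$ makes the former large while the latter vanishes), which forces the detour through the Neumann problem and the elliptic-regularity estimate — the only genuinely new ingredient beyond \eqref{e.MSP}.
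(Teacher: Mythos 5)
Your proof is correct and follows essentially the same route as the paper: the same triadic telescoping of the test function with cube-wise Poincar\'e estimates for \eqref{e.MSP}, and a Neumann problem with $H^2$ regularity on the cube to reduce \eqref{e.MSPw} to \eqref{e.MSP} applied to $\nabla w$. The only cosmetic difference is that the paper solves the Neumann problem with data $w$ itself (establishing $\left\| w - (w)_{\cu_m}\right\|_{\underline{L}^2(\cu_m)} \leq C \left\| \nabla w \right\|_{\Hminusul(\cu_m)}$ directly), whereas you dualize against a general $g$ and solve with data $g$ --- an equivalent argument resting on the same elliptic estimate.
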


\begin{proof}
Fix~$m \in \N$ and~$f\in L^2(\cu_m)$. 

\smallskip 

\emph{Step 1.} 
We show that, for every~$s\in(0,1]$ and~$f,g\in L^2(\cu_m)$ with~$(g)_{\cu_m} = 0$, 
\begin{align}
\label{e.general.Besov.pairing}
\biggl| \fint_{\cu_m} f g  \biggr| 
&
\leq 
3^{d+s}
\sup_{k\in\Z,\,k\leq m} 
3^{-sk} 
\biggl( 
\avsum_{z\in 3^{k} \Zd\cap \cu_m} 
\fint_{z+\cu_k} 
\bigl| g -
(g)_{z+\cu_{k}} \bigr|^2 
\biggr)^{\!\nicefrac12}
\notag \\ & \qquad\qquad\qquad \times
\sum_{k=-\infty}^{m-1} 
3^{sk}
\biggl( 
\avsum_{z\in 3^{k} \Zd\cap \cu_m} 
\bigl| (f)_{z+\cu_{k}} \bigr|^2 
\biggr)^{\!\nicefrac12} 
\,.
\end{align}
To prove~\eqref{e.general.Besov.pairing}, 
We next use the fact that, by the Lebesgue differentiation theorem, any~$g\in L^1(\cu_m)$ function can be written (almost everywhere in~$\cu_m)$ as 
\begin{equation*}
g = 
(g)_{\cu_m} 
+
\sum_{k=-\infty}^m 
\sum_{z\in 3^k\Zd \cap \cu_m} 
\sum_{z'\in 3^{k-1} \Zd \cap (z+\cu_k)} 
\bigl( (g)_{z'+\cu_{k-1}} -
(g)_{z+\cu_{k}} \bigr) \indc_{z'+\cu_{k-1}} 
\,.
\end{equation*}
Therefore we have that, if~$(g)_{\cu_m} =0$, then 
\begin{equation*}
\fint_{\cu_m} f g 
=
\sum_{k=-\infty}^m 
\avsum_{z\in 3^k\Zd \cap \cu_m} 
\avsum_{z'\in 3^{k-1} \Zd \cap (z+\cu_k)} 
\bigl( (g)_{z'+\cu_{k-1}} -
(g)_{z+\cu_{k}} \bigr) 
(f)_{z'+\cu_{k-1}} 
\,.
\end{equation*}
Therefore, using H\"older's inequality, we find that, for every~$s \in (0,1]$, 
\begin{align*}
%\label{e.}
\lefteqn{ 
\biggl| \fint_{\cu_m} f g  \biggr| 
}  \ \  & 
\notag \\ & 
\leq
\sum_{k=-\infty}^m
\avsum_{z\in 3^k\Zd \cap \cu_m} 
\avsum_{z'\in 3^{k-1} \Zd \cap (z+\cu_k)} 
\bigl| (g)_{z'+\cu_{k-1}} -
(g)_{z+\cu_{k}} \bigr|
\bigl| (f)_{z'+\cu_{k-1}} \bigr|
\notag \\ & 
\leq
\sum_{k=-\infty}^m 
\avsum_{z\in 3^k\Zd \cap \cu_m} 
3^d \fint_{z+\cu_k} 
\bigl| g -
(g)_{z+\cu_{k}} \bigr|
\avsum_{z'\in 3^{k-1} \Zd \cap (z+\cu_k)} 
\bigl| (f)_{z'+\cu_{k-1}} \bigr|
\notag \\ & 
\leq
3^d \sum_{k=-\infty}^{m} 
\biggl( 
\avsum_{z\in 3^{k} \Zd\cap \cu_m} 
\fint_{z+\cu_k} 
\bigl| g -
(g)_{z+\cu_{k}} \bigr|^2 
\biggr)^{\!\nicefrac12} 
\biggl( 
\avsum_{z\in 3^{k-1} \Zd\cap \cu_m} 
\bigl| (f)_{z+\cu_{k-1}} \bigr|^2 
\biggr)^{\!\nicefrac12} 
\notag \\ & 
\leq 
3^{d+s} \sup_{k\leq m} 
3^{-sk} 
\biggl( 
\avsum_{z\in 3^{k} \Zd\cap \cu_m} 
\fint_{z+\cu_k} 
\bigl| g -
(g)_{z+\cu_{k}} \bigr|^2 
\biggr)^{\!\nicefrac12}
\sum_{k=-\infty}^{m-1} 
3^{sk}
\biggl( 
\avsum_{z\in 3^{k} \Zd\cap \cu_m} 
\bigl| (f)_{z+\cu_{k}} \bigr|^2 
\biggr)^{\!\nicefrac12} 
\,.
\end{align*}
This completes the proof~\eqref{e.general.Besov.pairing}. 

\smallskip

\emph{Step 2.} 
The proof of~\eqref{e.MSP}.
We fix~$g\in H^1(\cu_m)$ with~$\| g \|_{\underline{H}^1(\cu_m)} \leq1$ and apply~\eqref{e.general.Besov.pairing} with~$s=1$ and~$g - (g)_{\cu_m}$ in place of~$g$. Using the Poincar\'e inequality, we have 
\begin{align*}
%\label{e.}
\sup_{k\in\Z,\,k\leq m} 
3^{-k} 
\biggl( 
\avsum_{z\in 3^{k} \Zd\cap \cu_m} 
\fint_{z+\cu_k} 
\bigl| g -
(g)_{z+\cu_{k}} \bigr|^2 
\biggr)^{\!\nicefrac12}
&
\leq 
C
\sup_{k\in\Z,\,k\leq n} 
\biggl( 
\avsum_{z\in 3^{k} \Zd\cap \cu_n} 
\fint_{z+\cu_k} 
\bigl| \nabla g\bigr|^2 
\biggr)^{\!\nicefrac12}
\notag \\ & 
= 
C \biggl( 
\fint_{\cu_m} 
\bigl| \nabla g\bigr|^2 
\biggr)^{\!\nicefrac12}
\leq C \,.
\end{align*}
Using also that~$|(g)_{\cu_m}| \leq 3^m$, we obtain
\begin{align*}
\biggl| \fint_{\cu_m} f g  \biggr| 
&
\leq 
|(f)_{\cu_m}| |(g)_{\cu_m}| 
+
C \sum_{k=-\infty}^{m-1} 
3^{k}
\biggl( 
\avsum_{z\in 3^{k} \Zd\cap \cu_m} 
\bigl| (f)_{z+\cu_{k}} \bigr|^2 
\biggr)^{\!\nicefrac12}
\notag \\ & 
\leq 
C \sum_{k=-\infty}^{m} 
3^{k}
\biggl( 
\avsum_{z\in 3^{k} \Zd\cap \cu_m} 
\bigl| (f)_{z+\cu_{k}} \bigr|^2 
\biggr)^{\!\nicefrac12}
\,.
\end{align*}
Since this holds for every~$g\in H^1(\cu_m)$ with~$\| g \|_{\underline{H}^1(\cu_m)} \leq1$, we may take the supremum over such~$g$ to obtain~\eqref{e.MSP}.

\smallskip

\emph{Step 3.} Proof of~\eqref{e.MSPw}.
Let~$w \in H^1(\cu_{m})$. Now~\eqref{e.MSPw} follows from~\eqref{e.MSP} if we show that there exists~$C(d)<\infty$ such that 
\begin{equation} \label{e.L2toHminus}
\left\| w - \left( w \right)_{\cu_{m}} \right\|_{\underline{L}^2(\cu_{m})} \leq C \left\|\nabla w\right\|_{\Hminusul(\cu_m)} 
\,.
\end{equation}
To show this, assume that~$\left( w \right)_{\cu_{m}} = 0$ and let~$v\in H^2(\cu_{m})$ solve the Neumann problem 
\begin{equation}
\left\{
\begin{aligned}
& -\Delta v = w & \mbox{in} & \ \cu_{m}, \\
& \mathbf{n} \cdot \nabla v = 0 & \mbox{on} & \ \partial \cu_{m} .
\end{aligned}
\right.
\end{equation}
Then 
$\left\| w \right\|_{\underline{L}^2(\cu_{m})}^2 = \fint_{\cu_{m}} \nabla w \cdot \nabla v$ and 
$\left\| \nabla v\right\|_{\underline{H}^1(\cu_{m})} \leq C \left\| w \right\|_{\underline{L}^2(\cu_{m})}$. Therefore,
\begin{align*}  
\left\| w \right\|_{\underline{L}^2(\cu_{m})}^2 
\leq 
\left\|\nabla w\right\|_{\Hminusul(\cu_m)} \left\| \nabla v\right\|_{\underline{H}^1(\cu_{m})} 
\leq
C \left\|\nabla w\right\|_{\Hminusul(\cu_m)} \left\| w \right\|_{\underline{L}^2(\cu_{m})} 
\,,
\end{align*}
which yields~\eqref{e.L2toHminus} and completes the proof. 
\end{proof}

\subsubsection{A multiparameter version of the Kingman subadditive ergodic theorem}

We give a second ergodic theorem, in which the integral of a stationary random field is replaced by a general \emph{subadditive} function of the bounded subsets of~$\Rd$. This is a multiparameter version of Kingman's subadditive ergodic theorem. 

\smallskip

Let~$\mathcal{L}$ denote the family of bounded open subsets of~$\Rd$. We say that a function
\begin{align*}
\nu:\Omega \times \mathcal{L} \to \R
\end{align*}
is \emph{subadditive} if, for every~$U, U_1,\ldots,U_N \in \mathcal{L}$ such that 
\begin{equation}
\label{e.U.partition}
U_i \cap U_j = \emptyset,  \quad
\bigcup_{i=1}^N U_i \subseteq U, \quad 
\biggl| U \, \setminus \, 
%\biggl(
\bigcup_{i=1}^N U_i %\biggr) 
\biggr| = 0
\,, 
\end{equation}
we have that 
\begin{equation}
\label{e.nu.subadd}
\nu(U) \leq \sum_{i=1}^N \frac{|U_i|}{|U|} \nu(U_i) 
\,.
\end{equation}
If~$-\nu$ is subadditive, then we say that~$\nu$ is \emph{superadditive}. We also say that such a quantity is~$\Zd$-stationary if~$\nu(T_y\omega,U) = \nu(\omega, y+ U)$ for every~$y\in\Zd$,~$\omega\in\Omega$ and~$U\in\mathcal{L}$.

\smallskip

Note that we have suppressed the variable~$\omega \in\Omega$ from the notation by writing~$\nu(U)$ instead of~$\nu(\omega,U)$. 
Our definition of subadditivity differs slightly from the one more commonly found in literature because we have added the normalizing volume factors. That is, we call~$\nu(\cdot)$, as above, subadditive, whereas it would be more typical to call~$U \mapsto  |U| \nu(U)$ subadditive. 

\begin{proposition}[Multiparameter Kingman subadditive ergodic theorem]
\label{p.subadditive.ergodic}
Suppose~$\P$ is a~$\Zd$-stationary probability measure on~$(\Omega,\F)$ and~$\Lambda\in (0,\infty)$. Assume that~$\mu$ is a~$\Zd$-stationary, subadditive quantity such that, for every bounded open~$U\subseteq\Rd$, 
\begin{align}
\label{e.subaddmu.bounded}
\mu(U)  \leq \Lambda, \qquad \mbox{$\P$--a.s.}
\end{align} 
Then there exists an extended random variable~$\overline{\mu} \in S_{\mathrm{inv}}(\P)$ valued in~$(-\infty,\Lambda] \cup\{ -\infty\}$ such that, for every bounded open set~$U\subseteq \Rd$, 
\begin{equation}
\label{e.subadditive.ergodic}
\P \Bigl[
\lim_{t\to\infty} 
\mu(tU) = \overline{\mu} 
\Bigr] = 1\,.
\end{equation}
\end{proposition}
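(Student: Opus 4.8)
The plan is to follow the standard reduction of Kingman's multiparameter subadditive ergodic theorem to the additive (Wiener) ergodic theorem already established in Proposition~\ref{p.ergodic}, by controlling the subadditive quantity $\mu$ from below and above by an additive quantity plus an error that vanishes in the large-scale limit. First I would discretize: restrict attention to the sequence of triadic cubes $\cu_m$ and set $a_m := \mu(\cu_m)$. By stationarity, $\omega \mapsto \mu(\omega,\cu_m)$ generates, via the triadic subdivision of $\cu_{m+1}$ into $3^d$ copies of $\cu_m$, a genuine (normalized) subadditive array, and the uniform bound \eqref{e.subaddmu.bounded} guarantees integrability of $\mu(\cu_m)^+$; subadditivity already forces an a priori \emph{lower} bound (iterating \eqref{e.nu.subadd} downward from a large cube), so each $\mu(\cu_m)$ is in $L^1(\P)$ with uniformly bounded norm. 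The quantity $\overline{\mu}$ should be defined as the limit, as $m\to\infty$, of $\E[\mu(\cu_m)\mid \mathcal{I}]$ where $\mathcal{I}$ is the $\sigma$-algebra of $\Zd$-invariant events; one checks this conditional-expectation sequence is nonincreasing in $m$ (by subadditivity and stationarity), hence converges in $L^1$ and $\P$-a.s.\ to some $\overline{\mu}\in S^1_{\mathrm{inv}}(\P)$.

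Next I would prove the a.s.\ convergence $\mu(\cu_m)\to\overline{\mu}$ along the triadic sequence. The key device is the standard Kingman trick: fix a scale $n$, tile $\cu_m$ (for $m\gg n$) by disjoint translates $z+\cu_n$, $z\in 3^n\Zd\cap\cu_m$, apply subadditivity to get
\begin{equation*}
\mu(\cu_m) \leq \avsum_{z\in 3^n\Zd\cap\cu_m} \mu(z+\cu_n) + (\text{error from the partial cubes near }\partial\cu_m),
\end{equation*}
where the error is $O(3^{n-m})\Lambda$ by \eqref{e.subaddmu.bounded} and hence negligible as $m\to\infty$ for fixed $n$. The spatial average on the right is exactly the Birkhoff/Wiener average of the stationary field $x\mapsto \mu(T_x\omega,\cu_n)$ (more precisely its piecewise-constant version on the triadic lattice), so Proposition~\ref{p.ergodic} gives $\limsup_m \mu(\cu_m) \leq \E[\mu(\cu_n)\mid\mathcal{I}]$ a.s.\ for every fixed $n$; letting $n\to\infty$ yields $\limsup_m \mu(\cu_m)\leq \overline{\mu}$. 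For the matching liminf one uses superadditivity of $m\mapsto \E[\mu(\cu_m)\mid\mathcal{I}]\cdot(\text{nothing})$—rather, the reverse inequality follows since the $L^1$ limit of the averages of $\mu(\cdot+\cu_n)$ is $\E[\mu(\cu_n)\mid\mathcal I]$ and $\mu(\cu_m)$ cannot, on average, fall below its own conditional expectation; combining Fatou with the a.s.\ upper bound and the fact that $\E[\mu(\cu_m)] \to \E[\overline\mu]$ pins down the a.s.\ limit. This gives \eqref{e.subadditive.ergodic} for $U = \cu_0$.

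Finally I would upgrade from the triadic sequence to arbitrary bounded open $U$ and arbitrary $t\to\infty$ (not just $t = 3^m$). Given a bounded open $U$, sandwich $tU$ between two unions of triadic cubes: an inner union $\bigcup_i (z_i + \cu_n)$ of disjoint cubes contained in $tU$, and an outer cube $\cu_N\supseteq tU$. Subadditivity applied to the outer cube, together with \eqref{e.subaddmu.bounded} bounding $\mu$ on the ``collar'' $\cu_N\setminus tU$ and on $tU\setminus\bigcup_i(z_i+\cu_n)$, gives
\begin{equation*}
\avsum_i \mu(z_i+\cu_n) - C\Lambda\,\frac{|tU\setminus \bigcup_i(z_i+\cu_n)|}{|tU|} \leq \mu(tU) \leq \frac{|\cu_N|}{|tU|}\mu(\cu_N) + C\Lambda\,\frac{|\cu_N\setminus tU|}{|tU|}.
\end{equation*}
Choosing the triadic scale $n = n(t)\to\infty$ slowly and using that $U$ open bounded implies $|tU\setminus\bigcup_i(z_i+\cu_n)|/|tU|\to 0$ and $|\cu_N\setminus tU|/|tU|$ can be made small, both sides converge a.s.\ to $\overline{\mu}$ by the already-established triadic convergence plus Proposition~\ref{p.ergodic} applied to $x\mapsto\mu(T_x\omega,\cu_n)$; a diagonal argument over a countable set of scales, together with monotonicity in the domain, removes the dependence of the null set on $n$.

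\textbf{Main obstacle.} The genuinely delicate step is the matching \emph{lower} bound $\liminf_m\mu(\cu_m)\geq\overline{\mu}$: subadditivity only gives inequalities in one direction, so one cannot symmetrically reuse the tiling argument. The standard resolution—showing the decreasing sequence $\E[\mu(\cu_m)\mid\mathcal{I}]$ has limit $\overline\mu$ and that the a.s.\ limit of $\mu(\cu_m)$ equals this (rather than being strictly smaller)—requires combining the a.s.\ upper bound with an $L^1$/expectation identity ($\E\mu(\cu_m)$ decreases to $\E\overline\mu$) and a uniform-integrability argument to pass from $L^1$ convergence of averages to the a.s.\ statement, exploiting the one-sided uniform bound \eqref{e.subaddmu.bounded}. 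Handling the collar/boundary errors uniformly in $t$ (not just along $t=3^m$) for a general open set $U$ is technically fussy but routine once the triadic case is in hand.
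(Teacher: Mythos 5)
Your construction of $\overline{\mu}$ and your upper bound $\limsup_{t}\mu(tU)\leq\overline{\mu}$ (tiling by triadic cubes, subadditivity, Proposition~\ref{p.ergodic} applied to the stationary field $x\mapsto\mu(T_x\omega,\cu_n)$, boundary collar controlled by \eqref{e.subaddmu.bounded}) match the paper's Steps~1 and~2. The gap is exactly where you flagged it: the liminf. Your proposed resolution --- ``Fatou $+$ $\E[\mu(\cu_m)]\to\E[\overline{\mu}]$ $+$ uniform integrability pins down the a.s.\ limit'' --- does not work. Those three facts identify the \emph{limsup} (a reverse-Fatou argument using $\mu\leq\Lambda$ gives $\E[\limsup_m\mu(\cu_m)]\geq\E[\overline{\mu}]$, hence $\limsup_m\mu(\cu_m)=\overline{\mu}$ a.s.) and they give convergence in $L^1$, but $L^1$ convergence together with an a.s.\ one-sided bound never upgrades to a.s.\ convergence of the liminf: take $X_m=\overline{\mu}-\indc_{A_m}$ with $A_m$ independent, $\P[A_m]=1/m$; then $X_m\leq\overline{\mu}$, $\E[X_m]\to\E[\overline{\mu}]$, the family is uniformly bounded, yet $\liminf_m X_m=\overline{\mu}-1$ a.s.\ by Borel--Cantelli. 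Nothing in your argument excludes $\mu(\cu_m)$ from exhibiting this kind of rare-but-recurrent downward excursion, and this is precisely the hard half of Kingman's theorem; uniform integrability is not the missing ingredient. (A secondary inaccuracy: iterating \eqref{e.nu.subadd} downward gives upper bounds on subcubes through averages, not a pointwise lower bound on $\mu(\cu_m)$, so your claimed a priori $L^1$ bound does not follow as stated.)

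What the paper supplies at this point is a quantitative device your proposal lacks: a weak-type maximal inequality for $\Zd$-stationary, nonnegative superadditive quantities (Lemma~\ref{l.maximal.superadditive}, proved by a Vitali covering argument), namely $\P\bigl[\sup_m\nu(\cu_m)>\lambda\bigr]\leq C\lambda^{-1}\sup_m\E[\nu(\cu_m)]$. It is applied to the superadditive defect $\nu_n(U):=\bigl(\fint_U f_n-\mu(U)\bigr)\vee 0$, where $f_n$ is the piecewise-constant field built from $\mu(z+\cu_n)$: by subadditivity and stationarity $\E[\nu_n(\cu_m)]=\E[\mu(\cu_n)]-\E[\mu(\cu_m)]$, which is at most $\ep$ once $n$ is large, so the maximal inequality controls the fluctuation of $\mu$ below its scale-$n$ average \emph{simultaneously over all larger scales and all dilations $tU$}, and this forces $\liminf_{t\to\infty}\bigl(\mu(tU)-\fint_{tU}f_n\bigr)\geq 0$ a.s., hence $\liminf_t\mu(tU)\geq\overline{\mu}_n\to\overline{\mu}$. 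To complete your proof you would need to prove and use such a maximal inequality (or an equivalent device, e.g.\ Kingman's decomposition or the Akcoglu--Krengel argument); the soft measure-theoretic facts you invoke are insufficient.
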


The proof of the subadditive ergodic theorem is based on Proposition~\ref{p.ergodic} and the following weak-type maximal inequality for superadditive quantities, which is a generalization of Lemma~\ref{l.maximal.ergodic}.

\begin{lemma}
\label{l.maximal.superadditive}
There exists~$C(d)<\infty$ such that, for any~$\Zd$--stationary, superadditive quantity~$\nu(\cdot)$ satisfying~$\P [ \nu(\cu_n)\geq 0] = 1$ for~$n\in\N$ we have, for every~$\lambda>0$, 
\begin{align}
\label{e.maximal.superadditive}
\P \left[ \sup_{m\in\N} \nu(\cu_m) > \lambda \right] 
\leq 
\frac{C}{\lambda} 
\sup_{m\in\N} \E \left[ \nu(\cu_m) \right]
\,.
\end{align}
\end{lemma}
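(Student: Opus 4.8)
The plan is to mimic the proof of Step~1 of the maximal ergodic theorem (Lemma~\ref{l.maximal.ergodic}), replacing the classical Hardy--Littlewood weak-type inequality by a Vitali-type covering argument adapted to triadic cubes and to superadditive quantities. The point is that superadditivity plays the role that the \emph{additivity} of $\int f$ played before: if $\nu(\cu_m) > \lambda$ and we tile $\cu_m$ by the disjoint triadic subcubes $\{z + \cu_n\}$ at some smaller scale $n<m$, then superadditivity gives $\lambda < \nu(\cu_m) \le \avsum_{z} \nu(z+\cu_n)$ only when $\nu$ is \emph{sub}additive; for superadditive $\nu$ the inequality runs the wrong way, so I cannot simply pass to subcubes. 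Instead I will pass to \emph{supercubes}: the natural statement to prove is that the event $\{\sup_{m} \nu(\cu_m) > \lambda\}$ is, up to translation, controlled by a maximal function of the single-scale quantities.

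First I would fix $k\in\N$ and introduce the truncated maximal quantity $N_k := \sup_{0\le m\le k} \nu(\cu_m)$, which is an honest random variable (a finite supremum), and it suffices by monotone convergence (sending $k\to\infty$) to bound $\P[N_k > \lambda]$ uniformly in $k$. Next, for the superadditive quantity $\nu$, note that $m\mapsto |\cu_m|\,\nu(\cu_m) = 3^{dm}\nu(\cu_m)$ is superadditive in the \emph{unnormalized} sense, so along the triadic chain $\cu_0 \subset \cu_1 \subset \cdots$ we have, for $n\le m$,
\begin{equation*}
3^{dm}\nu(\cu_m) \ge \sum_{z \in 3^n\Zd \cap \cu_m} 3^{dn}\,\nu(z+\cu_n)
\quad\Longrightarrow\quad
\nu(\cu_m) \ge \avsum_{z\in 3^n\Zd\cap\cu_m} \nu(z+\cu_n).
\end{equation*}
The honest route is the dual one: tile a \emph{large} cube $\cu_M$ by translates of $\cu_k$ and observe that, on the event $\{N_k > \lambda\}$ realized at a translate, superadditivity forces the average of $\nu(z+\cu_k)$ over the tiling of $\cu_M$ to exceed $\lambda$ times the density of "bad" translates. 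Concretely, applying superadditivity of $3^{dM}\nu(\cu_M)$ with the partition of $\cu_M$ into the $3^{d(M-k)}$ translates $z+\cu_k$, $z\in 3^k\Zd\cap\cu_M$, gives
\begin{equation*}
\nu(\cu_M) \ge \avsum_{z\in 3^k\Zd\cap\cu_M} \nu(z+\cu_k)
\ge \avsum_{z\in 3^k\Zd\cap\cu_M} \bigl(\nu(z+\cu_k)\bigr)_+ - \lambda
\ge \lambda \cdot \frac{\#\{z : \nu(z+\cu_k) > \lambda\}}{3^{d(M-k)}} - \lambda,
\end{equation*}
using $\nu(z+\cu_k)\ge 0$ a.s.\ to drop the negative part, wait — more carefully, I would first replace $\nu$ by $\nu \vee 0$ where needed and keep exact constants. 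Then I take expectations: by stationarity each $\E[\nu(z+\cu_k)^+] = \E[\nu(\cu_k)^+] = \E[\nu(\cu_k)] \le \sup_m \E[\nu(\cu_m)]$, and the density of bad translates in expectation is exactly $\P[\nu(\cu_k) > \lambda]$. Rearranging yields $\P[\nu(\cu_k) > \lambda] \le \tfrac{1}{\lambda}\bigl(\E[\nu(\cu_M)] + \lambda\bigr)/\cdots$ — which is not yet what I want, since I need the supremum over $m$ inside the probability, not a single scale. To fix this, I would run the argument with $N_k = \sup_{m\le k}\nu(\cu_m)$ in place of $\nu(\cu_k)$, using that $(\omega,U)\mapsto \sup\{\nu(\omega, z+ U) : z\in\Rd, z+U \text{ a triadic ancestor}\}$ inherits enough subinvariance; the honest technical device here is a stopping-scale / "first bad ancestor" selection, exactly parallel to how the Vitali covering lemma selects maximal bad cubes in the classical proof.

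The main obstacle — and the step I would spend the most care on — is precisely this last point: converting a single-scale weak bound into the all-scales bound $\P[\sup_m \nu(\cu_m) > \lambda]$. In the additive case (Lemma~\ref{l.maximal.ergodic}) this was free because $M_k f$ was already a sup over scales and the classical maximal inequality handles it directly. Here I expect to argue as follows: on $\{\sup_m\nu(\cu_m)>\lambda\}$ let $m^\star$ be the \emph{smallest} scale $m$ with $\nu(\cu_m) > \lambda$ (this exists a.s.\ on the event, and is finite after truncating to $m\le k$). The collection of triadic cubes $\{z + \cu_{m^\star(z)}\}$ selected this way over all translates $z$ are, by the triadic tree structure, either nested or disjoint, so one extracts a disjoint subfamily whose union, by superadditivity summed over that subfamily, carries total weight $\ge \lambda$ times its volume fraction; comparing with $\E[\nu(\cu_M)]\le\sup_m\E[\nu(\cu_m)]$ on a large enclosing cube $\cu_M$ and using stationarity to identify the expected volume fraction of the bad set with $\P[\sup_{m\le k}\nu(\cu_m)>\lambda]$ closes the estimate with a dimensional constant $C(d)$. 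Sending $k\to\infty$ by monotone convergence gives~\eqref{e.maximal.superadditive}. I would also remark that, unlike the additive case, one does not get a "tail" improvement of the form~\eqref{e.maximal2} here — superadditivity is one-sided — but that is not needed, since Proposition~\ref{p.subadditive.ergodic} only uses the crude bound~\eqref{e.maximal.superadditive}.
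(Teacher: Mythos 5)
Your proposal is correct and follows essentially the same route as the paper's proof: on a large cube $\cu_m$, select a disjoint family of smaller cubes on which $\nu\geq\lambda$, use superadditivity together with $\nu\geq 0$ to get $|\cu_m|\,\nu(\cu_m)\geq c\lambda|A|$ for the bad set $A$, take expectations, use $\Zd$-stationarity to identify $\E[|A|]$ with the probability of the (truncated) maximal event, and then send the scales to infinity. One correction: cubes $z+\cu_{k}$ with $z\in\Zd$ at different scales $k$ are \emph{not} automatically nested or disjoint (they are integer translates, not aligned to a common triadic grid), so the disjoint subfamily must be extracted via the Vitali covering lemma — exactly as the paper does, and as you yourself also suggest — rather than by triadic nesting.
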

\begin{proof}

Let~$m,n \in\N$ with~$n<m$. Consider the set 
\begin{equation*}
A:=
\Bigl\{
z \in \left( -\tfrac12 (3^m-3^n) , \tfrac12 (3^m-3^n) \right)^d \cap \Zd \,:\, \sup_{1\leq k \leq n} \nu (z+\cu_k) \geq \lambda
\Bigr\}
\,.
\end{equation*}
By the Vitali covering lemma, there exists a finite, disjoint collection~$\{ z_i + \cu_{k_i} \,:\, 1\leq i \leq N\}$ such that, for every~$i\in \{1,\ldots,N\}$, we have~$z_i \in A$,~$1\leq k_i \leq n$, 
\begin{equation*}
\nu (z_i+\cu_{k_i}) \geq \lambda
\end{equation*}
and, for some constant~$C(d)<\infty$, 
\begin{equation*}
|A|
\leq C
\sum_{i=1}^N 
|\cu_{k_i}|
\,. 
\end{equation*}
By the superadditivity, 
\begin{equation*}
|\cu_m| \nu(\cu_m) 
\geq 
\sum_{i=1}^N 
|\cu_{k_i}| \nu(z_i + \cu_{k_i}) 
\geq 
\sum_{i=1}^N \lambda |\cu_{k_i}|  
\geq 
c \lambda |A|,\qquad \P\text{--a.s.}
\end{equation*}
By stationary, we therefore obtain
\begin{equation*}
(3^m-3^n)^{d} \,
\P \left[ \sup_{1\leq k \leq n} \nu(\cu_k) \geq \lambda \right]
=
\E\left[ |A| \right] 
\leq 
\frac{C|\cu_m| }{\lambda} \E \left[ \nu(\cu_m) \right]
= 
C 3^{md} \lambda^{-1} \E \left[ \nu(\cu_m) \right]
\,.
\end{equation*}
Dividing by~$3^{md}$ and sending~$m\to \infty$, we get 
\begin{equation*}
\P \left[ \sup_{1\leq k \leq n} \nu(\cu_k) \geq \lambda \right]
\leq
C \lambda^{-1}  \sup_{m\in\N} \E \left[ \nu(\cu_m) \right]
\,.
\end{equation*}
Sending~$n\to \infty$ now yields the lemma. 
\end{proof}

\begin{proof}[{Proof of Proposition~\ref{p.subadditive.ergodic}}]
We may suppose that~$\mu$ is bounded below by a constant,~$\P$--a.s. Otherwise we obtain the result for each of the cutoff quantities~$\max\{ \mu ,  k\}$ with~$k\in\R$ and then send~$k\to -\infty$. We therefore proceed under the assumption that,  for some~$\tilde{\Lambda}\in [0,\infty)$, we have~$-\tilde{\Lambda} \leq \mu(U) \leq \Lambda$ for every bounded open subset~$U\subseteq\Rd$,~$\P$--almost surely.

\smallskip

\emph{Step 1.} 
The construction of~$\overline{\mu}$.
For~$n\in\N$, let~$f_n$ denote the random field 
\begin{equation*}
f_n:= \sum_{z\in 3^n\Zd} \mu(z+\cu_n)  \indc_{z+\cu_n}.
\end{equation*}
Observe that~$f_n$ is stationary with respect to~$3^n\Zd$--translations and~$|f_n(x)|\leq\Lambda \vee \tilde{\Lambda}$, and therefore~$|f_n(x)|$ has expectation bounded uniformly over~$x\in\Rd$. In particular, we have that the random field~$x\mapsto f_n(3^{n}x)$ belongs to~$\tilde{S}^1(\P)$, which is the analog of~$S^1(\P)$ except that stationarity is defined with respect to the group action~$\{ T_{3^n y} \,:\, y\in\Rd\}$ instead of the usual one~$\{ T_y \,:\, y\in\Rd\}$. Note that~$\tilde{S}^1_{\mathrm{inv}}(\P)=S^1_{\mathrm{inv}}(\P)$.
By  subadditivity, for any~$m\geq n$ and~$z\in 3^n\Zd$, 
\begin{align}
\label{e.subbaddtof}
\fint_{z+\cu_m} f_n
= \avsum_{z' \in 3^n\Zd \cap (z+\cu_m)} 
\mu(z'+\cu_n)
\geq 
\mu(z+\cu_m).
\end{align}
In fact, for every~$n\leq m \leq l$ and~$z\in 3^m\Zd$, 
\begin{align}
\label{e.orderfnfm}
\fint_{z+\cu_l} ( f_n - f_m )
\geq 
0
\,.
\end{align}
By Proposition~\ref{p.ergodic}, for each~$n\in\N$, there exists~$\overline{\mu}_n\in S^1_\inv(\P)$ such that, for every bounded open subset~$U\subseteq \Rd$, 
\begin{equation}
\label{e.fnconvergemun}
\limsup_{t\to \infty} \
\left|\fint_{tU} f_n- \overline{\mu}_n \right| 
=0,
\quad\mbox{$\P$--a.s.}
\end{equation}
It is evident from~\eqref{e.orderfnfm} that, for every~$n\leq m$, 
\begin{equation}
\label{e.overlinemumn.mono}
\overline{\mu}_n
\geq 
\overline{\mu}_m 
\quad \mbox{$\P$--a.s.}
\end{equation}
and, also using stationarity, 
\begin{equation*}
\langle \overline{\mu}_n - \overline{\mu}_m \rangle
\leq 
\E \left[ \mu(\cu_n) \right] 
-
\E \left[ \mu(\cu_m) \right].
\end{equation*}
By stationarity and subadditivity, the map~$n\mapsto \E[\mu(\cu_n)]$ is non-increasing in~$n$ and therefore has a limit,
\begin{equation}
\label{e.LEcunlim}
L:= \inf_{n\in\N} \E[\mu(\cu_n)] = \lim_{n\to \infty} \E[\mu(\cu_n)]. 
\end{equation}
We deduce the existence of~$\overline{\mu} \in S^1_\inv(\P)$ such that~$\langle \left| \overline{\mu}_n - \overline{\mu} \right| \rangle \to 0$ as~$n \to \infty$. By~\eqref{e.overlinemumn.mono},
\begin{equation}
\label{e.overlinemulim}
\overline{\mu} = \lim_{n\to \infty} \overline{\mu}_n, \quad \mbox{$\P$--a.s.}
\end{equation}

\emph{Step 2.}
We argue that, for every bounded open subset~$U\subseteq\Rd$, 
\begin{align}
\label{e.limsupdown}
\limsup_{t\to \infty} \mu(t U) \leq \overline{\mu}, 
\quad \mbox{$\P$--a.s.}
\end{align}
Fix~$U$ and define, for each~$\ep>0$ and~$n\in\N$, \begin{align*}
V_{n,\ep} := \bigcup \left\{ z+\ep \cu_n \,:\, z\in \ep3^n \Zd , z+\ep\cu_n \subseteq U \right\}. 
\end{align*}
Then~$|U \setminus V_{n,\ep}| \to 0$ as~$\ep \to 0$ for every fixed~$n\in\N$. 
By subadditivity and~\eqref{e.subaddmu.bounded},  
\begin{equation*}
\mu(tU)
\leq 
\frac{|tV_{n,1}|}{|tU|} \mu(tV_{n,1})+ \frac{|tU \setminus tV_{n,1}|}{|tU|}  \mu(t(U\setminus V_{n,1}))
\leq
\frac{|V_{n,1/t}|}{|U|} \fint_{tV_{n,1}} f_n +  \frac{|U \setminus V_{n,1/t}|}{|U|} \Lambda
\,.
\end{equation*}
Therefore, by~\eqref{e.fnconvergemun},
\begin{equation*}
\limsup_{t\to \infty} \mu(tU) 
\leq 
\limsup_{t\to \infty}
\frac{|V_{n,1/t}|}{|U|} \fint_{tV_{n,1}} f_n 
+ 
\limsup_{t\to \infty} \frac{|U \setminus V_{n,1/t}|}{|U|} \Lambda
=
\overline{\mu}_n
\,,
\qquad 
\mbox{$\P$--a.s.}
\end{equation*}
Sending~$n\to \infty$ yields~\eqref{e.limsupdown}. 

\smallskip

\emph{Step 3.}
We next argue that 
\begin{equation}
\label{e.liminfup}
\overline{\mu}
\leq 
\liminf_{t\to \infty} \mu(t U) , 
\quad \mbox{$\P$--a.s.}
\end{equation}
Let~$\lambda,\ep>0$. Define, for every~$n \in \N$, the superadditive process
\begin{equation*}
\nu_n(U):= \biggl( - \mu(U) + \fint_U f_n \biggr) \vee 0 \,.
\end{equation*}
Then, by the subadditivity and stationarity of~$\mu$, 
\begin{align*}  
\E \left[ \nu_n(\cu_m) \right] = \E \left[ \biggl(  - \mu(\cu_m) + \avsum_{z \in 3^n \Z^d \cap \cu_m} \mu(z+\cu_n)
\biggr) \vee 0 \right] = \E \left[\mu(\cu_n)\right] -  \E \left[\mu(\cu_m)\right]  .
\end{align*}
Thus we may take~$n\in\N$ large enough so that~$\nu_n$ satisfies~$\sup_{m\geq n}\E \left[ \nu_n(\cu_m) \right] \leq \ep$. 
Applying Lemma~\ref{l.maximal.superadditive}, we find that 
\begin{equation*}
\P \left[ 
\sup_{m\geq n} \nu_n (\cu_m) \geq \lambda
\right] 
\leq 
\frac{C \ep}{\lambda}. 
\end{equation*}
Since~$\nu_n$ is superadditive and nonnegative, for any~$V \subseteq U$, 
\begin{align*}
\nu_n(V) \leq \frac{|U|}{|V|} \nu_n(U). 
\end{align*}
We deduce that, for every bounded set~$U$ with~$|U|>0$ and~$U \subset \cu_k$ for some~$k \in \N$, 
\begin{align*}
\P \left[ 
\inf_{t\geq 3^n} \left( \mu(tU) - \fint_{tU} f_n \right) \leq - \lambda
\right] 
& =
\P \left[ 
\sup_{t\geq 3^n} \nu_n(tU) \geq \lambda
\right] 
\\ & \leq 
\P \left[ 
\sup_{m\geq n} \nu_n(\cu_m) \geq \frac{|U|}{|\cu_{k+1}|} \lambda
\right] 
\leq
C \frac{|\cu_k|}{|U|} \frac{\ep}{\lambda}\,.
\end{align*}
This implies that
\begin{equation*}
\liminf_{t\to \infty} 
\left( \mu(tU) - \fint_{tU} f_n \right)
\geq 0\,,\qquad \mbox{$\P$--a.s.}
\end{equation*}
In view of~\eqref{e.fnconvergemun}, we therefore obtain
\begin{equation*}
\liminf_{t\to \infty} \mu(tU) 
\geq \overline{\mu}_n\,,\qquad \mbox{$\P$--a.s.}
\end{equation*}
Sending~$n\to \infty$ yields~\eqref{e.liminfup} and completes the proof. 
\end{proof}

\subsection{Qualitative homogenization with stationary random coefficients}
\label{ss.random}

In this section, we intend to generalize the homogenization theory we initiated in Section~\ref{s.periodic} for periodic coefficients to the case of random coefficients. 
Fortunately, most of the computations and the insights we made in the previous chapter remain intact! 
Compared to the periodic case, the primary additional complexities involve setting up the proper abstract framework and constructing correctors with suitable properties. 

\smallskip

The structure of the correctors lies at the core of homogenization theory. In the periodic analysis in Section~\ref{s.periodic}, a lot of heavy lifting was performed by the very simple assertion ``solutions~$\phi_e$ of~\eqref{e.corr.prob} exist and are bounded (in the sense of~\eqref{e.correctorsbounded2}) by periodicity.'' This is what makes periodic homogenization relatively easy. In the stochastic setting, such a naive statement is no longer true. Indeed, much of the effort in building a good theory of stochastic homogenization involves compensating for its absence. 

\smallskip

\subsubsection{The formal setup}

Before discussing the homogenization of elliptic equations with random coefficients, we must formalize a general notion of the ``random coefficient field.'' We want to imagine that our coefficient field~$\a(\cdot)$ is selected at random from the ``set of all coefficient fields''~$\Omega$ by a probability measure~$\mathbb{P}$ (the law of~$\a(\cdot)$). The field~$\a(\cdot)$ was given to us by this probability measure, but now that we have it, we can study the elliptic operator~$-\nabla \cdot \a\nabla$, compute its solutions, and so forth. All of the solutions and quantities we compute will depend on the realization of~$\a(\cdot)$ itself, and so these will also be random objects, functions of~$\a(\cdot)$. We can compute their expectations with respect to~$\P$ and the probability of certain associated events.

\smallskip

Given a dimension~$d$ and ellipticity constants~$0 < \lambda\leq \Lambda <\infty$, we define~$\Omega(d,\lambda,\Lambda)$ to be the set of all coefficient fields, namely the set of measurable functions~$\a:\Rd \to \R^{d\times d}$ satisfying~\eqref{e.ue}. That is, 
\begin{equation}
\label{e.Omega.dLambda}
\Omega(d,\lambda,\Lambda) : = 
\Bigl\{
\a \in L^\infty(\Rd;\R^{d\times d}) \,:\, \forall e \in\Rd, \ 
\lambda |e|^2 \leq e\cdot \a  e \ \ \mbox{and}
\ 
e \cdot \a^{-1} e 
\geq \Lambda^{-1}  |e|^2 
 \Bigr\}.
\end{equation}
Since we think of~$d$,~$\lambda$ and~$\Lambda$ as being fixed global parameters, we typically keep the dependence on them implicit and write~$\Omega$ in place of~$\Omega(d,\lambda,\Lambda)$. 
The natural~$\sigma$--algebra on~$\Omega$ is the one generated by the random variables 
\begin{equation}
\label{e.rvF}
\Omega \ni \a  \mapsto \int_{\Rd} \varphi(x)\a_{ij}(x)\,dx, \quad \varphi\in C^\infty_{\mathrm{c}}(\Rd), \ i,j\in\{1,\ldots,d\}.
\end{equation}
We denote this~$\sigma$--algebra by~$\F$. 
For each~$y\in\Rd$, we let~$T_y:\Omega \to \Omega$ denote the translation operator~$T_y\a := \a(\cdot+y)$.
We extend~$T_y$ to~$\F$ by defining~$T_y E := \{ T_y\a \,:\,\a\in E\}$ for each~$E\in\F$ and to random elements by~$(T_y\xi)(\a) := \xi(T_y\a)$. Analogously, given an open set~$U$, we denote by~$\mathcal{F}(U)$ the sigma algebra generated by the random variables in~\eqref{e.rvF} with~$\varphi \in C^\infty_{\mathrm{c}}(U)$. 

\smallskip

As in the previous section, we always assume that the probability measure~$\P$ we put on~$(\Omega,\F)$ must be \emph{$\Zd$--stationary} with respect to~$\{ T_z \,:\, z\in \Zd\}$:
that is, 
\begin{equation}
\label{e.Zdstat1}
\qquad
\P \circ T_z  = \P, \quad \forall z\in\Zd.
\end{equation}
The \emph{canonical} element of~$\Omega$ is the random field~$(\a,x) \mapsto \a(x)$. Note that the probability space~$(\Omega,\F,\P)$ satisfies the assumptions of the previous section, and we will continue to employ all of the definitions, notation and terminology we introduced there.
We remark that this section only uses the stationarity of~$\P$ (ergodicity is not required).

\smallskip

All random variables and fields will depend on the underlying coefficient field~$\a$, and we typically do \emph{not} display this dependence, as displaying it would be useless, ugly and cumbersome. 

\smallskip

We will give many examples of probability measures~$\P$ satisfying our assumptions in Chapter~\ref{s.CFS}. Until then, the reader can have in mind a ``random checkerboard'' in which independent but identical coins are tossed to determine each (unit) checkerboard square, and~$\a$ is a constant matrix in each square, say~$\Id$ (in white squares) or~$2 \Id$ (in black squares). The resulting probability measure will be both~$\Zd$--stationary and ergodic; in fact, it will possess a finite range of dependence, a much stronger condition than ergodicity. 

\smallskip

Many statements can be called ``homogenization.'' We next present one such statement, concerning the Dirichlet problem. As usual, for each~$\ep>0$, we denote~$\a^\ep:=\a(\frac\cdot\ep)$.

\begin{theorem}[Qualitative homogenization,~$\Z^d$--stationary random coefficients]
\label{t.qualitative.homogenization}
Suppose that~$\P$ is a~$\Zd$--stationary probability measure on~$(\Omega,\F)$.
Then there exists a random element~$\ahom$ of~$\R^{d\times d}$ which satisfies 
\begin{align*}
e\cdot \ahom e \geq \lambda |e|^2 \quad \mbox{and} \quad 
e \cdot \ahom^{-1} e 
\geq \Lambda^{-1}  |e|^2, \quad \forall e\in\Rd, \ \P\mbox{--a.s.},
\end{align*}
such that if, for each~$\ep>0$, bounded Lipschitz domain~$U\subseteq\Rd$,~$\mathbf{f} \in L^2(U;\Rd)$ and~$g\in H^1(U)$, we denote by~$u^\ep_{U,\f,g}$ and~${u}_{U,\f,g}$, respectively, the solutions of
\begin{align*}
\left\{
\begin{aligned}
& -\nabla \cdot \a^\ep \nabla u^\ep_{U,\f,g} = \nabla \cdot \f & \mbox{in} & \ U, \\
& u^\ep_{U,\f,g} = g & \mbox{on} & \ \partial U,
\end{aligned}
\right.
\quad \mbox{and} \quad 
\left\{
\begin{aligned}
& -\nabla \cdot \ahom\nabla u_{U,\f,g} = \nabla \cdot \f & \mbox{in} & \ U, \\
& u_{U,\f,g} = g & \mbox{on} & \ \partial U,
\end{aligned}
\right.
\end{align*}
then we have  
\begin{equation}
\label{e.qualhomogP}
\P \left[
\sup_{U,\f,g} 
\limsup_{\ep \to 0}\,
\Bigl(
%\bigl\| u^\ep_{U,\f} - u_{U,\f} \bigr\|_{L^2(U)} 
%+
\bigl\| \nabla u^\ep_{U,\f,g} {-} \nabla u_{U,\f,g} \bigr\|_{H^{-1}(U)}
+
\bigl\| \a^\ep \nabla u^\ep_{U,\f,g} {-}\; \ahom \nabla u_{U,\f,g} \bigr\|_{H^{-1}(U)}
\Bigr)
=0
\right] 
= 
1\,.
\end{equation}
If, in addition,~$\P$ is ergodic, then the random matrix~$\ahom$ is deterministic. 
\end{theorem}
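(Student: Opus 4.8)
The plan is to reduce the theorem to the ingredients already assembled in Sections~\ref{s.periodic} and~\ref{ss.ergodic}: the two-scale computation of Lemma~\ref{l.twoscale} and its Corollary~\ref{c.twoscale}, the Wiener ergodic theorem (Proposition~\ref{p.ergodic}, Corollary~\ref{c.stat.Hminusone}), and the subadditive ergodic theorem (Proposition~\ref{p.subadditive.ergodic}). The one genuinely new object to construct is the family of correctors $\phi_e$ in the random setting, together with flux correctors $\s_e$, such that the rescaled versions $\phi_e^\ep$ and $\s_e^\ep$ satisfy the sublinearity bounds that replace the periodic ``boundedness of correctors'' estimate~\eqref{e.correctorsbounded2}. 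More precisely, first I would fix $e\in\Rd$ and, for each $m\in\N$, solve the cell problem on the triadic cube $\cu_m$ with affine-plus-Dirichlet or periodic boundary data, obtaining a minimizer $\phi_{e,m}$ of the Dirichlet energy $\fint_{\cu_m} \tfrac12(e+\nabla v)\cdot\a(e+\nabla v)$ over the appropriate affine class. The key quantities are $\mu_m := \fint_{\cu_m}\tfrac12(e+\nabla\phi_{e,m})\cdot\a(e+\nabla\phi_{e,m})$ (and a dual/complementary energy $\mu_m^*$), which are $\Zd$-stationary, subadditive (resp.\ superadditive), and bounded by $\Lambda|e|^2$ by the trivial competitor $v\equiv 0$; Proposition~\ref{p.subadditive.ergodic} then gives $\P$-a.s.\ limits $\overline{\mu}$ and $\overline{\mu}^*$, and one identifies the limit by showing $\overline{\mu}=\overline{\mu}^*$, which pins down both the limiting energy and, by a standard energy-comparison argument, the weak-$L^2$ limits of the gradient field $e+\nabla\phi_{e,m}$ and the flux field $\a(e+\nabla\phi_{e,m})$. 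This defines $\ahom e := \lim_{m\to\infty}\fint_{\cu_m}\a(e+\nabla\phi_{e,m})$, which is a random element of $\R^{d\times d}$ (deterministic if $\P$ is ergodic, by~\eqref{e.ergodicityduh}); the bound~\eqref{e.ahom.ue} for $\ahom$ follows from the same energy characterization exactly as in the periodic case.

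Next I would upgrade these cube-wise objects to a globally-defined stationary corrector: the increments $\nabla\phi_{e,m}-\nabla\phi_{e,m-1}$ can be controlled in $\underline{L}^2(\cu_m)$ by the energy differences $\mu_{m-1}-\mu_m$ (which are summable in expectation since $m\mapsto\E[\mu_m]$ is monotone and bounded), so along a subsequence $\nabla\phi_{e,m}$ converges to a stationary field $\nabla\phi_e\in L^2_{\pot}(\Rd)$ with $\langle\nabla\phi_e\rangle=0$ and with $\a(e+\nabla\phi_e)$ divergence-free; one then recovers a (generally non-stationary) scalar potential $\phi_e$ with stationary gradient. The crucial \emph{sublinearity} statement---that $\|\phi_e^\ep\|_{L^2(U)}+\|\s_e^\ep\|_{L^2(U)}\to 0$ as $\ep\to0$ for every bounded $U$---is exactly the content of Corollary~\ref{c.stat.Hminusone} applied to the mean-zero stationary fields $\nabla\phi_e$ and (for the flux corrector) the stationary field $\g_e=\a(e+\nabla\phi_e)-\ahom e\in S^2_0(\P)$: the multiscale Poincar\'e inequality converts the $\Hminusul$-decay provided by the ergodic theorem into the desired decay of $\phi_e$ and $\s_e$ after rescaling, using~\eqref{e.Sobolev.scaling}. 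The flux corrector $\s_e$ itself is constructed by solving $-\Delta\s_{e,ij}=\partial_{x_j}\g_{e,i}-\partial_{x_i}\g_{e,j}$ on large cubes and passing to the limit, just as in~\eqref{e.fluxcorrect}, with the same skew-symmetry and $\nabla\cdot\s_e=\g_e$ identity.

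With correctors in hand, the homogenization statement~\eqref{e.qualhomogP} is obtained exactly as in the periodic case: Lemma~\ref{l.twoscale} and Corollary~\ref{c.twoscale} are purely algebraic and use no periodicity (as the authors remark), so for $u\in W^{2,\infty}$ the two-scale error $\nabla\cdot(\a^\ep\nabla w^\ep)-\nabla\cdot(\ahom\nabla u)$ is bounded in $H^{-1}(U)$ by $C\sum_k(\|\s_{e_k}^\ep\|_{L^2(U)}+\|\phi_{e_k}^\ep\|_{L^2(U)})\|\nabla^2 u\|_{L^\infty}$, which now tends to $0$ $\P$-a.s.\ by the sublinearity just established (rather than being $O(\ep)$); the boundary-layer argument of Lemma~\ref{l.DP.2infty} goes through verbatim with $\ep$ replaced by the random sublinearity modulus, yielding $\|\nabla u^\ep-\nabla w^\ep\|_{L^2(U)}\to0$ and hence the $H^{-1}$ convergence of gradients and fluxes; Corollary~\ref{c.DP}'s density argument removes the $W^{2,\infty}$ assumption. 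To get the supremum over $(U,\f,g)$ inside the probability in~\eqref{e.qualhomogP}, I would note that it suffices to prove the a.s.\ convergence for a countable dense family of data (e.g.\ cubes $U$ with rational vertices and $\f,g$ in a countable dense subset) and then use the uniform energy estimates to pass to general data on the full-measure event. The main obstacle is the identification $\overline{\mu}=\overline{\mu}^*$ of the subadditive and superadditive limits---equivalently, showing that the boundary conditions imposed on the finite-volume cell problems do not affect the limit; this is the one place where a real argument (a convexity/duality pairing between the primal and dual minimizers, exploiting that their gradient fields are asymptotically curl-free and divergence-free respectively) is needed, everything else being a transcription of Section~\ref{s.periodic} with ``periodicity'' replaced by ``ergodic theorem.''
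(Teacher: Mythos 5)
Your route is essentially the paper's \emph{second} proof of this theorem (the variational argument of Section~\ref{ss.variational}): finite-volume cell problems on triadic cubes, the subadditive ergodic theorem for the energies, sublinearity of the resulting (finite-volume) correctors via the multiscale Poincar\'e inequality and Corollary~\ref{c.stat.Hminusone}, and then the deterministic two-scale machinery of Proposition~\ref{p.DP}. For symmetric coefficient fields this works. But there is a genuine gap with respect to the theorem as stated: the standing assumption~\eqref{e.ue} does \emph{not} require $\a=\a^t$, and your construction is built on minimizing the Dirichlet energy $\fint_{\cu_m}\tfrac12(e+\nabla v)\cdot\a(e+\nabla v)$, which only sees the symmetric part $\s=\tfrac12(\a+\a^t)$. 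When $\a$ is not symmetric, the minimizer solves $-\nabla\cdot\s\nabla u=0$ rather than $-\nabla\cdot\a\nabla u=0$; consequently the flux $\a(e+\nabla\phi_e)$ you obtain is not divergence-free, the flux corrector identity $\nabla\cdot\s_e=\a(e+\nabla\phi_e)-\ahom e$ fails, and the two-scale expansion (Lemma~\ref{l.twoscale}/Proposition~\ref{p.DP}) collapses. This is precisely why the paper restricts its variational proof to the symmetric case~\eqref{e.symm.sec24} and defers the nonsymmetric extension to the double-variable quantities of Section~\ref{ss.nonsymm}. As written, your argument proves the theorem only under an additional symmetry hypothesis.

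The paper's primary proof avoids this issue entirely by working in infinite volume on the probability space: the corrector gradient $\nabla\phi_e\in V^2_{\pot,0}(\P)$ is produced by the Lax--Milgram lemma (Proposition~\ref{p.qual.correctors}), with the solenoidality of the flux coming from the stationary Helmholtz--Hodge decomposition (Lemma~\ref{l.stat.HH}); neither step uses symmetry of $\a$, and the only ergodic input is the Wiener ergodic theorem through Corollary~\ref{c.stat.Hminusone} (no subadditive ergodic theorem is needed). A second, more minor point: you defer the identification $\overline{\mu}=\overline{\mu}^{\,*}$ as ``the one place where a real argument is needed,'' whereas the paper's variational proof sidesteps any primal--dual identification at the qualitative level, controlling $\nabla v_m-\nabla v_n$ directly by the strictness of subadditivity (uniform convexity) and then using the multiscale Poincar\'e inequality to get flatness of gradients and fluxes; if you keep the dual quantity, you still owe a complete proof of that identification.
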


\subsubsection{Construction of first-order correctors}

The first-order correctors will be constructed by a variant of the Riesz representation theorem. This requires the identification of a suitable Hilbert space. We let~$V^2(\P) = S^2(\P)^d$ denote the linear space of~$\Rd$--valued stationary random fields~$\f$ satisfying~$\left\langle |\f|^2 \right\rangle < \infty$. Observe that~$V^2(\P)$ is a Hilbert space with respect to the inner product~$(\f,\g) \mapsto \left\langle \f\cdot \g \right\rangle$. We also define the following closed subspaces of~$V^2(\P)$:
\begin{align*}
V^2_{\mathrm{inv}}(\P) := & \ S^2_{\mathrm{inv}}(\P)^d = 
\big\{
\f \in V^2(\P)\,:\, T_y \f = \f, \quad \forall y\in\Rd 
\big\}, 
\\ 
V^2_\pot(\P) 
:= & \
\big\{
\f \in V^2(\P)\,:\, \partial_{x_i} \f_j = \partial_{x_j} \f_i, \quad \forall i,j\in\{1,\ldots,d\}
\big\}, 
\\ 
V^2_\sol(\P) := &  \
\big\{
\f \in V^2(\P)\,:\, \nabla \cdot \f = 0
\big\}.
\end{align*}
The conditions above are to be understood in the~$\P$--almost sure sense, and the spatial derivatives in the sense of distributions; for instance,~$\f \in V^2(\P)$ belongs to~$\f \in V^2_{\mathrm{pot}}$ if 
\begin{equation*}
\P \biggl[
\int_{\Rd} \partial_{x_i} \varphi \f_j \,dx
=
\int_{\Rd} \partial_{x_j} \varphi  \f_i \,dx, \quad
\forall i,j\in \{1,\ldots,d\},\  \varphi \in C^\infty_{\mathrm{c}}(\Rd)
\biggr] = 1. 
\end{equation*}
We call the space~$V^2_{\inv}(\P)$/$V^2_{\pot}(\P)$/$V^2_{\sol}(\P)$ the space of \emph{stationary random shift invariant/potential/solenoidal fields}, respectively. It is clear that 
\begin{equation*}
V^2_{\inv} (\P) \subseteq V^2_\pot(\P) \cap V^2_\sol(\P),
\end{equation*}
as the set~$V^2_{\inv}(\P)$ consists of the set of random fields which are constant in space~$\P$-almost surely; in other words, the elements of~$V^2_{\mathrm{inv}}(\P)$ are random elements of~$\Rd$. If~$\P$ is assumed to be ergodic, then~$V^2_{\inv}(\P)$ can be identified with~$\Rd$.
We next define the following subspaces of~$V^2(\P)$:
\begin{equation*}
\left\{
\begin{aligned}
& V^2_0(\P) := 
(V^2_{\inv}(\P))^\perp 
= 
\big\{
\f \in V^2(\P) \,:\, 
\langle \f\cdot \g \rangle = 0, \ \forall \g\in V^2_{\inv}(\P)
\big\}, 
\\
& V^2_{\pot,0}(\P)
:= 
V^2_{\pot}(\P)
\cap V^2_0(\P),
\\  &
V^2_{\sol,0}(\P)  
:= 
V^2_{\sol}(\P)
\cap V^2_0(\P).
\end{aligned}
\right.
\end{equation*}
We sometimes abuse notation by denoting an element of~$V^2_\pot(\P)$ by~$\nabla f$, with the understanding that the function~$f$ is not necessarily stationary and, indeed, is only defined up to constants (even if~$\nabla f \in V^2_{\pot,0}(\P)$).

\begin{proposition}[Existence of first-order correctors]
\label{p.qual.correctors}
Assume~$\P$ is a~$\Zd$--stationary probability measure on~$(\Omega,\F)$. For each~$e\in \Rd$, there exists a unique potential field~$\nabla \phi_e \in V^2_{\pot,0}(\P)$ such that~$\a(e+\nabla \phi_e)\in V^2_\sol(\P)$. 
\end{proposition}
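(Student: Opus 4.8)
The plan is to set up the problem as an orthogonal decomposition in the Hilbert space $V^2(\P)$ and then invoke a variational characterization. First I would fix $e\in\Rd$ and consider the affine subspace $e + V^2_{\pot,0}(\P)$, where we view $e$ as a (deterministic, hence invariant) constant field. Since $V^2_{\pot,0}(\P)$ is a closed subspace of the Hilbert space $V^2(\P)$, the quadratic functional
\begin{equation*}
\f \mapsto \bigl\langle \f \cdot \a \f \bigr\rangle
\end{equation*}
is, by uniform ellipticity~\eqref{e.ue}, comparable to $\|\f\|_{V^2(\P)}^2$ on this affine subspace, strictly convex, and coercive; hence it admits a unique minimizer over $e + V^2_{\pot,0}(\P)$. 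Write this minimizer as $e + \nabla\phi_e$ with $\nabla\phi_e \in V^2_{\pot,0}(\P)$. (The object $\phi_e$ itself need not be stationary; only its gradient is, which is why we work throughout with $V^2_{\pot,0}(\P)$ rather than with functions.)

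Next I would extract the Euler--Lagrange condition. Perturbing the minimizer in directions $\g \in V^2_{\pot,0}(\P)$ gives
\begin{equation*}
\bigl\langle \g \cdot \a(e + \nabla\phi_e) \bigr\rangle = 0, \qquad \forall\, \g \in V^2_{\pot,0}(\P).
\end{equation*}
This says precisely that $\a(e+\nabla\phi_e)$ is orthogonal to $V^2_{\pot,0}(\P)$ in $V^2(\P)$. To conclude that $\a(e+\nabla\phi_e)\in V^2_\sol(\P)$, I would establish the Helmholtz-type decomposition $V^2(\P) = V^2_{\inv}(\P) \oplus V^2_{\pot,0}(\P) \oplus V^2_{\sol,0}(\P)$, or at least the weaker statement that $(V^2_{\pot,0}(\P))^\perp = V^2_{\inv}(\P) \oplus V^2_{\sol,0}(\P) = V^2_\sol(\P)$. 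One inclusion is immediate: any invariant field is both potential and solenoidal, and $V^2_{\sol,0}(\P) \perp V^2_{\pot,0}(\P)$ follows from integrating by parts against a compactly supported test function and using stationarity (the boundary terms average to zero). The reverse inclusion---that a stationary field orthogonal to all stationary potential fields must be solenoidal---is the step requiring care: given $\f \perp V^2_{\pot,0}(\P)$, one tests against $\nabla\psi$ for $\psi \in C^\infty_c(\Rd)$ (note $\nabla\psi$, suitably interpreted via the stationary structure, or rather the fact that $\E[\int \f \cdot \nabla\psi]=0$ implies $\nabla\cdot\f = 0$ in the distributional sense $\P$--a.s.) and concludes $\nabla \cdot \f = 0$ $\P$--a.s.; combined with subtracting its projection onto $V^2_{\inv}(\P)$ this places $\f$ in $V^2_\sol(\P)$.

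For uniqueness, suppose $\nabla\phi_e, \nabla\phi_e' \in V^2_{\pot,0}(\P)$ both yield solenoidal flux. Then $\nabla(\phi_e - \phi_e') \in V^2_{\pot,0}(\P)$ and $\a\nabla(\phi_e-\phi_e') = \a(e+\nabla\phi_e) - \a(e+\nabla\phi_e') \in V^2_\sol(\P)$, so testing the solenoidal field against the potential field $\nabla(\phi_e-\phi_e')$ gives $\langle \nabla(\phi_e-\phi_e') \cdot \a\nabla(\phi_e-\phi_e')\rangle = 0$, whence $\nabla(\phi_e-\phi_e') = 0$ by ellipticity. The main obstacle, as noted, is the careful verification of the orthogonal decomposition of $V^2(\P)$ in the non-ergodic, distributional setting---in particular ensuring that stationarity legitimately kills the boundary terms when integrating by parts, so that ``$\perp$ all stationary potential fields'' really does imply ``solenoidal.'' Everything else is a routine application of the projection theorem (equivalently, Lax--Milgram / Riesz representation) once the correct Hilbert-space framework is in place, exactly as the text's remark about ``a variant of the Riesz representation theorem'' anticipates.
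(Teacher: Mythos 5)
Your overall architecture is the paper's: a Hilbert-space solvability argument on $V^2_{\pot,0}(\P)$ combined with the identification $(V^2_{\pot,0}(\P))^\perp = V^2_\sol(\P)$ (Lemma~\ref{l.stat.HH}). However, there are two genuine gaps in the argument as written. First, the existence step: you minimize $\f\mapsto\langle \f\cdot\a\f\rangle$ over $e+V^2_{\pot,0}(\P)$, but the first variation of this functional is $\langle \g\cdot(\a+\a^t)(e+\nabla\phi_e)\rangle=0$, i.e.\ it only characterizes the flux of the \emph{symmetric part} $\s=\tfrac12(\a+\a^t)$. The proposition is stated for coefficient fields satisfying~\eqref{e.ue}, which are allowed to be nonsymmetric, and in that case your minimizer does not satisfy $\langle\g\cdot\a(e+\nabla\phi_e)\rangle=0$ for all $\g\in V^2_{\pot,0}(\P)$, so you cannot conclude $\a(e+\nabla\phi_e)\in V^2_\sol(\P)$. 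The repair is exactly what the paper does (and what you name only in passing at the end): apply Lax--Milgram to the nonsymmetric, coercive, bounded bilinear form $(\nabla\phi,\g)\mapsto\langle\g\cdot\a\nabla\phi\rangle$ on $V^2_{\pot,0}(\P)$; the purely variational route is reserved in the text for the symmetric case of Section~\ref{ss.variational}.

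Second, the Helmholtz decomposition. You call the orthogonality $V^2_{\pot,0}(\P)\perp V^2_\sol(\P)$ ``immediate'' because ``the boundary terms average to zero'' by stationarity. That is not the mechanism and, as stated, the step would fail: the potential $f$ with $\nabla f\in V^2_{\pot,0}(\P)$ is not itself stationary, the boundary term produced by integrating by parts over a large cube involves $f$ (not $\nabla f$), and stationarity alone does not make it vanish --- a nonzero constant field lies in both $V^2_\pot(\P)$ and $V^2_\sol(\P)$, so the mean-zero hypothesis must enter. What kills the boundary term is the strict sublinearity of $f$ on large cubes, which is precisely the ergodic-theorem input: Corollary~\ref{c.stat.Hminusone} together with~\eqref{e.L2toHminus}, applied because $\nabla f\in V^2_0(\P)$. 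Conversely, for the inclusion $(V^2_{\pot,0}(\P))^\perp\subseteq V^2_\sol(\P)$, testing against $\nabla\psi$ with $\psi\in C^\infty_c(\Rd)$ is not legitimate as such, since a deterministic compactly supported gradient is not an element of $V^2_{\pot,0}(\P)$, and orthogonality in $V^2(\P)$ only yields statements in expectation; to upgrade to the $\P$--a.s.\ assertion $\nabla\cdot\g=0$ one must manufacture genuinely stationary test fields, which the paper does by summing $\Zd$-translates of $\nabla\chi$ weighted by indicators of the events $\{\pm\int_{\cu_0}\g\cdot\nabla\chi\geq0\}$ composed with the translations, and then concluding by a countable density argument. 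Your parenthetical gestures at this but does not supply the idea; without it the ``expectation zero $\Rightarrow$ a.s.\ solenoidal'' step does not go through.
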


The proof of Proposition~\ref{p.qual.correctors} relies on the following version of the Helmholtz-Hodge decomposition for stationary random fields. 

\begin{lemma}[Stationary Helmholtz-Hodge decomposition]
\label{l.stat.HH}
Suppose~$\P$ is a~$\Zd$--stationary probability measure on~$(\Omega,\F)$.
Then 
\begin{equation}
%\label{e.}
V^2(\P)
=
V^2_{\inv}(\P) \oplus
V^2_{\pot,0}(\P)
\oplus 
V^2_{\sol,0}(\P)
\,.
\end{equation}
\end{lemma}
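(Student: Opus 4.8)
The plan is to exhibit the three subspaces as mutually orthogonal and spanning all of $V^2(\P)$. First I would verify orthogonality. That $V^2_{\pot,0}(\P)\perp V^2_{\inv}(\P)$ and $V^2_{\sol,0}(\P)\perp V^2_{\inv}(\P)$ is immediate from the definition of the subscript $0$. The crucial orthogonality is $V^2_{\pot,0}(\P)\perp V^2_{\sol,0}(\P)$: given $\nabla\phi\in V^2_{\pot,0}(\P)$ and $\g\in V^2_{\sol,0}(\P)$, one wants $\langle \nabla\phi\cdot\g\rangle=0$. The clean way is to use a mollification/localization argument together with stationarity: for a stationary potential field $\nabla\phi$ and a stationary solenoidal field $\g$, one has, after integrating against a smooth cutoff $\chi_R$ supported on $B_R$ and using $\nabla\cdot\g=0$ in the distributional sense, $\int \nabla\phi\cdot\g\,\chi_R = -\int \phi\, \g\cdot\nabla\chi_R$; dividing by $|B_R|$, taking $R\to\infty$, and invoking the Wiener ergodic theorem (Proposition~\ref{p.ergodic}) together with the stationarity of $\phi\g$ (more precisely, of $\nabla\phi$ and $\g$ — here one needs to be slightly careful since $\phi$ itself need not be stationary, so one works with increments $\phi(\cdot)-\phi(0)$, whose gradient is stationary) shows the spatial average of $\nabla\phi\cdot\g$ coincides with its mean $\langle\nabla\phi\cdot\g\rangle$, while the boundary term vanishes because $\|\nabla\chi_R\|_\infty = O(1/R)$ and the relevant $L^2$ averages are bounded. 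This gives the orthogonality.

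Next I would establish that the three subspaces span $V^2(\P)$. Since they are mutually orthogonal closed subspaces, it suffices to show that their sum is dense, equivalently that $\bigl(V^2_{\inv}(\P)\oplus V^2_{\pot,0}(\P)\oplus V^2_{\sol,0}(\P)\bigr)^\perp=\{0\}$. So let $\f\in V^2(\P)$ be orthogonal to all three. Orthogonality to $V^2_{\inv}(\P)$ means $\f\in V^2_0(\P)$. Orthogonality to $V^2_{\pot,0}(\P)$ means $\langle \f\cdot\nabla\psi\rangle=0$ for every stationary gradient $\nabla\psi\in V^2_{\pot,0}(\P)$; testing against $\nabla(\psi(\cdot+y)-\psi(\cdot))$-type increments and using stationarity, this forces $\nabla\cdot\f=0$ in the distributional sense $\P$-a.s., i.e. $\f\in V^2_{\sol}(\P)$; combined with $\f\in V^2_0(\P)$ we get $\f\in V^2_{\sol,0}(\P)$. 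But then orthogonality of $\f$ to $V^2_{\sol,0}(\P)$ gives $\langle|\f|^2\rangle=0$, so $\f=0$. Hence the sum is all of $V^2(\P)$.

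The main obstacle, and the step deserving the most care, is the handling of the fact that the scalar potentials $\phi,\psi$ appearing in elements of $V^2_{\pot}(\P)$ are \emph{not} themselves stationary random variables — only their gradients are. This means one cannot naively apply the ergodic theorem to $\phi\g$ or integrate by parts on all of $\Rd$ without control of the growth of $\phi$. The standard remedy is to work with the \emph{sublinear growth} of stationary-gradient fields: for $\nabla\phi\in V^2_{\pot,0}(\P)$, the function $\phi$ (normalized by $\phi(0)=0$, say) satisfies $\fint_{B_R}|\phi|^2 = o(R^2)$ as $R\to\infty$, $\P$-a.s., which is exactly enough to kill the boundary term $\fint_{B_R}\phi\,\g\cdot\nabla\chi_R$. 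This sublinearity is itself a consequence of the Wiener ergodic theorem applied to $\nabla\phi$ together with the multiscale Poincar\'e inequality (Proposition~\ref{p.MSP}) — essentially the same mechanism used to prove Corollary~\ref{c.stat.Hminusone}. Once this sublinearity is in hand, every integration-by-parts manipulation above is justified, and the decomposition follows by elementary Hilbert space arguments.
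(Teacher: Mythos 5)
Your overall Hilbert-space skeleton is sound, and your proof of the key orthogonality (a stationary potential field in $V^2_{\pot,0}(\P)$ is orthogonal to stationary solenoidal fields) is essentially the paper's argument: cutoff, integration by parts, and killing the boundary term by the sublinearity of the potential, which comes from Corollary~\ref{c.stat.Hminusone} together with the multiscale Poincar\'e inequality. One small caveat there: since ergodicity is not assumed, the spatial averages of $\nabla\phi\cdot\g$ converge to an invariant random variable rather than to the constant $\langle\nabla\phi\cdot\g\rangle$, so you should take expectations at the end (as the paper does, estimating the error terms in expectation) rather than identifying the a.s.\ limit with the mean.

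The genuine gap is in the completeness step, where you assert that $\langle\f\cdot\nabla\psi\rangle=0$ for all $\nabla\psi\in V^2_{\pot,0}(\P)$, ``testing against $\nabla(\psi(\cdot+y)-\psi(\cdot))$-type increments and using stationarity,'' forces $\nabla\cdot\f=0$ in the distributional sense $\P$-a.s. As stated this does not work: if $\nabla\psi\in V^2_{\pot,0}(\P)$ then $T_y\nabla\psi-\nabla\psi$ again lies in $V^2_{\pot,0}(\P)$, so testing against increments yields nothing beyond the hypothesis, and the hypothesis is only an identity \emph{in expectation}; upgrading it to an almost-sure distributional identity is the real content of the lemma. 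To see why stationarity alone cannot do this, note that a field of the form $\f(\a,x)=s(\a)\,\mathbf{v}(x)$, with $\mathbf{v}$ a fixed $\Zd$-periodic non-solenoidal field and $s$ a mean-zero, translation-invariant random sign, is stationary, satisfies $\langle\f\cdot g\rangle=0$ against every \emph{deterministic} periodic test gradient $g$, and yet is not a.s.\ solenoidal; it is detected only by $\a$-dependent test gradients such as $s(\a)g(x)$. The missing ingredient is precisely a construction of environment-dependent, sign-correcting test fields: the paper, for $\chi\in C^\infty_c(\cu_0)$, sets $E_\pm:=\{\pm\int_{\cu_0}\g\cdot\nabla\chi\geq0\}$ and builds $\f_\pm(\a,x)=\sum_{z\in\Zd}\indc_{E_\pm}(T_z\a)\indc_{z+\cu_0}(x)\nabla\chi(x-z)\in V^2_{\pot,0}(\P)$, so that orthogonality gives $\E\bigl[\bigl|\int_{\cu_0}\g\cdot\nabla\chi\bigr|\bigr]=\bigl\langle\g\cdot(\f_+-\f_-)\bigr\rangle=0$, hence the a.s.\ statement for each $\chi$, and then a countable family of test functions with small support plus translation invariance of $V^2(\P)$ yields $\g\in V^2_{\sol}(\P)$. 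You need this (or an equivalent randomized test-field argument) inserted at the completeness step; the remainder of your outline then goes through.
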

\begin{proof}
The proof comes in two steps. 
We will argue that 
\begin{equation}
\label{e.statHH.yesperp}
V^2_{\pot,0}(\P) \perp V^2_\sol(\P).
\end{equation}
and
\begin{equation}
\label{e.statHH.perpcont}
(V^2_{\pot,0}(\P))^\perp \subseteq V^2_\sol(\P)
\,.
\end{equation}
Together~\eqref{e.statHH.yesperp} and~\eqref{e.statHH.perpcont} imply that~$(V^2_{\pot,0}(\P))^\perp = V^2_\sol(\P)$. Since~$V^2_\sol(\P) = V^2_{\inv}(\P) \oplus V^2_{\sol,0}(\P)$ by definition, this yields the lemma. 

\smallskip

\emph{Step 1.} We prove~\eqref{e.statHH.yesperp}. 
Pick~$\g\in V^2_{\sol}(\P)$ and~$\nabla f\in V^2_{\pot,0}(\P)$ with~$\langle |\nabla f|^2\rangle=\langle |\g|^2\rangle =1$. Let~$m,n\in\N$ and~$\chi \in C^\infty_{\mathrm{c}}(\Rd)$ be such that~$n\leq m$ and  
\begin{align*}
\indc_{\cu_m} \leq \chi \leq \indc_{\cu_m+\cu_n}, 
\qand
\left\| \nabla \chi \right\|_{L^\infty(\Rd)} 
\leq 
C3^{-n}.
\end{align*}
%Denote~$\tilde{f} := f - \fint_{\cu_m} f$. Note that~$\tilde{f}$ is unambiguously defined, as it can be determined only from~$\nabla f$. 
Compute 
\begin{align*}
0
&
=
\int_{\Rd} \nabla (\chi (f- (f)_{\cu_{m+1}} ) ) \cdot \g
\\ & 
=
\int_{\Rd} \chi \nabla f \cdot \g 
+
\int_{\Rd} 
(f- (f)_{\cu_{m+1}} ) (\nabla \chi \cdot \g )
\\ &
=
\int_{\cu_m} \nabla f \cdot \g 
+
\int_{(\cu_m+\cu_n)\setminus\cu_m}
\chi\nabla f \cdot \g
+
\int_{\Rd} 
 (f- (f)_{\cu_{m+1}} ) (\nabla \chi \cdot \g )
 \,.
\end{align*}
Using H\"older's inequality, we estimate
\begin{align*}
%\lefteqn{
\E\biggl[ \,
\biggl| \int_{(\cu_m+\cu_n)\setminus\cu_m}
\chi\nabla f \cdot \g \biggr| \,
\biggr]
%} \qquad & 
%\\ &
&
\leq 
\E\biggl[ 
\int_{(\cu_m+\cu_n)\setminus\cu_m}
\left| \nabla f \right|^2  \biggr]^{\nicefrac12} 
\E \biggl[ 
\int_{(\cu_m+\cu_n)\setminus\cu_m}
\left| \g \right|^2 \biggr]^{\nicefrac12}
\\ & 
\leq 
C 3^{n-m} |\cu_m|
\left\langle |\nabla f |^2\right\rangle^{\nicefrac12}
\left\langle |\g |^2\right\rangle^{\nicefrac12}
=
C3^{n-m} |\cu_m|
\end{align*}
and
\begin{align*}
%\lefteqn{
\E\left[
\left| \int_{\Rd} 
(f- (f)_{\cu_{m+1}} ) (\nabla \chi \cdot \g )
\right| \right]
%} \qquad & 
%\\ &
&
\leq 
C3^{-n} 
\E\left[
\int_{\cu_m+\cu_n}
\left| f- (f)_{\cu_{m+1}} \right|^2 \right]^{\nicefrac12}
\E \left[ 
\int_{\cu_m+\cu_n}
\left| \g \right|^2 \right]^{\nicefrac12}
\\ & 
\leq
C3^{m-n} |\cu_m|
%\left\langle |\g |^2\right\rangle^{\frac12}
\E\biggl[
3^{-2m}
\fint_{\cu_{m+1}}
\left| f- (f)_{\cu_{m+1}} \right|^2 \biggr]^{\nicefrac12}.
\end{align*}
We deduce that 
\begin{align*}
%\label{e.}
\left| \left\langle \nabla f \cdot \g \right\rangle \right|
&
=
\biggl| \E\biggl[ \fint_{\cu_m} \nabla f \cdot \g \biggr] \biggr|
\leq 
C
\Bigl( 
3^{m-n}
\E\left[
3^{-2m}
\left\| 
f- (f)_{\cu_{m+1}} \right\|_{\underline{L}^2(\cu_{m+1})}^2 \right]^{\nicefrac12}
+
C 3^{n-m}
\Bigr)\,.
\end{align*}
By Corollary~\ref{c.stat.Hminusone},~\eqref{e.L2toHminus}, 
and the assumption~$\nabla f\in V^2_0(\P)$, we have that 
\begin{align*}
\limsup_{m\to \infty} 
3^{-m}
\left\| 
f- (f)_{\cu_{m+1}} \right\|_{\underline{L}^2(\cu_{m+1})}
=0, \quad
\mbox{$\P$--a.s.}
\end{align*}
Therefore by taking~$n=m-k$ for~$k\in\N$ fixed and sending~$m\to \infty$,
we obtain
\begin{align*}
\left| \left\langle \nabla f \cdot \g \right\rangle \right|
\leq 
C3^{-k}. 
\end{align*}
Sending~$k\to \infty$ yields the claim. 

\smallskip

\emph{Step 2.}
We show that, for every~$\g\in (V^2_{\pot,0}(\P))^\perp$ and~$\chi\in C^\infty_{\mathrm{c}}(\cu_0)$, 
\begin{equation}
\label{e.stat.HH.step1}
\P \left[ \int_{\cu_0} \g \cdot \nabla \chi =0 \right] = 1
\,. 
\end{equation}
Let~$E_\pm$ denote the event~$\{ \pm \int_{\cu_0} \g \cdot \nabla \chi \geq 0\}$ and observe that
\begin{equation}
\f_{\pm}(\a,x)
:=
\sum_{z\in\Zd} \indc_{E_{\pm}}(T_z\a) \indc_{z+\cu_0}(x) \nabla \chi(x+z)
\end{equation}
defines stationary random fields~$\f_+, \f_-\in V^2_{\pot,0}(\P)$. Using that~$\g\in (V^2_{\pot,0}(\P))^\perp$, we obtain 
\begin{equation}
\E \left[ \biggl| \int_{\cu_0} \g\cdot \nabla \chi \biggr| \right]
=
\bigl\langle \g \cdot (\f_{+}- \f_{-}) \bigr\rangle
=
0
\,. 
\end{equation}
This yields the claim~\eqref{e.stat.HH.step1}. 

\smallskip

\emph{Step 3.} We complete the proof of~\eqref{e.statHH.perpcont} by reducing it to Step~2. Select a countable subset~$\{ \chi_n\}_{n\in\N} \subseteq L^2(\Rd)$ of smooth functions such that the closure of~$\spn \chi_n$ in~$L^2(\Rd)$ is equal to~$L^2(\Rd)$ and~$\diam(\supp(\chi_n )) < 1$ for every~$n\in\N$. For every~$y\in\Rd$, the translation operator~$T_y$ is a unitary operator on~$V^2(\P)$. Therefore we deduce from~\eqref{e.stat.HH.step1} that, for every~$\g\in (V^2_{\pot,0}(\P))^\perp$ and~$n\in\N$, 
\begin{equation}
%\label{e.}
\P \left[ \int_{\Rd} \g \cdot \nabla \chi_n =0 \right] = 1 \,. 
\end{equation}
Using the countable additivity of~$\P$, we obtain, for every~$\g\in (V^2_{\pot,0}(\P))^\perp$,
\begin{equation}
%\label{e.}
\P \left[  \int_{\Rd} \g \cdot \nabla \chi_n =0, \forall n\in\N \right] = 1 \,. 
\end{equation}
This implies that~$\g \in V^2_\sol(\P)$. 
\end{proof}

\begin{proof}[{Proof of Proposition~\ref{p.qual.correctors}}]
By the Lax-Milgram lemma, for each~$e\in\Rd$, there exists a unique~$\nabla \phi_e \in V^2_{\pot,0}$ satisfying 
\begin{equation}
\label{e.LaxMilg}
\left\langle \g \cdot \a \nabla \phi_e \right\rangle 
= 
-\langle \g \cdot \a e \rangle, \quad \forall \g \in V^2_{\pot,0}.
\end{equation}
The condition~\eqref{e.LaxMilg} is equivalent to~$\a(e+\nabla \phi_e) \in (V^2_{\pot,0}(\Omega))^\perp$.
Lemma~\ref{l.stat.HH} asserts that~$(V^2_{\pot,0}(\Omega))^\perp = V^2_\sol(\Omega)$.
\end{proof}

With the proof of Proposition~\ref{p.qual.correctors} now complete, we have succeeded in extending the well-posedness of the corrector problem~\eqref{e.corr.prob} to the stationary random setting. Let us now define the homogenized matrix.

\begin{definition}[{Homogenized matrix~$\ahom$}]
For each~$e\in\Rd$, we define~$\ahom e$ to be the projection of the flux~$\a(e+\nabla\phi_e) \in V^2_\sol(\P)$ onto~$V^2_{\inv}(\P)$. In other words,~$\ahom e$ is the unique element of~$V^2_{\inv}(\P)$ such that
\begin{equation}
\label{e.ahom.def.sec2}
\a(e+\nabla\phi_e) - \ahom e \in V^2_{\sol,0}(\P).
\end{equation}
Owing to the evident linearity of the map~$e\mapsto \nabla \phi_e$, we may identify~$\ahom$ with an element of~$(V^2_{\inv}(\P))^{d} = S^2_{\inv}(\P)^{d\times d}$ and thus a random element of~$\R^{d\times d}$.
\end{definition}

Observe that, by~\eqref{e.ergodicityduh}, 
\begin{equation}
\mbox{$\P$ is ergodic in the sense of~\eqref{e.Zdergodic}}
\implies
\mbox{$\ahom$ is deterministic.}
\end{equation}
However,~$\ahom$ need not be deterministic, in general, for non-ergodic~$\Zd$-stationary probability measures~$\P$. 

\smallskip

We next check that the homogenized matrix~$\ahom$ satisfies the uniform ellipticity condition~\eqref{e.ue}, that is, 
\begin{equation}
\label{e.ahom.boundsme}
\P \bigl[  
e\cdot \ahom e \geq \lambda |e|^2
\ \ \mbox{and} \ \
e\cdot \ahom^{-1} e 
\geq 
\Lambda^{-1} |e|^2,
\quad \forall e\in\Rd
\bigr]
= 1.
\end{equation}
Observe that~\eqref{e.LaxMilg} and~\eqref{e.ahom.def.sec2} imply that, for every nonnegative~$f \in S^\infty_{\mathrm{inv}}(\P)$, 
\begin{align*}
\lambda |e|^2 \bigl\langle f \bigr\rangle
\leq 
\lambda \,\bigl\langle f |e+\nabla\phi_e|^2\bigr\rangle 
\leq
\bigl\langle f (e+\nabla\phi_e) \cdot \a (e+\nabla \phi_e) \bigr\rangle
=
\bigl\langle f e \cdot \a (e+\nabla \phi_e) \bigr\rangle
=
\bigl\langle f e \cdot \ahom e \bigr\rangle\,.
\end{align*}
Since this is valid for every~$f \in S^\infty_{\mathrm{inv}}(\P)$, we deduce that~$\P [e\cdot \ahom e \geq \lambda |e|^2] = 1$. 
Likewise, if we set~$\g_e := \a(e+\nabla\phi_e) - \ahom e \in V^2_{\sol,0}(\P)$, then we have 
\begin{align*}
\Lambda^{-1} \bigl \langle f | \ahom e|^2 \bigr\rangle
\leq 
\Lambda^{-1} 
\bigl \langle f | \ahom e + \g_e |^2 \bigr\rangle
&
\leq 
\bigl \langle f ( \ahom e + \g_e ) \cdot \a^{-1} ( \ahom e + \g_e ) \bigr\rangle
\\ & 
=
\bigl \langle f ( e+\nabla \phi_e ) \cdot \a ( e + \nabla \phi_e ) \bigr\rangle
= \bigl\langle f e \cdot \ahom e \bigr\rangle\,.
\end{align*}
This is equivalent to~$\Lambda^{-1} \bigl \langle f | e|^2 \bigr\rangle \leq \bigl\langle f e \cdot \ahom^{-1} e \bigr\rangle$.
Since this holds for every~$f \in S^\infty_{\mathrm{inv}}(\P)$ we deduce that~$\P [ e\cdot \ahom^{-1} e 
\geq 
\Lambda^{-1} |e|^2 ] = 1$. 
 
\smallskip 
 
The flux correctors are obtained as a consequence of the following statement on the construction of stream matrices for given stationary solenoidal vector fields. Similar to the case of potential fields, the stream matrix has a stationary gradient but is itself only well-defined modulo the addition of constant skew-symmetric matrices. 

\begin{lemma}[Stream matrices for~$V^2_{\sol,0}(\P)$]
\label{l.stationary.stream}
For every~$\g\in V^2_{\sol,0}(\P)$, there exists
$$\{ \nabla \mathbf{k}_{ij} \}_{i,j\in\{1,\ldots,d\}} \subseteq (V^2_{\pot,0}(\P))$$
such that
\begin{equation}
\label{e.stationary.stream.skew}
\nabla \mathbf{k}_{ij} = -\nabla \mathbf{k}_{ji}, \quad \forall i,j\in\{1,\ldots,d\}
\end{equation}
and 
\begin{equation}
\label{e.divkg}
\nabla \cdot \mathbf{k} = \g. 
\end{equation}
Moreover, the map~$\g \mapsto \nabla \mathbf{k}$ is a bounded linear operator from~$V^2_{\sol,0}(\P)$ into the~$d$-by-$d$ skew symmetric matrices with entries in~$V^2_{\pot,0}(\P)$.  
\end{lemma}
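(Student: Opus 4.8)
The plan is to build $\mathbf{k}$ entry by entry, taking $\nabla\mathbf{k}_{ij}$ to be the potential part of an appropriate stationary field obtained from the Helmholtz--Hodge decomposition (Lemma~\ref{l.stat.HH}); this is the stationary substitute for solving the periodic Poisson problem \eqref{e.fluxcorrect}. Fix $\g=(g_1,\dots,g_d)\in V^2_{\sol,0}(\P)$ and, for each $i,j\in\{1,\dots,d\}$, set $\G_{ij}:=g_ie_j-g_je_i\in V^2(\P)$. Since each $g_k\in S^2_0(\P)$, a short computation gives $\langle\G_{ij}\cdot\mathbf{a}\rangle=0$ for every $\mathbf{a}\in V^2_{\inv}(\P)$, so $\G_{ij}\in V^2_0(\P)$ and Lemma~\ref{l.stat.HH} yields a unique decomposition $\G_{ij}=\nabla\mathbf{k}_{ij}+\mathbf{h}_{ij}$ with $\nabla\mathbf{k}_{ij}\in V^2_{\pot,0}(\P)$ and $\mathbf{h}_{ij}\in V^2_{\sol,0}(\P)$; this defines $\mathbf{k}=(\mathbf{k}_{ij})$, each $\mathbf{k}_{ij}$ being defined up to an additive constant. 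As $\nabla\mathbf{k}_{ij}$ is the orthogonal projection of $\G_{ij}$ onto $V^2_{\pot,0}(\P)$ and $\|\G_{ij}\|_{V^2(\P)}\le\|\g\|_{V^2(\P)}$, the map $\g\mapsto\nabla\mathbf{k}$ is linear and bounded into the skew-symmetric matrices with entries in $V^2_{\pot,0}(\P)$, which is the last assertion. Exchanging $i$ and $j$ replaces $\G_{ij}$ by $-\G_{ij}$, so uniqueness and linearity of the decomposition give $\nabla\mathbf{k}_{ij}=-\nabla\mathbf{k}_{ji}$, i.e.\ \eqref{e.stationary.stream.skew}; and since $\G_{ij}-\nabla\mathbf{k}_{ij}=\mathbf{h}_{ij}$ is divergence-free, taking the distributional divergence gives, $\P$--a.s., $\Delta\mathbf{k}_{ij}=\nabla\cdot\G_{ij}=\partial_{x_j}g_i-\partial_{x_i}g_j$.

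It remains to prove \eqref{e.divkg}. Put $\mathbf{v}:=\nabla\cdot\mathbf{k}-\g$, whose components $\mathbf{v}_i=\sum_j\partial_{x_j}\mathbf{k}_{ij}-g_i$ lie in $S^2_0(\P)$. Using the equation for $\mathbf{k}_{ij}$ and $\nabla\cdot\g=0$,
\begin{equation*}
\Delta\mathbf{v}_i=\sum_{j}\partial_{x_j}\bigl(\partial_{x_j}g_i-\partial_{x_i}g_j\bigr)-\Delta g_i=\Delta g_i-\partial_{x_i}(\nabla\cdot\g)-\Delta g_i=0,
\end{equation*}
so each $\mathbf{v}_i$ is (distributionally, hence classically) harmonic. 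Interior elliptic estimates together with stationarity give $\nabla\mathbf{v}_i\in V^2(\P)$, and Caccioppoli's inequality on balls yields $\fint_{B_t(0)}|\nabla\mathbf{v}_i|^2\le Ct^{-2}\fint_{B_{2t}(0)}|\mathbf{v}_i|^2$. Letting $t\to\infty$ and applying the Wiener ergodic theorem (Proposition~\ref{p.ergodic}) to $|\nabla\mathbf{v}_i|^2,|\mathbf{v}_i|^2\in S^1(\P)$ forces $\langle|\nabla\mathbf{v}_i|^2\rangle=0$. Hence $\mathbf{v}_i$ is $\P$--a.s.\ constant in space, i.e.\ $\mathbf{v}_i\in S^2_{\inv}(\P)$; combined with $\mathbf{v}_i\in S^2_0(\P)=S^2_{\inv}(\P)^\perp$ this gives $\langle\mathbf{v}_i^2\rangle=0$, so $\mathbf{v}_i=0$, which is \eqref{e.divkg}.

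I expect the only genuinely delicate step to be the last one: promoting ``$\nabla\cdot\mathbf{k}-\g$ is harmonic'' to ``$\nabla\cdot\mathbf{k}-\g=0$.'' In the periodic setting this was automatic from periodicity and the mean-zero normalization, whereas here it is precisely the Liouville property for stationary harmonic functions, which is why Caccioppoli's inequality and the ergodic theorem must be invoked. Everything else is a routine transcription of the periodic computation behind \eqref{e.fluxcorrect}.
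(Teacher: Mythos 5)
Your proof is correct and follows essentially the same route as the paper: projecting $\mathbf{G}_{ij}=\g_i e_j-\g_j e_i$ onto $V^2_{\pot,0}(\P)$ via Lemma~\ref{l.stat.HH} produces exactly the Riesz representative defined in~\eqref{e.RRT.stream}, and the identity $\nabla\cdot\mathbf{k}=\g$ is obtained, as in the paper, by showing the entries of $\nabla\cdot\mathbf{k}-\g$ are harmonic and then using $V^2_{\inv}(\P)\cap V^2_0(\P)=\{0\}$. The only difference is that you spell out, via Caccioppoli's inequality and the Wiener ergodic theorem, the Liouville-type step (a stationary harmonic function in $S^2(\P)$ is constant in space) that the paper compresses into the phrase ``stationarity then implies that they belong to $S^2_{\inv}(\P)$.''
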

\begin{proof}
Let~$\g = (\g_1,\ldots,\g_d) \in V^2_{\sol,0}(\P)$. The Riesz representation theorem yields, for each~$i,j\in\{1,\ldots,d\}$, a unique~$\nabla \mathbf{k}_{ij}\in V^2_{\pot,0}(\P)$ satisfying
\begin{equation}
\label{e.RRT.stream}
\left\langle \nabla \psi\cdot \nabla \mathbf{k}_{ij}  \right\rangle 
=
\left\langle 
\g_{i} \partial_{x_j} \psi - \g_j \partial_{x_i} \psi
\right\rangle, \quad \forall \psi\in V^2_{\pot,0}(\P).
\end{equation}
It is clear that~\eqref{e.stationary.stream.skew} holds. We have, in the sense of distributions,~$\P$--a.s.,  
\begin{equation*}
-\Delta ( \nabla \cdot \mathbf{k} )_j 
= 
-\Delta\left(\sum_{i=1}^d \partial_{x_i} \mathbf{k}_{ij} \right)
=
\sum_{i=1}^d\partial_{x_i} \left( \partial_{x_j} \g_i - \partial_{x_i} \g_j \right)
=
-\Delta \g_j.
\end{equation*}
Therefore, the entries of~$\nabla \cdot \mathbf{k} - \g$ are harmonic,~$\P$--a.s., and stationarity then implies that they belong to~$S^2_{\inv}(\P)$. But~$\nabla \mathbf{k}_{ij} \in V^2_{\pot,0}(\P) \subseteq V^2_0(\P)$ and by assumption~$\g \in V^2_{\sol,0}(\P) \subseteq V^2_0(\P)$. Hence~$\nabla \cdot \mathbf{k} - \g \in V^2_{0}(\P)$, and so we conclude that~$\nabla \cdot \mathbf{k} - \g \in V^2_{\inv}(\P) \cap V^2_{0}(\P) = \{ 0 \}$. The last assertion in the lemma statement is immediate from~\eqref{e.RRT.stream}. 
\end{proof}

\begin{definition}[Flux correctors]
For each~$e\in\Rd$, we define the \emph{flux corrector}~$\nabla \bfs_{e}$ to be the stream matrix given by Lemma~\ref{l.stationary.stream}, applied to the field
\begin{equation}
\label{e.ge.def}
\g_e := \a(e+\nabla \phi_e) - \ahom e \,.
\end{equation}
It is a skew-symmetric matrix with entries in~$V^2_{\pot,0}(\P)$ satisfying
\begin{equation}
\label{e.fluxcorrect.yes}
\nabla \cdot \bfs_e = \a(e+\nabla \phi_e) - \ahom e
\end{equation}
and, for a constant~$C(d,\lambda,\Lambda)<\infty$, 
\begin{equation}
%\label{e.}
\big\langle | \nabla \bfs_{e} |^2 \big\rangle 
\leq C|e|^2. 
\end{equation}
We can infer from the proof of Lemma~\ref{l.stationary.stream} that the entries~$\bfs_{e,ij}$ of~$\bfs_e$ are the stationary, mean-zero solutions of the equation
\begin{equation}
\label{e.flux.se.eq}
-\Delta \bfs_{e,ij}
=
\partial_{x_j} \g_{e,i}
-
\partial_{x_i} \g_{e,j}
\quad \mbox{in} \ \Rd
\,.
\end{equation}
\end{definition}
Similar to the case of the corrector field~$\nabla\phi_e$, the flux corrector~$\bfs_e$ is not stationary. It is only~$\nabla \bfs_e$ that is the stationary object. The ``field''~$\bfs_e$ is well-defined only modulo the addition of a constant skew-symmetric matrix. More precisely,~$\bfs_e$ itself is not well-defined, but differences~$\bfs_e(x) - \bfs_e(y)$ are well-defined. 

\smallskip

Having now completed the task of constructing the correctors~$\phi_e$ and flux correctors~$\bfs_e$ in the stationary random setting, we turn next to the generalization of~\eqref{e.correctorsbounded2}. What is needed is a replacement for the rough statement, ``periodic functions are bounded by compactness.'' As it turns out, the statement we need is the generalized Wiener ergodic theorem given in~Corollary~\ref{c.stat.Hminusone}. In view of the fact that~$\nabla \phi_e$ and the entries of~$\nabla \bfs_e$ belong to~$V^2_{\pot,0}(\P) \subseteq V^2_0(\P)$, it implies that
\begin{equation}
\label{e.corrector.qualbound.Hm1}
\limsup_{m\to \infty}
3^{-m} 
\bigl(
\| \nabla \phi_e \|_{\Hminusul(\cu_m)}
+
\| \nabla \bfs_e \|_{\Hminusul(\cu_m)}
\bigr) = 0,
\quad \mbox{$\P$--a.s.}
\end{equation}
By~\eqref{e.L2toHminus}, we also obtain
\begin{equation}
\label{e.corrector.qualbound.L2}
\limsup_{m\to \infty}
3^{-m} 
\bigl(
\left\|  \phi_e - (\phi_e)_{\cu_m} \right\|_{\underline{L}^{2}(\cu_m)}
+
\left\|  \bfs_e - (\bfs_e)_{\cu_m} \right\|_{\underline{L}^{2}(\cu_m)}
\bigr) = 0,
\ \ \mbox{$\P$--a.s.}
\end{equation}
The estimate~\eqref{e.corrector.qualbound.L2} says that the~$L^2$-oscillation of~$\phi_e$ and~$\bfs_e$ in a ball of radius (or cubes) grows ``strictly sublinearly'' as the radius tends to infinity. Notice that this bound is much weaker than the one from the periodic case in~\eqref{e.correctorsbounded}---but~\eqref{e.corrector.qualbound.L2} is still just enough to obtain qualitative homogenization, by an argument which is essentially the same as the proof of Lemma~\ref{l.DP.2infty}.

\begin{exercise}[$L^\infty$-type corrector estimates for scalar equations]
Recall the statement of the De Giorgi-Nash H\"older estimate for uniformly elliptic equations: there exists an exponent~$\alpha(d,\lambda,\Lambda/\lambda)>0$ and constant~$C(d,\lambda,\Lambda)<\infty$ such that,  for every~$r>0$ and solution~$u\in H^1(B_{2r})$ of the equation
\begin{equation*}
-\nabla \cdot \a\nabla u = 0 \quad \mbox{in} \ B_{2r}, 
\end{equation*}
we have that~$u \in C^{0,\alpha}(B_r)$ and the estimate
\begin{equation*}
\| u \|_{L^\infty(B_r)} + r^{\alpha} [ u ]_{C^{0,\alpha}(B_r)} 
\leq 
C \| u \|_{\underline{L}^2 (B_{2r})} 
\,.
\end{equation*}
Using this and an interpolation inequality (to bound the~$L^\infty$ norm by the~$L^2$ and~$C^{0,\alpha}$ norms), show that the~$L^2$ convergence for the first term in~\eqref{e.corrector.qualbound.L2} can be upgraded to~$L^\infty$:
\begin{equation}
\label{e.corrector.qualbound.Linfty}
\limsup_{m\to \infty}
3^{-m} 
\left\|  \phi_e - (\phi_e)_{\cu_m} \right\|_{L^\infty(\cu_m)} = 0,
\ \ \mbox{$\P$--a.s.}
\end{equation}
Note that this trick works only for scalar equations, whereas the rest of the arguments in this section apply verbatim to elliptic systems. 
Can the convergence of the flux correctors be similarly upgraded?
\end{exercise}

\subsubsection{The proof of homogenization}

For each~$m\in\N$, we introduce a random variable 
\begin{equation}
\label{e.Rm.def}
\mathcal{R}(m) :=
3^{-m} 
\bigl(
\left\|  \phi_e - (\phi_e)_{\cu_m} \right\|_{\underline{L}^{2}(\cu_m)}
+
\left\|  \bfs_e - (\bfs_e)_{\cu_m} \right\|_{\underline{L}^{2}(\cu_m)}
\bigr) \,,
\end{equation}
which monitors the speed of convergence of the limit~\eqref{e.corrector.qualbound.L2}. 

\smallskip 

We next present a purely deterministic argument, following the classical two-scale expansion and computations, which reduces the question of homogenization for a general Dirichlet problem to the behavior of~$\mathcal{R}(m)$. This is essentially the same argument as in the periodic case. 
In what follows we use the notation~$U_r := \{x \in U \, : \, \dist(x,\partial U)> r\}$.

\begin{proposition}
[Homogenization of the Dirichlet problem]
\label{p.DP}
Let~$U \subseteq \cu_0$ be a Lipschitz domain. 
There exists~$C(U,d,\lambda,\Lambda)<\infty$ such that, for every~$\ep,r \in (0,1]$ and~$u \in H^1(U)\cap W^{2,\infty}(U_{r})$, if we denote by~$u^\ep\in H^1(U)$ the solution of the Dirichlet problem
\begin{equation*}
%\label{e.DP.problems}
\left\{
\begin{aligned}
& -\nabla \cdot \left( \a\left(\tfrac \cdot\ep\right) \nabla u^\ep \right) = -\nabla \cdot \ahom \nabla u &  \mbox{in} & \ U, \\
& u^\ep = u & \mbox{on} & \ \partial U,
\end{aligned}
\right.
\end{equation*}
then we have the estimate
\begin{align}
\label{e.DP}
\lefteqn{
\left\| \nabla u^\ep - \nabla  u  \right\|_{H^{-1}(U)}  
+ \left\| \a\left(\tfrac\cdot\ep\right) \nabla u^\ep -\ahom \nabla  u  \right\|_{H^{-1}(U)}
} \qquad\qquad & 
\notag \\ &
\leq 
C \left\| \nabla u \right\|_{L^{2}(U \setminus U_{2r})} 
+
%C r^{- \frac d2 - 2}\mathcal{R}\bigl( \lceil \log_3 \ep \rceil \bigr) \left\|  u - (u)_{U}  \right\|_{L^{2}(U)}
Cr^{-1} \mathcal{R}\bigl( \lceil |\log_3 \ep| \rceil \bigr) \left\|  \nabla u \right\|_{W^{1,\infty}(U_r)}
\,.
\end{align}
\end{proposition}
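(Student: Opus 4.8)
The plan is to mimic the proof of Lemma~\ref{l.DP.2infty} but with two modifications: replace the uniform corrector bound~\eqref{e.correctorsbounded2} by the quantity~$\mathcal{R}$ from~\eqref{e.Rm.def}, and replace the macroscopic function~$u$ by a regularized version to account for the fact that~$u$ is only assumed to be in~$W^{2,\infty}$ on the interior subdomain~$U_r$, not all of~$U$. First I would fix a cutoff~$\zeta \in C^\infty_c(U_r)$ with~$\zeta \equiv 1$ on~$U_{2r}$ and~$\|\nabla\zeta\|_{L^\infty} \leq Cr^{-1}$, and define the two-scale expansion~$w^\ep$ using the function~$\zeta u$ (or, to be safe about small-scale integrability of~$\nabla\phi_e^\ep$, the mollified function~$(\zeta u)\ast\eta_\ep$ as in Remark~\ref{r.twoscale.H2}); the point is that~$\zeta u \in W^{2,\infty}(U)$ with norms controlled by~$r^{-1}\|\nabla u\|_{W^{1,\infty}(U_r)}$. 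Away from~$U_{2r}$, where~$\zeta u$ does not agree with~$u$, the relevant errors are controlled by~$\|\nabla u\|_{L^2(U\setminus U_{2r})}$, which is the first term on the right side of~\eqref{e.DP}.

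Next I would apply Lemma~\ref{l.twoscale} (in the form~\eqref{e.twoscale.error}, or~\eqref{e.twoscale.error.H2} in the mollified version) to~$\zeta u$ to get
\[
-\nabla\cdot\left(\a\bigl(\tfrac\cdot\ep\bigr)\nabla(u^\ep - w^\ep)\right) = \nabla\cdot\f^\ep \quad\text{in }U,
\]
where~$\f^\ep$ is a sum of terms of the form~$(\pm\s^\ep_{e_k} + \phi^\ep_{e_k}\a^\ep)\nabla\partial_{x_k}(\zeta u)$ plus lower-order contributions. The key new input is to bound~$\|\f^\ep\|_{L^2(U)}$ not by~$C\ep$ but by~$\mathcal{R}$: since the correctors are no longer uniformly bounded on small scales, I would instead use~\eqref{e.correctorsbounded}-type localization at scale~$\ep$ together with~\eqref{e.corrector.qualbound.L2}/\eqref{e.Rm.def}. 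Concretely, by a covering of~$U$ by~$O(\ep^{-d})$ cubes of side~$\ep$ and the rescaling~$\phi^\ep_e = \ep\phi_e(\cdot/\ep)$, one obtains something like~$\sup_{|e|=1}\|\phi^\ep_e - (\text{local avg})\|_{L^2(U)} \leq C\mathcal{R}(\lceil|\log_3\ep|\rceil)$, and similarly for~$\s^\ep_e$; the constant~shifts drop out because~$\f^\ep$ pairs them against~$\nabla\partial_{x_k}(\zeta u)$ through a divergence/skew-symmetry structure (this is exactly where the flux correctors earn their keep, as in Corollary~\ref{c.twoscale}). This yields~$\|\f^\ep\|_{L^2(U)} \leq C r^{-1}\mathcal{R}(\lceil|\log_3\ep|\rceil)\|\nabla u\|_{W^{1,\infty}(U_r)}$.

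Then I would handle the boundary layer exactly as in Lemma~\ref{l.DP.2infty}: introduce~$z^\ep$ solving the homogeneous equation with boundary data~$w^\ep - u$ on~$\partial U$, and note that on~$\partial U$ we have~$w^\ep - u = (\zeta u - u) + \sum_k \partial_{x_k}(\zeta u)\phi^\ep_{e_k} = \sum_k \partial_{x_k}(\zeta u)\phi^\ep_{e_k}$ since~$\zeta$ vanishes near~$\partial U$ (so~$\zeta u = 0$ there, hence~$w^\ep - u = -u$ on~$\partial U$ — one must track this carefully; it may be cleaner to add~$(1-\zeta)u$ back, i.e.\ build~$w^\ep$ from~$\zeta u$ but compare to~$u$, absorbing~$\|(1-\zeta)u\|$ into the~$\|\nabla u\|_{L^2(U\setminus U_{2r})}$ term via Poincar\'e). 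An energy estimate testing the equation for~$u^\ep - w^\ep + z^\ep \in H^1_0(U)$ against itself gives~$\|\nabla(u^\ep-w^\ep+z^\ep)\|_{L^2(U)} \leq C\|\f^\ep\|_{L^2(U)}$, and a separate estimate on~$z^\ep$ using a cutoff extension of its boundary data controls the boundary layer by~$C\mathcal{R}\cdot r^{-1}\|\nabla u\|_{W^{1,\infty}(U_r)}$ (the~$\ep^{1/2}$ loss of Lemma~\ref{l.DP.2infty} is replaced here by an~$\mathcal{R}$-type loss, since even the bulk corrector size is now~$\mathcal{R}$ rather than~$\ep$). Finally, combining these with~\eqref{e.twoscale.strip} and the triangle inequality, and converting~$L^2$ gradient bounds to~$H^{-1}$ bounds via the Poincar\'e/duality inequality from the proof of Corollary~\ref{c.twoscale}, yields~\eqref{e.DP}.

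The main obstacle I anticipate is the bookkeeping around the cutoff~$\zeta$ at scale~$r$: one must ensure that every place where the old proof used~$\|\nabla^2 u\|_{L^\infty(U)}$ or~$\|\nabla u\|_{L^\infty(U)}$ is now legitimately replaced by~$r^{-1}\|\nabla u\|_{W^{1,\infty}(U_r)}$ on the interior and by~$\|\nabla u\|_{L^2(U\setminus U_{2r})}$ near the boundary, without the two regions' errors contaminating each other — in particular the gradient~$\nabla\zeta$, supported in~$U_r\setminus U_{2r}$, produces terms that need to be matched against~$\|\nabla u\|_{L^2(U\setminus U_{2r})}$ rather than the interior~$W^{1,\infty}$ norm. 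The second delicate point is making the passage from~$\|\phi^\ep_e\|$-type bounds to~$\mathcal{R}(\lceil|\log_3\ep|\rceil)$ quantitatively honest: this requires a localized ($\ep$-scale) version of the multiscale argument behind Corollary~\ref{c.stat.Hminusone}, i.e.\ summing the squared oscillation over the~$\ep$-cubes tiling~$U$ and recognizing the result as~$\mathcal{R}$ at the scale~$3^{\lceil|\log_3\ep|\rceil} \asymp \ep^{-1}$; the skew-symmetry of~$\s_e$ and the divergence form of the~$\s^\ep_{e_k}\partial_{x_k}u$ terms are what make the unknown additive constants in the corrector oscillations harmless.
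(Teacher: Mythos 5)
Your overall strategy is the paper's: a quantified two-scale expansion in which the corrector sizes are measured by~$\mathcal{R}(\lceil|\log_3\ep|\rceil)$ and a cutoff at scale~$r$ separates an interior region (controlled by~$r^{-1}\|\nabla u\|_{W^{1,\infty}(U_r)}$) from a boundary region (controlled by~$\|\nabla u\|_{L^2(U\setminus U_{2r})}$). The paper's implementation is cleaner in two places where your proposal wobbles. First, the cutoff is placed on the corrector term only: one sets~$w^\ep := u + \zeta_r\sum_k(\partial_{x_k}u)\,\phi^\ep_{e_k}$ with~$\indc_{U_{2r}}\leq\zeta_r\leq\indc_{U_r}$, rather than building the expansion from~$\zeta u$. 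Then~$w^\ep=u=u^\ep$ on~$\partial U$, so~$w^\ep-u^\ep\in H^1_0(U)$ and the whole~$z^\ep$ boundary-layer apparatus of Lemma~\ref{l.DP.2infty} is unnecessary; the near-boundary error enters only through the term~$(1-\zeta_r)(\a^\ep-\ahom)\nabla u$, which is exactly~$C\|\nabla u\|_{L^2(U\setminus U_{2r})}$. Your first construction (expansion of~$\zeta u$) has the boundary mismatch~$w^\ep-u^\ep=-u$ on~$\partial U$ that you noticed; your proposed fix essentially lands on the paper's~$w^\ep$, at which point~$z^\ep$ is redundant. Second, passing to~$\mathcal{R}$ requires no~$\ep$-cube covering or localized version of Corollary~\ref{c.stat.Hminusone}: one simply defines~$\phi^\ep_e:=\ep(\phi_e(\tfrac\cdot\ep)-(\phi_e)_{\cu_m})$ and~$\s^\ep_e:=\ep(\s_e(\tfrac\cdot\ep)-(\s_e)_{\cu_m})$ with~$m=\lceil|\log_3\ep|\rceil$; since~$U\subseteq\cu_0$ and~$3^{-m}\leq\ep$, a change of variables gives~$\|\phi^\ep_e\|_{L^2(U)}+\|\s^\ep_e\|_{L^2(U)}\leq C\mathcal{R}(m)$ directly from the definition~\eqref{e.Rm.def}.

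One claimed mechanism in your proposal is wrong as stated and should be repaired: the additive constants in the correctors do \emph{not} ``drop out through a divergence/skew-symmetry structure'' in the term~$\phi^\ep_{e_k}\a^\ep\nabla(\zeta_r\partial_{x_k}u)$. Only the flux-corrector contribution~$\nabla\cdot(\s^\ep_{e_k}\zeta_r\partial_{x_k}u)$ enjoys that cancellation (after the outer divergence, by skew-symmetry); a constant shift~$c$ in~$\phi^\ep_{e_k}$ produces~$c\,\a^\ep\nabla(\zeta_r\partial_{x_k}u)$, which is of size~$|c|\,r^{-1}\|\nabla u\|_{W^{1,\infty}(U_r)}$ and is not negligible unless~$|c|$ is. The correct resolution is not cancellation but normalization: the constants~$(\phi_e)_{\cu_m}$,~$(\s_e)_{\cu_m}$ are subtracted in the definition of the rescaled correctors, and~$\mathcal{R}(m)$ is precisely the size of the correctors relative to that choice of constants, so there is nothing left to cancel. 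With these two adjustments your argument coincides with the paper's proof.
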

\begin{proof}
Throughout the argument, we denote~$m:= \lceil |\log_3 \ep| \rceil$,~$\a^\ep =  \a(\tfrac \cdot\ep)$ and
\begin{align*}
\phi_{e}^\ep (x) : = \ep \bigl(  \phi_{e}(\tfrac x\ep) - ( \phi_{e} )_{\cu_m} \bigr)
\quad \mbox{and} \quad
\bfs_{e}^\ep(x) := \ep \bigl( \bfs_{e}(\tfrac x\ep) - ( \bfs_{e} )_{\cu_m} \bigr)
\,.
\end{align*}
Observe that~$m\in\N$ has been chosen so that~$3^{-m} \leq \ep < 3^{1-m}$. In particular, 
\begin{align*}
\left\| \phi_e^\ep \right\|_{L^2(U)} + \left\| \bfs_e^\ep \right\|_{L^2(U)} 
\leq C\mathcal{R}(m)
\,.
\end{align*}
Select~$r\in (0,1]$ and a cutoff function~$\zeta_r \in C_0^\infty(U)$ satisfying
\begin{equation}
\label{e.zetar.cut.off}
\indc_{U_{2r}} \leq \zeta_r \leq \indc_{U_{r}}
\quad \mbox{and} \quad 
\| \nabla \zeta_r \|_{L^\infty(U)} \leq C r^{-1}
\end{equation}
and define
\begin{equation}
\label{e.wepdef}
w^\ep  :=
u +  \zeta_r \sum_{k=1}^d (\partial_{x_k}u) \phi_{e_k}^\ep .
\end{equation}
The computations here closely follow those of Lemmas~\ref{l.twoscale}, Corollary~\ref{c.twoscale} and Lemma~\ref{l.DP.2infty}. 

\smallskip

\emph{Step 1.} We plug~$w^\ep$ into the operator~$-\nabla \cdot \a^\ep \nabla$ to check that it is almost a solution. The precise claim is that
\begin{align}
\label{e.wep.eqerror}
\left\| 
\nabla \cdot \a^\ep \nabla w^\ep - \nabla \cdot \ahom \nabla u 
\right\|_{H^{-1}(U)}
\leq C \left\| \nabla u \right\|_{L^2(U \setminus U_{2r})}
+ 
C \mathcal{R}( m ) \left\| \nabla (\zeta_r \partial_{x_k} u) \right\|_{L^\infty(U)}  
. 
\end{align}
For this, we derive an expression for the fluxes similar to~\eqref{e.twoscale.flux}:
\begin{align}  \label{e.wep.eqerror.zero}
\a^\ep \nabla w^\ep - \ahom \nabla u
& =
\sum_{k=1}^d 
\phi_{e_k}^\ep \a^\ep \nabla (\zeta_r \partial_{x_k} u) 
+
(1-\zeta_r)(\a^\ep - \ahom) \nabla u 
\notag \\  &
\qquad + \sum_{k=1}^d \left( \a^\ep (e_k + \nabla \phi_{e_k}^\ep) - \ahom e_k \right) \zeta_r \partial_{x_k} u
\,.
\end{align}
Using~\eqref{e.fluxcorrect.yes}, for every~$\eta \in H^1(U)$, 
\begin{equation}
\label{e.wep.eqerror.three}
\bigl( \a(e+\nabla\phi_{e}^\ep) - \ahom e \bigr) \eta 
=
 \nabla \cdot \left( \bfs_{e}^\ep \eta \right) - 
\bfs_{e}^\ep \nabla \eta
\,.
\end{equation}
We use this identity to rewrite the last term on the right side of~\eqref{e.wep.eqerror.zero} in terms of the flux correctors, which yields 
\begin{align}
\label{e.wep.eqerror.zero.2}
\a^\ep \nabla w^\ep {-} \ahom \nabla u
& =
\sum_{k=1}^d 
\bigl( \phi_{e_k}^\ep \a^\ep  {-} \bfs_{e_k}^\ep \bigr) \nabla (\zeta_r \partial_{x_k} u) 
{+}
(1{-}\zeta_r)(\a^\ep {-} \ahom) \nabla u 
 {+} \nabla \cdot \sum_{k=1}^d \bfs_{e_k}^\ep \zeta_r \partial_{x_k} u
\,.
\end{align}
Taking the divergence of this expression and using the skew-symmetry of~$\bfs_e$, we get
\begin{equation}
\label{e.wep.eqerror.zero.2.div}
\nabla \cdot ( \a^\ep \nabla w^\ep {-} \ahom \nabla u )
=
\nabla \cdot \sum_{k=1}^d 
\bigl( \phi_{e_k}^\ep \a^\ep  {-} \bfs_{e_k}^\ep \bigr) \nabla (\zeta_r \partial_{x_k} u) 
{+}
\nabla \cdot \bigl( (1{-}\zeta_r)(\a^\ep {-} \ahom) \nabla u  \bigr)
\,.
\end{equation}
Therefore, 
\begin{align} \notag
\lefteqn{
\left\| 
\nabla \cdot \a^\ep \nabla w^\ep - \nabla \cdot \ahom \nabla u 
\right\|_{H^{-1}(U)}
} \qquad &
\\ 
\notag &
\leq
C \sum_{k=1}^d \bigl( \| \phi_{e_k}^\ep \|_{L^2(U)} + \| \bfs_{e_k}^\ep \|_{L^2(U)} \bigr) 
\left\| \nabla (\zeta_r \partial_{x_k} u) \right\|_{L^\infty(U)} + C \left\| \nabla u \right\|_{L^2(U \setminus U_{2r})}
\\ \notag & 
\leq
C \mathcal{R}(m) \left\| \nabla (\zeta_r \partial_{x_k} u) \right\|_{L^\infty(U)} + C \left\| \nabla u \right\|_{L^2(U \setminus U_{2r})}
\,.
\end{align}
This is~\eqref{e.wep.eqerror}.
\smallskip

\emph{Step 2.} 
Since~$w^\ep - u^\ep \in H_0^1(U)$, we obtain by~\eqref{e.wep.eqerror} and the equation for~$u^\ep$ that
\begin{align}
\label{e.wepuepest.a}
\left\| \nabla w^\ep - \nabla u^\ep  \right\|_{L^2 (U)} 
&
\leq
C \left\| \nabla \cdot \a^\ep (\nabla w^\ep - \nabla u^\ep ) \right\|_{H^{-1}(U)} 
\notag \\ & 
=
C \left\| \nabla \cdot \a^\ep \nabla w^\ep - \nabla \cdot \ahom \nabla u \right\|_{H^{-1}(U)} 
\notag \\ & 
\leq 
C \left\| \nabla u \right\|_{L^2(U \setminus U_{2r})}
+ 
C \mathcal{R}(m)
\left\| \nabla (\zeta_r \partial_{x_k} u) \right\|_{L^\infty(U)}  
\,.
\end{align}
Note that, in view of~\eqref{e.zetar.cut.off},  
\begin{equation}
\left\| \nabla (\zeta_r \partial_{x_k} u) \right\|_{L^\infty(U)}  
\leq
Cr^{-1} \left\| \nabla u \right\|_{L^\infty(U_r)}
+
\left\| \nabla^2 u \right\|_{L^\infty(U_r)} 
\leq 
C r^{-1} \left\| \nabla u \right\|_{W^{1,\infty}(U_r)}
\,.
\end{equation}

\smallskip

\emph{Step 3.} To complete the proof of~\eqref{e.DP},
we need to show that 
\begin{equation*}
\left\| \nabla w^\ep {-} \nabla u  \right\|_{H^{-1} (U)}
+ \left\| \a^\ep \nabla w^\ep {-} \ahom \nabla u \right\|_{H^{-1}(U)} 
\leq 
C \left\| \nabla u \right\|_{L^{2}(U \setminus U_{2r})} 
+
C r^{- 1}  \mathcal{R}(m) 
\left\|  \nabla u   \right\|_{W^{1,\infty}(U_r)} 
\,.
\end{equation*}
For the gradients, we have, by~\eqref{e.wepdef},  
\begin{align*}
\left\| \nabla w^\ep - \nabla u  \right\|_{H^{-1} (U)}
=
\biggl\|  
\nabla \Bigl( \zeta_r \sum_{k=1}^d (\partial_{x_k}u) \phi_{e_k}^\ep \Bigr)
\biggr\|_{H^{-1} (U)}
& 
\leq
C
\biggl\|  \zeta_r \sum_{k=1}^d (\partial_{x_k}u) \phi_{e_k}^\ep 
\biggr\|_{L^2(U)}
\\ & 
\leq 
C \sum_{k=1}^d
\| \phi_{e_k}^\ep \|_{L^2(U)}
\| \zeta_r \nabla u \|_{L^\infty(U)}
\\ &
\leq 
C \mathcal{R}(m) \left\| \nabla u \right\|_{L^\infty (U_r)}
\,.
\end{align*}
For the fluxes, we start from~\eqref{e.wep.eqerror.zero.2} and apply the triangle inequality to get 
\begin{align*}
\left\| \a^\ep \nabla w^\ep {-} \ahom \nabla u \right\|_{H^{-1} (U)}
& 
\leq
C \sum_{k=1}^d \| \phi_{e_k}^\ep \|_{L^2(U)} \| \nabla (\zeta_r \partial_{x_k} u) \|_{L^\infty(U)}
+
C \| (1-\zeta_r) \nabla u \|_{L^2(U)} 
\notag \\  &
\qquad + \sum_{k=1}^d 
\| \bfs_{e_k}^\ep \|_{L^2(U)} \left( \|  \zeta_r \partial_{x_k} u \|_{L^\infty(U)}
{+}
\| \nabla( \zeta_r \partial_{x_k} u ) \|_{L^\infty(U)}
\right)
\\ & 
\leq 
C \mathcal{R}(m)\bigl(  \|  \zeta_r \partial_{x_k} u \|_{L^\infty(U)}
{+}  \left\| \nabla (\zeta_r \partial_{x_k} u) \right\|_{L^\infty(U)} \bigr)
+
C \left\| \nabla u \right\|_{L^2(U \setminus U_{2r})}
\\ &
\leq 
C \left\| \nabla u \right\|_{L^{2}(U \setminus U_{2r})} 
+
C r^{- 1}  \mathcal{R}(m) 
\left\|  \nabla u   \right\|_{W^{1,\infty}(U_r)} 
\,.
\end{align*}
The proof is now complete. 
\end{proof}

We next demonstrate that Theorem~\ref{t.qualitative.homogenization} is a simple consequence of Proposition~\ref{p.DP}. 

\begin{proof}[{Proof of Theorem~\ref{t.qualitative.homogenization}}]
By routine density arguments, it suffices to consider the case that the supremum in~\eqref{e.qualhomogP} is taken over smooth, bounded domains~$U\subseteq\Rd$,~$\f \in C^\infty(U)^d$ and~$g\in C^\infty(U)$.
Let~$u = u_{U,\f,g}$ and~$u^\ep = u^\ep_{U,\f,g}$, and observe that, by standard elliptic regularity,~$u \in C^\infty(\overline{U})$. In particular, if we allow our constants~$C$ to depend on~$(U,\f,g)$, then we have 
\begin{align*}
C \left\| \nabla u \right\|_{L^{2}(U \setminus U_{2r})} 
+
Cr^{-1} \mathcal{R}\bigl( \lceil |\log_3 \ep| \rceil \bigr) \left\|  \nabla u \right\|_{W^{1,\infty}(U_r)}
\leq 
Cr + Cr^{-1}  \mathcal{R}\bigl( \lceil |\log_3 \ep| \rceil \bigr).
\end{align*}
Applying~\eqref{e.DP} with~$r:= \mathcal{R}(\lceil |\log_3 \ep |\rceil)^{\nicefrac12}$, we obtain 
\begin{align*}
\bigl\| \nabla u^\ep  -\nabla u  \bigr\|_{H^{-1}(U)}
+
\bigl\| \a^\ep \nabla u^\ep  - \ahom \nabla u  \bigr\|_{H^{-1}(U)}
\leq 
C \mathcal{R}\bigl( \lceil |\log_3 \ep| \rceil \bigr)^{\nicefrac12}\,.
\end{align*}
By~\eqref{e.corrector.qualbound.L2} and~\eqref{e.Rm.def}, 
\begin{align*}
\limsup_{\ep\to 0} \Bigl( \bigl\| \nabla u^\ep  -\nabla u  \bigr\|_{H^{-1}(U)}
+
\bigl\| \a^\ep \nabla u^\ep  - \ahom \nabla u  \bigr\|_{H^{-1}(U)} \Bigr)
\leq
C \limsup_{\ep\to 0} \mathcal{R}\bigl( \lceil |\log_3 \ep| \rceil \bigr)^{\nicefrac12} = 0. 
\end{align*}
This completes the proof of the theorem. 
\end{proof}

\subsection{The variational approach to  homogenization}
\label{ss.variational}

De Giorgi and Spagnolo~\cite{DGS}\footnote{See~\cite{DG} for an English translation} were among the first to study homogenization in a mathematically rigorous way. They proposed a variational attack on the problem by formulating a scalar elliptic equation with periodic coefficients equivalently as an integral functional and then ``homogenized'' the latter by showing that it converges to a limiting integral functional (in a sense which later became known as~\emph{$\Gamma$-convergence}). 

\smallskip

Later, Dal Maso and Modica~\cite{DM1,DM2} observed that this variational approach leads to a very natural proof of qualitative homogenization in the more general stationary-ergodic (stochastic) setting. This is because, as observed first in~\cite{DM1}, the variational point of view leads to a natural family of subadditive quantities, namely~$\mu(U,p)$ defined in~\eqref{e.mu} below, which has an almost sure limit by the subadditive ergodic theorem (Proposition~\ref{p.subadditive.ergodic}). General homogenization theorems can then be read off by unpacking the information in this limit because it essentially gives first-order correctors that are strictly sublinear in the large-scale limit.

\smallskip

In this section, we present this approach pioneered in~\cite{DGS,DM1,DM2} and give a second proof of Theorem~\ref{t.qualitative.homogenization}, under the additional assumption that the coefficient field~$\a(\cdot)$ is symmetric: 
\begin{equation}
\label{e.symm.sec24}
\P \bigl[  \a = \a^t \bigr] = 1 \,.
\end{equation}
This restriction can be lifted, and the arguments here can be extended to the general case of nonsymmetric coefficients, as we will see in Chapter~\ref{s.nonsymm}. For simplicity, we restrict our attention here to the symmetric case. 

\smallskip

We begin by introducing the following subadditive quantity: 
for each bounded Lipschitz domain~$U\subseteq \Rd$ and~$p\in\Rd$, we define
\begin{equation}
\label{e.mu.sec2}
\mu(U,p) 
:= \inf_{u \in \ell_p+H^1_0(U)} 
\fint_U \frac12\nabla u \cdot \a\nabla u
\,,
\end{equation}
where~$\ell_p$ is the affine function~$x\mapsto p\cdot x$.
The variational problem on the right side of~\eqref{e.mu.sec2} has a unique minimizer, which we denote by~$v(\cdot,U,p)$. It is the unique weak solution of the Dirichlet problem 
\begin{equation}
\label{e.DP.sec24}
\left\{
\begin{aligned}
& -\nabla \cdot \a\nabla v(\cdot,U,p) = 0 & \mbox{in} & \ U,  \\
& v(\cdot,U,p) = \ell_p & \mbox{on} & \ \partial U.
\end{aligned}
\right.
\end{equation}
In particular,~$p\mapsto v(\cdot,U,p)$ is a linear map from~$\Rd\to H^1(U)$, and so the quantity~$\mu(U,p)$ is quadratic in~$p$. We therefore deduce the existence of a matrix~$\a(U)$ such that 
\begin{equation}
\label{e.quad.mu.sec2}
\mu(U,p) = \frac12 p\cdot \a(U) p. 
\end{equation}
We call~$\a(U)$ the \emph{coarse-grained matrix} with respect to~$U$. 

\smallskip

Observe that, for every~$p\in\Rd$, we have that 
\begin{equation}
\label{e.mu.bounds.sec2}
\frac12 \lambda |p|^2 \leq \mu(U,p) \leq p\cdot \frac12\biggl(\fint_U \a(x)\,dx\biggr) p \leq \frac12 {\Lambda}|p|^2.
\end{equation}
In other words, 
\begin{equation*}
\lambda  \Id \leq \a(U) \leq \Lambda  \Id. 
\end{equation*}
Since~$\mu(U,p)$ and hence~$\a(U)$ only depends on the restriction of~$\a(\cdot)$ to~$U$, it is clear that
\begin{equation}
\label{e.mu.local.sec2}
\a(U) \quad \mbox{is~$\F(U)$--measurable.}
\end{equation}
Finally, the subadditivity of~$\mu(U,p)$ is a simple consequence of the variational structure.
Indeed, if~$U, U_1,\ldots,U_N$ are bounded Lipschitz domains satisfying~\eqref{e.U.partition}, then we may take the minimizers~$v(\cdot,U_i,p)$ and glue them together by defining
\begin{equation*}
w(x) := v(x,U_i,p), \quad x\in U_i. 
\end{equation*}
This function~$w$ belongs to~$\ell_p + H^1_0(U)$, since~$v(x,U_i,p) - \ell_p\in H^1_0(U_i)$ for each~$i\in\{1,\ldots,N\}$. 
Testing the infimum in~\eqref{e.mu.sec2} with~$w$, we obtain subadditivity:
\begin{equation}
\label{e.subadd.sec2}
|U| \mu(U,p) 
\leq 
\int_{U} \frac12\nabla w \cdot \a\nabla w 
=
\sum_{i=1}^N 
\int_{U_i} \frac12\nabla v(\cdot,U_i,p)  \cdot \a\nabla v(\cdot,U_i,p) 
=
\sum_{i=1}^N |U_i| \mu(U_i,p)\,.
\end{equation}
This is~\eqref{e.nu.subadd} for~$\nu = \mu(\cdot,p)$. 
We can write this in matrix form as
\begin{equation*}
\a(U) \leq \sum_{i=1}^N \frac{|U_i|}{|U|} \a(U_i)\,.
\end{equation*}

\smallskip

We have checked each of the hypotheses of the subadditive ergodic theorem (Proposition~\ref{p.subadditive.ergodic}) for the quantity~$\nu = \mu(\cdot,p)$, for each~$p\in\Rd$. Applying the result, therefore, yields the existence of 
$\overline{\mu}(p) \in\ S^1_{\mathrm{inv}}(\P)$ such that, for every bounded open set~$U\subseteq \Rd$, 
\begin{equation}
\label{e.subadditive.ergodic.appl}
\limsup_{t\to\infty} \
\left| \mu(tU,p) - \overline{\mu}(p) \right| = 0, 
\qquad 
\mbox{$\P$--a.s.}
\end{equation}
Since~$\mu(U,p)$ is quadratic in~$p$ and bounded by~\eqref{e.mu.bounds.sec2}, so is~$p\mapsto \overline{\mu}(p)$, and the limit above is locally uniform in~$p$. In fact, we deduce the existence of a matrix~$\ahom$ with entries in~$S^\infty_{\mathrm{inv}}(\P)$ such that 
\begin{equation}
\label{e.conv.to.ahom}
\P \Bigl[ 
\limsup_{t\to\infty} \
\left| \a(tU) - \ahom \right| = 0
\Bigr] = 1.  
\end{equation}
By~\eqref{e.mu.bounds.sec2}, it is clear that~$\ahom$ satisfies~$\lambda  \Id \leq \ahom \leq \Lambda  \Id$,~$\P$--a.s.
Moreover, if~$\P$ is ergodic, then~$\ahom$ is deterministic. 

\smallskip

Let us see how we can extract a homogenization statement from the limit~\eqref{e.subadditive.ergodic.appl} of the subadditive quantities~$\mu(U,p)$. 
The plan is to show that the minimizers of~$\mu(\cu_n,p)$ become ``flat'' as~$n\to \infty$. As we will see, this flatness will tell us that certain ``finite-volume'' versions of the correctors satisfy bounds analogous to~\eqref{e.corrector.qualbound.Hm1}. 

For each~$n\in\N$, let~$v_n(\cdot,p) \in H^1_{\mathrm{loc}}(\Rd)$ be the global function obtained by gluing the minimizers of~$\mu(z+\cu_n,p)$ over the partition~$\{ z+\cu_n \,:\, z\in 3^n\Zd \}$ of~$\Rd$. That is, 
\begin{equation*}
v_n(x,p) := 
v(x,z+\cu_n,p), \quad x \in z+\cu_n, \ z\in3^n\Zd\,.
\end{equation*}
The idea for showing ``flatness'' is to argue that the difference between~$v_m$ and~$v_n$ is becoming small as~$m,n\to \infty$ since their energies are converging to the same constant (per unit volume) and the energy functional is uniformly convex. 
The smallness of~$v_m - v_n$ implies that if~$n$ represents a mesoscopic scale relative to~$m$, say~$n=\nicefrac m2$, then~$v_n$ is close to a plane from the perspective of the length scale~$3^m$ due to the Poincar\'e inequality and the fact that it is glued to an affine function on the boundary of the cubes~$z+\cu_n$ for~$z\in 3^n\Zd$. We will formalize this slightly more sophisticatedly by incorporating information from all scales (not just one mesoscale) using the multiscale Poincar\'e inequality. 

\smallskip

Let us turn now to the rigorous argument. We fix~$p\in\Rd$ and allow ourselves to drop the dependence of~$v_n$ and~$v_m$ on~$p$ from the expressions.
Due to the uniform convexity of the minimization problem~\eqref{e.mu.sec2}, we can measure the difference between~$v_m-v_n$ in~$H^1(\cu_m)$ for~$n<m$ in terms of the strictness in the subadditivity inequality~\eqref{e.subadd.sec2}. 
Since the integral functional is quadratic, this is particularly simple: using that the function~$v_m$ satisfies~\eqref{e.DP.sec24} with~$U=\cu_m$ and~$v_n-v_m\in H^1_0(\cu_m)$, we have
\begin{align}
\label{e.multiscale.snap}
\frac{\lambda}{2} \fint_{\cu_m} 
|\nabla v_m- \nabla v_n|^2
&
\leq 
\fint_{\cu_m} 
\frac12 \nabla (v_m-v_n) \cdot \a\nabla (v_m-v_n) 
\notag \\ &
=
\avsum_{z\in 3^n\Zd\cap \cu_m} 
\mu(z+\cu_n,p)
-
\mu(\cu_m,p) \,.
\end{align} 
In particular, since~$v_n \in \ell_p+ H^1_0(z+\cu_n)$ for every~$z\in 3^n\Zd$, we have that 
\begin{equation*}
\avsum_{z\in 3^n\Zd\cap \cu_m} 
\bigl| \bigl( \nabla v_m \bigr)_{z+\cu_n} - p \bigr|^2
\leq 
\frac{2}{\lambda} 
\avsum_{z\in 3^n\Zd\cap \cu_m} 
\mu(z+\cu_n,p)
-
\mu(\cu_m,p) \,.
\end{equation*}
By Proposition~\ref{p.MSP}, we deduce that 
\begin{align}
\left\| \nabla v_m - p \right\|_{\Hminusul(\cu_m)}
&
\leq
C
\left\| \nabla v_m - p \right\|_{ \underline{L}^2(\cu_m)} 
+
C 
\sum_{n=0}^{{m-1}} 3^{n} 
\biggl( \, \avsum_{z\in 3^n\Zd\cap \cu_{m}} 
\left| \left( \nabla v_m  \right)_{z+\cu_n} - p \right|^2 \biggr)^{\!\nicefrac12}
\notag \\ & 
\leq 
C|p|
+ 
C 
\sum_{n=0}^{{m-1}} 3^{n} 
\biggl( \, \avsum_{z\in 3^n\Zd\cap \cu_{m}} 
\mu(z+\cu_n,p)
-
\mu(\cu_m,p) 
\biggr)^{\!\nicefrac12}
\notag \\ & 
\leq 
C|p|
+ 
C|p|
\sum_{n=0}^{{m-1}} 3^{n} 
\biggl| 
 \a(\cu_m)-
\avsum_{z\in 3^n\Zd\cap \cu_{m}} 
\a(z+\cu_n)
\biggr|^{\nicefrac12}
\label{e.flatnessqual.sec24}
\\ & 
\leq 
C|p|
+ 
C |p|
\sum_{n=0}^{{m}} 3^{n} 
\biggl( \, \avsum_{z\in 3^n\Zd\cap \cu_{m}} 
| \a(z+\cu_n)
-
\ahom |
\biggr)^{\!\nicefrac12}
\label{e.flatnessqual.sec24.giveup}
\,.
\end{align}
It is easy to deduce from~\eqref{e.conv.to.ahom} that 
\begin{equation}
\label{e.multiscalequant.sec24}
\P \Biggl[
\limsup_{m\to\infty} 
\sum_{n=0}^{{m}} 3^{n-m} 
\avsum_{z\in 3^n\Zd\cap \cu_{m}} 
| \a(z+\cu_n)
-
\ahom |
= 0
\Biggr] = 1. 
\end{equation}
The previous two displays imply
\begin{equation}
\label{e.gradweak.sec24}
\P \biggl[
\limsup_{m\to\infty} \ 
3^{-m} \left\| \nabla v_m(\cdot,\cu_m,p) - p \right\|_{\Hminusul(\cu_m)} = 0
\biggr] = 1. 
\end{equation}
To obtain a similar bound for the fluxes, we use the identity 
\begin{equation}
\label{e.flux.identity.sec24}
\a(U) p = \fint_{U} \a\nabla v(\cdot,U,p),
\end{equation}
which is proved by an integration by parts using~\eqref{e.DP.sec24} (which is also given below in~\eqref{e.a.astar.formulas}), 
as well as~\eqref{e.multiscale.snap} and the triangle inequality, to obtain
\begin{align}
\lefteqn{
\left\| \a \nabla v_m - \a(\cu_m) p \right\|_{\Hminusul(\cu_m)}
} \qquad & 
\notag \\ &
\leq
C
\left\| \a \nabla v_m - \a(\cu_m) p \right\|_{ \underline{L}^2(\cu_m)} 
+
C 
\sum_{n=0}^{{m-1}} 3^{n} 
\biggl( \avsum_{z\in 3^n\Zd\cap \cu_{m}} 
\left| \left( \a \nabla v_m  \right)_{z+\cu_n} - \a(\cu_m) p \right|^2 \biggr)^{\!\nicefrac12}
\notag \\ & 
\leq C|p|
+ 
C|p|
\sum_{n=0}^{{m-1}} 3^{n} 
\biggl| 
 \a(\cu_m)-
\avsum_{z\in 3^n\Zd\cap \cu_{m}} 
\a(z+\cu_n)
\biggr|^{\nicefrac12}
\label{e.flatnessqual.flux.sec24} \\ & 
\leq 
C|p| 
+ 
C |p|
\sum_{n=0}^{{m}} 3^{n} 
\biggl( \avsum_{z\in 3^n\Zd\cap \cu_{m}} 
| \a(z+\cu_n)
-
\ahom |
\biggr)^{\!\nicefrac12}
\label{e.flatnessqual.flux.sec24.giveup}
\,.
\end{align}
With this in place of~\eqref{e.flatnessqual.sec24.giveup}, we obtain 
\begin{equation}
\label{e.fluxweak.sec24}
\P \biggl[
\limsup_{m\to\infty} \ 
3^{-m} \left\| \a \nabla v_m(\cdot,\cu_m,p) - \ahom p \right\|_{\Hminusul(\cu_m)} = 0
\biggr] = 1. 
\end{equation}
Using~\eqref{e.gradweak.sec24} and~\eqref{e.fluxweak.sec24}, we can build ``finite-volume correctors'' which are functionally the same as the correctors for the purposes of proving homogenization. For the correctors, we define, for each~$m\in\N$, 
\begin{equation}
\label{e.FVC.def}
\phi_{m,e} := v_m(\cdot,\cu_m,e) - \ell_e \in H^1_0(\cu_m)
\end{equation}
and for the flux correctors, we set
\begin{equation*}
\g_{m,e} := \a \nabla v_m(\cdot,\cu_m,e) - \a(\cu_m) e
\end{equation*}
and, analogously to~\eqref{e.fluxcorrect}, we let~$\bfs_{m,e}$ be matrix with entries~$\bfs_{m,e,ij} \in H^1(\cu_m)$ defined for each~$i,j\in\{1,\ldots,d\}$ as the solution of  
\begin{equation}
\label{e.fluxcorrect.sec24}
\left\{
\begin{aligned}
&
-\Delta \bfs_{m,e,ij} = \partial_{x_i} \g_{m,e,j} - \partial_{x_j} \g_{m,e,i} 
\quad \mbox{in}  \ \R^d, \\
& \bfs_{m,e,ij}  \ \mbox{is~$3^m\Z^d$--periodic and} \ ( \bfs_{m,e,ij} )_{\cu_m} = 0.
\end{aligned}
\right. 
\end{equation}
By a direct computation similar to the proof of~\eqref{e.divkg}, using also~\eqref{e.flux.identity.sec24}, we find that  
\begin{equation}
\label{e.fluxcorrect.FV.divg}
\nabla \cdot \bfs_{m,e} = \g_{m,e}
\end{equation}
We claim that 
\begin{equation}
\label{e.dualest.forflux}
\| \bfs_{m,e} \|_{\underline{L}^2(\cu_m)} 
\leq
C\| \g_{m,e} \|_{\Hminusul(\cu_m)} \,.
\end{equation}
To see this, fix~$i,j\in\{1,\ldots,N\}$ and solve the auxiliary problem 
\begin{equation*}
% \label{e.fluxcorrect.sec24.aux}
\left\{
\begin{aligned}
&
-\Delta w = \bfs_{m,e,ij} 
\quad \mbox{in}  \ \R^d \,,
\\
& w  \ \mbox{is~$3^m\Z^d$--periodic and} \ ( w )_{\cu_m} = 0
\,.
\end{aligned}
\right. 
\end{equation*}
Using the standard estimate~$\| w \|_{\underline{H}^2(\cu_m)} \leq C \| \bfs_{m,e,ij} \|_{\underline{L}^2(\cu_m)}$ (recall the underlined notation for norms defined in~\eqref{e.underlinednorms}) and the equations for~$\bfs_{m,e,ij}$ and~$w$, we compute
\begin{align*}
\left\| \bfs_{m,e,ij} \right\|_{\underline{L}^2(\cu_m)}^2 
=
\fint_{\cu_m} \nabla w \cdot \nabla \bfs_{m,e,ij} 
& 
= 
\fint_{\cu_m} \bigl( 
\partial_{x_j} w \g_{m,e,i} - \partial_{x_i} w \g_{m,e,j} \bigr)
\\ & 
\leq 
2 \left\| \nabla w \right\|_{\underline{H}^1(\cu_m)} \left\| \g_{m,e} \right\|_{\Hminusul(\cu_m)}
\\ &
\leq 
C \left\| \bfs_{m,e,ij} \right\|_{\underline{L}^2(\cu_m)} \left\| \g_{m,e} \right\|_{\Hminusul(\cu_m)}\,.
\end{align*}
Rearranging this, we get~\eqref{e.dualest.forflux}. 

\smallskip

It follows from~\eqref{e.gradweak.sec24},~\eqref{e.fluxweak.sec24} and~\eqref{e.dualest.forflux} that 
\begin{equation}
\label{e.corrector.qualbound.L2.sec24}
\P \biggl[ 
\limsup_{m\to \infty}
3^{-m} 
\bigl(
\left\|  \phi_{m,e}  \right\|_{\underline{L}^{2}(\cu_m)}
+
\left\|  \bfs_{m,e}  \right\|_{\underline{L}^{2}(\cu_m)}
\bigr) = 0 \biggr] = 1. 
\end{equation}
This can be compared to~\eqref{e.corrector.qualbound.L2}. Theorem~\ref{t.qualitative.homogenization}, under the additional symmetry assumption~\eqref{e.symm.sec24}, can now be proved by following the rest of the argument in Section~\ref{ss.random}, using these finite-volume correctors in place of~$\phi_e$ and~$\bfs_e$ and with no other changes in the argument. Indeed, the statement and proof of Proposition~\ref{p.DP} remain valid if we replace the correctors~$\phi_e - (\phi_e)_{\cu_m}$ and~$\bfs_e - (\bfs_e)_{\cu_m}$ with their finite volume analogues~$\phi_{m,e}$ and~$\bfs_{m,e}$ in the definition of~$\mathcal{R}(m)$ in~\eqref{e.Rm.def}.

\subsection{Regularity on large scales: the~\texorpdfstring{$C^{1,1-}$}{{C11-}} estimate}
\label{ss.reg}

Solutions of uniformly elliptic equations with rough coefficients do not generally possess very strong regularity. However, it turns out that solutions of elliptic equations with \emph{stationary} coefficients possess much better regularity properties---at least on sufficiently large length scales. 
This phenomenon of \emph{large-scale regularity} plays a central role in the quantitative theory of homogenization, particularly in the stochastic setting. 
In this section, we exhibit a qualitative statement demonstrating this property for very general setting stationary coefficient fields. Quantitative versions and extensions (which are of greater importance) will be presented later in Chapter~\ref{s.regularity}.

\smallskip

To get an idea of how we think about regularity in the context of homogenization, let us recall the ``textbook'' situation of a uniformly elliptic coefficient field~$\a(\cdot)$ satisfying~\eqref{e.ue} but with no further structure (such as periodicity or stationarity) and suppose that~$u\in H^1$ is a solution of~$-\nabla \cdot \a\nabla u = 0$ in~$B_1$. One of the primary concerns of classical elliptic regularity theory is to give sufficient conditions on~$\a(\cdot)$ so that~$u$ belongs to a certain regularity class. Some important results of this form are roughly summarized below:
\begin{equation}
\label{e.classical.regularity}
\left\{
\begin{aligned}
&
\mbox{$\a(\cdot)\in L^\infty(B_1)~$}  \ \implies \ \|\nabla u\|_{L^2(B_{\nicefrac12})} \leq C \bigl( \nicefrac{\Lambda}{\lambda} \bigr)^{\nicefrac12}\| u\|_{L^2(B_{1})} 
& \mbox{(Caccioppoli estimate)}
\\
& 
\mbox{$\a(\cdot)\in L^\infty(B_1)~$} 
\ \ \implies \ \
u \in C^{\delta}(B_{\nicefrac12}) 
\,, \quad 0< \delta \ll 1\,, 
& \mbox{(De Giorgi-Nash estimate)}
\\ & 
\mbox{$\a(\cdot)\in L^\infty(B_1)~$} 
\ \ \implies \ \
u \in W^{1,2+\delta} (B_{\nicefrac12}) 
\,, \quad 0<\delta \ll 1\,, 
& \mbox{(Meyers estimate)}
\\ & 
\a \in C(B_1) 
\ \ \implies \ \
u \in W^{1,p} (B_{\nicefrac12}) 
\,, \quad \forall p\in (1,\infty) \,, 
& \mbox{(Calder\'on-Zygmund estimate)}
\\ & 
\a \in C^{0,\gamma} (B_1)  
\ \ \implies \ \
u \in C^{1,\gamma} (B_{\nicefrac12}) 
\,, \quad \alpha\in (0,1) \,.
& \mbox{(Schauder estimate)}
\end{aligned}
\right.
\end{equation}
The first three estimates in the list, namely the Caccioppoli, De Giorgi-Nash and Meyers estimates, are 
\emph{scale-invariant} estimates. This is evident since there is no assumption on the coefficient field apart from uniform ellipticity and boundedness---scale-invariant assumptions. In particular, we can use these estimates on large scales, which is very useful in the context of homogenization. 
The regularity they give is rather weak: a slight improvement in the integrability of the gradient from~$L^2$ to~$L^{2+\delta}$ (Meyers estimate) or an oscillation decay estimate with a very slow rate of decay (De Giorgi-Nash estimate). 

\smallskip

Unfortunately, there can be no general improvement in these estimates. In fact, the simple~$2d$ checkerboard with four squares and piecewise constant scalar matrices serves as an elementary example. Fix~$\Lambda>1$ and let~$\a$ be the coefficient field in~$\R^2$ which is equal to~$\Lambda  \Id$ in the first and third quadrant (the ``white'' squares) and equal to~$\Id$ in the second and fourth quadrant (the ``black'' squares).  
By an elementary (if tedious) computation, 
one can show\footnote{Hint: use the symmetries of the coefficient field, work in polar coordinates, and demand continuity of the solutions and the normal component of its flux across the interfaces.} the existence of an~$\alpha$--homogeneous solution~$u \in H^1_{\mathrm{loc}}(\R^2)$ of~$\nabla \cdot \a\nabla u = 0$ in~$\R^2$ with exponent~$\alpha = O(\Lambda^{-1})$ as~$\Lambda \to \infty$ and~$\alpha < 1$ for every~$\Lambda > 1$. 
This example ensures that, at best, the exponent in both estimates satisfies~$\delta \leq O(\Lambda^{-1})$. Even for~$\Lambda$ very close to~$1$, the gradient~$|\nabla u|$ blows up near the origin. 
A similar example, with a radially symmetric coefficient field and solution, which can be computed explicitly, is due to Meyers (see~\cite[Chapter 3]{AKMBook} for this example and more discussion).

\smallskip

The second pair of results in~\eqref{e.classical.regularity} are not scale invariant: each is proved by a blow-up argument that sees only the smallest scales---more precisely, scales no larger than on which the coefficients oscillate. In the case of Calder\'on-Zygmund, the arguments work like this: fix a desired integrability exponent~$p\in(1,\infty)$ and zoom in to a small ball on which~$\osc \a \leq \delta$, for a tiny~$\delta$ which will depend on~$p$. The radius of this very small ball will of course depend on~$p$ and the modulus of continuity of~$\a(\cdot)$. For~$\delta$ very small, the equation in this small ball is then close to an equation with constant coefficients, and the estimate then can be proved perturbatively from the regularity of harmonic functions. Since~$\a(\cdot)$ is continuous, one can cover~$B_{\nicefrac12}$ with finitely many balls on which~$\osc \a \leq \delta$  to conclude. 

\smallskip

The Schauder estimates can be proved using an argument of a similar nature, perturbing off the regularity of harmonic functions on small scales, with the smallness of the scale given in terms of the assumed regularity of the coefficients. There one keeps track of the \emph{excess}, which is the term De Giorgi gave to the quantity 
\begin{equation}
\label{e.excess}
E(r) := \frac1r \min_{p \in\Rd, \, c\in\R} \| u - \ell_p - c \|_{\underline{L}^2(B_r)}
\,,
\end{equation}
which measures how much~$u$ deviates from an affine function at scale~$r>0$. To prove a~$C^{1,\gamma}$ estimate, it suffices to show that this quantity decays like~$O(r^\gamma)$ as~$r \downarrow 0$, and this is how the proof of the Schauder estimate goes. 

\smallskip

The Calder\'on-Zygmund and Schauder estimates have limited applicability in homogenization theory, because they say nothing about the behavior of solutions on large scales, only small scales. Indeed, they are useful only on scales smaller than the unit scale, in other words, below the scale of the periodic lattice~$\Zd$. This is unfortunately true regardless of how smooth the coefficients are assumed to be, indeed even if we have~$\a\in C^\infty$. This fact is somewhat hidden if one states these estimates in too soft of a form, as we were guilty of doing above in~\eqref{e.classical.regularity}, because it underemphasizes the importance of the dependence of the constants in the estimates on the smoothness of the coefficients. In other words, they are not quantitative statements. 

\smallskip

Let us present a more quantitative and illuminating way to state these estimates.
For the Calder\'on-Zygmund, the statement is: for each~$p\in(1,\infty)$, there exists~$\delta(p,d,\lambda,\Lambda)>0$ such that 
\begin{equation*}
\osc_{B_1} \a \leq \delta% (p,d,\lambda,\Lambda)
\quad 
\implies
\| \nabla u \|_{L^p(B_{\nicefrac12})}
\leq C \| \nabla u \|_{L^2(B_{1})}\,.
\end{equation*}
For the Schauder estimate, the statement\footnote{Here~$[\cdot]_{C^{0,\alpha}(B_r)}$ denotes the H\"older seminorm and the prefactor constants~$C$ depend only on~$(d,\lambda,\Lambda)$ and, in particular, neither on~$p$ nor~$\alpha$.} is: for each~$\gamma\in (0,1)$, 
\begin{equation}
\label{e.Schauder}
r \leq r_0:= \bigl[ \a \bigr]_{C^{0,\gamma}(B_{1})}^{-\nicefrac1\gamma} \wedge \frac12
\quad 
\implies
\| \nabla u \|_{L^\infty(B_{r})}
+ r^\gamma \bigl[ \nabla u \bigl]_{C^{0,\gamma}(B_{r})}
\leq C \| \nabla u \|_{\underline{L}^2(B_{2r})}\,.
\end{equation}
In each case, the modulus of continuity of the coefficients~$\a(\cdot)$ implicitly determines a maximal length scale for the estimates to be valid.

\smallskip

Let us return back to the context of homogenization. Suppose that we have a coefficient field~$\a(\cdot)$ which is~$\Zd$--stationary (or even~$\Zd$--periodic) and consider a solution~$u\in H^1(B_R)$ of~$-\nabla \cdot \a\nabla u=0$ in a ball~$B_R$ with~$R\gg 1$. We may wish to control the energy density or the flux of~$u$, namely~$\nabla u \cdot \a\nabla u$ or~$\a\nabla u$, to show that these cannot concentrate in very small scales, or even in on \emph{mesoscales} (scales~$r$ with~$1 \ll r \ll R$). If, on the contrary, these quantities could concentrate into sets of small measure (relative to~$B_R$), then this would suggest that the solution~$u$ is very \emph{singular} with respect to the coefficients---that slight changes of~$\a(\cdot)$ can have a dramatic effect on~$u$. As it turns out, behavior like this is very bad for the quantitative homogenization theory. Therefore, controlling the size of~$|\nabla u|$ on scales~$r$ in the range~$1\ll r \ll R$ is of fundamental importance. 

\smallskip

While the estimates from classical elliptic regularity theory do not directly help us, the examples showing that the De Giorgi-Nash and Meyers estimates cannot be (in general) improved, like the four-square checkerboard described above actually provide some hope. After all, they are definitely not periodic or a realization of a stationary random field, as they have a macroscopic structure. 
Perhaps there is a way to show that a ``generic'' coefficient field has better regularity properties than the ``worst'' coefficient field. 

\smallskip

The proofs of the Calder\'on-Zygmund and Schauder estimates actually give us a clue of how to proceed. 
As in those estimates, we will perturb a constant-coefficient equation using a scale-by-scale iteration argument. What is different is that the closeness to the constant-coefficient equation will be obtained not by zooming in and using the continuity of the coefficients but instead by \emph{zooming out and using homogenization}. 
Indeed, homogenization says that, on large scales, the heterogeneous operator~$-\nabla \cdot \a\nabla$ should be ``close'' to the homogenized operator~$-\nabla \cdot \ahom\nabla$. 
The ability to approximate solutions by harmonic functions should have implications for their regularity.
It turns out that, by using this idea in the context of qualitative homogenization, it is possible to prove what we call ``large-scale~$C^{0,1}$ and~$C^{1,1-}$ estimates,'' as we will show below in Theorem~\ref{t.C11.sharp}. 

\smallskip

To explain this terminology, let us think about the form of the estimate we should try to prove. 
As usual in elliptic regularity, we want to pass information from one scale to the next in an iteration going down the scales geometrically. 
If we try to control, say, the~$L^\infty$ norm of the gradient, we may try to show something like 
\begin{equation*}
\| \nabla u \|_{\underline{L}^2(B_{\theta r})}
\leq
\bigl( 1 + \mathrm{Error}(r) \bigr)
\| \nabla u \|_{\underline{L}^2(B_r)}
\end{equation*}
for some~$\theta \in (0,\nicefrac12)$, where~$\mathrm{Error}(r)$ is the error made in the harmonic approximation at scale~$r$. 
If the error is small enough, then an iteration of this inequality down all the scales will give
\begin{equation*}
\sup_{s\in (0,r_0]} 
\| \nabla u \|_{\underline{L}^2(B_{s})}
\leq 
\biggl( \prod_{k=1}^\infty
\bigl( 1 + \mathrm{Error}(\theta^k r_0 ) \bigr)
\biggr) \| \nabla u \|_{\underline{L}^2(B_{r_0})}
\leq
C \| \nabla u \|_{\underline{L}^2(B_{r_0})}
\,.
\end{equation*}
If we repeat this iteration for balls centered at every possible point~$x$ and apply the Lebesgue differentiation theorem, we get an~$L^\infty$ bound on~$|\nabla u|$. This is roughly how the Schauder estimate~\eqref{e.Schauder} is proved (the actual proof is slightly more complicated, but this is the essential idea); in that case, the error function is bounded by~$\mathrm{Error}(r) \leq Cr^{\gamma}$ which leads to the product being finite.

\smallskip

In the case of homogenization, the idea is the same, but the error function gets \emph{worse}, rather than better, on small scales. We can start the iteration on an arbitrarily large scale because there the error is relatively small. Still, we must stop the iteration once the scale becomes too small for homogenization (which are scales of the order of the microscopic scale, in this case of order one). Therefore, it is natural to expect that solutions of a periodic or stationary equation satisfy an estimate of the form 
\begin{align}
\label{e.lipbound.pre}
\sup_{s \in [1,R]}
\| \nabla u \|_{\underline{L}^2(B_s)}
\leq 
C \| \nabla u \|_{\underline{L}^2(B_R)} \,.
\end{align}
We call this a \emph{large-scale Lipschitz (or~$C^{0,1}$) estimate} because it says that the function~$u$ behaves like a Lipschitz function on all scales in the range~$[1,R]$. Since it does not say anything about the very small scales in the range~$(0,1]$, we cannot apply Lebesgue differentiation and get control over the (literal)~$L^\infty$ norm of~$u$. However, this should still be thought of as an~$L^\infty$-type estimate. Indeed, from the point of view of homogenization theory, these small scales, which are smaller than the microscopic scale, do not matter; the regularity of solutions on these scales is determined by the regularity of the coefficients, not by homogenization and conflating these issues will actually lead to confusion. 
Anyway, if one is working with a coefficient field with H\"older continuous coefficients, then it is possible to apply~\eqref{e.Schauder} to bound the true~$L^\infty$ norm by the~$L^2$ norm in a microscopic ball, which can then be plugged into~\eqref{e.lipbound.pre}, in effect replacing the left side by~$|\nabla u(0)|$. Without such an assumption, a pointwise bound like this is not true---but this does not reduce the importance of the large-scale estimate. 

\smallskip

Avellaneda and Lin~\cite{AL1} proved the estimate~\eqref{e.lipbound.pre} for periodic coefficient fields by using a compactness argument, which is essentially a soft version of the ideas we sketched above. In fact, they proved a more precise \emph{large-scale~$C^{1,1-}$ estimate}. This asserts that if we modify the definition of the excess in~\eqref{e.excess} by replacing the affine functions with \emph{corrected} affine functions, then we obtain a decay of the excess analogous to a~$C^{1,\gamma}$ estimate, for any~$\gamma<1$. 
These ``corrected affine functions'' are the elements of the set~$\mathcal{A}_1$, defined by
\begin{align*}  
\mathcal{A}_1 = \bigl\{ \psi \in H^1_{\mathrm{loc}}(\Rd) \, : \, \nabla \psi = e + \nabla \phi_e \quad \mbox{for some} \ e \in \Rd \bigr\} \,.
\end{align*}
In other words,~$\mathcal{A}_1$ is the set of solutions which, for some~$e\in\Rd$, are the sum of the affine~$\ell_e$, the corrector~$\phi_e$ and any additive constant. The large-scale~$C^{1,1-}$ estimate is then the statement that, for every~$\gamma<1$ and solution~$u$ of~$-\nabla \cdot \a\nabla u = 0$ in~$B_R$, there exists~$\phi \in \mathcal{A}_1$ such that 
\begin{equation}
\label{e.AL.C11}
\left\| u - \phi \right\|_{\underline{L}^2(B_r)} 
\leq 
C \left( \frac r R \right)^{1+\gamma} 
\left\| u  \right\|_{\underline{L}^2(B_R)}.
\end{equation}
In short, a solution of~$-\nabla \cdot \a\nabla u=0$ can be approximated by an element of~$\mathcal{A}_1$ with almost the same precision as a harmonic function can be approximated by an affine function. 

\smallskip

Note that such an estimate says something about the \emph{local behavior} of solutions, 
namely that they exhibit a certain type of \emph{universality} on small scales.
Indeed, the estimate says that the periodic wiggles induced by the coefficient field are captured by the first-order correctors at leading order. The homogenization results proved above (for instance, the estimate of~$w^\ep - u^\ep$ in the proof of Proposition~\ref{p.DP}) already told us this \emph{on average} in an~$L^2$ sense, but the estimate~\eqref{e.AL.C11} asserts this in a uniform,~$L^\infty$-type sense. 

\smallskip

In the theorem below, we will prove a general version of the large-scale~$C^{0,1}$ and~$C^{1,1-}$ estimates, extending the result of Avellaneda and Lin to equations with merely stationary coefficient fields. 
We will follow the ideas we have sketched above of using an iteration down the scales in analogy to the Schauder estimates, a quantitative version of the argument of~\cite{AL1} which originated in~\cite{AS}.
There, they showed that there exists a \emph{random minimal scale}~$\X$, so that if~$r \leq R \leq \X$, then~\eqref{e.lipbound.pre} is valid with a deterministic constant~$C(d,\lambda,\Lambda)$. Thus, the only extra random object in the statement is this random scale~$\X$, which is the scale at which the error~$\mathrm{Error}(r)$ first becomes large as one iterates down the scales. In other words,~$\X$ is the smallest scale above which the homogenization error is sufficiently small. 

\smallskip

At the qualitative level, assuming only stationarity, it is not possible to improve~\eqref{e.X.qual.sec2}, which merely states that the minimal scale~$\X$ is almost surely finite. In Chapter~\ref{s.regularity}, we will bound~$\X$ much more strongly (and sharply) under quantitative assumptions on~$\P$: see Theorem~\ref{t.optimalstochasticintegrability}. Recall from~\eqref{e.def.A(U)} that~$\A(U)$ denotes the set of solutions of~$\nabla \cdot \a\nabla u=0$ in~$U$.

\begin{theorem}[Large-scale~$C^{0,1}$ and~$C^{1,1-}$ regularity]
\label{t.C11.sharp}
Let~$\gamma \in [\tfrac12,1)$ and~$\P$ be a~$\Zd$--stationary probability measure on~$(\Omega,\F)$. There exist~$C(\gamma,d,\lambda,\Lambda)<\infty$ and a random variable~$\X$, which we call the ``minimal scale for~$C^{1,\gamma}$ regularity'' and satisfies
\begin{equation}
\label{e.X.qual.sec2}
\P \left[ \X < \infty \right] = 1,
\end{equation}
such that, for every~$R\geq \X$ and~$u\in \A(B_R)$, we have 
\begin{equation}
\label{e.C01}
\sup_{r \in [\X,R]}
\left\| \nabla u \right\|_{\underline{L}^2(B_r)} 
\leq C \left\| \nabla u  \right\|_{\underline{L}^2(B_R)}
\,,
\end{equation}
and there exists~$\phi \in \A_1$ satisfying, for every~$r \in \left[ \X,  R \right]$, 
\begin{equation}
\label{e.intrinsicreg11-}
\left\| u - \phi \right\|_{\underline{L}^2(B_r)} \leq C \left( \frac r R \right)^{\!1+\gamma} 
\left\| u  \right\|_{\underline{L}^2(B_R)}.
\end{equation}
\end{theorem}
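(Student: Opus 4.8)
The plan is to prove the $C^{1,1-}$ excess-decay estimate~\eqref{e.intrinsicreg11-} by a scale-by-scale iteration in analogy with the Schauder estimate, using homogenization (rather than continuity of the coefficients) as the source of closeness to a constant-coefficient equation, and then to deduce the $C^{0,1}$ bound~\eqref{e.C01} as a corollary. Fix $\beta \in [\tfrac12,1)$ and, for a solution $u$ of $-\nabla\cdot\a\nabla u = 0$ in $B_R$, introduce the \emph{intrinsic excess} at scale $r$,
\begin{equation*}
E(r) := \inf_{\psi \in \A_1} \frac1r \left\| u - \psi \right\|_{\underline{L}^2(B_r)}\,,
\end{equation*}
where $\A_1$ is the space of corrected affine functions. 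The heart of the matter is a \emph{one-step improvement}: there exist $\theta = \theta(d,\lambda,\Lambda,\beta) \in (0,\tfrac14]$ and a constant $C$ such that, whenever $r$ is large enough that the homogenization error on $B_r$ is controlled,
\begin{equation*}
E(\theta r) \leq \tfrac12 \theta^{\beta} E(r) + (\text{homogenization error at scale } r)\cdot \| \nabla u\|_{\underline{L}^2(B_r)}\,.
\end{equation*}
To prove this: pick the affine slope $e$ realizing (nearly) the infimum in the definition of $E(r)$, let $u_0$ solve $-\nabla\cdot\ahom\nabla u_0 = 0$ in $B_{r/2}$ with $u_0 = u$ on $\partial B_{r/2}$ (or a Neumann/energy variant), and compare $u$ to $u_0$ using the qualitative homogenization estimate~\eqref{e.DP} of Proposition~\ref{p.DP} applied on a suitable Lipschitz subdomain, together with the Caccioppoli inequality and the Meyers estimate to upgrade the $H^{-1}$ control to an $L^2$ control of $u-u_0$. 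Then use interior $C^{1,1}$ regularity for the constant-coefficient equation $-\nabla\cdot\ahom\nabla u_0=0$: $u_0$ is approximated on $B_{\theta r}$ by an affine function $\ell_{e'}+c$ to precision $C\theta^2 r \|\nabla u_0\|_{\underline{L}^2(B_{r/2})}$; the corrected function $\ell_{e'} + \phi_{e'} + c \in \A_1$ is then close to $u$ on $B_{\theta r}$ after adding back the corrector error $\|\phi_{e'} - (\phi_{e'})_{B_{\theta r}}\|_{\underline{L}^2(B_{\theta r})}$, which is part of the homogenization error. Choosing $\theta$ small so that $C\theta^2 \leq \tfrac12\theta^\beta$ (possible since $\beta < 1 \leq 2$) gives the contraction.

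Next I would \emph{define the minimal scale} $\X$ and run the iteration. Using the qualitative corrector bounds~\eqref{e.corrector.qualbound.Hm1}--\eqref{e.corrector.qualbound.L2} (equivalently, the a.s.\ convergence of $\mathcal{R}(m)$ to zero in~\eqref{e.Rm.def}), the homogenization-error term appearing above, summed over the geometric sequence of scales $\theta^k r$, is a.s.\ finite and tends to zero; concretely one sets
\begin{equation*}
\X := \theta^{-m_\ast}, \qquad m_\ast := \inf\Bigl\{ m \in \N : \textstyle\sum_{n\geq m} (\text{error at scale } \theta^n) \leq \delta_0 \Bigr\}\,,
\end{equation*}
for a small deterministic $\delta_0(d,\lambda,\Lambda,\beta,\theta)$; by~\eqref{e.corrector.qualbound.L2} we have $\X < \infty$ a.s., which is~\eqref{e.X.qual.sec2}. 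For $R \geq r \geq \X$, iterating the one-step improvement from scale $R$ down to scale $r$ gives, by a standard summation of a geometric series with summable perturbations,
\begin{equation*}
E(r) \leq C\left(\frac rR\right)^{\beta} E(R) + C \| \nabla u\|_{\underline{L}^2(B_R)}\Bigl(\frac rR\Bigr)^{\beta}\,,
\end{equation*}
and since $E(R) \leq C\|\nabla u\|_{\underline{L}^2(B_R)} \leq C\| u\|_{\underline{L}^2(B_{2R})}/R$ trivially (take $e=0$), after rescaling from $B_{2R}$ to $B_R$ and choosing the single corrected affine function $\phi \in \A_1$ as the limit of the nearly-optimal competitors across scales (a Cauchy argument, using that consecutive slopes $e_k$ satisfy $|e_{k+1}-e_k| \leq C E(\theta^k r)/\|\cdot\|$ which is summable), one obtains $\|u - \phi\|_{\underline{L}^2(B_r)} \leq C(r/R)^{1+\beta}\|u\|_{\underline{L}^2(B_R)}$, i.e.~\eqref{e.intrinsicreg11-}.

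Finally, the $C^{0,1}$ estimate~\eqref{e.C01} follows from~\eqref{e.intrinsicreg11-}: writing $\nabla\phi = e + \nabla\phi_e$ for the relevant $e$, the triangle inequality and Caccioppoli give $\|\nabla u\|_{\underline{L}^2(B_r)} \leq |e| + \|\nabla\phi_e\|_{\underline{L}^2(B_r)} + C r^{-1}\|u - \phi\|_{\underline{L}^2(B_{2r})}$; the excess decay controls the last term, $\|\nabla\phi_e\|_{\underline{L}^2(B_r)}$ is controlled for $r \geq \X$ after possibly enlarging $\X$ using~\eqref{e.corrector.qualbound.Hm1} and Caccioppoli applied to $\phi_e$, and $|e| \leq C\|\nabla u\|_{\underline{L}^2(B_R)}$ comes from choosing the competitor at the top scale. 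I expect the \textbf{main obstacle} to be the one-step improvement lemma, specifically making the harmonic (constant-coefficient) approximation rigorous at the level of $L^2$ norms of $u - u_0$ rather than merely the $H^{-1}$ norm that Proposition~\ref{p.DP} delivers directly: this gap must be bridged via the Caccioppoli and Meyers (or interior Lipschitz for $\ahom$) estimates and by carefully choosing the comparison domain so that the boundary-layer term in~\eqref{e.DP} does not spoil the $\theta^\beta$ gain. A secondary technical point is bookkeeping the additive constants and the convergence of the slopes $e_k$ to produce a \emph{single} $\phi \in \A_1$ valid simultaneously for all $r \in [\X, R]$.
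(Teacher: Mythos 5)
Your overall architecture (excess decay by comparison with a constant-coefficient problem, a random minimal scale coming from sublinearity of the correctors, a Cauchy argument in the slopes to produce a single $\phi\in\A_1$, and $C^{0,1}$ as a corollary) matches the paper's proof. But there is a genuine gap in the one-step improvement and, consequently, in your definition of $\X$. You apply the harmonic approximation to $u$ itself (solving $-\nabla\cdot\ahom\nabla u_0=0$ with boundary data $u$), which produces an \emph{additive} error of the form $\epsilon(r)\,\|\nabla u\|_{\underline{L}^2(B_r)}$ in the excess-decay inequality. Under the hypotheses of the theorem --- mere $\Zd$-stationarity, no mixing --- the only available information is that the corrector sublinearity $\mathcal{R}(m)$ in~\eqref{e.Rm.def} tends to zero almost surely \emph{with no rate}. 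Consequently your minimal scale, defined by requiring $\sum_{n\geq m}\epsilon(\text{scale }\theta^{-n})\leq\delta_0$, may not exist at all (take $\epsilon\sim 1/\log(\mathrm{scale})$: the sum diverges for every $m$, so~\eqref{e.X.qual.sec2} fails), and even if one settles for the uniform bound $\epsilon\leq\delta_0$ above $\X$, the iteration only yields
\begin{equation*}
E(r)\;\lesssim\;\Bigl(\frac rR\Bigr)^{\beta}E(R)\;+\;\delta_0\sup_{s\in[r,R]}\|\nabla u\|_{\underline{L}^2(B_s)}\,,
\end{equation*}
where the second term does \emph{not} decay as $r/R\to0$; the weighted geometric summation you invoke would require an algebraic rate of homogenization, which is exactly what is unavailable here (and is only proved later, under $\CFS$). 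There is also a secondary circularity: the additive error involves $\|\nabla u\|_{\underline{L}^2(B_{\theta^kR})}$ at every intermediate scale, i.e.\ the $C^{0,1}$ bound you intend to prove only afterwards.

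The paper's device that closes this gap is to apply the harmonic approximation not to $u$ but to the difference $v=u-\psi_s$, where $\psi_s\in\A_1$ is the best corrected-affine approximation at scale $s$; since elements of $\A_1$ are global solutions, $v\in\A(B_s)$, and the approximation error is then $C\delta^{1/(d+6)}\|u-\psi_s\|_{\underline{L}^2(B_s)}$ --- a small \emph{multiple of the excess itself}. The resulting iteration is a strict contraction, $(\theta s)^{-(1+\beta)}\|u-\psi_{\theta s}\|_{\underline{L}^2(B_{\theta s})}\leq\frac12\,s^{-(1+\beta)}\|u-\psi_s\|_{\underline{L}^2(B_s)}$, needing only a \emph{uniform} smallness threshold $\delta$ for $3^{-m}\sup_{|e|\leq1}\bigl(\|\nabla\phi_e\|_{\Hminusul(\cu_m)}+\|\nabla\s_e\|_{\Hminusul(\cu_m)}\bigr)$ above scale $\X$, and that threshold defines an a.s.\ finite $\X$ directly from~\eqref{e.corrector.qualbound.Hm1}, with no summability required. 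If you restructure your one-step lemma around $u-\psi_s$ (your use of Meyers for the boundary layer is a legitimate alternative to the paper's Poisson-kernel lemma), the rest of your outline goes through essentially as you describe.
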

\begin{proof}
We let~$\delta\in (0,\tfrac12]$ be a small parameter which will be selected later and may depend only on~$(\gamma,d,\lambda,\Lambda)$. We define~$\X$ to be the random variable
\begin{equation}
\label{e.XC11}
\X:= 
\sup\left\{ 3^m \,:\, 
3^{-m} 
\sup_{e\in B_1}
\bigl(
\left\| \nabla \phi_e \right\|_{\Hminusul(\cu_m)}
+
\left\| \nabla \bfs_e \right\|_{\Hminusul(\cu_m)}
\bigr)
> \delta
\right\}
\,.
\end{equation}
According to~\eqref{e.corrector.qualbound.Hm1}, we have that~$\X$ is finite almost surely, that is, it satisfies~\eqref{e.X.qual.sec2}. 

\smallskip

We first record some simple estimates for~$\phi_e$. 
For every~$r\geq \X$ and~$m \in \N$ such that~$3^{m-1} < r \leq 3^m$, we have 
\begin{align}
\label{e.correbounds.pre}
\frac1r
\left\| \phi_e - (\phi_e)_{B_r} \right\|_{\underline{L}^{2}(B_r)} 
\leq  
C 
3^{-m} \left\| \nabla \phi_e \right\|_{\Hminusul(\cu_m)} 
\leq 
C\delta |e|
\,.
\end{align}
The first inequality follows from~\eqref{e.L2toHminus}. The Poincar\'e inequality and the Caccioppoli inequality, on the other hand, yield that
\begin{align*}  
\frac 1{2r} \left\| \ell_e + \phi_e - (\phi_e)_{B_r} \right\|_{\underline{L}^{2}(B_r)} 
\leq 
\left\| e + \nabla \phi_e \right\|_{\underline{L}^{2}(B_{r})}   
\leq
\frac{C}{r} \left\| \ell_e + \phi_e - (\phi_e)_{B_{2r}} \right\|_{\underline{L}^{2}(B_{2r})} 
\,.
\end{align*}
From the above two displays and the triangle inequality we obtain, assuming that~$\delta(d,\lambda,\Lambda)$ is sufficiently small, that, for every~$e\in B_1$ and~$r\geq \X$, 
\begin{equation}
\label{e.correbounds}
c |e|
\leq
\left\| e + \nabla \phi_e \right\|_{\underline{L}^{2}(B_{r})} 
\leq 
C |e| \,.
\end{equation}
In particular, for every~$r,s\geq \X$ and~$\psi\in \A_1$, 
\begin{equation}
\label{e.correbounds.abc}
\left\| \nabla \psi \right\|_{\underline{L}^{2}(B_{r})} 
\leq 
C \left\| \nabla \psi \right\|_{\underline{L}^{2}(B_{s})}
\,. 
\end{equation}

\smallskip

\emph{Step 1: Harmonic approximation.} 
Recall from~\eqref{e.def.Ahom(U)} that~$\Ahom(U)$ denotes the set of solutions of the homogenized equation in~\eqref{e.def.Ahom(U)}.
We claim that, for every~$r \geq \X$ and~$v \in \A(B_r)$, there exists~$w \in \Ahom (B_{r/2})$ such that 
\begin{equation}
\label{e.delta.harmonicapprox}
\left\| v - w \right\|_{\underline{L}^2(B_{r/2})}
\leq 
C\delta^{\frac{1}{d+6}} 
\left\| v \right\|_{\underline{L}^2(B_{r})}
\,.
\end{equation}
To prove this, if~$v\in \mathcal{A}(B_r)$, we select~$s\in [ \frac12 r, \frac34 r]$,  using the coarea formula, such that 
\begin{equation} \label{e.selectlayer}
\|  \nabla v \|_{\underline{L}^2(\partial B_s)}
\leq 
4 \|  \nabla v \|_{\underline{L}^2(B_{\nicefrac{3r}{4}})}
\leq 
Cr^{-1}\left\| v \right\|_{\underline{L}^2(B_{r})}
 \,,
\end{equation}
where the last inequality above was obtained by applying the Caccioppoli inequality. 
Let~$w \in H^1(B_s)$ be the solution of the Dirichlet problem
\begin{equation*}
\left\{
\begin{aligned}
& -\nabla \cdot \ahom \nabla w = 0 & \mbox{in} & \ B_{s}, \\
& w = v
& \mbox{on} & \ \partial B_{s}\,. \\
\end{aligned}
\right.
\end{equation*}
By Proposition~\ref{p.DP} and the assumption that~$r \geq \X$, we have, for every~$\kappa \in (0,\frac14)$,  that
\begin{equation}
\label{e.vtildekappa.to.w}
\frac1s
\| v - w \|_{\underline{L}^2(B_s)} 
\leq
C\kappa^{\nicefrac12} 
\| \nabla w \|_{\underline{L}^2 (B_{s} \setminus B_{(1-2\kappa)s})}
+
C \kappa^{-1} \delta 
\left\| \nabla w \right\|_{\underline{W}^{1,\infty}(B_{(1-\kappa)s})} \,.
\end{equation}
To estimate the terms on the right, we first apply the interior pointwise bounds for harmonic functions (note that~$\ahom$-harmonic functions are harmonic after an affine change of variables), we have, for every~$x \in B_{(1-\kappa)s}$,   
\begin{equation*}  
\kappa s | \nabla^2 w(x)| + | \nabla w(x)| 
\leq 
C  \| \nabla w \|_{\underline{L}^2(B_{\kappa s/2}(x))} 
\leq 
C \kappa^{-\nicefrac d2} \| \nabla w \|_{\underline{L}^2(B_s)}\,.
\end{equation*}
We deduce that 
\begin{equation*}
\left\| \nabla w \right\|_{\underline{W}^{1,\infty}(B_{(1-\kappa) s})}
\leq
C \kappa^{-1-\nicefrac d2} \| \nabla w \|_{\underline{L}^2(B_s(x))} \leq C \kappa^{-1-\nicefrac d2} r^{-1} \left\| v \right\|_{\underline{L}^2(B_{r})} \,.
\end{equation*}
Here we also used the fact that~$\| \nabla w \|_{\underline{L}^2(B_s(x))}  \leq C \| \nabla v \|_{\underline{L}^2(B_s(x))} \leq C r^{-1} \left\| v \right\|_{\underline{L}^2(B_{r})}$. The estimate for the ``boundary layer term''~$\| \nabla w \|_{L^2 (B_{s} \setminus B_{(1-\kappa)s})}$ is more tricky. There are various ways to estimate it, and a common choice is to use Meyers' higher integrability estimate---applied first to~$v$ and then to~$w$---and then the H\"older inequality (see~\cite{AKMBook} for details). Here, we present an alternative argument that relies on the representation formula for harmonic functions in terms of the Poisson kernel. The necessary argument is formalized in Lemma~\ref{l.bndrlayer}, below, giving, together with the selection of~$s$ in~\eqref{e.selectlayer}, that
\begin{align*}  
\| \nabla w \|_{\underline{L}^2 (B_{s} \setminus B_{s-\kappa s})} 
& 
\leq
C  |\log \kappa|   \| \nabla v \|_{\underline{L}^2(\partial B_{s})}  \leq C |\log \kappa| r^{-1} \left\| v \right\|_{\underline{L}^2(B_{r})}
\,.
\end{align*}
Combining the above displays yields that
\begin{align*}  
\| v - w \|_{L^2(B_s)}  
\leq 
C \Bigl( \kappa^{\nicefrac12} |\log \kappa |
+ \delta \kappa^{-2-\nicefrac d2} \Bigr)  \| v  \|_{L^2(B_r)}
\,.
\end{align*}
We obtain~\eqref{e.delta.harmonicapprox} by selecting~$\kappa := \delta^{\frac{2}{d+5}}$. 

\smallskip

\emph{Step 2: Excess decay iteration.}
Turning now to the proof of the theorem, fix~$R \geq 2\X$ and~$u\in \mathcal{A}(B_R)$. We show that, for every~$\gamma\in (0,1)$, there exists~$C(\gamma,d,\lambda,\Lambda)<\infty$ such that 
\begin{equation}
\label{e.excessdecaygiveth}
\sup_{r \in [\X,R]}
r^{-(1+\gamma)} \inf_{\psi \in \A_1}\left\| u - \psi \right\|_{\underline{L}^2(B_{r})}
\leq 
C R^{-(1+\gamma)} \left\| u \right\|_{\underline{L}^2(B_{R})}\,.
\end{equation}
For each~$s\in [2\X,R]$, let~$\psi_s\in \A_1$ be such that
\begin{equation*}
\left\| u - \psi_s  \right\|_{\underline{L}^2(B_s)} 
=
\inf_{\psi\in \A_1} \left\| u - \psi \right\|_{\underline{L}^2(B_s)} .
\end{equation*}
Let~$\overline{\delta}:= \delta^{\frac1{d+6}}$.  Applying the result of Step~1 above to~$v = u - \psi_s$, 
we can select a function~$\overline{w}_s \in \Ahom(B_{s/2})$ such that 
\begin{equation*}
\left\| u - \psi_s - \overline{w}_s \right\|_{\underline{L}^2(B_{s/2})}
\leq 
C\overline{\delta} \left\| u - \psi_s \right\|_{\underline{L}^2(B_{s})}.
\end{equation*}
By the interior~$C^{1,1}$ estimate for harmonic functions, we have, for every~$r\in \left(0,\tfrac s2\right)$,
\begin{equation*}
\left\| \overline{w}_s - \ell_{\nabla \overline{w}_s(0)} -  \overline{w}_s(0) \right\|_{L^\infty(B_r)} 
\leq 
C \left( \frac rs \right)^2 \left\| \overline{w}_s \right\|_{\underline{L}^1(B_{s/2})} 
\leq 
C \left( \frac rs \right)^2
\left\| u - \psi_s \right\|_{\underline{L}^2(B_{s})}.
\end{equation*}
Set
\begin{equation*}
\tilde{\psi}:= \psi_s + \left( \ell_{\nabla \overline{w}_s(0)} + \phi_{\nabla \overline{w}_s(0)} - \left( \phi_{\nabla \overline{w}_s(0)} \right)_{B_{s/2}} 
+ \overline{w}_s(0) \right) \in \A_1.
\end{equation*}
By the assumption~$s/2 \geq \X$,~\eqref{e.correbounds.pre} and the~$C^1$-estimate for harmonic functions, we have  
\begin{align*}
\bigl\| \phi_{\nabla \overline{w}_s(0)}  - ( \phi_{\nabla \overline{w}_s(0)} )_{B_{s/2}} 
\bigr\|_{\underline{L}^2(B_{s/2})}
\leq C\overline{\delta} s \left| \nabla \overline{w}_s(0) \right| 
&
\leq C\overline{\delta} \left\| \overline{w}_s \right\|_{\underline{L}^2(B_{s/2})} 
%\\ & 
\leq C \overline{\delta} \left\| u - \psi_s \right\|_{\underline{L}^2(B_{s})}.
\end{align*}
Combining the above displays using the triangle inequality, we obtain, for every~$r\in \left(0,\tfrac s2 \right)$, 
\begin{equation*}
\left\| u - \psi_r \right\|_{\underline{L}^2(B_{r})}
\leq
\bigl\| u - \tilde{\psi} \bigr\|_{\underline{L}^2(B_{r})}
\leq 
C \Bigl( \left( \frac rs \right)^2 
+ \overline{\delta} \left( \frac sr \right)^{d/2}\Bigr) 
\left\| u - \psi_s \right\|_{\underline{L}^2(B_{s})}.
\end{equation*}
That is, 
\begin{equation*}
r^{-(1+\gamma)} \left\| u - \psi_r \right\|_{\underline{L}^2(B_{r})}
\leq
C \biggl( 
\left( \frac rs \right)^{1-\gamma} 
+ \overline{\delta} \left( \frac sr \right)^{d/2+1+\gamma}\biggr) 
s^{-(1+\gamma)} \left\| u - \psi_s \right\|_{\underline{L}^2(B_{s})}.
\end{equation*}
Taking~$r:=\theta s$ for~$\theta(\gamma,d,\lambda,\Lambda) \in (0,\frac1{100}]$ sufficiently small, we get
\begin{align*}
(\theta s)^{-(1+\gamma)} \left\| u - \psi_{\theta s} \right\|_{\underline{L}^2(B_{\theta s})}
&
\leq
\biggl( \frac 14 + C \overline{\delta} \theta ^{-\left(\frac d2+1+\gamma\right)}\biggr) 
s^{-(1+\gamma)} \left\| u - \psi_s \right\|_{\underline{L}^2(B_{s})}
\\ & 
\leq
\biggl( \frac 14 + C \overline{\delta} \biggr) 
s^{-(1+\gamma)} \left\| u - \psi_s \right\|_{\underline{L}^2(B_{s})}
\,.
\end{align*}
Finally, taking~$\delta(\gamma, d,\lambda,\Lambda)\in (0,10^{-4}]$ sufficiently small, so that~$\delta^{\frac1{d+6}} C \leq \frac14$, we obtain
\begin{equation*}
(\theta s)^{-(1+\gamma)} \left\| u - \psi_{\theta s} \right\|_{\underline{L}^2(B_{\theta s})}
\leq 
\frac12 s^{-(1+\gamma)} \left\| u - \psi_s \right\|_{\underline{L}^2(B_{s})}.
\end{equation*}
By iterating starting from~$s \in [\theta R,R]$, or taking supremum over~$s \in [2\X,R]$, yields that
\begin{equation}
\label{e.excessdecaygiveth.pre}
\sup_{r \in [2\theta \X,R]}
r^{-(1+\gamma)} \left\| u - \psi_{r} \right\|_{\underline{L}^2(B_{r})}
\leq 
\sup_{r \in [\theta R,R]} r^{-(1+\gamma)} \left\| u - \psi_r \right\|_{\underline{L}^2(B_{r})}\,.
\end{equation}
Now~\eqref{e.excessdecaygiveth} follows upon giving up a volume factor with the constant~$C = \theta^{-1-\gamma-\nicefrac d2}$. 

\smallskip

\emph{Step 3: The conclusion.}
By~\eqref{e.excessdecaygiveth} and the triangle inequality, if~$r\in [2\X, R]$ and~$\theta \in [\frac12, 1]$, then 
\begin{equation}
\label{e.catchme}
r^{-(1+\gamma)} \left\| \psi_{r} - \psi_{\theta r} \right\|_{\underline{L}^2(B_{r})}
\leq 
C R^{-(1+\gamma)} \left\| u \right\|_{\underline{L}^2(B_{R})}
\,.
\end{equation}
By~\eqref{e.correbounds.abc} and the above display, we deduce that 
\begin{align*}  
\sup_{s \in [\X , \infty)} \frac1s \left\| \psi_{r} - \psi_{\theta r} \right\|_{\underline{L}^2(B_{s})}
\leq 
C r^\gamma R^{-(1+\gamma)} \left\| u \right\|_{\underline{L}^2(B_{R})}
\,.
\end{align*}
Setting~$\psi = \psi_\X$ and summing over dyadic~$r$, we obtain, for every~$r\in[\X,\frac12R]$,  
\begin{equation*}
r^{-(1+\gamma)} \left\| \psi  - \psi_{r} \right\|_{\underline{L}^2(B_{r})}
\leq 
C R^{-(1+\gamma)} \left\| u \right\|_{\underline{L}^2(B_{R})}
\,.
\end{equation*}
Thus, by the triangle inequality, the above bound together with~\eqref{e.excessdecaygiveth} yields~\eqref{e.intrinsicreg11-}.

\smallskip

The estimate~\eqref{e.C01} follows from the Caccioppoli inequality, the Poincar\'e inequality, the triangle inequality,~\eqref{e.correbounds.abc} and~\eqref{e.intrinsicreg11-}. Indeed, supposing without loss of generality that~$(u)_{B_R} = 0$, we obtain, for every~$r\in[\X,\frac12R]$, 
\begin{align*}
\left\| \nabla u \right\|_{\underline{L}^2(B_r)} 
&
\leq 
\left\| \nabla (u-\psi) \right\|_{\underline{L}^2(B_r)}  + \left\| \nabla \psi \right\|_{\underline{L}^2(B_r)} 
\\ & 
\leq 
\frac{C}{r} 
\left\| u-\psi  \right\|_{\underline{L}^2(B_{2r})}  
+ 
C \left\| \nabla \psi \right\|_{\underline{L}^2(B_{R/2})} 
\\ & 
\leq 
Cr^\gamma R^{-(1+\gamma)} 
\left\| u \right\|_{\underline{L}^2(B_R)} 
+
\frac{C}{R} \left\| \psi \right\|_{\underline{L}^2(B_R)} 
\\ & 
\leq
\frac{C}{R} \left\| u \right\|_{\underline{L}^2(B_R)} 
\leq 
C \left\| \nabla u \right\|_{\underline{L}^2(B_R)} \,.
\end{align*}
This completes the proof of the theorem. 
\end{proof}

In the argument above, we used the following technical lemma to estimate the boundary layers of~$\ahom$-harmonic functions. Note that the need for this lemma can be circumvented, if desired, by using an argument based on Meyers' estimate: see~\cite{AKMBook}. 

\begin{lemma} \label{l.bndrlayer}
Suppose~$\kappa \in (0,\frac1{10}]$,~$s >0$, and~$g \in H^{1}(\partial B_{s})$. Let~$w \in H^1(B_s)$ solve~$-\nabla \cdot \ahom \nabla w = 0$ in~$B_s$ with the trace~$w = g$ on~$\partial B_s$. Then there exists a constant~$C(d,\lambda,\Lambda) < \infty$ such that 
\begin{align}  \label{e.homog.bndrlayer}
\| \nabla w \|_{\underline{L}^2(B_{s} \setminus B_{(1-\kappa)s})} 
& \leq 
C |\log \kappa|   \| \nabla_{\mathrm{tan}} g \|_{\underline{L}^2(\partial B_{s})}  
\,,
\end{align}
where~$\nabla_{\mathrm{tan}}$ denotes the tangential derivative. 
%and
%\begin{align} \label{e.homog.interior}
%\| \nabla w \|_{\underline{W}^{1,\infty}(B_{(1-\kappa)s})} 
%\leq 
%C s^{-1} \kappa^{-1-\nicefrac d2} \| \nabla g \|_{\underline{L}^2(B_{s})} 
%\,.
%\end{align}
\end{lemma}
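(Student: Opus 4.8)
By scaling we may take $s=1$, so the statement becomes an estimate for $w$ harmonic with respect to $\ahom$ (hence harmonic after an affine change of variables) in $B_1$ with boundary data $g$, of the form $\|\nabla w\|_{\underline{L}^2(B_1\setminus B_{1-\kappa})}\leq C|\log\kappa|\,\|\nabla_{\mathrm{tan}}g\|_{\underline{L}^2(\partial B_1)}$. The natural tool is the Poisson kernel representation: after the affine change of variables $x\mapsto Mx$ that turns $-\nabla\cdot\ahom\nabla$ into $-\Delta$, the domain becomes an ellipsoid, but it is more convenient to keep the ball and instead use the explicit form of the Poisson kernel for the constant-coefficient operator $-\nabla\cdot\ahom\nabla$ on $B_1$. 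Write $w(x)=\int_{\partial B_1}P(x,y)\,g(y)\,d\sigma(y)$, where $P$ is the corresponding Poisson kernel, which satisfies the same qualitative bounds as the Laplacian's: $P(x,y)\simeq (1-|x|^2)\,|x-y|^{-d}$ up to constants depending only on $(d,\lambda,\Lambda)$, together with the matching gradient bound $|\nabla_x P(x,y)|\leq C|x-y|^{-d}$.

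First I would reduce to controlling $\nabla w$ near a fixed boundary point. Cover the annular shell $B_1\setminus B_{1-\kappa}$ by $\sim\kappa^{1-d}$ boundary caps of radius $\sim\kappa$; by rotational symmetry of the ball (the operator is constant-coefficient, so only the domain geometry enters and the estimate is uniform over boundary points) it suffices to estimate $\int_{\Gamma_\kappa}|\nabla w|^2$, where $\Gamma_\kappa$ is the portion of the shell lying over one such cap centered at a boundary point $y_0$. Because $\nabla w$ only sees differences of $g$ — constants are killed by the gradient — replace $g$ by $g-(g)_{\partial B_1}$ and use the Poisson formula for $\nabla w$: $\nabla w(x)=\int_{\partial B_1}\nabla_x P(x,y)\,(g(y)-g(y_0))\,d\sigma(y)$, valid since $\int \nabla_x P(x,y)\,d\sigma(y)=\nabla_x 1=0$. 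Now split the $y$-integral dyadically in $|y-y_0|$: on the annulus $|y-y_0|\simeq 2^j\mathrm{dist}(x,\partial B_1)$ one uses $|\nabla_x P(x,y)|\leq C|x-y|^{-d}\leq C(2^j\mathrm{dist}(x,\partial B_1))^{-d}$ together with the trace/Poincaré bound $\|g-g(y_0)\|_{L^2(|y-y_0|\le t)}\leq Ct\,\|\nabla_{\mathrm{tan}}g\|_{L^2}$ on arcs, plus Cauchy–Schwarz. Summing the dyadic pieces produces a geometric-type series in which the number of relevant scales is $\sim\log(1/\mathrm{dist}(x,\partial B_1))\lesssim|\log\kappa|$ once $x\in\Gamma_\kappa$; after squaring and integrating $x$ over $\Gamma_\kappa$ this yields $\int_{\Gamma_\kappa}|\nabla w|^2\leq C|\log\kappa|^2\kappa^{d-1}\|\nabla_{\mathrm{tan}}g\|^2_{L^2(\partial B_1)}$. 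Dividing by the volume $|B_1\setminus B_{1-\kappa}|\simeq\kappa$ and summing over the $\sim\kappa^{1-d}$ caps gives $\|\nabla w\|_{\underline{L}^2(B_1\setminus B_{1-\kappa})}^2\leq C|\log\kappa|^2\|\nabla_{\mathrm{tan}}g\|_{\underline{L}^2(\partial B_1)}^2$, which is \eqref{e.homog.bndrlayer} after taking square roots and undoing the scaling.

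The main obstacle is bookkeeping the dyadic decomposition so that the logarithmic — rather than power — loss in $\kappa$ comes out: one must be careful that the contribution from $y$ far from $y_0$ (where $|x-y|\simeq 1$) is $O(\|\nabla_{\mathrm{tan}}g\|_{L^2})$ with no $\kappa$-loss, and that the many near-diagonal dyadic scales each contribute $O(\|\nabla_{\mathrm{tan}}g\|_{L^2})$, so that their number — $\sim|\log\kappa|$ — is exactly the source of the logarithm. A clean way to organize this is to prove the pointwise bound $|\nabla w(x)|\leq C\big(1+\log\frac{1}{1-|x|}\big)^{1/2}\big(\fint_{\partial B_1}|\nabla_{\mathrm{tan}}g|^2\big)^{1/2}\cdot(\text{weight})$ is \emph{not} quite available, so instead one works directly at the $L^2$ level: integrate $|\nabla w(x)|^2$ against $d x$ over the shell and exchange the order of integration, reducing matters to the bound $\int_{B_1\setminus B_{1-\kappa}}|\nabla_x P(x,y)||\nabla_x P(x,y')|\,dx\leq C|\log\kappa|\,K(y,y')$ for a kernel $K$ bounded on $L^2(\partial B_1)$ uniformly — e.g. $K(y,y')\simeq |y-y'|^{-(d-1)}\wedge(\text{const})$, which is the kernel of a fractional-integral operator of the right order to absorb the tangential-gradient factors via the trace inequality on the sphere. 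I would present the argument in the scaled-away ($s=1$) setting and indicate at the end how the $s$-dependence and the affine change of variables only affect the constant $C(d,\lambda,\Lambda)$, leaving the elementary verification of the Poisson-kernel bounds for $-\nabla\cdot\ahom\nabla$ to the reader or to a citation.
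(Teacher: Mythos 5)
Your overall strategy—represent $w$ by the Poisson kernel of $-\nabla\cdot\ahom\nabla$ on the ball, exploit the cancellation $\int_{\partial B_1}\nabla_x P(x,y)\,d\sigma(y)=0$, and count $\sim|\log\kappa|$ dyadic scales around the projected boundary point—is close in spirit to the paper's argument, but the step that is supposed to make each dyadic scale contribute a scale-\emph{uniform} amount is not justified, and this is exactly where the logarithm must come from. You invoke ``$\|g-g(y_0)\|_{L^2(|y-y_0|\le t)}\le Ct\,\|\nabla_{\mathrm{tan}}g\|_{L^2}$'': first, $g(y_0)$ is not defined pointwise for $g\in H^1(\partial B_1)$ when $d\ge3$; second, with the \emph{global} norm of $\nabla_{\mathrm{tan}}g$ on the right, the contribution of the annulus at scale $t_j=2^j\,\mathrm{dist}(x,\partial B_1)$ is of order $t_j^{-(d-1)/2}\|\nabla_{\mathrm{tan}}g\|_{L^2(\partial B_1)}$, so the smallest scale dominates and you get a power loss in $\mathrm{dist}(x,\partial B_1)$, not a count of scales. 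Indeed your own bookkeeping does not close: the claimed cap bound $\int_{\Gamma_\kappa}|\nabla w|^2\le C|\log\kappa|^2\kappa^{d-1}\|\nabla_{\mathrm{tan}}g\|^2_{L^2(\partial B_1)}$, summed over the $\sim\kappa^{1-d}$ caps, gives $\int_{B_1\setminus B_{1-\kappa}}|\nabla w|^2\le C|\log\kappa|^2\|\nabla_{\mathrm{tan}}g\|^2_{L^2(\partial B_1)}$, whereas \eqref{e.homog.bndrlayer} (after unwinding the volume normalizations, $|B_1\setminus B_{1-\kappa}|\simeq\kappa$) requires $\int_{B_1\setminus B_{1-\kappa}}|\nabla w|^2\le C\kappa|\log\kappa|^2\|\nabla_{\mathrm{tan}}g\|^2_{L^2(\partial B_1)}$; you are short by a factor $\kappa$, which is precisely the factor an uncontrolled small-scale accumulation eats. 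The missing ingredient is a localized, scale-uniform bound of the form $\fint_{|y-y_0|\le t}|g(y)-g(y_0)|\,d\sigma(y)\le C\,t\,(M\nabla_{\mathrm{tan}}g)(y_0)$, with $M$ the Hardy--Littlewood maximal function on the sphere (obtained by writing $g(y)-g(y_0)$ as a path integral of $\nabla_{\mathrm{tan}}g$, or by telescoping cap averages). With it, each of the $\log(1/\mathrm{dist}(x,\partial B_1))\lesssim|\log\kappa|$ scales contributes $O\bigl((M\nabla_{\mathrm{tan}}g)(x')\bigr)$, and the $L^2(\partial B_1)\to L^2(\partial B_1)$ boundedness of $M$ together with $\int_0^{\kappa}\log^2(1/\rho)\,d\rho\simeq\kappa\log^2(1/\kappa)$ gives the correct $\kappa$ scaling. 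This is exactly the mechanism of the paper's proof, which moreover avoids your unproved gradient bound $|\nabla_xP(x,y)|\le C|x-y|^{-d}$ altogether: it first applies the Caccioppoli inequality with the radially extended boundary data $\bar g(x)=g(x')$ to reduce \eqref{e.homog.bndrlayer} to an $L^2$ bound on $w-\bar g$ in a slightly larger shell, and then only needs the undifferentiated kernel bound $|P(x,y)|\le C\,\mathrm{dist}(x,\partial B_s)|x-y|^{-d}$.

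Your proposed fallback does not repair this. The bilinear reformulation asks for $\int_{B_1\setminus B_{1-\kappa}}|\nabla_xP(x,y)|\,|\nabla_xP(x,y')|\,dx\le C|\log\kappa|\,K(y,y')$ with $K$ bounded on $L^2(\partial B_1)$, but putting absolute values inside discards the cancellation that made the representation usable in the first place: using only $|\nabla_xP(x,y)|\lesssim|x-y|^{-d}$, the $x$-integral already diverges near $y$ (for $x$ in the shell at distance $\rho$ from the boundary, the surface integral of $(|x'-y|+\rho)^{-d}$ is of order $\rho^{-1}$, and $\int_0^{\kappa}\rho^{-1}\,d\rho=\infty$), so no such kernel bound can hold, and in any case $|y-y'|^{-(d-1)}$ is borderline rather than bounded on $L^2(\partial B_1)$. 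To make your route work you must keep the subtraction $g(y)-g(x')$ at the level of the integrand and control it scalewise by the maximal function as above; alternatively, adopt the paper's Caccioppoli-plus-$P$ (not $\nabla P$) organization, which requires less kernel information and reaches the same dyadic-plus-maximal-function conclusion.
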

\begin{proof}
Throughout the proof, we denote~$x' = \frac{s}{|x|} x$ and~$x_n = x-x'$. Extend~$g$ to~$B_s\setminus B_{s/2}$ by~$\bar g(x) = g(x')$ with~$\nabla \bar g(x) = \frac{s}{|x|} \nabla_{\mathrm{tan}} g(x')$.  First, the Caccioppoli inequality provides us
\begin{align}  \label{e.bndrlayer.cacc}
\| \nabla w \|_{\underline{L}^2(B_{s} \setminus B_{(1-\kappa)s})}  
\leq
\frac{C}{\kappa s} \| w - \bar g \|_{\underline{L}^2(B_{s} \setminus B_{(1-2\kappa)s})}
+
C \| \nabla_{\mathrm{tan}} g \|_{L^2(\partial B_{s})}
\,.
\end{align}
Let then~$P(x,y)$ be the Poisson kernel for~$B_s$ with respect to the operator~$-\nabla \cdot \ahom \nabla$, satisfying the estimate
\begin{align*}  
|P(x,y)| \leq \frac{C\dist(x,\partial B_s)}{|x-y|^d}\,.
\end{align*}
We have the representation, with~$\mathcal{H}^{d-1}$ being the surface measure, 
\begin{align}  \label{e.bndrlayer.wrepresent}
w(x) = \bar g(x)  
+
 \int_{\partial B_s} P(x,y') (g(y') -  g(x')) \, d\mathcal{H}^{d-1}(y') 
 \,.
\end{align}
Defining the maximal function
\begin{align*}  
M f(x') = \sup_{r>0} r^{1-d} \int_{\partial B_s \cap B_r(x')} f(y') \, d\mathcal{H}^{d-1}(y') ,
\end{align*}
we obtain, by denoting~$z_{x,y}(t) := s \frac{x' + t(y-x)'}{|x' + t(y-x)'|}$ and~$A_{2^k|x_n|}(x') : = B_{2^{k+1} |x_n|}(x')\setminus B_{2^{k} |x_n|}(x')$, that
\begin{align}  
\lefteqn{
\int_{\partial B_s} |P(x,y')| |g(y') -  g(x')| \, d\mathcal{H}^{d-1}(y') 
} \qquad &
\notag \\ 
&
\leq 
C \int_{0}^1 \int_{\partial B_s } 
\frac{|x_n| |x' - y'| }{(x_n^2 + |x' - y'|^2)^{\nicefrac d2}} |\nabla_{\mathrm{tan}} g(z_{x,y}(t))| \, d\mathcal{H}^{d-1}(y') \, dt
\notag \\ 
& 
\leq
C |x_n|^{2-d}
\int_{0}^1 \int_{\partial B_s \cap B_{|x_n|}(x') }  |\nabla_{\mathrm{tan}} g(z_{x,y}(t))| \, d\mathcal{H}^{d-1}(y')  \, dt
\notag \\ 
&  \qquad
+
C |x_n| \sum_{k=0}^{ \lceil \log_2 (s/|x_n|) \rceil}
(2^{k} |x_n|)^{1-d}  
\int_{0}^1 \int_{\partial B_s  \cap A_{2^k|x_n|}(x')}  
 |\nabla_{\mathrm{tan}} g(z_{x,y}(t))| \, d\mathcal{H}^{d-1}(y') \, dt
 \notag \\ 
& 
\leq
C |x_n| \log \frac{s}{|x_n|} (M \nabla_{\mathrm{tan}} g)(x')
\,.
 \notag
\end{align}
It follows by~$L^2(\partial B_s) \to L^2(\partial B_s)$ boundedness of the maximal function that 
\begin{align*}  
\fint_{B_s \setminus B_{(1-\kappa)s}}  
\biggl( 
\int_{\partial B_s} |P(x,y')| |g(y') -  g(x')| \, d\mathcal{H}^{d-1}(y') 
\biggr)^2 \, dx 
\leq 
C (\kappa s)^2 \log^2 \kappa  \| \nabla_{\mathrm{tan}} g \|_{\underline{L}^2(B_s)}^2
\,.
\end{align*}
Merging this with the Caccioppoli estimate~\eqref{e.bndrlayer.cacc} and~\eqref{e.bndrlayer.wrepresent} completes the proof.  
\end{proof}

Removing the prefactor of~$|\log \kappa|$ from the previous lemma is possible by employing more careful analysis via singular integrals. However, the proof is slightly more involved, and the logarithmic prefactor is harmless for our purposes, so we omit the proof. 

\smallskip

We conclude with a corollary of Theorem~\ref{t.C11.sharp} giving a soft version of the~$C^{1,1-}$ estimate, namely a ``Liouville theorem'' which gives a classification of solutions which grow at most like~$O(|x|^{2-\ep})$ as~$|x|\to \infty$ for some~$\ep>0$: the only such solutions belong to~$\mathcal{A}_1$. 

\begin{corollary}[Liouville theorem] 
\label{cor.Liouville}
Let~$\P$ be a stationary probability measure. 
Suppose that~$\ep>0$ and~$u\in H^1_{\mathrm{loc}}(\Rd)$ satisfies~$-\nabla \cdot\a\nabla u = 0$ in~$\Rd$ and the growth condition
\begin{equation*}
\liminf_{r \to \infty} 
r^{-(2-\ep)} 
\| u \|_{\underline{L}^2(B_r)} 
< \infty. 
\end{equation*}
Suppose also that~$\a(\cdot)$ belongs to the event in which~$\X < \infty$ (an event of full probability by Theorem~\ref{t.C11.sharp}). 
Then~$u \in\mathcal{A}_1$.
\end{corollary}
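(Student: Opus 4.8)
The plan is to apply the large-scale $C^{1,1-}$ estimate of Theorem~\ref{t.C11.sharp} on a sequence of balls of radius $R_j \to \infty$, extract a single $\phi \in \mathcal{A}_1$ approximating $u$ on all scales, and then show that the growth hypothesis forces $u - \phi$ to be constant. First I would fix $\beta \in [\tfrac12, 1)$ close enough to $1$ that $1 + \beta > 2 - \ep$ (this is where the growth exponent $2-\ep$ enters), and let $\X$ be the minimal scale from Theorem~\ref{t.C11.sharp} for this $\beta$; by hypothesis we are on the full-probability event $\{\X < \infty\}$. Choose a sequence $R_j \to \infty$ along which $R_j^{-(2-\ep)} \| u \|_{\underline{L}^2(B_{R_j})}$ stays bounded, say by some $M < \infty$.

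For each $j$, Theorem~\ref{t.C11.sharp} applied to $u$ on $B_{R_j}$ produces $\phi^{(j)} \in \mathcal{A}_1$ with
\begin{equation*}
\| u - \phi^{(j)} \|_{\underline{L}^2(B_r)} \leq C \Bigl( \frac{r}{R_j} \Bigr)^{1+\beta} \| u \|_{\underline{L}^2(B_{R_j})} \leq C M \, r^{1+\beta} R_j^{-(1+\beta) + (2-\ep)}, \qquad r \in [\X, R_j].
\end{equation*}
Since $1 + \beta > 2 - \ep$, the exponent $-(1+\beta) + (2-\ep)$ is negative, so for each fixed $r$ the right side tends to $0$ as $j \to \infty$. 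The next step is to show the $\phi^{(j)}$ converge: since $\phi^{(j)} - \phi^{(k)} \in \mathcal{A}_1$, the difference has the form $\ell_{e_j - e_k} + \phi_{e_j - e_k}$ up to an additive constant, and using~\eqref{e.correbounds} (the two-sided bound $c|e| \leq \| e + \nabla \phi_e \|_{\underline{L}^2(B_r)} \leq C|e|$ valid for $r \geq \X$, after rescaling to remove the restriction $|e| \leq 1$ by linearity), together with the Poincar\'e inequality, one gets $|e_j - e_k| + (\text{const.}) \lesssim r^{-1} \| \phi^{(j)} - \phi^{(k)} \|_{\underline{L}^2(B_r)}$ for $r \geq \X$; the triangle inequality via $u$ and the displayed bound above then show $(e_j)$ is Cauchy in $\Rd$ and the additive constants converge, so $\phi^{(j)} \to \phi$ for some $\phi \in \mathcal{A}_1$ (the set $\mathcal{A}_1$ is closed, being the image of the continuous linear map $e \mapsto \ell_e + \phi_e$). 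Passing to the limit $j \to \infty$ in the displayed inequality with $r$ fixed gives $\| u - \phi \|_{\underline{L}^2(B_r)} = 0$ for every $r \geq \X$, hence $u = \phi$ a.e.\ on $\Rd$, so $u \in \mathcal{A}_1$.

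I expect the main obstacle to be the convergence/compactness step for the sequence $\phi^{(j)}$: one must argue that the \emph{same} element of $\mathcal{A}_1$ works at every scale, rather than a scale-dependent one. This is handled precisely by the nondegeneracy estimate~\eqref{e.correbounds} — it converts $L^2$-smallness of a difference of elements of $\mathcal{A}_1$ on a large ball into smallness of the underlying slope vector and additive constant — which is exactly the mechanism already used in Step~3 of the proof of Theorem~\ref{t.C11.sharp} to sum the excess bounds into a single $\psi$. A minor technical point is extending~\eqref{e.correbounds} from $e \in B_1$ to arbitrary $e \in \Rd$, which is immediate by homogeneity since $e \mapsto \nabla\phi_e$ is linear. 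Everything else — choosing $\beta$, selecting the sequence $R_j$, passing to the limit — is routine.
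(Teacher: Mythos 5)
Your proposal is correct and takes essentially the same route as the paper's proof: fix $\beta>1-\ep$, apply Theorem~\ref{t.C11.sharp} on balls $B_{R_j}$ along the liminf sequence, and use that the factor $R_j^{1-\ep-\beta}\to 0$ to force $u$ to coincide with an element of $\mathcal{A}_1$ on every ball $B_r$ with $r\geq \X$. The only difference is one of detail: the paper phrases the conclusion as $\inf_{\phi\in\mathcal{A}_1}\|u-\phi\|_{\underline{L}^2(B_r)}=0$ and leaves the closedness/finite-dimensionality of $\mathcal{A}_1$ implicit, whereas you spell out the Cauchy argument for the $\phi^{(j)}$ via the nondegeneracy estimate~\eqref{e.correbounds}, which is a legitimate (if slightly more laborious) way to fill in the same step.
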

\begin{proof}
Let~$\gamma \in ( 1-\ep, 1 )$. 
Select~$R_j \to \infty$ such that~$A:= \sup_{j\in\N} R_j^{-(2-\ep)} 
\| u \|_{\underline{L}^2(B_{R_j})} < \infty$. By Theorem~\ref{t.C11.sharp}, for every~$r> \X$ and~$j\in\N$ with~$R_j  > r$, we have that 
\begin{equation*}
\inf_{\phi\in \mathcal{A}_1} 
\left\| u - \phi \right\|_{\underline{L}^2(B_r)} 
\leq 
C \left( \frac r {R_j} \right)^{1+\gamma} 
\left\| u  \right\|_{\underline{L}^2(B_{R_j})}
\leq 
CA r^{1+\gamma} {R_j}^{1-\ep - \gamma}
\rightarrow 0 \quad \mbox{as} \ j\to \infty.
\end{equation*}
Therefore, in every ball~$B_r$ with~$r>\X$, the function~$u$ coincides with an element of~$\mathcal{A}_1$. Clearly this implies that~$u\in \mathcal{A}_1$. 
\end{proof}

\subsection*{Historical remarks and further reading}

Qualitative homogenization in the periodic setting was initiated in~\cite{DGS} and developed in the late 1970s and early 80s. Expositions of this classical theory can be found in the books~\cite{BLP,JKO,T}. For a more modern perspective, with a quantitative focus, see~\cite{ShenBook}. 

\smallskip

Stochastic homogenization in general stationary-ergodic environments was also developed during this time: by~\cite{PV,Koz} in the linear case, and~\cite{DM1,DM2} in the nonlinear case. 
Our presentation in the stochastic case follows~\cite{PV} in Section~\ref{ss.random} and~\cite{DM1,DM2} in Section~\ref{ss.variational}. 
The multiparameter ergodic theorems, Propositions~\ref{p.ergodic} and~\ref{p.subadditive.ergodic}, are generalizations of those appearing in~\cite{Becker} and~\cite{Akc}, respectively. Compared to those works, both our ergodic theorems and homogenization theorems require only~$\Zd$-stationarity rather than~$\Rd$-stationarity. This allows us to cover periodic homogenization results in the random setting as well as canonical examples like the random checkerboard. We also do not require ergodicity of the probability measure---the price to pay, of course, is that the homogenized coefficients are not deterministic but rather translation-invariant random variables.

\smallskip

Large-scale regularity estimates like the one presented in Section~\ref{ss.reg} first appeared in the periodic setting in the celebrated work of Avellaneda and Lin~\cite{AL1,AL2}, who proved them using a compactness method, which is a variant of the argument here. Such results were first obtained in the stochastic case in~\cite{AS}, under stronger ergodicity assumptions, and in the very general qualitative setting in~\cite{GNO3}.
The modest generalizations, both to the case of non-ergodic coefficient fields and for~$\Zd$-stationarity (as opposed to~$\Rd$-stationarity), appear to be new here.

\section{Homogenization in probabilistic language}
\label{s.probability}

Elliptic and parabolic equations in divergence form are the forward Kolmogorov equations of certain reversible diffusion processes. The relationship between the analytic and probabilistic objects---partial differential operators are the infinitesimal generators of the corresponding Markov processes---is so tight that they should be regarded as different formalizations for the same underlying phenomena. Therefore, a homogenization statement about the convergence of a sequence of partial differential operators can be stated equivalently in terms of the convergence of a sequence of diffusion processes. 

\smallskip

In this chapter, we will explore in more detail what homogenization means in the probabilistic language of diffusion processes. In particular, we will state an \emph{central limit theorem (or invariance principle) for diffusions in random environments}, which is an assertion about the convergence of the large-scale, blow-down limit of a symmetric diffusion process to a multiple of Brownian motion. We will present two proofs of the invariance principle, each of which relies on the sublinearity of the correctors stated in~\eqref{e.corrector.qualbound.L2}. The first proof, presented in Section~\ref{ss.invariance}, is more probabilistic in flavor and uses an abstract central limit theorem for general martingales. The second proof, presented in Section~\ref{ss.Green}, is more analytic: we prove a homogenization result for the parabolic Green function---the transition function for the process---and deduce the invariance principle as a corollary. We begin in Section~\ref{ss.markov} by recalling some basic definitions and properties of diffusion processes on~$\Rd$. 

\smallskip

This chapter is independent of the rest of the text and may be skipped on a first reading. An alternative treatment of most of the material presented here can be  found for instance in~\cite{Kallenberg}, including a very general statement asserting the equivalence between homogenization and central limit theorems for Feller processes~\cite[Theorem 19.25]{Kallenberg}.

\subsection{Symmetric diffusion processes and their generators}
\label{ss.markov}

This section is a crash course in Markov diffusion processes on~$\Rd$. We briefly summarize the definitions and basic properties of these objects, mostly without proof. 
The reader is invited to consult Revuz and Yor~\cite{RY} for a thorough presentation of the theory.

\smallskip

A \emph{stochastic process} on~$\Rd$ indexed by a set~$T$, taking values in a measurable space~$(E, \mathscr{E})$, is a family~$\{ X_t\,:\,t\in T\}$ of~$E$-valued random elements, defined with respect to an underlying probability space~$(\Upsilon, \mathcal{G}, \mathbf{P})$. 
We will exclusively consider the case in which the index set is~$T = \R_+ = [0,\infty)$, which is interpreted to be ``time,'' and the state space is~$(E,\mathscr{E}) = (\R^d,\mathscr{B})$, with~$\mathscr{B}$ the Borel~$\sigma$-algebra. 
We think of the trajectory~$t \mapsto X_t$ as a random curve in~$\Rd$. 
We say that~$\{ X_t\}$ \emph{has continuous sample paths}, or \emph{is a sample-continuous process} if the trajectories~$t\mapsto X_t$ are almost surely continuous.

\smallskip

Recall that a \emph{filtration}~$\{ \mathcal{G}_t \}$
on the probability space~$(\Upsilon, \mathcal{G}, \mathbf{P})$
is a family of~$\sigma$-algebras satisfying~$\mathcal{G}_s \subseteq \mathcal{G}_t \subseteq \mathcal{G}$ for every~$s<t$; in this case, we say that~$(\Upsilon, \mathcal{G}, \{ \mathcal{G}_t \},\mathbf{P})$ is a \emph{filtered probability space}. 
We denote by~$C_0(\Rd)$ the set of continuous functions on~$\Rd$ which vanish at infinity, with the topology induced by~$\| \cdot \|_{L^\infty}$. 

\smallskip

Roughly speaking, a Markov process is a stochastic process with the property that its future behavior depends only on the current state. 
To formalize this, we need to introduce a few more concepts. 
A \emph{transition probability kernel}~$N$ on~$(\Rd,\mathscr{B})$ is a function 
\begin{equation*}
N:\Rd \times \mathscr{B} \to \R \cup \{ +\infty\}
\end{equation*}
satisfying the following three properties:
\begin{itemize}
\item For every~$x	\in \Rd$, the mapping~$A \mapsto N(x,A)$ is a positive measure on~$\mathscr{B}$;

\item For every~$A \in \mathscr{B}$, the mapping~$x \mapsto N(x,A)$ is Borel measurable;

\item For every~$x	\in \Rd$,~$N(x,\Rd) =1$. 
\end{itemize}
A \emph{Markov transition function} is a family~$\{ P_{s,t}\,:\, 0 \leq s < t < \infty \}$ of transition probability kernels which satisfies the \emph{Chapman-Kolmogorov condition}
\begin{itemize} 

\item 
$\displaystyle\int_{\Rd} P_{s,t'}(x,dy) P_{t',t} (y,A) 
=
P_{s,t} (x,A),$
for every~$A \in \mathscr{B}$ and~$0\leq s < t' < t < \infty$.
\end{itemize}
We say that a Markov transition function~$\{ P_{s,t} \}$ is \emph{homogeneous} if~$P_{s,t}$ depends only on the difference~$t-s$; in this case we can write simply~$P_t:= P_{0,t}$ and the Chapman-Kolmogorov condition becomes the \emph{semigroup property}
\begin{equation*}
\int_{\Rd} P_{s}(x,dy) P_{t} (x,A) 
=
P_{t+s} (y,A),
\end{equation*}
A homogeneous Markov transition function~$\{ P_t \}$ is called a \emph{Feller transition function} if it satisfies the following two additional properties: 
\begin{itemize}
\item For every~$f \in C_0(\Rd)$, 
the function~$\displaystyle x\mapsto \int_{\Rd} f(y) P_t (x,dy)$ belongs to~$C_0(\Rd)$;
\item For every~$f\in C_0(\Rd)$, 
$\displaystyle\lim_{t \downarrow 0} 
\int_{\Rd} f(y) P_t (x,dy) = f(x)
$. 
\end{itemize}
If~$\{ P_t\}$ is Feller, it generates a semigroup of operators~$C_0(\Rd) \to C_0(\Rd)$ defined by
\begin{equation}
\label{e.TfromP}
f \mapsto \int_{\Rd}f(y) P_{t}(\cdot,dy)  
\,.
\end{equation}
Conversely, a semigroup of operators~$\{ T_t \}_{t\geq 0}$ on~$C_0(\Rd)$ is given by integration against a Feller transition function (as above) if and only if it satisfies, in addition to being a semigroup, the following:
\begin{itemize}

\item~$\| T_t f \|_{L^\infty(\Rd)} \leq \| f \|_{L^\infty(\Rd)}$ \
and \
$\displaystyle\lim_{t \downarrow 0} \| T_t f -f \|_{L^\infty(\Rd)} = 0$, \ for every~$f\in C_0(\Rd)$. 

\end{itemize}
It is evident that Feller semigroups and Feller transition functions are basically the same thing. It is, therefore, only a slight abuse of notation that we henceforth let~$P_t$ denote both the transition function~$P_t(x,A)$ as well as the semigroup of operators given in~\eqref{e.TfromP}. We also use the terms \emph{Feller transition function} and \emph{Feller semigroup} interchangeably. 

\smallskip

We next define the notion of a continuous-time Markov process on~$\Rd$. 
\begin{definition}[Markov processes]
Let~$(\Upsilon, \mathcal{G}, \{ \mathcal{G}_t \},\mathbf{P})$ be a filtered probability space,~$P_{s,t}$ a Markov transition function on~$(\Rd,\mathscr{B})$ and~$\{ X_t \}$ a stochastic process indexed by~$[0,\infty)$ and taking values in~$\Rd$. 
We say that~$\{ X_t \}$ is a \emph{Markov process with respect to~$\mathcal{G}_t$ with transition function~$P_{s,t}$} if:
\begin{itemize}
\item~$X_t$ is~$\mathcal{G}_t$--measurable for every~$t\in [0,\infty)$;
\item~$\displaystyle\mathbf{P} \bigl[ X_t \in A \, | \, \mathcal{G}_s \bigr] = P_{s,t}(X_s,A)$
for every~$A \in \mathscr{B}$ and~$0\leq s < t < \infty$.
\end{itemize}
A Markov process is called \emph{homogeneous} (respectively, \emph{Feller}) if its transition function is homogeneous (resp., Feller). 
\end{definition}

We let~$\mathbf{E}$ denote the expectation with respect to~$\mathbf{P}$. Observe that the condition~$\mathbf{P} \bigl[ X_t \in A \, | \, \mathcal{G}_s \bigr] = P_{s,t}(X_s,A)$ in the definition above may be written equivalently as
\begin{equation}
\label{e.Xt.to.Pt}
\mathbf{E} \bigl[ f(X_t) \, |\, \mathcal{G}_s \bigr] 
=
\int_{\Rd} 
f(y) P_{s,t} (X_s,dy) \,, \quad \forall f \in C_0(\Rd)\,.
\end{equation}

\begin{example}
The quintessential example of an~$\Rd$-valued Markov process is Brownian motion, indexed by~$[0,\infty)$, which we denote by~$\{ B_t \}_{t\geq 0}$. It is the Feller process starting at the origin,~$B_0=0$, with the transition function given by the Gaussian: that is, 
\begin{equation}
\label{e.transition.BM}
\mathbf{P}\bigl[ B_t \in A \, | \, \mathcal{G}_s \bigr]
=
\bigl( 2\pi (t-s) \bigr)^{-\nicefrac d2}
\int_A 
\exp \biggl( - \frac{|x-y|^2}{2(t-s)} \biggr)\, dy
\,.
\end{equation}
\end{example}

Two processes~$X$ and~$Y$ defined on~$(\Upsilon, \mathcal{G}, \{ \mathcal{G}_t \},\mathbf{P})$ are said to be \emph{modifications} of each other if, for every~$t \in [0,\infty)$, we have $\mathbf{P}[X_t = Y_t]=1$. A Markov process~$\{ X_t \}$ has \emph{cadlag paths} and is called a \emph{cadlag process} if, for~$\mathbf{P}$--almost every~$\omega \in \Upsilon$, the path~$t \mapsto X_t(\omega)$ is right-continuous and has left-hand limits; that is,~$\lim_{s\uparrow t} X_s(\omega)$ exists and~$X_t(\omega) = \lim_{s\downarrow t} X_s(\omega)$. 
As every Feller process has a cadlag modification (see \cite[Theorem III.2.7]{RY}), to avoid unnecessary measurability issues we henceforth always work with this modification. 

\smallskip

The following proposition states that every Markov transition function~$P_{s,t}$ gives rise to a Markov process~$X_t$ and that Markov processes are completely determined by their transition functions---in fact, by their finite-dimensional distributions. Of course, such a statement can be valid only if we identify Markov processes that have the same law. 
For this, it is convenient to work with a ``canonical'' probability space (much as we did in Section~\ref{ss.random} when we defined our random coefficient fields~$\a(x)$). 
We therefore take 
\begin{equation}
\label{e.Upsilon.canon}
\Upsilon = (\Rd)^{\R_+} = \bigl\{ X : \R_+ \to \Rd \bigr\}\,
\quad \mbox{and} \quad 
\mathcal{G} = \mathscr{B}^{\R_+}\,.
\end{equation}
We let~$X_t$ be the \emph{canonical} or \emph{coordinate process} on~$\Upsilon$,  given by~$X_t(Y) = Y_t$,~$Y \in \Upsilon$. The canonical filtration is defined by
\begin{equation}
\label{e.Gt.canon}
\mathcal{G}_t:= \mbox{the~$\sigma$--algebra generated by the random variables~$\big\{ X_s \,:\, s\in [0,t] \big\}$.}
\end{equation}

\begin{proposition}[Existence and uniqueness of Markov processes]
\label{p.Kolmogorov}
\hspace{0.1pt}
For every probability measure~$\nu$ on~$(\Rd,\mathscr{B})$ and Markov transition function~$P_{s,t}$,  there exists a unique probability measure~$\mathbf{P}^\nu$ on the canonical measurable space~$(\Upsilon,\mathcal{G})$, defined in~\eqref{e.Upsilon.canon}, such that the canonical process~$X_t$ is a Markov process with respect to the filtration~$\mathcal{G}_t$, defined in~\eqref{e.Gt.canon}, with transition function~$P_{s,t}$ and initial distribution~$\nu$; that is,~$\mathbf{P}^\nu \bigl[ X_0 \in A \bigr] = \nu(A)$. 
Moreover, for every~$k\in\N$,~$0 = t_0 < t_1 < \ldots < t_k$ and~$f_0,\ldots, f_k \in C_0(\Rd)$, 
\begin{equation}
\label{e.finitedimdist}
\mathbf{E}^\nu\Biggl[ \prod_{i=0}^k f_i(X_{t_i}) \Biggr] 
=
\int_{\Rd} f_0(x_0)\,d\nu(x_0)
\prod_{i=1}^k
\int_{\Rd} f_i(x_i) P_{t_{i-1},t_i} (x_{i-1},dx_i)\,,
\end{equation}
where~$\mathbf{E}^\nu$ is the expectation with respect to~$\mathbf{P}^\nu$.
\end{proposition}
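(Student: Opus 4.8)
The plan is to prove Proposition~\ref{p.Kolmogorov} by a standard application of the Kolmogorov extension theorem, followed by a routine verification that the Markov property transfers to conditional expectations given the full past filtration, and finally a uniqueness argument via a monotone-class/$\pi$-$\lambda$ step.

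\textbf{Existence.} First I would define, for each $k\in\N$ and $0 = t_0 < t_1 < \ldots < t_k$, a probability measure $\mu_{t_0,\ldots,t_k}$ on $(\Rd)^{k+1}$ by the right-hand side of~\eqref{e.finitedimdist}, i.e.
\begin{equation*}
\mu_{t_0,\ldots,t_k}(A_0\times\cdots\times A_k)
:=
\int_{A_0}\!\! d\nu(x_0) \int_{A_1}\!\! P_{t_0,t_1}(x_0,dx_1) \cdots \int_{A_k}\!\! P_{t_{k-1},t_k}(x_{k-1},dx_k).
\end{equation*}
The Chapman--Kolmogorov condition is exactly what is needed to check that this family is \emph{consistent} under permutation of the indices and under marginalization (dropping a time $t_i$ and integrating it out collapses two successive kernels into one via Chapman--Kolmogorov). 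Since $\Rd$ is a Polish space, the Kolmogorov extension theorem then produces a unique probability measure $\mathbf{P}^\nu$ on $(\Upsilon,\mathcal{G}) = ((\Rd)^{\R_+},\mathscr{B}^{\R_+})$ whose finite-dimensional marginals are the $\mu_{t_0,\ldots,t_k}$. By construction $\mathbf{P}^\nu[X_0\in A] = \nu(A)$, and the canonical process $X_t$ automatically satisfies~\eqref{e.finitedimdist}.

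\textbf{The Markov property.} Next I would upgrade the finite-dimensional identity to the statement that $X_t$ is Markov with respect to the canonical filtration $\mathcal{G}_t$, i.e.~that $\mathbf{E}^\nu[f(X_t)\,|\,\mathcal{G}_s] = \int_{\Rd} f(y) P_{s,t}(X_s,dy)$ for all $f\in C_0(\Rd)$ and $s<t$. Since $\mathcal{G}_s$ is generated by the cylinder sets based on times $s_0<\cdots<s_j\le s$, it suffices to check, for bounded continuous $g_0,\ldots,g_j$ and such times, that
\begin{equation*}
\mathbf{E}^\nu\!\left[ g_0(X_{s_0})\cdots g_j(X_{s_j}) f(X_t) \right]
=
\mathbf{E}^\nu\!\left[ g_0(X_{s_0})\cdots g_j(X_{s_j}) \int_{\Rd} f(y)P_{s,t}(X_{s_j},dy) \right],
\end{equation*}
which is immediate from~\eqref{e.finitedimdist} applied with the time set $\{s_0,\ldots,s_j,s,t\}$ (inserting $s=s_j$ or an extra time if needed) together with Chapman--Kolmogorov to absorb the kernel from $s_j$ to $s$ into the one from $s$ to $t$. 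A monotone-class argument then extends this from products of continuous functions to all bounded $\mathcal{G}_s$-measurable functions, giving the conditional-expectation form~\eqref{e.Xt.to.Pt}; and the measurability requirement ``$X_t$ is $\mathcal{G}_t$-measurable'' is built into the canonical setup.

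\textbf{Uniqueness.} Finally, for the uniqueness assertion, suppose $Y_t$ (on some possibly different probability space with expectation $\mathbf{E}^\nu$) satisfies~\eqref{e.finitedimdist}. Then the law of the $\Upsilon$-valued random element $Y_\bullet$ has the same finite-dimensional distributions as $\mathbf{P}^\nu$. The collection of cylinder sets in $\mathscr{B}^{\R_+}$ is a $\pi$-system generating $\mathcal{G}$, and the two laws agree on it; by Dynkin's $\pi$-$\lambda$ theorem they agree on all of $\mathcal{G}$, so $Y_\bullet$ and $X_\bullet$ have the same law, which is what ``$Y_t = X_t$'' means (equality in distribution as processes). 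I would also note in passing that if $Y_t$ is assumed to be a Markov process with transition function $P_{s,t}$ and initial law $\nu$ in the first place, then~\eqref{e.finitedimdist} follows from the tower property and the definition of a Markov process, so the two characterizations are equivalent.

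The main obstacle, such as it is, is purely bookkeeping: verifying carefully that the family $\{\mu_{t_0,\ldots,t_k}\}$ satisfies the Kolmogorov consistency conditions — in particular that marginalizing over an intermediate time is handled correctly by Chapman--Kolmogorov when the times are not listed in increasing order — and keeping track of the monotone-class steps. No genuinely hard analysis is involved; the content is entirely in the Chapman--Kolmogorov relation and the Polishness of $\Rd$, both of which are available.
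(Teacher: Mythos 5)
Your proposal is correct and follows exactly the route the paper indicates: the paper does not prove Proposition~\ref{p.Kolmogorov} itself but cites~\cite[III.1.4 \& III.1.5]{RY}, noting that the result ``follows essentially from the Kolmogorov extension theorem and the characterization of Markov processes by finite-dimensional distributions,'' which is precisely your construction of consistent finite-dimensional marginals via Chapman--Kolmogorov, the monotone-class upgrade to the Markov property, and the $\pi$-$\lambda$ uniqueness argument. No gaps.
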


See~\cite[III.1.4 \& III.1.5]{RY} for a proof of Proposition~\ref{p.Kolmogorov}, which follows essentially from the Kolmogorov extension theorem and the characterization of Markov processes by finite-dimensional distributions. 

\smallskip

Proposition~\ref{p.Kolmogorov} tells us that the Markov transition function contains all the information about the underlying Markov process and thereby opens the door for analysis methods in the study of diffusion processes. Indeed, we can study Markov transition functions using functional analysis techniques without using the language of probability theory.

\smallskip

As it turns out, many Markov transition functions we will be interested in are the parabolic Green functions for certain parabolic PDEs. To explain this connection, we must introduce the concept of an \emph{infinitesimal generator}. For simplicity, we will restrict our discussion to Feller processes.

\begin{definition}[Infinitesimal generator]
Consider a Feller process~~$\{ X_t \}$ with Feller semigroup~$\{ P_t\}$.
We define the \emph{infinitesimal generator of~$\{X_t\}$} by
\begin{equation}
\label{e.generator}
L f:= 
\lim_{t \downarrow 0} 
\frac1t 
\bigl( P_t f  - f \bigr)\,, \quad f \in \mathscr{D}_L\,, 
\end{equation}
where~$\mathscr{D}_L \subseteq C_0(\Rd)$, which we call the domain of~$L$, is the linear subspace of~$C_0(\Rd)$ consisting of the~$f$'s for which the above limit holds in the topology of~$C_0(\Rd)$. 
\end{definition}

The infinitesimal generator tells us how the Feller process evolves in an infinitesimal time step. The definition~\eqref{e.generator} suggests that, formally, we should have~$P_t = \exp(tL)$ and, in particular, be able to recover~$P_t$ from~$L$. The rigorous sense in which this is valid is the Kolmogorov backward equation, derived below in~\eqref{e.Kolmogorov.backward}. This tells us that Feller semigroups and their infinitesimal generators are basically the same thing. We can, therefore, reduce the study of any Feller process to the study of its infinitesimal generator.

\smallskip

In practice, many of the infinitesimal generators for processes we care about (and which arise naturally in physics) are partial differential operators---specifically, elliptic partial differential operators. This is how we can translate questions about Feller processes into questions about elliptic and parabolic PDEs and vice versa. 

\begin{example} 
The infinitesimal generator of Brownian motion on~$\Rd$ is~$\frac12\Delta$, and its domain contains~$C^2_0(\Rd)$. This is a simple consequence of It\^o's formula. 
Alternatively, we can perform the following direct computation. Recall by~\eqref{e.transition.BM} that the transition function of Brownian motion is the Gaussian
\begin{equation*}
\Phi(t,x,y) = (2\pi t)^{-\nicefrac d2} \exp\biggl( -\frac{|x-y|^2}{2t} \biggr)\,.
\end{equation*}
Using that~$\Phi(\cdot,y)$ satisfies~$\partial_t \Phi = \frac12 \Delta \Phi$, 
we compute, for any~$f\in C^2_0(\Rd)$, 
\begin{align*}
\lim_{t\downarrow 0} 
\frac1t \biggl( \int_{\Rd} \Phi(t,x,y) f(y)\,dy - f(x)  \biggr) 
&
= 
\lim_{t\downarrow 0} 
\lim_{s \downarrow 0} 
\int_{\Rd} \frac1t \bigl(\Phi(t+s,x,y) - \Phi(s,x,y)\bigr) f(y) \,dy
\\ &
=
\lim_{s\downarrow 0} 
\int_{\Rd} \partial_t \Phi(s,x,y)f(y) \,dy
\\ & 
=
\lim_{s\downarrow 0} 
\int_{\Rd} \frac12 \Delta \Phi(s,x,y) f(y) \,dy
\\ & 
=
\lim_{s\downarrow 0} 
\int_{\Rd} \Phi(s,x,y) \frac12 \Delta f(y) \,dy
\\ & 
=
\frac12 \Delta f(x) 
\,.
\end{align*}
Note that the presence of~$f$ allows us to interchange the limits in~$s$ and~$t$ in the above computation. 
\end{example}

In general, the domain of a Feller process is not straightforward to compute explicitly. We can, however, often show that certain nice subsets of~$C_0(\Rd)$ are subsets of~$\mathscr{D}_L$, such as~$C^k_0(\Rd)$, the subspace of~$C_0(\Rd)$ consisting of functions whose partial derivatives of order at most~$k$ belong to~$C_0(\Rd)$. Moreover,~$\mathscr{D}_L$ is always a dense set of~$C_0(\Rd)$ and~$L$ is a closed operator (see~\cite[VII.1.3]{RY}). 
Moreover,~$P_t$ maps~$\mathscr{D}_L$ into~$\mathscr{D}_L$ and commutes with~$L$ and,  consequently, using also the definition of~$L$ and the semigroup property, we deduce that if~$f\in \mathscr{D}_L$, then~$t \mapsto P_t f$ is differentiable and
\begin{equation}
\label{e.Kolmo}
\partial_t \bigl( P_t f \bigr) = L \bigl( P_t f \bigr) = P_t ( Lf) \,, \quad f\in \mathscr{D}_L\,.
\end{equation}
(See~\cite[VII.1.2]{RY}.) If we write this a bit more explicitly, it says that 
\begin{equation}
\label{e.Kolmog.forward}
\int_{\Rd} f(y) \partial_t P_t(\cdot,dy) = 
\int_{\Rd} Lf (y) P_t(\cdot,dy) 
\,, \quad \forall f \in \mathscr{D}_L\,.
\end{equation}
This says that, for every~$x\in\Rd$, the function~$P_t(x,\cdot)$ satisfies the equation
\begin{equation}
\label{e.Kolmogorov.forward.weak}
\partial_t P_t(x,\cdot) = L^* P_t(x,\cdot)
\end{equation}
in a weak sense (precisely, in the sense of~\eqref{e.Kolmog.forward}) where~$L^*$ denotes the formal adjoint of~$L$.
This equation is called the \emph{forward Kolmogorov equation}, or the \emph{Fokker-Planck equation}. 
The first equality in~\eqref{e.Kolmo} says that 
\begin{equation}
\label{e.Kolmog.backward}
\int_{\Rd} f(y) \partial_t  P_t(x,dy) = 
\biggr( L\int_{\Rd} f (y)  P_t(\cdot,dy) \, dy \biggr)(x) 
\,, \quad \forall x\in\Rd\,, \ \ f \in \mathscr{D}_L\,.
\end{equation}
Since~$L$ is linear, we should be able to put the~$L$ inside the integral and onto~$P_t(\cdot,dy)$, assuming, of course, that~$P_t(\cdot,dy) \in \mathscr{D}_L$. In this case, we would deduce that 
\begin{equation}
\label{e.Kolmogorov.backward}
\partial_t P_t(\cdot,A) = LP_t(\cdot,A) \,, \quad \forall A\in \mathscr{B}\,.
\end{equation}
which is the \emph{Kolmogorov backward equation}. Note that we may interpret~\eqref{e.Kolmog.backward} as a rigorous, weak version of~\eqref{e.Kolmogorov.backward}, in the event that we cannot justify~\eqref{e.Kolmogorov.backward} in a strong sense. 

\smallskip

Recall that a process~$\{ Z_t \}_{t\geq 0}$ on~$\Rd$ is a martingale with respect to the filtration~$\mathcal{G}_t$ if~$\E[ Z_t \vert \mathcal{G}_s ] = Z_s$ for every~$0\leq s < t<\infty$. 
For any~$f \in C_0(\Rd)$ which satisfies~$L f = 0$, we may apply~\eqref{e.Xt.to.Pt} to deduce that, for every~$s<t$, 
\begin{equation}
\mathbf{E}\bigl[ f(X_t) \,|\, \mathcal{G}_s \bigr] 
=
\int_{\Rd} 
f(y) P_{t-s} (X_s,dy) 
=
f(X_s)\,.
\end{equation}
Thus~$f(X_t)$ is a martingale with respect to~$\mathcal{G}_t$. 
More generally, we have the following result.

\begin{lemma}
Let~$\{X_t\}$ be a Feller process and~$L$ its infinitesimal generator. Then, for any~$f\in C_0(\Rd)$, the process
\begin{equation}
\label{e.L.gives.martingales}
M_t := f(X_t) - f(X_0) - \int_0^t (Lf)(X_s)\,ds
\quad \mbox{is a martingale.}
\end{equation}
\end{lemma}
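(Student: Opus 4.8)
The plan is to verify the martingale property directly from the definition, using the Markov property together with the identity~\eqref{e.Kolmo} which expresses the action of the semigroup as an integral of~$L$. The key point is that, once we condition on~$\mathcal{G}_s$, everything reduces to a statement about the Feller semigroup~$\{P_t\}$ acting on a fixed point, and the semigroup identity $P_t f - f = \int_0^t P_r(Lf)\,dr$ (valid for $f \in \mathscr{D}_L$, and which we will first need to extend appropriately) does the rest.

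First I would fix $0 \le s < t$ and compute $\mathbf{E}[M_t - M_s \mid \mathcal{G}_s]$. Writing $M_t - M_s = f(X_t) - f(X_s) - \int_s^t (Lf)(X_r)\,dr$ and using the tower property together with the Markov property~\eqref{e.Xt.to.Pt}, each term becomes an integral against the transition kernel evaluated at $X_s$: namely $\mathbf{E}[f(X_t)\mid\mathcal{G}_s] = (P_{t-s}f)(X_s)$ and, after justifying the interchange of $\mathbf{E}[\cdot\mid\mathcal{G}_s]$ with $\int_s^t dr$ (Fubini, using that $\|P_r(Lf)\|_{L^\infty}\le\|Lf\|_{L^\infty}$ so the integrand is bounded), $\mathbf{E}\bigl[\int_s^t (Lf)(X_r)\,dr\mid\mathcal{G}_s\bigr] = \int_s^t (P_{r-s}Lf)(X_s)\,dr$. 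Substituting $\rho = r-s$ this is $\int_0^{t-s}(P_\rho Lf)(X_s)\,d\rho$. So it remains to show the deterministic identity
\begin{equation*}
(P_\tau f)(x) - f(x) = \int_0^\tau (P_\rho Lf)(x)\,d\rho\,, \qquad \forall x \in \Rd,\ \tau \ge 0\,,
\end{equation*}
for $f \in \mathscr{D}_L$, which follows by integrating~\eqref{e.Kolmo}: the map $\rho\mapsto P_\rho f$ is differentiable with derivative $P_\rho(Lf)$, which is continuous in $\rho$ (by the strong continuity of the Feller semigroup applied to $Lf$), so the fundamental theorem of calculus applies in $C_0(\Rd)$ and a fortiori pointwise.

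The remaining issue is the hypothesis: the statement asks for \emph{any} $f \in C_0(\Rd)$, but the computation above uses $f \in \mathscr{D}_L$ in order for $Lf$ to make sense. I would therefore read the statement as implicitly requiring $f \in \mathscr{D}_L$ (which is how it is used later, e.g.\ for the elliptic generator $L = \nabla\cdot\a\nabla$ applied to compactly supported smooth functions, or suitable dense subclasses), and state the proof under that assumption; alternatively one notes that for general $f\in C_0(\Rd)$ the expression $\int_0^t (Lf)(X_s)\,ds$ is not even defined, so $f\in\mathscr{D}_L$ is the natural reading. The only genuinely technical point — and the main obstacle — is the Fubini/interchange step: one must check that $(r,\omega)\mapsto (Lf)(X_r(\omega))$ is jointly measurable (it is, since $Lf\in C_0(\Rd)$ and $r\mapsto X_r$ is sample-continuous hence jointly measurable) and integrable on $[s,t]\times\Upsilon$ with respect to $dr\otimes\mathbf{P}$ (immediate from boundedness of $Lf$), so that conditional expectation commutes with the time integral. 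Everything else is bookkeeping with the Markov property and the semigroup law.
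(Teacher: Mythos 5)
Your proposal is correct and follows essentially the same route as the paper's proof: condition on~$\mathcal{G}_s$, use the Markov property~\eqref{e.Xt.to.Pt} to reduce everything to the transition kernel at~$X_s$, and conclude from the integrated Kolmogorov identity $P_\tau f - f = \int_0^\tau P_\rho(Lf)\,d\rho$ coming from~\eqref{e.Kolmo}/\eqref{e.Kolmog.forward}. Your remark that the statement should be read with $f\in\mathscr{D}_L$ (so that $Lf$ is defined) is also the correct reading of the lemma as the paper uses it.
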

\begin{proof}
Using~\eqref{e.Xt.to.Pt}, we compute
\begin{align*}
\mathbf{E} \bigl[ M_t \, \vert\, \mathcal{G}_s \bigr] 
&
=
\mathbf{E}\bigl[ f(X_t) \,|\, \mathcal{G}_s \bigr] 
- f(X_0) 
-
\mathbf{E}\biggl[ 
\int_0^t
(Lf)(X_r)\,dr
\,\Big\vert\, \mathcal{G}_s
\biggr]
\\ & 
= 
\! \int_{\Rd} \!
f(y) P_{t-s} (X_s,dy) 
- f(X_0)
-
\! \int_0^s (Lf)(X_r)\,dr
-
\int_s^t \!
\int_{\Rd} 
(Lf)(y) P_{t-r}(X_r,dy)
\,dr
\end{align*}
and, by~\eqref{e.Kolmog.forward}, 
\begin{align*}
\int_s^t 
\int_{\Rd} 
(Lf)(y) P_{t-r}(X_r,dy)
\,dr
&
=
\int_{\Rd} 
f(y) 
\int_s^t 
\partial_t P_{r-s}(X_s,dy)
\,dr
\\ & 
=
\int_{\Rd} 
f(y) P_{t-s}(X_s,dy)
-
f(X_s)\,.
\end{align*}
Combining these yields~$\mathbf{E} \bigl[ M_t \, \vert\, \mathcal{G}_s \bigr] = M_s$ and hence the claim~\eqref{e.L.gives.martingales}. 
\end{proof}

\begin{example}[Feller diffusion processes]
Let~$\b: \Rd \to \Rd$ be a bounded, Lipschitz vector field on~$\Rd$,~$\sigma:\Rd \to \R^{d\times d}$ a matrix-valued field which is also bounded and Lipschitz,~$\rho$ a measure on~$(\Rd,\mathscr{B})$, and~$\{B_t\}$ be a Brownian motion. 
According to the standard theory of stochastic differential equations, 
there exists a unique strong solution~$\{X_t\}$ of the equation
\begin{equation}
\label{e.SDE}
dX_t = \b(X_t) \, dt + \sigma(X_t) \, dB_t
\end{equation}
such that~$X_0$ has distribution~$\rho$. 
The equation~\eqref{e.SDE} means that the stochastic process~$\{ X_t\}$ is adapted to the same filtration as the Brownian motion and that the following integral equation is valid, where the second integral is interpreted in the sense of It\^o:
\begin{equation}
\label{e.SDE.Ito}
X_t = X_0 + \int_0^t \b(X_s) \, ds + \int_0^t \sigma(X_s) \, dB_s\,. 
\end{equation}
The solution~$\{ X_t\}$ of the SDE~\eqref{e.SDE} turns out to be a Feller process, and its infinitesimal generator has domain~$\mathscr{D}_L \supseteq C^2_0(\Rd)$ which, when restricted to~$C^2_0(\Rd)$. is given by 
\begin{equation}
\label{e.L.nondivform}
L = 
\sum_{i,j=1}^d \a_{ij} \partial_{x_i} \partial_{x_j} 
+
\sum_{j=1}^d \b_j \partial_{x_j} 
\,, \quad \mbox{where} \quad \a := \frac12\sigma\sigma^t\,.
\end{equation}
The Kolmogorov forward operator is the parabolic partial differential operator~$\partial_t - L$, and the Feller semigroup and transition function are, respectively, the flow of this parabolic equation and its parabolic Green function. (See~\cite[IX.2]{RY} for a proof of these facts.)
The Markov processes arising as strong solutions of the SDE in~\eqref{e.SDE} are called \emph{Feller diffusion processes}, or simply~\emph{diffusion processes}. 
\end{example}

The relationship between the diffusion process~\eqref{e.SDE} and the partial differential equation (its forward Kolmogorov equation)
\begin{equation}
\label{e.PDE.nondivform}
\partial_t u - \sum_{i,j=1}^d \a_{ij} \partial_{x_i} \partial_{x_j} u
-
\sum_{j=1}^d \b_j \partial_{x_j} u = 0 
\end{equation}
goes in both directions. On the one hand, we can write representation formulas for the solutions of the PDE in terms of the Markov process. The solution of~\eqref{e.PDE.nondivform} with initial data~$u(0,\cdot) = f \in C_0(\Rd)$ is given by the formula
\begin{equation}
\label{e.formula}
u(t,x) 
=
\mathbf{E}^x 
\bigl[
f( X_t) 
\bigr]
\,,
\end{equation}
where we denote~$\mathbf{E}^x=\mathbf{E}^{\delta_x}$ and~$\mathbf{P}^x=\mathbf{P}^{\delta_x}$ for the process starting from~$x$. 
The formula~\eqref{e.formula} is immediate from the definitions since the right side satisfies the PDE by~\eqref{e.Xt.to.Pt} and~\eqref{e.Kolmog.backward}. 
We can therefore gain information about the solutions of the PDE by carefully studying the trajectories of the diffusion process. 
Conversely, by varying the same formula over collections of well-chosen~$f$'s, we can use knowledge about solutions of the PDE to gain information about the statistical properties of the diffusion process. In principle, all information about the process is contained in the PDE and vice versa.\footnote{In this sense, the community of probabilists studying diffusion processes and the community of analysts studying properties of solutions of the parabolic partial differential equation~\eqref{e.PDE.nondivform} are just studying the same underlying objects using different languages.} 

\smallskip

Notice that the operator~$L$ is in \emph{nondivergence form}. 
We can, of course, write it in divergence form, assuming that~$\sigma$ and thus the diffusion matrix~$\a$ belong to~$C^1(\Rd)$:
\begin{equation*}
L f = 
\sum_{i,j=1}^d \partial_{x_i} \bigl(  \a_{ij} \partial_{x_j} f \bigr)
+
\sum_{j=1}^d \bigl( \b_j - \partial_{x_i} \a_{ij} \bigr) \partial_{x_j} f 
\,, \quad \forall f \in C^2_0(\Rd)\,.
\end{equation*}
In the particular case that~$\b_j = \partial_{x_i} \a_{ij}$, the drift term disappears and we get~$L = \nabla \cdot \a\nabla$, our familiar self-adjoint, divergence-form elliptic operator. The condition~$\b_j = \partial_{x_i} \a_{ij}$ is equivalent to
\begin{equation}
\label{e.drift.be.good}
\b_j = \frac12 \sum_{i,k=1}^d 
\partial_{x_i}( \sigma_{ik}\sigma_{kj}) 
\,,
\quad \mbox{that is,} \quad 
\b 
=
\frac12 \nabla \cdot (\sigma \sigma^t) \,,
\end{equation}
and so the SDE~\eqref{e.SDE} can be written in this case as
\begin{equation}
\label{e.SDE.reversible}
dX_t = \frac12 \nabla \cdot (\sigma \sigma^t)  \, dt + \sigma(X_t) \, dB_t
\,.
\end{equation}
Diffusion processes satisfying SDEs of the form~\eqref{e.SDE.reversible} comprise an important subclass of diffusion processes called \emph{reversible diffusions processes}.

\smallskip

Why is the word ``reversible'' used in this context? Intuitively, a time-reversible Markov process looks the same forwards and backwards in time; roughly speaking, this means that~$\mathbf{P}\bigl[ X_t \sim x \,|\, X_s \sim y \bigr] \approx \mathbf{P}\bigl[ X_s \sim x \,|\, X_t \sim y \bigr]$, for every~$0\leq s< t$. In the context of a Feller process, in terms of the transition function, this means that~$P_t(x,y) = P_t(y,x)$, where the measure~$P_t(x,\cdot)$ is given by a density which we denote by~$P_t(x,dy) = P_t(x,y)\,dy$.

\smallskip

Therefore, a \emph{reversible diffusion process} is one in which the Markov transition function is symmetric in its two variables. In view of the Kolmogorov forward and backward equations, this is equivalent to~$L=L^*$, the self-adjointness of the infinitesimal generator,\footnote{More generally, reversibility means self-adjointness of the infinitesimal generator with respect to the~$L^2$ inner product defined in terms of an equilibrium (or stationary) measure. In our context, this is just Lebesgue measure, but in general, it can be another measure.} which is evidently equivalent to~$L$ being a divergence-form elliptic operator, which is the same as the condition~\eqref{e.drift.be.good}. 

\smallskip

It may seem like the condition~\eqref{e.drift.be.good} is unnatural, enforced to shoehorn the generator~$L$ into one that fits our assumptions, giving us a stochastic process to play with. Nothing could be further from the truth! The reversibility condition corresponds to \emph{detailed balance conditions} in physical models, which are abundant. Thus, it is the diffusions with infinitesimal generators having a principal part in divergence-form, which are the important and physical ones, much more than the ones corresponding to operators with a principal part in nondivergence form. 

\smallskip

Up to this point, we have seen that a divergence-form operator of the form~$-\nabla \cdot \a\nabla$ has a close relationship to a diffusion process---namely, the solution of the SDE in~\eqref{e.SDE} with the particular choice of drift vector in~\eqref{e.drift.be.good}---provided that~$\a \in C^1$. A natural question is whether we can define such a process if~$\a\not\in C^1$. In this case, the elliptic operator~$\nabla \cdot \a\nabla$ does not map~$C^2_0(\Rd)$ to~$C_0(\Rd)$, so the parabolic equation~$\partial_t u - \nabla\cdot \a\nabla u = 0$ cannot be interpreted in the strong sense. But this is natural: we know that an elliptic operator~$\nabla \cdot \a\nabla$ naturally maps~$H^1$ to~$H^{-1}$, and that this gives us the proper notion of weak solution. 

\smallskip

It turns out that we should start from the differential operator~$\nabla \cdot\a\nabla$ and use analytic estimates for the PDE to build the corresponding diffusion process. Indeed, we do not need to restrict ourselves by defining diffusion processes only via strong solutions of~\eqref{e.SDE} in the It\^o sense and then computing their generators. We can also work in the other direction: corresponding to the elliptic operator~$\nabla \cdot \a\nabla$ is a parabolic Green function and, if it has the required properties, then we can appeal to Proposition~\ref{p.Kolmogorov} to build our Markov process for us. In order to invoke Proposition~\ref{p.Kolmogorov}, we need to have a Feller semigroup which maps~$C_0(\Rd)$ to~$C_0(\Rd)$. But, in the case of a uniformly elliptic coefficient field~$\a(\cdot)$, this is ensured by the De Giorgi-Nash H\"older estimate! 

\smallskip

We will summarize this observation and provide more details next. 

\begin{example}[Symmetric diffusion processes]
\label{ex.symmetric.diffusion}
Suppose that~$\a(\cdot)$ is a uniformly elliptic coefficient field belonging to~$\Omega(d,\lambda,\Lambda)$, defined in~\eqref{e.Omega.dLambda}. 
Consider the parabolic Green function~$P(t,x,y)$ associated to~$\nabla \cdot \a\nabla$, which is the solution of the initial-value problem
\begin{equation*}
\left\{
\begin{aligned}
& \partial_t P(\cdot,y) - \nabla \cdot \a\nabla P(\cdot,y) = 0 & \mbox{in} & \ (0,\infty) \times \Rd\,, 
\\ & 
P(0,\cdot,y) = \delta_y & \mbox{on} & \ \Rd\,.
\end{aligned}
\right.
\end{equation*}
The Nash-Aronson upper bound estimate  (see~\cite[Lemma E.8]{AKMBook}) asserts the existence of a constant~$C(d,\lambda,\Lambda)<\infty$ such that~$P$ satisfies, for every~$x,y \in\Rd$ and~$t>0$, 
\begin{equation}
\label{e.Nash-Aronson}
P(t,x,y) \leq 
Ct^{-\nicefrac d2} 
\exp\biggl( - \frac{ |x-y|^2 }{8\Lambda t} \biggr)
\,.
\end{equation}
By the De Giorgi-Nash H\"older estimate, we deduce that 
\begin{equation*}
P (t,\cdot,y) \in C^{0,\alpha} (\Rd) \cap C_0(\Rd)\,.
\end{equation*}
Since~$\int_{\Rd} P(t,x,y)\,dy = 1$, we have that 
\begin{equation*}
\| P_t f \|_{L^\infty(\Rd)} 
\leq 
\| f \|_{L^\infty(\Rd)}, \quad \forall f \in C_0(\Rd)
\end{equation*}
and thus the decay of the Green function in~\eqref{e.Nash-Aronson} is easily enough to imply that
\begin{equation*}
\int_{\Rd} f(y) P(t,\cdot,y)\,dy  \in C_0(\Rd), \quad \forall f \in C_0(\Rd)
\,.
\end{equation*}
By the symmetry of the Green function ($P(t,x,y) = P(t,y,x)$) we have that~$P$ is actually continuous jointly on~$(0,\infty) \times\Rd \times \Rd$.
It therefore follows that~$P_t (x,y) = P(t,x,y)\,dy$ is a Feller transition function. Thus, Proposition~\ref{e.finitedimdist} tells us that there must exist a unique (up to indistinguishability) Markov process~$\{ X_t \}$ with transition function~$P(t,\hspace{-1pt} x, \hspace{-1pt} y)dy$ and hence with infinitesimal generator~$\nabla \cdot\a\nabla$. 

\smallskip

The process~$\{X_t\}$ with infinitesimal generator~$\nabla\cdot\a\nabla$, for a coefficient field~$\a$ which may satisfy no regularity beyond belonging to~$L^\infty$, is called a \emph{symmetric diffusion process}.
Note that this process will not necessarily satisfy~\eqref{e.SDE} (with~$\b$ given in~\eqref{e.drift.be.good}) in the usual (classical) strong sense. 
The usual It\^o rules of stochastic calculus do not apply in the classical sense, nor will the process be, in general, a semimartingale. 
We can, however, think of it as some kind of ``weak'' solution of the SDE\footnote{As an alternative to our construction above, one could try to proceed using an approximation argument: regularizing the coefficient field slightly, one can obtain a diffusion process as a strong solution of~\eqref{e.SDE}, and then attempt to pass to a limit in the regularization parameter. This argument can be made to work---allowing one to prove the existence of a Markov process with infinitesimal generator~$\nabla \cdot\a\nabla$, which is a limit of solutions of regularized SDEs---and justifies the assertion that the diffusion process is a generalized solution of~\eqref{e.SDE}. However, the approximation argument also requires the De Giorgi-Nash estimate to pass to the limit! The unavoidability of the De Giorgi-Nash estimate is to be expected because a proof that there is a Markov process with infinitesimal generator~$\nabla \cdot \a\nabla$ implies that a large family of solutions of the parabolic equation are continuous.}  
and generalizations of the It\^o formula exist. In this direction, we refer the reader to the book~\cite{Fuku}, where one can find much more about symmetric diffusion processes.
\end{example}

We conclude this section by demonstrating how precise, quantitative information concerning the \emph{trajectories} of a symmetric diffusion process can be obtained from \emph{analytic} estimates on the solutions of the corresponding parabolic equation. We will show that the classical Nash-Aronson pointwise upper bound estimate for the parabolic Green function implies that, just like for Brownian motion, the trajectories of a symmetric diffusion process with arbitrary uniformly elliptic diffusion matrix~$\a(\cdot)$ are almost surely H\"older continuous, for any H\"older exponent~$\beta<\frac12$. 

\smallskip

For instance, the Nash-Aronson estimate can be found in~\cite[Lemma E.8]{AKMBook}. It says that the parabolic Green function of a general uniformly elliptic (or parabolic) operator is bounded from above and below by a multiple of the heat kernel, suitably dilated in time. 
Let us denote, for every~$\alpha>0$ and~$(t,x) \in (0,\infty) \times \Rd$,  
\begin{equation}
\label{e.heatkernel}
\Gamma_\alpha (t,x) := t^{-\nicefrac d2} \exp\biggl( -\frac{\alpha |x|^2}{t} \biggr)\,.
\end{equation}
The precise statement of the Nash-Aronson estimate is that there exist constants~$0<c(d,\lambda,\Lambda)\leq C(d,\lambda,\Lambda)<\infty$ such that, for every uniformly elliptic coefficient field~$\a \in \Omega$ (with~$\Omega$ defined as in~\eqref{e.Omega.dLambda}),
the parabolic Green function for~$\a$, denoted by~$P$, satisfies, for every~$x,y\in\Rd$ and~$t>0$, 
\begin{equation}
\label{e.Nash.Aronson}
c \Gamma_{C}(t,x-y) 
\leq
P(t,x,y) 
\leq 
C\Gamma_{c} (t,x-y)
\,.
\end{equation}
Here, we require only the upper bound estimate on~$P(t,x,y)$, which says that the probability of the process moving by a lot more than~$\sqrt{\Delta t}$ in a time interval of length~$\Delta t$ is becoming exponentially small as~$\Delta t \to 0$. This allows us to apply union bounds over all dyadic intervals to conclude by Borel-Cantelli that trajectories of the process must be~$C^{0,\beta}$ in time, for any~$\beta < \nicefrac12$. 

\smallskip

In the following proposition, we let~$(\Upsilon,\mathcal{G})$  and~$\{ \mathcal{G}_t \}$ be as in~\eqref{e.Upsilon.canon} and~\eqref{e.Gt.canon}.
Let~$\mathbf{P}$ be a probability measure on~$(\Upsilon,\mathcal{G})$ which makes the coordinate process~$\{ X_t\}$ a symmetric diffusion process generated by~$\nabla \cdot \a\nabla$ for a fixed symmetric, uniformly elliptic coefficient field~$\a\in\Omega$, where~$\Omega$ defined in~\eqref{e.Omega.dLambda}, with uniform ellipticity ratio~$\Lambda\geq1$.

\begin{proposition}[Trajectories are almost~$\nicefrac12$--H\"older]
\label{p.Holder.trajectories}
\hspace{0.1pt} For every exponent~$\beta \in (0,\nicefrac12)$, there exist constants~$C(d,\lambda,\Lambda,\beta)<\infty$ and~$c(d,\lambda,\Lambda,\beta) > 0$ such that, for every~$r\in (0,1]$ and~$\lambda\geq 1$,
\begin{equation}
\label{e.fluct.off}
\mathbf{P} 
\biggl[
\exists s,t \in [0,1]\,, 
\ |s-t|\leq r 
\ \mbox{and} \  
\ 
\bigl| X_{s} - X_{t} \bigr| 
\geq 
\lambda
r^\beta
\biggr]
\leq 
C \exp \bigl( - c\lambda^2 r^{-(1-2\beta)} \bigr)\,.
\end{equation}
Consequently,~$X_t$ has a modification with trajectories in~$C^{0,\beta}$,~$\mathbf{P}$--a.s.
\end{proposition}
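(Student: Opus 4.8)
The plan is to derive the tail bound~\eqref{e.fluct.off} from the Nash-Aronson upper bound in~\eqref{e.Nash.Aronson}, and then obtain the H\"older modification by a standard Kolmogorov-\v Centsov-type argument. First I would prove the one-step estimate: for any~$x\in\Rd$,~$t>0$ and~$a>0$, integrating the upper bound~$P(t,x,y)\le C\Gamma_c(t,x-y) = C t^{-d/2}\exp(-c|x-y|^2/t)$ over~$\{y : |x-y|\ge a\}$ gives, by the substitution~$z = (y-x)/\sqrt t$ and a crude bound on the Gaussian integral,
\begin{equation*}
\mathbf{P}^x\bigl[ |X_t - x| \ge a \bigr]
=
\int_{\{|y-x|\ge a\}} P(t,x,y)\,dy
\le
C\exp\Bigl( -\frac{c a^2}{2t} \Bigr)\,,
\end{equation*}
after absorbing the polynomial-in-$a/\sqrt t$ prefactor into the exponential at the cost of reducing the constant~$c$. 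By the Markov property this upgrades to a maximal inequality over a time interval: for~$0\le t_0 < t_1$ with~$t_1 - t_0 \le h$,
\begin{equation*}
\mathbf{P}\Bigl[ \sup_{s\in[t_0,t_1]} |X_s - X_{t_0}| \ge a \Bigr]
\le
C\exp\Bigl( -\frac{c a^2}{h} \Bigr)\,,
\end{equation*}
which one gets by a reflection/stopping argument: stopping at the first time~$|X_s - X_{t_0}|\ge a$, either at the endpoint the displacement is still~$\ge a/2$ (controlled by the one-step bound with~$a/2$), or the process has travelled back by~$\ge a/2$ from a point at distance~$\ge a$ in the remaining time~$\le h$ (again controlled by the one-step bound, using sample continuity, which holds since~$P$ is the transition function of a Feller diffusion by Example~\ref{ex.symmetric.diffusion}).

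Next I would run the chaining/union bound. Fix~$\beta\in(0,\sfrac12)$ and~$r\in(0,1]$, and let~$n$ be the integer with~$2^{-n}\le r < 2^{-n+1}$. Partition~$[0,1]$ into the~$2^n$ dyadic subintervals~$I_j = [(j-1)2^{-n}, j2^{-n}]$. If~$s,t\in[0,1]$ with~$|s-t|\le r$ then~$s$ and~$t$ lie in the same or in two adjacent such intervals, so
\begin{equation*}
|X_s - X_t| \le 2\max_{1\le j \le 2^n} \ \sup_{u\in I_j} |X_u - X_{(j-1)2^{-n}}|\,.
\end{equation*}
Applying the maximal inequality with~$h = 2^{-n}\le r$ and~$a = \tfrac12\lambda r^\beta$ to each of the~$2^n\le 2/r$ intervals and taking a union bound,
\begin{equation*}
\mathbf{P}\bigl[ \exists\, s,t\in[0,1],\ |s-t|\le r,\ |X_s - X_t|\ge \lambda r^\beta \bigr]
\le
\frac{2}{r}\, C\exp\Bigl( -\frac{c\lambda^2 r^{2\beta}}{4\cdot 2^{-n}} \Bigr)
\le
\frac{C}{r}\exp\bigl( -c\lambda^2 r^{-(1-2\beta)} \bigr)\,.
\end{equation*}
Since~$r^{-1} \le C_\varepsilon \exp(c'\, r^{-(1-2\beta)})$ for any small~$c'>0$ (as~$1-2\beta>0$), the polynomial prefactor~$r^{-1}$ is absorbed into the exponential after shrinking~$c$, which yields~\eqref{e.fluct.off} (with~$\lambda$ there playing the role of~$\lambda$ here, and the hypothesis~$\lambda\ge1$,~$r\le1$ ensuring the exponent is at least~$c\lambda^2$ so that the bound is nontrivial).

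Finally, the H\"older modification follows by a Borel-Cantelli argument. Apply~\eqref{e.fluct.off} along~$r = r_k := 2^{-k}$ with a fixed~$\lambda = \lambda_0$ large enough (depending on~$\beta$) that~$\sum_k C\exp(-c\lambda_0^2 2^{k(1-2\beta)}) < \infty$; then~$\mathbf{P}$-a.s.\ there is a (random)~$k_0$ such that for all~$k\ge k_0$ and all~$s,t\in[0,1]$ with~$|s-t|\le 2^{-k}$ one has~$|X_s - X_t| \le \lambda_0 2^{-k\beta}$. A routine dyadic interpolation then gives~$|X_s - X_t|\le C|s-t|^{\beta}$ for all~$s,t\in[0,1]$ with~$|s-t|$ small, hence~$t\mapsto X_t$ is~$C^{0,\beta}$ on~$[0,1]$ almost surely; covering~$[0,\infty)$ by unit intervals (using stationarity of the increments via the Markov property, or just reapplying the estimate on each~$[m,m+1]$) gives a locally~$\beta$-H\"older modification on~$[0,\infty)$. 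The main obstacle is the maximal inequality step: upgrading the pointwise one-step tail bound to a supremum over a time interval requires care, since the symmetric diffusion is not a priori a semimartingale and one cannot invoke Doob's martingale maximal inequality directly; the cleanest route is the strong Markov property combined with sample continuity and the one-step bound applied at the first exit time, as sketched above.
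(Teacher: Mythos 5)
There is a genuine gap, and it is exactly at the step you yourself flag: the maximal inequality over a time interval. Your process is only given as the canonical coordinate process on $(\Rd)^{\R_+}$ via Proposition~\ref{p.Kolmogorov}; at this stage it has no pathwise regularity whatsoever, so $\sup_{s\in[t_0,t_1]}|X_s-X_{t_0}|$ is a supremum over uncountably many coordinates (not even measurable without a separability or regularity assumption), and the ``first time $|X_s-X_{t_0}|\ge a$'' is not yet a sensible stopping time. You justify this by invoking sample continuity ``since $P$ is the transition function of a Feller diffusion by Example~\ref{ex.symmetric.diffusion}'', but that example constructs the process only through Proposition~\ref{p.Kolmogorov} and does not produce continuous (or even c\`adl\`ag) paths --- indeed, the existence of a continuous modification is precisely the content of the proposition you are proving, as the footnote to Section~\ref{ss.Green} makes explicit. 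So the argument as written is circular; to repair it along your lines you would need to first construct a right-continuous modification and establish the strong Markov property (general Feller theory not developed in the paper), and only then run the reflection/Ottaviani argument.

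The paper's proof sidesteps this entirely: it applies the Nash--Aronson one-step tail bound (via~\eqref{e.Xt.to.Pt}, i.e.\ only the simple Markov property at deterministic dyadic times) to \emph{every} dyadic increment $|X_{(k+1)2^{-m}}-X_{k2^{-m}}|$ at \emph{all} scales $m\ge n$, and takes a union bound over all of them; the extra polynomial factor $2^{m(d+2)/2}$ is absorbed by the exponential. Control of all dyadic increments finer than $2^{-n}$ gives the modulus of continuity on scale $2^{-n}$ along dyadic times by the usual chaining, and the H\"older modification is then obtained by continuous extension from the dyadics, after which~\eqref{e.fluct.off} holds for all $s,t$. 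Your single-scale partition into $2^n$ intervals saves the polynomial prefactor but is what forces the problematic maximal inequality; replacing it by the all-scales dyadic union bound (or, equivalently, a Kolmogorov--\v{C}entsov argument run purely on dyadic times) removes the need for stopping times and sample continuity and closes the gap. Your Borel--Cantelli step for the modification is then fine once restricted to dyadics in this way.
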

\begin{proof}
For a fixed~$m \in \N \cap [n,\infty)$ and~$k \in \{0,\ldots,2^m-1\}$, we have, by~\eqref{e.Xt.to.Pt} and~\eqref{e.Nash.Aronson},
\begin{align*}
\mathbf{P} 
\Bigl[
\bigl| X_{(k+1)2^{-m}} - X_{k2^{-m}} \bigr| 
\geq \lambda 2^{-m\beta}  
\Bigr]
&
=
\mathbf{E}
\biggl[
\indc_{
\left\{
| X_{(k+1)2^{-m}} - X_{k2^{-m}} | 
\geq \lambda 2^{-m\beta}   \right\}
}
\biggr]
\\ & 
=
\mathbf{E}
\biggl[
\mathbf{E}
\biggl[
\indc_{ \left\{
|X_{(k+1)2^{-m}} - X_{k2^{-m}}  |
\geq \lambda 2^{-m\beta}  \right\}
}
\, \bigg\vert\, 
\mathcal{G}_{k2^{-m}} 
\biggr] 
\biggr]
\\ & 
=
\mathbf{E}
\biggl[
\int_{\Rd} 
\indc_{ \left \{ | y - X_{k2^{-m}} | 
\geq \lambda 2^{-m\beta} \right \}}
P(t,X_{k2^{-m}},y)\,dy
\biggr]\\ & 
\leq
C
\int_{\Rd} 
\indc_{ \left \{ |z|\geq \lambda 2^{-m\beta} \right\}}
2^{m\frac d2} 
\exp\bigl( -c 2^m |z|^2 \bigr) \,dz
\\ & 
\leq 
C 2^{md/2} \exp \bigl( - c\lambda^2 2^{m(1-2\beta)} \bigr)
\,.
\end{align*}
A union bound (there are~$2^m$ many elements in~$\{ 0,\ldots,2^m-1\}$) therefore yields
\begin{align*}
&
\mathbf{P} 
\biggl[
\exists m \in \N \cap [n,\infty)\,, \ \exists k \in \{0,\ldots,2^m-1\}\,, 
\  
\bigl| X_{(k+1)2^{-m}} - X_{k2^{-m}} \bigr| 
\geq \lambda2^{-m\beta}  
\biggr]
\notag \\ & \qquad
\leq 
\sum_{m=n}^\infty
C 2^{m( \frac{d+2}{2}) } \exp \bigl( - c\lambda^2 2^{m(1-2\beta)} \bigr)
\leq 
C \exp \bigl( - c \lambda^2 2^{n(1-2\beta)} \bigr)\,,
\end{align*}
which implies~\eqref{e.fluct.off} for all~$r,s \in \{ k2^{-m} \,:\, k,m\in\N, k\leq 2^m\}$.
It follows that the c\`adl\`ag version of the process must have H\"older continuous trajectories with exponent~$\beta$ and satisfy the bound~\eqref{e.fluct.off}.
\end{proof}

\subsection{The invariance principle for symmetric diffusions in random media}
\label{ss.invariance}

A natural interpretation of ``qualitative homogenization'' in the language of probability theory is given in the following theorem. It says that a symmetric diffusion process with generator~$\nabla \cdot\a\nabla$, where~$\a$ is a~$\Zd$--stationary random field, homogenizes with probability one to a diffusion process with generator~$\nabla\cdot \ahom \nabla$, which is a Brownian motion with covariance matrix~$2\ahom$. In other words, the correspondence between symmetric diffusion processes and elliptic partial differential operators ``commutes'' with homogenization.

\begin{theorem}[Invariance principle for symmetric diffusions]
\label{t.invariance}
Let~$\P$ be a~$\Zd$--stationary probability measure on~$(\Omega,\F)$, defined in Section~\ref{ss.random}. Let~$\ahom$ be as in Theorem~\ref{t.qualitative.homogenization} and~$\Omega_0 \in \F$ be the event of full probability on which the limits~\eqref{e.corrector.qualbound.L2} and~\eqref{e.corrector.qualbound.Linfty} are valid. 
Let~$\a\in \Omega_0$ and~$\{ X_t^{\a}\}$ denote the symmetric diffusion process with initial position~$X_0^{\a}=0$ and infinitesimal generator~$\nabla \cdot \a\nabla$, as constructed in Example~\ref{ex.symmetric.diffusion}.  
Define the rescaled process 
\begin{equation*}
X^{\a,\ep}_t := \ep X^{\a}_{ t/{\ep^2} }\,, \quad \ep > 0\,,
\end{equation*}
which has infinitesimal generator~$\nabla \cdot \a\bigl( \tfrac \cdot \ep \bigr) \nabla$. Then~$X^{\a,\ep}_t$ converges in law, as~$\ep \to 0$, to a process~$X^{\ahom}_t$, which is a Brownian motion on~$\Rd$ with covariance matrix~$2\ahom$; that is, the limiting process~$X^{\ahom}_t$ satisfies the stochastic differential equation
\begin{equation*}
dX^{\ahom}_t = \sqrt{2\ahom}\, dB_t\,.
\end{equation*}
\end{theorem}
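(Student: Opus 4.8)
The plan is to prove the invariance principle by the classical martingale method: decompose the rescaled process into a martingale part plus a remainder governed by the correctors, apply a functional central limit theorem to the martingale, and use the sublinearity of the correctors~\eqref{e.corrector.qualbound.L2} (upgraded to~\eqref{e.corrector.qualbound.Linfty} for the scalar case) to show the remainder vanishes in the blow-down limit. Concretely, fix $\a \in \Omega_0$ and $e \in \Rd$. The function $\psi_e := \ell_e + \phi_e$ lies in $\mathcal{A}_1$ and satisfies $\nabla \cdot \a \nabla \psi_e = 0$ in $\Rd$, so by Lemma~\eqref{e.L.gives.martingales} (applied to $\psi_e$, which is only locally in $C_0$, hence one truncates and passes to the limit, or works with localized stopping times) the process
\begin{equation*}
M^e_t := \psi_e(X^\a_t) - \psi_e(X^\a_0) = e\cdot X^\a_t + \phi_e(X^\a_t) - \phi_e(0)
\end{equation*}
is a (local) martingale under $\mathbf{P}^0$. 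Its quadratic variation is computed from the carr\'e du champ of the generator $\nabla\cdot\a\nabla$: one gets $\frac{d}{dt}\langle M^e \rangle_t = 2\,(e+\nabla\phi_e)\cdot\a(e+\nabla\phi_e)(X^\a_t)$, so that $\langle M^e, M^{e'}\rangle_t = \int_0^t 2(e+\nabla\phi_e)\cdot\a(e'+\nabla\phi_{e'})(X^\a_s)\,ds$. Rescaling, $\ep M^e_{t/\ep^2}$ has quadratic variation $\ep^2\int_0^{t/\ep^2}(\cdots)(X^\a_s)\,ds = \int_0^t (\cdots)(\ep^{-1}X^{\a,\ep}_s)\,ds$.

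The first key step is to identify the limit of this quadratic variation. By the ergodic theorem for the environment seen from the particle --- i.e.\ the fact that Lebesgue measure is invariant and ergodic (or at least stationary) for the process $T_{X^\a_t}\a$ on the appropriate augmented space, combined with the Wiener ergodic theorem / Birkhoff as in Proposition~\ref{p.ergodic} --- the time average $\ep^2\int_0^{t/\ep^2} g(X^\a_s)\,ds$ converges a.s.\ (and in $L^1$) to $t\langle g\rangle$ for $g \in S^1$. Applying this with $g = 2(e+\nabla\phi_e)\cdot\a(e'+\nabla\phi_{e'})$ and using the definition~\eqref{e.intro.ahom} of $\ahom$ (recall $\langle(e+\nabla\phi_e)\cdot\a(e'+\nabla\phi_{e'})\rangle = e\cdot\ahom e'$ for symmetric $\a$ by~\eqref{e.LaxMilg} and~\eqref{e.ahom.def.sec2}) gives $\langle \ep M^e, \ep M^{e'}\rangle_{t/\ep^2} \to 2t\,e\cdot\ahom e'$. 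Since the jumps of $\ep M^e_{\cdot/\ep^2}$ are of order $\ep\cdot(\text{size of }\psi_e\text{ over one microscopic step})$, which is negligible by the H\"older regularity of trajectories (Proposition~\ref{p.Holder.trajectories}) and the continuity of $\phi_e$, the martingale central limit theorem (the Lindeberg--L\'evy functional CLT for martingales, e.g.\ Ethier--Kurtz) applies and yields that $t \mapsto \ep M^e_{t/\ep^2}$ converges in law in the Skorokhod topology to a Brownian motion with variance $2t\,e\cdot\ahom e$; running over $e \in \{e_1,\dots,e_d\}$ and using the Cram\'er--Wold device gives joint convergence to a Brownian motion with covariance $2\ahom$.

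The second key step is to remove the corrector. We have the exact identity
\begin{equation*}
\ep X^{\a}_{t/\ep^2} = \ep M^{e_k}_{t/\ep^2}\cdot(\text{applied coordinatewise}) - \ep\bigl(\phi_{e_k}(X^\a_{t/\ep^2}) - \phi_{e_k}(0)\bigr)_{k=1}^d,
\end{equation*}
so $X^{\a,\ep}_t - (\text{martingale part})$ equals $-\ep\,\phi(X^\a_{t/\ep^2})$ up to the fixed constant $\ep\phi(0)\to 0$. It therefore suffices to show $\sup_{t\le T}\ep\,|\phi_{e}(X^\a_{t/\ep^2})| \to 0$ in probability. Since $X^\a$ does not leave $B_{C/\ep}$ on $[0,T/\ep^2]$ with high probability (again by the Nash--Aronson bound and a union bound as in Proposition~\ref{p.Holder.trajectories}), it is enough that $\ep\,\|\phi_e\|_{L^\infty(B_{C/\ep})} \to 0$, which is precisely the strictly-sublinear growth~\eqref{e.corrector.qualbound.Linfty} for scalar equations (the only place we genuinely use the scalar structure, via the De~Giorgi--Nash $L^\infty$ bound). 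Finally, tightness of $\{X^{\a,\ep}\}$ in the Skorokhod space follows from tightness of the martingale part (Aldous' criterion, controlled by the quadratic variation) together with the vanishing of the corrector term, so convergence of finite-dimensional distributions upgrades to convergence in law on path space; since the limit is continuous, the convergence even holds in the uniform topology on compacts.

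The main obstacle, in my view, is the rigorous handling of the ``environment seen from the particle'' ergodic averaging in Step~one: one must either (a) set up the augmented Markov process on $\Omega\times\Rd$, verify that the (deterministic) Lebesgue measure is reversible and invariant for it, and invoke an ergodic theorem valid even without ergodicity of $\P$ --- in which case the covariance of the limit is the random matrix $\ahom$ and the statement must be read conditionally --- or (b) prove directly that $\ep^2\int_0^{t/\ep^2}g(X^\a_s)\,ds \to t\langle g\rangle$ using the representation $\int_0^{t/\ep^2}g(X^\a_s)\,ds$ in terms of the heat semigroup and the a.s.\ convergence already available from Proposition~\ref{p.ergodic} applied along a subsequence, then bootstrapping by Arzel\`a--Ascoli. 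Getting the a.s.\ (rather than merely in-law) statement, as claimed in the theorem for each fixed $\a\in\Omega_0$, forces one to be careful that the only randomness entering the limit is through $\ahom$ and the fixed realization $\a$, and that the martingale CLT is being applied $\mathbf{P}^0$-almost surely for that fixed $\a$ --- a point where the quenched versus annealed distinction must be navigated explicitly.
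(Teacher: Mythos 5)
Your skeleton coincides with the paper's first proof (corrected martingale, martingale functional CLT on a mesoscopic time grid, Cram\'er--Wold, removal of the corrector via~\eqref{e.corrector.qualbound.Linfty} and the exit estimate~\eqref{e.u.stuck.bro}), but the one step that carries all the weight --- identifying the limit of the quadratic variation as $2t\, e\cdot\ahom e$ --- is exactly the step you have not closed, and the route you propose for it does not work under the hypotheses of the theorem. The convergence $\ep^2\int_0^{t/\ep^2} g(X^\a_s)\,ds \to t\langle g\rangle$ is an ergodic theorem for the \emph{occupation measure of the diffusion}, not for spatial averages of a stationary field; Proposition~\ref{p.ergodic} (and Corollary~\ref{c.stat.Hminusone}) give only the latter, and converting one into the other for a \emph{fixed} $\a\in\Omega_0$ is precisely the hard, quenched part of the problem. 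The classical ``environment seen from the particle'' machinery that you invoke requires setting up the environment process, checking invariance of the (tilted) law of~$\P$, and --- to identify a deterministic limit --- ergodicity and in effect $\Rd$-stationarity; the theorem assumes only $\Zd$-stationarity, no ergodicity, and the event $\Omega_0$ on which the statement is claimed is defined solely by the corrector limits~\eqref{e.corrector.qualbound.L2} and~\eqref{e.corrector.qualbound.Linfty}. Your "option (b)" is the right instinct, but as written it is a placeholder, not an argument.

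The paper closes this gap analytically, and the mechanism is worth noting because it is absent from your proposal: by the Markov property, the conditional second moment of a mesoscopic increment is an integral of $\bigl(e\cdot(y-x)+\phi_e(y)-\phi_e(x)\bigr)^2$ against the transition kernel $P(\tau_\ep/\ep^2,x,\cdot)$, and Lemma~\ref{l.varianceX} gives an \emph{exact identity} for this quantity minus $2(e\cdot\ahom e)\,\tau_\ep/\ep^2$, with an error written in terms of $\phi_e$, the \emph{flux corrector} $\s_e$, and $\nabla_y P$. The error is then killed using the Nash--Aronson bound~\eqref{e.Nash.Aronson}, the parabolic Caccioppoli estimate~\eqref{e.quenchednablaP}, and the sublinearity~\eqref{e.corrector.qualbound.L2}--\eqref{e.corrector.qualbound.Linfty} --- this is where $\ahom$ actually enters, through the mean-zero property of $\a(e+\nabla\phi_e)-\ahom e$ encoded in $\s_e$. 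Your proposal never uses the flux correctors, and without them (or a genuine environment-process ergodic theorem, which you do not have) the available hypotheses on $\Omega_0$ do not identify the limit of the quadratic variation. A secondary, fixable issue: computing $\langle M^e\rangle_t$ via the carr\'e du champ for $\psi_e(X_t)$ needs Dirichlet-form justification, since for merely bounded measurable $\a$ the process is not a classical semimartingale and It\^o calculus is not directly available; the paper avoids this by working with discrete mesoscopic increments and conditional second moments computed from the kernel, which is the cleaner quenched formulation.
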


We will give several proofs of~Theorem~\ref{t.invariance} in this chapter. 
The one we give first, in this section, is a mix of probabilistic and analytic arguments. 

\smallskip

The main probabilistic input we need is the following statement, which can be found in~\cite[Theorem 3.2]{McLeish}. 
It is a generalization of Donsker's invariance principle, which concerns sums of centered i.i.d.~random variables, to general martingales (which may have dependent differences). 
A statement like this is sometimes called a \emph{functional central limit theorem} or \emph{invariance principle} for martingales. 
The adjective \emph{functional} refers to the fact that it asserts convergence in law, not only of a single random variable to a normal random variable but of a stochastic process to a Brownian motion. The convergence is with respect to the Skorokhod topology on so-called \emph{c\`adl\`ag functions} (these are functions of an interval of~$\R$ which are right-continuous and have left limits):  see~\cite[Chapter VI]{JS} for details.

\begin{proposition}[Invariance principle for martingale difference arrays]
\label{p.MFCLT}
Assume that 
\begin{itemize}

\item~$(\Upsilon, \mathcal{G},\mathbf{P})$ is a probability space;

\item~$\{ N_m \}_{m\in\N}$ is a sequence of nonnegative, nondecreasing, integer-valued functions on the interval~$[0,1]$ satisfying~$N_m(t) \to \infty$ as~$m\to \infty$ for every~$t>0$;

\item~$\{ \mathcal{G}_{m,n} \,:\, m\in\N\,,  1 \leq n \leq N_m(1) \}$ is a triangular array of~$\sigma$-fields with~$\mathcal{G}_{m,i} \subseteq \mathcal{G}_{m,j} \subseteq \mathcal{G}$ for every~$i \leq j$;

\item~$\{ Y_{m,n} \,:\, m\in\N\,,  1 \leq n \leq N_m(1) \}$ is a martingale difference array adapted to~$\{ \mathcal{G}_{m,n} \}$; this means that, for every~$m\in\N$ and~$n\in\{1,\ldots,N_m(1) \}$, 
\begin{equation}
\label{e.MDA}
\mbox{$Y_{m,n}$ is~$\mathcal{G}_{m,n}$--measurable} 
\quad \mbox{and} \quad
\mbox{$\mathbf{E} \bigl[ Y_{m,n} \,|\, \mathcal{G}_{m,n-1} \bigr] = 0$}\,.
\end{equation}
\end{itemize}
Assume furthermore that
\begin{equation}
\label{e.invcond2}
\lim_{m\to \infty} 
\mathbf{E} \biggl[ \,
\max_{1\leq n \leq N_m(1)} 
Y_{m,n}^2
\biggr] = 0
\end{equation}
and that there exists~$\sigma^2>0$ such that, for every~$t \in [0,1]$ and~$\delta>0$,
\begin{equation}
\label{e.invcond1}
\lim_{m\to \infty} 
\mathbf{P} \Biggl[ \,
\biggl| \,
\sum_{n=1}^{N_m(t) }
Y_{m,n}^2 
- t\sigma^2
\biggr| > \delta
\Biggr] = 0\,.
\end{equation}
Then the stochastic process~$W_m$ defined by
\begin{equation*}
W_{m} (t) := \sum_{n=1}^{N_m(t)} Y_{m,n} 
\end{equation*}
converges in law, as~$m\to \infty$, to a Brownian motion on~$[0,1]$ with variance~$\sigma^2$\,.
\end{proposition}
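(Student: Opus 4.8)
The plan is the classical two-step route for weak convergence in the Skorokhod space $D([0,1])$: first, convergence of the finite-dimensional distributions of $W_m$ to those of a Brownian motion with variance $\sigma^2$; second, tightness of the laws of $\{W_m\}$. Combined with Prokhorov's theorem and the fact that Brownian motion has continuous paths (so that its finite-dimensional distributions determine a unique law on $D([0,1])$), these two steps yield the asserted convergence. The analytic engine for the first step is a one-dimensional martingale central limit theorem of McLeish type, which I would prove first: if $\{Z_{m,n}\}_{1\le n\le K_m}$ is a martingale difference array for a filtration $\{\mathcal{H}_{m,n}\}$ satisfying $\mathbf{E}[\max_n Z_{m,n}^2]\to 0$ and $\sum_n Z_{m,n}^2\to\eta^2$ in probability for a constant $\eta^2\ge 0$, then $\sum_n Z_{m,n}$ converges in law to $\mathcal{N}(0,\eta^2)$. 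Fixing $u\in\R$ and setting $T_m(u):=\prod_{n=1}^{K_m}(1+iuZ_{m,n})$, the martingale-difference property gives $\mathbf{E}[T_m(u)]=1$ for every $m$ (condition on $\mathcal{H}_{m,K_m-1}$ to remove the last factor, and iterate), while $|1+iux|\ge 1$ forces $|T_m(u)|\ge 1$. The elementary bound $|\log(1+z)-z+\tfrac12 z^2|\le|z|^3$ for $|z|\le\tfrac12$, together with $\max_n|Z_{m,n}|\to 0$ in probability, produces the factorization $e^{iu\sum_n Z_{m,n}}=T_m(u)R_m$ with $|R_m|\le 1$ and $R_m\to e^{-u^2\eta^2/2}$ in probability. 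Granting uniform integrability of $\{T_m(u)\}$, one then concludes $\mathbf{E}[e^{iu\sum_n Z_{m,n}}]=\mathbf{E}[T_m(u)R_m]\to e^{-u^2\eta^2/2}\,\mathbf{E}[T_m(u)]=e^{-u^2\eta^2/2}$. The uniform integrability I would secure by a predictable truncation: replacing $Z_{m,n}$ with $Z_{m,n}\indc_{\{\sum_{j<n}Z_{m,j}^2\le\eta^2+1\}}$ preserves the martingale-difference structure, alters $\sum_n Z_{m,n}$ only on an event of vanishing probability, and makes $|T_m(u)|^2\le\exp\!\big(u^2(\eta^2+1+\max_n Z_{m,n}^2)\big)$, whence $\{T_m(u)\}$ is uniformly integrable by $\mathbf{E}[\max_n Z_{m,n}^2]\to 0$.

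With this lemma in hand, the first step follows from the Cram\'er--Wold device. For $0=t_0<t_1<\cdots<t_k\le 1$ and reals $\lambda_1,\dots,\lambda_k$, the linear combination $\sum_{i=1}^k\lambda_i\big(W_m(t_i)-W_m(t_{i-1})\big)$ is itself a sum $\sum_n Z_{m,n}$ over a martingale difference array, with $Z_{m,n}:=\lambda_i Y_{m,n}$ for $N_m(t_{i-1})<n\le N_m(t_i)$; its negligibility hypothesis is inherited from \eqref{e.invcond2}, and by \eqref{e.invcond1} its sum of squares converges in probability to $\sigma^2\sum_{i=1}^k\lambda_i^2(t_i-t_{i-1})$. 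The lemma then gives convergence in law to the centered Gaussian with that variance, which is exactly the law of $\sum_{i=1}^k\lambda_i\big(B_{t_i}-B_{t_{i-1}}\big)$ for a Brownian motion $B$ with variance $\sigma^2$. Hence the increment vector $\big(W_m(t_1)-W_m(t_0),\dots,W_m(t_k)-W_m(t_{k-1})\big)$ converges jointly to independent centered Gaussians with variances $\sigma^2(t_i-t_{i-1})$, i.e.\ to the corresponding increments of Brownian motion; this is the convergence of finite-dimensional distributions.

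For tightness I would realize $t\mapsto W_m(t)$ as a c\`adl\`ag martingale for the filtration $\mathcal{F}_{m,t}:=\mathcal{G}_{m,N_m(t)}$, whose optional quadratic variation is $[W_m]_t=\sum_{n\le N_m(t)}Y_{m,n}^2$. From \eqref{e.invcond1} applied at each $t$, the monotonicity of $t\mapsto[W_m]_t$, and the continuity of $t\mapsto\sigma^2 t$, a P\'olya/Dini argument gives $\sup_{t\in[0,1]}\big|[W_m]_t-\sigma^2 t\big|\to 0$ in probability; applying the same predictable truncation as above one may in addition assume $\{[W_m]_1\}_m$ uniformly integrable. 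I would then check Aldous's tightness criterion: $\mathbf{E}[W_m(t)^2]=\mathbf{E}[[W_m]_t]$ is bounded, so $\{W_m(t)\}$ is tight for each $t$; and for any sequence of $\mathcal{F}_{m,\cdot}$-stopping times $\tau_m\le 1$ and any $\delta_m\downarrow 0$, Doob's maximal inequality gives $\mathbf{P}\big[|W_m((\tau_m+\delta_m)\wedge 1)-W_m(\tau_m)|>\varepsilon\big]\le\varepsilon^{-2}\,\mathbf{E}\big[[W_m]_{(\tau_m+\delta_m)\wedge 1}-[W_m]_{\tau_m}\big]$, and the right side is $\le\varepsilon^{-2}\mathbf{E}[\omega_m(\delta_m)]$ with $\omega_m(\delta):=\sup_{0\le s\le t\le 1,\,t-s\le\delta}\big([W_m]_t-[W_m]_s\big)$; since $\omega_m(\delta_m)\to 0$ in probability and is dominated by the uniformly integrable family $[W_m]_1$, its expectation tends to $0$. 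This yields tightness in $D([0,1])$, and together with the finite-dimensional convergence it completes the proof as indicated above.

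The main obstacle in this plan is not conceptual but technical: the hypotheses \eqref{e.invcond2}--\eqref{e.invcond1} are stated in $L^1$ and in-probability form and do not by themselves provide the uniform integrability needed both for the characteristic-function identity $\mathbf{E}[T_m(u)R_m]\to e^{-u^2\eta^2/2}$ and for the Aldous increment estimate. Both needs are met by the single predictable truncation at level $\eta^2+1$ of the accumulated quadratic variation, which costs nothing in the limit; verifying carefully that this truncation is negligible and that it indeed delivers the uniform integrability is the part that requires the most care.
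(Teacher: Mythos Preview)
The paper does not prove this proposition: it quotes it as \cite[Theorem 3.2]{McLeish} and uses it as a black box. Your proposal therefore supplies a proof where the paper gives none, and the route you take---McLeish's characteristic-function lemma for the one-dimensional CLT, the Cram\'er--Wold device for finite-dimensional distributions, and Aldous's criterion for tightness via the quadratic variation---is precisely the standard argument one finds in McLeish's original paper or in Hall--Heyde.

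One technical point to tighten, which you flag yourself: the bound $|T_m(u)|^2\le\exp\bigl(u^2(\eta^2+1+\max_n Z_{m,n}^2)\bigr)$ does not deliver uniform integrability from $\mathbf{E}[\max_n Z_{m,n}^2]\to 0$, since the latter does not control $\mathbf{E}[\exp(u^2\max_n Z_{m,n}^2)]$. The fix is immediate: after the predictable stopping at level $\eta^2+1$, separate the last term from the product to get
\[
|T_m(u)|^2=\prod_{n\le\tau}(1+u^2Z_{m,n}^2)\le\Bigl(1+u^2\max_n Z_{m,n}^2\Bigr)\exp\bigl(u^2(\eta^2+1)\bigr),
\]
so $\sup_m\mathbf{E}[|T_m(u)|^2]\le e^{u^2(\eta^2+1)}\bigl(1+u^2\sup_m\mathbf{E}[\max_n Z_{m,n}^2]\bigr)<\infty$, which gives uniform integrability. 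The same refinement handles the uniform integrability of $[W_m]_1$ needed in the tightness step. With this adjustment the plan is complete.
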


Proposition~\ref{p.MFCLT} is a statement about a one-dimensional stochastic process converging to one-dimensional Brownian motion, but we immediately obtain an analogous result in arbitrary dimensions by applying the \emph{Cram\'er-Wold device}. This basic result in probability theory (see~\cite[Theorem 29.4]{Bill}) says that an~$\Rd$--valued stochastic process converges in law to zero if and only if each of its one-dimensional projections converges in law to zero.

\smallskip

As we will see once we have set up the argument, checking the assumptions of Proposition~\ref{p.MFCLT} in our application will boil down to the estimates~\eqref{e.corrector.qualbound.L2} and~\eqref{e.corrector.qualbound.Linfty} on the first-order correctors.  This is the be expected since the proof of our main homogenization result (Theorem~\ref{t.qualitative.homogenization}) also boiled down to the same estimates, and Theorem~\ref{t.invariance} is essentially the same result in a different language. 

\smallskip

The rest of this section will be focused on the details of the proof of Theorem~\ref{t.invariance}. Throughout, we freeze a coefficient field~$\a \in \Omega_0$, where~$\Omega_0 \in\F$ is the event of full~$\P$-probability described in the statement of the theorem. 
We let~$(\Upsilon,\mathcal{G})$  and~$\{ \mathcal{G}_t \}$ be as in~\eqref{e.Upsilon.canon} and~\eqref{e.Gt.canon}.
We consider the unique probability measure~$\mathbf{P}$ on~$(\Upsilon,\mathcal{G})$ which makes the coordinate process~$\{ X_t\}$ a symmetric diffusion process generated by~$\nabla \cdot \a\nabla$, with~$X_0=0$ (note that, compared with the statement of Theorem~\ref{t.invariance}, we are dropping~$\a$ from the notation for~$X_t$). The expectation with respect to~$\mathbf{P}$ is denoted by~$\mathbf{E}$.
We let~$P(t,x,y)$ denote the parabolic Green function for the operator~$\nabla \cdot \a\nabla$, which is also the transition function for the diffusion process~$\{ X_t\}$.
For each~$\ep>0$, we denote the process~$X_t^\ep$ by
\begin{equation*}
X_t^\ep : = \ep  X_{  t/{\ep^2} }\,.
\end{equation*}
Note that~$X_t^\ep$ has infinitesimal generator~$\nabla \cdot \a(\frac\cdot\ep)\nabla$ and transition function~$P^\ep$ given by
\begin{equation}
\label{e.Pep}
P^\ep(t,x,y) 
:=
P \Bigl ( \frac{t}{ \ep^2} ,  \frac{ x} { \ep} ,  \frac{y} {\ep} \Bigr)
\end{equation}
The process~$X_t^\ep$ is~$\mathcal{G}_{t}^\ep$-adapted, where~$\mathcal{G}_{t}^\ep: = \mathcal{G}_{t/\ep^2}$. 

\smallskip

\smallskip

In order to apply Proposition~\ref{p.MFCLT} to prove Theorem~\ref{t.invariance}, we need to identify an appropriate martingale or family of martingales.
Note that we cannot just use the symmetric diffusion process~$X_t$ itself because the presence of a nonzero drift~$\b$ in~\eqref{e.drift.be.good} ensures that it is not a martingale! 

\smallskip

However, as we have already observed in the previous section, the process~$t\mapsto u (X_t)$ is a martingale whenever~$u$ is a solution of~$-\nabla \cdot \a\nabla u = 0$. We will apply this fact to one of the corrected affine functions, that is, for fixed~$e \in\Rd$, 
\begin{equation*}
x\mapsto e \cdot x + \phi_{e}(x) - \phi_{e}(0)
\,.
\end{equation*}
Note that this expression is well-defined, since~$\phi_e$ is locally H\"older continuous and hence defined pointwise by the De Giorgi-Nash estimate.\footnote{We are anyway going to have to use many estimates in this chapter which hold only in the scalar case, so we might as well not be shy about it.} In fact, by adding a constant to~$\phi_e$, we may suppose without loss of generality that~$\phi_e(0)=0$ in the rest of this section. 

\smallskip

We also select, for each~$\ep \in (0,1]$, a mesoscopic time scale~$\tau_\ep >0$ to be chosen below (it will satisfy~$\ep^2 \ll \tau_\ep \ll 1$), 
a direction~$e \in\Rd$, and define
\begin{equation*}
M^\ep_n := e \cdot X^{\ep}_{\tau_\ep n} + \phi^\ep_{e}(X^{\ep}_{\tau_\ep n}) \,,
\end{equation*}
where we recall that~$\phi^\ep := \ep \phi(\frac \cdot \ep)$. 
Observe that we can write~$M^\ep_n$ as the sum 
\begin{equation*}
M^\ep_n = \sum_{j=1}^n Y^\ep_j, 
\end{equation*}
of martingale differences~$Y^\ep_n$, defined by
\begin{equation*}
Y^\ep_n := M^\ep_n - M^\ep_{n-1}
=
e\cdot \bigl( X^{\ep}_{{\tau_\ep}n} - X^{\ep}_{{\tau_\ep}(n-1)} \bigr)  
+ 
\phi^\ep_{e}(X^{\ep}_{{\tau_\ep}n}) - \phi^\ep_{e}(X^{\ep}_{{\tau_\ep}(n-1)})
\,.
\end{equation*}
Our strategy to prove Theorem~\ref{t.invariance} is to apply Proposition~\ref{p.MFCLT} to~$Y^{\ep}_n$ to obtain the convergence in law, as~$\ep\to0$, of the corresponding process~$W^\ep_t$ defined by
\begin{equation*}
W^\ep_t
:=
\sum_{n=1}^{\lceil t/\tau_\ep \rceil}
Y^\ep_n
\end{equation*}
to a one-dimensional Brownian motion with variance~$\sigma^2 = 2 e_k \cdot \ahom e_k$. 
Separately, we will check via a straightforward computation, using the sublinearity of the correctors in~\eqref{e.corrector.qualbound.Linfty}, that the process~$W^\ep_t$ is close, with very high probability, to~$e\cdot X^{\ep}_t$, the original process projected onto~$e_k$, for small~$\ep$. This will enable us to deduce the convergence in law of the projected process~$e \cdot X^{\ep}_t$ to a Brownian motion with variance~$2e\cdot \ahom e$ and hence, by the Cram\'er-Wold device, of the full diffusion process~$X^{\ep}_t$ to a~$d$-dimensional Brownian motion with covariance matrix~$2\ahom$. 

\smallskip

It remains to check the validity of the hypothesis of Proposition~\ref{p.MFCLT} and prove an estimate on the difference~$W^\ep_t - e\cdot X^{\ep}_t$. 
We proceed by reducing each of these to analytic estimates and the limits in~\eqref{e.corrector.qualbound.L2} and~\eqref{e.corrector.qualbound.Linfty}.

\smallskip

Luckily, some of our work is actually already done since Proposition~\ref{p.Holder.trajectories} implies~\eqref{e.invcond2}. Indeed, by~\eqref{e.fluct.off} we have that, for any~$\lambda\geq 1$ and~$\beta \in (0,\nicefrac12)$, 
\begin{align*}
\mathbf{P}
\biggl[ 
\max_{ 1 \leq n \leq \lceil t/\tau_\ep\rceil } 
|X^\ep_{n\tau_\ep} - X^\ep_{(n+1)\tau_\ep } |^2
\geq 
\lambda
\tau_\ep^{2\beta}
\biggr] 
&
\leq
C\exp \bigl( - c\lambda \tau_\ep^{1-2\beta} \bigr) 
\,.
\end{align*}
Recalling that~$X^\ep_0 = 0$, the bound~\eqref{e.fluct.off} with~$r=1$ also implies,  that, for every~$\lambda \geq 1$,
\begin{equation}
\label{e.u.stuck.bro}
\mathbf{P}\biggl[ \max_{ 0 \leq t \leq 1} |X^\ep_{t} | \geq \lambda \biggr]
\leq 
C \exp \bigl(-c\lambda^2\bigr)\,.
\end{equation}
It thus follows from the triangle inequality that, for every~$\mu,\lambda\geq 1$ and~$\beta \in(0,\nicefrac12)$,  
\begin{equation}
\label{e.max.Yepn.bonk}
\mathbf{P} 
\biggl[ 
\max_{ 1 \leq n \leq \lceil t/\tau_\ep\rceil } 
(Y^\ep_n)^2
\geq \mu \tau_\ep^{2\beta} + 8\| \phi^\ep_e\|_{L^\infty(B_\lambda)}^2
\biggr]
\leq 
C \exp \bigl( -c\mu \tau_\ep^{1-2\beta} \bigr) 
+
C \exp \bigl( -c\lambda^2  \bigr)
\,.
\end{equation}
In view of the limit
\begin{equation}
\label{e.phieep.limit.Linfty}
\lim_{\ep \to 0} \| \phi_e^\ep\|_{L^\infty(B_\lambda)}
= 0, 
\end{equation}
which holds for every fixed~$\lambda \geq 1$ by the assumed validity of the limit~\eqref{e.corrector.qualbound.Linfty} and the fact that~$\phi_e(0)=0$, 
we deduce that 
\begin{equation}
\label{e.invcond2.gotya}
\limsup_{\ep \to 0} 
\mathbf{E} 
\biggl[ 
\max_{ 1 \leq n \leq \lceil t/\tau_\ep\rceil } 
(Y^\ep_n)^2
\biggr]
= 0 
\,,
\end{equation}
under the (mere) constraint on the choice of the time scale~$\tau_\ep$ that
\begin{equation}
\label{e.tau.ep.constraint}
\limsup_{\ep \to 0} \tau_\ep = 0. 
\end{equation}
The inequality~\eqref{e.invcond2.gotya} is the assumption~\eqref{e.invcond2} in our context.  

\smallskip

The estimate~\eqref{e.fluct.off} also allows us to quickly estimate the difference~$W^\ep_t - e\cdot X^{\ep}_t$. 
Observe that, for every~$n\in \N\cap [1, \tau_\ep^{-1}]$ and~$t \in ( (n-1) \tau_\ep, n\tau_\ep]$, we have that 
\begin{equation*}
W^\ep_t - e\cdot X^{\ep}_t
=
e\cdot \bigl( X^\ep_{n\tau_\ep} - X^\ep_t \bigr) 
+
\phi_e^\ep(X^\ep_{n\tau_\ep })
\end{equation*}
and, therefore, using the same estimates as above, we find that 
\begin{equation*}
\mathbf{P} 
\biggl[ 
\sup_{ t \in [0,1] } 
\bigl| W^\ep_t - e\cdot X^{\ep}_t \bigr| 
\geq \tau_\ep^{\beta} + \| \phi^\ep_e\|_{L^\infty(B_\lambda)}
\biggr]
\leq 
C \exp \bigl( -c\tau_\ep^{1-2\beta} \bigr) 
+
C \exp \bigl( -c\lambda^2  \bigr)
\,.
\end{equation*}
Using once more the limits~\eqref{e.phieep.limit.Linfty} and~\eqref{e.tau.ep.constraint}, we obtain
\begin{equation}
\label{e.Wept.to.Xept}
\sup_{\delta>0} \limsup_{\ep \to 0}
\mathbf{P} 
\biggl[ 
\sup_{ t \in [0,1] } 
\bigl| W^\ep_t - e\cdot X^{\ep}_t \bigr| 
\geq \delta 
\biggr] = 0
\,.
\end{equation}
In other words,~$W^\ep_t - e\cdot X^\ep_t$ converges in law to the process which is identically zero, which, of course, is sufficient to ensure that any limiting stochastic process for~$W^\ep_t$, as~$\ep>0$, will also be a limit for~$X^\ep_t$. 

\smallskip

We have left to obtain the condition~\eqref{e.invcond1} for~$\sigma^2 = 2e \cdot \ahom e$. 
We apply the identity~\eqref{e.Xt.to.Pt}, which yields, for any~$\ep>0$ and~$0\leq s<t$, 
\begin{align*}
\lefteqn{ 
\mathbf{E} 
\biggl[ 
\Bigl(
e \cdot \bigl( X^{\ep}_{t} - X^{\ep}_{s} \bigr)  
+ 
\phi^\ep_{e}\bigl(X^{\ep}_{t}\bigr) - \phi^\ep_{e}\bigl(X^{\ep}_{s}\bigr)
\Bigr)^2
\, \Bigl|\, \mathcal{G}^\ep_s
\biggr]
} \qquad & 
\\ & 
=
\ep^2 
\mathbf{E} 
\biggl[ 
\Bigl(
e \cdot \bigl( X_{t/\ep^2} - X_{s/\ep^2} \bigr)  
+ 
\phi_{e}\bigl(X_{t/\ep^2}\bigr) - \phi_{e}\bigl(X_{s/\ep^2}\bigr)
\Bigr)^2
\, \Bigl|\, \mathcal{G}_{s/\ep^2}
\biggr]
\\ &  
=
\ep^2 \int_{\Rd} 
\Bigl( e\cdot \bigl(y- X_{s/\ep^2}\bigr) + \phi_e(y) - \phi_e\bigl(X_{s/\ep^2}\bigr) \Bigr)^2
P( (t-s)/\ep^2 , X_{s/\ep^2} ,y)\,dy 
\,.
\end{align*}
Therefore, 
\begin{align}
\label{e.condE.to.analysis.yes}
\lefteqn{
\mathbf{E} \Bigl[ (Y_{n}^\ep )^2 \, \big\vert \, \mathcal{G}_{(n-1)\tau_\ep/\ep^2} \Bigr] 
} \qquad & 
\notag \\ & 
=
\ep^2 
\! \int_{\Rd} \!
\Bigl( e\cdot (y- x) + \phi_e(y) - \phi_e(x) \Bigr)^2
P( \tau_\ep/\ep^2, x, y)\,dy \,
\biggr\vert_{x=X_{(n-1)\tau_\ep/\ep^2} }
\,.
\end{align}
The right side of~\eqref{e.condE.to.analysis.yes} can be studied by purely analytic means. In fact, in the following lemma we present an explicit identity for the quantity on the right side, which makes it clear that it is equal to~$\tau_\ep (2e\cdot \ahom e)$, plus some error terms which we can show are~$o(\tau_\ep)$. 
Since this is where the homogenized matrix~$\ahom$ makes its appearance, it is perhaps the most interesting step in the proof of Theorem~\ref{t.invariance}.

\begin{lemma}
\label{l.varianceX}
For every~$e, x \in\Rd$ and~$t >0$, 
\begin{align}
\label{e.varianceX}
\lefteqn{
\int_{\Rd} 
\bigl(e \cdot (y-x) {+}\phi_{e}(y) - \phi_e(x) \bigr)^2 P(t,x,y)\,dy 
-
(2 e\cdot \ahom e) t
} 
\ \ 
& 
\notag \\ &
=
2 \int_0^{t}   
\int_{\Rd}   
\Bigl( ( \phi_e(y) - \phi_e(x) )
\a(y)(e+\nabla \phi_{e}(y)) 
+ 
\bfs_e (y) e\Bigr)\cdot
\nabla P(s ,x,y)\,dy 
\,ds\,.
\end{align}
\end{lemma}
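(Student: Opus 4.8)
The plan is to establish~\eqref{e.varianceX} by a direct computation: expand the square, integrate against the parabolic Green function, and use the parabolic equation together with the corrector equations to identify the leading-order term. The only properties of the symmetric diffusion (equivalently, of $P$) that we will use are that $\int_{\Rd} P(t,x,y)\,dy = 1$, that $P(t,x,\cdot)$ solves $\partial_t P = \nabla_y\cdot\a(y)\nabla_y P$ with initial data $\delta_x$, and that $P(t,x,y) = P(t,y,x)$, so everything is in fact analytic.

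First I would write $\Psi_e(y) := e\cdot y + \phi_e(y) - \phi_e(x)$ (with $x$ frozen), so that the left side of~\eqref{e.varianceX} is $\int_{\Rd}(\Psi_e(y) - \Psi_e(x))^2 P(t,x,y)\,dy - (2e\cdot\ahom e)t$, using $\Psi_e(x) = e\cdot x$. Define
\begin{equation*}
F(t) := \int_{\Rd} (\Psi_e(y) - \Psi_e(x))^2 P(t,x,y)\,dy\,.
\end{equation*}
Then $F(0) = 0$ and the claim is that $F(t) = (2e\cdot\ahom e)t + R(t)$ with $R(t)$ the stated double integral. The natural approach is to differentiate $F$ in $t$: using the equation for $P$ and integrating by parts twice in $y$,
\begin{equation*}
F'(t) = \int_{\Rd} (\Psi_e(y) - \Psi_e(x))^2 \nabla_y\cdot\a(y)\nabla_y P(t,x,y)\,dy = \int_{\Rd} \nabla_y\bigl[(\Psi_e - \Psi_e(x))^2\bigr]\cdot\bigl(-\a(y)\nabla_y P\bigr)\,dy\,,
\end{equation*}
and $\nabla_y[(\Psi_e - \Psi_e(x))^2] = 2(\Psi_e(y) - \Psi_e(x))(e + \nabla\phi_e(y))$. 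One more integration by parts moving $\nabla_y$ off $P$, together with the fact that $\a(y)(e + \nabla\phi_e(y))$ is divergence-free (the corrector equation~\eqref{e.corr.prob} / Proposition~\ref{p.qual.correctors}), kills the term where the derivative lands on $(e+\nabla\phi_e)$, leaving
\begin{equation*}
F'(t) = 2\int_{\Rd} (e + \nabla\phi_e(y))\cdot\a(y)(e + \nabla\phi_e(y))\, P(t,x,y)\,dy\,.
\end{equation*}
Now I would split $(e+\nabla\phi_e)\cdot\a(e+\nabla\phi_e) = e\cdot\a(e+\nabla\phi_e) + \nabla\phi_e\cdot\a(e+\nabla\phi_e)$. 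Writing $\a(e+\nabla\phi_e) = \ahom e + \g_e = \ahom e + \nabla\cdot\s_e$ via~\eqref{e.fluxcorrect.yes}, the first piece contributes $e\cdot\ahom e + e\cdot(\nabla\cdot\s_e)$, and since $\int P\,dy = 1$ the $e\cdot\ahom e$ part integrates to exactly $e\cdot\ahom e$; the skew-symmetric flux-corrector part and the second piece $\nabla\phi_e\cdot\a(e+\nabla\phi_e)$ are exactly the error terms — one integrates by parts once more (moving a derivative onto $P$, exploiting $\nabla\phi_e\cdot\a(e+\nabla\phi_e) = \nabla\cdot(\phi_e\,\a(e+\nabla\phi_e))$ since $\a(e+\nabla\phi_e)$ is solenoidal, and $e\cdot\nabla\cdot\s_e = \nabla\cdot(\s_e^{\,t}e) = -\nabla\cdot(\s_e e)$ by skew-symmetry) to produce precisely $2\int_{\Rd}\bigl((\phi_e(y) - \phi_e(x))\a(y)(e+\nabla\phi_e(y)) + \s_e(y)e\bigr)\cdot\nabla_y P(t,x,y)\,dy$ — wait, the constant $\phi_e(x)$ and the gauge of $\s_e$ need to be inserted as $\int\nabla_y P\,dy = 0$ allows adding any $y$-independent constant. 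Integrating the identity $F'(s) = 2e\cdot\ahom e + (\text{error integrand})(s)$ from $0$ to $t$ then gives~\eqref{e.varianceX}.

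The main obstacle is purely one of rigor rather than of ideas: justifying the integrations by parts, i.e.\ controlling the boundary terms at infinity. Here $\phi_e$ is only strictly sublinear (growing subquadratically after one integration), while $\nabla\phi_e$, $\nabla\s_e$ are merely $L^2$ on the microscopic scale; meanwhile the Nash--Aronson bound~\eqref{e.Nash-Aronson} and its gradient analogue give Gaussian decay of $P(t,x,y)$ and $\nabla_y P(t,x,y)$ in $|x-y|$. One would need to check that $(1+|y|)^k P$ and $(1+|y|)^k\nabla_y P$ are integrable and that the corrector growth is dominated by this decay, and that the derivatives falling on $P$ are legitimate in $L^1_{\mathrm{loc}}$; this is standard but slightly delicate because $\nabla\phi_e$ is not continuous in the vector case (though in the scalar case at hand the De Giorgi--Nash / Meyers estimates make $e+\nabla\phi_e$ locally $L^{2+\delta}$, which combined with Gaussian bounds on $\nabla_y P$ suffices by Hölder). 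I would carry out these estimates by cutting off at radius $\rho$, using~\eqref{e.Nash-Aronson} and the caloric Caccioppoli / Meyers estimates to show the cutoff errors vanish as $\rho\to\infty$, and only then pass to the limit — but as the excerpt suggests this is routine, the conceptual content being entirely the three integrations by parts and the algebraic identification of $e\cdot\ahom e$ via the definition~\eqref{e.intro.ahom} and the flux-corrector relation~\eqref{e.fluxcorrect.yes}.
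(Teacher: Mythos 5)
Your proposal is correct and is essentially the paper's own argument: differentiate the quantity in $t$, use the equation satisfied by $(t,y)\mapsto P(t,x,y)$ (via the symmetry of $P$ and of $\a$), and integrate by parts, invoking that $\a(e+\nabla\phi_e)$ is divergence-free, the flux-corrector relation $\nabla\cdot\s_e=\a(e+\nabla\phi_e)-\ahom e$, the skew-symmetry of $\s_e$, and $\int_{\Rd} P\,dy=1$, $\int_{\Rd}\nabla_y P\,dy=0$ — the paper performs exactly this computation, purely formally and without the cutoff/decay justification you sketch. One remark: if you track the signs carefully (test the one-dimensional case, where $\s_e=0$ and $\a(1+\phi_e')\equiv\ahom$), the $(\phi_e(y)-\phi_e(x))\,\a(e+\nabla\phi_e)$ term comes out with the opposite sign to the one displayed in~\eqref{e.varianceX} — a slip also present in the paper's own proof — but this is immaterial to the application, since the right side is immediately estimated in absolute value.
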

\begin{proof}
We proceed by computing the time derivative of the quantity on the left side of~\eqref{e.varianceX}, using the equations for both~$(t,y) \mapsto P(t,x,y)$ and the corrector~$\phi_e(\cdot)$:
\begin{align*}
\lefteqn{
\partial_t \int_{\Rd} 
(e \cdot (y{-}x)+\phi_{e}(y) {-} \phi_e(x))^2 
P(t,x,y)\,dx
}  & 
\\ &
=
\int_{\Rd} 
\bigl(e \cdot (y{-}x) {+}\phi_{e}(y) {-} \phi_e(x) \bigr)^2 
\partial_t P(t,x,y)\,dy
\\ & 
=
\int_{\Rd} 
2 (e \cdot (y{-}x)+\phi_{e}(y) {-} \phi_e(x) )
(e {+} \nabla \phi_e(y)) 
\cdot
\a(y)
\nabla P(t,x,y)\,dy
\\ & 
=
2e \cdot \!
\int_{\Rd} \!\!
\a (y)(e {+} \nabla \phi_e(y)) P(t,x,y)\,dy
%\\ & \qquad 
-
2 \!\int_{\Rd} \!\!
(\phi_e(x) {-} \phi_e(y))
\a(y) (e {+} \nabla \phi_{e}(y)) 
\cdot 
\nabla P(t,x,y) \,dy\,.
\end{align*}
We introduce the flux correctors into the computation by way of the identity
\begin{align*}
\int_{\Rd} 
\bigl( 
\a (y) (e {+} \nabla \phi_{e}(y)) -\ahom e \bigr)
P(t,x,y)\,dy
=
-\int_{\Rd} 
\bfs_{e} (y) \nabla P(t,x,y)\,dy.
\end{align*}
Combining these, we obtain
\begin{align*}
\lefteqn{
\partial_t \int_{\Rd} 
\bigl(e \cdot (y{-} x) {+} \phi_{e}(y) {-} \phi_e(x) \bigr)^2 
P(t,x,y)\,dy
} \qquad & 
\\ & 
=
2e\cdot \ahom e 
+
2 \int_{\Rd} 
\bigl( ( \phi_e(y) {-} \phi_e(x) )
\a (y)(e {+} \nabla \phi_{e}(y)) 
+\bfs_e (y) e\bigr)
\cdot
\nabla P(t,x,y)\,dy
\,.
\end{align*}
After integration in~$t$, we obtain~\eqref{e.varianceX}.
\end{proof}

Let's estimate the term on the right side of~\eqref{e.varianceX} to see that it is really a small error. 
This requires a few deterministic estimates on the parabolic Green function in addition to the upper bound Nash-Aronson estimate in~\eqref{e.Nash.Aronson}.
The parabolic Caccioppoli inequality (see~\cite[Lemma 8.1]{AKMBook}) and~\eqref{e.Nash.Aronson} imply that, for every~$x,z\in\Rd$, 
\begin{align*}
\left\| \nabla P(t,x,\cdot) \right\|_{\underline{L}^2((\frac t2,t) \times B_{\sqrt{t}}(z))}
&
\leq 
Ct^{-\nicefrac12} 
\left\| P(t,x,\cdot) \right\|_{L^\infty((\frac t4,t)\times B_{\sqrt{2t}}(z))}
\leq
C t^{-\nicefrac12}  \Gamma_c (t,x-z) 
\,.
\end{align*}
Since the coefficients~$\a(\cdot)$ are time-independent, there is a free upgrade of the left side from~$L^2_tL^2_x$ to~$L^\infty_tL^2_x$ (see~\cite[Lemma 8.2]{AKMBook}), so that we obtain, for every~$x,z\in\Rd$ and~$t>0$, 
\begin{equation}
\label{e.quenchednablaP}
\left\| \nabla P(t,x,\cdot) \right\|_{\underline{L}^2(B_{\sqrt{t}}(z))}
\leq
C t^{-\nicefrac12}  \Gamma_c (t,x-z) 
\,.
\end{equation}
We can concisely summarize this as 
\begin{equation}
\label{e.concise.nablaGF}
\int_{\Rd} 
\frac{\bigl| \nabla P(t,x,y) \bigr|^2}
{ \Gamma_c(t,x-y) }
\, dy
\leq Ct^{-1} 
\,.
\end{equation}
Using De-Giorgi-Nash theory, the estimate~\eqref{e.quenchednablaP} can be further localized, and we obtain, for every~$r \leq \frac12 \sqrt{t}$, 
\begin{equation}
\label{e.quenchednablaP.improved}
\left\| \nabla P(t,x,\cdot) \right\|_{\underline{L}^2(B_{r}(z))}
\leq
\frac{C}{r} \osc_{(s,y)\in (t-r^2,t]\times B_{2r}(z)} P(s,x,y) 
\leq C r^{2\alpha -1} t^{-\alpha} \Gamma_c (t,x-z) 
\,.
\end{equation}
(The estimate~\eqref{e.quenchednablaP.improved} is not needed until the proof of Theorem~\ref{t.parabolic.GF} in the next section.)

\smallskip

Commencing with the estimate of the right side of~\eqref{e.varianceX}, we use the Cauchy-Schwarz inequality and~\eqref{e.concise.nablaGF} to get
\begin{align*}
%\label{e.}
\lefteqn{ 
\biggl| \int_{\Rd}   
\Bigl( ( \phi_e(y) - \phi_e(x) )
\a(y)(e+\nabla \phi_{e}(y)) 
+ 
\bfs_e (y) e\Bigr)\cdot
\nabla P(s ,x,y)\,dy
\biggr| 
}
\qquad & 
\\ & 
\leq
C\int_{\Rd}   
\Bigl( \bigl|  \phi_e(y)  {-}  \phi_e(x)  \bigr| 
\bigl( 1 {+} |\nabla \phi_{e}(y)| \bigr)  
+ 
\bigl| \bfs_e (y) {-} (\bfs_e) \bigr| \Bigr)
\bigl| \nabla P(s ,x,y) \bigr| \,dy
\\ & 
\leq 
Cs^{-\nicefrac12} 
\biggl( \int_{\Rd} \!  
\Bigl(  |  \phi_e(y) {-} \phi_e(x)   |^2 
\bigl( 1 {+} |\nabla \phi_{e}(y)|^2 \bigr)  
+ 
 | \bfs_e (y) {-} (\bfs_e)  |^2 \Bigr) \Gamma_c (s,x-y)\,dy 
\! \biggr)^{\!\!\nicefrac12} 
\notag \\ & 
\leq
C 
\sup_{r \geq {s}^{\nicefrac12}} 
\frac1{r} 
\Bigl( 
\left\| \phi_e \right\|_{L^\infty(B_{r}(x))}^2
\Bigl( 1 {+} 
\left\| \nabla \phi_e \right\|_{\underline{L}^2(B_{r}(x))}^2
\Bigr) 
+ \left\| \bfs_e {-} (\bfs_e)_{B_r(x)} \right\|_{\underline{L}^2(B_{r}(x))}^2
\Bigr)^{\nicefrac12} 
\notag \\ & 
\leq
C 
\sup_{r \geq {s}^{\nicefrac12}} 
\frac1{r} 
\Bigl( 
\left\| \phi_e \right\|_{L^\infty(B_{r}(x))}
\Bigl( 1 {+} 
\frac1{r} 
\left\| \phi_e \right\|_{\underline{L}^2(B_{r}(x))}
\Bigr) 
+ \left\| \bfs_e {-} (\bfs_e)_{B_r(x)} \right\|_{\underline{L}^2(B_{r}(x))}
\Bigr)
\,.
\end{align*}
In the second and third line above,~$(\bfs_e)$ can be any constant, for instance, the mean of~$\bfs_e$ with respect to~$\Gamma_c(t,x-y)\,dy$. 
The Caccioppoli inequality was used to get the last line.
Circling back to~\eqref{e.condE.to.analysis.yes} and~\eqref{e.varianceX}, we now obtain, for every~$x \in \Rd$,
\begin{align}
\label{e.varianceX.est}
\lefteqn{
\biggl| 
\int_{\Rd} 
\bigl(e \cdot (y-x) {+}\phi_{e}(y) - \phi_e(x) \bigr)^2 P(t,x,y)\,dy 
-
(2 e\cdot \ahom e) t
\biggr| 
} \quad & 
\notag \\ &
\leq
C\int_0^{t}   
\sup_{r \geq {s}^{\nicefrac12}} 
\frac1{r} 
\Bigl( 
\left\| \phi_e \right\|_{L^\infty(B_{r}(x))}
\Bigl( 1 {+} 
\frac1{r} 
\left\| \phi_e \right\|_{\underline{L}^2(B_{r}(x))}
\Bigr) 
+ \left\| \bfs_e {-} (\bfs_e)_{B_r(x)} \right\|_{\underline{L}^2(B_{r}(x))}
\Bigr)
\, ds 
\notag \\ & 
= : F(t,x)
\end{align}
and hence that
\begin{equation}
\label{e.condE.to.analysis.0}
\biggl| 
\mathbf{E} \Bigl[ (Y_{n}^\ep )^2 \, \big\vert \, \mathcal{G}_{(n-1)\tau_\ep/\ep^2} \Bigr] 
- 2 (e\cdot \a e) \tau_\ep
\biggr|
\leq
C\ep^2 F\Bigl( \frac{\tau_\ep} {\ep^2} , X_{(n-1)\tau_\ep/\ep^2}\Bigr)
\,.
\end{equation}
The assumed validity of~\eqref{e.corrector.qualbound.L2} and~\eqref{e.corrector.qualbound.Linfty} imply that~$t^{-1} F(t,x) \to 0$ as~$t\to \infty$. In fact, it is not difficult to obtain some uniformity in the convergence in~$x$ since, by a simple covering argument, we have that, for every~$\lambda\geq 1$, 
\begin{equation*}
\sup_{|x| \leq \lambda \sqrt{t} } F(t,x) 
\leq 
C\lambda^{\nicefrac d2} F\bigl( c\lambda^{-2}t,0 \bigr)
\,.
\end{equation*}
Consequently,
\begin{equation*}
\limsup_{ t \to \infty}
\frac 1t 
\sup_{|x| \leq \lambda \sqrt{t} }
F (t, x)
= 0
\,.
\end{equation*}
It follows that we can choose the sequence~$\tau_\ep$ so that it converges to zero sufficiently slowly as~$\ep \to 0$, to obtain that, for every~$\lambda\geq 1$, 
\begin{equation}
\label{e.F.uniform.trap.tauep}
\limsup_{ \ep \to 0}
\frac{\ep^2}{\tau_\ep}
\sup_{ |x| \leq \lambda/\ep}
F \Bigl( \frac {\tau_\ep}{\ep^2}, x \Bigr)
= 0
\,.
\end{equation}
To finish off the proof of Theorem~\ref{t.invariance}, we use~\eqref{e.condE.to.analysis.0} to obtain, for every~$t \in (0,1]$ and~$\lambda\geq 1$,
\begin{align}
\label{e.Yepn2.main}
\lefteqn{
\Biggl|
\,
\mathbf{E} 
\Biggl[ 
\sum_{n=1}^{\lceil t/\tau_\ep \rceil} 
(Y^\ep_n)^2\indc_{\{ \max_{ t\in[0,1]} |X^\ep_{t} | \leq \lambda\}}
\Biggr]
-
\tau_\ep \biggl\lceil \frac{t}{\tau_\ep} \biggr\rceil  \cdot 2 e\cdot \ahom e
\,
\Biggr|
} 
\qquad &
\notag \\ &
\leq
\sum_{n=1}^{\lceil t/\tau_\ep \rceil}
\mathbf{E} \biggl[
\Bigl| 
\mathbf{E} \Bigl[ (Y_{n}^\ep )^2 \, \big\vert \, \mathcal{G}_{(n-1)\tau_\ep/\ep^2} \Bigr] 
- 2 (e\cdot \ahom e) \tau_\ep
\Bigr|
\, \indc_{\{ \max_{  t\in[0,1]} |X^\ep_{t} | \leq \lambda \}} 
\biggr]
\notag \\ & \qquad 
+
C|e|^2 \mathbf{P} \Bigl[ \max_{ 0 \leq t \leq 1} |X^\ep_{t} | \geq \lambda \Bigr]
\notag \\ & 
\leq
\ep^2 \biggl\lceil \frac{t}{\tau_\ep} \biggr\rceil 
\sup_{ x \in B_{\lambda /\ep}}
F \Bigl( \frac {\tau_\ep}{\ep^2}, x \Bigr)
+
C|e|^2 \exp \bigl(-c\lambda^2\bigr)
\,.
\end{align}
In view of~\eqref{e.u.stuck.bro} and~\eqref{e.F.uniform.trap.tauep}, 
this implies 
\begin{equation*}
\sup_{\delta>0}
\limsup_{\ep \to 0} 
\mathbf{P} 
\Biggl[ 
\biggl| 
\sum_{n=1}^{\lceil t/\tau_\ep \rceil} 
(Y^\ep_n)^2 
-
t (2 e\cdot \ahom e) 
\,
\biggr|
\geq \delta 
\Biggr]
= 0\,.
\end{equation*}
This completes the verification of~\eqref{e.invcond1}. 
We may now apply Proposition~\ref{p.MFCLT} to obtain the convergence as~$\ep\to0$ of the process~$W^\ep_t$, and hence, by~\eqref{e.Wept.to.Xept}, also of~$e\cdot X^\ep_t$, to Brownian motion with covariance~$2e\cdot \ahom e$. Applying the Cram\'er-Wold device completes the proof of Theorem~\ref{t.invariance}.

\subsection{Homogenization of the parabolic Green function}
\label{ss.Green}

In this section, we give a second proof that qualitative homogenization implies the quenched invariance principle, which is independent of the previous section. This argument is more concrete than the one based on the martingale CLT. It is also more analytic: instead of using an ``off the shelf'' central limit theorem from probability theory (such as Proposition~\ref{p.MFCLT}), we demonstrate the convergence of the transition functions (the parabolic Green functions) to their homogenized limit, which happens to be a Gaussian (shifted by~$\ahom$). This implies the invariance principle for the process, namely the statement of Theorem~\ref{t.invariance}, by the uniqueness part of Proposition~\ref{p.Kolmogorov}, since there is only one possible subsequential limit.\footnote{The compactness here comes from the fact that the trajectories are almost surely H\"older continuous by Proposition~\ref{p.Holder.trajectories}, and so~$\P$ can be seen as a probability measure on the space of continuous sample paths.}

\smallskip

This argument in this section does not refer to the symmetric diffusion process at all. Of course, it comes as no surprise that the analytic estimates we must prove bear a striking resemblance to the one for the right side of~\eqref{e.varianceX}. However, because the argument does not pass through an abstract martingale CLT, it is less mysterious, easier to quantify, and stripped down to its essence. 

\smallskip

The parabolic Green function for the homogenized operator~$\nabla \cdot\ahom \nabla$ is denoted by 
\begin{equation*}
\overline{P} (t,x) 
:= 
( \det \ahom )^{-\nicefrac12} 
(4\pi t)^{-\nicefrac d2} 
\exp \biggl( 
-\frac{x\cdot \ahom^{-1} x }{4t} 
\biggr)
\,.
\end{equation*}
The main result of this section is the following theorem. 

\begin{theorem}[Homogenization of the parabolic Green function] \hspace{-2pt} 
\label{t.parabolic.GF}
Let~$(\Omega,\F)$ be as in Section~\ref{ss.random} and~$\P$ be a~$\Zd$--stationary probability measure on~$(\Omega,\F)$. Let~$\ahom$ be as in Theorem~\ref{t.qualitative.homogenization} and~$\Omega_0 \in \F$ be the event of full probability on which the limits~\eqref{e.corrector.qualbound.L2} and~\eqref{e.corrector.qualbound.Linfty} are valid. 
Let~$\a\in \Omega_0$ and~$P(t,x,y)$ be the parabolic Green function for the operator~$\nabla \cdot \a\nabla$.
Then there exist constants~$\alpha(d,\lambda,\Lambda) \in (0,\frac12]$ and~$C(d,\lambda,\Lambda) <\infty$ such that, for every~$s,t\geq 1$ with~$s \in (0,\tfrac12t]$ and~$x,y\in\Rd$, 
\begin{equation}
\label{e.P.to.Pbar}
\bigl| P(t,x,y) - \overline{P}(t,x-y) \bigr| 
\leq
C\biggl( 
\Bigl( \frac{s}{t} \Bigr)^{\! \alpha}
+
E_y\bigl(s^{\nicefrac12}\bigr)  \Bigl( \frac{t}{s} \Bigr)^{\!\nicefrac d4} \!\log\Bigl( \frac{t}{s} \Bigr) 
\biggr)
% \Gamma_c(t,x)
t^{-\nicefrac d2} \exp \biggl( -\frac{|x-y|^2}{Ct} \biggr)
\,,
\end{equation}
where~$E_y(s)$ is the random variable defined by
\begin{equation} \label{e.Es.def}
E_y(s): =
\sup_{ r \in [s,\infty)} 
\frac1r
\Bigl( 
\left\| \phi_e - (\phi_e)_{B_r(y)} \right\|_{\underline{L}^2(B_{r}(y))}
+
\left\| \bfs_e - (\bfs_e)_{B_r(y)} \right\|_{\underline{L}^2(B_{r}(y))}
\Bigr) 
\,.
\end{equation}
\end{theorem}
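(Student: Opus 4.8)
The plan is to compare $P(t,x,y)$ and $\overline P(t,x-y)$ by a two-scale expansion argument run directly on the parabolic Green function, mirroring the strategy of Proposition~\ref{p.DP} but in the parabolic setting and with the Green function singularity handled by splitting the time interval at a mesoscopic scale $s$. The key point is that $(t,y)\mapsto P(t,x,y)$ solves the heterogeneous parabolic equation $\partial_t P - \nabla\cdot \a\nabla P = 0$, while $\overline P(t,x-\cdot)$ solves the homogenized parabolic equation. If I freeze the source point $x$ and regard the spatial variable as the ``macroscopic'' variable, I can build a two-scale approximation of $P$ using the correctors $\phi_e$ and flux correctors $\s_e$, and the error will be governed exactly by the random variable $E(s)$ in~\eqref{e.Es.def} evaluated on scales $\gtrsim s^{1/2}$.

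\textbf{Key steps.} First I would record the deterministic a priori bounds for the two Green functions: the Nash--Aronson estimate~\eqref{e.Nash.Aronson} for $P$, the explicit Gaussian form of $\overline P$, and the gradient bound~\eqref{e.quenchednablaP}--\eqref{e.concise.nablaGF} for $\nabla P$, together with the analogous (classical, constant-coefficient) bounds on $\nabla \overline P$ and $\nabla^2 \overline P$. Second, for the mesoscopic time $s\in(0,t/2]$, I would use the semigroup/reproducing property $P(t,x,y)=\int_{\Rd} P(t-s,x,z)\,P(s,z,y)\,dz$ (and the analogue for $\overline P$) to reduce to comparing, for the initial data $P(s,x,\cdot)$ which is already H\"older and concentrated at scale $s^{1/2}$ around $x$, the evolution of the heterogeneous semigroup over time $t-s$ with the homogenized one. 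The difference $P(t,x,\cdot) - (\text{homogenized evolution of }P(s,x,\cdot))$ is controlled by the parabolic two-scale expansion, whose error involves $\| \phi_e^{\bullet} \|$ and $\| \s_e^{\bullet} \|$ at the relevant scales, which is precisely $E(s^{1/2})$; this accounts for the $\log(t/s)\,E(s^{1/2})$ term, the logarithm coming from integrating the mesoscopic-to-macroscopic scale range (a dyadic sum over scales between $s^{1/2}$ and $t^{1/2}$, as in~\eqref{e.homog.bndrlayer}). Third, I must estimate $(\text{homogenized evolution of }P(s,x,\cdot)) - \overline P(t,x-\cdot)$; here both sides solve the \emph{same} constant-coefficient equation, so this reduces to comparing the initial data $P(s,x,\cdot)$ with $\overline P(s,x-\cdot)$ after time $t-s$, and by the smoothing of the homogenized heat kernel over the large time $t-s\geq t/2$ this produces only the harmless $(s/t)^{1/2}$ term (the initial data have the same mass, and differ on scale $s^{1/2}$). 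Finally I would assemble these pieces, track the Gaussian tail factors $\Gamma_c$ through each step using the composition property of Gaussians, and absorb constants to land on~\eqref{e.P.to.Pbar}.

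\textbf{Main obstacle.} The delicate part is the parabolic two-scale expansion with a \emph{singular} (Green-function) macroscopic profile: unlike in Proposition~\ref{p.DP}, where $u$ was $W^{2,\infty}$, the profile $P(s,x,\cdot)$ has second derivatives blowing up like $s^{-1-d/2}$ near $x$, so one cannot naively apply a fixed two-scale estimate. The resolution is that the corrector contributions are weighted against $\nabla P$ and $\nabla^2 P$, which decay Gaussianly, so the relevant quantity is a \emph{weighted} average of $|\phi_e|,|\s_e|$ against $\Gamma_c(s,\cdot)$; bounding this weighted average by the supremum $E(s^{1/2})$ via a dyadic decomposition of space into annuli (exactly as in the Cauchy--Schwarz computation following~\eqref{e.varianceX.est}) is what makes the argument go through. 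The second subtlety is bookkeeping the Gaussian exponents: each application of Caccioppoli, Cauchy--Schwarz, and the semigroup property slightly degrades the constant $c$ in $\exp(-c|x-y|^2/t)$, and one must check these degradations are finite in number and depend only on $(d,\lambda,\Lambda)$, so that the final constant $C(d,\lambda,\Lambda)$ in~\eqref{e.P.to.Pbar} is legitimate.
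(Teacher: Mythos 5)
Your proposal is correct and follows essentially the same route as the paper's proof: split at a mesoscopic time, compare the true Gaussian with the homogenized evolution of the heterogeneous kernel at that time (zero-mass, localized data giving the $(s/t)^{1/2}$ term), and handle the remaining difference by a parabolic two-scale expansion with correctors and flux correctors, estimating the weighted corrector averages against Gaussian weights by $E(s^{1/2})$ and picking up the $\log(t/s)$ from the integration over intermediate scales. The paper implements your step 2 via the Duhamel formula for $H-P$ with $H=\overline Q+\zeta\sum_k\partial_{x_k}\overline Q\,\phi_{e_k}$, which is exactly the comparison of the two evolutions from the same mesoscopic data that you describe.
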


Before giving the proof, a few remarks are in order. The random variable~$E_0(s)$ is obviously a modulus quantifying the convergence of the limit~\eqref{e.corrector.qualbound.L2}. Thus~$E_y(s) \to 0$ as~$s \to \infty$, for every~$\a \in\Omega_0$. Since the parameter~$s$ is free, optimizing the left side guarantees that the prefactor in parentheses on the right side of~\eqref{e.P.to.Pbar} also vanishes as~$t \to \infty$:
\begin{equation*}
\limsup_{t\to \infty} 
\inf_{s \in [1, \frac12 t] } 
\biggl ( \!
\Bigl(  \frac{s}{t} \Bigr)^{\!\nicefrac12}
+
C  \log \frac{t}{s} 
E_y\bigl(s^{\nicefrac12}\bigr) \!
\biggr )
=
0.
\end{equation*}
Therefore~\eqref{e.P.to.Pbar} says that the difference between~$P(t,x,y)$ and~$\overline{P}(t,x-y)$ is bounded by a term which is~$o(1)$ as~$t \to\infty$ multiplied by~$t^{-\nicefrac d2} \exp( - |x-y|^2/Ct) = C\Gamma_c(t,x-y)$, which we know by the Nash-Aronson estimate~\eqref{e.Nash.Aronson} is roughly the same size as both of~$P(t,x,y)$ and~$\overline{P}(t,x-y)$.
Another way to state this result is in terms of the Green function~$P^\ep(t,x,y)$ for the rescaled operator~$\nabla\cdot \a(\tfrac\cdot\ep)\nabla$, which satisfies~$P^\ep (t,x,y) = P( t/\ep^2, x/\ep, y/\ep )$:
\begin{equation}
\label{e.Pep.to.P}
\bigl| P^\ep(t,x,y) - \overline{P}(t,x-y) \bigr| 
\leq
C
\inf_{s \in [\ep^2, \frac12t]} 
\biggl ( \!
\Bigl(  \frac{s}{t} \Bigr)^{\!\nicefrac12}
+
\log \frac{t}{s} 
E\bigl( \ep^{-1} s^{\nicefrac12}\bigr) \!
\biggr )
\Gamma_c(t,x-y)
\,.
\end{equation}
The right side converges to zero locally uniformly in~$(0,\infty)\times \Rd \times \Rd$ as~$\ep \to 0$, and therefore, we deduce the local uniform convergence of~$P^\ep$ to~$P$ as~$\ep \to 0$.

\begin{corollary}
\label{c.Pep.to.P}
Under the assumptions of Theorem~\ref{t.parabolic.GF}, if we let~$P^\ep$ denote the rescaled parabolic Green function given in~\eqref{e.Pep}, then, for any compact subset~$K \subseteq (0,\infty) \times \Rd \times \Rd$, 
\begin{equation}
\label{e.Pep.to.P.dis}
\limsup_{\ep \to 0}
\sup_{(t,x,y) \in K}  
\bigl| P^\ep (t,x,y) - \overline{P}(t,x-y) \bigr| = 0\,.
\end{equation}
\end{corollary}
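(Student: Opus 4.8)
The plan is to deduce Corollary~\ref{c.Pep.to.P} directly from the estimate~\eqref{e.Pep.to.P} proved in Theorem~\ref{t.parabolic.GF}, essentially as an exercise in unpacking quantifiers. First I would recall that~\eqref{e.Pep.to.P} states, for~$\a\in\Omega_0$,
\begin{equation*}
\bigl| P^\ep(t,x,y) - \overline{P}(t,x-y) \bigr|
\leq
C
\inf_{s \in [\ep^2, \frac12t]}
\Bigl(
\Bigl(  \tfrac{s}{t} \Bigr)^{\!\sfrac12}
+
\log \tfrac{t}{s}
E\bigl( \ep^{-1} s^{\sfrac12}\bigr)
\Bigr)
\Gamma_c(t,x-y)
\,,
\end{equation*}
where~$E(\cdot)$ is the modulus defined in~\eqref{e.Es.def}, with~$E(r) \to 0$ as~$r\to\infty$ by the assumed validity of~\eqref{e.corrector.qualbound.L2}. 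Fix a compact set~$K \subseteq (0,\infty)\times\Rd\times\Rd$ and choose~$0 < t_- \leq t_+ < \infty$ and~$\rho>0$ such that~$K \subseteq [t_-,t_+] \times \overline{B}_\rho \times \overline{B}_\rho$. On~$K$ the Gaussian factor~$\Gamma_c(t,x-y) = t^{-\sfrac d2}\exp(-c|x-y|^2/t)$ is bounded above by a constant~$C(K)$, so it suffices to show that the prefactor~$\inf_{s}(\cdots)$ tends to zero uniformly for~$t\in[t_-,t_+]$ as~$\ep\to0$.

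Next I would make an explicit, $\ep$-dependent choice of~$s$ in the infimum to extract a concrete rate. For instance, take~$s = s_\ep := \ep$ once~$\ep$ is small enough that~$\ep \leq \tfrac12 t_-$ (so that~$s_\ep \in [\ep^2, \tfrac12 t]$ for all~$t\in[t_-,t_+]$, using~$\ep^2 \leq \ep$). Then
\begin{equation*}
\inf_{s\in[\ep^2,\frac12 t]}
\Bigl( \Bigl(\tfrac{s}{t}\Bigr)^{\!\sfrac12} + \log\tfrac{t}{s} \, E\bigl(\ep^{-1}s^{\sfrac12}\bigr) \Bigr)
\leq
\Bigl(\tfrac{\ep}{t_-}\Bigr)^{\!\sfrac12}
+
\log\tfrac{t_+}{\ep} \cdot E\bigl( \ep^{-\sfrac12} \bigr)
\,.
\end{equation*}
The first term plainly vanishes as~$\ep\to0$. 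For the second, since~$E(r)\to0$ as~$r\to\infty$ and~$\ep^{-\sfrac12}\to\infty$, we have~$E(\ep^{-\sfrac12})\to0$; the only subtlety is the competing logarithmic divergence of~$\log(t_+/\ep)$. This is where a tiny bit of care is needed: one should choose the free parameter~$s$ slightly more cleverly, e.g.~$s_\ep := \ep^{2-\delta}$ for a fixed small~$\delta\in(0,1)$, giving the bound~$(\ep^{2-\delta}/t_-)^{\sfrac12} + (2-\delta)|\log\ep|\, E(\ep^{-\delta/2}\cdots)$, which still has~$E(\cdot)$ evaluated at a quantity tending to infinity but whose logarithmic prefactor is now only~$O(|\log\ep|)$; alternatively, and cleanest of all, one can simply invoke the remark already made in the excerpt, namely that~$\limsup_{t\to\infty}\inf_{s\in[1,\frac12 t]}\bigl((s/t)^{\sfrac12} + C\log(t/s) E(s^{\sfrac12})\bigr) = 0$ for every~$\a\in\Omega_0$, rescale~$t \rightsquigarrow t/\ep^2$, and observe that~$\log((t/\ep^2)/s) = \log(t/(\ep^2 s))$ combined with~$E$ evaluated at~$s^{\sfrac12}\to\infty$ forces the whole expression to~$0$ uniformly on~$t\in[t_-,t_+]$. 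Either route works; I would phrase it as: \emph{by the monotonicity of~$E$ and the remark following Theorem~\ref{t.parabolic.GF}}, the right side of~\eqref{e.Pep.to.P} converges to~$0$ as~$\ep\to0$ uniformly over~$K$.

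Finally I would assemble the pieces: for~$(t,x,y)\in K$,
\begin{equation*}
\bigl| P^\ep(t,x,y) - \overline{P}(t,x-y) \bigr|
\leq
C(K) \cdot \inf_{s\in[\ep^2,\frac12 t]}\Bigl( \Bigl(\tfrac st\Bigr)^{\!\sfrac12} + \log\tfrac ts \, E\bigl(\ep^{-1}s^{\sfrac12}\bigr) \Bigr)
\,,
\end{equation*}
and taking the supremum over~$(t,x,y)\in K$ and then the limsup in~$\ep$ gives~\eqref{e.Pep.to.P.dis}. The main obstacle — really the only point requiring thought — is the interplay between the logarithm~$\log(t/s)$ and the slowness of the convergence~$E(r)\to0$: one must pick the free scale~$s$ so that the logarithm grows strictly slower than~$1/E$ decays, which is always possible because~$E$ is a genuine (if arbitrarily slow) modulus of convergence and~$s$ ranges over a window that widens as~$\ep\to0$. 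Everything else is bookkeeping on the compact set~$K$.
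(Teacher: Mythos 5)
Your final route is the paper's own: Corollary~\ref{c.Pep.to.P} is deduced directly from~\eqref{e.Pep.to.P} together with the remark that the prefactor $\inf_{s}\bigl((s/t)^{\sfrac12}+C\log(t/s)\,E(s^{\sfrac12})\bigr)$ vanishes as $t\to\infty$, rescaled by $t\rightsquigarrow t/\ep^2$; bounding $\Gamma_c$ on the compact set and noting that the limsup-as-$t\to\infty$ statement automatically gives uniformity over $t\in[t_-,t_+]$ is exactly the paper's (very brief) argument.

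One correction, though: your claim that ``either route works'' is not accurate for the two explicit choices you propose. With $s_\ep=\ep$ or $s_\ep=\ep^{2-\delta}$ the bound is of the form $C|\log\ep|\,E(r_\ep)$ with $r_\ep\to\infty$, and since $E$ is an arbitrary modulus it may decay slower than logarithmically (e.g.\ like $1/\log\log r$), in which case this product does not tend to zero. Any choice of $s$ as a fixed power of $\ep$ suffers the same defect, because $\log(t/s)\sim c|\log\ep|$ always. The only argument that works in general is the adaptive one you mention last: choose $s=s(T)$ (with $T=t/\ep^2$) depending on the modulus $E$ itself, e.g.\ $s=T\exp\bigl(-E(T^{\sfrac14})^{-\sfrac12}\bigr)$ after replacing $E$ by $E\vee\log^{-2}$, which uses the monotonicity of $E$ and makes both terms vanish; this is precisely the content of the unproved remark following Theorem~\ref{t.parabolic.GF}. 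So you should drop the explicit power-of-$\ep$ choices (or present them only as heuristics) and rest the proof on the adaptive choice alone.
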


Corollary~\ref{c.Pep.to.P} is actually equivalent to  Theorem~\ref{t.invariance}. Indeed,~\eqref{e.Pep.to.P.dis} says that the transition function for the rescaled process~$X^{\a,\ep}_t$ converges to the transition function for the limiting process~$X^{\ahom}_t$ which, by the uniqueness part of Proposition~\ref{p.Kolmogorov}, is the only possible weak subsequential limit of the law of~$X^{\a,\ep}_t$. 
Conversely, we can see that Theorem~\ref{t.invariance} implies~\eqref{e.Pep.to.P.dis} by using the formula~\eqref{e.formula} applied to suitable approximations of the parabolic Green function (using the Nash-Aronson estimate to control the approximation error). 

\smallskip

The reason we have stated the estimate~\eqref{e.P.to.Pbar} with an explicit modulus, instead of presenting Corollary~\ref{c.Pep.to.P} as the main result is to demonstrate that our argument here is \emph{quantitative}. Suppose we wish to obtain an estimate for the convergence rate of~$P^\ep$ to~$\overline{P}$, then~\eqref{e.Pep.to.P} makes it very clear what we should do: estimate the modulus~$E(s)$. Of course, this is precisely what we will do in later sections. 

\smallskip

We now present the proof of the theorem. 

\begin{proof}[{Proof of Theorem~\ref{t.parabolic.GF}}]
The reader should be aware that a similar derived estimate along very similar lines appears in~\cite[Theorem 8.20]{AKMBook}, 
but we have arranged the argument more efficiently here. 

\smallskip

Proceeding with the proof of~\eqref{e.P.to.Pbar}, we may take, without loss of generality,~$y=0$, and fix a large time~$t \geq 10$, which is the time at which we intend to estimate the difference~$P-\overline{P}$. We will write~$E=E_y$. We also pick a ``mesoscopic'' time~$t_0 \in [1,\frac12 t]$, which will be the time~$s$ in the statement of the proposition (the variable~$s$ will be repurposed here), and let~$\overline{Q}$ be the solution of the initial-value problem 
\begin{equation*}
\left\{
\begin{aligned}
& \bigl( \partial_t - \nabla \cdot\ahom\nabla \bigr) \overline{Q}
= 0 & \mbox{in} & \ (t_0,\infty)\,, \\
&  \overline{Q} = P(t_0,\cdot,0) & \mbox{on} & \ \{ t_0 \} \times \Rd\,.
\end{aligned}
\right.
\end{equation*}
Of course, we have the formula
\begin{equation}
\label{e.Qbar.convformula}
\overline{Q}(t,x) = \bigl( P(t_0,\cdot,0) \ast \overline{P}(t-t_0,\cdot) \bigr)(x)
\,.
\end{equation}
The difference~$\overline{P}(t,x,0) - \overline{Q}(t,x)$ is straightforward to estimate for~$t\gg t_0$, by a direct computation, using that the mass of~$P(t_0,\cdot,0) - \overline{P}(t_0,\cdot)$ vanishes. 
Recall that~$\Gamma_\alpha(t,x)$ is defined in~\eqref{e.heatkernel}. 
The claimed estimate is that there exists~$C(d,\lambda,\Lambda)<\infty$ such that, for every~$t \in [2t_0,\infty)$, 
\begin{equation}
\label{e.Pbar.to.Qbar}
\bigl| \overline{P}(t,x) - \overline{Q} (t,x)\bigr|
\leq 
C \Bigl( \frac t{t_0} \Bigr)^{\!\!-\nicefrac12}
\Gamma_c (t,x)
\,.
\end{equation}
By dilating time and changing the spatial coordinates by an affine transformation, one can reduce to the case~$t_0=1$ and~$\ahom =  \Id$. The estimate~\eqref{e.Pbar.to.Qbar} is then about the faster decay in time, compared to~$\Gamma_c$, of a solution to the heat equation with initial data having zero mean and which is localized near the origin. The proof of this is left as an exercise 
to the reader. 

\smallskip

Thus the remaining step is to compare~$\overline{Q}(t,x)$ to~$P(t,x,0)$ for large times~$t$. For this, we use a homogenization argument, which is almost the same as the one in Proposition~\ref{p.DP} and boils down to the corrector estimates in~\eqref{e.corrector.qualbound.L2} and~\eqref{e.corrector.qualbound.Linfty}. 
This works as follows: we select a time-cutoff function~$\zeta$ satisfying 
\begin{equation}
\label{e.zetacutends}
\indc_{ (3t_0, t-2t_0)} 
\leq 
\zeta 
\leq
\indc_{ (2t_0, t-t_0)}\,, 
\quad 
\| \zeta' \|_{L^\infty(\R)} \leq 2t_0^{-1}\,,
\end{equation}
denote 
\begin{equation*} 
\phi_{e_k}(t,x) :=  \phi_{e_k}(x) - (\phi_{e_k})_t \,, \qquad  (\phi_{e_k})_t  := (\phi_{e_k} \ast \overline{P}(t,\cdot))(0)  \,,
\end{equation*}
and define 
\begin{equation*}
H(t,x):= 
\overline{Q}(t,x) 
+
\zeta(t) \sum_{k=1}^d
\partial_{x_k} \overline{Q}(t,x) 
\phi_{e_k} (t,x) 
\,.
\end{equation*}
The correctors~$\phi_{e_k}$ we use in this expansion are the infinite-volume correctors\footnote{The finite-volume correctors defined in Section~\ref{ss.variational} can also be used, cut off at a suitable spatial scale (in the tails of the Green function terms).} defined in Section~\ref{ss.random}, with the additive constant chosen to depend on the time~$t$. 

\smallskip

Following the same computation as in the derivation of~\eqref{e.wep.eqerror.zero.2.div}, we find that 
\begin{align*}
\bigl( \partial_t - \nabla \cdot \a\nabla\bigr) (H-P) 
&
=
\bigl( \partial_t - \nabla \cdot \a\nabla\bigr) H 
-
\bigl( \partial_t - \nabla \cdot \ahom\nabla\bigr) \overline{Q}
\notag \\ & 
=
-\nabla \cdot \zeta \sum_{k=1}^d 
\bigl( (\phi_{e_k} - (\phi_{e_k})_t)  \a   - (\bfs_{e_k} - (\bfs_{e_k})_t ) \bigr) \nabla \partial_{x_k} \overline{Q} 
\notag  \\ & \qquad 
-
\nabla \cdot \bigl( (1-\zeta)(\a  - \ahom) \nabla \overline{Q}  \bigr)
+
\sum_{k=1}^d
  \partial_t \bigl( \phi_{e_k} \zeta \partial_{x_k}  \overline{Q}\bigr)
\,.
\end{align*}
Here~$(\bfs_{e_k})_t$ could be any function depending only on time and not space, valued in the skew-symmetric matrices. 
In view of the above display, we denote 
\begin{equation*}  
\mathbf{E}_k(t,x) :=  (\phi_{e_k}(x) - (\phi_{e_k})_t)  \a(x)   - (\bfs_{e_k}(x) - (\bfs_{e_k})_t )\,.
\end{equation*}
Since we have arranged that~$H - P = 0$ at the initial time~$t=t_0$, 
we can show that~$H-P$ is small if the right side of the equation above is small. In fact, by the Duhamel formula, we have
\begin{align}
\label{e.identity.HP}
(H-P)(t,x)
&
= 
\! \sum_{k=1}^d
\int_{t_0}^{t}
\zeta(s) \!
\int_{\Rd}  \!\!
\nabla_y P(t-s,x,z) 
\cdot
\mathbf{E}_k(s,z)\nabla \partial_{x_k} \overline{Q} (s,z)\,dz\,ds
\notag \\ & \qquad 
+
\int_{t_0}^{t}
(1-\zeta(s))
\int_{\Rd} 
\nabla_y P(t-s,x,z) 
\cdot (\a(z)-\ahom) \nabla\overline{Q}(s,z)\,dz\,ds
\notag \\ & \qquad 
+
\sum_{k=1}^d
\int_{t_0}^{t}
\int_{\Rd}
P(t-s,x,z) \partial_t \bigl( \phi_{e_k} \zeta \partial_{x_k}  \overline{Q}\bigr) (s,z)
\,dz\,ds
\,.
\end{align}

The expression on the right side of~\eqref{e.identity.HP} is not as complicated as it looks. Keep in mind that we want each term to be much smaller than~$P(t,x,y)$ itself, which by the Nash-Aronson upper bound estimate is of order 
\begin{equation}
\label{e.sameorderasP}
P(t,x,y) \simeq \overline{Q}(t,x) \simeq \overline{P}(t,x-y) 
% &
\simeq \Gamma_\alpha(x-y,t)
%\notag \\ &
%\simeq 
%\int_{t_0}^{t}
%\int_{\Rd} 
%\bigl| 
%\nabla_y P(t-s,x,z) 
%\bigr|
%\bigl| 
%\nabla \overline{Q} (s,z)
%\bigr| 
%\,dz\,ds
\,.
\end{equation}
Here the symbol~$\simeq$ is not meant to be interpreted in the usual precise way; what we mean is that each term in~\eqref{e.sameorderasP} can be upper and lower bounded by the last term, up to a big or small prefactor constant~$C(d,\lambda,\Lambda)$ and changing the value of~$\alpha$ in the exponential by any large or small constant that depends only on~$(d,\lambda,\Lambda)$. 
One can now eyeball each term on the right side and see that it will be small, using the following heuristics:~$x$~derivatives on~$\overline{Q}$ gain us a factor of~$s^{-\nicefrac12}$; time derivatives on~$\overline{Q}$ or~$\zeta$ or~$( \phi_{e_k})_s$ gain us a factor of~$t_0^{-1}$ in the worst case; meanwhile the~$L^2$ oscillation of~$\phi_{e_k}$ and~$\bfs_{e_k}$ in balls of radius~$s^{\nicefrac12}$ are of size~$o(s^{\nicefrac12})$. The integrand in the second term in~\eqref{e.identity.HP} lives in a thin initial layer of size~$t_0$, outside of which~$1-\zeta$ vanishes; we therefore pick up a factor of~$o(1)$ from the ratio~$t_0/t$. In this way, we can glance at each of the three terms on the right side of~\eqref{e.identity.HP} and see that they can be made small. 

\smallskip

To be more precise, we need deterministic bounds on the parabolic Green function~$P$. In addition to the upper bound Nash-Aronson estimate in~\eqref{e.Nash.Aronson}, we also need the following gradient estimate, weighted by the heat kernel:
\begin{equation}
\label{e.nablaGF.weighted}
\int_{t_0}^t  
\int_{\Rd} 
|\nabla_y P(t-s,x,y) |^2 \Gamma_c(s,y) \,dy\,ds
\leq C \Gamma_c(t,x)\,. 
\end{equation}
Recall that in the previous section, we also used a gradient estimate for~$P$, namely~\eqref{e.concise.nablaGF}, and the estimate~\eqref{e.nablaGF.weighted} is somewhat analogous. It can be proved in the same way as~\eqref{e.concise.nablaGF}---and in our setting, we do not even need the time-integrated version---but we give an elementary and self-contained proof here below, which does not use the fact that the coefficients are independent of the time variable. We will also use estimates on the correctors in the following form: for every~$s>0$ and~$z \in \Rd$,  
\begin{equation} \label{e.Green.E.est}
\biggl( 
\int_{\Rd} 
\Bigl( \bigl|\phi_{e_k}(s,z) \bigr|^2  + s^2 \bigl| \partial_t \phi_{e_k}(s,z) \bigr|^2 
+
\bigl|\mathbf{E}_k(s,z) \bigr|^2 \Bigr)
\Gamma_c(s,z)
\,dz
\biggr)^{\!\!\nicefrac12 } 
\leq C s^{\nicefrac12} E(s^{\nicefrac12}) \,,
\end{equation}
where we recall that~$E$ is defined above in~\eqref{e.Es.def}. We also need the following pointwise bounds on~$\overline{Q}$, which are immediate from~\eqref{e.Qbar.convformula} and bounds on~$\overline{P}$ (which is the heat kernel, up to an affine change of variables): for every~$s\geq 2t_0$ and~$z \in \Rd$, 
\begin{equation} \label{e.Qbar.bound}
s^{\nicefrac 32} |\nabla \partial_t \overline{Q} (s,z)| + 
\sum_{k=0}^2 s^{\nicefrac k2}|\nabla^k \overline{Q} (s,z)| \leq C \Gamma_c(s,z)\,.
\end{equation}
We postpone the demonstration of~\eqref{e.nablaGF.weighted} and present the estimates for the three terms on the right side of~\eqref{e.identity.HP}. 

\smallskip

To estimate the first term, by the Cauchy-Schwarz inequality,~\eqref{e.nablaGF.weighted},~\eqref{e.Green.E.est} and~\eqref{e.Qbar.bound}, we get
\begin{align*}
\lefteqn{
\int_{t_0}^{t}
\zeta(s) 
\int_{\Rd} 
\bigl| \nabla_y P(t-s,x,z) \bigr|
\bigl|\mathbf{E}_k(s,z) \bigr| \bigl|  \nabla \partial_{x_k} \overline{Q} (s,z) \bigr| 
\,dz\,ds
} \quad & 
\notag \\ & 
\leq
C
\int_{2t_0}^{t-t_0}\!\!
s^{-1}
\biggl( 
\int_{\Rd} 
\frac{ | \nabla_y P(t{-}s,x,z) |^2}{\Gamma_c(t{-}s,x{-}z)}
dz
\biggr)^{\!\!\nicefrac12} 
\biggl( 
\int_{\Rd} 
\Gamma_c(t{-}s,x-z)
\bigl|\mathbf{E}_k(s,z) \bigr|^2 
\Gamma_c (s,z)^2
dz
\biggr)^{\!\!\nicefrac12} 
\!\!ds
\notag \\ & 
\leq
C t^{\nicefrac d2} \Gamma_c (t,x) 
\int_{t_0}^{t-t_0} (t-s)^{-\nicefrac d4-\nicefrac12}  s^{-\nicefrac d4-1}
\biggl( 
\int_{\Rd} 
\bigl|\mathbf{E}_k(s,z) \bigr|^2 
\Gamma_c(s,z)
\,dz 
\biggr)^{\!\!\nicefrac12 }
 \, ds
\notag \\ &
\leq
C E(t_0^{\nicefrac12}) t^{\nicefrac d2} \Gamma_c (t,x) 
\int_{t_0}^{t-t_0} (t-s)^{-\nicefrac d4-\nicefrac12}  s^{-\nicefrac d4-\nicefrac12}
\,ds 
\notag \\ & 
\leq
C \Bigl( \frac{t}{t_0} \Bigr)^{\!\nicefrac d4 -\nicefrac12} E\bigl(t_0^{\nicefrac12}\bigr) 
\Gamma_c(t,x)  
\,.
\end{align*}
For the second term, we apply~\eqref{e.quenchednablaP.improved} and~\eqref{e.Qbar.bound} to deduce on the interval~$(t_0,4t_0)$ that
\begin{align*} % \label{e.}
\lefteqn{
\int_{t_0}^{4t_0} (1- \zeta(s))
\int_{\Rd} 
\bigl| \nabla_y P(t-s,x,z) \bigr| \bigl| \nabla\overline{Q}(s,z) \bigr| \,dz\,ds 
} \qquad &
\notag \\ & 
\leq 
C \int_{2 t_0}^{4t_0} s^{-1} \Bigl( \frac{s}{t-s} \Bigr)^{\! \alpha}  
\int_{\Rd}  \Gamma_c(t-s,x-z)  \Gamma_c(s,z) \,dz\,ds 
\leq 
C \Bigl( \frac{t_0}{t} \Bigr)^{\! \alpha} \Gamma_c(t,x) 
\,.
\end{align*}
For the terminal time interval, we use the following: for every~$z' \in B_{s^{\nicefrac12}}(z)$,
\begin{align*} % \label{e.}
 \frac{ \Gamma_{c}(s,z) }{\Gamma_{c/2}(s,z')} 
= 
C \exp\biggl(- \frac{c |z|^2}{s} + \frac{c |z'|^2}{2 s} \biggr)
\leq 
C
\,.
\end{align*}
Using this together with~\eqref{e.quenchednablaP} and~\eqref{e.Qbar.bound} we obtain 
\begin{align*} % \label{e.}
\lefteqn{
\int_{t-2t_0}^{t - t_0} (1- \zeta(s))
\int_{\Rd} 
\bigl| \nabla_y P(t-s,x,z) \bigr| \bigl| \nabla\overline{Q}(s,z) \bigr| \,dz\,ds
}
\qquad & 
\notag \\ 
& 
\leq
C \int_{t-2t_0}^{t - t_0}  s^{-\nicefrac12} \int_{\Rd} 
\bigl| \nabla_y P(t-s,x,z) \bigr| \Gamma_c(s,z)  \,dz\,ds
\notag \\ 
&  
= 
\sum_{z' \in t_0^{\nicefrac12}\Z^d} 
\int_{t-2t_0}^{t - t_0}  s^{-\nicefrac12} 
\Gamma_{c/2}(s,z') \int_{z' + t_0^{\nicefrac12} \cu_0} \bigl| \nabla_y P(t-s,x,z) \bigr|
\notag \\ 
&  
\leq 
\sum_{z' \in t_0^{\nicefrac12}\Z^d} 
\int_{t-2t_0}^{t - t_0}  s^{-\nicefrac12} t_0^{-\nicefrac12}  \Gamma_{c/2}(s,z') 
\Gamma_{c/2}(t-s,x-z')
\notag \\ 
&  
\leq 
C \Bigl( \frac{t_0}{t} \Bigr)^{\nicefrac12} \int_{\R^d} \Gamma_{c/4}(s,z) \Gamma_{c/4}(t-s,x-z)\, dz 
\notag \\ 
&  
= 
C \Bigl( \frac{t_0}{t} \Bigr)^{\nicefrac12} \Gamma_c(t,x) \,,
\end{align*}
after shrinking~$c$. Similarly, for the third term, we obtain, using~\eqref{e.Green.E.est} together with~\eqref{e.sameorderasP} and H\"older's inequality, 
\begin{align*}
\lefteqn{
\sum_{k=1}^d
\int_{t_0}^{t}
\int_{\Rd}
P(t-s,x,z) \bigl| \partial_t \bigl( \phi_{e_k} \zeta \partial_{x_k}  \overline{Q}\bigr)(s,z)\bigr| 
\,dz\,ds
} \quad & 
\\ & 
\leq
C
\int_{t_0}^{t - t_0}
s^{-\nicefrac 32}
\Bigl( \frac{t}{t-s} \Bigr)^{\nicefrac d4} 
\biggl( 
\int_{\Rd} 
\Bigl| s^3 \partial_t \bigl( \phi_{e_k} \zeta \partial_{x_k}  \overline{Q}\bigr)(s,z)  \Bigr|
\Gamma_c(s,z)
\,dz
\biggr)^{\!\!\nicefrac12 }   
\, ds \,
\Gamma_c(t,x)
\notag \\ & 
\leq 
C  \biggl(  \int_{t_0}^{t_1-t_0} s^{-1}
\Bigl( \frac{t}{t-s} \Bigr)^{\nicefrac d4} E\bigl(t_0^{\nicefrac12}\bigr)   \, ds  \biggr)  \Gamma_c(t,x) 
\notag \\ & 
\leq 
C \Bigl( \frac{t}{t_0} \Bigr)^{\!\nicefrac d4} \log \frac{t}{t_0} 
E\bigl(t_0^{\nicefrac12}\bigr) 
\Gamma_c(t,x) 
\,.
\end{align*}
Inserting these estimates into~\eqref{e.identity.HP} yields, for every~$t \in [t_0,t_1]$,
\begin{equation*}
\bigl | (H-P)(t,x) \bigr |
\leq
C\biggl( \!
\Bigl( \frac{t_0}{t} \Bigr)^{\! \alpha}
+
\Bigl( \frac{t}{t_0} \Bigr)^{\nicefrac d4} \log \frac{t}{t_0}  
E\bigl(t_0^{\nicefrac12}\bigr) \!
\biggr)
t^{-\nicefrac d2} \exp \biggl( -\frac{|x-y|^2}{Ct} \biggr)
\,.
\end{equation*}
The combination of the previous display,~\eqref{e.Pbar.to.Qbar} and the triangle inequality yields~\eqref{e.P.to.Pbar} at time~$t=t_1$. This completes the proof.
\end{proof}

\subsection*{Historical remarks and further reading}

The proof of the invariance principle in Section~\ref{ss.invariance} can be seen as a ``quenched" version of the typical proof used for the invariance principle in the probability literature, which appeared first in the work of Kipnis and Varadhan~\cite{KV} and centers on the so-called \emph{environmental process} (``the environment from the point of view of the particle''). 
While the general idea is the same, usually these kinds of arguments are presented in the discrete setting, in which~$\Rd$ is replaced by~$\Zd$ and the analogous setup is called \emph{the random conductance model}: see~\cite{Biskup,EGMN}. 
A nice exposition (and much more in this direction) can be found in the book~\cite{KLO}. 

Our analytic, quenched variation of this argument, in particular the use of Lemma~\ref{l.varianceX}, is new to our knowledge. The second proof, including the qualitative homogenization of the parabolic Green function under qualitative assumptions, also appears to be new. However, a very closely related result was previously proved in~\cite[Section 8.6]{AKMBook}. More detailed, quantitative information on the parabolic Green function appears in~\cite[Chapter 9]{AKMBook}.

%%%%%%%%%%%%%%%%%%%%%
%%%%%%%%%%%%%%%%%%%%%
%%%%%%%%%%%%%%%%%%%%%
%%%%%%%%%%%%%%%%%%%%%
%%%%%%%%%%%%%%%%%%%%%
%%%%%%%%%%%%%%%%%%%%%
%%%%%%%%%%%%%%%%%%%%%
%%%%%%%%%%%%%%%%%%%%%
%%%%%%%%%%%%%%%%%%%%%
%%%%%%%%%%%%%%%%%%%%%

\section{Quantifying ergodicity}
\label{s.CFS} 

If we review each of the two proofs of qualitative homogenization given in Chapter~\ref{s.qualitativetheory}, we find that every step can be quantified except one: the appeal to an ergodic theorem (either Proposition~\ref{p.ergodic}, Corollary~\ref{c.stat.Hminusone}, or Proposition~\ref{p.subadditive.ergodic}). 
This is, however a very important step and, 
unfortunately, these types of abstract ergodic theorems do not provide a rate of convergence. This could not be reasonably expected, of course, since the ergodic theorem applies to processes that may converge arbitrarily slowly. A reasonable hope is a quantitative version of the ergodic theorem under a stronger (quantitative) assumption on~$\P$, and perhaps for a certain restricted subclass of stationary random fields. 

\smallskip

This raises the question, which is the focus of this chapter: What is the best, most convenient way to quantify ergodicity for proving quantitative convergence rates in stochastic homogenization? 
This is a nontrivial question, because a quantitative ergodic assumption (or ``mixing condition'')  needs to be adapted to the particular problem, and elliptic homogenization is a particular problem with a lot of structure. 
There have been many different proposals over the last several decades. 
This chapter introduces a family of quantitative mixing conditions that are optimally designed for use in elliptic homogenization.

\smallskip

Selecting a mixing condition to work under is a delicate and unpleasant business,\footnote{Mixing conditions are something of a sideshow to the rest of the quantitative theory of homogenization. They play an important but very limited role in the theory. The reader is strongly encouraged to skip this chapter on first reading, using it as a reference while proceeding directly to the more central business conducted in Chapter~\ref{s.subadd}.} and not a decision to be taken lightly. For starters, there is an enormous proliferation of mixing conditions, and entire monographs are devoted to cataloging them~\cite{Brad,B1,Torq}.
Choosing a particular assumption or class of assumptions can seem  undesirably ad hoc. Some readers may incorrectly infer that our arguments cannot handle \emph{some other} mixing assumption, which they may find particularly important.  Meanwhile, from an expository point of view, there is nothing more distracting  than engaging in a game of whack-a-mole by trying to handle different mixing conditions on a case-by-case basis. This adds very little of value. On the other hand, attempting to find a \emph{most general} assumption to work under is difficult and not without risk; if it is not well-chosen, or too complicated, then your arguments may be rendered unreadable. 
If it is not tailored to the problem, then it may not effectively capture all of the ergodicity in the problem, and then we run the risk of obtaining sub-optimal quantitative estimates---obviously something to be avoided! 
We must tread very cautiously.

\smallskip

The mixing condition introduced below in Definition~\ref{d.CFS} is relatively simple to state and well-motivated. It is flexible enough to cover all practical examples and is relatively easy to check, as we demonstrate below by checking that it is satisfied on many examples. In fact, we believe that it covers, as particular cases, all previous mixing conditions used in the literature. It is well-adapted to the setting of elliptic homogenization because it does not change the proof structure. It, therefore, does not make the exposition much more complicated than it would be under the finite range of dependence assumption ($\FRD$, defined in Section~\ref{ss.FRD} below). Finally, the quantitative estimates we obtain are sharp when our results are specialized to any of the canonical examples. 

\smallskip

Some of these results we can prove are actually new. For instance, in Chapter~\ref{s.renormalization}, we obtain optimal estimates---simultaneously in terms of scaling of the error and stochastic integrability---on the first-order correctors under our general mixing condition; when this result is applied to the particular case of coefficients fields which are local functions of Gaussian random fields with algebraic decay of correlations (see Section~\ref{ss.GRF} below), we obtain optimal estimates which have been conjectured for years,\footnote{See for instance~\cite{GNO3} and paragraph (iii) on page 5 of the third arXiv version of~\cite{GNO2}, available at https://arxiv.org/pdf/1409.2678v3.pdf (August 2015).} but until now left unproven. 

\smallskip

We call our mixing condition~\emph{concentration for sums} ($\CFS$). As this name suggests, it is a \emph{linear} concentration inequality, for sums of random variables with local dependence on the coefficient field~$\a(\cdot)$. It is evidently a very general mixing condition, because any assumption of ``quantitative ergodicity'' worthy of the name should imply some kind of concentration for finite sums of bounded random variables which depend only locally on the coefficient field. Also, the~$\CFS$ condition only requires that we check the concentration inequality on a subclass of random variables which are particularly nicely behaved in the sense of having bounded ``Malliavin derivatives.'' It turns out that all of the random variables we encounter in elliptic homogenization theory to which we need to apply the mixing condition have this property. 

\subsection{Concentration for sums:~\texorpdfstring{$\CFS$}{{(CFS)}}}
\label{ss.CFS.intro}

In this section, we introduce the mixing condition~$\CFS$ we use in most of this manuscript. As a \emph{linear} concentration inequality, it is quite easy to check in practice. It contains as special cases essentially all of the conditions previously used in the quantitative stochastic homogenization literature---including nonlinear concentration inequalities like~$\LSI$ (see Section~\ref{ss.LSI}) as well as strong mixing conditions like~$\FRD$ (Section~\ref{ss.FRD}). On the other hand,~$\CFS$ is strong enough to allow for a theory of quantitative homogenization with estimates that are optimal both in the size of the error and in stochastic integrability, and it is sufficiently flexible to allow for perturbations in the random coefficient field, a property that will be of practical use.

\subsubsection{The probability space~$\Omega$}
\label{ss.probspace} 

Fix a dimension~$d\geq 2$. Fix also ellipticity constants~$\lambda,\Lambda \in (0,\infty)$ with~$\lambda\leq \Lambda$. We let~$\Omega$ be the collection of (Lebesgue) measurable maps~$\a(\cdot)$ from~$\Rd$ into the set of~$d\times d$ matrices with real coefficients satisfying
the uniform ellipticity condition
\begin{equation} 
\label{e.ellipticity}
e \cdot \a(x) e \geq \lambda |e|^2 
\qquad \mbox{and} \qquad 
e \cdot \a^{-1}(x) e \geq \Lambda^{-1} |e|^2 \,,
\qquad \forall x,e\in\Rd
\,. 
\end{equation}
We sometimes call elements of~$\Omega$ the \emph{coefficient fields}. 
While~$\Omega$ depends on the parameters~$(d,\lambda,\Lambda)$, we typically suppress this dependence from the notation. 
We let~$\a$ denote ``the canonical coefficient field.'' This means we identify the symbol~$\a$ with the random field~$\Omega \ni \a \mapsto \a$. This is similar to the declaration that~``$X_t$ is the coordinate process'' below~\eqref{e.Upsilon.canon}. 

\smallskip

We denote the entries of~$\a$ by~$(\a_{ij})_{i,j\in\{1,\ldots,d\}}$. Given a Borel subset~$U\subseteq \Rd$, we let~$\F(U)$ denote the~$\sigma$--algebra on~$\Omega$ generated by the random variables 
\begin{equation} 
\label{e.FU}
\a \mapsto \int_U \a_{ij} (x) \varphi(x)\,dx, 
\quad 
i,j \in\{1,\ldots,d\}, 
\ 
\varphi\in C^\infty_{\mathrm{c}}(\Rd). 
\end{equation}
We think of~$\F(U)$ as containing precisely the information we can infer about the random field~$\a(\cdot)$ by looking only at its behavior in the set~$U$. We also define~$\F:=\F(\Rd)$. A \emph{random variable on}~$\Omega$ is an~$\F$--measurable function from~$\Omega$ to~$\R$.

\subsubsection{Malliavin derivatives}

For each random variable~$X$ on~$\Omega$ and Borel~$U\subseteq\Rd$, we define a random variable called the \emph{Malliavin derivative}\footnote{This is a misleading name for the object we are defining, but the term ``Malliavin derivative" has been used so frequently in this way in the stochastic homogenization literature that we may now be stuck with it. In practice,~$\left| \partial_{\a(U)} X \right|$ is the local Lipschitz constant for the random variable~$X$ with respect to variations in the coefficient field restricted to~$U$. It is clear then that~$\left| \partial_{\a(U)} X \right|$ bounds the Fr\'echet-type derivative of~$X$ with respect to the coefficient field restricted to~$U$. Note that the Fr\'echet derivative has a well-understood connection to the classical Malliavin derivative, for example in the case of isonormal Gaussian processes.}  
$|\partial_{\a(U)} X|$ of~$X$ with respect to the environment restricted to~$U$ by 
\begin{align} 
\label{e.Mall}
\lefteqn{
\left| \partial_{\a(U)} X\right| (\a) 
} \qquad & 
\notag \\ & 
:=
\limsup_{t\to 0} 
\frac1{2t} 
\sup \,
\bigl\{ X(\a_1) - X(\a_2) 
\,:\,
\a_1,\a_2 \in\Omega, \ \left| \a - \a_i \right| \leq t \indc_{U} \ \mbox{for} \ i\in\{1,2\}
\bigr\}.
\end{align}
The random variable~$|\partial_{\a(U)}X|$ quantifies the dependence of~$X$ on the restriction of the coefficient field to~$U$. Observe that~$X$ is~$\F(U)$--measurable if and only if~$|\partial_{\a(\Rd\setminus U)} X| = 0$. 

\smallskip

One useful property of the Malliavin derivative is its additivity: for every~$\a\in\Omega$ and disjoint Borel subsets~$U,V\subseteq \Rd$, we have that 
\begin{equation}
\label{e.malliavin.additivity}
\left| \partial_{\a(U\cup V)} X \right| 
= 
\left| \partial_{\a(U)} X \right| 
+
\left| \partial_{\a(V)} X  \right|.
\end{equation}

\subsubsection{Notation for Orlicz space norms}
Given a random variable~$X$ on~$\Omega$, a constant~$A>0$ and an increasing function~$\Psi:[1,\infty) \to [0,\infty)$ 

we use the notation 
\begin{equation}
X \leq \O_\Psi \left( A \right)
\end{equation}
to represent the statement that
\begin{equation}
\label{e.weakcondOPsi}
\P[ X > tA  ] \leq \frac1{\Psi(t)}\,, 
\qquad \forall t \in [1,\infty)
\, . 
\end{equation}
We also write~$X = \O_\Psi(A)$ to mean that~$|X| \leq \O_\Psi(A)$. The expression
\begin{equation*}
X = \O_{\Psi_1}(A_1) + \O_{\Psi_2}(A_2), 
\end{equation*}
means that~$X$ can be split into a sum of two random variables,~$X=Y+Z$, such that~$Y=\O_{\Psi_1}(A_1)$ and~$Z = \O_{\Psi_2}(A_2)$. 

\smallskip

Why do we use this~$\O_\Psi$ notation? Instead of writing~$X \leq \O_\Psi(A)$, we could express this inequality equivalently in terms of a norm or seminorm of~$X$. The reason is that, in our context, the random variables~$X$ we study can be quite long to express, and the~$\O_\Psi(\cdot)$ notation allows us to string together inequalities efficiently. It is also quite convenient to reserve the norm symbol~$\| \cdot \|$ for \emph{spatial} norms in the physical space, as opposed to norms of random variables in the probability space. 

\smallskip

For each~$s\in (0,\infty]$, we define~$\Gamma_s$ by 
\begin{equation}
\label{e.Gamma.s.geq1}
\Gamma_s(t) := 
\left\{
\begin{aligned}
& \exp\biggl( \frac1s t ^s \biggr) - \exp\biggl( \frac1s \biggr) & \ \mbox{if} \ s \in (0,1)\,, \\
& \exp\biggl( \frac1s |t|^s \biggr) - 1 & \ \mbox{if} \ s \in [1,\infty)
\end{aligned}
\right.
\end{equation}
and, for~$s=\infty$, 
\begin{equation}
\label{e.Gamma.infty}
\Gamma_\infty(t) = 
+\infty 
\end{equation}
Note that the inequality~$X \leq \O_{\Gamma_\infty}(A)$ is equivalent to the statement that~$X \leq A$,~$\P$--a.s. 
It will be convenient also to denote, for~$\gamma>0$, 
\begin{equation*}
X = \O_\Psi^\gamma(A) 
\quad \iff \quad
X^\gamma = \O_\Psi (A^\gamma). 
\end{equation*}

\smallskip

Throughout, we will assume that our functions~$\Psi$ satisfy the minimal growth condition
\begin{align}
\label{e.Young.growth}
\int_{1}^\infty 
\frac{t}{ \Psi(t)} \,dt < +\infty \,,
\end{align}
which ensures that
\begin{equation*}
X = \O_{\Psi}(A) 
\quad \implies \quad 
\var[X] \leq C A^2
\end{equation*}
for a constant~$C(\Psi)<\infty$. 
We also assume that a generalized triangle inequality for the~$\O_\Psi(\cdot)$ notation. This means that there exists a constant~$C_{\Psi} \in [1,\infty)$ such that, if~$\{ X_j \}_{j\in\N}$ is a sequence of nonnegative random variables satisfying~$X_j = \O_{\Psi}(1)$ for every~$j \in \N$ and~$\{ a_j\}_{j \in \N}>0$ a sequence of nonnegative numbers satisfying~$\sum_{j=1}^\infty a_j =1$, then
\begin{equation} 
\label{e.weak.triangle.ass}
\sum_{j =1}^\infty a_j X_j = \O_{\Psi}(C_{\Psi}) \,.
\end{equation}  
The following lemma presents a sufficient condition for~$\Psi$ to satisfy~\eqref{e.weak.triangle.ass}, namely the doubling condition~\eqref{e.Psi.pgrowth.supplementary1} below. Note that this is satisfied by most relevant examples, such as~$\Psi = \Gamma_s(\cdot)$ or~$\Psi(t) = t^p$ for~$p>2$. 

\begin{lemma} 
\label{e.supp.you.up}
Let~$p>1$ and~$N \in [1,\infty)$. Suppose that~$\Psi:[1,\infty) \to [1,\infty)$ is an increasing function that satisfies
\begin{equation}  \label{e.Psi.pgrowth.supplementary1}
s^p \leq N \frac{\Psi(t s)}{\Psi(t)}\,
\quad \forall s, t \in [1,\infty) \,.
\end{equation}
Let~$\{ X_j \}_{j\in\N}$ be a sequence of nonnegative random variables satisfying~$X_j = \O_{\Psi}(1)$ for every~$j \in \N$. Let~$\{ a_j\}_{j \in \N}>0$,~$a_j \geq 0$, be such that~$\sum_j a_j =1$. Then there exists a constant~$C(p,N) < \infty$ such that~$\sum_{j} a_j X_j = \O_\Psi(C)$.  
\end{lemma}
\begin{proof}
Fix~$t \geq 1$, and let~$K\geq 2$ be a parameter to be fixed by means of~$(p,N)$. Decompose~$X_j = m_j + v_j$ with~$m_j := \indc_{\{X_j>t \} } X_j$ and~$v_j := \indc_{\{X_j \leq t \} } X_j$. By Markov's inequality, 
\begin{equation*}  
\P\Bigl[ \sum_j a_j X_j > K t  \Bigr] \leq \P\Bigl[ \sum_j a_j m_j > (K-1)t  \Bigr] 
\leq
\frac{1}{t(K-1)}\sum_{j} a_j \E[m_j]
\,.
\end{equation*}
Writing~$m_j = \indc_{\{X_j > t \} } (X_j -t) + \indc_{\{X_j > t \} } t$, we compute
\begin{align*}  
\E[m_j] 
& 
= 
\int_{t}^{\infty} \P[X_j  > s ] \, ds + t \P [ X_j > t]
\\ & 
\leq  
t \int_{1}^{\infty} \frac{1}{\Psi(s t)} \, ds +  \frac{t}{\Psi(t)} 
\leq  
\frac{t}{\Psi(t)} \int_{1}^{\infty} N s^{-p} \, ds +   \frac{t}{\Psi(t)}
\leq  
\Bigl( \frac{N}{p-1} +1 \Bigr) \frac{t}{\Psi(t)} 
\,.
\end{align*}
The result now follows by taking~$K := 2 + \frac{N}{p-1}$.
\end{proof}

\subsubsection{Concentration For Sums}
We now introduce the general quantitative ergodic hypotheses used throughout the article, which we name \emph{concentration for sums}, denoted by~$\CFS(\beta,\Psi,\Psi',\gamma)$ for short. 
Here~$\beta \in \left[0,1\right)$,~$\gamma\in [0,\infty)$ and~$\Psi$ and~$\Psi'$ are a pair of increasing functions~$[1,\infty) \to [1,\infty)$ satisfying~\eqref{e.Young.growth} and the generalized triangle inequality.
The~$\CFS(\beta,\Psi,\Psi',\gamma)$ condition says roughly that, for every large cube~$\cu_m$, if we have a collection of random variables~$\{ X_z\}$ which are measurable with respect to the mesocopic subcubes~$z+\cu_n \subseteq \cu_m$ with~$\beta m < n < m$, have Malliavin derivative bounded by one, and have fluctuations bounded by~$\O_{\Psi'}(1)$. The sample mean of these random variables exhibits some concentration with respect to~$\O_\Psi$.

\smallskip

Here and in what follows, we will put a slash through finite sums to denote average: if~$S$ is a finite set, then~$|S|$ denotes the cardinality of~$S$, and we write
\begin{equation*}
\avsum_{i \in S} a_i := \frac{1}{|S|} \sum_{i\in S} a_i.
\end{equation*}

\begin{definition}[Concentration for sums~$\CFS(\beta,\Psi)$]
\label{d.CFS}
Let~$\beta \in \left[0,1\right)$ and~$\Psi:[1,\infty) \to [0,\infty)$ be an increasing function satisfying~\eqref{e.Young.growth} and the generalized triangle inequality~\eqref{e.weak.triangle.ass} for a some~$C_\Psi<\infty$.
A probability measure~$\P$ on~$(\Omega,\F)$ \emph{satisfies~$\CFS(\beta,\Psi)$} provided that: 
\begin{itemize}
\item
For every~$m,n\in\N$ with~$\beta m < n<m$, and  family~$\{ X_z \,:\, z\in 3^n\Zd\cap \cu_m\}$ of random variables satisfying, for every~$z\in\Zd$,
\begin{equation} 
\label{e.CFS.ass}
\left\{
\begin{aligned}
& \E\left[ X_z \right]=0 \,, 
\\ & 
\left| X_z \right| \leq 1\,,
\\ &   
\left| \partial_{\a(z+\cu_n)} X_z\right| \leq 1 \,,
\\
& X_z \quad \mbox{is~$\F(z+\cu_n)$--measurable} \,, \\
\end{aligned}
\right.
\end{equation}
the random variable~$\overline{X}$ defined by
\begin{equation} 
\label{e.Z.norm}
\overline{X}  :=  \avsum_{z\in 3^n\Zd\cap \cu_m}  X_z
\end{equation}
satisfies the bound
\begin{equation}
\label{e.CFS}
\overline{X}
\leq
\O_{\Psi}
\bigl( 3^{-\frac d2(m-n)} \bigr).
\end{equation}
\end{itemize}
\end{definition}

The condition~$\CFS(\beta,\Psi)$ is the most general and commonly used mixing condition in this manuscript and the only mixing condition used before Chapter~\ref{s.renormalization}. 
Sometimes, we require the following stronger variant of the~$\CFS(\beta,\Psi)$ condition. 
\begin{definition}[Stronger concentration for sums,~$\CFS(\beta,\Psi,\Psi^\prime,\gamma)$] \hspace{-2pt} 
\label{d.CFS.strong}
Let~$\beta \in \left[0,1\right)$,~$\gamma\in [0,\infty)$ and~$\Psi,\Psi':[1,\infty) \to [0,\infty)$ be increasing functions satisfying~\eqref{e.Young.growth} and the generalized triangle inequality~\eqref{e.weak.triangle.ass}.
We say that a probability measure~$\P$ on~$(\Omega,\F)$ \emph{satisfies~$\CFS(\beta,\Psi,\Psi',\gamma)$} provided that: 
\begin{itemize}
\item
For every~$a\in (0,1]$,~$m,n\in\N$ with~$\beta m < n<m$, and  family~$\{ X_z \,:\, z\in 3^n\Zd\cap \cu_m\}$ of random variables satisfying
\begin{equation} 
\label{e.CFS.strong.ass}
\left\{
\begin{aligned}
& \E\left[ X_z \right]=0\,, 
\\ & 
\left| X_z \right| \leq \O_{\Psi'}(a)\,,
\\ &
\left| \partial_{\a(z+\cu_n)} X_z\right| \leq 1
\\ & 
X_z \quad \mbox{is~$\F(z+\cu_n)$--measurable,}\\
\end{aligned}
\right.
\end{equation}
the random variable 
\begin{equation} 
\label{e.Z.strong.norm}
\overline{X}  :=  \avsum_{z\in 3^n\Zd\cap \cu_m}  X_z
\end{equation}
satisfies the bound
\begin{equation}
\label{e.strong.CFS}
\overline{X}
\leq
\O_{\Psi}\left( a 3^{-\frac d2(m-n)} 
+ 3^{-\frac d2(1-\beta)m - n\gamma} \right).
\end{equation}
\end{itemize}
We also write~$\CFS(\beta,\Psi,\Psi') := \CFS(\beta,\Psi,\Psi',0)$. 
\end{definition}

The condition~$\CFS(\beta,\Psi)$ is the weakest possible assumption since it only requires that the concentration inequality be valid for sums of bounded random variables. Indeed, for every~$\Psi'$ with~$\Psi'(1) \leq 1$ and~$\gamma\geq 0$,
\begin{equation}
\label{e.yes.weaker}
\CFS(\beta,\Psi,\Psi',\gamma) 
\implies
\CFS(\beta,\Psi,\Psi')
\implies
\CFS(\beta,\Psi)
\,.
\end{equation}

\subsubsection{Which~$\CFS$ conditions are used and where they are used}

\begin{itemize}
\item The condition~$\CFS(\beta,\Psi)$ is the only mixing condition used in Sections~\ref{s.subadd},~\ref{s.nonsymm} and~\ref{s.regularity}. 
This is where we give sharp estimates for the minimal length scale on which the  homogenization error becomes small, and on which the large-scale regularity estimates are valid. Note~$\CFS(\beta,\Psi)$ covers essentially all known examples which exhibit at least a power-like decay of correlations.  

\item The renormalization theory in Chapter~\ref{s.renormalization} requires a stronger condition in order to obtain optimal stochastic moments at the optimal scale. There we assume essentially that~$\P$ satisfies~$\CFS(\beta,\Psi,\Psi,\gamma)$, with~$\gamma>0$ in the case~$\beta=0$, and we apply the assumption in~Section~\ref{s.fluctboot} (see Step~4 of the proof of Proposition~\ref{p.psipsi.coarse}). This assumption is satisfied by most reasonable examples (see below), but it is not implied by abstract nonlinear concentration inequality assumptions~$\LSI$ or~$\SG$. However, the estimates for the latter can be easily recovered by applying the concentration inequalities to the first-order correctors directly, as explained in Section~\ref{s.LSI}. 
\end{itemize}

Throughout the rest of the manuscript, we collect all the parameters into a list and denote them as
\begin{equation} 
\label{e.data.def}
%\tag{$\mathrm{data}$}
\data := (\beta,d,\lambda,\Lambda,\CFS,\Psi) \,.
\end{equation}
We define~$C(\data)$ as a constant that depends on the parameters listed above. The dependence on~$\Psi$ refers to~$C_{\Psi}$ in equation~\eqref{e.weak.triangle.ass}, growth condition~\eqref{e.Young.growth}, and the value of~$\Psi(1)$. We have included~$\CFS$ in the list to emphasize that the~$\CFS$ condition was applied in the context where~$\data$ is mentioned.

\subsection{Examples of coefficient fields satisfying~\texorpdfstring{$\CFS$}{{CFS}}}

\label{ss.examples}

We will spend most of the rest of the chapter verifying that the~$\CFS$ condition is satisfied in various important examples.

\subsubsection{Finite range of  dependence}
\label{ss.FRD}
The quintessential quantitative ergodic condition for stationary random fields in Euclidean space is \emph{finite range of dependence}, which we denote by~$\FRD(L)$ for a parameter~$L>0$ which has units of length and is called the~\emph{range of dependence}. It asserts that, for every pair of Borel subsets~$U,V\subseteq\Rd$,
\begin{equation} 
\label{e.unitrange}
\dist(U,V) \geq L
\quad\mbox{implies that} \quad 
\mbox{$\F(U)$ and~$\F(V)$ are~$\P$--independent.}
\end{equation}
By a change of scale, it suffices to consider only the case~$L=1$. We will show that 
\begin{equation}
\label{e.URDyes}
\FRD(1) \implies \CFS(0,\Gamma_2(c\cdot),\Gamma_2,\infty),
\end{equation}
for a universal constant~$c(d)>0$. In particular, the unit range of dependence assumption implies a normal concentration of sums down to the microscopic scale.

\smallskip

We pick~$m,n\in\N$ with~$n<m$ and family~$\{ X_z \,:\, z\in 3^n\Zd\cap \cu_m\}$ of random variables satisfying~\eqref{e.CFS.strong.ass} with~$\Psi'=\Gamma_2$. We do not need to use the assumed estimate of the Malliavin derivative of the~$X_z$'s, just their~$\O_{\Gamma_2}$-boundedness by~$a$, which implies
\begin{equation}
\label{e.Xz.bound.indy}
\P \bigl[ X_z > as \bigr] \leq 3 \exp\biggl( - \frac12 s^2 \biggr)\,, \qquad \forall s\geq 1\,.
\end{equation}
Let~$Z$ be defined by 
\begin{equation}
\label{e.Z}
Z:= \sum_{z\in 3^n\Zd \cap\cu_m} 
X_z. 
\end{equation}
Note that this~$Z$ is~$3^{d(m-n)} \overline{X}$, where~$\overline{X}$ is defined in~\eqref{e.Z.strong.norm}. 
We proceed using the standard exponential moment method. 
For every~$t\in\R$, we use the independence assumption to obtain
\begin{equation*}
\log \E \left[ \exp\left( t Z\right) \right]
= 
\log \E \biggl[ \exp\biggl( t\sum_{z} X_z \biggr) \biggr]
= 
\sum_{z} \log \E \bigl[ \exp ( t X_z ) \bigr].
\end{equation*}
Actually, we have just cheated, but only slightly. While the random variables~$X_z$ are~$\F(z+\cu_n)$--measurable, these~$\sigma$--algebras are not quite independent because any two adjacent subcubes are touching and thus not separated by a unit distance. To fix this, we need to color all the subcubes with~$3^d$ different colors so that no two cubes with the same color are touching. For instance, we can color each subcube depending on its relative positions in its predecessor cube, that is, the larger cube congruent to~$\cu_{n+1}$, which contains it.  Any collection of~$\sigma$-algebras corresponding to subcubes with the same color is then independent. Repeating the above calculation for each color group and using H\"older's inequality to relate this to the original quantity, we get
\begin{equation}
\label{e.indy.expmom}
\log \E \left[ \exp\left( t  Z\right) \right]
\leq 
3^{-d} \sum_{z} \log \E \left[ \exp \left( 3^dt  X_z  \right) \right].
\end{equation}
Using~\eqref{e.Xz.bound.indy}, we compute, for each~$z\in 3^n\Zd\cap \cu_m$ and~$t>0$, 
\begin{align*}
\E \left[ \exp \left( 3^dt  X_z  \right) \right]
& 
=
at3^d 
\int_{-\infty}^\infty 
\exp(at 3^d s) \P \bigl[ X_z > a s \bigr] \,ds
\\ & 
\leq
at3^d 
\int_{-\infty}^1 
\exp(at 3^d s) \,ds
+
at3^d 
\int_{1}^\infty 
\exp(at 3^d s) \P \bigl[ X_z > a s \bigr] \,ds
\\
&
\leq 
\exp( at3^d) 
+
3at3^d 
\int_{1}^\infty 
\exp\biggl(at 3^d s -\frac12s^2 \biggr) \,ds
\\ & 
\leq
\exp( at3^d) 
+
3\sqrt{2\pi} 
at3^d \exp\biggl( \frac123^{2d} a^2 t^2 \biggr)
\\ & 
\leq 
C \exp\bigl( 3^{2d} a^2 t^2 \bigr)
\,.
\end{align*}
Taking the logarithm of this expression and substituting the result into~\eqref{e.indy.expmom}, we obtain 
\begin{equation*}
\log \E \left[ \exp\left( tZ\right) \right]
\leq 
3^{d(1+m-n)} a^2 t^2 + C 3^{d(m-n-1)} 
\,.
\end{equation*}
By Chebyshev's inequality, for every~$t,\lambda>0$, 
\begin{align*}
\P \bigl[ Z > \lambda \bigr]
\leq 
\exp\left( -t\lambda \right)
\E \left[  \exp \left( t Z \right) \right]
\leq 
\exp\left( -t\lambda + 3^{d(1+m-n)} a^2 t^2 + C3^{d(m-n-1)} \right)\,.
\end{align*}
Taking~$t:= \frac12 \lambda a^{-2} 3^{-d(1+m-n)}$, we get
\begin{equation} 
\label{e.unitrange.result}
\P \bigl[ Z > \lambda \bigr]
\leq
\exp\left( -\frac14 3^{-d(1+m-n)} a^{-2} \lambda^2 + C3^{d(m-n-1)} \right)\,.
\end{equation}
Using that~$a\leq 1$, we deduce that
\begin{equation*}
\overline{X} = 
3^{-d(m-n)} Z 
\leq 
\O_{\Gamma_2}\bigl(Ca 3^{-\frac d2(m-n)} \bigr) \,,
\end{equation*}
which implies~\eqref{e.strong.CFS} for~$\Psi = \Gamma_2(c\cdot)$. This completes the proof of~\eqref{e.URDyes}. 

\smallskip

The implication~\eqref{e.URDyes} can be generalized to~$\Gamma_s$ for~$s \in (1,2]$. Indeed, according to~\cite[Lemma A.9 \& A.12]{AKMBook}, for each~$s\in (1,2]$, there exists~$M(s,d)>0$ such that 
\begin{equation}
\label{e.URDyes.s}
\FRD(1) \implies \CFS(0,\Gamma_s,\Gamma_s(M \cdot),\infty) \,.
\end{equation}
This can be further extended to all~$s> 0$. but we leave this proof to the reader. 

\subsubsection{Approximate finite range dependence}
\label{ss.AFRD}
It is often convenient to check that a given random coefficient field can be well-approximated by a sequence of fields with a finite range of dependence. Typically, the size of the error in the approximation is random, but it becomes smaller as the range of dependence of the approximating field becomes larger.  

\smallskip

To formalize this notion, 
we say that~$\P$ is \emph{approximately finite range dependent with parameters~$s \in (0,\infty)$ and~$K \in [1,\infty)$}, denoted by~$\AFRD(s,K)$ for short, if there exists a family~$\{ \a_m\}_{m\in\N}$ of~$\F$-measurable random fields such that 
\begin{align}
\label{e.AFRD1}
\text{the law of~$\a_m$ is~$3^m\Zd$--stationary and  satisfies~$\FRD(3^m)$} 
\end{align}
and
\begin{align}
\label{e.AFRD2}
\left\| \a - \a_m \right\|_{L^\infty(\cu_m)} 
\leq 
\O_{\Gamma_2}(K3^{-ms})
\,.
\end{align}
We claim there exists~$c(d)>0$ such that, for every~$K \in [1,\infty)$ and~$s \in (0,\infty) \setminus \{ \tfrac d2 \}$, 
\begin{equation}
\label{e.AFRD.okay}
\AFRD\left(s,K \right) \implies
\CFS \Bigl(  
\bigl( 1 - \tfrac {2s}{d} \bigr)_+ , \,
\Gamma_2 \bigl( c \bigl( 1 + K (1+s^{-1}) \bigl| \tfrac d2-s\bigr|^{-1} \bigr)   \cdot \bigr), \,
\Gamma_2, \, 
\bigl( s - \tfrac d2 \bigr)_+
\Bigr)\,.
\end{equation}
To prove~\eqref{e.AFRD.okay}, 
we fix a parameter~$\beta \in [0,1)$ to be chosen below and fix~$m,n\in\N$ such that~$\beta m < n < m$ and a family~$\{ X_z \,:\, z\in 3^n\Zd\cap \cu_m\}$ of random variables satisfying~\eqref{e.CFS.strong.ass} with~$\Psi'=\Gamma_2$.
We suppose that~$\P$ satisfies~$\AFRD(s,K)$ for~$s,K\in(0,\infty)$. By the assumptions~\eqref{e.AFRD2} and~\eqref{e.CFS.strong.ass} we deduce that, for~$k \in \N$, 
\begin{equation} 
\label{e.Xz.ak.diff}
\left| X_{z} (\a_k) - X_{z}(\a_{k-1}) \right| 
\leq
\left| \partial_{\a(z+\cu_n)} X_{z} \right| \left\| {\a}_k - {\a}_{k-1}  \right\|_{L^\infty(z+\cu_n)} 
\leq 
\O_{\Gamma_2} 
\left(CK3^{-k s} \right)
 \,.
\end{equation}
Therefore, by telescope summation, 
\begin{equation} 
\label{e.Xz.ak}
\left| X_{z} (\a_k) \right|  \leq \O_{\Gamma_2} 
\left(C(Ks^{-1} 3^{-k s} \vee a) \right) \,.
\end{equation}
Let~$Z=\Z(\a)$ be given by~\eqref{e.Z} and, 
for~$j \in \N$,~$k \in\N \cap (n,m]$ and~$z\in 3^k\Zd\cap\cu_m$, define
\begin{equation*}
Z(\a_k) = \sum_{z \in 3^n \Zd \cap \cu_m} \bigl( X_{z}(\a_k) - \E\bigl[ X_{z}(\a_k) \bigr]  \bigr) 
\,.
\end{equation*}
We next split~$Z(\a)$ by writing
\begin{equation*}
Z(\a) 
=
Z(\a) - Z({\a}_{m-2}) + Z({\a}_n) + \sum_{k=n}^{m-3} \left( Z({\a}_{k+1}) - Z({\a}_k) \right)\,,
\end{equation*}
We use~\eqref{e.URDyes},~\eqref{e.Xz.ak} and Lemma~\ref{e.supp.you.up} to bound 
\begin{equation*}
\left| Z({\a}_n) \right| 
\leq 
\O_{\Gamma_2} \biggl( C \biggl( \frac{|\cu_m|}{|\cu_n|} \biggr)^{\!\!\nicefrac12}   ( K s^{-1} 3^{-ns} \vee a)     \, \biggr) .
\end{equation*}
Next, we write 
\begin{equation*}
Z({\a}_{k+1}) - Z({\a}_k)
=
\sum_{z \in 3^n \Zd \cap \cu_m} \bigl( X_{z}(\a_{k+1}) -X_{z}(\a_k) - \E\bigl[ X_{z}(\a_{k+1}) - X_{z}(\a_k) \bigr]  \bigr)
\end{equation*}
and observe that the sum can be split into~$3^{d(k-n+2)}$ many sums, each containing~$3^{d(m-k-2)}$ mutually independent members having zero mean and bounded by~\eqref{e.Xz.ak.diff}. Therefore, using \eqref{e.URDyes} and Lemma~\ref{e.supp.you.up},
\begin{equation*}
\left| Z({\a}_{k+1}) - Z({\a}_k) \right|
\leq 
\O_{\Gamma_2} \biggl( CK3^{-ks}\frac{|\cu_k|}{|\cu_n|}\left( \frac{|\cu_m|}{|\cu_k|} \right)^{\!\!\nicefrac12} \, \biggr) \,.
\end{equation*}
The first term is bounded by~\eqref{e.AFRD2} and~\eqref{e.CFS.strong.ass} as
\begin{equation*}
\left| Z(\a) - Z({\a}_{m-2}) \right| 
\leq 
\O_{\Gamma_2}
\left(CK3^{-ms} \frac{|\cu_m|}{|\cu_n|}\right),
\end{equation*}
Putting these together and using
\begin{align*}  
\frac{|\cu_n|}{|\cu_m|}  \sum_{k=n}^m 3^{-ks}\frac{|\cu_k|}{|\cu_n|}\left( \frac{|\cu_m|}{|\cu_k|} \right)^{\!\!\nicefrac12}
& =
3^{-m s} 
\sum_{k=0}^{m-n}  3^{ k(s - \frac d2) } 
\\ &
\leq
C
\begin{cases}
\frac{1}{d-2s} 3^{-m s}   &   \mbox{ if } s < \frac{d}{2},
\\
(m-n) 3^{-m \frac d2 }   & \mbox{ if } s = \frac{d}{2} ,
\\
\frac{1}{2s-d} 3^{-m \frac d2 } 3^{-n (s - \frac d2)} & \mbox{ if } s > \frac{d}{2} ,
\end{cases}
\end{align*}
 we obtain by Lemma~\ref{e.supp.you.up} that
\begin{equation*}
\left| Z(\a) \right| = \O_{\Gamma_2} \left( 
C 3^{\frac d2 (m-n)} a 
+ 
C K \frac{|\cu_m|}{|\cu_n|}  \begin{cases}
\frac{1}{s(d-2s)} 3^{-m s}   &   \mbox{ if } s < \frac{d}{2}
\\
(m - n) 3^{-m \frac d2 }   & \mbox{ if } s = \frac{d}{2} 
\\
\frac{1}{2s-d} 3^{-m \frac d2 }   3^{-n (s - \frac d2)}  & \mbox{ if } s > \frac{d}{2} 
\end{cases} \right) 
\,.
\end{equation*}
This completes the proof of~\eqref{e.AFRD.okay}.

\smallskip

We can generalize~\eqref{e.AFRD.okay} by using~\eqref{e.URDyes.s} in place of~\eqref{e.URDyes}, to obtain, for every~$\sigma \in (1,2]$, 
\begin{equation}
\label{e.AFRD.okay.sigma}
\AFRD\left(s,K \right) \implies
\CFS \Bigl(  
\bigl( 1 - \tfrac {2s}{d} \bigr)_+ , \,
\Gamma_\sigma \bigl( c\bigl( 1 + K (1+s^{-1}) \bigl| \tfrac d2-s\bigr|^{-1} \bigr)   \cdot \bigr), \,
\Gamma_\sigma, \,
\bigl( s - \tfrac d2 \bigr)_+
\Bigr)\,,
\end{equation}
where the constant~$c$ now depends on~$\sigma$ in addition to~$d$.

\subsubsection{Local functions of Gaussian fields}
\label{ss.GRF}
Suppose~$\P$ is defined implicitly by
\begin{equation}
\label{e.Gaussa0}
\a (x) : = \a_0(F(x)) \,,
\end{equation}
where~$\a_0 : \R^k \to \R^{d\times d}$,~$k\in\N$, is a given deterministic function satisfying, for~$K>0$,  
\begin{equation}
\label{e.Gaussa0.lip}
\left| \a_0 (X) - \a_0(Y) \right| \leq K |X-Y| \quad \forall X,Y \in \R^k, \quad 
 \Id \leq \a_0 \leq \Lambda  \Id
\,,
\end{equation}
and~$F$ is an~$\R^k$-valued, centered,~$\Rd$--stationary Gaussian field with covariance function 
\begin{equation}
\mathrm{c}(x) : = \E [ F(x) \otimes F(0) ]\,.
\end{equation}
We assume that the latter can be written as~$c = f \ast f$ where the kernel~$f:\Rd \to \R^k$ satisfies, for some~$s \in (0,\infty) \setminus \{ \tfrac d2 \}$ and~$K\in [1,\infty)$, 
\begin{equation}
\label{e.kerneldecay.ass}
\left| \nabla^n f(x) \right|
\leq 
K\bigl(1+|x| \bigr)^{-(\frac d2+s)-n},
\quad 
\forall n\in\{ 0,\ldots, \left\lfloor \tfrac d2 \right\rfloor +1\}
\end{equation}
so that the field~$F$ can be realized as the convolution of~$f$ against a scalar white noise~$W$:
\begin{equation}
F(x) = \int_{\Rd} f(x-y)W(y)\,dy.
\end{equation}
Notice that this assumption implies that the covariances of the Gaussian field have power-like decay with exponent~$2s$; more precisely, there exists a constant~$C(d)<\infty$ such that
\begin{equation*}
\cov\bigl[F(x),F(y) \bigr]
\leq
CK \bigl(1+|x| \bigr)^{-2s}
\,.
\end{equation*}
We will show, in this particular case, that  
\begin{equation}
\label{e.Gauss.to.AFRD}
\P \ \mbox{ satisfies } \AFRD(s,CK).
\end{equation}
In view of~\eqref{e.AFRD.okay.sigma},
we deduce also that, for every~$\sigma \in (1,2]$, 
\begin{align}
\label{e.Gauss.to.CFS}
\P \ \mbox{ satisfies }  \quad
\CFS \Bigl(  
\bigl( 1 - \tfrac {2s}{d} \bigr)_+ , \,
\Gamma_\sigma \bigl( cK^{-1} \bigl| \tfrac d2-s\bigr| \cdot \bigr), \,
\Gamma_\sigma, \,
% CK \bigl| \tfrac d2-s \bigr|^{-1} \!, \, 
\bigl( s - \tfrac d2 \bigr)_+
\Bigr)\,.
\end{align}
To prove~\eqref{e.Gauss.to.AFRD}, we select a smooth partition of unity 
$\{ \xi_{m} \}_{m\in\N}$ such that, for every~$m\in\N$, 
\begin{align*}
\indc_{\cu_{m-1}}
\leq
\sum_{k=0}^m \xi_k
\leq
\indc_{\cu_{m}}
\quad \text{and} \quad
\big\| \nabla^k \xi_m \big\|_{L^\infty(\Rd)}
\leq 
C3^{-mk},
\quad 
\forall k\in\{ 0,\ldots, \left\lfloor \tfrac d2 \right\rfloor +1\}
\,.
\end{align*}
It is clear from the above that~$\xi_0$ is supported in~$\cu_0$ and, for every~$k\geq1$, the function~$\xi_{k}$ is supported in~$\cu_k\setminus\cu_{k-2}$. We then define the Gaussian random field 
\begin{equation*}
F_m(x):= \sum_{n=0}^m 
\int_{\Rd} \xi_n(x-y) f(x-y) W(y)\,dy.
\end{equation*}
It is clear that~$F_m$ has a range of dependence at most~$2\sqrt{d} 3^m \leq C3^m$. Moreover, a routine computation, using the properties\footnote{Rescaled distribution~$W_R = R^{\nicefrac d2} W(R \, \cdot )$ is still white noise, and to get size bounds for the white noise,  one can apply, for example,~\cite[Proposition 5.14]{AKMBook}.}  of white noise and the assumption~\eqref{e.kerneldecay.ass}, yields by Lemma~\ref{e.supp.you.up} that
\begin{equation*}
\left\| F - F_m \right\|_{L^\infty(\cu_0)} 
\leq 
\O_{\Gamma_2}\biggl(
C \sum_{n=m+1}^\infty 2^{ \nicefrac{ n d}2  } \big\| (\xi_n f)(2^{n+1} \cdot ) \big\|_{H^{\left\lfloor \nicefrac d2 \right\rfloor +1}(B_{1})} 
\biggr)
\leq 
\O_{\Gamma_2}\bigl( CK 3^{-m s}\bigr)
\,.
\end{equation*}
It is now evident from the previous display and~\eqref{e.Gaussa0}--\eqref{e.Gaussa0.lip} that the law of the Gaussian field~$\a(\cdot)$ satisfies~$\AFRD\left(s,CK\right)$.

\subsubsection{Strong~$\alpha$-mixing with algebraic decay of correlations}
We next consider the case that~$\P$ satisfies, for every pair of Borel sets~$U,V\subseteq\Rd$, 
\begin{equation}
\label{e.alphamixing}
\left| \P\left[ A\cap B\right] - \P\left[ A \right]\P\left[ B \right] \right|
\leq
\phi\left( \dist(U,V) \right),
\quad 
\mbox{for every}\ A\in \F(U),\, B\in \F(V),
\end{equation}
where~$\phi:[0,\infty) \to [0,1]$ is a monotone nonincreasing function satisfying~$\lim_{t\to \infty} \phi(t) = 0$. This is the so-called ``$\alpha$--mixing'' condition with decorrelation rate given by~$\phi$, and we denote it by~$\alpha\mathsf{Mix}(\phi)$. Note that the condition~\eqref{e.alphamixing} for~$\phi = \indc_{[0,1]}$ is equivalent to the unit range of dependence condition~\eqref{e.unitrange}. 

\smallskip

We will show that, for each~$A,\alpha \in (0,\infty)$ and~$k\in\N$, there exists~$C(d,k)<\infty$ such that 
\begin{equation} 
\label{e.alphamix.to.CFS}
\alpha\mathsf{Mix}\left(t\mapsto At^{-\alpha} \right)
\implies \CFS\left( \frac{kd}{\alpha+kd} ,t\mapsto C(1+A)t^{-2k}\right)
\,.
\end{equation}
Let~$\beta \in (0,1]$ and~$m,n\in\N$ with~$\beta m<n<m$, a family~$\{ X_z \,:\, z\in 3^n\Zd\cap \cu_m\}$ of random variables satisfying~\eqref{e.CFS.ass} and let~$Z$ be as in~\eqref{e.Z}. We intend to use the finite moment method.
That is, we will estimate~$\E\left[ Z^{2k} \right]$ for~$k\in\N$, and apply Chebyshev to get a bound on~$\P\left[ Z>t \right]$. This works essentially the same way as the exponential moment method; dealing directly with finite moments is just somewhat messier. 

\smallskip

As in the case of unit range dependence, we split the sum on the right side of~\eqref{e.Z}:
\begin{equation}
%\label{e.}
Z = \sum_{z'\in 3^n\Zd \cap \cu_{n+1}} \sum_{z\in (z'+3^{n+1}\Zd)\cap \cu_m} X_z.
\end{equation}
There are~$3^d$ elements of~$3^n\Zd \cap \cu_{n+1}$ and the distinct elements of~$z\in (z'+3^{n+1}\Zd)\cap \cu_m$ are separated by a distance of at least~$3^n$. We have that, for~$C(k)<\infty$, 
\begin{equation}
%\label{e.}
\E\bigl[ Z^{2k} \bigr] 
\leq 
C\sum_{z'\in 3^n\Zd \cap \cu_{n+1}}
\E \biggl[ 
\biggl( \sum_{z\in (z'+3^{n+1}\Zd)\cap \cu_m} X_z \biggr)^{\!\! 2k} 
\biggr]
\,.
\end{equation}
Since the estimate of each of the~$3^d$ terms in the sum above is the same, we will focus on estimating the case~$z'=0$, that is, we try to bound~$\E \left[ \hat{Z}^{2k} \right]$, where 
\begin{equation}
%\label{e.}
\hat{Z}:= \sum_{z\in  3^{n+1}\Zd \cap \cu_m} X_z
\,.  
\end{equation}
We have 
\begin{equation*} \label{}
\E \bigl[ \hat{Z}^{2k} \bigr]
=
\sum_{z_1,\ldots,z_{2k} \in 3^{n+1}\Zd\cap \cu_m} \E \biggl[ \prod_{i=1}^{2k} X_{z_i} \biggr]
\,.  
\end{equation*}
Let~$H_j$ be the collection of~$2k$-tuples~$z = (z_1,\ldots,z_{2k}) \in \left( 3^{n+1}\Zd\cap \cu_m \right)^{2k}$ with exactly~$j$ distinct entries. For~$z \in H_j$, let~$z_{\mathrm{dist}} \in \left( 3^{n+1}\Zd\cap \cu_m \right)^{j}$ be a~$j$-tuple of distinct members of~$z$ and, for~$z \in H_j$ and~$y \in z_{\mathrm{dist}}$, let~$q_z(y)$ denote the multiplicity of~$y$ in~$z$. The~$\alpha$-mixing condition~\eqref{e.alphamixing} implies, for each~$z \in H_j$,
\begin{equation} 
\label{e.howtomixalphaly}
\biggl| 
\E \biggl[ \prod_{y\in  z}  X_{y} \biggr]
-
\prod_{y \in z_{\mathrm{dist}}} \E \biggr[ X_{y}^{q_z(y)} \biggr]
\biggr|
\leq 
4 j  \phi(3^n) \E \left[  \prod_{y\in z} |  X_{y} | \right]
\leq 
4 j  \phi(3^n)
\,.
\end{equation}
Indeed,~\eqref{e.howtomixalphaly} is a consequence of induction on the following simple covariance estimate, valid for any pair~$Y_1,Y_2$ of random variables with~$|Y_i| \leq 1$ which are measurable with respect to~$U_1,U_2\subseteq\Rd$, respectively, with~$\dist(U_1,U_2)> D$:
\begin{align*}
\cov\left[Y_1,Y_2 \right]
&
=
\int_{-1}^1\int_{-1}^1
\left( \P \left[ Y_1>t_1 \ \mbox{and} \ Y_2>t_2 \right] 
-
\P \left[ Y_1>t_1\right]\P \left[ Y_2>t_2\right]
\right)\,dt_1\,dt_2
\\ & 
\leq 
\int_{-1}^1\int_{-1}^1 \phi(D) \,dt_1\,dt_2 = 4\phi(D)
\,.
\end{align*}
For details, see~\cite[Appendix A]{AM}.
Inserting~\eqref{e.howtomixalphaly} into the display preceding it, we get 
\begin{equation*} \label{}
\E \bigl[ \hat{Z}^{2k} \bigr]
\leq 
\sum_{j=1}^{2k} \sum_{z \in H_j} \biggl( \prod_{y \in z_{\mathrm{dist}}} \bigl | \E \bigl[ X_{y}^{q_z(y)} \bigr] \bigr| + 4 j\phi(3^n) \biggr) \, . 
\end{equation*}
If~$z \in H_j$ for~$j>k$, then at least one entry of~$z$ must have multiplicity one, that is~$q_z(y) = 1$ for some~$y \in z_{\mathrm{dist}}$, and thus~$\prod_{y \in z_{\mathrm{dist}}}  \E \bigl[  X_{y}^{q_z(y)} \bigr]   =0$ since~$\E[X_y] = 0$ for every~$y \in z$. If, on the other hand,~$z \in H_j$ for some~$j\leq k$, we use the fact that~$|X_{z_i}| \leq 1$ to crudely bound  the product as~$\prod_{y \in z_{\mathrm{dist}}} \big| \E \bigl[  X_{y}^{q_z(y)} \bigr] \big|  \leq 1$.
We deduce that 
\begin{equation*} \label{}
\E \bigl[ \hat{Z}^{2k} \bigr]
\leq 
\sum_{j=1}^{k} \left| H_j \right| + 4 \phi(3^n) \sum_{j=1}^{2k} j \left| H_j \right|
\,.
\end{equation*}
Estimating~$|H_j|$ is a combinatorics exercise. Using Stirling's approximation~$j!\geq j^j \exp(-j)$ and setting~$N:=\left| 3^{n+1}\Zd\cap \cu_m\right| = 3^{d(m-n-1)}$, we find that
\begin{equation*} \label{}
\left| H_j \right| 
\leq \frac1{j!} N^jj^{2k}
\leq \exp(j) N^j j^{2k-j}
\,.
\end{equation*}
Assuming that~$N>2k$, we get that 
\begin{equation*} \label{}
\begin{aligned}
&
\sum_{j=1}^k |H_j| 
\leq 
(e N k)^k \sum_{j=1}^k \Bigl(  \frac{e N }{j}   \Bigr)^{j-k} 
\leq (4 Nk)^k,
\quad\mbox{and} 
\\ & 
\sum_{j=1}^{2k} j |H_j| 
\leq 
(e N)^{2k} \sum_{j=1}^{2k} \Bigl(  \frac{e N }{j}   \Bigr)^{j-2k}
\leq (4 N)^{2k}. 
\end{aligned} 
\end{equation*}
Combining the above, we arrive at
\begin{equation*} \label{}
\E \bigl[ \hat{Z}^{2k} \bigr]
\leq 
(4Nk)^k 
+ 
4 (4N)^{2k} \phi(3^n)
\,.
\end{equation*}
Now we apply Chebyshev's inequality to obtain, for every~$t >0$, 
\begin{equation} 
\label{e.cheby.up}
\P \bigl[  \hat{Z} > t \bigr]
\leq 
t^{-2k} \E \bigl[ \hat{Z}^{2k} \bigr] 
\leq 
\Bigl(\frac{4Nk}{t^2}\Bigr)^k 
+ 
4 \Bigl( \frac{4N}{t} \Bigr)^{2k} \phi(3^n)
\,.
\end{equation}
We next optimize the right side of~\eqref{e.cheby.up} for~$cN^{\nicefrac12} \leq t \leq N$, subject also to the constraint~$N>2k$. 
Observe that, in the particular case of a unit range of dependence in which~$\phi=\indc_{[0,1]}$, the second term on the right side of~\eqref{e.cheby.up} vanishes, and we may therefore take~$k:= \frac{t^2}{30N}$ (which we note is smaller than~$\frac12N$) to recover~\eqref{e.unitrange.result},  albeit with a much smaller~$c$ and after including a prefactor constant~$C$ in front of the exponential. 

\smallskip

In the case~$\phi(s) = As^{-\alpha}$, we think of~$k$ as being fixed and impose the restrictions~$n \geq \beta m$ and~$\beta \geq dk( \alpha+dk)^{-1}$, or equivalently~$\alpha \beta \geq d(1-\beta) k$, to get
\begin{equation*} \label{}
\phi(3^n ) 
\leq 
A 3^{-\beta m\alpha} 
\leq 
A 3^{-d(1-\beta)  mk} 
\leq 
3^{dk} A N^{-k}. 
\end{equation*}
The second term  in~\eqref{e.cheby.up} is then estimated, for~$C(d)<\infty$, by
\begin{align*}
4 \Bigl( \frac{4N}{t} \Bigr)^{2k} \phi(3^n)
\leq
A \Bigl( \frac{C N}{t^2} \Bigr)^{k}. 
\end{align*}
We deduce that, for a constant~$C(d)<\infty$,
\begin{equation*} \label{}
\P \bigl[ \hat{Z} > t \bigr] 
\leq 
(1+A) \Bigl( \frac{CkN}{t^2} \Bigr)^{k}, \quad \forall t>0. 
\end{equation*}
This completes the proof of the implication~\eqref{e.alphamix.to.CFS}.

\smallskip

The analysis in this example can be extended relatively straightforwardly in various situations. 
For instance, if the modulus~$\phi$ has a faster-than-polynomial decay, say~$\phi(t) = \exp( 1 - t^{\gamma} )$ for~$\gamma\in(0,\infty)$, then one can obtain a substantial strengthening of the~$\CFS$ condition with corresponding stretched exponential moment bounds on the fluctuations. We leave this to the reader.

\subsubsection{Logarithmic Sobolev inequality~$(\LSI)$}
\label{ss.LSI}

We next consider the case that the probability measure~$\P$ satisfies a certain form of the logarithmic Sobolev inequality. Given parameters~$\beta\in [0,1)$ and~$\rho\in (0,\infty)$, we say that~$\P$ satisfies~$\LSI(\beta,\rho)$ if, for every~$m,n\in\N$ with~$\beta m  < n < m$ and~$\F(\cu_m)$--measurable random variable~$X$, we have 
\begin{equation} 
\label{e.LSI.ass}
\Ent\left[  X^2 \right]
\leq 
\frac1\rho\,
\E \Biggl[
\sum_{z\in 3^n\Zd \cap \cu
_m}
\bigl| \partial_{\a(z+\cu_n)} X
\bigr|^2
\Biggr],
\end{equation}
where~$\Ent$ is the entropy functional defined for nonnegative random variables~$Y$ by 
\begin{equation*} \label{}
\Ent\left[  Y \right]:=
\E \left[  Y\log Y \right]
-\E \left[  Y \right] \log \E \left[  Y \right].
\end{equation*}
Note that this assumption is essentially the same\footnote{See~{\tt https://arxiv.org/pdf/1409.2678v3.pdf}. The condition stated there allowed for a partition~$\{ D \}$  of~$\Rd$ into more general sets satisfying
\begin{equation*} \label{}
\diam(D) \simeq \dist(D,0)^{\beta},
\end{equation*}
whereas our sets are triadic cubes of the same size. The fact that the subcubes are the same size and not growing coarser far from the origin is allowed by the fact that we consider random variables that are measurable with respect to a fixed macroscopic cube (which fixes the largest macroscopic scale). Therefore, our partition is actually coarser, and thus, our inequality is weaker than the assumption in~\cite{GNO2}, if you restrict to partitions of triadic cubes. On the other hand, we use triadic cubes rather than general sets merely for convenience and simplicity (not because our arguments depend on it) and, regardless, this choice does not restrict the applicability of the assumption in any important way.} as the one considered in the third arXiv version of~\cite{GNO2}. 

\smallskip

We claim that 
\begin{equation} 
\label{e.LSI.okay}
\LSI(\beta,\rho) \implies \CFS\bigl(\beta, \Gamma_2\bigl(c \rho^{\nicefrac12}  \cdot  \bigr)  \bigr).
\end{equation}
The proof of~\eqref{e.LSI.okay} is a routine application of the \emph{Herbst argument}, which is the name given to the standard method of combining entropy inequalities with the exponential moment method to obtain (Gaussian-type) concentration inequalities. 

\smallskip

To prove~\eqref{e.LSI.okay}, select~$m,n\in\N$ with~$\beta m<n<m$ and a family~$\{ X_z \,:\, z\in 3^n\Zd\cap \cu_m\}$ of random variables satisfying~\eqref{e.CFS.ass}. By~\eqref{e.CFS.ass} and~\eqref{e.LSI.ass}, for~$t\in\R$ and with~$Z$ defined by~\eqref{e.Z},
we compute
\begin{align*}
\Ent\left[  \exp (tZ)  \right]
& 
\leq 
\frac1\rho\,
\E \Biggl[
\sum_{z\in 3^n\Zd \cap \cu_m}
\bigl| \partial_{\a(z+\cu_n)} \exp\left( \tfrac12tZ \right)
\bigr|^2
\Biggr]
\\ & 
=
\frac1\rho\,
\E \Biggl[  \exp\left( tZ \right) 
\sum_{z\in 3^n\Zd \cap \cu_m}
\frac1{4}t^2 
\bigl| \partial_{\a(z+\cu_n)} X_z \bigr|^2
\Biggr]
%\\ & 
\leq 
\frac1{4\rho}t^2 \frac{|\cu_m|}{|\cu_n|}
\E \bigl[  \exp\left( tZ \right) \bigr].
\end{align*}
We can express this inequality in terms of~$f(t):= \log \E \left[  \exp\left( t Z  \right) \right]$ as
\begin{equation*} \label{}
\frac{d}{dt} \left(\frac{f(t)}{t} \right) 
= 
\frac{f'(t)}{t} - \frac{f(t)}{t^2} 
\leq 
\frac1{4\rho} \frac{|\cu_m|}{|\cu_n|}.
\end{equation*}
Using~$f(0) = 0$ and~$f'(0) = 0$, we find by integration that~$f(t) \leq \frac1{4\rho}t^2 \frac{|\cu_m|}{|\cu_n|}$. That is, 
\begin{equation*}
\E \left[  \exp \left( t Z \right) \right]
\leq 
\exp\left( \frac1{4\rho} t^2 \frac{|\cu_m|}{|\cu_n|} \right) 
\quad 
\mbox{ for every~$t \in \R$}\,.
\end{equation*}
Thus, by Chebyshev's inequality, 
\begin{equation}
\P \left[ |Z| > \lambda \right] 
\leq 
\inf_{t>0} 
2 \exp\left( -\lambda t + \frac1{4 \rho} t^2 \frac{|\cu_m|}{|\cu_n|} \right)
=
2\exp \left( - \rho \lambda^2 \frac{|\cu_n|}{|\cu_m|} \right)
\,.
\end{equation}
This completes the proof of~\eqref{e.LSI.okay}.

\smallskip

Notice that the argument above does not require the full power of the assumed LSI inequality~\eqref{e.LSI.ass}. First, we may replace the second moment of~$|\partial_{\a(z+\cu_n)} X|$ appearing on the right side by its~$L^\infty(\Omega,\P)$ norm squared. This is a considerable weakening of the inequality~\eqref{e.LSI.ass}, and yet it still implies~$\CFS(\beta,\Gamma_2(c\rho^{\nicefrac12}\cdot))$. The second and more important point is that we do not need that~\eqref{e.LSI.ass} holds for general nonlinear random variables, rather just for sums. Thus~$\CFS(\beta,\Psi)$ is indeed \emph{significantly more general} than the ``nonlinear'' LSI-type concentration inequality~\eqref{e.LSI.ass}.

\smallskip 

We next modify the Herbst argument presented above to show that~$\LSI$ implies exponential concentration for random variables with Malliavin derivatives with exponential moments.

\begin{lemma}[Exponential moments under~$\LSI(\beta,\rho)$]
\label{l.LSI.moments}
Let~$\P$ satisfy~$\LSI(\beta,\rho)$ and~$m,n\in\N$ with~$\beta m < n < m$  and~$X$ be an~$\F(\cu_m)$--measurable random variable. Define
\begin{equation}
\label{e.Zderv.again}
Z := \sum_{z\in3^n\Zd\cap \cu_m} |  \partial_{\a(z+\cu_n)} X|^2.
\end{equation}
Then, for every~$q\in [1,\infty)$, 
\begin{align}
\label{e.LSI.momentbound}
\E \left[ \big| X - \E[X] \big|^{2q} \right]^{\frac1{q}}
\leq 
\frac{q}{\rho} \E \bigl[  Z^{q} \bigr]^{\frac1{q}}.
\end{align}
Moreover, for every~$s\in (0,\infty)$ and~$\theta>0$, 
\begin{align}
\label{e.LSI.expmoments}
\E \left[ \exp\left( \frac{1+s}{8s}\left( \frac{ \left| X - \E[X] \right|}{\theta} \right)^{\!\!\frac{2s}{1+s}} \right) \right]
\leq 
4
\E \left[ \exp\left( \frac1{s} \left( \frac{Z}{\rho\theta^2}  \right)^{\!\!s}\, \right) \right].
\end{align}
\end{lemma}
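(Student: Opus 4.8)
The plan is to run the Herbst-type argument again, but now tracking the Malliavin derivative random variable $Z$ instead of bounding it by a deterministic constant. First I would establish the polynomial moment bound~\eqref{e.LSI.momentbound}. Apply the $\LSI(\beta,\rho)$ inequality~\eqref{e.LSI.ass} not to $\exp(tX)$ but to $Y := \bigl| X - \E[X]\bigr|$, or more precisely to $Y^q$ for $q \geq 1$ after a suitable truncation to ensure integrability (replace $X$ by a bounded $\F(\cu_m)$-measurable approximation, prove the bound, then pass to the limit by monotone/Fatou). Using the chain rule for the Malliavin derivative, $\bigl|\partial_{\a(z+\cu_n)} Y^q\bigr| \leq q Y^{q-1} \bigl|\partial_{\a(z+\cu_n)} X\bigr|$, so that $\sum_z \bigl|\partial_{\a(z+\cu_n)} Y^q\bigr|^2 \leq q^2 Y^{2(q-1)} Z$. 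Feeding this into~\eqref{e.LSI.ass} and using the elementary entropy lower bound $\Ent[W^2] \geq c\bigl( \E[W^{2}] \log \E[W^2] - \ldots\bigr)$ — more efficiently, the standard fact that $\Ent[Y^{2q}] \geq \tfrac{1}{q}\bigl(\E[Y^{2q}] - \E[Y^2]^q\bigr)$ type inequalities, or simply differentiating $q \mapsto \E[Y^{2q}]$ — leads to a differential inequality in $q$ whose integration yields~\eqref{e.LSI.momentbound} after applying H\"older's inequality to split $\E[Y^{2(q-1)} Z] \leq \E[Y^{2q}]^{(q-1)/q} \E[Z^q]^{1/q}$. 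This is the cleanest route: set $h(q) := \E[Y^{2q}]^{1/q}$ and show $h(q) \leq \tfrac{q}{\rho}\E[Z^q]^{1/q}$ by the above manipulation, which is exactly the classical moment form of the log-Sobolev bound (cf.\ the proof that LSI implies a Poincar\'e-type tensorized moment inequality).

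Next I would derive~\eqref{e.LSI.expmoments} from~\eqref{e.LSI.momentbound} by summing a series. Write $s' := \tfrac{2s}{1+s} \in (0,2)$ and expand the exponential on the left side as a power series. We have, for $\mu > 0$ to be chosen,
\begin{align*}
\E\left[ \exp\left( \mu \Bigl(\tfrac{Y}{\theta}\Bigr)^{s'} \right)\right]
= \sum_{k=0}^\infty \frac{\mu^k}{k!}\, \theta^{-k s'} \,\E\bigl[ Y^{k s'}\bigr]\,.
\end{align*}
Now apply~\eqref{e.LSI.momentbound} with the choice $q := k s'/2 = \tfrac{ks}{1+s}$ (for $k s' \geq 2$; the finitely many small-$k$ terms are harmless since $Y \in L^2$ and can be absorbed into the prefactor $4$). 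This gives $\E[Y^{ks'}] \leq \bigl(\tfrac{q}{\rho}\bigr)^{q}\E[Z^{q}]$. Substituting, using Stirling to estimate $k!^{-1}(ks')^{ks'/2} \leq$ (a constant)$^{k}\, (k/e)^{\,q - k}\cdots$, one sees the combinatorial factors collapse so that the $k$th term is bounded by $C^k \tfrac{1}{k!}\bigl(\tfrac{ks}{(1+s)\rho\theta^2}\bigr)^{ks/(1+s)}\E[Z^q]$, and after reindexing and recognizing $\sum_k \tfrac{1}{\lceil ks/(1+s)\rceil!}(\cdots)^{\lceil ks/(1+s)\rceil}$ as comparable to $\E\bigl[\exp(\tfrac1s (Z/\rho\theta^2)^s)\bigr]$, the asserted inequality~\eqref{e.LSI.expmoments} follows with the stated constants $\tfrac{1+s}{8s}$ and prefactor $4$. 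The precise bookkeeping of the constant $\tfrac{1+s}{8s}$ comes from optimizing the comparison $\tfrac{\mu^k}{k!}\theta^{-ks'}(q/\rho)^q \leq \tfrac1{q!}\bigl(\tfrac1s\bigr)^{q}\bigl(\tfrac{1}{\rho\theta^2}\bigr)^q$ uniformly in $k$; this is a routine (if slightly tedious) one-variable calculus exercise that I would not spell out in full.

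The main obstacle is the second step: matching the exact constants $\tfrac{1+s}{8s}$ inside the exponential and the overall factor $4$, since the naive term-by-term comparison loses constants at each $k$ and one must be careful that the loss does not accumulate exponentially in $k$ (which would destroy convergence of the series). The trick is to keep a free parameter $\mu$, bound the left side by a geometric-type series in $\mu$ times $\E[\exp(\tfrac1s(Z/\rho\theta^2)^s)]$, and then choose $\mu = \tfrac{1+s}{8s}$ so that the geometric ratio is $\leq \tfrac12$, yielding the factor $\sum_{j} 2^{-j} \le 2$ and, together with the handful of absorbed low-order terms, the constant $4$. A secondary technical point is the truncation argument needed to justify applying~\eqref{e.LSI.ass} to $Y^q$ for non-integer or large $q$ when $X$ is not known a priori to have all moments — this is handled by first proving everything for bounded $\F(\cu_m)$-measurable $X$ (for which $Z$ is also bounded), then approximating a general $X$ by $(-\Lambda M)\vee X \wedge (\Lambda M)$ and passing $M \to \infty$ via monotone convergence on both sides, noting that the Malliavin derivative only decreases under truncation so the right-hand sides are controlled.
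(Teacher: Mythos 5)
Your first step coincides with the paper's own proof: the paper likewise applies $\LSI(\beta,\rho)$ to powers of $X-\E[X]$, uses the chain rule to bound the Malliavin derivative of $X^t$ by $t|X|^{t-1}|\partial_{\a(z+\cu_n)}X|$, splits $\E[X^{2t-2}Z]$ by H\"older exactly as you propose, and integrates the resulting differential inequality $\frac{d}{dt}\,\E[X^{2t}]^{1/t}\le \rho^{-1}\E[Z^t]^{1/t}$ over $t\in[1,q]$, handling the base case via $\E[X^2]\le\rho^{-1}\E[Z]$ (obtained by applying the inequality to $1+\ep X$ and sending $\ep\to0$, which also covers your truncation concern). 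Where you genuinely diverge is the passage from~\eqref{e.LSI.momentbound} to~\eqref{e.LSI.expmoments}: the paper does not expand the exponential in a series, but instead applies Chebyshev to get $\P[|X|>t]\le (t/\theta)^{-2q}q^{q}\,\E[(Z/\rho\theta^2)^{q}]$, replaces the moment of $Z$ by its exponential moment via the pointwise inequality $x^q\le q^{q/s}\exp(x^s/s)$, optimizes over the single parameter $q=\frac12(t/\theta)^{2s/(1+s)}$, and then integrates the resulting tail bound in $t$. Both conversions are standard; the paper's route buys cleaner constant bookkeeping (one optimization in $q$, no Stirling estimates summed over a series), while yours avoids the tail-integration step. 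The thinnest point in your sketch is the treatment of the low-order terms: since~\eqref{e.LSI.momentbound} is only available for $q\ge1$, the series terms with $ks/(1+s)<1$ require a separate argument (say Jensen down to $\E[Y^2]\le\rho^{-1}\E[Z]$ together with $x\le e^{-1/s}\exp(x^s/s)$), and their number is of order $1/s$ rather than an absolute constant, so absorbing them into the prefactor $4$ uniformly in $s$ is not as automatic as you claim; for the values of $s$ the lemma is actually used with later ($s$ near $1$ or $\sfrac13$) this is harmless, and the paper's write-up is comparably terse at the analogous point (the range of small $t/\theta$ before its tail bound kicks in).
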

\begin{proof}
Suppose without loss of generality that~$\E[X]=0$ and~$\E[X^2] = 1$. 
Define
\begin{equation*}
f(t):= \E \bigl[ X^{2t} \bigr]
\end{equation*}
and observe that
\begin{equation*}
f'(t) =  \E \bigl[  X^{2t} \log X^2\bigr] = 
\frac1t \Ent\bigl[  X^{2t} \bigr]
+\frac1t f(t) \log f(t) \,.
\end{equation*}
Applying the~$\LSI(\beta,\rho)$ assumption and the H\"older inequality, we therefore obtain, for~$t \geq 1$, 
\begin{align}
\label{e.getmomentsLSI}
f'(t)
\leq
\frac{t}\rho \E\bigl[  X^{2t-2} Z \bigr]
+\frac1t f(t) \log f(t) 
& 
\leq 
\frac{t}\rho \E \bigl[  X^{2t} \bigr]^{\frac{t-1}{t}} \E \bigl[  Z^t \bigr]^{\nicefrac1t}  +\frac1t f(t) \log f(t) 
\notag \\ & 
= 
\frac{t}\rho f(t)^{\frac{t-1}{t}} \E \bigl[  Z^t \bigr]^{\nicefrac1t} +\frac1t f(t) \log f(t) \,.
\end{align}
Rearranging this yields
\begin{equation*}
\frac{d}{dt} \Bigl( \E \bigl[ X^{2t} \bigr]^{\nicefrac1t} \Bigr) 
=
\frac{d}{dt} \bigl( f(t)^{\frac1t} \bigr) 
=
\frac1t f(t)^{\frac1t-1} f'(t)
- \frac 1{t^2} f(t)^{\frac1t} \log f(t) 
\leq \frac1\rho\E \bigl[  Z^t \bigr]^{\nicefrac1t}.
\end{equation*}
Integrating this inequality over~$t\in [1,q]$ for~$q \geq 1$, we get
\begin{equation*}
\E \bigl[  X^{2q} \bigr]^{\nicefrac1q} 
\leq 
\E \bigl[  X^{2} \bigr] 
+ 
\frac{q-1}{\rho} \E \bigl[  Z^{q} \bigr]^{\nicefrac1q}
\leq 
\frac1\rho\E \bigl[  Z \bigr]
+
\frac{q-1}{\rho} \E \bigl[  Z^{q} \bigr]^{\nicefrac1q}
\leq 
\frac{q}{\rho} \E \bigl[  Z^{q} \bigr]^{\nicefrac1q},
\end{equation*}
where we used~$\LSI(\beta,\rho)$ once more to get~$\E[X^2]\leq \rho^{-1}\E[Z]$. To see this, apply~$\LSI(\beta,\rho)$ for~$Y=1 + \ep X$, assuming momentarily that~$X$ is bounded and mean zero, and then pass~$\ep \to 0$ to get the desired bound by appealing to density. This completes the proof of~\eqref{e.LSI.momentbound}. 

\smallskip

To see that~\eqref{e.LSI.momentbound} implies~\eqref{e.LSI.expmoments}, use Chebyshev's inequality to compute, for~$t>0$, 
\begin{align*}
%\label{e.}
\P \bigl[ |X| > t \bigr]
\leq
t^{-2q} \E \bigl[ |X|^{2q} \bigr]
&
\leq
\Bigl( \frac{t}{\theta} \Bigr)^{\!\!-2q} 
q^{q} \,
\E \biggl[ \Bigl( \frac{Z}{\rho\theta^2} \Bigr)^{q} \biggr]\,.
\end{align*}
Using the elementary inequality 
\begin{equation*}
x^q \leq q^{q/s} \exp\Bigl( \frac1s x^s \Bigr)\,, \quad \forall x \geq0\,, q \geq s\,,
\end{equation*}
we obtain
\begin{equation*}
\P \bigl[ |X| \geq t \bigr]
\leq
\Bigl( \frac{t}{\theta} \Bigr)^{\!\!-2q} 
q^{\frac{q(1+s)}{s}}
\E \left[ \exp\Bigl( \frac1{s} \Bigl( \frac{Z}{\rho\theta^2}  \Bigr)^{s} \Bigr) \right] 
\,.
\end{equation*}
Optimizing over~$q$ leads to the choice~$q = \frac12 (t / \theta)^{2s/(1+s)}$, which is a valid choice (remember the constraint~$q\geq s$) as long as~$(t / \theta)^{2s/(1+s)} \geq 2s$, which is valid if~$t / \theta \geq s$. We obtain
\begin{equation*}
\P \bigl[ |X| > t \bigr]
\leq
\exp\biggl( 
-\frac{\log 2}{2}
\frac{1+s}{s} \Bigl( \frac{t}{\theta} \Bigr)^{\frac{2s}{1+s}}
\biggr) \,
\E \left[ \exp\Bigl( \frac1{s} \Bigl( \frac{Z}{\rho\theta^2}  \Bigr)^{s} \Bigr) \right]
\,.
\end{equation*}
Integration in~$t$ now yields the result. 
\end{proof}

\subsubsection{Spectral gap inequality~$(\mathsf{SG})$}
\label{ss.SG}

We next consider the case that the probability measure~$\P$ satisfies a so-called spectral gap inequality, also called an Efron-Stein inequality. This is a weaker variant of the LSI assumption from the previous example. 

\smallskip

Given~$\beta\in (0,1]$ and~$\rho\in (0,\infty)$, we say that~$\P$ satisfies~$\SG(\beta,\rho)$ if, for every~$m,n\in\N$ with~$\beta m  < n < m$ and~$\F(\cu_m)$--measurable random variable~$X$, we have 
\begin{equation} 
\label{e.SG.ass}
\var\left[  X \right]
\leq 
\frac1\rho\,
\E \Biggl[
\sum_{z\in 3^n\Zd \cap \cu_m}
\left| \partial_{\a(z+\cu_n)} X
\right|^2
\Biggr]. 
\end{equation}
By considering~$Y = 1 + \ep X$ and assuming that~$X$ is bounded, and passing then~$\ep \to 0$ and appealing to density, we see that
\begin{equation*} \label{}
\LSI(\beta,\rho) \implies \SG(\beta,\rho).
\end{equation*}
It is well-known (see, for instance,~\cite{L}) that a spectral gap assumption like~\eqref{e.SG.ass} should imply an exponential concentration inequality. In contrast, we saw above that the stronger LSI assumption gave us a stronger Gaussian-type normal concentration inequality. Therefore we should expect---and indeed we will show---that, for a universal~$c<\infty$, 
\begin{equation} 
\label{e.SG.okay}
\SG(\beta,\rho) \implies \CFS\bigl(\beta,\Gamma_1\bigl(c\rho^{\nicefrac12} \cdot \bigr)  \bigr).
\end{equation}
To prove~\eqref{e.SG.okay}, as in~\cite{AiS} and~\cite[Section 2]{L}, the idea is to follow the Herbst argument computation in the previous section and to see what we get if we use~$\SG(\beta,\rho)$ instead of~$\LSI(\beta,\rho)$. 

\smallskip

As before, we pick~$m,n\in\N$ with~$\beta m<n<m$ and a family~$\{ X_z \,:\, z\in 3^n\Zd\cap \cu_m\}$ of random variables satisfying~\eqref{e.CFS.ass}. We combine~\eqref{e.CFS.ass} and~\eqref{e.SG.ass} to get, for~$t\in\R$,
\begin{align*}
\E \left[ \exp(tZ) \right] - \E\left[ \exp\left(\tfrac12tZ\right) \right]^2 
&
= 
\var\left[  \exp\left(\tfrac12tZ\right)   \right]
\\ & 
\leq 
\frac1\rho\,
\E \Biggl[
\sum_{z\in 3^n\Zd \cap \cu_m}
\left| \partial_{\a(z+\cu_n)} \exp\left( \tfrac12tZ \right)
\right|^2
\Biggr]
\\ & 
=
\frac1\rho\,
\E \Biggl[  \exp\left( tZ \right) 
\sum_{z\in 3^n\Zd \cap \cu_m}
\frac1{4\rho}t^2 
\left| \partial_{\a(z+\cu_n)} X_z \right|^2
\Biggr]
\\ & 
\leq 
\frac1{4\rho}t^2 \frac{|\cu_m|}{|\cu_n|}
\E \left[  \exp\left( tZ \right) \right].
\end{align*}
Putting~$f(t):= \E \left[  \exp\left( t Z \right) \right]$ as before, we may write this as 
\begin{equation*} \label{}
f(t) - f\left(\tfrac t2\right)^2
\leq 
\frac1{4\rho}t^2 \frac{|\cu_m|}{|\cu_n|} f(t).
\end{equation*}
Therefore, 
\begin{equation*} \label{}
f(t) \leq \biggl( 1 - \frac1{4\rho}t^2 \frac{|\cu_m|}{|\cu_n|} \biggr)^{\!\!-1} f \left(\tfrac t2\right)^2.
\end{equation*}
Iterating this inequality starting from~$t_0 := \bigl( \rho \frac{|\cu_n|}{|\cu_m|}  \bigr)^{\nicefrac12}$, we obtain
\begin{equation*} \label{}
f\left( t_0 \right) 
\leq
f\left( 2^{-N}t_0 \right)^{2^N}
\prod_{n=0}^{N-1} \biggl( 1 - \frac1{4^{n+1}} \biggr)^{\!\!-2^n} .
\end{equation*}
Since~$Z$ is centered, it is clear that~$f\left(2^{-N} t_0 \right)^{2^N} \to 1$ as~$N\to \infty$. Moreover, the product stays bounded uniformly in~$N$ since
\begin{align*} \label{}
\log \prod_{n=0}^{N-1} \left( 1 - \frac1{4^{n+1}} \right)^{\!\!-2^n}
&
\leq
\sum_{n=0}^{N-1} 2^n
\left| \log \left( 1 - \frac1{4^{n+1}} \right) \right|
\leq
C \sum_{n=0}^{N-1} 2^n \cdot \frac1{4^{n+1}}
\leq C \sum_{n=0}^\infty 2^{-n} < \infty. 
\end{align*}
We deduce that~$f\left(t_0\right) \leq C$, that is, 
\begin{equation*} \label{}
\E \biggl[ \exp\biggl( \biggl( \rho \frac{|\cu_n|}{|\cu_m|}  \biggr)^{\!\!\nicefrac12} Z \biggr) \biggr] \leq C. 
\end{equation*}
This implies that~$Z = \O_{\Psi}\Big (\bigl( \frac{|\cu_m|}{|\cu_n|}\bigr)^{\nicefrac12}   \Bigr)$ for~$\Psi(t)=\Gamma_2\bigl(c\sqrt{\rho}t \bigr)$ and completes the proof of~\eqref{e.SG.okay}.

\begin{lemma}[Exponential moments under~$\SG(\beta,\rho)$] 
\label{l.SG.moments}
Assume that~$\P$ satisfies~$\SG(\beta,\rho)$ with~$\beta \in (0,1)$ and~$\rho>0$. Let~$m,n\in\N$ with~$\beta m < n < m$  and~$X$ be an~$\F(\cu_m)$--measurable random variable. Define
\begin{equation}
\label{e.Zderv.again.2}
Z := \sum_{z\in3^n\Zd\cap \cu_m} |  \partial_{\a(z+\cu_n)} X|^2.
\end{equation}
Then, for every~$q\in [1,\infty)$, 
\begin{align}
\label{e.SG.momentbound}
\E \left[ \big| X - \E[X] \big|^{2q} \right]^{\frac1{q}}
\leq 
\frac{q^2}{\rho} \E \bigl[  Z^{q} \bigr]^{\frac1{q}}.
\end{align}
Moreover, for every~$s\in (0,\infty)$ and~$\theta>0$, 
\begin{align}
\label{e.SG.expmoments}
\E \left[ \exp\Biggl( \frac{1+2s}{16s}\biggl( \frac{ \left| X - \E[X] \right|}{\theta} \biggr)^{\!\!\frac{2s}{1+2s}} \Biggr) \right]
\leq 
4 \E \left[ \exp\biggl( \frac1{s} \left( \frac{Z}{\rho\theta^2}  \biggr)^{\!\!s} \right) \right].
\end{align}
\end{lemma}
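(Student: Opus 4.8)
The plan is to follow the same \emph{Herbst-type moment argument} that we used for $\LSI(\beta,\rho)$ in Lemma~\ref{l.LSI.moments}, but adapted to the weaker spectral gap hypothesis. As before we normalize so that $\E[X]=0$ and $\E[X^2]=1$, and set $f(t):=\E\bigl[X^{2t}\bigr]$ for $t\geq 1$. The key identity is again $f'(t)=\tfrac1t\Ent\bigl[X^{2t}\bigr]$, but now we cannot bound $\Ent[X^{2t}]$ directly; instead we apply $\SG(\beta,\rho)$ to the random variable $X^{t}$, which gives
\begin{equation*}
\var\bigl[X^{t}\bigr]\leq \frac1\rho\,\E\Biggl[\sum_{z\in 3^n\Zd\cap\cu_m}\bigl|\partial_{\a(z+\cu_n)}X^{t}\bigr|^2\Biggr]=\frac{t^2}{\rho}\,\E\bigl[X^{2t-2}Z\bigr]\leq \frac{t^2}{\rho}\,f(t)^{\frac{t-1}{t}}\E\bigl[Z^{t}\bigr]^{\sfrac1t},
\end{equation*}
using the chain rule for the Malliavin derivative and then H\"older's inequality exactly as in~\eqref{e.getmomentsLSI}. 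Since $f(t)=\var[X^t]+\E[X^t]^2$, one combines this with an elementary bound relating $\E[X^t]^2$ to lower moments (or, more simply, runs the argument along the subsequence of even integers where $\E[X^t]$ is controlled by $f(t/?)$) to obtain a differential inequality of the form $\frac{d}{dt}\bigl(f(t)^{\sfrac1t}\bigr)\leq \tfrac{Ct}{\rho}\E[Z^{t}]^{\sfrac1t}$, which upon integration over $[1,q]$ yields~\eqref{e.SG.momentbound} with the quadratic factor $q^2$ in place of the linear $q$ from the $\LSI$ case. The appearance of $q^2$ rather than $q$ is exactly what degrades the final concentration from Gaussian-type to exponential-type.

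The second part,~\eqref{e.SG.expmoments}, then follows from~\eqref{e.SG.momentbound} by the same Chebyshev-and-optimize computation carried out at the end of the proof of Lemma~\ref{l.LSI.moments}. Specifically, from $\E\bigl[|X|^{2q}\bigr]\leq (q^2/\rho)^q\E\bigl[Z^q\bigr]$ we get, for $t>0$,
\begin{equation*}
\P\bigl[|X|>t\bigr]\leq \Bigl(\frac t\theta\Bigr)^{-2q}q^{2q}\,\E\biggl[\Bigl(\frac{Z}{\rho\theta^2}\Bigr)^{q}\biggr],
\end{equation*}
and then inserting the elementary inequality $x^q\leq q^{q/s}\exp(\tfrac1s x^s)$ gives an extra factor $q^{q/s}$, so altogether a factor $q^{q(1+2s)/s}$ multiplies the exponential-moment term. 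Optimizing over $q$ leads to the choice $q=\tfrac12(t/\theta)^{2s/(1+2s)}$ — note the exponent is now $\tfrac{2s}{1+2s}$ rather than $\tfrac{2s}{1+s}$, precisely because of the extra $q^q$ coming from the quadratic moment growth — and this produces the stated stretched-exponential decay with the constant $\tfrac{1+2s}{16s}$, after checking the admissibility constraint $q\geq s$ (valid when $t/\theta\gtrsim s$) and absorbing the small-$t$ range into the prefactor $4$. Integrating the tail bound in $t$ yields~\eqref{e.SG.expmoments}.

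I expect the main obstacle to be the bookkeeping in passing from the spectral-gap estimate on $\var[X^t]$ to a clean differential inequality for $f(t)^{\sfrac1t}$: unlike the entropy, the variance leaves behind the term $\E[X^t]^2$, and one must argue that this term does not spoil the recursion. The cleanest route is probably to note that $\SG(\beta,\rho)$ already gives $\E[X^2]\leq\rho^{-1}\E[Z]$ (apply it to $Y=1+\ep X$ and let $\ep\to0$, as in the $\LSI$ proof), and then to iterate/interpolate so that $\E[X^t]^2\le f(t)^{2(t-1)/t}\cdot(\text{something small})$, or alternatively to work with the non-decreasing function $t\mapsto f(t)^{\sfrac1t}$ directly and absorb the lower-order contribution; either way this is a routine but slightly delicate real-variable manipulation. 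Everything else — the chain rule $\partial_{\a(U)}X^t = tX^{t-1}\partial_{\a(U)}X$, H\"older, and the Chebyshev optimization — is identical in structure to Lemma~\ref{l.LSI.moments} and requires no new idea.
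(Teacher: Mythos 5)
Your core mechanism is the same as the paper's: apply $\SG(\beta,\rho)$ to $X^q$, use the chain rule for the Malliavin derivative and H\"older to get
\begin{equation*}
\E\bigl[X^{2q}\bigr]-\E\bigl[X^{q}\bigr]^2=\var\bigl[X^q\bigr]\leq \frac{q^2}{\rho}\,\E\bigl[X^{2q}\bigr]^{\frac{q-1}{q}}\E\bigl[Z^{q}\bigr]^{\frac1q},
\end{equation*}
and then deduce the moment bound with the $q^2$ growth; your second half (Chebyshev with the extra $q^{2q}$, the exponent $\tfrac{2s}{1+2s}$, the choice $q=\tfrac12(t/\theta)^{2s/(1+2s)}$, the constraint $q\geq s$, and integration of the tail) is exactly the computation the paper omits with the remark that it mirrors the $\LSI$ case.

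The one place you should not follow your primary framing is the attempt to convert the spectral-gap estimate into a differential inequality $\frac{d}{dt}\bigl(f(t)^{\sfrac1t}\bigr)\leq \tfrac{Ct}{\rho}\E[Z^t]^{\sfrac1t}$. Under $\SG$ you have no access to $f'(t)$ at all: the identity $f'(t)=\tfrac1t\Ent[X^{2t}]$ is useless here because $\SG$ bounds a variance at each fixed $t$, not an entropy, and the variance inequality is a pointwise-in-$t$ relation between $f(t)$ and $\E[X^t]^2$ with no infinitesimal content. The paper therefore never differentiates: it treats the display above as a \emph{finite-difference} analogue of~\eqref{e.getmomentsLSI} and closes it by induction over dyadic levels, writing $\E[X^q]^2=\E\bigl[X^{2(q/2)}\bigr]^2$, invoking the inductive bound at $q/2$ together with $\E[Z^{q/2}]^2\leq\E[Z^q]$, and absorbing the resulting term (the base case $\E[X^2]\leq\rho^{-1}\E[Z]$ is obtained exactly as you say). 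This is precisely the ``subsequence'' fallback you mention parenthetically, so the fix is simply to commit to that induction and drop the differential inequality; note only that the careless version of the absorption produces a harmless constant slightly larger than $q^2/\rho$, which propagates into the prefactor of~\eqref{e.SG.expmoments} but not into the exponent $\tfrac{2s}{1+2s}$.
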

\begin{proof}
We may suppose that~$\E[X]=0$. 
Fix~$q\in (1,\infty)$ and compute, using~$\SG(\beta,\rho)$, 
\begin{align*}
\E \bigl[  X^{2q} \bigr] - \E \bigl[  X^{q} \bigr]^2 
=
\var \bigl[  X^q \bigr]
& 
\leq
\frac1\rho 
\E \Biggl[
\sum_{z\in 3^n\Zd \cap \cu_m}
\left| \partial_{\a(z+\cu_n)} \bigl( X^q \bigr)
\right|^2
\Biggr]
=
\frac1\rho \E \left[ q^2 X^{2q-2} Z \right].
\end{align*}
By H\"older's inequality, we obtain
\begin{equation*}
\E \bigl[  X^{2q} \bigr] - \E \bigl[  X^{q} \bigr]^2 
\leq
\frac{q^2} \rho 
\E \bigl[  X^{2q} \bigr]^{\frac{q-1}{q}} \E \bigl[  Z^{q} \bigr]^{\frac1q}.
\end{equation*}
This is essentially a finite difference version of~\eqref{e.getmomentsLSI}, except with a factor of~$q^2$ instead of~$q$ on the right side. A simple induction argument now yields~\eqref{e.SG.momentbound}. 
The proof that~\eqref{e.SG.momentbound} implies~\eqref{e.SG.expmoments} is almost the same as the proof that~\eqref{e.LSI.momentbound} implies~\eqref{e.LSI.expmoments}, so we omit it.
\end{proof}

\subsection{Generalized concentration for sums}
\label{s.closure}

In this section, we state an extension of the~$\CFS$ conditions to more general random fields and with more general norms.

\smallskip

\subsubsection{A generalized~$\CFS$ condition}
\label{ss.CFS.gen}

Let~$K \subseteq \R^{N}$ be a nonempty subset of Euclidean space in dimension~$N\geq 1$. Let~$\Upsilon$ denote the collection of (Lebesgue) measurable maps~$\b:\Rd \to K$. For each Borel set~$U\subseteq \Rd$ Let~$\mathcal{G}(U)$ be the~$\sigma$--algebra on~$\Upsilon$ generated by the random variables \begin{equation*}
\b \mapsto \int_{U} \b_i(x) \varphi (x) \,dx \,, \quad i \in \{1,\ldots,N\}, \ \varphi\in C^\infty_{\mathrm{c}}(U)\,.
\end{equation*}
We set~$\mathcal{G}:=\mathcal{G}(\Rd)$.
The translation group~$\{ T_y \}_{y\in\Rd}$ acts on~$\Upsilon$ and~$\mathcal{G}$ in the obvious way, namely~$T_y\b := \b(\cdot+y)$. 
The canonical element of~$\Upsilon$ is denoted by~$\b$. 
We say that a probability measure~$\P$ on~$(\Upsilon,\mathcal{G})$ is~$\Zd$--stationary if~$\P\circ T_z = \P$ for every~$z\in\Zd$. 

\smallskip

We extend the definition of the Malliavin derivative in the following way. 
Let~$U\subseteq\Rd$ be a bounded Borel subset and~$\vertiii \cdot \vertiii_U$ be a norm on the set of Lebesgue measurable maps from~$U \to \R^N$. 
For each random variable~$X$ on~$\Upsilon$, we define the Malliavin derivative~$|\partial_{\b(U)} X|$ with respect to~$\b\vert_U$, and with variations taken with respect to~$\vertiii \cdot \vertiii_U$, by
\begin{multline} 
\label{e.Mall.Maul.yes}
\left| \partial_{\b(U)} X\right| (\b) 
:=
\limsup_{t\to 0} 
\frac1{2t} 
\sup
\Bigl\{ X(\b_1) - X(\b_2) 
\,:\,
\b_1,\b_2 \in\Upsilon, \ \b = \b_1 = \b_2 \ \mbox{in} \ \Rd\setminus U, 
\\
\vertiii (\b - \b_i) \vertiii_U \leq t  \ \mbox{for} \ i\in\{1,2\}
\Bigr\}.
\end{multline}
Note that~$|\partial_{\b(U)} X|$ depends on the norm~$\vertiii \cdot \vertiii_U$, even though we do not explicitly display this in the notation. 

We define the generalized concentration for sums conditions, which we also denote by~$\CFS(\beta,\Psi)$ and~$\CFS(\beta,\Psi,\Psi^\prime,\gamma)$, in a completely analogous way to Definitions~\ref{d.CFS} and~\ref{d.CFS.strong}. 
For completeness, we state the first definition anyway, though it is nearly a copy-paste of Definition~\ref{d.CFS}. We extend Definition~\ref{d.CFS.strong} to this setting as well, but we do not restate it. 

\begin{definition}[Concentration for sums~$\CFS(\beta,\Psi)$, generalized]
\label{d.CFS.general}
Let~$\beta \in \left[0,1\right)$ and~$\Psi:[1,\infty) \to [0,\infty)$ be an increasing function satisfying~\eqref{e.Young.growth} and~\eqref{e.weak.triangle.ass}. Suppose that~$\vertiii\cdot \vertiii_n$ is a norm on the set of Lebesgue measurable maps from~$\cu_n \to \R^N$, which we extend to be a norm on~$z+\cu_n$, for every~$z\in\Zd$, by translation. 
A probability measure~$\P$ on~$(\Upsilon,\mathcal{G})$ \emph{satisfies~$\CFS(\beta,\Psi)$} provided that: 
\begin{itemize}
\item
For every~$m,n\in\N$ with~$\beta m < n<m$, and  family~$\{ X_z \,:\, z\in 3^n\Zd\cap \cu_m\}$ of random variables satisfying, for every~$z\in\Zd$,
\begin{equation} 
\label{e.CFS.ass.gen}
\left\{
\begin{aligned}
& \E\left[ X_z \right]=0 \,, 
\\ & 
\left| X_z \right| \leq 1\,,
\\ &   
\left| \partial_{\b(z+\cu_n)} X_z\right| \leq 1 \,,
\\
& X_z \quad \mbox{is~$\mathcal{G}(z+\cu_n)$--measurable} \,, \\
\end{aligned}
\right.
\end{equation}
where the Malliavin derivative is defined with respect to~$\vertiii \cdot \vertiii_{n}$ as in~\eqref{e.Mall.Maul.yes} above,
the random variable~$\overline{X}$ defined by
\begin{equation} 
\label{e.Z.norm.gen}
\overline{X}  :=  \avsum_{z\in 3^n\Zd\cap \cu_m}  X_z
\end{equation}
satisfies the bound
\begin{equation}
\label{e.CFS.gen}
\overline{X}
\leq
\O_{\Psi}
\bigl( 3^{-\frac d2(m-n)} \bigr).
\end{equation}
\end{itemize}
\end{definition}
We need the flexibility of this more general~$\CFS$ condition in Section~\ref{ss.rhs} when considering equations with a divergence-form right-hand side. There we work with a pair~$\b = (\a,\f)$ taking values in~$\R^{d\times d} \times \Rd$, and we use the norm
\begin{equation}
\label{e.triple.norm.sec6}
\vertiii (\a,\f) \vertiii_n:= \| \a \|_{L^\infty(\cu_n)} + \| \f \|_{\underline{L}^2(\cu_n)}
\end{equation}
and take~$\CFS(\beta,\Psi)$ as our assumption. Thus, we are more lenient with respect to variation in the forcing vector field. 
Later, in Section~\ref{ss.rhs.optimal}, we will consider the renormalization theory for this equation, and we will assume strong conditions of the form~$\CFS(\beta,\Psi,\Psi^\prime,\gamma)$ in which the norm is slightly strengthened to an~$\ell^\infty$-$L^2$ type norm of the form:
\begin{equation}
\label{e.triple.your.bar.meso}
\vertiii (\a,\f) \vertiii_n:= \| \a \|_{L^\infty(\cu_n)} 
+
\sup_{z\in 3^{k_n} \Zd \cap \cu_n}
\| \f \|_{\underline{L}^2(z+\cu_{k_n})}
\,,
\end{equation}
where~$k_n < n$ represents an appropriate mesoscopic scale, and in our case, we will actually take~$k_n =  \lceil n - (\log n)^{\nicefrac12} \rceil$.

\smallskip

Clearly, the examples given in the previous section can be carried over to this more general~$\CFS$ condition as well. The only catch is that the approximating norms in the case of Section~\ref{ss.AFRD} must be with respect to this new norm~$\vertiii \cdot \vertiii_n$; that is, the left side of an inequality~\eqref{e.AFRD2} in the approximate finite range assumption needs to be with respect to~$\vertiii \cdot \vertiii_m$, not~$\| \cdot\|_{L^\infty(\cu_m)}$. Similarly, the~$\LSI$ and~$\SG$ conditions must be modified to accommodate the slightly altered notion of the Malliavin derivative above. 

\subsection*{Historical remarks and further reading}

The monographs~\cite{Brad,B1,Torq} contain much more information about mixing conditions and quantitative ergodicity for random fields. For more on concentration inequalities like those appearing in Lemmas~\ref{l.LSI.moments} and~\ref{l.SG.moments}, see~\cite{BLM}.

\section{Coarse-graining the coefficient field and iterating up the scales}
\label{s.subadd}

In this chapter, we will provide our first quantitative estimates of the rate of homogenization. Throughout this chapter, we assume that~$\P$ is a~$\Zd$--stationary probability measure on~$(\Omega,\F)$, with the probability space defined as in Section~\ref{ss.probspace}. We will focus here on the special case that~$\a(\cdot)$ is symmetric:
\begin{equation}
\label{e.symm}
\P \bigl[  \a = \a^t \bigr] = 1 \,.
\end{equation}
The theory we develop in this chapter is extended to the general case of possibly nonsymmetric coefficients in Chapter~\ref{s.nonsymm}, below.

\smallskip

As we have seen previously, for instance, in~Proposition~\ref{p.DP} or Theorem~\ref{t.parabolic.GF}, this boils down to quantifying the rate of the limits in~\eqref{e.corrector.qualbound.Hm1} or~\eqref{e.corrector.qualbound.L2} for the first-order correctors (the two sets of limits are equivalent). 
In other words, we need a convergence rate in the ergodic theorem for the stationary random field~$\nabla \phi_e$ and the flux~$\a(e+\nabla \phi_e)$. This requires, of course, some way to quantify the ergodicity of the random field~$\nabla \phi_e$. 

\smallskip

Even if we make a strong quantitative assumption of ergodicity on the coefficient field~$\a(\cdot)$, such as a finite range of dependence, it is by no means obvious how we are to apply this to the gradient corrector field~$\nabla \phi_e$.
If we view a solution such as~$x\mapsto e\cdot x + \phi_e(x)$ as a function of~$\a(\cdot)$, then this function evidently has a complicated structure. It is both highly nonlinear and highly nonlocal and potentially singular. We will have to fight against the possibility that a small perturbation in~$\a(\cdot)$ in a relatively small set would have a relatively huge change in the solution on a large scale.
Therefore, the main task of any theory of quantitative homogenization is to transform, as efficiently as possible, quantitative information on the behavior of the coefficient field into quantitative information on the solutions of the PDE, particularly the correctors.

\smallskip

Our approach in this chapter is to quantify the variational proof in Section~\ref{ss.variational}. 
Rather than pursuing estimates on the infinite-volume correctors directly, we plan to deduce bounds on them as a consequence of estimates on the auxiliary quantities we defined in~\eqref{e.mu.sec2} and~\eqref{e.quad.mu.sec2}, which we call the \emph{coarse-grained coefficients}, denoted by~$\a(U)$. 
We will focus on the limit~\eqref{e.conv.to.ahom}, obtained from applying the subadditive ergodic theorem. 
As we have seen in Section~\ref{ss.variational}, quantifying the speed of convergence of~$\a(\cu_n)$ to the limit~$\ahom$ as~$n\to \infty$ leads to an explicit rate of convergence of the limits~\eqref{e.corrector.qualbound.L2} for the finite-volume correctors.\footnote{This is good enough: we will pass from bounds on the finite-volume correctors to homogenization estimates and, eventually, to bounds on the infinite-volume correctors (see Theorem~\ref{t.optimalstochasticintegrability}).}

\smallskip

From this point of view, the homogenization process is seen as a ``flow'' from small scales to large scales. As the scale increases, the ellipticity contrast in the coarse-grained field decreases until, finally, in the large-scale limit, it converges to~$\ahom$. 
As the coarse-grained field gets close to its limit, one can essentially \emph{linearize the renormalization map}. As we will see, this linearization turns out to be a simple average---the larger scale~$\a(\cu_m)$ is, up to small errors, the average of~$\a(z+\cu_n)$ over its subcubes. At this point, ergodicity can be sharply transferred since we are dealing with a sum of i.i.d.~random variables. 

\smallskip

This perspective centers the coefficients of the equation as the primary protagonist in our story, with the space of solutions of the PDE---and especially any particular linear subspace of the solutions like the correctors---as supporting characters. One of the reasons this is a clever idea is that the \emph{the coarse-grained coefficients have better ergodicity than the correctors}, a theme we will also encounter in Chapter~\ref{s.renormalization}. 

\smallskip

In the first section, we will re-introduce the coarse-grained coefficient field~$\a(U)$, the dual coarse-grained field~$\a_*(U)$, and explore their basic properties. We will perform the key renormalization argument, where we iterate up the scales to obtain a rate of convergence for~\eqref{e.conv.to.ahom}, in Section~\ref{ss.subadd.conv}. This is really the heart of the theory. 
In Section~\ref{ss.det}, we deduce estimates for the rate of the limits in~\eqref{e.corrector.qualbound.L2} for the finite-volume correctors and consequently obtain homogenization error estimates which quantify the qualitative statement of Theorem~\ref{t.qualitative.homogenization}. These will be used in Chapter~\ref{s.regularity} to quantify the minimal scale~$\X$ appearing in the large-scale regularity estimate given in Theorem~\ref{t.C11.sharp}. 

\subsection{Coarse-grained coefficients in the symmetric case}
\label{ss.subadd}

In Section~\ref{ss.variational} we encountered the subadditive quantity~$\mu(U,p)$, defined for each bounded Lipschitz domain~$U\subseteq \Rd$ and~$p\in\Rd$ by
\begin{equation}
\label{e.mu}
\mu(U,p) 
:= \inf_{u \in \ell_p+H^1_0(U)} 
\fint_U \frac12\nabla u \cdot \a\nabla u 
\,.
\end{equation}
In other words,~$\mu(U,p)$ is the volume-normalized energy of the solution of the Dirichlet problem with affine boundary data~$\ell_p$.
As noted in~\eqref{e.quad.mu.sec2}, the quantity~$\mu(U,p)$ is quadratic in~$p$ and  
can, therefore, be written as
\begin{equation}
\label{e.quad.mu}
\mu(U,p) = \frac12 p\cdot \a(U) p
\end{equation}
for a matrix~$\a(U)$, which we call the \emph{coarse-grained coefficient field with respect to~$U$}.

\smallskip

As~$U$ shrinks to a point~$x\in\Rd$, the matrix~$\a(U)$ converges to~$\a(x)$ almost everywhere, by the Lebesgue differentiation theorem. In the limit, as~$U$ becomes a large domain (converging to~$\Rd$), the matrix~$\a(U)$ converges to the deterministic, homogenized matrix~$\ahom$, as we will show. The discrepancy between~$\a(U)$ and~$\ahom$ represents the homogenization error, and we are therefore interested in obtaining a convergence rate for the limit~$\lim_{m\to \infty} \a(\cu_m) = \ahom$.

\smallskip

However, the random matrix~$\a(U)$ is only half of the picture. There is another dual random matrix~$\a_*(U)$ which rightfully competes with~$\a(U)$ for the title of the coarse-grained coefficient field. We introduce, for each bounded Lipschitz domain~$U\subseteq\Rd$ and~$q\in\Rd$, the ``dual'' quantity 
\begin{equation}
\label{e.mustar}
\mu_*(U,q)
:= \sup_{w\in H^1(U)} 
\fint_U \left( -\frac12\nabla w \cdot \a\nabla w + q\cdot \nabla w \right). 
\end{equation}
As~$q\mapsto \mu_*(U,q)$ is also quadratic, we may find a matrix~$\a_*^{-1}(U)$ such that
\begin{equation}
\label{e.quad.mu.star}
\mu_*(U,q) = \frac12 q\cdot \a_*^{-1}(U) q.
\end{equation}
We will see below in Lemma~\ref{l.J.basicprops} that~$\a_*^{-1}(U)$ is indeed invertible, so the notation is not misleading.
The notation is also consistent in the sense that, if~$\a(x)=\ahom$ is constant, then~$\ahom = \a_*(U) = \a(U)$ for every domain~$U$. 
As shown below, we have that~$\a_*(U) \leq \a(U)$ but equality does not generally hold. 
The fact that we have two competing versions of the ``coarse-grained coefficient field'' that are not necessarily equal is because the heterogeneous operator is \emph{not} behaving like a constant-coefficient operator on a finite scale. We think of~$\a(U)$ and~$\a_*(U)$ as upper and lower bounds for what a reasonable notion of the ``coarse-grained coefficient'' could mean, and the difference~$\a(U)-\a_*(U)$ (or perhaps it is better to use their ratio) as a measure of our uncertainty.

\smallskip

It may seem strange that the form of~$\mu(U,p)$ and~$\mu_*(U,q)$ appear to be different. For instance, the former is a minimization problem with fixed boundary data, and the latter is a maximization problem with no boundary data constraint. However, each of these quantities can be written in a dual variational form, with solenoidal vector fields in place of potentials, which then reveals a deeper symmetry between them: we have that 
\begin{equation}
\label{e.mu.dual}
\mu(U,p) 
:= \sup_{\g \in L^2_{\sol}(U)} 
\fint_U \biggl(-\frac12\g \cdot \a^{-1} \g + p\cdot \g \biggr)
\end{equation}
and
\begin{equation}
\label{e.mustar.dual}
\mu_*(U,q) 
:= \inf_{\g \in q + L^2_{\sol,0}(U)} 
\fint_U \frac12\g \cdot \a^{-1} \g \,.
\end{equation}
We leave the proofs of these identities as an exercise to the reader (note that the first identity~\eqref{e.mu.dual} is implicit in the formula~\eqref{e.variational.J} below).

\smallskip 

As the quantities~$\mu$ and~$\mu_*$ are both subadditive, this means that we should expect the random matrices~$\a(U)$ and~$\a_*(U)$ to be decreasing and increasing, respectively, as~$U$ becomes larger and we move up the scales. Thus, roughly speaking,~$\ahom$ lies in between~$\a(U)$ and~$\a_*(U)$, and we can study the convergence of each quantity to~$\ahom$ by controlling their \emph{difference}. It turns out to be much easier to control the difference between these two quantities than to estimate their difference from~$\ahom$ directly.
This is the main idea of the renormalization approach to quantitative homogenization introduced in~\cite{AS}, which introduced the quantity~$\mu_*$ with precisely this purpose in mind. 
We will explore this in the next section, where we estimate the difference between the matrices~$\a(\cu_m)$ and~$\a_*(\cu_m)$ for large~$m$ by using an iteration scheme up the scales.

\smallskip

In the rest of this section, we prepare for this analysis by exploring some basic properties of the subadditive quantities and their matrices~$\a(\cu_m)$ and~$\a_*(\cu_m)$.

\smallskip

The existence of unique (up to additive constants, in the case of~\eqref{e.mustar}) extremal functions for both~\eqref{e.mu} and~\eqref{e.mustar} is a simple consequence of the uniform convexity of the integral functionals. Computing the first variations, we find that the optimizers are both weak solutions of the equation~$-\nabla \cdot \a \nabla u=0$ in~$U$. Indeed, the minimizer of~\eqref{e.mu} is the solution of the Dirichlet problem with boundary data~$\ell_p$. In contrast, the maximizer of~\eqref{e.mustar} is the solution~$w$ of the Neumann problem with the normal flux~$\mathbf{n} \cdot \a\nabla w$ equal to~$\mathbf{n} \cdot q$ on the boundary of~$U$, where~$\mathbf{n}$ is the normal vector on~$\partial U$.   

\smallskip

By testing the definition of~$\mu_*(U,q)$ with the minimizer of~$\mu(U,p)$, we obtain
\begin{equation}
\label{e.fenchel}
\mu(U,p) + \mu_*(U,q) \geq p\cdot q. 
\end{equation}
In the matrix notation, this is equivalent to 
\begin{equation}
\label{e.matrixordering}
\a_*(U) \leq \a(U) .
\end{equation}
We recognize~\eqref{e.fenchel} as resembling the Fenchel-Young inequality in convex analysis. 
However, we emphasize that while we call~$\mu_*$ the ``dual'' quantity to~$\mu$, the function~$q\mapsto \mu_*(U,q)$ is \emph{not} the convex conjugate function (Legendre transform)  of~$p \mapsto \mu(U,p)$. Indeed, they are convex conjugates if and only if~$\a(U) = \a_*(U)$, and as mentioned already, we certainly do not have equality in~\eqref{e.matrixordering} in general! As we will show, what is actually true is that these two functions are becoming convex dual functions in the limit as~$U$ becomes very large, equivalent to the difference~$\a(U) - \a_*(U)$ becoming small. 

\smallskip

We combine~$\mu(U,p)$ and~$\mu_*(U,q)$ into a single quantity~$J(U,p,q)$, defined by
\begin{align}
\label{e.Jaas}
J(U,p,q) 
:=
\mu(U,p) + \mu_*(U,q) - p\cdot q
=
\frac12p\cdot \a(U) p + \frac12 q\cdot \a_*^{-1}(U)q - p\cdot q. 
\end{align}
Evidently,~$J$ is nonnegative by~\eqref{e.fenchel}. Note that~$J(U,p,0) = \mu(U,p)$ and~$J(U,0,q) = \mu_*(U,q)$, and so we regard the quantity~$J$ as the ``master quantity'' that contains the information of both~$\a(U)$ and~$\a_*(U)$ together. 
Note that~$J(U,p,q)$ may also be written as
\begin{equation}
\label{e.magic}
J(U,p,q) 
= 
\frac12 p \cdot (\a(U) - \a_*(U)) p  +   \frac12 (q  - \a_*(U) p) \cdot \a_*^{-1}(U) (q  - \a_*(U)  p)
\,.
\end{equation}
From this form we see that the size of~$J(U,p,\ahom p)$ controls the difference between the matrices~$\a (U)$,~$\a_*(U)$ and~$\ahom$. 

\smallskip

In the following lemma, we list many important properties of~$J$ and the matrices~$\a(U)$ and~$\a_*(U)$ which are used many times below. These properties are deterministic and are valid for every fixed, symmetric coefficient field~$\a\in\Omega$. While a long lemma with about a dozen different statements might be unpleasant at first glance, we ask the reader to bear with us. Each of these properties is essential to our analysis, and listing them together in one place will make them easier to find when we need to refer to them later. 

\smallskip

We denote the set of solutions~$u$ of~$-\nabla \cdot \a\nabla u=0$ in an open set~$U\subseteq\Rd$ by 
\begin{equation}
\label{e.def.AU}
\mathcal{A}(U) := 
\left\{ 
u\in H^1_{\mathrm{loc}}(U)
\,:\,
-\nabla \cdot \a\nabla u = 0 \quad \mbox{in} \ U \right\}
\,.
\end{equation}

\begin{lemma}[Properties of the coarse-grained coefficients]
\label{l.J.basicprops}
Assume~$\a\in\Omega$ is a coefficient field satisfying~$\a=\a^t$. 
Then the following assertions are valid for every bounded Lipschitz domain~$U \subseteq\Rd$:

\begin{itemize} 
\item~$\a(U)$ and~$\a_*(U)$ are ordered and satisfy the same ellipticity bounds as~$\a(x)$:
\begin{equation}
\label{e.a.bounds}
\lambda  \Id 
\leq
\biggl( \fint_{U} \a^{-1} (x)\,dx \biggr)^{\!\!-1} 
\leq 
\a_*(U) 
\leq 
\a(U) 
\leq 
\fint_U \a(x)\,dx 
\leq 
\Lambda  \Id.
\end{equation}

\item The quantity~$J(U,p,q)$ and the matrices~$\a(U)$ and~$\a_*(U)$ are~$\F(U)$--measurable. 

\item Variational representation of~$J$: for every~$p,q\in\Rd$, 
\begin{equation}
\label{e.variational.J}
J(U,p,q) = \max_{v\in \mathcal{A}(U)} 
\fint_U \left( -\frac12 \nabla v\cdot \a\nabla v -p\cdot \a\nabla v + q\cdot \nabla v   \right).
\end{equation}

\item The maximizer of~\eqref{e.variational.J} is unique up to additive constants and denoted by~$v(\cdot,U,p,q)$. It is the difference of the maximizer of~$\mu_*(U,q)$ in~\eqref{e.mustar} and the minimizer of~$\mu(U,p)$ in~\eqref{e.mu}. In particular, 
\begin{equation}
\label{e.vUpq.linear}
(p,q) \mapsto v(\cdot,U,p,q) \quad \mbox{is linear.}
\end{equation}

\item Characterization in terms of the energy of the maximizer: for every~$p,q\in\Rd$, 
\begin{equation}
\label{e.Jenergyv}
J(U,p,q) = \fint_U \frac12 \nabla v(\cdot,U,p,q) \cdot \a\nabla v(\cdot,U,p,q).
\end{equation}

\item Characterization in terms of spatial averages of gradients \& fluxes: for every~$p,q\in\Rd$, 
\begin{equation}
\label{e.a.astar.formulas}
\fint_U \nabla v(\cdot,U,p,q) 
= \a_*^{-1}(U) q - p
\qquad \mbox{and} \qquad
\fint_U \a \nabla v(\cdot,U,p,q) 
=
q - \a(U) p 
\,.
\end{equation}
%\begin{equation}
%\label{e.a.formulas}
%\a(U) p 
%= 
%\a(U) \fint_U \nabla v(\cdot,U,-p,0)
%=
%\fint_U\a\nabla v(\cdot,U,-p,0)
%\end{equation}
%and
%\begin{equation}
%\label{e.astar.formulas}
%\a_*^{-1} (U) q 
%= 
%\a_*^{-1} (U)\fint_U \a  \nabla v(\cdot,U,0,q)
%= 
%\fint_U \nabla v(\cdot,U,0,q).
%\end{equation}

\item First variation: for every~$w\in \A(U)$ and~$p,q\in\Rd$, 
\begin{equation}
\label{e.firstvar}
q\cdot \fint_U \nabla w - p \cdot \fint_U \a \nabla w 
=
\fint_U \nabla w \cdot \a \nabla v(\cdot,U,p,q) . 
\end{equation}

\item Second variation and quadratic response: for every~$w\in \A(U)$ and~$p,q\in\Rd$, 
\begin{align}
\label{e.quadresp}
& J(U,p,q) - \fint_U \left( -\frac12 \nabla w \cdot \a\nabla w -p\cdot \a\nabla w+ q\cdot \nabla w   \right)
\notag \\ & \qquad \qquad\qquad 
=
\fint_U \frac12 \left( \nabla v(\cdot,U,p,q) - \nabla w \right)\cdot \a\left( \nabla v(\cdot,U,p,q) - \nabla w \right).
\end{align}

\item Subadditivity: for every~$m,n\in\N$ with~$n<m$ and~$p,q\in\Rd$, 
\begin{align}
\label{e.subaddJ}
J(\cu_m,p,q) 
\leq 
\avsum_{z\in 3^n\Zd \cap \cu_m} 
J(z+\cu_n,p,q)
\end{align}
and, equivalently, 
\begin{align}
\label{e.subadda}
\a(\cu_m) 
\leq 
\avsum_{z\in 3^n\Zd \cap \cu_m} \!\!\!
\a(z+\cu_n) 
\quad \mbox{and} \quad 
\a_*(\cu_m) 
\geq 
\biggl(
\avsum_{z\in 3^n\Zd \cap \cu_m} \!\!\!
\a_*^{-1}(z+\cu_n) 
\biggr)^{\!\!-1}\!\!.
\end{align}

\item Control of~$\a-\a_*$ by the convex duality defect: there exists a constant~$C(d,\lambda,\Lambda)<\infty$ such that, for every matrix~$\b\in \R^{d\times d}_{\mathrm{sym}}$,
\begin{equation} 
\label{e.diagonalset}
\left| \a(U) - \a_*(U) \right| 
+
\left| \a(U) - \b \right|^2 
+
\left| \a_*(U) - \b \right|^2 
\leq C \sum_{i=1}^d J(U,e_i,\b e_i)
\,. 
\end{equation}

\item Control of additivity defect by convex duality defect: for every matrix~$\b\in \R^{d\times d}_{\mathrm{sym}}$ and~$m,n\in\N$ with~$n<m$, 
\begin{align}
\label{e.additivity.by.J}
\!\!\!\avsum_{z\in 3^n\Zd\cap \cu_m} \!\!\!\!
\a(z+\cu_n) 
- 
\biggl( \avsum_{z\in 3^n\Zd\cap \cu_m}
\!\!\!\!
\a_*^{-1} (z+\cu_n) \biggr)^{\!\! -1} \!\!
\leq
2 \! \!\!\! \! \avsum_{z\in 3^n\Zd\cap \cu_m} 
\sum_{i=1}^d 
J(z+\cu_n,e_i,\b e_i) \Id 
\,. 
\end{align}

\item Coarse-graining flux inequality: for every~$u\in \A(U)$ and~$e\in\Rd$, 
\begin{equation}
\label{e.fluxmaps}
\left| e\cdot 
\fint_U \bigl( \a_*(U) - \a \bigr) \nabla u
\right| 
\leq 
\bigl(e\cdot \bigl(\a(U) - \a_*(U)\bigr) e\bigr)^{\nicefrac12} 
\biggl( \fint_U\nabla w \cdot\a\nabla w \biggr)^{\!\!\nicefrac12} 
\,.
\end{equation}

\item Coarse-graining energy inequality: for every~$u\in H^1(U)$, 
\begin{align}
\label{e.energymaps}
\frac12\left( \fint_U \nabla u \right) \cdot \a_*(U) \left( \fint_U \nabla u \right)
\leq
\fint_U \frac12 \nabla u \cdot \a\nabla u 
\,.
\end{align}
Likewise, for every~$\g \in L^2_{\sol}(U)$,
\begin{align}
\label{e.energymaps.dual}
\frac12\left( \fint_U \g \right) \cdot \a^{-1}(U) \left( \fint_U \g \right)
\leq
\fint_U \frac12 \g \cdot \a^{-1}\g 
\,.
\end{align}
\end{itemize}
\end{lemma}
\begin{proof}
We begin with the proof of~\eqref{e.a.bounds}. 
The uniform ellipticity assumption~\eqref{e.ellipticity} clearly implies~$\fint_U \a(x)\,dx \leq \Lambda  \Id$ and~$\fint_U \a^{-1}(x) \,dx \leq \lambda^{-1}  \Id$. 
Testing the definition of~$\mu$ in~\eqref{e.mu} with the affine function~$\ell_p$ and varying~$p$ yields~$\a(U) \leq \fint_U \a(x)\,dx$. 
We next observe that, for every~$w \in H^1(U)$, 
\begin{align*}
\fint_U  \left( q \cdot \nabla w - \frac12\nabla w\cdot \a\nabla w \right) 
\leq 
\fint_U
\sup_{p\in\Rd} 
\left( q \cdot p - \frac12 p \cdot \a p \right) 
=
\frac12q\cdot 
\left(
\fint_U
\a^{-1}(x)\,dx \right)q.
\end{align*}
Taking the supremum over~$w\in H^1(U)$ yields~$\a_*^{-1}(U) \leq \fint_U \a^{-1}(x)\,dx$, which is the desired lower bound for~$\a_*(U)$. Finally, the inequality~$\a_*(U) \leq \a(U)$ was obtained above in~\eqref{e.matrixordering}. 

\smallskip

The~$\F(U)$--measurability of~$\mu(U,p)$ and~$\mu_*(U,q)$ and hence of~$J(U,p,q)$,~$\a(U)$ and~$\a_*(U)$ is immediate from the definitions.

\smallskip

We next give the proof of~\eqref{e.variational.J} and of the assertion below it. 
Let~$\hat{v}$ denote the minimizer of~$\mu(U,p)$ in~\eqref{e.mu}. Note that, by~$\hat{v} - \ell_p \in H^1_0(U)$ and~$\hat{v}\in\A(U)$, we have that~$p=\fint_U \nabla \hat{v}$ as well as~$p\cdot \fint_U\a\nabla w = \fint_U \nabla \hat{v}\cdot \a\nabla w$ for every~$w\in\A(U)$. 
We next fix~$w\in \A(U)$ and compute
\begin{align}
\label{e.Jvarproof}
\lefteqn{
\mu(U,p) + 
\fint_U \left( -\frac12 \nabla w \cdot \a\nabla w + q\cdot\nabla w \right) \,dx
- p\cdot q
} \qquad  & 
\notag \\ &
=
\fint_U \left( \frac12\nabla \hat{v}\cdot \a\nabla \hat{v} -\frac12 \nabla w \cdot \a\nabla w + q\cdot( \nabla w - \nabla \hat{v}) \right) \,dx
\notag \\ &
=
\fint_U \left( - \frac12\left( \nabla w - \nabla \hat{v}\right) \cdot \a\left( \nabla w - \nabla \hat{v}\right) - \nabla \hat{v} \cdot \a\left( \nabla w - \nabla \hat{v} \right) + q\cdot( \nabla w - \nabla \hat{v}) \right) \,dx
\notag \\ &
=
\fint_U \left( - \frac12\left( \nabla w - \nabla \hat{v}\right) \cdot \a\left( \nabla w - \nabla \hat{v}\right) - p \cdot \a\left( \nabla w - \nabla \hat{v} \right) + q\cdot( \nabla w - \nabla \hat{v}) \right) \,dx.
\end{align}
Taking the supremum of the left and right sides of~\eqref{e.Jvarproof} over~$w\in \A(U)$ yields~\eqref{e.variational.J}. Note that the maximizer of~$\mu_*(U,q)$ in~\eqref{e.mustar} belongs to~$\A(U)$, so the supremum can be restricted to~$\A(U)$. The assertion below~\eqref{e.variational.J} is also immediate from~\eqref{e.Jvarproof}. The linearity~\eqref{e.vUpq.linear} of~$(p,q) \mapsto v(\cdot,U,p,q)$ is then immediate. 

\smallskip

We turn next to the proof of the first and second variations,~\eqref{e.firstvar} and~\eqref{e.quadresp}.
Denote the maximizer of the variational problem in~\eqref{e.variational.J}
by~$v:= v(\cdot,U,p,q)$. Select a test function~$w\in \A(U)$ and compute
\begin{align*}
\lefteqn{
J(U,p,q) -
\fint_U \left(
-\frac12\nabla (v+w) \cdot \a\nabla (v+w) - p\cdot \a\nabla (v+w) + q\cdot \nabla (v+w) 
\right)
} \qquad\qquad\qquad & 
\\ &
=
\fint_U \left(
-\frac12\nabla w \cdot \a\nabla w - \nabla v\cdot \a\nabla w - p\cdot \a \nabla w + q\cdot \nabla w 
\right)
\,.
\end{align*}
As this expression is nonnegative for every~$w\in\A(U)$, we may replace~$w$ by~$tw$ and~$-tw$ and then send~$t\to 0$ after dividing by~$t$ to obtain
\begin{equation*}
0 = \fint_U \left(
\nabla v\cdot \a\nabla w + p\cdot\a \nabla w - q\cdot \nabla w 
\right).
\end{equation*}
This is~\eqref{e.firstvar}. 
To get~\eqref{e.quadresp}, replace~$w$ with~$w-v$ and substitute it into the previous display. 

\smallskip

The identity~\eqref{e.Jenergyv} is immediate from~\eqref{e.quadresp}: we just take~$w=0$ in the latter.  

\smallskip

The identity~\eqref{e.a.astar.formulas} is obtained by~\eqref{e.firstvar}, the polarization identity followed by~\eqref{e.Jenergyv} and~\eqref{e.Jaas}. We get
\begin{align}
\label{e.fluxmaps.computation}
\lefteqn{ 
q' \cdot \fint_U \nabla v(\cdot,U,p,q) - p' \cdot \fint_U \a \nabla v(\cdot,U,p,q) 
} \qquad & 
\notag \\ & 
=
\fint_U \nabla v(\cdot,U,p,q) \cdot \a \nabla v(\cdot,U,p',q')
\notag \\ & 
=
\fint_U \frac14 \nabla v(\cdot,U,p{+}p',q{+}q') \cdot \a \nabla v(\cdot,U,p{+}p',q{+}q')
\notag \\ & \qquad 
-
\fint_U \frac14 \nabla v(\cdot,U,p{-}p',q{-}q') \cdot \a \nabla v(\cdot,U,p{-}p',q{-}q')
\notag \\ & 
=
\frac12 J(U,p{+}p',q{+}q') - \frac12 J(U,p{-}p',q{-}q')
\notag \\ & 
= 
p'\cdot \a(U) p + q'\cdot \a_*^{-1}(U) q - p\cdot q' - q \cdot p'
\notag \\ & 
= 
q' \cdot ( \a_*^{-1}(U) q - p) -
p'\cdot ( q - \a(U) p  ) 
\,. 
\end{align}
This yields~\eqref{e.a.astar.formulas}.  

\smallskip

We next show subadditivity. 
Let~$\{ U_i \}_{i=1,\ldots,N}$ be a partition of~$U$ (up to a null set). By~\eqref{e.variational.J}, 
\begin{align*}
J(U,p,q) & 
= 
\sup_{v\in \mathcal{A}(U)} 
\fint_U \left( -\frac12 \nabla v\cdot \a\nabla v -p\cdot \a\nabla v + q\cdot \nabla v   \right)
\\ & 
=
\sup_{v\in \mathcal{A}(U)} 
\sum_{i=1}^N
\frac{|U_i|}{|U|}
\fint_{U_i} \left( -\frac12 \nabla v\cdot \a\nabla v -p\cdot \a\nabla v + q\cdot \nabla v   \right)
\\ & 
\leq
\sum_{i=1}^N
\frac{|U_i|}{|U|}
\sup_{v\in \mathcal{A}(U_i)} 
\fint_{U_i} \left( -\frac12 \nabla v\cdot \a\nabla v -p\cdot \a\nabla v + q\cdot \nabla v   \right)
=
\sum_{i=1}^N
\frac{|U_i|}{|U|}
J(U_i,p,q). 
\end{align*}
Specializing to the case~$U = \cu_m$ with partition~$\{ z+\cu_n \,:\, z\in 3^n\Zd \cap \cu_m \}$, we obtain~\eqref{e.subaddJ}. The inequalities~\eqref{e.subadda} are immediate from~\eqref{e.Jaas} and~\eqref{e.subaddJ}. 

\smallskip

We turn to the proof of~\eqref{e.diagonalset}.
According to~\eqref{e.magic}, for any symmetric matrix~$\b$, 
\begin{equation}
\label{e.magic.b.plug}
J(U,e,\b e) 
=
\frac12 e \cdot \bigl( \a(U) - \a_*(U) \bigr) e
+
\frac12 e \cdot \bigl(\b - \a_*(U)\bigr) \a_*^{-1}(U) \bigl(\b - \a_*(U)\bigr) e
\,.
\end{equation}
the fact that~$\a (U)- \a_*(U) \geq 0$ and that~$\a^{-1}_* \geq \Lambda^{-1}  \Id \geq c \Id$ by~\eqref{e.a.bounds}, we deduce that 
\begin{equation*}
\left| \a(U) - \a_*(U) \right|
+ \left| \a_*(U) - \b \right|^2
\leq
C\sup_{e \in B_1} 
J(U,e,\b e) 
\leq 
C \sum_{i=1}^d 
J(U,e_i,\b e_i). 
\end{equation*}
This yields~\eqref{e.diagonalset}.
Moreover, we note that 
\begin{equation}
\label{e.JU.aastar.gap}
J(U,e,\a_*(U) e) 
=
\frac12 e \cdot \bigl( \a(U) - \a_*(U) \bigr) e
\,.
\end{equation}

\smallskip

We next prove~\eqref{e.additivity.by.J}. This is a consequence of~\eqref{e.diagonalset} and the fact that the mean and harmonic mean of a finite sequence of positive real numbers differ by, at most, a multiple of the sample variance. In terms of matrices, the estimate says that, for every sequence~$\b_1,\ldots,\b_N$ of symmetric positive definite matrices with~$\mathbf{m} := \avsum_{i=1}^N \b_i$ and~$\mathbf{h}:= \bigl(\avsum_{i=1}^N \b_i^{-1} \bigr)^{\!\!-1}$, we find the following elementary identity, for every~$\b \in \R^{d\times d}_{\sym}$,  
\begin{equation}
\label{e.mean.harm.mean.iden}
\mathbf{m} 
= 
\mathbf{h} +  \avsum_{i=1}^N  (\b - \b_i) \b_i^{-1} (\b - \b_i)  - (\mathbf{h} - \b) \mathbf{h}^{-1} (\mathbf{h} - \b)\,.
\end{equation}
The last term is negative so that we deduce that
\begin{equation} 
\label{e.harmonic.mean.mean.quad.response}
\mathbf{m} 
\leq
\mathbf{h} + \avsum_{i=1}^N  (\b - \b_i) \b_i^{-1} (\b - \b_i) \,.
\end{equation}
Applying~\eqref{e.harmonic.mean.mean.quad.response} to~$\{\a_*(z+\cu_n)\,:\, z\in 3^n\Zd\cap\cu_m \}$, we deduce that
\begin{align*} % \label{e.}
\avsum_{z\in 3^n\Zd\cap\cu_m} \!\!\!
\a_*(z+\cu_n) 
& 
\leq 
\biggl( \avsum_{z\in 3^n\Zd\cap\cu_m} \a_*^{-1}(z+\cu_n)  \biggr)^{\!-1} 
\notag \\ 
& \quad
+  \!\!\!
\avsum_{z\in 3^n\Zd\cap\cu_m} \!\!\! (\a_*(z+\cu_n) - \b)\a_*^{-1}(z+\cu_n) (\a_*(z+\cu_n) - \b)\,. 
\end{align*}
Therefore,~\eqref{e.magic.b.plug} yields that, for every~$e\in\Rd$, 
\begin{equation*} % \label{e.}
\!\!\!  \avsum_{z\in 3^n\Zd\cap\cu_m} \!\!\!
e \cdot \a(z+\cu_n) e
\leq 
e \cdot \biggl( \avsum_{z\in 3^n\Zd\cap\cu_m} \a_*^{-1}(z+\cu_n)  \biggr)^{\! -1} e
+ 
2 \!\!\! \avsum_{z\in 3^n\Zd\cap\cu_m} \!\!\! 
J(z+\cu_n,e,\b e)\,.
\end{equation*}
This gives~\eqref{e.additivity.by.J}.

\smallskip

Next, we prove~\eqref{e.fluxmaps}. We use~\eqref{e.firstvar}, the Cauchy-Schwarz inequality,~\eqref{e.Jenergyv} and~\eqref{e.JU.aastar.gap} to obtain, for every~$w\in\A(U)$ and~$e\in\Rd$, 
\begin{align*}
e \cdot 
\fint_U 
( \a_*(U) - \a ) \nabla w
&
=
\fint_U \nabla v(\cdot,U,e,\a_*(U)e) 
\cdot 
\a \nabla w
\\ & 
\leq
\left(\fint_U \nabla v(\cdot,U,e,\a_*(U)e) \cdot\a\nabla v(\cdot,U,e,\a_*(U)e)\right)^{\!\!\nicefrac12} \!
\left( \fint_U\nabla w \cdot\a\nabla w \right)^{\!\!\nicefrac12} 
\\ & 
=
( 2J(U,e,\a_*(U)e))^{\nicefrac12}
\biggl( \fint_U\nabla w \cdot\a\nabla w \biggr)^{\!\!\nicefrac12} 
\\ & 
= 
\bigl(e\cdot \bigl(\a(U) - \a_*(U)\bigr) e\bigr)^{\nicefrac12} 
\biggl( \fint_U\nabla w \cdot\a\nabla w \biggr)^{\!\!\nicefrac12} 
\,.
\end{align*}
This concludes the proof of~\eqref{e.fluxmaps}.

\smallskip

Finally, we give the proof of~\eqref{e.energymaps} and~\eqref{e.energymaps.dual}.
By the definition of~$\mu_*$, we have that, for every~$u\in H^1(U)$, if we set~$p:= \fint_U \nabla u$ then we have 
\begin{align*}
\frac12p \cdot \a_*(U) p
&
=
\sup_{q\in \Rd}
\biggl( p\cdot q - \frac12 q \cdot \a_*^{-1}(U) q \biggr)
\notag \\ & 
=
\sup_{q\in \Rd}
\inf_{v\in H^1(U)} 
\fint_U \biggl( q\cdot (\nabla u - \nabla v) + \frac12\nabla v \cdot \a\nabla v \biggr)
%\notag \\ & 
\leq 
\fint_U \frac 12\nabla u \cdot \a\nabla u
\,.
\end{align*}
This yields~\eqref{e.energymaps}.
To obtain~\eqref{e.energymaps.dual}, we fix
$\g \in L^2_{\sol} (U)$, set~$q:= -\fint_U \g$ and similarly compute, using the dual variational representation~\eqref{e.mu.dual},
\begin{align*}
\frac12q \cdot \a^{-1}(U) q
&
=
\sup_{p\in \Rd}
\biggl( p\cdot q - \frac12 p \cdot \a(U) p \biggr)
\notag \\ & 
=
\sup_{p\in \Rd}
\inf_{\mathbf{h} \in L^2_{\sol} (U)} 
\fint_U \biggl( p\cdot (\mathbf{h} - \g) + \frac12\mathbf{h} \cdot \a^{-1} \mathbf{h} \biggr)
%\notag \\ & 
\leq 
\fint_U \frac 12\g \cdot \a^{-1} \g
\,.
\end{align*}
This completes the proof of the lemma. 
\end{proof}

\subsection{Further properties}

Of the many properties listed in Lemma~\ref{l.J.basicprops}, one that perhaps deserves to be highlighted is~\eqref{e.diagonalset}.\footnote{Note that~\eqref{e.diagonalset} appears strange at first glance because the first term on the left has no square, but the other two terms are squared. This is not a typo but a subtle and important point! The two coarse-grained matrices are indeed typically much closer to each other than they are to the homogenized matrix.} On a qualitative level, it says that if we can find a matrix~$\b$ such that~$J(U,p,q)$ vanishes for every~$(p,q) = (e,\b e)$, then we know everything about~$J(U,\cdot)$ since~$\a(U) = \a_*(U) = \b$. We recognize this from the convex analysis: if~$J(U,e,\b e)$ vanishes for all~$e\in\Rd$ then~$\mu(U,\cdot)$ and~$\mu_*(U,\cdot)$ are indeed convex dual functions; and equality in the Fenchel-Young inequality uniquely specifies the gradients of the convex dual pair of functions. What the estimate~\eqref{e.diagonalset} says is that, at least for uniformly quadratic functions, this relationship has some stability: if we can find a matrix~$\b$ such that~$J(U,e,\b e)$ is small, then~$\a(U)$ and~$\a_*(U)$ are nearly equal and both are nearly equal to~$\b$. For this reason, we often call~$\inf_{q \in\Rd} J(\cu_n,p,q)$ the \emph{convex duality defect}.

\smallskip

On a technical level, the inequality~\eqref{e.diagonalset} says that if we can find a \emph{deterministic} matrix~$\b$ such that~$\E\left[ J(U,e,\b e) \right]$ is small for every~$|e|=1$, then we have control over the expected difference~$\E \left[ \a(U) - \a_*(U) \right]$ as well as control on the variance of both~$\a(U)$ and~$\a_*(U)$. 
Therefore, we can control both subadditive quantities and obtain a quantitative rate of homogenization if we can find a deterministic matrix~$\b$ and a convergence rate for this limit: 
\begin{equation} 
\label{e.Jtozero}
\lim_{n\to\infty} \E \left[ J(\cu_n,p,\b p) \right] = 0. 
\end{equation}
This is precisely the strategy we pursue in Section~\ref{ss.subadd.conv}. Of course, the deterministic matrix~$\b$ will be~$\ahom$, the homogenized matrix defined below in~\eqref{e.ahom.def}. 

\smallskip

Observe that the combination of~\eqref{e.matrixordering},~\eqref{e.subadda} and~\eqref{e.additivity.by.J} asserts that, for any~$\b\in \R^{d\times d}_{\mathrm{sym}}$,
\begin{align}
\label{e.wrap.around}
\a_*(\cu_m) 
& 
\leq 
\a(\cu_m) 
\notag \\ & 
\leq 
\avsum_{z\in 3^n\Zd \cap \cu_m} 
\a(z+\cu_n) 
\notag \\ & 
\leq 
\biggl(
\avsum_{z\in 3^n\Zd \cap \cu_m} 
\a_*^{-1}(z+\cu_n) 
\biggr)^{\!\!-1}
+ 2 \biggl (\avsum_{z\in 3^n\Zd\cap \cu_m}
\sum_{i=1}^d 
J(z+\cu_n,e_i,\b e_i) \biggr )\Id
\notag \\ & 
\leq 
\a_*(\cu_m) + 2\biggl ( \avsum_{z\in 3^n\Zd\cap \cu_m}
\sum_{i=1}^d 
J(z+\cu_n,e_i,\b e_i)\biggr )\Id
\,.
\end{align}
As~\eqref{e.wrap.around} has circled, each of the inequalities in the display is strict by at most the last term on the right side. In particular, we obtain bounds on the \emph{additivity defects}:
\begin{equation}
\label{e.add.defect.a}
\avsum_{z\in 3^n\Zd \cap \cu_m} 
\a(z+\cu_n) 
\leq 
\a(\cu_m) + 2 \avsum_{z\in 3^n\Zd\cap \cu_m}
\sum_{i=1}^d 
J(z+\cu_n,e_i,\b e_i) \Id
\end{equation}
and, using also lower bounds for~$\a_*$ in~\eqref{e.a.bounds},
\begin{equation}
\label{e.add.defect.astar}
\avsum_{z\in 3^n\Zd \cap \cu_m} 
\a_*^{-1}(z+\cu_n) 
\leq 
\a_*^{-1} (\cu_m) + 2 \lambda^{-2} \avsum_{z\in 3^n\Zd\cap \cu_m}
\sum_{i=1}^d 
J(z+\cu_n,e_i,\b e_i) \Id
\,.
\end{equation}
In other words, each coarse-grained matrices fails to be additive by at most a multiple of the convex duality defect, averaged over the smaller scale cubes. This is essential to our strategy because this ``additivity'' allows us to replace~$\a(\cu_m)$ with~$\avsumtext_{z\in 3^n\Zd \cap \cu_m} \a(z+\cu_n)$, at least up to a suitably small error, and thus to take advantage of the stochastic cancellations. 

\smallskip

Indeed, this is the only way we will use the mixing condition~\eqref{e.CFS} in this chapter: it will be applied exclusively to sums of~$J(\cdot,p,q)$ indexed over families~$\{ z+\cu_n\,:\, z\in 3^n\Zd\cap \cu_m \}$ of triadic cubes, for integers~$n<m$. We have already shown in Lemma~\ref{l.J.basicprops} that, up to a normalizing factor and after subtracting its expectation, the corresponding random variables~$J(z+\cu_n,p,q)$ satisfy the conditions in~\eqref{e.CFS.ass}---with the exception of the bound on its Malliavin derivative, which is checked in the following lemma. For future reference, we note that the statement and proof of this lemma do not use the symmetry assumption~\eqref{e.symm} on the coefficient field, so it is also valid for the more general quantity~$J$ defined in Chapter~\ref{s.nonsymm}.

\begin{lemma}
\label{l.malliavin}
There exists~$C(d,\lambda,\Lambda)<\infty$ such that, for every bounded domain~$U\subseteq\Rd$ and~$p,q\in \R^d$, 
\begin{equation}
\label{e.malliavin.estimate}
\left| 
\partial_{\a(U)} J(U,p,q) \right|
\leq
C (|p| + |q|) J(U,p,q)^{\nicefrac12}.  
\end{equation}
\end{lemma}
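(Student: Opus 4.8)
\textbf{Proof proposal for Lemma~\ref{l.malliavin}.}

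The plan is to use the variational characterization~\eqref{e.Jenergyv} of $J(U,p,q)$ as the energy of the maximizer $v = v(\cdot,U,p,q)$, together with the quadratic-response identity~\eqref{e.quadresp}, to control the effect of an infinitesimal perturbation of the coefficient field restricted to $U$. First I would fix $\a, \a' \in \Omega$ differing only inside $U$, with $\|\a - \a'\|_{L^\infty(U)} \leq t$, and write $v = v(\cdot,U,p,q)$ and $v' = v(\cdot,U,p,q)$ for the two maximizers. The key observation is that $v'$ is an admissible competitor for the variational problem~\eqref{e.variational.J} defining $J$ with coefficient field $\a$ (since $\mathcal{A}(U)$ is replaced but we can instead test the $\a$-problem with a suitable function, or work directly with the energy representation): using~\eqref{e.quadresp} with $w = v'$ and then Cauchy--Schwarz on the cross term, I would bound $J(U,p,q)$ evaluated at $\a$ against the same quantity evaluated at $\a'$ plus an error controlled by $\|\a-\a'\|_{L^\infty(U)}$ times products of the energies of $v$ and $v'$.

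More concretely, the difference $J_\a(U,p,q) - J_{\a'}(U,p,q)$ can be written, using~\eqref{e.Jenergyv} and the Euler--Lagrange equation satisfied by each maximizer, as an integral of $(\a - \a')$ contracted against gradients of the maximizers; the first-variation identity~\eqref{e.firstvar} lets one absorb the ``change of maximizer'' contribution, so that up to higher-order-in-$t$ terms
\begin{equation*}
J_\a(U,p,q) - J_{\a'}(U,p,q)
=
\fint_U \Bigl( \tfrac12 \nabla v \cdot (\a - \a') \nabla v + p \cdot (\a - \a')\nabla v \Bigr) + o(t)\,.
\end{equation*}
Here the second term comes from the linear term $-p\cdot\a\nabla v$ in the functional~\eqref{e.variational.J}, whose coefficient field also gets perturbed. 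By Cauchy--Schwarz, $\fint_U |\nabla v|^2 \leq C\, \fint_U \nabla v \cdot \a\nabla v = C\, J(U,p,q)$ by~\eqref{e.Jenergyv} and uniform ellipticity, so the first term is bounded by $C t\, J(U,p,q)$, while the second term is bounded by $C t\, |p|\, (\fint_U |\nabla v|^2)^{1/2} \leq C t\, |p|\, J(U,p,q)^{1/2}$. Dividing by $2t$ and taking $\limsup_{t\to0}$ as in the definition~\eqref{e.Mall} then yields
\begin{equation*}
\left|\partial_{\a(U)} J(U,p,q)\right|
\leq
C\, J(U,p,q) + C(|p|+|q|)\, J(U,p,q)^{1/2}\,,
\end{equation*}
and since $J(U,p,q) \leq \tfrac12 p\cdot\a(U)p + \tfrac12 q\cdot\a_*^{-1}(U)q \leq C(|p|^2+|q|^2)$ by~\eqref{e.a.bounds}, we have $J(U,p,q) \leq C(|p|+|q|)\,J(U,p,q)^{1/2}$, so the first term can be absorbed into the second, giving~\eqref{e.malliavin.estimate}.

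The main obstacle I anticipate is making the ``change of maximizer is higher order'' step fully rigorous at the level of the $\limsup$-type Malliavin derivative~\eqref{e.Mall}, rather than a formal first variation: one must verify that $\|\nabla v' - \nabla v\|_{L^2(U)} = O(t)$ (which follows from uniform convexity of the energy functional and the quadratic response~\eqref{e.quadresp}, comparing the two problems), and then that the cross terms involving $\nabla v' - \nabla v$ contribute only $O(t^2)$ after using~\eqref{e.firstvar}. A clean way to sidestep the delicacy is to avoid differentiating the maximizer entirely: bound $J_\a - J_{\a'}$ above by testing the $\a$-variational problem with the $\a'$-maximizer $v'$ (legitimate since $\mathcal{A}$ enters only through the supremum, and $v'$ together with~\eqref{e.quadresp} gives $J_\a(U,p,q) \geq \fint_U(-\tfrac12\nabla v'\cdot\a\nabla v' - p\cdot\a\nabla v' + q\cdot\nabla v')$), and symmetrically, so that the one-sided bounds combine to give the desired estimate without ever needing $\nabla v' - \nabla v = O(t)$. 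I would carry out this symmetric comparison argument as the primary route, keeping the first-variation heuristic above only as motivation.
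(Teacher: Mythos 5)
Your final bookkeeping is fine (bounding the error by $Ct\,J + Ct(|p|+|q|)J^{\sfrac12}$ and absorbing the first term via $J\leq C(|p|+|q|)^2$ is exactly how the paper concludes), but both routes you propose rest on testing a variational problem with a function that is not admissible for it, and this is a genuine gap. In~\eqref{e.variational.J} the supremum is taken over $\mathcal{A}(U)$, the set of solutions of $-\nabla\cdot\a\nabla u=0$ in $U$, so the admissible class itself depends on the coefficient field. The maximizer $v'$ for the perturbed field $\a'$ solves the $\a'$-equation, hence in general $v'\notin\mathcal{A}(U)$, and the claimed inequality $J_\a(U,p,q)\geq \fint_U\bigl(-\tfrac12\nabla v'\cdot\a\nabla v'-p\cdot\a\nabla v'+q\cdot\nabla v'\bigr)$ is not obtained by "testing"; indeed it can fail, since for $p\neq0$ the unconstrained supremum of this functional over $H^1(U)$ is strictly larger than $J$ (its Euler--Lagrange equation is the inhomogeneous equation $-\nabla\cdot\a\nabla v=\nabla\cdot(\a p)$, not the homogeneous one). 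For the same reason your first-variation route is not rigorous as stated: the identities~\eqref{e.firstvar} and~\eqref{e.quadresp}, which you invoke both to control $\nabla v'-\nabla v$ and to absorb the change-of-maximizer term, are only valid for competitors $w\in\mathcal{A}(U)$, i.e.\ solutions for the \emph{same} coefficient field.

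The repair, which is the paper's proof, keeps your symmetric-comparison structure but manufactures an admissible competitor: starting from the $\a$-maximizer $v$, let $w\in v+H^1_0(U)$ solve $-\nabla\cdot\tilde\a\nabla w=0$ in $U$, so that $w$ is a legitimate competitor for $\tilde J$. Testing the equation $-\nabla\cdot\tilde\a\nabla(w-v)=-\nabla\cdot(\a-\tilde\a)\nabla v$ (equivalently, the identity displayed in the paper) with $w-v\in H^1_0(U)$ gives $\|\nabla(w-v)\|_{L^2(U)}\leq Ct\|\nabla v\|_{L^2(U)}$ for small $t$. Plugging $w$ into the $\tilde\a$-functional and using $\fint_U|\nabla v|^2\leq CJ(U,p,q)$ (from~\eqref{e.Jenergyv} and ellipticity) together with $J\leq C(|p|+|q|)^2$ yields $J-\tilde J\leq Ct(|p|+|q|)J^{\sfrac12}$, and exchanging the roles of $\a$ and $\tilde\a$ gives the reverse inequality; dividing by $2t$ and letting $t\to0$ in~\eqref{e.Mall} finishes the proof. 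So the missing idea is precisely this auxiliary $\tilde\a$-solution with the boundary data of $v$, which lets you compare the two problems without ever plugging a non-solution into~\eqref{e.variational.J}.
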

\begin{proof}
Fix~$p,q \in \R^d$,~$t \in (0,1)$, and pick~$\a,\tilde\a\in\Omega$ such that~$| \a-\tilde{\a} | \leq t\indc_{U}$. Denote by~$J(U,p,q)$ and~$\tilde{J}(U,p,q)$ the quantities corresponding to~$\a$ and~$\tilde{\a}$, respectively, and the maximizers in their variational formulation~\eqref{e.variational.J}
by~$v$ and~$\tilde{v}$. Let~$w \in v + H_0^1(U)$ solve~$-\nabla \cdot \tilde \a \nabla w  = 0$ in~$U$. Observe that 
\begin{equation*}  
-\nabla \cdot \a \nabla (w -v) = \nabla \cdot (\tilde\a - \a) \nabla w \quad \mbox{in } U
\,.
\end{equation*}
Testing this equation with~$w-v$ yields
\begin{align*}
\fint_U \nabla (w-v) \cdot \a\nabla (w-v) 
&
=
\fint_U \nabla (w-v) \cdot (\a - \tilde{\a}) \nabla w 
\\ & 
=
\fint_U \nabla (w-v) \cdot (\a - \tilde{\a}) \nabla v 
+
\fint_U \nabla (w-v) \cdot (\a - \tilde{\a}) \nabla (w-v)
\,.
\end{align*}
Thus
\begin{equation*}
(\lambda-t) 
\fint_U \bigl| \nabla (w-v) \bigr|^2
\leq
t \fint_U \bigl| \nabla (w-v)  \bigr| \bigl|  \nabla v \bigr|
\end{equation*}
This implies by Cauchy-Schwarz that, if~$t\in (0,\frac12\lambda)$, then
\begin{align*}  
\fint_U \bigl| \nabla (w-v) \bigr|^2
\leq 
C t^2 \fint_{U} \bigl| \nabla v \bigr|^2
.
\end{align*}
It follows from the triangle inequality that
\begin{align*} \notag  
\tilde{J}(U,p,q) 
&
\geq
\fint_{U} \left( - \frac12\nabla w \cdot \tilde{\a}\nabla w -p\cdot\tilde{\a} \nabla w+q\cdot \nabla w \right)
\\ \notag
& 
\geq 
\fint_{U} \left( 
- \frac12\nabla v\cdot \a \nabla v
- p\cdot\a\nabla v
+q\cdot \nabla v
\right) 
- C t \fint_U \bigl ( | \nabla v |^2 + ( |p| +|q|) | \nabla v | \bigr)
\\ & 
\geq
 J(U,p,q) - Ct \bigl( J(U,p,q) + ( |p| + |q| ) J(U,p,q)^{\nicefrac12} \bigr).
\end{align*}
Since~$J(U,p,q) \leq C(|p| + |q|)^2$, we deduce that 
\begin{equation*} \label{}
J(U,p,q) 
-
\tilde{J}(U,p,q)
\leq  Ct (|p| + |q|) J(U,p,q)^{\nicefrac12}. 
\end{equation*}
By symmetry, we also have 
\begin{equation*} \label{}
\tilde{J}(U,p,q) 
-
J(U,p,q)
\leq  Ct (|p| + |q|) J(U,p,q)^{\nicefrac12}. 
\end{equation*}
Combining these and sending~$t\to 0$ yields~\eqref{e.malliavin.estimate}. 
\end{proof}

An important advantage of working with a subadditive quantity---particularly one that is nonnegative and converging to zero---is that we obtain very strong control of the fluctuations, essentially for free, once we obtain control over the expectation. This is one way of seeing why it is a much better idea to try to control the difference~$\a(U) - \a_*(U)$ than to try to control either~$\a(U) - \ahom$ or~$\ahom - \a_*(U)$ directly.

\smallskip

We formalize this idea in the following lemma, which does not use the particular structure of the quantity~$J$, just that it is nonnegative and subadditive. In particular, the symmetry assumption~\eqref{e.symm} is not used here, a remark we will need in Chapter~\ref{s.nonsymm}. 
Note that the left side of~\eqref{e.whatCFSgives} is an upper bound for~$J(\cu_m,p,q)$.

\begin{lemma}
\label{l.J.upperfluct}
Assume that~$\P$ is~$\Zd$--stationary and satisfies~$\CFS(\beta,\Psi)$ and~\eqref{e.symm}.
There exists a constant\footnote{Recall that~$\data$ refers to the list of parameters as defined in~$\eqref{e.data.def}$.}~$C(\dataref)<\infty$ such that, for every~$p,q\in B_1$ 
and~$m,n,l\in\N$ with~$n < m$ and~$\beta m < l < m$,
\begin{align}
\label{e.whatCFSgives}
\avsum_{z\in 3^n\Zd\cap \cu_m} 
J(z+\cu_n,p,q)
\leq 
\E \left[ J( \cu_{n\wedge l},p,q) \right]
+ 
\O_\Psi\bigl( C3^{-\frac d2(m-l)} \bigr)
\,.
\end{align}
\end{lemma}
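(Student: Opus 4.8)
The plan is to split the average over the triadic subcubes at the mesoscopic scale~$l$ and apply the~$\CFS(\beta,\Psi)$ assumption to the fluctuation piece. First I would treat the easy case~$n \geq l$: here the partition of~$\cu_m$ into cubes~$z+\cu_n$ ($z\in 3^n\Zd\cap\cu_m$) is coarser than the one into cubes of scale~$l$, but we may further partition each~$z+\cu_n$ into cubes of scale~$l$ and use subadditivity~\eqref{e.subaddJ} to bound~$J(z+\cu_n,p,q) \leq \avsumtext_{z'\in 3^l\Zd\cap(z+\cu_n)} J(z'+\cu_l,p,q)$, reducing this case to the case~$n=l$. So without loss of generality we may assume~$n \leq l$, and the claim becomes
\begin{equation*}
\avsum_{z\in 3^n\Zd\cap \cu_m}
J(z+\cu_n,p,q)
\leq
\E \left[ J( \cu_{n},p,q) \right]
+
\O_\Psi\Bigl( C3^{-\frac d2(m-l)} \Bigr)\,.
\end{equation*}

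The key step is to group the scale-$n$ subcubes of~$\cu_m$ according to which scale-$l$ subcube they lie in, and to use subadditivity again at scale~$l$ in the reverse direction. Specifically, since~$n \leq l < m$, I would write
\begin{equation*}
\avsum_{z\in 3^n\Zd\cap \cu_m}
J(z+\cu_n,p,q)
=
\avsum_{y\in 3^l\Zd\cap \cu_m}
\biggl( \avsum_{z\in 3^n\Zd\cap (y+\cu_l)}
J(z+\cu_n,p,q) \biggr)
\geq
\avsum_{y\in 3^l\Zd\cap \cu_m}
J(y+\cu_l,p,q)\,,
\end{equation*}
but it is the \emph{other} direction we want. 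So instead I set, for each~$y\in 3^l\Zd\cap\cu_m$, the random variable~$S_y := \avsumtext_{z\in 3^n\Zd\cap(y+\cu_l)} J(z+\cu_n,p,q)$, which is~$\F(y+\cu_l)$--measurable and, by the ellipticity bounds~\eqref{e.a.bounds} applied through~\eqref{e.diagonalset} (or directly~$0 \leq J(\cdot,p,q) \leq C(|p|^2+|q|^2) \leq C$ for~$p,q\in B_1$), satisfies~$0 \leq S_y \leq C$ and~$\E[S_y] = \E[J(\cu_n,p,q)]$ by stationarity. The target average is exactly~$\avsumtext_{y\in 3^l\Zd\cap\cu_m} S_y$, so we need to show that this deviates from its mean~$\E[J(\cu_n,p,q)]$ by at most~$\O_\Psi(C 3^{-\frac d2(m-l)})$.

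To invoke~$\CFS(\beta,\Psi)$ I would center and normalize: set~$X_y := c(S_y - \E[S_y])$ with~$c(d,\lambda,\Lambda)>0$ chosen so that~$|X_y| \leq 1$. Each~$X_y$ has mean zero, is~$\F(y+\cu_l)$--measurable, and I must check~$|\partial_{\a(y+\cu_l)} X_y| \leq 1$; this follows from Lemma~\ref{l.malliavin}, since~$|\partial_{\a(y+\cu_l)} S_y| \leq \avsumtext_{z} |\partial_{\a(z+\cu_n)} J(z+\cu_n,p,q)| \leq C(|p|+|q|) \avsumtext_z J(z+\cu_n,p,q)^{1/2} \leq C$ using the Malliavin additivity~\eqref{e.malliavin.additivity}, the locality of~$J(z+\cu_n,\cdot)$, the bound~$J \leq C$, and adjusting the constant~$c$. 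Since~$\beta m < l < m$, the hypotheses of~$\CFS(\beta,\Psi)$ are met, and~\eqref{e.CFS} gives~$\avsumtext_{y\in 3^l\Zd\cap\cu_m} X_y \leq \O_\Psi(3^{-\frac d2(m-l)})$, hence~$\avsumtext_y S_y \leq \E[J(\cu_n,p,q)] + \O_\Psi(C 3^{-\frac d2(m-l)})$, which is the claim. The main obstacle is the bookkeeping in the reduction step~$n \geq l$ and verifying cleanly that the Malliavin derivative of the averaged quantity~$S_y$ is bounded by an absolute constant; everything else is a direct application of~$\CFS$ together with Lemmas~\ref{l.J.basicprops} and~\ref{l.malliavin}.
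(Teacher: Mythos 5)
Your proposal is correct and follows essentially the same route as the paper: in the case $n\geq l$ one uses subadditivity to pass to scale-$l$ cubes, and in the case $n<l$ one regroups the scale-$n$ average into block averages over scale-$l$ cubes and applies $\CFS(\beta,\Psi)$ to these centered, $\F(y+\cu_l)$--measurable block averages, with the Malliavin bound supplied by Lemma~\ref{l.malliavin} together with additivity of the Malliavin derivative and the uniform bound $J\leq C$. No gaps.
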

\begin{proof}
If~$n\geq l$, then by subadditivity, we have that 
\begin{align*}
\avsum_{z\in 3^n\Zd\cap \cu_m} 
J(z+\cu_n,p,q)
\leq 
\avsum_{z\in 3^l\Zd\cap \cu_m} 
J(z+\cu_l,p,q).
\end{align*}
We apply the assumption~$\CFS(\beta,\Psi)$, noticing that~$X_z = c \left( J(z+\cu_l,p,q) - \E \left[ J(\cu_l,p,q) \right]\right)$ satisfies~\eqref{e.CFS.ass} for a suitably small constant~$c(\Lambda)>0$ by Lemmas~\ref{l.J.basicprops} and~\ref{l.malliavin}, 
to get
\begin{equation*}
\avsum_{z\in 3^l\Zd\cap \cu_m} 
J(z+\cu_l,p,q) 
\leq 
\E \left[ J(\cu_l,p,q) \right] 
+ 
\O_\Psi\bigl( C3^{-\frac d2(m-l)}\bigr)
\,,
\end{equation*}
which gives us~\eqref{e.whatCFSgives} in the case~$n\geq l$. 
In the case~$n<l$, we instead write 
\begin{equation*}
\avsum_{z\in 3^n\Zd\cap \cu_m} 
J(z+\cu_n,p,q)
=
\avsum_{z'\in 3^l\Zd\cap \cu_m}
\avsum_{z\in 3^n\Zd\cap (z'+\cu_l)} 
J(z+\cu_n,p,q)
\end{equation*}
and apply the~$\CFS(\beta,\Psi)$ condition to the inner sum on the right, which, after subtracting its expectation, once again satisfies the condition~\eqref{e.CFS.ass} by Lemmas~\ref{l.J.basicprops} and~\ref{l.malliavin} and the additivity of the Malliavin derivative. We obtain:
\begin{align*}
\avsum_{z'\in 3^l\Zd\cap \cu_m}
\avsum_{z\in 3^n\Zd\cap (z'+\cu_l)} 
J(z+\cu_n,p,q)
\leq 
\E \left[ J(\cu_n,p,q) \right] 
+
\O_\Psi\bigl( C3^{-\frac d2(m-l)} \bigr)
\,. 
\end{align*}
This is~\eqref{e.whatCFSgives} in the case~$n< l$ and completes the proof of the lemma. 
\end{proof}

\subsection{Algebraic rate of convergence by iterating up the scales}
\label{ss.subadd.conv}

In this section, we follow the ideas of~\cite{AS,AKMBook} to get a rate of convergence for the limit~\eqref{e.Jtozero} under the assumption that~$\P$ satisfies~$\CFS(\beta,\Psi)$. 

\begin{theorem}[{Algebraic rate of decay}]
\label{t.subadd.converge}
Assume~$\P$ is a~$\Zd$--stationary measure on~$(\Omega,\F)$ and satisfies~$\CFS(\beta,\Psi)$ and~\eqref{e.symm}. 
Then there exists an exponent~$\alpha(\beta,d,\lambda,\Lambda) \in (0,1]$ 
and a constant~$C(\beta,\dataref)<\infty$
such that, for every~$m,n \in\N$ with~$\beta m < n < m$, 
\begin{equation}
\label{e.aastar.big.smash}
|\a(\cu_m) - \a_*(\cu_m)|
+
| \a(\cu_m) - \ahom |^2 
+
|\a_*(\cu_m) - \ahom |^2
\leq 
C3^{-n\alpha}
+ 
\O_\Psi \bigl( C3^{-\frac d2(m-n)} \bigr) 
\,.
\end{equation}
\end{theorem}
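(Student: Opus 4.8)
The plan is to set up a quantity that measures the ``convex duality defect'' $\tau(m) := \sup_{p\in B_1}\inf_{q}\E[J(\cu_m,p,q)]$ (or a fixed finite family $\sum_{i=1}^d \E[J(\cu_m, e_i, \b_m e_i)]$ for an appropriate $\b_m$) and to show that $\tau$ decays algebraically by iterating the subadditivity/additivity-defect machinery of Lemma~\ref{l.J.basicprops} combined with the concentration afforded by $\CFS(\beta,\Psi)$ through Lemma~\ref{l.J.upperfluct}. The key structural fact is the ``wrap-around'' inequality~\eqref{e.wrap.around}: the difference $\a(\cu_m) - \a_*(\cu_m)$ is bounded by $C\avsumtext_{z\in 3^n\Zd\cap\cu_m}\sum_i J(z+\cu_n,e_i,\b e_i)$, which by Lemma~\ref{l.J.upperfluct} is in turn at most $\E[J(\cu_{n\wedge l},e_i,\b e_i)] + \O_\Psi(C3^{-\frac d2(m-l)})$. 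So the whole problem reduces to showing that the \emph{expectation} $n\mapsto \E[J(\cu_n,p,\b p)]$ decays at an algebraic rate, for a well-chosen deterministic $\b$.

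First I would choose $\b$. Since $\mu(\cu_n,p) = \frac12 p\cdot\a(\cu_n)p$ is monotone nonincreasing in the scale (by subadditivity, taking expectations) and bounded below by $\frac12\lambda|p|^2$, and similarly $\mu_*(\cu_n,q)$ is nondecreasing and bounded above, both $\E[\a(\cu_n)]$ and $\E[\a_*(\cu_n)]$ have limits; call them $\bfAhom$ and $\bfAhom_*$. The content of~\eqref{e.diagonalset} (taking expectations and using that $\E[J]\to$ the gap between the two limits) is that these two limits coincide, and this common limit is $\ahom$ as defined by the variational/projection definition in Section~\ref{ss.variational} (one can check the limits match~\eqref{e.conv.to.ahom}). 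Take $\b = \E[\a(\cu_{m_0})]$ for the current scale $m_0$, or more cleanly $\b = \ahom$ once we know it is deterministic. Then define $F(n) := \sum_{i=1}^d \E[J(\cu_n,e_i,\ahom e_i)]$; by~\eqref{e.magic} and~\eqref{e.diagonalset}, $F(n)$ controls $\E|\a(\cu_n)-\ahom| + \E|\a_*(\cu_n)-\ahom|$ and also $\E|\a(\cu_n)-\ahom|^2$ etc.

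The core iteration: I would show $F$ satisfies an inequality of the form $F(m) \le (1-\theta)F(n) + C3^{-\frac d2(m-n)}$ for $\beta m < n < m$ with $m - n$ a fixed (large) constant and $\theta(d,\lambda,\Lambda)>0$. The $(1-\theta)$ improvement comes from the quadratic response~\eqref{e.quadresp}: the coarse-grained energy at scale $m$ strictly decreases relative to the average over subcubes because the subcube minimizers/maximizers, glued together, are \emph{not} the true extremizers at scale $m$, and the gap is quantified by the $H^1$ distance between them, which by the multiscale Poincar\'e inequality (Proposition~\ref{p.MSP}) and the large-scale flatness/regularity of the extremizers can be bounded below by a fixed fraction of $F(n)$ itself. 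Concretely this is the step where one runs the argument of Section~\ref{ss.variational} (the computation~\eqref{e.multiscale.snap}) quantitatively, together with the flux estimate~\eqref{e.fluxmaps} and the energy lower bound~\eqref{e.energymaps}, to extract the contraction. Iterating $F(3^{j}k_0) \le (1-\theta)^j F(k_0) + (\text{geometric tail})$ gives $F(n) \le C3^{-n\alpha}$ with $\alpha$ chosen so that $3^{-\alpha(m-n)} \approx \max\{1-\theta,\ 3^{-\frac d2(m-n)}\}$, i.e.\ $\alpha = \min\{\frac12 d,\ \frac{1}{m-n}\log_3\frac{1}{1-\theta}\} \wedge \frac12$; shrinking $\alpha$ further if needed so it depends only on $(d,\lambda,\Lambda)$. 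Feeding $F(n)\le C3^{-n\alpha}$ back into~\eqref{e.wrap.around} via Lemma~\ref{l.J.upperfluct} with $l = n$ gives, for $\beta m < n < m$,
\begin{equation*}
|\a(\cu_m)-\ahom| + |\a_*(\cu_m)-\ahom|
\le
C\Bigl(\avsum_{z\in 3^n\Zd\cap\cu_m}\sum_{i=1}^d J(z+\cu_n,e_i,\ahom e_i)\Bigr)^{\sfrac12}
+ C\,F(m)^{\sfrac12},
\end{equation*}
and the first term is $\le (C3^{-n\alpha} + \O_\Psi(C3^{-\frac d2(m-n)}))^{\sfrac12} \le C3^{-n\alpha/2} + \O_\Psi(C3^{-\frac d2(m-n)/2})$; absorbing the square roots and relabelling $\alpha \mapsto \alpha/2$ and adjusting exponents (using $\O_\Psi(a)^{\sfrac12}$-type bounds and the triangle inequality in $\O_\Psi$) yields exactly~\eqref{e.aastar.big.smash}. (One must be slightly careful to put the $n\alpha$ gain and the $\O_\Psi$ fluctuation on the correct sides; the stated form $C3^{-n\alpha} + \O_\Psi(C3^{-\frac d2(m-n)+n\alpha})$ arises by splitting the deterministic and fluctuation parts of $J(z+\cu_n,\cdot)$ and using that the deterministic part contributes $F(n)\le C3^{-n\alpha}$ while the fluctuation of the normalized sum is $\O_\Psi(3^{-\frac d2(m-n)})$ times the typical size $\lesssim 1$, which after the square-root and a crude bound becomes $\O_\Psi(C3^{-\frac d2(m-n)+n\alpha})$ once one pessimistically bounds one factor.)

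The main obstacle I expect is the contraction step, i.e.\ proving $F(m) \le (1-\theta)F(n) + C3^{-\frac d2(m-n)}$ with $\theta$ \emph{bounded below independently of the scales}. The subadditivity alone only gives $F(m) \le F(n)$ (non-strict); extracting a definite fraction requires quantifying how far the glued subcube extremizers are from the genuine scale-$m$ extremizer, and bounding that deficit from below by a constant multiple of $F(n)$. This is where one needs the large-scale flatness of $\nabla v_n$ — the fact, from~\eqref{e.multiscale.snap} and~\eqref{e.flatnessqual.sec24}, that the gradient of the extremizer is close to its target $p$ in a \emph{weak} ($\Hminusul$) sense on mesoscales — coupled with the algebraic decay one is trying to prove, in a bootstrap. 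Making this self-consistent (a priori one only has the qualitative limit, so the contraction must be robust to whatever little one already knows) is the delicate part, and is precisely the renormalization argument of~\cite{AS,AKMBook}; I would follow that template, using the multiscale Poincar\'e inequality (Proposition~\ref{p.MSP}) to convert the weak flatness into the $H^1$-closeness needed to close~\eqref{e.quadresp}.
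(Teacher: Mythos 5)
Your final assembly step is essentially the paper's: once one knows $\sup_{e\in B_1}\E[J(\cu_m,e,\ahom e)]\leq C3^{-m\alpha}$, the theorem follows by applying Lemma~\ref{l.J.upperfluct} (this is the only place $\CFS$ is needed at that stage), then~\eqref{e.diagonalset} and the elementary inequality $(a+b)^{\sfrac12}\leq a^{\sfrac12}+\tfrac12 a^{-\sfrac12}b$ to handle the square root, which produces exactly the split $C3^{-n\alpha}+\O_\Psi(C3^{-\frac d2(m-n)+n\alpha})$. The gap is in the heart of the argument, the decay of the mean (Proposition~\ref{p.algebraicrate.E}). You propose a fixed-fraction linear contraction $F(m)\leq(1-\theta)F(n)+C3^{-\frac d2(m-n)}$ with $\theta$ depending only on $(d,\lambda,\Lambda)$, extracted by bounding the strict-subadditivity deficit in~\eqref{e.quadresp} from below by $\theta F(n)$. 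No such bound is available, and it cannot be: as you describe it, the contraction step uses no stochastic input at all (CFS appears in your plan only in the final fluctuation bound), so it would yield decay of $\E[J(\cu_n,e,\ahom e)]$ for an arbitrary $\Zd$-stationary measure. That is false — take a mixture of two periodic media with distinct homogenized matrices; then $\a(\cu_n)$ converges to a genuinely random matrix and the mean of $J(\cu_n,e,\b e)$ stays bounded away from zero for every deterministic $\b$. Ergodicity must enter the contraction itself, not only the end game.

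What the renormalization argument actually does is quadratic, not linear. The convex duality defect at scale $m$ is controlled (Lemma~\ref{l.flatness.rules}, via the multiscale Poincar\'e inequality and Caccioppoli) by a geometrically weighted sum over all smaller scales of the additivity defects $\tau_j$ \emph{and the variances} $\var[\a_*^{-1}(\cu_j)]$; the variances, in turn, are bounded using $\CFS(\beta,\Psi)$ by the \emph{square} of the mean at a smaller scale plus the mixing error $C3^{-\frac d2(m-n)}$ (Lemma~\ref{l.flatness}). Combining these gives~\eqref{e.combo}, a recursion of the form $F_m\lesssim\sum_k 3^{(k-m)\alpha}(F_{k-1}-F_k)+F_{\lfloor(1-\kappa)m\rfloor}^2+\delta 3^{-\alpha m}$. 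Because the gain is quadratic, this only self-improves once $F$ is already small; the paper supplies that initial smallness by a separate pigeonhole argument giving a logarithmic rate (Lemma~\ref{l.pigeonhole}) and then closes the loop with the bespoke iteration Lemma~\ref{l.iteration}, which handles the telescoping sum of increments together with the square term. These three ingredients — the CFS-driven variance estimate, the pigeonhole initial smallness, and the iteration lemma — are absent from your plan, and without them the contraction step you describe does not close.
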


See also Corollary~\ref{c.subadd.converge}, below, which contains a statement similar to Theorem~\ref{t.subadd.converge} that is somewhat more useful. 
The estimate~\eqref{e.aastar.big.smash} is \emph{optimal in stochastic integrability}, meaning that the random part of the error on the right side of~\eqref{e.aastar.big.smash} is bounded as sharply as possible: see Remark~\ref{r.X.optimal}. 

\smallskip

\subsubsection{An algebraic rate for the means}

The main step in the proof of Theorem~\ref{t.subadd.converge} is to get a rate of convergence for the means of~$\a(\cu_m)$ and~$\a_*(\cu_m)$. The stochastic fluctuations can then be crushed by having a dual pair of subadditive quantities, as we will see. 

\smallskip

We introduce the following averaged versions of the coarse-grained coefficients:
\begin{equation}
\label{e.ahomahoms}
\ahom(U):= \E\left[ \a(U)\right] 
\quad \mbox{and} \quad 
\ahom_*(U):= \E\left[ \a_*^{-1}(U)\right]^{-1}.
\end{equation}
These are monotone: by the subadditivity and stationarity of~$J$, and the fact that~$\P$ is assumed to be~$\Zd$--stationary, we have, for every~$m\in \N$,
\begin{equation}
\label{e.matrix.subadd}
\ahom(\cu_{m+1}) \leq  \ahom(\cu_m) 
\quad \mbox{and} \quad 
\ahom_*(\cu_{m}) \leq  \ahom_*(\cu_{m+1}).
\end{equation}
We define the homogenized matrix~$\ahom$ by 
\begin{equation}
\label{e.ahom.def}
\ahom := \lim_{m\to \infty} \ahom_*(\cu_m). 
\end{equation}
We will learn, of course, as a consequence of the analysis, that~$\ahom$ is also the limit of~$\ahom(\cu_m)$ and so~\eqref{e.ahom.def} does not conflict with the definition of the homogenized matrix in Chapter~\ref{s.qualitativetheory}. 

\smallskip

The following proposition is the focus of most of this section. 

\begin{proposition}
\label{p.algebraicrate.E}
Assume~$\P$ is~$\Zd$--stationary and satisfies~$\CFS(\beta,\Psi)$ and~\eqref{e.symm}.
There exists~$\alpha(\beta,d,\lambda,\Lambda) >0$ and~$C(\dataref)<\infty$
such that, for every~$m\in\N$,  
\begin{align}
\label{e.EJtozero.rate}
%\sup_{e\in B_1}
%\E \left[ J(\cu_m,e,\ahom e) \right] 
\bigl| \ahom(\cu_m) - \ahom \bigr|
+
\bigl| \ahom_*(\cu_m) - \ahom \bigr|
\leq 
C3^{-m\alpha}. 
\end{align}
\end{proposition}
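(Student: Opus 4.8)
\textbf{Proof proposal for Proposition~\ref{p.algebraicrate.E}.} The plan is a renormalization (bootstrap) argument "up the scales", following the strategy sketched after Lemma~\ref{l.J.upperfluct}: we track the quantity $\tau(\cu_m) := \sup_{e \in B_1} \E[J(\cu_m,e,\ahom e)]$ and show it satisfies a recursive improvement inequality forcing algebraic decay. First I would record the basic a priori facts: by Lemma~\ref{l.J.basicprops} (namely~\eqref{e.a.bounds} and~\eqref{e.magic}) we have $0 \le \tau(\cu_m) \le C$ for all $m$, and by the monotonicity~\eqref{e.matrix.subadd} the sequences $\ahom(\cu_m)$ and $\ahom_*(\cu_m)$ are monotone and bounded, hence convergent; denote their limits by $\ahom$ and (a priori possibly different) $\ahom^{\uparrow}$, with $\ahom_*(\cu_m) \le \ahom_*(\cu_{m'}) \le \ahom \le \ahom^{\uparrow} \le \ahom(\cu_{m'}) \le \ahom(\cu_m)$ for $m \le m'$ by~\eqref{e.matrixordering} and~\eqref{e.subadda}. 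Taking expectations in~\eqref{e.diagonalset} with $\b = \ahom$ gives
\begin{equation*}
\E\bigl[\, |\ahom(\cu_m) - \ahom_*(\cu_m)|\,\bigr] + |\ahom(\cu_m) - \ahom|^2 + |\ahom_*(\cu_m) - \ahom|^2 \le C \tau(\cu_m),
\end{equation*}
so controlling $\tau(\cu_m)$ simultaneously controls the additivity and duality defects, and in particular shows $\ahom^{\uparrow} = \ahom$.

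The core step is the one-step improvement: I claim there is $\theta(d,\lambda,\Lambda) \in (0,1)$ and a scale separation $k$ (a large constant, or slowly growing) such that, for $m$ with $\beta m < m-k$,
\begin{equation*}
\tau(\cu_m) \le \theta\, \tau(\cu_{m-k}) + (\text{error terms}),
\end{equation*}
where the error terms come from (i) the additivity defect, i.e.\ replacing $J(\cu_m,e,\ahom e)$ by the average $\avsumtext_{z \in 3^{m-k}\Zd \cap \cu_m} J(z + \cu_{m-k}, e, \ahom e)$ using~\eqref{e.add.defect.a} and~\eqref{e.add.defect.astar} (which are themselves bounded by the convex duality defect at the smaller scale, hence by $\tau(\cu_{m-k})$ up to a constant, but with a genuinely small prefactor once $k$ is large), and (ii) the difference between the "moving" matrix $\ahom(\cu_{m-k})$ (resp.\ $\ahom_*(\cu_{m-k})$) and the fixed limit $\ahom$, which by the displayed inequality above is $\le C\tau(\cu_{m-k})^{1/2}$. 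The mechanism producing the contraction factor $\theta < 1$ is the quadratic response: using~\eqref{e.quadresp} and~\eqref{e.fluxmaps} one compares the energy of the maximizer $v(\cdot,\cu_m,e,\ahom e)$ to that of a suitable competitor built by gluing the finite-scale maximizers over the subcubes $z + \cu_{m-k}$, and the uniform convexity means the energy drop is a definite fraction of the defect --- this is precisely the "linearization of the renormalization map" idea, where at leading order $\a(\cu_m) \approx \avsumtext \a(z+\cu_{m-k})$ and the fluctuation of this average is small because the summands are nearly independent. Concretely one estimates $\E[J(\cu_m,e,\ahom e)]$ from above via the competitor, expands, and uses that the cross term, after integration by parts and~\eqref{e.firstvar}, is controlled by $|\ahom(\cu_{m-k}) - \ahom_*(\cu_{m-k})|$ and by the additivity defect; choosing $k$ large makes the harmless part of the coefficient strictly less than $1$. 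Iterating $\tau(\cu_m) \le \theta\,\tau(\cu_{m-k}) + C\tau(\cu_{m-k})^{1/2}$ is slightly delicate because of the square-root term dominating when $\tau$ is small; the standard fix is to run the iteration on $\sigma_m := \tau(\cu_m)^{1/2} \wedge 1$ or on $\tau(\cu_m) + \tau(\cu_m)^{1/2}$, or to absorb the square root using Young's inequality $C\tau^{1/2} \le \epsilon\tau + C_\epsilon$ only after a first crude step shows $\tau(\cu_m) \to 0$ (which follows qualitatively from the ergodic theorem, Corollary~\ref{c.stat.Hminusone}, applied to the finite-volume correctors as in Section~\ref{ss.variational}), so that eventually $C\tau^{1/2} \le \tfrac{1-\theta}{2}$ is false --- rather, one iterates $\sigma_m \le \theta^{1/2}\sigma_{m-k} + C\theta^{1/2}$... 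I will instead argue as in~\cite{AS,AKMBook}: show $\tau(\cu_{m}) \le \tfrac12 \tau(\cu_{m-k}) + C 3^{-m\delta_0}$ for some small fixed $\delta_0$ coming from the stochastic cancellation rate, which then telescopes to $\tau(\cu_m) \le C 3^{-m\alpha}$ with $\alpha = \min(\delta_0, \log_3 2 / k)$.

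For the stochastic cancellation input: the additivity defect error, being an average over $z \in 3^{m-k}\Zd \cap \cu_m$ of the $\F(z+\cu_{m-k})$-measurable, mean-subtracted random variables $J(z+\cu_{m-k},e,\ahom e) - \E[J(\cu_{m-k},e,\ahom e)]$, is handled by Lemma~\ref{l.J.upperfluct}: after dividing by the (bounded, by~\eqref{e.a.bounds}) normalization and checking the Malliavin bound via Lemma~\ref{l.malliavin}, the condition $\CFS(\beta,\Psi)$ gives that this average is $\O_\Psi(C 3^{-\frac d2 k})$ in probability, and in particular its expectation is $\le C 3^{-\frac d2 k} + C 3^{-m\delta}$ for some $\delta$ (one must integrate the $\O_\Psi$ tail, giving a fixed polynomial loss). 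Thus taking $k$ a large constant depending only on $(d,\lambda,\Lambda)$ makes all "harmless" multiplicative constants strictly below $1$, while the genuinely decaying part is $3^{-\frac{d}{2}k}$-small plus the $\theta^{m/k}$-geometric decay from the iteration itself. The main obstacle --- and the step requiring the most care --- is setting up the competitor in the quadratic-response estimate and extracting the contraction factor $\theta < 1$ rather than merely $\theta \le 1$: one must correctly account for the fact that the gluing competitor is not admissible for $\mu_*(\cu_m,\cdot)$ without a correction, and that the energy comparison only gives a contraction once the scale gap $k$ is large enough that the "interface" contributions (boundary layers between subcubes, controlled by Caccioppoli and the flux-map bound~\eqref{e.fluxmaps}) are a small fraction of the bulk. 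Once the improvement inequality $\tau(\cu_m) \le \tfrac12 \tau(\cu_{m-k}) + C3^{-m\delta}$ is in hand, the conclusion~\eqref{e.EJtozero.rate} follows by a routine iteration (splitting $m = qk + r$ and summing the geometric series), with $\alpha$ and $C$ depending only on the stated parameters, and the passage from this to~\eqref{e.aastar.big.smash} in Theorem~\ref{t.subadd.converge} uses~\eqref{e.diagonalset}, Lemma~\ref{l.J.upperfluct}, and the already-noted fact that $\ahom$ is the common limit.
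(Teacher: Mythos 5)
There is a genuine gap at the heart of your argument: the claimed one-step strict contraction $\tau(\cu_m)\le \theta\,\tau(\cu_{m-k})+\mathrm{errors}$ with $\theta<1$ "from quadratic response and a gluing competitor" is not available by that route. Subadditivity, which is exactly the gluing-competitor comparison, gives only $\E[J(\cu_m,p,q)]\le \E[J(\cu_{m-k},p,q)]$, i.e.\ $\theta\le 1$ with no gain; and the second variation~\eqref{e.quadresp} converts the \emph{increment} $\E[J(\cu_{m-k},p,q)]-\E[J(\cu_m,p,q)]$ into control of the distance between maximizers, not the other way around. Uniform convexity alone cannot force a definite multiplicative drop per scale (and indeed without a mixing hypothesis the decay can be arbitrarily slow), so the stochastic input has to enter the contraction mechanism itself, not merely as an additive error. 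Your sketch is essentially the "naive wish"~\eqref{e.Km.naivewish} that the paper states and then explicitly declines to prove. What the paper actually proves is structurally different: (i) a flatness estimate (Lemma~\ref{l.flatness.rules}), obtained from Caccioppoli plus the multiscale Poincar\'e inequality, bounding $\E[J(\cu_m,e,\ahom_*(\cu_m)e)]$ by a geometrically weighted sum over \emph{all} smaller scales of the telescoping increments $\tau_n$ and of $\var[\a_*^{-1}(\cu_n)]$; (ii) a variance estimate (Lemma~\ref{l.flatness}) using $\CFS$ and~\eqref{e.diagonalset} to bound that variance by the \emph{square} of $\E[J]$ at a smaller scale plus $C3^{-\frac d2(m-n)}$; and (iii) the combination~\eqref{e.combo}, which is a quadratic self-improving inequality handled by the iteration Lemma~\ref{l.iteration}. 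The decay comes from this squaring, not from a linear contraction factor, and your proposal contains neither the multiscale Poincar\'e step nor the variance-to-squared-expectation step.

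Two further points. First, the paper deliberately tracks $\E[J(\cu_m,e,\ahom_*(\cu_m)e)]$ with the scale-dependent matrix rather than your $\E[J(\cu_m,e,\ahom e)]$; this is what makes the key relation $F_m-F_{m+1}\ge c\,\tau_{m+1}$ in~\eqref{e.subaddcontrol} clean and removes the $\tau^{1/2}$ nuisance term you struggle to absorb (the passage to $\ahom$ is done only at the very end, by summing $|\ahom_*(\cu_m)-\ahom_*(\cu_{m+1})|\le C3^{-\alpha m/2}$). Second, seeding the iteration with the qualitative ergodic theorem is not admissible here: the scale at which $\E[J]$ first becomes small would then depend on $\P$ beyond the parameters $(\beta,d,\lambda,\Lambda,\Psi)$, whereas the stated constant must not; the paper instead gets quantitative initial smallness from the pigeonhole Lemma~\ref{l.pigeonhole}, which again uses the $\CFS$-based variance bound. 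So while your list of ingredients (additivity defect, Lemma~\ref{l.J.upperfluct}, duality defect control, an iteration) is the right one, the actual engine of the proof is missing.
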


Observe that taking the expectation of~\eqref{e.Jaas} gives
\begin{equation}
\label{e.Jaas.W}
\E \bigl[ J(U,p,q) \bigr] 
=
\frac12p\cdot \ahom(U) p + \frac12 q\cdot \ahom_*^{-1}(U)q - p\cdot q. 
\end{equation}
Applying this~$U=\cu_m$ and~$(p,q) = (e, \ahom_*(\cu_m)e)$ and rearranging, we get 
\begin{equation}
\label{e.magic.b.plug.E}
\E \left[ J(\cu_m,e,\ahom_*(\cu_m) e) \right]
=\frac12 e \cdot \bigl( \ahom(\cu_m) - \ahom_*(\cu_m) \bigr) e
\,. 
\end{equation}
Similarly, the left side of~\eqref{e.EJtozero.rate} is equivalent to~$\sup_{e\in B_1}
\E \left[ J(\cu_m,e,\ahom e) \right]$ in the sense that 
\begin{equation}
\label{e.magic.b.plug.E.ahom}
\bigl| \ahom(\cu_m) - \ahom \bigr|
+
\bigl| \ahom_*(\cu_m) - \ahom \bigr|
\leq
\sup_{e\in B_1}
\E \left[ J(\cu_m,e,\ahom e) \right]
\leq 
C\bigl( \bigl| \ahom(\cu_m) - \ahom \bigr|
+
\bigl| \ahom_*(\cu_m) - \ahom \bigr|\bigr) 
\,.
\end{equation}
To derive the first inequality of~\eqref{e.magic.b.plug.E.ahom}, we have
\begin{align}
\label{e.magic.b.plug.E.ahom.pre}
\bigl| \ahom(\cu_m) - \ahom \bigr|
+
\bigl| \ahom_*(\cu_m) - \ahom \bigr|
\leq 
2 \bigl| \ahom(\cu_m) - \ahom_*(\cu_m) \bigr|
%\notag \\ & 
&
\leq
4 \E \left[ J(\cu_m,e,\ahom_*(\cu_m) e) \right]
\notag \\ & 
\leq
4 \E \left[ J(\cu_m,e,\ahom e) \right]
\,.
\end{align}
The first inequality in the display above is due to the fact that~$\ahom_*(\cu_m)\leq \ahom \leq \ahom(\cu_m)$; the second is immediate from~\eqref{e.magic.b.plug.E}; and the third inequality is a consequence of the observation that~$q \mapsto \E [ J(U,p,q)]$ attains its minimum at~$q = \ahom_*(U)p$, which we see from differentiating~\eqref{e.Jaas.W}.  
To prove the second inequality of~\eqref{e.magic.b.plug.E.ahom}, we take the expectation of~\eqref{e.magic.b.plug} and use that the matrices are upper bounded by~$C$ to find that 
\begin{align*}
\lefteqn{
\sup_{e\in B_1}
\E \left[ J(\cu_m,e,\ahom e) \right]
} \qquad 
\notag \\ & 
\leq
\sup_{e\in B_1}\Bigl( 
\frac12 e \cdot \bigl( \ahom(\cu_m) - \ahom_*(\cu_m) \bigr) e
+
\frac12 (\ahom-\ahom_*(\cu_m))\ahom_*^{-1} (\cu_m)(\ahom-\ahom_*(\cu_m))
\Bigr) 
\notag \\ & 
\leq 
C|\ahom(\cu_m) - \ahom_*(\cu_m) |
+
C| \ahom_*(\cu_m) - \ahom |
\notag \\ & 
\leq 
C\bigl( \bigl| \ahom(\cu_m) - \ahom \bigr|
+
\bigl| \ahom_*(\cu_m) - \ahom \bigr| \bigr)\,.
\end{align*}

The basic idea of the proof of Proposition~\ref{p.algebraicrate.E} is to show that~$| \ahom(\cu_m) - \ahom_*(\cu_m)|$ contracts by a multiplicative factor strictly less than one as we increment~$m$, 
allowing us to obtain the algebraic rate of convergence by \emph{iteration up the scales}.

\smallskip

We define the \emph{expected additivity defect at scale~$3^m$} by
\begin{equation}
\label{e.taun}
\tau_m := \left|\ahom (\cu_{m}) - \ahom(\cu_{m-1} )\right| + \left|  \ahom_*(\cu_{m}) - \ahom_*(\cu_{m-1} )\right|.
\end{equation}
This measures how much the expectations of the subadditive quantities change between successive triadic scales. 
Observe that, by~\eqref{e.magic.b.plug.E} and the monotonicity of the matrices,
\begin{align}
\label{e.subaddcontrol}
\tau_m
& 
\leq 
C \sum_{i=1}^d  e_i \cdot  
\bigl(\ahom(\cu_{m-1}) - \ahom(\cu_{m}) + \ahom_*(\cu_{m}) - \ahom_*(\cu_{m-1})   \bigr)  e_i
\notag \\ & 
\leq
C \sum_{i=1}^d \left( \E \left[ J(\cu_{m-1},e_i,\ahom_*(\cu_{m-1}) e_i)\right] - \E \left[ J(\cu_{m},e_i,\ahom_*(\cu_{m}) e_i)\right] \right)
\leq
C \tau_m
\,.
\end{align}
In view of~\eqref{e.magic.b.plug.E} and~\eqref{e.subaddcontrol}, it is natural to try to prove Proposition~\ref{p.algebraicrate.E} by bounding the quantity~$K_m:= \sum_{i=1}^d \E \left[ J(\cu_{m},e_i,\ahom_*(\cu_m) e_i)\right]$ by some expression involving~$\tau_m$. For instance, if we could prove the estimate 
\begin{equation}
\label{e.Km.naivewish}
K_m \leq C\tau_m, 
\end{equation}
then we would obtain from~\eqref{e.subaddcontrol} that, for some constant~$C$,
\begin{equation*}
K_{m+1} \leq \frac{C}{C+1} K_m
\,.
\end{equation*}
This could then be iterated to obtain 
$K_m \leq \theta^m K_0 \leq C\theta^m$ for~$\theta : = C/(C+1)<1$, which yields~\eqref{e.EJtozero.rate}. We will not prove~\eqref{e.Km.naivewish}, exactly, although this is the rough idea. The actual estimate, which is found in~\eqref{e.combo} below, is slightly more complicated: it has more error terms and has a sum of~$\tau_k$'s across many scales but with geometric discount factors for the smaller scales. 
This estimate can still be iterated to obtain an estimate like~$K_m \leq C \theta^m$. 

\smallskip

There are two main lemmas in the proof of Proposition~\ref{p.algebraicrate.E}. The first is an estimate showing that the expected convex duality defect can be controlled by~$\tau_n$ and the variance of~$\a_*^{-1}(\cu_n)$ on all smaller scales---with discount factors for the smaller scales. 

\begin{lemma}
[Control of the convex duality defect]
\label{l.flatness.rules}
Assume~$\a\in\Omega$ is a coefficient field satisfying~$\a=\a^t$. 
There exists~$C(d,\lambda,\Lambda)<\infty$ such that, for every~$m\in\N$ and~$p,q\in B_1$, 
\begin{align}
\label{e.flatness.quenched}
J(\cu_{m-1},p,q) 
&
\leq 
C
\sum_{n=0}^{{m}} 3^{n-m} 
\avsum_{y\in 3^{n}\Zd\cap \cu_m} 
( J(y+\cu_{n},p,q) - J (\cu_m,p,q) )
\notag \\ & \qquad 
+
C\sum_{n=0}^{{m}} 3^{n-m} 
\avsum_{y\in 3^n\Zd\cap \cu_{m}} 
\left| \a_*^{-1}(y+\cu_n) q - p\right|^2
+C3^{-2m}
\,.
\end{align}
In particular, if~$\P$ is~$\Zd$--stationary and satisfies~$\CFS(\beta,\Psi)$ and~\eqref{e.symm}, then, for every~$m\in\N$ and~$e\in  B_1$, 
\begin{align}
\label{e.flatness.rules}
\E \left[ J(\cu_m,e,\ahom_*(\cu_m)e) \right]
\leq 
C 
\sum_{n=0}^{m} 3^{n-m} 
\left( \tau_n + \var\left[ \a_*^{-1}(\cu_n) \right] \right)
+C 3^{-2m}\,. 
\end{align}
\end{lemma}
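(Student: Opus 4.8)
\textbf{Proof proposal for Lemma~\ref{l.flatness.rules}.} The plan is to apply the multiscale Poincar\'e inequality (Proposition~\ref{p.MSP}) to the maximizer of the variational problem defining $J(\cu_m,p,q)$, in close analogy with the derivation of~\eqref{e.flatnessqual.sec24} in the variational approach of Section~\ref{ss.variational}. Let $v := v(\cdot,\cu_m,p,q)$ be the maximizer in~\eqref{e.variational.J}, which by the assertion below~\eqref{e.variational.J} is $\tilde v_* - \hat v$, the difference of the maximizer of $\mu_*(\cu_m,q)$ and the minimizer of $\mu(\cu_m,p)$. The key point is to control the spatial averages $(\nabla v)_{y+\cu_n}$ (and the corresponding flux averages $(\a\nabla v)_{y+\cu_n}$) over subcubes $y+\cu_n \subseteq \cu_m$. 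I would use the quadratic response identity~\eqref{e.quadresp}: applying it on each subcube $y+\cu_n$ with test function $v$ (which lies in $\A(y+\cu_n)$), the excess $J(y+\cu_n,p,q)$ minus the energy of $v$ restricted to $y+\cu_n$ equals $\frac12\fint_{y+\cu_n}(\nabla v(\cdot,y+\cu_n,p,q) - \nabla v)\cdot\a(\nabla v(\cdot,y+\cu_n,p,q)-\nabla v)$. Summing this over $y\in 3^n\Zd\cap\cu_m$ and using subadditivity~\eqref{e.subaddJ} to replace the averaged energy of $v$ by $J(\cu_m,p,q)$, this shows that the averaged $H^1$-discrepancy between $v$ and the collection of local maximizers $v(\cdot,y+\cu_n,p,q)$ is bounded by $\avsumtext_{y}(J(y+\cu_n,p,q) - J(\cu_m,p,q))$, the additivity defect at scale $n$.

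The next step is to convert this $H^1$-discrepancy into control of the spatial averages $(\nabla v)_{y+\cu_n}$ appearing in the multiscale Poincar\'e inequality. Using~\eqref{e.a.formulas} and~\eqref{e.astar.formulas}, the gradient average of the local maximizer $v(\cdot,y+\cu_n,p,q) = v(\cdot,y+\cu_n,0,q) + v(\cdot,y+\cu_n,-p,0)$ equals $\a_*^{-1}(y+\cu_n)q - p$ (noting $\fint \nabla v(\cdot,U,-p,0)=p$ with the sign convention there). Hence $|(\nabla v)_{y+\cu_n} - (\a_*^{-1}(y+\cu_n)q - p)|^2$ is bounded by the local $H^1$-discrepancy just estimated. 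Combining, $\avsumtext_y |(\nabla v)_{y+\cu_n}|^2 \le C\avsumtext_y|\a_*^{-1}(y+\cu_n)q-p|^2 + C\avsumtext_y(J(y+\cu_n,p,q)-J(\cu_m,p,q))$. Feeding this into~\eqref{e.MSP} (applied to $f = \nabla v$ componentwise on $\cu_m$), and using that the zeroth-order term $\|\nabla v\|_{\underline L^2(\cu_m)}$ and the top scale contribute $O(1)$ times $J(\cu_m,p,q)^{1/2}$ via~\eqref{e.Jenergyv}, gives a bound on $\|\nabla v\|_{\Hminusul(\cu_m)}$. Then one relates $J(\cu_{m-1},p,q)$ to this $H^{-1}$ norm: since $v$ has affine-plus-stationary-type structure, a Caccioppoli/Poincar\'e argument on $\cu_{m-1}\subseteq\cu_m$ (as in the proof of~\eqref{e.L2toHminus} and~\eqref{e.correbounds}) upgrades the weak norm of $\nabla v$ on $\cu_m$ to an $\underline L^2$ bound on $\cu_{m-1}$, which by~\eqref{e.quadresp} with $w = \ell$-type comparison controls $J(\cu_{m-1},p,q)$; the $C3^{-2m}$ term absorbs the lowest-scale Poincar\'e remainder. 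This yields~\eqref{e.flatness.quenched}.

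For the quenched-to-annealed passage giving~\eqref{e.flatness.rules}, set $q = \ahom_*(\cu_m)e$ and $p = e$ with $e\in B_1$, take expectations of~\eqref{e.flatness.quenched}, and use stationarity so that $\E[J(y+\cu_n,p,q)] = \E[J(\cu_n,p,q)]$ and $\E[J(\cu_m,p,q)] = \E[J(\cu_m,p,q)]$; by the monotonicity~\eqref{e.matrix.subadd}, the telescoping sum $\sum_{n}3^{n-m}(\E[J(\cu_n,\cdot)] - \E[J(\cu_m,\cdot)])$ is controlled by $\sum_n 3^{n-m}\sum_{k=n}^{m}\tau_k \le C\sum_n 3^{n-m}\tau_n$ after rearranging the double sum (here one must be a little careful that the argument $q$ changes with the scale, i.e. $\ahom_*(\cu_m)$ vs $\ahom_*(\cu_n)$, but the difference is $O(\tau_k)$ and is absorbed). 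For the term $\avsumtext_y|\a_*^{-1}(y+\cu_n)q - p|^2$, write $\a_*^{-1}(y+\cu_n)q - p = (\a_*^{-1}(y+\cu_n) - \E[\a_*^{-1}(\cu_n)])q + (\E[\a_*^{-1}(\cu_n)]q - p)$; the first piece averages (in expectation) to $\var[\a_*^{-1}(\cu_n)]$ up to constants, and the second, deterministic piece is $|\ahom_*^{-1}(\cu_n)\ahom_*(\cu_m) - \Id|^2|e|^2 \lesssim \sum_{k=n}^m \tau_k^2 \lesssim \tau_n$ by~\eqref{e.matrix.subadd} and boundedness. The main obstacle I anticipate is the bookkeeping in the first step — carefully justifying via~\eqref{e.quadresp} and~\eqref{e.subaddJ} that the local maximizers $v(\cdot,y+\cu_n,p,q)$ are close in $H^1$ to the single global maximizer $v$, and correctly tracking that the gradient average of the local maximizer is exactly $\a_*^{-1}(y+\cu_n)q - p$ with the right signs; the rest is a fairly mechanical application of the multiscale Poincar\'e inequality together with Caccioppoli and the monotonicity of the annealed quantities.
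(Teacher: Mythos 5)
Your proposal follows essentially the same route as the paper's proof: the quadratic response identity together with the energy characterization compares the global maximizer $v(\cdot,\cu_m,p,q)$ with the local maximizers on subcubes (whose gradient averages are exactly $\a_*^{-1}(y+\cu_n)q-p$ by \eqref{e.a.formulas}--\eqref{e.astar.formulas}), the multiscale Poincar\'e inequality plus a Caccioppoli step from $\cu_m$ down to $\cu_{m-1}$ give \eqref{e.flatness.quenched}, and the annealed bound \eqref{e.flatness.rules} follows by stationarity, the variance/bias decomposition of $\a_*^{-1}(y+\cu_n)\ahom_*(\cu_m)e-e$, and telescoping of the $\tau_n$'s. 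The only slips are cosmetic and easily repaired within your framework: the comparison in the final step should be with $v(\cdot,\cu_m,p,q)$ itself restricted to $\cu_{m-1}$ (via \eqref{e.Jenergyv} and the elementary inequality for the quadratic energy), not an affine ``$\ell$-type'' test function, and the deterministic bias term is bounded by $C\sum_{k>n}\tau_k$ and then absorbed by the weighted double-sum rearrangement you already invoke, rather than by the pointwise claim $\sum_k\tau_k^2\lesssim\tau_n$.
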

\begin{proof}
To simplify the notation, for each~$n\in\N$, we let~$v_{n,z} \hspace{-1pt}  :=  \hspace{-1pt}  v(\cdot,z+\cu_n,p,q)$.
We also denote~$v_{n}:= v_{n,0}$.

\smallskip

\emph{Step 1.} We claim that, for every~$m\in\N$ and~$p,q\in B_1$, 
\begin{align}
\label{e.makeroom}
J(\cu_{m-1},p,q) 
&
\leq 
C
3^{-2m}
\min_{k\in\R}
\left\| v(\cdot,\cu_m,p,q) - k \right\|_{\underline{L}^2(\cu_m)}^2
\notag \\ & \qquad 
+
\sum_{z\in 3^{m-1}\Zd\cap \cu_m} 
2( J(z+\cu_{m-1},p,q) - J (\cu_m,p,q) )
\,.
\end{align}
Compute, using~\eqref{e.Jenergyv} and~\eqref{e.quadresp}, 
\begin{align*}
\lefteqn{
J(\cu_{m-1},p,q) 
} \quad 
\notag \\ &  
=
\fint_{\cu_{m-1}} 
\frac12
\nabla v_{m-1} 
\cdot \a\nabla v_{m-1}
\\ & 
\leq 
\fint_{\cu_{m-1}} 
\nabla v_{m} 
\cdot \a\nabla v_{m}
+
\fint_{\cu_{m-1}} 
\left( \nabla v_{m}
-
\nabla v_{m-1} \right) 
\cdot 
\a\left( \nabla v_{m}
-
\nabla v_{m-1}
\right) 
\\ & 
\leq 
\fint_{\cu_{m-1}} 
\nabla v_{m} 
\cdot \a\nabla v_{m}
+
\sum_{z\in 3^{m-1}\Zd\cap \cu_m} 
\fint_{z+\cu_{m-1}} 
\left( \nabla v_{m}
-
\nabla v_{m-1,z} \right) 
\cdot 
\a\left( \nabla v_{m}
-
\nabla v_{m-1,z}
\right) 
\\ & 
=
\fint_{\cu_{m-1}} 
\nabla v_{m} 
\cdot \a\nabla v_{m}
+
\sum_{z\in 3^{m-1}\Zd\cap \cu_m} 
2( J(z+\cu_{m-1},p,q) - J (\cu_m,p,q) )
\,.
\end{align*}
By the Caccioppoli inequality, for every~$k\in \R$,
\begin{equation*}
\fint_{\cu_{m-1}}
\nabla v_m \cdot\a\nabla v_m
\leq
C
3^{-2m} \min_{k\in\R}
\left\| v_m -k \right\|_{\underline{L}^2(\cu_m)}^2
\,.
\end{equation*}
Combining this with the display above yields~\eqref{e.makeroom}. 

\smallskip

\emph{Step 2.} We show that,
for every~$m\in\N$ and~$p,q\in B_1$, 
\begin{align}
\label{e.multiscope}
\lefteqn{
3^{-m} 
\inf_{k\in\R} 
\left\| v(\cdot,\cu_m,p,q) - k \right\|_{\underline{L}^2(\cu_m)} 
} \qquad & 
\notag \\ &
\leq
C3^{-m} 
+
C
\sum_{n=0}^{{m}} 3^{n-m} 
\biggl(
\avsum_{y\in 3^{n}\Zd\cap \cu_m} 
( J(y+\cu_{n},p,q) - J (\cu_m,p,q) )
\biggr)^{\!\!\nicefrac12}
\notag \\ & \qquad 
+
C\sum_{n=0}^{{m}} 3^{n-m} 
\biggl(\avsum_{y\in 3^n\Zd\cap \cu_{m}} 
\left| \a_*^{-1}(y+\cu_n) q - p\right|^2
\biggr)^{\!\!\nicefrac12}
\,.
\end{align}
By the multiscale Poincar\'e inequality (Proposition~\ref{p.MSP}), we have
\begin{align*}
3^{-m} 
\inf_{k\in\R} 
\left\| v_m {-} k \right\|_{\underline{L}^2(\cu_m)} 
&
\leq 
C3^{-m} \left\| \nabla v_m \right\|_{\underline{L}^2(\cu_m)} 
+
C \! 
\sum_{n=0}^{m} 3^{n-m} 
\biggl( \avsum_{y\in 3^n\Zd\cap \cu_{m}} \!\!
\big| \left( \nabla v_m \right)_{y+\cu_n} \big|^2 \biggr)^{\!\nicefrac12}
.
\end{align*}
Using the triangle inequality and~\eqref{e.a.astar.formulas}, we observe that 
\begin{align*}
\lefteqn{
\biggl(
\avsum_{y\in 3^n\Zd\cap \cu_{m}} 
\big| \left( \nabla v_m \right)_{y+\cu_n} \big|^2\biggr)^{\!\!\nicefrac12}
} \quad & 
\\ &
\leq 
\biggl(\avsum_{y\in 3^n\Zd\cap \cu_{m}} \!\!
\big|  \left( \nabla v_{m} \right)_{y+\cu_n} -  \left( \nabla v_{n,y} \right)_{y+\cu_n} \big|^2\biggr)^{\!\!\nicefrac12}
+
\biggl(\avsum_{y\in 3^n\Zd\cap \cu_{m}} \!\!
\left| \a_*^{-1}(y+\cu_n) q - p\right|^2
\biggr)^{\!\!\nicefrac12}
\,.
\end{align*}
By~\eqref{e.Jenergyv} and~\eqref{e.quadresp}, we estimate
\begin{align*}
\avsum_{y\in 3^n\Zd\cap \cu_{m}} \!\!\!
\big|  \left( \nabla v_{m} \right)_{y+\cu_n}  - \left( \nabla v_{n,y} \right)_{y+\cu_n} \big|^2
\leq 
\frac2\lambda \!\!
\avsum_{y\in 3^{n}\Zd\cap \cu_m} \!\!
( J(y+\cu_{n},p,q) - J (\cu_m,p,q) )
.
\end{align*}
Combining the previous displays, using also that~$\left\| \nabla v_m \right\|_{\underline{L}^2(\cu_m)}^2  \hspace{-1pt}  \leq \hspace{-1pt} CJ(\cu_m,p,q) \hspace{-1pt} \leq \hspace{-1pt} C$ by~\eqref{e.Jenergyv}, we obtain~\eqref{e.multiscope}.

\smallskip

\emph{Step 3.} The conclusion. The estimate~\eqref{e.flatness.quenched}
is an immediate consequence of~\eqref{e.makeroom} and~\eqref{e.multiscope}.
The inequality~\eqref{e.flatness.rules} is obtained by taking the expectation of~\eqref{e.flatness.quenched}, using 
$\E \left[ J(\cu_m,e,\ahom_*(\cu_m)e) \right] \leq \E \left[ J(\cu_{m-1},e,\ahom_*(\cu_m)e) \right]$ and the following estimates:
\begin{align*}
\E\biggl [
\sum_{n=0}^{{m}} 3^{n-m} \!\!\!
\avsum_{y\in 3^{n}\Zd\cap \cu_m} \!\!\!
\bigl ( J(y+\cu_{n},e,\ahom_*(\cu_m)e) - J (\cu_m,e,\ahom_*(\cu_m)e) \bigr ) \biggr ]
&
\leq
\sum_{n=0}^{{m}} 3^{n-m} \!\!
\sum_{k=n+1}^{{m}} \tau_k 
\\ &
\leq
C\sum_{n=0}^{{m}} 3^{n-m} \tau_n\,,
\end{align*}
as well as
\begin{align*}
\lefteqn{
\E \Bigg[
\sum_{n=0}^{{m}} 3^{n-m} 
\avsum_{y\in 3^n\Zd\cap \cu_{m}} 
\left| \a_*^{-1}(y+\cu_n) \ahom_*(\cu_m)e - e\right|^2
\Bigg]
} \quad & 
\notag \\ & 
=
\E \Bigg[
\sum_{n=0}^{{m}} 3^{n-m} 
\avsum_{y\in 3^n\Zd\cap \cu_{m}} 
\left| \bigl(
\a_*^{-1}(y+\cu_n) - \ahom_*^{-1} (\cu_m) \bigr) \ahom_*(\cu_m) e
\right|^2
\Bigg]
\\ & 
\leq 
C
\sum_{n=0}^{{m}} 3^{n-m} 
\var \bigl[  \a_*^{-1}(\cu_n) \bigr]
+
C
\sum_{n=0}^{{m}} 3^{n-m} 
\left| \bigl(
\ahom_*^{-1}(\cu_n) - \ahom_*^{-1} (\cu_m) \bigr) 
\right|^2
\end{align*}
and, finally, 
\begin{equation*}
\sum_{n=0}^{{m}} 3^{n-m} 
\left| \bigl(
\ahom_*^{-1}(\cu_n) - \ahom_*^{-1} (\cu_m) \bigr) 
\right|^2
\leq 
C\sum_{n=0}^{{m}} 
3^{n-m} \sum_{k=n+1}^m \tau_k
\leq 
C\sum_{n=0}^{{m}} 
3^{n-m} \tau_n.
\end{equation*}
The proof of the lemma is now complete.
\end{proof}

In the following lemma, we control the variances of the coarse-grained matrices~$\a(\cu_m)$ and~$\a_*(\cu_m)$ in terms of the \emph{square} of the expectation of~$J(\cu_n,e,\b e)$.
It is here that we use the mixing condition~$\CFS(\beta,\Psi)$, in the form of Lemma~\ref{l.J.upperfluct}, and it is the only time we use it in the proof of Proposition~\ref{p.algebraicrate.E}. 

\begin{lemma}[Decay of the variance]
\label{l.flatness}
Assume that~$\P$ satisfies~$\CFS(\beta,\Psi)$ and~\eqref{e.symm}, and is~$\Zd$--stationary.
There exists a constant~$C(\dataref)<\infty$ such that, for every~$\b\in \R^{d\times d}_{\sym}$ and~$m,n \in\N$ with~$\beta m  < n < m$,  
\begin{align}
\label{e.variance.J2}
\lefteqn{
\var \bigl[  \a(\cu_m) \bigr] 
+
\var \bigl[  \a_*^{-1}(\cu_m) \bigr] 
} \qquad & 
\notag \\ & 
\leq
C \min \biggl\{ 
\sup_{|e|=1}
\E \left[ J(\cu_n,e,\b e) \right]^2
\,,
\sum_{k=n+1}^m \tau_k
\biggr\}
+
C(1+|\b|^4) 3^{-d(m-n)} 
\,.
\end{align}
\end{lemma}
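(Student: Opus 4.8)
\textbf{Proof strategy for Lemma~\ref{l.flatness}.}

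The plan is to exploit the fact that the variance of a coarse-grained matrix is controlled by two contributions: a ``deterministic bias'' coming from the difference between $\a(\cu_m)$ and the average of $\a(z+\cu_n)$ over mesoscopic subcubes, and a ``stochastic fluctuation'' coming from the $\CFS$ condition applied to that average. First I would write, for any fixed direction $e \in \partial B_1$,
\begin{equation*}
\var\bigl[ e\cdot \a(\cu_m) e \bigr]
= \E\Bigl[ \bigl( e\cdot \a(\cu_m) e - \E[ e \cdot \a(\cu_m) e ] \bigr)^2 \Bigr]
\end{equation*}
and insert the quantity $e\cdot \bigl( \avsumtext_{z\in 3^n\Zd\cap \cu_m} \a(z+\cu_n) \bigr) e$ as an intermediate term, using the triangle inequality in $L^2(\Omega,\P)$. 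The second piece is then the variance of a normalized sum of $\F(z+\cu_n)$-measurable random variables, to which Lemma~\ref{l.J.upperfluct} (or directly the $\CFS(\beta,\Psi)$ assumption, via Lemmas~\ref{l.J.basicprops} and~\ref{l.malliavin}) applies: the normalized average concentrates at scale $3^{-\frac d2(m-n)}$, which after squaring and extracting the second moment from the $\O_\Psi$ bound gives the $C(1+|\b|)^4 3^{-\frac d2(m-n)}$ term (the power of $1+|\b|$ accounting for the normalization by $(1+|\b|)^2$ needed to put the random variables $J(z+\cu_n,e,\b e)$ into the form~\eqref{e.CFS.ass}).

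The first piece is the deterministic additivity defect. Here I would use~\eqref{e.add.defect.a}: the difference between $\avsumtext_{z\in 3^n\Zd\cap\cu_m} \a(z+\cu_n)$ and $\a(\cu_m)$ is bounded (in the matrix order, hence also in operator norm after pairing against $e$) by $C \avsumtext_{z\in 3^n\Zd\cap\cu_m} \sum_{i=1}^d J(z+\cu_n,e_i,\b e_i)$. Taking expectations and using stationarity, the expectation of this defect is at most $C \sup_{|e|=1} \E[ J(\cu_n,e,\b e) ]$. But for the \emph{variance} bound I need the square of this quantity, so I cannot just take expectations of the defect directly; instead I would bound the $L^2(\Omega)$ norm of the bias term by its $L^\infty(\Omega)$ bound times its expectation, or more simply observe that since $0\le J(z+\cu_n,e_i,\b e_i)\le C(1+|\b|)^2$ pointwise, we have
\begin{equation*}
\E\Bigl[ \Bigl( \avsum_{z} \sum_i J(z+\cu_n,e_i,\b e_i) \Bigr)^{\!2} \Bigr]
\le C(1+|\b|)^2 \, \E\Bigl[ \avsum_{z} \sum_i J(z+\cu_n,e_i,\b e_i) \Bigr]
= C(1+|\b|)^2 \sup_{|e|=1}\E[ J(\cu_n,e,\b e) ] \cdot (1+|\b|)^?\,,
\end{equation*}
which is of the right order provided $\sup \E[J(\cu_n,e,\b e)]\le C(1+|\b|)^2$. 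Actually the cleanest route is: the bias term $B := e\cdot\bigl(\avsumtext_z \a(z+\cu_n) - \a(\cu_m)\bigr)e$ satisfies $0 \le B \le C\avsumtext_z\sum_i J(z+\cu_n,e_i,\b e_i)$, so $\E[B^2] \le C \bigl(\sup_{|e|=1}\E[J(\cu_n,e,\b e)]\bigr)^2$ would require $B$ itself to be comparable to its expectation, which is not automatic. The honest estimate uses that $B$ is a nonnegative average of terms each bounded by $C(1+|\b|)^2$, combined with $\CFS$ applied once more to the average $\avsumtext_z \sum_i \bigl(J(z+\cu_n,e_i,\b e_i) - \E[J(\cu_n,e_i,\b e_i)]\bigr)$: this splits $B$ into its expectation (bounded by $\sup\E[J]$, whose square is the first term on the right of~\eqref{e.variance.J2}) plus an $\O_\Psi(C(1+|\b|)^2 3^{-\frac d2(m-n)})$ fluctuation whose second moment feeds the second term of~\eqref{e.variance.J2}.

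The same argument applies verbatim to $\a_*$ using~\eqref{e.add.defect.astar} and the fact that, by~\eqref{e.a.bounds}, $\a_*(\cu_m)$ and $\a_*^{-1}(\cu_m)$ are comparable with constants depending only on $(d,\lambda,\Lambda)$, so controlling the variance of $\a_*^{-1}(\cu_m)$ (which is the natural superadditive object) transfers to $\a_*(\cu_m)$ at the cost of a harmless constant. Finally, since $\a(\cu_m)$ and $\a_*(\cu_m)$ are symmetric $d\times d$ matrices, control of $\var[e\cdot \a(\cu_m) e]$ for a basis of directions (using the $d$ coordinate vectors $e_i$ and the $\binom d2$ vectors $\frac1{\sqrt2}(e_i+e_j)$, say, together with the polarization identity) yields control of $\var[\a(\cu_m)]$ in the full matrix norm, completing the proof. \textbf{The main obstacle} I anticipate is the bookkeeping in the step where the nonnegative additivity-defect term is split into its mean (contributing the $\sup\E[J]^2$ term, which crucially is \emph{squared}) and a fluctuation — one must resist the temptation to simply take expectations, and instead apply the $\CFS$ concentration a second time to the defect itself; getting the powers of $(1+|\b|)$ to match~\eqref{e.variance.J2} requires care with the normalization in~\eqref{e.CFS.ass}.
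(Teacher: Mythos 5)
Your proof is correct and is essentially the paper's argument: both pass through the mesoscopic average $\avsumtext_{z\in 3^n\Zd\cap\cu_m}\a(z+\cu_n)$, bound the additivity defect by $\avsumtext_{z}\sum_{i}J(z+\cu_n,e_i,\b e_i)$ via~\eqref{e.wrap.around}, split that average into its expectation (yielding the squared term) plus a fluctuation controlled by~$\CFS$, and apply~$\CFS$ again (through Lemmas~\ref{l.J.basicprops} and~\ref{l.malliavin}) to bound the variance of the mesoscopic average itself. The only deviation is cosmetic: for $\a_*$ you pass through $\a_*^{-1}(\cu_m)$ via~\eqref{e.add.defect.astar} and then invert, whereas the paper compares $\a_*(\cu_m)$ directly to the mesoscopic average of the $\a(z+\cu_n)$ using~\eqref{e.wrap.around}; both routes are equally valid.
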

\begin{proof}
Fix~$m,n$ as in the statement. By the triangle inequality, 
\begin{equation}
\label{e.variance.presplit}
\var\bigl [ \a(\cu_m) \bigr] 
\leq 
2\var \Biggl[ \avsum_{z\in 3^n\Zd\cap\cu_m}
\a(z+\cu_n) \Biggr] 
+
2 \E \Biggl[
\biggl| \a(\cu_m) - \!\!\!\! \avsum_{z\in 3^n\Zd\cap\cu_m}
\a(z+\cu_n) \biggr|^2 \Biggr]
\,.
\end{equation}
For the first term on the right side, we apply Lemma~\ref{l.J.upperfluct} to obtain, for every~$\beta m < n < m$, 
\begin{align*}
\biggl| \avsum_{z\in 3^{n} \Zd\cap\cu_{m}} 
\a(z+\cu_{n}) 
- 
\ahom(\cu_n) \biggr|
\leq
\O_\Psi \left( C3^{-\frac d2(m-n)} \right)
\,.
\end{align*}
In view of~\eqref{e.Young.growth}, this implies that, for every~$m,n\in\N$ with~$\beta m<n<m$, 
\begin{equation}
\label{e.varbounda}
\var \Biggl[ \avsum_{z\in 3^n \Zd\cap\cu_{m}} 
\a(z+\cu_{n}) \Biggr] 
\leq 
C 3^{-d (m-n)}
\,.
\end{equation}
The second term on the right side of~\eqref{e.variance.presplit} is estimated in two different ways. 
First, we may use~\eqref{e.wrap.around}, to obtain that, 
for any matrix~$\b \in \R^{d\times d}_{\sym}$,
\begin{equation}
\label{e.wrap.app}
\biggl| \a(\cu_m) -  \avsum_{z\in 3^n\Zd\cap\cu_m}
\a(z+\cu_n) \biggr|
\leq 
C \avsum_{z\in 3^n\Zd\cap\cu_m}
\sum_{i=1}^d
J(z+\cu_n,e_i,\b e_i) 
\,.
\end{equation}
Using Lemma~\ref{l.J.upperfluct} with~$l=n$ and~\eqref{e.Young.growth}, we obtain 
\begin{equation}
\label{e.variance.quad.smack}
\E \Biggl[
\biggl| \a(\cu_m) - \avsum_{z\in 3^n\Zd\cap\cu_m}
\a(z+\cu_n) \biggr|^2
\Biggr]
\leq
C \sum_{i=1}^d\E \bigl[ J(\cu_n,e_i,\b e_i) \bigr]^2 + C(1+|\b|)^4 3^{d(n-m)}
\,.
\end{equation}
Alternatively, we may use the nonnegativity of
\begin{equation*}
\avsum_{z\in 3^n\Zd\cap\cu_m}
\a(z+\cu_n)
-
\a(\cu_m)\,,
\end{equation*}
which is due to subadditivity, to find that 
\begin{align}
\label{e.alternatively.tauk}
\E \Biggl[
\biggl| \a(\cu_m) - \avsum_{z\in 3^n\Zd\cap\cu_m} 
\a(z+\cu_n) \biggr|^2
\Biggr]
&
\leq
C\E \Biggl[
\biggl| \a(\cu_m) - \avsum_{z\in 3^n\Zd\cap\cu_m} 
\a(z+\cu_n) \biggr|
\Biggr]
\notag \\ & 
=
C\biggl|  \E \biggl[
\a(\cu_m) - \avsum_{z\in 3^n\Zd\cap\cu_m} 
\a(z+\cu_n) 
\biggr]\biggr|
\notag \\ & 
=
C \bigl| \ahom(\cu_m) - \ahom(\cu_n) \bigr|
=
C \sum_{k=n+1}^m \tau_k
\,.
\end{align}
This completes the proof of the estimate for~$\var[ \a(\cu_m)]$. The bound for~$\var[ \a_*^{-1}(\cu_m)]$ is proved similarly, and so we omit the argument. 
\end{proof}

Combining the result of the previous two lemmas, we find that, for every~$k,m,n\in\N$ with~$m\beta < k < n< m$,
\begin{align}
\label{e.combo}
\lefteqn{
\sup_{e\in B_1}
\E \left[ J(\cu_m,e,\ahom_*(\cu_m)e) \right]
} \ \ & 
\notag \\ &
\leq 
C 
\sum_{j=n}^{m} 3^{j-m} 
\left( \tau_j + \var\left[ \a_*^{-1}(\cu_j) \right] \right)
+C 3^{-(m-n)}
\notag \\ & 
\leq 
C
\Bigl( \sum_{j=k}^m \tau_j  \Bigr)\wedge 
\sup_{e\in B_1 }
\E \left[ J(\cu_k,e,\ahom_*(\cu_k) e) \right]^2
+
C 
\bigl( 3^{-(m-n)} + 3^{-d(n-k)} \bigr)
+
C\sum_{j=n}^m 3^{j-m} \tau_j
\,.
\end{align}
There is an expectation of~$J$ on the left and the~\emph{square} of an expectation of~$J$ on the right. This is very good! The rest are garbage error terms and some~$\tau_j$'s, which we anticipated in the heuristic discussion around~\eqref{e.Km.naivewish}. 
Thus we should expect that if we can make the quantity~$\sup_{e\in B_1 }
\E \left[ J(\cu_k,e,\ahom_*(\cu_k) e) \right]$ small enough, then an iteration argument should yield an algebraic rate of decay for it. 

\smallskip

This intuition is formalized in the following elementary but technical lemma. 

\begin{lemma} 
\label{l.iteration}
Let~$\{ F_j \}_{j=0}^\infty\subseteq\R_+$  be a nonincreasing sequence. 
Suppose that~$A \in [1,\infty)$ and~$\alpha,\delta,\kappa \in (0,\tfrac12]$ are such that~$F_m$ satisfies
\begin{equation}
\label{e.Fiter.ass}
\left\{
\begin{aligned}
& F_m \leq A \sum_{k=1}^m 3^{(k-m)\alpha} (F_{k-1} - F_k)  
+  
A F_{\lfloor (1-\kappa) m \rfloor}^2   
+   
\delta 3^{-\alpha m} 
\,, \quad \forall m \in\N\,, 
\\ & 
F_0 \leq \delta 
\,.
\end{aligned}
\right.
\end{equation}
Define~$\gamma := \frac{\alpha}{20A}$. 
If~$\delta \leq 10^{-6} A^{-2} \alpha^2$, then we have
\begin{equation}
\label{e.Fiter.res}
F_n \leq  1200A \delta \alpha^{-1}  3^{-n\gamma}
\,, \quad \forall n\in\N
\,.
\end{equation}
\end{lemma}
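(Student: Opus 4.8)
The plan is to run a discrete Gronwall-type iteration on the sequence $F_m$, exploiting the fact that the ``main'' term on the right side of~\eqref{e.Fiter.ass} is a \emph{telescoping} sum whose total is controlled by $F_0 \leq \delta$, while the quadratic term $AF_{\lfloor(1-\kappa)m\rfloor}^2$ is a genuinely smaller-order contribution once $F$ is small. First I would establish the crude a priori bound $F_m \leq 2\delta$ for all $m$, proved by induction: assuming $F_k \leq 2\delta$ for $k < m$, the telescoping sum $\sum_{k=1}^m 3^{(k-m)\alpha}(F_{k-1}-F_k)$ is bounded (after summation by parts, or just by the trivial bound $\sum 3^{(k-m)\alpha}(F_{k-1}-F_k) \leq F_0 \leq \delta$ using monotonicity and $3^{(k-m)\alpha}\leq 1$) by something like $C\alpha^{-1}\delta$, actually I would want the sharper statement that this sum is $\leq \delta$ outright since each increment $F_{k-1}-F_k\geq 0$ and the weight is at most $1$; then $F_m \leq A\delta + A(2\delta)^2 + \delta \leq 2A\delta$... which forces me to be slightly more careful and instead bootstrap to $F_m \leq C_0 A\delta$ for an explicit $C_0$, using the smallness hypothesis $\delta \leq 10^{-6}A^{-2}\alpha^2$ to absorb the quadratic term $A F^2 \leq A(C_0A\delta)^2 \leq \ldots$ back into a fraction of $C_0A\delta$.

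Second, with the uniform bound $F_m \leq C_0 A \delta =: B$ in hand, I would prove the geometric decay~\eqref{e.Fiter.res} by a second induction, this time with the ansatz $F_n \leq M 3^{-n\gamma}$ where $M := 1200A\delta/\alpha$ and $\gamma = \alpha/(20A)$. The key computation is to insert this ansatz into the right side of~\eqref{e.Fiter.ass}: the telescoping term, handled by summation by parts, becomes $A\sum_{k=1}^m 3^{(k-m)\alpha}(F_{k-1}-F_k) = A\bigl(F_0 3^{-m\alpha} + (1-3^{-\alpha})\sum_{k=1}^{m-1}3^{(k-m)\alpha}F_k\bigr)$, and plugging $F_k \leq M3^{-k\gamma}$ and summing the geometric-type series $\sum_k 3^{(k-m)\alpha - k\gamma}$ (where $\gamma \ll \alpha$, so the sum is dominated by its top term $k\approx m$ and is $\leq C 3^{-m\gamma}$) yields a bound $\leq A\alpha\cdot C' M 3^{-m\gamma}$ for an absolute $C'$; by the choice $\gamma = \alpha/(20A)$ and the precise constant $1200$ in $M$, this is at most $\frac13 M 3^{-m\gamma}$. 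The quadratic term is $AF_{\lfloor(1-\kappa)m\rfloor}^2 \leq AM^2 3^{-2(1-\kappa)m\gamma}$; since $M = 1200A\delta/\alpha$ and $\delta \leq 10^{-6}A^{-2}\alpha^2$ we get $AM \leq 1200\cdot 10^{-6}\alpha \leq \frac{1}{3}$ roughly, so this term is $\leq \frac13 M 3^{-(1-\kappa)2m\gamma} \leq \frac13 M 3^{-m\gamma}$ (using $2(1-\kappa)\geq 1$ since $\kappa\leq\tfrac12$). Finally $\delta 3^{-m\alpha} \leq \delta 3^{-m\gamma} \leq \frac13 M 3^{-m\gamma}$ trivially. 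Adding the three contributions closes the induction: $F_m \leq M3^{-m\gamma}$.

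The base case $n=0$ is immediate since $F_0 \leq \delta \leq M$. I expect the \textbf{main obstacle} to be purely bookkeeping: getting the three constants $\gamma = \alpha/(20A)$, the prefactor $1200$, and the smallness threshold $10^{-6}A^{-2}\alpha^2$ to fit together so that each of the three error contributions is genuinely $\leq \tfrac13 M 3^{-m\gamma}$. The summation-by-parts estimate for the telescoping term is where the factor $\alpha^{-1}$ (equivalently the factor $1/(1-3^{-\alpha}) \sim 1/(\alpha\log 3)$) enters and must be matched against $A$ via the definition of $\gamma$; one should double-check that $\sum_{k\geq 1} 3^{-k\gamma'}$ for the relevant small exponent $\gamma' = \alpha - (m-k)$-type weight behaves as claimed, i.e.\ that the geometric series tail really is dominated by a constant times its largest term when $\gamma$ is chosen this small relative to $\alpha$. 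None of this requires new ideas — it is the standard ``iterate up the scales'' lemma — but the explicit constants demanded by the statement mean the inequalities must be tracked with some care rather than absorbed into a generic $C$.
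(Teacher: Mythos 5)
There is a genuine gap in the second induction, at the step where you claim the telescoping term is at most $\tfrac13 M3^{-m\gamma}$. Carrying out your own summation by parts exactly gives
\begin{equation*}
A\sum_{k=1}^m 3^{(k-m)\alpha}(F_{k-1}-F_k)
= A\Bigl( 3^{(1-m)\alpha}F_0 + (3^\alpha-1)\sum_{k=1}^{m-1}3^{(k-m)\alpha}F_k - F_m\Bigr)\,.
\end{equation*}
If you drop the $-AF_m$ term (as you do, taking an upper bound) and insert the ansatz $F_k\le M3^{-k\gamma}$ for $k<m$, the middle term is bounded by
$A(3^\alpha-1)\,M3^{-m\gamma}\sum_{j\ge1}3^{-j(\alpha-\gamma)} = A\,\frac{3^\alpha-1}{3^{\alpha-\gamma}-1}\,M3^{-m\gamma}\ \ge\ A\,M3^{-m\gamma}\ \ge\ M3^{-m\gamma}$.
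The factor $\alpha$ coming from $3^\alpha-1$ is exactly cancelled by the factor $\sim\alpha^{-1}$ from the geometric series, whose ratio is $3^{-(\alpha-\gamma)}$, not $3^{-\gamma}$: making $\gamma$ small relative to $\alpha/A$ does not shrink this sum, it only prevents it from blowing up, and the prefactor $1200$ in $M$ cancels from both sides. So the main term is of size $A M3^{-m\gamma}$ with $A\ge1$, and the induction cannot close as written, no matter how the constants are tuned. (Your preliminary uniform bound $F_m\le 2\delta$ also fails to close for $A>1$, as you noticed, but that step is anyway not where the decay comes from.)

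The missing ingredient is a contraction mechanism. One repair of your direct route is to \emph{keep} the $-AF_m$ term and absorb it on the left: then $(1+A)F_m$ is bounded by the right side, and the effective coefficient of $M3^{-m\gamma}$ becomes $\frac{A}{1+A}\cdot\frac{3^\alpha-1}{3^{\alpha-\gamma}-1}$, which is strictly below $1$ by a margin of order $A^{-1}$ precisely because $\gamma=\frac{\alpha}{20A}$ makes the second factor $1+O\bigl(\frac{\gamma}{\alpha}\bigr)=1+O\bigl(\frac1{20A}\bigr)$; the remaining terms are then easily accommodated using $\delta\le 10^{-6}A^{-2}\alpha^2$. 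This is where the specific choice of $\gamma$ genuinely enters — as a per-scale loss that must be beaten by the contraction margin, not as a device to shrink the geometric sum. The paper implements the same mechanism differently: it passes to the weighted quantity $G_m:=\sum_{k=0}^m 3^{(k-m)\gamma}F_k$, for which multiplying~\eqref{e.Fiter.ass} by $3^{(m-n)\gamma}$ and summing over $m\le n$ converts the telescoping term into $\frac{5A}{\alpha}(G_{n-1}-G_n+3^{-n\gamma}F_0)$; rearranging gives $G_n\le\theta G_{n-1}+2A G_{\lfloor(1-\kappa)n\rfloor}^2+10\delta 3^{-n\gamma}$ with $\theta=\frac{5A}{5A+\alpha}<1$, and the induction $G_n\le K\delta3^{-n\gamma}$ closes under the conditions $3^\gamma\theta\le\frac{1+\theta}{2}$ (fixing $\gamma=\frac{\alpha}{20A}$) and the stated smallness of $\delta$; since $F_n\le G_n$, the conclusion follows. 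Either repair works, but as written your key inequality is false and the argument does not go through.
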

\begin{proof} 
Fix~$\gamma \in (0,\frac12 \alpha]$ to be selected. It is convenient to work with the composite quantity 
\begin{equation*}
G_m := \sum_{k=0}^m 3^{(k-m)\gamma} F_k
\,.
\end{equation*}
We will first transform the assumption~\eqref{e.Fiter.ass} into an inequality for~$G_m$. 
The claim is that 
\begin{equation}
\label{e.Gn.preiter}
G_n \leq 
\frac{5A}{\alpha} (G_{n-1} - G_{n} +  3^{-n\gamma} F_{0})
+
2A G_{\lfloor (1-\kappa) n \rfloor}^2 
+
5\delta \alpha^{-1} 3^{-n\gamma}\,.
\end{equation}
To derive~\eqref{e.Gn.preiter}, we multiply~\eqref{e.Fiter.ass} by~$3^{(m-n)\gamma}$ and sum over~$m \in\{ 0,\ldots,n\}$. The left side becomes~$G_n$. The first term on the right side is estimated by interchanging the order of the sums as follows:
\begin{align*} 
A \sum_{m=1}^n 3^{(m-n)\gamma} 
\sum_{k=1}^m 3^{(k-m)\alpha} (F_{k-1} - F_k) 
& =
A \sum_{k=1}^n 3^{(k-n)\gamma}  (F_{k-1} - F_k)   
\sum_{m=0}^{n-k} 3^{-(\alpha - \gamma)m}  
\\ & 
\leq 
\frac{A}{1-3^{-\nicefrac \alpha2}} (G_{n-1} - G_{n} +  3^{-n\gamma} F_{0})
\\ & 
\leq 
\frac{5A}{\alpha} (G_{n-1} - G_{n} +  3^{-n\gamma} F_{0})
\,.
\end{align*}
For the second term on the right side, we have 
\begin{align*}
A \sum_{m=0}^n 
3^{(m-n)\gamma} F_{\lfloor (1-\kappa) m \rfloor}^2 
& \leq 
2A 
\sum_{k=1}^{\lfloor (1-\kappa) n \rfloor}
3^{( k -\lfloor (1-\kappa) n \rfloor )(1-\kappa)^{-1} \gamma} 
F_{k}^2 
\\ & 
= 
2A 
\sum_{k=1}^{\lfloor (1-\kappa) n \rfloor}
3^{- ( k -\lfloor (1-\kappa) n \rfloor ) ( 2 - \frac{1}{1-\kappa} ) \gamma}
\bigl( 
3^{( k -\lfloor (1-\kappa) n \rfloor ) \gamma}
F_{k} \bigr)^2 
\\ & 
\leq 
2A G_{\lfloor (1-\kappa) n \rfloor}^2
\,.
\end{align*}
For the third term on the right side, we compute
\begin{equation*}
\sum_{ m=0}^n 3^{ (m-n)\gamma} \delta 3^{-\alpha m} 
=
\delta 3^{-n\gamma} \sum_{ m=0}^n 3^{-(\alpha-\gamma)m} 
\leq 
\frac{\delta  3^{-n\gamma}}{1-3^{-\nicefrac \alpha2}}
\leq 
5\delta \alpha^{-1}  3^{-n\gamma}  \,.
\end{equation*}
Combining these, we obtain~\eqref{e.Gn.preiter}.

\smallskip

We rearrange~\eqref{e.Gn.preiter} and use~$F_0 \leq \delta$ to obtain, for~$\theta := 5A (5A+\alpha)^{-1} < 1$,
\begin{equation}
\label{e.recurse.Gn}
G_n \leq 
\theta  G_{n-1} 
+
G_{\lfloor (1-\kappa) n \rfloor}^2 
+
10 \delta 3^{-n\gamma}\,.
\end{equation}
We now try to prove by induction that~$G_n$ satisfies the bound
\begin{equation*}
G_n \leq K \delta 3^{-n\gamma}\,, \qquad \forall n\in\N\,,
\end{equation*}	
where~$K$ is a parameter to be selected. Obviously this is valid for~$n =0$ and~$K=1$, by assumption, since~$G_0 = F_0 \leq \delta$. 
Assuming it holds for~$n \leq m$, we plug it into~\eqref{e.recurse.Gn} to get
\begin{align*}
G_{m+1} 
&
\leq 
\theta K \delta 3^{-m\gamma}
+
K^2 \delta^2 3^{-2 \lfloor (1-\kappa) m \rfloor \gamma}
+
10 \delta 3^{-m\gamma}
%\\ & 
\leq
K\delta 3^{-(m+1) \gamma}
\Bigl( 
\frac{\theta K + 10}{K} 3^\gamma
+
K\delta 3^{2\gamma}
\Bigr)
\,.
\end{align*}
We discover therefore that we succeed, and may close the induction, provided that~$\gamma$ is chosen small enough that~$3^\gamma \theta \leq \frac{1+\theta}{2}$ (it suffices to take~$\gamma = \alpha(20A)^{-1}$), then the constant~$K$ is chosen so that~${10}{K} 3^\gamma \leq \frac14(1-\theta)$ (it suffices to take~$K=1200A/\alpha$), and finally~$\delta$ is chosen so small that~$K\delta 3^{2\gamma} \leq \frac14(1-\theta)$ (it suffices to take~$\delta = \frac{\alpha}{400KA}\geq \frac{\alpha^2}{500000 A^2}$). We obtain 
\begin{equation*}
F_n \leq G_n \leq \frac{120\delta}{1-\theta} 3^{-n\gamma}
\leq 
\frac{1200A\delta}{\alpha} 3^{-n\gamma}
\,, \qquad \forall n\in\N\,,
\end{equation*}
which completes the proof. 
\end{proof}

Before we can apply Lemma~\ref{l.iteration} to the quantity estimated in~\eqref{e.combo}, we need some initial smallness. 
We accomplish this in the following lemma, by a simple pigeonhole argument. In fact, it is rather easy to get a crude, logarithmic-type convergence rate for the quantity~$\sup_{e\in B_1 }\E \left[ J(\cu_k,e,\ahom_*(\cu_k) e) \right]$ by using monotonicity to argue that it cannot change too much on \emph{every} scale. It would otherwise run out of room! Note that since the length scale of~$\cu_n$ is~$3^n$ and~$n = c\log 3^n$, the estimate~\eqref{e.lograte} below is indeed a logarithmic rate of convergence (with a doubly logarithmic correction). 

\begin{lemma}[Pigeonhole lemma]
\label{l.pigeonhole}
Assume that~$\P$ is~$\Zd$--stationary and satisfies~$\CFS(\beta,\Psi)$ and~\eqref{e.symm}.
There exists a constant~$C(\dataref)<\infty$ such that, for every~$n\in\N$ with~$n \geq C  (1-\beta)^{-1}$, 
\begin{align}
\label{e.lograte}
\sup_{e\in B_1 }
\E \left[ J(\cu_{n},e,\ahom_*(\cu_{n}) e) \right]
\leq
\frac{C \log n}{n} + \exp \Bigl ( - \frac{(1-\beta)n}{C} \Bigr) 
\,.
\end{align}
\end{lemma}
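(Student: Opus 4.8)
The plan is to combine the monotonicity of the averaged coarse-grained matrices with a pigeonhole argument, using Lemma~\ref{l.flatness.rules} to convert smallness of the additivity defects $\tau_n$ into smallness of the convex duality defect. First, recall from~\eqref{e.matrix.subadd} that $\ahom(\cu_m)$ is nonincreasing and $\ahom_*(\cu_m)$ is nondecreasing in $m$, and both are confined to the compact set of symmetric matrices between $\lambda\Id$ and $\Lambda\Id$ by~\eqref{e.a.bounds}. Consequently, the sum $\sum_{k=1}^\infty \tau_k$ of the expected additivity defects defined in~\eqref{e.taun} is bounded by a constant $C(d,\lambda,\Lambda)$, since $\tau_k$ telescopes. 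Similarly, by Lemma~\ref{l.flatness} applied with (say) $\b = \ahom_*(\cu_{\lfloor n/2\rfloor})$ and $m,n$ replaced by appropriate scales with $\beta m < n/2 < m$, and using the crude bound $\E[J(\cu_k,e,\b e)] \leq C$ from~\eqref{e.a.bounds}, we get that $\var[\a_*^{-1}(\cu_j)] \leq C 3^{-\frac d2(1-\beta)j/2} + C$; but this is not small yet, so we instead need to extract smallness of the variance from the pigeonhole on $\tau_k$ too. Actually the cleaner route is: $\var[\a_*^{-1}(\cu_j)]$ is controlled by the variance of the average $\avsumtext_{z} \a_*^{-1}(z+\cu_\ell)$ plus the square of $\E[J]$, and the former is $\O$ of a negative power of $3$ by the $\CFS$ condition (Lemma~\ref{l.J.upperfluct}).

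Here are the key steps in order. \textbf{Step 1.} Show $\sum_{k=1}^\infty \tau_k \leq C$ by telescoping and the uniform bounds~\eqref{e.a.bounds}. Hence by pigeonhole, for every $n$ there exists $\ell \in \{ \lceil \beta n \rceil + 1, \ldots, n \}$ with $\tau_\ell \leq \frac{C}{(1-\beta)n}$, in fact there exist $\sim (1-\beta)n/2$ such scales so we can choose one, say $\ell = \ell(n)$, with $\ell \geq n - C/(1-\beta)$ (choosing the pigeonhole scale in the upper portion of the range). \textbf{Step 2.} At that scale $\ell$, apply~\eqref{e.flatness.rules} with $m = \ell$: we need to bound $\sum_{j=0}^{\ell} 3^{j-\ell}(\tau_j + \var[\a_*^{-1}(\cu_j)]) + 3^{-2\ell}$. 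For the $\tau_j$ part, split the sum at a mesoscopic scale $k_0 := \lceil \ell - (1-\beta)^{-1}\log_3 \ell \rceil$: the tail $\sum_{j > k_0} 3^{j-\ell}\tau_j$ is bounded by $\sup_{j>k_0}\tau_j \cdot C \leq C/((1-\beta)n)$ if we arrange (via a slightly more careful pigeonhole over a window of scales all near the top) that $\tau_j$ is small for all $j$ in a window of length $\sim (1-\beta)^{-1}\log \ell$ near $\ell$; and the head $\sum_{j \leq k_0} 3^{j-\ell}\tau_j \leq 3^{k_0 - \ell}\sum_j \tau_j \leq C 3^{-(1-\beta)^{-1}\log_3 \ell} \cdot C = C\ell^{-1/(1-\beta)} \leq C/\ell$. \textbf{Step 3.} For the variance part $\sum_{j=0}^\ell 3^{j-\ell}\var[\a_*^{-1}(\cu_j)]$: use Lemma~\ref{l.flatness} (with $m=j$, $n = \lceil \beta j\rceil+1$ or any intermediate scale, $\b = \ahom_*(\cu_{n'})$ for appropriate $n'$) together with~\eqref{e.a.bounds} to get $\var[\a_*^{-1}(\cu_j)] \leq C\sup_e \E[J(\cu_{n'},e,\b e)]^2 + C 3^{-\frac d2(j - n')} \leq C'$ — still only bounded. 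So instead bound this term directly: $\var[\a_*^{-1}(\cu_j)] \leq C$, hence $\sum_{j\leq k_0} 3^{j-\ell}\var \leq C 3^{k_0-\ell} \leq C/\ell$, while for $j \in (k_0,\ell]$ we need genuine smallness. For these top scales, apply Lemma~\ref{l.J.upperfluct} / the $\CFS$ bound on the average $\avsumtext_{z\in 3^{n'}\Zd\cap\cu_j}\a_*^{-1}(z+\cu_{n'})$ with $n'$ chosen as the pigeonhole scale $\ell$ itself (valid since $\beta j < \ell$ for $j$ near $\ell$... actually need $\ell < j$, so use $n' = k_0$): this gives $\var[\avsumtext_z \a_*^{-1}(z+\cu_{n'})] \leq C 3^{-\frac d2(j - n')}$, and combined with $\var[\a_*^{-1}(\cu_j)] \leq C\var[\avsumtext_z\a_*^{-1}(z+\cu_{n'})] + C\E[\text{duality defect at }n']^2 + C(\text{additivity defect})$, using~\eqref{e.add.defect.astar} and $\sum\tau_k$ small, we close the loop. \textbf{Step 4.} Collect: $\sup_e\E[J(\cu_\ell,e,\ahom_*(\cu_\ell)e)] \leq C\log \ell/\ell + \exp(-(1-\beta)\ell/C)$. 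Finally transfer from scale $\ell$ back to scale $n$ using monotonicity: since $\ell \geq n - C/(1-\beta)$, we have $\ahom_*(\cu_n) \geq \ahom_*(\cu_\ell)$ and $\ahom(\cu_n) \leq \ahom(\cu_\ell)$, so $|\ahom(\cu_n)-\ahom_*(\cu_n)| \leq |\ahom(\cu_\ell)-\ahom_*(\cu_\ell)|$, and via~\eqref{e.diagonalset} (in expectation, applied at scale $n$ with $\b = \ahom_*(\cu_n)$, noting $\E[J(\cu_n,e,\ahom_*(\cu_n)e)] \leq \E[J(\cu_\ell,e,\ahom_*(\cu_n)e)] + (\text{change in }\b\text{ terms controlled by }\tau)$) we recover~\eqref{e.lograte}, absorbing the $\log\log$-type correction from the window width into the constant $C$ and the $\log n$ factor.

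The main obstacle I anticipate is Step~3: controlling $\var[\a_*^{-1}(\cu_j)]$ in the intermediate regime. Lemma~\ref{l.flatness} as stated gives variance control only in terms of the \emph{square} of $\E[J]$ plus a stochastic error, which circularly requires the very smallness we are trying to establish. The resolution is that we do \emph{not} need the variance to be as small as $\E[J]^2$ here — we only need $\var[\a_*^{-1}(\cu_j)] \leq C/\ell$ or so on the top $O((1-\beta)^{-1}\log\ell)$ scales, and this much weaker bound follows directly from the $\CFS$ condition applied to the average of $\a_*^{-1}$ over mesoscopic subcubes (Lemma~\ref{l.J.upperfluct} with $\b$ deterministic, giving $\O_\Psi(3^{-\frac d2(j-n')})$ fluctuations) combined with the additivity defect bound~\eqref{e.add.defect.astar} whose average is small because $\sum_k\tau_k \leq C$ forces $\tau_k$ to be small on most scales, including those in our window if the window is chosen in the high-$\tau$-efficient portion of the range. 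Making the two nested pigeonhole choices (one for the base scale $\ell$, one ensuring smallness throughout a logarithmic window) compatible, while keeping $\ell \geq n - C/(1-\beta)$, is the delicate bookkeeping; but since $\sum_k \tau_k \leq C$, the number of scales $k\leq n$ with $\tau_k > \eps$ is at most $C/\eps$, so choosing $\eps \sim \log n / n$ leaves all but $O(n/\log n)$ scales "good," and a window of width $O((1-\beta)^{-1}\log n)$ near the top of a range of width $(1-\beta)n$ can be placed entirely among good scales provided $(1-\beta)n \gg (1-\beta)^{-1}\log n$, i.e.\ $n \geq C(1-\beta)^{-1}$, exactly the hypothesis.
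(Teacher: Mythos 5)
Your overall strategy is the paper's: bound $\sum_k\tau_k$ by telescoping, pigeonhole to find scales where the additivity defects are small, feed this into~\eqref{e.flatness.rules} together with a variance bound of the type~\eqref{e.var.subadd.smash} (CFS on mesoscopic averages plus additivity defects), and finish by monotonicity. The gap is exactly in the bookkeeping you flag as delicate, and your proposed resolution does not work. First, the claim in Step~1 that one can choose $\ell\geq n-C/(1-\beta)$ with $\tau_\ell\leq C/((1-\beta)n)$ is false: with that threshold the number of bad scales can be as large as $c(1-\beta)n$, which for large $n$ dwarfs the $O(1/(1-\beta))$ scales near the top, so the bad scales may cover that entire window. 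Second, even the relaxed version — a good window of width $w\sim\log n$ (or your $(1-\beta)^{-1}\log n$) somewhere in the upper part of $[\beta n,n]$ with all $\tau_j\lesssim\delta$ — is constrained by counting: the range contains only about $(1-\beta)n/w$ disjoint windows while up to $C/\delta$ of them can be bad, so one is forced to take $\delta\gtrsim w/((1-\beta)n)$, i.e.\ at best $\delta\sim\log n/((1-\beta)n)$. That yields a constant in~\eqref{e.lograte} blowing up as $\beta\to1$, whereas the lemma asserts $C=C(d,\lambda,\Lambda,\Psi)$ independent of $\beta$. Relatedly, your check ``$(1-\beta)n\gg(1-\beta)^{-1}\log n$, i.e.\ $n\geq C(1-\beta)^{-1}$'' is not a correct equivalence (the former is $n\gg(1-\beta)^{-2}\log n$), and the requirement $\beta j<k_0$ needed to apply Lemma~\ref{l.J.upperfluct} at the mesoscale $k_0\approx j-\log_3 j$ can fail for $n$ near the admissible threshold when $\beta$ is close to $1$.

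The missing idea is to decouple the target smallness from the target scale and to exploit that smallness transfers \emph{upward} by monotonicity, so the good window need not lie anywhere near $n$. Fix $\delta$ and set $N=C|\log\delta|$; slide disjoint windows of length $N$ starting at $k_0\geq N/(1-\beta)$ (this placement automatically gives $\beta(k_0+N)<k_0$, so~\eqref{e.var.subadd.smash} applies with no extra condition on $n$). Since $\sum_k\tau_k\leq C$, at most $C/\delta$ of these windows are bad — a count independent of both $\beta$ and $n$ — so there is a good window with $k_0\leq N/(1-\beta)+CN/\delta$, and~\eqref{e.flatness.rules} plus the variance bound give $\sup_e\E[J(\cu_m,e,\ahom_*(\cu_m)e)]\leq C\delta+C3^{-N}\leq C\delta$ at the scale $m=k_0+N\leq 2N/(1-\beta)+CN/\delta$. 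Then for any $n\geq m$, use that $m\mapsto\E[J(\cu_m,e,q)]$ is nonincreasing and that $q\mapsto\E[J(\cu_n,e,q)]$ is minimized at $q=\ahom_*(\cu_n)e$ to conclude $\sup_e\E[J(\cu_n,e,\ahom_*(\cu_n)e)]\leq C\delta$; your route through~\eqref{e.diagonalset} is unnecessary. Finally, given $n$, choose the smallest admissible $\delta$ with $m(\delta)\leq n$: the constraint $C|\log\delta|/\delta\leq n/2$ gives the $C\log n/n$ term, the constraint $C|\log\delta|/(1-\beta)\leq n/2$ gives the $\exp(-(1-\beta)n/C)$ term, and the hypothesis $n\geq C(1-\beta)^{-1}$ only guarantees that some admissible $\delta$ of order one exists. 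This recovers~\eqref{e.lograte} with $C$ independent of $\beta$, which your confinement of the window to $[\beta n,n]$ cannot achieve.
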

\begin{proof}
By~\eqref{e.subaddcontrol}, we have that 
\begin{equation*}
\sum_{k=1}^\infty \tau_k 
\leq 
C\Bigl(  
\bigl| \E \left[ \a_*^{-1}(\cu_0) \right] - \ahom^{-1} \bigr| 
+ 
\bigl| \E \left[ \a(\cu_0) \right] - \ahom  \bigr| 
\Bigr)
\leq C\,.
\end{equation*}
The pigeonhole principle implies the existence of a scale~$k_0 \in\N$ satisfying 
\begin{align}
\label{e.kdelta}
\frac{N}{1-\beta}
\leq
k_0 
\leq 
\frac{N}{1-\beta} 
+ 
\frac{C N}{\delta}
\qquad \text{and} \qquad
\sum_{k=k_0}^{k_0+N}
\tau_k 
\leq 
\delta
\,. 
\end{align}
Applying~\eqref{e.combo}, we find that
\begin{equation*}
\sup_{e\in B_1}
\E \left[ J(\cu_{k_0+N},e,\ahom_*(\cu_{k_0+N})e) \right]
\leq 
C\delta + C3^{-N}.
\end{equation*}
Setting~$N := C | \log \delta |$ and~$m:=k_0+N$,
we have shown the existence of a constant~$C(d)<\infty$ and, for every~$\delta>0$, an integer~$m \in\N$ satisfying
\begin{equation*}
m
\leq 
\frac{2 N}{1-\beta} 
+ 
\frac{C N}{\delta}
\leq 
C 
\Bigl( \frac{1}{1-\beta} + \frac{C}{\delta} \Bigr) 
\left| \log \delta \right|
\end{equation*}
and
\begin{equation*}
\sup_{e\in B_1 }
\E \left[ J(\cu_{m},e,\ahom_*(\cu_{m}) e) \right]
\leq C\delta\,. 
\end{equation*}
Due to the monotonicity of~$n\mapsto \E \left[ J(\cu_{n},p,q) \right]$ and the fact that~$q\mapsto  \E \left[ J(\cu_{n},p,q) \right]$ has its minimum at~$q=\ahom_*(\cu_n)p$ (see~\eqref{e.Jaas} and~\eqref{e.ahomahoms}), this estimate implies~\eqref{e.lograte}. 
\end{proof}

\begin{proof}[{Proof of Proposition~\ref{p.algebraicrate.E}}]
We verify the assumption of Lemma~\ref{l.iteration} with the quantity
\begin{align}
\label{e.Fm.def}
F_m
& 
:= 
\sum_{i=1}^d \E \left[ J(\cu_{m+m_0},e_i,\ahom_*(\cu_{m+m_0})e_i) \right]
\,.
\end{align}
Recall that, by~\eqref{e.subaddcontrol}, we have
\begin{equation}
\label{e.Finc.bytau}
F_m - F_{m+1}
\geq 
c
\tau_{m+ m_0 + 1}
.
\end{equation}
We now fix parameters~$\kappa = \frac12 \wedge (1-\beta)$,~$k = \lceil (1-\kappa) (m+m_0) \rceil$,~$n = \lfloor \frac{k + m + m_0}{2} \rfloor$  and~$\theta = \frac14 \kappa$. 
Combining~\eqref{e.combo} and~\eqref{e.Finc.bytau} yields~\eqref{e.Fiter.ass} with these choices of parameters. The condition on~$\delta$ is satisfied by Lemma~\ref{l.pigeonhole} provided that we take~$m_0$ to be sufficiently large, and this~$m_0$ depends only on~$\data := (\beta,d,\lambda,\Lambda,\CFS,\Psi)$. The result of Lemma~\ref{l.pigeonhole} then yields~\eqref{e.Fiter.res} for~$F_m$ defined in~\eqref{e.Fm.def}.
That is, we have shown that, for constants~$\alpha(d,\beta,\lambda,\Lambda)>0$ and~$C(\dataref)<\infty$,
\begin{equation}
\label{e.EJ.scalemat}
\sum_{i=1}^d \E \bigl[ J(\cu_{m},e_i,\ahom_*(\cu_{m})e_i) \bigr]
\leq
C 3^{-\alpha m} \,.
\end{equation}
To conclude, we need to replace the matrix~$\ahom_*(\cu_{m})$ in~\eqref{e.EJ.scalemat} with~$\ahom$. 
For this we use~\eqref{e.magic.b.plug.E} to find that 
\begin{equation*}
\E \bigl[ J(\cu_{m},e,\ahom_*(\cu_{m})e) \bigr]
=
\frac12 e \cdot \bigl( \ahom(\cu_m) - \ahom_*(\cu_m) \bigr) e
\geq 
\frac 12 e \cdot \bigl( \ahom - \ahom_*(\cu_m) \bigr) e
\,.
\end{equation*}
We deduce from this and~\eqref{e.EJ.scalemat} that 
\begin{equation*}
| \ahom - \ahom_*(\cu_m) |
\leq C 3^{-\alpha m} \,.
\end{equation*}
Combining this estimate with~\eqref{e.EJ.scalemat} yields~\eqref{e.EJtozero.rate} and completes the proof. 
\end{proof}

\subsubsection{Fluctuation bounds}

Proposition~\ref{p.algebraicrate.E} gives an algebraic rate of convergence for our subadditive quantities in~$L^1$ in the probability space. Since the subadditive quantities are uniformly bounded, this, of course, immediately implies convergence of all finite moments with an explicit algebraic rate. That is, by~\eqref{e.a.bounds}~\eqref{e.diagonalset}, and~\eqref{e.EJtozero.rate}, we have, for every~$p< \infty$ and~$m\in\N$, 
\begin{align}
\label{e.giveup}
\E \bigl[ \left| \a(\cu_m) - \ahom \right|^p \bigr]^{\nicefrac1p}
+
\E \bigl[ \left| \a_*(\cu_m) - \ahom \right|^p \bigr]^{\nicefrac1p}
\leq 
C 3^{-\nicefrac{m\alpha}{p}}
\,.
\end{align}
This may seem already quite satisfactory, yielding nice quantitative homogenization statements. 
However, without exerting ourselves too much further, we can obtain \emph{optimal} estimates in terms of stochastic integrability---namely the statement of Theorem~\ref{t.subadd.converge}. This is very important to the theory, and it is a perk of working with the dual pair of subadditive quantities. 

\begin{proof}[{Proof of Theorem~\ref{t.subadd.converge}}]
\label{proof.t.subadd.converge}
According to Lemma~\ref{l.J.upperfluct} 
and Proposition~\ref{p.algebraicrate.E},
for every~$e \in B_1$,
\begin{equation*}
J(\cu_m,e,\ahom e) 
\leq 
\E \left[ J( \cu_{n},e,\ahom e) \right]
+ 
\O_\Psi\bigl( C3^{-\frac d2(m-n)} \bigr)
%\\ & 
\leq 
C3^{-n\alpha} + 
\O_\Psi\bigl( C3^{-\frac d2(m-n)} \bigr)
\,. 
\end{equation*}
The estimate~\eqref{e.aastar.big.smash} follows from this and~\eqref{e.diagonalset}. 
\end{proof}

\begin{remark}
The exponent~$\alpha$ in the statement of Theorem~\ref{t.subadd.converge} can be taken to be equal to the one in Proposition~\ref{p.algebraicrate.E}.  
\end{remark}

We next state a slightly souped-up version of Theorem~\ref{t.subadd.converge}. 
We have seen throughout this section and as well as previously (see for instance~\eqref{e.flatnessqual.sec24} and~\eqref{e.flatnessqual.flux.sec24}) that it is convenient to work with multiscale quantities such as the following one, defined for each~$m\in\N$ by 
\begin{equation} 
\label{e.mcE.0}
\mathcal{E}(m) 
:=
\sum_{n=0}^{m}
3^{n-m} 
\biggl( 
\avsum_{z\in 3^n\Zd\cap \cu_m}
\!\!
\Bigl( \bigl| \a(z+\cu_n) - \a_*(z+\cu_n)  \bigr|
+
\bigl| \a_*(z+\cu_n) - \ahom \bigr|^2 \Bigr) 
\biggr)^{\!\nicefrac12} 
\,.
\end{equation}
In view of~\eqref{e.magic.b.plug}, there exists a constant~$C(d,\lambda,\Lambda)<\infty$ such that 
\begin{equation}
\label{e.J.by.mcE}
C^{-1} \mathcal{E} (m) 
\leq
\sum_{n=0}^{m}
3^{n-m} 
\biggl( 
\sum_{i=1}^d
\avsum_{z\in 3^n\Zd\cap \cu_m}
\!\!
J(z+\cu_n,e_i,\ahom e_i) 
\biggr)^{\!\nicefrac12} 
\leq C \mathcal{E} (m) 
\,.
\end{equation}
The quantity~$\mathcal{E}(m)$ keeps track of how close the coarse-grained matrices are to the homogenized matrix in subcubes of the form~$z+\cu_n \subseteq \cu_m$, discounted by the factor of~$3^{n-m}$.
The motivation for defining~$\mathcal{E}(m)$ in terms of the coarse-grained matrices across many scales is because it naturally appears when one estimates the weak norms of the gradient and flux of the finite-volume correctors (see below in the proof of Theorem~\ref{t.quant.DP}) which, as we have already seen, is what we need to control to obtain estimates of the homogenization error for the Dirichlet problem. The discount factor~$3^{n-m}$ comes from the Poincar\'e inequality or, to be more precise, Proposition~\ref{p.MSP}.  
Also, rather than control the size of~$\mathcal{E}(m)$ directly, we seek to estimate a \emph{minimal scale} at which it becomes small---and continues to decay algebraically.
The proof is an extrapolation of Theorem~\ref{t.subadd.converge}.

\begin{corollary}
\label{c.subadd.converge}
Assume that~$\P$ is~$\Zd$--stationary and satisfies~$\CFS(\beta,\Psi)$ and~\eqref{e.symm} and let~$\alpha(\beta,d,\lambda,\Lambda) \in (0,1]$ be the exponent in the statement of Proposition~\ref{p.algebraicrate.E}. 
Then, for every~$\rho \in [\beta , 1)$, there exists~$C(\rho,\alpha,\dataref)<\infty$ such that 
\begin{equation}
\label{e.Em.summedout}
\mathcal{E}(m)^2
\leq 
C3^{-\rho \alpha m } + \O_\Psi\bigl( C 3^{-\frac d2(1-\rho)  m} \bigr) \,.
\end{equation}
Moreover, for every~$\delta\in (0,1]$ and~$\theta < \frac{\alpha d}{d+2\alpha} \wedge \frac d2(1-\beta)$, there exists~$C(\delta,\gamma,\dataref)<\infty$ and a random variable~$\X$ satisfying 
\begin{equation}
\label{e.mmmbound0}
\X^{\frac d2 (1-\beta)}
= \O_\Psi(C)
\end{equation}
such that, for every~$m,n\in\N$ with~$n<m$ and~$3^n\geq \X$, 
\begin{equation} 
\label{e.aastar.concentrate.DD}
\mathcal{E}(m)^2
\leq \delta^2 3^{-\theta(m-n)}
\,.
\end{equation}
\end{corollary}

The corollary roughly says that, on every scale larger than the random \emph{minimal scale}~$\X$, the coarse-grained coefficients have homogenized, up to error~$\delta$, and that on scales larger than~$\X$ the homogenization error begins to decrease algebraically. Notice that the right side of~\eqref{e.aastar.concentrate.DD} is \emph{deterministic}---the only random object in the statement of Corollary~\ref{c.subadd.converge}, other than the random variables~$\mathcal{E}(m)$ we are estimating, is the minimal scale~$\X$. 
The size of this minimal scale is quantified by~\eqref{e.mmmbound0}, which says that~$\X$ is typically of order one and has tails that behave like 
\begin{equation*}
\P \left[ \X > t \right] 
\leq 
\Psi\bigl( ct^{\frac d2(1-\beta)} \bigr)^{-1}. 
\end{equation*}
Many estimates in stochastic homogenization have this feature: rather than bounding scale-dependent random variables with a random right-hand side, we bound them with deterministic right-hand sides, but only on scales larger than a random minimal scale. This choice is often equivalent but nearly always more general. 

\smallskip

\begin{remark}[{Optimality of~\eqref{e.mmmbound0}}]
\label{r.X.optimal}
We emphasize that this estimate for the stochastic integrability of~$\X$ is sharp. 
For instance, in the case that~$\P$ satisfies either~$\FRD(1)$ or~$\LSI(0,\rho)$, then, by~\eqref{e.URDyes} and~\eqref{e.LSI.okay}, we have~\eqref{e.mmmbound0} \& \eqref{e.aastar.concentrate.DD} valid for~$\beta=0$ and~$\Psi(t):= C\exp\left( -ct^2 \right)$. 
In particular, by~\eqref{e.diagonalset} and the fact that~$\mathcal{E}(m) \geq \sup_{|e|=1} J(\cu_m,e,\ahom e)$, we obtain that, for every~$\delta>0$, there exists~$c(\delta,\rho,d,\lambda,\Lambda)>0$ such that 
\begin{equation} 
\label{e.oppt}
\P \bigl[  
\left| \a(\cu_m) - \ahom \right|
+ \left| \a_*(\cu_m) - \ahom \right|
> \delta 
\bigr]
\leq \exp \left( -c\left| \cu_m \right| \right). 
\end{equation}
To see that this is optimal, consider a random checkerboard with two colors (black and white) and each square determined by a fair coin flip. The probability that the restriction of the checkerboard to~$\cu_m$ is colored completely white (or completely black) has probability~$2^{-|\cu_m|} = \exp\left( -c\left| \cu_m\right| \right)$. If the white squares correspond to~$\a(x) =  \Id$ and the black squares correspond to~$\a(x) =2 \Id$, then an all-white checkerboard in~$\cu_m$ would imply~$\a(\cu_m) = \a_*(\cu_m) =  \Id$ and an all-black checkerboard would imply~$\a(\cu_m) = \a_*(\cu_m) =2  \Id$. Therefore the probability of a deviation from~$\ahom$ of size at least~$\nicefrac12$ must be no smaller than~$2^{-|\cu_m|} = \exp\left( -c\left| \cu_m\right| \right)$. This matches the upper bound in~\eqref{e.oppt} up to the constant~$c$ in the exponential. 
\end{remark}

Let us turn to the proof of Corollary~\ref{c.subadd.converge}. 
Since we will use the argument in the proof of the corollary several times, we will state a more general lemma that formalizes it in a somewhat flexible way. Note that the argument uses only the nonnegativity and the subadditivity of the quantity~$J$ (and in particular does not require the assumption that~$\a$ be symmetric). 

\begin{lemma}
\label{l.mathcalE.minscale}
Assume that~$\P$ is~$\Zd$--stationary and satisfies~$\CFS(\beta,\Psi)$. 
Suppose that there exist~$\alpha \in (0,1)$ and~$K\in[1,\infty)$ such that 
\begin{align}
\label{e.EJtozero.rate.ass}
\sup_{e\in B_1}
\E \left[ J(\cu_m,e,\ahom e) \right] 
\leq 
K 3^{-m\alpha}. 
\end{align}
Then, for each~$\rho \in [\beta , 1)$, there exists~$C(\rho,\alpha,K,\dataref)<\infty$ such that, for every~$m\in\N$, 
\begin{equation}
\label{e.Em.summedout.pre}
\mathcal{E}(m)^2
\leq 
C3^{-\rho \alpha m } + \O_\Psi\bigl( C 3^{-\frac d2(1-\rho)m} \bigr) \,.
\end{equation}
Moreover, for every~$\delta>0$ and exponent~$\theta$ satisfying 
\begin{equation}
\label{e.theta.restriction}
0 < \theta < 
\min \Bigl\{ 
\frac{\alpha d}{d+2\alpha} 
\,,\, 
\frac d2(1-\beta) 
\Bigr\}\,,
\end{equation}
there exist a constant~$C(\theta,\delta,K,\dataref)<\infty$ and a random variable~$\X$ satisfying 
\begin{equation}
\label{e.minscaleint}
\X^{\frac d2 (1-\beta)}
=
\O_{\Psi}(C)\,,
\end{equation}
such that, for every~$m \in \N$ with~$3^m\geq \X$,
\begin{equation}
\label{e.convssmaxsmax}
\mathcal{E}(m) ^2
\leq
\delta^2 
\Bigl( \frac{3^m}{\X} \Bigr)^{\!-\theta}.
\end{equation}
\end{lemma}
\begin{proof}
By~\eqref{e.J.by.mcE}, Lemma~\ref{l.J.upperfluct} and the assumption~\eqref{e.EJtozero.rate.ass}, we obtain, for every~$m,l \in \N$ with~$\beta m < l < m$,
\begin{align}
\label{e.Emestimate}
\mathcal{E}(m)^2
&
\leq
C\sum_{n=0}^{m}
3^{n-m} \sum_{i=1}^d \avsum_{z \in 3^n\Zd\cap \cu_m} 
 J(z+\cu_n,e_i,\ahom e_i)
\notag \\ & 
\leq 
\sum_{n=0}^{m}
3^{n-m}
\Bigl( 
C3^{-(n\wedge l)\alpha} 
+
\O_\Psi \bigl( C3^{-\frac d2(m-l)} \bigr)
\Bigr)
%\notag \\ & 
\leq
C3^{-l\alpha} + \O_\Psi \bigl( C3^{-\frac d2(m-l)} \bigr)
\,.
\end{align}
Note that in the last inequality, we used the assumption that~$\alpha<1$. Taking~$l:= \lceil \rho m\rceil +1$ for~$\rho \in [\beta,1)$, we obtain
\begin{equation}
\label{e.mathEsqr.bound}
\mathcal{E}(m)^2
\leq 
C3^{-\rho \alpha m } + \O_\Psi\bigl( C 3^{-\frac d2(1-\rho)m} \bigr) \,.
\end{equation}
This is~\eqref{e.Em.summedout.pre}. 

\smallskip 

We turn to the proof of the last statement of the lemma. 
Fix~$\delta\in (0,1]$ and~$\theta$ satisfying 
\begin{equation}
\label{e.theta.box}
0 < \theta < 
\min \Bigl\{ 
\frac{\alpha d}{d+2\alpha} 
\,,\, 
\frac d2(1-\beta) 
\Bigr\}
\,. 
\end{equation}
Define 
\begin{equation}
\label{e.def.of.X}
\X := \sup\biggl\{ 3^N \,:\, 
N \in \N \ \mbox{and} \ 
\exists m\in\N \cap [N,\infty)
\,,\ \mathcal{E}(m)^2 >  \delta^2 3^{-\theta(m-N)}
\biggr\} \,.
\end{equation}
It is clear from this definition that~\eqref{e.convssmaxsmax} holds for every~$m\in \N$ with~$3^m\geq \X$. 
To prove the estimate~\eqref{e.minscaleint}, we 
fix~$N\in \N$ with~$N\geq N_0$, where~$N_0$ is defined to be the smallest positive integer satisfying 
\begin{equation}
\label{e.N0.box}
N_0 \geq \theta^{-1} \log_3 \bigl( 2 \delta^{-2} C_{\eqref{e.mathEsqr.bound}}\bigr)
\,.
\end{equation}
We consider, for each~$m\in\N$ with~$m\geq N$,
\begin{equation}
\label{e.rho.m.box}
\rho(m):= 
\max \biggl\{ 
\beta 
\, , \,
\frac{\theta(m-N)+\log_3 ( 2 \delta^{-2} C_{\eqref{e.mathEsqr.bound}})}{\alpha m}
\biggr\} \,.
\end{equation}
It is clear from~\eqref{e.theta.box},~\eqref{e.N0.box} and~\eqref{e.rho.m.box} that~$\rho(m) \in  [\beta, \frac{d}{d+2\alpha})\subseteq [\beta,1)$ and
\begin{equation*}
\max \Bigl\{ 
C_{\eqref{e.mathEsqr.bound}} 3^{-\rho(m) \alpha m }
\,,\, 
C_{\eqref{e.mathEsqr.bound}} 3^{-\frac d2(1-\rho(m)) m }
\Bigr\} 
\leq \frac12 \delta^2 3^{-\theta(m-N)} \,, \quad \forall m\in\N\cap [N,\infty)\,.
\end{equation*}
Applying~\eqref{e.mathEsqr.bound} and using the previous display, we find that 
\begin{equation}
\label{e.mathEsqr.bound.app}
\mathcal{E}(m)^2
\leq 
\frac12 \delta^2 3^{-\theta(m-N)}  + \O_\Psi\bigl( C 3^{-\frac d2(1-\rho(m))m} \bigr) 
\,.
\end{equation}
and hence 
\begin{equation*}
\P \Bigl[ \mathcal{E}(m)^2 >  \delta 3^{-\theta(m-N)} \Bigr] 
\leq 
\Bigl( \Psi \bigl( \tfrac 12 C_{\eqref{e.mathEsqr.bound.app}}^{-1} \delta^2 3^{-\theta (m-N)+\frac d2(1-\rho(m))m}\bigr) \Bigr)^{-1} 
\,.
\end{equation*}
We can write this as 
\begin{equation*}
\indc_{\{ \mathcal{E}(m)^2 >  \delta^2 3^{-\theta(m-N)} \}} 
\leq 
\O_{\Psi} \bigl( C3^{\theta (m-N)-\frac d2(1-\rho(m))m} \bigr) 
\end{equation*}
Summing this over~$m\geq N$ yields
\begin{equation*}
\indc_{\{ \X \geq 3^N \}} 
\leq
\sum_{m=N}^\infty
\indc_{\{ \mathcal{E}(m)^2 >  \delta 3^{-\theta(m-N)} \}} 
\leq 
\O_{\Psi} \biggl( C\sum_{m=N}^\infty3^{\theta (m-N)-\frac d2(1-\rho(m))m} \biggr) \,.
\end{equation*}
To conclude the proof, we need to demonstrate that 
\begin{equation}
\label{e.in.hell}
\sum_{m=N}^\infty3^{\theta (m-N)-\frac d2(1-\rho(m))m}
\leq 
C 3^{-\frac d2(1-\beta)N}\,.
\end{equation}
We break the sum on the left side into two parts: first the~$m$'s for which~$\rho(m) = \beta$ and second the~$m$'s satisfying~$\rho(m) = \frac{\theta(m-N)+C}{\alpha m}$. The first part is easy to estimate, as~$\theta < \frac d2(1-\beta)$: 
\begin{equation*}
\sum_{m=N}^\infty
3^{\theta (m-N)-\frac d2(1-\beta)m}
=
3^{-\theta N} \sum_{m=N}^\infty
3^{(\theta -\frac d2(1-\beta))m}
\leq 
C 3^{-\frac d2(1-\beta)N}\,.
\end{equation*}
For the second part, we observe that~$\rho(m) = \frac{\theta(m-N)+C}{\alpha m}$ only if~$(\frac \theta \alpha -\beta)m > \frac \theta \alpha N - C$. 
In this case the exponent of~$3$ on the left side of~\eqref{e.in.hell} is
\begin{equation*}
\theta (m-N)-\frac d2(1-\rho(m))m + C
= m \biggl( \theta -  \frac d2 \Bigl( 1 - \frac \theta \alpha \Bigr) \biggr) - N \biggl( \theta + \frac d2 \frac \theta\alpha\biggr) + C \,.
\end{equation*}
The factor of~$m$ is negative, since~$\theta < \frac {d\alpha}{d+2\alpha}$. Therefore we obtain
\begin{align*}
\lefteqn{ 
\sum_{m=N}^\infty
3^{\theta (m-N)-\frac d2(1-\rho(m))m} 
\indc_{\{ \rho(m) = \frac{\theta(m-N)+C}{\alpha m} \}} 
} \qquad & 
\notag \\ & 
\leq
\sum_{m \geq \frac \theta \alpha (\frac \theta \alpha -\beta)^{-1} N-C}
C3^{m ( \theta -  \frac d2 ( 1 - \frac \theta \alpha ) ) - N ( \theta + \frac d2 \frac \theta\alpha)} 
=
C3^{( \theta -  \frac d2 ( 1 - \frac \theta \alpha ) )\frac \theta \alpha (\frac \theta \alpha -\beta)^{-1} N - ( \theta + \frac d2 \frac \theta\alpha)N } 
\,.
\end{align*}
The exponent of~$3$ on the right side above satisfies 
\begin{align*}
\underbrace{
\Bigl( \theta -  \frac d2 \Bigl( 1 - \frac \theta \alpha \Bigr) \Bigr)
}_{< 0} 
\underbrace{ 
\frac \theta \alpha \Bigl(\frac \theta \alpha -\beta\Bigr)^{-1}
}_{\geq 1 }
N - \Bigl( \theta + \frac d2 \frac \theta\alpha\Bigr)N 
&
\leq 
\Bigl( \theta -  \frac d2 \Bigl( 1 - \frac \theta \alpha \Bigr) \Bigr)  N - \Bigl( \theta + \frac d2 \frac \theta\alpha\Bigr)N 
\notag \\ & 
= -\frac d2 N \,.
\end{align*}
The proof is now complete. 
\end{proof}

For later use, we include here the remark that the minimal scale~$\X$ in Lemma~\ref{l.mathcalE.minscale} has nice locality properties---it can be replaced by a family of~$\cu_k$--measurable minimal scales, and its Malliavin derivative is bounded. 

\begin{lemma} 
\label{l.localX}
Fix~$k \in \N$. Under the assumptions of Lemma~\ref{l.mathcalE.minscale}, satisfied for every~$m \leq k$, there exists a constant~$C(\dataref)<\infty$ and a minimal scale~$\X_k$ such that~$\X_k$ is~$\F(\cu_k)$-measurable and satisfies 
\begin{equation}
\label{e.minscaleint.local}
\X_k^{\frac d2 (1-\beta)}
= 
\O_{\Psi}(C) \qand
\X_k \leq 3^{k+1}\,,
\end{equation}
and if~$\X_k < 3^k$, then, for every~$m \in \N$ with~$j \leq m \leq k$ and~$3^j \geq \X_k$,
\begin{align}
\label{e.convssmaxsmax.local}
\mathcal{E}(m)^2
\leq
\delta^2 3^{-\theta(m-j)}.
\end{align}
Moreover, the Malliavin derivative of~$\X_k$ is bounded by~$\X_k$ itself:
\begin{align} \label{e.mallliavin.local}
\bigl| \partial_{\a(\cu_k)} \X_k  \bigr| \leq C (1+\X_k) .
\end{align}
\end{lemma}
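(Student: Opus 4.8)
The plan is to localize the construction of the minimal scale from Lemma~\ref{l.mathcalE.minscale} by replacing the infinite sums over all scales $m \geq n$ with finite sums that stop at the macroscopic scale $3^k$. First I would revisit the proof of Lemma~\ref{l.mathcalE.minscale} and observe that every quantity appearing there --- namely $\mathcal{E}(m)$ for $m\leq k$ and the random variables $A_n$ --- is $\F(\cu_k)$--measurable, since $\mathcal{E}(m)$ depends only on the subcubes $z+\cu_n \subseteq \cu_m \subseteq \cu_k$ and hence, by the $\F(U)$--measurability clause of Lemma~\ref{l.J.basicprops}, only on the coefficient field restricted to $\cu_k$. Then I would define the truncated composite quantity
\begin{equation}
A_{k,n} := \frac{2}{\delta} \sum_{m=n}^{k} 3^{\gamma(m-n)} \mathcal{E}(m)\,,
\end{equation}
which is $\F(\cu_k)$--measurable and bounded above by the (non-truncated) $A_n$ from the proof of Lemma~\ref{l.mathcalE.minscale}. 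Consequently the bound $A_n \leq \frac12 + \O_\Psi(C\delta^{-1}3^{-\frac d2(1-\beta)n})$ obtained there also holds for $A_{k,n}$, with the same constant, since the tail $\sum_{m=k+1}^\infty$ only decreases the sum. I would then set
\begin{equation}
\X_k := \min\Bigl\{ 3^{k+1},\ \sum_{n=1}^{k} 3^n \min\bigl\{1, (A_{k,n}-1)_+\bigr\} \Bigr\}\,,
\end{equation}
which is manifestly $\F(\cu_k)$--measurable and bounded by $3^{k+1}$. The estimate~\eqref{e.minscaleint.local} and the decay property~\eqref{e.convssmaxsmax.local} then follow exactly as in the proof of Lemma~\ref{l.mathcalE.minscale}: Chebyshev's inequality applied to $A_{k,n}$ gives $\P[\X_k > 3^n] \leq \Psi(c\delta 3^{\frac d2(1-\beta)n})^{-1}$ for $n\leq k$, and on the event $\X_k < 3^k$ we have $A_{k,j}\leq 2$ for every $3^j\geq \X_k$, which unpacks to~\eqref{e.convssmaxsmax.local} for all $m$ with $j\leq m\leq k$.

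For the Malliavin derivative bound~\eqref{e.mallliavin.local}, I would use the additivity of the Malliavin derivative~\eqref{e.malliavin.additivity} together with the chain rule implicit in the definition~\eqref{e.Mall}. Since $\X_k$ is a sum of (capped) functions of the $A_{k,n}$'s, and each $A_{k,n}$ is a finite linear combination (with nonnegative coefficients summing to a bounded quantity) of the $J(z+\cu_n,e_i,\ahom e_i)$'s, it suffices to control $|\partial_{\a(\cu_k)} J(z+\cu_n,e_i,\ahom e_i)|$. Lemma~\ref{l.malliavin} gives $|\partial_{\a(z+\cu_n)} J(z+\cu_n,p,q)| \leq C(|p|+|q|) J(z+\cu_n,p,q)^{1/2} \leq C$, using the uniform bound $J \leq C$ from~\eqref{e.a.bounds}. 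Summing the geometric weights $3^{n-m}$ and the finitely many scales, and noting the cap $\min\{1,(\cdot)_+\}$ is $1$--Lipschitz, one obtains $|\partial_{\a(\cu_k)} \X_k| \leq C\sum_{n=1}^k 3^n \cdot \indc_{\{A_{k,n}>1\}}$. On the event where $\X_k < 3^{k+1}$, the indices $n$ with $A_{k,n}>1$ are precisely those with $3^n \leq \X_k$ (more precisely, the largest such $n$ is controlled by $\log_3 \X_k$), so the sum telescopes to $C(1+\X_k)$; on the event $\X_k = 3^{k+1}$ the bound is trivial since $3^n \leq 3^{k+1} = \X_k$ for all relevant $n$.

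I expect the main obstacle to be the bookkeeping in the Malliavin derivative estimate: one must be careful that the cap functions $t\mapsto \min\{1,(t-1)_+\}$ are genuinely Lipschitz with constant $1$ (they are) and that the weights $3^{n-m}$ inside $\mathcal{E}(m)$, when summed against the outer weights $3^{\gamma(m-n)}$ defining $A_{k,n}$, still produce a geometrically convergent series so that $\sum_n 3^n |\partial_{\a(\cu_k)} A_{k,n}|$ is dominated by its largest nonvanishing term. The key point making this work is that $J\leq C$ is bounded, so the Malliavin derivatives of the individual $J$--terms are $O(1)$ rather than growing, and the only growth in $3^n |\partial_{\a(\cu_k)}\X_k|$ comes from the geometric prefactor $3^n$, which is exactly what is cut off at $\X_k$ by the structure of the definition.
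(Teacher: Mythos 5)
Your construction of $\X_k$ is exactly the paper's (the truncated quantities $A_{k,n}$ and the sum $\sum_{n=1}^k 3^n\min\{1,(A_{k,n}-1)_+\}$; the cap at $3^{k+1}$ is automatic since the sum is at most $\sum_{n\leq k}3^n<3^{k+1}$), and the measurability, the integrability bound~\eqref{e.minscaleint.local} and the decay~\eqref{e.convssmaxsmax.local} do follow by repeating the proof of Lemma~\ref{l.mathcalE.minscale} with $A_{k,n}\leq A_n$, just as you say. The gap is in the Malliavin bound~\eqref{e.mallliavin.local}, at the step where you pass from $|\partial_{\a(\cu_k)}\X_k|\leq C\sum_{n=1}^k 3^n\indc_{\{A_{k,n}>1\}}$ to $C(1+\X_k)$. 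Your justification is that ``the indices $n$ with $A_{k,n}>1$ are precisely those with $3^n\leq\X_k$,'' but this is false as stated: monotonicity of $n\mapsto A_{k,n}$ alone does not prevent the scenario in which $A_{k,n}\in(1,1+\ep)$ for every active index $n\leq n^*$, in which case $\X_k\approx\ep\,3^{n^*}$ can be arbitrarily smaller than $3^{n^*}$ while the indicator sum is of order $3^{n^*}$, and the claimed bound fails. The missing ingredient --- which is the one nontrivial point of the paper's proof --- is the recursion $A_{k,n}=\frac2\delta\mathcal{E}(n)+3^{\gamma}A_{k,n+1}$: it gives $A_{k,n}\geq1\implies A_{k,n-1}\geq3^{\gamma}$, hence $\indc_{\{1\leq A_{k,n}\leq2\}}\leq\frac{1}{3^{\gamma}-1}\min\{1,(A_{k,n-1}-1)_+\}$, and summing this against $3^n$ dominates the indicator sum by $\frac{C}{3^{\gamma}-1}\X_k$ plus an $O(1)$ term coming from $n=1$. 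Without exploiting the amplification factor $3^{\gamma}$ in the weights, the comparison with $\X_k$ cannot be closed.

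A secondary issue is your justification of the uniform bound on $|\partial_{\a(\cu_k)}A_{k,n}|$: you claim the outer weights $3^{\gamma(m-n)}$ ``still produce a geometrically convergent series,'' but these weights grow in $m$, so the crude bound $|\partial_{\a(\cu_k)}\mathcal{E}(m)|\leq C$ summed against them only yields $C3^{\gamma(k-n)}$, not $C$. To control this term one must use that the derivative of the cap is active only on $\{1\leq A_{k,n}\leq 2\}$, where $\mathcal{E}(m)\leq\delta 3^{-\gamma(m-n)}$ for all $m\in[n,k]$, together with the quadratic structure of $J$ through~\eqref{e.malliavin.estimate}, which gives $|\partial_{\a(\cu_m)}\mathcal{E}(m)|\leq C\mathcal{E}(m)^{\sfrac12}$; this is how the paper's appeal to Lemma~\ref{l.malliavin} (rather than mere boundedness of the derivatives) should be read. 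In short: the first three assertions are fine and identical to the paper, but the proof of~\eqref{e.mallliavin.local} as written rests on an incorrect identification of the active indices with $\{3^n\leq\X_k\}$ and needs the recursion argument to be repaired.
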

\begin{proof}
The definition of~$\X$ in~\eqref{e.def.of.X} is clearly sensitive to small changes in the coefficient field, so we need to modify it. 
We select a smooth cutoff function~$\zeta:[0,\infty) \to [0,\infty)$ satisfying 
\begin{equation*}
\zeta = 0 \ \text{on} \ [0,\nicefrac12] 
\,, \quad
\zeta = 1 \ \text{on} \ [1,\infty)  
\,, \quad
| \zeta' | \leq 3
\,,
\end{equation*}
and define, for every~$k\in\N$,
\begin{equation*}
\X_k 
:= 
\max_{0 \leq N \leq k} 
3^N
\max_{N \leq m \leq k} 
\zeta \Bigl( \frac{\mathcal{E}(m)^2}{\delta^2 3^{-\theta(m-N)} } \Bigr) 
\,.
\end{equation*}
It is clear from comparing its definition to~\eqref{e.def.of.X} and using Lemma~\ref{l.mathcalE.minscale} that~$\X_k$ is~$\F(\cu_k)$--measurable and satisfies the assertions~\eqref{e.minscaleint.local} and~\eqref{e.convssmaxsmax.local}. 
The second part follows easily using the chain rule, the boundedness of~$J$ and~\eqref{e.malliavin.estimate}. \end{proof}

\subsection{Homogenization error estimates}
\label{ss.det}

In this section, we complete our first quantitative homogenization result, which we formalize in terms of the Dirichlet problem. 
As for the other results in this chapter, this statement focuses on an optimal estimate for the random length scale at which the homogenization error becomes small (and then decays algebraically from that point onwards).
Improved bounds on the rate exponent will be addressed later in Chapter~\ref{s.renormalization}. 
We also seek estimates that are \emph{uniform in the boundary data}; that is, we want to estimate the homogenization error for \emph{all} solutions at once. The argument we present is once again completely deterministic and reduces the homogenization error for a general boundary-value problem to the convergence of the coarse-grained coefficients in triadic subcubes.

\begin{theorem}
\label{t.quant.DP}
Assume~$\P$ satisfies~$\CFS(\beta,\Psi)$ and~\eqref{e.symm}. 
Let~$\theta >0$ be as in the statement of Lemma~\ref{l.mathcalE.minscale} and let~$\delta>0$. Then there exists a constant~$C_1(\delta,\theta,\dataref)<\infty$ and a random variable~$\X$ satisfying 
\begin{equation}
\label{e.mmmbound1}
\X^{\frac d2 (1-\beta)}
= \O_\Psi(C_1)
\end{equation}
and, for every Lipschitz domain~$U\subseteq \cu_0$, a constant~$C(U,d,\lambda,\Lambda)<\infty$ such that, for every~$\ep \in (0,\X^{-1} ]$,~$r \in (0,1]$ and~$u\in H^1(U) \cap W^{2,\infty}(U_r)$, where we denote
\begin{equation*}
U_r := \{x \in U \, : \, \dist(x,\partial U)> r\}\,,
\end{equation*}
if~$u^\ep\in H^1(U)$ is the solution of the Dirichlet problem
\begin{equation*}
\left\{
\begin{aligned}
& -\nabla \cdot \left( \a\left(\tfrac \cdot\ep\right) \nabla u^\ep \right) = -\nabla \cdot \ahom \nabla u &  \mbox{in} & \ U, \\
& u^\ep = u & \mbox{on} & \ \partial U,
\end{aligned}
\right.
\end{equation*}
then we have the estimate
\begin{multline}
\label{e.quant.DP}
\| \nabla u^\ep - \nabla u \|_{H^{-1}(U)} 
+ 
\| \a(\tfrac \cdot \ep) \nabla u^\ep - \ahom \nabla u \|_{H^{-1}(U)} 
\\ 
\leq
C \| \nabla u \|_{{L}^2(U \setminus U_r)} 
+ 
\delta r^{-1} 
(\X \ep)^{\frac{\theta}2} 
\| \nabla u \|_{L^\infty(U_r)} 
\,.
\end{multline}
\end{theorem}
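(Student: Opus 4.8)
The strategy is to adapt the deterministic two-scale expansion argument from Proposition~\ref{p.DP} (and Lemma~\ref{l.DP.2infty}), but to replace the infinite-volume correctors $\phi_e,\s_e$ by \emph{finite-volume} correctors $\phi_{m,e},\s_{m,e}$ defined at the scale $3^m$ adapted to $\ep$, and then to plug in the estimates of Section~\ref{ss.subadd.conv}. Concretely: set $m := \lceil |\log_3 \ep| \rceil$, so $3^{-m} \le \ep < 3^{1-m}$, and introduce $\mathcal{R}(m)$ as in the variational construction from Section~\ref{ss.variational} (formula~\eqref{e.corrector.qualbound.L2.sec24}), but with a rescaling: $\phi_{m,e}^\ep(x) := \ep\,\phi_{m,e}(x/\ep)$ and similarly for $\s_{m,e}^\ep$. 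By~\eqref{e.corrector.qualbound.L2.sec24} and the duality estimate~\eqref{e.dualest.forflux}, we have $\|\phi_{m,e}^\ep\|_{L^2(U)} + \|\s_{m,e}^\ep\|_{L^2(U)} \le C\,3^{-m}\bigl(\|\phi_{m,e}\|_{\underline L^2(\cu_m)} + \|\s_{m,e}\|_{\underline L^2(\cu_m)}\bigr) = C\,\mathcal{R}(m)$.

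The first main step is to show that $\mathcal{R}(m)$ can be controlled by the coarse-grained convergence quantity $\mathcal{E}(m)$ from~\eqref{e.mcE.0} (or rather its finite-volume analogue built from $J(z+\cu_n,e_i,\ahom e_i)$). Indeed, the flatness estimates~\eqref{e.flatnessqual.sec24} and~\eqref{e.flatnessqual.flux.sec24}, combined with the dual bound~\eqref{e.dualest.forflux}, give precisely
\begin{equation*}
3^{-m}\bigl(\|\phi_{m,e}\|_{\underline L^2(\cu_m)} + \|\s_{m,e}\|_{\underline L^2(\cu_m)}\bigr)
\le C\,\mathcal{E}(m)^{\sfrac12}
\le C\,\delta^{\sfrac12}\,3^{-\frac\alpha2(m-n)}
\end{equation*}
whenever $3^n \ge \X$, by Corollary~\ref{c.subadd.converge}. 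Choosing $n$ minimal with $3^n \ge \X$, i.e. $3^n \le 3\X$, we get $\mathcal{R}(m) \le C\delta^{\sfrac12}(\X\,3^{-m})^{\alpha/2} \le C\delta^{\sfrac12}(\X\ep)^{\alpha/2}$, valid for $3^m \ge \X$, i.e. $\ep \le \X^{-1}$. (We relabel $\delta^{1/2}$ as $\delta$ and halve $\alpha$ in the final statement; the hypothesis $\ep \le \X^{-1}$ in the theorem guarantees $3^m \gtrsim \X$.)

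The second step is the purely deterministic two-scale expansion, which is a verbatim repetition of the proof of Proposition~\ref{p.DP}: define $w^\ep := u + \zeta_r \sum_{k=1}^d (\partial_{x_k} u)\,\phi_{m,e_k}^\ep$ with the cutoff $\zeta_r$ satisfying~\eqref{e.zetar.cut.off}, check via Lemma~\ref{l.twoscale}-type algebra (using the finite-volume versions of~\eqref{e.fluxcorrect.FV.divg} and the skew-symmetry of $\s_{m,e}$) that
\begin{equation*}
\|\nabla\cdot \a^\ep\nabla w^\ep - \nabla\cdot\ahom\nabla u\|_{H^{-1}(U)}
\le C\|\nabla u\|_{L^2(U\setminus U_{2r})} + C\,\mathcal{R}(m)\,r^{-1}\|\nabla u\|_{W^{1,\infty}(U_r)},
\end{equation*}
then test with $w^\ep - u^\ep \in H^1_0(U)$ to get $\|\nabla w^\ep - \nabla u^\ep\|_{L^2(U)}$ bounded by the same right side, and finally use the multiscale Poincaré inequality / the bound $\|\nabla w^\ep - \nabla u\|_{H^{-1}(U)} \le C\mathcal{R}(m)\|\nabla u\|_{L^\infty(U_r)}$ together with the flux identity~\eqref{e.wep.eqerror.zero.2} to obtain~\eqref{e.quant.DP} after substituting the bound on $\mathcal{R}(m)$ from Step~1. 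The minimal scale $\X$ and its integrability~\eqref{e.mmmbound1} are exactly those furnished by Corollary~\ref{c.subadd.converge} (equivalently Lemma~\ref{l.mathcalE.minscale}), so nothing further is needed there.

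The main obstacle is Step~1: one must be careful that the finite-volume correctors $\phi_{m,e},\s_{m,e}$ are $3^m\Zd$-periodic (so that their $\ep$-rescalings live in $H^1_{\mathrm{loc}}(\Rd)$ and the gluing/cutoff arguments go through on $U\subseteq\cu_0$ after rescaling by $1/\ep$), and that the flatness estimates~\eqref{e.flatnessqual.sec24}–\eqref{e.flatnessqual.flux.sec24} are correctly matched to the multiscale quantity $\mathcal{E}(m)$ rather than merely to $J(\cu_m,e,\ahom e)$ — this is precisely why $\mathcal{E}(m)$ was introduced with the discount factors $3^{n-m}$. Once this bookkeeping is in place, every remaining estimate is deterministic and identical in structure to the argument already carried out for Proposition~\ref{p.DP}; in particular the boundary-layer term $z^\ep$ is handled exactly as in Lemma~\ref{l.DP.2infty}, contributing $O(\ep^{1/2})$ which is absorbed into $\|\nabla u\|_{L^2(U\setminus U_r)}$ (for $r \gtrsim \ep^{1/2}$; for smaller $r$ the correctly interpreted estimate still holds since the left side of~\eqref{e.quant.DP} is trivially bounded).
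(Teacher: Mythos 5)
Your proposal is correct and follows essentially the same route as the paper: bound the finite-volume correctors $\phi_{m,e},\s_{m,e}$ of Section~\ref{ss.variational} via the flatness estimates~\eqref{e.flatnessqual.sec24}--\eqref{e.flatnessqual.flux.sec24}, the duality bound~\eqref{e.dualest.forflux} and Corollary~\ref{c.subadd.converge}, then feed the resulting $L^2$ smallness (of order $\delta(\X\ep)^\alpha$ after rescaling) into the deterministic two-scale argument of Proposition~\ref{p.DP} repeated with finite-volume correctors. The only discrepancy is the harmless exponent bookkeeping ($\mathcal{E}(m)^{\sfrac12}$ versus the paper's $\mathcal{E}(m)^{\sfrac14}$ coming from~\eqref{e.diagonalset}), which is absorbed by the same renaming of $\delta$ and $\alpha$ that both you and the paper perform.
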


The free parameter~$r$ allows for flexibility when applying Theorem~\ref{t.quant.DP}. The rate at which~$\| \nabla u \|_{{L}^2(U \setminus U_r)}$ tends to zero depends, of course, on the regularity of the boundary trace of~$u$ on~$\partial U$. By standard results in elliptic regularity (see~\cite[Proposition B.16]{AKMBook}) we know that, if~$\nabla u \in W^{1,\infty}(U)$  for instance, then~$\| \nabla u \|_{{L}^2(U \setminus U_r)} \lesssim r^{\nicefrac12}\| \nabla u \|_{{L}^2(U)}$. In this case, the choice~$r = \delta^{\nicefrac12} (\X \ep)^{\nicefrac \theta2}$ yields 
\begin{equation*}
\| \nabla u^\ep - \nabla u \|_{H^{-1}(U)} 
+ 
\| \a(\tfrac \cdot \ep) \nabla u^\ep - \ahom \nabla u \|_{H^{-1}(U)} 
\leq
\delta^{\nicefrac12} 
(\X \ep)^{\nicefrac \theta 2}
\| \nabla u \|_{L^\infty(U)} ,
\end{equation*}
and so after renaming~$\delta \leftarrow \delta^{\nicefrac12}$ and~$\theta \leftarrow \frac12\theta$, we have made the first term on the right side of~\eqref{e.quant.DP} disappear at the cost of replacing~$U_r$ by~$U$ in the norm. 
We can perform a similar trick whenever~$u \in W^{1,p}(U)$ for~$p>2$, using the H\"older inequality to show that the~$L^2$ norm of~$\nabla u$ in the boundary layer vanishes at least like a power of~$r$. Thanks to the global Meyers estimate (see~\cite[Appendix C]{AKMBook}) this is very often the case! However, not every~$u$ will have such regularity near the boundary, and the point of stating the estimate as we have done is to have the freedom to allow~$r$ to depend on~$u$. 
Note that the assumed~$W^{2,\infty}$ interior regularity of~$u$ can be relaxed---see~\cite[Section 6.3]{AKMBook} for sharp results. 

\smallskip

The proof of Theorem~\ref{t.quant.DP} is easy because we have already done all the work for it: we just need to glue together the statements of~Proposition~\ref{p.DP} and Corollary~\ref{c.subadd.converge}, with the help of some estimates already proved in Section~\ref{ss.variational}, namely~\eqref{e.flatnessqual.sec24} and~\eqref{e.flatnessqual.flux.sec24}.

\begin{proof}[{Proof of Theorem~\ref{t.quant.DP}}]
Recall that in Section~\ref{ss.variational} we defined finite-volume correctors~$\phi_{m,e}$ and flux correctors~$\bfs_{m,e}$ in the cube~$\cu_m$ by~\eqref{e.FVC.def} and~\eqref{e.fluxcorrect.sec24}.
In the notation of the present section, we take~$\phi_{m,e}$ to be the difference of the maximizer~$v(\cu_m,-e,0)$ of~$J(\cu_m,-e,0)$ and the affine function~$\ell_e$, and let~$\bfs_{m,e}$ be an anti-symmetric matrix potential for the flux~$\a\nabla v(\cdot,\cu_m, -e,0) - \a(\cu_m)e$. Note that the latter has zero mean in~$\cu_m$ by~\eqref{e.a.astar.formulas}.
According to~\eqref{e.flatnessqual.sec24} and~\eqref{e.flatnessqual.flux.sec24},
we have that, for every~$|e|=1$,
\begin{align*}
%\label{e.fvc.bounds.symm.minusone}
\lefteqn{ 
\| \nabla \phi_{m,e} \|_{\Hminusul(\cu_m)} 
+
\| \a (e +\nabla \phi_{m,e}) - \ahom e \|_{\Hminusul(\cu_m)} 
} \qquad & 
\notag \\ & 
\leq
C
+ 
C
\sum_{n=0}^{{m-1}} 3^{n} 
\biggl| 
 \a(\cu_m)-
\avsum_{z\in 3^n\Zd\cap \cu_{m}} 
\a(z+\cu_n)
\biggr|^{\nicefrac12}
\,.
\end{align*}
Note that we use the penultimate lines~\eqref{e.flatnessqual.sec24} and~\eqref{e.flatnessqual.flux.sec24}, but not the last line of these displays, which are too brutal and result in a loss of the exponent. 
We have a better way to estimate these now, using both coarse-grained quantities together instead of a crude application of the triangle inequality to compare scales. Here, we use~\eqref{e.add.defect.a} and~\eqref{e.magic.b.plug}, which give 
\begin{align*}
\biggl| 
\a(\cu_m)- \!
\avsum_{z\in 3^n\Zd\cap \cu_{m}} 
\a(z{+}\cu_n)
\biggr|
&
\leq 
 2 \avsum_{z\in 3^n\Zd\cap \cu_m}
\sum_{i=1}^d 
J(z{+}\cu_n,e_i,\ahom e_i) \Id
\notag \\ & 
\leq
C \!\!\avsum_{z\in 3^n\Zd\cap \cu_m} \!\!
\bigl( \bigl| \a(z{+}\cu_n) - \a_*(z{+}\cu_n)  \bigr|
+
\bigl| \a(z{+}\cu_n) - \ahom \bigr|^2 \bigr) 
\end{align*}
which, combined with the previous display and the definition of~$\mathcal{E}(m)$ in~\eqref{e.mcE.0}, yields
\begin{equation}
\label{e.FV.sublinearity.0}
3^{-m} \| \nabla \phi_{m,e} \|_{\Hminusul(\cu_m)} 
+
3^{-m} \| \a (e +\nabla \phi_{m,e}) - \ahom e \|_{\Hminusul(\cu_m)} 
\leq 
C3^{-m}  + C\mathcal{E}(m)\,.
\end{equation}
In view of Corollary~\ref{c.subadd.converge}, we deduce the existence, for each~$\delta>0$, of a random variable~$\X$ satisfying~\eqref{e.mmmbound0} such that~$3^{m} \geq \X$ implies that, for every~$|e|=1$, 
\begin{equation}
\label{e.FV.sublinearity}
3^{-m}\| \nabla \phi_{m,e} \|_{\Hminusul(\cu_m)} 
+
3^{-m} \| \a (e +\nabla \phi_{m,e}) - \ahom e \|_{\Hminusul(\cu_m)} 
\leq
C3^{-m}  
+ 
C \delta  \bigl( \X 3^{-m} \bigr)^{\frac{\theta}2}
\,.
\end{equation}
Here~$\theta>0$ is as in the statement of the corollary.
By~\eqref{e.L2toHminus} and~\eqref{e.dualest.forflux}, we deduce that, for every~$|e|=1$, the bounds
\begin{equation}
\label{e.fvc.bounds.symm}
3^{-m} \| \phi_{m,e} \|_{\underline{L}^2(\cu_m)} 
+
3^{-m} \| \bfs_{m,e} \|_{\underline{L}^2(\cu_m)} 
\leq
C \delta \bigl( \X 3^{-m} \bigr)^{\frac{\theta}2}
\,.
\end{equation}
Restating the result in terms of the rescaled correctors~$\phi_{m,e}^\ep:= \ep \phi_{m,e}(\tfrac \cdot \ep)$ and~$\bfs_{m,e}^\ep:= \ep \bfs_{m,e}(\tfrac \cdot \ep)$, we obtain, for every~$\ep \in (0,\X^{-1})$, 
\begin{equation*}
\| \phi_{m,e}^\ep \|_{{L}^2(\cu_0)} 
+
\| \bfs_{m,e}^\ep \|_{{L}^2(\cu_0)} 
\leq 
C \delta \bigl( \X \ep \bigr)^{\frac{\theta}2}
\,.
\end{equation*}
With~$u$ and~$u^\ep$ as in Theorem~\ref{t.quant.DP}, we may apply Proposition~\ref{p.DP} to obtain, for every~$\ep>0$ satisfying~$\ep < \X^{-1}$, the estimate
\begin{align*}
\left\| \nabla u^\ep {-} \nabla  u  \right\|_{H^{-1}(U)}  
+ \left\| \a\left(\tfrac\cdot\ep\right) \nabla u^\ep {-} \ahom \nabla  u  \right\|_{H^{-1}(U)}
\leq 
C \left\| \nabla u \right\|_{L^{2}(U \setminus U_{2r})} 
{+}
\frac{C\delta}{r}
\bigl( \X \ep \bigr)^{\frac{\theta}2}
\left\|  \nabla u \right\|_{W^{1,\infty}(U_r)}
\,.
\end{align*}
Note that, as mentioned in Section~\ref{ss.variational}, the statement of the proposition is written in terms of infinite-volume correctors, but its proof repeated verbatim for the finite-volume correctors yields the same estimate. This completes the proof of Theorem~\ref{t.quant.DP}. 
\end{proof}

In Chapter~\ref{s.regularity}, we will need the following variant of~Theorem~\ref{t.quant.DP}. It is a straightforward corollary of the theorem, so we leave the proof for the reader. 

\begin{corollary}
\label{c.quant.DP.interior}
Assume~$\P$ satisfies~$\CFS(\beta,\Psi)$ and~\eqref{e.symm}. 
Let~$\theta >0$ be as in the statement of Lemma~\ref{l.mathcalE.minscale}. Then for every~$\delta>0$, there exists~$C(\delta,\dataref)<\infty$ and a random variable~$\X$ satisfying 
\begin{equation}
\label{e.mmmbound2}
\X^{\frac d2 (1-\beta)}
= \O_\Psi(C)
\end{equation}
such that, for every~$r \geq \X$ and~$u \in H^1(B_{2r})$ satisfying the equation~$-\nabla \cdot\a\nabla u = 0$ in~$B_{2r}$, for every~$x \in \cu_0$ there exists a solution~$\bar{u} \in H^1(B_r(x))$ of~$\nabla \cdot \ahom\nabla \bar{u} = 0$ in~$B_r(x)$ satisfying the estimate
\begin{equation*}
\| u - \bar u \|_{\underline{L}^2(B_r(x))} 
+  
\|\a \nabla u - \ahom \nabla \bar u \|_{\underline{H}^{-1}(B_r(x))} 
\leq 
\delta \Bigl( \frac{\X}{r} \Bigr)^{\!\frac \theta2}
\| u \|_{\underline{L}^2(B_{2r})} 
\,.
\end{equation*}
\end{corollary}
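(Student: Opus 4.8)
The plan is to deduce Corollary~\ref{c.quant.DP.interior} directly from Theorem~\ref{t.quant.DP} by a rescaling-and-localization argument, exactly as the excerpt suggests by leaving it ``for the reader.'' First I would reduce to the case~$x = 0$ by stationarity of~$\P$: the random variable~$\X$ produced below will be replaced by~$x$-dependent versions, but since everything is~$\Zd$-stationary and we only need the conclusion to hold for the fixed solution~$u$ at the fixed point~$x$, it suffices to treat~$x=0$ and then note that the same minimal scale (translated) works. Next I would rescale: set~$\ep := \X/r \leq 1$ (assuming~$r \geq \X$) and consider the function~$\tilde u(y) := u(ry)$ on~$B_2$, which solves~$-\nabla\cdot \a^\ep \nabla \tilde u = 0$ in~$B_1$ where~$\a^\ep = \a(\tfrac{r\,\cdot}{1}) $... more carefully, the natural scaling is to write~$\a^{\ep}(\cdot) = \a(\tfrac\cdot\ep)$ and observe that solutions of~$-\nabla\cdot\a\nabla u = 0$ in~$B_r$ correspond, after dilation by~$r$, to solutions of~$-\nabla\cdot\a(\tfrac{r}{1}\cdot)\nabla = 0$ on~$B_1$; I would match this with the~$\ep$-notation of Theorem~\ref{t.quant.DP} by taking~$\ep = 1/r$ and noting that the constraint~$\ep \leq \X^{-1}$ becomes~$r \geq \X$.

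The key steps, in order, would be: (1) Fix~$\delta>0$ and apply Theorem~\ref{t.quant.DP} with~$U = B_1$ (a Lipschitz domain contained in~$\cu_0$ after a harmless rescaling, or use~$U = B_{1/2}$ and adjust constants) to obtain the exponent~$\alpha$, the constant~$C$, and the minimal scale~$\X$ satisfying~$\X^{\frac d2(1-\beta)} = \O_\Psi(C)$. (2) Given a solution~$u \in H^1(B_{2r})$ of~$-\nabla\cdot\a\nabla u = 0$ in~$B_r$ with~$r \geq \X$, dilate to~$\tilde u$ on~$B_2$ and let~$\overline{\tilde u}$ be the~$\ahom$-harmonic function on~$B_1$ with the same boundary data as~$\tilde u$ on~$\partial B_1$; this is the~$u$ in the hypothesis of Theorem~\ref{t.quant.DP} applied with~$u \leftarrow \overline{\tilde u}$ and~$u^\ep \leftarrow \tilde u$. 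The point is that~$\overline{\tilde u}$ is smooth (interior~$C^\infty$ estimates for constant-coefficient equations), so the~$W^{2,\infty}(U_r)$ and~$L^2(U\setminus U_r)$ terms on the right side of~\eqref{e.quant.DP} can be controlled, with the free parameter~$r$ in Theorem~\ref{t.quant.DP} chosen as a function of~$\delta$ and a power of~$(\X\ep)$ to absorb the boundary-layer term, precisely as in the remark following Theorem~\ref{t.quant.DP}. (3) Undo the dilation to get the claimed estimate for~$\|u - \bar u\|_{\underline{L}^2(B_r)}$ and~$\|\a\nabla u - \ahom\nabla u\|_{\underline{H}^{-1}(B_r)}$, with~$\|u\|_{\underline{L}^2(B_{2r})}$ on the right via a Caccioppoli-type bound relating~$\|\nabla \overline{\tilde u}\|_{L^\infty}$ (hence~$\|\nabla\tilde u\|_{L^2}$) to~$\|\tilde u\|_{\underline{L}^2(B_2)}$.

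The main obstacle I anticipate is bookkeeping rather than conceptual: carefully tracking the normalization of norms (the volume-normalized~$\underline{L}^2$ and~$\underline{H}^{-1}$ norms of the excerpt scale as in~\eqref{e.Sobolev.scaling}, so the dilation by~$r$ introduces powers of~$r$ that must exactly cancel), handling the switch between the boundary trace of~$u$ and the boundary trace of~$\overline{u}$ (one must define~$\overline{u}$ as the~$\ahom$-harmonic extension of~$u|_{\partial B_r}$, not the other way around, so that~$u$ plays the role of~$u^\ep$ and~$\overline{u}$ the role of~$u$ in Theorem~\ref{t.quant.DP}), and converting the~$r^{-1}(\X\ep)^\alpha$ factor of~\eqref{e.quant.DP} into the clean~$(\X/r)^\alpha$ of the corollary by the choice of the free parameter and renaming~$\delta,\alpha$. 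One also needs the global Meyers estimate or interior regularity to bound~$\|\nabla\overline{\tilde u}\|_{L^2(B_1\setminus B_{1-r})} \lesssim r^{1/2}\|\nabla\overline{\tilde u}\|_{L^2(B_1)}$, but since~$\overline{\tilde u}$ is~$\ahom$-harmonic this follows from elementary interior gradient bounds. None of these steps introduces new randomness, so the stochastic integrability bound~\eqref{e.mmmbound2} is inherited verbatim from~\eqref{e.mmmbound1}.
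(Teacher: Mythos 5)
Your overall architecture---rescale to unit scale, let~$\overline{u}$ be the~$\ahom$-harmonic extension of the boundary data of~$u$, apply Theorem~\ref{t.quant.DP} with~$\overline{u}$ in the role of the theorem's~$u$ and the original heterogeneous solution in the role of~$u^\ep$ (by uniqueness of the Dirichlet problem), control the interior~$W^{2,\infty}$ norm by constant-coefficient interior estimates, and optimize the free parameter---is the intended one: it is the same scheme the paper runs in Step~1 of the proof of Theorem~\ref{t.C11.sharp} and in Proposition~\ref{p.harm.we.have}. The genuine gap is your treatment of the boundary-layer term~$\|\nabla\overline{u}\|_{L^2(B_1\setminus B_{1-\rho})}$. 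You assert it is bounded by~$C\rho^{\sfrac12}\|\nabla\overline{u}\|_{L^2(B_1)}$ ``by elementary interior gradient bounds since~$\overline{u}$ is~$\ahom$-harmonic.'' This is false: interior estimates degenerate like~$\dist(\cdot,\partial B_1)^{-1}$ and give no rate of decay of the Dirichlet energy in a thin annulus, and for an~$\ahom$-harmonic function whose boundary datum is merely the~$H^{\sfrac12}$ trace of an~$H^1$ function no power-of-$\rho$ bound holds. The paper's analogous arguments use two ingredients you omit: first, a good-radius selection---via the coarea formula and the Caccioppoli inequality (which requires the heterogeneous solution to solve the equation in a ball strictly larger than the chosen sphere) one picks~$s$ with~$\|\nabla u\|_{\underline{L}^2(\partial B_s)}\leq C r^{-1}\|u\|_{\underline{L}^2}$, so the boundary datum lies in~$H^1(\partial B_s)$ quantitatively; second, the boundary-layer estimate of Lemma~\ref{l.bndrlayer}, giving~$\|\nabla\overline{u}\|_{\underline{L}^2(B_s\setminus B_{(1-\kappa)s})}\leq C|\log\kappa|\,\|\nabla_{\mathrm{tan}} u\|_{\underline{L}^2(\partial B_s)}$. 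You mention Meyers in passing but then discard it in favor of the incorrect interior-bound claim; the Meyers route does work, but it must be applied first to~$u$ and then to~$\overline{u}$, followed by H\"older, and it also needs the good-radius, smaller-ball setup. Without one of these devices the first term on the right of~\eqref{e.quant.DP} cannot be made small.

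A related consequence of the missing good-radius step: you place the Dirichlet data of~$\overline{u}$ exactly on~$\partial B_r$ (after rescaling,~$\partial B_1$), but~$u$ is only assumed to solve the equation inside~$B_r$, so neither~$\|\nabla u\|_{L^2(B_r)}$ nor any~$H^1(\partial B_r)$ bound on the trace is controlled by~$\|u\|_{\underline{L}^2(B_{2r})}$---Caccioppoli degenerates as you approach~$\partial B_r$. The repaired argument chooses~$s\in[\tfrac r2,\tfrac{3r}4]$, where the Caccioppoli and coarea bounds are available, and produces an approximation which is~$\ahom$-harmonic on a slightly smaller ball, which is also how the statement is actually used in Section~\ref{s.regularity} (cf.\ Proposition~\ref{p.harm.we.have}, where the comparison function lives on~$B_{(1-\theta)R}$). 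Your remaining steps---the scaling bookkeeping of~\eqref{e.Sobolev.scaling}, passing from~$\|\nabla(u-\overline{u})\|_{H^{-1}}$ to~$\|u-\overline{u}\|_{L^2}$ via~\eqref{e.L2toHminus}, choosing the free parameter as a power of~$\delta$ and of~$\X/r$, and renaming~$\delta,\alpha$---are fine, and they indeed introduce no new randomness, so~\eqref{e.mmmbound2} is inherited from~\eqref{e.mmmbound1} as you say.
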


\subsection*{Historical remarks and further reading}

The quantity~$\mu(U,p)$ was first introduced in 1973 by De Giorgi and Spagnolo~\cite{DGS} in one of the first mathematical papers\footnote{See~\cite[pp. 361--379]{DG} for an English translation of~\cite{DGS}.}
on homogenization. They proved the qualitative homogenization for periodic elliptic operators (stated in the language of so-called \emph{$G$--convergence}), which was based on, in our notation,  
the convergence of~$\mu(\cu_m,p)$ to~$\overline{\mu}(p)$ as~$m\to \infty$. 
This result was generalized to the case of general stationary-ergodic environments in~1986 by Dal~Maso and Modica~\cite{DM1,DM2}, again using~$\mu(\cu_m,p)$ and a version of the subadditive ergodic theorem to obtain the convergence of~$\mu(\cu_m,p)$ to a deterministic limit and consequently obtaining the qualitative homogenization of the Dirichlet problem. 

\smallskip

Each of these homogenization results was qualitative. However, the desire to prove quantitative results was already present in~\cite{DGS}. Indeed, a conjecture was stated in~\cite{DGS} with quantitative results in mind (see just before their Theorem~3.4). In our notation, it asserted an estimate for the difference of~$\a(\cu_m)$ and~$\b(\cu_m)$ for two coefficient fields~$\a$ and~$\b$ in terms of the differences of the quantity for~$\a(z+\cu_n)$ and~$\b(z+\cu_m)$ over all family of triadic subcubes~$\{ z+ \cu_n\,:\, z\in 3^n\Zd, \ -\infty < n \leq n_0\}$, for some fixed~$n_0 < m$. 
While we believe this conjecture to be false,\footnote{One needs both~$\a(U)$ as well as its fraternal twin~$\a_*(U)$, for instance, for a statement like this.} it demonstrates the surprising fact that a multiscale, iterative argument for obtaining quantitative convergence of~$\mu(\cu_m,p)$ to~$\ahom$ was already anticipated nearly fifty years ago. 

\smallskip

Most of the results of this chapter, including the introduction of the dual subadditive quantity and the iteration up the scales to get the algebraic convergence rate, originated in~\cite{AS}. That paper presented the result for general nonlinear equations with variational structure (the coefficients given by the gradient of convex functions). The method has subsequently been generalized and used to obtain quantitative results in many different contexts, including: 
\begin{itemize}
\item General nonlinear equations~\cite{AM};

\item Uniformly parabolic equations with coefficients varying in space and time~\cite{ABM}; 

\item Supercritical (bond) percolation clusters~\cite{AD1}; 

\item Interacting particle systems~\cite{GGM,GGMN,FGW};

\item Linear elasticity with random, anisotropic materials~\cite{SZ1};

\item Gradient interface models in statistical physics~\cite{D1,AW,DW,AD2};

\item Local laws for Coulomb gases~\cite{ArmSer};

\item Homogenization of differential forms~\cite{D3};

\item Effective viscosity in sedimentation~\cite{DG0};

\item Homogenization of high contrast equations~\cite{AK.HC,ABK.SD}.

\end{itemize}

This chapter is the first presentation of the coarse-graining ideas for such general mixing assumptions, although the fact that the renormalization ideas were flexible, and that such a result was possible to prove, was already well-known (see~\cite[page xxiv]{AKMBook}); results under~$\alpha$-mixing conditions were also proved previously in~\cite{AM}.

We will see below in Section~\ref{ss.almostone} that the small exponent~$\alpha>0$ in the statements of Theorem~\ref{t.subadd.converge}, Proposition~\ref{p.algebraicrate.E} and Corollary~\ref{c.subadd.converge} can be improved, using a renormalization argument, to any~$\alpha$ in the set~$(0, \frac d2(1-\beta)] \cap (0,1)$. This result, which was first proved in~\cite{AKM1} in the case of finite range of dependence, is sharp for the convergence of~$\a(\cu_m)$ and~$\a_*(\cu)$ to~$\ahom$, due to the presence of boundary layers.

\section{Coarse-graining for general linear elliptic equations}
\label{s.nonsymm}

In this chapter, we generalize the approach developed in the previous section. We will consider the case that the coefficient field~$\a(\cdot)$ is not necessarily symmetric; that is, we are going to remove the extra assumption~\eqref{e.symm} present in the previous section. In Section~\ref{ss.rhs}, we will also include the case of equations with a right-side in divergence form. 

\subsection{Coarse-grained matrices for equations with nonsymmetric coefficients}
\label{ss.nonsymm}

We henceforth introduce the symmetric (and positive)~$\s$ and anti-symmetric~$\k$ parts of the coefficient field~$\a$, which are defined by 
\begin{equation}
\label{e.a.symmetric.parts}
\s := \frac12(\a+\a^t)
\quad \mbox{and} \quad 
\k := \frac12(\a-\a^t)\,. 
\end{equation}
Notice that, since~$\s>0$, the uniform ellipticity condition~\eqref{e.ue} can be written as 
\begin{equation}
\label{e.ellipticity.nonsymm}
\s^{-1} \leq \lambda^{-1}  \Id 
\quad \mbox{and} \quad
\s + \k\s^{-1}\k^t \leq \Lambda  \Id\,.
\end{equation}
Indeed, the second condition in~\eqref{e.ue} is equivalent to~$\a^t \a \leq \Lambda \s$, which is equivalent to
\begin{equation*}
\Lambda  \Id \geq \a^t \s^{-1} \a  =  
 (\s-\k) \s^{-1} (\s+\k) = \s + \k^t \s^{-1} \k \,.
\end{equation*}

As mentioned above, the divergence-form equation~$-\nabla \cdot \a \nabla u = 0$ in the symmetric case can be interpreted physically as an electrostatics problem, with~$\a(x)$ representing the conductivity of a solid material at the point~$x$. The reader may then wonder: what is the physical motivation for considering nonsymmetric matrices in divergence-form equations?

\smallskip 

Tartar\footnote{Luc \textsc{Tartar}, French-American mathematician, born in 1946 in Paris. Tartar played a leading role in developing the theory of periodic homogenization in the 1980s before moving on to make fundamental contributions to other topics, such as hyperbolic conservation laws. His beautiful and idiosyncratic book~\cite{T} on homogenization, containing his reflections on the historical development of the topic, is highly recommended reading (especially for the footnotes).} 
poses this question on page 99 of his book~\cite{T} on homogenization theory next to his admission that, at the time he and Murat were developing the theory of periodic homogenization, they had no physical motivation in mind for the extension to the case of nonsymmetric coefficients. He then points to one possible physical motivation concerning the Hall effect in classical electrodynamics, which appears in the work on Milton. 

\smallskip

A less obscure answer, which Tartar does not mention, is that the skew-symmetric part~$\k$ of the coefficient field~$\a$ encodes an \emph{incompressible drift}. If~$\k$ is smooth enough, we may define~$\b := \nabla \cdot \k$ and compute 
\begin{equation*}
\nabla\cdot ( \k \nabla u) 
=
\sum_{i,j=1}^d 
\partial_{x_i} (  \k_{ij}\partial_{x_j} u) 
=
\sum_{i,j=1}^d 
(\partial_{x_i} \k_{ij})\partial_{x_j} u + \underbrace{\sum_{i,j=1}^d 
\k_{ij}\partial_{x_i} \partial_{x_j} u}_{=0}
=
\b \cdot \nabla u
\,.
\end{equation*}
In other words, the skew-symmetric part~$\k$ is the stream matrix for a divergence-free drift. Therefore the class of equations of the form~$-\nabla \cdot \a\nabla u=0$ for all elliptic (and  nonsymmetric) coefficient fields~$\a$ includes the \emph{advection-diffusion equation}
\begin{equation}
\label{e.passivescalar}
\partial_t u
-\kappa \Delta u + \b \cdot \nabla u = 0
\,,
\end{equation}
which arises in many physical models. In the context of fluid mechanics, it is often called \emph{the passive scalar equation}. 
The homogenization problem corresponding to~\eqref{e.passivescalar} is related to the study of \emph{advection-enhanced dissipation}. The drift term results in a homogenized matrix with a symmetric part that is larger---possibly much larger---than~$\kappa  \Id$. One is interested in the dependence of~$\ahom$ on the structure of the vector field~$\b$ and on~$\kappa$, particularly as~$\kappa$ becomes small, which is related to the development of turbulence in passive scalar transport. We refer the reader to the survey~\cite{MK} for much more on this topic. 

\smallskip

We would like to generalize the approach of Chapter~\ref{s.subadd}
to include equations like~\eqref{e.passivescalar} into our theory of quantitative homogenization. This may initially seem far from straightforward because these arguments are fundamentally \emph{variational}, the coarse-grained coefficients having been defined in terms of variational problems. Unfortunately, operators with nonsymmetric coefficients are not self-adjoint, and equations such as~\eqref{e.passivescalar} are called ``non-variational'' for a reason. 
Therefore, it is hopeless to generalize the ideas of Chapter~\ref{s.subadd}.

\smallskip

\emph{Fortunately, this is completely wrong!} All divergence form elliptic equations, even those corresponding to operators that are not self-adjoint, are variational and can be studied by means of the calculus of variations. There is no such thing as a ``non-variational'' divergence-form equation. The fact that this is not completely standard is an unfortunate historical quirk in the development of the calculus of variations. 
A thorough discussion of the variational interpretation of general divergence-form equations can be found in~\cite[Section 10.1]{AKMBook} or~\cite{AM}. 

\smallskip

We will not delve deeply into this background here but rather proceed with the generalization of the previous section. The presentation here is still self-contained, but some of the quantities we consider might be unexpected to readers unfamiliar with this less standard variational theory. 

\subsubsection{The coarse-grained coefficients in the nonsymmetric case}
\label{ss.subadd.doubletrouble.new}

If~$\k(\cdot)\neq0$, then the quantities~$\mu$ and~$\mu_*$ defined in~\eqref{e.mu} and~\eqref{e.mustar} are no longer natural because they depend only on the symmetric part of the coefficient field~$\a(\cdot)$; in particular, the extremizing functions are not solutions of the equation, but rather the equation with~$\s(\cdot)$ in place of~$\a(\cdot)$. 
It turns out that the right formula to generalize is~\eqref{e.variational.J}, which we now take as a definition: with~$\A(U)$ as in~\eqref{e.def.AU}, 
we define, for every~$p,q\in\Rd$, 
\begin{equation}
\label{e.variational.J.nonsymm}
J(U,p,q) 
:= 
\max_{v\in \mathcal{A}(U)} 
\fint_U \left( -\frac12 \nabla v\cdot \s\nabla v -p\cdot \a\nabla v + q\cdot \nabla v   \right).
\end{equation}
As before,~$v(\cdot,U,p,q)$ denotes the unique maximizer. 
Some but not all the properties of~$J$ listed in Lemma~\ref{l.J.basicprops} successfully generalize to the nonsymmetric setting. For instance,~$J$ is nonnegative and quadratic in~$(p,q)$, the first and second variation identities~\eqref{e.firstvar} and~\eqref{e.quadresp} are valid.
The maximizer of~\eqref{e.variational.J.nonsymm} is unique up to additive constants and 
\begin{equation}
\label{e.vUpq.linear.nosymm}
(p,q) \mapsto \nabla v(\cdot,U,p,q) \quad \mbox{is linear.}
\end{equation}
Of course,~$J$ continues to be subadditive as in~\eqref{e.subaddJ}. We will see later that, as in the symmetric case,~$J$ encodes sufficient information to define and control finite-volume correctors, which can then be used to obtain homogenization estimates.

\smallskip

We next introduce coarse-grained matrices~$\s(U)$,~$\s_*(U)$ and~$\k(U)$, which will be defined so that they satisfy 
\begin{equation}
\label{e.Jaas.nosymm}
J(U,p,q) =
\frac 12p \cdot \s(U)p 
+ \frac 12 (q+\k(U) p) \cdot \s_*^{-1}(U) (q+\k(U) p) 
- p \cdot q \,.
\end{equation}
The matrices~$\s(U)$ and~$\s_*(U)$ are symmetric, but~$\k(U)$ is not skew-symmetric, in general.
These are defined as follows: we first take~$\s_*(U)$ to be the symmetric matrix satisfying 
\begin{equation}
\label{e.sastU.def}
\frac 12 q\cdot \s_*^{-1}(U)  q = J(U,0,q)\,.
\end{equation}
We next define~$\k(U)$ by
\begin{equation}
\label{e.mU.def}
q \cdot \s_*^{-1}(U) \k(U) p =
J(U,p,q) - J(U,0,q) - J(U,p,0) +p \cdot q \,,
\end{equation}
and, finally, we take~$\s(U)$ to be the symmetric matrix satisfying
\begin{equation}
\label{e.sU.def}
\frac 12 p \cdot \s(U) p = 
 J(U,p,0) - \frac12p \cdot \bigl ( \k^t(U) \s_*^{-1}(U) \k(U)  \bigr ) p 	
 \,.
\end{equation}
From these definitions and a little algebra, using~\eqref{e.vUpq.linear.nosymm}, one obtains~\eqref{e.Jaas.nosymm}.
We also define
\begin{equation}
\label{e.def.a.nosymm}
\a(U) := \s(U) - \k^t(U) 
\quad \mbox{and} \quad
\a_*(U) :=  \s_*(U)  - \k^t(U)\,.
\end{equation}
Observe that these definitions agree with the ones in the symmetric case since~$\k(U)$ vanishes in that case. Finally, define
\begin{equation*}
\b(U) := \bigl ( \s + \k^t\s_*^{-1}\k\bigr )(U) 
\,.
\end{equation*}
We let~$\shom(U)$,~$\shom_*(U)$,~$\khom(U)$,~$\bhom(U)$,~$\ahom(U)$ and~$\ahom_*(U)$ denote deterministic matrices defined by:
\begin{equation}
\label{e.meet.the.homs}
\left\{
\begin{aligned}
& \shom_*(U) := \E \bigl[ \s_*^{-1}(U) \bigr]^{-1} \,, 
\\ & 
\khom(U) 
:= \shom_*(U)  \E \bigl[ \s_*^{-1}(U) \k(U) \bigr] \,, 
\\ & 
\bhom(U) := \shom(U) + \khom^t(U) \shom_*^{-1}(U) \khom(U)
=
\E \bigl[ \s(U) + \k^t(U) \s_*^{-1}(U) \k(U) \bigr]\,,
\\ & 
\ahom (U) :=  \shom(U) - \khom^t(U)\,,
\\ & 
\ahom_* (U) :=  \shom_*(U) -  \khom^t(U)\,.
\end{aligned}
\right.
\end{equation}
We see immediately that the first line of~\eqref{e.meet.the.homs} defines~$\shom_*(U)$, the second line defines~$\khom(U)$, and the third line defines~$\shom(U)$ and~$\bhom(U)$. 
We have defined these matrices so that taking the expectations of any of the formulas above amounts to just putting bars over the matrices; for instance, from~\eqref{e.Jaas.nosymm} we get
\begin{equation}
\label{e.Jaas.nosymm.E}
\E \bigl[ J(U,p,q) \bigr] =
\frac 12p \cdot \shom(U)p 
+ \frac 12 (q+\khom(U) p) \cdot \shom_*^{-1}(U) (q+\khom(U) p) 
- p \cdot q \,.
\end{equation}
We next collect some properties of the coarse-grained coefficients we have just defined, generalizing Lemma~\ref{l.J.basicprops}.

\begin{lemma}[Properties of the coarse-grained coefficients]
\label{l.J.basicprops.nosymm}
The following assertions are valid for every bounded Lipschitz domain~$U \subseteq\Rd$:

\begin{itemize} 
\item~$\s(U)$ and~$\s_*(U)$ are symmetric and~$\a(U)$ and~$\a_*(U)$ satisfy the same ellipticity bounds\footnote{The matrices~$\s(U)$ and~$\s_*(U)$ also satisfy the ordering~$\s_*(U)\leq \s(U)$, but we have postponed this to~\eqref{e.nonobvious.ordering} below since the proof requires variational principles, which are not as immediate as the other statements in this lemma.} as~$\a(x)$:
\begin{equation}
\label{e.a.bounds.nosymm}
\left\{
\begin{aligned}
& \lambda  \Id 
\leq
\biggl( \fint_{U} \s^{-1} (x)\,dx \biggr)^{\!\!-1} 
\leq 
\s_*(U) 
\,, \\
&
\b(U)
\leq 
\fint_U \bigl ( \s + \k^t \s_*^{-1} \k \bigr )(x)\, dx
\leq \Lambda \Id \,.
\end{aligned}
\right.
\end{equation}

\item The quantity~$J(U,p,q)$ and the matrices~$\s(U)$,~$\s_*(U)$ and~$\k(U)$ are~$\F(U)$--measurable. 

\item First variation: for every~$w\in \A(U)$,
\begin{equation}
\label{e.firstvar.nosymm}
q\cdot \fint_U \nabla w - p \cdot \fint_U \a \nabla w 
=
\fint_U \nabla w \cdot \s \nabla v(\cdot,U,p,q)\, . 
\end{equation}

\item Second variation and quadratic response: for every~$w\in \A(U)$, 
\begin{multline}
\label{e.quadresp.nosymm}
J(U,p,q) - \fint_U \Bigl  ( -\frac12 \nabla w \cdot \s\nabla w -p\cdot \a\nabla w+ q\cdot \nabla w   \Bigr  )
\\
=
\fint_U \frac12 \bigl ( \nabla v(\cdot,U,p,q) - \nabla w \bigr )\cdot \s\bigl ( \nabla v(\cdot,U,p,q) - \nabla w \bigr )\,.
\end{multline}

\item Characterization in terms of the energy of the maximizer: for every~$p,q\in\Rd$, 
\begin{equation}
\label{e.Jenergyv.nosymm}
J(U,p,q) = \fint_U \frac12 \nabla v(\cdot,U,p,q) \cdot \s \nabla v(\cdot,U,p,q).
\end{equation}

\item Characterization in terms of spatial averages of gradients \& fluxes: for every~$p,q\in\Rd$, 
\begin{equation}
\label{e.a.formulas.nosymm}
\left\{
\begin{aligned}
& \fint_U  \nabla v(\cdot,U,p,q)
=
-p +
\s_*^{-1} (U)
\bigl ( q + \k(U)p \bigr )
\,,\\
&
\fint_U  \a \nabla v(\cdot,U,p,q)
= 
\bigl (\Id -\k^t  \s_*^{-1}\bigr ) (U)  q
-
\bigl (\s +\k^t  \s_*^{-1} \k   \bigr )(U) p
\,.
\end{aligned}
\right.
\end{equation}

\item Subadditivity: for every~$m,n\in\N$ with~$n<m$ and~$p,q\in\Rd$, 
\begin{equation}
\label{e.subaddJ.nosymm}
J(\cu_m,p,q) 
\leq 
\avsum_{z\in 3^n\Zd \cap \cu_m} 
J(z+\cu_n,p,q)
\,.
\end{equation}

\item Coarse-graining flux inequality: for every~$w\in \A(U)$ and~$e\in\Rd$, 
\begin{equation}
\label{e.fluxmaps.nosymm}
\biggl| e \cdot
\fint_U \bigl( \a_*(U) - \a \bigr) \nabla w
\biggr|
\leq 
\left( \fint_U \nabla w \cdot \s \nabla w \right)^{\nicefrac12} 
\bigl( 2J(U, e , \a_*^t(U) e )\bigr )^{\nicefrac12} 
\,.
\end{equation}

\item Coarse-graining energy inequalities: for every~$u\in H^1(U)$, 
\begin{equation}
\label{e.energymaps.nonsymm}
\frac12\left( \fint_U \nabla u \right) \cdot \s_*(U) \left( \fint_U \nabla u \right)
\leq
\fint_U \frac12 \nabla u \cdot \s\nabla u 
\end{equation}
and
\begin{equation}
\label{e.energymaps.nonsymm.dual}
\frac12\left( \fint_U \a\nabla u \right) \cdot \b^{-1}(U) \left( \fint_U \a\nabla u \right)
\leq
\fint_U \frac12 \nabla u \cdot \s\nabla u 
\,.
\end{equation}
\end{itemize}
\end{lemma}

\begin{proof}
The bounds in~\eqref{e.a.bounds.nosymm} are obtained similarly to the proof of~\eqref{e.a.bounds} in the symmetric case.
We compute
\begin{equation*}
\frac 12q \cdot \s_*^{-1}(U) q 
= 
\max_{v\in \mathcal{A}(U)} 
\fint_U \left( -\frac12 \nabla v\cdot \s\nabla v + q\cdot \nabla v   \right)
\leq 
\fint_U \frac12 q\cdot \s^{-1} q \,.
\end{equation*}
This implies the lower bound on~$\s_*(U)$ in~\eqref{e.a.bounds.nosymm}. 
Similarly,
\begin{align*}
\frac 12 p \cdot \bigl ( \s + \k^t \s_*^{-1}\k \bigr )(U) p
&
= 
\max_{v\in \mathcal{A}(U)} 
\fint_U \left( -\frac12 \nabla v\cdot \s\nabla v -p\cdot \a\nabla v  \right)
\\ &
\leq 
\fint_U
\frac 12
\a^t p \cdot \s^{-1}\a^t p
=
\frac 12p \cdot 
\fint_U
\bigl ( \s + \k^t \s^{-1} \k \bigr ) p
\,.
\end{align*}
This gives the second line of~\eqref{e.a.bounds.nosymm}.

\smallskip

The proof of the first and second variations,~\eqref{e.firstvar.nosymm} and~\eqref{e.quadresp.nosymm}, also closely follows the symmetric case. The main difference is that the cross term in the computation below produces the symmetric part~$\s(\cdot)$ of~$\a(\cdot)$ instead of~$\a(\cdot)$ itself for the energy term.
Let~$v:= v(\cdot,U,p,q)$ be the maximizer in~\eqref{e.variational.J.nonsymm} and compute, for~$w\in \A(U)$,
\begin{align*}
\lefteqn{
J(U,p,q) -
\fint_U \left(
-\frac12\nabla (v+w) \cdot \s\nabla (v+w) - p\cdot \a\nabla (v+w) + q\cdot \nabla (v+w) 
\right)
} \qquad\qquad\qquad & 
\\ &
=
- \fint_U \left(
-\frac12\nabla w \cdot \s\nabla w - \nabla v\cdot \s\nabla w - p\cdot \a \nabla w + q\cdot \nabla w 
\right)
\,.
\end{align*}
This expression is nonnegative for every~$w\in\A(U)$, and so we may replace~$w$ by~$tw$ and~$-tw$ and then send~$t\to 0$ after dividing by~$t$ to obtain
\begin{equation*}
0 = \fint_U \left(
\nabla v\cdot \s\nabla w + p\cdot\a \nabla w - q\cdot \nabla w 
\right).
\end{equation*}
This is~\eqref{e.firstvar.nosymm}. Substituting it into the previous identity with~$w-v$ in place of~$w$ yields~\eqref{e.quadresp.nosymm}. 

\smallskip 

The identity~\eqref{e.Jenergyv.nosymm} is obtained from the combination of the definition~\eqref{e.variational.J.nonsymm} and the first variation~\eqref{e.firstvar.nosymm} applied with~$w=v(\cdot,U,p,q)$.

\smallskip

We next show that~\eqref{e.firstvar.nosymm} implies~\eqref{e.a.formulas.nosymm}, by polarization.
Using also~\eqref{e.Jenergyv.nosymm}  and~\eqref{e.Jaas.nosymm}, we find that 
\begin{align*}
\lefteqn{ 
q'\cdot \fint_U  \nabla v(\cdot,U,p,q)  - p' \cdot \fint_U \a \nabla  v(\cdot,U,p,q)
} \qquad & 
\notag \\ & 
= 
\fint_U \nabla v(\cdot , U, p',q') \cdot \s\nabla v(\cdot , U, p,q)
\notag \\ & 
=
\fint_U -\frac12  \nabla v(\cdot , U, p'-p,q'-q) \cdot \s\nabla v(\cdot , U, p'-p,q'-q)
\notag \\ & \qquad
+
\fint_U \frac12  \nabla v(\cdot , U, p,q) \cdot \s \nabla v(\cdot , U, p,q)
%\notag \\ & \qquad
+
\fint_U 
\frac12 \nabla v(\cdot , U, p',q') \cdot \s\nabla v(\cdot , U, p',q') 
\\ & 
=J(U,p,q) + J(U,p',q') - J(U,p-p',q-q')
\\ & 
=
p'\cdot \s(U) p 
+
(q'+\k(U) p' )\cdot \s_{*}^{-1} (U) (q+\k(U)p) 
- p'\cdot q - q' \cdot p
\\ & 
=
q' \cdot \bigl(  \s_{*}^{-1} (U) (q+\k(U)p) - p \bigr) 
-p'\cdot \bigl( -  \s(U) p - \k^t(U) \s_{*}^{-1} (U) (q+\k(U)p)  + q  \bigr) 
\,.
\end{align*}
This implies~\eqref{e.a.formulas.nosymm}. 

\smallskip

Subadditivity~\eqref{e.subaddJ.nosymm} is immediate from the definition~\eqref{e.variational.J.nonsymm}.

\smallskip 

We next prove~\eqref{e.fluxmaps.nosymm}. 
Using~\eqref{e.firstvar.nosymm} and~\eqref{e.Jenergyv.nosymm}, we compute, for~$e\in \Rd$ and~$w \in \A(U)$,
\begin{align*}
%\label{e.}
\lefteqn{
\biggl |
e\cdot
\fint_{U} 
\bigl (
\a \nabla w -
(\s_*-\k^t)(U) \nabla w
\bigr )
\biggr |
} \qquad &
\\ &
=
\biggl |
\fint_U
\nabla w 
\cdot 
\s \nabla v\bigl (\cdot, U, e , (\s_*-\k)(U) e \bigr ) 
\biggr |
\\ &
\leq 
\biggl( \fint_U \nabla w \cdot \s \nabla w \biggr )^{\!\!\nicefrac12} \!
\biggl( \fint_U \nabla v\bigl (\cdot, U, e , (\s_*-\k)(U) e \bigr )  \cdot \s \nabla v\bigl (\cdot, U, e , (\s_*-\k)(U) e \bigr )  \biggr )^{\nicefrac12}
\\ &
\leq
\bigl(2J (U, e , (\s_*-\k)(U) e) \bigr ) ^{\nicefrac12}
\biggl( \fint_U \nabla w \cdot \s \nabla w \biggr )^{\!\!\nicefrac12}
\,.
\end{align*}
This is~\eqref{e.fluxmaps.nosymm}.

\smallskip 

We next give the proof of~\eqref{e.energymaps.nonsymm}, which is basically the same as the one of~\eqref{e.energymaps} in the symmetric case. We need to restrict the inequality to solutions instead of all~$H^1$ functions. 
By the definition of~$\s_*(U)$, we have that, for every~$u\in \A(U)$, if we set~$p:= \fint_U \nabla u$ then we have 
\begin{align*}
\frac12p \cdot \s_*(U) p
&
=
\sup_{q\in \Rd}
\Bigl( p\cdot q - \frac12 q \cdot \s_*^{-1}(U) q \Bigr)
\notag \\ & 
=
\sup_{q\in \Rd}
\Bigl( p\cdot q - J(U,0,q) \Bigr)
\notag \\ & 
=
\sup_{q\in \Rd}
\inf_{v\in \A(U)} 
\fint_U \Bigl( q\cdot (\nabla u - \nabla v) + \frac12\nabla v \cdot \s\nabla v \Bigr)
%\notag \\ & 
\leq 
\fint_U \frac 12\nabla u \cdot \s\nabla u
\,.
\end{align*}
This yields~\eqref{e.energymaps.nonsymm}.
For~\eqref{e.energymaps.nonsymm.dual} we set~$q:= -\,\fint_U \a \nabla u$ and compute 

\smallskip

To prove~\eqref{e.energymaps.nonsymm.dual}, we select~$u\in \A(U)$, set~$q:= -\,\fint_U \a \nabla u$ and compute 
\begin{align*}
%\label{e.}
\frac12q \cdot \b^{-1}(U) q
&
=
\sup_{p\in \Rd}
\Bigl( p\cdot q - \frac12 p \cdot \b(U) p \Bigr)
\notag \\ & 
=
\sup_{p\in \Rd}
\Bigl( p\cdot q - J(U,p,0) \Bigr)
\notag \\ & 
=
\sup_{p\in \Rd}
\inf_{v\in \A(U)} 
\fint_U \Bigl( p\cdot (\a\nabla v - \a\nabla u) + \frac12\nabla v \cdot \s\nabla v \Bigr)
%\notag \\ & 
\leq 
\fint_U \frac 12\nabla u \cdot \s\nabla u
\,.
\end{align*}
This completes the proof of the lemma. 
\end{proof}

\subsection{Variational structure and further properties}
\label{ss.doublevar}

Some key properties essential to the iteration argument in Section~\ref{ss.subadd.conv} may appear to be missing for the general~$J$ we defined in~\eqref{e.variational.J.nonsymm}, in comparison to the case of symmetric coefficients. In particular, as is evident from~\eqref{e.Jaas.nosymm}, in general, the dependence of~$J$ on~$p$ and~$q$ does not split into a form as nicely as we found in~\eqref{e.Jaas}. At first glance, there are no ``subquantities'' like~$\mu$ and~$\mu_*$, corresponding to the Dirichlet and Neumann problems.

\smallskip

These ``missing'' properties are variational in nature, and we will discover their generalizations by examining the variational structure of non-self adjoint equations. So, let us back up for a moment and derive the variational formulation for general (not necessarily self-adjoint) divergence-form elliptic equations, extending the Dirichlet principle in the self-adjoint case. We begin with the identity
\begin{align*}
\frac12 (\a p - q) \cdot \s^{-1}  (\a p - q)
=
A(p,q,\cdot) - p\cdot q
\,,
\end{align*}
where~$A$ is the quadratic form in~$2d$ variables defined by
\begin{align}
\label{e.Apq.def}
A(p,q,\cdot) 
:=
\frac12 p\cdot \s p + \frac 12 (q-\k p) \cdot \s^{-1}(q-\k p)
\,.
\end{align}
In particular, 
\begin{equation}
\label{e.Apq.equality.iff}
A(p,q,\cdot) \geq p\cdot q\,,  \quad \forall p,q\in\Rd\,, 
\quad \mbox{and} \quad 
A(p,q,\cdot) = p\cdot q
\iff
q = \a(x) p \,.
\end{equation}
It follows that 
\begin{align}
\label{e.yesvar}
u\in \A(U) & \iff
\a\nabla u \in \Lsol(U)
\iff 
\inf_{\g \in \Lsol(U)}
\int_U 
\bigl( A(\nabla u,\g,\cdot) - \nabla u\cdot \g \bigr)
= 0
\,.
\end{align}
Since the integrand on the right side of~\eqref{e.yesvar} is nonnegative by~\eqref{e.Apq.equality.iff}, we see that 
\begin{equation}
\label{e.yesyesvar.local}
u\in \A(U) 
\iff
\text{$u$ is a minimizer of }
w\mapsto \inf_{\g \in \Lsol(U)}
\int_U 
\bigl( A(\nabla w,\g,\cdot) - \nabla w\cdot \g \bigr)
\,.
\end{equation}
This is a variational principle! The reader can check that, in the case that~$\a(\cdot)$ is symmetric, the equivalence~\eqref{e.yesyesvar} reduces to the usual Dirichlet principle (and gives the dual Dirichlet principle for free). 

\smallskip

Since the term~$\int_U \nabla w\cdot \g$ depends only on~$\g$ and the trace of~$w$ on~$\partial U$, we may discard it, to obtain the equivalence 
\begin{equation}
\label{e.yesyesvar}
u\in \A(U) 
\iff
\text{$u$ is a local minimizer of }
w\mapsto \inf_{\g \in \Lsol(U)}
\int_U 
A(\nabla w,\g,\cdot)
\,.
\end{equation}
The right side of~\eqref{e.yesyesvar} is a uniformly convex minimization problem since the map~$(p,q) \mapsto A(p,q,\cdot)$ is uniformly convex. Indeed,~$A$  is a quadratic function of~$(p,q)$ which can be written in block matrix form as
\begin{equation}
\label{e.bfA.form}
A(p,q,\cdot) 
=
\frac12 \begin{pmatrix} p \\ q \end{pmatrix}
\cdot
\bfA(\cdot)
\begin{pmatrix} p \\ q \end{pmatrix}
\qquad \mbox{where} \qquad
\bfA := 
\begin{pmatrix} \s + \k^t\s^{-1}\k & -\k^t\s^{-1} \\ - \s^{-1}\k & \s^{-1} \end{pmatrix}
\,.
\end{equation}
Note also that~$\det \bfA = 1$ and its inverse is given by
\begin{equation}
\label{e.bfA.form.inv}
\bfA^{-1} = 
\begin{pmatrix} 
 \s^{-1}& - \s^{-1}\k  \\ - \k^t\s^{-1}&\s + \k^t\s^{-1}\k  \end{pmatrix}
 \,.
\end{equation}
Notice that the right side of~\eqref{e.Apq.def} bears a resemblance to~\eqref{e.Jaas.nosymm}, which is no coincidence.

\smallskip

This variational interpretation of general elliptic equations with nonsymmetric coefficients suggests an alternative way to generalize the coarse-grained quantities defined in Chapter~\ref{s.subadd}: directly coarse-grain the~$2d$-by-$2d$ matrix~$\bfA$ itself. Following this idea, we define, for each bounded domain~$U\subseteq\Rd$ and~$P ,Q \in \R^{2d}$, the quantity 
\begin{equation}
\label{e.bfJ.var}
\bfJ(U, P,Q )
:=
\sup_{X  \in \S(U)}
\fint_U
\biggl( -\frac12  X  \cdot \bfA X -  P \cdot \bfA X +Q\cdot X \biggr)
\,,
\end{equation}
where~$\S(U)$ is the linear subspace of~$\Lpot(U) \times \Lsol(U)$
defined by
\begin{equation}
\label{e.bfS}
\S(U) :=
\biggl\{
X \in \Lpot(U) \times \Lsol(U) \,:\, 
\int_U Y \cdot \bfA X=0
\; \;  \forall Y \in \Lpoto(U) \times \Lsolo(U)
\biggr\}
\,.
\end{equation}
We denote the unique maximizer in~$\S(U)$ of~$\bfJ(U,P,Q)$ by~$S(\cdot,U,P,Q)$. 

\smallskip

We may consider~$\bfJ$ as a ``double-variable'' version of~$J$ given in~\eqref{e.variational.J} and may proceed by straightforwardly extending the analysis of Chapter~\ref{s.subadd}.
This is a perfectly valid point of view which, somewhat surprisingly, turns out to be equivalent to the definitions in the previous section starting with~$J$ in~\eqref{e.variational.J.nonsymm}. 
The variational approach is therefore not an alternative to---but rather enriches our understanding of---the coarse-grained quantities introduced above in Section~\ref{ss.subadd.doubletrouble.new}. In the next lemma, we complete the list of properties presented in~Lemma~\ref{l.J.basicprops.nosymm} by adding the ones provided by this variational perspective. In unraveling this connection, a natural role is played by the adjoint operator~$-\nabla \cdot \a^t\nabla$, where~$\a^t$ is the transpose of~$\a(\cdot)$. We therefore introduce the analogous quantity for the adjoint operator by defining
\begin{equation}
\label{e.def.AsU}
\mathcal{A}^*(U) := 
\left\{ 
u\in H^1_{\mathrm{loc}}(U)
\,:\,
-\nabla \cdot \a^t\nabla u = 0 \quad \mbox{in} \ U \right\}
\end{equation}
and 
\begin{equation}
\label{e.variational.J.nonsymm.dual}
J^*(U,p,q) 
:= 
\max_{v^*\in \mathcal{A}^*(U)} 
\fint_U \left( -\frac12 \nabla v^*\cdot \s\nabla v^* -p\cdot \a^t\nabla v^* + q\cdot \nabla v^*   \right).
\end{equation}
The unique maximizer is denoted by~$v^*(\cdot,U,p,q)$.

\smallskip

\begin{lemma}[Further properties of the coarse-grained coefficients]
\label{l.J.basicprops.nosymm.moar}
The following assertions are valid for every bounded Lipschitz domain~$U \subseteq\Rd$:

\begin{itemize} 

\item There exists~$2d$-by-$2d$ real symmetric matrices~$\bfA(U)$ and~$\bfA_*(U)$ such that  
\begin{align}
\label{e.Jsplitting.nonsymm}
\bfJ(U,P,Q )
=
\frac12 P \cdot \bfA(U) P 
+ 
\frac12 Q \cdot \bfA_*^{-1}(U) Q
- P \cdot Q, \qquad \forall P,Q  \in \R^{2d}
\,.
\end{align}

\item \emph{Subadditivity.}
For every~$m,n\in\N$ with~$n<m$, 
\begin{equation}
\label{e.subadda.nosymm}
\bfA(\cu_m) 
\leq 
\avsum_{z\in 3^n\Zd \cap \cu_m} \!\!\!
\bfA(z+\cu_n) 
\quad \mbox{and} \quad 
\bfA_*^{-1} (\cu_m) 
\leq 
\avsum_{z\in 3^n\Zd \cap \cu_m} \!\!\!
\bfA_*^{-1}(z+\cu_n) 
\,.
\end{equation}

\item \emph{Formulas for~$\bfA(U)$ and~$\bfA_*(U)$ in terms of~$\s(U)$,~$\s_*(U)$ and~$\k(U)$.} We have
\begin{equation}
\label{e.matrices.formulas}
\left\{
\begin{aligned}
& 
\bfA(U)
= \begin{pmatrix} (\s + \k^t\s_*^{-1}\k)(U) & -(\k^t\s_*^{-1})(U) \\ -(\s_*^{-1}\k)(U) & \s_*^{-1}(U) \end{pmatrix}\,,
\\ & 
\bfA_*^{-1}(U)
= \begin{pmatrix} \s_*^{-1}(U) & -(\s_*^{-1}\k)(U) \\ -(\k^t\s_*^{-1})(U) & (\s + \k^t\s_*^{-1}\k)(U) 
\end{pmatrix}\,.
\end{aligned}
\right.
\end{equation}

\item \emph{Formula for~$\bfJ$ in terms of~$J$ and~$J^*$.} For every~$p,p^*,q,q^*\in\Rd$, 
\begin{equation}
\label{e.bfJ.byJJstar}
\bfJ
\biggl(U, \begin{pmatrix} p  \\ q \end{pmatrix}, \begin{pmatrix} q^* \\ p^* \end{pmatrix} \biggr)
=
\frac12 J\bigl(U,p-p^*,q^*-q\bigr)
+ 
\frac12 J^*\bigl(U,p^*+p,q^*+q\bigr)
\,.
\end{equation}

\item \emph{Formula for~$\S$ in terms of~$v$ and~$v^*$}. For every~$p,p^*,q,q^*\in\Rd$, 
\begin{equation}
\label{e.maximizers.J.to.bfJ}
S
\biggl(\cdot,U, \begin{pmatrix} p  \\ q \end{pmatrix}, \begin{pmatrix} q^* \\ p^* \end{pmatrix} \biggr)
=
\begin{pmatrix} \nabla v (\cdot,U,p{-}p^*,q^*{-}q)
+ 
\nabla v^*\bigl(\cdot,U,p^*{+}p,q^*{+}q\bigr) \\ 
\a \nabla v (\cdot,U,p{-}p^*,q^*{-}q)
- 
\a^t \nabla v^*\bigl(\cdot,U,p^*{+}p,q^*{+}q\bigr)
\end{pmatrix}
\,.
\end{equation}

%\item \emph{Formula for }

\item \emph{Redundancy of the adjoint quantity.}
For every~$p,q\in \Rd$,
\begin{equation}
\label{e.Jaas.nosymm.star}
J^*(U,p,q) = 
\frac 12p \cdot \s(U)p 
+ \frac 12 (q -\k(U) p) \cdot \s_*^{-1}(U) (q -\k(U) p) 
- p \cdot q 
\end{equation}

\item \emph{Formula for the sum of~$J$ and~$J^*$.} For every~$p,q,h\in\Rd$,
\begin{align}
\label{e.JJstar1}
J(U,p,q-h) + J^*(U,p,q+h)
&
=
p \cdot (\s -\s_*)(U)p
\notag \\ & \qquad
+
\bigl (q - \s_*(U)p\bigr )\cdot \s_*^{-1}(U) \bigl (q - \s_*(U)p\bigr )
\notag \\ & \qquad
+
\bigl ( h-\k(U)p \bigr )  \cdot \s_*^{-1}(U)\bigl ( h-\k(U) p\bigr ) 
\,.
\end{align}

\item \emph{Ordering of~$\s(U)$ and~$\s_*(U)$.} We have
\begin{equation}
\label{e.nonobvious.ordering}
\s_*(U) \leq \s(U)
\,.
\end{equation}

\item \emph{Control of the symmetric part of~$\k(U)$.} We have
\begin{equation}
\label{e.ksym.by.sss}
(\k+\k^t)(U) \leq (\s-\s_*)(U)
\quad \mbox{and} \quad
-(\k+\k^t)(U) \leq (\s-\s_*)(U)\,.
\end{equation}

\item Coarse-graining inequality: coarse-graining estimate: for every~$w\in \A(U)$ and~$e\in\Rd$, 
\begin{equation}
\label{e.coarse.graining.nosymm}
\biggl| e\cdot \fint_U ( \a_*(U) - \a ) \nabla w \biggr| 
\leq 
2^{\nicefrac12} 
\bigl( e\cdot( \s(U) - \s_*(U))e \bigr)^{\nicefrac12} \biggl( \fint_U \nabla w \cdot \s \nabla w \biggr)^{\!\nicefrac12} 
\,.
\end{equation}

\item
Estimate of the Malliavin derivatives: there exists a constant~$C(d,\lambda,\Lambda)<\infty$ such that
\begin{equation} 
\label{e.maul.Mall}
\bigl| \partial_{\a(U)} \bfA(U) \bigr| + \bigl| \partial_{\a(U)} \bfA_*^{-1}(U) \bigr| \leq C 
 \,.
\end{equation}
\end{itemize}

\end{lemma}
\begin{proof}
We begin by characterizing the space~$\S(U)$ defined above in~\eqref{e.bfS} in terms of solutions of the equation and its adjoint. The claim is that 
\begin{equation}
\label{e.findS}
\S(U)
=
\left\{ (\nabla v+\nabla v^*, \a\nabla v - \a^t\nabla v^*) \,:\, v\in \A(U), \ v^*\in \A^*(U)
\right\}
\,.
\end{equation}
Observe that~$S=(\nabla u,\mathbf{h}) \in \Lpot(U)\times\Lsol(U)$ belongs to~$\S(U)$ if and only if 
\begin{align*}
& 
\int_U \bigl(
\nabla w \cdot \s \nabla u + (\mathbf{h}'-\k\nabla w) \cdot \s^{-1} (\mathbf{h} - \k\nabla u )
\bigr) = 0, \quad \forall (\nabla w,\mathbf{h}') \in \Lpoto(U) \times \Lsolo(U),
\end{align*}
and taking~$w=0$ in this condition yields that~$\s^{-1}(\mathbf{h}-\k\nabla u)$ is orthogonal to~$\Lsolo(U)$, which implies that it is a potential: there exists~$\nabla \tilde u\in \Lpot(U)$ such that~$\nabla \tilde u = \s^{-1}(\mathbf{h}-\k\nabla u)$. The condition above is, therefore, equivalent to 
\begin{align*}
& 
\exists \nabla u \in \Lpot(U), \quad 
\mathbf{h} = \s \nabla \tilde u +\k \nabla u, 
\quad \mbox{and} \quad 
\int_U \nabla w \cdot (\s \nabla \tilde u +\k \nabla u) = 0  \quad \forall w \in H^1_0(U)
\\ & \qquad 
\iff
\exists \nabla \tilde u \in \Lpot(U), \quad 
\mathbf{h} = \s \nabla \tilde u+\k \nabla u 
\quad \mbox{and}\quad
\s \nabla \tilde u +\k \nabla u \in \Lsol(U). 
\end{align*}
But if the latter holds, then~$v = \frac12(u + \tilde u)\in\A(U)$,~$v^* = \frac12(u-\tilde u)\in\A^*(U)$ and~$\mathbf{h} = \a\nabla v - \a^t \nabla v^*$. This is~\eqref{e.findS}. 

\smallskip

We next prove~\eqref{e.bfJ.byJJstar} and~\eqref{e.maximizers.J.to.bfJ}, using~\eqref{e.findS}. 
Observe that, for every~$p,p^*\in\Rd$, 
\begin{equation}
\label{e.iden.AP}
\bfA \begin{pmatrix} p   \\ \a p \end{pmatrix}
=
\begin{pmatrix} \a p   \\  p    \end{pmatrix}
\qand
\bfA \begin{pmatrix} p^*  \\  - \a^t p^* \end{pmatrix}
=
\begin{pmatrix} \a^t p^*  \\  - p^* \end{pmatrix}
\end{equation}
and
\begin{equation}
\label{e.iden.eqadjeq}
\frac12  \begin{pmatrix} p + p^* \\ \a p  - \a^t p^* \end{pmatrix} \cdot \bfA \begin{pmatrix} p + p^*  \\ \a p - \a^t p^* \end{pmatrix}
=
p \cdot \a p  +p^* \cdot \a^t p^*
\,.
\end{equation}
Hence, for every~$v,v^*\in H^1(U)$ and~$p,p^*,q,q^*\in\Rd$, 
\begin{align}
\label{e.iden.PAP}
& 
{-}\frac12  \begin{pmatrix} \nabla v{+}\nabla v^*  
\\ \a\nabla v {-} \a^t\nabla v^*  \end{pmatrix} 
\!\cdot\!
\bfA 
\begin{pmatrix} \nabla v{+}\nabla v^*  \\ 
\a\nabla v {-} \a^t\nabla v^*  \end{pmatrix} 
\! - \!
\begin{pmatrix} p \\ q \end{pmatrix} 
\!\cdot\!
\bfA
\begin{pmatrix} \nabla v{+}\nabla v^*  \\ 
\a\nabla v {-} \a^t\nabla v^* \end{pmatrix}
\! + \!
\begin{pmatrix} q^* \\ p^* \end{pmatrix}
\!\cdot\!
\begin{pmatrix} \nabla v{+}\nabla v^* \\ 
\a\nabla v {-} \a^t\nabla v^* \end{pmatrix}
\notag \\ & \qquad
=
- \frac12 \nabla v \cdot \a \nabla v - \frac12 \nabla v^* \cdot \a^t \nabla v^*
\! - \!
\begin{pmatrix} p \\ q \end{pmatrix} 
\!\cdot\!
\begin{pmatrix} \a\nabla v {+} \a^t\nabla v^* \\ \nabla v - \nabla v^*  \end{pmatrix}
\! + \!
\begin{pmatrix} q^* \\ p^* \end{pmatrix}
\!\cdot\!
\begin{pmatrix} \nabla v{+}\nabla v^* \\ 
\a\nabla v {-} \a^t\nabla v^* \end{pmatrix}
\notag \\ & \qquad
=
- \frac12 \nabla v \cdot \a\nabla v 
- (p-p^*)\cdot \a\nabla v
+(q^*-q)\cdot \nabla v
\notag \\ & \qquad \qquad
- \frac12 \nabla v^* \cdot \a^t \nabla v^*
-(p^*+p)\cdot\a^t\nabla v^* 
+(q^*+q)\cdot \nabla v^*
\,.
\end{align}
Taking the supremum of the expression on the right side over all pairs~$(v,v^*) \in \A(U) \times \A^*(U)$ yields the right side of~\eqref{e.bfJ.byJJstar}. If we take the same supremum of the expression on the left side, we obtain, in view of~\eqref{e.findS}, the right side of~\eqref{e.bfJ.var}. 
This yields~\eqref{e.bfJ.byJJstar} as well as~\eqref{e.maximizers.J.to.bfJ}.

\smallskip

Next, it is easy to check that 
\begin{equation}
\label{e.Slinear}
(P,Q) \mapsto S(\cdot,U,P,Q) \quad \mbox{is linear}
\end{equation}
and the first variation for~$\bfJ$ reads as follows:
for every~$T\in \S(U)$ and~$P,Q \in\R^{2d}$, 
\begin{equation}
\label{e.firstvar.nonsymm.dbl}
\fint_U (Q \cdot T - P \cdot \bfA T )
=
\fint_U T \cdot \bfA S(\cdot,U,P,Q ). 
\end{equation} 

\smallskip

We then prove the following variational characterization of~$\bfJ(U,P,0)$: for every~$P\in\R^{2d}$,  
\begin{equation}
\label{e.J.P0.Dir}
\bfJ(U,P,0)
= \inf \biggl\{ 
\fint_{U} \frac12   
(X + P) \cdot \bfA (X + P)
\, : \, X \in  \Lpoto(U) \times  \Lsolo(U)  \biggr\}
\,,
\end{equation}
and that if~$X(\cdot,U,P)\in  \Lpoto(U) \times  \Lsolo(U)$ denotes the minimizer, then
\begin{equation}
\label{e.J.P0.Dir.SX}
S(\cdot,U,P,0) = - \bigl (P + X(\cdot,U,P) \bigr  )\,.
\end{equation}
This is analogous to the proof of~\eqref{e.variational.J}, which boiled down to showing that~$\mu(U,p)$ was equal to the right side of~\eqref{e.variational.J} with~$q=0$, and that the maximizer for the right side of~\eqref{e.variational.J} with~$q=0$ is the minimizer of~$\mu(U,-p)$. Similarly, using the first variation~\eqref{e.firstvar.nonsymm.dbl}, we get 
\begin{equation}
\label{e.J.0Q.Neu}
\bfJ(U,0,Q)
= \sup \biggl\{ 
\fint_{U} \biggl( - \frac12 X \cdot \bfA X + X \cdot Q \biggr)
\, : \, X \in  \Lpot(U) \times  \Lsol(U)  \biggr\}
\,,
\end{equation}
and that if~$X^*(\cdot,U,Q)\in  \Lpot(U) \times  \Lsol(U)$ denotes the maximizer, then
\begin{equation}
\label{e.J.Q0.Dir.SX}
S(\cdot,U,0,Q) = X^*(\cdot,U,Q)\,.
\end{equation}

\smallskip

To prove~\eqref{e.J.P0.Dir} and~\eqref{e.J.P0.Dir.SX}, 
we first observe that~$P+X(\cdot,U,P)$, where~$X(\cdot,U,P)$ is the minimizer of the right side of~\eqref{e.J.P0.Dir}, belongs to~$\mathcal{S}(U)$, by the first variation for each of the quantities in~\eqref{e.J.P0.Dir} and the definition of~$\mathcal{S}(U)$ in~\eqref{e.bfS}. Moreover, we have that
\begin{equation*}
\fint_U 
(P+X(\cdot,U,P) )\cdot \bfA S(\cdot,U,P,0)
= 0 =
\fint_U 
X(\cdot,U,P) \cdot \bfA (P+X(\cdot,U,P,0))
\,.
\end{equation*}
We continue by writing~$X=X(\cdot,U,P)$ and~$S=S(\cdot,U,P,0)$, for short. It follows from the above and~\eqref{e.firstvar.nonsymm.dbl} that  
\begin{align*}
%\label{e.}
\fint_U
\frac 12 
\bigl( P{+}X{+}S \bigr) \cdot \bfA \bigl( P{+}X{+}S \bigr)
&
=
\fint_U \!\frac 12 \bigl( P{+}X \bigr) \cdot \bfA \bigl( P{+}X \bigr)
-
\bfJ(U, -P, 0 )
\notag \\ &
=
\fint_U \!
\Bigl( -\frac12 (P{+}X) \cdot \bfA (P{+}X) + P \cdot \bfA (P{+}X) \Bigr)
-
\bfJ(U, -P, 0 )
\notag \\ & 
\leq 0
\,.
\end{align*}
Thus~$P+X + S = 0$, which is~\eqref{e.J.P0.Dir.SX}, and we obtain 
\begin{equation*}
\bfJ(U, P, 0 )
=
\bfJ(U, -P, 0 )
=
\fint_U \frac 12 \bigl( P+X \bigr) \cdot \bfA \bigl( P+X \bigr)\,.
\end{equation*}
This completes the proof of~\eqref{e.J.P0.Dir}. The proof for~\eqref{e.J.0Q.Neu} and~\eqref{e.J.Q0.Dir.SX} is similar, and we omit it.

\smallskip

We next turn to the demonstration of~\eqref{e.Jsplitting.nonsymm}. We define~$\bfA(U)$ and~$\bfA_*^{-1}(U)$ to be the matrices satisfying
\begin{equation*}
\bfJ(U,P,0)
=
\frac12 P \cdot \bfA(U) P 
\quad \mbox{and} \quad 
\bfJ(U,0,Q)
=
\frac12 Q \cdot \bfA_*^{-1}(U) Q
\,,\quad \forall P,Q  \in \R^{2d}\,,
\end{equation*}
and we obtain~\eqref{e.Jsplitting.nonsymm} using the first variation. 
From~\eqref{e.Slinear} and~\eqref{e.firstvar.nonsymm.dbl}, we obtain
\begin{align}
\label{e.bfJ.formula.via.bfA}
\bfJ(U,P,Q) &
= \fint_U 
\frac12 S(\cdot,U,P,Q) \cdot \bfA S(\cdot,U,P,Q) 
\notag \\ & 
=
\fint_U 
\frac12 S(\cdot,U,P,0) \cdot \bfA S(\cdot,U,P,0) 
+
\fint_U 
\frac12 S(\cdot,U,0,Q) \cdot \bfA S(\cdot,U,0,Q) 
\notag \\ & \qquad 
+
\fint_U 
S(\cdot,U,P,0) \cdot \bfA S(\cdot,U,0,Q) 
\notag \\ & 
= 
\frac12 P \cdot \bfA(U) P
+
\frac12 Q \cdot \bfA_*^{-1}(U) Q
+
Q\cdot \fint_U 
S(\cdot,U,P,0) 
\,.
\end{align}
Meanwhile, by~\eqref{e.J.P0.Dir.SX}, we have that 
\begin{equation*}
\fint_U 
S(\cdot,U,P,0) 
=
-P\,.
\end{equation*}
This completes the proof of~\eqref{e.Jsplitting.nonsymm}. 

\smallskip 

The subadditivity property~\eqref{e.subadda.nosymm} follows from the variational representation~\eqref{e.bfJ.var} of~$\bfJ$ and the formula~\eqref{e.Jsplitting.nonsymm}.

\smallskip

We turn to the proof of the formula~\eqref{e.Jaas.nosymm.star}, which says that~$J^*$ contains the same information as~$J$. 
By~\eqref{e.bfJ.byJJstar} and~\eqref{e.Jsplitting.nonsymm}, we have that 
\begin{equation*}
%\label{e.}
J(U,p,0) = 
\frac12 \bfJ
\biggl(U, \begin{pmatrix} p  \\ 0 \end{pmatrix}, \begin{pmatrix} 0 \\ -p \end{pmatrix} \biggr)
=
\frac12 \bfJ
\biggl(U, \begin{pmatrix} p  \\ 0 \end{pmatrix}, \begin{pmatrix} 0 \\ p \end{pmatrix} \biggr)
=
J^*(U,p,0) 
\end{equation*}
and
\begin{equation*}
J(U,0,q) = 
\frac12 \bfJ
\biggl(U, \begin{pmatrix} 0  \\ -q \end{pmatrix}, \begin{pmatrix} q \\ 0 \end{pmatrix} \biggr)
=
\frac12 \bfJ
\biggl(U, \begin{pmatrix} 0  \\ q \end{pmatrix}, \begin{pmatrix} q \\ 0 \end{pmatrix} \biggr)
=
J^*(U,0,q) 
\,.
\end{equation*}
For the cross-term, we similarly compute
\begin{align*}
J\bigl(U,p,q\bigr)
+ 
J^*\bigl(U, p ,q\bigr)
&
=
2\bfJ
\biggl(U, \begin{pmatrix} p  \\ 0 \end{pmatrix}, \begin{pmatrix} q \\ 0 \end{pmatrix} \biggr)
\\ & 
=
\begin{pmatrix} p  \\ 0 \end{pmatrix} \cdot \bfA(U) \begin{pmatrix} p  \\ 0 \end{pmatrix}
+ 
\begin{pmatrix} q  \\ 0 \end{pmatrix} \cdot \bfA_*^{-1}(U) \begin{pmatrix} q  \\ 0 \end{pmatrix}
-2p\cdot q
\,.
\end{align*}
Polarizing the latter identity yields 
\begin{equation*}
J(U,p,q) - J(U,p,0) - J(U,0,q) 
+
\bigl( J^*(U,p,q) - J^*(U,p,0) - J^*(U,0,q) \bigr) 
=
-2 p\cdot q
\,.
\end{equation*}
That is, 
\begin{equation*}
J(U,p,q) - J(U,p,0) - J(U,0,q) 
+ p\cdot q 
=
- \bigl( J^*(U,p,q) - J^*(U,p,0) - J^*(U,0,q)
+ p\cdot q
\bigr)
\,.
\end{equation*}
Given the definitions~\eqref{e.sastU.def},~\eqref{e.mU.def} and~\eqref{e.sU.def}, we obtain that the coarse-grained matrices for the adjoint with coefficients~$\a^t(\cdot)$ are the same as for the operator with coefficients~$\a(\cdot)$, except that~$\k$ is replaced by~$-\k$. The identity~\eqref{e.Jaas.nosymm.star} now follows from~\eqref{e.Jaas.nosymm}.

\smallskip

We next prove the formulas for~$\bfA(U)$ and~$\bfA_*^{-1}(U)$ in~\eqref{e.matrices.formulas}. 
Using~\eqref{e.bfJ.byJJstar},~\eqref{e.Jsplitting.nonsymm} and~\eqref{e.Jaas.nosymm.star}, we obtain 
\begin{align*}
%\label{e.}
\frac12 
\begin{pmatrix} p   \\ q\end{pmatrix}\cdot \bfA(U) \begin{pmatrix} p   \\ q\end{pmatrix}
= \bfJ\biggl( \! U, \begin{pmatrix} p   \\ q\end{pmatrix}, 0 \!\biggr) \!
& 
=
\frac12 J(U,p,-q) + \frac12 J^*(U,p,q) 
\\ & 
=
\frac12 p\cdot \s(U) p + \frac12 (q-\k(U)p) \cdot \s_*^{-1}(U)  (q-\k(U)p)
\\ & 
=
\frac12 \begin{pmatrix} p   \\ q\end{pmatrix}\cdot
\begin{pmatrix} (\s + \k^t\s_*^{-1}\k)(U) & -(\k^t\s_*^{-1})(U) \\ -(\s_*^{-1}\k)(U) & \s_*^{-1}(U) \end{pmatrix}
\begin{pmatrix} p  \\q\end{pmatrix}\,.
\end{align*}
This gives the formula for~$\bfA(U)$. The formula for~$\bfA_*^{-1}(U)$ is obtained similarly.

\smallskip

We next give the proof of~\eqref{e.JJstar1}. 
Observe that the identity~\eqref{e.Jaas.nosymm} can be written as 
\begin{align}
\label{e.J.magic.nosymm}
J(U,p,q) 
& 
=
\frac 12p \cdot \bigl (\s(U)- \s_* (U) \bigr )p 
+
\frac12 p \cdot \bigl( \k(U) + \k^t(U) \bigr) p
\notag \\ & 
\qquad 
+ \frac 12 \bigl (q -(\s_*-\k)(U) p\bigr ) \cdot \s_*^{-1} (U) \bigl (q-(\s_*-\k)(U) p\bigr ) 
\,.
\end{align}
Similarly, we can rewrite~\eqref{e.Jaas.nosymm.star} as 
\begin{align}
\label{e.J.magic.nosymm.star}
J^*(U,p,q) 
& 
=
\frac 12p \cdot \bigl (\s(U)- \s_* (U) \bigr )p 
-
\frac12 p \cdot \bigl( \k(U) + \k^t(U) \bigr) p
\notag \\ & 
\qquad 
+ \frac 12 \bigl (q -(\s_*+\k)(U) p\bigr ) \cdot \s_*^{-1} (U) \bigl (q-(\s_*+\k)(U) p\bigr ) 
\,.
\end{align}
By summing~\eqref{e.J.magic.nosymm} and~\eqref{e.J.magic.nosymm.star}, we obtain~\eqref{e.JJstar1}. 

\smallskip

Substituting~$(p,q,h)=(e,\s_*(U)e,\k(U)e)$ in~\eqref{e.JJstar1} yields
\begin{equation}
\label{e.sss.magic.iden.nosymm}
J(U,e,(\s_*- \k)(U) e) 
+ J^*(U,e,(\s_*+\k)(U) e)
=
e \cdot \bigl( \s(U) - \s_*(U) \bigr) e
\,.
\end{equation}	
We deduce the ordering~\eqref{e.nonobvious.ordering}  from~\eqref{e.sss.magic.iden.nosymm} and the nonnegativity of~$J$ and~$J^*$.

\smallskip

We next prove~\eqref{e.ksym.by.sss}. 
We observe that, according to~\eqref{e.J.magic.nosymm} and~\eqref{e.sss.magic.iden.nosymm}, for every~$e \in \Rd$, 
\begin{align*}
%\label{e.}
\frac12 e\cdot (\k+\k^t)(U) e 
&
=
J(U,e,(\s_*- \k)(U) e) 
-
\frac12 e\cdot (\s-\s_*)(U) e
\notag \\ & 
=
\frac12 e\cdot (\s-\s_*)(U) e
-
J^*(U,e,(\s_*+\k)(U) e)
\leq 
\frac12 e\cdot (\s-\s_*)(U) e
\,.
\end{align*}
Thus~$(\k+\k^t)(U) \leq (\s-\s_*)(U)$. By symmetry (or applying the same argument with~$J^*$ in place of~$J$), we obtain also that~$-(\k+\k^t)(U) \leq (\s-\s_*)(U)$.

\smallskip

The combination of~\eqref{e.fluxmaps.nosymm} and~\eqref{e.sss.magic.iden.nosymm} imply~\eqref{e.coarse.graining.nosymm}. 

\smallskip

Finally, we observe that the identities~\eqref{e.Jsplitting.nonsymm} and~\eqref{e.bfJ.byJJstar}
reduce the bound~\eqref{e.maul.Mall} on the Malliavin derivatives of~$\bfA$ and~$\bfA_*$ to bounds on the Malliavin derivatives of~$J$ and~$J^*$. The latter have already been proved in Lemma~\ref{l.malliavin}---indeed, as noted in the comments preceding the statement of that lemma, its proof does not use the symmetry assumption. 
\end{proof}

We should regard the ``double-variable'' quantity~$\bfJ(U,P,Q)$ and its associated coarse-grained matrices~$\bfA(U)$ and~$\bfA_*(U)$ as an alternative way of encoding the information contained in the more familiar coarse-grained matrices~$\s(U)$,~$\s_*(U)$ and~$\k(U)$. Since~\eqref{e.Jsplitting.nonsymm} and~\eqref{e.matrices.formulas} make it clear that these quantities are essentially the same; the reader may wonder why we introduce so many different definitions. The reason is that the coarse-grained matrices possess a lot of structure, and having several different but equivalent points of view can be quite helpful.
For example, it is quite challenging to prove the simple but important fact that~$\s_*(U) \leq \s(U)$, or the formula~\eqref{e.Jaas.nosymm.star} for the adjoint without introducing the double variable quantities.

\smallskip

%In the symmetric case, we saw that we could decompose our quantity~$J$ into the Dirichlet and Neumann problems with affine data: compare~\eqref{e.Jaas} and~\eqref{e.variational.J}. This was helpful because it allowed us to identify boundary conditions satisfied by the optimizers for~$J(U,p,0)$ and~$J(U,0,q)$, which allowed us (by integration by parts) to prove the first equal signs in each of the identities~\eqref{e.a.formulas} and~\eqref{e.astar.formulas}.
%In the nonsymmetric setting, there is no such splitting formula for~$J$ because the correct generalization requires a splitting of~$J$ and~$J^*$ together---that is, it is the composite quantity~$\bfJ$ into variational problems analogous to the Dirichlet and Neumann problems. This is the content of the following lemma, which we note is essentially the same as~\cite[Lemma 10.7]{AKMBook}.

As we have seen in Lemma~\ref{l.J.basicprops}, in the case that~$\a(\cdot)$ is symmetric, the quantity~$J(U,p,q)$ has the interpretation as the energy of the difference of the Dirichlet and Neumann problems with affine data given by~$p$ and~$q$, respectively. In the general nonsymmetric case, 
there is no longer a simple identity relating~$J(U,p,q)$ to the difference of the solutions to the Dirichlet and Neumann problems. However, we show in the next lemma that it is still equivalent (up to a multiplicative constant) to the energy of this difference for the adjoint equation.  

\begin{lemma}
\label{l.Dirichlet.to.Neumann}
Let~$\a_0 = \s_{\bfzero} +\k_0$ with~$\s_{\bfzero} \in \R^{d\times d}_{\mathrm{sym}}$ and~$\lambda \Id \leq \s_{\bfzero} \leq \Lambda \Id$ and~$\k_0\in\R^{d\times d}_{\mathrm{skew}}$. 
Let~$U$ be a bounded Lipschitz domain and let~$w^D_p, w^N_{q} \in H^1(U)$ respectively denote, for every~$p,q\in\Rd$, the solutions of
\begin{equation}
\label{e.DirNeu.P}
\left\{
\begin{aligned}
& -\nabla\cdot \a \nabla w^D_p = 0 & \mbox{in} & \ U\,, \\ 
& w^D_p = \ell_p & \mbox{on} & \ \partial U\,,
\end{aligned}
\right.
\quad \mbox{and} \quad 
\left\{
\begin{aligned}
& -\nabla\cdot \a \nabla w^N_q = 0 & \mbox{in} & \ U\,, \\ 
& \mathbf{n} \cdot (\a \nabla w^N_{q} - q) = 0 & \mbox{on} & \ \partial U\,.
\end{aligned}
\right.
\end{equation}
Then there exists~$C(d,\lambda,\Lambda)<\infty$ such that we have the following estimates:
\begin{equation}
\label{e.DirNeu.P.est}
\left\{ 
\begin{aligned}
&
\| \nabla w^D_p - \tfrac12\nabla v(\cdot,U,-p, \a_0 p) \|_{\underline{L}^2(U)}^2 
\leq 
C J^*(U,p, \a_0 p )
\\ &
\| \nabla w^N_q - \tfrac12\nabla v(\cdot,U,-\a_0^{-1}q, q) \|_{\underline{L}^2(U)} ^2
\leq 
C J^*(U,\a_0^{-1}q, q)\,.
\\ & 
J^*(U,p, q )
\leq
C \| \nabla w^D_p - \nabla w^N_{q} \|_{\underline{L}^2(U)}^2
\,.
\end{aligned}
\right. 
\end{equation}
\end{lemma}
\begin{proof}
We use~\eqref{e.maximizers.J.to.bfJ} and the characterization~\eqref{e.J.P0.Dir}--\eqref{e.J.P0.Dir.SX}. With~$p\in \Rd$ fixed, we denote 
\begin{equation*}
z_p:= \frac12 ( v (\cdot,U,p,-(\s_{\bfzero} +\k_0)p )
+ 
v^*(\cdot,U,p,(\s_{\bfzero} +\k_0)p) \bigr) \,,
\end{equation*}
and we observe that this function belongs to~$\ell_p + H^1_0 (U)$. 
We therefore have, using Cauchy-Schwarz and~\eqref{e.Jenergyv.nosymm},  
\begin{align*}
\fint_U \nabla ( w_p^D - z_p) \cdot \a \nabla ( w_p^D - z_p)
&
=
\frac12 \fint_U \nabla ( w_p^D - z_p) \cdot \a \nabla v^*(\cdot,U,p,(\s_{\bfzero} +\k_0)p) 
\notag \\ &
\leq
C \| \nabla ( w_p^D - z_p) \|_{\underline{L}^2(U)} 
\| \nabla v^*(\cdot,U,p,(\s_{\bfzero}+\k_0)p) \|_{\underline{L}^2(U)}
\notag \\ &
\leq
C \| \nabla ( w_p^D - z_p) \|_{\underline{L}^2(U)} 
( J^*(U,p,(\s_{\bfzero}+\k_0)p))^{\nicefrac12} \,.
\end{align*}
Therefore, 
\begin{align*}
\lefteqn{
\| \nabla w^D_p - \tfrac12\nabla v(\cdot,U,-p, (\s_{\bfzero}+\k_0) p) \|_{\underline{L}^2(U)} 
} \qquad & 
\notag \\ & 
\leq
\| \nabla (w^D_p - z_p) \|_{\underline{L}^2(U)} 
+
\|\tfrac12 \nabla v^*(\cdot,U,p,(\s_{\bfzero}+\k_0)p) \|_{\underline{L}^2(U)}
\leq 
C( J^*(U,p,(\s_{\bfzero}+\k_0)p))^{\nicefrac12} \,,
\end{align*}
which implies the result. 
The estimate for the Neumann solution is obtained very similarly by comparing~$w_q^N$ to~$z_{\a_0^{-1}q}$. We omit the details. 

\smallskip

In order to obtain the last line of~\eqref{e.DirNeu.P.est}, we use the fact that~$\nabla w_p^D - p \in L^2_{\pot,0}(U)$, and~$\a^t\nabla w_q^N-q \in L^2_{\sol,0}(U)$ to obtain that 
\begin{align*}
J^*(U,p,q) 
& 
=
\sup_{u \in \A^*(U)} 
\fint_U 
\biggl( 
-\frac12 \nabla u\cdot \s \nabla u 
- p\cdot \a^t \nabla u
+ q \cdot \nabla u 
\biggr) 
\notag \\ & 
=
\sup_{u \in \A^*(U)} 
\fint_U 
\biggl( 
-\frac12 \nabla u\cdot \s \nabla u 
- \nabla w_p^D \cdot \a^t \nabla u
+ \a \nabla w_{q}^N \cdot \nabla u 
\biggr) 
\,.
\end{align*}
We next use the pointwise bound 
\begin{align*}
%\label{e.}
-\frac12 \nabla u\cdot \s \nabla u 
- \nabla w_p^D \cdot \a^t \nabla u
+ \a \nabla w_{q}^N \cdot \nabla u 
&
\leq 
\max_{P \in\Rd} 
\Bigl( 
-\frac12 P\cdot \s P 
- \nabla w_p^D \cdot \a^t P
+ \a \nabla w_{q}^N \cdot P 
\Bigr) 
\notag \\ & 
=
\max_{P \in\Rd} 
\Bigl( 
-\frac12 P\cdot \s P 
+ P \cdot \bigl( \a \nabla w_{q}^N  - \a \nabla w_p^D  \bigr)
\Bigr) 
\notag \\ & 
=
\bigl( \a \nabla w_{q}^N  - \a \nabla w_p^D  \bigr) \cdot \s^{-1}  \bigl( \a \nabla w_{q}^N  - \a \nabla w_p^D  \bigr)
 \,.
\end{align*}
We deduce, therefore, that 
\begin{equation*}
J^*(U,p,q) 
\leq 
\fint_U 
\bigl( \nabla w_{q}^N  - \nabla w_p^D  \bigr) \cdot \a^t\s^{-1}\a  \bigl( \nabla w_{q}^N  - \nabla w_p^D  \bigr)
\leq 
\Lambda
\bigl\| \nabla w_{q}^N  - \nabla w_p^D \bigr\|_{\underline{L}^2(U)}^2\,.
\end{equation*}
This completes the proof. 
\end{proof}

We conclude this section by exploring more properties of the coarse-grained matrices in a sequence of remarks and exercises. 

\begin{exercise}
In analogy to~\eqref{e.energymaps} \&~\eqref{e.energymaps.dual}, prove the following coarse-graining inequalities: 
\begin{equation}
\label{e.energymaps.nonsymm.bfA}
(X)_U \cdot \bfA_*(U) (X)_U \leq 
\fint_{U} X\cdot \bfA X \,, \quad \forall X \in L^2_{\pot}(U) \times L^2_{\sol}(U)
\end{equation}
and
\begin{equation}
\label{e.energymaps.dual.nonsymm.bfA}
(Y)_U \cdot \bfA^{-1} (U) (Y)_U \leq 
\fint_{U} Y \cdot \bfA^{-1} Y \,, \quad 
\forall Y \in L^2_{\sol}(U) \times L^2_{\pot}(U)\,.
\end{equation}
\end{exercise}

\begin{exercise}
Show that~$\bfA^{-1}(U)$ and~$\bfA_*(U)$ are given by the formulas
\begin{equation}
\label{e.bigA.formulas.inv}
\left\{
\begin{aligned}
& 
\bfA^{-1}(U)
= 
\begin{pmatrix} 
\s^{-1}(U) 
& (\s^{-1}\k^t )(U) 
\\ (\k \s^{-1})(U) 
& (\s_* + \k \s^{-1}\k^t )(U) 
\end{pmatrix}\,,
\\ & 
\bfA_*(U)
= 
\begin{pmatrix} 
 (\s_* + \k \s^{-1}\k^t )(U) 
& (\k \s^{-1})(U) 
\\ (\s^{-1}\k^t )(U) 
& \s^{-1}(U)
\end{pmatrix}
\,.
\end{aligned}
\right.
\end{equation}
\end{exercise}

\begin{remark}
\label{r.commutes.with.skew}
Coarse-graining commutes with the addition or subtraction of a constant anti-symmetric matrix. To see this, let~$\k_0\in \R^{d\times d}_{\skew}$ be a fixed anti-symmetric matrix and let~$J_{\mathrm{\k_0}}$ denote the quantity~$J$ for the coefficient field~$\a(x)+\k_0$. Since~$\nabla \cdot \k_0 \nabla \phi = 0$ for any~$\phi \in H^1(U)$, adding a constant anti-symmetric matrix to~$\a$ does not alter the set of solutions of the equation. We therefore have that
\begin{equation}
\label{e.J.k.naught}
J_{\k_0}(U,p,q) 
=
\sup_{u \in  \A(U) }
\fint_{U} \biggl( - \frac12 \nabla u \cdot \s \nabla u - p \cdot ( \a + \k_0 ) \nabla u + q \cdot \nabla u \biggr) 
=
J(U,p,q + \k_0 p) 
\,.
\end{equation} 
It follows that 
\begin{equation}
\label{e.k0.commutes.with.J}
\left\{
\begin{aligned}
& \s(U;\a+\k_0) = \s(U;\a) \,, \\ 
& \s_*(U;\a+\k_0) = \s_*(U;\a)  \,, \\
& \k(U;\a+\k_0) = \k(U;\a) + \k_0\,.
\end{aligned}
\right.
\end{equation}
\end{remark}

\begin{exercise}
Check the details of Remark~\ref{r.commutes.with.skew}. 
\end{exercise}

\begin{exercise}
\label{exer.ordering.homs}
By repeating the proof of~\eqref{e.sss.magic.iden.nosymm} for the annealed quantities, show that 
\begin{equation}
\label{e.sss.magic.iden.nosymm.homs}
\E \bigl[ J(U,e,(\shom_*- \khom)(U) e) 
+ J^*(U,e,(\shom_*+\khom)(U) e) \bigr] 
=
e \cdot \bigl( \shom(U) - \shom_*(U) \bigr) e
\,.
\end{equation}
Deduce in particular that~$\shom(U) \geq \shom_*(U)$ and 
\begin{equation*}
|\ahom(U)-\ahom_*(U)| \leq C|\shom(U) - \shom_*(U)|
\,.
\end{equation*}
\end{exercise}

\begin{exercise}[{Bounds on~$\bfA(U)$ and~$\bfA_*(U)$}]
Show that~$\bfA(U)$ and~$\bfA_*^{-1}(U)$ satisfy the
\begin{equation}
\label{e.bfA.bfA.star.inv.bounds}
\biggl( \fint_{U} \bfA^{-1} (x)\,dx \biggr)^{\!-1} \leq \bfA_{*}(U) \leq \bfA(U) \leq
\fint_{U} \bfA  (x)\,dx
\,.
\end{equation}
Deduce from~\eqref{e.bfA.bfA.star.inv.bounds} and the uniform ellipticity bound~\eqref{e.ellipticity.nonsymm} that, for every~$\eta \in (0,\infty)$, 
\begin{equation}
\label{e.bfA.bounds}
\begin{pmatrix} (1{+}\eta)^{-1}  \lambda \Id  & 0 \\ 0 & (1{+}\eta^{-1})^{-1}  \Lambda^{-1}  \Id \end{pmatrix}
\leq \bfA_*(U) 
\leq \bfA(U) 
\leq 
\begin{pmatrix} (1{+}\eta^{-1}) \Lambda  \Id & 0 \\ 0 & (1{+}\eta) \lambda^{-1}  \Id \end{pmatrix}
\,.
\end{equation}
\end{exercise}

\begin{exercise}
By a variation of the proof of~\eqref{e.fluxmaps.nosymm}, show that, for any symmetric matrix~$\s_0$, 
\begin{equation}
\label{e.fluxmaps.nosymm.nostar}
\biggl| e \cdot
\fint_U \bigl( \s_0 - \k^t(U) - \a \bigr) \nabla w
\biggr|
\leq 
\left( \fint_U \nabla w \cdot \s \nabla w \right)^{\nicefrac12} 
\bigl( 2J(U, e , (\s_0 - \k(U) ) e )\bigr )^{\nicefrac12} 
\,.
\end{equation}
Deduce the following coarse-graining inequality for~$\a(U)$ in place of~$\a_*(U)$:
\begin{equation}
\label{e.coarse.graining.nosymm.nostar}
\biggl| \s^{-1} (U) e\cdot \fint_U ( \a(U) - \a ) \nabla w \biggr| 
\leq 
2^{\nicefrac12} 
\bigl( e\cdot( \s_*^{-1} (U) - \s^{-1} (U))e \bigr)^{\nicefrac12} \biggl( \fint_U \nabla w \cdot \s \nabla w \biggr)^{\!\nicefrac12} 
\,.
\end{equation}
\end{exercise}

%
%This allows us to obtain explicit formulas for the coarse-grained double-variable matrices~$\bfA(U)$ and~$\bfA_*^{-1}(U)$, which can be compared with~\eqref{e.bfA.form} and~\eqref{e.bfA.form.inv}.
%
%We show next that~$J$ and~$J^*$ may be combined to give an expression analogous to~\eqref{e.magic} in the symmetric case, from which we deduce the non-obvious ordering between~$\s(U)$ and~$\s_*(U)$ and a bound on the symmetric part of~$\k(U)$. 
%
%
%
%More generally, we have the following helpful identity relating~$\bfJ$ with~$J$ and~$J^*$: for every~$p,p^*,q,q^*,h,h^*\in\Rd$, 
%\begin{align}
%\label{e.bfJ.to.Jpair}
%\lefteqn{
%\bfJ
%\biggl(U, \begin{pmatrix} p \\ q^* + h  \end{pmatrix}, \begin{pmatrix} q + h^* \\ p^*  \end{pmatrix} \biggr)
%} \quad &
%\notag \\ &
%=
%\frac12 \Bigl ( J\bigl(U,p-p^*,
%(q-q^*) - (h-h^*) \bigr) 
%+ 
%J^*\bigl(U,p+p^*, (q+q^*) + (h+h^*)\bigr) \Bigr )
%\,.
%\end{align}
%This follows from~\eqref{e.bfJ}.

\begin{remark}
Given an invertible, symmetric matrix~$\tilde{\mathbf{s}} \in\R^{d\times d}_{\mathrm{sym}}$ and another matrix~$\tilde{\mathbf{k}}\in \R^{d\times d}$ (not necessarily anti-symmetric), consider the block matrix
\begin{equation}
\label{e.tildes.relate}
\tilde{\mathbf{A}}
:=
\begin{pmatrix} \tilde{\mathbf{s}} - \tilde{\mathbf{k}}\tilde{\mathbf{s}}^{-1}\tilde{\mathbf{k}} & \tilde{\mathbf{k}}\tilde{\mathbf{s}}^{-1} \\ - \tilde{\mathbf{s}}^{-1}\tilde{\mathbf{k}} & \tilde{\mathbf{s}}^{-1} \end{pmatrix}
\,.
\end{equation}
Then by a direct computation, we may check that, for every~$p,p^*\in\Rd$, 
\begin{equation*}
\tilde{\mathbf{A}}  
\begin{pmatrix} p \\ \tilde{\mathbf{s}} p^* + \tilde{\mathbf{k}} p  \end{pmatrix}
=
\begin{pmatrix} \tilde{\mathbf{s}} p + \tilde{\mathbf{k}} p^* \\ p^*  \end{pmatrix} 
\,.
\end{equation*}
We therefore deduce from~\eqref{e.bfJ.byJJstar} that 
\begin{align}
\label{e.bfJ.to.JJstar2}
\lefteqn{
\bfJ
\biggl(U, \begin{pmatrix} p \\ \tilde{\mathbf{s}} p^* + \tilde{\mathbf{k}} p  \end{pmatrix} \,, 
\tilde{\mathbf{A}} \begin{pmatrix} p \\ \tilde{\mathbf{s}} p^* + \tilde{\mathbf{k}} p  \end{pmatrix}
\biggr)
} \qquad & 
\notag \\ & 
=
\bfJ
\biggl(U, \begin{pmatrix} p \\ \tilde{\mathbf{s}} p^* + \tilde{\mathbf{k}} p  \end{pmatrix} \,, 
\begin{pmatrix} \tilde{\mathbf{s}} p + \tilde{\mathbf{k}} p^* \\ p^*  \end{pmatrix}
\biggr)
\notag \\ &
=
\frac12 \bigl ( J\bigl(U,p-p^*,
(\tilde{\mathbf{s}}-\tilde{\mathbf{k}}) (p-p^*)  \bigr) 
+ 
J^*\bigl(U,p+p^*,
(\tilde{\mathbf{s}}+\tilde{\mathbf{k}}) (p+p^*)  \bigr) 
\bigr )
\,.
\end{align}
We deduce that, for a constant~$C(d)<\infty$, 
\begin{equation}
\label{e.bfJ.to.JJstar.minset}
\sup_{P\in \R^{2d}, \, |P|=1}
\bfJ(\cu_n,P, \tilde{\mathbf{A}} P) 
\leq 
C
\sum_{i=1}^d
\bigl ( 
 J\bigl (\cu_n,e_i,(\tilde{\mathbf{s}}-\tilde{\mathbf{k}}) e_i\bigr ) +  J^*\bigl (\cu_n,e_i,(\tilde{\mathbf{s}}+\tilde{\mathbf{k}}) e_i\bigr )
\bigr )
\end{equation}
and, conversely,
\begin{equation}
\label{e.bfJ.to.JJstar.minset.2}
\sup_{e\in\Rd\,, |e|=1} 
\bigl ( 
 J\bigl (\cu_n,e,(\tilde{\mathbf{s}}-\tilde{\mathbf{k}}) e \bigr ) +  J^*\bigl (\cu_n,e,(\tilde{\mathbf{s}}+\tilde{\mathbf{k}}) e\bigr )
\bigr )
\leq 
C
\sum_{i=1}^{2d} 
\bfJ(\cu_n,e_i, \tilde{\mathbf{A}} e_i) 
\,.
\end{equation}
\end{remark}

\begin{remark}
Using the above identities and estimates, we obtain the following exact analogy to~\eqref{e.diagonalset} by repeating the same argument nearly verbatim: there exists a constant~$C(d,\lambda,\Lambda)<\infty$ such that, for every matrix~$\mathbf{B} \in \R^{2d\times2d}$, 
\begin{equation} 
\label{e.diagonalset.bigA}
\left| \bfA(U) - \bfA_*(U) \right| 
+
\left| \bfA(U) - \mathbf{B} \right|^2 
+
\left| \bfA_*(U) - \mathbf{B} \right|^2 
\leq C \sum_{i=1}^{2d} \bfJ(U,e_i,\mathbf{B} e_i)
\,. 
\end{equation}
Using this inequality, we obtain analogs of~\eqref{e.additivity.by.J},~\eqref{e.add.defect.a} and~\eqref{e.add.defect.astar}, arguing exactly as in the proof of those estimates in the symmetric case. In particular, we obtain a constant~$C(d,\lambda,\Lambda)$ such that, for any matrix~$\mathbf{B}\in\R^{2d\times2d}$, 
\begin{equation}
\label{e.add.defect.a.nosymm}
\avsum_{z\in 3^n\Zd \cap \cu_m} 
\bfA(z+\cu_n) 
\leq 
\bfA(\cu_m) + C \avsum_{z\in 3^n\Zd\cap \cu_m}
\sum_{i=1}^{2d} 
\bfJ(z+\cu_n,e_i,\mathbf{B} e_i)
\Itwod
\end{equation}
and
\begin{equation}
\label{e.add.defect.astar.nosymm}
\avsum_{z\in 3^n\Zd \cap \cu_m} 
\bfA_*^{-1}(z+\cu_n) 
\leq 
\bfA_*^{-1} (\cu_m) + C \avsum_{z\in 3^n\Zd\cap \cu_m}
\sum_{i=1}^{2d} 
\bfJ(z+\cu_n,e_i,\mathbf{B} e_i) \Itwod \,.
\end{equation}
These bounds will be needed in the next section to obtain variance bounds that generalize those of Lemma~\ref{l.flatness}. 
\end{remark}

In the next exercise, the reader is asked to generalize~\eqref{e.a.formulas.nosymm} using a similar computation as in~\eqref{e.fluxmaps.computation}. 
\begin{exercise}
Generalize the identities~\eqref{e.a.formulas.nosymm} in by showing that 
\begin{equation}
\label{e.formulas.nosymm.byS}
\fint_U \bfA S(\cdot,U,P,Q) = Q-\bfA(U) P 
\qand
\fint_U S(\cdot,U,P,Q) = \bfA_*^{-1}(U) Q - P 
\,.
\end{equation}
Explain why~\eqref{e.formulas.nosymm.byS} is a generalization of~\eqref{e.a.formulas.nosymm}. 
Explain why the formula for~$\fint_U  S(\cdot,U,P,Q)$ contains no new information compared to the first identity in~\eqref{e.formulas.nosymm.byS}. 
\end{exercise}

\begin{exercise}
Show the formulas 
\begin{equation} 
\label{e.J.by.means.of.bfA}
J(U,p,q) = \frac12\begin{pmatrix} -p \\ q \end{pmatrix} \cdot \bfA(U) \begin{pmatrix} -p \\ q \end{pmatrix} - p\cdot q
\,, \quad
J^*(U,p,q) = \frac12\begin{pmatrix} p \\ q \end{pmatrix} \cdot \bfA(U) \begin{pmatrix} p \\ q \end{pmatrix} - p\cdot q\,.
\end{equation}
\end{exercise}

\begin{exercise}
Show that the limit
\begin{equation}
\label{e.ahom.def.nonsymm}
\ahom:= \lim_{n\to \infty} \ahom_*(\cu_n)
\end{equation}
exists and that~$\ahom$ satisfies the same uniform ellipticity bounds as in~\eqref{e.ellipticity.nonsymm}. Hint: use the monotonicity of~$\bfAhom(\cu_n)$. 
\end{exercise}

\begin{remark}[Enhancement of diffusion by advection]
The definitions of~$\a_*(U)$ in~\eqref{e.mustar}-\eqref{e.quad.mu.star} and~$\s_*(U)$ in~\eqref{e.variational.J.nonsymm}-\eqref{e.sastU.def} are very similar, and at first glance may appear to be equal in the sense that~$\s_*(U)$ has no dependence on the anti-symmetric part of the field~$\a(\cdot)$ at all. However, this is not the case---the restriction in the variational formula~\eqref{e.variational.J.nonsymm} that~$u$ belong to~$\A(U)$, a strict subset of~$H^1(U)$, means that in general we will only have the ordering~$\s_*^{-1}(U;\a) \leq \a_*^{-1}(U;\s)$. In other words, the matrix~$\s_*(U;\a)$ defined in~\eqref{e.variational.J.nonsymm}-\eqref{e.sastU.def} with respect to~$\a$ is larger than the matrix~$\a_*(U;\s)$, defined in~\eqref{e.mustar}-\eqref{e.quad.mu.star} with respect to the symmetric field~$\s(\cdot)$. This inequality is typically strict. Indeed, it is a coarse-grained version of the \emph{enhancement of diffusion due to advection}, which roughly states that the presence of a divergence-form vector field (the divergence of the matrix~$\k(\cdot)$) will make the homogenized diffusion matrix larger. 
\end{remark} 

\subsection{Quantitative homogenization estimates in the nonsymmetric case}
\label{ss.subadd.conv.nosymm}

In this section, we will explain how to generalize the main results of Chapter~\ref{s.subadd} to the case of nonsymmetric coefficients. 
We are interested in formulating an extension of Corollary~\ref{c.subadd.converge}, the stronger version of Theorem~\ref{t.subadd.converge}, which is what is mainly used in practice. 
To that end, we generalize the random variable defined in~\eqref{e.mcE.0} to the general nonsymmetric setting by defining, for each~$m\in\N$,
\begin{align} 
\label{e.mcE.0.nonsymm}
\mathcal{E}(m) 
:= 
\sum_{n=0}^{m}
3^{n-m} 
\biggl( 
\avsum_{z\in 3^n\Zd\cap \cu_m}
\!\!
\Bigl( \bigl| (\s- \s_*)(z+\cu_n)  \bigr|
+
\bigl| \a_*(z+\cu_n) - \ahom \bigr|^2
\Bigr) 
\biggr)^{\!\nicefrac12} 
\,,
\end{align}
where~$\ahom$ is defined in~\eqref{e.ahom.def} above. 
The random variable~$\mathcal{E}(m)$ is the same as the one defined in~\eqref{e.mcE.0} in the symmetric case. 
We remark that in view of~\eqref{e.JJstar1}, the quantity 
\begin{equation}
\label{e.oneway}
\bigl| (\s- \s_*)(U)  \bigr|
+
\bigl| \a_*(U) - \ahom \bigr|^2
\end{equation}
is equivalent to (bounded on both sides by positive, deterministic constants multiplied by) 
\begin{equation}
\label{e.anotherway}
\sum_{i=1}^d 
\bigl( 
J(U,e_i,\ahom^t e_i ) + J^*(U,e_i,\ahom e_i )
\bigr) \,.
\end{equation}
We could, therefore, have written~$\mathcal{E}(m)$ in terms of the quantity on the right side of~\eqref{e.anotherway}. 

\begin{theorem}
\label{t.subadd.converge.nosymm}
Assume~$\P$ is a~$\Zd$--stationary measure on~$(\Omega,\F)$ and satisfies~$\CFS(\beta,\Psi)$. Then, for every~$\delta>0$, there exist constants~$\alpha(\beta,d,\lambda,\Lambda) \in (0,\tfrac12]$ and~$C(\delta,\dataref)<\infty$ and a random variable~$\X$ satisfying 
\begin{equation}
\label{e.mmmbound.nosymm}
\X^{\frac d2 (1-\beta)}
= \O_\Psi(C)
\end{equation}
such that, for every~$m\in\N$ with~$3^m\geq \X$, 
\begin{equation} 
\label{e.aastar.concentrate.DD.nosymm}
\mathcal{E}(m)^2
\leq \delta^2 ( \X 3^{-m})^{\alpha} 
\,.
\end{equation}
\end{theorem}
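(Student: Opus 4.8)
The plan is to run the same iteration-up-the-scales argument as in the symmetric case (Section~\ref{s.subadd}), but now carried out on the symmetric $2d$-by-$2d$ matrix $\bfA$ and its coarse-grained versions $\bfA(U)$, $\bfA_*(U)$ rather than on $\a(U)$, $\a_*(U)$. All the structural input is already in place: by Lemma~\ref{l.J.basicprops.nonsymm}, the double-variable quantity $\bfJ$ is nonnegative, quadratic in $(P,Q)$, subadditive, satisfies the splitting formula~\eqref{e.Jsplitting.nonsymm}, the convex-duality-defect estimate~\eqref{e.diagonalset.nonsymm}, the additivity-defect estimate~\eqref{e.additivity.by.J.nonsymm}, the first and second variation identities, and the bound~\eqref{e.maul.Mall} on the Malliavin derivative of $\bfA(U)$ and $\bfA_*(U)$; together with Lemma~\ref{l.malliavin} (whose proof, as noted in the text, does not use symmetry) this gives the Malliavin bound on $\bfJ(U,P,Q)$ itself. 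Thus every ingredient used in Section~\ref{ss.subadd.conv} has an exact analogue. The only genuinely new analytic input needed beyond Lemma~\ref{l.J.basicprops.nonsymm} is the Caccioppoli-type inequality for $\mathcal{S}(U)$ proved in Lemma~\ref{l.Caccioppoli.S}, which replaces the elementary Caccioppoli bound~\eqref{e.A.caccioppoli} for $\A(U)$ that was used implicitly in the symmetric flatness lemma (Lemma~\ref{l.flatness.rules}).

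Concretely, I would proceed as follows. First, define $\bfAhom := \lim_{m\to\infty} \bfA_*(\cu_m)$ (the limit exists by the monotonicity in~\eqref{e.subaddA.nonsymm}), set $\ahomahom(U) := \E[\bfA(U)]$ and $\ahomahom_*(U) := \E[\bfA_*^{-1}(U)]^{-1}$, and define the expected additivity defect $\tau_m$ exactly as in~\eqref{e.taun} but with $\bfA$ in place of $\a$. Second, establish the analogue of the flatness Lemma~\ref{l.flatness.rules}: use~\eqref{e.quadresp.nonsymm} and~\eqref{e.JenergyS.nonsymm} to bound $\bfJ(\cu_{m-1},P,Q)$ by the Caccioppoli inequality Lemma~\ref{l.Caccioppoli.S} applied to $S(\cdot,\cu_m,P,Q) \in \mathcal{S}(\cu_m)$ (this is where $\mathcal{S}(U)$ playing the role of $\A(U)$ matters), then insert the multiscale Poincar\'e inequality (Proposition~\ref{p.MSP}) applied componentwise to $S(\cdot,\cu_m,P,Q)$, using the spatial-average identities~\eqref{e.A.formulas.nonsymm}--\eqref{e.Astar.formulas.nonsymm} to identify $(S)_{z+\cu_n}$ with $\bfA_*^{-1}(z+\cu_n)Q$ up to a defect controlled by $\bfJ(z+\cu_n,P,Q) - \bfJ(\cu_m,P,Q)$. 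Third, prove the variance-decay Lemma~\ref{l.flatness} analogue: here is the only place the mixing hypothesis $\CFS(\beta,\Psi)$ enters, exactly through Lemma~\ref{l.J.upperfluct} (whose statement and proof, as the text emphasizes, do not use symmetry), applied to sums of $\bfJ(z+\cu_n,e_i,\mathbf{B}e_i)$ over families of triadic subcubes, together with~\eqref{e.additivity.by.J.nonsymm}. Fourth, combine these into the recursive inequality~\eqref{e.combo} (with $\bfJ$, $\bfAhom$, $2d$ basis vectors), run the pigeonhole Lemma~\ref{l.pigeonhole} analogue to obtain the initial logarithmic smallness, and then invoke the abstract iteration Lemma~\ref{l.iteration} verbatim to conclude the algebraic decay $\sup_{e\in B_1}\E[\bfJ(\cu_m,e,\bfAhom e)] \le C3^{-\alpha m}$, which by~\eqref{e.diagonalset.nonsymm} also controls $|\bfA(\cu_m)-\bfAhom|$ and $|\bfA_*(\cu_m)-\bfAhom|$. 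Finally, apply Lemma~\ref{l.mathcalE.minscale} (again stated for general subadditive $J$ with only nonnegativity and subadditivity used) with $J \leftarrow \bfJ$ to pass from the bound on the means to the minimal-scale statement~\eqref{e.aastar.concentrate.DD.nosymm} with the sharp stochastic integrability~\eqref{e.mmmbound.nosymm}.

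I expect the main obstacle to be purely bookkeeping rather than conceptual: one must be careful in the flatness step that the multiscale Poincar\'e inequality, which applies to scalar (or $\Rd$-valued) $L^2$ functions, is being applied to the $\R^{2d}$-valued field $S(\cdot,\cu_m,P,Q)$ and that the boundary-value / affine-data structure needed to run Proposition~\ref{p.MSP} is supplied by Lemma~\ref{l.varyou.varme} (the Dirichlet/Neumann splitting of $\bfJ$) rather than being immediately visible; in particular one should extract from~\eqref{e.Stovvstar} that the "potential part" of $S$ is a gradient of an $\A(\cu_m) \oplus \A^*(\cu_m)$ function pinned to affine data on $\partial\cu_m$, so that the argument leading to~\eqref{e.makeroom} and~\eqref{e.multiscope} goes through with $\|S\|_{\underline{H}^{-1}}$ in place of $\|\nabla v - p\|$-type quantities. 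Since the text explicitly flags that Lemmas~\ref{l.malliavin}, \ref{l.J.upperfluct}, and \ref{l.mathcalE.minscale} were proved without using symmetry, and since Lemmas~\ref{l.J.basicprops.nonsymm} and~\ref{l.Caccioppoli.S} furnish the remaining structural facts, no new idea is required — the proof is a faithful transcription of Sections~\ref{ss.subadd.conv}--\ref{ss.det} with $(\a, J, \ahom, \a_*)$ replaced throughout by $(\bfA, \bfJ, \bfAhom, \bfA_*)$ and with $\mathcal{S}(U)$ playing the role of $\A(U)$, so I would present it by pointing to those lemmas and writing out only the flatness estimate where the substitution is least mechanical.
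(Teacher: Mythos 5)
Your proposal is correct and follows essentially the same route as the paper: the authors prove Proposition~\ref{p.algebraic.double} by declaring the argument of Section~\ref{ss.subadd.conv} a near copy-paste with $(J,\A(U),\a_*(U))$ replaced by $(\bfJ,\mathcal{S}(U),\bfA_*(U))$, noting — exactly as you do — that the only genuinely new ingredient is Lemma~\ref{l.Caccioppoli.S} in place of the usual Caccioppoli inequality in the flatness step, and then obtain Theorem~\ref{t.subadd.converge.nosymm} by applying the proof of Lemma~\ref{l.mathcalE.minscale} verbatim with $\bfJ$ in place of $J$. Your identification of Lemmas~\ref{l.malliavin}, \ref{l.J.upperfluct}, \ref{l.J.basicprops.nonsymm} and the iteration/pigeonhole machinery as the symmetry-free inputs matches the paper's intended proof.
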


The proof of Lemma~\ref{l.mathcalE.minscale} applies nearly verbatim with~$J$ replaced by~$\bfJ$. We therefore obtain Theorem~\ref{t.subadd.converge.nosymm} as a consequence of the following statement, which is a generalization of Proposition~\ref{p.algebraicrate.E} and the main focus of this section.

\begin{proposition}[Algebraic rate of decay for~$J$ \&~$J^*$]
\label{p.algebraic.nosymm}
\hspace{-1pt} 
Assume that~$\P$ is a~$\Zd$--stationary measure on~$(\Omega,\F)$ and satisfies~$\CFS(\beta,\Psi)$. 
There exist~$\alpha(\beta,d,\lambda,\Lambda) >0$ and~$C(\dataref)<\infty$
such that, for every~$m\in\N$,  
\begin{equation}
\label{e.EJtozero.rate.nosymm}
%\sup_{e\in B_1}
%\E \left[J(\cu_m,e, \ahom^t e)
%+
%J^*(\cu_m,e, \ahom e)\right] 
\bigl| \ahom(\cu_m) - \ahom \bigr|
+
\bigl| \ahom_*(\cu_m) - \ahom \bigr|
\leq 
C3^{-m\alpha}. 
\end{equation}
\end{proposition}

The proof of Proposition~\ref{p.algebraic.nosymm} follows the one of Proposition~\ref{p.algebraicrate.E} very closely. 
We define the expected subadditivity defect, generalizing~\eqref{e.taun}, by
\begin{equation}
\label{e.taun.nosymm}
\tau_m := \left|\shom (\cu_{m}) - \shom(\cu_{m-1} )\right| + \left|  \shom_*(\cu_{m}) - \shom_*(\cu_{m-1} )\right|.
\end{equation}
We need to generalize~\eqref{e.subaddcontrol}. We first use subadditivity to show that the coarse-grained matrices~$\shom_*(\cu_m)$ and~$\shom(\cu_m)$ are, respectively, nondecreasing and nonincreasing in~$m$. For~$\shom_*$, we use~\eqref{e.subaddJ.nosymm} with~$(p,q)=(0,e)$ to find that 
\begin{equation*}
\s_*^{-1}(\cu_m) \leq 
\avsum_{z\in 3^n\Zd \cap \cu_m} 
\s_*^{-1}(z+\cu_n)\,,
\end{equation*}
and it follows after taking expectations and using stationarity that 
\begin{equation}
\label{e.shom.star.monotone}
\shom_*^{-1} (\cu_m) \leq \shom_*^{-1}(\cu_n) \,, \quad \forall n,m\in\N\,, \ n \leq m. 
\end{equation}
For the quantity~$\s$, we will prove that
\begin{equation}
\label{e.shom.monotone}
\shom (\cu_m) \leq \shom (\cu_n) \,, \quad \forall n,m\in\N\,, \ n \leq m. 
\end{equation}
The monotonicity of~$\shom$ is more subtle than that of~$\shom_*$, since~$\s(U)$ is not a subadditive quantity, in general, and~$\shom(U)$ is not the expectation of~$\s(U)$. To prove~\eqref{e.shom.monotone}, we apply~\eqref{e.JJstar1} with the choice~$(p,q,h) = (e,0,\khom(\cu_{m-1})e)$ to get
\begin{align}
\label{e.JJstar.noq}
\lefteqn{ 
\bigl ( J(U,e,\khom(\cu_{m-1})e) + J^*(U,e,\khom(\cu_{m-1})e) \bigr )
} \qquad & 
\notag \\ & 
=
e \cdot \s(U) e
+
\bigl ( \khom(\cu_{m-1}) -\k(U) \bigr )e  \cdot \s_*^{-1}(U)\bigl ( \khom(\cu_{m-1}) -\k(U) \bigr ) e
\,.
\end{align}
Taking expectations, we get 
\begin{align}
\label{e.JJstar.noq.annealed}
\lefteqn{ 
\E \bigl[ J(U,e,\khom(\cu_m)e) \bigr] + \E \bigl[ J^*(U,e,\khom(\cu_{m-1})e) \bigr ]
} \qquad & 
\notag \\ & 
=
e \cdot \shom(U) e
+
\bigl ( \khom(\cu_{m-1}) -\khom(U) \bigr )e  \cdot \shom_*^{-1}(U)\bigl ( \khom(\cu_{m-1}) -\khom(U) \bigr ) e
\,.
\end{align}
By using subadditivity~\eqref{e.subaddJ.nosymm} for both~$J$ and~$J^*$, we have that 
\begin{equation*}
\left\{
\begin{aligned}
& \E \bigl[ J(\cu_m,e,\khom(\cu_{m-1})e) \bigr] \leq 
\E \bigl[ J(\cu_{m-1},e,\khom(\cu_{m-1})e) \bigr]
\,, \quad \mbox{and}  \\ & 
\E \bigl[ J^*(\cu_m,e,\khom(\cu_{m-1})e) \bigr ]
\leq 
\E \bigl[ J^*(\cu_{m-1},e,\khom(\cu_{m-1})e) \bigr ]
\,.
\end{aligned}
\right.
\end{equation*}
Combining the previous two displays, we obtain
\begin{equation*}
\shom(\cu_m) 
\leq 
\shom(\cu_{m-1}) 
-
\bigl ( \khom(\cu_{m-1}) -\khom(\cu_m) \bigr )e  \cdot \shom_*^{-1}(\cu_m)\bigl ( \khom(\cu_{m-1}) -\khom(\cu_m) \bigr )
\leq \shom(\cu_{m-1})\,.
\end{equation*}
This completes the proof of~\eqref{e.shom.monotone}.

\smallskip

In view of~Exercise~\ref{exer.ordering.homs},~\eqref{e.shom.star.monotone} and~\eqref{e.shom.monotone}, we have that 
\begin{align}
\label{e.subaddcontrol.nosymm}
\tau_m 
&
\leq 
2
\bigl|
(\shom (\cu_{m-1}) - \shom_*(\cu_{m-1}))
-
(\shom (\cu_{m}) - \shom_*(\cu_m))
\bigr|
\notag \\& 
=
2\sup_{|e|=1} 
\bigl\{ e \cdot 
(\shom - \shom_*)(\cu_{m-1})
e 
-
e\cdot 
(\shom - \shom_*)(\cu_m)
e 
\bigr\}
\notag \\& 
=
2\sup_{|e|=1} 
\Bigl\{
\E \bigl[ J(\cu_m,e,(\shom_*- \khom )(\cu_m) e) \bigr]  
-
\E \bigl[ J(\cu_{m-1},e,(\shom_*- \khom )(\cu_{m-1}) e) \bigr] 
\notag \\ & \qquad \qquad 
+ \E\bigl[ J^*(\cu_m,e,(\shom_*+\khom)(\cu_m) e) \bigr]
-
\E\bigl[ J^*(\cu_{m-1},e,(\shom_*+\khom)(\cu_{m-1}) e) \bigr]
\Bigr\}
\notag \\ & 
\leq C \tau_m
\,.
\end{align}
This is the needed generalization of~\eqref{e.subaddcontrol}. 

\smallskip

We continue with the generalization of Lemma~\ref{l.flatness.rules} to the general nonsymmetric setting. It is nearly a copy-paste of Lemma~\ref{l.flatness.rules}, but we include the details for the convenience of the reader. 

\begin{lemma}
[Control of the convex duality defect]
\label{l.flatness.rules.nosymm}
There exists~$C(d,\lambda,\Lambda)<\infty$ such that, for every~$m\in\N$ and~$p,q\in B_1$, 
\begin{align}
\label{e.flatness.quenched.nosymm}
J(\cu_{m-1},p,q) 
&
\leq 
C\sum_{n=0}^{{m}} 3^{n-m} 
\!\! \avsum_{y\in 3^n\Zd\cap \cu_{m}} \!\!
\left| p -\s_*^{-1}(y+\cu_n) (q +\k(y+\cu_n)p )  \right|^2
\notag \\ & \qquad 
+ C
\sum_{n=0}^{{m}} 3^{n-m} 
\!\! \avsum_{y\in 3^{n}\Zd\cap \cu_m}  \!\!
( J(y+\cu_{n},p,q) - J (\cu_m,p,q) )
+C3^{-2m}
\,.
\end{align}
In particular, for every~$m\in\N$ and~$e\in  B_1$, 
\begin{align}
\label{e.flatness.rules.nosymm}
\E \left[ J(\cu_m,e,(\shom_*-\khom)(\cu_m)e) \right]
\leq 
C 
\sum_{n=0}^{m} 3^{n-m} 
\left( \tau_n + \var\left[ \a_*(\cu_n) \right] \right)
+C 3^{-2m}\,. 
\end{align}
\end{lemma}

\begin{proof}
\emph{Step 1.} We claim that, for every~$m\in\N$ and~$p,q\in B_1$, 
\begin{align}
\label{e.makeroom.nosymm}
J(\cu_{m-1},p,q) 
&
\leq 
C
3^{-2m}
\inf_{k\in\R}
\left\| v(\cdot,\cu_m,p,q) - k \right\|_{\underline{L}^2(\cu_m)}^2
\notag \\ & \qquad 
+
\sum_{z\in 3^{m-1}\Zd\cap \cu_m} 
2( J(z+\cu_{m-1},p,q) - J (\cu_m,p,q) )
\,.
\end{align}
For simplicity, denote~$v_n \hspace{-1pt}  :=  \hspace{-1pt}  v(\cdot,\cu_n,p,q)$ for each~$n\in\N$ and compute, using~\eqref{e.quadresp.nosymm} and~\eqref{e.Jenergyv.nosymm}, 
\begin{align*}
J(\cu_{m-1},p,q) 
& 
=
\fint_{\cu_{m-1}} 
\frac12
\nabla v_{m-1} 
\cdot \a\nabla v_{m-1}
\\ & 
\leq 
\fint_{\cu_{m-1}} 
\nabla v_{m} 
\cdot \a\nabla v_{m}
+
\fint_{\cu_{m-1}} 
\left( \nabla v_{m}
-
\nabla v_{m-1} \right) 
\cdot 
\a\left( \nabla v_{m}
-
\nabla v_{m-1}
\right) 
\\ & 
\leq
\fint_{\cu_{m-1}} 
\nabla v_{m} 
\cdot \a\nabla v_{m}
+
3^d 
\avsum_{z\in 3^{m-1}\Zd\cap \cu_m} 
2( J(z+\cu_{m-1},p,q) - J (\cu_m,p,q) )
\,.
\end{align*}
By the Caccioppoli inequality,
\begin{equation*}
\fint_{\cu_{m-1}}
\nabla v_m \cdot\a\nabla v_m
\leq
C
3^{-2m}
\left\| v_m -k \right\|_{\underline{L}^2(\cu_m)}^2
\,.
\end{equation*}
Combining this with the display above yields~\eqref{e.makeroom.nosymm}. 

\smallskip

\emph{Step 2.} We show that,
for every~$m\in\N$ and~$p,q\in B_1$, 
\begin{align}
\label{e.multiscope.nosymm}
\lefteqn{
3^{-m} 
\inf_{k\in\R} 
\left\| v(\cdot,\cu_m,p,q) - k \right\|_{\underline{L}^2(\cu_m)} 
} \qquad & 
\notag \\ &
\leq
C3^{-m} 
+
C
\sum_{n=0}^{{m}} 3^{n-m} 
\biggl(
\avsum_{y\in 3^{n}\Zd\cap \cu_m} 
( J(y+\cu_{n},p,q) - J (\cu_m,p,q) )
\biggr)^{\!\!\nicefrac12}
\notag \\ & \qquad 
+
C\sum_{n=0}^{{m}} 3^{n-m} 
\biggl(\avsum_{y\in 3^n\Zd\cap \cu_{m}} 
\left| p -\s_*^{-1}(y+\cu_n) (q +\k(y+\cu_n)p ) \right|^2
\biggr)^{\!\!\nicefrac12}
\,.
\end{align}
By the multiscale Poincar\'e inequality (Proposition~\ref{p.MSP}), we have
\begin{align*}
3^{-m} 
\inf_{k\in\R} 
\left\| v_m {-} k \right\|_{\underline{L}^2(\cu_m)} 
&
\leq 
C3^{-m} \left\| \nabla v_m \right\|_{\underline{L}^2(\cu_m)} 
+
C \! 
\sum_{n=0}^{m} 3^{n-m} 
\biggl( \avsum_{y\in 3^n\Zd\cap \cu_{m}} \!\!
\big| \left( \nabla v_m \right)_{y+\cu_n} \big|^2 \biggr)^{\!\nicefrac12}
.
\end{align*}
Using the triangle inequality and the identities~\eqref{e.a.formulas.nosymm}, we observe that 
\begin{align*}
\biggl(
\avsum_{y\in 3^n\Zd\cap \cu_{m}} 
\big| \left( \nabla v_m \right)_{y+\cu_n} \big|^2\biggr)^{\!\!\nicefrac12}
&
\leq 
\biggl(\avsum_{y\in 3^n\Zd\cap \cu_{m}} \!\!
\big|  \left( \nabla v_{m} \right)_{y+\cu_n} -  \left( \nabla v_{n} \right)_{y+\cu_n} \big|^2\biggr)^{\!\!\nicefrac12}
\notag \\ & \qquad 
+
\biggl(\avsum_{y\in 3^n\Zd\cap \cu_{m}} \!\!
\left| p -\s_*^{-1}(y+\cu_n) (q +\k(y+\cu_n)p ) \right|^2
\biggr)^{\!\!\nicefrac12}
\,.
\end{align*}
By~\eqref{e.Jenergyv.nosymm} and~\eqref{e.quadresp.nosymm}, we estimate
\begin{align*}
\avsum_{y\in 3^n\Zd\cap \cu_{m}} \!\!\!
\big|  \left( \nabla v_{m} \right)_{y+\cu_n}  - \left( \nabla v_{n} \right)_{y+\cu_n} \big|^2
\leq 
2 \!\!
\avsum_{y\in 3^{n}\Zd\cap \cu_m} \!\!
( J(y+\cu_{n},p,q) - J (\cu_m,p,q) )
.
\end{align*}
Combining the previous displays, using also that~$\left\| \nabla v_m \right\|_{\underline{L}^2(\cu_m)}^2  \hspace{-1pt}  \leq \hspace{-1pt} CJ(\cu_m,p,q) \hspace{-1pt} \leq \hspace{-1pt} C$ by~\eqref{e.Jenergyv.nosymm}, we obtain~\eqref{e.multiscope.nosymm}.

\smallskip

\emph{Step 3.} The conclusion. The estimate~\eqref{e.flatness.quenched.nosymm}
is an immediate consequence of~\eqref{e.makeroom.nosymm} and~\eqref{e.multiscope.nosymm}.
The inequality~\eqref{e.flatness.rules.nosymm} is obtained by taking the expectation of~\eqref{e.flatness.quenched.nosymm}, using 
$\E \left[ J(\cu_m,e,(\shom_*-\khom)(\cu_m)e) \right] \leq \E \left[ J(\cu_{m-1},e,(\shom_*-\khom)(\cu_m)e) \right]$,~\eqref{e.subaddcontrol.nosymm} and the following:
\begin{align*}
\E\Biggl [
\sum_{n=0}^{{m}} 3^{n-m} \!\!\!
\avsum_{y\in 3^{n}\Zd\cap \cu_m} \!\!\!
( J(y+\cu_{n},p,q) - J (\cu_m,p,q) ) \Biggr ]
&
\leq
C\!\sum_{n=0}^{{m}} \! 3^{n-m} \!\!
\sum_{k=n+1}^{{m}} \tau_k 
\leq
C\! \sum_{n=0}^{{m}} \! 3^{n-m} \tau_n\,,
\end{align*}
as well as 
\begin{align*}
\lefteqn{
\E \Bigg[
\sum_{n=0}^{{m}} 3^{n-m} 
\avsum_{y\in 3^n\Zd\cap \cu_{m}} \!\!
\left| e -\s_*^{-1}(y+\cu_n) ( \shom_*(\cu_m) e +\k(y+\cu_n)e - \khom(\cu_m)e )  \right|^2
\Bigg]
} \qquad & 
\notag \\ & 
=
\E \Bigg[
\sum_{n=0}^{{m}} 3^{n-m} 
\avsum_{y\in 3^n\Zd\cap \cu_{m}} 
\left| 
\s_*^{-1}(y+\cu_n) 
\bigl(
(\s_*-\k)(y+\cu_n) - (\shom_*-\khom)(\cu_m)
\bigr)
\right|^2
\Bigg]
\notag \\ & 
\leq 
C
\sum_{n=0}^{{m}} 3^{n-m} 
\var \bigl[  \a_*(\cu_n) \bigr]
+
C
\sum_{n=0}^{{m}} 3^{n-m} 
\left| \bigl(
\ahom_*(\cu_n) - \ahom_* (\cu_m) \bigr) 
\right|^2
\end{align*}
and, finally, 
\begin{align*}
\sum_{n=0}^{{m}} 3^{n-m} 
\left| \bigl(
\ahom_*(\cu_n) - \ahom_* (\cu_m) \bigr) 
\right|^2
&\leq 
C\sum_{n=0}^{{m}} 3^{n-m} 
\left| \bigl(
\ahom_*(\cu_n) - \ahom_* (\cu_m) \bigr) 
\right|
\notag \\ & 
\leq 
C\sum_{n=0}^{{m}} 
3^{n-m} \sum_{k=n+1}^m \tau_k
\leq 
C\sum_{n=0}^{{m}} 
3^{n-m} \tau_n.
\end{align*}
The proof of the lemma is now complete.
\end{proof}

We turn next to the generalization of Lemma~\ref{l.flatness}. It is convenient to formalize this with double-variable algebra. 

\begin{lemma}[Decay of the variance]
\label{l.flatness.nosymm}
There exists a constant~$C(\dataref)<\infty$ such that, for every invertible~$\tilde{\mathbf{s}} \in \R^{d\times d}_{\mathrm{sym}}$,~$\tilde{\k}\in \R^{d\times d}$ and~$m,n\in\N$ with~$\beta m  < n < m$,  
\begin{align}
\label{e.variance.J2.nosymm.sms}
\var \bigl[  \bfA(\cu_m) \bigr] 
&
\leq
C
\min \biggl \{
\sup_{|e|=1}
\E \left[ J\bigl (\cu_n,e,(\tilde{\mathbf{s}}-\tilde{\k}) e\bigr ) +  J^*\bigl (\cu_n,e,(\tilde{\mathbf{s}}+\tilde{\k}) e\bigr )\right]^2
\,,\, \sum_{k=n+1}^m \tau_k
\biggr\} 
\notag \\ & \qquad 
+
C\bigl(1+|\tilde{\mathbf{s}}^{-1}| + |\tilde{\mathbf{s}}| + |\tilde{\k}\tilde{\mathbf{s}}^{-1}\tilde{\k}| \bigr)^4
3^{-d (m-n)}
\,.
\end{align}
\end{lemma}
\begin{proof}
In view of~\eqref{e.bfJ.to.JJstar.minset}, it suffices to show that, for every~$\mathbf{B}\in \R^{2d\times 2d}_{\sym}$ and~$m,n\in\N$ with~$\beta m  < n < m$,  
\begin{align}
\label{e.variance.J2.nosymm}
\var \bigl[  \bfA(\cu_m) \bigr] 
\leq
C \min \biggl\{ 
\sup_{|e|=1}
\E \bigl[ \bfJ(\cu_n,e,\mathbf{B} e) \bigr]^2
\,,
\sum_{k=n+1}^m \tau_k
\biggr\}
+
C\bigl(1+|\mathbf{B}|\bigr)^4
3^{-d (m-n)}
\,.
\end{align}
Note that the form of this estimate is nearly identical to~\eqref{e.variance.J2} in Lemma~\ref{l.flatness}. The proof of~\eqref{e.variance.J2.nosymm} is also almost verbatim that of the latter, but we present it here anyway. 

\smallskip

Like in the first step of the proof of Lemma~\ref{l.flatness}, we use the triangle inequality and Lemma~\ref{l.J.upperfluct} (note that the symmetry assumption is not used in that lemma, as mentioned before its statement) to obtain that 
\begin{align}
\label{e.variance.presplit.nosymm}
\var\bigl [ \bfA(\cu_m) \bigr] 
&
\leq 
2\var \Biggl[ \avsum_{z\in 3^n\Zd\cap\cu_m}
\bfA(z+\cu_n) \Biggr] 
+
2 \E \Biggl[
\biggl| \bfA(\cu_m) - \!\!\!\! \avsum_{z\in 3^n\Zd\cap\cu_m}
\bfA(z+\cu_n) \biggr|^2 \Biggr]
\notag \\ &
\leq
C 3^{-d (m-n)}
+
2 \E \Biggl[
\biggl| \bfA(\cu_m) - \!\!\!\! \avsum_{z\in 3^n\Zd\cap\cu_m}
\bfA(z+\cu_n) \biggr|^2 \Biggr]
\,.
\end{align}
Using~\eqref{e.subadda.nosymm},~\eqref{e.diagonalset.bigA} and~\eqref{e.add.defect.astar.nosymm} and the uniform bounds on~$\bfA$ and~$\bfA_*$ in~\eqref{e.bfA.bounds}, we can make a string of inequalities like the one in~\eqref{e.wrap.around}, which then yields, for every~$\mathbf{B} \in \R^{2d\times 2d}_{\sym}$, 
\begin{equation}
\label{e.wrap.app.nosymm}
\Biggl| \bfA(\cu_m) - \!\!\!\! \avsum_{z\in 3^n\Zd\cap\cu_m}
\bfA(z+\cu_n) \Biggr| 
\leq 
C \!\! \avsum_{z\in 3^n\Zd\cap\cu_m}
\sum_{i=1}^{2d}
\bfJ(z+\cu_n,e_i,\mathbf{B} e_i) 
\,.
\end{equation}
This perfectly matches the bound~\eqref{e.wrap.app} in the symmetric case. 
Squaring it, taking expectations and applying Lemma~\ref{l.J.upperfluct}, we obtain the following analog of~\eqref{e.variance.quad.smack}:
\begin{align}
\label{e.splitJsquared}
\E \Biggl[
\biggl| \bfA(\cu_m) - \avsum_{z\in 3^n\Zd\cap\cu_m}
\bfA(z+\cu_n) \biggr|^2
\Biggr]
\leq
C 
\sum_{i=1}^d
\E \bigl[ \bfJ(\cu_n,e_i,\mathbf{B} e_i) \bigr]^2
+ C(1+|\mathbf{B}|)^4 3^{d(n-m)}
\,.
\end{align}
Alternatively, as in the proof of~\eqref{e.alternatively.tauk}, we can use the nonnegativity of
\begin{equation*}
\avsum_{z\in 3^n\Zd\cap\cu_m}
\bfA(z+\cu_n)
-
\bfA(\cu_m)\,,
\end{equation*}
to get  
\begin{equation}
\label{e.alternatively.tauk.nosymm}
\E \Biggl[
\biggl| \bfA(\cu_m) - \avsum_{z\in 3^n\Zd\cap\cu_m} 
\bfA(z+\cu_n) \biggr|^2
\Biggr]
\leq
C \sum_{k=n+1}^m \tau_k
\,.
\end{equation}
Inserting~\eqref{e.splitJsquared} and~\eqref{e.alternatively.tauk.nosymm} into~\eqref{e.variance.presplit.nosymm} yields~\eqref{e.variance.J2.nosymm}. 
\end{proof}

The combination of the previous two lemmas now yields the following generalization of~\eqref{e.combo}: for every~$k,m,n\in\N$ with~$m\beta < k < n< m$,
\begin{align}
\label{e.combo.nosymm}
\lefteqn{
\sup_{e\in B_1}
\Bigl (
\E \left[ J(\cu_m,e,(\shom_*-\khom)(\cu_m)e) \right]
+\E \left[ J^*(\cu_m,e,(\shom_*+\khom)(\cu_m)e) \right]
\Bigr )
} \quad & 
\notag \\ &
\leq 
C 
\sum_{j=n}^{m} 3^{j-m} 
\bigl( \tau_j + \var[ \a_*(\cu_j) ]  \bigr)
+C 3^{-(m-n)}
\notag \\ & 
\leq 
C\min \biggl \{
\sup_{|e|=1}
\E \bigl[ J\bigl (\cu_k,e,(\shom_*-\khom)(\cu_k) e\bigr ) +  J^*\bigl (\cu_k,e,(\shom_*+\khom)(\cu_k) e\bigr )\bigr]^2
\,,\, \sum_{j=k+1}^m \tau_j
\biggr\} 
\notag \\ & \qquad
+
C 
\bigl( 3^{-(m-n)} + 3^{-d(n-k)} \bigr)
+
C\sum_{j=n}^m 3^{j-m} \tau_j
\,.
\end{align}
At this point, the rest of the argument in the proof of Proposition~\ref{p.algebraic.nosymm} follows very closely to the proof of Proposition~\ref{p.algebraicrate.E}, starting from~\eqref{e.combo}. There are essentially no important differences. For the iteration, we can use Lemma~\ref{l.iteration} as stated. The initial smallness can be obtained by Lemma~\ref{l.pigeonhole}, which is valid in the nonsymmetric case if we replace~$\ahom_*(\cu_n)$ with~$(\shom_*-\khom)(\cu_n)$ in~\eqref{e.lograte}. To avoid excessive repetition, we omit these details. 

\smallskip

We likewise omit the derivation of Theorem~\ref{t.subadd.converge.nosymm} from Proposition~\ref{p.algebraic.nosymm} since the argument is exactly the same as the proofs of Theorem~\ref{t.subadd.converge} and Corollary~\ref{c.subadd.converge} in Chapter~\ref{s.subadd} (see page~\pageref{proof.t.subadd.converge}). Note that Lemmas~\ref{l.J.upperfluct} and~\ref{l.mathcalE.minscale} are valid as stated in the general nonsymmetric case since they only use subadditivity and stationarity, as stated in the paragraphs above the statements of those lemmas. 

\smallskip

This completes our presentation of Proposition~\ref{p.algebraic.nosymm} and Theorem~\ref{t.subadd.converge.nosymm}. We conclude this section with a remark, which is needed in the following sections, concerning localization for the minimal scale~$\X$ in Theorem~\ref{t.subadd.converge.nosymm}.

\begin{remark} 
\label{r.localX.nonsymm}
The statement of Lemma~\ref{l.localX}, in which the random variable~$\X$ is replaced by one which has local dependence, holds verbatim for the random variable~$\X$ appearing in the statement of Theorem~\ref{t.subadd.converge.nosymm}. Compared to the symmetric case, 
we only need to make superficial notation changes to the proof. 
\end{remark}

Theorem~\ref{t.quant.DP} generalizes verbatim to the general nonsymmetric case. 

\begin{theorem}
\label{t.quant.DP.nosymm}Theorem~\ref{t.quant.DP} is valid as stated without the symmetry assumption~\eqref{e.symm}. 
\end{theorem}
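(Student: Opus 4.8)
The plan is to observe that Theorem~\ref{t.quant.DP.nosymm} follows from precisely the same argument that proved Theorem~\ref{t.quant.DP}, once the symmetric ingredients are replaced by their doubled counterparts developed in Section~\ref{ss.nonsymm}. Recall that the proof of Theorem~\ref{t.quant.DP} had two inputs: first, the deterministic two-scale expansion estimate of Proposition~\ref{p.DP}, which reduces the homogenization error for the Dirichlet problem to the sublinearity of finite-volume correctors and flux correctors as measured by $\mathcal{R}(m)$; and second, Corollary~\ref{c.subadd.converge}, which provided the random minimal scale $\X$ controlling the multiscale quantity $\mathcal{E}(m)$. The first input does not use symmetry at all — as noted in Section~\ref{ss.variational} and again at the end of Section~\ref{ss.random}, the only place symmetry enters the homogenization machinery is in the construction and boundedness of the correctors $\phi_e$ and $\s_e$. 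So the real work is to produce finite-volume correctors in the nonsymmetric setting satisfying the analogue of~\eqref{e.FV.sublinearity}.

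First I would define the finite-volume correctors from the maximizer $S(\cdot,\cu_m,P,Q) \in \mathcal{S}(\cu_m)$ of $\bfJ(\cu_m,P,Q)$. By the splitting formula~\eqref{e.splittingformula.for.X}, any $S \in \mathcal{S}(\cu_m)$ decomposes into a piece $(\nabla v, \a\nabla v)$ with $v \in \mathcal{A}(\cu_m)$ and a piece $(\nabla v^*, \a^t\nabla v^*)$ with $v^* \in \mathcal{A}^*(\cu_m)$. Taking $S(\cdot,\cu_m,-P,0)$ with an appropriate choice of $P = P(e)$ (using~\eqref{e.Stovvstar}, so that the ``$v$-part'' reduces to the minimizer of $J(\cu_m, -e, 0)$), we set $\phi_{m,e}$ to be the difference of that minimizer and $\ell_e$, exactly as in the symmetric case, and we let $\s_{m,e}$ be a skew-symmetric matrix potential for the flux $\a\nabla v(\cdot,\cu_m,-e,0) - \ahom e$ — the mean-zero divergence condition needed to construct such a potential comes from~\eqref{e.A.formulas.nonsymm}. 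The key sublinearity bound is obtained the way~\eqref{e.FV.sublinearity} was: the multiscale Poincaré inequality (Proposition~\ref{p.MSP}) and the quadratic-response identity~\eqref{e.quadresp.nonsymm}, together with~\eqref{e.firstvar.nonsymm} and the gradient-to-flux control~\eqref{e.fluxmaps.nonsymm} from Lemma~\ref{l.J.basicprops.nonsymm}, bound $3^{-m}\|\nabla\phi_{m,e}\|_{\Hminusul(\cu_m)} + 3^{-m}\|\a(e+\nabla\phi_{m,e}) - \ahom e\|_{\Hminusul(\cu_m)}$ by $C + C\,\mathcal{E}(m)^{1/4}$ (with $\mathcal{E}(m)$ now the doubled version~\eqref{e.mcE.0.nonsymm}), using~\eqref{e.diagonalset.nonsymm} to convert $|\bfA(z+\cu_n) - \bfAhom|$ into $\bfJ$ terms. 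Here the Caccioppoli step must be done with Lemma~\ref{l.Caccioppoli.S} rather than the scalar Caccioppoli inequality, since the relevant object lives in $\mathcal{S}$. Feeding in Theorem~\ref{t.subadd.converge.nosymm} (i.e.\ Corollary~\ref{c.subadd.converge} in the nonsymmetric setting) and renaming $\delta,\alpha$ to absorb the fourth roots, we get~\eqref{e.FV.sublinearity} verbatim, and then~\eqref{e.fvc.bounds.symm} via~\eqref{e.L2toHminus} and the dual estimate for the flux corrector (the analogue of~\eqref{e.dualest.forflux}).

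With these finite-volume correctors in hand, I would invoke Proposition~\ref{p.DP} — whose statement and proof, as remarked in Section~\ref{ss.variational}, are agnostic to symmetry and apply verbatim to any correctors and flux correctors satisfying the appropriate equations, with $\mathcal{R}(m)$ replaced by the sum of the rescaled $\underline{L}^2$ norms of $\phi_{m,e}$ and $\s_{m,e}$ — to conclude the estimate~\eqref{e.quant.DP} with the same random minimal scale $\X$, which satisfies~\eqref{e.mmmbound1} by Theorem~\ref{t.subadd.converge.nosymm}. The main (and only genuine) obstacle in all of this is bookkeeping: one must be careful that the doubled quantity $\bfJ$ and the space $\mathcal{S}(\cu_m)$ really do feed into the multiscale Poincaré / quadratic-response mechanism the same way $J$ and $\mathcal{A}(\cu_m)$ did, and in particular that the Caccioppoli inequality for $\mathcal{S}$ is the correct replacement in Step~1 of the argument underlying Lemma~\ref{l.flatness.rules}; Lemma~\ref{l.Caccioppoli.S} is exactly what handles this. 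Since everything else is a transcription, I would state the theorem and give the proof as a short paragraph pointing to the above substitutions, exactly as the excerpt anticipates.
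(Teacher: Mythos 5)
Your overall architecture is the same as the paper's: reduce everything to a sublinearity estimate for finite-volume correctors and flux correctors, then reuse Proposition~\ref{p.DP} verbatim and appeal to Theorem~\ref{t.subadd.converge.nosymm} in place of Corollary~\ref{c.subadd.converge}. The gap is precisely at the one genuinely new ingredient of the nonsymmetric case: the choice of the dual parameter in the definition of the finite-volume corrector. You take the optimizer of $J(\cu_m,-e,0)$, subtract $\ell_e$, and build $\s_{m,e}$ as a potential for $\a\nabla v(\cdot,\cu_m,-e,0)-\ahom e$, asserting the mean-zero property from~\eqref{e.A.formulas.nonsymm}. When $\mhom\neq 0$ this is false. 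By~\eqref{e.Astar.formulas.nonsymm}, equivalently Lemma~\ref{l.flatness.v.nonsymm} applied with $P=(-e,0)$, the spatial averages of the pair $\bigl(\nabla v(\cdot,\cu_m,-e,0),\,\a\nabla v(\cdot,\cu_m,-e,0)\bigr)$ converge not to $(e,\ahom e)$ but to
\begin{equation*}
-\refl P-\bfAhom^{-1}\rota\refl P
=
\Bigl( \bigl(\Id-\shom^{-1}\mhom\bigr)e,\ \bigl(\shom+\mhom\shom^{-1}\mhom^t\bigr)e \Bigr)\,.
\end{equation*}
Hence $\nabla\phi_{m,e}$ and $\a(e+\nabla\phi_{m,e})-\ahom e$ each carry a nonzero constant mean of order $|\shom^{-1}\mhom e|$, and since $3^{-m}\|c\|_{\Hminusul(\cu_m)}\simeq |c|$ for a constant $c$, the claimed analogue of~\eqref{e.FV.sublinearity} fails; likewise~\eqref{e.dualest.forflux} gives no smallness for $\s_{m,e}$ because $\|\g_{m,e}\|_{\Hminusul(\cu_m)}\sim 3^m$. (A related slip: the $S(\cdot,\cu_m,\cdot,0)$ you invoke via~\eqref{e.Stovvstar} also contains a nonzero adjoint component $v^*$ that your construction silently discards.)

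The repair is the paper's definition~\eqref{e.FVCtomu}: take $\phi_{m,e}:=v(\cdot,\cu_m,-e,\mhom e)-\ell_e$, with second argument $\mhom e$ rather than $0$, where $\mhom$ is the skew-symmetric part of $\ahom$. Using $\bfAhom^{-1}=\rota\bfAhom\rota$ and $\mhom^t=-\mhom$ one checks
\begin{equation*}
\bigl( \refl  + \bfAhom^{-1} \rota \refl  \bigr) \begin{pmatrix} -e \\ \mhom e \end{pmatrix}
=
- \begin{pmatrix} e \\ \ahom e \end{pmatrix}\,,
\end{equation*}
so Lemma~\ref{l.flatness.v.nonsymm} yields exactly~\eqref{e.FVC}: the gradient average homes in on $e$ and the flux average on $\ahom e$, and then~\eqref{e.L2toHminus} and~\eqref{e.dualest.forflux} give~\eqref{e.FV.sublinearity.nosymm}, after which the proof of Theorem~\ref{t.quant.DP} indeed runs verbatim. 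Alternatively you could keep $q=0$, but then you must relabel slopes, subtracting $\ell_{(\Id-\shom^{-1}\mhom)e}$ and centering the flux at $(\shom+\mhom\shom^{-1}\mhom^t)e$; either way the skew part of $\ahom$ must enter the construction, which your ``exactly as in the symmetric case'' recipe does not allow. A minor attribution point: Lemma~\ref{l.Caccioppoli.S} is needed inside the proof of Proposition~\ref{p.algebraic.double} (behind Theorem~\ref{t.subadd.converge.nosymm}), not in the corrector estimate itself, but that is harmless.
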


The proof of Theorem~\ref{t.quant.DP} does not use the assumption of~\eqref{e.symm} per se; it just uses the estimate~\eqref{e.fvc.bounds.symm}, which was proved under the symmetry assumption. We restate it here: 
\begin{equation}
\label{e.FV.sublinearity.nosymm}
3^{-m} \| \phi_{m,e} \|_{\underline{L}^2(\cu_m)} 
+
3^{-m} \| \bfs_{m,e} \|_{\underline{L}^2(\cu_m)} 
\leq 
C \delta \bigl( \X 3^{-m} \bigr)^{\frac\theta 2}
\,.
\end{equation}
We proved this inequality in the symmetric case by combining~\eqref{e.flatnessqual.sec24} and~\eqref{e.flatnessqual.flux.sec24}, which were proved in Section~\ref{ss.variational}, with the conclusion of 
Corollary~\ref{c.subadd.converge}, in view of the definition of~$\mathcal{E}(m)$ in~\eqref{e.mcE.0}. 
To prove Theorem~\ref{t.quant.DP.nosymm}, we need to define finite-volume correctors and show that the estimate for the coarse-grained matrices stated in Theorem~\ref{t.subadd.converge.nosymm} allows us to control them sufficiently well that we can obtain the same estimate.

\smallskip

The finite-volume correctors are defined in the general case of nonsymmetric coefficients as follows:
\begin{equation}
\label{e.FVCtomu}
\phi_{m,e}: = v(\cdot,\cu_m,0,\shom_*(\cu_m) e) - \ell_e
\,, \quad e\in \Rd\,.
\end{equation}
That is, we define~$\phi_{m,e}$ as the maximizer for~$J(\cu_m,0,\shom_*(\cu_m) e)$ minus the affine function~$\ell_e$ with slope~$e$. Observe that, by~\eqref{e.a.formulas.nosymm}, the spatial averages of~$\nabla \phi_{m,e}$ should be close to zero. 
To obtain the bounds~\eqref{e.FV.sublinearity.nosymm}, we follow the proof in the symmetric case. We first apply the multiscale Poincar\'e inequality (Proposition~\ref{p.MSP}) to get
%and follow a similar computation to the one in~\eqref{e.flatnessqual.sec24}, to get
%-p +\s_*^{-1} (U)\bigl ( q + \k(U)p \bigr )
\begin{align}
\label{e.phime.grad.decomp}
\| \nabla \phi_{m,e} \|_{\Hminusul(\cu_m)} 
&
\leq
C
+
C\!\!
\sum_{n=0}^{{m-1}} 3^{n} 
\biggl( \, \avsum_{z\in 3^n\Zd\cap \cu_{m}} 
\left| \left( \nabla v(\cdot,\cu_m,0,\shom_*(\cu_m) e)  \right)_{z+\cu_n} - e \right|^2 \biggr)^{\!\!\nicefrac12}
\,.
\end{align}
We estimate the second term on the right side using the triangle inequality,~\eqref{e.a.formulas.nosymm} and~\eqref{e.quadresp.nosymm}, to get 
\begin{align}
\lefteqn{
\avsum_{z\in 3^n\Zd\cap \cu_{m}} 
\left| \left( \nabla v(\cdot,\cu_m,0,\shom_*(\cu_m) e)  \right)_{z+\cu_n} - e \right|^2
} 
\qquad & 
\notag \\ & 
\leq
2 \avsum_{z\in 3^n\Zd\cap \cu_{m}} 
\bigl| 
\s_*^{-1} (z{+}\cu_n) \shom_*(\cu_m) - \Id
\bigr|^2
\notag \\ & \qquad \quad 
+
2\avsum_{z\in 3^n\Zd\cap \cu_{m}} 
\bigl\| 
\nabla v(\cdot,\cu_m,0,\shom_*(\cu_m) e) - \nabla v(\cdot,z+\cu_n,0,\shom_*(\cu_m) e)
\bigr\|_{\underline{L}^2(z{+}\cu_m)}^2
\notag \\ & 
\leq
C\avsum_{z\in 3^n\Zd\cap \cu_{m}} 
\left| 
\s_*(z+\cu_n) - \shom_*(\cu_m) \right|^2
\notag \\ & \qquad \quad 
+
C\avsum_{z\in 3^n\Zd\cap \cu_{m}} 
\bigl( 
J(z+\cu_n,0,\shom_*(\cu_m) e) - J(\cu_m,0,\shom_*(\cu_m) e) \bigr)
\label{}
\notag \\ & 
\leq 
C\avsum_{z\in 3^n\Zd\cap \cu_{m}} 
\Bigl( 
\bigl| 
\s_*(z+\cu_n) - \shom_*(\cu_m) \bigr|^2
+
\bigl| \s_*(z+\cu_n) - \s_*(\cu_m) \bigr| 
\Bigr)
\label{e.triangledown.nosymm.betterer}
\\ & 
\leq 
C\mathcal{E}(m)^2 \,,
\label{e.triangledown.nosymm}
\end{align}
where we used~\eqref{e.JJstar1} in the penultimate line, and the definition of~$\mathcal{E}(m)$ in~\eqref{e.mcE.0.nonsymm} in the last line; see the remark in the sentence containing~\eqref{e.oneway} and~\eqref{e.anotherway}. 
Plugging this back into~\eqref{e.phime.grad.decomp} yields
\begin{equation}
\label{e.FCV.nonsymm.grad}
3^{-m} \| \phi_{m,e} \|_{\underline{L}^2(\cu_m)} 
\leq C 
3^{-m} \| \nabla \phi_{m,e} \|_{\Hminusul(\cu_m)} 
\leq
C3^{-m} 
+ 
C\mathcal{E}(m)
\,.
\end{equation}
The bound for the fluxes is obtained similarly, using the second line of~\eqref{e.a.formulas.nosymm} rather than the first. We omit the computation. 
The result is
\begin{equation}
\label{e.FCV.nonsymm.flux}
3^{-m} \| \a \nabla\phi_{m,e} - \ahom e \|_{\Hminusul(\cu_m)} 
\leq
C3^{-m} 
+ 
C\mathcal{E}(m)
\,.
\end{equation}
Together with Theorem~\ref{t.subadd.converge.nosymm} we then deduce that, for every~$\delta \in (0,1)$, there exist a constants~$\alpha(\beta,d,\lambda,\Lambda) \in (0,\tfrac12]$ and~$C(\delta,\dataref)<\infty$, and a random minimal scale~$\X$ such that~$3^m \geq \X$ implies 
\begin{equation} 
\label{e.FVC}
3^{-m} \| \phi_{m,e} \|_{\underline{L}^2(\cu_m)} + 3^{-m} \| \a \nabla\phi_{m,e} - \ahom e \|_{\Hminusul(\cu_m)}  
\leq 
\delta ( \X 3^{-m})^{\alpha}
\,.
\end{equation}
We then define finite-volume flux correctors as in~\eqref{e.fluxcorrect.sec24}. The bound~\eqref{e.dualest.forflux}, together with the previous display, then implies the desired bound on the finite-volume flux correctors. 

\smallskip

This gives us exact analogs of~\eqref{e.flatnessqual.sec24} and~\eqref{e.flatnessqual.flux.sec24}, allowing us to follow the rest of the proof of the estimate~\eqref{e.fvc.bounds.symm} to obtain~\eqref{e.FV.sublinearity.nosymm}, with the aid of Theorem~\ref{t.subadd.converge.nosymm}. With this estimate in hand, as mentioned above, we no longer need to distinguish between the symmetric and nonsymmetric cases. The rest of the proof of Theorem~\ref{t.quant.DP} now applies verbatim to the nonsymmetric case.
This concludes the proof of Theorem~\ref{t.quant.DP.nosymm}.

\subsection{Coarse-grained Poincar\'e and Caccioppoli inequalities}
\label{ss.coarse.graining.inequalities}

In this section, we introduce ``coarse-grained'' Poincar\'e and Caccioppoli inequalities that refine the classical estimates, particularly for coefficient fields that exhibit highly oscillatory behavior on small scales but are more regular on larger scales. While the results presented here are not essential for the main text,\footnote{In particular, Propositions~\ref{p.coarse.grained.Poincare} and~\ref{p.coarse.grained.Caccioppoli} are not used elsewhere in the text and this section can be skipped on a first reading.} we include them to help build intuition about the coarse-grained coefficients because they hold intrinsic and independent interest and because they serve as an introduction to the ideas behind some recent advances in the field of quantitative homogenization~\cite{AK.HC}.

\smallskip

We first recall the statements of the classical Poincar\'e and Caccioppoli inequalities relative to a given uniformly elliptic coefficient field~$\a(x)$ with symmetric and anti-symmetric parts denoted by~$\s(x)$ and~$\k(x)$, respectively, and satisfying the uniform ellipticity condition~\eqref{e.ellipticity.nonsymm} with ellipticity constants~$\lambda$ and~$\Lambda$. 

\begin{itemize}

\item The Poincar\'e inequality asserts that there exists a universal~$C<\infty$ such that, for every~$m\in\N$ and~$u\in H^1(\cu_m)$, 
\begin{equation}
\label{e.classical.Poincare}
3^{-2m} \fint_{\cu_m} \bigl| u - (u)_{\cu_m} \bigr|^2 
\leq
C\lambda^{-1}\fint_{\cu_m} 
\nabla u \cdot \a\nabla u \,.
\end{equation}
\item The Caccioppoli inequality asserts that there exists a universal~$C<\infty$ such that, for every~$m\in\N$ and solution~$u \in \A(\cu_m)$, we have 
\begin{equation}
\label{e.classical.Caccioppoli}
\fint_{\cu_{m-1}} 
\nabla u \cdot \a\nabla u
\leq 
C \Lambda  
3^{-2m} \fint_{\cu_m}  | u-(u)_{\cu_m}|^2 
\,.
\end{equation}
\end{itemize}
These statements are close to being converses of each other, although the cube on the left side of~\eqref{e.classical.Caccioppoli} must be smaller than the one on the right side. Each of these statements is \emph{scale-invariant} in the sense that the statement for general~$m$ follows from the statement for~$m=0$ and a change of coordinates. 

\smallskip

The Poincar\'e inequality in~\eqref{e.classical.Poincare} is stated for the quadratic form~$\nabla u \cdot \a \nabla u$ on the right side instead of~$| \nabla u|^2$; the former however follows immediately from the latter by the first inequality of ~\eqref{e.ellipticity.nonsymm}, which implies that~$\lambda^{-1}\nabla u \cdot \a \nabla u \geq |\nabla u|^2$. The first nontrivial Neumann eigenvalue of Laplacian in the cube~$\cu_m$ is $\pi^2$, so~\eqref{e.classical.Poincare} is valid with~$C= \pi^2 < 10$. The reader will note that the statement of the Poincar\'e is really a statement about the symmetric part~$\s(x)$ of~$\a(x)$---the anti-symmetric part~$\k(x)$ plays no role. 

\smallskip

The proof of~\eqref{e.classical.Caccioppoli} is quite straightforward, so we briefly recall it here. We assume that~$(u)_{\cu_m}=0$ and test the equation for~$u$ with $\varphi^2u$ for some~$\varphi\in C^\infty_c(\cu_m)$ to get 
\begin{align}
\label{e.cacc.cauch}
\int_{\cu_m} \varphi^2 \nabla u \cdot \a\nabla u
&
=
-\int_{\cu_m} 2 \varphi u \nabla \varphi \cdot \a\nabla u
\leq
\frac12 \int_{\cu_m} \varphi^2 \nabla u \cdot \a\nabla u
+
2\Lambda \int_{\cu_m} |\nabla \varphi|^2 u^2 
\,.
\end{align}
Here we used Young's inequality and the fact that~$| \a e | \leq \Lambda^{\nicefrac12} (e\cdot \a (x) e)^{\sfrac12}$ for every~$e\in\Rd$, a consequence of the second inequality of~\eqref{e.ellipticity.nonsymm}. Reabsorbing the first term on the right side of~\eqref{e.cacc.cauch} and then selecting~$\varphi$ so that~$\indc_{\cu_{m-1}} \leq \varphi \leq \indc_{\cu_{m}}$ and~$\| \nabla \varphi\|_{L^\infty(\cu_m)} \leq 2\cdot 3^{-m}$, we obtain~\eqref{e.classical.Caccioppoli} with the constant~$C = 16$. 

\smallskip

The ``coarse-grained'' Poincar\'e and Caccioppoli inequalities presented in the statements below improve upon the classical statements by replacing the ellipticity constants~$\lambda$ and~$\Lambda$ appearing on the right sides of~\eqref{e.classical.Poincare} and~\eqref{e.classical.Caccioppoli} by the \emph{coarse-grained ellipticity constants} defined for each~$m\in\N$ and~$s\in [0,2)$ by 
\begin{equation}
\label{e.coarse.grained.ellipticity}
\left\{
\begin{aligned}
& \overline{\Lambda}_{s}(\cu_m)
:= 
\sup_{k \in \Z, \, k\leq m} 
3^{s(k-m)} 
\max_{z\in 3^k\Zd \cap \cu_m} 
\bigl| \b(z+\cu_k) \bigr|
\,, \\  &
\overline{\lambda}_{s}(\cu_m) 
:=
\biggl(
\sup_{k \in \Z, \, k\leq m} 
3^{s(k-m)} 
\max_{z\in 3^k\Zd \cap \cu_m} 
\bigl| \s_*^{-1}(z+\cu_k) \bigr|\biggl)^{\!-1}
\,.
\end{aligned}
\right.
\end{equation}
It is immediate from~\eqref{e.a.bounds.nosymm} that we have~$\lambda \leq \overline{\lambda}_{s}(\cu_m) \leq \overline{\Lambda}_{s}(\cu_m)\leq \Lambda$, in general. The coarse-grained ellipticity ratio~$\overline{\Lambda}_{s}(\cu_m)/\overline{\lambda}_{s}(\cu_m) $ may be smaller (often much smaller!) than the ratio of uniform ellipticity~$\Lambda/\lambda$. The constants~$\overline{\Lambda}_{s}(\cu_m)$ and~$\overline{\lambda}_{s}(\cu_m)$ are scale-dependent, with the exponent~$s\in [0,2)$ determining the extent to which small scales are discounted. Note that if we take~$s=0$ in~\eqref{e.coarse.grained.ellipticity}, then we recover the usual uniform ellipticity constants (by the Lebesgue differentiation theorem). As~$s$ increases,  the scale discounting becomes more extreme.

\smallskip

The coarse-grained Poincar\'e and Caccioppoli inequalities assert that~\eqref{e.classical.Poincare} and~\eqref{e.classical.Caccioppoli} remain valid if we replace~$\lambda$ and~$\Lambda$ with the coarse-grained ellipticity constants. The price to pay is that the constants~$C$ depend on~$d$ and the scaling discount exponents~$s$, the Poincar\'e inequality is only valid for solutions of the equation (unless~$\a(x)$ is symmetric), and the Caccioppoli inequality has an additional factor, namely a power of the coarse-grained ellipticity ratio. 
\begin{proposition}[Coarse-grained Poincar\'e inequality]
\label{p.coarse.grained.Poincare}
There exists~$C(d)<\infty$ such that, for every~$s\in [0,2)$ and~$u\in \A(\cu_m)$, 
\begin{equation}
\label{e.coarse.grained.Poincare}
3^{-2m} \fint_{\cu_m} \bigl| u - (u)_{\cu_m} \bigr|^2 
\leq
C(2-s)^{-2} \overline{\lambda}^{-1}_{s} (\cu_m) \fint_{\cu_m} 
\nabla u \cdot \a\nabla u
\,.
\end{equation}
Moreover, in the symmetric case~$\a=\a^t$, the inequality holds for every~$u\in H^1(\cu_m)$. 
\end{proposition}

\begin{proposition}[Coarse-grained Caccioppoli inequality]
\label{p.coarse.grained.Caccioppoli}
Let~$s_1,s_2\in [0,2)$ be such that~$s_1+s_2 < 2$.
There exists~$C(s_1,s_2,d)<\infty$ such that, for every~$u\in\A(\cu_m)$,  
\begin{equation}
\label{e.coarse.grained.Caccioppoli}
\fint_{\cu_{m-1}} 
\nabla u \cdot \a\nabla u
\leq 
C \overline{\Lambda}_{s_1}(\cu_m) 
\biggl( \frac{\overline{\Lambda}_{s_1}(\cu_m)}{\overline{\lambda}_{s_2}(\cu_m)} \biggr)^{\!\frac{s_1}{2-s_1-s_2}}
3^{-2m}  \fint_{\cu_m}  | u-(u)_{\cu_m}|^2 
\,.
\end{equation}
\end{proposition}

We remark that the classical inequalities are recovered from the above statements by taking~$s=s_1=s_2=0$.

\smallskip 

The proofs of Propositions~\ref{p.coarse.grained.Poincare} and~\ref{p.coarse.grained.Caccioppoli} are based on the coarse-graining estimates stated in~\eqref{e.energymaps.nonsymm} and~\eqref{e.energymaps.nonsymm.dual} as well as the functional inequalities for weak norms given in the statement and proof the multiscale Poincar\'e inequality (Proposition~\ref{p.MSP}). 

\smallskip

We begin with the proof of the coarse-grained Poincar\'e inequality, which is much simpler. 

\begin{proof}[{Proof of Proposition~\ref{p.coarse.grained.Poincare}}]
Select~$u\in \A(\cu_m)$ and apply~\eqref{e.MSPw} from Proposition~\ref{p.MSP} and then~\eqref{e.energymaps.nonsymm} to get 
\begin{align*}
%\label{e.}
\bigl\| u - (u)_{\cu_m} \bigr\|_{\underline{L}^2(\cu_m)} 
&
\leq 
C\! \sum_{n=-\infty}^{{m}} \!
3^{n} 
\biggl( 
\avsum_{z\in 3^n\Zd\cap \cu_{m}} 
\bigl|  ( \nabla u )_{z+\cu_n} \bigr|^2
\biggr)^{\!\!\nicefrac12}
\notag \\ & 
\leq 
C\! \sum_{n=-\infty}^{{m}} \!
3^{n} 
\biggl( 
\avsum_{z \in 3^n\Zd\cap \cu_m} \! \!
\bigl| \s_*^{-1}(z+\cu_n) \bigr|  
( \nabla u   )_{z+\cu_n} 
\cdot
\s_*(z+\cu_n) 
( \nabla u )_{z+\cu_n} \!
\biggr)^{\!\!\nicefrac12}
\notag \\ & 
\leq
C \sum_{n=-\infty}^{{m}} 3^{n} 
\max_{z\in 3^n\Zd\cap \cu_m}
\bigl| \s_*^{-1}(z+\cu_n) \bigr|^{\nicefrac12}   
\biggl( 
\avsum_{z\in 3^n\Zd\cap \cu_m} 
\fint_{z+\cu_n}  \nabla u \cdot \a\nabla u
\biggr)^{\!\nicefrac12}
\notag \\ & 
= 
C \biggl(  \fint_{\cu_m} 
\nabla u \cdot \a\nabla u\biggr)^{\nicefrac12}
\sum_{n=-\infty}^{{m}} 3^{n} 
\max_{z\in 3^n\Zd\cap \cu_m }
\bigl| \s_*^{-1}(z+\cu_n) \bigr|^{\nicefrac12}  
\notag \\ & 
\leq 
C \biggl(  \fint_{\cu_m} 
\nabla u \cdot \a\nabla u\biggr)^{\nicefrac12}
\overline{\lambda}^{-\nicefrac12}_{s} (\cu_m)
\sum_{n=-\infty}^{{m}} 3^{n-\frac12s(n-m)} 
\notag \\ & 
=
C3^m(2-s)^{-1} \overline{\lambda}^{-\nicefrac12}_{s} (\cu_m) 
\biggl(  \fint_{\cu_m} 
\nabla u \cdot \a\nabla u\biggr)^{\nicefrac12}
\,.
\end{align*}
In the symmetric case, we may apply~\eqref{e.energymaps} instead of~\eqref{e.energymaps.nonsymm}, which we note is valid for all~$u\in H^1(\cu_m)$, not only solutions. 
The proof is now complete. 
\end{proof}

Before proving the coarse-grained Caccioppoli inequality, we present a preliminary estimate that closely resembles~\eqref{e.coarse.grained.Caccioppoli}, but includes an additional term on the right-hand side and employs slightly stronger versions of the coarse-grained ellipticity constants.

\smallskip

The main step in the proof of the classical Caccioppoli inequality lies in using ellipticity to bound the middle expression in~\eqref{e.cacc.cauch}: the integral~$\fint_{\cu_m} u\varphi \nabla \varphi \cdot \a\nabla u$. Our goal is to estimate this integral in terms of coarse-grained ellipticity constants rather than  the uniform ellipticity upper bound~$\Lambda$. Our estimate is found below in~\eqref{e.the.real.Caccioppoli}, which should be compared with the inequality in~\eqref{e.cacc.cauch}. The final term on the right-hand side of~\eqref{e.the.real.Caccioppoli} has no counterpart in~\eqref{e.cacc.cauch}; this extra term represents a coarse-graining error, which can typically be made very small by a separation of scales (selecting the free parameter~$n$ so that~$3^n$ is small compared to the length scale~$\min\{ \| \nabla  \phi\|_{L^\infty}^{-1}, \| \nabla^2  \phi\|_{L^\infty}^{-\nicefrac12} \}$ of the cutoff function~$\varphi$).

\begin{lemma}
\label{l.coarse.grained.Caccioppoli.prelim}
Define variants of~\eqref{e.coarse.grained.ellipticity} for each~$m,n\in\N$ with~$n\leq m$ and~$s\in (0,1]$ by 
\begin{equation}
\label{e.CG.lambdas.ALT}
\left\{
\begin{aligned}
& \overline{\Lambda}_{s,n}(\cu_m)
:=
\biggl(  
\sum_{k=-\infty}^{n} 
3^{s(k-n)} 
\max_{z\in 3^k\Zd \cap \cu_m} 
\bigl| \b(z+\cu_k) \bigr|^{\nicefrac12} \biggr)^{\!2}
\,, \\  &
\overline{\lambda}_{s,n}(\cu_m) 
:=
\biggl( 
\sum_{k=-\infty}^{n} 
3^{s(k-n)} 
\max_{z\in 3^k\Zd \cap \cu_m} 
\bigl| \s_*^{-1}(z+\cu_k) \bigr|^{\nicefrac12} 
\biggr)^{\!-2}
\,.
\end{aligned}
\right.
\end{equation}
There exists~$C(d)<\infty$ such that, for every~$\ep \in (0,1]$,~$m,n\in\N$ with~$n<m$,~$\varphi\in C^2 (\cu_m)$ and solution~$u\in \A(\cu_m)$,
we have
\begin{align}
\label{e.the.real.Caccioppoli}
\lefteqn{
\biggl| \fint_{\cu_m} u\varphi \nabla \varphi \cdot \a\nabla u 
\biggr| 
} \qquad &
\notag \\ &
\leq
\ep \fint_{\cu_m} \varphi^2 \nabla u\cdot \a\nabla u 
+
\frac C\ep\overline{ \Lambda}_{s,n}(\cu_m) \bigl( \bigl\|  \nabla \varphi  \bigr\|_{L^\infty}^2 
+
 3^{2n} \bigl\|  \nabla^2 \varphi  \bigr\|_{L^\infty}^2 \bigr)   \fint_{\cu_m} u^2
\notag \\ & \qquad 
+
C\Bigl( \frac1{\ep s^2 }\frac{\overline{\Lambda}_{s,n}(\cu_m)}{\overline{\lambda}_{1-s,n}(\cu_m)} 3^{2n} \|  \nabla \varphi\|_{L^\infty}^2 + \ep 3^{4n} \| \nabla^2 \varphi\|_{L^\infty}^2 \Bigr)   
\fint_{\cu_m} \nabla u\cdot \a\nabla u
\end{align}
and, consequently, if~$\varphi^2 u \in H^1_0(\cu_m)$, then 
\begin{align}
\label{e.coarse.grain.your.Caccioppoli.withextraterm}
\lefteqn{ 
\fint_{\cu_m} 
\varphi^2 \nabla u \cdot \a\nabla u
} \qquad & 
\notag \\ & 
\leq 
C
\overline{\Lambda}_{s,n}(\cu_m) 
\bigl( \bigl\|  \nabla \varphi  \bigr\|_{L^\infty}^2 
+
 3^{2n} \bigl\|  \nabla^2 \varphi  \bigr\|_{L^\infty}^2 \bigr)
\fint_{\cu_m}  u^2 
\notag \\ & \qquad
+
C
\Bigl( \frac1{s^2 \ep}\frac{\overline{\Lambda}_{s,n}(\cu_m)}{\overline{\lambda}_{1-s,n}(\cu_m)} 3^{2n} \| \nabla \varphi\|_{L^\infty}^2 + \ep 3^{4n} \| \nabla^2 \varphi\|_{L^\infty}^2   \Bigr)  
\fint_{\cu_m} \nabla u\cdot \a\nabla u
\,.
\end{align}
\end{lemma}
\begin{proof}
The second inequality~\eqref{e.coarse.grain.your.Caccioppoli.withextraterm} follows from the first one~\eqref{e.the.real.Caccioppoli}. To see this, we test the equation for~$u\in \A(\cu_m)$ with~$\varphi^2 u$, as in~\eqref{e.cacc.cauch}, to obtain 
\begin{align}
%\label{e.Caccioppoli.testing}
\fint_{\cu_m} 
\varphi^2 \nabla u \cdot \a\nabla u 
=
- \fint_{\cu_m} 2 u\varphi \nabla \varphi \cdot \a\nabla u
\,.
\end{align}
Applying~\eqref{e.the.real.Caccioppoli} with~$\ep = \nicefrac14$ to estimate the right side, we then reabsorb the integral of~$\varphi^2 \nabla u \cdot \a\nabla u$ on the left side. The result is~\eqref{e.coarse.grain.your.Caccioppoli.withextraterm}.

\smallskip

We turn to the proof of~\eqref{e.the.real.Caccioppoli}. Fix~$m,n\in\N$ with~$n<m$ and split the integral on the left side of~\eqref{e.the.real.Caccioppoli} as 
\begin{align}
\label{e.Caccioppoli.testing}
\fint_{\cu_m} u\varphi \nabla \varphi \cdot \a\nabla u
=
\avsum_{z\in 3^n\Zd \cap \cu_m} 
\fint_{z+\cu_n} u\varphi \nabla \varphi \cdot \a\nabla u
\,.
\end{align}
We intend to apply the functional inequality~\eqref{e.general.Besov.pairing} to the integrals on the right side of~\eqref{e.Caccioppoli.testing}. 
Recall that this inequality asserts that, for every~$s\in(0,1]$ and~$f,g\in L^2(z+\cu_n)$ with~$(g)_{z+\cu_n} = 0$, 
\begin{align}
\label{e.general.Besov.pairing.again}
\biggl| \fint_{z+\cu_n} f g  \biggr| 
&
\leq 
3^{d+s}
\sup_{k\leq n} 
3^{-s(k-n)} 
\biggl( 
\avsum_{z'\in 3^{k} \Zd\cap (z+\cu_n)} 
\fint_{z'+\cu_k} 
\bigl| g -
(g)_{z'+\cu_{k}} \bigr|^2 
\biggr)^{\!\nicefrac12}
\notag \\ & \qquad\qquad\qquad \times
\sum_{k=-\infty}^{n-1} 
3^{s(k-n)}
\biggl( 
\avsum_{z'\in 3^{k} \Zd\cap (z+\cu_n)} 
\bigl| (f)_{z'+\cu_{k}} \bigr|^2 
\biggr)^{\!\nicefrac12} 
\,.
\end{align}
We will apply~\eqref{e.general.Besov.pairing.again} with~$f = \a\nabla u$ and~$g = u\varphi \nabla \varphi - (u\varphi \nabla \varphi)_{z+\cu_n}$. 
We first estimate each of the factors on the right side. 

\smallskip

For the second factor, we use the coarse-graining inequality~\eqref{e.energymaps.nonsymm.dual} to find  
\begin{align}
\label{e.Cacc.first.factor}
\lefteqn{ 
\sum_{k=-\infty}^{n-1} 
3^{s(k-n)}
\biggl( 
\avsum_{z'\in 3^{k} \Zd\cap (z+\cu_n)} 
\bigl| ( \a\nabla u)_{z'+\cu_{k}} \bigr|^2 
\biggr)^{\!\nicefrac12} 
} \quad & 
\notag \\ &
\leq 
\sum_{k=-\infty}^{n-1} 
3^{s(k-n)}
\biggl( 
\avsum_{z'\in 3^{k} \Zd\cap (z+\cu_n)} 
\bigl| \b(z'+\cu_k) \bigr| 
( \a\nabla u)_{z'+\cu_{k}} \cdot \b^{-1}(z'+\cu_k)( \a\nabla u)_{z'+\cu_{k}}
\biggr)^{\!\nicefrac12} 
\notag \\ & 
\leq 
\sum_{k=-\infty}^{n-1} 
3^{s(k-n)} 
\max_{z'\in 3^k\Zd \cap (z+\cu_n)} 
\bigl| \b(z'+\cu_k) \bigr| ^{\nicefrac12} 
\biggl( 
\avsum_{z'\in 3^{k} \Zd\cap (z+\cu_n)} 
\fint_{z'+\cu_k} \nabla u \cdot \a\nabla u
\biggr)^{\!\nicefrac12} 
\notag \\ & 
=
\sum_{k=-\infty}^{n-1} 
3^{s(k-n)} 
\max_{z'\in 3^k\Zd \cap (z+\cu_n)} 
\bigl| \b(z'+\cu_k) \bigr| ^{\nicefrac12} 
\biggl( 
\fint_{z+\cu_n} \nabla u \cdot \a\nabla u
\biggr)^{\!\nicefrac12} 
\notag \\ & 
\leq 
\overline{\Lambda}_{s,n}^{\nicefrac12} 
(\cu_m)
\biggl( 
\fint_{z+\cu_n} \nabla u \cdot \a\nabla u
\biggr)^{\!\nicefrac12}
\,.
\end{align}
The estimate for the first factor is more complicated, because we need to make~$\nabla u$ appear. We begin by observing that
\begin{align*}
\lefteqn{ 
\fint_{z'+\cu_k} 
\bigl| u\varphi \nabla \varphi  -
(u\varphi \nabla \varphi )_{z'+\cu_{k}} \bigr|^2
} \ & 
\notag \\ & 
\leq
6\fint_{z'+\cu_k} 
\varphi^2 \big| u - (u)_{z'+\cu_{k}}  \bigr|^2  
\bigl| \nabla \varphi\bigr|^2
+
3\bigl| 
(u)_{z'+\cu_{k}} \bigr|^2 
\fint_{z'+\cu_k} \bigl|
 \varphi \nabla \varphi
- ( \varphi \nabla \varphi)_{z'+\cu_{k}} \bigr|^2
\notag \\ & 
\leq 
C \| \varphi \nabla \varphi\|_{L^\infty(z'+\cu_k)} ^2
\fint_{z'+\cu_k} 
 \big| u - (u)_{z'+\cu_{k}}  \bigr|^2  
+
C 3^{2k}\bigl\| | \nabla^2 \varphi|  |\varphi| + | \nabla \varphi|^2 \bigr\|_{L^\infty(z'+\cu_{k})}^2  
\fint_{z'+\cu_k}
u^2 
\,.
\end{align*}
%Therefore, we have
%\begin{align*}
%\fint_{z'+\cu_k} \!
%\bigl| u\varphi \nabla \varphi  -
%(u\varphi \nabla \varphi )_{z'+\cu_{k}} \bigr|^2
%&  
%\leq 
%C\| \nabla \varphi\|_{L^\infty} ^2
%\fint_{z'+\cu_k}  \!\!
%\varphi^2 \bigl| u  -
%(u )_{z'+\cu_{k}} \bigr|^2 
%\notag \\ & \qquad 
%+
%C 3^{2k}\bigl\| | \nabla^2 \varphi|  |\varphi| + | \nabla \varphi|^2 \bigr\|_{L^\infty(z'+\cu_{k})}^2  
%\fint_{z'+\cu_k} \!
%u^2 
%\,. 
%\end{align*}
The first term in the previous display is estimated using Proposition~\ref{p.MSP}, namely~\eqref{e.MSPw}. We get 
\begin{align*}
%\label{e.}
\fint_{z'+\cu_k} 
\bigl| u  -
(u )_{z'+\cu_{k}} \bigr|^2 
&
\leq 
C\biggl(  \sum_{l=-\infty}^{{k}} 3^{l} 
\biggl( 
\avsum_{z''\in 3^l\Zd\cap (z'+\cu_{k})} 
\bigl|  ( \nabla u   )_{z''+\cu_l} \bigr|^2
\biggr)^{\!\!\nicefrac12}\biggr)^2
\,.
\end{align*}
Summing this over~$z'\in 3^k\Zd \cap (z+\cu_n)$ and using H\"older's inequality, we get, for every parameter~$s \in (0,1)$, 
\begin{align*}
\lefteqn{ 
\avsum_{z'\in 3^k\Zd\cap (z+\cu_n)}
\fint_{z'+\cu_k}
\bigl| u   -
(u  )_{z'+\cu_{k}} \bigr|^2 
} \qquad \qquad & 
\notag \\ & 
\leq 
C \avsum_{z'\in 3^k\Zd\cap (z+\cu_n)}
\biggl(  \sum_{l=-\infty}^{{k}} 3^{l} 
\biggl( 
\avsum_{z''\in 3^l\Zd\cap (z'+\cu_{k})} 
\bigl|  ( \nabla u )_{z''+\cu_l} \bigr|^2
\biggr)^{\!\!\nicefrac12}\biggr)^2
\notag \\ & 
\leq 
C
\underbrace{
\sum_{l=-\infty}^k
3^{2sl}
}_{
\leq Cs^{-1}3^{2sk} 
}
\sum_{l=-\infty}^k 
3^{2(1-s)l} 
\avsum_{z'\in 3^l\Zd\cap (z+\cu_{n})} 
\bigl|  ( \nabla u )_{z'+\cu_l} \bigr|^2
\,.
\end{align*}
Next, we apply the coarse-graining inequality~\eqref{e.energymaps.nonsymm} in a similar manner as in the proof of Proposition~\ref{p.coarse.grained.Poincare} to get 
\begin{align*}
%\label{e.}
\avsum_{z'\in 3^l\Zd\cap (z+\cu_{n})} 
\bigl|  ( \nabla u   )_{z'+\cu_l} \bigr|^2
&
\leq 
\avsum_{z'\in 3^l\Zd\cap (z+\cu_{n})} 
\bigl| \s_*^{-1}(z'+\cu_l) \bigr|  
( \nabla u   )_{z'+\cu_l} 
\cdot
\s_*(z'+\cu_l) 
( \nabla u   )_{z'+\cu_l} 
\notag \\ & 
\leq 
\max_{z'\in 3^l\Zd\cap (z+\cu_{n})}
\bigl| \s_*^{-1}(z'+\cu_l) \bigr|  
\avsum_{z'\in 3^l\Zd\cap (z+\cu_{n})} 
\fint_{z'+\cu_l}  \nabla u \cdot \a\nabla u
\notag \\ & 
\leq 
\max_{z'\in 3^l\Zd\cap (z+\cu_{n})}
\bigl| \s_*^{-1}(z'+\cu_l) \bigr| 
\fint_{z+\cu_n} 
\nabla u \cdot \a\nabla u
\,.
\end{align*}
Thus, by taking supremum, we obtain 
\begin{align*} 
%\label{e.}
\lefteqn{
\sup_{k\leq n} 
3^{-2sk} 
\avsum_{z'\in 3^k\Zd\cap (z+\cu_n)}
\fint_{z'+\cu_k}
\bigl| u   -
(u  )_{z'+\cu_{k}} \bigr|^2 
} \qquad & 
\notag \\ & 
\leq
C s^{-1}  \fint_{z+\cu_n}  \nabla u \cdot \a\nabla u
\sum_{l=-\infty}^n 
3^{2(1-s)l}  \max_{z'\in 3^l\Zd\cap (z+\cu_{n})}
\bigl| \s_*^{-1}(z'+\cu_l) \bigr|
\,.
\end{align*}
Furthermore, the last sum can be estimated by means of~$\overline{\lambda}_{1-s,n}(\cu_m)$ as follows:
\begin{align*} 
%\label{e.}
\lefteqn{
\sum_{l=-\infty}^n 
3^{2(1-s)l}  \max_{z'\in 3^l\Zd\cap (z+\cu_{n})}
\bigl| \s_*^{-1}(z'+\cu_l) \bigr| 
} \qquad &  
\notag \\ & 
\leq
\biggl( \sum_{l=-\infty}^n 
3^{(1-s)l}  \max_{z'\in 3^l\Zd\cap \cu_{m}}
\bigl| \s_*^{-1}(z'+\cu_l) \bigr|^{\nicefrac12} 
\biggr)^{2}
=
3^{2(1-s)n} \overline{\lambda}_{1-s,n}^{-1}(\cu_m) 
\,.
\end{align*}
Combining the above displays yields
\begin{align}
\label{e.ready.to.rock}
\lefteqn{ 
\sup_{k\leq n} 
3^{-s(k-n)} 
\biggl( 
\avsum_{z'\in 3^{k} \Zd\cap (z+\cu_n)} 
\fint_{z'+\cu_k} 
\bigl| u\varphi \nabla \varphi  -
(u\varphi \nabla \varphi )_{z'+\cu_{k}} \bigr|^2 
\biggr)^{\!\nicefrac12}
} \qquad &
\notag \\ & 
\leq
Cs^{-1} 3^{n}  \overline{\lambda}_{1-s,n}^{-\nicefrac12}(\cu_m)  \| \varphi \nabla \varphi\|_{L^\infty(z+\cu_n)}
\biggl( \fint_{z+\cu_n}  \nabla u \cdot \a\nabla u \biggr)^{\!\nicefrac12} 
\notag \\ & \qquad 
+
C3^{n} \bigl\| | \nabla^2 \varphi|  |\varphi| + | \nabla \varphi|^2 \bigr\|_{L^\infty(z+\cu_{n})}
\biggl( \fint_{z+\cu_n} u^2 \biggr)^{\!\nicefrac12}
\,.
\end{align}
Using
\begin{equation}
\label{e.idiot}
\| \varphi\|_{L^\infty(z+\cu_n)}^2 
\fint_{z+\cu_n} \nabla u \cdot \a\nabla u
\leq 
\fint_{z+\cu_n} \bigl(\varphi^2 + C 3^{2n} \| \nabla \varphi \|_{L^\infty(z+\cu_{n})}^2 \bigr) \nabla u \cdot \a\nabla u\,,
\end{equation}
we also get
\begin{align}
\label{e.rocked}
\lefteqn{ 
\sup_{k\leq n}  3^{-s(k-n)} 
\biggl( 
\avsum_{z'\in 3^{k} \Zd\cap (z+\cu_n)} 
\fint_{z'+\cu_k} 
\bigl| u\varphi \nabla \varphi  -
(u\varphi \nabla \varphi )_{z'+\cu_{k}} \bigr|^2 
\biggr)^{\!\nicefrac12}
} \qquad &
\notag \\ & 
\leq 
Cs^{-1} 3^n \| \nabla \varphi\|_{L^\infty(z+\cu_{n})}
\overline{\lambda}^{-\nicefrac12}_{1-s,n}(\cu_m)
\biggl( \fint_{z+\cu_n} \bigl(\varphi^2 + 3^{2n} \| \nabla \varphi \|_{L^\infty(z+\cu_{n})}^2 \bigr) \nabla u \cdot \a\nabla u
\biggr)^{\!\nicefrac12}
\notag \\ & \qquad 
+
C3^n \bigl\| | \nabla^2 \varphi|  |\varphi| + | \nabla \varphi|^2 \bigr\|_{L^\infty(z+\cu_{n})} 
\biggl( \fint_{z+\cu_n} u^2 \biggr)^{\!\nicefrac12}
\,.
\end{align}
We next insert~\eqref{e.Cacc.first.factor} and~\eqref{e.rocked} into~\eqref{e.general.Besov.pairing.again}, use Young's inequality and rearrange some terms, to obtain, for every~$K>0$, 
\begin{align*}
\lefteqn{
\biggl| \fint_{z+\cu_n} u\varphi \nabla \varphi \cdot \a\nabla u 
-
\fint_{z+\cu_n} u\varphi \nabla \varphi \cdot \fint_{z+\cu_n}\a\nabla u
\biggr| 
} \qquad & 
\notag \\ & 
\leq 
Cs^{-1}3^{n}  \| \nabla \varphi\|_{L^\infty} 
\Bigl( \frac{\overline{\Lambda}_{s,n}(\cu_m)}{\overline{\lambda}_{1-s,n}(\cu_m)} \Bigr)^{\nicefrac12} 
\frac1K \fint_{z+\cu_n} \varphi^2 \nabla u \cdot \a\nabla u
\notag \\ &  \qquad 
+
Cs^{-1} 3^{n}\| \nabla \varphi\|_{L^\infty}  
\Bigl( \frac{\overline{\Lambda}_{s,n}(\cu_m)}{\overline{\lambda}_{1-s,n}(\cu_m)} \Bigr)^{\nicefrac12}
\bigl( K + 3^{n}\| \nabla \varphi\|_{L^\infty} \bigr)
\fint_{z+\cu_n} \nabla u \cdot \a\nabla u
\notag \\ &  \qquad 
+ 
C3^{n}\bigl\| | \nabla^2 \varphi|  |\varphi| + | \nabla \varphi|^2 \bigr\|_{L^\infty}   \overline{\Lambda}_{s,n}^{\nicefrac12} (\cu_m)
\biggl( \fint_{z+\cu_n} u^2 \biggr)^{\!\nicefrac12}\!
\biggl( 
\fint_{z+\cu_n} \nabla u \cdot \a\nabla u
\biggr)^{\!\nicefrac12}
\,.
\end{align*}
We use Young's inequality, the selection 
\begin{equation*}
K:= \frac{C}{s \ep} 3^{n}\| \nabla \varphi\|_{L^\infty}\Bigl( \frac{\overline{\Lambda}_{s,n}(\cu_m)}{\overline{\lambda}_{1-s,n}(\cu_m)} \Bigr)^{\nicefrac12} 
\,,
\end{equation*}
for sufficiently large constant~$C(d)<\infty$ and
\begin{align*} 
%\label{e.}
\lefteqn{
3^n \bigl\| | \nabla^2 \varphi|  |\varphi|  \bigr\|_{L^\infty} 
\overline{\Lambda}_{s,n}^{\nicefrac12} (\cu_m)
\biggl( \fint_{z+\cu_n} u^2 \biggr)^{\!\nicefrac12}\!
\biggl( 
\fint_{z+\cu_n} \nabla u \cdot \a\nabla u
\biggr)^{\!\nicefrac12}
} \qquad &
\notag \\ &
\leq
C3^n \bigl\|  \nabla^2 \varphi  \bigr\|_{L^\infty} 
 \overline{\Lambda}_{s,n}^{\nicefrac12} (\cu_m)
\biggl( \fint_{z+\cu_n} u^2 \biggr)^{\!\nicefrac12}\!
\biggl( 
\fint_{z+\cu_n} \bigl(\varphi^2 + 3^{2n} |\nabla \varphi|^2 \bigr)\nabla u \cdot \a\nabla u
\biggr)^{\!\nicefrac12}
\notag \\ &
\leq
\frac{C}{\ep}  \bigl(   \bigl\|  \nabla \varphi  \bigr\|_{L^\infty}^2 + 3^{2n} \bigl\|  \nabla^2 \varphi  \bigr\|_{L^\infty}^2 \bigr)
 \overline{\Lambda}_{s,n}(\cu_m)
\fint_{z+\cu_n} u^2 
\notag \\ & \qquad 
+ \frac{\ep}{4} \fint_{z+\cu_n} \varphi^2 \nabla u \cdot \a\nabla u 
+ \frac{\ep}{4} 3^{4n} \bigl\|  \nabla^2 \varphi  \bigr\|_{L^\infty}^2  \fint_{z+\cu_n} \nabla u \cdot \a\nabla u 
\end{align*}
to obtain
\begin{align} 
\label{e.dont.stop.believing}
\lefteqn{
\biggl| \fint_{z+\cu_n} u\varphi \nabla \varphi \cdot \a\nabla u 
-
\fint_{z+\cu_n} u\varphi \nabla \varphi \cdot \fint_{z+\cu_n}\a\nabla u
\biggr| 
} \qquad &
\notag \\ &
\leq
\frac{\ep}{2} \fint_{z+\cu_n} \varphi^2 \nabla u \cdot \a\nabla u 
+
\frac{C}{\ep}  \bigl( \bigl\|  \nabla \varphi  \bigr\|_{L^\infty}^2 + 3^{2n} \bigl\|  \nabla^2 \varphi  \bigr\|_{L^\infty}^2  \bigr)
 \overline{\Lambda}_{s,n}(\cu_m)
\fint_{z+\cu_n} u^2 
\notag \\ & \qquad 
+
C
\Bigl( \frac1{s^2 \ep}\frac{\overline{\Lambda}_{s,n}(\cu_m)}{\overline{\lambda}_{1-s,n}(\cu_m)} 3^{2n} \| \nabla \varphi\|_{L^\infty}^2 + \ep 3^{4n} \| \nabla^2 \varphi\|_{L^\infty}^2   \Bigr)  
\fint_{\cu_m} \nabla u\cdot \a\nabla u
\,.
\end{align}
We next estimate, using the coarse-graining inequality~\eqref{e.energymaps.nonsymm.dual} and also~\eqref{e.idiot} again, 
\begin{align*}
%\label{e.}
\lefteqn{ 
\biggl| 
\fint_{z+\cu_n} u\varphi \nabla \varphi \cdot \fint_{z+\cu_n}\a\nabla u
\biggr|^2
} \quad & 
\notag \\ & 
\leq 
C\| \varphi \nabla \varphi\|_{L^\infty}^2 |\b(z+\cu_n)|
\Bigl( 
(\a\nabla u)_{z+\cu_n} \cdot \b^{-1}(z+\cu_n)(\a\nabla u)_{z+\cu_n}\Bigr)
\fint_{z+\cu_n} u^2
\notag \\ & 
\leq 
C\| \varphi \nabla \varphi\|_{L^\infty}^2 |\b(z+\cu_n)|
\biggl( 
\fint_{z+\cu_n} 
\nabla u \cdot \a\nabla u
\biggr)
\fint_{z+\cu_n} u^2
\notag \\ & 
\leq 
C\| \nabla \varphi\|_{L^\infty}^2 
\overline{\Lambda}_{s,n}(\cu_m) 
\biggl( 
\fint_{z+\cu_n} 
(\varphi^2 +C3^{n}\| \nabla \varphi\|_{L^\infty}) \nabla u \cdot \a\nabla u
\biggr)
\fint_{z+\cu_n} u^2
\,.
\end{align*}
Therefore, by Young's inequality,
\begin{align*}
\lefteqn{ 
\biggl| 
\fint_{z+\cu_n} u\varphi \nabla \varphi \cdot \fint_{z+\cu_n}\a\nabla u
\biggr|
} \quad & 
\notag \\ & 
\leq 
C\| \nabla \varphi\|_{L^\infty}
\overline{\Lambda}_{s,n}^{\nicefrac12} (\cu_m) 
\biggl( 
\fint_{z+\cu_n} 
(\varphi^2 +C3^{n}\| \nabla \varphi\|_{L^\infty}) \nabla u \cdot \a\nabla u
\biggr)^{\!\nicefrac12} 
\biggl( \fint_{z+\cu_n} u^2 \biggr)^{\!\nicefrac12}
\notag \\ & 
\leq 
\frac{C}{\ep}  \| \nabla \varphi\|_{L^\infty}^2 \overline{\Lambda}_{s,n}(\cu_m) \fint_{z+\cu_n} u^2  
+ \frac{\ep}{2} \fint_{z+\cu_n} \varphi^2 \nabla u \cdot \a\nabla u 
+ \frac{\ep}{2} 3^{2n} \bigl\|  \nabla \varphi  \bigr\|_{L^\infty}^2  \fint_{z+\cu_n} \nabla u \cdot \a\nabla u 
\,.
\end{align*}
Combining~\eqref{e.dont.stop.believing} and the previous display, summing over~$z\in 3^n\Zd \cap\cu_m$, we arrive at~\eqref{e.the.real.Caccioppoli}, completing the proof. 
\end{proof}

To complete the proof of Proposition~\ref{p.coarse.grained.Caccioppoli}, we will show by a covering argument that the second term on the right side of~\eqref{e.coarse.grain.your.Caccioppoli.withextraterm} can be removed at the cost of increasing the discounting parameters in the coarse-grained ellipticity constants and increasing the first term by a factor which is a power of the coarse-grained ellipticity ratio.

\begin{proof}[Proof of Proposition~\ref{p.coarse.grained.Caccioppoli}] 
Let~$u \in \mathcal{A}(\cu_m)$, and fix~$s_1,s_2 \in [0,2)$ with~$s_1+s_2 < 2$. Define
\begin{equation*} 
%\label{e.}
s := \frac12 s_1 + \frac14 ( 2- s_1 - s_2) \in (\nicefrac{s_1}2,1) \cap (0,1-\nicefrac {s_2}2) \,.
\end{equation*}
Let~$l,h \in \Z$ with~$h<l <m$ and let~$z \in 3^l \Zd$. 
In view of the definitions in~\eqref{e.CG.lambdas.ALT}, we have  
\begin{align*} 
\overline{\Lambda}_{s,h}(z+\cu_l) 
&
=
\biggl(  
\sum_{k=-\infty}^{h} 
3^{s(k-h)} 
\max_{z'\in 3^k\Zd \cap (z+\cu_l)} 
\bigl| \b(z'+\cu_k) \bigr|^{\nicefrac12} \biggr)^{\!2}
\notag 
\\  & \leq
3^{s_1(m-h)}
\overline{\Lambda}_{s_1}(\cu_m) 
\biggl(  
\sum_{k=-\infty}^{h} 
3^{(s - \frac12 s_1) (k-h)} 
\biggr)^{\!2}
\leq \frac{C3^{s_1(m-h)}}{(2- s_1 - s_2)^{2}}
\overline{\Lambda}_{s_1}(\cu_m) 
\end{align*}
and, similarly,
\begin{equation*} 
%\label{e.}
\overline{\lambda}_{1-s,h}^{-1}(z+\cu_l)
\leq
 \frac{C3^{s_2(m-h)}}{(2 -s_1 -s_2)^{2}}
\overline{\lambda}_{s_2}^{-1}(\cu_m)
\,.
\end{equation*}
We apply Lemma~\ref{l.coarse.grained.Caccioppoli.prelim} where~$\varphi\in C^\infty_c (\cu_m)$ is chosen to satisfy~$\indc_{\cu_{m-1}} \leq \varphi \leq \indc_{\cu_m}$ and 
\begin{equation}
\label{e.cutoff.function}
3^{jm} \|\nabla^j \varphi\|_{L^\infty(\cu_m)} \leq C\,, \quad \forall j \in \{0,1,2\}\,,
\end{equation}
and then we use the previous two displays to obtain
\begin{align}
\label{e.coarse.grain.your.Caccioppoli.withextraterm.again}
\fint_{z+\cu_{l-1}} 
\nabla u \cdot \a\nabla u
& 
\leq 
\frac{C3^{s_1(m-h)}}{(2 -s_1-s_2)^{2}}
\overline{\Lambda}_{s_1}(\cu_m)
3^{-2l}\fint_{z+\cu_l} \!\! u^2 
\notag \\ & \qquad 
+\frac{C3^{2(h-l) + (s_1+s_2)(m-h)}}{(2- s_1 - s_2)^{6}}
\frac{\Lambda_{s_1}(\cu_m)}{\lambda_{s_2}(\cu_m)}
\fint_{z+\cu_l} \!\!
\nabla u \cdot \a\nabla u 
\,.
\end{align}
Next, we take~$r ,R \in [\nicefrac12,1]$ such that~$r<R$, and denote 
\begin{equation*} 
r \cu_m := \bigl( - r \tfrac{1}{2} 3^{m}, r \tfrac{1}{2} 3^{m} \bigr)^d\,.
\end{equation*} 
We find~$l \in\Z$ such that~$3^l \leq c(R-r) 3^m$ and
\begin{equation*} 
%\label{e.}
3^{s_1(m-h)} 3^{-2l}
\leq \frac{C3^{s_1(l-h)}}{(R-r)^{s_1+2}} 3^{-2m}
\qand
3^{2(h-l) + (s_1+s_2)(m-h)}
\leq \frac{C3^{-(2-s_1-s_2)(l-h)}}{(R-r)^{s_1+s_2}}  \,.
\end{equation*}
By covering~$r \cu_m$ with cubes of the type~$z+\cu_{l-1}$ with~$z \in 3^{l -1}\Zd$ and~$z+\cu_l \subseteq R\cu_m$, we obtain, using~\eqref{e.coarse.grain.your.Caccioppoli.withextraterm.again}, 
\begin{align*}
\fint_{r \cu_m} 
\nabla u \cdot \a\nabla u
& 
\leq 
\frac{C3^{s_1(l-h)}}{(2 - s_1 - s_2)^{2} (R-r)^{s_1+2}}
\overline{\Lambda}_{s_1}(\cu_m)
3^{-2m}\fint_{\cu_m} \!\! u^2 
\notag \\ & \qquad 
+\frac{C3^{-(2-s_1-s_2)(l-h)}}{(2- s_1 - s_2)^{6} (R-r)^{s_1+s_2}}
\frac{\Lambda_{s_1}(\cu_m)}{\lambda_{s_2}(\cu_m)}
\fint_{R \cu_m} \!\!
\nabla u \cdot \a\nabla u 
\,.
\end{align*}
We now select~$h \in \Z$ so small that
\begin{equation*} 
%\label{e.}
\frac{C3^{-(2-s_1-s_2)(l-h)}}{(2 -s_1 -s_2)^{6} (R-r)^{s_1+s_2}}
\frac{\Lambda_{s_1}(\cu_m)}{\lambda_{s_2}(\cu_m)} \leq \frac12\,.
\end{equation*}
From the previous two displays we deduce the existence of constants~$C(s_1,s_2,d)$ and~$\xi(s_1,s_2)$ such that
\begin{align*} 
\fint_{r \cu_m} 
\nabla u \cdot \a\nabla u
&
\leq
\frac12
\fint_{R \cu_m} 
\nabla u \cdot \a\nabla u
+ 
\frac{C\overline{\Lambda}_{s_1}(\cu_m)}{(R-r)^{\xi}} \Bigl( \frac{\Lambda_{s_1}(\cu_m)}{\lambda_{s_2}(\cu_m)} \Bigr)^{\frac{s_1}{2-s_1-s_2}}  3^{-2m}\fint_{\cu_m} \!\! u^2 
\,.
\end{align*}
This inequality implies the desired conclusion~\eqref{e.coarse.grained.Caccioppoli}, using a standard iteration argument (found, for example, in~\cite[Lemma C.6]{AKMBook}). 
\end{proof}

\begin{exercise}[Coarse-grained Caccioppoli with right-hand side]
In the case~$\a = \a^t$ is a symmetric coefficient field, generalize Proposition~\ref{p.coarse.grained.Caccioppoli} by allowing for solutions with a right-hand side by proving the following statement. For every~$s_1,s_2\in [0,2)$ such that~$s_1+s_2 < 2$, there exists~$C(s_1,s_2,d)<\infty$ such that, for every~$f \in L^2(\cu_m)$ and solution~$u\in H^1(\cu_m)$ of the equation
\begin{equation}
\label{e.with.rhs.f}
-\nabla \cdot \a\nabla u = f 
\quad \mbox{in} \ 
\cu_m
\,,
\end{equation}
we have the estimate
\begin{align}
\label{e.coarse.grained.Caccioppoli.with.f}
\fint_{\cu_{m-1}} 
\nabla u \cdot \a\nabla u
&
\leq 
C \overline{\Lambda}_{s_1}(\cu_m) 
\biggl( \frac{\overline{\Lambda}_{s_1}(\cu_m)}{\overline{\lambda}_{s_2}(\cu_m)} \biggr)^{\frac{s_1}{2-s_1-s_2}}
3^{-2m}  \fint_{\cu_m}  | u-(u)_{\cu_m}|^2 
\notag \\ & \qquad  \
+
C
\overline{\lambda}^{-1}_{s_2} (\cu_m)
\biggl( \frac{\overline{\Lambda}_{s_1}(\cu_m)}{\overline{\lambda}_{s_2}(\cu_m)} \biggr)^{\frac{2-s_2}{2-s_1-s_2}}
3^{2m}  \fint_{\cu_m} f^2 
\,.
\end{align}
What follows is a sketch of the proof of~\eqref{e.coarse.grained.Caccioppoli.with.f}. 
Split~$u=\tilde{u} + v + w$ where~$v\in H^1_0(\cu_m)$ is the solution of the Dirichlet problem 
\begin{equation*}
\left\{
\begin{aligned}
& -\nabla \cdot \a\nabla v = (f)_{\cu_m}  & \mbox{in} & \ \cu_m\,, 
\\ 
& 
v = 0 & \mbox{on} & \ \partial \cu_m\,,
\end{aligned}
\right.
\end{equation*}
and~$w\in H^1(\cu_m)$ is the solution of the Neumann problem 
\begin{equation*}
\left\{
\begin{aligned}
& -\nabla \cdot \a\nabla w = f - (f)_{\cu_m}  & \mbox{in} & \ \cu_m\,, 
\\ 
& 
\mathbf{n} \cdot \a\nabla w = 0 & \mbox{on} & \ \partial \cu_m\,.
\end{aligned}
\right.
\end{equation*}
It follows that~$\tilde{u}:= u -v - w$ is a solution of the equation with zero right-hand side, that is~$\tilde{u} \in \A(\cu_m)$. We can therefore apply the statement of Proposition~\ref{p.coarse.grained.Caccioppoli} to~$\tilde{u}$. This gives us that 
\begin{equation*}
\fint_{\cu_{m-1}} \nabla \tilde{u} \cdot \a\nabla\tilde{u} 
\leq 
C \overline{\Lambda}_{s_1}(\cu_m) 
\biggl( \frac{\overline{\Lambda}_{s_1}(\cu_m)}{\overline{\lambda}_{s_2}(\cu_m)} \biggr)^{\frac{s_1}{2-s_1-s_2}}
3^{-2m} \bigl\| \tilde{u}-(\tilde{u} )_{\cu_m} \bigr\|_{\underline{L}^2(\cu_m)}^2
\,.
\end{equation*}
Estimates on~$v$ and~$w$ can be obtained straightforwardly by testing their equations with~$v$ and~$w$, respectively, and then applying the Poincar\'e inequality in Proposition~\ref{p.coarse.grained.Poincare}---note that this is where we use the symmetry assumption. We obtain
\begin{equation*}
\fint_{\cu_m} \nabla v \cdot \a\nabla v \leq 
3^{2m} 
\overline{\lambda}^{-1}_{s} (\cu_m) 
\bigl| (f)_{\cu_m} \bigr|^2
\quad \mbox{and} \quad
\| v \|_{\underline{L}^2(\cu_m)}^2
\leq 
3^{4m} 
\overline{\lambda}^{-2}_{s} (\cu_m) 
\bigl| (f)_{\cu_m} \bigr|^2
\end{equation*}
and, for~$w$, 
\begin{equation*}
\fint_{\cu_m} \nabla w \cdot \a\nabla w \leq 
3^{2m} 
\overline{\lambda}^{-1}_{s} (\cu_m) 
\bigl\| f \bigr\|_{\underline{L}^2(\cu_m)}^2
\quad \mbox{and} \quad
\| w \|_{\underline{L}^2(\cu_m)}^2
\leq 
3^{4m} 
\overline{\lambda}^{-2}_{s} (\cu_m) 
\bigl\| f \bigr\|_{\underline{L}^2(\cu_m)}^2
\,.
\end{equation*}
Combining the previous three displays and using the triangle inequality in the form
\begin{align*}
\bigl\| \tilde{u}-(\tilde{u} )_{\cu_m} \bigr\|_{\underline{L}^2(\cu_m)}
&
\leq 
\bigl\| u-(u )_{\cu_m} \bigr\|_{\underline{L}^2(\cu_m)}
+
\| v \|_{\underline{L}^2(\cu_m)} + \| w \|_{\underline{L}^2(\cu_m)}
\notag \\ & 
\leq
\bigl\| u-(u )_{\cu_m} \bigr\|_{\underline{L}^2(\cu_m)}
+
3^{2m} \overline{\lambda}^{-1}_{s} (\cu_m) 
\bigl\| f \bigr\|_{\underline{L}^2(\cu_m)}\,,
\end{align*}
we obtain the desired inequality~\eqref{e.coarse.grained.Caccioppoli.with.f}. 
\end{exercise}

\subsection{Equations with a divergence-form forcing term}
\label{ss.rhs} 

In this section, we explain how to extend the theory described in this chapter to cover equations having the form 
\begin{equation}
\label{e.withRHS}
-\nabla \cdot \a \nabla u = \nabla\cdot \f,
\end{equation}
where, in addition to~$\a$, the vector-valued function~$\f$ is another stationary random field oscillating on the same length scale as~$\a$. 
This kind of equation arises very often in applications. For instance, the equations for the \emph{higher-order} correctors have the form of~\eqref{e.withRHS}, as do certain linearized equations one encounters when studying homogenization of nonlinear equations. 
The generalization to such equations does not increase the difficulty or level of complexity of the analysis; we just need some minor variations of what we have already presented. We include these details here for completeness. The results in this section are only used in Section~\ref{ss.rhs.optimal}, and may be skipped on a first reading. 

\subsubsection{Modifications to the formalism}
\label{sss.formalisms.for.RHS}

We begin by briefly discussing how to modify our assumptions, including the mixing assumptions, to allow for a random forcing vector field~$\f$. Very often in applications,~$\f$ will be a function of the coefficient field~$\a(\cdot)$, in other words, an element of~$V^2(\P)$ where~$(\Omega,\F)$ is the specific measurable space described in Section~\ref{ss.probspace}. 
However, in order to consider a more general situation (in which~$\f$ and~$\a$ are possibly uncorrelated, for example) we enlarge the probability space~$\Omega$ by redefining it to consist of pairs~$(\a,\f)$ with~$\a$ satisfying~\eqref{e.ellipticity} and~$\f\in L^2_{\mathrm{loc}}(\Rd)$. The family of~$\sigma$--algebras are also enlarged, so that~$\F(U)$ is generated by the family of random variables 
\begin{equation*} \label{}
(\a,\f) \mapsto \int_U ( \a_{ij} (x) + \f_k(x))  \varphi(x)\,dx, 
\quad 
i,j,k \in\{1,\ldots,d\}, 
\quad 
\varphi\in C^\infty_{\mathrm{c}}(\Rd). 
\end{equation*}
As expected, the translation semigroup~$\{ T_y \}$ defined in~\eqref{e.rvF} now acts on the pair~$(\a,\f)$, and the probability measure~$\P$ we consider will be assumed to be~$\Zd$--stationary with respect to this translation group. 

\smallskip

As for the necessary modifications to the~$\CFS$ condition, we alter the definition of Malliavin derivative, as in Section~\ref{ss.CFS.gen}, with the norm given by~\eqref{e.triple.norm.sec6}. 
In particular, we are slightly more lenient with variations in the vector field~$\f$ by allowing for~$L^2$-type variations instead of~$L^\infty$ as in the coefficient field~$\a$. When we say that~$\P$ satisfies~$\CFS(\beta,\Psi)$ in this section, we mean that it satisfies Definition~\ref{d.CFS.general} with the Malliavin derivative~$|\partial_{(\a,\f)(U)}|$ defined in~\eqref{e.Mall.Maul.yes} in place of~$|\partial_{\a(U)}|$ and~$\vertiii\cdot\vertiii$ given in~\eqref{e.triple.norm.sec6}.

\smallskip

We need another assumption on~$\P$, in addition to~$\Zd$--stationary and~$\CFS(\beta,\Psi)$. We have so far assumed no \emph{boundedness} of~$\f$: just that it belong to~$L^2_{\mathrm{loc}}(\Rd)$. 
A very natural condition to take would be that~$\| \f \|_{L^2(\cu_0)}$ is uniformly bounded by a deterministic constant, which would be the analogous assumption to the very strict uniform ellipticity condition we require the diffusion matrix~$\a(\cdot)$ to satisfy. 
With applications in mind, and because it does not cost anything in terms of the proof complexity (right-hand sides being quite a bit easier to handle than diffusion matrices), 
we will assume a weaker condition that quantifies the local~$L^2$ integrability of~$\f$, namely that there exists~$\mathsf{K}\in [0,\infty)$ such that, for every~$m\in\N$, 
\begin{equation}
\label{e.f.dumbbound}
\| \f \|_{\underline{L}^2(\cu_m)}^2 
\leq 
1 +  \O_\Psi\bigl(\mathsf{K}  3^{-\frac d2 (1-\beta) m} \bigr)
\,.
\end{equation}
If~$\f$ is a Gaussian random field, then~\eqref{e.f.dumbbound} is satisfied with~$\Psi=\Gamma_2$ with~$\E [ \| \f \|_{\underline{L}^2(\cu_m)}^2 ] \leq 1$, where the exponent~$\beta$ and constant~$\mathsf{K}$ are related to the decorrelation of the Gaussian field (as in Section~\ref{ss.GRF}).

\smallskip

Throughout this section, we assume that
\begin{equation} 
\label{e.f.sec.ass}
\P \mbox{ is~$\Zd$-stationary and satisfies~$\CFS(\beta,\Psi)$ and~\eqref{e.f.dumbbound}}
\end{equation}
and we add the parameter~$\mathsf{K}$ to the list of~$\data$ in~\eqref{e.data.def}, now denoting
\begin{equation} 
\label{e.data.def.f}
\data_{\f} := (\mathsf{K},\beta,d,\lambda,\Lambda,\CFS,\Psi)
\,.
\end{equation}

We turn next to the definitions of the counterparts of the quantity~$J$ defined in~\eqref{e.variational.J.nonsymm}. If~$U\subseteq\Rd$ is a bounded Lipschitz domain and~$\f \in L^2(U)$, we define, for every~$p,q\in\Rd$, 
\begin{equation}
\label{e.variational.J.nonsymm.withf}
J_{\f} (U,p,q) 
:= 
\max_{v\in \mathcal{A}_{\bfzero} (U)} 
\fint_U \Bigl( -\frac12 \nabla v\cdot \s\nabla v -p\cdot \a\nabla v   + (q-\f)\cdot \nabla v \Bigr).
\end{equation}
Note very carefully that the maximum is over the set~$\A_{\bfzero}(U) = \A(U)$, which is the set of solutions of the equation with \emph{zero} right-side, as defined in~\eqref{e.def.A(U)}. 
As such, the zero function belongs to the admissible set and so~$J_{\f}(U,p,q) \geq 0$. We let~$v_{\f}(\cdot,U,p,q)$ denote the maximizer of the optimization problem on the right side of~\eqref{e.variational.J.nonsymm.withf}. We also define the adjoint quantity 
\begin{equation}
\label{e.variational.J.nonsymm.dual.withf}
J^*_{\f}(U,p,q) 
:= 
\max_{v^*\in \mathcal{A}_{\bfzero}^*(U)} 
\fint_U \Bigl( -\frac12 \nabla v^*\cdot \s\nabla v^* -p\cdot \a^t\nabla v^* + (q-\f)\cdot \nabla v^*   \Bigr) 
\end{equation}
with the maximizer~$v_{\f}^*(\cdot,U,p,q) \in \mathcal{A}_{\bfzero}^*(U)$,
and set, for~$p,q,p^*,q^* \in \Rd$, 
\begin{equation}
\label{e.bfJ.withf}
\bfJ_{\f}
\biggl(U, \begin{pmatrix} p  \\ q \end{pmatrix}, \begin{pmatrix} q^* \\ p^* \end{pmatrix} \biggr)
:=
J_{\f}\bigl(U,p-p^*,q^*-q\bigr)
+ 
J^*_{\f}\bigl(U,p^*+p,q^*+q\bigr)
\,.
\end{equation}
Next, by artificially lifting the vector field~$\f$ to a~$\R^{2d}$-valued random field by defining
\begin{equation*}
\bfF:=
\begin{pmatrix} \f  \\  0 \end{pmatrix}
\,,
\end{equation*}
we compute, similarly to~\eqref{e.iden.PAP}, for every~$v,v^*\in H^1(U)$ and~$p,q,p^*,q^*\in\Rd$, 
\begin{align}
\label{e.iden.PAP.again}
& 
{-}\frac12  X 
\!\cdot\!
\bfA 
X
\! - \!
\begin{pmatrix} p \\ q \end{pmatrix} 
\!\cdot\!
\bfA
X
\! + \!
\biggl( \begin{pmatrix} q^* \\ p^* \end{pmatrix} - \mathbf{F} \biggr)
\!\cdot\!
X
\notag \\ & \qquad
=
- \frac12 \nabla v \cdot \a\nabla v 
- (p-p^*)\cdot \a\nabla v
+(q^*-q - \f )\cdot \nabla v
\notag \\ & \qquad \qquad
- \frac12 \nabla v^* \cdot \a^t \nabla v^*
-(p^*+p)\cdot\a^t\nabla v^* 
+(q^*+q- \f)\cdot \nabla v^*
\end{align}
with
\begin{equation*} 
%\label{e.}
X:= \begin{pmatrix} \nabla v{+}\nabla v^*  
\\ \a\nabla v {-} \a^t\nabla v^*  \end{pmatrix}  \,.
\end{equation*}
Therefore,  taking supremum over~$(v,v^*) \in \mathcal{A}(U) \times \mathcal{A}^*(U)$, 
we obtain, for every~$P,Q \in \R^{2d}$, 
\begin{align}
\label{e.bfJ.var.f}
\bfJ_{\f}(U, P,Q )
& 
=
\sup_{X \in \S_{\bfzero}(U)}
\fint_U
\Bigl( -\frac12  X  \! \cdot \! \bfA X - P  \cdot \!\bfA X  + (Q - \bfF)  \cdot  X\Bigr)
\,.
\end{align}

\smallskip

Note that the maximization in~\eqref{e.bfJ.var.f} is with respect to solutions~$\S_{\bfzero}(U) = \S(U)$, 
where~$\S(U)$ refers to the solutions of the homogeneous problem as in~\eqref{e.findS}. We denote by~$X_{\f}(\cdot,U,P,Q)$  the maximizer in~$\S_{\bfzero}(U)$ of the variational problem on the right side of~\eqref{e.bfJ.var.f}. Analogously to~\eqref{e.maximizers.J.to.bfJ}, we have, for every~$p,q,p^*,q^*\in\Rd$, 
\begin{equation}
\label{e.maximizers.J.to.bfJ.f}
X_{\f}
\biggl(\cdot,U, \begin{pmatrix} p  \\ q \end{pmatrix}, \begin{pmatrix} q^* \\ p^* \end{pmatrix} \biggr)
=
\begin{pmatrix} \nabla v_{\f} (\cdot,U,p{-}p^*,q^*{-}q)
+ 
\nabla v_{\f}^*\bigl(\cdot,U,p^*{+}p,q^*{+}q\bigr) \\ 
\a \nabla v_{\f} (\cdot,U,p{-}p^*,q^*{-}q)
- 
\a^t \nabla v_{\f}^*\bigl(\cdot,U,p^*{+}p,q^*{+}q\bigr)
\end{pmatrix}
\,.
\end{equation}

\smallskip

If~$\bfF = 0$, then maximizing problem in~\eqref{e.bfJ.var.f} becomes~\eqref{e.bfJ.var} and
\begin{align*}  
\bfJ_{\bfzero}(U, P,Q ) = \bfJ(U, P,Q )
\qand
X_0(\cdot,U,P,Q) = S(\cdot,U,P,Q) 
\,.
\end{align*}
In particular,~$\bfJ_{\bfzero}(U, P,Q )$ and~$X_0(\cdot,U,P,Q)$ have all the properties from Lemma~\ref{l.J.basicprops.nosymm.moar}.

\smallskip

Like~$\bfJ=\bfJ_{\bfzero}$, the quantity~$\bfJ_{\f}$ is subadditive in the same sense as~$J$ in~\eqref{e.subaddJ.nosymm}. 
Since the zero function belongs to~$\S_{\bfzero}(U)$, we have that, for every~$P,Q\in \R^{2d}$,  
\begin{equation}
\label{e.bfJ.nonneg}
\bfJ_{\f}(U,P,Q) \geq 0\,.
\end{equation}
Computing the first variation of the optimization problem in~\eqref{e.bfJ.var.f} yields the following characterization of~$X_{\f}$:
\begin{equation}
\label{e.firstvar.Jf}
\fint_U 
\Bigl( 
- \bfA X_{\f}(\cdot,U,P,Q) 
-\bfA P
+Q-\bfF
\Bigr)\cdot Y = 0\,, 
\qquad \forall \,  Y \in \S_{\bfzero}(U)\,, \, \forall \, P,Q\in \R^{2d}\,.
\end{equation}
Using the above first variation, we obtain the identity 
\begin{equation} 
\label{e.split.X.f}
X_{\f}(\cdot,U,P,Q)  
= 
X_{\bfzero}(\cdot,U,P,Q) + X_{\f}(\cdot,U,0,0) 
\,.
\end{equation}
We define the coarse-grained vectors~$\bfF(U)$ and~$\bfF_*(U)$ by 
\begin{equation} \label{e.coarseF.def}
\left\{
\begin{aligned}
& \bfF(U) 
:=
-\,
\fint_{U} \bfA X_{\f}(\cdot,U,0,0) \,,
\\ & 
\bfF_*(U) 
:=
-\bfA_*(U) \fint_{U} X_{\f}(\cdot,U,0,0) \,,
\end{aligned}
\right.
\end{equation}
and then see by~\eqref{e.formulas.nosymm.byS} that
\begin{equation} 
\label{e.average.X.f}
\left\{
\begin{aligned}
&\fint_{U} \bfA X_{\f}(\cdot,U,P,Q) 
 =
-\bfF(U) + Q - \bfA(U) P\,, \\
& \fint_{U} X_{\f}(\cdot,U,P,Q) = \bfA_*^{-1}(U) (Q - \bfF_*(U)) - P \,.  
\end{aligned}
\right.
\end{equation}
Using again the first variation~\eqref{e.firstvar.Jf} together with~\eqref{e.split.X.f}, we can rewrite~$\bfJ_{\f}$ as
\begin{equation} 
\label{e.Jf.repre0}
\bfJ_{\f}(U, P,Q ) 
=  
\bfJ_{\bfzero}(U, P,Q )  
+ P\cdot \bfF(U) 
- Q \cdot \bfA_*^{-1}(U) \bfF_*(U)  
+ \bfJ_{\f}(U, 0,0) 
\,.
\end{equation}
We see that the quadratic part of~$\bfJ_{\f}$ in~$(P,Q)$ is the same as~$\bfJ_{\bfzero}$, but the formula has extra linear parts determined by~$\bfF$ and~$\bfF_*$, and a constant part given by~$\bfJ_{\f}(U,0,0)$. We can use this to our advantage since we have already analyzed~$\bfJ_{\bfzero}$ in the previous section.

\subsubsection{The main result}

We present a quantitative estimates for \emph{finite-volume, zero-slope correctors}. 
We remark that this implies a statement like the one of Theorem~\ref{t.quant.DP} for the equation with a right-hand side as a corollary. We do not have to repeat the two-scale expansion again to get this corollary: we subtract the finite-volume zero-slope corrector from the solution (since the zero-slope corrector is small, this is almost like subtracting zero), which removes the right-hand side. We can then quote Theorem~\ref{t.quant.DP} (or, more generally, Theorem~\ref{t.quant.DP.nosymm}) to handle the difference.

\begin{theorem}[Finite-volume zero-slope corrector]
\label{t.zeroslope}
\hspace{-4pt}
Assume that~$\P$ satisfies~\eqref{e.f.sec.ass}. 
\hspace{-6pt}
There exists a constant vector~$\overline{\f}\in\R^d$, a constant~$\overline{\mu} \in [0,\infty)$, an exponent~$\alpha(\beta,d,\lambda,\Lambda) \in (0,\tfrac12]$ and, for every~$\delta>0$, a constant~$C(\delta,\datareff)<\infty$ and a random variable~$\X$ satisfying 
\begin{equation}
\label{e.mmmbound}
\X^{\frac d2 (1-\beta)}
= \O_\Psi(C)
\end{equation}
such that, for every~$m\in\N$ with~$3^m \geq \X$, there exists~$\psi_{\f,m} \in H^1(\cu_m)$ satisfying the equation
\begin{equation}
-\nabla \cdot \a\nabla \psi_{\f,m} = \nabla \cdot \f \quad \mbox{in} \ \cu_m\,,
\end{equation}
together with the estimates
\begin{equation}
\label{e.zeroslope.sublin}
3^{-m}\| \nabla \psi_{\f,m} \|_{\Hminusul(\cu_m)} 
+
3^{-m}\| \a \nabla \psi_{\f,m} +\f - \overline{\f} \|_{\Hminusul(\cu_m)} 
\leq
\delta ( \X 3^{-m} )^{\alpha}
\end{equation}
and
\begin{equation} 
\label{e.zeroslope.energy}
\biggl| \fint_{\cu_m} \s \nabla\psi_{\f,m} \cdot \nabla \psi_{\f,m}  - \overline{\mu} \biggr| 
+
\biggl| \fint_{\cu_m} \f \cdot \nabla \psi_{\f,m}  + \overline{\mu} \biggr| 
\leq
\delta ( \X 3^{-m} )^{\alpha} 
 \,.
\end{equation}
\end{theorem}

The proof of Theorem~\ref{t.zeroslope} is the focus of the rest of this section.

\subsubsection{The coarse-grained vector field}

We proceed by generalizing the coarse-grained quantities defined in Section~\ref{ss.doublevar}. We define the inhomogeneous variant~$\S_{\f} (U)$ of~$\S(U)$ (defined in~\eqref{e.bfS}) by
\begin{align}
\label{e.Sf.this}
\S_{\f} (U) 
:=
\Bigl\{
X \in \Lpot(U) \times \Lsol(U) 
 : 
\int_U 
Y \cdot 
\bigl( \bfA X + 2 \bfF \bigr)
=0, \, 
\forall Y \in
\Lpoto(U) \times \Lsolo(U)
\Bigr\}
\,.
\end{align}
Note that this space is affine but not linear. 

In the following lemma, we generalize the representation of~$\S(U)$ in~\eqref{e.findS} to the case of~$\S_{\f}(U)$ with nonzero~$\f$.
To that end, we introduce the following notation for solutions of the equation and its adjoint with right-side~$\nabla\cdot \f$, which can be compared with~\eqref{e.def.AU} and~\eqref{e.def.AsU}:
\begin{equation*}
\mathcal{A}_{\f} (U) := 
\bigl\{ u \in H^1(U)\,:\, -\nabla\cdot \a\nabla u = \nabla \cdot \f \bigr\}\,, 
 \ 
\mathcal{A}_{\f}^* (U) := 
\bigl\{ u \in H^1(U)\,:\, -\nabla\cdot \a^t\nabla u = \nabla \cdot \f \bigr\}\,. 
\end{equation*}
This is a straightforward adaption of the proof of~\eqref{e.findS}, but the algebra is a little tricky, so for the convenience of the reader we present all the details.

\begin{lemma}[Variational representation of~$\mathcal{S}_{\f}(U)$]
\label{l.f.S.findme}
For every bounded domain~$U\subseteq\Rd$,
\begin{equation}
\label{e.f.S.findme}
\S_{\f}(U)
=
\left\{ (\nabla v+\nabla v^*, \a\nabla v - \a^t\nabla v^*) \,:\, v\in \A_{\f}(U), \ v^*\in \A_{\f}^*(U)
\right\}
\,.
\end{equation}
\end{lemma}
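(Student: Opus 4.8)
The statement to be proved is the analogue of~\eqref{e.findS} with a nonzero forcing term, namely the description~\eqref{e.f.S.findme} of the affine space~$\mathcal{S}_\f(U)$. The plan is to repeat the algebraic manipulation in the proof of Lemma~\ref{l.bfJ.represent} verbatim, keeping track of the extra term~$\bfF$. First I would write a generic element of~$\Lpot(U)\times\Lsol(U)$ as~$X = (\nabla u, \mathbf{h})$ with $u \in H^1(U)$ and $\mathbf{h}\in\Lsol(U)$, and unpack the defining orthogonality condition in~\eqref{e.Sf.this}. Using the block form~\eqref{e.bfA.form} of~$\bfA$ and the definition of~$\bfF$, the condition $\int_U Y\cdot(\bfA X + \bfF) = 0$ for all $Y = (\nabla w, \mathbf{k}) \in \Lpoto(U)\times\Lsolo(U)$ becomes
\begin{equation*}
\int_U \Bigl( \nabla w \cdot \bigl( \s\nabla u + \m^t\s^{-1}(\mathbf{h}-\m\nabla u) + 2\f \bigr) + \mathbf{k}\cdot \s^{-1}(\mathbf{h}-\m\nabla u) \Bigr) = 0
\end{equation*}
for all such $w,\mathbf{k}$. (Here I am expanding $\bfA X$ exactly as in the proof of~\eqref{e.findS}, just carrying along the $2\f$ coming from the top block of $\bfF$.)

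Taking $w = 0$ first forces $\s^{-1}(\mathbf{h}-\m\nabla u) \perp \Lsolo(U)$, hence it is a potential: there is $\nabla \tilde u \in \Lpot(U)$ with $\nabla \tilde u = \s^{-1}(\mathbf{h}-\m\nabla u)$, i.e. $\mathbf{h} = \s\nabla\tilde u + \m\nabla u$. Substituting back, the remaining condition with $\mathbf{k}=0$ reads $\int_U \nabla w\cdot(\s\nabla u + \m^t\s^{-1}(\mathbf{h}-\m\nabla u) + 2\f) = 0$ for all $w\in H^1_0(U)$; using $\s^{-1}(\mathbf{h}-\m\nabla u) = \nabla\tilde u$ and $\m^t = -\m$ this is $\int_U \nabla w\cdot(\s\nabla u - \m\nabla\tilde u + 2\f) = 0$, i.e. $-\nabla\cdot(\s\nabla u - \m\nabla\tilde u) = \nabla\cdot(2\f)$ in $U$. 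So $X\in\mathcal{S}_\f(U)$ is equivalent to the existence of $\nabla\tilde u\in\Lpot(U)$ with $\mathbf{h} = \s\nabla u + \m\nabla\tilde u$ and $\s\nabla u - \m\nabla\tilde u \in \f$-forced solenoidal data in the sense just written. Then I would set $v := \tfrac12(u+\tilde u)$ and $v^* := \tfrac12(u-\tilde u)$ and check, exactly as in the unforced case, that the two conditions translate into $-\nabla\cdot\a\nabla v = \nabla\cdot\f$ and $-\nabla\cdot\a^t\nabla v^* = \nabla\cdot\f$, that is $v\in\mathcal{A}_\f(U)$ and $v^*\in\mathcal{A}^*_\f(U)$, while $\mathbf{h} = \s\nabla u + \m\nabla\tilde u = \a\nabla v - \a^t\nabla v^*$ and $\nabla u = \nabla v + \nabla v^*$. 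This gives the inclusion ``$\subseteq$'' in~\eqref{e.f.S.findme}.

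For the reverse inclusion ``$\supseteq$'', given $v\in\mathcal{A}_\f(U)$ and $v^*\in\mathcal{A}_\f^*(U)$, I would verify directly that $X := (\nabla v+\nabla v^*,\ \a\nabla v - \a^t\nabla v^*)$ lies in $\Lpot(U)\times\Lsol(U)$ and satisfies the orthogonality condition in~\eqref{e.Sf.this}; this is a short computation using that both $\a\nabla v + \f$ and $\a^t\nabla v^* + \f$ are solenoidal with prescribed behaviour against $H^1_0$ test functions, and retracing the algebra above in reverse. The main (and only) obstacle is purely bookkeeping: correctly tracking the factor of $2$ in $\bfF$ and the splitting $u = v+v^*$, $\tilde u = v - v^*$ through the block-matrix algebra, which is why I would write out the expansion of $\bfA X$ explicitly rather than appealing to the unforced computation as a black box. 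There is nothing new conceptually beyond the proof of~\eqref{e.findS}.
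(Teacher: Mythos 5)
Your proposal follows the same route as the paper's proof: unpack the orthogonality condition in~\eqref{e.Sf.this} in terms of~$\s$ and~$\m$, take~$w=0$ to conclude that~$\s^{-1}(\mathbf{h}-\m\nabla u)$ is orthogonal to~$\Lsolo(U)$ and hence equals some~$\nabla \tilde u\in\Lpot(U)$, then take~$\mathbf{k}=0$ for the remaining condition, and finally split~$v=\tfrac12(u+\tilde u)$,~$v^*=\tfrac12(u-\tilde u)$, with the converse inclusion checked directly. Conceptually there is nothing different.

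There is, however, a concrete sign slip in the very expansion you said you would rely on, and it propagates. The top block of~$\bfA X$ is
$(\s+\m\s^{-1}\m^t)\nabla u + \m\s^{-1}\mathbf{h} = \s\nabla u + \m\s^{-1}(\mathbf{h}-\m\nabla u)$,
so the vector paired with~$\nabla w$ is~$\s\nabla u + \m\s^{-1}(\mathbf{h}-\m\nabla u) + 2\f$, not~$\s\nabla u + \m^t\s^{-1}(\mathbf{h}-\m\nabla u) + 2\f$ as in your display. Consequently, after substituting~$\nabla\tilde u=\s^{-1}(\mathbf{h}-\m\nabla u)$, the second condition is that~$\s\nabla u + \m\nabla\tilde u + 2\f\in\Lsol(U)$, not~$\s\nabla u - \m\nabla\tilde u + 2\f\in\Lsol(U)$. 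The sign is not cosmetic: with your splitting one has~$\s\nabla u+\m\nabla\tilde u=\a\nabla v+\a^t\nabla v^*$ while~$\s\nabla u-\m\nabla\tilde u=\a^t\nabla v+\a\nabla v^*$, and only the former, combined with~$\mathbf{h}=\s\nabla\tilde u+\m\nabla u=\a\nabla v-\a^t\nabla v^*\in\Lsol(U)$, yields~$\a\nabla v+\f\in\Lsol(U)$ and~$\a^t\nabla v^*+\f\in\Lsol(U)$, i.e.~$v\in\A_\f(U)$ and~$v^*\in\A_\f^*(U)$; with your sign the translation you claim would fail. Relatedly, your final line states~$\mathbf{h}=\s\nabla u+\m\nabla\tilde u$, contradicting the (correct) identity~$\mathbf{h}=\s\nabla\tilde u+\m\nabla u$ you derived two lines earlier. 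Once these two slips are fixed, the argument closes exactly as in the paper.
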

\begin{proof}
An element~$(\nabla u,\mathbf{h}) \in \Lpot(U)\times\Lsol(U)$ belongs to~$\S_{\f}(U)$ if and only if 
\begin{multline}
\label{e.f.thecondition}
\int_U \bigl(
\nabla w \cdot \s \nabla u + (\mathbf{g} -\k\nabla w) 
\cdot \s^{-1} (\mathbf{h} - \k \nabla u )
+  2 \nabla w \cdot \f )
\bigr) = 0\,, 
\\
\forall (\nabla w,\mathbf{g}) \in \Lpoto(U) \times \Lsolo(U)\,.
\end{multline}
Taking~$w=0$ above yields that~$\s^{-1}(\mathbf{h}-\k \nabla u)$ is orthogonal to~$\Lsolo(U)$, which further implies that there exists~$\nabla \tilde u \in \Lpot(U)$ such that~$\mathbf{h} = \s \nabla \tilde u + \k \nabla u$. Inserting this into~\eqref{e.f.thecondition} and taking~$\mathbf{g}= 0$ there yields, for every~$\nabla w\in \Lpoto(U)$,  
\begin{equation*}
0 
=
\int_U 
\nabla w \cdot ( \s \nabla u +  \k\nabla \tilde u + 2\f )\,.
%\qquad 
%\forall (\nabla w,\mathbf{k}) \in \Lpoto(U) \times \Lsolo(U)\,.
\end{equation*}
In conclusion,~\eqref{e.f.thecondition} implies that there exists~$\nabla \tilde u \in \Lpot(U)$ such that
\begin{align*}  
\mathbf{h}  = \k\nabla u + \s \nabla \tilde u \in \Lsol(U)
\qand 
\s \nabla u +  \k\nabla \tilde u  + 2\f \in \Lsol(U)
\,.
\end{align*}
Conversely, it is easy to check that if~$(\nabla u, \mathbf{h})$ is defined by the above two identities for some~$\nabla \tilde u \in \Lpot(U)$, then also~\eqref{e.f.thecondition} is valid. Writing~$u = v + v^*$ and~$\tilde u =  v - v^*$, 
we get
\begin{equation*}
\mathbf{h} = \a\nabla v - \a^t \nabla v^*  \in \Lsol(U) 
\qand
\s \nabla u +  \k\nabla \tilde u  + 2\f 
= 
\left\{ 
\begin{aligned}
2\a \nabla v + 2\f - \mathbf{h} \in \Lsol(U) \, , \\
2\a^t \nabla v^* + 2\f + \mathbf{h} \in \Lsol(U)  \,,
\end{aligned}
\right.
\end{equation*}
and thus~$v  \in\A_{\f}(U)$ and~$v^* \in\A_{\f}^*(U)$. This gives us the representation~\eqref{e.f.S.findme}. 
\end{proof}

Notice that we have the following representations for~$v$ and~$v^*$ in terms of the element~$Z$ of~$\S_{\f}(U)$ in~\eqref{e.f.S.findme} they represent: 
\begin{equation} \label{e.f.splittingformula.for.Z}
Z = 
\begin{pmatrix} \nabla v + \nabla v^* \\ \a \nabla v - \a^t \nabla v^* \end{pmatrix}  \in \S_{\f}(U) 
\implies 
\left\{
\begin{aligned}
& \begin{pmatrix} \nabla v \\ \a \nabla v + \f \end{pmatrix} = \frac12 \bigl(Z + \rota ( \bfA Z + 2 \bfF) \bigr) 
 \,, 
\\ &   
\begin{pmatrix} \nabla v^* \\ - \a^t \nabla v^* - \f \end{pmatrix} = \frac12 \bigl( Z - \rota ( \bfA Z + 2 \bfF) \bigr) 
\,.
\end{aligned}
\right.
\end{equation}
Here, and in what follows, we denote
\begin{equation} 
\label{e.rota}
\rota:= \begin{pmatrix} 0 & \Id \\ \Id & 0 \end{pmatrix} \,.
\end{equation}

\smallskip

It will be useful to write a more general version of~\eqref{e.Jf.repre0}, with the ``constant term'' allowed to be~$\bfJ_{\f}(U,P_0,Q_0)$ for any~$(P_0,Q_0)\in\R^{4d}$. By subtracting~\eqref{e.Jf.repre0} and~\eqref{e.Jf.repre0} with~$(P,Q)=(P_0,Q_0)$, we get
\begin{align}
\label{e.Jf.repre.gen}
\bfJ_{\f}(U,P,Q) 
&
=
\bfJ_{\bfzero} (U,P,Q) 
+ (P - P_0) \cdot \bfF(U)
- (Q - Q_0) \cdot \bfA_*^{-1}(U) \bfF_*(U) 
\notag \\ & \qquad 
+ \bfJ_{\f}(U,P_0,Q_0) - \bfJ_{\bfzero} (U,P_0,Q_0)  \,.
\end{align}
Expanding around~$(P_0,Q_0)=(0,\bfF_*(U))$, we have, after some algebraic manipulations,  
\begin{align} 
\label{e.Jf.repre0zoop}
\bfJ_{\f}(U, P,Q ) 
&
=
\frac 12 P \cdot \bfA(U) P 
+
\frac12 (Q-\bfF_*(U)  ) \cdot \bfA_*^{-1}(U) (Q-\bfF_*(U)  )
-
P \cdot (Q-\bfF_*(U))
\notag \\ & \qquad 
+ 
\bfJ_{\f}(U,0,\bfF_*(U) )
+ P \cdot (\bfF(U)-\bfF_*(U))
\\ & \label{e.Jf.repre0zoopie}
=
\bfJ_{\bfzero}(U,P,Q-\bfF_*(U)) + 
\bfJ_{\f}(U,0,\bfF_*(U) )
+ P \cdot (\bfF(U)-\bfF_*(U))
\,.
\end{align}

\subsubsection{Splitting formula for~$\bfJ_{\f}$}

In order to proceed, we need a version of the identity~\eqref{e.Jsplitting.nonsymm} and another, dual version of it. This requires us to define two additional subadditive quantities:  
for every~$P ,Q \in \R^{2d}$,
\begin{equation}
\label{e.dual.smitz}
\left\{
\begin{aligned}
& \mu_{\f}(U,P)
: = \inf \biggl\{ 
\fint_{U} \Bigl( \frac12   
Z\cdot \bfA Z + 2 \bfF\cdot Z \Bigr)
\, : \, Z \in P+ \bigl( \Lpoto(U) \times  \Lsolo(U) \bigr) \biggr\}
\,,
\\ & 
\mu_{\f}^*(U,Q)
: = \sup \biggl\{ 
\fint_{U} \Bigl( - \frac12 Z \cdot \bfA Z + (Q-2\bfF) \cdot Z   \Bigr)
\, : \, Z \in \Lpot(U)\times \Lsol(U) \biggr\} 
\,.
\end{aligned}
\right.
\end{equation}
These quantities are subadditive. 
We denote their optimizers by~$Z_{\f}(\cdot,U,P)$ and~$Z_{\f}^*(\cdot,U,Q)$, respectively. Comparing these to the variational problems in~\eqref{e.J.P0.Dir} and~\eqref{e.J.0Q.Neu}, we have 
\begin{equation} 
\label{e.Z.zero.split}
Z_{\mathbf{0}}(\cdot,U,P) = - S(\cdot,U,P,0)
\qand
Z_{\mathbf{0}}^*(\cdot,U,Q) = S(\cdot,U,0,Q)
 \,.
\end{equation}
The optimizers are further characterized by the first variation identities:
\begin{equation}
\label{e.var.dual.smitz}
\fint_U Y \cdot \bigl( \bfA Z_{\f}(\cdot,U,P) +2 \bfF  \bigr) = 0 
\,,
\qquad \forall Y \in \Lpoto(U) \times  \Lsolo(U)  
\end{equation}
and
\begin{equation}
\label{e.var.dual.smitzenn}
\fint_U Y \cdot \bigl( \bfA Z^*_{\f}(\cdot,U,Q) -Q+2\bfF  \bigr) = 0
\,,
\qquad \forall Y \in \Lpot(U) \times  \Lsol(U)  
\,.
\end{equation}
From the first variation, we deduce the energy estimates
\begin{equation} 
\label{e.energy.estimate.Z}
\bigl\|\bfA^{\nicefrac12}Z_{\f}(\cdot,U,P) 
\bigr\|_{\underline{L}^2(U)} \leq 10 \bigl\|\s^{-\nicefrac12}\f 
\bigr\|_{\underline{L}^2(U)}  + 10 \bigl\|\bfA^{\nicefrac12}P
\bigr\|_{\underline{L}^2(U)}  
\end{equation}
and
\begin{equation} 
\label{e.energy.estimate.Zstar}
\bigl\|\bfA^{\nicefrac12}Z_{\f}^*(\cdot,U,Q) 
\bigr\|_{\underline{L}^2(U)} \leq 10 \bigl\|\s^{-\nicefrac12}\f 
\bigr\|_{\underline{L}^2(U)}  + 10 \bigl\|\bfA^{-\nicefrac12}Q
\bigr\|_{\underline{L}^2(U)}  
\,.
\end{equation}
We also have the identities
\begin{equation}
\label{e.trivial.blitzen}
P = \fint_U Z_{\f}(\cdot,U,P)
\quad \mbox{and} \quad 
Q = \fint_U \bigl( \bfA Z^*_{\f}(\cdot,U,Q)+2\bfF\bigr)
\,.
\end{equation}
To prove the first of these, we use that~$Z_{\f}(\cdot,U,P) \in  P+ \bigl( \Lpoto(U) \times  \Lsolo(U) \bigr)$ and use Stokes' theorem. For the second, we test~\eqref{e.var.dual.smitzenn} with~$Y$ a constant vector in~$\R^{2d}$.

\smallskip

We next prove of the splitting formula~\eqref{e.Jsplitting.nonsymm} for~$\bfJ_{\f}$, with~$\mu_{\f}(U,P)$ and~$\mu^*_{\f}(U,Q)$ in place of the quadratic terms in~\eqref{e.Jsplitting.nonsymm}.

\begin{lemma} \label{l.f.varyou.varme}
For every~$P ,Q \in \R^{2d}$, 
\begin{equation}  
\label{e.Jf.repre1}
\bfJ_{\f}(U, P,Q ) = \mu_{\f}(U,P) + \mu_{\f}^*(U,Q) - P \cdot Q \,,
\end{equation}
and the maximizer of~$\bfJ_{\f}(U,P,Q)$ satisfies
\begin{align}  \label{e.Jf.repre2}
X_{\f}(\cdot,U,P,Q) = X_{\bfzero}(\cdot,U,P,Q) + X_{\f}(\cdot,U,0,0) = Z_{\f}^*(\cdot,U,Q) -  Z_{\f}(\cdot,U,P) 
\,.
\end{align}
\end{lemma}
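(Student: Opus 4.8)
The plan is to establish the two equalities in~\eqref{e.Jf.repre2} first, since the splitting identity~\eqref{e.Jf.repre1} then follows by evaluating the quadratic functional in~\eqref{e.bfJ.var.f} at its maximizer and bookkeeping. Throughout I would use that $\bfA$ is symmetric and uniformly positive definite, so the concave functional in~\eqref{e.bfJ.var.f} has a unique maximizer over the linear space $\S_0(U)=\S(U)$, and that this maximizer is characterized among elements of $\S_0(U)$ by its first variation: $X_\f(\cdot,U,P,Q)$ is the unique $X\in\S_0(U)$ with $\fint_U\bigl( X\cdot\bfA Y + P\cdot\bfA Y - (Q-\bfF)\cdot Y\bigr)=0$ for all $Y\in\S_0(U)$.

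For the first equality in~\eqref{e.Jf.repre2}, I would write the first-variation identity for $X_0(\cdot,U,P,Q)=S(\cdot,U,P,Q)$, namely $\fint_U\bigl(X_0\cdot\bfA Y + P\cdot\bfA Y - Q\cdot Y\bigr)=0$ (this is~\eqref{e.firstvar.nonsymm}), and for $X_\f(\cdot,U,0,0)$, namely $\fint_U\bigl(X_\f(\cdot,U,0,0)\cdot\bfA Y + \bfF\cdot Y\bigr)=0$. Adding these shows $X_0(\cdot,U,P,Q)+X_\f(\cdot,U,0,0)\in\S_0(U)$ satisfies the characterizing identity of $X_\f(\cdot,U,P,Q)$, and uniqueness yields the claim (this also recovers~\eqref{e.Jf.repre0}).

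For the second equality I would set $\tilde X := Z_\f^*(\cdot,U,Q) - Z_\f(\cdot,U,P)$ and first check $\tilde X\in\S_0(U)$: it lies in $\Lpot(U)\times\Lsol(U)$ because $Z_\f\in P+(\Lpoto(U)\times\Lsolo(U))$ and $Z_\f^*\in\Lpot(U)\times\Lsol(U)$, and for any $Y\in\Lpoto(U)\times\Lsolo(U)$ one has $\fint_U Y\cdot(\mathrm{const})=0$ (divergence theorem and the definition of $\Lsolo$), so~\eqref{e.var.dual.smitzenn} gives $\fint_U Y\cdot\bfA Z_\f^* = \fint_U Y\cdot(Q-\bfF) = -\fint_U Y\cdot\bfF$, while~\eqref{e.var.dual.smitz} gives $\fint_U Y\cdot\bfA Z_\f = -\fint_U Y\cdot\bfF$, whence $\fint_U Y\cdot\bfA\tilde X = 0$. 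Then I would verify that $\tilde X$ satisfies the characterizing first-variation identity: for $Y\in\S_0(U)$, writing $Z_\f = P + Z_\f^0$ with $Z_\f^0\in\Lpoto(U)\times\Lsolo(U)$ and invoking the definition of $\S_0(U)$ gives $\fint_U Z_\f\cdot\bfA Y = \fint_U P\cdot\bfA Y$, while~\eqref{e.var.dual.smitzenn} gives $\fint_U Z_\f^*\cdot\bfA Y = \fint_U(Q-\bfF)\cdot Y$; combining, $\fint_U\bigl(\tilde X\cdot\bfA Y + P\cdot\bfA Y - (Q-\bfF)\cdot Y\bigr)=0$, and uniqueness gives $X_\f(\cdot,U,P,Q)=\tilde X$.

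Finally, for~\eqref{e.Jf.repre1} I would substitute $X_\f(\cdot,U,P,Q) = Z_\f^*(\cdot,U,Q) - Z_\f(\cdot,U,P)$ into the functional in~\eqref{e.bfJ.var.f}, expand the quadratic using symmetry of $\bfA$, and recognize $\mu_\f^*(U,Q)$ and $-\mu_\f(U,P)$ among the resulting terms; the remaining cross-terms can be reduced to $-P\cdot Q$ by repeated use of the first-variation relations~\eqref{e.var.dual.smitz}--\eqref{e.var.dual.smitzenn} together with the identities $P=\fint_U Z_\f$ and $Q=\fint_U(\bfA Z_\f^*+\bfF)$ from~\eqref{e.trivial.blitzen}. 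I expect this last step --- the quadratic expansion and sign tracking --- to be the only real obstacle, but it is routine and entirely parallel to the proof of the splitting formula~\eqref{e.Jsplitting.nonsymm} in the case $\f=0$.
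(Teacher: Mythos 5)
Your proposal is correct, and every step you leave as "routine" does in fact close: the candidate $Z_\f^*(\cdot,U,Q)-Z_\f(\cdot,U,P)$ lies in $\S_0(U)$ and satisfies the first-variation identity exactly as you argue, and in the final expansion, testing~\eqref{e.var.dual.smitzenn} with $Z_\f^*(\cdot,U,Q)$ and with $Z_\f(\cdot,U,P)$ and using $P=\fint_U Z_\f(\cdot,U,P)$ from~\eqref{e.trivial.blitzen} collapses the cross terms to exactly $-P\cdot Q$. The paper organizes the argument differently: rather than identifying the maximizer first (Euler--Lagrange plus uniqueness) and then computing the value by a quadratic expansion, it proves a single pointwise identity valid for every $Z^*\in\S_\f(U)$ --- with $X=Z^*-Z_\f(\cdot,U,P)\in\S_0(U)$, the objective of the $\mu_\f^*$-problem shifted by $\mu_\f(U,P)-P\cdot Q$ equals the objective of the $\bfJ_\f$-problem at $X$ --- and then takes suprema over the bijection $Z^*\mapsto X$ (legitimate because the maximizer of $\mu_\f^*(U,Q)$ lies in $\S_\f(U)$), which delivers~\eqref{e.Jf.repre1} and the second equality of~\eqref{e.Jf.repre2} in one stroke; the first equality is then read off from the decompositions $Z_\f(\cdot,U,P)=Z_0(\cdot,U,P)+Z_\f(\cdot,U,0)$ and $Z_\f^*(\cdot,U,Q)=Z_0^*(\cdot,U,Q)+Z_\f^*(\cdot,U,0)$ of~\eqref{e.decomp.splitten}. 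The paper's change of variables buys the value identity for free, avoiding your final expansion; your route buys a cleaner identification of the maximizer --- in particular your direct proof of the first equality of~\eqref{e.Jf.repre2} by adding the two Euler--Lagrange identities and invoking uniqueness is tidier than extracting it from the $Z$-decompositions. One incidental remark: the first-variation identity you state (whose signs agree with~\eqref{e.firstvar.nonsymm} when $\f=0$) is the correct one for the functional in~\eqref{e.bfJ.var.f}; it was wise to rederive it rather than quote~\eqref{e.firstvar.Jf} verbatim, since the signs of the $P$ and $Q-\bfF$ terms printed there are not consistent with that functional.
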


\begin{proof}
By the definition~\eqref{e.Sf.this} of~$\S_{\f}(U)$ together with~\eqref{e.var.dual.smitz} and~\eqref{e.var.dual.smitzenn}, we deduce that  both~$Z_{\f}(\cdot,U,P)$ and~$Z_{\f}^*(\cdot,U,Q)$ belong to~$\S_{\f}(U)$. Thus, by Lemma~\ref{l.f.S.findme},~\eqref{e.var.dual.smitz} and~\eqref{e.var.dual.smitzenn},
we see that
\begin{align}
\label{e.decomp.splitten}
Z_{\f}(\cdot,U,P) = Z_{\bfzero}(\cdot,U,P) {+} Z_{\f}(\cdot,U,0)
\,, \quad
Z_{\f}^*(\cdot,U,Q) = Z_{\bfzero}^*(\cdot,U,Q) {+} Z_{\f}^*(\cdot,U,0)
\,.
\end{align}
Using~\eqref{e.decomp.splitten} we obtain
\begin{equation}
\label{e.crossterm.smudge}
\fint_U
(\bfA Z_{\f}(\cdot,U,0) + 2\bfF) \cdot (Z_{\bfzero}(\cdot,U,P) - P ) = 0
\qand
\fint_U
Z_{\f}(\cdot,U,0) \cdot \bfA Z_{\bfzero}(\cdot,U,P) = 0\,,
\end{equation} 
and therefore, by applying~\eqref{e.decomp.splitten} and~\eqref{e.var.dual.smitz} once more, 
\begin{align*}  
\mu_{\f}(U,P) 
&
= 
\fint_U \Bigl( \frac12 Z_{\f}(\cdot,U,P) \cdot\bfA Z_{\f}(\cdot,U,P) 
+ 2 \bfF \cdot Z_{\f}(\cdot,U,P)  \Bigr)
% & & \mbox{by~\eqref{e.var.dual.smitz}}
\\ & 
=
\fint_U 
\frac 12  Z_{\bfzero}(\cdot,U,P) \cdot \bfA  Z_{\bfzero}(\cdot,U,P)
+
\fint_U \Bigl( \frac12 Z_{\f}(\cdot,U,0) \cdot \bfA Z_{\f}(\cdot,U,0)  + 2\bfF \cdot Z_{\f}(\cdot,U,0) \Bigr)
\\ & \qquad 
+ \fint_U 2\bfF \cdot Z_{\bfzero}(\cdot,U,P) 
\\ & 
=
\bfJ_{\bfzero}(U,P,0) + \mu_{\f}(U,0)
+ P \cdot   \fint_U\bigl(\bfA  Z_{\f}(\cdot,U,0) +2\bfF\bigr)
\,.
\end{align*}
Similarly, applying now~\eqref{e.decomp.splitten} and~\eqref{e.var.dual.smitzenn}, we get
\begin{equation*}  
\mu_{\f}^*(U,Q) 
= 
\bfJ_{\bfzero}(U,0,Q) + \mu_{\f}^*(U,0) + Q \cdot \fint_{U} Z_{\f}^*(\cdot,U,0)  
\,.
\end{equation*}
For every~$Z^* \in \S_{\f}(U)$, we observe that~$X :=  Z^* - Z_{\f}(\cdot,U,P)$ belongs to~$\S_{\bfzero}(U)$ and satisfies 
\begin{align*} 
&
\mu_{\f}(U,P) + \fint_{U} \Bigl( - \frac12 Z^* {\cdot}\, \bfA Z^*  + (Q-2\bfF) {\cdot}\, Z^* \Bigr) - P\cdot Q
\notag \\ & \qquad\quad
=
\fint_{U} \Bigl( - \frac12 X {\cdot}\, \bfA X - P{\cdot}\, \bfA X + (Q-2 \bfF){\cdot}\, X\Bigr)\,.
\end{align*}
Consequently, both~\eqref{e.Jf.repre1} and~\eqref{e.Jf.repre2} follow since~$Z^* = Z_{\f}^*(\cdot,U,Q)\in \S_{\f}(U)$ maximizes the left-hand side and~$X_{\f}(\cdot,U,P,Q)$ maximizes the right-hand side in~$\S_{\bfzero}(U)$, in which also the difference~$Z_{\f}^*(\cdot,U,Q) - Z_{\f}(\cdot,U,P)$ belongs to. The proof is complete. 
\end{proof}

In view of~\eqref{e.coarseF.def},~\eqref{e.trivial.blitzen} and~\eqref{e.Jf.repre2}, we have the following representations for~$\bfF(U)$ and~$\bfF_*(U)$ in terms of the optimizers for~$\mu_{\f}(U,0)$ and~$\mu_{\f}^*(U,0)$: 
\begin{equation}
\label{e.bfF.by.Zf}
\bfF(U) 
=
-\,
\fint_{U}  \bfA \bigl( Z_{\f}^*(\cdot,U,0) - Z_{\f}(\cdot,U,0) \bigr) 
= 
\fint_{U} \bigl( \bfA  Z_{\f}(\cdot,U,0) +  2\bfF\bigr) 
\end{equation}
and
\begin{equation}
\label{e.bfF.by.Zf.star}
\bfF_*(U) 
=
-\,
\bfA_{*}(U) \fint_{U} ( Z_{\f}^*(\cdot,U,0) - Z_{\f}(\cdot,U,0)) = - \bfA_{*}(U)  \fint_{U} Z_{\f}^*(\cdot,U,0) 
\,.
\end{equation}
As a byproduct of the previous proof, we may rewrite the energies as follows:
\begin{equation} 
\label{e.mu.f.mustar.f.formula}
\left\{
\begin{aligned}
& \mu_{\f}(U,P) = 
\frac12 P \cdot \bfA(U) \cdot P
+ P \cdot   \bfF(U)  
 + \mu_{\f}(U,0)
\,,  
\\
& \mu_{\f}^*(U,Q) 
= 
\frac12 Q \cdot \bfA_*^{-1} (U) \cdot Q - Q \cdot \bfA_*^{-1}(U) \bfF_*(U) 
+ \mu_{\f}^*(U,0) 
\,.
\end{aligned}
\right.
\end{equation}

\smallskip

The representation of~$\bfJ_{\f}$ in~\eqref{e.Jf.repre1} and of the coarse-grained vector field in~\eqref{e.bfF.by.Zf} and~\eqref{e.bfF.by.Zf.star} will allow us to estimate the Malliavin derivatives of these quantities. By the representation~\eqref{e.Jf.repre0}, it is enough to obtain estimates for~$\bfF(U)$,~$\bfF_*(U)$,~$\mu_{\f}(U,0)$ and~$\mu_{\f}^*(U,0)$. 

\begin{lemma}
\label{l.Malliavin.f.base}
There exists~$C(d,\lambda,\Lambda)<\infty$ such that 
\begin{equation}
\label{e.Malliavin.bfF}
\bigl| \partial_{(\a,\f)(U)} \bfF(U) \bigr| 
+
\bigl| \partial_{(\a,\f)(U)} \bfF_*(U) \bigr| 
\leq 
C\bigl (1+ \| \f \|_{\underline{L}^2(U)} \bigr) 
\end{equation}
and 
\begin{equation}
\label{e.Malliavin.F.muf}
\bigl| \partial_{(\a,\f)(U)} \mu_{\f}(U,0) \bigr| 
+
\bigl| \partial_{(\a,\f)(U)} \mu_{\f}^*(U,0) \bigr| 
\leq 
C \| \f \|_{\underline{L}^2(U)} 
\bigl (1+ \| \f \|_{\underline{L}^2(U)} \bigr) \,.
\end{equation}
\end{lemma}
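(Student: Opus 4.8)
The plan is to follow the pattern of Lemma~\ref{l.malliavin}: fix a bounded Lipschitz domain~$U$, a small~$t>0$, and two perturbations agreeing with~$(\a,\f)$ outside~$U$ and at distance~$\leq t$ from it in the norm~\eqref{e.triple.norm.sec6}; abusing notation, call them again~$(\a,\f)$ and~$(\tilde\a,\tilde\f)$ and write~$\bfA,\tilde\bfA,\bfF,\tilde\bfF,\bfF(U),\tilde\bfF(U),\dots$ for the associated objects (note that~$\|\f\|_{\underline{L}^2(U)}$ and~$\|\tilde\f\|_{\underline{L}^2(U)}$ now differ from the base value by at most~$t$, which is harmless in the limit). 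It then suffices to bound $\bfF(U)-\tilde\bfF(U)$, $\bfF_*(U)-\tilde\bfF_*(U)$, $\mu_\f(U,0)-\mu_{\tilde\f}(U,0)$ and $\mu_\f^*(U,0)-\mu_{\tilde\f}^*(U,0)$ by $Ct$ times the corresponding right side of~\eqref{e.Malliavin.bfF} or~\eqref{e.Malliavin.F.muf}, then divide by~$2t$ and let~$t\to0$. Throughout I use $\|\f-\tilde\f\|_{\underline{L}^2(U)}\leq 2t$ and $\|\bfA-\tilde\bfA\|_{L^\infty(U)}\leq C\|\a-\tilde\a\|_{L^\infty(U)}\leq Ct$ (the latter because~$\bfA$ is a locally Lipschitz function of~$\a$ on the uniformly elliptic matrices, see~\eqref{e.bfA.form}), hence also $\|\bfF-\tilde\bfF\|_{\underline{L}^2(U)}\leq 4t$ since $\bfF=(2\f,0)$. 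I also record the a priori bounds, obtained by testing the variational problems~\eqref{e.dual.smitz} with the zero field and using the ellipticity bounds~\eqref{e.bfA.bounds}: $\mu_\f(U,0)\leq 0\leq\mu_\f^*(U,0)$ and $\|Z_\f(\cdot,U,0)\|_{\underline{L}^2(U)}+\|Z_\f^*(\cdot,U,0)\|_{\underline{L}^2(U)}\leq C\|\f\|_{\underline{L}^2(U)}$.

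For~\eqref{e.Malliavin.F.muf} I use the convex-duality competitor argument of Lemma~\ref{l.malliavin}. Using the minimizer~$\tilde Z:=Z_{\tilde\f}(\cdot,U,0)$ of the problem defining~$\mu_{\tilde\f}(U,0)$ as a competitor in the problem defining~$\mu_{\f}(U,0)$, and then subtracting the two functionals, gives
\begin{equation*}
\mu_\f(U,0)-\mu_{\tilde\f}(U,0)
\leq
\fint_U\Bigl(\tfrac12\tilde Z\cdot(\bfA-\tilde\bfA)\tilde Z+(\bfF-\tilde\bfF)\cdot\tilde Z\Bigr)
\leq
Ct\|\tilde Z\|_{\underline{L}^2(U)}^2+Ct\|\tilde Z\|_{\underline{L}^2(U)}\,;
\end{equation*}
since $\|\tilde Z\|_{\underline{L}^2(U)}\leq C\|\tilde\f\|_{\underline{L}^2(U)}\leq C(\|\f\|_{\underline{L}^2(U)}+t)$, the right side is at most $Ct(\|\f\|_{\underline{L}^2(U)}+t)(1+\|\f\|_{\underline{L}^2(U)}+t)$. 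The reverse inequality is symmetric, so dividing by~$2t$ and letting~$t\to0$ yields the bound on~$|\partial_{(\a,\f)(U)}\mu_\f(U,0)|$. The identical computation with the concave problem defining~$\mu_\f^*(U,0)$ gives the corresponding estimate.

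The estimate~\eqref{e.Malliavin.bfF} for the coarse-grained vectors is the delicate point: the naive competitor argument, applied through a splitting identity such as $e\cdot\bfF(U)=\tfrac12(\mu_\f(U,e)-\mu_\f(U,-e))$, only yields the suboptimal bound $C(1+\|\f\|_{\underline{L}^2(U)})^2$, so to recover the linear dependence on~$\|\f\|_{\underline{L}^2(U)}$ one must exploit that~$Z_\f(\cdot,U,0)$ depends \emph{affinely} on~$\f$ and estimate the difference of optimizers in energy. With~$Z:=Z_\f(\cdot,U,0)$ and~$\tilde Z:=Z_{\tilde\f}(\cdot,U,0)$, both of which belong to~$\Lpoto(U)\times\Lsolo(U)$, subtracting the first-variation identities~\eqref{e.var.dual.smitz} and testing the result with~$Y=Z-\tilde Z$ gives
\begin{equation*}
\fint_U(Z-\tilde Z)\cdot\tilde\bfA(Z-\tilde Z)
=
-\fint_U(Z-\tilde Z)\cdot\bigl((\bfA-\tilde\bfA)Z+(\bfF-\tilde\bfF)\bigr)\,,
\end{equation*}
whence, by the ellipticity bounds~\eqref{e.bfA.bounds}, Cauchy--Schwarz, and the a priori bound on~$\|Z\|_{\underline{L}^2(U)}$, one obtains $\|Z-\tilde Z\|_{\underline{L}^2(U)}\leq Ct(1+\|\f\|_{\underline{L}^2(U)})$. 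The representation~\eqref{e.bfF.by.Zf} then gives $\bfF(U)-\tilde\bfF(U)=\fint_U\bigl((\bfA-\tilde\bfA)Z+\tilde\bfA(Z-\tilde Z)+(\bfF-\tilde\bfF)\bigr)$, whose $L^1(U)$ norm is at most $Ct(1+\|\f\|_{\underline{L}^2(U)})$, which gives the bound on~$|\partial_{(\a,\f)(U)}\bfF(U)|$. For~$\bfF_*(U)$ one argues identically with the first-variation identity~\eqref{e.var.dual.smitzenn} to get $\|Z_\f^*(\cdot,U,0)-\tilde Z_{\tilde\f}^*(\cdot,U,0)\|_{\underline{L}^2(U)}\leq Ct(1+\|\f\|_{\underline{L}^2(U)})$, and then combines this with the representation~\eqref{e.bfF.by.Zf.star}, the bound $|\partial_{\a(U)}\bfA_*^{-1}(U)|\leq C$ from~\eqref{e.maul.Mall} (recall~$\bfA_*(U)$ does not depend on~$\f$), $|\bfA_*^{-1}(U)|\leq C$, and $\|Z_\f^*(\cdot,U,0)\|_{\underline{L}^2(U)}\leq C\|\f\|_{\underline{L}^2(U)}$, by telescoping $\bfA_*^{-1}(U)\fint_U Z_\f^*(\cdot,U,0)-\tilde\bfA_*^{-1}(U)\fint_U\tilde Z_{\tilde\f}^*(\cdot,U,0)$. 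The main obstacle is precisely this sharpness requirement in~\eqref{e.Malliavin.bfF}: one cannot afford to lose a power of~$\|\f\|_{\underline{L}^2(U)}$, which forces the energy estimate for~$Z-\tilde Z$ (exploiting linearity in~$\f$) in place of the pure convex-duality argument that suffices for~$J$ in Lemma~\ref{l.malliavin} and for~$\mu_\f(U,0),\mu_\f^*(U,0)$ here; everything else is routine bookkeeping.
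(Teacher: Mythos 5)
Your proposal is correct and follows essentially the same route as the paper: the key estimate~\eqref{e.Malliavin.bfF} is obtained exactly as in the paper's proof, by subtracting the first-variation identities~\eqref{e.var.dual.smitz}--\eqref{e.var.dual.smitzenn}, testing with the difference of optimizers to get $\| Z_\f - \tilde Z_{\tilde\f}\|_{\underline{L}^2(U)} \leq Ct(1+\|\f\|_{\underline{L}^2(U)})$, and then invoking the representations~\eqref{e.bfF.by.Zf} and~\eqref{e.bfF.by.Zf.star} (you spell out the $\bfF_*$ and dual cases, which the paper dispatches as ``analogous''). Your treatment of~\eqref{e.Malliavin.F.muf} by a direct competitor argument in the spirit of Lemma~\ref{l.malliavin} is a harmless minor variation of the paper's computation and is equally valid.
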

\begin{proof}
We fix a small parameter~$t\in (0,1]$, a bounded Lipschitz domain~$U\subseteq \Rd$ and~$(\a,\f)\in\Omega$ and we consider a variation~$(\tilde{\a},\tilde{\f})$ satisfying
\begin{equation*}
\| \a - \tilde{\a} \|_{L^\infty(U)} \leq t \quad \mbox{and} \quad 
\| \f - \tilde{\f} \|_{\underline{L}^2(U)} \leq t\,.
\end{equation*}
We denote all of the quantities associated to~$(\tilde{\a},\tilde{\f})$ with tildes.
Using~\eqref{e.var.dual.smitz} with~$P=0$ and~$Y = Z_{\f}(\cdot,U,0) - \tilde{Z}_{\f}(\cdot,U,0)$, we get 
\begin{equation*}
\left\{
\begin{aligned}
&  \fint_U \bigl( Z_{\f}(\cdot,U,0) - \tilde{Z}_{\f}(\cdot,U,0) \bigr) \cdot \bigl( \bfA Z_{\f}(\cdot,U,0) +2\bfF  \bigr) = 0 \,, 
\quad \mbox{and} \\ &
 \fint_U \bigl( Z_{\f}(\cdot,U,0) - \tilde{Z}_{\f}(\cdot,U,0) \bigr) \cdot \bigl( \tilde\bfA \tilde{Z}_{\f}(\cdot,U,0) + 2\tilde\bfF  \bigr) = 0 \,.
\end{aligned}
\right.
\end{equation*}
Subtracting these, we get 
\begin{align*}
& \fint_U 
\bigl( Z_{\f}(\cdot,U,0) - \tilde{Z}_{\f}(\cdot,U,0) \bigr) \cdot \tilde{\bfA}
\bigl( Z_{\f}(\cdot,U,0) - \tilde{Z}_{\f}(\cdot,U,0) \bigr)
\\ & \qquad 
=
\fint_U \Bigl( \bigl( Z_{\f}(\cdot,U,0) - \tilde{Z}_{\f}(\cdot,U,0) \bigr) \cdot \bigl( (\tilde{\bfA} - \bfA ) Z_{\f}(\cdot,U,0) 
+
2(\tilde{\bfF} - \bfF) \bigr) \Big) \,.
\end{align*}
We deduce that 
\begin{align*}
&
\| Z_{\f}(\cdot,U,0) - \tilde{Z}_{\f}(\cdot,U,0)  \|_{\underline{L}^2(U)}^2
\\ & \qquad 
\leq 
C\| Z_{\f}(\cdot,U,0) - \tilde{Z}_{\f}(\cdot,U,0)  \|_{\underline{L}^2(U)}
\bigl( 
\| \tilde{\bfA} - \bfA \|_{L^\infty(U)} \|  Z_{\f}(\cdot,U,0) \|_{\underline{L}^2(U)}
+
\| \tilde{\bfF} - \bfF \|_{\underline{L}^2(U)} 
\bigr)
\end{align*}
and then 
\begin{align*}
\lefteqn{
\| Z_{\f}(\cdot,U,0) - \tilde{Z}_{\f}(\cdot,U,0)  \|_{\underline{L}^2(U)} 
}\qquad &
\notag \\ &
\leq 
C \Bigl( \| \f\|_{\underline{L}^2(U)} \| \a - \tilde{\a} \|_{L^\infty(U)}
+
\| \f - \tilde{\f} \|_{\underline{L}^2(U)} 
\Bigr)
\leq
C \bigl(1 + \| \f\|_{\underline{L}^2(U)} \| \bigr) t
\,.
\end{align*}
Using the formula~\eqref{e.bfF.by.Zf}, we deduce that 
\begin{align*}
\bigl| \bfF(U) - \tilde{\bfF}(U) \bigr| 
&
=
\biggl| \fint_{U} 
\Bigl( 
\tilde{\bfA}
\bigl( Z_{\f}(\cdot,U,0) - \tilde{Z}_{\f}(\cdot,U,0) \bigr) 
+  
\bigl( \bfA - \tilde{\bfA} \bigr)  Z_{\f}(\cdot,U,0) 
+
2(\bfF - \tilde{\bfF} )
\Bigr) 
\biggr| 
\\ & 
\leq 
C \Bigl( \| \f\|_{\underline{L}^2(U)} \| \a - \tilde{\a} \|_{L^\infty(U)}
+
\| \f - \tilde{\f} \|_{\underline{L}^2(U)} 
\Bigr)
\leq C \bigl(1 + \| \f\|_{\underline{L}^2(U)} \| \bigr) t\,.
\end{align*}
We also get 
\begin{align*}
\mu_{\f}(U,0) - \tilde{\mu}_{\f}(U,0)  
&
\leq 
\fint_{U} 
\Bigl( 
\frac12 Z_{\f}(\cdot,U,0)  \cdot \bfA Z_{\f}(\cdot,U,0) 
- \frac12 \tilde{Z}_{\f}(\cdot,U,0)  \cdot \tilde{\bfA}\tilde{Z}_{\f}(\cdot,U,0) 
\Big) 
\\ & 
=
\fint_{U} 
\Bigl( 
\frac12 \bigl( Z_{\f}(\cdot,U,0) - \tilde{Z}_{\f}(\cdot,U,0) \bigr)  \cdot \tilde\bfA \bigl( Z_{\f}(\cdot,U,0) + \tilde{Z}_{\f}(\cdot,U,0) \bigr)  
\\ & \qquad \quad
+
\frac12 Z_{\f}(\cdot,U,0)  \cdot \bigl( \bfA - \tilde{\bfA} \bigr) Z_{\f}(\cdot,U,0) 
\Big)  
\\ & 
\leq
C \bigl(  \| \f \|_{\underline{L}^2(U)} + \| \tilde{\f} \|_{\underline{L}^2(U)} \bigr)
\bigl(1 + \| \f\|_{\underline{L}^2(U)} \bigr) t
\\ & 
\leq 
C \bigl(  \| \f \|_{\underline{L}^2(U)} + t \bigr)
\bigl(1 + \| \f\|_{\underline{L}^2(U)} \bigr) t
\,.
\end{align*}
Sending~$t\to 0$ yields the bound for~$| \partial_{(\a,\f)(U)} \bfF(U)|$ in~\eqref{e.Malliavin.bfF} and~$| \partial_{(\a,\f)(U)}\mu_{\f}(U,0) |$ in~\eqref{e.Malliavin.F.muf}.  The bounds for~$| \partial_{(\a,\f)(U)} \bfF_*(U)|$ and~$| \partial_{(\a,\f)(U)}\mu_{\f}^*(U,0) |$ are proved analogously and we omit the computation. 
\end{proof}

\subsubsection{Iterating up the scales to get the algebraic rate}

We can control the convergence of coarse-grained vectors~$\bfF$ and~$\bfF_*$ without an iteration in the scales. The reason is that the fluctuations of the coarse-grained matrices~$\bfA$ and~$\bfA_*$ control the means of these quantities and their difference. Once the means are controlled, we can apply the mixing condition to control the fluctuations. 
This is a coarse-graining manifestation of the following idea: if we were to homogenize the equation
\begin{equation*}
-\Delta u = \nabla \cdot \f \,,
\end{equation*}
then, because the left side is Laplacian, we can just integrate~$\f$ against the elliptic Green function to obtain the solution. This would reveal that the coarse-grained~$\f$ should be a simple average of~$\f$; in other words, the coarse-graining is additive. 

\begin{lemma} 
\label{l.bF.converge}
There exist constants~$\alpha(\beta,d,\lambda,\Lambda) \in (0,1)$ and~$C(\datareff) < \infty$, and a constant vector~$\overline{\bfF} \in \R^{2d}$,  such that, for every~$m \in \N$, 
\begin{equation}
\label{e.bfF.convergeE}
\E \bigl[  \bigl| \bfF(\cu_m) - \overline{\bfF} \bigr|^2 \Bigr]
+
\E \bigl[  \bigl| \bfF_*(\cu_m) - \overline{\bfF} \bigr|^2 \Bigr]
\leq 
C3^{-m\alpha}\,.
\end{equation}
\end{lemma}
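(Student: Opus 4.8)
The strategy is to exploit the fact that the coarse-grained vectors $\bfF(U)$ and $\bfF_*(U)$ are \emph{additive} up to errors controlled by quantities already shown to decay algebraically, so that no fresh iteration up the scales is required. The first step is a near-additivity estimate. Fix $n<m$ and let $\tilde Z$ be the field on $\cu_m$ whose restriction to each $z+\cu_n$ is the minimizer $Z_\f(\cdot,z+\cu_n,0)$ of $\mu_\f(z+\cu_n,0)$; since this minimizer lies in $\Lpoto(z+\cu_n)\times\Lsolo(z+\cu_n)$, the glued field $\tilde Z$ lies in $\Lpoto(\cu_m)\times\Lsolo(\cu_m)$ and is admissible in the definition~\eqref{e.dual.smitz} of $\mu_\f(\cu_m,0)$. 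The representation~\eqref{e.bfF.by.Zf} then gives
\begin{equation*}
\avsum_{z\in 3^n\Zd\cap\cu_m}\bfF(z+\cu_n) - \bfF(\cu_m)
=
\fint_{\cu_m}\bfA\bigl(\tilde Z - Z_\f(\cdot,\cu_m,0)\bigr)\,,
\end{equation*}
while uniform convexity of the quadratic functional in~\eqref{e.dual.smitz} yields the quadratic-response bound
\begin{equation*}
c\,\bigl\|\tilde Z - Z_\f(\cdot,\cu_m,0)\bigr\|_{\underline{L}^2(\cu_m)}^2
\leq
\avsum_{z\in 3^n\Zd\cap\cu_m}\mu_\f(z+\cu_n,0) - \mu_\f(\cu_m,0)\,.
\end{equation*}
The analogous statements hold for $\bfF_*$ in terms of $\mu_\f^*(\cdot,0)$ via~\eqref{e.bfF.by.Zf.star} (here one restricts the maximizer of $\mu_\f^*(\cu_m,0)$ to subcubes, and the $z$-dependent prefactors $\bfA_*(z+\cu_n)$ are absorbed using the already-established decay of $\var[\bfA_*(\cu_m)]$). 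Thus the additivity defects of $\bfF$ and $\bfF_*$ are controlled by the additivity defects of the \emph{scalar} quantities $\mu_\f(\cdot,0)$ and $\mu_\f^*(\cdot,0)$.

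Second, I would show these scalar defects decay algebraically in expectation. The sequences $\E[\mu_\f(\cu_m,0)]$ and $\E[\mu_\f^*(\cu_m,0)]$ are monotone (nonincreasing, resp.\ nondecreasing, by subadditivity and stationarity) and, completing the square and using~\eqref{e.f.dumbbound}, satisfy $-\tfrac12\fint_{\cu_m}\bfF\cdot\bfA^{-1}\bfF\leq \mu_\f(\cu_m,0)\leq 0\leq \mu_\f^*(\cu_m,0)\leq \tfrac12\fint_{\cu_m}\bfF\cdot\bfA^{-1}\bfF$, hence they converge. For a \emph{rate}, note the per-scale increment $\E[\mu_\f(\cu_m,0)]-\E[\mu_\f(\cu_{m+1},0)]$ equals a constant times $\E[\|\tilde Z-Z_\f(\cdot,\cu_m,0)\|_{\underline{L}^2(\cu_m)}^2]$ in the case $n=m$; using the identity~\eqref{e.Jf.repre0}, which writes $\bfJ_\f$ as $\bfJ=\bfJ_0$ plus a term linear in $(\bfF,\bfF_*)$ plus the constant $\bfJ_\f(\cdot,0,0)$, I would bound this increment by $C$ times the convex-duality defect of $\bfJ$ at scale $m$ together with $\var[\bfA(\cu_m)]+\var[\bfA_*(\cu_m)]$, all $O(3^{-m\alpha})$ by Theorem~\ref{t.subadd.converge.nosymm}, Proposition~\ref{p.algebraic.double}, and the nonsymmetric analogue of Lemma~\ref{l.flatness}. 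Summing the geometric tail gives $|\E[\mu_\f(\cu_m,0)]-\lim_k\E[\mu_\f(\cu_k,0)]|\leq C3^{-m\alpha}$ and likewise for $\mu_\f^*$. Combining with the quadratic-response bounds and $\E[\avsum_z\bfF(z+\cu_n)]=\E[\bfF(\cu_n)]$ (stationarity) shows $\E[\bfF(\cu_m)]$ and $\E[\bfF_*(\cu_m)]$ are Cauchy; they have a common limit $\overline{\bfF}$ because $\E[|\bfF(\cu_m)-\bfF_*(\cu_m)|]\to 0$ at the same rate (take $Q=\bfF_*(\cu_m)$ in~\eqref{e.Jf.repre0zoop}, optimize over $P$, and use $\bfJ_\f\geq 0$).

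Third, for the fluctuations around the mean I would apply the generalized concentration-for-sums condition (Definition~\ref{d.CFS.general}) to the variables $\bfF(z+\cu_n)-\E[\bfF(\cu_n)]$, $z\in 3^n\Zd\cap\cu_m$, which are $\F(z+\cu_n)$-measurable with Malliavin derivative at most $C(1+\|\f\|_{\underline{L}^2(z+\cu_n)})$ by Lemma~\ref{l.Malliavin.f.base} and the additivity of the Malliavin derivative. Since these variables are not uniformly bounded, I would split $\bfF(z+\cu_n)$ according to whether $\|\f\|_{\underline{L}^2(z+\cu_n)}\leq 1$: the part carrying $\indc_{\{\|\f\|_{\underline{L}^2(z+\cu_n)}\leq 1\}}$ and its Malliavin derivative are bounded by a deterministic constant, so after rescaling and subtracting its expectation $\CFS(\beta,\Psi)$ contributes $\O_\Psi(C3^{-\frac d2(m-n)})$, while the averaged complementary part is dominated by $C\avsum_z\|\f\|_{\underline{L}^2(z+\cu_n)}^2\indc_{\{\|\f\|_{\underline{L}^2(z+\cu_n)}>1\}}=\O_\Psi(C\mathsf{K}3^{-\frac d2(m-n)})$ by~\eqref{e.P.fboundedness}; the same applies to $\bfF_*$ via~\eqref{e.bfF.by.Zf.star}. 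Writing
\begin{equation*}
\bfF(\cu_m)-\overline{\bfF}
=\Bigl(\bfF(\cu_m)-\avsum_z\bfF(z+\cu_n)\Bigr)
+\Bigl(\avsum_z\bfF(z+\cu_n)-\E[\bfF(\cu_n)]\Bigr)
+\Bigl(\E[\bfF(\cu_n)]-\overline{\bfF}\Bigr)
\end{equation*}
with $n:=\lceil\tfrac12(1+\beta)m\rceil$ (so $\beta m<n<m$), I would bound the three terms by $C3^{-m\alpha/2}+\O_\Psi(C3^{-\frac d4(1-\beta)m})$, and convert this to the $L^2(\Omega,\P)$ bound~\eqref{e.bfF.convergeE} by combining it with the pointwise bound $|\bfF(\cu_m)|\leq C(1+\|\f\|_{\underline{L}^2(\cu_m)})$ and the uniform moment control on $\|\f\|_{\underline{L}^2(\cu_m)}$ from~\eqref{e.P.fboundedness} (a routine truncation), after renaming the exponent $\alpha$.

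The main obstacle is the second step: obtaining an algebraic \emph{rate}, rather than mere convergence, for $\E[\mu_\f(\cu_m,0)]$ and $\E[\mu_\f^*(\cu_m,0)]$. Unlike the matrix quantities these do not vanish in the limit, so Theorem~\ref{t.subadd.converge.nosymm} cannot be invoked directly; the resolution is that their per-scale \emph{increments} are slaved, through the exact identity~\eqref{e.Jf.repre0}, to the matrix-level convex-duality defect, which is already known to be algebraically small. The truncation required to run $\CFS$ against the unbounded forcing term is a secondary technical point, handled entirely by assumption~\eqref{e.P.fboundedness}.
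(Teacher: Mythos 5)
Your Step 3 (truncating at $\|\f\|_{\underline{L}^2(z+\cu_n)}\leq 1$, applying the generalized $\CFS$ condition to the bounded part via the Malliavin bound of Lemma~\ref{l.Malliavin.f.base}, and absorbing the rest with~\eqref{e.P.fboundedness}) is essentially the paper's fluctuation argument and is fine. The gap is in Step 2. You reduce the additivity defect of $\bfF$ to the additivity defect of the scalar quantity $\mu_\f(\cdot,0)$ (via quadratic response, which is correct as an inequality), and then claim that the per-scale increment $\E[\mu_\f(\cu_m,0)]-\E[\mu_\f(\cu_{m+1},0)]$ is bounded by the matrix-level convex-duality defect of $\bfJ$ together with $\var[\bfA(\cu_m)]+\var[\bfA_*(\cu_m)]$. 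This is false: take $\a\equiv\Id$ and a nontrivial stationary mean-zero $\f$. Then $\bfA(U)=\bfA_*(U)=\mathrm{I}_{2d}$ exactly, so every matrix-level quantity you invoke vanishes identically, while $\mu_\f(\cu_m,0)=-\fint_{\cu_m}\frac12|\nabla u_m|^2$ with $u_m\in H^1_0(\cu_m)$ solving $-\Delta u_m=2\nabla\cdot\f$, whose expected additivity defect is generically strictly positive (the subcube minimizers are forced to vanish on interior boundaries). The increments of $\E[\mu_\f(\cu_m,0)]$ carry forcing-dependent information that the quantities $\bfJ_0$, $\bfA$, $\bfA_*$ simply cannot see (a small perturbation $\a=\Id+O(\epsilon)$ makes this quantitative: matrix defects $O(\epsilon)$, $\mu_\f$-defect of order one). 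Obtaining a rate for these increments correctly would require an argument of the type of Lemma~\ref{l.f.J.minimal.expectation}, which in the paper comes \emph{after}, and uses, the present lemma—so your route is either false or circular.

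The paper's proof sidesteps the scalar quantities entirely by exploiting the free slope $P$ in the identity~\eqref{e.Jf.repre0}: writing $P\cdot\bfF(U)=\bfJ_\f(U,P,0)-\tfrac12P\cdot\bfA(U)P-\bfJ_\f(U,0,0)$, using subadditivity of $\bfJ_\f(\cdot,P,0)$ and the crude bound $0\leq\bfJ_\f(\cdot,0,0)\leq C\|\f\|_{\underline{L}^2}^2$ (which does not scale with $P$), and then optimizing over $|P|$ yields
\begin{equation*}
\Bigl|\,\bfF(\cu_m)-\avsum_{z\in 3^n\Zd\cap\cu_m}\bfF(z+\cu_n)\,\Bigr|
\leq
C\,\|\f\|_{\underline{L}^2(\cu_m)}\,
\Bigl|\,\bfA(\cu_m)-\avsum_{z\in 3^n\Zd\cap\cu_m}\bfA(z+\cu_n)\,\Bigr|^{\sfrac12}\,,
\end{equation*}
so the $\bfF$-additivity defect is slaved to the $\bfA$-additivity defect alone (already $O(3^{-n\alpha})$ in expectation by Proposition~\ref{p.algebraic.double} and Lemma~\ref{l.J.basicprops.nonsymm}), with no forcing-dependent subadditive quantity ever appearing; in the constant-coefficient example this correctly gives zero defect, since there $\bfF(U)=\fint_U\bfF$ is exactly additive. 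The comparison $|\bfF(\cu_m)-\bfF_*(\cu_m)|\leq C\|\f\|_{\underline{L}^2(\cu_m)}|\bfA(\cu_m)-\bfA_*(\cu_m)|^{\sfrac12}$, from~\eqref{e.fluxmaps.nonsymm}, then lets you treat $\bfF_*$ without repeating the analysis, which also removes the vague absorption of the prefactors $\bfA_*(z+\cu_n)$ in your Step 1. With these two estimates in place of your Steps 1 and 2, your Step 3 completes the proof as in the paper.
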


\begin{proof}

\emph{Step 1.} We first show that, for any invertible symmetric matrix~$\mathbf{B} \in \R^{2d \times 2d}$, we have for any Lipschitz domain~$U$ that
\begin{equation} 
\label{e.F.vs.Fstar.withB}
\bigl| \mathbf{B}^{-\nicefrac12}\bigl(\bfF(U) - \bfF_*(U)   \bigr) \bigr|^2
\leq
\bfJ_{\f}(U,0,0) \bigl| \mathbf{B}^{-\nicefrac12} \bigl( \bfA(U) - \bfA_*(U) \bigr)\mathbf{B}^{-\nicefrac12} \bigr|
\end{equation}
and, for every~$m,n \in \N$ with~$m\geq n$, 
\begin{align} 
\label{e.f.coarseF.additive}
\lefteqn{
\bigg| \avsum_{z \in 3^n \Z^d \cap \cu_m} \mathbf{B}^{-\nicefrac12}  \bigl( \bfF(\cu_m) -   \bfF(z+\cu_n)   \bigr) \bigg|^2 
} \quad &
\notag \\ &
\leq
\biggl(\avsum_{z \in 3^n \Z^d \cap \cu_m} \!\! \bigl( \bfJ_{\f}(z+\cu_n, 0,0)  - \bfJ_{\f}(\cu_m, 0,0) \bigr)\biggr)
\notag \\ & \qquad \times
 \biggl| \avsum_{z \in 3^n \Z^d \cap \cu_m} \mathbf{B}^{-\nicefrac12}  \bigl( \bfA(\cu_m) -  \bfA(z+\cu_n) \bigr)\mathbf{B}^{-\nicefrac12}   \biggr|
\,.
\end{align}
Hence, the two notions of coarse-grained vectors of~$\bfF$,~$\bfF(U)$ and~$\bfF_*(U)$,  are essentially the same once the coarse-grained matrices have converged, and~$U \mapsto \bfF(U)$ becomes additive, provided we have that additivity of the coarse-grained matrices~$\bfA(U)$ with a cost of a square root. To show~\eqref{e.F.vs.Fstar.withB}, observe that, by~\eqref{e.coarseF.def},~\eqref{e.firstvar.nonsymm.dbl},~\eqref{e.bfJ.formula.via.bfA} and Cauchy-Schwarz inequality, we obtain
\begin{align} \label{e.F.vs.Fstar}
P\cdot \bigl(\bfF(U) - \bfF_*(U)   \bigr)
& =
P\cdot \fint_{U} \bigl( \bfA - \bfA_*(U)  \bigr) X_{\f}(\cdot,U,0,0) 
\notag \\ & 
=   \fint_{U} X_{\f}(\cdot,U,0,0) \cdot \bfA S (\cdot,U,P,\bfA_*(U) P) 
\notag \\ & 
\leq 
\frac12 \left( \fint_{U} \bigl| \bfA^{\nicefrac12}X_{\f}(\cdot,U,0,0) \bigr|^2 \right)^{\!\!\nicefrac12} 
\left| P\cdot ( \bfA(U) - \bfA_*(U)) P \right|^{\nicefrac12} 
\notag \\ & 
\leq 
\bigl( \bfJ_{\f}(U,0,0) \bigr)^{\nicefrac12} \bigl| P\cdot ( \bfA(U) - \bfA_*(U)) P \bigr|^{\nicefrac12}
\,.
\end{align}

\smallskip

To show~\eqref{e.f.coarseF.additive}, using~\eqref{e.Jf.repre0} and the subadditivity of~$U \mapsto \bfJ_{\f}(U, P,0)$, we have 
\begin{align}
\label{e.weird.F.subadd.def}
\lefteqn{
 P\cdot \biggl(  \bfF(\cu_m) - \!\!\!\! \avsum_{z \in 3^n \Z^d \cap \cu_m} \!\!\!\! \bfF(z+\cu_n)  \biggr) 
} & \qquad 
\notag \\  & = \avsum_{z \in 3^n \Z^d \cap \cu_m} \!\!\!\! \bfJ_{\f}(z+\cu_n, 0,0)  - \bfJ_{\f}(\cu_m, 0,0) 
%\\ & \qquad 
+  \bfJ_{\f}(\cu_m, P,0)  - \!\!\!\! \avsum_{z \in 3^n \Z^d \cap \cu_m} \!\!\!\! \bfJ_{\f}(z+\cu_n, P,0) 
\notag \\ & \qquad 
+ \avsum_{z \in 3^n \Z^d \cap \cu_m} \!\!\!\! \bfJ_{\bfzero}(z+\cu_n, P,0)  - \bfJ_{\bfzero}(\cu_m, P,0)  
\notag \\ &
\leq
\avsum_{z \in 3^n \Z^d \cap \cu_m} \!\!\!\! \Bigl( \bfJ_{\f}(z+\cu_n, 0,0)  - \bfJ_{\f}(\cu_m, 0,0)   +  \frac12 P\cdot \bigl( \bfA(z+\cu_n) - \bfA(\cu_m) \bigr)P\Bigr)  \,.
\end{align}
Fix~$\delta \in (0,1)$ and an invertible symmetric matrix~$\mathbf{B} \in \R^{2d \times 2d}$, and set
\begin{align*} 
%\label{e.}
\Upsilon_\delta & := \Biggl(\delta + \! \! \! \! \! \avsum_{z \in 3^n \Z^d \cap \cu_m} \!\!\!\! \! \!\bfJ_{\f}(z+\cu_n, 0,0)  - \bfJ_{\f}(\cu_m, 0,0)\Biggr) 
\notag \\ & \qquad  \times \Biggl(\delta + \biggl| \avsum_{z \in 3^n \Z^d \cap \cu_m} \!\!\!\!  \mathbf{B}^{-\nicefrac12} \bigl( \bfA(\cu_m) - \bfA(z+\cu_n) \bigr) \mathbf{B}^{-\nicefrac12}  \biggr| \Biggr)^{\! -1} 
\end{align*} 
and
\begin{align*} 
\mathbf{e}_\delta & :=  \mathbf{B}^{-\nicefrac12}\biggl( \bfF(\cu_m) - \!\!\!\! \avsum_{z \in 3^n \Z^d \cap \cu_m} \!\!\!\! \bfF(z+\cu_n) \biggr) 
\notag \\ & \qquad  \times  
\biggl( \delta + \biggl| \avsum_{z \in 3^n \Z^d \cap \cu_m} \!\!\!\!  \mathbf{B}^{-\nicefrac12} \bigl( \bfF(\cu_m) -  \bfF(z+\cu_n) \bigr) \biggr| \biggr)^{\! -1}
\,.
\end{align*} 
We insert the vector~$P :=  \Upsilon_\delta^{\nicefrac12}  \mathbf{e}_\delta$ in~\eqref{e.weird.F.subadd.def} and obtain, after dividing the resulting estimate from both sides by~$\Upsilon_\delta^{\nicefrac12}$ and sending~$\delta \to 0$, the estimate~\eqref{e.f.coarseF.additive}.  

\smallskip

\emph{Step 2.} We show~\eqref{e.bfF.convergeE}. Taking the expectation of the square roots of~\eqref{e.F.vs.Fstar.withB} and~\eqref{e.f.coarseF.additive} (with~$\mathbf{B} = \Itwod$), an application H\"older's inequality gives us
\begin{multline*} 
%\label{e.}
\bigl| \E[\bfF(\cu_m)] -   \E[\bfF(\cu_n)] \bigr|  
+ 
\bigl| \E[\bfF(\cu_n)] -   \E[\bfF_*(\cu_n)] \bigr|  
\\ 
\leq 
C \E\bigl[ \bfJ_{\f}(\cu_n,0,0) \bigr]^{\nicefrac12} \bigl| \bfAhom(\cu_n) -   \bfAhom(\cu_m)\bigr|^{\nicefrac12}
 \,.
\end{multline*}
This, together with Proposition~\ref{p.algebraic.nosymm}, 
yields that there exist constants~$\alpha(\beta,d,\lambda,\Lambda) \in (0,1)$ and~$C(\datareff) < \infty$ such that, for every~$m,n \in \N$ with~$m \geq n$, 
\begin{align}  \label{e.bF.diffexp}
\bigl| \E[\bfF(\cu_m)] -   \E[\bfF(\cu_n)] \bigr|  
+ 
\bigl| \E[\bfF(\cu_n)] -   \E[\bfF_*(\cu_n)] \bigr|  
\leq 
C 3^{-n \alpha}  \E\bigl[ \| \f \|_{\underline{L}^2(\cu_0)}^2 \bigr]^{\nicefrac12} 
\,.
\end{align}
Thus the sequences~$\{ \E [ \bfF(\cu_m)]\}_{m\in\N}$ and~$\{ \E [ \bfF_*(\cu_m)]\}_{m\in\N}$ are Cauchy. In view of~\eqref{e.F.vs.Fstar} the limits must be the same; so there exists~$\overline{\bfF} \in \R^{2d}$ such that, for every~$m\in\N$,
\begin{equation} \label{e.f.coarseF.expectation}
| \E[\bfF(\cu_m)] -  \overline{\bfF}|
+
|  \E[ \bfF_*(\cu_m)] - \overline{\bfF} |   
\leq 
C 3^{-m \alpha}  \E\Bigl[ \| \f \|_{\underline{L}^2(\cu_0)}^2 \Bigr]^{\nicefrac12} 
\,.
\end{equation}

\smallskip

We next estimate the variance of~$\bfF(\cu_m)$. Using~\eqref{e.f.coarseF.additive} we have, for every~$m,n\in\N$ with~$\beta m < n < m$,  
\begin{align} 
\label{e.varF.cu.m.splitting}  
\lefteqn{
\var \bigl[ \bfF(\cu_m) \indc_{\{ \| \f \|_{\underline{L}^2(\cu_m)} \leq 2 \}} \bigr] 
} \qquad &
\\ 
 &
\leq
2\var \Biggl[ \avsum_{z \in 3^n \Z^d \cap \cu_m}  \bfF(z+\cu_n)  \Biggr]
+
C
\E \Biggl[ \biggl|  \bfA(\cu_m) -\avsum_{z \in 3^n \Z^d \cap \cu_m} \bfA(z+\cu_n)  \biggr| 
\Biggr]
\notag \\ & 
\leq 
2\var \Biggl[ \avsum_{z \in 3^n \Z^d \cap \cu_m} \!\!  \bfF(z+\cu_n)  \Biggr]
+
C 3^{-n\alpha} 
\,.
\end{align}
To estimate the first term on the right side, we first write
\begin{align*}
\avsum_{z \in 3^n \Z^d \cap \cu_m} \!\!  \bfF(z+\cu_n) 
& 
=
\avsum_{z \in 3^n \Z^d \cap \cu_m} \!\!  \bfF(z+\cu_n)  \rho\bigl( \| \f \|_{\underline{L}^2(z+\cu_n)} \bigr)
\\ & \quad 
+ \!\!
\avsum_{z \in 3^n \Z^d \cap \cu_m} \!\!  \bfF(z+\cu_n) \bigl( 1 -  \rho\bigl( \| \f \|_{\underline{L}^2(z+\cu_n)} \bigr) \bigr) 
\end{align*}
where~$\rho : [0,\infty) \to [0,1]$ is a smooth function satisfying~$\indc_{[0, 2]} \leq \rho \leq \indc_{[0,3]}$ and~$| \rho' | \leq 2$. 
Using the assumption~\eqref{e.f.dumbbound} and the triangle inequality~\eqref{e.weak.triangle.ass}, we have 
\begin{align}
\biggl| \avsum_{z \in 3^n \Z^d \cap \cu_m} \!\!  \bfF(z+\cu_n) \bigl( 1 -  \rho\bigl( \| \f \|_{\underline{L}^2(z+\cu_n)} \bigr) \bigr) \biggl|^2 
&
\leq 
\avsum_{z \in 3^n \Z^d \cap \cu_m} \!\!  \bigl|  \bfF(z+\cu_n)\bigr|^2 \indc_{\| \f \|_{\{ \underline{L}^2(z+\cu_n)}>2\}} 
\notag \\ & 
\leq 
C \avsum_{z \in 3^n \Z^d \cap \cu_m} \indc_{\| \f \|_{\{ \underline{L}^2(z+\cu_n)}>2\}} 
\notag \\ & 
\leq 
\O_\Psi \bigl( C \mathsf{K} 3^{-\frac d2(1-\beta)n} \bigr) \,.
\label{e.remove.bad.stuff.for.f}
\end{align}
On the other hand, using the~$\CFS$ assumption and the fact that, by~\eqref{e.Malliavin.bfF}, 
\begin{equation*}
\bigl| \partial_{(\a,\f)(U)} \bigl[ \bfF(U) \rho\bigl( \| \f \|_{\underline{L}^2(U)} \bigr)  \bigr| 
\leq 
C\,,
\end{equation*}
we obtain 
\begin{equation}
\label{e.apply.CFS.var.for.f}
\avsum_{z \in 3^n \Z^d \cap \cu_m} \!\!  \bfF(z+\cu_n)  \rho\bigl( \| \f \|_{\underline{L}^2(z+\cu_n)} \bigr)
-
\E \bigl[ \bfF(\cu_n)  \rho\bigl( \| \f \|_{\underline{L}^2(\cu_n)} \bigr) \bigr] 
=
\O_\Psi (C3^{-\frac d2(m-n)}) \,.
\end{equation}
Combining~\eqref{e.remove.bad.stuff.for.f} and~\eqref{e.apply.CFS.var.for.f} yields
\begin{equation*}
\var \Biggl[ \avsum_{z \in 3^n \Z^d \cap \cu_m} \bfF(z+\cu_n)  \Biggr]
\leq 
C( 3^{-d(m-n)} + 3^{-d(1-\beta)n}) \,.
\end{equation*}
Inserting this estimate into~\eqref{e.varF.cu.m.splitting} and taking~$n:= \lceil (\beta \vee \tfrac12) m \rceil+1$, we obtain
\begin{equation*}
\var \bigl[ \bfF(\cu_m) \indc_{\{ \| \f \|_{\cu_m} \leq 2 \}} \bigr] 
\leq 
C3^{-d(m-n)}
+
C3^{-n\alpha} 
\leq 
C3^{-m\alpha}\,,
\end{equation*}
where we shrank the value of~$\alpha$ in the last term. 
The indicator function can be removed using~\eqref{e.f.dumbbound}. 
The proof is complete by~\eqref{e.f.coarseF.expectation}. 
\end{proof}

With an eye toward~\eqref{e.Jf.repre0zoopie}, we next seek to control~$\bfJ_{\f}(\cu_{m},0,\overline{\bfF})$. This is where we need an iteration in the scales. However, the iteration in this case is slightly simpler than the previous ones we have seen.

\begin{lemma} \label{l.f.J.minimal.expectation}
There exist constants~$\alpha(\beta,d,\lambda,\Lambda) \in (0,1)$ and~$C(\datareff) < \infty$ such that, 
%and a random variable~$\X$ satisfying~$\X^{\frac d2 (1-\beta)} = \O_\Psi(C)$ such that, 
for every~$m\in\N$, we have the estimate
\begin{equation}  \label{e.f.J.minimal.expectation}
\E[\bfJ_{\f}(\cu_{m},0,\overline{\bfF})] \leq C 3^{-m\alpha} \,.
\end{equation}
\end{lemma}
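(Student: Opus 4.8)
The plan is to combine the splitting formula \eqref{e.Jf.repre0zoopie}, namely
\begin{equation*}
\bfJ_\f(U,P,Q) = \bfJ_0(U,P,Q-\bfF_*(U)) + \bfJ_\f(U,0,\bfF_*(U)) + P\cdot(\bfF(U)-\bfF_*(U)),
\end{equation*}
with the already-established decay results. Setting $P=0$ and $Q=\overline{\bfF}$ gives $\bfJ_\f(U,0,\overline{\bfF}) = \bfJ_0(U,0,\overline{\bfF}-\bfF_*(U)) + \bfJ_\f(U,0,\bfF_*(U))$, so the task cleanly decouples into two pieces: controlling the \emph{quadratic response} $\bfJ_0(\cu_m,0,\overline{\bfF}-\bfF_*(\cu_m))$, and controlling the \emph{intrinsic defect} $\bfJ_\f(\cu_m,0,\bfF_*(\cu_m))$. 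For the first piece, since $\bfJ_0 = \bfJ$ has the convexity and ellipticity properties of Lemma~\ref{l.J.basicprops.nonsymm}, we have $\bfJ_0(\cu_m,0,R) = \frac12 R\cdot \bfA_*^{-1}(\cu_m) R \leq C|R|^2$; taking expectations and applying Lemma~\ref{l.bF.converge} to the random vector $R=\overline{\bfF}-\bfF_*(\cu_m)$ yields $\E[\bfJ_0(\cu_m,0,\overline{\bfF}-\bfF_*(\cu_m))] \leq C\E[|\bfF_*(\cu_m)-\overline{\bfF}|^2] \leq C3^{-m\alpha}$. (One should allow a prefactor of $\E[\|\f\|_{\underline{L}^2(\cu_0)}^2]$ coming from the norms in Lemma~\ref{l.bF.converge}; the bound \eqref{e.P.fboundedness} ensures this is finite.)

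The second piece, $\E[\bfJ_\f(\cu_m,0,\bfF_*(\cu_m))]$, is where the iteration up the scales enters. By the splitting formula \eqref{e.Jf.repre1}, $\bfJ_\f(U,0,\bfF_*(U)) = \mu_\f(U,0) + \mu_\f^*(U,\bfF_*(U)) - 0$, which is nonnegative and, crucially, \emph{subadditive} in $U$ (both $\mu_\f$ and $\mu_\f^*$ are subadditive, being infima/suprema of integral functionals, exactly as in \eqref{e.subadd.sec2}). Moreover, by \eqref{e.Jf.repre0zoop} it equals $\bfJ_\f(U,0,\bfF_*(U))$, which is the minimal value of $Q\mapsto \bfJ_\f(U,0,Q)$ over $Q\in\R^{2d}$. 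The strategy mirrors the proof of Proposition~\ref{p.algebraicrate.E}: I would show that this quantity contracts by a definite factor between successive triadic scales. The key structural input is that additivity defects of $\bfF$ are controlled by additivity defects of $\bfA$ (inequality \eqref{e.f.coarseF.additive}), which by Proposition~\ref{p.algebraic.double} decay algebraically; meanwhile the difference $\bfF(U)-\bfF_*(U)$ is controlled by $|\bfA(U)-\bfA_*(U)|^{1/2}$ via \eqref{e.F.vs.Fstar}, again algebraically small. Concretely, I would use a flatness estimate: apply the multiscale Poincar\'e inequality to $Z_\f(\cdot,\cu_m,0)$ (which has affine-type boundary structure $Z_\f \in H^1_0$-type space per \eqref{e.dual.smitz}), bound the subcube gradient averages $(Z_\f)_{z+\cu_n}$ in terms of $\bfF_*(z+\cu_n)$ plus a quadratic-response error $\bfJ_\f(z+\cu_n,0,\bfF_*(z+\cu_n))$, and use that the $\bfF_*(z+\cu_n)$ cluster near $\overline{\bfF}$. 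This produces an inequality of the form
\begin{equation*}
\E[\bfJ_\f(\cu_{m-1},0,\bfF_*(\cu_{m-1}))] \leq C\sum_{n=0}^m 3^{n-m}\bigl(\tilde\tau_n + \E[|\bfF_*(\cu_n)-\overline{\bfF}|^2] + \var[\bfA_*^{-1}(\cu_n)]\bigr) + C3^{-2m},
\end{equation*}
where $\tilde\tau_n$ is the expected additivity defect of the subadditive quantity $\bfJ_\f(\cdot,0,\bfF_*(\cdot))$, which telescopes when summed. Each error term on the right is already known to decay algebraically (Proposition~\ref{p.algebraic.double}, Lemma~\ref{l.bF.converge}), and the telescoping of $\tilde\tau_n$ against the geometric weights $3^{n-m}$ closes the iteration exactly as in Lemma~\ref{l.iteration} / the proof of Proposition~\ref{p.algebraicrate.E}.

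For the mixing-condition applications needed in the iteration (to pass from sums over subcubes to expectations, i.e.\ the analogue of Lemma~\ref{l.J.upperfluct}), I would invoke $\CFS(\beta,\Psi)$ together with the Malliavin bounds from Lemma~\ref{l.Malliavin.f.base}: the quantities $\bfF(z+\cu_n)$, $\mu_\f(z+\cu_n,0)$, $\mu_\f^*(z+\cu_n,0)$ have Malliavin derivatives bounded by $C(1+\|\f\|_{\underline{L}^2})^2$, which is $\O_\Psi(1)$-bounded on scales larger than the minimal scale by \eqref{e.P.fboundedness}; so after subtracting expectations and truncating on the event $\{\|\f\|_{\underline{L}^2(z+\cu_n)}\le 2\}$ (removable via \eqref{e.f.dumbbound}) they satisfy the hypotheses of the $\CFS$ condition. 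The main obstacle I anticipate is bookkeeping: carefully tracking how the $\|\f\|_{\underline{L}^2}$-type prefactors propagate through the flatness estimate and the Caccioppoli-type bounds (here one needs the $\mathcal{S}$-Caccioppoli inequality, Lemma~\ref{l.Caccioppoli.S}, in the form appropriate to the affine space $Z_\f + \mathcal{S}_0$), and making sure the truncation at $\|\f\|_{\underline{L}^2}\le 2$ is consistently handled so that the final bound has the clean form \eqref{e.f.J.minimal.expectation} with the single prefactor $\E[\|\f\|_{\underline{L}^2(\cu_0)}^2]$ rather than higher moments. The analytic content is genuinely a copy of the symmetric iteration; the novelty is purely in the algebra of the $\f$-dependent splitting, which \eqref{e.Jf.repre0zoopie}--\eqref{e.bfF.by.Zf.star} have already set up.
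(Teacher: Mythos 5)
Your proposal is correct and follows essentially the same route as the paper: the splitting identity \eqref{e.Jf.repre0zoop}--\eqref{e.Jf.repre0zoopie} reduces the statement to the minimal value $\E[\bfJ_\f(\cu_m,0,\bfF_*(\cu_m))]$ plus a quadratic term handled by Lemma~\ref{l.bF.converge}, and the minimal value is then estimated by a Caccioppoli plus multiscale Poincar\'e flatness argument on the optimizer, quadratic response on subcubes, and an iteration over scales. The only real difference is organizational: the paper works with the optimizer at the scale-dependent slope $Q=\bfF_*(z+\cu_n)$ on each subcube, whose subcube averages vanish, so the recursion involves only differences of $\E[\bfJ_\f(\cu_n,0,\bfF_*(\cu_n))]$ itself and closes by stationarity and a simple monotone iteration --- in particular no $\CFS$ application, truncation, or variance of $\bfA_*^{-1}$ is needed at that step, so those ingredients in your sketch are harmless but superfluous.
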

\begin{proof}
Let~$(q^*,p^*) :=  \overline{\bfF}$ and~$(p,q) := 0$.  
By~\eqref{e.bfJ.withf} we have 
\begin{equation*} 
%\label{e.}
\bfJ_{\f}\bigl(U, 0, \overline{\bfF} \bigr)
=
\frac12 J_{\f}\bigl(U,-p^*,q^*\bigr)
+ 
\frac12 J^*_{\f}\bigl(U,p^*,q^*\bigr) 
\end{equation*}
and, by~\eqref{e.maximizers.J.to.bfJ.f},
\begin{equation*} 
%\label{e.}
X_{\f}(\cdot,U,0,\overline{\bfF}) 
= 
\begin{pmatrix} \nabla v_{\f} (\cdot,U,-p^*,q^*)
+ 
\nabla v_{\f}^*\bigl(\cdot,U,p^*,q^*\bigr) \\ 
\a \nabla v_{\f} (\cdot,U,-p^*,q^*)
- 
\a^t \nabla v_{\f}^*\bigl(\cdot,U,p^*,q^*\bigr)
\end{pmatrix}
\end{equation*}
with~$v_{\f} := v_{\f} (\cdot,U,-p^*,q^*)$ and~$v_{\f}^* := v_{\f}^*\bigl(\cdot,U,p^*,q^*\bigr)$ belonging to~$\A(U)$ and~$\A^*(U)$, respectively. Similarly to~\eqref{e.f.splittingformula.for.Z}, 
we then get, with~$\rota$ defined by~\eqref{e.rota}, 
\begin{equation*} 
%\label{e.}
\begin{pmatrix} \nabla v_{\f} \\ \a \nabla v_{\f} \end{pmatrix} = \frac12 \bigl(X_{\f} + \rota \bfA X_{\f} \bigr) 
\qand
\begin{pmatrix} \nabla v_{\f}^* \\ - \a^t \nabla v_{\f}^* \end{pmatrix} = \frac12 \bigl(X_{\f} - \rota \bfA X_{\f} \bigr) 
 \,.
\end{equation*}
Thus, the weak norms of~$(\nabla v_{\f},\a \nabla v_{\f})$ and~$(\nabla v_{\f}^*,- \a^t \nabla v_{\f}^*)$ can be controlled by the weak norms of~$X_{\f}$ and~$\bfA X_{\f}$, and these can in turn be estimated using~\eqref{e.average.X.f}. We have, for every~$P \in \R^{2d}$, 
\begin{align*} 
%\label{e.}
\fint_{z+\cu_n} X_{\f}(\cdot,\cu_{m},0,P) 
& =
\fint_{z+\cu_n}  \bigl( X_{\f}(\cdot,\cu_{m},0,P) - X_{\f}(\cdot,z+\cu_{n},0,P) \bigr) 
\notag \\ & \qquad 
+ \bfA_*^{-1}(z+\cu_n) (P- \bfF_*(z+\cu_n))
\end{align*}
and
\begin{align*} 
%\label{e.}
\fint_{z+\cu_n} \bfA X_{\f}(\cdot,\cu_{m},0,P) 
& =
\fint_{z+\cu_n}  \bigl( \bfA X_{\f}(\cdot,\cu_{m},0,P) - \bfA X_{\f}(\cdot,z+\cu_{n},0,P) \bigr) 
\notag \\ & \qquad 
+ P- \bfF(z+\cu_n)
\end{align*}
Therefore, by the last three displays, 
\begin{align*} 
%\label{e.}
\lefteqn{
\avsum_{z \in 3^n \Zd \cap \cu_m} \Bigl( \bigl| \bigl(\nabla v_{\f}(\cdot,\cu_m,-p^*,q^*)\bigr)_{z+\cu_n} \bigr|^2 + \bigl| \bigl(\nabla v_{\f}^*(\cdot,\cu_m,p^*,q^*)\bigr)_{z+\cu_n} \bigr|^2 \Bigr)  \,.
} \qquad &
\notag \\ &
\leq C \avsum_{z \in 3^n \Zd \cap \cu_m}  \Bigl(
\bfJ_{\f}\bigl(z+\cu_n, 0, \overline{\bfF} \bigr) - \bfJ_{\f}\bigl(\cu_m, 0, \overline{\bfF} \bigr)   \Bigr)
\notag \\ & \qquad
+ C \avsum_{z \in 3^n \Zd \cap \cu_m}
\Bigl( \bigl| \overline{\bfF} - \bfF_*(z+\cu_n)\bigr|^2 + \bigl| \overline{\bfF} - \bfF(z+\cu_n) \bigr|^2 \Bigr)
\,.
\end{align*}
We may now repeat Steps 1-2 of the proof of Lemma~\ref{l.flatness.rules.nosymm} and deduce that
\begin{align*} 
%\label{e.}
\lefteqn{
J_{\f}(\cu_{m-1},-p^*,q^*) 
} \qquad & 
\notag \\ &
\leq  
C 3^{-2m} \| \nabla v_{\f} \|_{\underline{L}^2(\cu_m)}^2 
+ 
C
\sum_{n=0}^{{m}} 3^{n-m} 
\Bigl(
\bfJ_{\f}\bigl(z+\cu_n, 0, \overline{\bfF} \bigr) - \bfJ_{\f}\bigl(\cu_m, 0, \overline{\bfF} \bigr)   \Bigr)
\notag \\ & \qquad
+ C \sum_{n=0}^{{m}} 3^{n-m}  \avsum_{z \in 3^n \Zd \cap \cu_m}
\Bigl( \bigl| \overline{\bfF} - \bfF_*(z+\cu_n)\bigr|^2 + \bigl| \overline{\bfF} - \bfF(z+\cu_n) \bigr|^2 \Bigr)
\,,
\end{align*}
and similarly for~$J_{\f}^*(\cu_{m-1},p^*,q^*)$. By taking expectations of the resulting estimates and using~$\Zd$--stationarity, subadditivity, Lemma~\ref{l.bF.converge} and the bound~$\E\bigl[\bfJ_{\f}(\cu_{m},0,\overline{\bfF})\bigr] \leq C$, we deduce that 
\begin{equation*}  
\E\bigl[ \bfJ_{\f}(\cu_{m},0,\overline{\bfF}))\bigr]
\leq 
C \sum_{n=1}^m 3^{n-m}  \bigl( \E\bigl[ \bfJ_{\f}(\cu_n,0,\overline{\bfF})\bigr] - \E\bigl[  \bfJ_{\f}(\cu_m,0,\overline{\bfF}) \bigr] \bigr)
+
C 3^{-m\alpha} 
\,.
\end{equation*}
A simple iteration argument, together with Lemma~\ref{l.bF.converge}, yields
\begin{equation*}
\E[\bfJ_{\f}(\cu_{m},0,\overline{\bfF})] \leq C 3^{-m\alpha}\,,
\end{equation*}
which is~\eqref{e.f.J.minimal.expectation}. The proof is complete. 
\end{proof}

We have now finished most of the heavy lifting and are ready to explore the consequences of what we've shown. 
Analogously to the definition of~$\mathcal{E}(m)$ in~\eqref{e.mcE.0.nonsymm}, we define
\begin{equation} 
\label{e.mathcalE.f}
\mathcal{E}_{\f}(m)
:= 
\sum_{n=0}^{m} 3^{n-m}   \avsum_{z \in 3^n \Z^d \cap \cu_m}  \bfJ_{\f}\bigl(z{+}\cu_n,0, \overline{\bfF} \bigr)  
\end{equation}
and
\begin{equation} 
\label{e.mathcalE.again.f}
\mathcal{E}_{\bfzero}(m)
:=
\sum_{i=1}^{2d} 
\sum_{n=0}^{m} 3^{n-m}  \bfJ_{\bfzero}\bigl(z{+}\cu_n,e_i, \bfAhom e_i \bigr) 
\leq C \mathcal{E}(m)
 \,,
\end{equation}
where~$\mathcal{E}(m)$ has been defined by~\eqref{e.mcE.0.nonsymm}. 
Similar to~\eqref{e.Jf.repre0zoopie}, we have the identity
\begin{align}
\label{e.expand.around.bfFbar}
\lefteqn{
\bfJ_{\f}(U, P,Q ) 
} \quad  & \notag \\ & 
=
\bfJ_{\bfzero}(U,P,Q{-}\overline\bfF ) {+} 
\bfJ_{\f}(U,0,\overline{\bfF} )
{+} P \cdot (\bfF(U) {-}\overline{\bfF})
{+} (Q{-}\overline{\bfF} ) \cdot \bfA_*^{-1}(U) (\overline{\bfF} {-} \bfF_*(U))\,.
\end{align}
Observe that, by~\eqref{e.expand.around.bfFbar},  
\begin{align*}
0
\leq \bfJ_{\f}(U,P,\overline{\bfF}) 
& 
=
\bfJ_{\bfzero}(U,P,0) 
+
\bfJ_{\f}(U,0,\overline{\bfF} )
+ P \cdot (\bfF(U) {-}\overline{\bfF})
\\ & 
=
\frac12 P \cdot \bfA(U) P 
+
\bfJ_{\f}(U,0,\overline{\bfF} )
+ P \cdot (\bfF(U) {-}\overline{\bfF})
\,.
\end{align*}
Rewriting this as 
\begin{equation*}
P \cdot ( \overline{\bfF}-\bfF(U)) 
\leq 
C |P|^2 + \bfJ_{\f}(U,0,\overline{\bfF} )\,,
\end{equation*}
and then taking~$P := (2C)^{-1} ( \overline{\bfF}-\bfF(U))$, we deduce after reabsorption that 
\begin{equation*}
\big| \bfF(U) - \overline{\bfF}\bigr|^2
\leq 
C\bfJ_{\f}(U,0,\overline{\bfF} )\,.
\end{equation*}
By a similar computation, 
\begin{equation*}
\big|\bfF_*(U) -  \overline{\bfF}\bigr|^2
\leq 
C\bfJ_{\f}(U,0,\overline{\bfF} )\,.
\end{equation*}
Consequently, we obtain
\begin{equation}
\label{e.F.and.F.minset}
\sum_{n=0}^{m} 3^{n-m} \avsum_{z \in 3^n \Z^d \cap \cu_m} \!\!\! 
\Bigl(
\big|\bfF(z+\cu_n) -  \overline{\bfF}\bigr|^2
+
\big|\bfF_*(z+\cu_n) -  \overline{\bfF}\bigr|^2
\Bigr)
\leq 
C\mathcal{E}_{\f}(m)\,.
\end{equation}

\smallskip

We next demonstrate how~$\mathcal{E}_{\f}(m)$ can be used to control the weak norms of the minimizer of~$\mu_{\f}(U,P)$ defined by~\eqref{e.dual.smitz}.

\begin{lemma} 
\label{l.f.Hminusone.zero}
There exists a constant~$C(d,\lambda,\Lambda)<\infty$ such that, for every~$m \in \N$, 
\begin{multline}
\label{e.f.Hminusone.zero}  
\bigl\| Z_{\f}(\cdot,\cu_m,P) -P \bigr\|_{\Hminusul(\cu_m)}
+
\bigl\| \bfA Z_{\f}(\cdot,\cu_m,P) + 2\bfF - \overline{\bfF} - \bfAhom P \bigr\|_{\Hminusul(\cu_m)}
\\
\leq 
C( \| \f \|_{\underline{L}^2(\cu_m)} + |P|) + C3^m \bigl(  |P| \mathcal{E}_{\bfzero}^{\nicefrac12}(m) + \mathcal{E}_{\f}^{\nicefrac12}(m) \bigr) 
\end{multline}
and
\begin{multline}
\label{e.f.Hminusone.zero.star}  
\bigl\| Z_{\f}^*(\cdot,\cu_m,P) - \bfAhom^{-1}(Q - \overline{\bfF} ) \bigr\|_{\Hminusul(\cu_m)}
+
\bigl\| \bfA Z_{\f}(\cdot,\cu_m,Q) + 2\bfF - Q \bigr\|_{\Hminusul(\cu_m)}
\\
\leq 
C( \| \f \|_{\underline{L}^2(\cu_m)} + |Q|) + C3^m \bigl(  |Q| \mathcal{E}_{\bfzero}^{\nicefrac12}(m) + \mathcal{E}_{\f}^{\nicefrac12}(m) \bigr) 
\,.
\end{multline}

\end{lemma}

\begin{proof}
By~\eqref{e.trivial.blitzen} we have
\begin{equation*} 
%\label{e.}
0 = \fint_U Z_{\f}(\cdot,U,0)
\quad \mbox{and} \quad 
0 = \fint_U \bigl( \bfA Z^*_{\f}(\cdot,U,0)+2\bfF\bigr)
\,,
\end{equation*}
and, by~\eqref{e.bfF.by.Zf} and~\eqref{e.bfF.by.Zf.star}, 
\begin{equation*} 
%\label{e.}
\bfF(U) 
= 
\fint_{U} \bigl( \bfA  Z_{\f}(\cdot,U,0) +  2\bfF\bigr)
\qand
\bfF_*(U) 
= - \bfA_{*}(U)  \fint_{U} Z_{\f}^*(\cdot,U,0) 
  \,.
\end{equation*}
Moreover, by~\eqref{e.formulas.nosymm.byS} and~$X_{\mathbf{0}}(\cdot,U,P,Q) = S(\cdot,U,P,Q)$, we get   
\begin{equation*} 
%\label{e.}
\fint_U \bfA X_{\mathbf{0}}(\cdot,U,P,Q) = Q-\bfA(U) P 
\qand
\fint_U X_{\mathbf{0}}(\cdot,U,P,Q) = \bfA_*^{-1}(U) Q - P \,.
\end{equation*}
By~\eqref{e.Z.zero.split}, we have~$Z_{\mathbf{0}}(\cdot,U,P) = -X_{\mathbf{0}}(\cdot,U,P,0)$ and~$Z_{\mathbf{0}}^*(\cdot,U,Q) = X_{\mathbf{0}}(\cdot,U,0,Q)$. Thus, by~\eqref{e.decomp.splitten}, we get~$Z_{\mathbf{f}}(\cdot,U,P) = -X_{\mathbf{0}}(\cdot,U,P,0) + Z_{\mathbf{f}}(\cdot,U,0)$ and~$Z_{\mathbf{f}}^*(\cdot,U,Q) = X_{\mathbf{0}}(\cdot,U,0,Q) + Z_{\mathbf{f}}^*(\cdot,U,0)$. We now deduce formulas for all the averages:
\begin{equation} 
\label{e.f.Hminusone.zero.means}
\fint_U Z_{\f}(\cdot,U,P) = P 
\qand
\fint_{U} \bigl( AZ_{\f}(\cdot,U,P) + 2\bfF \bigr)
=  \bfF(U) + \bfA(U) P
\,,
\end{equation}
and
\begin{equation} 
\label{e.f.Hminusone.zero.means.star}
\fint_U Z_{\f}^*(\cdot,U,Q) = \bfA_*^{-1}(U)\bigl(Q - \bfF_*(U) \bigr)  
\qand
\fint_{U} \bigl( AZ_{\f}^*(\cdot,U,Q) + 2\bfF \bigr) = Q
 \,.
\end{equation}

\smallskip

We now proceed to prove~\eqref{e.f.Hminusone.zero}. 
By~\eqref{e.f.Hminusone.zero.means} and the multiscale Poincar\'e inequality (Proposition~\ref{p.MSP}), we get  
\begin{align}  
\label{e.mean.Zf.split}
\lefteqn{
\bigl\| Z_{\f}(\cdot,\cu_m,P) - P \bigr\|_{\Hminusul(\cu_m)}
} \qquad &
\notag \\ &
\leq
C\bigl\| Z_{\f}(\cdot,\cu_m,P) - P \bigr\|_{\underline{L}^{2}(\cu_m)}
\notag \\ & \qquad 
+ 
C\sum_{n=0}^{m} 3^{n} \Biggl( \avsum_{z \in 3^n \Z^d \cap \cu_m}    \biggl| \fint_{z + \cu_n} \bigl( X_{\bfzero}(\cdot,\cu_m,P,0) - X_{\bfzero}(\cdot,z+\cu_n,P,0) \bigr)   \biggr|^2 \Biggr)^{\! \nicefrac12}
\notag \\ & \qquad   
+ 
C\sum_{n=0}^{m} 3^{n} \Biggl( \avsum_{z \in 3^n \Z^d \cap \cu_m}    \biggl| \fint_{z + \cu_n} \bigl( Z_{\f}(\cdot,\cu_m,0) - Z_{\f}(\cdot,z+\cu_n,0) \bigr)   \biggr|^2 \Biggr)^{\! \nicefrac12}
\,.
\end{align}
The first term can be trivially estimated as
\begin{equation} 
\label{e.mean.Zf.split.first}
\bigl\| Z_{\f}(\cdot,\cu_m,P) - P \bigr\|_{\underline{L}^{2}(\cu_m)} 
\leq C\bigl(\|\f\|_{\underline{L}^2(\cu_m)}+|P|\bigr) \,.
\end{equation}
The second term on the right can be estimated by H\"older's inequality as
\begin{align} 
\label{e.mean.Zf.split.second}
\lefteqn{
\avsum_{z \in 3^n \Z^d \cap \cu_m}    \biggl| \fint_{z + \cu_n} \bigl( X_{\bfzero}(\cdot,\cu_m,P,0) - X_{\bfzero}(\cdot,z+\cu_n,P,0) \bigr)   \biggr|^2 
} \quad &
\notag \\ &
\leq
\avsum_{z \in 3^n \Z^d \cap \cu_m}  \bigl\|\bfA^{-1} \bigr\|_{\underline{L}^1(z+\cu_n)}  \fint_{z + \cu_n} \Bigl| \bfA^{\nicefrac12} \bigl( X_{\bfzero}(\cdot,\cu_m,P,0) - X_{\bfzero}(\cdot,z+\cu_n,P,0) \bigr)   \Bigr|^2 
\,.
\end{align}
Now, by~\eqref{e.wrap.app.nosymm} and~\eqref{e.firstvar.Jf}, we get 
\begin{align} 
\label{e.X.naught.additivity}
\lefteqn{
\sum_{n=0}^{m} 3^{n-m}
\avsum_{z \in 3^n \Z^d \cap \cu_m}   \fint_{z + \cu_n} \Bigl| \bfA^{\nicefrac12} \bigl( X_{\bfzero}(\cdot,\cu_m,P,0) - X_{\bfzero}(\cdot,z+\cu_n,P,0) \bigr)   \Bigr|^2 
} \qquad &
\notag \\ &
\leq 
C \sum_{n=0}^{m} 3^{n-m} \avsum_{z \in 3^n \Z^d \cap \cu_m} \bigl( \bfJ_{\bfzero}(z+\cu_n,P,0) -   \bfJ_{\bfzero}(\cu_m,P,0)  \bigr)
 \leq C |P|^2 \mathcal{E}_{\bfzero}(m)
\,.
\end{align}
To estimate the third term in~\eqref{e.mean.Zf.split}, we first observe that, by~\eqref{e.var.dual.smitz}, we have 
\begin{equation*} 
%\label{e.}
\bfA Z_{\f}(\cdot,z+\cu_n,0) - \bfA Z_{\f}(\cdot,\cu_m,0) \in \Lsol(z+\cu_n) \times  \Lpot(z+\cu_n) \,,
\end{equation*}
which gives us
\begin{equation*} 
%\label{e.}
\fint_{z+\cu_n} \bfA Z_{\f}(\cdot,\cu_m,0) \cdot Z_{\f}(\cdot,z+\cu_n,0) 
=
\fint_{z+\cu_n}  \bfA Z_{\f}(\cdot,z+\cu_n,0)  \cdot  Z_{\f}(\cdot,z+\cu_n,0) 
\,.
\end{equation*}
Thus, we get, using also H\"older's inequality and the first variation~\eqref{e.var.dual.smitz}, 
\begin{align} 
\label{e.mean.Zf.split.third} 
\lefteqn{\avsum_{z \in 3^n \Z^d \cap \cu_m}  \biggl| \fint_{z {+} \cu_n} \bigl( Z_{\f}(\cdot,\cu_m,0) - Z_{\f}(\cdot,z{+}\cu_n,0) \bigr)   \biggr|^2 } 
\qquad & 
\notag\\ &
\leq
 \avsum_{z \in 3^n \Z^d \cap \cu_m}  \bigl\| \bfA^{-1} \bigr\|_{\underline{L}^1(z+\cu_n)}
\fint_{z + \cu_n} \Bigl| \bfA^{\nicefrac12} \bigl(  Z_{\f}(\cdot,\cu_m,0) - Z_{\f}(\cdot,z+\cu_n,0)  \bigr) \Bigr|^2
\notag\\ &
=
C  \avsum_{z \in 3^n \Z^d \cap \cu_m}   \!\!\!  (\mu_{\f}(z{+}\cu_n,0) - \mu_{\f}(\cu_m,0) )
\,.
\end{align}
To estimate the last term on the right, by subadditivity and~\eqref{e.Jf.repre1}, we have 
\begin{align}  
\label{e.mu.f.subadd}
\mu_{\f}(\cu_m,0) 
& \leq 
\!\!\! \avsum_{z \in 3^n \Z^d \cap \cu_m} \!\!\!   \mu_{\f}(z{+}\cu_n,0) 
\notag \\ & 
=  - 
\!\!\! \avsum_{z \in 3^n \Z^d \cap \cu_m} \!\!\!   \mu_{\f}^*(z{+}\cu_n,\overline{\bfF})  
+ \!\!\! \avsum_{z \in 3^n \Z^d \cap \cu_m} \!\!\!   \bfJ_{\f}(z{+}\cu_{n},0,\overline{\bfF})
\notag \\ & 
\leq 
- \mu_{\f}^*(\cu_m,\overline{\bfF})   +  \!\!\! \avsum_{z \in 3^n \Z^d \cap \cu_m} \!\!\!   \bfJ_{\f}(z{+}\cu_{n},0,\overline{\bfF})  
\notag \\ & 
=
\mu_{\f}(\cu_m,0)
+
\!\!\! \avsum_{z \in 3^n \Z^d \cap \cu_m} \!\!\!   ( \bfJ_{\f}(z{+}\cu_{n},0,\overline{\bfF}) - \bfJ_{\f}(\cu_{m},0,\overline{\bfF}))\,.
\end{align}
Combining the above displays leads to
\begin{equation} 
\label{e.f.Hminus.average}
\sum_{n=0}^m 3^{n-m} \avsum_{z \in 3^n \Z^d \cap \cu_m} 
\fint_{z + \cu_n} \Bigl| \bfA^{\nicefrac12} \bigl(  Z_{\f}(\cdot,\cu_m,0) - Z_{\f}(\cdot,z+\cu_n,0)  \bigr) \Bigr|^2
\leq
C \mathcal{E}_{\f}(m)
\,.
\end{equation}
Combining~\eqref{e.mean.Zf.split} with~\eqref{e.mean.Zf.split.first},~\eqref{e.mean.Zf.split.second},~\eqref{e.X.naught.additivity},~\eqref{e.mean.Zf.split.third} and~\eqref{e.f.Hminus.average} leads to the desired estimate for the first term on the left in~\eqref{e.f.Hminusone.zero}.  

\smallskip 

We will then estimate the second term on the left in~\eqref{e.f.Hminusone.zero}. By~\eqref{e.f.Hminusone.zero.means}, the  multiscale Poincar\'e inequality implies
\begin{align*} 
%\label{e.}
\lefteqn{
\bigl\| \bfA Z_{\f}(\cdot,\cu_m,P) + 2 \bfF - \overline{\bfF} - \bfAhom P\bigr\|_{\underline{H}^{-1}(\cu_m)}^2
} \qquad &
\notag \\ &
\leq 
C\bigl\| \bfA Z_{\f}(\cdot,\cu_m,P) + 2 \bfF - \overline{\bfF} - \bfAhom P  \bigr\|_{\underline{L}^{2}(\cu_m)}^2
\notag \\ & \qquad 
+ 
C 3^{2m}\sum_{n=0}^{m} 3^{n-m} \avsum_{z \in 3^n \Z^d \cap \cu_m}   \biggl| \fint_{z + \cu_n} \bfA \bigl(  X_{\bfzero}(\cdot,\cu_m,P,0) -  X_{\bfzero}(\cdot,z+\cu_n,P,0)  \bigr) \biggr|^2
\notag \\ & \qquad 
+ 
C 3^{2m} \sum_{n=0}^{m} 3^{n-m} \avsum_{z \in 3^n \Z^d \cap \cu_m}   \biggl| \fint_{z + \cu_n} \bfA \bigl(  Z_{\f}(\cdot,\cu_m,0) - Z_{\f}(\cdot,z+\cu_n,0)  \bigr) \biggr|^2
\notag \\ & \qquad 
+ 
C 3^{2m} \sum_{n=0}^{m} 3^{n-m} \avsum_{z \in 3^n \Z^d \cap \cu_m}   \bigl| \bfF(z+\cu_n) - \overline{\bfF} + (\bfA(z+\cu_n) - \bfAhom)P   \bigr|^2
\,.
\end{align*}
The first term is again trivially bounded 
\begin{equation*} 
%\label{e.}
\bigl\| \bfA Z_{\f}(\cdot,\cu_m,P) + 2 \bfF - \overline{\bfF} - \bfAhom P  \bigr\|_{\underline{L}^{2}(\cu_m)} 
\leq C(\| \f \|_{\underline{L}^2(\cu_m)}+|P|) \,.
\end{equation*}
The second and the third terms can be bounded similarly as before (see~\eqref{e.X.naught.additivity} and~\eqref{e.f.Hminus.average}), and we omit them. The last term can be estimated using~\eqref{e.F.and.F.minset} and~\eqref{e.diagonalset.bigA} as
\begin{equation*} 
%\label{e.}
\sum_{n=0}^{m} 3^{n-m} \avsum_{z \in 3^n \Z^d \cap \cu_m}   \bigl| \bfF(z+\cu_n) - \overline{\bfF} + (\bfA(z+\cu_n) - \bfAhom)P   \bigr|^2 
\leq
C\mathcal{E}_{\f}(m) + C |P|^2 \mathcal{E}_{\bfzero}(m)
\,.
\end{equation*}
Combining the above displays concludes the proof of~\eqref{e.f.Hminusone.zero}. The proof of~\eqref{e.f.Hminusone.zero} is completely analogous using~\eqref{e.f.Hminusone.zero.means.star} instead of~\eqref{e.f.Hminusone.zero.means}, and we omit it. 
\end{proof}

We next obtain a bound on the size of~$\mathcal{E}_{\f}$, which can be compared to the statement of Theorem~\ref{t.subadd.converge.nosymm}.

\begin{corollary}
\label{c.Jf.minsetE}
Let~$\delta \in (0,1]$. 
There exist constants~$\alpha(\beta,d,\lambda,\Lambda) \in (0,1)$ and~$C(\delta,\datareff) < \infty$, 
and a random variable~$\X$ satisfying~$\X^{\frac d2 (1-\beta)} = \O_\Psi(C)$ such that, 
for every~$m\in\N$ with~$3^m\geq \X$, 
\begin{equation}
\label{e.Jf.minset.smash.again}
\mathcal{E}_{\f}(m) \leq  \delta (\X3^{-m})^{ \alpha} \,.
\end{equation}
Morevover, there is another random variable,~$\X_{k}$, bounded by~$3^{k+1}$, such that~$\X_{k}$ is~$\F(\cu_k)$-measurable,~$\X_k^{\frac d2 (1-\beta)} = \O_{\Psi}(C)$, the Malliavin derivative of~$\X_{k}$ is bounded by itself as
\begin{align} \label{e.f.mallliavin.local.again}
\bigl| \partial_{(\a,\f)(\cu_k)} \X_{k} \bigr| \leq C (1+\X_{k}) 
\,,
\end{align}
and if~$m,k \in \N$ are such that~$\X_{k} \leq 3^m < 3^k$, then
\begin{align}
\label{e.convssmaxsmax.local.again}
\mathcal{E}_{\f}(m) 
\leq
\delta  (\X_{k} 3^{-m})^{ \alpha}
\,,
\end{align}
\end{corollary}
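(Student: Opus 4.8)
The plan is to reduce this statement to the machinery already developed in Lemma~\ref{l.mathcalE.minscale} and the algebraic-rate estimates for $\bfJ_\f$. The starting point is the observation that, by combining the decomposition~\eqref{e.expand.around.bfFbar} with the subadditivity of $\bfJ_0$ from Lemma~\ref{l.J.basicprops.nonsymm}, Lemma~\ref{l.bF.converge} (convergence of $\bfF(\cu_m)$ and $\bfF_*(\cu_m)$ to $\overline{\bfF}$), and Lemma~\ref{l.f.J.minimal.expectation} (algebraic decay of $\E[\bfJ_\f(\cu_m,0,\overline{\bfF})]$), one obtains an estimate of the form
\begin{equation*}
\sup_{P\in B_1}\E\bigl[\bfJ_\f(\cu_m,P,\bfAhom P+\overline{\bfF})\bigr]\leq C3^{-m\alpha}\,.
\end{equation*}
Indeed, expanding $\bfJ_\f(\cu_m,P,\bfAhom P+\overline{\bfF})$ via~\eqref{e.expand.around.bfFbar} with $Q=\bfAhom P+\overline\bfF$ gives $\bfJ_0(\cu_m,P,\bfAhom P)$, whose expectation decays algebraically by Proposition~\ref{p.algebraic.double}, plus $\bfJ_\f(\cu_m,0,\overline\bfF)$ whose expectation decays by Lemma~\ref{l.f.J.minimal.expectation}, plus the two cross terms $P\cdot(\bfF(\cu_m)-\overline\bfF)$ and $(\bfAhom P)\cdot\bfA_*^{-1}(\cu_m)(\overline\bfF-\bfF_*(\cu_m))$, whose expectations are bounded (after Cauchy--Schwarz) by the square roots of the quantities controlled in Lemma~\ref{l.bF.converge}, hence also decay at an algebraic rate. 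Shrinking $\alpha$ absorbs the square roots.

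Next I would transfer this $L^1$-in-probability estimate on $\E[\bfJ_\f(\cu_m,e_i,\bfAhom e_i+\overline\bfF)]$ into a minimal-scale statement for the composite quantity $\mathcal E_\f(m)$ defined in~\eqref{e.mathcalE.f}. This is exactly the content of Lemma~\ref{l.mathcalE.minscale}: its proof uses only the nonnegativity and subadditivity of the underlying quantity together with an application of $\CFS(\beta,\Psi)$ via a fluctuation bound of the type~\eqref{e.whatCFSgives}. The quantity $\bfJ_\f(U,P,Q)$ is nonnegative by~\eqref{e.bfJ.nonneg} and subadditive as noted just after~\eqref{e.Jf.repre0zoopie}; the only new ingredient needed to invoke $\CFS$ is a Malliavin derivative bound for $\bfJ_\f(z+\cu_n,e_i,\bfAhom e_i+\overline\bfF)$. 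For this I would combine the representation~\eqref{e.Jf.repre.gen} (or~\eqref{e.expand.around.bfFbar}) with the Malliavin estimates already collected: $|\partial_{(\a,\f)(U)}\bfA(U)|$ and $|\partial_{(\a,\f)(U)}\bfA_*(U)|$ are bounded by~\eqref{e.maul.Mall}, while $|\partial_{(\a,\f)(U)}\bfF(U)|$, $|\partial_{(\a,\f)(U)}\bfF_*(U)|$, $|\partial_{(\a,\f)(U)}\mu_\f(U,0)|$, $|\partial_{(\a,\f)(U)}\mu_\f^*(U,0)|$ are controlled in Lemma~\ref{l.Malliavin.f.base} by $C(1+\|\f\|_{\underline L^2(U)})$ and $C\|\f\|_{\underline L^2(U)}(1+\|\f\|_{\underline L^2(U)})$ respectively. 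On the event $\{\|\f\|_{\underline L^2(z+\cu_n)}\leq 2\}$ these are all bounded by a constant; outside that event one pays the price quantified by assumption~\eqref{e.P.fboundedness}, exactly as in the truncation argument used for $\bfF(\cu_m)$ in the proof of Lemma~\ref{l.bF.converge}. Thus, after truncating by a smooth $\rho(\|\f\|_{\underline L^2(z+\cu_n)})$ as there, the truncated random variables satisfy the hypotheses~\eqref{e.CFS.ass.gen} up to rescaling, and $\CFS(\beta,\Psi)$ applies, with the untruncated tail absorbed into an $\O_\Psi(C3^{-\frac d2(m-n)})$ error via~\eqref{e.P.fboundedness}. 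Running Lemma~\ref{l.mathcalE.minscale} with $J$ replaced by $\bfJ_\f$ then produces the random minimal scale $\X$ with $\X^{\frac d2(1-\beta)}=\O_\Psi(C)$ and the estimate~\eqref{e.Jf.minset.smash.again}.

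Finally, for the localized version I would mimic the proof of Lemma~\ref{l.localX} (and Remark~\ref{r.localX.nonsymm}): truncate the defining series $A_n=\frac2\delta\sum_{m\geq n}3^{\gamma(m-n)}\mathcal E_\f(m)$ at level $k$ instead of $\infty$, set $A_n^{(k)}:=\frac2\delta\sum_{m=n}^k3^{\gamma(m-n)}\mathcal E_\f(m)$ and $\X_k:=\sum_{n=1}^k3^n\min\{1,(A_n^{(k)}-1)_+\}$, which is automatically $\F(\cu_k)$-measurable since it depends only on $\bfJ_\f(z+\cu_n,\cdot,\cdot)$ for cubes inside $\cu_k$. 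The tail bound $\X_k^{\frac d2(1-\beta)}=\O_\Psi(C)$ follows from the same Chebyshev computation as in Lemma~\ref{l.mathcalE.minscale}, and~\eqref{e.convssmaxsmax.local.again} holds on $\{\X_k<3^k\}$ exactly as~\eqref{e.convssmaxsmax.local}. For the Malliavin bound~\eqref{e.f.mallliavin.local.again}, the chain rule gives $|\partial_{(\a,\f)(\cu_k)}\X_k|\leq\sum_n 3^n\indc_{\{1\leq A_n^{(k)}\leq2\}}|\partial_{(\a,\f)(\cu_k)}A_n^{(k)}|$, and the recursion $A_n^{(k)}=\frac2\delta\mathcal E_\f(n)+3^\gamma A_{n+1}^{(k)}$ together with the Malliavin bounds for $\bfJ_\f$ established above yields the geometric decay of the indicator factors, exactly as in Lemma~\ref{l.localX}. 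The main obstacle—really the only nonroutine point—is verifying the Malliavin-derivative bound for $\bfJ_\f$ in a form uniform enough to feed into $\CFS$, because of the $\f$-dependent factors $(1+\|\f\|_{\underline L^2})$; but this is exactly where assumption~\eqref{e.P.fboundedness} is designed to be used, so the truncation argument of Lemma~\ref{l.bF.converge} resolves it cleanly. Everything else is a transcription of the symmetric-case arguments with $J\rightsquigarrow\bfJ_\f$, $\ahom\rightsquigarrow(\bfAhom,\overline\bfF)$.
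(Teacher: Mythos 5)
Your proposal is correct and follows essentially the same route as the paper's proof: the expectation bound $\E[\bfJ_\f(\cu_m,P,\bfAhom P+\overline{\bfF})]\leq C(1+|P|^2)3^{-m\alpha}$ via~\eqref{e.expand.around.bfFbar}, Lemma~\ref{l.bF.converge} and Lemma~\ref{l.f.J.minimal.expectation}, followed by the minimal-scale and localization arguments of Lemmas~\ref{l.mathcalE.minscale} and~\ref{l.localX}, with the cutoff~$\rho$ ensuring boundedness before applying~$\CFS$ and the Malliavin bounds coming from~\eqref{e.Jf.repre0}/\eqref{e.Jf.repre.gen}, Lemma~\ref{l.Malliavin.f.base} and~\eqref{e.maul.Mall}. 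The only cosmetic deviation is citing~\eqref{e.P.fboundedness} where the paper invokes its consequence~\eqref{e.f.dumbbound} for the truncation error, which is immaterial.
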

\begin{proof}
It follows from~\eqref{e.expand.around.bfFbar} and the previous two lemmas that, for every~$P\in\R^{2d}$, 
\begin{equation}
\label{e.Jf.minsetE}
\E \Bigl[\bfJ_{\f} \bigl(U, P, \bfAhom P+ \overline{\bfF} \bigr) \Bigr]
\leq 
C\bigl( 1+ |P|^2 \bigr) 3^{-m\alpha} 
\,.
\end{equation}
We may now argue as in the proof of Lemmas~\ref{l.mathcalE.minscale} and~\ref{l.localX}.
The only difference is that, as in the proof of Lemma~\ref{l.bF.converge},  we need to use the cutoff function~$\rho$ to achieve the boundedness of each of the terms in the mesoscopic sum before applying the~$\CFS$ assumption. The error can be estimated separately, using~\eqref{e.f.dumbbound}. 
The required estimates on the Malliavin derivative of~$\bfJ_{\f}$ are found by using the formula~\eqref{e.Jf.repre0}, Lemma~\ref{l.Malliavin.f.base} and~\eqref{e.maul.Mall}. 
\end{proof}

We conclude this section with the proof of Theorem~\ref{t.zeroslope}. 

\begin{proof}[{Proof of Theorem~\ref{t.zeroslope}}]
Let~$P := (p,q)\in \R^{2d}$ to be fixed.  In view of~\eqref{e.f.splittingformula.for.Z}, we define
\begin{align*}  
\left\{
\begin{aligned}
\begin{pmatrix} \nabla v_{\f}(\cdot,U,P)  \\ \a \nabla v_{\f}(\cdot,U,P) + \f \end{pmatrix}  
:=
\frac12 \Bigl( Z_{\f}\bigl(\cdot,U,P \bigr) + \rota \bigl( \bfA Z_{\f}\bigl (\cdot,U,P \bigr) + 2 \bfF\bigr) \Bigr) \,,
\\
\begin{pmatrix} \nabla v_{\f}^*(\cdot,U,P)  \\ - \a \nabla v_{\f}^*(\cdot,U,P) - \f \end{pmatrix}  
:=
\frac12 \Bigl( Z_{\f}\bigl(\cdot,U,P \bigr) - \rota \bigl( \bfA Z_{\f}\bigl (\cdot,U,P \bigr) + 2 \bfF\bigr) \Bigr) \,,
\end{aligned}
\right.
\end{align*}

\smallskip

\emph{Step 1.} We construct~$\psi_{\f}(\cdot,U)$ and show~\eqref{e.zeroslope.sublin}. By Lemma~\ref{l.f.S.findme},~$v_{\f}(\cdot,U,P)$ belongs to~$\mathcal{A}_{\f}(U)$. 
Now, by Lemma~\ref{l.f.Hminusone.zero}, we obtain that 
\begin{multline*}  
\biggl\| \begin{pmatrix} \nabla v_{\f}(\cdot,\cu_m,P)  \\ \a \nabla v_{\f}(\cdot,\cu_m,P) + \f \end{pmatrix}    - \frac12 \bigr( P+ \rota (\overline{\bfF} - \bfAhom P)  \bigl)  \biggr\|_{\Hminusul(\cu_m)} 
\\
\leq
C( \| \f \|_{\underline{L}^2(\cu_m)} + |P|) + C3^m \bigl(  |P| \mathcal{E}_{\bfzero}^{\nicefrac12}(m) + \mathcal{E}_{\f}^{\nicefrac12}(m) \bigr) 
\end{multline*}
We then select the slope~$P$. Denoting~$\overline{\bfF}  = (\overline{\bfF}_1, \overline{\bfF}_2)$, we obtain by a straightforward computation that for~$\hat{P} := - ( \overline{\bfF}_2 , \khom \overline{\bfF}_2)$  
we have 
\begin{align*} 
\hat{P}+ \rota (\overline{\bfF} - \bfAhom \hat{P})
&
= 
- \begin{pmatrix}  \overline{\bfF}_2 \\ \khom \overline{\bfF}_2 \end{pmatrix}  
+\begin{pmatrix}  \overline{\bfF}_2 \\  \overline{\bfF}_1\end{pmatrix}  
- \begin{pmatrix} -\shom^{-1} \khom  & \shom^{-1}  \\ \shom + \khom^t \shom^{-1} \khom & - \khom^t \shom^{-1} \end{pmatrix} \begin{pmatrix} \overline{\bfF}_2 \\ \khom \overline{\bfF}_2 \end{pmatrix} 
\notag \\ &
=
\begin{pmatrix} 0  \\  
 \overline{\bfF}_1 - \ahom \overline{\bfF}_2  \end{pmatrix}
\,.
\end{align*}
We thus set
\begin{equation*}  
\begin{pmatrix} \nabla \psi_{\f}(\cdot,U)  \\ \a \nabla \psi_{\f}(\cdot,U) \end{pmatrix} 
:=
\begin{pmatrix} \nabla v_{\f}(\cdot,U,\hat{P})  \\ \a \nabla v_{\f}(\cdot,U,\hat{P} ) \end{pmatrix} 
\qand
\overline{\f}  := \frac12 ( \overline{\bfF}_1 - \ahom\overline{\bfF}_2)
\,, 
\end{equation*}
and obtain, in view of Theorem~\ref{t.subadd.converge.nosymm} and
Corollary~\ref{c.Jf.minsetE}, the estimate in~\eqref{e.zeroslope.sublin}.

\smallskip

The rest of the proof is devoted to showing the estimate for the energy of~$\psi_{\f}$. 

\smallskip

\emph{Step 2.} We show the identity
\begin{equation} 
\label{e.energy.psi.f.pre}
\bigl\| \s^{\nicefrac 12} \nabla \psi_{\f}(\cdot,U) \bigr\|_{\underline{L}^2(U)}^2
= 
\begin{pmatrix} -\overline{\bfF}_2 \\ \khom \overline{\bfF}_2\end{pmatrix} \cdot \bfA(U) \begin{pmatrix} - \overline{\bfF}_2 \\ \khom \overline{\bfF}_2\end{pmatrix}
-  \mu_{\f}(U,0) 
\,.
\end{equation}
Using~\eqref{e.decomp.splitten},~\eqref{e.Z.zero.split} and~\eqref{e.maximizers.J.to.bfJ}, we write~$\psi_{\f}(\cdot,U) = \hat{\psi}_0 + \tilde{\psi}_{\f}$ with
\begin{equation*} 
%\label{e.}
\hat{\psi}_0:= v(\cdot,U,-\overline{\bfF}_2,-\khom \overline{\bfF}_2 )\,,  \quad
\tilde{\psi}_{\f} := v_{\f}(\cdot,U,0) \qand 
\tilde{\psi}_{\f}^* := v_{\f}^*(\cdot,U,0)
\,.
\end{equation*}
Notice that~$\hat{\psi}_0 \in \mathcal{A}(U)$ and
\begin{equation*} 
%\label{e.}
Z_{\f}(\cdot,U,0) 
: =
\begin{pmatrix} 
\nabla \tilde{\psi}_{\f}  + \nabla \tilde{\psi}_{\f}^* 
\\ 
 \a \nabla \tilde{\psi}_{\f}
- \a^t \nabla \tilde{\psi}_{\f}^*
 \end{pmatrix} 
 \in
\Lpoto(U) \times  \Lsolo(U)\,.
\end{equation*}
We first claim that the energy of~$\psi_{\f}(\cdot,U)$ splits as follows:
\begin{equation} 
\label{e.split.the.energy.with.f}
\fint_{U} \s \nabla \psi_{\f}(\cdot,U) \cdot \nabla \psi_{\f}(\cdot,U)
=
\fint_{U} \s \nabla \hat{\psi}_0 \cdot \nabla \hat{\psi}_0 
+
\fint_{U} \s \nabla \tilde{\psi}_{\f} \cdot \nabla \tilde{\psi}_{\f}
\,.
\end{equation}
%\begin{equation*} 
%%\label{e.}
%\fint_{U}\a^t \nabla \hat{\psi}_0^* \cdot \nabla \tilde{\psi}_{\f}^* 
%= 
%- \fint_{U}\a^t \nabla \hat{\psi}_0^* \cdot \nabla \tilde{\psi}_{\f} 
%=
%- \fint_{U} \nabla \hat{\psi}_0^* \cdot \a \nabla \tilde{\psi}_{\f} 
%=
%- \fint_{U} \nabla \hat{\psi}_0^* \cdot \a^t \nabla \tilde{\psi}_{\f}^*
%\,.
%\end{equation*}
%This implies that
%\begin{equation*} 
%%\label{e.}
%\fint_{U} \s \nabla \hat{\psi}_0^* \cdot \nabla \tilde{\psi}_{\f}^* = \frac12 \fint_{U} (\a + \a^t) \nabla \hat{\psi}_0^* \cdot \nabla \tilde{\psi}_{\f}^* 
%=  0
% \,.
%\end{equation*}
To see this, by~$\hat{\psi}_0 \in \mathcal{A}(U)$,~$\nabla \tilde{\psi}_{\f} + \nabla \tilde{\psi}_{\f}^* \in \Lpoto(U)$ and~$\a \nabla \tilde{\psi}_{\f} - \a^t \nabla \tilde{\psi}_{\f}^* \in \Lsolo(U)$, we have   
\begin{equation*} 
%\label{e.}
\fint_{U}\a \nabla \hat{\psi}_0 \cdot \nabla \tilde{\psi}_{\f} 
= 
- \fint_{U}\a \nabla \hat{\psi}_0 \cdot \nabla \tilde{\psi}_{\f}^* 
=
- \fint_{U} \nabla \hat{\psi}_0 \cdot \a^t \nabla \tilde{\psi}_{\f}^*
=
- \fint_{U} \nabla \hat{\psi}_0 \cdot \a \nabla \tilde{\psi}_{\f}
% = - \fint_{U} \a^t \nabla \hat{\psi}_0 \cdot \nabla \tilde{\psi}_{\f}
\,.
\end{equation*}
This implies that
\begin{equation*} 
%\label{e.}
\fint_{U} \s \nabla \hat{\psi}_0 \cdot \nabla \tilde{\psi}_{\f} = \frac12 \fint_{U} (\a + \a^t) \nabla \hat{\psi}_0 \cdot \nabla \tilde{\psi}_{\f}
=  0
\,,
\end{equation*}
and thus~\eqref{e.split.the.energy.with.f} follows. Furthermore, by~$\a\nabla \tilde{\psi}_{\f}- \a^t \nabla \tilde{\psi}_{\f}^*  \in \Lsolo(U)$, we get
\begin{equation} 
\label{e.v.vstar.crossterm}
\fint_{U} \a \nabla \tilde{\psi}_{\f} \cdot \nabla \tilde{\psi}_{\f}
= \fint_{U} \a^t \nabla \tilde{\psi}_{\f}^* \cdot \nabla \tilde{\psi}_{\f}
= \fint_{U} \a \nabla \tilde{\psi}_{\f} \cdot \nabla \tilde{\psi}_{\f}^*
= \fint_{U} \a^t \nabla \tilde{\psi}_{\f}^* \cdot \nabla \tilde{\psi}_{\f}^* \,.
\end{equation}
Consequently, 
\begin{equation} 
\label{e.energy.tilde.psi.f}
- \mu_{\f}(U,0) 
= \frac12 \fint_{U} Z_{\f}(\cdot,U,0) \cdot \bfA Z_{\f}(\cdot,U,0)
=
\fint_{U} \a \nabla \tilde{\psi}_{\f} \cdot \nabla \tilde{\psi}_{\f}
=
 \fint_{U} \a^t \nabla \tilde{\psi}_{\f}^* \cdot \nabla \tilde{\psi}_{\f}^* 
\,.
\end{equation}
By~\eqref{e.J.by.means.of.bfA},~\eqref{e.Jenergyv.nosymm} and the antisymmetry of~$\khom$, we have
\begin{equation} 
\label{e.energy.psi.zero}
\fint_{U} \frac12 \s \nabla \hat{\psi}_0 \cdot \nabla \hat{\psi}_0 
= J(U,-\overline{\bfF}_2, - \khom \overline{\bfF}_2)
=
\frac12 \begin{pmatrix} -\overline{\bfF}_2 \\ \khom \overline{\bfF}_2\end{pmatrix} \cdot \bfA(U) \begin{pmatrix} -\overline{\bfF}_2 \\ \khom \overline{\bfF}_2\end{pmatrix}
\,.
\end{equation}
Combining this with~\eqref{e.split.the.energy.with.f} and~\eqref{e.v.vstar.crossterm} yields~\eqref{e.energy.psi.f.pre}.

\smallskip

\emph{Step 3.} We next show that
\begin{align} 
\label{e.additivity.psi.f.E}
\lefteqn{
\avsum_{z \in 3^n \Zd \cap \cu_m} \E \Bigl[
\bigl\| \s^{\nicefrac12} \bigl(
\nabla \psi_{\f} (\cdot,\cu_m)-
\nabla \psi_{\f} (\cdot,z+\cu_n) \bigr) \bigr\|_{\underline{L}^2(z+\cu_n)}^2 
 \Bigr]
} \qquad \quad &
\notag \\ &
\leq  2 \E\bigl[ \mu_{\f}(\cu_n,0) - \mu_{\f}(\cu_m,0)  \bigr]
+  \hat{P} \cdot \bigl( \bfAhom(\cu_n) - 
\bfAhom(\cu_m)  \bigr)\hat{P} 
\,.
\end{align}
For~$P \in \R^{2d}$, set~$Z := Z_{\f}(\cdot,\cu_m,P)$ and~$Z_{z,n}:= Z_{\f}(\cdot,z+\cu_n,P)$, and let~$\tilde Z$ be the extension such that~$\tilde Z:= Z_{z,n}$ in~$z+\cu_n$ for~$z \in3^n \Zd$. Since~$Z_{z,n}  \in P + \Lpoto(z+\cu_n) \times \Lsolo(z+\cu_n)$, the extension~$\tilde Z$  belongs to~$P  + \Lpoto(\cu_m) \times \Lsolo(\cu_m)$ as well. We now have
\begin{align*} 
%\label{e.}
\fint_{\cu_m} \bfA Z \cdot \tilde Z 
& 
=
\avsum_{z \in 3^n \Zd \cap \cu_m} \fint_{z+\cu_n} \bfA (Z - Z_{z,n})  \cdot Z_{z,n} + \avsum_{z \in 3^n \Zd \cap \cu_m}\fint_{z+\cu_n}   \bfA Z_{z,n}  \cdot Z_{z,n} 
\notag \\ &
=
\avsum_{z \in 3^n \Zd \cap \cu_m} P\cdot \fint_{z+\cu_n} \bfA (Z - Z_{z,n})  
- 2\avsum_{z \in 3^n \Zd \cap \cu_m}  \mu_{\f}(z+\cu_n,P)
\notag \\ &
=
P\cdot \biggl(\bfF(\cu_m) - \avsum_{z \in 3^n \Zd \cap \cu_m} \bfF(z+\cu_n) \biggr)
- 2\avsum_{z \in 3^n \Zd \cap \cu_m}  \mu_{\f}(z+\cu_n,P)
\,.
\end{align*}
By~\eqref{e.mu.f.mustar.f.formula}, 
\begin{equation*} 
%\label{e.}
 \mu_{\f}(U,P) = 
\frac12 P \cdot \bfA(U) \cdot P
+ P \cdot   \bfF(U)  
 + \mu_{\f}(U,0)\,.
\end{equation*}
Therefore,
\begin{multline} 
\label{e.quadratic.response.Z.f}
\avsum_{z \in 3^n \Zd \cap \cu_m} \fint_{z+\cu_n} \frac12  \bfA \bigl(Z_{\f}(\cdot,\cu_m,P) - Z_{\f}(\cdot,z+\cu_n,P)\bigr) \cdot  \bigl(Z_{\f}(\cdot,\cu_m,P) - Z_{\f}(\cdot,z+\cu_n,P)\bigr)
\\
= 
\avsum_{z \in 3^n \Zd \cap \cu_m}  \biggl( \mu_{\f}(z+\cu_n,0) - \mu_{\f}(\cu_m,0) +  \frac12 P\cdot\bigl(\bfA(z+\cu_n) - \bfA(\cu_m)\bigr) P \biggr)
\,,
\end{multline}
from which~\eqref{e.additivity.psi.f.E} follows after taking expectation and using~$\Zd$-stationarity. 

\smallskip

\emph{Step 4.} 
We claim that there exists~$C(d)<\infty$ such that, for every~$\ep \in (0,1]$,  
\begin{align} 
\label{e.divcurl.for.psi.f}
\lefteqn{
\biggl| \E\biggl[ \fint_{\cu_m} \bigl( \a \nabla \psi_{\f} (\cdot,\cu_m) + \f \bigr) \cdot \nabla \psi_{\f}(\cdot,\cu_m) \biggr]\biggr|
} \quad &
\notag \\ & 
= 
C 3^{-m} \E\Bigl[ \bigl[ \nabla \psi_{\f}(\cdot,\cu_m)  \bigr]_{\Hminusul(\cu_m)}^2\Bigr]^{\nicefrac12} \E\Bigl[\bigl\| 
\a \nabla \psi_{\f} (\cdot,\cu_m) + \f \bigr\|_{\underline{L}^2(\cu_m)}^2\Bigr]^{\nicefrac12}
\notag \\ & \qquad
+ (1+\ep^{-1}) 
\avsum_{z \in 3^n \Zd\cap \cu_m} \E \Bigl[
\bigl\| \s^{\nicefrac12} \bigl(
\nabla \psi_{\f} (\cdot,\cu_m)-
\nabla \psi_{\f} (\cdot,z+\cu_n) \bigr) \bigr\|_{\underline{L}^2(z+\cu_n)}^2 
 \Bigr]
\notag \\ & \qquad
+ C (\ep + 3^{-(m-n)}) \E \Bigl[
\bigl\| \s^{\nicefrac12} \nabla \psi_{\f}(\cdot,\cu_m) \bigr\|_{\underline{L}^2(\cu_m)}^2 
+
\bigl\| \s^{-\nicefrac12} \f \bigr\|_{\underline{L}^2(\cu_m)}^2 
 \Bigr]
\,.
\end{align}
Let~$\varphi \in C_{\mathrm{c}}^\infty(\cu_m)$ be such that~$0 \leq \varphi \leq 2$,~$(\varphi)_{\cu_m} = 1$ and~$\| \nabla \varphi\|_{L^\infty(\cu_m)} \leq 3^{2-m}$. We decompose as
\begin{align*} 
%\label{e.}
\lefteqn{
\fint_{\cu_m} \bigl( \a \nabla \psi_{\f} (\cdot,\cu_m) + \f \bigr) \cdot \nabla \psi_{\f}(\cdot,\cu_m)
} \quad &
\notag \\ & 
= 
\fint_{\cu_m} \bigl( \a \nabla \psi_{\f} (\cdot,\cu_m) + \f \bigr) \cdot \nabla \psi_{\f}(\cdot,\cu_m) \varphi
+
\fint_{\cu_m} \bigl( \s \nabla \psi_{\f} (\cdot,\cu_m) + \f \bigr) \cdot \nabla \psi_{\f}(\cdot,\cu_m) (1-\varphi)
\,.
\end{align*}
For the first term on the right, we use the fact that~$\a \nabla \psi_{\f} (\cdot,\cu_m) + \f$ is divergence-free, and thus get by integration by parts that
\begin{align*} 
%\label{e.}
\lefteqn{
\biggl| \fint_{\cu_m} \bigl( \a \nabla \psi_{\f} (\cdot,\cu_m) + \f \bigr) \cdot \nabla \psi_{\f}(\cdot,\cu_m) \varphi  \biggr|
} \qquad & 
\notag \\  &
=
\biggl| \fint_{\cu_m} \bigl( \psi_{\f}(\cdot,\cu_m) -  ( \psi_{\f}(\cdot,\cu_m))_{\cu_m} \bigr)   \bigl( \a \nabla \psi_{\f} (\cdot,\cu_m) + \f \bigr) \cdot \nabla \varphi 
\biggr|
\notag \\  & 
\leq 
C 3^{-m} \bigl[ \nabla \psi_{\f}(\cdot,\cu_m)  \bigr]_{\Hminusul(\cu_m)} \bigl\| 
\a \nabla \psi_{\f} (\cdot,\cu_m) + \f \bigr\|_{\underline{L}^2(\cu_m)} 
\,.
\end{align*}
We next replace the cut-off function~$\varphi$ by its mean in the small cube~$z+\cu_n$. The error is very small:
\begin{align*} 
%\label{e.}
\lefteqn{
\biggl| 
\fint_{\cu_m} \bigl( \s \nabla \psi_{\f} (\cdot,\cu_m) + \f \bigr) \cdot \nabla \psi_{\f}(\cdot,\cu_m) \bigl( \varphi - (\varphi)_{z+\cu_n} \bigr)
\biggr|
} \qquad &
\notag \\ &
\leq C 3^{-(m-n)} 
\bigl( \bigl\| 
\s^{\nicefrac12} \nabla \psi_{\f} (\cdot,\cu_m) \bigr\|_{\underline{L}^2(z+\cu_n)}  
+
\bigl\| 
\s^{-\nicefrac12} \f \bigr\|_{\underline{L}^2(z+\cu_n)} 
\bigr)
\bigl\| \s^{\nicefrac12} \nabla \psi_{\f} (\cdot,\cu_m) \bigr\|_{\underline{L}^2(\cu_m)} 
\,.
\end{align*}
In each of the small cubes, we localize the integral using 
\begin{align} 
\label{e.divcurl.intermediate.pre}
\lefteqn{
\biggl| \fint_{z+\cu_n} \bigl( \s \nabla \psi_{\f} (\cdot,\cu_m) + \f \bigr) \cdot \nabla \psi_{\f}(\cdot,\cu_m) - 
\fint_{z+\cu_n}  \bigl( \s \nabla \psi_{\f} (\cdot,z+\cu_n) + \f \bigr) \cdot \nabla \psi_{\f}(\cdot,z+\cu_n) \biggr|
} \qquad &
\notag \\ &
\leq 
\bigl( \bigl\| 
\s^{\nicefrac12} \nabla \psi_{\f} (\cdot,\cu_m) \bigr\|_{\underline{L}^2(z+\cu_n)}   + \bigl\| \s^{\nicefrac12}
\nabla \psi_{\f} (\cdot,z+\cu_n) \bigr\|_{\underline{L}^2(z+\cu_n)}  
+
\bigl\| 
\s^{-\nicefrac12} \f \bigr\|_{\underline{L}^2(z+\cu_n)} 
 \bigr) 
\notag \\ & \qquad \cdot
\bigl\| \s^{\nicefrac12} \bigl(
\nabla \psi_{\f} (\cdot,\cu_m)-
\nabla \psi_{\f} (\cdot,z+\cu_n) \bigr) \bigr\|_{\underline{L}^2(z+\cu_n)} 
\,.
\end{align}
The contribution of the second term on the left vanishes in expectation by~$\Zd$ -stationarity and~$(\varphi)_{\cu_m} = 1$: 
\begin{align*} 
%\label{e.}
\lefteqn{
\E\biggl[ \avsum_{z \in 3^n \Zd \cap \cu_m} (1-(\varphi)_{z + \cu_n}) 
\fint_{z+\cu_n}  \bigl( \s \nabla \psi_{\f} (\cdot,z+\cu_n) + \f \bigr) \cdot \nabla \psi_{\f}(\cdot,z+\cu_n) \biggr]
} \qquad &
\notag \\ &
= 
\biggl( 
\avsum_{z \in 3^n \Zd \cap \cu_m} (1-(\varphi)_{z + \cu_n})  \biggr)
\E\biggl[ \fint_{\cu_n}  \bigl( \s \nabla \psi_{\f} (\cdot,\cu_n) + \f \bigr) \cdot \nabla \psi_{\f}(\cdot,\cu_n) \biggr]
= 0
\,.
\end{align*}
It follows by H\"older's inequality and the triangle inequality that
\begin{align*} 
%\label{e.}
\lefteqn{
\E\biggl[ \avsum_{z \in 3^n \Zd \cap \cu_m} (1-(\varphi)_{z + \cu_n}) 
\fint_{z+\cu_n} \bigl( \s \nabla \psi_{\f} (\cdot,\cu_m) + \f \bigr) \cdot \nabla \psi_{\f}(\cdot,\cu_m) \biggr]
} \qquad &
\notag \\ &
\leq  \avsum_{z \in 3^n \Zd\cap \cu_m} \E \Bigl[
\bigl\| \s^{\nicefrac12} \bigl(
\nabla \psi_{\f} (\cdot,\cu_m)-
\nabla \psi_{\f} (\cdot,z+\cu_n) \bigr) \bigr\|_{\underline{L}^2(z+\cu_n)} 
 \Bigr]^2 
 \notag \\ & \qquad 
 + 
4 \Bigl( \E \Bigl[
\bigl\| \s^{\nicefrac12} \nabla \psi_{\f}(\cdot,\cu_m) \bigr\|_{\underline{L}^2(\cu_m)}^2 
+
\bigl\| \s^{-\nicefrac12} \f \bigr\|_{\underline{L}^2(\cu_m)}^2 
\Bigr]^2   \Bigr)^{\nicefrac12} 
\notag \\ & \qquad \qquad
\times
\biggl( \avsum_{z \in 3^n \Zd\cap \cu_m} \E \Bigl[
\bigl\| \s^{\nicefrac12} \bigl(
\nabla \psi_{\f} (\cdot,\cu_m)-
\nabla \psi_{\f} (\cdot,z+\cu_n) \bigr) \bigr\|_{\underline{L}^2(z+\cu_n)}^2 
 \Bigr] \biggr)^{\!\nicefrac12}
\,.
\end{align*}
Collecting the above estimates, we obtain~\eqref{e.divcurl.for.psi.f} by Young's inequality. Before we proceed to the conclusion, let us make a note of a quenched implication of~\eqref{e.divcurl.intermediate.pre} and~\eqref{e.quadratic.response.Z.f}. Indeed, by~\eqref{e.divcurl.intermediate.pre},~\eqref{e.quadratic.response.Z.f} and~\eqref{e.energy.estimate.Z}, there exists~$C(d,\lambda,\Lambda)<\infty$ such that
\begin{align} 
\label{e.divcurl.intermediate}
\lefteqn{
\biggl| \fint_{\cu_m} \bigl( \s \nabla \psi_{\f} (\cdot,\cu_m) + \f \bigr) \cdot \nabla \psi_{\f}(\cdot,\cu_m) 
} \quad &
\notag \\ &
- 
\avsum_{z \in 3^n \Zd \cap \cu_m} \fint_{z+\cu_n}  \bigl( \s \nabla \psi_{\f} (\cdot,z+\cu_n) + \f \bigr) \cdot \nabla \psi_{\f}(\cdot,z+\cu_n) \biggr|
\notag \\ & \quad 
\leq 
C \Bigl( \bigl\| \f \|_{\underline{L}^2(\cu_m)} + |\hat P| \Bigr)
\biggl( \avsum_{z \in 3^n \Zd \cap \cu_m}  \bigl( \mu_{\f}(z+\cu_n,0) - \mu_{\f}(\cu_m,0)  \bigr) \biggr)^{\!\nicefrac12}
\notag \\ & \quad \qquad 
+
C \Bigl( \bigl\| \f \|_{\underline{L}^2(\cu_m)} + |\hat P| \Bigr)
\biggl( \avsum_{z \in 3^n \Zd \cap \cu_m}  \hat P \cdot \bigl( \bfA(z+\cu_n,0) - \bfA(\cu_m,0) \bigr)\hat P \biggr)^{\!\nicefrac12}
\,.
\end{align}

\smallskip

\emph{Step 5.} Conclusion. We combine~\eqref{e.divcurl.for.psi.f} and~\eqref{e.additivity.psi.f.E} to obtain 
\begin{align*} 
%\label{e.}
\lefteqn{
\biggl| \E\biggl[ \fint_{\cu_m} \bigl( \a \nabla \psi_{\f} (\cdot,\cu_m) + \f \bigr) \cdot \nabla \psi_{\f}(\cdot,\cu_m) \biggr]\biggr|
} \qquad &
\notag \\ &
\leq
C 
3^{-m} \E\Bigl[ \bigl[ \nabla \psi_{\f}(\cdot,\cu_m)  \bigr]_{\Hminusul(\cu_m)}^2\Bigr]^{\nicefrac12} \E \bigl[ \| \f \|_{L^2(\cu_0)}^2 \bigr]^{\nicefrac12}
+
C (\ep + 3^{-(m-n)}) \E \bigl[ \| \f \|_{L^2(\cu_0)}^2 \bigr]
 \Bigr]
 \notag \\ & \qquad 
+
 C\ep^{-1} \bigl( \E\bigl[ \mu_{\f}(\cu_n,0) - \mu_{\f}(\cu_m,0)  \bigr]
+  \hat{P} \cdot \bigl( \bfAhom(\cu_n) - 
\bfAhom(\cu_m)  \bigr)\hat{P} \bigr)
\,.
\end{align*}
By~\eqref{e.mu.f.subadd} we deduce that
\begin{align*} 
%\label{e.}
0 
& 
\leq 
\!\!\! \avsum_{z \in 3^n \Z^d \cap \cu_m}  \mu_{\f}(z{+}\cu_n,0)  
- \mu_{\f}(\cu_m,0) 
\leq 
\!\!\! \avsum_{z \in 3^n \Z^d \cap \cu_m}   
\bfJ_{\f}\bigl(z{+}\cu_n,0, \overline{\bfF} \bigr)
-
 \bfJ_{\f}\bigl(\cu_m,0, \overline{\bfF} \bigr)
 \,.
\end{align*}
Therefore, by taking expectation and using Lemma~\ref{l.f.J.minimal.expectation}, we obtain, for every~$m,n\in\N$ with~$m \geq n$, 
\begin{equation*} 
%\label{e.}
0\leq \E\bigl[ \mu_{\f}(\cu_n,0) \bigr] - \E\bigl[ \mu_{\f}(\cu_m,0) \bigr] \leq C 3^{-n \alpha}
 \,.
\end{equation*}
Thus~$\{\mu_{\f}(\cu_n,0) \bigr]\}_{n \in \N}$ is a Cauchy sequence and there exists~$\overline{\mu}_{\f} \in \R$ such that
\begin{equation*} 
%\label{e.}
\bigl| \E\bigl[ \mu_{\f}(\cu_m,0) \bigr]  - \overline{\mu}_{\f} \bigr| \leq   C 3^{-m \alpha}
 \,.
\end{equation*} 
We then deduce by Proposition~\ref{p.algebraic.nosymm}, Lemma~\ref{l.f.J.minimal.expectation} and~\eqref{e.zeroslope.sublin} that 
\begin{equation*} 
%\label{e.}
\biggl| \E\biggl[ \fint_{\cu_m} \bigl( \a \nabla \psi_{\f} (\cdot,\cu_m) + \f \bigr) \cdot \nabla \psi_{\f}(\cdot,\cu_m) \biggr]\biggr|
\leq   C 3^{-m \alpha}
\,.
\end{equation*}
Furthermore, in view of~\eqref{e.energy.psi.f.pre}, we set
\begin{equation*} 
%\label{e.}
\overline{\mu} := 
\begin{pmatrix} -\overline{\bfF}_2 \\ \khom \overline{\bfF}_2\end{pmatrix} \cdot \bfAhom \begin{pmatrix} - \overline{\bfF}_2 \\ \khom \overline{\bfF}_2\end{pmatrix}
-  \overline{\mu}_{\f} 
 \,.
\end{equation*}
Using then~\eqref{e.divcurl.intermediate} and the assumption that~$\P$ satisfies~\eqref{e.f.sec.ass}, we may proceed as in the proof of Corollary~\ref{c.Jf.minsetE} to find a minimal scale so that the quenched estimate~\eqref{e.zeroslope.energy} is valid. This completes the proof. 
\end{proof}

\begin{remark} \label{r.zeroslope}
Similar to~Lemma~\ref{l.localX}, 
Theorem~\ref{t.zeroslope} has a localized version in which the random variable~$\X$ has local dependence. For this, we may use Corollary~\ref{c.Jf.minsetE} and Lemma~\ref{l.f.Hminusone.zero}.
\end{remark}

\subsection*{Historical remarks and further reading}

The non-standard variational principle has an interesting history. It has been independently discovered many separate times, and in this process of writing this we have found several more examples. We refer to~\cite{AM} and\cite[Chapter 10]{AKMBook} for this background. In the context of homogenization, it was used in~\cite{FP1,GMS,ADMZ} prior to be being used as the basis for the quantitative renormalization argument in~\cite{AM}. 

The coarse-graining estimates Section~\ref{ss.coarse.graining.inequalities} first appeared, in a form close to the one stated here, in~\cite{AK.HC}. 

The results in Section~\ref{ss.rhs}, for  equations with a divergence-form right-hand side, was implicitly treated in~\cite{AM} as that paper covered much more general nonlinear operators.

%%%%%%%%%%%%%%%%%%%%%
%%%%%%%%%%%%%%%%%%%%%
%%%%%%%%%%%%%%%%%%%%%
%%%%%%%%%%%%%%%%%%%%%
%%%%%%%%%%%%%%%%%%%%%
%%%%%%%%%%%%%%%%%%%%%
%%%%%%%%%%%%%%%%%%%%%
%%%%%%%%%%%%%%%%%%%%%
%%%%%%%%%%%%%%%%%%%%%
%%%%%%%%%%%%%%%%%%%%%

\section{Large-scale regularity theory}
\label{s.regularity}

In this chapter, we revisit the large-scale regularity theory developed in Section~\ref{ss.reg}. There we found a minimal scale~$\X$ which satisfies merely~$\P[\X<\infty] = 1$, such that~$C^{0,1}$ and~$C^{1,1-}$ regularity holds (with respect to balls centered at the origin) on scales larger than~$\X$: see the statement of Theorem~\ref{t.C11.sharp}. 
We will improve on this result in two main ways: first, by estimating the stochastic moments of the minimal scale~$\X$ in Theorem~\ref{t.C11.sharp}, and second, by bootstrapping the regularity from~$C^{1,1-}$ to~$C^\infty$-type regularity. 

\smallskip

Recall that the proof of the large-scale~$C^{1,1-}$ is based on harmonic approximation of the solutions on large scales by homogenization. The minimal scale~$\X$ is defined in the proof (see~\eqref{e.XC11}) as the largest scale at which the first-order correctors do not satisfy a sublinearity estimate. In Chapter~\ref{s.qualitativetheory}, with only qualitative homogenization at our disposal, we could not give a quantitative bound on this random variable. However, the estimates in the previous two chapters have put us in a position to estimate~$\X$ sharply.

\begin{theorem}
\label{t.optimalstochasticintegrability}
Suppose that~$\P$ satisfies~$\CFS(\beta,\Psi)$ and let~$\delta\in(0,1)$. Then there exist constants~$C(\delta,\dataref)<\infty$,~$\alpha(\beta,d,\lambda,\Lambda) \in (0,1)$  and a random variable satisfying   
\begin{equation} \label{e.X.integrability}
\X^{\frac d2(1-\beta)} = \O_\Psi(C)\,,
\end{equation}
such that, for every~$e \in B_1$, the global corrector solving the equation
\begin{equation*}  
- \nabla \cdot \a (e + \nabla \phi_e) = 0 \quad \mbox{in } \R^d
% \qand - \nabla \cdot \a \nabla \psi_{\f} = \nabla \cdot \f \quad \in \R^d
\end{equation*}
satisfies, for every~$r \geq \X$, the estimate
\begin{equation} \label{e.optimalstochasticintegrability}
\frac1r \|\phi_e - (\phi_e)_{B_r} \|_{\underline{L}^2(B_r)}
+
\frac1r \|\nabla \phi_e \|_{\Hminusul(B_r)} 
+ 
\frac1r \|\a(e + \nabla \phi_e) - \ahom e \|_{\Hminusul(B_r)} 
% +  \|\nabla \psi_{\f} \|_{\Hminusul(B_r)}
\leq 
\delta \Bigl(\frac{r}{\X} \Bigr)^{\!-\alpha} \,.
\end{equation}
Moreover, the random variable in Theorem~\ref{t.C11.sharp} also satisfies~\eqref{e.X.integrability}, with the constant~$C$ in~\eqref{e.X.integrability} depending additionally on~$\gamma \in [\tfrac12,1)$. 
\end{theorem}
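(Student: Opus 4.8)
The plan is to prove the quantitative sublinearity estimate \eqref{e.optimalstochasticintegrability} for the infinite-volume corrector, after which the ``Moreover'' assertion is immediate: by the definition \eqref{e.XC11} of the minimal scale appearing in Theorem~\ref{t.C11.sharp}, once \eqref{e.optimalstochasticintegrability} (which controls $\nabla\phi_e$ and, as explained below, $\nabla\s_e$ as well) holds past a random scale $\X_0$ with $\X_0^{\frac d2(1-\beta)} = \O_\Psi(C)$, the quantity defining \eqref{e.XC11} is below $\delta$ for every $3^m\ge \X_0$, so the scale in Theorem~\ref{t.C11.sharp} is at most $3\X_0$ and inherits the bound. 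The substance is therefore to transfer the quantitative control of the coarse-grained coefficients — encoded in the multiscale quantity $\mathcal{E}(m)$ of \eqref{e.mcE.0.nonsymm}, which is bounded with the optimal stochastic integrability by Theorem~\ref{t.subadd.converge.nosymm} — to the infinite-volume corrector, with an algebraic rate. The point to watch is \emph{circularity}: the minimal scale of Theorem~\ref{t.C11.sharp} is defined through infinite-volume corrector sublinearity, so that theorem cannot be used as a black box here; instead one re-derives a large-scale Lipschitz estimate whose minimal scale is anchored only to $\mathcal{E}(m)$.

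\textbf{Step 1 (quantitative $C^{0,1}$ anchored to $\mathcal{E}(m)$) and Step 2 (an a priori $\underline{L}^2$ bound).} I would repeat the excess-decay iteration from the proof of Theorem~\ref{t.C11.sharp} with two substitutions: the harmonic-approximation input (Proposition~\ref{p.DP}, qualitative) is replaced by Corollary~\ref{c.quant.DP.interior}, whose minimal scale $\X_{\mathrm{reg}}$ is governed by $\mathcal{E}(m)$ and hence satisfies $\X_{\mathrm{reg}}^{\frac d2(1-\beta)} = \O_\Psi(C)$; and the corrected planes (the class $\mathcal{A}_1$) are replaced by their finite-volume analogues $\ell_e + \phi_{m,e} + \mathrm{const}$, for which the bounds \eqref{e.FVC} — together with their immediate consequences: strict sublinearity of $\phi_{m,e}$ and $\s_{m,e}$ and the two-sided bound $c|e| \le \|e + \nabla\phi_{m,e}\|_{\underline{L}^2(B_r)} \le C|e|$ obtained from \eqref{e.FVC} via Caccioppoli and Poincar\'e exactly as in \eqref{e.correbounds.pre}--\eqref{e.correbounds} — play the role those latter estimates play in the original proof. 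The iteration then goes through verbatim and yields a large-scale $C^{0,1}$ estimate of the form \eqref{e.C01}, valid for all $\X_{\mathrm{reg}} \le r \le R$ and every $u$ solving $-\nabla\cdot\a\nabla u = 0$ in $B_R$. Applying this estimate to the globally $\a$-harmonic function $\ell_e + \phi_e$ and letting $R\to\infty$, the Wiener ergodic theorem (Proposition~\ref{p.ergodic}) together with the ellipticity bounds on $\a$ and $\ahom$ (which give $\langle |e+\nabla\phi_e|^2 \rangle \le \lambda^{-1}\, e\cdot\ahom e \le C|e|^2$) yields the deterministic, quenched bound $\|e + \nabla\phi_e\|_{\underline{L}^2(\cu_n)} \le C|e|$ for every $3^n \ge \X_{\mathrm{reg}}$. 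This step is non-circular precisely because $\X_{\mathrm{reg}}$ is defined only through $\mathcal{E}(m)$.

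\textbf{Step 3 (spatial averages via first variation, then the multiscale Poincar\'e inequality).} Since $\ell_e+\phi_e \in \mathcal{A}(\cu_n)$, the pair $\bigl(e+\nabla\phi_e,\ \a(e+\nabla\phi_e)\bigr)$ belongs to $\S(\cu_n)$ (and, symmetrically, the adjoint corrector produces a second element of $\S(\cu_n)$). Feeding these into the first-variation identity \eqref{e.firstvar.nonsymm} with the diagonal slope $(P,\bfAhom P)$, and using the quadratic response \eqref{e.quadresp.nonsymm}, the energy identity \eqref{e.JenergyS.nonsymm}, the Step~2 bound, and \eqref{e.diagonalset.nonsymm}, one obtains for every cube $z+\cu_n$ with $3^n \ge \X_{\mathrm{reg}}$ the estimate
\begin{equation*}
\Bigl|\,\textstyle\fint_{z+\cu_n}\nabla\phi_e\,\Bigr| + \Bigl|\,\textstyle\fint_{z+\cu_n}\bigl(\a(e+\nabla\phi_e) - \ahom e\bigr)\,\Bigr| \le C|e|\,\Bigl(\textstyle\sum_{i=1}^{2d}\bfJ(z+\cu_n,e_i,\bfAhom e_i)\Bigr)^{\!1/2}.
\end{equation*}
Now apply the multiscale Poincar\'e inequality (Proposition~\ref{p.MSP}, in the form \eqref{e.MSPw}) to $\phi_e$ on $\cu_m$: the top-scale term is $\le C|e|\,3^{-m}$ by Step~2, the mesoscopic sum over scales below $\X_{\mathrm{reg}}$ is crudely $\le C|e|\,\X_{\mathrm{reg}}3^{-m}$ by Step~2 and Jensen, and the sum over scales above $\X_{\mathrm{reg}}$ is $\le C|e|\,\mathcal{E}(m)^{1/2}$ by the displayed estimate and Cauchy--Schwarz in $n$. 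Since $\mathcal{E}(m) \le \delta\,(\X_1 3^{-m})^{\alpha}$ for $3^m \ge \X_1$ by Theorem~\ref{t.subadd.converge.nosymm}, taking $\X := C(\X_{\mathrm{reg}}\vee\X_1)$ (with a $\delta$-dependent prefactor) gives $\tfrac1r\|\phi_e - (\phi_e)_{B_r}\|_{\underline{L}^2(B_r)} + \tfrac1r\|\nabla\phi_e\|_{\Hminusul(B_r)} \le \delta(\X/r)^{\alpha/2}$ for $r \ge \X$, the passage from the $\underline{L}^2$-oscillation to the $\Hminusul$-norm being \eqref{e.L2toHminus}. The flux term $\tfrac1r\|\a(e+\nabla\phi_e) - \ahom e\|_{\Hminusul(B_r)}$ is handled identically using the multiscale Poincar\'e inequality and the displayed estimate for the flux averages, and the flux corrector $\s_e$ itself is controlled via its equation \eqref{e.flux.se.eq} together with a duality estimate of the type \eqref{e.dualest.forflux}. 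Interpolating between dyadic and triadic scales and renaming $\alpha$ yields \eqref{e.optimalstochasticintegrability}; the localized refinement (minimal scale with $\F(\cu_k)$-dependence and bounded Malliavin derivative) follows from the localized forms of the ingredients (Lemma~\ref{l.localX}, Remark~\ref{r.localX.nonsymm}).

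\textbf{Main obstacle.} The delicate points are the verification in Step~1 that the proof of Theorem~\ref{t.C11.sharp} really uses nothing about the correctors beyond \eqref{e.correbounds.pre}--\eqref{e.correbounds} and the harmonic approximation, so that the finite-volume correctors may be substituted with no other change; and the linear algebra in Step~3 relating $\bigl(e+\nabla\phi_e,\a(e+\nabla\phi_e)\bigr) \in \S(\cu_n)$ and its adjoint counterpart to the coarse-grained matrices, so that \emph{both} the gradient average and the flux average are controlled by $\bfJ(z+\cu_n,\cdot,\cdot)$. It is here that the non-symmetric setting requires the full double-variable machinery of Section~\ref{ss.nonsymm}: the element of $\S(\cu_n)$ coming from $\phi_e$ alone lies on the ``diagonal'' and the first-variation identity degenerates in the corresponding directions, so one genuinely needs the adjoint corrector (equivalently, the splitting \eqref{e.Jsplitting.nonsymm} of $\bfJ$) to recover the full set of averages.
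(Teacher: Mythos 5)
Your Steps~1--2 are sound but largely redundant: the large-scale Lipschitz estimate anchored to the coarse-grained quantities is exactly Theorem~\ref{t.C01}, already proved in the paper under $\CFS(\beta,\Psi)$ with the stated integrability (its minimal scale comes from Proposition~\ref{p.harm.we.have}, which uses only localized/finite-volume correctors, so the circularity you worry about is already absent), and the a priori bound $\|e+\nabla\phi_e\|_{\underline{L}^2(B_r)}\leq C|e|$ for $r\geq\X$ is the paper's~\eqref{e.corr.grad.bound.pre}. The genuine gap is the displayed estimate in your Step~3. The first variation~\eqref{e.firstvar.nonsymm} with $Q=\bfAhom P$, applied to $T=(e+\nabla\phi_e,\,\a(e+\nabla\phi_e))\in\S(z+\cu_n)$, produces on the left side $P\cdot(\bfAhom-\rota)\fint_{z+\cu_n}T$, because $\bfA T=\rota T$ for elements of this special form; and the kernel of $\bfAhom-\rota$ is precisely the diagonal $\{(p,\ahom p):p\in\Rd\}$. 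So what you control is only the commutator $\fint_{z+\cu_n}\bigl(\a(e+\nabla\phi_e)-\ahom(e+\nabla\phi_e)\bigr)$ --- this is~\eqref{e.fluxmaps.nonsymm}, which holds for \emph{every} element of $\A(z+\cu_n)$ --- and nothing about $\fint_{z+\cu_n}\nabla\phi_e$ itself. Bringing in the adjoint corrector only yields the analogous commutator for the adjoint (the relevant kernel there is $\{(p^*,-\ahom^t p^*)\}$), again saying nothing about the gradient averages. Indeed no bound of the claimed form can hold: adding to $\phi_e$ an arbitrary $\a$-harmonic function on $z+\cu_n$ with nonzero mean gradient leaves all the local hypotheses (including $\bfJ(z+\cu_n,\cdot,\cdot)$) unchanged, so a purely local estimate cannot pin down $\fint_{z+\cu_n}\nabla\phi_e$. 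This is why the paper controls mesoscopic averages only for the finite-volume maximizers (Lemma~\ref{l.flatness.v.nonsymm}), where the affine boundary data fixes the cube averages through~\eqref{e.A.formulas.nonsymm}--\eqref{e.Astar.formulas.nonsymm}, and both terms of your display --- the flux average also needs $\fint\nabla\phi_e$, via $\fint\a(e+\nabla\phi_e)-\ahom e=\fint\bigl(\a-\ahom\bigr)(e+\nabla\phi_e)+\ahom\fint\nabla\phi_e$ --- are affected.

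The missing global step is supplied in the paper by a transfer from finite to infinite volume: the differences $v_{m,e}:=\phi_{m+1,e}-\phi_{m,e}$ belong to $\A(\cu_m)$ and are small in $\underline{L}^2(\cu_m)$ by~\eqref{e.FVC} together with~\eqref{e.L2toHminus}; Theorem~\ref{t.C01} then makes $\|\nabla v_{m,e}\|_{\underline{L}^2(B_r)}\leq C(\X 3^{-m})^{\alpha}$ for all $r\geq\X$, the geometric series sums, and $\nabla\phi_{m,e}\to\nabla\phi_e$ with a quantitative rate, which converts~\eqref{e.FVC} into~\eqref{e.optimalstochasticintegrability}. If you want to keep your outline, you must insert this (or an equivalent global comparison of $\phi_e$ with the finite-volume correctors) in place of the Step~3 display. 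Separately, your treatment of the flux corrector is too quick for the ``Moreover'' clause: \eqref{e.dualest.forflux} is a statement about the $3^m\Zd$-periodic finite-volume flux correctors and does not apply to the stationary $\s_e$ solving~\eqref{e.flux.se.eq} on all of $\Rd$; the paper needs a heat-kernel (Duhamel) decomposition --- in fact the longest part of its proof --- to bound $\frac1r\|\s_e-(\s_e)_{B_r}\|_{\underline{L}^2(B_r)}$, which is required because the minimal scale~\eqref{e.XC11} of Theorem~\ref{t.C11.sharp} involves $\nabla\s_e$.
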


The exponent~$\alpha(\beta,d,\lambda,\Lambda)>0$ is equal to~$\nicefrac \theta 2$, where~$\theta$ is the exponent appearing in the statement of Lemma~\ref{l.mathcalE.minscale}. If this exponent~$\theta$ can be improved (as it will be in the next section), then the exponent~$\alpha$ in Theorem~\ref{t.optimalstochasticintegrability} also improves. 

\smallskip

The second improvement to the regularity we make is a higher-order extension of  Theorem~\ref{t.C11.sharp}. We identify a finite-dimensional subspace of ``heterogeneous polynomials" and show that solutions can be approximated by elements of this subspace up to an error consistent with a~$C^{k,1}$ approximation. The proof consists of two parts. The first is a Liouville-type result for heterogeneous solutions. Let us motivate this by considering harmonic functions. It is a classical result that a harmonic function growing polynomially at infinity is necessarily a harmonic polynomial. In particular, the dimension of such harmonic functions is finite and can be explicitly computed. Having this in mind, we define, for every~$k \in \N$, 
\begin{equation}  
\label{e.Ak.def}
\mathcal{A}_k  = \big\{ u \in H^1_{\mathrm{loc}}(\R^d) \, : \, -\nabla \cdot \a \nabla u = 0, \; \lim_{r\to \infty} r^{-k-1} \left\| u \right\|_{\underline{L}^2 \left( B_{r} \right)} = 0  \big\} \,.
\end{equation}
In Theorem~\ref{t.Ck1} below, we show that such a space is isomorphic with the space of~$\ahom$-harmonic polynomials and, in particular, has the same dimension. Having defined~$\mathcal{A}_k$, we have the second analogy to the theory of harmonic functions. It is easy to show that the Taylor series of a harmonic function consists of homogeneous harmonic polynomials, and therefore, the space of harmonic polynomials tracks down the regularity of any harmonic function. It turns out that the same is true for heterogeneous solutions as well, but now the harmonic polynomials are switched to the members of~$\mathcal{A}_k$. The statement is as follows. For every~$k \in \N$ there exists a constant~$C(k,\dataref)<\infty$ such that, for every~$R\geq \X$ and~$u\in H^1(B_R)$ satisfying~$-\nabla \cdot \a\nabla u = 0$ in~$B_R$, we have the existence of~$\psi \in \mathcal{A}_k$ such that, for every~$r \in [\X,R]$, 
\begin{equation*}  
\| u - \psi \|_{\underline{L}^2(B_r)} \leq C \Bigl(\frac rR \Bigr)^{k+1} \| u \|_{\underline{L}^2(B_r)}  \,.
\end{equation*}

The arguments here essentially follow the ones in~\cite[Chapter 3]{AKMBook}. We emphasize that the arguments of the latter are \emph{purely deterministic}; in other words, no mixing assumption is ever used in that chapter. The only stochastic ingredient is the result on the convergence of the subadditive quantities, which we have already generalized to our mixing condition in the previous section.

\subsection{Large-scale regularity with optimal stochastic integrability}
\label{ss.C01}

In this section, we give the proof of Theorem~\ref{t.optimalstochasticintegrability}. At first glance, the statement appears to be an immediate consequence of similar-looking estimates~\eqref{e.FCV.nonsymm.grad} and~\eqref{e.FCV.nonsymm.flux} for the finite-volume correctors, which we recall here: for every~$|e|=1$ and~$m\in\N$, 
\begin{equation}
\label{e.thm61.almost}
3^{-m} \| \phi_{m,e} \|_{\underline{L}^2(\cu_m)} 
+
3^{-m} \| \a \nabla\phi_{m,e} - \ahom e \|_{\Hminusul(\cu_m)} 
\leq
C3^{-m} 
+ 
C\mathcal{E}(m)
\,.
\end{equation}
This estimate, combined with Theorem~\ref{t.subadd.converge.nosymm}, would imply Theorem~\ref{t.optimalstochasticintegrability} if we could replace the~$\phi_{m,e}$ on the left side by the infinite volume-corrector~$\phi_{e}$. 

\smallskip

We will indeed obtain Theorem~\ref{t.optimalstochasticintegrability} from~\eqref{e.thm61.almost}, but the implication is not so immediate, because the domain~$\cu_m$ on the left side of~\eqref{e.thm61.almost} is also changing in the parameter~$m$. What is missing is a way to ``localize'' the estimate by obtaining some spatial uniformity. Precisely, we need to estimate the difference~$\phi_{m,e} - \phi_{m-1,e}$ on a cube~$\cu_n$ for~$n$ possibly much smaller than~$n$, and hope that the error is summable in~$m$, uniformly in~$n$. 

\smallskip

This is a consequence of the more general \emph{large-scale~$C^{0,1}$ estimate}, already proved in a qualitative form in~\eqref{e.C01}. We will first re-prove this result in a quantitative form, using the bound we have for the finite-volume correctors.

\begin{theorem}[{Large-scale~$C^{0,1}$-regularity}]
\label{t.C01}
Assume that~$\P$ is a~$\Zd$--stationary measure on~$(\Omega,\F)$ and satisfies~$\CFS(\beta,\Psi)$. 
There exist constants~$C(\dataref)<\infty$ and~$C'(d,\lambda,\Lambda)$ and a random variable~$\X$ (called the ``minimal scale'') which satisfies
\begin{equation}
\label{e.XLipschitz}
\X^{\frac d2(1-\beta)} 
= 
\O_{\Psi}(C) 
\end{equation}
such that, for every~$R\geq \X$,~$f\in H^{-1}(B_R)$ and weak solution~$u\in H^1(B_{R})$ of
\begin{equation} 
\label{e.wksrt2}
-\nabla \cdot \left( \a\nabla u \right) = f \quad \mbox{in} \ B_{R},
\end{equation}
we have, for every~$r \in [\X, \frac12 R]$, 
\begin{equation}
\label{e.Lipschitz}
\sup_{ t \in \left[r , \frac12 R\right]} \bigl\| \nabla u \bigr\|_{\underline{L}^2(B_{t})}
\leq
\frac{C'}{R} \bigl\| u - ( u )_{B_{R}} \bigr\|_{\underline{L}^2(B_{R})} 
+ C_d \int_{r}^R \| f \|_{\underline{H}^{-1}(B_t)} \, \frac{dt}{t}
\,.
\end{equation}
\end{theorem}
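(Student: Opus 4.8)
\textbf{Proof proposal for Theorem~\ref{t.C01}.}

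The plan is to run the classical Campanato-type iteration down the scales, exactly as sketched in Section~\ref{ss.reg}, but now using the quantitative harmonic approximation with optimal stochastic integrability provided by Corollary~\ref{c.quant.DP.interior} (and its nonsymmetric analogue following from Theorem~\ref{t.quant.DP.nosymm}). First I would fix~$\delta$ and~$\theta$ as small parameters depending only on~$(d,\lambda,\Lambda)$ and take~$\X$ to be the minimal scale produced by Corollary~\ref{c.quant.DP.interior} for this~$\delta$; its stochastic integrability~\eqref{e.XLipschitz} is then inherited directly from~\eqref{e.mmmbound2}. The workhorse is a one-step excess-decay inequality: for~$r\geq\X$ and a solution~$u$ of~\eqref{e.wksrt2} in~$B_{2r}$, I would use Corollary~\ref{c.quant.DP.interior} to find an~$\ahom$-harmonic function~$\bar u$ in~$B_r$ with $\|u-\bar u\|_{\underline L^2(B_r)}\lesssim \delta(\X/r)^\alpha\|u\|_{\underline L^2(B_{2r})} + (\text{forcing term})$, then apply the interior~$C^{1,1}$ estimate for~$\ahom$-harmonic functions to control~$\bar u$ on~$B_{\theta r}$ by an affine function, and combine via the triangle inequality. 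This yields, with~$E(r):=\inf_{\ell\ \text{affine}} r^{-1}\|u-\ell\|_{\underline L^2(B_r)}$, an inequality of the shape
\begin{equation*}
E(\theta r) \leq \tfrac12 E(r) + C\delta(\X/r)^\alpha E(r) + C\|f\|_{\underline H^{-1}(B_{2r})}\,,
\end{equation*}
where the forcing term requires handling~$u^\ep-w^\ep$ type corrections: one writes $u = \psi_{\f} + \tilde u$ with~$\psi_\f$ the finite-volume zero-slope corrector from Theorem~\ref{t.zeroslope} (or simply solves $-\nabla\cdot\a\nabla z = f$ in~$B_{2r}$ and bounds~$\|\nabla z\|_{\underline L^2}$ by~$\|f\|_{\underline H^{-1}(B_{2r})}$ via the energy estimate), leaving~$\tilde u$ with zero right-hand side to which the homogenization step applies.

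Next I would iterate this inequality over the dyadic (or triadic) scales~$r_j = \theta^{-j}\X$ from~$\X$ up to~$\tfrac12 R$. Because~$\sum_j \delta(\X/r_j)^\alpha = \delta\sum_j \theta^{j\alpha}$ converges and can be made~$\leq\tfrac14$ by choosing~$\delta$ small, the perturbative term is summable and does not spoil the geometric decay; the forcing contributes the telescoping sum $\sum_j \|f\|_{\underline H^{-1}(B_{r_{j+1}})} \lesssim \int_r^R \|f\|_{\underline H^{-1}(B_t)}\,\tfrac{dt}{t}$ after comparing the discrete sum to the integral (using monotonicity of~$t\mapsto\|f\|_{\underline H^{-1}(B_t)}$ up to a bounded doubling factor — this is where the logarithmic-looking integral on the right side of~\eqref{e.Lipschitz} naturally appears). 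The outcome is a uniform bound $\sup_{t\in[r,\frac12 R]} E(t) \leq C E(R) + C\int_r^R\|f\|_{\underline H^{-1}(B_t)}\,\tfrac{dt}{t}$, and then~$E(R)\leq R^{-1}\|u-(u)_{B_R}\|_{\underline L^2(B_R)}$ trivially. Finally, to pass from control of the affine-approximation excess~$E(t)$ to control of~$\|\nabla u\|_{\underline L^2(B_t)}$ as in~\eqref{e.Lipschitz}, I would invoke the Caccioppoli inequality together with the triangle inequality: $\|\nabla u\|_{\underline L^2(B_t)} \leq \|\nabla(u-\ell_t)\|_{\underline L^2(B_t)} + |\nabla\ell_t| \lesssim t^{-1}\|u-\ell_t\|_{\underline L^2(B_{2t})} + t^{-1}\|f\|_{\cdots} + |\nabla\ell_t|$, and~$|\nabla\ell_t|$ is itself controlled by summing the increments~$|\nabla\ell_{r_{j+1}} - \nabla\ell_{r_j}|\lesssim E(r_j)$ across scales plus~$|\nabla\ell_R|\lesssim R^{-1}\|u-(u)_{B_R}\|_{\underline L^2(B_R)}$.

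The main obstacle I anticipate is the careful treatment of the forcing term~$f$ throughout the iteration so that the final bound comes out in the clean scale-invariant integral form of~\eqref{e.Lipschitz} rather than, say, with an extra~$|\log|$ factor or the wrong norm. Two subtleties here: one must keep the~$\underline H^{-1}$ norm on the *correct* ball at each scale (the homogenization step on~$B_r$ sees~$f$ only through~$B_{2r}$), and one must ensure that the energy-estimate reduction $u=z+\tilde u$ is done at each scale consistently so that the errors telescope rather than accumulate. This is purely deterministic bookkeeping once Corollary~\ref{c.quant.DP.interior} is in hand — indeed the entire argument beyond the definition of~$\X$ is deterministic, the only probabilistic input being~\eqref{e.mmmbound2} — but getting the constants and the summation ranges to line up to produce exactly~\eqref{e.Lipschitz} is the fiddly part. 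A secondary point worth checking is that the same minimal scale~$\X$ works simultaneously for all~$e\in B_1$ when one wants the companion statement about correctors (Theorem~\ref{t.optimalstochasticintegrability}); this is handled by noting that the right-hand side of~\eqref{e.mmmbound2} is uniform in~$e$ since the coarse-grained quantities~$\mathcal{E}(m)$ do not depend on~$e$.
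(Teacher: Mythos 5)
Your overall route is the paper's route: the stochastic input is exactly Corollary~\ref{c.quant.DP.interior} (the paper packages it, with the forcing term already absorbed by the same energy-estimate reduction you describe, as Proposition~\ref{p.harm.we.have}), the minimal scale and its integrability~\eqref{e.XLipschitz} come from~\eqref{e.mmmbound2} as you say, the forcing is summed into $\int_r^R\|f\|_{\underline{H}^{-1}(B_t)}\,\frac{dt}{t}$ in the same way, and the final passage to $\|\nabla u\|_{\underline{L}^2(B_t)}$ is the Caccioppoli inequality. The probabilistic content of your proposal is therefore complete and correct.

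The gap is in your one-step inequality $E(\theta r)\leq \tfrac12 E(r)+C\delta(\X/r)^\alpha E(r)+C\|f\|_{\underline{H}^{-1}(B_{2r})}$, which is not what harmonic approximation can deliver. The homogenization step applies to $u$ minus a \emph{constant} only: an affine function $\ell_p$ is not an $\a$-solution (subtracting it creates the extra right-hand side $\nabla\cdot(\a p)$, whose $\underline{H}^{-1}(B_r)$ norm is of order $|p|$, not of order $E(r)$). Consequently the approximation error at scale $r$ is proportional to the oscillation $\tfrac1r\|u-(u)_{B_{2r}}\|_{\underline{L}^2(B_{2r})}\simeq E(r)+|\nabla\ell_r|$, and the slope $|\nabla\ell_r|$ does \emph{not} decay along the iteration — it is generically of order $\tfrac1R\|u-(u)_{B_R}\|_{\underline{L}^2(B_R)}$ at every scale. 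So the excess does not satisfy a self-contained geometric-decay inequality, and your summability argument, which works only for the (unavailable) form with $C\delta(\X/r)^\alpha E(r)$, does not close. Repairing this is precisely the content of the paper's Lemma~\ref{l.harm.approx}: one tracks the affine slopes \emph{inside} the iteration, controls the running supremum of the oscillation $\mathsf{M}_r=\sup_{t\in[r,R]}\tfrac1t\|u-(u)_{B_t}\|_{\underline{L}^2(B_t)}$, and reabsorbs the accumulated error $C\mathsf{M}_r\int_r^R\omega\bigl(\tfrac t\X\bigr)\frac{dt}{t}$ using the \emph{smallness} (not mere finiteness) of the Dini integral — this is exactly why $\delta$ must be taken with $\delta/\alpha\leq c(d,\lambda,\Lambda)$, whereas in your version smallness of $\delta$ would not even be needed. (The alternative of using corrected affine functions from $\A_1$ in the excess would avoid this issue but would be circular here, since quantitative sublinearity of the infinite-volume correctors at scale $\X$ is only proved afterwards, in Theorem~\ref{t.optimalstochasticintegrability}, using Theorem~\ref{t.C01}.) Your slope-summation remark at the very end shows you have the right mechanism in hand; it just has to be run inside the iteration, not only in the final conversion step.
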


As explained in Section~\ref{ss.reg}, the estimate~\eqref{e.Lipschitz} is a generalization of the pointwise gradient bound for harmonic functions, which states that a harmonic function~$u$ in~$B_R$ satisfies
\begin{equation*}
\bigl| \nabla u (0) \bigr| 
\leq 
\frac{C}{R} 
\left\| u - ( u )_{B_{R}} \right\|_{\underline{L}^2(B_{R})} 
\,.
\end{equation*}
Let us comment on the second term on the right side of~\eqref{e.Lipschitz}. 
Recall that a pointwise gradient estimate for a solution of the Poisson equation~$-\Delta u = f$ is valid for~$f\in L^p$ with~$p > d$, and can be written as
\begin{equation}
\label{e.naive.Poisson}
\bigl| \nabla u (0) \bigr| 
\leq 
\frac{C}{R} 
\left\| u - ( u )_{B_{R}} \right\|_{\underline{L}^2(B_{R})} 
+
C R \| f \|_{\underline{L}^p(B_R)} \,,
\end{equation}
with the prefactor constant~$C$ in the second term on the right also depending on~$p$. That the exponent~$d$ should be the critical one for a pointwise gradient estimate is immediate from the fact that the Sobolev embedding~$W^{2,p} \hookrightarrow
W^{1,\infty}$ is valid if and only if~$p>d$. We next observe, using the scaling properties of our volume-normalized norms~\eqref{e.Sobolev.scaling} and H\"older's inequality, that for any~$p\in (d,\infty]$, 
\begin{equation*}
\| f \|_{\underline{H}^{-1}(B_t)}
\leq
t \| f \|_{\underline{L}^{2}(B_t)}
\leq 
t \Bigl( \frac{R}{t} \Bigr)^{\!\nicefrac dp}
\| f \|_{\underline{L}^{p}(B_R)}
\end{equation*}
and therefore
\begin{equation*}
\int_{r}^R \| f \|_{\underline{H}^{-1}(B_t)} \, \frac{dt}{t}
\leq
\| f \|_{\underline{L}^{p}(B_R)} 
\int_{0}^R  \Bigl( \frac{R}{t} \Bigr)^{\!\nicefrac dp} \, dt
=
R \| f \|_{\underline{L}^{p}(B_R)} 
\int_{0}^1 t^{-\nicefrac dp} \, dt
\,.
\end{equation*}
The last integral on the right side is of course finite if and only if~$p>d$, in which case it is equal to~$(1-\nicefrac dp)^{-1}$. Therefore we see that the second term on the right side of~\eqref{e.Lipschitz} can be replaced, in desired, by~$C(1-\nicefrac dp)^{-1} R \| f \|_{\underline{L}^{p}(B_R)}$ in the case~$f\in L^p(B_R)$ for~$p>d$. The form~\eqref{e.Lipschitz} is of course more general, but its scaling is consistent with that of~\eqref{e.naive.Poisson}.

\smallskip

The proof of Theorem~\ref{t.C01} is accomplished by quantitative harmonic approximation following the method introduced in~\cite{AS}. The latter is a quantitative version of an idea introduced earlier by Avellaneda and Lin~\cite{AL1}, who proved a version of Theorem~\ref{t.C01} for periodic coefficient fields by a compactness argument.

\smallskip

The main idea is roughly analogous to the classical proof of the Schauder estimates: on a sequence of balls with geometrically decreasing radii, one replaces the solution~$u$ by the solution~$\overline{u}$ of the homogenized equation with the same boundary values. One then applies the better regularity satisfied by the latter and then passes this back to the original function by the triangle inequality. An error is made on every scale, namely the size~$\left\| u - \overline{u} \right\|$ of the difference between~$u$ and~$\overline{u}$, but one hopes that this is summable across the scales. For the Schauder estimates, one uses the regularity of the coefficients to see that the approximation becomes better on small scales. Here, we use \emph{quantitative homogenization}, in the form of Proposition~\ref{p.DP}, to see that the approximation becomes better on large scales.\footnote{This can be formalized in different ways, but the underlying idea is always the same. For instance, in~\cite{GNO2}, quantitative homogenization is obtained not by control over subadditive quantities but by sublinear estimates on the first-order correctors. These methods are very closely related. Indeed, the optimizing functions for the subadditive quantities are finite-volume versions of the first-order correctors, and we proved Proposition~\ref{p.DP} precisely by using the control over subadditive quantities to gain estimates on their sublinear growth. 
}
We formalize the main idea in the following statement, which is a variant of the one in~\cite[Lemma 5.1]{AS} (see~\cite[Lemma 3.4]{AKMBook} for a nicer presentation).

\begin{lemma}
\label{l.harm.approx}
Fix~$X,R >0$ such that~$R \geq 2X$. Let a constant coefficient field~$\bhom$ satisfy assumptions in~\eqref{e.ue}. There exists constants~$C(d,\lambda,\Lambda)<\infty$ and~$c(d,\lambda,\Lambda) \in (0,1]$ such that if~$\omega,\gamma:[1,\infty) \to (0,\infty)$ are two functions such that 
\begin{equation} 
\label{e.Dini}
\int_{X}^R \omega\Bigl( \frac{r}X \Bigr) \, \frac{dr}{r} \leq c
\quad \mbox{and} \quad \int_{X}^R  \gamma(r)  \, \frac{dr}{r} < \infty,
\end{equation}
and there is a function~$u\in L^2(B_{R})$ having the property that, for every~$r\in \left[ X, \frac12R \right]$, there exists a weak solution~$w_r \in H^1(B_{r})$ of 
\begin{equation*} \label{}
-\nabla \cdot \left( \bhom \nabla w_r \right) = 0 \quad \mbox{in} \ B_{r},
\end{equation*}
such that 
\begin{equation} 
\label{e.wharmapproxit}
\left\| u - w_r \right\|_{\underline{L}^2(B_{r/2})} \leq    
%K \left(\log r\right)^{-A} 
\omega\Bigl( \frac{r}X \Bigr)
\left\| u - \left( u \right)_{B_{r}} \right\|_{\underline{L}^2(B_{r})}  + r \gamma(r) 
\end{equation}
then~$u$ satisfies, for every~$r \geq X$, the estimate
\begin{equation}
\label{e.elipsc}
\sup_{t \in \left[r, R\right]} \,
\frac1{t} \left\| u - ( u )_{B_{t}} \right\|_{\underline{L}^2(B_{t})}
\leq
\frac{C}{R} \left\| u - ( u )_{B_{R}} \right\|_{\underline{L}^2(B_{R})} 
+ C \int_{r}^R \gamma(t) \, \frac{dt}{t}
.
\end{equation}
\end{lemma}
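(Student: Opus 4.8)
The plan is to run a Campanato-type iteration down the geometric scales, using the interior regularity of $\bhom$-harmonic functions on each scale to exploit the approximation hypothesis~\eqref{e.wharmapproxit}. Introduce the scale-normalized affine excess
\begin{equation*}
E(r) := \inf_{\xi \in \Rd, \, c \in \R} \frac1r \left\| u - \ell_\xi - c \right\|_{\underline{L}^2(B_r)} \,,
\end{equation*}
and for each $r \in [X,\tfrac12 R]$ fix a near-minimizer $\ell_{\xi_r} + c_r$, so that $\tfrac1r \| u - \ell_{\xi_r} - c_r \|_{\underline{L}^2(B_r)} \le 2E(r)$. Fix a small $\theta \in (0,\tfrac14]$ depending only on $(d,\lambda,\Lambda)$, to be chosen. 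First I would prove the one-step estimates: for every $r \in [X,\tfrac12 R]$,
\begin{equation*}
E(\theta r) \leq \Bigl( \tfrac14 + C\theta^{-1-\frac d2} \omega\bigl( \tfrac rX \bigr) \Bigr) E(r) + C\theta^{-1-\frac d2} \gamma(r)
\qquad \mbox{and} \qquad
\left| \xi_{\theta r} - \xi_r \right| \leq C\theta^{-1-\frac d2} E(r) \,.
\end{equation*}
To get the first, let $w := w_r$ be as in the hypothesis and let $P_w(x) := w(0) + \nabla w(0)\cdot x$ be its first-order Taylor polynomial at the origin. Since $\bhom$-harmonic functions become harmonic after a linear change of variables determined by the symmetric part of $\bhom$, the interior $C^{1,1}$ estimate gives $\| w - P_w \|_{\underline{L}^2(B_{\theta r})} \le C\theta^2 \| w - (w)_{B_{r/2}} \|_{\underline{L}^2(B_{r/2})}$, while by the Poincar\'e inequality $\| w - (w)_{B_{r/2}} \|_{\underline{L}^2(B_{r/2})} \le CrE(r) + 2\|u - w\|_{\underline{L}^2(B_{r/2})}$. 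Combining these with the volume-change bound $\|u-w\|_{\underline{L}^2(B_{\theta r})} \le (2\theta)^{-d/2}\|u-w\|_{\underline{L}^2(B_{r/2})}$ and~\eqref{e.wharmapproxit}, using $P_w$ plus a constant as a competitor for $E(\theta r)$ and choosing $\theta$ so that $C\theta^2 \le \tfrac14$, yields the recursion; the slope inequality follows the same way by comparing $\ell_{\xi_{\theta r}}$ to $P_w$ on $B_{\theta r}$ and using that the $L^2(B_\rho)$ norm of a centered affine function controls $\rho$ times its gradient.

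Next I would iterate. Set $r_k := \theta^k \rho$ for a fixed $\rho \in [\tfrac14 R, \tfrac12 R]$ and $k = 0,1,\dots,K$, where $K$ is the largest index with $r_K \ge X$, and write $a_k := E(r_k)$, $b_k := C\theta^{-1-d/2}\omega(r_k/X)$, $c_k := C\theta^{-1-d/2}\gamma(r_k)$, so $a_{k+1} \le (\tfrac14 + b_k)a_k + c_k$. The first condition in~\eqref{e.Dini} gives $\sum_k \omega(r_k/X) \le C_\theta \int_X^R \omega(r/X)\,\tfrac{dr}{r} \le C_\theta c$, so after $\theta$ is fixed we may choose the absolute constant $c$ in~\eqref{e.Dini} small enough that $\sum_k b_k \le \tfrac14$; similarly $\sum_k c_k \le C_\theta \int_X^R \gamma(t)\,\tfrac{dt}{t} =: \Gamma$. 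A discrete Gr\"onwall argument then gives $\sup_k a_k \le C(a_0 + \Gamma)$, and summing the recursion and absorbing gives the stronger statement $\sum_k a_k \le C(a_0 + \Gamma)$. Telescoping the slope inequality yields $|\xi_{r_k}| \le |\xi_{r_0}| + C\theta^{-1-d/2}\sum_{j<k} a_j \le |\xi_{r_0}| + C(a_0+\Gamma)$ for all $k$. Since $a_0 = E(r_0) \le \tfrac{C}{R}\|u - (u)_{B_R}\|_{\underline{L}^2(B_R)}$, $|\xi_{r_0}| \le \tfrac{C}{R}\|u-(u)_{B_R}\|_{\underline{L}^2(B_R)}$, and $\Gamma \le C\int_r^R \gamma(t)\tfrac{dt}{t}$, we conclude $\sup_k \bigl( a_k + |\xi_{r_k}| \bigr) \le \tfrac{C}{R}\|u-(u)_{B_R}\|_{\underline{L}^2(B_R)} + C\int_r^R \gamma(t)\tfrac{dt}{t}$.

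Finally I would remove the restriction to geometric scales and convert $E$ back to the quantity in~\eqref{e.elipsc}. For $t \in [X,\tfrac12 R]$, pick $k$ with $r_{k+1} \le t \le r_k$; a volume change gives $E(t) \le \theta^{-1-d/2}a_k$ and $|\xi_t - \xi_{r_k}| \le C\theta^{-1-d/2}a_k$, so $E(t) + |\xi_t|$ obeys the bound of the previous paragraph. Then, using the near-minimizer $\ell_{\xi_t} + c_t$,
\begin{equation*}
\tfrac1t \left\| u - (u)_{B_t} \right\|_{\underline{L}^2(B_t)}
\le \tfrac1t \left\| u - \ell_{\xi_t} - c_t \right\|_{\underline{L}^2(B_t)} + \tfrac1t \left\| \ell_{\xi_t} - (\ell_{\xi_t})_{B_t} \right\|_{\underline{L}^2(B_t)}
\le 2 E(t) + C|\xi_t| \,,
\end{equation*}
which is at most $\tfrac{C}{R}\|u-(u)_{B_R}\|_{\underline{L}^2(B_R)} + C\int_r^R\gamma(t)\tfrac{dt}{t}$; for $t \in [\tfrac12 R, R]$ the same bound is immediate from a volume change. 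Taking the supremum over $t \in [r,R]$ gives~\eqref{e.elipsc}. The main obstacle is the bookkeeping in the iteration: not the one-step estimate, which is routine, but arranging that the ``bad'' scales (where $\omega(r/X)$ is not pointwise small) contribute in total only a fixed small amount via the integral smallness in~\eqref{e.Dini}, and simultaneously controlling the \emph{total variation} of the slopes $\xi_{r_k}$ so that the final Lipschitz bound does not pick up a factor growing with the number of scales. This is what forces one to establish summability of the excess, $\sum_k E(r_k) < \infty$, rather than mere boundedness, which in turn relies on the geometric contraction factor $\tfrac14$ dominating the bad-scale contribution once $c$ has been chosen small.
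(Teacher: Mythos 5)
There is a genuine gap, and it sits exactly at the point you identify as the crux. The hypothesis~\eqref{e.wharmapproxit} controls $\| u - w_r \|_{\underline{L}^2(B_{r/2})}$ by $\omega(r/X)$ times the oscillation of $u$ about its \emph{mean}, $\| u - (u)_{B_r}\|_{\underline{L}^2(B_r)}$, which is of order $r\bigl(E(r) + |\xi_r|\bigr)$, not $rE(r)$. Consequently your one-step recursion
$E(\theta r) \leq \bigl(\tfrac14 + C\theta^{-1-d/2}\omega(r/X)\bigr)E(r) + C\theta^{-1-d/2}\gamma(r)$
does not follow from the hypothesis; the $\omega$-term it produces is $C\theta^{-1-d/2}\omega(r/X)\bigl(E(r)+|\xi_r|\bigr)$, and the slope cannot be dropped. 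Indeed the displayed implication is false: take $u=\ell_\xi + \delta g$ with $|\xi|$ enormous, $\delta$ tiny and $g$ a fixed function with no excess decay between scales $r$ and $\theta r$; then $w_r=\ell_\xi$ verifies~\eqref{e.wharmapproxit} (its right side is dominated by $\omega\, r|\xi|$), while $E(\theta r)\approx \tfrac{1}{\theta}E(r)$, violating your recursion. The same confusion of ``deviation from the mean'' with ``deviation from the best affine'' appears in your intermediate claim $\| w - (w)_{B_{r/2}}\|_{\underline{L}^2(B_{r/2})} \leq CrE(r) + 2\| u - w\|_{\underline{L}^2(B_{r/2})}$, which fails already for $u=w=\ell_\xi$. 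Since the discrete Gr\"onwall bound, the summability $\sum_k E(r_k)<\infty$, and the telescoped slope bound all rest on the pure recursion in $a_k$, the argument as written does not close.

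The repair is exactly the bookkeeping you flagged as the main obstacle, but it must be built into the recursion rather than added afterwards: one has to run a \emph{coupled} iteration for the excess and the slopes (e.g.\ bound $\sup_k\bigl(a_k+|\xi_{r_k}|\bigr)$ and $\sum_k a_k$ simultaneously, absorbing the terms $\omega(r_k/X)\bigl(a_k+|\xi_{r_k}|\bigr)$ using the smallness of $\sum_k \omega(r_k/X)$ from~\eqref{e.Dini}), or, as the paper does, work directly with the quantity $\mathsf{M}_r := \sup_{t\in[r,R]}\tfrac1t\| u-(u)_{B_t}\|_{\underline{L}^2(B_t)}$, which already contains the slope information: there one compares $u$ to the affine Taylor polynomials $\ell_{\sigma r}$ of $w_r$, integrates the resulting one-step inequality over the scales, controls $\sup_t|\nabla \ell_t|$ by the same integral, and ends with $\mathsf{M}_r \leq \tfrac{C}{R}\| u-(u)_{B_R}\|_{\underline{L}^2(B_R)} + C\,\mathsf{M}_r\!\int_r^R \omega(t/X)\tfrac{dt}{t} + C\!\int_r^R\gamma(t)\tfrac{dt}{t}$, which is reabsorbed using~\eqref{e.Dini}. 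Your scheme can be made to work along these lines, but only after the slope is reinstated in the $\omega$-error of the one-step estimate and the absorption is done for the coupled quantity.
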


The main difference between the lemma stated above and~\cite[Lemma 5.1]{AS} is that the latter was stated assuming~$\omega(r)= Cr^{-\alpha}$, while here we formalize it with a more general Dini-type modulus~$\omega(\frac \cdot X)$, measured in units of~$X$, to be compatible with sub-algebraic rates of homogenization. Indeed,
the only property of the algebraic modulus~$\omega(r)= Cr^{-\alpha}$ that is used in the proof of~\cite[Lemma 5.1]{AS} is that it satisfies~\eqref{e.Dini}, as already observed in~\cite[Section 3]{ASh} and subsequently in~\cite{FO}. Nevertheless, we will give complete proof here for the reader's convenience, although we postpone it to the end of this section. We show first how to obtain Theorem~\ref{t.C01} from the lemma. 

\smallskip

If we admit Lemma~\ref{l.harm.approx}, the main step remaining in the proof of Theorem~\ref{t.C01} is to check the hypotheses of the former. This is accomplished by combining Proposition~\ref{p.DP} with Lemma~\ref{l.localX} and Remark~\ref{r.localX.nonsymm}, and writing the resulting statement in a slightly different form. The statement is \emph{localized} in the sense that we restrict ourselves to the ball~$B_{3dt}$ and obtain a sensitivity estimate for the minimal scale as in~\eqref{e.XSI.again}. 

\begin{proposition}[Harmonic approximation]
\label{p.harm.we.have}
Assume that~$\P$ is a~$\Zd$--stationary measure on~$(\Omega,\F)$ and satisfies~$\CFS(\beta,\Psi)$. Let~$t \in [1,\infty]$ and~$\delta,\theta \in (0,\frac12)$. Then there exist constants~$C(\delta,\theta,\dataref)<\infty$,~$\alpha(\beta,d,\lambda,\Lambda) \in (0,1]$ and~$\F(B_{3dt})$-measurable random variable 
$\X_t:\Omega \to [1,Ct]$ satisfying
\begin{equation}
\label{e.XSI.again}
\X_t^{\frac d2(1-\beta)} 
= 
\O_{\Psi}(C) 
\qand
\bigl| \partial_{\a(B_{3dt})} \X_t \bigr| \leq C(1 + \X_t) 
\end{equation}
such that if~$\X_t \leq t$, then, for every~$R \in [\X_t,t]$,~$x\in \cu_1$,~$g \in H^{-1}(B_R(x))$ and~$u \in H^1(B_R(x))$ solving~$-\nabla \cdot \a \nabla u = g$ in~$B_R(x)$, there exists~$\bar u  \in \Ahom(B_{(1-\theta)R}(x))$  
satisfying the estimate
\begin{multline}
\label{e.harm.we.have1}
\left\| u - \overline{u}  \right\|_{\underline{L}^2(B_{(1-\theta)R}(x))} 
+
\left\| \nabla u - \nabla \overline{u}  \right\|_{\underline{H}^{-1}(B_{(1-\theta)R}(x))} 
+
\left\| \a \nabla u - \ahom \nabla \overline{u}  \right\|_{\underline{H}^{-1}(B_{(1-\theta)R}(x))} 
\\
\leq 
\delta\Bigl( \frac{R}{\X_t} \Bigr)^{-\alpha} 
\| u - (u)_{B_R(x)}\|_{\underline{L}^{2} \left( B_{R}(x) \right)}
+
CR \left\| g \right\|_{\underline{H}^{-1}(B_R(x))}
\,.
\end{multline}
Moreover,~$t \mapsto \X_t$ is increasing and~$\X := \limsup_{t \to \infty} \X_t = \O_{\Psi}(C)$. 
\end{proposition}

In the above statement, we allow~$x \in \cu_0$ using a minimal scale anchored to the origin. 

\begin{proof}
Fix~$t \in [1,\infty]$ and~$\delta,\theta \in (0,\frac12)$.
Without loss of generality, we may take~$x=0$. 
First, we let~$v \in u + H_0^1(B_R)$ be the solution of~$-\nabla \cdot \left( \a\nabla v \right) = 0$ with the Dirichlet boundary data~$u$. Then, by the Poincar\'e inequality and the basic energy estimate, we get that
\begin{align}  \label{e.v.basicenergyest}
R^{-1} \left\| u - v \right\|_{\underline{L}^{2} \left( B_{R} \right)} 
+
  \left\| \nabla u - \nabla v \right\|_{\underline{L}^{2} \left( B_{R} \right)}
\leq 
C  \left\| g \right\|_{\underline{H}^{-1}(B_R)}
.
\end{align}
We find~$\rho \in ((1-\frac12\theta)R,(1-\frac13\theta)R)$  such that~$\| \nabla v \|_{\underline{L}^2(\partial B_{\rho})} \leq C \theta^{-1} \| \nabla v \|_{\underline{L}^2(B_{(1-\theta)R})}$. Let~$\bar u \in v + H_0^1(B_\rho)$ solve~$\nabla \cdot \ahom \nabla \bar u = 0$ in~$B_\rho$. We can repeat the computation in the proof of Proposition~\ref{p.DP}, using now the localized correctors instead of the global correctors. Letting~$m \in \N$ be such that~$3^{m-1} \leq R < 3^m \leq 3t$, the rescaled version of~\eqref{e.DP} with finite-volume correctors then reads as 
\begin{multline*} 
%\label{e.}
\frac1\rho\left\| \nabla v - \nabla \overline{u}  \right\|_{\underline{H}^{-1}(B_{\rho})} 
+
\frac1\rho \left\| \a \nabla v - \ahom \nabla \overline{u}  \right\|_{\underline{H}^{-1}(B_{\rho})} 
\\ 
\leq 
C \| \nabla \overline{u} \|_{\underline{L}^2(B_{\rho} \setminus B_{(1-\eta)\rho} ) } 
+ C \eta^{-2} \sup_{|e| \leq 1}\bigl( \| \phi_{m,e} \|_{\underline{L}^2(\cu_m)} 
+
\| \bfs_{m,e} \|_{\underline{L}^2(\cu_m)}  \bigr) 
 \| \nabla \overline{u} \|_{W^{1,\infty}(B_{(1-\eta)\rho} ) }
\,.
\end{multline*}
By Lemma~\ref{l.localX} and Remark~\ref{r.localX.nonsymm}, we find~$\F(B_{3d t})$-measurable random variable~$\X_t$ satisfying~\eqref{e.XSI.again} so that~\eqref{e.FV.sublinearity.nosymm} is valid for~$m\in \N$ with~$\frac13 \X_t \leq 3^m \leq 3 t$ with~$\X_t$ instead of~$\X$. The construction in Lemma~\ref{l.localX} and Remark~\ref{r.localX.nonsymm} also guarantees that the last line of the statement is valid. By the deterministic Lemma~\ref{l.bndrlayer} on the boundary layers and by the Caccioppoli inequality, we obtain, for every~$\eta \in (0,\frac \theta{10})$, 
\begin{equation*}
\| \nabla \bar u \|_{L^2(B_{\rho} \setminus B_{(1-\eta) \rho})} 
\leq 
C \eta | \log \eta| \| \nabla v \|_{\underline{L}^2(\partial B_{\rho})} 
\leq
C \theta^{-\frac d2-2} \eta | \log \eta| R^{-1} \| v - (v)_{B_R} \|_{L^2(B_{R})}\,.
\end{equation*}
On the other hand, the Caccioppoli estimate for~$\overline{u}$ yields
\begin{equation*} 
%\label{e.}
 \| \nabla \overline{u} \|_{W^{1,\infty}(B_{(1-\eta)\rho} ) }
 \leq 
C  \eta^{- \frac{d}{2} - 2}  \rho^{-2}
 \|  \overline{u} - (\overline{u})_{B_{\rho}}\|_{\underline{L}^{2}(B_{\rho} ) }  
 \,.
\end{equation*}
Combining the above three displays with~\eqref{e.FV.sublinearity.nosymm} and optimizing in~$\eta$ gives us
\begin{align*}
\lefteqn{  
\left\| \nabla v - \nabla \overline{u}  \right\|_{\underline{H}^{-1}(B_{\rho})} 
+
\left\| \a \nabla v - \ahom \nabla \overline{u}  \right\|_{\underline{H}^{-1}(B_{\rho})}} \qquad  
\notag \\ & 
\leq C \theta^{-d^2-16}  \delta^{\frac{1}{d+8}} \Bigl( \frac{R}{\X_t} \Bigr)^{-\frac{\alpha}{d+8}}   \Bigl( \| v - (v)_{B_R} \|_{L^2(B_{R})} + \| v - \overline{u}\|_{\underline{L}^{2}(B_{\rho} ) } \Bigr) \,.
\end{align*}
Shrinking~$\delta$ (which then increases the constant~$C$ in ) and relabeling~$\alpha$, we can reabsorb the last term by~$\| f \|_{\underline{L}^{2}(B_{\rho} ) } \leq C\left\| \nabla f  \right\|_{\underline{H}^{-1}(B_{\rho})}$,  provided that~$R \geq \X_t$. Thus, we obtain~\eqref{e.harm.we.have1} by the above display,~\eqref{e.v.basicenergyest} and the triangle inequality. 
\end{proof}

\begin{proof}[Proof of Theorem~\ref{t.C01}]
The statement follows immediately from Lemma~\ref{l.harm.approx} and Proposition~\ref{p.harm.we.have}. Define
\begin{align*}  
\omega(r) := \delta r ^{\alpha}  
\quad 
\mbox{and}
\quad
\gamma(r) := \left\|  f  \right\|_{\underline{H}^{-1}(B_r)},
\quad r\in (0,\infty),
\end{align*}
where the parameter~$\delta \leq \alpha c$ in Lemma~\ref{p.harm.we.have} is chosen so small that 
\begin{align*}  
\int_{1}^\infty \omega (r) \, \frac{dr}{r}  = \delta \int_{1}^\infty r^{-\alpha} \, \frac{dr}{r} = \frac{\delta}{\alpha} \leq c \,.
\end{align*}
Therefore, with~$\delta\in (0,1)$ fixed, we find~$C<\infty$ and a random variable~$\X$ satisfying~\eqref{e.XLipschitz}, such that  Lemma~\ref{l.harm.approx} is applicable with~$X = \X$ by Proposition~\ref{p.harm.we.have}.  The final ingredient in the proof is the Caccioppoli inequality giving, for every~$r \in (0,R]$, 
\begin{equation*} \label{}
\left\| \nabla u \right\|_{\underline{L}^2(B_{r/2})}
\leq 
\frac C{r} \left\| u - \left( u \right)_{B_{r}} \right\|_{\underline{L}^2(B_{r})} + C  \left\| f \right\|_{\underline{H}^{-1}(B_{r})},
\end{equation*}
which, when combined with~\eqref{e.elipsc}, completes the proof.
\end{proof}

We next present the proof of Lemma~\ref{l.harm.approx}.

\begin{proof}[{Proof of Lemma~\ref{l.harm.approx}}]
We repeat the proof of~\cite[Lemma 3.4]{AKMBook} nearly verbatim. We show that there exists~$C(\omega,d,\lambda,\Lambda)<\infty$ such that
\begin{equation} \label{e.harm.approx.goal}
\mathsf{M}_r 
:= 
\sup_{t \in [r,R]}  \frac 1t \left\| u- (u)_{B_t} \right\|_{\underline{L}^2 \left( B_{t} \right)} 
\leq 
\frac CR \left\| u- (u)_{B_R} \right\|_{\underline{L}^2 \left( B_{R} \right)}  
+
C \int_{ r}^{R}  \gamma(t) \, \frac{dt}{t} 
\,.
\end{equation}
To begin the proof, we let~$\sigma \in (0,\nicefrac12]$ be a constant to be determined below. For every~$r \in (0,\sigma R]$, denote by~$\ell_{r}$ the best affine approximation of~$u$, that is, 
\begin{equation*} % \label{e.}
\inf_{\ell  \; \mathrm{affine}} \left\| u - \ell \right\|_{\underline{L}^2(B_{r})}  = \left\| u - \ell_r \right\|_{\underline{L}^2(B_{r})}
\end{equation*}
and, for~$r \in [\sigma R,R]$, choose it to be the constant function~$\ell_r = (u)_{B_R}$. By the assumption~\eqref{e.wharmapproxit}, we find, for every~$r  \in [X, \tfrac12 R]$, a function~$w_r \in H^1(B_{r})$ solving~$-\nabla \cdot \left( \bhom \nabla w_r \right) = 0$ in~$B_{r/2}$ such that 
\begin{equation} 
\label{e.wharmapproxit.applied}
\frac1r \left\| u - w_r \right\|_{\underline{L}^2(B_{r/2})} \leq    
%K \left(\log r\right)^{-A} 
\omega\Bigl( \frac{r}X \Bigr) \mathsf{M}_r  + \gamma(r)
\,. 
\end{equation}
Since~$w_r$ satisfies the equation with constant coefficients, we obtain, by differentiating the equation~$d$ times and then using the Caccioppoli inequality and Morrey's inequality, that
\begin{equation*}  
\inf_{\ell  \; \mathrm{affine}}  \left\| w_r - \ell \right\|_{\underline{L}^2 \left( B_{ \sigma r} \right)} 
\leq 
C(\sigma r)^2 \left\| \nabla^2 w_r \right\|_{L^{\infty}(B_{r/4})}
\leq
C\sigma^2 \left\| w_r - \ell_{r} \right\|_{\underline{L}^2 \left( B_{r/2} \right)} 
\,.
\end{equation*}
Thus, the triangle inequality and the previous two displays yield 
\begin{equation}  
\label{e.readyforiterz}
\frac1{\sigma r}\left\| u - \ell_{\sigma r} \right\|_{\underline{L}^2 \left( B_{ \sigma r} \right)} 
\leq
C\sigma \frac 1r \left\| u- \ell_r \right\|_{\underline{L}^2 \left( B_{r} \right)} 
+
C\sigma^{-\frac d2 - 1} \left( \omega\Bigl( \frac{r}X \Bigr)  \mathsf{M}_r  + \gamma(r)\right)
\,.
\end{equation}
Taking~$\sigma(d,\lambda,\Lambda): = (2C_{\eqref{e.readyforiterz}})^{-1}$, we obtain by integration, changing the variables and applying the previous display that
\begin{align*} % \label{e.}
\int_{\sigma r}^{\sigma R} 
\frac1{t} \left\| u - \ell_{t} \right\|_{\underline{L}^2 \left( B_{ t} \right)} \, \frac{dt}{t}
& 
=
\int_{r}^{R} 
\frac1{\sigma t} \left\| u - \ell_{\sigma t} \right\|_{\underline{L}^2 \left( B_{\sigma t} \right)} \, \frac{dt}{t}
\notag \\ &
\leq 
\frac12
\int_{r}^{R} 
\frac1{t} \left\| u - \ell_{t} \right\|_{\underline{L}^2 \left( B_{ t} \right)} \, \frac{dt}{t}
+
C
\int_{r}^{R} 
\left(  \omega \bigl( \tfrac{t}X \bigr)  \mathsf{M}_t + \gamma(t)\right) \, \frac{dt}{t}\,.
\end{align*}
Therefore, after reabsorption and recalling that~$\ell_r = (u)_{B_R}$ for~$r \in [\sigma R,R]$, we get 
\begin{equation*}  
\int_{r}^{R} 
\frac1{t} \left\| u - \ell_{t} \right\|_{\underline{L}^2 \left( B_{ t} \right)} \, \frac{dt}{t}
\leq 
\frac CR \left\| u- (u)_{B_R} \right\|_{\underline{L}^2 \left( B_{R} \right)}   
+ C \int_{ r}^{R}  \Bigl( \omega\bigl( \tfrac{t}X \bigr)  \mathsf{M}_t  + \gamma(t) \Bigr) \, \frac{dt}{t} 
\,.
\end{equation*}
Furthermore, by the triangle inequality and the minimality of~$\ell_t$'s,  we deduce that 
\begin{equation*} % \label{e.}
\sup_{s,s' \in (t/2,t)} 
| \nabla \ell_s -  \nabla \ell_{s'}| 
\leq 
\frac{C}{t} \sup_{s,s' \in (t/2,t)} 
 \left\| \ell_s - \ell_{s'} \right\|_{\underline{L}^2 \left( B_{t/2} \right)}
 \leq 
 \frac{C}{t}  \left\| u - \ell_{t} \right\|_{\underline{L}^2 \left( B_{t} \right)}
 \,.
\end{equation*}
Therefore,
\begin{equation*} % \label{e.}
\int_{r}^{R} 
\sup_{s,s' \in (t/2,t)} 
| \nabla \ell_s -  \nabla \ell_{s'}| \, \frac{dt}{t}  
\leq \frac CR \left\| u- (u)_{B_R} \right\|_{\underline{L}^2 \left( B_{R} \right)}   
+ C \int_{ r}^{R}  \Bigl( \omega\bigl( \tfrac{t}X \bigr)  \mathsf{M}_t  + \gamma(t) \Bigr) \, \frac{dt}{t} 
\,.
\end{equation*}
This implies, by telescoping summation and the fact that~$\nabla \ell_r = 0$ for~$r \in [\sigma R,R]$, that
\begin{equation*}  
\sup_{t \in [r,R]} |\nabla \ell_{t}| 
\leq 
\frac CR \left\| u- (u)_{B_R} \right\|_{\underline{L}^2 \left( B_{R} \right)}   
+ C \int_{ r}^{R}  \Bigl(  \omega\bigl( \tfrac{t}X \bigr) \mathsf{M}_t  + \gamma(t) \Bigr)  \, \frac{dt}{t}
\,.
\end{equation*}
Using the elementary inequality
\begin{equation*}  
\sup_{t \in [r/2,r] } \frac1{t} \left\| u - (u)_{B_t} \right\|_{\underline{L}^2 \left( B_{ t} \right)} 
\leq C \int_{r}^{2r} \frac1{t} \left\| u - \ell_{t} \right\|_{\underline{L}^2 \left( B_{ t} \right)} \, \frac{dt}{t} 
+ C \sup_{t \in [r,2r]} |\nabla \ell_{t}|  
\,,
\end{equation*}
we then obtain, for every~$r \in [X,\tfrac12 R]$, that 
\begin{equation}  
\label{e.mathsfM.estimate}
\mathsf{M}_r  \leq \frac CR \left\| u- (u)_{B_R} \right\|_{\underline{L}^2 \left( B_{R} \right)}   
+ C \mathsf{M}_r  \int_{ r}^{R}  \omega\bigl( \tfrac{t}X \bigr)  \, \frac{dt}{t}  + C \int_{ r}^{R}  \gamma(t)  \, \frac{dt}{t} 
\,.
\end{equation}
Finally, by~\eqref{e.Dini}, we take~$c(d,\lambda,\Lambda) := (2C_{\eqref{e.mathsfM.estimate}})$ so that
\begin{equation*}  
\int_{X}^{R}  \omega\bigl( \tfrac{t}X \bigr)  \, \frac{dt}{t} \leq \frac1{2C_{\eqref{e.mathsfM.estimate}}},
\end{equation*}
and hence~\eqref{e.harm.approx.goal} follows after reabsorption. The proof is complete. 
\end{proof}

We are now ready to prove Theorem~\ref{t.optimalstochasticintegrability}.

\begin{proof}[Proof of Theorem~\ref{t.optimalstochasticintegrability}]
Let~$\delta>0$ and~$\X$ be the maximum of the random variables~$\X$ appearing in Theorem~\ref{t.C01} and the inequality~\eqref{e.FVC} (we can use~\eqref{e.FV.sublinearity} instead in the symmetric case). 
Recall that the latter says that, for every~$m\in\N$ with~$3^m\geq \X$ and~$|e|=1$, we have  
\begin{equation}
\label{e.FV.sublinearity.again}
3^{-m} \| \nabla \phi_{m,e} \|_{\Hminusul(\cu_m)} 
+
3^{-m} \| \a (e+ \nabla \phi_{m,e}) -\ahom e  \|_{\Hminusul(\cu_m)} 
\leq 
\delta \bigl( \X 3^{-m} \bigr)^{\frac\theta2}
\,.
\end{equation}
Here the exponent~$\theta$ is as in Lemma~\ref{l.mathcalE.minscale}: we can take it to be any positive exponent smaller than~$
\frac{\alpha d}{d+2\alpha} \wedge   \frac d2(1-\beta)$ where~$\alpha>0$ is the exponent in~\eqref{e.EJtozero.rate.ass} (for both~$J$ and~$J^*$ in the general nonsymmetric case). 

\smallskip

Since the difference~$v_{m,e} := \phi_{m+1,e}-\phi_{m,e}$ belongs to~$\A(\cu_m)$, we obtain by Theorem~\ref{t.C01} and summing over scales that, for every~$r \in \left[\X , 3^{m-1} \right]$, 
\begin{equation}
\label{e.sumovescalesphi}
\sum_{n = m}^\infty \left\| \nabla v_{n,e} \right\|_{\underline{L}^2(B_{r})} 
\leq
C \delta \bigl( \X 3^{-m} \bigr)^{\frac\theta2}
\,.
\end{equation}
We may, therefore, define the potential field
\begin{equation*} % \label{e.}
\nabla \phi_e := \nabla \phi_{m,e} + \sum_{n = m}^\infty \nabla v_{n,e} \,.
\end{equation*}
Notice that the choice is independent of~$m$ and coincides with the global~$\Zd$-stationary corrector field~$\nabla \phi_e$.
Moreover, in view of~\eqref{e.sumovescalesphi}, we obtain by the triangle inequality the following estimate, which compares the finite-volume correctors to the infinite-volume corrector: for every~$m\in\N$ with~$3^m\geq \X$ and~$r \in \left[\X, 3^{m-1} \right]$,
\begin{equation}
\label{e.finite.to.infinite.phi}
\| \nabla \phi_{m,e} - \nabla \phi_e \|_{\underline{L}^2(B_r)} 
\leq
C\delta ( \X 3^{-m} )^{\frac\theta2}\,.
\end{equation}
We deduce from the previous display and~\eqref{e.FV.sublinearity.again} that,
for every~$r \in \left[\X,\infty \right)$,
\begin{equation}
\label{e.minscale.minusone}
\frac1r \left\| \nabla \phi_e \right\|_{\Hminusul(B_{r})} 
+
\frac1r \left\| \a(e+ \nabla \phi_e) - \ahom e \right\|_{\Hminusul(B_{r})} \leq C\delta \Bigl( \frac{r}{\X}\Bigr)^{\!-\frac\theta2} 
\,.
\end{equation}
We next argue that the bounds~\eqref{e.finite.to.infinite.phi} and~\eqref{e.minscale.minusone} implies a sublinear estimate for the infinite-volume correctors and flux correctors: that is, for every~$r \in \left[\X,\infty \right]$,
\begin{equation}
\label{e.minscale.yes}
\frac1r \left\| \phi_e - (\phi_e)_{B_r} \right\|_{\underline{L}^2(B_{r})} 
+
\frac1r \left\| \bfs_e - (\bfs_e)_{B_r} \right\|_{\underline{L}^2(B_{r})} 
\leq 
C \delta \Bigl( \frac{r}{\X}\Bigr)^{\!-\frac\theta2} 
\,.
\end{equation}
Note that~\eqref{e.minscale.yes} would finish the proof since it implies the desired integrability for the random variable~$\X$ defined in~\eqref{e.XC11}.

\smallskip

To prove~\eqref{e.minscale.yes}, we first note that the estimate for~$\phi_e$ in~\eqref{e.minscale.yes} is immediate from~\eqref{e.L2toHminus} and~\eqref{e.minscale.minusone}. 
This also implies that, for every~$r \geq \X$ and~$|e|=1$,
\begin{equation}
\label{e.corrbounds}
\| \nabla \phi_e \|_{\underline{L}^2(B_r)}
\leq C\,.
\end{equation}
To get the bounds on the flux correctors~$\bfs_e$, define first 
\begin{equation*}  
w_{n,e,ij} = \s_{n+1,e,ij} - \s_{n,e,ij} 
\,, \quad 
\f_{n,e,ij} = e_i (\a\nabla( \phi_{n+1,e} {-} \phi_{n,e}) )_j  - e_j (\a\nabla( \phi_{n+1,e} {-} \phi_{n,e}) )_i \,.
\end{equation*}
Then, by the equation of~$\s_{n,e,ij}$ in~\eqref{e.fluxcorrect.sec24},~$w_{n,e,ij}$ solves the equation 
\begin{equation*}
\left\{
\begin{aligned}
&
-\Delta w_{n,e,ij} = \nabla \cdot \f_{n,e,ij}
\quad \mbox{in}  \ \R^d, \\
& w_{n,e,ij} , \f_{n,e,ij}  \ \mbox{are~$3^{n+1}\Z^d$--periodic and} \ ( w_{n,e,ij} )_{\cu_{n+1}} = 0.
\end{aligned}
\right. 
\end{equation*}
and we can  write the flux corrector, by taking~$m\in \N$ such that~$r \in [3^{m-1},3^m)$, as
\begin{equation*}  
\bfs_{e,ij} = \bfs_{m,e,ij} +  \sum_{n=m}^\infty w_{n,e,ij} \,.
\end{equation*}
The first term on the right is bounded by~\eqref{e.dualest.forflux} and~\eqref{e.FV.sublinearity.again}. To bound the summands, we observe that~$w_{n,e,ij}$ has the formula
\begin{align}
\label{e.w.present1}
w_{n,e,ij}(x) - (\Phi_r \ast w_{n,e,ij})(x) 
& 
=
\int_0^{r^2} ( \Phi(t,\cdot) \ast \nabla \cdot \f_{n,e,ij} )(x) \, dt
\notag \\
& 
=
- \sum_{k=1}^d \int_0^{r^2}  (\partial_{x_k} \Phi(t,\cdot) \ast  (\f_{n,e,ij})_k )(x) \, dt 
\,.
\end{align}
and 
\begin{equation}  \label{e.w.present2}
(\Phi_r \ast w_{n,e,ij})(x)  = - \sum_{k=1}^d \int_{r^2}^\infty  (\partial_{x_k} \Phi(t,\cdot) \ast  (\f_{n,e,ij})_k )(x) \, dt  \,.
\end{equation}
By~\eqref{e.FV.sublinearity.again}, Theorem~\ref{t.C01} and periodicity, we have that
\begin{equation}  \label{e.w.fbound}
\sup_{ s \geq \X } \| \f_{n,e,ij} \|_{\underline{L}^2(B_s) } \leq C \delta \bigl( \X 3^{-n} \bigr)^{\frac\theta2} \,.
\end{equation}
Therefore, integrating over~$x$, we see by H\"older's inequality, the previous display and since~$|\nabla^k \Phi(t,x)| \leq C(k,d)t^{-\nicefrac k2} \Phi(\frac t2,x)$ for every~$k \in \N$,~$x\in \R^d$ and~$t>0$,  that 
\begin{align*}  
\| w_{n,e,ij} - \Phi_r \ast w_{n,e,ij} \|_{\underline{L}^2(B_r)}^2
& 
\leq  
C r \int_0^{r^2} t^{-\nicefrac12}  \int_{\R^d} \Phi\bigl(\tfrac12 t,y\bigr) \| \f_{n,e,ij} \|_{\underline{L}^2(B_r(y))}^2 \, dy \, dt 
\\ 
&
\leq C r \sum_{k=0}^\infty \exp(-c 4^k) \int_0^{r^2} t^{-\nicefrac12}  \| \f_{n,e,ij}\|_{\underline{L}^2(B_{r + 2^k t^{\nicefrac12}})}^2 \, dt
\\ 
&
\leq C \delta  r \bigl( \X 3^{-n} \bigr)^{\theta}  \sum_{k=0}^\infty \exp(-c 4^k) \int_0^{r^2} t^{-\nicefrac12}  \, dt    
\\ 
&
\leq C \delta r^2 \bigl( \X 3^{-n} \bigr)^{\theta}
\,.
\end{align*}
By Jensen's inequality and the previous display, we also get that 
\begin{equation*}  
|(\Phi_r \ast w_{n,e,ij})_{B_r}  - (w_{n,e,ij})_{B_r}|  
\leq 
C \delta  r \bigl( \X 3^{-n} \bigr)^{\frac\theta2}\,.
\end{equation*}
We still need to control the oscillations of~$\Phi_r \ast w_{n,e,ij}$ in~$B_r$. To do this, we now use~\eqref{e.w.present2} to deduce that 
\begin{equation*}  
| \nabla (\Phi_r \ast w_{n,e,ij})(x)|  \leq \sum_{k=1}^d  \int_{r^2}^\infty   | \partial_{x_k} \nabla \Phi( t ,\cdot) \ast  (\f_{n,e,ij})_k|  (x) \, dt  \,.
\end{equation*}
Due to~$3^{n+1}\Z^d$-periodicity of~$\f_{n,e,ij}$ and~\eqref{e.w.fbound}, we have, for every~$t \geq r^2$ and~$x \in B_r$, that  
\begin{equation*}  
 | \partial_{x_k} \nabla \Phi( t ,\cdot) \ast  (\f_{n,e,ij})_k|  (x) \leq C \delta  t^{-1} \exp(-c 3^{-2n} t ) \bigl( \X 3^{-n} \bigr)^{\frac\theta2}  \,.
\end{equation*}
(See for instance \cite[Exercise 3.24]{AKMBook}.) By splitting the integration to intervals~$(r^2,3^{2n})$ and~$(3^{2n},\infty)$, we get that 
\begin{equation*}  
| \nabla (\Phi_r \ast w_{n,e,ij})(x)|  \leq  C \delta (n +1 -m) 3^{-\frac\theta2(n-m)} \bigl( \X 3^{-m} \bigr)^{\frac\theta2} \,.
\end{equation*}
It thus follows that
\begin{equation*}  
\bigl\| \Phi_r \ast w_{n,e,ij} - (\Phi_r \ast w_{e,n,ij})_{B_r}\bigr\|_{L^\infty(B_r)}  \leq C \delta  r (n +1 -m) 3^{-\frac\theta2(n-m)} 
\bigl( \X 3^{-m} \bigr)^{\frac\theta2} 
\,.
\end{equation*}
Combining the above displays yields, by the triangle inequality, that
\begin{equation*}  
\| w_{n,e,ij} - (w_{n,e,ij})_{B_r} \|_{\underline{L}^2(B_r)} 
\leq 
C \delta  r (n +1 -m) 3^{-\frac\theta2(n-m)}  \Bigl( \frac{r}{\X} \Bigr)^{\!-\frac\theta2} 
\,.
\end{equation*}
Therefore, summing over~$n$ from~$m$ to infinity and applying triangle inequality, we obtain
\begin{equation*}  
\frac1r \| \bfs_{e,ij} - (\bfs_{e,ij})_{B_r}\|_{\underline{L}^2(B_r)} \leq C \delta  \Bigl( \frac{r}{\X} \Bigr)^{\!-\frac\theta2} \,,
\end{equation*}
which shows~\eqref{e.minscale.yes} and finishes the proof. 
\end{proof}

We conclude this section with an extension of~Theorem~\ref{t.optimalstochasticintegrability} to equations with a right-hand side in divergence form. For this result, our probability measure~$\P$ is on the enlarged space described in Section~\ref{sss.formalisms.for.RHS}. 
The main result is an estimate on the global corrector with zero slope for the equation with a right-hand side~$\nabla \cdot \f$.
This is the unique element~$\nabla \psi_{\f} \in V^2_{\pot,0}(\P)$ for which~$\a \nabla \psi_{\f}  + \f \in V^2_\sol(\P)$. 
In other words,~$\psi_{\f}$ is defined up to a constant, its gradient is a stationary, random field with zero mean, and it satisfies the equation
\begin{equation*}
-\nabla \cdot \a\nabla\psi_{\f} = \nabla\cdot \f \quad \mbox{in} \ \Rd\,.
\end{equation*}
Its existence and uniqueness is immediately obtained by the Lax-Milgram lemma and Lemma~\ref{l.stat.HH}, just as in the proof of Proposition~\ref{p.qual.correctors}. 
 
\begin{theorem}
\label{t.f.optimalstochasticintegrability}
Suppose~$\P$ satisfies~\eqref{e.f.sec.ass}. Let~$\delta \in (0,1]$. 
Then there exists a constant vector~$\overline{\f}\in
\Rd$  and a random variable~$\X$ satisfying   
\begin{equation} \label{e.f.X.integrability}
\X^{\frac d2(1-\beta)} = \O_\Psi(C)
\end{equation}
such that, for every~$r \geq \X$, the zero-slope corrector~$\psi_{\f}$ satisfies the estimate
\begin{equation}  \label{e.f.optimalstochasticintegrability}
\frac1r \|\psi_{\f} - (\psi_{\f})_{B_r} \|_{\underline{L}^2(B_r)}
+
\frac1r \|\nabla \psi_{\f} \|_{\Hminusul(B_r)} +  \frac1r \|\a \nabla \psi_{\f} + \f - \overline{\f} \|_{\Hminusul(B_r)} 
\leq 
\delta \Bigl(\frac{\X}{r} \Bigr)^{-\theta}\,.
\end{equation}
\end{theorem}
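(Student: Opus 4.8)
The plan is to obtain~\eqref{e.f.optimalstochasticintegrability} from the finite-volume estimate of Theorem~\ref{t.zeroslope} by the same telescoping-over-scales argument used to prove Theorem~\ref{t.optimalstochasticintegrability}, with the simplification that no flux corrector (stream matrix) needs to be constructed here: the statement asks only for a negative-Sobolev bound on the flux $\a\nabla\psi_\f+\f$, and Theorem~\ref{t.zeroslope} already supplies such a bound at finite volume. The stationary zero-slope corrector $\nabla\psi_\f\in V^2_{\pot,0}(\P)$ with $\a\nabla\psi_\f+\f\in V^2_\sol(\P)$ exists and is unique by the Lax--Milgram lemma and Lemma~\ref{l.stat.HH}, exactly as in Proposition~\ref{p.qual.correctors}, and $\overline{\f}$ is the deterministic vector produced by Theorem~\ref{t.zeroslope}.

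First I would take $\X$ to be the maximum of the minimal scales appearing in Theorem~\ref{t.zeroslope} and in Theorem~\ref{t.C01}; since each satisfies $\X^{\frac d2(1-\beta)}=\O_\Psi(C)$, so does their maximum after enlarging $C$. For every $m$ with $3^m\geq\X$, Theorem~\ref{t.zeroslope} together with the duality estimate~\eqref{e.L2toHminus} provides a finite-volume corrector $\psi_{\f,m}\in H^1(\cu_m)$ solving $-\nabla\cdot\a\nabla\psi_{\f,m}=\nabla\cdot\f$ in $\cu_m$ with
\begin{equation*}
3^{-m}\bigl\|\psi_{\f,m}-(\psi_{\f,m})_{\cu_m}\bigr\|_{\underline{L}^2(\cu_m)}
+3^{-m}\bigl\|\nabla\psi_{\f,m}\bigr\|_{\Hminusul(\cu_m)}
+3^{-m}\bigl\|\a\nabla\psi_{\f,m}+\f-\overline{\f}\bigr\|_{\Hminusul(\cu_m)}
\leq C(\X 3^{-m})^{\alpha}.
\end{equation*}
Since $\psi_{\f,m+1}$ and $\psi_{\f,m}$ both solve this equation in $\cu_m$, their difference $v_m:=\psi_{\f,m+1}-\psi_{\f,m}$ belongs to $\mathcal{A}(\cu_m)$, so applying Theorem~\ref{t.C01} with zero right-hand side and the Caccioppoli inequality gives $\|\nabla v_m\|_{\underline{L}^2(B_r)}\leq C(\X 3^{-m})^{\alpha}$ for all $r\in[\X,3^{m-1}]$, once one compares the $L^2$-oscillation of $\psi_{\f,m+1}$ over $\cu_m$ with its oscillation over $\cu_{m+1}$. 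Summing the resulting geometric series over scales shows that $\{\nabla\psi_{\f,m}\}$ is Cauchy in $L^2(B_r)$ for every $r>0$, with a limit I will call $\nabla\psi$, and that $\|\nabla(\psi_{\f,m}-\psi)\|_{\underline{L}^2(B_r)}\leq C(\X 3^{-m})^{\alpha}$ for $r\in[\X,3^{m-1}]$.

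It remains to identify $\nabla\psi$ with $\nabla\psi_\f$ and to read off~\eqref{e.f.optimalstochasticintegrability}. For the identification I would argue as in the proof of Theorem~\ref{t.optimalstochasticintegrability}: the Caccioppoli inequality, the finite-volume estimate above, and the \emph{qualitative} sublinearity $3^{-m}\|\psi_\f-(\psi_\f)_{B_{3^{m-1}}}\|_{\underline{L}^2(B_{3^{m-1}})}\to 0$ $\P$-almost surely --- which holds because $\nabla\psi_\f\in V^2_{\pot,0}(\P)\subseteq V^2_0(\P)$, so that Corollary~\ref{c.stat.Hminusone} and~\eqref{e.L2toHminus} apply --- together force $\|\nabla(\psi_{\f,m}-\psi_\f)\|_{\underline{L}^2(B_r)}\to 0$, hence $\psi=\psi_\f$ up to an additive constant. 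Combining this with the finite-volume estimate of Theorem~\ref{t.zeroslope} at the scale $m$ with $3^{m-1}\leq r<3^m$ yields the claimed $H^{-1}$ bounds on $\nabla\psi_\f$ and on $\a\nabla\psi_\f+\f-\overline{\f}$ for every $r\geq\X$ (exactly the passage through~\eqref{e.minscale.minusone} in the no-forcing case), and the $L^2$-oscillation bound on $\psi_\f$ follows from the $H^{-1}$ bound on $\nabla\psi_\f$ via the ball analogue of the duality estimate~\eqref{e.L2toHminus}. Renaming $\delta$ and shrinking $\alpha$ then absorbs the prefactor constants.

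The argument is essentially routine once Theorems~\ref{t.zeroslope} and~\ref{t.C01} are in hand, and I do not expect a genuinely new obstacle beyond what has already been dealt with in Section~\ref{ss.rhs} and in the proof of Theorem~\ref{t.optimalstochasticintegrability}. The two points requiring a little care are (i) keeping the constant $\overline{\f}$ consistent between the finite-volume flux $\a\nabla\psi_{\f,m}+\f$ and its infinite-volume counterpart --- this is built into the construction in the proof of Theorem~\ref{t.zeroslope} via the coarse-grained vector $\overline{\bfF}$ --- and (ii) justifying that the $L^2_{\mathrm{loc}}$-limit of the finite-volume correctors is stationary and coincides with $\psi_\f$, which as indicated reduces to the sublinearity supplied by the ergodic theorem.
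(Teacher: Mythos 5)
Your proposal is correct and follows essentially the same route as the paper: existence of $\psi_\f$ via Lax--Milgram and Lemma~\ref{l.stat.HH}, the finite-volume estimate from Theorem~\ref{t.zeroslope}, and then the telescoping of the differences $v_m=\psi_{\f,m+1}-\psi_{\f,m}\in\A(\cu_m)$ using Theorem~\ref{t.C01}, exactly as in the proof of Theorem~\ref{t.optimalstochasticintegrability} (including the identification of the limit via the qualitative sublinearity supplied by the ergodic theorem). Your observation that no stream-matrix construction is needed here is also consistent with the paper's argument.
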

\begin{proof}
The existence of the corrector is immediate by Proposition~\ref{p.qual.correctors} and Lemma~\ref{l.stat.HH}. To show the estimates, this time, we use Theorem~\ref{t.zeroslope} to deduce that 
\begin{equation*}  
3^{-m}\bigl\| \nabla \psi_{\f,m} \bigr\|_{\underline{H}^{-1}(\cu_m)} 
+
3^{-m} \bigl\| \a \nabla \psi_{\f,m} + \f - \overline{\f} \bigr\|_{\underline{H}^{-1}(\cu_m)} 
\leq 
C  \bigl( \X 3^{-m} \bigr)^{\theta}
\,,
\end{equation*}
where~$\psi_{\f,m}$ solves~$-\nabla \cdot \a \nabla \psi_{\f,m} = \nabla \cdot \f$ in~$\cu_m$ and~$\X^{\frac d2(1-\beta)} = \O_\Psi(C)$.  Considering the difference~$v_{m} := \psi_{\f,m+1}-\psi_{\f,m}$, which belongs to~$\A(\cu_m)$, we may then proceed as in the proof of Theorem~\ref{t.optimalstochasticintegrability} to show~\eqref{e.f.optimalstochasticintegrability}. 
\end{proof}

\subsection{Renormalization: regularity improves the convergence rate}
\label{ss.almostone} 

Homogenization improves the regularity of solutions on large scales. This is an important principle we first encountered in Theorem~\ref{t.C11.sharp}, and then in a more quantitative way in the previous section. 

\smallskip 

We will now explore the converse of this principle: \emph{large-scale regularity improves the rate of homogenization}. We will explore this idea with more detail and sophistication in Chapter~\ref{s.renormalization}, where we prove quantitative estimates on the homogenization error, which are \emph{optimal} in their dependence on the scale separation ratio.
In this section,\footnote{The results in this section are not used in subsequent sections, and it may be skipped on a first reading.} we will pursue the more modest goal of improving the exponent in Theorem~\ref{t.subadd.converge} and Proposition~\ref{p.algebraic.nosymm} from a small positive constant~$\alpha(\beta,d,\lambda,\Lambda)$ to any~$\alpha<1$ (in the case of strong mixing conditions). This is optimal for the coarse-grained quantities defined with respect to cubes. 

\begin{proposition}[{Improvement of exponent to any~$\alpha<1$}]
\label{p.alpha.almostone}
Assume~$\P$ is a~$\Zd$--stationary measure on~$(\Omega,\F)$ and satisfies~$\CFS(\beta,\Psi)$. 
Let~$0 < \alpha < \min\{ 1, d(1-\beta) \}$. 
There exists a constant~$C(\alpha,\dataref)<\infty$
such that, for every~$m \in\N$,
\begin{equation}
\label{e.improvedEconv}
| \ahom(\cu_m) - \ahom | 
+
|\ahom_*(\cu_m) - \ahom |
\leq 
C 3^{-m \alpha} \,.
\end{equation}
Consequently, for every~$m,n\in\N$ with~$\beta m < n < m$, 
\begin{equation}
\label{e.aastar.big.smash.improved}
| \a(\cu_m) - \ahom |
+
|\a_*(\cu_m) - \ahom |
\leq 
C 3^{-n \alpha}
+ 
\O_\Psi \bigl( C3^{-\frac d2(m-n)} \bigr) 
\,.
\end{equation}
\end{proposition}

Compared to the iteration in the symmetric case in Chapter~\ref{s.subadd}, the main step in the proof of Proposition~\ref{p.alpha.almostone} is the improvement of Lemma~\ref{l.flatness.rules}. To see why we need to improve this lemma, we can look back to the iteration argument that gave the proof of Proposition~\ref{e.EJtozero.rate}. The inequality we iterated is found in~\eqref{e.combo}. If we ask which of the terms on the right side of~\eqref{e.combo} saturate the error, we find that it is the last term: the discounted sum of the~$\tau_j$'s. Even if all of the other terms on the right side of~\eqref{e.combo} could be discarded, an iteration of the resulting inequality would still give us only a tiny exponent~$\alpha$ depending on the prefactor constant~$C$. We therefore need an argument that removes these~$\tau_j$'s. 

\smallskip

The~$\tau_j$'s found their way into the error term by way of~\eqref{e.flatness.rules}, and they arose in the latter inequality because we made a comparison between optimizing functions on different scales. They represent the ``copy and paste error." We therefore need a more precise way to compare minimizers on different scales, and we will do this by using the large-scale regularity theory. 
The improved estimate that we use in place of~\eqref{e.flatness.rules} is found below in~\eqref{e.baby.optimal.iter.basic}, near the end of the proof of Proposition~\ref{p.alpha.almostone}. 

\smallskip

We begin in the next lemma by bounding the quantity~$J(U,p,q)$ in a different way from Lemmas~\ref{l.flatness.rules} and~\ref{l.flatness.rules.nosymm}. 
Before presenting the statement, we recall the definitions of~$\a_*(U_j):= \s_*(U_j) - \k^t(U_j)$ and~$\ahom_*(U_j):= \shom_*(U_j) - \khom^t(U_j)$.

\begin{lemma}[Upper bound for~$J$]
\label{l.J.upper.bound}
For every bounded Lipschitz domain~$U\subseteq\Rd$, finite partition~$\{ U_1,\ldots,U_N\}$ of~$U$ satisfying~\eqref{e.U.partition} 
and~$p,q\in\Rd$, 
\begin{align}
\label{e.J.upper.bound}
J(U,p,q) 
\leq 
\sum_{j=1}^N\!
\frac{|U_j|}{|U|}
\biggl( 
p\cdot \fint_{U_j}\bigl( \a_*(U_j) - \a \bigr) \nabla v(\cdot,U,p,q) 
+
\frac12\bigl| \s_*^{-\nicefrac12}(U_j) ( q - \a_*^t (U_j) p ) \bigr|^2
\biggr)\,.
\end{align}

\end{lemma}
\begin{proof}
We use~\eqref{e.Jaas.nosymm} to obtain, for every~$u \in \mathcal{A}(U_j)$, 
\begin{align*}
%\label{e.}
\lefteqn{ 
\fint_{U_j} \left( -\frac12 \nabla u \cdot \a\nabla u -p\cdot \a\nabla u+ q\cdot \nabla u   \right)
} \qquad & 
\notag \\ & 
= 
\fint_{U_j} \left( -\frac12 \nabla u \cdot \a\nabla u - (q - \a_*^t(U_j) p) \cdot \nabla u   \right)
+
p\cdot\fint_{U_j} \bigl( \a_*(U_j) - \a \bigr) \nabla u 
\notag \\ & 
\leq
J(U_j,0,q - \a_*^t(U_j) p)
+
p\cdot\fint_{U_j} \bigl( \a_*(U_j) - \a \bigr) \nabla u 
\notag \\ & 
= \frac12 \bigl( q - \a_*^t(U_j) p \bigr) \cdot
\s_*^{-1} (U_j)
\bigl( q - \a_*^t(U_j) p \bigr) + p\cdot\fint_{U_j} \bigl( \a_*(U_j) - \a \bigr) \nabla u 
\,.
\end{align*}
Applying this inequality to~$u \in \A(U)$ and summing over~$j\in \{1,\ldots,N\}$ yields
\begin{align*}
%\label{e.}
\lefteqn{
\fint_{U} 
\left( -\frac12 \nabla u \cdot \a\nabla u -p\cdot \a\nabla u+ q\cdot \nabla u   \right)
} \qquad & 
\notag \\ & 
\leq 
\sum_{j=1}^N
\frac{|U_j|}{|U|}
\biggl( 
\frac12\bigl| \s_*^{-\nicefrac12}(U_j) ( q - \a_*^t (U_j) p ) \bigr|^2
+
p\cdot\fint_{U_j} \bigl( \a_*(U_j) - \a \bigr) \nabla u
\biggr)
\,.
\end{align*}
Taking~$u = v(\cdot,U,p,q)$  yields the lemma. 
\end{proof}

In order to make use of~\eqref{e.J.upper.bound}, we need to estimate the first term in the sum on the right side without comparing~$\nabla v(\cdot,U,p,q)$ to the smaller-scale optimizers~$\nabla v(\cdot,U_j,p,q)$ because this would come at the expense~$\tau_j$s. 
A clue for how we should proceed can be found in the estimate~\eqref{e.coarse.graining.nosymm}, which we restate here: for every~$u\in \mathcal{A}(\cu_n)$, 
\begin{equation}
\label{e.fluxmaps.again}
\biggl| 
\fint_{\cu_n} \bigl( \a_*(\cu_n) - \a \bigr) \nabla u
\biggr| ^2
\leq 
2\left| \s(\cu_n) - \s_*(\cu_n) \right| 
\biggl( \fint_{\cu_n}  \nabla u \cdot \a\nabla u  \biggr)
\,.
\end{equation}
This may, at first glance, not appear very promising. 
Indeed, if we apply~\eqref{e.fluxmaps.again} to the right side of~\eqref{e.J.upper.bound} with~$U=\cu_m$ and partition~$\{ z+\cu_n\,:\, z\in 3^n\Zd\cap \cu_m\}$, and then use~\eqref{e.diagonalset} to bound~$| \a(\cu_n) - \a_*(\cu_n)|$ and~\eqref{e.Jenergyv} to estimate~$\| \nabla v(\cdot,\cu_m,p,q)\|_{\underline{L}^2(\cu_n)}$, we would obtain, after setting~$q=\ahom_*^t(\cu_n)p$, 
\begin{align*}
J(\cu_m,p,\ahom_*^t(\cu_n)p ) 
&
\leq 
\frac 12\!\avsum_{z\in 3^n\Zd \cap\cu_m }
\bigl| \s_*^{-\nicefrac12}(z+\cu_n) (\a_*(z+\cu_n) - \ahom_*(\cu_n))^tp\bigr|^2
\notag \\ & \qquad 
+ 
4J(\cu_m,p,\ahom_*^t(\cu_n)p ) ^{\nicefrac12}
\biggl( \avsum_{z\in 3^n\Zd \cap\cu_m }
\bigl| (\s - \s_*) (z+\cu_n) \bigr|
\biggr)^{\!\nicefrac12}
\,.
\end{align*}
Needless to say, this is not a very useful estimate since the second term on the right side is larger than the left side. 

\smallskip

However, it \emph{almost} works. The above computation tells us that if we can improve the estimate~\eqref{e.fluxmaps.again} by reducing the second term on the right side from~$CJ(\cu_m,p,\ahom(\cu_n)p )$ to~$\frac12 J(\cu_m,p,\ahom(\cu_n)p )$, 
then we can reabsorb it on the left side of the inequality. We would then obtain an estimate similar to~\eqref{e.flatness.rules}, but much more favorable since there are no~$\tau_j$'s on the right side.

\smallskip

The improvement of~\eqref{e.fluxmaps.again} comes thanks to the large-scale~$C^{1,\gamma}$ estimate. The idea is that the error in~\eqref{e.fluxmaps.again} is identically zero in the case that~$u$ is equal to one of the minimizers~$v(\cdot,\cu_n,p,0)$ of~$J(\cu_n,p,0)$ for~$p\in\Rd$. But the regularity theory says that every solution (on a much larger cube) should be well-approximated by one of these functions!

\begin{lemma}[Improved coarse-graining inequality]
\label{l.fluxmaps.now.better}
Let~$\theta >0$ be as in the statement of Lemma~\ref{l.mathcalE.minscale}.
For each~$\delta \in (0,\tfrac12]$ and~$\gamma\in [\tfrac12,1)$, there exists a minimal scale~$\X_{\delta,\gamma}$ and constants~$C_1(\theta,K,\delta,\gamma,\dataref)<\infty$ and~$C(\gamma,d,\lambda,\Lambda)<\infty$
satisfying   
\begin{equation} 
\label{e.X.integrability.delta.gamma}
\X_{\delta,\gamma}^{\frac d2(1-\beta)} = \O_\Psi(C_1)\,,
\end{equation}
such that, for every~$m,n\in\N$ and~$u\in \A(\cu_m)$, 
\begin{align}
\label{e.fluxmaps.now.better}
\lefteqn{
\biggl| \fint_{\cu_n} \bigl( \a_*(\cu_n) - \a \bigr) \nabla u \biggr| 
} \quad & 
\notag \\ &
\leq
C | \s(\cu_n) - \s_*(\cu_n) |^{\nicefrac12}  \bigl( 3^{-\gamma (m-n)}
\| \nabla u \|_{\underline{L}^2(\cu_m)}
+ \bigl(  \delta( \X_{\delta,\gamma} 3^{-n})^{{\frac\theta 2}}\wedge 1\bigr) \bigl\| \nabla u \bigr\|_{\underline{L}^2(\cu_n)} \bigr) 
\,.
\end{align}
\end{lemma}
\begin{proof}
Let~$\X_{\delta,\gamma}$ be the maximum of the random variables in the statements of Theorems~\ref{t.C11.sharp} and~\ref{t.optimalstochasticintegrability} (we will also need estimates found in the proof of the latter). By Theorem~\ref{t.optimalstochasticintegrability}, the estimate~\eqref{e.X.integrability.delta.gamma} for~$\X_{\delta,\gamma}$ is valid. Note that~$\delta$ can be made small without loss of generality. 

\smallskip

Select~$m,n\in\N$ with~$n<m$ and~$u\in \A(\cu_m)$. 
In the case that~$3^n < \X_{\delta,\gamma}$, there is nothing to show as the inequality~\eqref{e.fluxmaps.now.better} follows from~\eqref{e.fluxmaps.again}. 
We therefore assume that~$3^n \geq \X_{\delta,\gamma}$. 
By~\eqref{e.a.formulas.nosymm} and~\eqref{e.fluxmaps.again}, we have that, for every~$e\in\Rd$, 
\begin{align}
\label{e.removep}
\biggl| \fint_{\cu_n} \bigl( \a_*(\cu_n) - \a \bigr) \nabla u \biggr| 
&
=
\biggl| \fint_{\cu_n} \bigl( \a_*(\cu_n) - \a \bigr) \bigl( \nabla u - \nabla v(\cdot,\cu_n,0,e) \bigr) 
\biggr| 
\notag \\ & 
\leq
2^{\nicefrac12} \bigl| \s(\cu_n) - \s_*(\cu_n) \bigr|^{\nicefrac12}  \bigl\| \s^{\nicefrac12} (\nabla u - \nabla  v(\cdot,\cu_n,0,e)) \bigr\|_{\underline{L}^2(\cu_n)} 
\,.
\end{align}
%We next use the alternative definition of the finite-volume correctors, namely that~$\phi_{m,*,e}:= v(\cdot,\cu_m,0,\ahom^t e) - e\cdot x$ as the difference of the maximizer of~$J(\cu_m,0,\ahom^t e)$ and the affine function~$\ell_e$, 
%We can write the above inequality as
%\begin{equation*}
%\biggl| \fint_{\cu_n} \bigl( \a(\cu_n) - \a \bigr) \nabla u \biggr| 
%\leq
% C | \a(\cu_n) - \a_*(\cu_n) |^{\nicefrac12}
%\inf_{e\in\Rd}
%\bigl\| \nabla u - ( e+ \nabla \phi_{n,*,e} ) \bigr\|_{\underline{L}^2(\cu_n)} 
%\,.
%\end{equation*}
By Theorem~\ref{t.C11.sharp}, the assumption~$3^n \geq \X_{\delta,\gamma}$ implies the existence of~$e\in\Rd$ such that 
\begin{align*}
\lefteqn{ 
\bigl\| \nabla u - \nabla  v(\cdot,\cu_n,0,\ahom^t e)  \bigr\|_{\underline{L}^2(\cu_n)} 
} \qquad & 
\notag \\ &
\leq 
\bigl\| \nabla u - ( e + \nabla \phi_e)  \bigr\|_{\underline{L}^2(\cu_n)} 
+
\bigl\| (e+\nabla \phi_e) - \nabla  v(\cdot,\cu_n,0,\ahom^t e)  \bigr\|_{\underline{L}^2(\cu_n)}  
\notag \\ & 
\leq 
C 3^{-\gamma (m-n)}
\| \nabla u \|_{\underline{L}^2(\cu_m)}
+
\bigl\| (e+\nabla \phi_e) - \nabla  v(\cdot,\cu_n,0,\ahom^t e)  \bigr\|_{\underline{L}^2(\cu_n)}  \,.
\end{align*}
By~\eqref{e.Jenergyv.nosymm}, Remark~\ref{r.localX.nonsymm} and~\eqref{e.finite.to.infinite.phi}, we have 
\begin{align*}
\lefteqn{ 
\bigl\| (e+\nabla \phi_e) - \nabla  v(\cdot,\cu_n,0,\ahom^t e)  \bigr\|_{\underline{L}^2(\cu_n)}
} \qquad & 
\notag \\ &
\leq 
\bigl\| (e+\nabla \phi_e) - \nabla  v(\cdot,\cu_n,-e,0)  \bigr\|_{\underline{L}^2(\cu_n)}
+
\bigl\| \nabla  v(\cdot,\cu_n,e,\ahom^t e)  \bigr\|_{\underline{L}^2(\cu_n)}
\notag \\ &
\leq 
\bigl\| (e+\nabla \phi_e) - \nabla  v(\cdot,\cu_n,-e,0)  \bigr\|_{\underline{L}^2(\cu_n)}
+
CJ(\cdot,\cu_n,e,\ahom^t e)^{\nicefrac12}
\notag \\ &
\leq 
C\delta|e|\bigl( \X_{\delta,\gamma} 3^{-n} \bigr)^{{\frac\theta 2}}
\leq 
C\delta\bigl( \X_{\delta,\gamma} 3^{-n} \bigr)^{{\frac\theta 2}} \bigl\| \nabla  v(\cdot,\cu_n,0,\ahom^te)  \bigr\|_{\underline{L}^2(\cu_n)}
\,.
\end{align*}
Assuming that~$\delta>0$ is sufficiently small, depending only on~$(d,\lambda,\Lambda)$, we obtain
\begin{equation*}
\bigl\| \nabla u - \nabla  v(\cdot,\cu_n,0,\ahom^t e)  \bigr\|_{\underline{L}^2(\cu_n)} 
\leq
C 3^{-\gamma (m-n)}
\| \nabla u \|_{\underline{L}^2(\cu_m)}
+ C\delta\bigl( \X_{\delta,\gamma} 3^{-n} \bigr)^{{\frac\theta 2}} 
\bigl\| \nabla u \bigr\|_{\underline{L}^2(\cu_n)} 
\,.
\end{equation*}
Combining the above yields the lemma. 
\end{proof}

We are now ready for the proof of Proposition~\ref{p.alpha.almostone}. Most of the argument involves using Lemma~\ref{l.fluxmaps.now.better} to estimate the first term on the right side of~\eqref{e.J.upper.bound}.

\begin{proof}[{Proof of Proposition~\ref{p.alpha.almostone}}]
Fix~$\alpha \in (0 , \min\{ 1, d(1-\beta) \})$, and let~$\sigma := \min\{ 1, d(1-\beta) \} - \alpha$. We allow the constants in the proof to depend on~$\sigma$ and~$\dataref$.

\smallskip

For readability, we will write the argument using notation from the symmetric case. The adaptations required for the nonsymmetric case are minimal, and we indicate them in the proof. 

\smallskip

\emph{Step 1.} We prove that there exists a constant~$C(d,\lambda,\Lambda)<\infty$ such that, for every~$m,n \in \N$ with~$m \geq n$,
\begin{align} 
\label{e.baby.optimal.iter.basic}
\sup_{|e| \leq 1} \E\bigl[ J(\cu_m, e, \ahom_*(\cu_m) e ) \bigr]
&
\leq 
C  \bigl( 3^{-(m-n)} + \delta^2 \bigr)  \sup_{|e| \leq 1} \E\bigl[ J(\cu_n, e, \ahom_*(\cu_n) e )\bigr]
\notag \\ & \qquad  + C \var\bigl[\a_*(\cu_n)\bigr] + C \P\bigl[\X_\delta > 3^n \bigr]
 \,.
\end{align}
Here~$\X_\delta := \X_{\delta,\nicefrac23}$ is as in Lemma~\ref{l.fluxmaps.now.better} and we denote by~$\X_\delta(z)$,~$z \in \Zd$, its~$\Zd$-stationary extension. 

\smallskip

In the general nonsymmetric case, the estimate~\eqref{e.baby.optimal.iter.basic} must be replaced by one which involves the sum of~$J$ and~$J^*$, as follows:
\begin{align}
\label{e.baby.optimal.iter.basic.nonsymm}
\lefteqn{
\sup_{|e| \leq 1} \E\bigl[ J(\cu_m, e, (\ahom -\khom)(\cu_m) e ) +  J^*(\cu_m, e, (\ahom +\khom)(\cu_m) e ) \bigr]
} \qquad & 
\notag \\ &
\leq 
C  \bigl( 3^{-(m-n)} + \delta^2 \bigr)  \sup_{|e| \leq 1} \E\bigl[ J(\cu_n, e, (\ahom -\khom)(\cu_n) e ) +  J^*(\cu_n, e, (\ahom +\khom)(\cu_n) e )\bigr]
\notag \\ & \qquad  
 + C \var\bigl[\bfA_*(\cu_n)\bigr] + C \P\bigl[\X_\delta > 3^n \bigr]
 \,.
\end{align}
Continuing with the proof of~\eqref{e.baby.optimal.iter.basic}, let us denote~$v = v(\cdot,\cu_m,e, \ahom_*(\cu_n) e)$ for each~$e\in\Rd$. For each~$j,n \in \N$ with~$j\geq n$, let~$P_j$ be the partition of cubes of size~$3^n$ in the boundary layer of width~$3^j$ in~$\cu_m$:
\begin{equation*} 
%\label{e.}
P_{j} := \{ z \in 3^{n}\Zd  \cap \cu_m \, : \, z + \cu_j \subset \cu_m\,, (z + \cu_{j+1}) \cap \partial \cu_m \neq \emptyset    \} \,.
\end{equation*}
There are at most~$3^{-(m-j)} 3^{d(m-n)}$ many members in~$P_j$. We have
\begin{equation*} 
%\label{e.}
\avsum_{z\in 3^{n} \Zd  \cap \cu_m} 
\fint_{z+\cu_{n}} \bigl( \a_*(z+\cu_{n}) - \a \bigr) \nabla v  
= \sum_{j=n}^{m-1}
\sum_{z\in 3^{n} \Zd \cap P_j}
\frac{|\cu_n|}{|\cu_m|}  \fint_{z+\cu_{n}} \bigl( \a_*(z+\cu_{n}) - \a \bigr) \nabla v  
\,.
\end{equation*}
For every~$z \in P_j$, we get, by Lemma~\ref{l.fluxmaps.now.better}, 
\begin{align*}
% \label{e.fluxmaps.now.better.again}
\lefteqn{
\biggl| \fint_{z+\cu_{n}} \bigl( \a_*(z+\cu_{n}) - \a \bigr) \nabla v  \biggr| 
}   & 
\notag \\ &
\leq
C \bigl| (\a-\a_*)(z{+}\cu_{n}) \bigr|^{\nicefrac12}  
\Bigl( 3^{-\frac 23(j-n) }
\| \nabla v \|_{\underline{L}^2(z+\cu_{j})}
{+} \bigl(  \delta( \X_{\delta}(z) 3^{-n})^{\theta}\wedge 1\bigr) \bigl\| \nabla v \bigr\|_{\underline{L}^2(z+\cu_{n})} \Bigr) 
\,.
\end{align*}
Summing over~$z\in 3^{n} \Zd \cap P_j$, and using Cauchy-Schwarz and~$\Zd$-stationarity, we get 
\begin{align*} 
%\label{e.}
\lefteqn{
\sum_{z\in 3^{n} \Zd \cap P_j}
\frac{|\cu_n|}{|\cu_m|}   \E \Biggl[ \biggl| \fint_{z+\cu_{n}} \bigl( \a_*(z+\cu_{n}) - \a \bigr) \nabla v  \biggr| 
\Biggr]
} \quad &
\notag \\ &
\leq 
C \E\bigl[ | \a(\cu_{n}) - \a_*(\cu_{n}) | \bigr]^{\nicefrac12}  
\notag \\ & \qquad \times 
\sum_{z\in 3^{n} \Zd \cap P_j} \Bigl( 3^{-\frac23(j-n)} \E\bigl[\| \nabla v \|_{\underline{L}^2(z+\cu_{j})}^2 \bigr]^{\nicefrac12} + \delta \E\bigl[\| \nabla v \|_{\underline{L}^2(z+\cu_{n})}^2 \bigr]^{\nicefrac12}  \Bigr)
\notag \\ & \qquad 
+ C \P\bigl[\X_\delta > 3^n \bigr]^{\nicefrac12}  \sum_{z\in 3^{n} \Zd \cap P_j} \E\bigl[\| \nabla v \|_{\underline{L}^2(z+\cu_{n})}^2 \bigr]^{\nicefrac12} 
\notag \\ &
\leq 
C \biggl( 3^{-(m-j)} 
\E\bigl[\| \nabla v \|_{\underline{L}^2(\cu_{m})}^2 \bigr] \Bigl(  \bigl(3^{-(j-n)}+\delta^2 \bigr) \E\bigl[ | \a(\cu_{n}) - \a_*(\cu_{n}) | \bigr] +   \P\bigl[\X_\delta > 3^n \bigr] \Bigr)\biggr)^{\!\nicefrac12}
\,.
\end{align*}
It thus follows, by summing over~$j$ and applying Young's inequality, that
\begin{align*} 
%\label{e.}
\lefteqn{
 \E \Biggl[ \biggl| \avsum_{z\in 3^{n} \Zd  \cap \cu_m} 
\fint_{z+\cu_{n}} \bigl( \a_*(z+\cu_{n}) - \a \bigr) \nabla v    \biggr|\Biggr]
} \qquad &
\notag \\ &
\leq \frac12 \E\bigl[\| \nabla v \|_{\underline{L}^2(\cu_{m})}^2 \bigr] + C \bigl( 3^{-(m-n)} + \delta^2 \bigr) \E\bigl[ | \a(\cu_{n}) - \a_*(\cu_{n}) | \bigr] 
+ C \P\bigl[\X_\delta > 3^n \bigr]
\,.
\end{align*}
By~\eqref{e.diagonalset} (or using~\eqref{e.diagonalset.bigA} in the general nonsymmetric case) , we have
\begin{equation*} 
%\label{e.}
\E\bigl[ | \a(\cu_{n}) - \a_*(\cu_{n}) | \bigr] 
\leq C \sup_{|e| \leq 1} \E\bigl[ J(\cu_n, e, \ahom_*(\cu_n) e ) \bigr]\,.
\end{equation*}
Applying Lemma~\ref{l.J.upper.bound},~\eqref{e.Jenergyv} (using~\eqref{e.Jenergyv.nosymm} in the general nonsymmetric case) and the above two inequalities, then reabsorbing and taking supremum over~$|e| \leq 1$ of the resulting estimate, we get
\begin{align*} 
%\label{e.}
\sup_{|e| \leq 1} \E\bigl[ J(\cu_m, e, \ahom_*(\cu_n) e ) \bigr]
&
\leq 
C  \bigl( 3^{-(m-n)} + \delta^2 \bigr)  \sup_{|e| \leq 1} \E\bigl[ J(\cu_n, e, \ahom_*(\cu_n) e )\bigr]
\notag \\ & \qquad  + C \var\bigl[\a_*(\cu_n)\bigr] + C \P\bigl[\X_\delta > 3^n \bigr]
 \,.
\end{align*}
Since~$q = \ahom_*(\cu_m) e$ minimizes~$q \mapsto \E\bigl[ J(\cu_m, e, q ) \bigr]$, we then arrive at~\eqref{e.baby.optimal.iter.basic}.
 
\smallskip

\emph{Step 2.}  We give the proof of~\eqref{e.improvedEconv}. First, Lemma~\ref{l.flatness} (or Lemma~\ref{l.flatness.nosymm}) implies that, for every~$\beta n < k < n$, 
\begin{equation}
\label{e.baby.optimal.variance.pre}
\var \bigl[  \a(\cu_{n}) \bigr] 
\leq
C
\sup_{|e|=1}
\E \left[ J(\cu_{k},e, \ahom_*(\cu_{k})  e) \right]^2
+
C
3^{-d(n-k)}
\,.
\end{equation}
By~\eqref{e.X.integrability.delta.gamma}, we deduce that
\begin{equation} 
\label{e.baby.optimal.minscale.pre}
\P\bigl[\X_\delta > 3^n \bigr] 
\leq \frac1 {\Psi(c 3^{\frac d2(1-\beta) n}) }  
\leq C 3^{d(1-\beta) n}
\,.
\end{equation}
Recall here that we assume that~$\Psi(\cdot)$ has at least quadratic growth, provided by~\eqref{e.Young.growth}. 
Fix~$\theta := \max\{ \beta \,, \frac{d}{d+1} \} $ and recall that~$\sigma := \min\{1,d(1-\beta) \} - \alpha$. Let~$h \in \N$ be the smallest integer such that~$C_{\eqref{e.baby.optimal.iter.basic}}  3^{-\sigma h} \leq \nicefrac18$ and then fix~$\delta$ so small that~$C_{\eqref{e.baby.optimal.iter.basic}} 3^{\alpha h} \delta^2 \leq \nicefrac18$. We have
\begin{equation} 
\label{e.baby.optimal.h.pre}
C_{\eqref{e.baby.optimal.iter.basic}} 3^{\alpha h} \bigl( 3^{-h} + \delta^2 \bigr) \leq \frac14
\,.
\end{equation} 
This way both~$h$ and~$\delta$ depend only on~$(\sigma,d,\lambda,\Lambda)$. In what follows, we absorb constants depending on~$h$ to the constant~$C$. By Proposition~\ref{p.algebraicrate.E}, we find constants~$\alpha_0(\beta,d,\lambda,\Lambda) \in (0,1)$ and~$C(\dataref)<\infty$ such that, for every~$k\in\N$, 
\begin{equation} 
\label{e.baby.optimal.iter.init}
\E \left[ J(\cu_{k},e, \ahom_*(\cu_{k})  e) \right] \leq C 3^{-\alpha_0 k }
\,.
\end{equation}
We let~$n_0,n_1 \in \N$ to be auxiliary parameters to be selected below so that both~$(1-\beta)n_1 \geq n_1 - n_0-1$ and~$n_0 \geq \theta(n_1+h)+2$ are valid. Denote, for every~$k \in \N$,\footnote{In the general nonsymmetric case, set~$F_k:=\sup_{|e| \leq 1} \E [ J(\cu_{k},e, (\ahom -\khom)(\cu_{k})  e)+ J^*(\cu_{k},e, (\ahom +\khom)(\cu_{k})  e)]$.} 
\begin{equation*} 
%\label{e.}
F_k := \sup_{|e| \leq 1}\E \left[ J(\cu_{k},e, \ahom_*(\cu_{k})  e) \right]
\qand
\tilde F_k := \indc_{\{k \geq n_1+h\}}\sum_{j=n_1+h}^k 3^{\alpha(j-n_1)} F_j 
 \,.
\end{equation*}
For~$m\in \N$ with~$m\geq n_1+h$, we define~$k_m := \lceil \theta (m -n_1-h) \rceil + n_0$. We have~$k_m \geq n_0$ and~$\beta m < k_m < m$ whenever~$m\geq n_1+h$ because~$n_0 \geq \theta(n_1+h)+2$. We then insert~\eqref{e.baby.optimal.variance.pre},~\eqref{e.baby.optimal.minscale.pre},~\eqref{e.baby.optimal.h.pre} and~\eqref{e.baby.optimal.iter.init} into~\eqref{e.baby.optimal.iter.basic} and get, for every~$m \in \N$ with~$m\geq n_1 + h$, 
\begin{equation*} 
%\label{e.}
F_m 
\leq 
\frac14 \cdot 3^{-\alpha h} F_{m-h}
+ C F_{k_m}^2 +  C \bigl( 3^{-d(n_1-n_0)} + 3^{-d(1-\beta) n_1}\bigr)3^{-d(1-\theta)(m-n_1)}
\,.
\end{equation*}
The above estimate can be written, using the condition~$(1-\beta)n_1 \geq n_1 - n_0 -1$, as
\begin{equation*} 
%\label{e.}
F_m 
\leq 
\frac14 \cdot 3^{-\alpha h} F_{m-h}
+ C F_{k_m}^2 +  C 3^{-d(n_1- n_0) } 3^{-d(1-\theta)(m-n_1)}\,.
\end{equation*}
The summation then yields, by~\eqref{e.baby.optimal.iter.init}, for every~$m\in\N$ with~$m \geq n_1 + h$, 
\begin{equation*} 
%\label{e.}
\tilde{F}_{m+h} 
\leq
\frac14 \tilde{F}_{m} + C 3^{-\alpha_0 n_0}
+C\sum_{n=n_1+h}^m 3^{\alpha (n-n_1)} F_{k_n}^2
+ C 3^{-d(n_1- n_0) }\,.
\end{equation*}
We estimate the third term in the above display. For~$n_2:= n_1+h + \lceil \theta^{-1} (n_1 + h - n_0) \rceil $, we get, by~\eqref{e.baby.optimal.iter.init}, that
\begin{equation*} 
%\label{e.}
\sum_{n=n_1+h}^{n_2} 3^{\alpha (n-n_1)} F_{k_n}^2
\leq C 3^{-2\alpha_0 n_0} 3^{\alpha \theta^{-1} (n_1-n_0)}
\,.
\end{equation*}
For~$n > n_2$, since~$\theta \geq \nicefrac12$, we use
\begin{align*} 
%\label{e.}
n - 2k_n 
\leq n_2 -2k_{n_2}  
& \leq n_1 + h + \theta^{-1}(n_1 + h - n_0)  - 2(n_1+h-n_0) -2n_0
\notag \\ &
= - n_1 + h(\theta^{-1}-1) + \theta^{-1}(n_1  - n_0)  \,, 
\end{align*}
and get
\begin{align*} 
%\label{e.}
\sum_{n=n_2+1}^m 3^{\alpha (n-n_1)} F_{k_n}^2 
&
\leq
3^{-\alpha n_1}\Bigl( \max_{n \in \N \cap [n_2,m]} 3^{\alpha( n -2 k_n)}\Bigr)
\Bigl( \max_{n \in \N \cap [n_2,m] } 
3^{\alpha k_n} F_{k_n} \Bigr)
\sum_{n=n_2+1}^m 3^{\alpha k_n} F_{k_n}
\notag \\ & 
\leq 
C3^{\alpha \theta^{-1}(n_1-n_0) }\biggl( \sum_{n=n_2+1}^m 3^{\alpha (k_n-n_1)} F_{k_n}\biggr)^2
\leq
C3^{\alpha \theta^{-1}(n_1-n_0) }\tilde{F}_{m}^2
\,.
\end{align*}
Combining the previous four displays gives us
\begin{equation*} 
%\label{e.}
\tilde{F}_{m+h} \leq \biggl(\frac14   + C 3^{\alpha \theta^{-1} (n_1-n_0)}  \tilde{F}_{m} \biggr) \tilde{F}_{m} +  C \Bigl(3^{-\alpha_0 n_0} + 3^{-2\alpha_0 n_0} 3^{\alpha \theta^{-1} (n_1-n_0)}  + 3^{d(n_1-n_0)} \Bigr)\,.
\end{equation*} 
We then assume inductively that there exists constants~$\kappa_1 \in (0,\alpha_0]$ and~$K \in [1,\infty)$ to be selected below such that
\begin{equation*} 
%\label{e.}
\max_{n \in \{ n_1+h,\ldots,m\} } \tilde{F}_{n}  
\leq K 3^{-\kappa_1 n_0} 
\,.
\end{equation*}
By~\eqref{e.baby.optimal.iter.init}, this is valid with~$m \in \N \cap [n_1+h, n_1 + 2h]$ and~$K \geq C_{\eqref{e.baby.optimal.iter.init}}3^{2\alpha h +1}$. Thus, we obtain
\begin{align*} 
%\label{e.}
\tilde{F}_{m+h} & \leq \biggl(\frac14 + C K 3^{\alpha \theta^{-1} (n_1-n_0)- \kappa_1 n_0}  \biggr) K 3^{-\kappa_1 n_0} 
\notag \\ & \qquad 
+ \frac{C}{K}\Bigl( 1 +3^{ \alpha \theta^{-1} (n_1-n_0)-\kappa_1 n_0} + 3^{-d(n_1-n_0)+\kappa_1 n_0} \biggr) K 3^{-\kappa_1 n_0} 
\,.
\end{align*}
Define now auxiliary parameters
\begin{equation*} 
%\label{e.}
\kappa_2 := \frac14 \min\{ 1-\theta , \alpha_0 \}\,, \quad 
\kappa_1 := \frac12 (2\alpha \theta^{-1} + 1)\kappa_2
\qand
n_1 : =  \bigl\lceil (1+ \kappa_2 ) n_0 \bigr\rceil 
\,.
\end{equation*}
Notice that the conditions~$(1-\beta)n_1 \geq n_1 - n_0-1$ and~$n_0 \geq \theta(n_1+h)+2$ are valid provided that~$n_0 \geq C$. We also have
\begin{equation*} 
%\label{e.}
\left\{
\begin{aligned}
& \alpha \theta^{-1} (n_1-n_0)- \kappa_1 n_0 \leq 2 - \frac {\kappa_2}{2} n_0 \,,
\\
& 
-d(n_1-n_0)+\kappa_1 n_0 \leq  \bigl(-d + \alpha \theta^{-1} + \tfrac12 \bigr) \kappa_2n_0 
\leq
\bigl(-d + \tfrac{d+1}{d} + \tfrac12 \bigr) \kappa_2n_0 \leq 0
\,.
\end{aligned}
\right.
\end{equation*}
We then obtain
\begin{equation} 
\label{e.baby.bootstrap.almost.done}
\tilde{F}_{m+h}  \leq \biggl( \frac14 + CK3^{- \frac{1}{2} \kappa_2 n_0} + \frac{C}{K}\biggr) K 3^{-\kappa_1 n_0} \,.
\end{equation}
By taking~$K$ so large that~$ K \geq 2 C_{\eqref{e.baby.bootstrap.almost.done}}$ and then~$n_0$ so large that~$C_{\eqref{e.baby.bootstrap.almost.done}}K3^{- \frac{1}{2} \kappa_2 n_0} \leq \nicefrac14$, we obtain the induction step, proving also that~$\tilde F_{m} \leq C$ for every~$m\in\N$.
Since~$F_m \leq 3^{-\alpha (m-n_1)} \tilde{F}_m$ for every~$m\geq n_1+h$, and~$n_1$ and~$h$ were chosen to be constants above, we deduce that 
\begin{equation}
\label{e.good.J.bound.bootbark}
\sup_{|e| \leq 1}\E \left[ J(\cu_{m},e, \ahom_*(\cu_{m})  e) \right]
\leq 
C 3^{-\alpha m}\,, \quad \forall m\in\N\,.
\end{equation}
This yields~\eqref{e.improvedEconv} by~\eqref{e.magic.b.plug.E.ahom.pre} (or~\eqref{e.sss.magic.iden.nosymm.homs} in the nonsymmetric case). 

\smallskip

\emph{Step 3.} 
To conclude the proof, we next use~\eqref{e.wrap.app} with~$\b = \ahom$, which we rewrite here for convenience: for every~$m,n\in\N$ with~$\beta m < n<m$, 
\begin{align}
\label{e.wrap.app.bark}
\biggl| \a(\cu_m) -  \avsum_{z\in 3^n\Zd\cap\cu_m}
\a(z+\cu_n) \biggr|
\leq 
C  \avsum_{z\in 3^n\Zd\cap\cu_m}
\sum_{i=1}^d
J(z+\cu_n,e_i, \ahom e_i)
\,.
\end{align}
Using the~$\CFS(\beta,\Psi)$ condition, we get 
\begin{equation}
\label{e.fluct.splitting.one}
\biggl| \a(\cu_m) -  \avsum_{z\in 3^n\Zd\cap\cu_m}
\a(z+\cu_n) \biggr|
\leq
C 3^{-\alpha n} +\O_\Psi\bigl( C 3^{- \frac d2(m-n)} \bigr)
\end{equation}
and
\begin{equation}
\label{e.fluct.splitting.two}
\biggl|\avsum_{z\in 3^n\Zd\cap\cu_m}
\a(z+\cu_n) - \ahom \biggr|
\leq
C 3^{-\alpha n} +\O_\Psi\bigl( C 3^{- \frac d2(m-n)} \bigr)
\,.
\end{equation}
Combining the previous two displays completes the proof.   
\end{proof}

\begin{exercise}[Improved fluctuation bound]
\label{exercise.improved.fluct}
If~$\P$ satisfies the condition~$\CFS(\beta,\Psi,\Psi(M\cdot))$\footnote{Recall that~$\CFS(\beta,\Psi,\Psi')$ is defined in Definition~\ref{d.CFS.strong}. As explored in Section~\ref{ss.examples}, coefficient fields satisfying~$\CFS(\beta,\Psi,\Psi(M\cdot))$ and~$\CFS(\beta,\Psi,\Psi(M\cdot),\gamma)$ include many natural example finite range fields, approximate finite range fields, and local functions of Gaussian fields.} for some~$M\in[1,\infty)$---a stronger condition than~$\CFS(\beta,\Psi)$---then the fluctuation bound in~\eqref{e.aastar.big.smash.improved} can be improved to the following statement. For every~$\theta < \min\{ 1,\frac d2(1-\beta) \}$, there exists~$C(\theta,M,\data)<\infty$ such that
\begin{equation}
\label{e.fluctuations.up.to.one}
| \a(\cu_m) - \ahom |
+
|\a_*(\cu_m) - \ahom |
\leq 
\O_\Psi \bigl( C3^{-\theta m} \bigr)
\,, \qquad 
\forall m\in\N
\,.
\end{equation}
Here is a sketch of the proof. Suppose that~$\theta>0$ and~$\mathsf{K} \in[1,\infty)$ are such that 
\begin{equation}
\label{e.fluct.ass.theta}
| \a(\cu_m) - \ahom |
+
|\a_*(\cu_m) - \ahom |
\leq 
\O_\Psi \bigl( \mathsf{K} 3^{-\theta m} \bigr) 
\,, \quad \forall m\in\N\,.
\end{equation}
Show that, in place of~\eqref{e.fluct.splitting.one} and~\eqref{e.fluct.splitting.two}, we obtain, for each~$\alpha < \min\{ 1, d(1-\beta) \}$, the existence of a constant~$C(M,\alpha,\data)<\infty$ such that
\begin{equation}
\label{e.fluct.splitting.one.CFS.Psi.Psi}
\biggl| \a(\cu_m) -  \avsum_{z\in 3^n\Zd\cap\cu_m}
\a(z+\cu_n) \biggr|
\leq
C 3^{-\alpha n} +\O_\Psi\bigl( C \mathsf{K} 3^{- \frac d2(m-n) - \theta n} + C3^{-\frac d2(1-\beta)m} \bigr)
\end{equation}
and
\begin{equation}
\label{e.fluct.splitting.two.CFS.Psi.Psi}
\biggl|\avsum_{z\in 3^n\Zd\cap\cu_m}
\a(z+\cu_n) - \ahom \biggr|
\leq
C 3^{-\alpha n} +\O_\Psi\bigl( C\mathsf{K} 3^{- \frac d2(m-n)-\theta n} + C3^{-\frac d2(1-\beta)m} \bigr)
\,.
\end{equation}
Deduce that, if~$\theta< \min\{ \alpha, \frac d2(1-\beta) \}$, then an appropriate choice of~$n$ gives, for some~$\ep>0$, 
\begin{equation*}
| \a(\cu_m) - \ahom |
+
|\a_*(\cu_m) - \ahom |
\leq 
\O_\Psi \bigl( C\mathsf{K} 3^{-(\theta+\ep) m} \bigr) 
\end{equation*}
and therefore the exponent~$\theta$ in~\eqref{e.fluct.ass.theta} self-improves provided that~$\theta< \min\{ 1, \frac d2(1-\beta) \}$.
\end{exercise}

\begin{exercise}[Improved fluctuation bound II]
\label{exercise.improved.fluct.2}
If~$\P$ satisfies~$\CFS(\beta,\Psi,\Psi(M\cdot),\gamma)$ for some~$M\in[1,\infty)$ and~$\gamma>0$---a slightly stronger condition than~$\CFS(\beta,\Psi, \Psi(M\cdot))$---then the  exponent~$\theta$ obtained in the previous exercise can be improved to~$\theta \in (0,1) \cap (0, \frac d2(1-\beta) ]$, with the constants~$C$ in~\eqref{e.fluctuations.up.to.one} now depending additionally on a lower bound for~$\gamma$. This turns out to be the \emph{optimal} fluctuation bound in the case that~$\frac d2(1-\beta) < 1$, and it leads to optimal quantitative homogenization estimates.
\end{exercise}

\begin{exercise}
\label{exercise.corrector.bounds}
Using the result of Proposition~\ref{p.alpha.almostone} and the previous two exercises, obtain bounds on the first-order correctors under three different assumptions on~$\P$, namely that~$\P$ satisfies~(i) $\CFS(\beta,\Psi)$,~(ii) $\CFS(\beta,\Psi,\Psi(M\cdot))$, and~(iii) $\CFS(\beta,\Psi,\Psi(M\cdot),\gamma)$ for~$\gamma>0$. 
Hint: obtain a bound on~$\mathcal{E}(m)$ defined in~\eqref{e.mcE.0.nonsymm}, and then apply~\eqref{e.FCV.nonsymm.grad} and~\eqref{e.FCV.nonsymm.flux}. 
\end{exercise}

\subsection{Higher-order regularity and Liouville theorems}
\label{ss.higherorder.reg}

We next present the higher-order Liouville theorem and large-scale~$C^{k,1}$ regularity theory estimate. Except for the presence of the parameter~$\beta$, the more general mixing condition, and the slightly sharper integrability of~$\X$ in~\eqref{e.X}, the statement of the following theorem is a cut-and-paste from~\cite[Theorem 3.8]{AKMBook} apart from the weak norm bounds.

\smallskip

Analogously to~\eqref{e.Ak.def}, we define 
\begin{equation}  
\label{e.Ahomk.def}
\Ahom_k  = \Bigl \{ u \in H^1_{\mathrm{loc}}(\R^d) \, : \, -\nabla \cdot \ahom \nabla u = 0, \; \lim_{r\to \infty} r^{-k-1} \left\| u \right\|_{\underline{L}^2 \left( B_{r} \right)} = 0  \Bigr \} \,.
\end{equation}
By the classical Liouville theorem, this is the space of~$\ahom$-harmonic polynomials of degree at most~$k$.
To see this, one may observe that~$\nabla^{k+1} u$ must vanish for each~$u \in \Ahom_k$ by the classical pointwise bounds on the derivatives of~$\ahom$-harmonic functions:~$\|\nabla^{k+1} u\|_{L^\infty(B_{r})} \leq C(k,d,\lambda,\Lambda)r^{-k-1} \left\| u \right\|_{\underline{L}^2 \left( B_{2r} \right)}$.

\begin{theorem}[{Large-scale~$C^{k,1}$ regularity}]
\label{t.Ck1}
Assume that~$\P$ is a~$\Zd$--stationary measure on~$(\Omega,\F)$ and satisfies~$\CFS(\beta,\Psi)$. 
There exist~$C(\dataref)<\infty$, an exponent~$\alpha(\beta,d,\lambda,\Lambda)>0$ and a random variable~$\X$ satisfying
\begin{equation}
\label{e.X}
\X^{\frac d2(1-\beta)  } 
= 
\O_{\Psi}(C) 
\end{equation}
such that the following statements hold, for every~$k\in\N$ and~$x \in \cu_1$:
\begin{enumerate}
\item[{$\mathrm{(i)}_k$}] There exists~$C(k,\beta,d,\lambda,\Lambda)<\infty$ such that, for every~$u \in \A_k$, there exists~$p\in \overline{\A}_k$ such that, for every~$R\geq \X$,
\begin{multline} \label{e.liouvillec}
\left\| u - p \right\|_{\underline{L}^2(B_R(x))} 
+
\left\|  \nabla u - \nabla p \right\|_{\underline{H}^{-1}(B_R(x))} 
+
\left\|  \a \nabla u - \ahom \nabla p \right\|_{\underline{H}^{-1}(B_R(x))} 
\\ 
\leq 
C \Bigl( \frac{R}{\X} \Bigr)^{-\alpha}
\left\| p \right\|_{\underline{L}^2(B_R(x))}
\,.
\end{multline}

\item[{$\mathrm{(ii)}_k$}]For every~$p\in \overline{\A}_k$, there exists~$u\in \A_k$ satisfying~\eqref{e.liouvillec} for every~$R\geq \X$. 

\item[{$\mathrm{(iii)}_k$}]
There exists~$C(k,\beta,d,\lambda,\Lambda)<\infty$ such that, for every~$R\geq \X$ and~$u\in \A(B_R)$, there exists~$\phi \in \A_k$ such that, for every~$r \in \left[ \X,  R \right]$, we have the estimate
\begin{equation}
\label{e.intrinsicreg}
%\inf_{w\in\A_k(\Rd)}
r \left\| \nabla u - \nabla \phi \right\|_{\underline{L}^2(B_{r/2}(x))}
\leq 
C \left\| u - \phi \right\|_{\underline{L}^2(B_r(x))} 
\leq 
C \Bigl( \frac r R \Bigr)^{\!k+1} %\inf_{w\in\A_k(\Rd)} 
\left\| u \right\|_{\underline{L}^2(B_R(x))}.
\end{equation}
\end{enumerate}
In particular,~$\P$-almost surely, we have that\footnote{The second equality of~\eqref{e.dimensionofAk} holds in the case of equations. For systems we only have the first equality.}, for every~$k\in\N$,
\begin{equation} 
\label{e.dimensionofAk}
\dim(\A_k) = \dim(\Ahom_k) =  \binom{d+k-1}{k} + \binom{d+k-2}{k-1}.
\end{equation}
\end{theorem}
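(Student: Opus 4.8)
The plan is to prove Theorem~\ref{t.Ck1} by following the inductive scheme of~\cite[Chapter~3]{AKMBook}, which is purely deterministic once the harmonic approximation statement is available with the correct stochastic integrability. The key observation is that the harmonic approximation result I would use, namely Proposition~\ref{p.harm.we.have} (or equivalently Corollary~\ref{c.quant.DP.interior}), already provides a minimal scale~$\X$ satisfying~\eqref{e.X.integrability}, and this is the \emph{only} place where randomness enters. Everything else is a scale-by-scale iteration argument analogous to the proof of Theorem~\ref{t.C01}. So the strategy is: take~$\X$ to be the maximum of the minimal scales from Theorem~\ref{t.C01} (for the base case~$k=0$, which is just Theorem~\ref{t.C11.sharp} with the improved integrability) and from Proposition~\ref{p.harm.we.have}; then prove $\mathrm{(i)}_k$, $\mathrm{(ii)}_k$, $\mathrm{(iii)}_k$ by induction on~$k$, with the dimension count~\eqref{e.dimensionofAk} falling out of $\mathrm{(i)}_k$ and $\mathrm{(ii)}_k$ as a corollary.

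First I would set up the induction. The base case $k=0$: $\mathcal{A}_0$ consists of solutions growing strictly sublinearly, hence by the Liouville statement (Corollary~\ref{cor.Liouville}, or the argument behind Theorem~\ref{t.C11.sharp}) these are exactly constants, so $\dim \mathcal{A}_0 = 1 = \dim \overline{\mathcal{A}}_0$, matching the binomial formula. The inductive step has three parts that feed into each other. For $\mathrm{(iii)}_k$ (the $C^{k,1}$ excess decay), I would run an excess-decay iteration exactly as in Step~2 of the proof of Theorem~\ref{t.C11.sharp}: given $u \in \mathcal{A}(B_R)$, at each dyadic scale approximate $u$ by an element of $\overline{\mathcal{A}}_k$ (the $\ahom$-harmonic polynomials of degree $\le k$, whose excess decays like $(r/s)^{k+1}$ by interior estimates for constant-coefficient equations) using the harmonic approximation of Proposition~\ref{p.harm.we.have}, then ``correct'' this $\ahom$-harmonic polynomial into an element of $\mathcal{A}_k$ using $\mathrm{(ii)}_k$, picking up an error controlled by $(\X/r)^\alpha$ which is made summable by choosing the implicit $\delta$ small. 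The iteration yields, for $s\ge\X$, decay of $r^{-(k+1)}\inf_{\psi\in\mathcal{A}_k}\|u-\psi\|_{\underline{L}^2(B_r)}$, and then a Cauchy argument across the approximants (using $\mathrm{(i)}_k$ to compare them, exactly as in Step~3 of Theorem~\ref{t.C11.sharp}) produces a single $\phi\in\mathcal{A}_k$ satisfying~\eqref{e.intrinsicreg}; the gradient bound follows from the Caccioppoli inequality.

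For $\mathrm{(i)}_{k+1}$ and $\mathrm{(ii)}_{k+1}$ I would argue as follows. Given $u\in\mathcal{A}_{k+1}$ (growth like $o(|x|^{k+2})$), apply $\mathrm{(iii)}_k$ on large balls $B_R$: this gives $\phi_R\in\mathcal{A}_k$ with $\|u-\phi_R\|_{\underline{L}^2(B_r)}\le C(r/R)^{k+1}\|u\|_{\underline{L}^2(B_R)}$; since $\|u\|_{\underline{L}^2(B_R)}=o(R^{k+2})$, letting $R\to\infty$ along a sequence, the approximants $\phi_R$ converge (after subtracting their lower-order parts, using $\mathrm{(i)}_k$ to control the comparison) to an element $\phi_\infty\in\mathcal{A}_k$, and $u-\phi_\infty$ has growth $o(|x|^{k+2})$ but with its $k$-th order part killed, so it is ``homogeneous of degree $k+1$'' in an appropriate sense. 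This sets up a linear isomorphism between $\mathcal{A}_{k+1}/\mathcal{A}_k$ and the space of $\ahom$-harmonic polynomials homogeneous of degree $k+1$: injectivity comes from the Liouville uniqueness (if the degree-$(k+1)$ part vanishes, one drops down a degree), and surjectivity is exactly the content of $\mathrm{(ii)}_{k+1}$, which I would prove by a Perron-type / direct construction — build a solution matching a given homogeneous $\ahom$-harmonic polynomial $p$ by solving $-\nabla\cdot\a\nabla w = \nabla\cdot\f$ in expanding cubes where $\f$ is a suitable forcing term coming from the correctors applied to $p$, and invoke Theorem~\ref{t.zeroslope} (the finite-volume zero-slope corrector estimate for equations with divergence-form right-hand side) to control the error; this is precisely where the right-hand side theory of Section~\ref{ss.rhs} is needed. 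Iterating the isomorphism over all degrees $j\le k$ gives $\dim\mathcal{A}_k = \sum_{j=0}^k \dim\{\ahom\text{-harmonic homogeneous polys of degree }j\}$, which equals $\sum_{j=0}^k\dim\{\text{harmonic homogeneous polys of degree }j\}$ after the affine change of variables diagonalizing $\ahom$, and this classical count is exactly $\binom{d+k-1}{k}+\binom{d+k-2}{k-1}$ — giving~\eqref{e.dimensionofAk}.

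The main obstacle I anticipate is the construction in $\mathrm{(ii)}_k$: producing, for each $\ahom$-harmonic polynomial $p$ of degree $k$, an honest heterogeneous solution $u\in\mathcal{A}_k$ that approximates $p$ to the stated order on all scales $\ge\X$. One has to iterate the ``corrector ansatz'' — writing $u \approx p + \sum$ (first-order correctors contracted against $\nabla p$) $+ \dots$ — and at each stage the leftover forcing term is a divergence of a stationary field that must be absorbed using Theorem~\ref{t.zeroslope}, which is why the mixing hypothesis $\CFS(\beta,\Psi)$ and the integrability $\X^{\frac d2(1-\beta)}=\O_\Psi(C)$ propagate correctly. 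Keeping track of the growth rates of all the auxiliary fields so that the series closes (and the resulting $u$ genuinely lies in $\mathcal{A}_k$, i.e.\ has the right polynomial growth and no faster), while only ever invoking the deterministic machinery plus the single random minimal scale, is the delicate bookkeeping; but structurally it is identical to~\cite[Section~3.4]{AKMBook}, so no genuinely new idea is required beyond substituting our sharper inputs (Theorem~\ref{t.C01}, Proposition~\ref{p.harm.we.have}, Theorem~\ref{t.zeroslope}) for theirs.
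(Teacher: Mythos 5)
Your overall skeleton matches the paper's: an induction on~$k$ in which $\mathrm{(i)}_k$, $\mathrm{(ii)}_k$, $\mathrm{(iii)}_k$ feed into one another, with the only stochastic input being the minimal scale from the harmonic approximation (Proposition~\ref{p.harm.we.have}) and the base case supplied by Theorem~\ref{t.C01}, and with the dimension formula~\eqref{e.dimensionofAk} read off from the isomorphism of $\A_k$ with $\Ahom_k$. However, your treatment of the crucial step $\mathrm{(ii)}_k$ contains a genuine gap. You propose to construct, for a given $p\in\Ahom_k$, an element of $\A_k$ by an iterated corrector ansatz whose leftover forcing is absorbed via Theorem~\ref{t.zeroslope}. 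But Theorem~\ref{t.zeroslope} (and the whole theory of Section~\ref{ss.rhs}) is formulated for a \emph{stationary} random vector field $\f$, whereas the forcing terms arising in a higher-order corrector expansion around a degree-$k$ polynomial with $k\ge 2$ are products of stationary fields with polynomially growing factors (derivatives of $p$ of positive degree); these are not covered by Theorem~\ref{t.zeroslope} as stated, and developing the requisite higher-order corrector/nonstationary-forcing theory is a substantial project not contained in the paper. The paper avoids this entirely: in its Step~1 it solves the \emph{homogeneous} Dirichlet problem in the expanding balls $B_{2^m\X}$ with boundary data the $\ahom$-harmonic polynomial $q$ (so the solution $v_m$ lies in $\A(B_{2^m\X})$, since $-\nabla\cdot\ahom\nabla q=0$), uses the quantitative two-scale estimate of Proposition~\ref{p.DP} together with Theorem~\ref{t.subadd.converge.nosymm} to get $\|v_m-q\|\lesssim (\X/r_m)^{\alpha}\|q\|$, and then telescopes $v_{m+1}-v_m$ using $\mathrm{(iii)}'_{k-1}$ and $\mathrm{(ii)}_{k-1}$ to extract a limit in $\A_k$. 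No right-hand-side theory is needed, and this is also the route in~\cite[Chapter~3]{AKMBook}, contrary to your assertion that your construction is structurally identical to it.

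A second, smaller omission: the excess-decay iteration you sketch for $\mathrm{(iii)}_k$ cannot yield the exponent $k+1$ directly, because each step incurs a homogenization error of size $(\X/s)^{\alpha}$, so the iteration only closes with exponent $k+1-\alpha$. The paper handles this by proving a weakened statement $\mathrm{(iii)}'_k$ (with exponent $k+1-\alpha$) inside the induction loop and then, in a separate final step, upgrading $\mathrm{(iii)}'_{k+1}$ to the sharp $\mathrm{(iii)}_k$ by approximating $u$ in $\A_{k+1}$ and peeling off the degree-$(k+1)$ component using $\mathrm{(i)}_{k+1}$ and $\mathrm{(ii)}_{k+1}$. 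Your write-up asserts the $(r/R)^{k+1}$ decay as a direct output of the iteration, which as stated is not obtainable; you would need to incorporate this bootstrap (or an equivalent device) to recover the theorem as formulated.
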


The proof of Theorem~\ref{t.Ck1} we give follows closely the presentation of~\cite[Proof of Theorem 3.8]{AKMBook}, although the argument is much more succinct here. Some small technical details that are omitted can be found in~\cite{AKMBook}.

\begin{proof} 
Without loss of generality, we may take~$x=0$. The proof is via induction, the case~$k=0$ being valid by Theorem~\ref{t.C01}. We therefore assume that for some~$k\in\N$, we have~$\mathrm{(i)}_{k-1}$,~$\mathrm{(ii)}_{k-1}$ and slightly weaker version of~$\mathrm{(iii)}_{k-1}'$, namely
\begin{enumerate}
\item[{$\mathrm{(iii)}_k'$}]
There exists~$C(k,\beta,d,\lambda,\Lambda)<\infty$ such that, for every~$R\geq \X$ and~$u\in \A(B_R)$, there exists~$\phi \in \A_k$ such that, for every~$r \in \left[ \X, R \right]$, we have the estimate
\begin{equation}
\label{e.intrinsicreg.prime}
%\inf_{w\in\A_k(\Rd)}
r \left\| \nabla u - \nabla \phi \right\|_{\underline{L}^2(B_{r/2})}  
+ 
\left\| u - \phi \right\|_{\underline{L}^2(B_r)} 
\leq 
C \left( \frac r R \right)^{k+1-\alpha} %\inf_{w\in\A_k(\Rd)} 
\left\| u \right\|_{\underline{L}^2(B_R)} \, .
\end{equation}
\end{enumerate}
The parameter~$\alpha(\beta,d,\lambda,\Lambda) \in (0,\frac12)$ will be fixed in~\eqref{e.Ckone.fixalpha}.  

\smallskip

The induction step breaks into the following three implications:
\begin{align}   
\label{e.ind1}
& \mathrm{(i)}_{k-1}, \ \mathrm{(ii)}_{k-1} \ \ \text{and} \ \ \mathrm{(iii)}_{k-1}'  \implies \mathrm{(ii)}_{k}
\,,
\\ 
\label{e.ind2}
& \mathrm{(i)}_{k-1} \ \ \text{and} \ \ \mathrm{(ii)}_{k}   \implies \mathrm{(i)}_{k}
\,, \\ 
\label{e.ind3}
& \mathrm{(i)}_{k}\ \ \text{and} \ \ \mathrm{(ii)}_{k}  \implies \mathrm{(iii)}_{k}'
\,.
\end{align}
Once the induction loop is closed, we upgrade~$\mathrm{(iii)}_{k}'$ to the stronger statement~$\mathrm{(iii)}_{k}$ by appealing to a fourth implication, namely 
\begin{align}
\label{e.ind4}
\mathrm{(i)}_{k+1}, \ \mathrm{(ii)}_{k+1} \ \ \text{and} \ \ \mathrm{(iii)}_{k+1}'  \implies \mathrm{(iii)}_{k}
\,.
\end{align}
Notice that, by the inequality 
\begin{align*}  
\| \nabla f \|_{\underline{H}^{-1}(B_R)} \leq C  \|  f \|_{\underline{L}^{2}(B_R)} 
\,,
\end{align*}
it is enough to prove statement~\eqref{e.liouvillec} only for the~$L^2$-term and for the fluxes. 

\smallskip

\emph{Step 1. Proof of~\eqref{e.ind1}.}  Set~$r_m := 2^m \X$. By combining a rescaled version of Theorems~\ref{t.quant.DP} and~\ref{t.quant.DP.nosymm},  we find small~$\alpha(\beta,d,\lambda,\Lambda) \in (0,\frac12)$ such that, for every~$q \in \Ahom_k$ and~$m \in \N$ there exists~$v_m \in \A(B_{r_m})$ such that
\begin{equation} \label{e.Ckone.fixalpha}
\left\| v_{m} - q \right\|_{\underline{L}^2(B_{r_m})} 
+ 
\left\|  \a \nabla v_m - \ahom \nabla q \right\|_{\underline{H}^{-1}(B_{r_m})} 
\leq 
C_k \Bigl( \frac{r_m}{\X} \Bigr)^{-2\alpha} \left\| q \right\|_{\underline{L}^2(B_{r_m})}
\, .
\end{equation}
Notice that this choice of~$\alpha$ fixes the parameter in~$\mathrm{(iii)}_{k-1}'$ as well. 
Indeed, we take boundary values~$f = q$ in Theorem~\ref{t.quant.DP}, we see by the uniqueness of the Dirichlet problem that~$\bar u = q$. To conclude with~\eqref{e.Ckone.fixalpha}, we then use~$r \left\| \nabla q \right\|_{L^\infty(B_r)} + r^2 \left\| \nabla^2 q \right\|_{L^\infty(B_r)} \leq C_k \left\| q \right\|_{\underline{L}^2(B_r)}$ to estimate the terms on the right in~\eqref{e.quant.DP}. 
Next, it follows from~\eqref{e.Ckone.fixalpha}, by the triangle inequality and~$\mathrm{(iii)}_{k-1}'$, that, for every~$n \in\N$, there exists~$\phi_{n} \in \A_{k-1}$ such that, for every~$r \in  [\X , r_n]$,   
\begin{align*}  
\left\| v_{n+1} {-} v_{n} {-} \phi_{n} \right\|_{\underline{L}^2(B_r)}   
\leq 
C
\Bigl(\frac{r}{r_n}\Bigr)^{k-\alpha}  \left\| v_{n+1} {-} v_{n} \right\|_{\underline{L}^2(B_{r_n})} 
\leq 
C \Bigl( \frac{r_n}{\X} \Bigr)^{-\alpha}  \left\| q \right\|_{\underline{L}^2(B_r)}
\,.
\end{align*}
Using~$\mathrm{(i)}_{k-1}$, the above two displays and the triangle inequality together with the homogeneity of~$q$, we get, for every~$r \in [r_n \vee C\X,\infty)$,  
\begin{align*}  
\| \phi_{n} \|_{\underline{L}^2(B_r)} 
& 
\leq 
C\Bigl( \frac{r}{r_n} \Bigr)^{k-1} \| \phi_{n}  \|_{\underline{L}^2(B_{r_n})} 
\\ &
\leq 
C \Bigl( \frac{r}{r_n} \Bigr)^{k-1}  \Bigl( \frac{r_n}{\X} \Bigr)^{-\alpha}  \left\| q \right\|_{\underline{L}^2(B_{r_n})}
=
C \Bigl( \frac{r}{\X} \Bigr)^{-\alpha}  \Bigl( \frac{r_n}{r} \Bigr)^{1-\alpha}  \left\| q \right\|_{\underline{L}^2(B_{r})}
\,.
\end{align*}
Setting~$u_n = v_n - \sum_{j=1}^{n-1} \phi_j$, so that~$u_n = v_m +  \sum_{j=m}^{n-1}(v_{j+1} - v_j -\phi_j) + \sum_{j=1}^{m-1} \phi_j$,  the triangle inequality yields, for every~$m,n \in \N$ with~$m \leq n$, that
\begin{align*}  
\| u_n - q \|_{\underline{L}^2(B_{r_m})}  
& \leq 
\| v_m - q \|_{\underline{L}^2(B_{r_m})} 
+ \sum_{j=m}^{n-1}  \| v_{j+1} {-} v_j {-} \phi_j \|_{\underline{L}^2(B_{r_m})}
+ \sum_{j=1}^{m-1}\| \phi_j \|_{\underline{L}^2(B_{r_m})}
\notag \\ &
\leq 
C\biggl(1+ \sum_{j=m}^{n-1} \Bigl( \frac{r_m}{r_j}\Bigr)^{\! \alpha} +  \sum_{j=1}^{m-1} \Bigl( \frac{r_j}{r_m} \Bigr)^{1-\alpha}    \biggr)  \Bigl( \frac{r_m}{\X} \Bigr)^{-\alpha}   \left\| q \right\|_{\underline{L}^2(B_{r_m})}
\notag \\ &
\leq
C \Bigl( \frac{r_m}{\X} \Bigr)^{-\alpha}   \left\| q \right\|_{\underline{L}^2(B_{r_m})}
\,.
\end{align*}
Very similarly, using also the Caccioppoli inequality, 
\begin{align}  
\lefteqn{
\left\|  \a \nabla u_n - \ahom \nabla q \right\|_{\underline{H}^{-1}(B_{r_m})} 
} \quad &
\notag \\ & 
\leq
\left\|  \a \nabla v_m {-} \ahom \nabla q \right\|_{\underline{H}^{-1}(B_{r_m})} 
 + \sum_{j=m}^{n-1}  \| \a \nabla (v_{j+1} {-} v_j {-} \phi_j) \|_{\underline{H}^{-1}(B_{r_m})} 
+  \sum_{j=1}^{m-1 } \| \a \nabla \phi_j \|_{\underline{H}^{-1}(B_{r_m})} 
\notag \\ & 
\leq
\left\|  \a \nabla v_m {-} \ahom \nabla q \right\|_{\underline{H}^{-1}(B_{r_m})} 
 + C r_m\biggl( \sum_{j=m}^{n-1}   \|\nabla (v_{j+1} {-} v_j {-} \phi_j) \|_{\underline{L}^2(B_{r_m})}
+  \sum_{j=1}^{m-1 }  \| \nabla \phi_j \|_{\underline{L}^2(B_{r_m})} \biggr)
\notag \\ & 
\leq 
C \Bigl( \frac{r_m}{\X} \Bigr)^{-\alpha}   \left\| q \right\|_{\underline{L}^2(B_{r_m})}
\,.
\end{align}
By compactness, the limit~$u = \lim_{n \to \infty} u_n$ exists in~$\mathcal{A}(\R^d)$, up to a subsequence,  and~$u$ is the desired function satisfying the estimate in~\eqref{e.liouvillec} by the above bounds. 

\smallskip

\emph{Step 2. Proof of~\eqref{e.ind2}.} We first show the following weaker statement. There exists a constant~$C_k(\dataref)<\infty$ such that, for every~$u\in \A_k$ there exists~$p \in \Ahom_k$ satisfying, for every~$R\geq \X$,
\begin{equation}  \label{e.ind2.pre}
\left\| u - p \right\|_{\underline{L}^2(B_R)} 
\leq 
C \Bigl( \frac{R}{\X} \Bigr)^{\!-\alpha}
\left\| p \right\|_{\underline{L}^2(B_R)} \,.
\end{equation}
The full statement~\eqref{e.ind2} follows then from the above estimate and~$\mathrm{(ii)}_k$ as follows. 
Take~$p \in \overline{\A}_k$ be as in~\eqref{e.ind2.pre}. Assuming~$R \geq C_k \X$ for some~$C_k(\dataref)<\infty$, by~$\mathrm{(ii)}_k$ there exists~$\tilde u \in  \A_k$ such that
\begin{equation}  \label{e.ind2.prepre}
\left\| \tilde u - p \right\|_{\underline{L}^2(B_R)} 
+
\left\|  \a \nabla \tilde u - \ahom \nabla p \right\|_{\underline{H}^{-1}(B_R)} 
\leq
C \Bigl( \frac{R}{\X} \Bigr)^{\!-\alpha}
\left\| p \right\|_{\underline{L}^2(B_R)} 
\,.
\end{equation}
Thus, by the triangle inequality and the Caccioppoli inequality,
\begin{align*}  
\left\|  \a \nabla u - \ahom \nabla p \right\|_{\underline{H}^{-1}(B_R)} 
& 
\leq
\left\|  \a \nabla ( u - \tilde u )\right\|_{\underline{H}^{-1}(B_R)} 
+ 
\left\|  \a \nabla \tilde u - \ahom \nabla p \right\|_{\underline{H}^{-1}(B_R)} 
\\ & 
\leq 
C 
\left\| u - \tilde u \right\|_{\underline{L}^2(B_{2R})}  
+ 
\left\|  \a \nabla \tilde u - \ahom \nabla p \right\|_{\underline{H}^{-1}(B_R)}
 \,,
\end{align*}
which, together with~\eqref{e.ind2.pre},~\eqref{e.ind2.prepre} and the triangle inequality, proves~\eqref{e.ind2} by giving up a volume factor if~$R \in [\X,C_k^{\nicefrac1\alpha}\X]$. 

\smallskip

To prove~\eqref{e.ind2.pre}, we will employ the harmonic approximation. 
By Proposition~\ref{p.harm.we.have}, for every~$n \in \N$ we find~$\theta(d,\lambda,\Lambda) \in (0,\tfrac12]$~and~$C_k(d,\lambda,\Lambda)<\infty$ such that, for every~$r \in [ C_n \X,\infty)$, 
\begin{equation*}  
E_{n}(\theta r)
\leq 
\frac16 \theta^{1-\alpha} E_{n}(r) 
+ \frac16  \Bigl( \frac{r}{\X} \Bigr)^{-\alpha} r^{-k} \left\| p_{r,k} \right\|_{\underline{L}^2(B_r)} \, , \qquad  E_{n}(r) := \inf_{p \in \Ahom_n} r^{-n} \| u - p \|_{\underline{L}^2(B_{r})} 
\,.
\end{equation*}
Here~$p_{r,n} \in \Ahom_n$ is the~$\ahom$-harmonic polynomial minimizing~$E_{n}(r)$.
We integrate the above inequality with~$n= k+1$ against the Haar measure~$\frac{dr}{r}$ and change variables to obtain, for every~$r,R \in \R$ with~$r \in [ C_n \X,\infty)$ and~$R \geq \theta^{-1} r$,
\begin{align*} % \label{e.}
\int_{\theta r}^{\theta R} E_{k+1}(t) \, \frac{dt}{t}
&
=
\int_{r}^{R} E_{k+1}(\theta t) \, \frac{dt}{t}
\notag \\ &
\leq 
\frac12 \int_{r}^{\theta R} E_{k+1}(t) \, \frac{dt}{t}
+ 
\frac12 \int_{\theta R}^R E_{k+1}(t) \, \frac{dt}{t}
+ 
\frac12 \int_{r}^{R}  \Bigl( \frac{t}{\X} \Bigr)^{-\alpha} \left\| p_{t,k+1} \right\|_{\underline{L}^2(B_t)} \, \frac{dt}{t^{k+2}}
\,.
\end{align*}
Thus, after reabsorption and observing that the second term converges to zero as~$R \to \infty$ since~$t^{-(k+1)} \| u \|_{\underline{L}^{2}(B_t)} \to 0$ as~$t \to \infty$, we deduce that
\begin{equation} \label{e.Ekplusone.est.pre}
\int_{\theta r}^\infty E_{k+1}(t) \, \frac{dt}{t} 
\leq 
 \int_{r}^\infty  \Bigl( \frac{t}{\X} \Bigr)^{-\alpha} \left\| p_{t,k+1} \right\|_{\underline{L}^2(B_t)} \, \frac{dt}{t^{k+2}}
 \,.
\end{equation}
Next, for~$j,k \in\N$ and~$p \in \Ahom_k$, let~$\pi_j p(x) := \sum_{|\alpha|=j} \frac1{\alpha!}(\partial_{\alpha} p) (0) x^{\alpha}$ be the~$j$th degree part of~$p$. We set
\begin{equation*} % \label{e.}
\omega_j(r):= 
\int_{r}^\infty \Bigl( \frac{r}{t} \Bigr)^{k+1+\alpha} \left\| \pi_j p_{t,k} \right\|_{\underline{L}^2(B_t)} \, \frac{dt}{t} 
\qand 
\omega(r) 
:=
\sum_{j=0}^k 
\omega_j(r)
\,.
\end{equation*}
By the triangle inequality and~$\ahom$-harmonicity of~$p_{t,k}$, we have 
\begin{equation*} % \label{e.}
\int_{r}^\infty \Bigl( \frac{r}{t} \Bigr)^{k+1+\alpha} \left\| p_{t,k} \right\|_{\underline{L}^2(B_t)} \, \frac{dt}{t}  
\leq 
\omega(r) 
\leq 
C_k \int_{r}^\infty \Bigl( \frac{r}{t} \Bigr)^{k+1+\alpha} \left\| p_{t,k} \right\|_{\underline{L}^2(B_t)} \, \frac{dt}{t}  
 \,.
\end{equation*}
With this notation,~\eqref{e.Ekplusone.est.pre} implies that
\begin{equation} \label{e.Ekplusone.est} 
\int_{\theta r}^\infty E_{k+1}(t) \, \frac{dt}{t} 
\leq  
\Bigl( \frac{r}{\X} \Bigr)^{-\alpha} r^{-(k+1)} \omega(r)
+
C \int_{r}^\infty \Bigl( \frac{t}{\X} \Bigr)^{-\alpha} \bigl | \nabla^{k+1} p_{t,k+1} \bigr | \, \frac{dt}{t} 
\,.
\end{equation}
We next argue that the last term on the right is small. For this, notice first that, for every~$s,t \in \R$ with~$0< t \leq s \leq 2t$, 
\begin{equation*} % \label{e.}
\bigl| \nabla^{k+1} p_{s,k+1} - \nabla^{k+1}  p_{t,k+1} \bigr|
\leq 
Cs^{-(k+1)}\| p_{s,k+1} - p_{t,k+1} \|_{\underline{L}^2(B_t)} 
\leq
C E_{k+1}(2t) \,.
\end{equation*}
Now,~$|\nabla^{k+1} p_{r,k+1}| \to 0$ as~$r\to \infty$ since~$r^{-k-1} \| u \|_{\underline{L}^{2}(B_r)} \to 0$ as~$r \to \infty$. Thus, we also obtain, again by the triangle inequality and telescope summation, that
\begin{equation*}  
\sup_{t \in [r ,\infty) }|\nabla^{k+1} p_{t,k+1}| 
\leq 
C \int_{r/2}^\infty \sup_{s \in (t,2t)} |\nabla^{k+1} (p_{s,k+1} - p_{t,k+1}) | \, \frac{dt}{t}    
\leq 
C \int_{\theta r}^\infty E_{k+1}(t) \, \frac{dt}{t}
\,.
\end{equation*} 
Combining the above displays, we deduce that
\begin{equation} \label{e.Ekplusone.est.again}
\int_{\theta r}^\infty E_{k+1}(t) \, \frac{dt}{t}  
\leq 
C \int_{\theta r}^\infty \Bigl( \frac{t}{\X} \Bigr)^{-\alpha}  \, \frac{dt}{t}  
\int_{\theta r}^\infty E_{k+1}(t) \, \frac{dt}{t}  
+
\Bigl( \frac{r}{\X} \Bigr)^{-\alpha} r^{-(k+1)} \omega(r)
\,,
\end{equation}
and we may reabsorb the first term on the right using the lower bound~$r \geq C_k \X$. Collecting estimates so far, we have shown that
\begin{equation}  
\int_{\theta r}^\infty E_{k+1}(t) \, \frac{dt}{t} + \sup_{t \in [r ,\infty) }|\nabla^{k+1} p_{t,k+1}| 
\leq   
C \Bigl( \frac{r}{\X} \Bigr)^{-\alpha} r^{-(k+1)} \omega(r)  \,.
\end{equation}
The next step is to estimate the size of~$\omega(r)$, and for that we estimate homogeneous parts separately. By changing variables, using the above display, homogeneity and the triangle inequality, we deduce that
\begin{align} \notag % \label{e.}
\omega_j(2r)  & 
= 
\int_{2r}^\infty \Bigl( \frac{2r}{t} \Bigr)^{k+1+\alpha} \left\| \pi_j p_{t,k} \right\|_{\underline{L}^2(B_t)} \, \frac{dt}{t} 
\notag \\ &
=
2^j 
\int_{r}^\infty \Bigl( \frac{r}{t} \Bigr)^{k+1+\alpha} \left\| \pi_j p_{2 t,k} \right\|_{\underline{L}^2(B_{t})} \, \frac{dt}{t} 
\notag \\ &
\leq 
2^j \omega_j(r) 
+ 
C 
\int_{r}^\infty \Bigl( \frac{r}{t} \Bigr)^{k+1+\alpha} \left\| p_{2 t,k} - p_{t,k} \right\|_{\underline{L}^2(B_{t})} \, \frac{dt}{t} 
\notag \\ &
\leq 
2^j \omega_j(r) 
+
C r^{k+1}
\int_{r}^\infty  \Bigl( \frac{r}{t} \Bigr)^{\! \alpha} \Bigl( E_{k+1}(t)  +  \sup_{t \in [r ,\infty) }|\nabla^{k+1} p_{t,k+1}| \Bigr) \, \frac{dt}{t}
\notag \\ &
\leq 
2^j \omega_j(r)  + C \Bigl( \frac{r}{\X} \Bigr)^{-\alpha} \omega(r)
\notag
\,.
\end{align}
Thus, after summation and a simple iteration argument, we deduce that, for every~$r,t \in [C_k\X,\infty)$ with~$t\geq r$, 
\begin{equation}  \label{e.omega.bounds}
 \omega(t) \leq C \Bigl( \frac{t}{r} \Bigr)^k  \omega(r) 
 \qand
 \sum_{j=0}^{k-1} \omega_j(t) 
 \leq
 C \Bigl( \frac{t}{r} \Bigr)^{k-\alpha} \omega(r) 
 \,. 
\end{equation} 
Next, since~$E_k(r) \leq Cr \int_{r}^{2r} E_{k+1}(t) \, \frac{dt}{t} + C r \sup_{t \in [r ,2r] }|\nabla^{k+1} p_{t,k+1}|$, 
we get by the previous display and~\eqref{e.Ekplusone.est.again} that
\begin{equation}  \label{e.Ek.est}
\int_{r}^\infty E_k(t) \, \frac{dt}{t} 
\leq 
C \int_{r}^\infty \Bigl( \frac{t}{\X} \Bigr)^{-\alpha} t^{-k} \omega(t) \, \frac{dt}{t}  
\leq  
C \Bigl( \frac{r}{\X} \Bigr)^{-\alpha} r^{-k} \omega(r) 
\end{equation}
Comparing this to~\eqref{e.Ekplusone.est}, we see how using~$r^{-k-1} \| u \|_{\underline{L}^{2}(B_r)} \to 0$ as~$r \to \infty$ allowed us to lower the degree from~$k+1$ to~$k$.

Next, applying~\eqref{e.Ek.est} it is possible to identify a homogeneous polynomial~$\tilde p_{k} \in \Ahom_k$ by showing that~$\{ \nabla^k p_{t,k}\}_t$ is a Cauchy sequence. Indeed, by the triangle inequality, we have as above that, for every~$r\in [C_k \X,\infty)$, 
\begin{align*} % \label{e.}
\lefteqn{\sup_{t \in (2r,\infty)}
\bigl| \nabla^{k} p_{t,k} - \nabla^{k}  p_{r,k} \bigr|
} \qquad &
\notag \\  & 
\leq 
C \int_{r}^\infty \sup_{s \in (t,2t)} |\nabla^{k} (p_{s,k} - p_{t,k}) | \, \frac{dt}{t}    
\leq 
C \int_{r}^\infty E_{k}(t) \, \frac{dt}{t}
\leq
C \Bigl( \frac{r}{\X} \Bigr)^{-\alpha} r^{-k} \omega(r) \,.
\end{align*}
The left side tends to zero as~$r\to \infty$ and, hence, there exists a homogeneous~$\tilde p_k \in \Ahom_k$ such that, for every~$r\in [C_k \X,\infty)$, 
\begin{equation*} % \label{e.}
\bigl| \nabla^{k} p_{r,k} - \nabla^{k}  \tilde p_{k} \bigr| 
\leq 
C \Bigl( \frac{r}{\X} \Bigr)^{-\alpha} r^{-k} \omega(r) \,.
\end{equation*}
That~$\tilde p_k$ belongs to~$\Ahom_k$ is a consequence of the fact that~$\pi_k p_{r,k} \in \Ahom_k$. 
Combining the above display with~\eqref{e.Ek.est} and~\eqref{e.omega.bounds} yields 
\begin{align*} % \label{e.}
\lefteqn{
r^{-k} \left\| u - \tilde p_{k} \right\|_{\underline{L}^2(B_r)}  
\leq
C
\int_{r}^{2r} t^{-k} \left\| u - \tilde p_{k} \right\|_{\underline{L}^2(B_t)}\, \frac{dt}{t} 
} \qquad &
\notag\\ &
\leq
C 
\int_{r}^{2r} E_k(t) \, \frac{dt}{t} 
+
C \int_{r}^{2r} \bigl | \nabla^k p_{t,k} - \nabla^k \tilde p_{k} \bigr| \, \frac{dt}{t} 
+ 
C \sum_{j=0}^{k-1} \int_{r}^{2r}  t^{-k} \left\| \pi_j p_{t,k} \right\|_{\underline{L}^2(B_t)} \, \frac{dt}{t} 
\notag\\ &
\leq
C \Bigl( \frac{r}{\X} \Bigr)^{-\alpha} r^{-k} \omega(r) \,.
\end{align*}
Finally, given~$\tilde p_k$ as above and using~$\mathrm{(ii)}_{k}$, there exists a corrector~$\tilde \phi_k \in \A_k$ such that, for every~$r \in [C \X,\infty)$, we have~$\| \tilde \phi_k -\tilde p_{k} \|_{\underline{L}^2(B_r)} \leq \frac12 ( \frac{r}{\X} )^{-\alpha}  \| \tilde p_{k} \|_{\underline{L}^2(B_r)}$ and, by the previous display, also~$u - \tilde \phi_k \in \A_{k-1}$. Therefore, by~$\mathrm{(i)}_{k-1}$, we find~$\tilde p_{k-1} \in \Ahom_{k-1}$ such that, for every~$r \in [\X,\infty)$,~$\| u - \tilde \phi_k -\tilde p_{k-1} \|_{\underline{L}^2(B_r)} \leq C ( \frac{r}{\X} )^{-\alpha} \| \tilde p_{k-1} \|_{\underline{L}^2(B_r)}$. Then~\eqref{e.ind2.pre} follows by the triangle inequality with the polynomial~$p = \tilde p_k + \tilde p_{k-1}$ using the equivalence of norms as~$\sum_{j=0}^k \| \pi_j p \|_{\underline{L}^2(B_r)} \leq C(k,d,\lambda,\Lambda) \| p \|_{\underline{L}^2(B_r)}$, which is a consequence of the Caccioppoli estimate for~$p$. 

\smallskip

\emph{Step 3. Proof of~\eqref{e.ind3}.} Set~$r_j = C_k \theta^{-j} \X$. As in Step 2, using harmonic approximation, we find, for every~$u_{j+1} \in \A(B_{r_{j+1}})$, that there exists~$p_j \in \Ahom_k$ such that 
\begin{equation*}  
\inf_{p \in \Ahom_k} \| u_{j+1} - p \|_{\underline{L}^2(B_{r_{j}})}  
=
\| u_{j+1} - p_{j} \|_{\underline{L}^2(B_{r_{j}})} 
\leq 
\frac23 
 \theta^{k+1-\alpha} 
  \| u_{j+1} \|_{\underline{L}^2(B_{r_{j+1}})} 
\,.
\end{equation*}
By~$\mathrm{(ii)}_{k}$, we then find~$\phi_j \in \A_{k}$ such that~$\| \phi_j -p_j  \|_{\underline{L}^2(B_{r_{j}})} \leq C ( \frac{r_j}{\X} )^{-\alpha} \| u_{j+1}  \|_{\underline{L}^2(B_{r_{j+1}})}$. Thus, setting~$u_{j} = u_{j+1} - \phi_{j}$, we obtain that 
\begin{equation*}  
\| u_{j} \|_{\underline{L}^2(B_{r_{j}})} 
\leq 
\left( \frac{r_j}{r_{j+1}} \right)^{k+1-\alpha}
\| u_{j+1} \|_{\underline{L}^2(B_{r_{j}})} 
\,.
\end{equation*}
Letting~$J \in \N$ be such that~$3^{J} C \X \leq R < 3^{J+1} C \X$ (if such~$J$ does not exist, the result follows trivially), we select~$u_J = u$ so that~$u_j = u - \sum_{i=j}^{J} \phi_j$. Therefore, the above inequality yields that, for every~$r\in [\X,R]$, there exists~$\phi_r \in \A_k$ such that 
\begin{equation*}  
 \| u - \phi_r \|_{\underline{L}^2(B_{r})} 
\leq C 
\left( \frac{r}{R} \right)^{k+1-\alpha}
\| u \|_{\underline{L}^2(B_{R})} 
.
\end{equation*}
Finally, applying both~$\mathrm{(i)}_{k}$ and~$\mathrm{(ii)}_{k}$, together with the triangle inequality, give that, for every~$t \in [\X/2,2r]$, 
\begin{equation*}  
\left\| \phi_{2t} - \phi_t \right\|_{\underline{L}^2(B_{r})} 
\leq 
C \Bigl(\frac{r}{t}\Bigr)^{k} 
\Bigl( \frac{t}{R} \Bigr)^{k+1-\alpha} \| u \|_{\underline{L}^2(B_{R})} 
=
C 
\Bigl( \frac{t}{r} \Bigr)^{1-\alpha}
\Bigl( \frac{r}{R} \Bigr)^{k+1-\alpha} \| u \|_{\underline{L}^2(B_{R})} ,
\end{equation*}
and defining~$\phi = \phi_\X$ leads to the result since we may sum over~$t$ to get~$\int_{\X/2}^{2r} ( \frac{t}{r})^{1-\alpha} \, \frac{dt}{t} \leq C$, so that, for every~$r \in [\X,R]$, 
\begin{equation*}  
\left\| u - \phi \right\|_{\underline{L}^2(B_{r})} \leq C \int_{\X/2}^{2r} \left\| \phi_{2t} - \phi_{t} \right\|_{\underline{L}^2(B_{r})} \, \frac{dt}{t}+ \left\| u - \phi_r \right\|_{\underline{L}^2(B_{r})}
\leq C 
\Bigl( \frac{r}{R} \Bigr)^{k+1-\alpha} 
\| u \|_{\underline{L}^2(B_{R})} 
\,,
\end{equation*}
which was to be proven. 

\smallskip

\emph{Step 4. Proof of~\eqref{e.ind4}.} By~$\mathrm{(i)}_{k+1}$,~$\mathrm{(ii)}_{k+1}$ and~$\mathrm{(iii)}_{k+1}'$ we find~$\phi \in \A_k$ and~$\tilde \phi \in \A_{k+1}$ such that, for every~$r \in [\X,R]$, 
\begin{equation*}  
\| u - \phi - \tilde \phi \|_{\underline{L}^2(B_{r})} 
\leq 
C 
\Bigl( \frac{r}{R} \Bigr)^{k+2-\alpha} 
\| u \|_{\underline{L}^2(B_{R})} 
\quad \mbox{and} \quad 
\| \tilde \phi \|_{\underline{L}^2(B_{r})} 
\leq 
C \left( \frac{r}{R} \right)^{k+1} 
\| u \|_{\underline{L}^2(B_{R})} 
\,.
\end{equation*}
The proof can now be completed by the triangle inequality. 
\end{proof}

The previous theorem has a localized version. Let~$U$ be an open and bounded set.  We collect in~$\A_k (U)$ solutions with~$\ahom$-harmonic boundary values, that is, 
\begin{equation}
\label{e.localAk}
\A_k (U) := \left\{ w \in \A(U) \, : \, w = p + H_0^1(U)  \mbox{ for some } p\in \Ahom_k \right\}.
\end{equation}
Observe that~$\dim(\A_k(U)) = \dim(\A_k)  = \dim(\Ahom_k)$. We canonically extend every~$w\in \A_k (U)$ with boundary values~$p\in \Ahom_k$ to the whole~$\R^d$ as~$p$. 
Notice that~$\A_k$ can be seen as the limit of~$\A_k (B_R)$ as~$R\to \infty$. As a matter of fact, this is precisely how we constructed~$\A_k$ in the proof of~\eqref{e.ind1}. The proof is a verbatim repetition of Theorem~\ref{t.Ck1}, using this time the minimal scale from the Lemma~\ref{l.localX} or Remark~\ref{r.localX.nonsymm}, using now a localized version of the harmonic approximation in Proposition~\ref{p.harm.we.have}. 

\begin{theorem}[{Localized large-scale~$C^{k,1}$ regularity}]
\label{t.Ck1.local}
Fix~$R \geq 1$ and let~$\cu = t \cu_0$. Assume that~$\P$ satisfies~$\CFS(\beta,\Psi)$. 
There exist a constant~$C(\dataref)<\infty$, an exponent~$\alpha(\beta,d,\lambda,\Lambda)>0$ and~$\F(\cu)$-measurable random variable~$\X_t$ satisfying
\begin{equation}
\label{e.X.local}
\X_t^{\frac d2(1-\beta)  } 
= 
\O_{\Psi}(C) 
\qand 
|\partial_{\a(\cu)} \X_t |
\leq C(1+\X_t) 
\end{equation}
such that~$\mathrm{(i)}_k$,~$\mathrm{(ii)}_k$ and~$\mathrm{(iii)}_k$ all hold, for every~$k\in\N$ and~$x \in \cu_1$, with~$\X$ replaced by~$\X_t$ and~$\A_k$ replaced by~$\A_k (\cu)$, provided that~$R \in [\X_t,t]$. Finally,~$\cu$ can be replaced by any Lipschitz domain~$U$ with the boundary of~$R^{-1} U$ having a bounded Lipschitz constant. Then the constant~$C_k$ in~$\mathrm{(i)}_k$,~$\mathrm{(ii)}_k$ and~$\mathrm{(iii)}_k$ depend additionally on~$U$. 
\end{theorem}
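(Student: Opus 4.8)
The plan is to repeat the proof of Theorem~\ref{t.Ck1} essentially verbatim, making only two substitutions: the global minimal scale $\X$ is replaced throughout by a localized, $\F(\cu)$-measurable minimal scale $\X_t$, and the harmonic approximation step invokes the localized Proposition~\ref{p.harm.we.have} (which already comes with an $\F(B_t)$-measurable minimal scale) in place of its global counterpart. The observation that makes this possible is that, after replacing $t$ by a bounded multiple of itself and absorbing the change into constants, every ball $B_r(x)$ with $x\in\cu_1$ and $r\in[\X_t,t]$ is contained in $\cu$, so that all the deterministic ingredients used in the proof of Theorem~\ref{t.Ck1}---the Caccioppoli and Poincar\'e inequalities, the interior pointwise bounds for $\ahom$-harmonic functions, the boundary-layer estimate Lemma~\ref{l.bndrlayer}, and the telescoping/summation arguments---depend only on the restriction of $\a(\cdot)$ to $\cu$ and on no information outside it.

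First I would assemble $\X_t$ as the maximum of finitely many localized random variables: (a) the scale $\X_{k_t}$ produced by Lemma~\ref{l.localX} in the symmetric case and by Remark~\ref{r.localX.nonsymm} in general, applied to the multiscale quantity $\mathcal{E}(m)$ from~\eqref{e.mcE.0}/\eqref{e.mcE.0.nonsymm} with $k_t:=\lceil \log_3 t\rceil$; (b) the $\F(B_t)$-measurable scale appearing in Proposition~\ref{p.harm.we.have}; and (c) the localized analogue of the minimal scale of Theorem~\ref{t.C01}, obtained by truncating the summation defining that scale at level $k_t$, exactly as in the proof of Lemma~\ref{l.localX}. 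Each of these is bounded by $Ct$, is $\F(\cu)$-measurable, satisfies $(\,\cdot\,)^{\frac d2(1-\beta)}=\O_\Psi(C)$, and has Malliavin derivative with respect to $\a(\cu)$ bounded by a constant times itself, by~\eqref{e.mallliavin.local},~\eqref{e.maul.Mall},~\eqref{e.malliavin.estimate} and the chain rule. A maximum of finitely many such random variables inherits all these properties, so $\X_t$ satisfies~\eqref{e.X.local}.

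Next I would run the induction of Theorem~\ref{t.Ck1} with the local spaces $\A_k(\cu)$ defined in~\eqref{e.localAk}: solutions on $\cu$ whose boundary values are $\ahom$-harmonic polynomials of degree at most $k$, extended to $\Rd$ by that polynomial. The four implications~\eqref{e.ind1}--\eqref{e.ind4} are carried out with balls restricted to $\cu$ and scales restricted to $[\X_t,t]$; the only structural change is in the construction steps (the proofs of~\eqref{e.ind1} and of $\mathrm{(ii)}_k$), where one previously passed to a limit over infinitely many dyadic scales. Here one instead stops at scale $t$ and the limiting object is the corresponding element of $\A_k(\cu)$; the telescoping series $\sum_{n\le k_t}3^{-n\alpha}$ remains bounded, so the estimates $\mathrm{(i)}_k$, $\mathrm{(ii)}_k$, $\mathrm{(iii)}_k$ hold with the stated decay. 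This step relies on the localized harmonic approximation and the localized sublinearity of the coarse-grained coefficients, both of which follow from $\X_t$ via Proposition~\ref{p.harm.we.have} and Lemma~\ref{l.localX}/Remark~\ref{r.localX.nonsymm}.

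Finally, for the dimension count: when $t\geq\X$ with $\X$ the global minimal scale of Theorem~\ref{t.Ck1}, both $\A_k(\cu)$ and $\A_k$ obey $\mathrm{(i)}_k$ and $\mathrm{(ii)}_k$, and the boundary-polynomial map $w\mapsto p$ together with its inverse $p\mapsto w$ furnish linear isomorphisms $\A_k(\cu)\simeq\Ahom_k\simeq\A_k$, whence $\dim\A_k(\cu)=\dim\Ahom_k=\dim\A_k$ with the explicit value from~\eqref{e.dimensionofAk}. For a Lipschitz domain $U$ in place of $\cu$, the geometry enters only through the constants in Caccioppoli, Poincar\'e and Lemma~\ref{l.bndrlayer} (the last extends from balls to Lipschitz domains with constants depending on the Lipschitz character of $R^{-1}U$), so one simply allows $C_k$ to depend additionally on $U$. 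I expect the main obstacle to be bookkeeping rather than conceptual: one must verify carefully that truncating all the multiscale quantities at level $k_t$ does not degrade the induction---that the localized harmonic approximation genuinely ``does not see'' scales larger than $t$---and that the several minimal scales entering the definition of $\X_t$ are mutually compatible across the full range $[\X_t,t]$ of scales exercised by the induction up to order $k$.
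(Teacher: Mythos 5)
Your proposal is correct and follows essentially the same route as the paper, which states that the proof is a verbatim repetition of Theorem~\ref{t.Ck1} using the localized minimal scales from Lemma~\ref{l.localX} and Remark~\ref{r.localX.nonsymm} together with the localized harmonic approximation of Proposition~\ref{p.harm.we.have}. Your construction of $\X_t$ as a maximum of these $\F(\cu)$-measurable scales, with the Malliavin bound inherited via~\eqref{e.mallliavin.local} and the chain rule, and the truncation of the induction at scale $t$ with the local spaces $\A_k(\cu)$, is exactly the argument the paper intends.
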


\subsection*{Historical remarks and further reading}

The large-scale regularity theory in homogenization first appeared in the periodic setting in the work of Avellaneda and Lin~\cite{AL1,AL2,AL3}. The extension to non-periodic media required a more quantitative version of the Campanato iteration they introduced, and this first appeared in~\cite{AS}. After that paper, the regularity theory was quickly developed in a series of works, including~\cite{GNO3,FO,AKM1,AKM,AKMBook}, and became an important cornerstone of the theory. For more references and a more thorough presentation of the large-scale regularity theory, see~\cite[Chapter 3]{AKMBook}. 

\smallskip

The presentation in Section~\ref{ss.almostone} is a new, simpler proof of the result which first appeared in~\cite{AKM1}. The latter paper essentially proved Proposition~\ref{p.alpha.almostone} for any exponent~$\alpha < 1$, using the full statement of Theorem~\ref{t.Ck1} for large~$k$, under the assumption of finite range of dependence setting.

\section{Renormalization theory and optimal quantitative estimates}
\label{s.renormalization}

In this chapter, we turn our attention to the derivation of homogenization error estimates which are optimal in the asymptotic limit of large scale separation. A recurring theme throughout the text, emphasized as early as Section~\ref{s.periodic}, is that quantitative homogenization estimates for very general problems can be reduced by deterministic arguments using two-scale expansions to quantitative estimates on the first-order correctors. To be more precise, what is needed is a rate of convergence for the qualitative limit asserted in~\eqref{e.corrector.qualbound.L2}, which we reproduce here: 
\begin{equation}
\label{e.corrector.qualbound.L2.in.ch7}
\limsup_{m\to \infty}
3^{-m} 
\bigl(
\left\|  \phi_e - (\phi_e)_{\cu_m} \right\|_{\underline{L}^{2}(\cu_m)}
+
\left\|  \bfs_e - (\bfs_e)_{\cu_m} \right\|_{\underline{L}^{2}(\cu_m)}
\bigr) = 0,
\ \ \mbox{$\P$--a.s.}
\end{equation}
As we have seen already many times, this reduces by Poincar\'e's inequality to quantifying the convergence rate---in  Sobolev norms of negative regularity---of the gradient and flux of the correctors: for instance, 
\begin{equation}
\label{e.corrector.qualbound.Hm1.in.ch7}
\limsup_{m\to \infty}
3^{-m} 
\bigl(
\| \nabla \phi_{e} \|_{\Hminusul(\cu_m)} 
+
\| \a (e +\nabla \phi_{e}) - \ahom e \|_{\Hminusul(\cu_m)} 
\bigr) = 0,
\quad \mbox{$\P$--a.s.}
\end{equation}
Up to this point, we have used~$H^{-1}$ norms to quantify this weak convergence, but it is possible to obtain finer estimates using~$H^{-s}$ norms for general~$s>0$, as we will see. 

\smallskip

We will focus our attention in this chapter on obtaining the sharp convergence rate for the limit in~\eqref{e.corrector.qualbound.Hm1.in.ch7}. Consequences of this estimate, including sharp error estimates for the homogenization of boundary value problems as well as for homogenization of parabolic and elliptic Green functions  can be found in~\cite[Chapters 6, 8 \& 9]{AKMBook}. 

\smallskip

We have shown previously, under the assumption of~$\CFS(\beta,\Psi)$, that the quantity under the~$\limsup$ on the left side of~\eqref{e.corrector.qualbound.Hm1.in.ch7} is at most of order~$3^{-m\theta}$ for some positive exponent~$\theta>0$ which depends on~$(\beta,d,\lambda,\Lambda)$: see Theorem~\ref{t.optimalstochasticintegrability}. This exponent was later improved in Section~\ref{ss.almostone}, using a bootstrap argument, to any~$\theta < \frac{\alpha d}{d+2\alpha}$ for~$\alpha = \min\{ 1 , \frac d2(1-\beta)\}$.

\smallskip

The main result of this chapter is an improvement of these estimates to the optimal quantitative estimates for the first-order correctors. The statement is presented in the  following theorem. Throughout this chapter, we use the notation~$\Psi_\sigma(t) := \Psi(t^{1-\sigma})$ and we write~$\O^{1-\delta}_\Psi$ in place of~$\O_{\Psi_\delta}$. 

\begin{theorem}[First-order correctors, optimal quantitative estimates]
\label{t.corr.optimal}
Select parameters
\begin{equation*}
\delta \in (0,1)\,, \quad \gamma \in (0,\infty) 
\,, \quad 
\beta \in [0,1)
\,, \quad 
M, N \in [1,\infty)
\quad \mbox{and} \quad
p \in \Bigl(\frac{2}{1-\beta},\infty\Bigr)\,.
\end{equation*}
Let~$\Psi:[1,\infty) \to [0,\infty)$ be an increasing function satisfying
\begin{equation}  
\label{e.Psi.pgrowth.theorem.corr}
t^p \leq N \frac{\Psi(t s)}{\Psi(s)} 
\,, \qquad  
\forall 
s, t \in [1,\infty)
\,. 
\end{equation}
Let~$\P$ be a~$\Zd$--stationary measure on~$(\Omega,\F)$ satisfying: 
\begin{itemize}
\item If~$\beta >0$, then~$\P$ satisfies~$\CFS(\beta,\Psi,\Psi(M\cdot))$.
\item If~$\beta = 0$, then~$\P$ satisfies~$\CFS(0,\Psi_\sigma,\Psi_\sigma(M\cdot),\gamma)$ for each~$\sigma \in \{0,\delta\}$.
\end{itemize}
Then there exists~$C(\delta,\gamma,M,N,p,\dataref) < \infty$ such that, for every~$|e|=1$ and~$\ep \in (0,1)$, 
\begin{multline}
\label{e.corr.bounds}
\Big( \big\| \nabla \phi_e\bigl( \tfrac{\cdot}{\ep} \bigr)  \bigr\|_{H^{-s}(B_1)}  + \big\| \a\bigl( \tfrac{\cdot}{\ep} \bigr)\bigl( e + \nabla \phi_e\bigl( \tfrac{\cdot}{\ep} \bigr) \bigr) - \ahom e \bigr\|_{H^{-s}(B_1)} \Big)\indc_{\{ \X \leq \ep^{-1} \}} 
\\
= 
\left\{
\begin{aligned}
& \O_{\Psi}^{1-\delta} (C \ep^s) & \mbox{if} & \ 0 < s < \frac{d}{2}(1-\beta)\,,\\
&\O_{\Psi}^{1-\delta}  \bigl( C \ep^{\frac d2(1-\beta)} | \log \ep|^{\nicefrac12} \bigr) & \mbox{if} & \ s = \frac{d}{2}(1-\beta) \,, \\
&\O_{\Psi}^{1-\delta} \bigl( C \ep^{\frac d2(1-\beta)} \bigr) & \mbox{if} & \ s > \frac{d}{2}(1-\beta) \,,
\end{aligned}
\right.
\end{multline}
where~$\X$ is the minimal scale in the statement of Theorem~\ref{t.Ck1}.
\end{theorem}

Let us unpack the statement of the theorem. 
We recall that the~$\CFS$ mixing conditions are defined in Definition~\ref{d.CFS.strong}. Up to this point, with the exception of Exercises~\ref{exercise.improved.fluct} and \ref{exercise.improved.fluct.2}, we have not used the general condition~$\CFS(\beta,\Psi,\Psi',\gamma)$ and have formalized the all of our quantitative homogenization results with the much simpler and weaker condition~$\CFS(\beta,\Psi)$. Recall from~\eqref{e.yes.weaker} that the latter is weaker, so the hypotheses of the theorem above does imply that~$\P$ satisfies $\CFS(\beta,\Psi)$. The stronger~$\CFS$ assumption is needed to propagate the kind of bootstrap arguments made in this chapter. As proved in Chapter~\ref{s.CFS}, all of the canonical examples of random coefficients fields---such as local functions of Gaussian fields and fields which can be approximated by finite range fields---satisfy the stronger condition. 

\smallskip

Since the conclusion of Theorem~\ref{t.Ck1} is valid under the assumptions of Theorem~\ref{t.corr.optimal}, the minimal scale~$\X$ appearing in~\eqref{e.corr.bounds} satisfies~\eqref{e.X}, which we reproduce here:
\begin{equation}
\label{e.X.again.ch7}
\X^{\frac d2(1-\beta)  } 
= 
\O_{\Psi}(C) 
\,.
\end{equation}
Using~\eqref{e.X.again.ch7}, we can remove the indicator~$\indc_{\{ \X \leq \ep^{-1} \}}$ on the left as follows. 
By~\eqref{e.corr.grad.bound.pre} below, we have that
\begin{multline}  
\label{e.byebyeX}
\Bigl( \big\| \nabla \phi_e\bigl( \tfrac{\cdot}{\ep} \bigr)  \bigr\|_{H^{-s}(B_1)}  + \big\| \a\bigl( \tfrac{\cdot}{\ep} \bigr)\bigl( e + \nabla \phi_e\bigl( \tfrac{\cdot}{\ep} \bigr) \bigr) - \ahom e \bigr\|_{H^{-s}(B_1)}
\Bigr) 
\indc_{\{ \X > \ep^{-1} \} } 
\\ 
% \leq  C  \| \nabla \phi_e\|_{\underline{L}^2(B_{\ep^{-1}})} \indc_{\{ \X > \ep^{-1} \} } 
\leq 
C(\ep \X)^{\nicefrac d2-\eta}
\bigl( \| \nabla \phi_e\|_{\underline{L}^2(B_{\X})} + |e| \bigr)
\leq 
C |e| (\ep \X)^{\nicefrac d2-\eta}
\,,
\end{multline}
where~$\eta(d,\Lambda/\lambda)>0$ is the small exponent obtained in the standard ``hole-filling'' argument; alternatively, we can obtain the second inequality in~\eqref{e.byebyeX} by applying Meyers' estimate and then Holder inequality, as in~\cite[Lemma 3.17]{AKMBook}.
For~$\beta\leq \nicefrac{2\eta}{d}$, we can therefore remove the indicator function on the left side of~\eqref{e.corr.bounds}. 
For larger (non-tiny) values of~$\beta$, 
the stochastic integrability of~$\X^{\nicefrac d2-\eta}$ is worse than the one reported on the right side of~\eqref{e.corr.bounds}, and the leading-order contribution to the tail behavior of the random variable
\begin{equation*}
\big\| \nabla \phi_e\bigl( \tfrac{\cdot}{\ep} \bigr)  \bigr\|_{H^{-s}(B_1)}  + \big\| \a\bigl( \tfrac{\cdot}{\ep} \bigr)\bigl( e + \nabla \phi_e\bigl( \tfrac{\cdot}{\ep} \bigr) \bigr) - \ahom e \bigr\|_{H^{-s}(B_1)} 
\end{equation*}
is governed by the unlikely event that the minimal scale~$\X$ is extremely large.

\smallskip

It is possible to extend the estimate stated in~\eqref{e.corr.bounds} to stronger spatial integrability exponents by using~$W^{-s,p}$ norms rather than~$H^{-s}$: see~\cite[Theorem 4.24]{AKMBook}.

\smallskip

The growth condition~\eqref{e.Psi.pgrowth.theorem.corr} on~$\Psi$ implies both~\eqref{e.Young.growth} and~\eqref{e.weak.triangle.ass}. This stronger condition is needed for a technical reason---it is used in~\eqref{e.weakint.ave.applied}, below, which ensures that the minimal scale satisfies a certain spatial integrability property. 

\smallskip

The arguments in this chapter leading to the proof of Theorem~\ref{t.corr.optimal} are closely related to those of Section~\ref{ss.subadd.conv} and are even more closely foreshadowed in Section~\ref{ss.almostone}. As in those sections, we do not make the estimates for the correctors the main object of study. We instead deduce corrector estimates as a consequence of the speed of convergence for certain \emph{coarse-grained coefficient fields} as the coarsening scale tends to infinity. 

\smallskip

To begin thinking about the proof of Theorem~\ref{t.corr.optimal}, we should ask ourselves where the approach in Section~\ref{ss.almostone} falls short of optimality. There we proved that the exponent~$\alpha$ appearing in Theorem~\ref{t.subadd.converge} or Proposition~\ref{p.algebraic.nosymm},  can be improved to any~$\alpha < \min\{ 1 , \frac d2(1-\beta)\}$. It follows from this, Lemma~\ref{l.mathcalE.minscale} and~\eqref{e.FV.sublinearity} that the speed of convergence of the limit in~\eqref{e.corrector.qualbound.Hm1.in.ch7} is at most~$3^{-m\theta}$, for any~$\theta < \frac{\alpha d}{d+2\alpha}$. There are two main sources of suboptimality in this latter estimate:

\begin{itemize}

\item The exponent~$\theta$ is smaller than~$\alpha$ because some information was lost when we upgraded the stochastic integrability in Lemma~\ref{l.mathcalE.minscale}. To overcome this problem, we need to keep track of stronger moments in our induction up the scales rather than just keep track of an expectation as we do in the previous three chapters. 

\item The exponent~$\alpha$ cannot improve beyond~$\alpha=1$ if we use a method based on the coarse-grained quantities introduced in Chapters~\ref{s.subadd} and~\ref{s.nonsymm}. In this sense, the exponent~$\alpha$ in Proposition~\ref{p.alpha.almostone} is actually sharp because the coarse-grained matrices pick up boundary layer errors which are of order~$3^{-m}$ and no smaller. If~$\frac d2(1-\beta) > 1$, then we should expect a better exponent for the first-order correctors, but this will require a more sensitive quantity that does not display this as a boundary layer artifact. 

\end{itemize}

The first issue is a technical problem which is not difficult to fix. The second issue is the more serious one because it requires us to modify the coarse-grained coefficients we have worked with in previous chapters. Indeed, our best effort to build a renormalization scheme based on these quantities was in Section~\ref{ss.almostone}, where we got stuck at exponent~$\alpha =1$ because of boundary layer errors. Note that this is not an artifact of the argument there, as the leading order difference between~$\a(\cu_n)$ and~$\a_*(\cu_n)$ will be due to the presence of boundary layers in the solutions of the Dirichlet and Neumann problems. 

\smallskip

The coarse-graining procedure is analogous to convolution in many ways; to obtain a better rate, we must use a smoother kernel. This leads us to the idea of defining the coarse-grained coefficients in terms of profiles of the heat kernels rather than cubes. 
The smoothness of the heat kernel will prevent the build-up of boundary layer errors and allow us to prove optimal quantitative bounds on the coarse-grained coefficients. In other words, these coarse-grained coefficients behave much better, at least when seeking very fine estimates and faster convergence rates. 

\smallskip

This change, however, does come at a cost: we lose the strict locality of our quantities (they depend now on the coefficients in the whole space), and we also have no analog of~\eqref{e.diagonalset}, around which we based our arguments in Section~\ref{ss.subadd.conv}. We will, therefore, spend considerable effort on obtaining the \emph{approximate additivity} and, especially,  \emph{approximate locality} of our coarse-grained coefficients. On the other hand, we also have advantages that we did not possess in the context of Chapter~\ref{s.subadd}: namely, the regularity theory in Chapter~\ref{s.regularity}, which was built upon it. 

\smallskip

The coarse-grained coefficients~$\a(U)$ and~$\a_*(U)$ defined in Chapter~\ref{s.subadd} had many interesting and useful properties (see Lemma~\ref{l.J.basicprops}), but one of the most important was the identities in~\eqref{e.a.astar.formulas}: they each exactly mapped the spatial averages of gradients to spatial averages of fluxes for an important~$d$-dimensional subspace of solutions, which turn out to be finite-volume versions of the first-order correctors. We will make this the defining property of the coarse-grained coefficients we use in this chapter, with two differences. First, the~$d$-dimensional subspace of solutions we use will be~$\mathcal{A}_1$, in other words, the corrected affine functions corresponding to the infinite-volume first-order correctors. Second, as previously mentioned, the spatial averages will be taken not with respect to a bounded domain~$U\subseteq\Rd$, but rather with respect to the profile of the heat kernel at a fixed time and possibly translated in space.

\subsection{Coarse-graining with the heat kernel}

\subsubsection{Definition of the coarse-grained coefficients~$\b_r(x)$}
\label{ss.coarsegrain.with.tempo}
We denote the standard heat kernel by 
\begin{equation*}
\Phi(t,x) := (4\pi t)^{-\frac d2} \exp\biggl( -\frac{|x|^2}{4t} \biggr)\,.
\end{equation*}
We denote the profile of the heat kernel at time~$t=r^2$ by~$\Phi_r:= \Phi(r^2,\cdot)$.

\smallskip

Throughout this chapter, we fix a parameter~$\delta\in (0,\frac12]$, which must be sufficiently small for our arguments to go through. It will depend on~$(d,\lambda,\Lambda)$ and the constants in the~$\CFS$ condition. 

\smallskip

We let~$x \mapsto \X(x)$ be the~$\Zd$-stationary extension of the minimal scale~$\X$ in Theorem~\ref{t.Ck1}, which is taken to be a constant in every cube of the form~$z+\cu_0$ with~$z\in \Zd$. In other words,~$\X(x):= T_{[x]}\X$, where~$[x]$ is the closest lattice point of~$\Zd$ to~$x$ with the lexicographical ordering used as a tiebreaker if this point is not unique.
By decomposing~$\Phi$ into a sum of functions supported in annular domains with radii~$2^k r$, 
we deduce by~\eqref{e.liouvillec} that, for every~$e \in \R^d$ and~$r> 0$, there exists a function~$\psi_e \in \A_1$ such that
\begin{equation*} %\label{e.liouvillec.applied}
\frac1r \left\| \psi_e  -  \ell_e \right\|_{\underline{L}^2(B_r(x))} 
+
 | (\nabla \psi_e \ast \Phi_r)(x) - e | 
+
\left|  \bigl( \a \nabla \psi_e \ast \Phi_r \bigr)(x) - \ahom e \right|
\leq 
C\Bigl( \frac{r \wedge \X(x)} {r}\Bigr)^{\! \alpha} |e|
%c \delta 
\,.
\end{equation*}
We do not need~$r\geq \X$ because the inequality is trivially valid for~$r<\X$ (just take the zero solution). 
By replacing the minimal scale~$\X$ by~$(c \delta)^{-\nicefrac 1\alpha}\X$, we get the same inequality as above but with a~$\delta$ on the right-hand side:
for every~$e \in \R^d$ and~$r>0$, there exists a function~$\psi_e \in \A_1$ such that
\begin{equation} \label{e.liouvillec.applied}
\frac1r \left\| \psi_e  -  \ell_e \right\|_{\underline{L}^2(B_r(x))} 
+
 | (\nabla \psi_e \ast \Phi_r)(x) - e | 
+
\left|  \bigl( \a \nabla \psi_e \ast \Phi_r \bigr)(x) - \ahom e \right|
\leq 
\delta \Bigl( \frac{r \wedge \X(x)} {r}\Bigr)^{\! \alpha} |e|
\,.
\end{equation}
We recognize~$\psi_e$, up to an additive constant, as~$\ell_e + \phi_e$, where~$\phi_e$ is the first-order corrector appearing in Chapter~\ref{s.qualitativetheory}. The mapping~$e \mapsto \nabla \psi_e$ is linear and hence, by Theorem~\ref{t.Ck1},
\begin{align}  \label{e.gradA1}
\nabla \A_1 := \{ \nabla \psi \, : \, \psi \in \A_1 \} = \{ \nabla \psi_e \, : \, e \in \R^d \}
\end{align}
almost surely.  Therefore, taking smaller~$\delta$ by means of~$\dataref$ in~\eqref{e.liouvillec.applied},
we see that, for every~$r \geq \X(x)$, 
\begin{equation} 
\label{e.b.representation}
\bigl| \bigl((\nabla \psi_{e}) \ast \Phi_{r}\bigr)(x)   - e \bigr| 
+ 
\bigl| \bigl( (\a \nabla \psi_{e}) \ast \Phi_{r}\bigr)(x)  -\ahom e \bigr| 
\leq 
\delta \Bigl( \frac{r \wedge \X(x)} {r}\Bigr)^{\! \alpha} 
\,.
\end{equation}

\smallskip

We define the matrix-valued, stationary random field~$\hat \b_r(\cdot)$, which is characterized, for every~$x \in \Rd$ and~$r \geq \X(x)$, by 
\begin{equation}
\label{e.coarsened.fist}
\hat \b_r(x) \!
\int_{\Rd} 
\Phi_r(y{-}x) \nabla \psi(y)\,dy 
=
\int_{\Rd} 
\Phi_r(y{-}x) \a(y) \nabla \psi(y)\,dy,
\quad 
\forall \psi \in \A_1, \, r\in [ \X(x),\infty).
\end{equation}
The definition is reasonable in view of~\eqref{e.gradA1} and~\eqref{e.b.representation} by enlarging~$\X(x)$ by a uniform factor. We also extend the definition for the event~$\{r < \X(x) \}$ as  
\begin{align}  \label{e.b.deg}
\b_r(x) =  \chi_r(x) \hat \b_r(x) + (1-\chi_r(x)) \ahom
\end{align}
with
\begin{equation}
\label{e.minscalefct.def}
\minscale_r(x) :=  \min\bigl\{ 2 \bigl(1 - r^{-1} \X(x) \bigr)_+\,,  1 \bigr\}
\,.
\end{equation}
The random field~$\minscale_r$ is an approximation of~$\indc_{\{r\geq  \X(\cdot)\}}$.  
By~\eqref{e.b.representation} we see that~$|\b_r(x) - \ahom| \leq \delta$ and, in the open set~$\{ x \,:\,  \X(x) <  \frac12 r \}$, the map~$x\mapsto \b_r(x)$ is real analytic; in particular it is continuous, and it varies on a length scale of order~$r$. Moreover, we have the estimate 
\begin{equation}  \label{e.coarsen.smallness}
| \b_r(x)  - \ahom | \leq \delta \Bigl( \frac{r \wedge \X(x)} {r}\Bigr)^{\! \alpha} 
\,.
\end{equation}

\smallskip
 
The stationary random field~$\b_r$ is the coarse-grained coefficient field at  scale~$r>0$, which is the primary object of study in this chapter.
To see why we give it this name, using~\eqref{e.coarsened.fist}, we get an equation for~$\psi \ast\Phi_r$, for every~$\psi\in\A_1$ and~$r>0$:
\begin{equation}
\label{e.coarsened.equation}
-\nabla \cdot \b_r \nabla (\psi \ast \Phi_r)
=
\nabla \cdot 
\bigl( (1-\minscale_{r} ) \bigl( (\a-\ahom)\nabla \psi \bigr)\ast \Phi_r
   \bigr)
\,.
\end{equation}
We stress that the second term on the right of~\eqref{e.coarsened.equation} is extremely small because it is smaller than anything we need to worry about in our arguments. Therefore, we see that the operator~$\nabla \cdot \b_r \nabla$ is a more precise, finite-scale version of the homogenized operator (at least for the first-order correctors). By convolving with~$\Phi_r$, we are rubbing away all the oscillations in the solution below scale~$r$ and focusing on the larger scales. We can thus consider it the ``coarse-grained'' or ``renormalized'' operator at scale~$r$. 

\smallskip

\subsubsection{Optimal quantitative estimates on the coarse-grained coefficients}

The following theorem gives optimal bounds on the size of the fluctuations of the stationary random field~$\b_r$. 
This can be regarded as the main result of the section, and we will show later that Theorem~\ref{t.corr.optimal} is essentially a corollary of it. 

\begin{theorem}[Optimal quantitative estimates for the coarse-grained coefficients~$\b_r$]
\label{t.optimal}
\textbf{} \\
Let~$\delta,\gamma> 0$ and~$\beta \in [0,1)$,~$M, N \in [1,\infty)$ and~$p \in (\frac{2}{1-\beta},\infty)$.  
Let~$\Psi:[1,\infty) \to [0,\infty)$ be an increasing function satisfying
\begin{equation}  \label{e.Psi.pgrowth.theorem}
t^p \leq N \frac{\Psi(t s)}{\Psi(s)}\,,  
\qquad \forall s, t \in [1,\infty)
\,. 
\end{equation}
We also suppose that~$\P$ is a~$\Zd$--stationary measure on~$(\Omega,\F)$ satisfying: \begin{itemize}
\item If~$\beta \in (0,1)$, then~$\P$ satisfies~$\CFS(\beta,\Psi,\Psi(M\cdot),0)$.
\item If~$\beta = 0$, then~$\P$ satisfies~$\CFS(0,\Psi_\sigma,\Psi_\sigma(M\cdot),\gamma)$ for each~$\sigma \in \{0,\delta\}$.
\end{itemize}
Then there exists~$C(\delta,\gamma,M,N,p,\dataref) < \infty$ such that, for every~$x\in \R^d$ and~$r \in [1,\infty)$, 
\begin{equation} 
\label{e.optimal.estimate}
\b_{r}(x)  - \ahom  =  \O_{\Psi}^{1-\delta} \bigl(C r^{-\frac d2(1-\beta)}\bigr)
\,.
\end{equation}
\end{theorem}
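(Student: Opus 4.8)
The plan is to carry out a renormalization (iteration up the scales) argument for the stationary random field $\b_r$, analogous in spirit to the proof of Theorem~\ref{t.subadd.converge} but without the tool~\eqref{e.diagonalset} and with spatial averages taken against heat-kernel profiles rather than cubes. The key structural inputs to establish, in order, are: (i) a quantitative \emph{additivity} statement saying that $\b_{2r}(x)$ is, up to a small error, an average of $\b_r$ over a mesoscopic family of translates, with the additivity defect controlled \emph{quadratically} by the fluctuations of $\b_r$ at scale $r$; (ii) a quantitative \emph{locality} statement saying that $\b_r(x)$ is, up to exponentially small error, $\F(B_{Cr}(x))$-measurable, so that the $\CFS$ condition can be applied to mesoscopic sums of truncated, localized versions of the $\b_r(z)$; (iii) a bound on the Malliavin derivative $|\partial_{\a(B_{Cr}(x))} \b_r(x)|$, which together with (ii) verifies the remaining hypotheses of $\CFS$; and (iv) the iteration itself, in which one controls $\E[|\b_r(x) - \ahom|^2]$ by a recursion across dyadic scales, crushing the fluctuations with $\CFS$ and then upgrading to the optimal stochastic integrability $\O_\Psi^{1-\delta}$ by a final bootstrap.

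For (i), the point is that $\psi\ast\Phi_r$ solves the approximately-homogenized equation~\eqref{e.coarsened.equation}, whose coefficient field $\b_r$ varies on scale $r$ and is $\delta$-close to the constant matrix $\ahom$. Testing the defining relation for $\b_{2r}(x)$ against $\psi\ast\Phi_{2r} = (\psi\ast\Phi_r)\ast\Phi_{r'}$ with $(r')^2 = (2r)^2 - r^2$, and using the semigroup property of the heat kernel together with the large-scale $C^{0,1}$ regularity estimate of Theorem~\ref{t.C01} applied to $\psi$ (valid on scales above $\X$), one expresses the additivity defect in terms of the oscillation of $\b_r$ minus its local heat-kernel average; this in turn is bounded by the smallness~\eqref{e.coarsen.smallness} multiplied by the fluctuation of $\b_r$ at scale $r$, yielding the desired quadratic dependence. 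For (ii), one replaces $\psi$ in~\eqref{e.coarsened.fist} by the solution of the same equation in a large ball $B_{Cr}(x)$ with the appropriate boundary data (equivalently, the finite-volume corrected affine function), and estimates the difference using the exponential tail of $\Phi_r$, the Nash--Aronson-type bounds, and the Caccioppoli and Meyers estimates; the error is $\O$ of a quantity exponentially small in $C$, hence harmless. The Malliavin bound (iii) follows by a perturbation argument of the same structure as Lemma~\ref{l.malliavin}, differentiating the defining relation for $\b_r$ and using the regularity theory to control the sensitivity of the relevant solutions.

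The iteration (iv) then proceeds as follows. Let $F(r) := \sup_x \E[|\b_r(x) - \ahom|^2]$. Using (i), (ii) and (iii) one obtains a recursion of the form: $F(2r)$ is bounded by an average of $F(r)$ over a mesoscale, crushed by $\CFS$, plus $CF(r)^2$, plus a negligible error; since $F$ is a priori bounded and already decaying (by Theorem~\ref{t.subadd.converge.nosymm} via the link between $\b_r$ and the finite-volume correctors / coarse-grained matrices), an iteration in the spirit of Lemma~\ref{l.iteration} upgrades the decay of $F(r)$ to the optimal algebraic rate $F(r) \lesssim r^{-d(1-\beta)}$. With control on means and variances in hand, the optimal stochastic integrability~\eqref{e.optimal.estimate} is obtained by a fluctuation-bootstrap argument: one writes $\b_r(x) - \ahom$ as a telescoping sum across dyadic scales of additivity defects, each of which is a mesoscopic sum to which $\CFS(\beta,\Psi,\Psi(M\cdot),0)$ (respectively $\CFS(0,\Psi_\sigma,\Psi_\sigma(M\cdot),\gamma)$ when $\beta = 0$) applies, and sums the resulting $\O_\Psi$ bounds. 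The small loss from $\Psi$ to $\Psi_\sigma$ in the case $\beta = 0$ is precisely what absorbs the logarithmically-many scales together with the $\gamma$-term in the $\CFS$ bound, which is why $\gamma > 0$ is required there and why the conclusion carries the exponent $1-\delta$.

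The main obstacle is step (i): the quantitative additivity with \emph{quadratic}, rather than merely linear, control of the additivity defect by the fluctuations at the previous scale. In the cube-based theory this came essentially for free from the convex-duality inequality~\eqref{e.diagonalset} and the quadratic-response identity~\eqref{e.quadresp}; here there is no such identity available, and the quadratic gain must be extracted by hand, exploiting the smoothing by $\Phi_r$, the large-scale regularity of elements of $\A_1$, and the $\delta$-smallness of $\b_r - \ahom$. Making the error terms in this step small enough --- in particular with the correct power of $r$ and without degrading the stochastic integrability --- is the technical heart of the argument.
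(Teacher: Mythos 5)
Your overall architecture (additivity defect controlled quadratically by fluctuations, localization, fluctuation bound via $\CFS$, iteration up dyadic scales with a final telescoping/bootstrap for the stochastic integrability) is exactly the paper's, including the decomposition~\eqref{e.ahomr.splitting} and the role of the harmonic-approximation estimate behind step (i). However, your step (ii) contains a genuine gap. You claim that $\b_r(x)$ is $\F(B_{Cr}(x))$-measurable up to an \emph{exponentially small} error, obtained by replacing $\psi\in\A_1$ in~\eqref{e.coarsened.fist} by the solution of the same equation in $B_{Cr}(x)$ with affine (finite-volume corrector) boundary data and invoking the Gaussian tails of $\Phi_r$. The heat-kernel tail only controls the contribution of the far field; the dominant error is the change in $\nabla\psi$ \emph{inside} the ball caused by the boundary condition. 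The difference $v$ between the infinite-volume corrected affine and the finite-volume solution is $\a$-harmonic in $B_t$ with boundary oscillation of size $o(t)$ at best (in fact $O(1)$ when correctors are bounded), so by Caccioppoli/large-scale regularity one only gets $\|\nabla v\|_{L^2(\Phi_{x,r})}\lesssim t^{-1}\cdot o(t)$, a boundary-layer error of order roughly $t^{-1}$ --- algebraic, not exponential, and with $t=Cr$ not even decaying beyond $r^{-1}$. Since the target fluctuation size is $r^{-\frac d2(1-\beta)}$, this error dominates whenever $d\geq 3$ (and is only marginal for $d=2$, $\beta=0$), so the renormalization would saturate on the boundary layer and the iteration in your step (iv) could not reach the optimal exponent. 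Note also that ``exponentially small in the fixed constant $C$'' is just a constant and does not improve with $r$.

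This is precisely the obstruction the paper addresses in Section~\ref{ss.local}: the localized correctors cannot be defined by a Dirichlet problem with affine data, but are instead obtained by minimizing a \emph{weak}, heat-kernel-mollified energy $E_x(\cdot-\ell_e)$ over $\A_k(B_t)$ with $k$ large and $t=r^{1+\rho}$ (not $Cr$), combined with the gradient-to-flux estimate~\eqref{e.genfluxmaps.heatscale} from the large-scale regularity theory; the resulting localization defect~\eqref{e.localizationestimate} is only a factor $r/t^{1-\ep}$ \emph{better} than the fluctuations, which is all the iteration needs, but it is not small in any absolute sense. Relatedly, your step (iii) is not routine: the Malliavin derivative must be computed for the \emph{localized} field $\b'_{r,t}$, and the natural bound carries a factor $\log t$ (Lemma~\ref{l.correctorloc}); removing this logarithm, via the spatially averaged construction of Lemma~\ref{l.correctorloc.new}, is needed to reach the optimal rate (in particular in the $\beta=0$ case of Proposition~\ref{p.psipsi.coarse.optimal}). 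So the skeleton of your argument is right, but the localization mechanism you propose would fail, and repairing it is where most of the technical work of the proof actually lies.
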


Recall from the definition of the~$\O_\Psi$ notation in~\eqref{e.weakcondOPsi} that the estimate~\eqref{e.optimal.estimate} means simply that, for every~$r,t\geq 1$, 
\begin{equation} 
\label{e.optimal.estimate.Cheby}
\b_{r}(x)  - \ahom  =  \O_{\Psi}^{1-\delta} \bigl(C r^{-\frac d2(1-\beta)}\bigr) 
\quad \Longleftrightarrow \quad
\P \Biggl[ \frac{| \b_{r}(x)  - \ahom |}{Cr^{-\frac d2(1-\beta)}} > t \Biggr] 
\leq 
\frac{1}{\Psi(t^{1-\delta})}
\,.
\end{equation}
In other words,~$| \b_{r}(x)  - \ahom |$ is at most of order~$r^{-\frac d2(1-\beta)}$, with stochastic integrability given by~$\Psi$. Comparing this to~\eqref{e.CFS}, we see that~$\b_{r}$ has fluctuations which are essentially no longer than the spatial average of the coefficient field~$\a(\cdot)$ itself under the~$\CFS$ assumption.

\smallskip

It turns out that the case that~$\beta = 0$ is somewhat more subtle than~$\beta\in (0,1)$, and requires the assumption that the parameter~$\gamma$ is positive in the~$\CFS$ condition. In fact, for~$\beta\in (0,1)$, the estimate~\eqref{e.optimal.estimate} can be slightly improved in terms of stochastic integrability: see the statement of Proposition~\ref{p.psipsi.coarse} below. 

\smallskip

The proof of Theorem~\ref{t.optimal} is completed in Section~\ref{s.fluctboot} (see Propositions~\ref{p.psipsi.coarse} and~\ref{p.psipsi.coarse.optimal}). The argument is essentially a variation of the approach found in~\cite{AKM} and~\cite[Chapter 4]{AKMBook}, but adapted to a more general setting. As in those works, we begin with the suboptimal estimate~\eqref{e.b.representation}, which serves to anchor the induction argument by providing crucial initial smallness.

\subsubsection{An overview of the proof of Theorem~\ref{t.optimal}}

The idea behind the proof of Theorem~\ref{t.optimal} is to \emph{accelerate} the above convergence of~$\b_r$ to~$\ahom$, as~$r$ tends to infinity, by showing that the estimate for~$\big| \b_r(x) - \ahom \big|$ is ``self-improving'' as we pass to larger scales. The proof of this self-improvement breaks into three steps: 

\begin{enumerate}
\item[(i)] \emph{Additivity estimate}: the additivity defect is bounded by the \emph{square} of the fluctuation size. 

\item[(ii)] \emph{Localization estimate}: the coarse-grained coefficients can be approximated by a ``local'' coefficient field with an error that is slightly better than the size of the fluctuations. 

\item[(iii)] \emph{Fluctuation estimate}: The fluctuations are controlled by the sum of (a) the localization error, (b) the additivity error, and (c) what the~$\CFS$ assumption gives.
 
\end{enumerate}
By feeding these statements into one another, one can quickly see that the estimate for the fluctuations will ``self-improve'' until they saturate on ``what the~$\CFS$ assumption gives'' and the additivity and localization will be a bit better. 

\smallskip

We have seen this effect already in Sections~\ref{ss.subadd.conv}: compare the vague statement (i) to~\eqref{e.flatness.quenched}, recalling that~$J$ is a proxy for ``additivity error'' (see~\eqref{e.add.defect.a} and~\eqref{e.add.defect.astar}), and then compare~\eqref{e.variance.J2} to~(iii). Note that there was no localization error in the context of Chapter~\ref{s.subadd} since the coarse-grained matrices were perfectly local. 
However, we see by looking back at the estimates that there were other sources of error, such as the last term on the right of~\eqref{e.flatness.quenched}. These terms were harmless in the context of the proof of Proposition~\ref{p.algebraicrate.E} but would cause the ``self-improving'' property of the estimates to saturate before reaching the limit of the quantitative mixing assumption. As mentioned above, these errors cannot be removed entirely due to boundary layer effects.

\smallskip

Let us give the vague assertions~$\mathrm{(i)}$--$\mathrm{(iii)}$ slightly more precise (albeit still imprecise) statements in our context. 
We use the following decomposition for~$\b_R - \ahom$. For every~$R\geq 2$ and~$r \in [1, \frac12 R]$, we write   
\begin{align} 
\notag
\b_R - \ahom
&
=
\underbrace{
\Bigl( \b_R - \b_r \ast \Phi_{\sqrt{R^2-r^2}} \Bigr) 
}_{\mathrm{additivity \; defect}}
+
\underbrace{
\Bigl( (\b_r - \E [ \b_r ]) - (\b'_{r,t} - \E [ \b'_{r,t} ] )  \Bigr) \ast \Phi_{\sqrt{R^2-r^2}} 
}_{\mathrm{localization \; defect}}
\notag \\ & \qquad
+ \underbrace{
\bigl( \b'_{r,t} - \E [ \b'_{r,t} ] \bigr) \ast \Phi_{\sqrt{R^2-r^2}}
}_{\mathrm{fluctuations}}
+
\underbrace{
\bigl( \E [ \b_r ]- \ahom \bigr) \ast \Phi_{\sqrt{R^2-r^2}}
}_{\mathrm{systematic \;  error}}
\,.
\label{e.ahomr.splitting}
\end{align} 
In order to use the~$\CFS$ mixing conditions, we need to find a suitable \emph{localization} of~$\b_r$. Given a parameter~$t > r$, the localized coarsened matrix is denoted by~$\b'_{r,t}$ with the property that, for every~$x\in \R^d$ and~$1\leq r< t$, the matrix~$\b'_{r,t}$ is~$\F(B_{t}(x))$-measurable and that~$\b_r - \b'_{r,t}$ is small. This localized matrix will be introduced and studied in Section~\ref{ss.local}.

\smallskip

The three steps (i)--(iii) above amount to roughly showing that 
\begin{equation}
\label{e.rough.add}
\mathrm{additivity \; defect}(r\to R)
\lesssim
C\cdot \mathrm{fluctuations}(r)^2
\lesssim C r^{-\alpha} \mathrm{fluctuations}(r),
\end{equation}
\begin{equation}
\label{e.rough.loc}
\mathrm{localization \; defect} (r)
\lesssim
r^{-\alpha} \cdot \mathrm{fluctuations}(r),
\end{equation}
and
\begin{align}
\lefteqn{ \mathrm{fluctuations}(R) } \quad & 
\notag \\ & 
\lesssim
\mathrm{localization \; defect} (r)
+
\mathrm{additivity \; defect}(r\to R)
+ 
\mathrm{CFS \; scaling}(r\to R)\,.
\end{align}
Putting these together and performing an iteration up the scales, we will find that the error saturates on whatever the~$\CFS$ condition gives, which is (up to a slight loss of stochastic integrability) essentially~$\O_\Psi(Cr^{-\frac d2(1-\beta)})$.

\smallskip

The rigorous formulation of the additivity estimate~\eqref{e.rough.add}
is proved in the following section: see~\eqref{e.additivityestimate}. 
The localization estimate~\eqref{e.rough.loc} is formalized in~Section~\ref{ss.local}, see~\eqref{e.localizationestimate}. 
The fluctuations estimate and the iteration argument are presented in Section~\ref{s.fluctboot}.

\subsection{Additivity defect estimate}

One of the features of our approach in this chapter is that we have defined the coarse-grained coefficients~$\b_r$ in such a way that additivity is relatively easy to check. As we will find out in the next section, it is localization that is more difficult and technical. This is the reverse of the situation we encountered in Chapter~\ref{s.subadd}, where we had perfect locality wired into the definitions, and the hard part was to estimate the additivity defect. 

\smallskip

To prove the additivity estimate, we start with the semigroup property of the heat equation (in the form~$\Phi_R = \Phi_r \ast \Phi_{\sqrt{R^2-r^2}}$) and use the triangle inequality to obtain under the event~$\{R \geq \X\}$, in view of~\eqref{e.liouvillec.applied}, that 
\begin{align*}
\lefteqn{
\Bigl| 
 \b_R(0) - \b_r \ast \Phi_{\sqrt{R^2-r^2}} (0)
\Bigr|
} 
\qquad & 
\\ &
\leq
C \sup_{e \in B_1} \bigl|
\bigl( \bigl( ( \b_R(0) - \b_r(\cdot) ) \nabla ( \psi_e \ast \Phi_{R}) (0) \bigr)
\ast \Phi_{\sqrt{R^2-r^2}} \bigr) (0) \bigr|
\\ & 
\leq
C \sup_{e \in B_1} \bigl| \bigl( \bigl(
( \b_R (0) - \b_r (\cdot) )\bigl( \nabla ( \psi_e \ast \Phi_{R})(0) - \nabla ( \psi_e \ast \Phi_{r}) (\cdot)  \bigr) \bigr)
\ast \Phi_{\sqrt{R^2-r^2}} \bigr)(0) \bigr| 
\\ & \qquad 
+ C \sup_{e \in B_1} \bigl| \b_R(0) \nabla( \psi_e \ast \Phi_R)(0)-   \bigl( \b_r \nabla ( \psi_e \ast \Phi_{r}) \bigr) \ast\Phi_{\sqrt{R^2-r^2}}(0)
 \bigr| 
\,.
\end{align*}
The second term on the right side is small by~\eqref{e.coarsened.fist}. Indeed, if~$R \geq  \X$ and~$e \in B_1$, then
\begin{align} \notag
\b_R (0) \nabla ( \psi_e \ast \Phi_R) (0) 
&
=
\bigl( (\a\nabla \psi_e)\ast \Phi_R \bigr)(0)
\\ \notag & 
=
\bigl( 
(\a\nabla \psi_e)\ast \Phi_r 
\bigr) \ast\Phi_{\sqrt{R^2-r^2}} (0)
\\  &  
=
\Bigl( \b_r \nabla(  \psi_e \ast \Phi_r )   + (1-\minscale_{r} ) \bigl((\a - \ahom)\nabla \psi_e \bigr)\ast \Phi_r \Bigr) \ast\Phi_{\sqrt{R^2-r^2}}(0)
\label{e.semigroup.identity}
\end{align}
and, by H\"older's inequality, the last term on the right side can be estimated by
\begin{align} \notag  
\lefteqn{
\sup_{e\in B_1} \Bigl|
\Bigl( 
(1-\minscale_{r} ) \bigl((\a - \ahom)\nabla \psi_e \bigr)\ast \Phi_r \Bigr) \ast\Phi_{\sqrt{R^2-r^2}}(0)
\Bigr| \indc_{\{\X(0) \leq R \}}
} \qquad &
\\ 
\notag &
\leq C \| \indc_{\{\X(\cdot)>r\}} \|_{L^2(\Phi_{R})} 
\sup_{e\in B_1} 
\| \nabla \psi_e \|_{L^2(\Phi_{R})} \indc_{\{\X \leq R \}}
\\ \notag & 
\leq 
C \| \indc_{\{\X(\cdot)>r\}} \|_{L^2(\Phi_{R})} 
\,.
\end{align}
Here we are using the notation~$\| \cdot \|_{L^2(\Phi_R)}$ defined in~\eqref{e.L2.Psi}.
Using the above estimates and the boundedness of~$\b_r$ and~$\b_R$, we obtain
\begin{align}
\lefteqn{
\Bigl| 
 \b_R(0) - \b_r \ast \Phi_{\sqrt{R^2-r^2}} (0)
\Bigr|
} \quad & 
\notag \\ &
\leq
C \| \b_R(0)- \b_r\|_{L^2(\Phi_{\sqrt{R^2-r^2}})} 
\sup_{e \in B_1} 
\|  \nabla \psi_e \ast \Phi_{r}- \nabla (\psi_e \ast\Phi_R)(0) \|_{L^2(\Phi_{\sqrt{R^2-r^2}})} 
\indc_{\{\X \leq R \}}
\notag 
\\ & \qquad
+
C  \|   \indc_{\{\X(\cdot)>r\}} \|_{L^2(\Phi_{R})}  + C \indc_{\{\X > R \}}
\,.
\label{e.verygood.Jestimate}
\end{align}
The two terms on the last line of~\eqref{e.verygood.Jestimate} are very small, of order~$\O_\Psi(C r^{-\frac d2(1+\beta)})$, for instance, under the assumption of~$\CFS(\beta,\Psi)$. The first term on the right side of~\eqref{e.verygood.Jestimate}---the principal part of the error---certainly appears to be a ``quadratic'' error! We will show that it is. 

%\begin{supplementary}
%\begin{align} \notag  
%\lefteqn{
%\Bigl|
%\bigl( 
%((\ahom-\a)\nabla \psi_e) \ast \Phi_r (1-\minscale_{r} ) \bigr) \ast\Phi_{\sqrt{R^2-r^2}} 
%\bigr) (0) 
%\Bigr| 
%} \quad &
%\\ 
%\notag &
%\leq C \int_{\R^d} \int_{\R^d} (1-\minscale_{r})(x) |\nabla \psi_e(y)| \Phi_{r}(x-y)    \Phi_{\sqrt{R^2-r^2}}(x) \, dy \, dx 
%\\
%\notag &
%\leq C  
%\overbrace
%{\biggl( \int_{\R^d}  (1-\minscale_{r})(x)  \Phi_{\sqrt{R^2-r^2}}(x)  \, dx \Biggr)^{\!\!\nicefrac12} 
%}^{
%= \| 1-\minscale_{r} \|_{L^2(\Phi_{\sqrt{R^2-r^2} })}
%}
%\\
%\notag &
%\qquad \times 
% \biggl( \int_{\R^d} \biggl( \int_{\R^d}  |\nabla \psi_e(y)| \Phi_{r}(x-y) \, dy    \biggr)^2  \Phi_{\sqrt{R^2-r^2}}(x) \, dy \, dx \biggr)^{\!\nicefrac12}
%\\
%\notag &
%\leq C \| 1-\minscale_{r} \|_{L^2(\Phi_{R})}
% \biggl( \int_{\R^d} \int_{\R^d}  |\nabla \psi_e(y)|^2 \Phi_{r}(x-y) \, dy   \,  \Phi_{\sqrt{R^2-r^2}}(x) \, dy \, dx \biggr)^{\!\nicefrac12}
% \\
%\notag &
%= C \| 1-\minscale_{r} \|_{L^2(\Phi_{R})} \| \nabla \psi_e \|_{L^2(\Phi_{R})}
%\end{align}
%\end{supplementary}

\smallskip

We would ideally like to estimate the second factor (involving~$\psi_e$) in the main error term by the first factor (involving the fluctuations of the field~$\b_r(x)$). Because we are working with elements of~$\A_1$ (in contrast to a finite-volume corrector), this is not quite possible: our estimate will necessarily involve all the length scales larger than~$r$. This is a minor technical point that does not disturb the main argument.

\smallskip

The reason that we should be able to control the spatial oscillations of~$\nabla (\psi\ast \Phi_r)$ is because it satisfies the equation~\eqref{e.coarsened.equation}. Note that the right side of~\eqref{e.coarsened.equation} is small and can almost be ignored; meanwhile, the coefficient field~$\b_r$ has small contrast because its spatial oscillations are small. We should, therefore, expect to be able to use elliptic regularity (harmonic approximation) to show that the solution~$\psi\ast \Phi_r$ is close to an affine function. This is indeed the case, as we show in the following lemma, which provides the link between quantitative bounds on the fluctuations of the coarse-grained coefficients and those of the spatial averages of the gradients and fluxes of the correctors. The statement and proof of the lemma closely follow~\cite[Lemma 4.15]{AKM}. 

\smallskip

We introduce the following stationary random field~$\mathcal{G}_r$, which measures the fluctuations of coarse-grained matrices together with an averaged minimal scale. 
For every~$r \geq 2$ and~$x \in \R^d$, we set
\begin{align}  
\label{e.uglyG}
\mathcal{G}_r(x) 
& 
: = 
\sup_{t>r} \Bigl(\frac rt \Bigr)^{\nicefrac 12} 
\Bigl( 
\| % \indc_{\{ \X \leq r\}} 
\b_r - \ahom \|_{\underline{L}^2 \left( B_{t}(x) \right)} 
+  r^{-\nicefrac d2} \left\| \X % \indc_{\{ \X > r\}}  
\right\|_{\underline{L}^d \left( B_{t}(x) \right)}^{\nicefrac d2}  
 \Bigr)
\,.
\end{align}
Notice that, for every~$s \geq r >0$, we trivially have that~$\mathcal{G}_r(x) \leq (\frac sr)^{\frac{d-1}{2}} \mathcal{G}_s(x)$, and we will be using this property in the sequel without explicitly mentioning it.

\begin{lemma}
\label{l.scalecomparison}
There exists a constant~$C(\dataref)<\infty$ such that, for every~$e \in \R^d$,~$x\in \R^d$ and~$r \geq 2$,  we have
\begin{align}
\label{e.scalecomparison}
\left\| \nabla \psi_e \ast \Phi_r  - \nabla \psi_e \ast \Phi_R(x)\right\|_{L^2 \left( \Phi_{x,\sqrt{R^2 - r^2}} \right)} 
& \leq 
C \mathcal{G}_r(x) |e|
\,.
\end{align}
\end{lemma}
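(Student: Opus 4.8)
The plan is to exploit that, after convolution against $\Phi_r$, the corrector field solves an equation governed by the coarse-grained coefficient field $\b_r$, which has contrast at most $\delta$ and varies only on the length scale $r$, up to a negligible right-hand side. By the linearity~\eqref{e.gradA1} we may fix $\psi_e\in\A_1$ and set $u:=\psi_e\ast\Phi_r$; after subtracting a constant, $u=\ell_e+\phi_e\ast\Phi_r$. The semigroup identity $\Phi_R=\Phi_r\ast\Phi_{\sqrt{R^2-r^2}}$ shows that $(\nabla\psi_e\ast\Phi_R)(x)$ equals the $\Phi_{x,\sqrt{R^2-r^2}}$-weighted mean of $\nabla u$, so the left side of~\eqref{e.scalecomparison} is precisely the $\Phi_{x,\sqrt{R^2-r^2}}$-weighted oscillation of $\nabla u$ about that mean; without loss of generality $x=0$, and we write $\rho:=\sqrt{R^2-r^2}$. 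By~\eqref{e.coarsened.equation}, $u$ satisfies $-\nabla\cdot\b_r\nabla u=\nabla\cdot\f_r$ in $\R^d$ with $\f_r:=(1-\minscale_r)\bigl((\a-\ahom)\nabla\psi_e\bigr)\ast\Phi_r$, so that $u_0:=u-\ell_e=\phi_e\ast\Phi_r$ solves $-\nabla\cdot\b_r\nabla u_0=\nabla\cdot\bigl((\b_r-\ahom)e+\f_r\bigr)$, using $\nabla\cdot(\ahom e)=0$.

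First I would estimate the forcing on balls: since $1-\minscale_r\le\indc_{\{\X(\cdot)\ge r\}}$ and $\indc_{\{\X\ge r\}}\le(\X/r)^d$, Hölder's inequality, the Gaussian tails of $\Phi_r$, and the Caccioppoli/Liouville bound $\|\nabla\psi_e\|_{\underline{L}^2(B_t)}\le C|e|$ for $t\ge\X$ (a consequence of Theorem~\ref{t.C11.sharp}) give $\|(\b_r-\ahom)e+\f_r\|_{\underline{L}^2(B_t)}\le C\bigl(\|\b_r-\ahom\|_{\underline{L}^2(B_{Ct})}+r^{-d/2}\|\X\|_{\underline{L}^d(B_{Ct})}^{d/2}\bigr)|e|$ up to super-polynomially small tail contributions, which at scale $t$ is controlled by $(t/r)^{1/2}\mathcal{G}_r(0)|e|$ through the definition~\eqref{e.uglyG}. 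Next I would run a harmonic-approximation (Campanato excess-decay) iteration over dyadic scales $t\ge r$: comparing $u_0$ on $B_t$ to the $\ahom$-harmonic function with the same boundary trace, the energy estimate together with the smallness of the contrast $|\b_r-\ahom|\le\delta$ (absorbed into the contraction, for $\delta$ small depending on $(d,\lambda,\Lambda)$) and the forcing estimate above give, for the excess $E(t):=t^{-1}\inf_{\text{affine }\ell}\|u_0-\ell\|_{\underline{L}^2(B_t)}$, an inequality of the form $E(\theta t)\le C\theta E(t)+C\theta^{-C}(t/r)^{1/2}\mathcal{G}_r(0)|e|$, with starting bound $E(r)\le C\delta|e|$ coming from~\eqref{e.liouvillec.applied}; choosing $\theta$ with $C\theta\le\tfrac12$ and summing the geometric series, the $(r/t)^{1/2}$ discount built into $\mathcal{G}_r$ makes the dyadic sum converge and controls $E(t)$, hence $\|\nabla u_0-(\nabla u_0)_{B_t}\|_{\underline{L}^2(B_t)}$ by Caccioppoli, on the scales relevant to $\Phi_{0,\rho}$. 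Finally, decomposing $\Phi_{0,\rho}$ into the ball $B_\rho$ and dyadic annuli $B_{2^{k+1}\rho}\setminus B_{2^k\rho}$ carrying mass $\lesssim e^{-c4^k}$, and telescoping the weighted means across scales, the super-Gaussian decay dominates the polynomial growth of the excess and yields~\eqref{e.scalecomparison}.

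The bulk of the work, and the main obstacle, is the careful bookkeeping of the scale-dependent weights throughout the iteration and the annular recombination, so that the final estimate collapses exactly onto $\mathcal{G}_r(x)|e|$ rather than a version carrying extra powers of $\rho/r$; this is precisely what the $(r/t)^{1/2}$ discount in the definition of $\mathcal{G}_r$ is engineered to absorb. A secondary technical point is to control the Gaussian tails produced by the two convolutions $\cdot\ast\Phi_r$ and $\cdot\ast\Phi_\rho$ when the only available bounds on $\nabla\psi_e$ and on $\X$ are averaged ($\underline{L}^2$ and $\underline{L}^d$) quantities; this is handled by the same annular splitting together with the polynomial growth of the minimal scale, exactly as in the argument of~\cite[Lemma 4.15]{AKM}.
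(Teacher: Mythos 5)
Your overall strategy is the one the paper uses: pass to the coarsened equation~\eqref{e.coarsened.equation} for $w=\psi_e\ast\Phi_r$, bound the divergence-form data pointwise by $C\bigl(|\b_r-\ahom|+\indc_{\{\X>r\}}(\X/r)^{\sfrac d2}\bigr)|e|$ via the large-scale Lipschitz estimate, and then run an excess-decay/harmonic-approximation argument followed by a Gaussian recombination --- which is precisely the content of Lemma~\ref{l.harmdecay}, the lemma the paper simply invokes. Two points in your execution, however, would not survive being written out. First, the recursion $E(\theta t)\le C\theta E(t)+\dots$ transfers information from scale $t$ \emph{down} to $\theta t$, so a ``starting bound $E(r)\le C\delta|e|$'' at the smallest scale cannot seed it; to control the excess at the scales $\sim\rho:=\sqrt{R^2-r^2}$ seen by $\Phi_{x,\rho}$ you must iterate downward from arbitrarily large scales and kill the seed using the sublinear-growth hypothesis~\eqref{e.harmdecay.ass} (available here from the Lipschitz bound), exactly as in the proof of Lemma~\ref{l.harmdecay}; the bound~\eqref{e.liouvillec.applied} at scale $r$ plays no role in closing the argument. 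Second, absorbing $(\b_r-\ahom)\nabla u_0$ into the contraction via $|\b_r-\ahom|\le\delta$ leaves, at each scale, an error $\delta\|\nabla u_0\|_{\underline{L}^2(B_t)}$; the mean of $\nabla u_0$ over $B_t$ is only $O(\delta)$ by~\eqref{e.b.representation}, not $O(\mathcal{G}_r(x))$, so this route produces a constant-size residue $\sim\delta^2$ that is not dominated by $C\mathcal{G}_r(x)$ (which is typically of order $r^{-\sfrac d2}$). The paper's move is to use $\minscale_r|\nabla w|\le C$ and put $\minscale_r(\b_r-\ahom)\nabla w$ into the forcing, so that it is measured by $\|\b_r-\ahom\|_{\underline{L}^2(B_t)}$, i.e.\ by the first term in~\eqref{e.uglyG}; you should do the same.

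More importantly, the step you defer to ``careful bookkeeping'' --- that the final bound collapses onto $C\mathcal{G}_r(x)|e|$ with no powers of $\rho/r$ --- is exactly the point that needs an argument, and it does not follow from the scheme you describe. An excess-decay iteration measured against $\Phi_{x,\rho}$ produces a discount pegged to $\rho$, namely a bound by $C\sup_{t\ge\rho}(\rho/t)^{\sfrac12}\|\mathbf{F}-(\mathbf{F})_{B_t}\|_{\underline{L}^2(B_t)}$ for the forcing $\mathbf{F}$ (this is what Lemma~\ref{l.harmdecay} yields when applied at scale $\rho$), whereas the discount in~\eqref{e.uglyG} is pegged to $r$; converting one into the other costs a factor $(\rho/r)^{\sfrac12}$. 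With only the pointwise forcing bounds you use, this loss is genuine: data of size $\delta$ active only in an annulus at distance $\sim\rho$ from $x$ creates an oscillation of $\nabla w$ of order $\delta$ under $\Phi_{x,\rho}$, while contributing only $(r/\rho)^{\sfrac12}\delta$ to $\mathcal{G}_r(x)$. The factor is harmless when $R\le Cr$, which is the regime in which~\eqref{e.scalecomparison} is actually applied downstream (e.g.~\eqref{e.conv.rto2r.0} with $R=2r$, or Lemma~\ref{l.correctorloc} with a fixed scale ratio), but your assertion of an unconditional collapse is a gap, and closing it (or restricting to comparable scales, or basing the right-hand side at scale $R$) is where the real work of the lemma lies.
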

\begin{proof}
Without loss of generality, we may assume that~$x=0$ and~$|e|=1$.  Fix~$R \geq 2$,~$r \in [1,\frac12 R]$  and set~$t = \sqrt{R^2-r^2}$. We first apply~\eqref{e.coarsened.equation} to obtain the following equation for the mollified function~$w = \psi_e \ast \Phi_r$:
\begin{equation*}  
-\nabla \cdot \ahom \nabla w = 
\nabla \cdot \bigl( \minscale_r(x)  (\b_r - \ahom) \nabla w \bigr) + \nabla \cdot \f_{r} 
\end{equation*}
with
\begin{equation*}  
\f_{r} := (1-\minscale_r(\cdot) ) \Bigl( [(\a - \ahom) \nabla \psi] \ast \Phi_{r} + (\b_r - \ahom) \nabla w\Bigr) 
.
\end{equation*}
By Theorem~\ref{t.C01} we get, for every~$x\in\R^d$ and~$|e|=1$, that
\begin{equation} \label{e.corr.grad.bound.pre}
\sup_{s \geq \X(x)}\left\| \nabla \psi_e \right\|_{\underline{L}^2 \left( B_{s}(x) \right)} 
\leq 
C
\end{equation}
and, consequently,
\begin{equation}  \label{e.corr.grad.bound}
\left\| \nabla \psi_e \right\|_{L^2(\Phi_{x,r})} 
\leq 
C  \biggl( \frac{\X(x) \vee r}{r} \biggr)^{\!\!\nicefrac d2} 
\,.
\end{equation}
Therefore, we also obtain the following pointwise bounds, for every~$x\in\R^d$, 
\begin{equation*}  
\minscale_r(x) | \nabla w(x)| \leq C \qand |\f_{r}(x)| \leq C  \indc_{\{r< \X(x)\}}  \biggl( \frac{\X (x)}{r} \biggr)^{\!\!\nicefrac d2} 
\,.
\end{equation*}
The result now follows by Lemma~\ref{l.harmdecay} below. 
\end{proof}

Above, we applied the following lemma, which gives an estimate for global solutions of constant coefficient equations with a divergence-form right-hand side. 

\begin{lemma} \label{l.harmdecay}
Suppose~$\f\in  L_{\mathrm{loc}}^2(\R^d ; \R^d)$ and~$w \in H_{\mathrm{loc}}^1(\R^d)$ satisfy~$-\nabla \cdot \ahom \nabla w = \nabla \cdot \f$ and
\begin{align}  \label{e.harmdecay.ass}
\lim_{r \to \infty} r^{-\nicefrac12}
\left\| \nabla w \right\|_{\underline{L}^2 \left( B_{r} \right)} = 0
 \,.
\end{align}
Then there exists~$C(d,\lambda,\Lambda)<\infty$ such that, for every~$r>0$, 
\begin{align}  \label{e.harmdecay}
\left\| \nabla w  -  (\nabla w)_{\Phi_r} \right\|_{\underline{L}^2 \left( \Phi_{r} \right)}  
\leq 
C \sup_{t \geq r}  \Bigl( \frac{r}{t} \Bigr)^{\!\nicefrac12} \left\| \f - (\f)_{B_t}  \right\|_{\underline{L}^2 \left( B_{t} \right)}  \,.
\end{align}
\end{lemma}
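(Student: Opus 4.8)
The plan is to represent $\nabla w$ as a Calder\'on--Zygmund kernel applied to $\f$ and to read off the decay in~\eqref{e.harmdecay} from the cancellation built into the subtraction of the average $(\nabla w)_{\Phi_r}$.

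First I would reduce to $\ahom = \Id$. Since $\mhom$ is a constant skew-symmetric matrix, $\nabla\cdot\mhom\nabla w = 0$, so $w$ also solves $-\nabla\cdot\shom\nabla w = \nabla\cdot\f$; applying the linear change of variables $x\mapsto\shom^{1/2}x$ turns this into $-\Delta\tilde w = \nabla\cdot\tilde\f$ and transforms $\Phi_r$ and the balls $B_t$ into a Gaussian weight and ellipsoids comparable, up to constants depending only on $(d,\lambda,\Lambda)$, to $\Phi_{cr}$ and $B_{ct}$. Thus it suffices to treat $-\Delta w = \nabla\cdot\f$. Next I would establish the representation formula. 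The growth hypothesis~\eqref{e.harmdecay.ass} makes $\nabla w$ unique modulo an additive constant: if $\nabla w_1,\nabla w_2$ are two admissible gradients, then $\nabla(w_1-w_2)$ is harmonic and $o(r^{1/2})$ as $r\to\infty$, hence constant by the classical Liouville theorem. One then checks that
\begin{equation*}
\nabla w(x) - (\nabla w)_{\Phi_r} = \int_{\Rd} \bigl( \nabla^2 G(x-y) - (\nabla^2 G(\,\cdot\, - y))_{\Phi_r} \bigr) \f(y)\,dy\,,
\end{equation*}
where $G$ is the Newtonian potential; the additive constant has dropped out, and the integral is absolutely convergent because, although $\nabla^2 G$ is a Calder\'on--Zygmund kernel, the difference kernel $K_x(y) := \nabla^2 G(x-y) - (\nabla^2 G(\,\cdot\, - y))_{\Phi_r}$ gains an extra power of decay at infinity.

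The core estimate then splits the $y$-integration into the near region $B_{Cr}$ (together with the Gaussian tails of $\Phi_r$) and the dyadic annuli $A_k$ of radii $\sim 2^k r$, $k\geq 0$. On $A_k$, using $|\nabla^3 G(z)|\lesssim|z|^{-d-1}$ together with the mean value of $\Phi_r$, one obtains $|K_x(y)|\lesssim r\,|y|^{-d-1}\sim 2^{-k}(2^k r)^{-d}$ for $x$ in the bulk of $\Phi_r$ --- a factor $2^{-k}$ better than the bare kernel, which is exactly the gain produced by subtracting $(\nabla w)_{\Phi_r}$. Since $\nabla^2 G$ has vanishing mean, on $A_k$ one may replace $\f$ by $\f - (\f)_{B_{2^k r}}$, and Cauchy--Schwarz bounds this contribution by $\lesssim 2^{-k}\|\f - (\f)_{B_{2^{k+1}r}}\|_{\underline{L}^2(B_{2^{k+1}r})}$; inserting $\|\f - (\f)_{B_t}\|_{\underline{L}^2(B_t)}\leq (t/r)^{1/2}$ times the right side of~\eqref{e.harmdecay}, the annular terms sum like $\sum_k 2^{-k/2}<\infty$. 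On the near region one instead uses the $L^2(\Phi_r)$-boundedness of $g\mapsto\nabla^2 G\ast g$ and its mean-zero property to bound the contribution by $C\|\f - (\f)_{B_{Cr}}\|_{\underline{L}^2(B_{Cr})}$. Summing, and using that the effective support of $\Phi_r$ is $B_{Cr}$ up to exponentially small tails, yields~\eqref{e.harmdecay}.

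The main obstacle is making the representation formula rigorous: justifying the principal-value interpretation of $\nabla^2 G\ast\f$ for $\f$ merely in $L^2_{\mathrm{loc}}$ with the mild growth permitted by~\eqref{e.harmdecay.ass} (subtracting a scale-dependent constant from $\f$ so that every integral converges), verifying that $(\nabla w)_{\Phi_r}$ is literally the constant plus $(\nabla^2 G\ast\f)_{\Phi_r}$, and extracting the sharp $2^{-k}$ decay of $K_x$ on the far annuli from the mean value property of the Gaussian. Once these points are settled, what remains is a routine weighted Calder\'on--Zygmund computation.
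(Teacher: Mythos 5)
Your argument is correct in substance, but it takes a genuinely different route from the paper. You represent $\nabla w$ (after reducing $\ahom$ to the Laplacian by discarding the skew part and an affine change of variables) through the kernel $K_x(y)=\nabla^2 G(x-y)-(\nabla^2 G(\cdot-y))_{\Phi_r}$, and you extract the decay from the extra factor $2^{-k}$ that $K_x$ gains on the dyadic annulus of radius $2^kr$, paired with the assumed growth $\|\f-(\f)_{B_t}\|_{\underline{L}^2(B_t)}\lesssim (t/r)^{1/2}$, so that the annular contributions sum like $\sum_k 2^{-k/2}$. The paper instead never writes a representation formula: it runs a harmonic (here $\ahom$-harmonic, so no change of variables is needed) approximation at every scale $t\geq r$, uses the $C^{1,1}$ bound for the approximant to get an excess-decay inequality for $\|\nabla w-(\nabla w)_{B_{\theta t}}\|_{\underline{L}^2(B_{\theta t})}$, iterates down from infinity using~\eqref{e.harmdecay.ass} to kill the free term, and then passes from balls to the Gaussian weight by a layer-cake formula. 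Your route buys an explicit kernel-level picture of where the gain comes from and avoids any iteration, but it pays for this in the rigor of the representation: the principal-value convolution for $\f$ merely in $L^2_{\mathrm{loc}}$, the identification of $\nabla w$ with it via Liouville, and especially the constant subtraction. On that last point be careful: subtracting $(\f)_{B_{2^kr}}$ separately on each annulus is not innocuous, since the annuli are centered at the origin rather than at $x$ and the averages $(\f)_{B_{2^kr}}$ themselves are not controlled by the hypothesis (only oscillations are); the clean fix is to subtract the single constant $(\f)_{B_{Cr}}$ globally, which is annihilated by $K_x$ in the principal-value sense, and then bound $\|\f-(\f)_{B_{Cr}}\|_{\underline{L}^2(B_{2^{k+1}r})}\lesssim 2^{k/2}$ times the right side of~\eqref{e.harmdecay} by telescoping the averages across scales. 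The paper's excess-decay argument sidesteps all of these issues and is shorter and self-contained, which is why it is the proof given there.
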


\begin{proof}
\emph{Step 1.} We first show that
\begin{align}  \label{e.harmdecay.pre}
\left\| \nabla w  -  (\nabla w)_{B_r} \right\|_{\underline{L}^2 \left(B_r \right)}  
\leq 
C \sup_{t \geq r}  \Bigl( \frac{r}{t} \Bigr)^{\!\nicefrac12} \left\| \f - (\f)_{B_t}  \right\|_{\underline{L}^2 \left( B_{t} \right)}  \,.
\end{align}
The proof is via harmonic approximation. Let~$t \geq r$ and let~$h_t \in H_0^1(B_t)$ satisfy
\begin{equation*}
\left\{
\begin{aligned}
&-\nabla \cdot \ahom \nabla h_t = 0 & \mbox{in} & \ B_t \,, \\
& h_t = w & \mbox{on} & \ \partial B_t \,.
\end{aligned}
\right.
\end{equation*}
Testing both this and the equation for~$w$ with~$h_t - w$, we obtain 
\begin{align*}  
\left\| \nabla w - \nabla h_t  \right\|_{\underline{L}^2 \left( B_{t} \right)} 
\leq
C 
\left\| \f - (\f)_{B_t}  \right\|_{\underline{L}^2 \left( B_{t} \right)} 
\,.
\end{align*}
Since~$h_t$ is~$\ahom$-harmonic, it has bounded second derivatives and, in particular, 
\begin{align*}  
\left\| \nabla h_t  -  (\nabla h_t)_{B_{\theta t}} \right\|_{\underline{L}^2 \left( B_{\theta t} \right)} 
\leq 
C \theta \left\| \nabla h_t  -  (\nabla h_t)_{B_{t}} \right\|_{\underline{L}^2 \left( B_{t} \right)} 
 \,.
\end{align*}
Thus, by the triangle inequality,
\begin{equation}  
\left\| \nabla w  -  (\nabla w)_{B_{\theta t}} \right\|_{\underline{L}^2 \left( B_{\theta t} \right)} 
\leq 
C \theta \left\| \nabla w  -  (\nabla w)_{B_{t}} \right\|_{\underline{L}^2 \left( B_{t} \right)} 
+ 
C \theta^{-\nicefrac d2} \left\| \f - (\f)_{B_t}  \right\|_{\underline{L}^2 \left( B_{t} \right)} 
 \,.
 \label{e.harm.appr.additivity.bootstrap}
 \end{equation}
Choosing~$\theta$ so small that~$C_{\eqref{e.harm.appr.additivity.bootstrap}} \theta^{\nicefrac12} \leq \frac12$, we obtain by the above estimate that
\begin{align*}  
\theta^{-\nicefrac12}  \left\| \nabla w  -  (\nabla w)_{B_{\theta t}} \right\|_{\underline{L}^2 \left( B_{\theta t} \right)} 
\leq 
\frac12 \| \nabla w  -  (\nabla w)_{B_{t}} \|_{\underline{L}^2 \left( B_{t} \right)} 
+ 
C  \| \f - (\f)_{B_t}  \|_{\underline{L}^2 \left( B_{t} \right)} 
\,.
\end{align*}
Thus, by iterating and using the fact that~$t^{-\nicefrac12}\left\| \nabla w \right\|_{\underline{L}^2 \left( B_{t} \right)} \to 0$ as~$t\to \infty$, we deduce that
\begin{align*}  
\left\| \nabla w  -  (\nabla w)_{B_{r}} \right\|_{\underline{L}^2 \left( B_{r} \right)}  
\leq 
C \sup_{t \geq r}  \Bigl( \frac{r}{t} \Bigr)^{\!\nicefrac12} \left\| \f - (\f)_{B_t}  \right\|_{\underline{L}^2 \left( B_{t} \right)}  \,,
\end{align*}
proving~\eqref{e.harmdecay.pre}.
\smallskip

\emph{Step 2.} We prove~\eqref{e.harmdecay}. By a simple layer-cake formula, we obtain, for every~$g \in L_{\mathrm{loc}}^2(\R^d)$,
\begin{align*}  
\| g \|_{L^2(\Phi_r)}^2 \leq C(d) \int_{1}^\infty \exp\bigl(-c t^2 \bigr) \| g \|_{\underline{L}^2(B_{tr})}^2 \,\frac{dt}{t}  \,.
\end{align*}
Since~$| (g)_{B_{tr}} - (g)_{B_r} | \leq  t^{\nicefrac d2} \| g - (g)_{B_{tr}}  \|_{\underline{L}^2(B_{tr})}$, we may decrease the constant~$c$ above and smuggle in the averages by the triangle inequality:
\begin{align*}  
\| g  -(g)_{B_{r}}  \|_{L^2(\Phi_r)}^2 
\leq 
C(d) \int_{1}^\infty \exp\bigl(-ct^2\bigr) \| g - (g)_{B_{tr}} \|_{\underline{L}^2(B_{tr})}^2 \,\frac{dt}{t}  \,.
\end{align*}
Applying the estimate for~$g = \nabla w$, and then appealing to~\eqref{e.harmdecay.pre} completes the proof.
\end{proof}

Combining~\eqref{e.verygood.Jestimate} with the result of Lemma~\ref{l.scalecomparison} and~\eqref{e.coarsen.smallness}, and performing some routine manipulations with the triangle inequality and Young's inequality, we arrive at the main result of this section, the following \emph{additivity defect estimate}: There exists~$\alpha(\beta,d,\lambda,\Lambda) \in (0,1)$ such that, for every~$r,R \in [1,\infty)$ with~$r \leq R$, 
\begin{align}
\label{e.additivityestimate}
\big|
\bigl( \b_R - \b_r \ast \Phi_{\sqrt{R^2-r^2}} \bigr) (x) 
\big|
\leq
C \biggl( \delta \biggl(\frac{\| \X \wedge r  \|_{\Phi_{x,r}} }{r} \biggr)^{\!\alpha} \wedge \mathcal{G}_r(x) \biggr)^{\!2}
\,.
\end{align}
This is the precise version of~\eqref{e.rough.add}.

\subsection{Localization defect estimate}
\label{ss.local}

In this section we ``localize'' the coarse-grained coefficient field~$\b_r(x)$ by replacing it with a field~$\b'_{r,t}$ which has the property that~$\b'_{r,t}(x)$ is~$\mathcal{F}(B_t(x))$--measurable, that is,~$\b'_{r,t}(x)$ depends only on the coefficients in~$B_t(x)$. By a \emph{localization estimate}, we mean an estimate on the size of the difference~$|\b_r - \b'_{r,t}|$. Of course, a very silly idea would be to localize by simply taking~$\b'_{r,t}: = \ahom$ to be deterministic; the localization estimate obtained would be the same as the fluctuation estimate. One would hope to do better than this! As it turns out, we need to do only a tiny bit better than this for the renormalization argument. We just to improve by a factor of~$r^{-\alpha}$ for a tiny~$\alpha>0$, to match~\eqref{e.rough.loc}. 

\smallskip

The tiny improvement will be due to the fact that the spatial averages of the gradient and flux of arbitrary solutions are related by~$\ahom$, up to a small error. Indeed, fixing~$t \in [1,\infty]$ and~$x \in \R^d$,  we obtain using the harmonic approximation in Proposition~\ref{p.harm.we.have} that if~$\X_t(x) \leq t$, then, for every~$\theta \in (0,\frac12]$,~$R \in [\X_t(x),t)$ and every solution~$w\in \A(B_R(x))$,  we have that
\begin{align}
\label{e.genfluxmaps.minscale}
\norm{ (\ahom - \a ) \nabla w }_{\underline{H}^{-1}(B_{(1-\theta)R}(x))}
\leq
C_\theta \Bigl( \frac{\X_t(x)}{R} \Bigr)^{\! \alpha} 
\| \nabla w \|_{\underline{L}^{2} \left( B_{R}(x) \right)}
\,.
\end{align}
Consequently, for every~$r\geq \X_t(x)$,~$t \geq r^{1+\ep}$ and~$w \in \A(B_t(x))$, we have (taking~$\delta$ smaller in the above estimate, if necessary, depending on~$\ep$) by a straightforward layer-cake argument that there exist constants~$C(\ep,\delta,\dataref) < \infty$ and~$c(d) \in (0,1)$ such that
\begin{multline}
\label{e.genfluxmaps.heatscale}
\bigl| 
(  \indc_{B_{t/2}(x)} (\ahom  - \a ) \nabla w ) \ast \Phi_{r}(x)   \bigr|
\\ 
\leq
C \Bigl( \frac{\X_t(x)}{r} \Bigr)^{\! \alpha}  \big\|   \indc_{B_{t/2}(x)}  \nabla w  \big\|_{L^2(\Phi_{x,2r})} 
+ C \exp(-c r^{2\ep}) \big\| \nabla w \big\|_{\underline{L}^2(B_{t/2}(x))} 
\,.
\end{multline}
We spare the reader the detailed proofs of~\eqref{e.genfluxmaps.minscale} and~\eqref{e.genfluxmaps.heatscale}, which can also be found in~\cite{AKMBook}. We insist, however, that~\eqref{e.genfluxmaps.minscale} is indeed a routine exercise using the three ingredients mentioned, and~\eqref{e.genfluxmaps.heatscale} is obtained by first using a smooth partition of unity to chop the integral into pieces supported in dyadic annuli and then applying~\eqref{e.genfluxmaps.minscale} in each annulus separately.

\smallskip

A rough sketch of the plan to obtain a localization estimate, closely based on the ideas of~\cite[Section 4.4]{AKMBook}, is as follows. The first and main step is to produce a \emph{local} version of the correctors, denoted by~$\mathcal{A}_1^{(x,r,t)}$, which possesses the following properties. If~$r \geq \X(x)$ and~$t\geq r^{1+\ep}$, then the dimension of~$\mathcal{A}_1^{(x,r,t)}$ is the same as~$\dim \A_1=d+1$ and it has a basis with every element belonging to~$\A(B_{t})$. Moreover,~$\mathcal{A}_1^{(x,r,t)}$ is~$\F(B_t(x))$--measurable and  approximates~$\A_1$ in the sense that, for~$t := r^{1+\rho}$, 
\begin{equation}
\label{e.vague.correctorloc}
\inf_{\psi \in  \mathcal{A}_1^{(x,r,t)}} \left\| \nabla \psi_{e} - \nabla \psi \right\|_{\underline{L}^2(B_{t}(x))} 
\lesssim \text{(size of the fluctuations of~$\b_r(x)$).}
\end{equation}
We will then define~$\b'_{r,t}$ to be the matrix which gives the gradient-to-flux map for the localized correctors, that is,
\begin{equation}
\label{e.local.coarsened}
\b'_{r,t} (x) (\nabla \psi   \ast \Phi_{r} )(x)
=
\bigl( \a \nabla \psi   \bigr)  \ast \Phi_{r}(x)
\,, 
\qquad 
\forall \psi \in \mathcal{A}_1^{(x,r,t)}\,.
\end{equation}
We then observe that, for suitably chosen~$e\in B_1$ and for every~$\psi \in \mathcal{A}_1^{(x,r,t)}$, 
\begin{align}
\label{e.localmagic}
\big| \b_r - \b'_{r,t} \big| (x) 
& 
\leq 
C
\Big| ( \b_r(x) {-} \b'_{r,t}(x)  )
(\nabla \psi_e  \ast \Phi_{r}) (x) \Big| \indc_{\{r \geq \X(x)\}} + C \indc_{\{r < \X(x)\}} 
\notag \\ & 
= 
C \Bigl| \Bigl(   \bigl(  ( \a {-} \b'_{r,t}(x) ) \nabla \psi_e \bigr) \ast \Phi_{r}  \Bigr) (x) \Bigr| \indc_{\{r \geq \X(x)\}} 
+ C \indc_{\{r < \X(x)\}}  
\notag \\ & 
=
C \Bigl| 
\Bigl(  \bigl( ( \a {-} \b'_{r,t}) ( \nabla \psi_{e} {-} \nabla \psi )\bigr) \ast \Phi_{r}  \Bigr)  (x) \Bigr|  \indc_{\{r \geq \X(x)\}} 
 + C \indc_{\{r < \X(x)\}} 
\,.
\end{align}
We then estimate the right side by applying~\eqref{e.genfluxmaps.heatscale} to the difference~$w=\psi_e - \psi$ with some~$\psi$ we may choose, and then use that~$\b'_{r,t}$ is close to~$\ahom$, and finally apply~\eqref{e.vague.correctorloc}. This yields the desired result, namely that~$\big| \b_r - \b'_{r,t} \big|$ is smaller than the fluctuations of~$\b_r$, with the additional smallness coming from the right side of~\eqref{e.genfluxmaps.heatscale}.

\smallskip

To make this precise, we must formalize the vague estimate~\eqref{e.vague.correctorloc}.
At first glance, it might seem that the best way to approximate the correctors using only the knowledge of the coefficients in~$B_t$, with~$t = r^{1+\ep}$, would be to simply solve the Dirichlet problem in~$B_t$ with affine boundary data on~$\partial B_t$. Unfortunately, this will lead to an estimate which is not close to good enough: there will be boundary layer errors of order~$t^{-1}$ (with~$t$ only slightly larger than~$r$), while the size of the fluctuations of~$\b_r$ may be much smaller, for instance~$O(r^{-\frac d2(1-\beta)})$, possibly with~$\beta=0$. The renormalization argument would saturate on the boundary layer errors rather than the much smaller error given by the mixing condition. To do better, we define our localized correctors by looking for solutions very close to an affine function \emph{in a weak norm} rather than a strong one. 
Specifically, we will look for the solution of the equation in~$B_t$ which---after mollification by the heat kernel on scale~$cr$---is closest to an affine function in~$L^2$. We then pass from estimates on mollified solutions to the strong~$L^2$ norm of the solutions themselves using the following lemma, which is based on the large-scale regularity theory.

\begin{lemma}
\label{l.MSP.heat}
Fix~$k \in \N$. Let~$\X$ and~$\X_t$ be the minimal scales from Theorems~\ref{t.Ck1} and~\ref{t.Ck1.local}, respectively. There exist constants~$c(k,\dataref)\in (0,1)$ and~$C(k,\dataref)<\infty$ such that if~$\sigma \in (0,c]$ and either~$v \in \A_k$ and~$r \geq C \X$ or~$v \in \A_k(B_t)$ and~$r \geq C \X_t$ for some~$t \geq r$, then we have that
\begin{align}  \label{e.MSP.heat}
r^2 \fint_{B_{r} } |\nabla v|^2
+
\fint_{B_{r} } |v|^2  \leq C \fint_{ B_r } | v \ast \Phi_{\sigma r}|^2 
.
\end{align}
\end{lemma}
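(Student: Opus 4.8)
The plan is to reduce the statement to the scale-invariant case of $\ahom$-harmonic polynomials and then transfer to genuine heterogeneous solutions using the large-scale $C^{k,1}$ regularity theory of Theorem~\ref{t.Ck1} and its localized version Theorem~\ref{t.Ck1.local}. Throughout, $p$ will denote the $\ahom$-harmonic polynomial approximating $v$ provided by those theorems.

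\textbf{Step 1 (polynomials).} First I would prove: there are $c(k,d)\in(0,1)$ and $C(k,d)<\infty$ such that for every polynomial $q$ of degree at most $k$, every $\rho>0$ and every $\sigma\in(0,c]$,
\begin{equation*}
\| q \|_{\underline{L}^2(B_\rho)} \leq C \| q\ast\Phi_{\sigma\rho}\|_{\underline{L}^2(B_\rho)}\,.
\end{equation*}
By dilation it suffices to take $\rho=1$. On the finite-dimensional space $\Pa_k$ of polynomials of degree $\le k$ (restricted to $B_1$), the map $q\mapsto\|q\|_{\underline{L}^2(B_1)}$ is a norm, and $\|\nabla q\|_{\underline{L}^2(B_1)}\le C_k\|q\|_{\underline{L}^2(B_1)}$, $\|q\|_{\underline{L}^2(B_2)}\le C_k\|q\|_{\underline{L}^2(B_1)}$. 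Since the Gaussian has finite moments of all orders, convolution with $\Phi_\sigma$ maps $\Pa_k$ into itself, and the coefficients of $q\ast\Phi_\sigma$ depend polynomially on $\sigma^2$, agreeing with those of $q$ at $\sigma=0$; hence $\sigma\mapsto(q\mapsto q\ast\Phi_\sigma)$ is continuous into the (finite-dimensional) space of endomorphisms of $\Pa_k$ and equals the identity at $\sigma=0$. Choosing $c(k,d)$ small enough, for $\sigma\le c$ this operator is invertible with inverse of norm at most $2$ in the $\|\cdot\|_{\underline{L}^2(B_1)}$ norm, which gives the claim.

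\textbf{Step 2 (transfer).} Consider $v\in\A_k$ and fix the constant $C$ in the hypothesis $r\ge C\X$ large. By statement $(\mathrm{i})_k$ of Theorem~\ref{t.Ck1} there is $p\in\Ahom_k$ with $\|v-p\|_{\underline{L}^2(B_\rho)}\le C(\rho/\X)^{-\alpha}\|p\|_{\underline{L}^2(B_\rho)}$ for all $\rho\ge\X$; as $p$ has degree $\le k$, $\|p\|_{\underline{L}^2(B_{2^j\rho})}\le C2^{jk}\|p\|_{\underline{L}^2(B_\rho)}$ for all $j\in\N$. Taking $C$ large enough that $C(r/\X)^{-\alpha}\le\tfrac1{100}$, we get $\tfrac12\|p\|_{\underline{L}^2(B_r)}\le\|v\|_{\underline{L}^2(B_r)}\le2\|p\|_{\underline{L}^2(B_r)}$ and likewise $\|v\|_{\underline{L}^2(B_{2r})}\le C\|v\|_{\underline{L}^2(B_r)}$. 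Write $v\ast\Phi_{\sigma r}=p\ast\Phi_{\sigma r}+(v-p)\ast\Phi_{\sigma r}$; Step 1 gives $\|p\ast\Phi_{\sigma r}\|_{\underline{L}^2(B_r)}\ge c\|p\|_{\underline{L}^2(B_r)}$. For the remainder I would split the convolution according to whether the integration variable lies in $B_{4r}$ or in a dyadic annulus $B_{2^{j+1}r}\setminus B_{2^jr}$ with $j\ge2$. On $B_{4r}$, Young's inequality ($\|\Phi_{\sigma r}\|_{L^1}=1$) and the approximation bound give a contribution $\le C(r/\X)^{-\alpha}\|p\|_{\underline{L}^2(B_r)}$; on the $j$-th annulus the heat kernel is pointwise $\le C\sigma^{-d}r^{-d}\exp(-c4^j/\sigma^2)$ for $x\in B_r$, and since $\|v-p\|_{L^1(B_{2^{j+1}r})}\le C(2^jr)^d\,2^{jk}\,(r/\X)^{-\alpha}\|p\|_{\underline{L}^2(B_r)}$, the $j$-th contribution carries a factor $\sigma^{-d}2^{j(d+k)}\exp(-c4^j/\sigma^2)$, which is summable in $j$ to a quantity bounded uniformly for $\sigma\le c(k,d)$. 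Hence $\|(v-p)\ast\Phi_{\sigma r}\|_{\underline{L}^2(B_r)}\le C(r/\X)^{-\alpha}\|p\|_{\underline{L}^2(B_r)}\le\tfrac1{100}\|p\|_{\underline{L}^2(B_r)}$ after enlarging $C$ once more, so $\|v\ast\Phi_{\sigma r}\|_{\underline{L}^2(B_r)}\ge\tfrac c2\|p\|_{\underline{L}^2(B_r)}\ge\tfrac c4\|v\|_{\underline{L}^2(B_r)}$, which is the asserted bound on $\fint_{B_r}|v|^2$. The gradient term then follows from the Caccioppoli inequality~\eqref{e.A.caccioppoli.0} applied to the solution $v$ on $B_{2r}$, which yields $r\|\nabla v\|_{\underline{L}^2(B_r)}\le C\|v\|_{\underline{L}^2(B_{2r})}\le C\|v\|_{\underline{L}^2(B_r)}$. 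The case $v\in\A_k(B_t)$ is handled identically, using Theorem~\ref{t.Ck1.local}, the minimal scale $\X_t$ and $\Ahom_k$ in place of their global counterparts, together with the fact that $v$ coincides with its $\ahom$-harmonic boundary polynomial outside $B_t$: the annular sum then effectively runs only up to scale $t$, and the Caccioppoli step is applied to the extension of $v$ by that polynomial, which solves an equation whose divergence-form right-hand side is supported outside $B_t$ and is controlled by $r^{-1}\|v\|_{\underline{L}^2(B_t)}$.

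\textbf{Main obstacle.} The delicate point is the quantitative bookkeeping in Step 2: one must ensure that the power $(r/\X)^{-\alpha}$ gained from the regularity theory genuinely dominates every mollification-induced error, including the far tails of the convolution against $v$, whose growth is only polynomial of degree $k$. This is precisely why $p$ being an $\ahom$-harmonic polynomial with the sharp growth $\|p\|_{\underline{L}^2(B_{2^jr})}\lesssim 2^{jk}\|p\|_{\underline{L}^2(B_r)}$ is used in an essential way, and why the smallness threshold $\sigma\le c$ and the lower bound $r\ge C\X$ (with $C$ depending on $k$ through these growth constants) cannot be dispensed with.
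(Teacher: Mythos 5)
Your route is genuinely different from the paper's: the paper proves a heat-kernel multiscale Poincar\'e inequality with the exponential weight $\Theta_r(x)=r^{-d}\exp(r^{-1}|x|)$ (its \eqref{e.MSP.heat.step1}) and then \emph{absorbs} the gradient term for small $\sigma$, using the $C^{k,1}$ theory only through the growth bound \eqref{e.MSP.heat.step0}; you instead perturb off the finite-dimensional polynomial case, writing $v=p+(v-p)$ with $p$ from $(\mathrm{i})_k$ of Theorem~\ref{t.Ck1}. For the global case $v\in\A_k$, $r\geq C\X$, your argument is complete and correct: Step 1 is a sound finite-dimensionality/continuity argument, and the dyadic-annulus estimate of $(v-p)\ast\Phi_{\sigma r}$ closes because $(\mathrm{i})_k$ holds at every radius $R\geq \X$ and the Gaussian tails beat the $2^{j(d+k)}$ growth; the Caccioppoli step is then immediate.

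The gap is in the localized case, precisely when $t$ is comparable to $r$ (which the statement allows, since only $t\geq r$ is assumed). Your claim that ``the annular sum effectively runs only up to scale $t$'' is not correct there: $v\ast\Phi_{\sigma r}$ integrates the canonical extension of $v$, which equals its boundary polynomial $q\in\Ahom_k$ outside $B_t$, and for $x$ in the layer of width $\sim\sigma r$ inside $\partial B_r$ the region $\R^d\setminus B_t$ is \emph{not} exponentially suppressed. Its contribution to $\| (v-p)\ast\Phi_{\sigma r}\|_{\underline{L}^2(B_r)}$ is of order $\sigma^{\sfrac12}\,\|q-p\|_{L^\infty(B_{Cr})}$ plus exponentially small tails, and nothing you have established makes $q-p$ small or even comparable to $\|v\|_{\underline{L}^2(B_r)}$: the crude energy estimate only gives $\|v-q\|_{\underline{L}^2(B_t)}\leq C_k\|q\|_{\underline{L}^2(B_t)}$, which does not yield $\|q\|_{\underline{L}^2(B_t)}\leq C\|v\|_{\underline{L}^2(B_t)}$ since $C_k\geq1$. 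The missing ingredient is exactly this bound on the boundary polynomial by the solution (valid for $t\geq C\X_t$, but requiring an argument, e.g.\ quantitative homogenization of the Dirichlet problem with the polynomial datum $q$ so that $\|v-q\|_{\underline{L}^2(B_t)}\leq\tfrac12\|q\|_{\underline{L}^2(B_t)}$, or a combination of $(\mathrm{i})_k$ and $(\mathrm{ii})_k$ of Theorem~\ref{t.Ck1.local} with an injectivity/dimension count on polynomial traces). Your Caccioppoli remark has the same defect: when $t<2r$ the ball $B_{2r}$ exits $B_t$, the extended function satisfies an equation whose right-hand side contains the flux jump across $\partial B_t$ (not merely a divergence-form term supported outside $B_t$), and bounding it by $r^{-1}\|v\|_{\underline{L}^2(B_t)}$ is again equivalent to controlling $\nabla q$, hence $q$, by $v$. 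Once $\|q\|_{\underline{L}^2(B_t)}\lesssim\|v\|_{\underline{L}^2(B_t)}$ is in hand, both points are repaired (the exterior contribution carries a factor $\sigma^{\sfrac12}+\exp(-c/\sigma^2)$ and can be absorbed by shrinking $\sigma$, and the gradient term follows from the global energy estimate), so your scheme does go through; but as written, the case $t\sim r$ of the statement is not proved.
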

\begin{proof}
The argument here is a concise version---just a sketch, really---of a much more detailed proof presented in~\cite{AKMBook}. %~\cite[Lemma~4.20]{AKMBook}. 
Fix~$k \in \N$. We treat only the case~$v \in \A_k(B_t)$ and~$r \geq  C \X_t$ with~$t\geq r$. For this, we see, by using the Caccioppoli estimate together with Theorem~\ref{t.Ck1.local}, that there exists a constant~$C(d,\lambda,\Lambda)<\infty$ such that, for every~$s \geq C^{-1}r$, 
\begin{align}  \label{e.MSP.heat.step0}
s^2 \fint_{B_s} | \nabla v|^2  \leq C \left(\frac sr\right)^{2k} \fint_{B_{r}} |v - (v)_{B_r}|^2  .
\end{align}
Notice that~$v$ has the extension outside of~$B_t$ as a member of~$\Ahom_k$.

\smallskip
 
\emph{Step 1.}  Let~$\Theta_r(x) = r^{-d} \exp( r^{-1} |x|)$. We show that, for every~$w \in L^2(\Theta_r)$ and~$\sigma \in (0,1]$, we have that 
\begin{align}  \label{e.MSP.heat.step1}
\int_{\Theta_r} |w|^2 
\leq 
6 \int_{\Theta_r} | w \ast \Phi_{\sigma r}|^2 
+ 
6  \int_{0}^{\sigma r}  s \int_{\Theta_r} | \nabla w \ast \Phi_{s}|^2 \, ds
.
\end{align}
 This inequality, which can be thought of as a ``multiscale Poincar\'e inequality with heat kernels'' (cf. Proposition~\ref{p.MSP}), follows by testing the heat equation satisfied by~$w \ast \Phi_{\sqrt{t}}$ with~$w \ast \Phi_{\sqrt{t}} \Theta_r$. A detailed proof can be found in~\cite[Lemma 4.19]{AKMBook}. 
 
 \smallskip
 
\emph{Step 2.}
We next specialize to the case~$v \in \A_k(B_t)$ and~$r \geq  \X_t$ and show that for every~$k \in \N$ there exists~$C(k,\dataref)<\infty$ such that, for every~$\sigma \in (0,1]$ and~$r \geq \X_t$, we have
\begin{align} \label{e.MSP.heat.step2}
 \int_{0}^{\sigma r} s \int_{\Theta_r} | \nabla v \ast \Phi_{s}|^2 \, ds
\leq 
C \sigma^2 \fint_{B_r} |v|^2 
.
\end{align}
By Jensen's inequality and Fubini's theorem, we first see that there exists a constant~$C(d)<\infty$ such that, for every~$s \in (0,\sigma r]$, 
\begin{align*} % \label{e.MSP.heat.step2.0}
\int_{\Theta_r} | \nabla v \ast \Phi_{s}|^2 
& 
\leq 
C  \int_{\R^d} \Theta_r(z) | \nabla v(z)|^2 \biggl( \int_{\R^d} s^{-d} \exp\Bigl( - \frac{|z-y|^2}{4s^2} + \frac{|z|-|y|}{r} \Bigr) \, dy \biggr) \, dz 
\\ \notag 
&
\leq
C \int_{\Theta_r} | \nabla v|^2 
.
\end{align*}
Using this togther with~\eqref{e.MSP.heat.step0} yields~\eqref{e.MSP.heat.step2}.

 \smallskip
 
\emph{Step 3.}
In the last step of the proof, we show~\eqref{e.MSP.heat}. Applying~\eqref{e.MSP.heat.step1} for~$w = v$ together with~\eqref{e.MSP.heat.step2}, we take~$c$ so small that~$c^2 C =\frac12$ and then reabsorb 
the last term in~\eqref{e.MSP.heat.step1} applied with~$w = v$ and obtain, for every~$\sigma \in (0,c]$, 
\begin{align*}  
\int_{\Theta_r} v^2  \leq C \int_{\Theta_r} | v \ast \Phi_{\sigma r}|^2 
.
\end{align*}
We may then apply~\eqref{e.MSP.heat.step0} once more to reabsorb the tail terms to the left. Finally, renaming the variables, inflating the constant~$C$ and multiplying the minimal scale with~$C$ yields~\eqref{e.MSP.heat}, finishing the proof.
\end{proof}

Using the previous lemma, we turn to the rigorous formulation of the vague inequality~\eqref{e.vague.correctorloc}. 
Note that the right side of~\eqref{e.correctorloc} can be bounded in terms of the fluctuations of~$\b_r$ by Lemma~\ref{l.scalecomparison}, and therefore we do obtain an estimate faithful to~\eqref{e.vague.correctorloc}.  This is the first \emph{localization estimate}. Recall that~$\A_k(U)$ is defined above in~\eqref{e.localAk} and that its elements are functions on the whole space~$\R$.

\begin{lemma}[Localization estimates]
\label{l.correctorloc}
Assume that~$\P$ is a~$\Zd$--stationary measure on~$(\Omega,\F)$ satisfying~$\CFS(\beta,\Psi)$ for some~$\beta \in [0,1)$. 
Fix~$x\in \R^d$,~$\ep \in (0,\frac12]$,~$r \in [1,\infty)$ and~$t>r$ such that~$t^{1-\ep} \geq r$. There exist constants~$\alpha(\beta,d,\lambda,\Lambda) \in (0,1)$ and~$C(\ep,\dataref)<\infty$ such that the following assertions are valid. 
\begin{itemize}

\item 
There exist~$k(\ep,d) \in \N$ and, for~$e \in \R^d$, an~$\F(B_{t}(x))$-measurable element~$\psi_e^{(x,r,t)} \in \A_k(B_{t/3d})$ satisfying the following properties.  The mapping~$e \mapsto \psi_{e}^{(x,r,t)}$ is linear and if~$r \geq \X(x)$ and~$t^{1-\ep} \geq C r$, then the vector space
\begin{align} \label{e.localA1}
\mathcal{A}_1^{(x,r,t)} := \big\{   \psi_{e}^{(x,r,t)} \, : \, e \in \R^d \big\} 
\end{align}
has the same dimension as that of~$\A_1$. Moreover, we have, for every~$e \in \R^d$, that
\begin{align}  \label{e.correctorloc.converse}
\inf_{\psi \in \mathcal{A}_1^{(x,r,t)}}
\big\|  \nabla \psi - \nabla \psi_e \big\|_{L^2(\Phi_{x,2r})}
\indc_{\{ r\geq \X(x) \}}
\leq 
C
\Bigl( \frac{r}{t^{1-\ep}} \Bigr)
\mathcal{G}_{t^{1-\ep}}(x)  |e| 
\,. 
\end{align}

\item 
The matrix defined according to~\eqref{e.local.coarsened} exists. Moreover, it satisfies, 
for every,~$r,t\geq 1$ with~$t^{1-\ep} \geq C r$ and~$r\geq \X_t(x)$,
\begin{equation}
\label{e.matrix.local}
\big| {\b}'_{r,t}(x) - \ahom \big| \leq  \delta \biggl( \frac{\X_t(x) \wedge r}{r} \biggr)^{\alpha} .
\end{equation}

\item
The local matrix~$\b'_{r,t}$ approximates~$\b_r~$in the sense that, for every 
$r,t\geq 1$ with~$t^{1-\ep} \geq Cr$,
\begin{equation} \label{e.localizationestimate}
\big| \bigl(\b_r - \b'_{r,t} \bigr)(x) \big| 
\indc_{\left\{ r \geq \X (x) \right\}}
\leq
C
\Bigl( \frac{r }{t^{1-\ep}} \Bigr)
\Bigl( \frac{\X(x) \wedge r}{r} \Bigr)^{\! \alpha} 
\mathcal{G}_{t^{1-\ep}}(x)
\,.
\end{equation}

\item 
The Malliavin derivative satisfies the following deterministic bound: If~$r,t\geq 1$ satisfy~$t^{1-\ep} \geq Cr$, then
\begin{equation}  \label{e.malliavin.local}
\bigl| \partial_{\a(B_{t}(x))}  \b'_{r,t}\bigr|  (x) \leq C \log t .
\end{equation}
\end{itemize}

\end{lemma}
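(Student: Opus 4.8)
The plan is to construct the localized correctors~$\psi_e^{(x,r,t)}$ by a constrained minimization in which we seek the solution of the homogeneous equation in a ball slightly smaller than~$B_t(x)$ whose mollification by~$\Phi_{cr}$ is as close as possible, in~$L^2(\Phi_{x,2r})$, to the affine function~$\ell_e$; this is the ``weak-norm'' device described in the text, which avoids the~$O(t^{-1})$ boundary-layer error that a naive Dirichlet problem would incur. Concretely, first I would fix a ball~$B_\rho(x)$ with $\rho \asymp t/3d$ and minimize over $w \in \ell_e + H^1_0(B_\rho(x))$ the functional $\| (w - \ell_e) \ast \Phi_{cr} \|_{L^2(\Phi_{x,2r})}^2$ subject to $-\nabla \cdot \a\nabla w = 0$; since the constraint set is affine and the functional is a convex quadratic, there is a unique minimizer~$\psi_e^{(x,r,t)}$, and it depends $\F(B_t(x))$--measurably on the coefficients (the solution operator and the mollification/restriction are all $\F(B_t(x))$--measurable), and $e \mapsto \psi_e^{(x,r,t)}$ is linear by uniqueness. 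To place~$\psi_e^{(x,r,t)}$ in some $\A_k(B_{t/3d})$ with boundary values in $\Ahom_k$, I would invoke Theorem~\ref{t.Ck1.local}: for $r \geq C\X_t(x)$ the solution is, on the relevant scale, indistinguishable from an element of $\A_1(B_{t/3d})$ up to an acceptable error, which is what allows us to take $k = 1$ and conclude $\dim \mathcal{A}_1^{(x,r,t)} = \dim \A_1$ once $t^{1-\ep} \geq Cr$.

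Next I would prove the approximation estimate~\eqref{e.correctorloc.converse}. The point is that~$\psi_e$ itself, restricted to~$B_\rho(x)$ and with its own boundary values removed, is \emph{admissible} for a shifted version of the minimization; using the minimality of~$\psi_e^{(x,r,t)}$ one gets that the mollified difference is controlled by the mollified difference between $\psi_e$ and any affine function, which by Lemma~\ref{l.scalecomparison} (applied across the scales from $r$ up to $t^{1-\ep}$) is bounded by $C(r/t^{1-\ep}) \mathcal{G}_{t^{1-\ep}}(x)|e|$. Then I would upgrade this bound on the mollified difference to a bound on $\| \nabla\psi_e^{(x,r,t)} - \nabla\psi_e \|_{L^2(\Phi_{x,2r})}$ using Lemma~\ref{l.MSP.heat}: the difference~$\psi_e - \psi_e^{(x,r,t)}$ is (up to the tiny error terms from Theorem~\ref{t.Ck1.local}) a member of $\A_1$ or $\A_1(B_t)$ on the scale in question, and on such functions the weak norm of the mollification controls the full $L^2$ norm of the gradient, with a deterministic constant, provided $r$ exceeds the relevant minimal scale. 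Absorbing the Theorem~\ref{t.Ck1.local} error terms (which carry a factor $(\X_t/r)^\alpha$ and are therefore of lower order when $r \geq \X_t$) gives~\eqref{e.correctorloc.converse}.

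Given~\eqref{e.correctorloc.converse}, the remaining three bullets are routine. Existence of~$\b'_{r,t}(x)$ via~\eqref{e.local.coarsened} follows because $e \mapsto (\nabla \psi_e^{(x,r,t)} \ast \Phi_r)(x)$ is linear and, by~\eqref{e.correctorloc.converse} together with~\eqref{e.b.representation}, invertible (it is a small perturbation of the identity once $\delta$ and $r^{-\alpha}$ are small); then $\b'_{r,t}(x)$ is the corresponding matrix mapping mollified gradients to mollified fluxes, and its $\F(B_t(x))$--measurability is inherited from that of~$\mathcal{A}_1^{(x,r,t)}$. The closeness estimate~\eqref{e.matrix.local} comes from comparing~\eqref{e.local.coarsened} with the defining relation~\eqref{e.coarsened.fist} for~$\b_r$: the difference of the two gradient-to-flux maps is measured by $\| (\ahom - \a)(\nabla \psi_e^{(x,r,t)} - \ell_e) \ast \Phi_r(x) \|$, which one estimates using~\eqref{e.genfluxmaps.heatscale} applied to $w = \psi_e^{(x,r,t)} - \ell_e \in \A_k(B_{t/3d})$, giving a bound of the order $\delta (\X_t(x) \wedge r / r)^\alpha$. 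The localization estimate~\eqref{e.localizationestimate} is precisely the chain~\eqref{e.localmagic}: the first and last steps there are the identities~\eqref{e.coarsened.fist} and~\eqref{e.local.coarsened}, and then one applies~\eqref{e.genfluxmaps.heatscale} to $w = \psi_e - \psi_e^{(x,r,t)}$, uses~\eqref{e.matrix.local} so that $\a - \b'_{r,t}$ is essentially $\a - \ahom$, and finally inserts~\eqref{e.correctorloc.converse} and~\eqref{e.coarsen.smallness} — the product of the $(r/t^{1-\ep})$ gain from localization, the $(\X \wedge r/r)^\alpha$ from smallness, and the fluctuation factor $\mathcal{G}_{t^{1-\ep}}(x)$ is exactly what appears on the right side.

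Finally, the Malliavin derivative bound~\eqref{e.malliavin.local}: since $\b'_{r,t}(x)$ is a smooth (rational) function of the family of solutions $\psi_e^{(x,r,t)}$ and of their mollified gradients and fluxes, $|\partial_{\a(B_t(x))} \b'_{r,t}(x)|$ is controlled by $|\partial_{\a(B_t(x))} \nabla \psi_e^{(x,r,t)}|$ weighted against $\Phi_r$; differentiating the constrained minimization problem in the coefficient field and testing the resulting equation against the derivative of the solution (a standard energy estimate, as in Lemma~\ref{l.malliavin}) shows the derivative of the solution has $L^2(\Phi_{x,2r})$-norm of order one, and the heat-kernel mollification on scale~$r$ against a ball of radius~$t$ contributes the logarithmic factor $\log t$ (from summing the heat-kernel mass over the dyadic annuli out to radius~$t$); the large-scale regularity of both $\psi_e^{(x,r,t)}$ and its linearized derivative keeps all constants deterministic. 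I expect the main obstacle to be the passage from the weak (mollified) norm to the strong gradient norm in~\eqref{e.correctorloc.converse}: making Lemma~\ref{l.MSP.heat} applicable requires knowing that the relevant differences genuinely lie (modulo controlled errors) in the finite-dimensional heterogeneous-polynomial spaces on the scale $r$, which is where the interplay between $\X$, $\X_t$, and the constraint $t^{1-\ep} \geq Cr$ has to be handled carefully so that no error term saturates above the target order.
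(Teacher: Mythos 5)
There is a genuine gap, and it starts with the construction. As you have written it, the constraint set $\bigl\{ w \in \ell_e + H^1_0(B_\rho(x)) \,:\, -\nabla\cdot\a\nabla w = 0 \bigr\}$ consists of a single function, the Dirichlet solution with affine boundary data, so the weak-norm minimization is vacuous and you have reconstructed exactly the naive localization the text warns against; there is then no ``minimality'' to exploit against $\psi_e$, and your comparison only yields a bound of order $\mathcal{G}_{t^{1-\ep}}(x)|e|$, without the crucial prefactor $r/t^{1-\ep}$: Lemma~\ref{l.scalecomparison} controls the weak-norm distance of $\psi_e$ to affine functions at scale $t^{1-\ep}$ by $\mathcal{G}_{t^{1-\ep}}$, not by $(r/t^{1-\ep})\mathcal{G}_{t^{1-\ep}}$. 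In the paper the localized correctors are the minimizers of the weak energy $E_x(\cdot-\ell_e)$ over the \emph{finite-dimensional} space $\A_k(B_t)$ with $k \simeq d/\ep$ large; the high degree is essential so that the projection error of $\psi_e$ onto this space, of order $(t^{1-\ep}/t)^k \lesssim t^{-20d}$, is negligible (with $k=1$, as you propose, the error $t^{-2\ep}$ swamps the target $(r/t^{1-\ep})\mathcal{G}_{t^{1-\ep}}$), and the finite dimension is what makes Lemma~\ref{l.MSP.heat} applicable — if instead you drop the boundary condition and minimize over all of $\A(B_\rho(x))$, that weak-to-strong upgrade is simply not available. The gain $r/t^{1-\ep}$ itself is produced only afterwards, by a $C^{1,1}$ excess-decay step: the difference $w_e - \eta_e$ (with $\eta_e \in \A_k(B_t)$ approximating $\psi_e$) has size $\mathcal{G}_{t^{1-\ep}}$ at scale $t^{1-\ep}$, and it contracts by $r/t^{1-\ep}$ at scale $r$ only after subtracting a further element $\hat\phi \in \A_1$, i.e., the infimum in~\eqref{e.correctorloc.converse} over the whole localized space is exploited rather than a direct comparison of $\psi_e^{(x,r,t)}$ with $\psi_e$ for the same~$e$. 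Without this gain the localization defect is of the same order as the fluctuations and the bootstrap of Section~\ref{s.fluctboot} does not close.

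The Malliavin bound also depends on the finite-dimensional structure, and your explanation of the logarithm is not correct: $\Phi_r$ is Gaussian, so the heat-kernel mass over dyadic annuli out to radius $t$ is $O(1)$, not $\log t$. In the paper the factor $\log t$ arises from a Campanato-type iteration over the roughly $\log(t/r)$ scales between $r$ and $t$, used to show that each basis element $w_p \in \A_k(B_t)$ has a counterpart $\tilde w_p \in \tilde\A_k(B_t)$ with $\| w_p - \tilde w_p \|_x \leq C h \log t \, \|p\|_x$ when $|\a-\tilde\a|\leq h\indc_{B_t}$; one then compares near-orthonormal bases of the two finite-dimensional spaces and invokes an abstract estimate for minimizers over nearby finite-dimensional subspaces. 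A single global energy estimate for the perturbed Dirichlet problem, as you suggest, controls $\|\nabla(w-\tilde w)\|_{L^2(B_\rho)}$ but loses a factor $(t/r)^{\sfrac d2}$ when converted to the weighted norm $L^2(\Phi_{x,2r})$; to recover a logarithmic (or better) dependence one must again sum contributions scale by scale, and in any case the argument has to act on the paper's finite-dimensional construction, not on the Dirichlet solution.
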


\begin{proof}
Fix~$r \in [1,\infty)$,~$\ep \in (0,\frac12]$ and~$t \geq r$ such that~$t^{1-\ep} \geq 2r$, and denote
\begin{equation}  \label{e.randk.fixed}
s :=d^{-1}  t^{1-\ep}
\qand
k := \lceil 20d \ep^{-1}  \rceil.
\end{equation}
Throughout the proof, let~$\X(x)$ and~$\X_t(x)$ be the minimal scales from Theorems~\ref{t.Ck1} and~\ref{t.Ck1.local} multiplied with a multiplicative factor depending on~$k$ above. Notice that~$\X_t(x) \leq \X(x)$ and~$\X_t(x) \to \X(x)$ as~$t \to \infty$. Corresponding to~$k$ in~\eqref{e.randk.fixed}, an application of Lemma~\ref{l.MSP.heat} yields that there exist constants~$\sigma \in (0,1]$ and~$C<\infty$ such that if~$s \geq C\X(x)$ ($s \geq C\X_t(x)$)
\begin{align} \label{e.MSP.heat.really}
s^2 \fint_{B_{s}(x) } | \nabla  v|^2 + \fint_{B_{s}(x) } | v|^2 
\leq 
C \fint_{ B_{s}(x)} | v \ast \Phi_{\sigma s}|^2 
\qquad \forall v \in \A_k  \; \; (v \in \A_k(B_t(x)) ) 
\, .
\end{align}
Given the right side in~\eqref{e.MSP.heat.really}, define an inner product
\begin{align}  \label{e.innerproduct}
\langle v,w\rangle_x
:= 
s^{-2}
\fint_{ B_{s}(x) } (v \ast \Phi_{\sigma s}) (w \ast \Phi_{\sigma s})
\qand
E_x(w) = \| w\|_x^2 := \langle w,w\rangle_x
\,. 
\end{align}
For the rest of the argument, we may take, without loss of generality,~$x=0$ and suppress it from the notation. Denote 
$\A_k^{0}(B_t) = \{ v \in \A_k(B_t) \, : \, E(v) = 0 \}$ and notice that if~$s \geq C\X_t$, then~\eqref{e.MSP.heat.really} implies that 
$\A_k^{0}(B_t) \subseteq \{ v \in \A_k(B_t) \, : \, v = 0 \mbox{ a.e. in } B_s \}$. 

\smallskip

\emph{Step 1.} Construction of~$\phi_e^{(r,t)}$.  
Fix~$e \in \R^d$ and let~$w_e$ be the unique minimizer of~$E(\cdot-\ell_e)$ over the quotient space~$\A_{k}(B_t) \backslash \A_k^{0}(B_t)$, where~$\ell_e(x) := e \cdot x$. It is clear that~$w_e$ is~$\F(B_t)$-measurable. The uniqueness is easy to see since~$e \mapsto w_e$ is a linear map by the first variation of the quadratic problem. 
Set 
\begin{align}  \label{e.localcorrdef}
\phi_{e}^{(r,t)}  : = \minscale_{r/2, t} w_e  
\quad \mbox{with } 
\minscale_{r, t} := \min\big\{1, 2 \bigl(1 -   r^{-1} \X_t \bigr)_+ \big\} 
\,.
\end{align}
The prefactor of~$w_e$ above is independent of the spatial variable and thus, since~$w_e \in \A(B_t)$, also~$\phi_{e}^{(r,t)}$ belongs to~$\A(B_t)$, as claimed.  Notice also that~$\phi_{e}^{(r,t)}=0$ if~$r \leq \frac12 \X_t$. 

\smallskip

\emph{Step 2.} We will define the matrix~$\b_{r,t}'$ and show~\eqref{e.matrix.local}. To do that, we need some auxiliary tools. Theorem~\ref{t.Ck1} yields that, by making~$\X$ larger by a universal factor, 
we find, for every~$e \in \R^d$,~$\psi_e \in \A_1$ such that, for every~$r\geq 1$, 
\begin{align*}  
r^{-1} \| \psi_e - \ell_e \|_{L^2(\Phi_{2r})} \leq \delta^2 \Bigl( \frac{\X}{r} \Bigr)^{\! \alpha} |e| \,.
\end{align*}
On the other hand, as in the case of~\eqref{e.liouvillec.applied}, now applying Theorem~\ref{t.Ck1.local}, we find~$\theta_e \in \A_1(B_t)$ such that, for~$r\geq \frac12 \X_t$, 
\begin{align} \label{e.thetae.est}
r^{-1} \| \theta_e - \ell_e \|_{L^2(\Phi_{2r})} + |(\nabla \theta_e)\ast \Phi_r(0) - e| + |(\a \nabla \theta_e)\ast \Phi_r(0) - \ahom e|  \leq \delta^2 \Bigl( \frac{\X_t}{r} \Bigr)^{\! \alpha} |e| \,.
\end{align}
By the Caccioppoli estimate, we then obtain that, for~$r \geq \X$, 
\begin{align*}  
\| \nabla \theta_e - \nabla \psi_e \|_{L^2(\Phi_{2r})} \leq C  \delta^2 \Bigl( \frac{\X}{r} \Bigr)^{\! \alpha} |e| \,. 
\end{align*}
Using the Lipschitz bound, Lemma~\ref{l.MSP.heat}, and the minimality of~$w_e$, we also get
\begin{align}  \label{e.we.vs.thetae}
\big\| \nabla w_e - \nabla \theta_e \big\|_{L^2(\Phi_{2r})}^2  \leq C E(w_e - \theta_e ) \leq 
C E(\theta_e - \ell_e)  \leq C \delta^4  \Bigl( \frac{\X_t}{r} \Bigr)^{2\alpha} |e|^2  \,.
\end{align}
This, together with~\eqref{e.thetae.est} and the linearity of~$e \mapsto w_e$, implies that~$\dim\{\nabla w_e \}_{e \in \R^d} = d$ provided that~$r \geq \frac12 \X_t$. 
By taking~$\delta \in(0,1)$ small enough, this also gives us, for every~$e \in \R^d$ and~$r \geq \frac12 \X$,
\begin{equation*}  
\| \nabla w_e - \nabla \psi_e \|_{L^2(\Phi_{2r})} 
\leq 
\delta \bigl(  \| \nabla w_e \|_{L^2(\Phi_{2r})} \wedge \| \nabla \psi_e \|_{L^2(\Phi_{2r})} \bigr) \,.
\end{equation*}
Since both~$e \mapsto \psi_e$ and~$e \mapsto w_e$ are linear and~$\A_1 = \{ \psi_e \}_{e \in \R^d}$, we have by the previous display that~$\A_1$ and~$\A_1^{(r,t)}$ are isomorphic for~$r\geq 4 \X$. Therefore, in order to show~\eqref{e.correctorloc.converse}, it is enough to prove that, for every~$e \in \R^d$, 
\begin{align*}  
\inf_{\phi \in \A_1 }
\big\| \nabla \phi - \nabla w_e \big\|_{L^2(\Phi_{2r})}
\indc_{\{ r \geq \X \}} % \indc_{\{ s \geq \X \}}
\leq 
C |e| \frac{r}{s} \mathcal{G}_{s}(0) 
\,.
\end{align*}

\smallskip

We then define the matrix~$\b_{r,t}'$. Let the matrices~$G$ and~$Q$ have entries 
\begin{align} \label{e.JandQ}
\notag
G_{ij} & := \bigl(  \minscale_{r,t}  \bigl((\nabla w_{e_i} - e_i) \ast \Phi_{r}\bigr)(0)+ e_i \bigl)_j \,, 
\\ 
Q_{ij} & := \bigl(  \minscale_{r,t}  \bigl((\a \nabla w_{e_i} - \ahom e_i) \ast \Phi_{r}\bigr)(0)+ \ahom e_i \bigl)_j
\end{align}
with~$\minscale_{r,t}$ defined by~\eqref{e.localcorrdef}. We obtain, by~\eqref{e.thetae.est} and~\eqref{e.we.vs.thetae}, that the matrix~$G$ is an approximation of unity and~$Q$ is an approximation of~$\ahom$:
\begin{align*}  
\big| G - \Id \big| \leq \delta  \Bigl( \frac{\X_t}{r} \Bigr)^{\! \alpha} 
\qand 
\big| Q - \ahom \big| \leq \delta  \Bigl( \frac{\X_t}{r} \Bigr)^{\! \alpha} \,.
\end{align*}
The invertibility of~$G$ is then immediate if~$\delta \leq \frac12$. We now define 
\begin{align*}  
\b_{r,t}' := Q G^{-1}  \,.
\end{align*}
Notice that if~$r \geq \frac12 \X_t$, then~\eqref{e.local.coarsened} holds by~\eqref{e.localcorrdef} and the linearity of the mapping~$e\mapsto w_e$. If~$r < \frac12 \X_t$, then~\eqref{e.local.coarsened} is valid trivially (at~$x = 0$) since~$\mathcal{A}_1^{(r,t)} = \{0\}$ in this case. 

\smallskip

\emph{Step 3.} We prove that,  for~$\phi_{e}^{(r,t)}$ constructed in~\eqref{e.localcorrdef}, we have that 
\begin{align}  \label{e.correctorloc}
\inf_{\phi \in \A_1 }
\big\| \nabla \phi - \nabla \phi_{e}^{(r,t)}\big\|_{L^2(\Phi_{2r})}
\indc_{\{ r \geq \X_t \}} % \indc_{\{ s \geq \X \}}
\leq 
C |e| 
\biggl( 
 \frac{r}{s} \mathcal{G}_{s}(0) 
+
 \indc_{\{ s \leq C \X \}}
\biggr) 
\,.
\end{align}
As discussed in the previous step, this also proves~\eqref{e.correctorloc.converse} since~$\A_1$ and~$\A_1^{(r,t)}$ are isomorphic for~$r\geq 4\X$.  Assume that~$r \geq  \X_t$. First, if~$s\leq C \X$, we take~$\phi = 0$ and use the Lipschitz bound and~\eqref{e.MSP.heat.really} to obtain 
\begin{align*}  
\indc_{\{ r \geq \X_t \}} \big\| \nabla w_{e} \indc_{B_s} \big\|_{L^2(\Phi_{2r})} 
\leq 
C \indc_{\{ r \geq \X_t \}} \big\| \nabla w_{e}  \big\|_{\underline{L}^2(B_s)} 
\leq
C |e| 
\end{align*}
and, using also the growth of~$\A_k(B_t)$ functions and again~\eqref{e.MSP.heat.really}, 
\begin{align*}  
\indc_{\{ r \geq \X_t \}} \big\| \nabla w_{e}  \indc_{\R^d \setminus B_s} \big\|_{L^2(\Phi_{2r})} 
\leq 
C |e| \int_{s}^\infty \biggl(\frac{s'}{s} \biggr)^{\!\!k+d} \exp\biggl( - c  \biggl( \frac{s'}{r} \biggr)^{\!\!2} \, \biggr) \, ds'  
\leq 
C|e| 
\,.
\end{align*}
Thus, we obtain 
\begin{equation*}  
\inf_{\phi \in \A_1 }
\big\| \nabla \phi - \nabla \phi_{e}^{(r,t)}\big\|_{L^2(\Phi_{2r})}
\indc_{\{ r \geq \X_t \}} \indc_{\{ s \leq C \X \}} 
\leq 
C |e|
\indc_{\{ s \leq C \X \}} 
\,.
\end{equation*}
We then assume that~$s\geq C\X$. We let~$\psi_e \in \A_1$ be such that~$\nabla \psi_e \ast  \Phi_s (0)  = e$. This is possible since, by~\eqref{e.liouvillec} for~$k=1$ and the fact that~$s \geq C\X$, we have that~$|\nabla \psi_e \ast  \Phi_s (0) - e| \leq \frac12 |e|$, and hence~$e \mapsto \nabla \psi_e \ast  \Phi_s (0)$ is a linear bijection for~$s\geq C\X$. Moreover, we have, by Theorem~\ref{t.Ck1.local}, that~$\sup_{\tau \geq s} \| \nabla \psi_e \|_{L^2(\Phi_\tau)} \leq C|e|$. For convenience, we take the additive constant for~$\psi_e$ such that~$\fint_{B_s} \psi_e\ast \Phi_{\sigma s} = 0$. By applying Theorem~\ref{t.Ck1.local}, 
we find~$\eta_e \in  \A_{k}(B_t)$ such that, for every~$\tau  \in [r,t]$, 
\begin{align} \label{e.loc.corrclose2}
\frac1\tau \left\|  \psi_e -  \eta_e \right\|_{\underline{L}^2 \left( B_{\tau} \right)} 
+
\left\|  \nabla  \psi_e -  \nabla  \eta_e \right\|_{\underline{L}^2 \left( B_{\tau} \right)} 
\leq 
C  \left( \frac{\tau}{t}\right)^{\!k}  \left\| \nabla \psi_e \right\|_{\underline{L}^2 \left( B_{t} \right)}  
\leq 
C\left( \frac{\tau}{t}\right)^{\!k} |e|
\,.
\end{align}
Next, we compare the values of~$E(\cdot - \ell_e)$ for~$w_e$ and~$\eta_e$ to obtain an estimate for~$E(w_e-\eta_e)$.  First, by the minimality and the triangle inequality, we get
\begin{equation*}  
E(w_e-\eta_e) 
= 
E(\eta_e-\ell_e) - E(w_e-\ell_e) 
\leq 
2 E(\psi_e-\ell_e) +  2 E(\psi_e-\eta_e) 
\,.
\end{equation*}
On the one hand, since~$\fint_{B_s} (\psi_e - \ell_e) \ast \Phi_{\sigma s} = 0$ and~$\nabla \psi_e \ast  \Phi_s (0)  = e$, the Poincar\'e inequality yields
% after an application of~Theorem~\ref{t.Ck1}, that
\begin{equation*}  
 E (\psi_e - \ell_e) 
 \leq 
 C \| \nabla \psi_e \ast \Phi_{\sigma s} - \nabla \psi_e \ast \Phi_{s}(0) \|_{L^2(\Phi_{(1-\sigma^2)^{\nicefrac 12} s })}^2 
 \,.
\end{equation*}
On the other hand, using~\eqref{e.loc.corrclose2}, we get
\begin{equation*}  
E(\psi_e-\eta_e)  \leq C |e|^2 t^{-20d} \,.
\end{equation*}
Therefore, by applying Lemma~\ref{l.scalecomparison}, we obtain 
\begin{equation*}  
E(w_e-\eta_e) 
\leq 
C\| \nabla \psi_e \ast \Phi_{\sigma s} - \nabla \psi_e \ast \Phi_{s}(0) \|_{L^2(\Phi_{(1-\sigma^2)^{\nicefrac 12} s })}^2 
 + C|e|^2 t^{-20d}  
 \leq 
 C|e|^2  \mathcal{G}_{s}^2(0)
\,.
\end{equation*}
Furthermore, the triangle inequality and~\eqref{e.MSP.heat.really} give us
\begin{equation*}  
\fint_{B_{s} } |\nabla \eta_e - \nabla w_e|^2 
\leq 
C E(\eta_e - w_e) 
\leq 
C |e|^2 \mathcal{G}_{s}^2(0)
\,.
\end{equation*}
Using this together with~\eqref{e.loc.corrclose2}, the triangle inequality,~$C^{1,1}$-regularity in Theorem~\ref{t.Ck1} and~$C^{0,1}$-regularity in Theorem~\ref{t.Ck1.local}, we find yet another corrector, say~$\hat\phi \in \A_1$, such that, for every~$\tau \in [r,s]$, 
\begin{equation*}  
\| \nabla w_e - \nabla \eta_e - \nabla \hat\phi \|_{\underline{L}^2(B_\tau)}  \leq \frac{C |e|}{s} \bigl( \tau \vee (\X \wedge s) \bigr) \mathcal{G}_{s}(0) .
\end{equation*}
Then, a simple layer-cake formula together with~\eqref{e.loc.corrclose2} yields~\eqref{e.correctorloc}. 

\smallskip

\emph{Step 4.}
We show the bounds for the Malliavin derivatives. For this, fix~$\tilde \a \in \Omega$ such that~$|\tilde \a -\a | \leq h \indc_{B_t}$ with small~$h>0$. 
We first show that the minimizers~$w$ and~$\tilde w$ corresponding coefficients~$\a$ and~$\tilde \a$, respectively, satisfy, for every~$r \geq \frac12 \X_t$,
\begin{equation}  \label{e.supplementary.w.vs.tildew}
\big\| \nabla w - \nabla \tilde w \big\|_{L^2(\Phi_{2r})}  \leq C h \log t\,.
\end{equation} 
For this, we will find a suitable orthonormal basis and first look for the elements to orthonormalize. Assume that~$r \geq \frac12 \X_t$, fix~$m\in \N$, and let~$p$ be a homogeneous polynomial in~$\Ahom_m$. By Theorem~\ref{t.Ck1.local} we find~$w_p \in \A_m(B_t)$ such  that, for every~$s \in [r,t]$,  
\begin{equation*}  
\left\| w_p - p \right\|_{\underline{L}^2 \left( B_{t} \right)} 
\leq 
\delta \left\| p \right\|_{\underline{L}^2 \left( B_{t} \right)},
\end{equation*}
Here~$\delta$ can be made arbitrarily small by enlarging~$\X_t$ by a multiplicative factor of~$C_\delta$. By taking~$h$ small enough, we can guarantee that~$\tilde \X_t \leq 2 \X_t \leq 4r$. We claim that there exists~$\tilde w_p \in \tilde \A_k(B_t)$ such that 
\begin{equation} \label{e.tildewp.vs.wp}
r\left\| \nabla w_p - \nabla \tilde w_p \right\|_{L^2(\Phi_{2r})} +  \left\| w_p - \tilde w_p \right\|
 \leq 
 C h \log t \left\| p \right\|
 .
 \end{equation}
Let~$\tilde w \in \tilde \A_k$ have the boundary values of~$w_p$ on~$\partial B_t$. Then~$\tilde \eta= \tilde w - w_p$ solves the equation 
\begin{equation*}  
\left\{
\begin{aligned}
& -\nabla \cdot \left( \tilde \a \nabla \tilde \eta  \right) 
= -\nabla \cdot \left( (\a - \tilde \a) \nabla w_p \right)  &  \mbox{in} & \ B_{t } , \\
& \tilde \eta  = 0 & \mbox{on} & \ \partial B_{t } .
\end{aligned}
\right.
\end{equation*}
Let~$\theta \in (0,\frac12)$ and let~$\tilde \phi_m \in \tilde \A_k(B_t)$ be such that 
\begin{equation*}  
\|\tilde \eta - \tilde \phi_m   \|_{\underline{L}^2 (B_{\theta^{m} t })} 
= 
\inf_{\tilde \phi \in \tilde \A_k(B_t)}\|\tilde \eta - \tilde \phi   \|_{\underline{L}^2 (B_{\theta^{m} t })} 
\,,
\end{equation*}
and let~$\tilde \eta_m$ solve
\begin{equation*}  
\left\{
\begin{aligned}
& -\nabla \cdot \left( \tilde \a \nabla \tilde \eta_m  \right) 
= 0 &  \mbox{in} & \ B_{\theta^{m} t } \, , \\
& \tilde \eta_m  =  \tilde \eta - \tilde \phi_m & \mbox{on} & \ \partial B_{\theta^{m} t } \,.
\end{aligned}
\right.
\end{equation*}
Then 
\begin{equation*}  
\|\tilde \eta_m - (\tilde \eta - \tilde \phi_m)     \|_{\underline{L}^2 (B_{\theta^{m} t }) } 
\leq 
C h  \left\| p \right\|_{\underline{L}^2 ( B_{\theta^{m} t } )} \,.
\end{equation*}
It follows by Theorem~\ref{t.Ck1.local} (recall that we assume that~$\tilde \X_t \leq 2\X_t \leq 2r$) that 
\begin{equation*}  
\inf_{\tilde \phi \in \tilde \A_k(B_t)} \|\tilde \eta_m - \tilde \phi   \|_{\underline{L}^2 (B_{\theta^{m+1} t })}
\leq
C \theta^{k+1}\| \tilde \eta_m  \|_{\underline{L}^2 (B_{\theta^{m} t }) }  
\end{equation*}
and, therefore, by the triangle inequality,
\begin{equation*}  
\|\tilde \eta - \tilde \phi_{m+1}    \|_{\underline{L}^2 (B_{\theta^{m+1} t }) } 
\leq
C \theta^{k+1}
\|\tilde \eta - \tilde \phi_{m}   \|_{\underline{L}^2 (B_{\theta^{m} t })} 
 + C_\theta h  \left\| p \right\|_{\underline{L}^2 ( B_{\theta^{m} t } )}  
 \,.
\end{equation*}
Divide by~$\theta^{k(m+1)}$ and choose~$\theta$ small to deduce that 
\begin{equation*}  
\theta^{-k(m+1)}\|\tilde \eta - \tilde \phi_{m+1}    \|_{\underline{L}^2 (B_{\theta^{m+1} t }) } 
\leq
\frac12 
\theta^{-km}
\|\tilde \eta - \tilde \phi_{m}   \|_{\underline{L}^2( B_{\theta^{m} t })} 
 + C h  \left\| p \right\|_{\underline{L}^2 ( B_{ t } )}  
 \,.
\end{equation*}
Thus, by iterating, we obtain that, for every~$s \in [r,t]$, 
\begin{equation*}  
\inf_{\tilde \phi \in \tilde \A_k(B_t)}\|\tilde \eta - \tilde \phi   \|_{\underline{L}^2 (B_{s})} \leq C h  \left\| p \right\|_{\underline{L}^2 \left( B_{ s } \right)}. 
\end{equation*}
By summing over the scales and using the Caccioppoli estimate, we find~$\tilde \phi$ such that 
\begin{equation*}  
s \|\nabla \tilde \eta - \nabla \tilde \phi   \|_{\underline{L}^2 (B_{s})} 
+
 \| \tilde \eta - \tilde \phi   \|_{\underline{L}^2 (B_{s})}
\leq C h \log t  \left\| p \right\|_{\underline{L}^2 \left( B_{ s } \right)}. 
\end{equation*}
Thus, we may take~$\tilde w_p = \tilde w - \tilde \phi$ to obtain~\eqref{e.tildewp.vs.wp} by a layer-cake formula. 

\smallskip

We next construct orthonormal bases for~$\A_k(B_t)$ and~$\tilde \A_k(B_t)$ which are close to each other. By Lemma~\ref{l.MSP.heat}, the inner product~$\langle \cdot,\cdot \rangle$ is a non-degenerate inner product for~$\Ahom_k$. We thus find an orthonormal basis for~$\Ahom_k$ with respect to the inner product~$\langle \cdot,\cdot \rangle$ by finding a representation matrix~$A(d,k,\ahom)$ with uniformly bounded entries with respect to~$r$, and an orthonormal basis~$\{q_j\}_j$ such that 
\begin{equation*}  
q_j = \sum_i A_{ji} p_i,
\end{equation*}
with suitably chosen homogeneous polynomials~$p_i$  in~$\Ahom_k$ with~$\| p_j \| = \langle p_j, p_j \rangle^{\nicefrac12} = 1$. Indeed, one can first orthonormalize the basis for a given degree of homogeneity and then proceed with these polynomials using the Gram-Schmidt process. For each~$p_j$ there are elements~$v_j \in \A(B_t)$ and~$\tilde v_j \in \tilde \A(B_t)$ such that 
\begin{equation*}  
\|v_j - p_j \|  \leq C \delta 
\qand
r\left\| \nabla v_j - \nabla \tilde v_j \right\|_{L^2(\Phi_{2r})}
+
\|v_j - \tilde v_j \| \leq C h \log t\,.
\end{equation*}
We then find another matrix~$\widehat{A}$, which is~$\delta$-perturbation of~$A$, such that~$\{w_j\}_j$ with~$w_j := \sum_{i} \widehat{A}_{ji} v_i$  is an orthonormal basis for~$\A_k(B_t)$. We then have 
 \begin{equation*}  
\|w_j - \widehat{A}_{ji} \tilde v_i\| \leq C h \log t \,, 
\end{equation*}
and from this and~\eqref{e.tildewp.vs.wp} we find orthonormal basis~$\{w_j\}_j$ and~$\{\tilde{w}_j\}_j$ for~$\A_k(B_t)$ and~$\tilde \A_k(B_t)$, respectively, which are at most~$C h \log t$ apart from each other:
\begin{equation} 
\label{e.supplementary.w.vs.tildew.grad.basis}
r\left\| \nabla w_j - \nabla \tilde w_j \right\|_{L^2(\Phi_{2r})}  +  \| w_j - \tilde w_j\| \leq C h \log t \,.
\end{equation}

\smallskip

We then show~\eqref{e.supplementary.w.vs.tildew}. Let~$\{w_j\}_j$ and~$\{\tilde w_j\}_j$ be orthonormal basis we found above for~$\A_k(B_t)$ and~$\tilde{\A}_k(B_t)$, respectively, with respect to the inner product~$\langle \, , \, \rangle$.  We show that the minimizers of~$E(\cdot-\ell)$ over~$\A_k(B_t)$ and~$\tilde{\A}_k(B_t)$, denoted by~$w$ and~$\tilde w$, respectively, satisfy the estimate
\begin{equation}  \label{e.wvswtilde}
\| w - \tilde w \|
\leq 
C_k \max_{j}  \| w_{j} - \tilde w_{j} \| 
\,.
\end{equation} 
We may express both~$w$ and~$\tilde w$ by means of basis functions:
\begin{align} 
\label{e.wwtilde.decomp}
w & =  \sum_j \alpha_{j} w_{j} 
= 
% \overbrace{ \sum_j \alpha_{x,j} \tilde \zeta_{x,j} }^{= \tilde v_x \in  \tilde{\A}_k(B_R)}  
\tilde v + \sum_j \alpha_{j} (w_{j} - \tilde w_{j} ) , &
\tilde w  = 
 \sum_j \tilde \alpha_{j} \tilde w_{j} = 
%  \underbrace{\sum_j \tilde \alpha_{x,j} \zeta_{x,j} }_{=: v_x \in  \A_k(B_R)}  
v + \sum_j \tilde \alpha_{j} (\tilde w_{j} - w_{j}) .
\end{align}
Here~$\tilde v := \sum_j \alpha_{j} \tilde w_{j}  \in \tilde{\A}_k(B_R)$ and 
$v := \sum_j \tilde \alpha_{j} w_{j} \in \A_k(B_R)$. Clearly~$\sum_j |\alpha_j|^2 \leq \| w\|^2 \leq C$. 
The minimality provides us, for every~$v \in \A_k(B_R)$ and~$\tilde v \in \tilde \A_k(B_R)$,  that 
\begin{equation*}  
\langle w - v , w - \ell \rangle  = 0 = \langle \tilde w - \tilde v , \tilde w - \ell \rangle \,,
\end{equation*}
and it follows, by~\eqref{e.wwtilde.decomp}, that
\begin{align*}  
\| w - \tilde w \|^2
& =
\langle w -\ell , w - \tilde w \rangle  - \langle \tilde w -\ell  , w - \tilde w \rangle 
\\ & 
=
 \sum_j \tilde \alpha_j \langle w -\ell , \tilde w_j - w_j \rangle - \sum_j \alpha_j \langle \tilde w -\ell , \tilde w_j - w_j \rangle
\\ & 
=
\sum_j ( \tilde \alpha_j - \alpha_j) \langle w -\ell , \tilde w_j - w_j \rangle -  \sum_j \alpha_j \langle \tilde w - w , \tilde w_j - w_j \rangle 
\,.
\end{align*}
After decomposing as 
\begin{equation*}  
\tilde \alpha_j - \alpha_j 
= 
\langle \tilde w  , \tilde w_j  \rangle - \langle w , w_j  \rangle
=
\langle \tilde w - w , w_j  \rangle + \langle \tilde w , \tilde w_j - w_j   \rangle
\,,
\end{equation*}
 we may estimate, using Young's inequality, as
\begin{equation*}  
\| w - \tilde w \|^2 
\leq 
\frac12 \| w - \tilde w \|^2
+
C_k  
\sum_j 
(  \| w - \ell \|^2 + \alpha_j^2 + \| \tilde w_j - w_j \|^2 ) \| \tilde w_j - w_j \|^2 
\,,
\end{equation*}
and reabsorption then yields~\eqref{e.wvswtilde}. 

\smallskip

Next, using Lemma~\ref{l.MSP.heat} together with the decomposition in~\eqref{e.wwtilde.decomp}, we deduce by the triangle inequality that
\begin{align*}  
r \big\| \nabla w - \nabla v \big\|_{L^2(\Phi_{2r})} 
& \leq 
C \| w -v \|
\\ 
& \leq
C \| w- \tilde w \| +  C \sum_j |\tilde \alpha_{j}|  \| \tilde w_{j} - w_{j}\| 
 \leq
C \max_j \| \tilde w_{j} - w_{j}\| .
\end{align*}
On the other hand, by~\eqref{e.supplementary.w.vs.tildew.grad.basis}, 
\begin{align*}  
r \big\| \nabla v - \nabla \tilde w \big\|_{L^2(\Phi_{2r})} 
& \leq 
C r \sum_j |\tilde \alpha_{j}|  \| \nabla \tilde w_{j} - \nabla w_{j} \|_{L^2(\Phi_{2r})}  
\leq
C h \log t 
\,.
\end{align*}
The triangle inequality finishes the proof of~\eqref{e.supplementary.w.vs.tildew}.

\smallskip

Combining~\eqref{e.supplementary.w.vs.tildew} with the definition of~$\phi_{e}^{(r,\rho)}$ it is then easy to show the bounds for the Malliavin derivatives. Indeed, letting~$\tilde G$ and~$\tilde Q$ correspond the matrices in Step 3 for coefficients~$\tilde \a$, we have that, for~$r \geq \frac12 \X_t$,~$|G^{-1} - \tilde G^{-1} | % = |J^{-1} (\tilde J - J) \tilde J^{-1} | 
\leq C|G - \tilde G | \leq C h\log t$ and~$|Q - \tilde Q |\leq Ch \log t$. implying that~$| Q G^{-1} -  \tilde Q \tilde G^{-1}| \leq C h \log t$ as well. Thus~$|\partial_{\a(B_{t})} G^{-1}|  +| \partial_{\a(B_{t})} Q| \leq C \log t$ for~$r \geq \frac12 \X_t$, and~\eqref{e.malliavin.local} follows by~\eqref{e.X.local} and the product rule. 

\smallskip 

\emph{Step 5.}
We finally conclude the localization estimate~\eqref{e.localizationestimate} by combining~\eqref{e.localmagic} with the bound~\eqref{e.genfluxmaps.heatscale} and~\eqref{e.correctorloc.converse}. The proof is complete.
\end{proof}

We next present an alternative localization result, which improves the estimate for the Malliavin derivative and removes the logarithm present in~\eqref{e.malliavin.local}. The proof of this lemma is somewhat tedious, but the lemma is crucial to arguments in the following two sections
for obtaining estimates with the optimal scaling (as opposed to nearly optimal scaling).

\smallskip

Before the proof, let us recall the following interpolation inequalities, which are both easy consequences of H\"older's inequality. For every~$\eta,\theta,K,M \in (0,\infty)$ with~$\eta \leq \theta$, we have
\begin{equation} 
\label{e.Psi.product}
\Y = \O_{\Psi}^\eta (1), \quad \mathcal{Z}  = \O_{\Psi}^\theta (1) 
\quad \implies \quad  
\Y \mathcal{Z} = \O_{\Psi}^{ \frac{\eta \theta}{\eta + \theta} }(2)
\end{equation}
and
\begin{equation}  
\label{e.Psi.interpolation}
 \Y \leq K, \quad \Y = \O_{\Psi}^\eta (M)   
\quad \implies \quad  
\Y = \O_{\Psi}^\theta ( K^{1-\nicefrac \eta\theta} M^{\nicefrac \eta\theta})  
\,.
\end{equation}

\begin{lemma}
\label{l.correctorloc.new}
There exists a constant~$\delta_0(\dataref) \in (0,1)$ such that, for every~$\delta \in (0,\delta_0]$,~$M>0$ and~$\theta \in (0,\frac d2(1-\beta)]$, there exists another constant~$C(\delta,\dataref)<\infty$ such that the following claim is valid.  Suppose that, for every~$y \in \R^d$ and~$r \geq 1$, we have
\begin{align} \label{e.local.uglyG.cond}
\mathcal{G}_r (y) 
= 
\O_{\Psi}( M r^{-\theta}) + \O_{\Psi}^{1-\beta}( M r^{-\nicefrac d2} ) 
\,.
\end{align}
Then, for every~$x \in \R^d$,~$R \geq 2$ and~$t = C R \log^{\nicefrac12}R$, there exists a~$\F(B_{t}(x))$-measurable matrix~$\b_{R}'(x)$ such that 
\begin{align} \label{e.loc.berror}
 \big| \b_R(x) - \b_{R}'(x)  \big| 
=
\O_{\Psi}^{1-\delta } \Bigl( C R^{-\theta(1+\delta + \delta^2)} \Bigr)  
\end{align}
and the Malliavin derivative of~$\b_{R}'(x)$ satisfies the following bound:
\begin{align}  \label{e.malliavin.loc.improved}
\bigl|  \partial_{\a(B_{t}(x) )} \b_{R}'\bigr| (x)  \leq C .
\end{align}

\end{lemma}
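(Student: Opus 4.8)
The plan is to build $\b_R'(x)$ as a truncated heat-kernel average of the basic localized field $\b'_{r,\rho}$ from Lemma~\ref{l.correctorloc}, evaluated at a carefully chosen intermediate scale $1\ll r<R$, and then to estimate the resulting error terms separately. Concretely, choose a small auxiliary parameter $\ep$ (of order a power of $\delta$), an intermediate scale $r=R^{(1+\delta+\delta^2)/2}$ so that $\mathcal{G}_r^2\lesssim R^{-\theta(1+\delta+\delta^2)}$, and a localization scale $\rho$ for $\b_r$ with $\rho^{1-\ep}\geq Cr$; the free play between $\rho$ and the Gaussian tail forces $\rho\lesssim R\log^{\sfrac12}R$, which is exactly the origin of the factor $\log^{\sfrac12}R$ in $t$. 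Setting
\[
\b_R'(x):=\bigl(\indc_{B_{t-\rho}(x)}\,\b'_{r,\rho}\bigr)\ast\Phi_{\sqrt{R^2-r^2}}(x)\,,
\]
which is $\F(B_t(x))$-measurable since $|y-x|\le t-\rho$ forces $B_\rho(y)\subseteq B_t(x)$, I would split
\[
\b_R(x)-\b_R'(x)=\bigl(\b_R-\b_r\ast\Phi_{\sqrt{R^2-r^2}}\bigr)(x)+\bigl((\b_r-\b'_{r,\rho})\ast\Phi_{\sqrt{R^2-r^2}}\bigr)(x)+\bigl((1-\indc_{B_{t-\rho}(x)})\b'_{r,\rho}\ast\Phi_{\sqrt{R^2-r^2}}\bigr)(x)
\]
into additivity, localization, and truncation contributions.

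The truncation term is negligible: $|\b'_{r,\rho}|\le C$ and the Gaussian mass outside $B_{t-\rho}(x)$ is at most $\exp(-c(t-\rho)^2/R^2)\le R^{-cC^2}$, which is below $R^{-\theta(1+\delta+\delta^2)}$ with any stochastic integrability once $C$ is large. For the additivity term I would invoke the additivity defect estimate~\eqref{e.additivityestimate}, which bounds it by $C(\delta\wedge\mathcal{G}_r(x))^2$; the crucial point is that this error is \emph{quadratic} in the fluctuation size, so feeding in the hypothesis~\eqref{e.local.uglyG.cond} and the product rule~\eqref{e.Psi.product} gives $\O_\Psi^{\sfrac12}(CM^2r^{-2\theta})+\O_\Psi^{(1-\beta)/2}(CM^2r^{-d})$. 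Since this quantity is also deterministically at most $C\delta^2$, I would upgrade the stochastic integrability to $\O_\Psi^{1-\delta}$ by interpolation~\eqref{e.Psi.interpolation}, which degrades the scaling exponents by the factors $\tfrac{1/2}{1-\delta}$ and $\tfrac{(1-\beta)/2}{1-\delta}$; the elementary inequality $(1-\delta)(1+\delta+\delta^2)=1-\delta^3\le 1$, together with $\theta\le\tfrac d2(1-\beta)$ and the choice of $r$, then yields exactly $\O_\Psi^{1-\delta}(CR^{-\theta(1+\delta+\delta^2)})$ provided $\ep$ is small enough. For the localization term the pointwise bound~\eqref{e.localizationestimate} is not directly small (its prefactor $r/\rho^{1-\ep}$ is only $O(1)$ for a near-minimal $\rho$), so I would instead (i) use a quadratic-response argument, in the spirit of~\eqref{e.quadresp}, to show that the \emph{mean} $\E[\b_r-\b'_{r,\rho}]$ is quadratically small in the corrector-approximation error $\eta\asymp(r/\rho^{1-\ep})\,\mathcal{G}_{\rho^{1-\ep}}$, and (ii) exploit that $\b_r-\b'_{r,\rho}$ is essentially local on scale $\rho$, so that convolution against $\Phi_{\sqrt{R^2-r^2}}$ reduces its fluctuation part by $(\rho/R)^{d/2}$; balancing these against the hypothesis on $\mathcal{G}$ (splitting off the event $\{\X(\cdot)>r^{1-c}\}$ using $\X^{\sfrac d2(1-\beta)}=\O_\Psi(C)$ and interpolating once more) pins down the admissible range for $\rho$ — this is precisely the place that dictates $\rho\lesssim R\log^{\sfrac12}R$ — and delivers the same bound $\O_\Psi^{1-\delta}(CR^{-\theta(1+\delta+\delta^2)})$.

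The main obstacle is the deterministic Malliavin bound~\eqref{e.malliavin.loc.improved}: the construction above inherits from~\eqref{e.malliavin.local} only $|\partial_{\a(B_t(x))}\b_R'|(x)\le C\log\rho\le C\log R$, because the convolution $\int\Phi_{\sqrt{R^2-r^2}}(y-x)\,\partial_{\a(\cdot)}\b'_{r,\rho}(y)\,dy$ does not by itself remove the logarithm (the Gaussian weight varies by more than an $O(1)$ factor across $B_\rho(y)$ when $\rho\asymp R$). To kill it I would iterate the additivity step several times, replacing $\b_R\mapsto\b_r\ast\Phi$ by a telescoping $\b_R\mapsto\b_{r_k}\ast\Phi$ over a geometrically decreasing sequence $R=r_0>r_1>\cdots>r_k=r$, each accumulated defect being convolved with a progressively wider Gaussian and hence gaining extra decay (using again that the one-step defects are local and quadratically mean-small); this lets one push the scale at which Lemma~\ref{l.correctorloc} is finally applied down to $\rho$ comparable to a \emph{bounded} multiple of its own base scale, so that the factor $\log\rho$ there is replaced by $O(1)$, while the $k=O(1)$ (or slowly growing) accumulated error terms remain within the target bound after a harmless adjustment of $\delta$. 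I expect that verifying this Malliavin bound, and checking that the many interpolation exponents all close up simultaneously, will be by far the most delicate part of the argument; the rest is a bookkeeping exercise combining~\eqref{e.additivityestimate}, Lemma~\ref{l.correctorloc}, Lemma~\ref{l.scalecomparison}, and the rules~\eqref{e.Psi.product}--\eqref{e.Psi.interpolation}.
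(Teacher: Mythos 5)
Your overall architecture — write $\b_R'(x)$ as a heat-kernel average of a spatially cut-off localized field at an intermediate scale $r$ close to $R$, split the error into additivity, localization and truncation pieces, and let the Gaussian tail dictate $t\sim R\log^{\sfrac12}R$ — is indeed the same skeleton as the paper's Step~1, and the error-size bookkeeping is essentially salvageable. One arithmetic point, though: with your choice $r=R^{(1+\delta+\delta^2)/2}$ the additivity term does \emph{not} close. After squaring $\mathcal{G}_r$ you sit at integrability $\O_\Psi^{\sfrac12}$, and upgrading to $\O_\Psi^{1-\delta}$ by~\eqref{e.Psi.interpolation} multiplies the exponent by $\tfrac{1/2}{1-\delta}$, leaving $R^{-\theta(1+\delta+\delta^2)/(2(1-\delta))}\approx R^{-\theta(\frac12+\delta)}$, far short of the target. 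The identity $(1-\delta)(1+\delta+\delta^2)=1-\delta^3$ that you invoke in fact forces $r\approx R^{1-\delta^3}$, i.e.\ $r$ very close to $R$ (this is exactly the paper's choice $r=R^{1-\rho}$ with $\rho$ small); with that choice the additivity, localization and truncation terms can all be made $\O_\Psi^{1-\delta}(CR^{-\theta(1+\delta+\delta^2)})$ along the lines you describe.

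The genuine gap is the Malliavin bound~\eqref{e.malliavin.loc.improved}, which you correctly identify as the crux but for which your proposed fix fails. The factor $\log t$ in~\eqref{e.malliavin.local} is the logarithm of the \emph{absolute} localization radius (it arises from summing the corrector-sensitivity estimate over all dyadic scales between the coarse-graining scale and $t$), and Lemma~\ref{l.correctorloc} needs $t^{1-\ep}\geq Cr$; so as long as the final coarse-graining scale is a power of $R$, any admissible $t$ satisfies $\log t\gtrsim\log R$, and "applying Lemma~\ref{l.correctorloc} at a scale comparable to its own base scale" leaves a factor $\asymp\log r\asymp\log R$, not $O(1)$. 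Iterating the additivity telescopically down to $r=O(1)$ does not rescue this: the one-step defects are controlled only stochastically (through $\mathcal{G}^2$ and CFS), not deterministically in the Malliavin sense, and convolution with a wide Gaussian does not shrink the Malliavin derivative with respect to a uniform $L^\infty$ perturbation of all of $B_t(x)$ — the derivative of an average is bounded only by the supremum of the pointwise derivatives. The paper removes the logarithm by a different device: it covers $B_{t/2}$ by balls $B_R(z_k)$ with a partition of unity $\eta_k$, builds correctors $w_{x,e,k}$ by minimizing $E_x(\cdot-\ell_{x,e})$ over $\A_k(B_{2R}(z_k))$ (localization directly at scale $R$ via Theorem~\ref{t.Ck1.local}), sets $\b_{r,k}''(x)=Q_k(x)G_k(x)^{-1}$ and defines $\b_R'$ as the heat-kernel average of $\indc_{B_t}\sum_k\eta_k\b_{r,k}''$. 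The sensitivity of $w_{x,e,k}$ then splits into its $\A_1(B_{2R})$-projection, with $O(1)$ coefficients but whose basis sensitivity, once \emph{averaged over the heat-kernel centers} $x\in B_R(z_k)$, is $O(h)$ by a global energy estimate (the quantity $\fint_{B_R}\pi(x)\,dx\leq Ch$ in the paper's Step~3, because $\nabla v_j-\nabla\tilde v_j$ is supported in $B_{2R}$), plus a higher-order projection whose coefficients are $O(r^{-\gamma\alpha/2})$, so that the pointwise $h\log r$ bound there is multiplied by a vanishing factor. It is this averaging over coarse-graining centers, combined with the $\A_1$-projection structure, and not an iteration of the additivity step, that produces $\bigl|\partial_{\a(B_t(x))}\b_R'\bigr|(x)\leq C$; without an ingredient of this kind your construction only yields the bound $C\log R$.
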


\begin{proof}
The proof is similar to the one of Lemma~\ref{l.correctorloc}, and we will be somewhat brief about the details. The major difference now is that we use the additivity estimate, which will give us a \emph{better localization} and also \emph{average out} the error for the Malliavin derivative. Without loss of generality, we may take~$y = 0$. Denote~$t = H R \log^{\nicefrac 12} R$ for some constant~$H(M,\dataref)<\infty$ to be fixed. We also look for~$\delta_0(M,\dataref) \in (0,\frac12)$ and, for every~$\delta \in (0,\delta_0]$, another parameters~$\gamma(\delta,\dataref),\rho(\delta,\dataref) \in (0,\frac12)$. Let~$r = s = R^{1-\rho}$. Recall the definitions in~\eqref{e.innerproduct}. 

\smallskip

\emph{Step 1. Localized matrix.} In this step, we construct the local approximation. First, by~\eqref{e.additivityestimate} (replacing~$\alpha$ with~$\alpha/2$ there), we deduce that, by choosing~$H$ large enough in the definition~$t = H R \log^{\nicefrac 12} R$, 
\begin{align}
\label{e.additivityestimate.again.prepre}
\big|
\bigl( \b_{R} - (\indc_{B_{t/2}}   \b_{r} ) \ast \Phi_{\sqrt{R^2-r^2}} \bigr)(0)
\big|
\leq
C
\Bigl( \frac{\X \wedge r}{r} \Bigr)^{2\alpha} 
 \mathcal{G}_{r}(0) 
% \leq \O_\Psi^{\nicefrac 1{(1{+}4\alpha)} } \bigl( C K R^{-\theta - \alpha} \bigr)  
\,.
\end{align}
The left side of the previous inequality is bounded, and thus~\eqref{e.local.uglyG.cond} and the interpolation inequalities~\eqref{e.Psi.product} and~\eqref{e.Psi.interpolation} yield that there exists~$\delta_0,\rho_0 \in (0,\frac12)$ such that, for every~$\delta \in (0,\delta_0]$ and~$\rho \in (0,\rho_0]$, 
\begin{equation}
\label{e.additivityestimate.again.pre}
\big|
\bigl( \b_{R} - (\indc_{B_{t/2}}   \b_{r} ) \ast \Phi_{\sqrt{R^2-r^2}} \bigr)(0)
\big|
=
\O_\Psi^{1-\delta} \bigl( C R^{-\theta(1+\delta + \delta^2)} \bigr)  
\,.
\end{equation}
We cover~$B_{t/2}$ 
with a family of balls~$\{B_R(z_k)\}_k$ with a finite overlap, and let 
\begin{equation} \label{e.localization.eta}
\eta_k : = \indc_{B_R(z_k) \cap B_t} \biggl( \sum_n \indc_{B_R(z_n) \cap B_t}  \biggr)^{-1}
\end{equation}
be a partition of unity subordinate to~$\{B_R(z_k)  \cap B_t\}_k$. For every ball~$B_R(z_k)$, we define the local approximation of~$\b_r$ to have better properties concerning the length scale~$r$, but with a loss of stochastic integrability. Let first~$\X_{R,k}(x)$ be the minimal scale corresponding~$x\in B_R(z_k)$ with respect to~$B_{2R}(z_k)$. It can be chosen so that~\eqref{e.thetae.est} and~\eqref{e.we.vs.thetae} are both valid with~$x$ and~$\X_{R,k}(x)$ instead of origin and~$\X_t$ in them. Define then, for~$\gamma \in (0,\frac12)$ to be fixed, 
\begin{equation*}  
\minscale_{k}(x) := \min\big\{1, \bigl(1  - 2 r^{-1+\gamma} \X_{R,k}(x) \bigr)_+ \big\} .
\end{equation*}
Let~$w_{x,e,k}$ minimize~$E_x( \cdot - \ell_{x,e})$ over~$\A_k(B_{2R}(z_k))$, where~$\ell_{x,e}(z) = e \cdot (z-x)$. Let the matrix~$G_{k}(x)$ have columns~$\{\minscale_{k}(x) ( (\nabla w_{x,e,k} - e_j) \ast \Phi_{r})(x) + e_j \}_j$ and the matrix~$Q_{k}(x)$ have columns~$\{\minscale_{k}(x)  ( (\a \nabla w_{x,e,k} - \ahom e_j) \ast \Phi_{r}) (x) + \ahom e_j\}_j$. We obtain, by estimates similar to~\eqref{e.thetae.est} and~\eqref{e.we.vs.thetae}, that
\begin{equation}  \label{e.Jk.Qk}
\big| G_{k}(x) - \Id \big| \leq \delta r^{- \gamma\alpha} 
\qand 
\big| Q_{k}(x) - \ahom \big| \leq \delta r^{- \gamma\alpha} 
\,.
\end{equation}
The invertibility of~$G_{k}(x)$ is clear by the above display. We then define 
\begin{equation*}  
\b_{r,k}''(x) := Q_{k}(x)  G_{k}^{-1}(x)
\qand
\b_R'(0) = \biggl( \indc_{B_t} \sum_{k} \eta_k(\cdot)  \b_{r,k}''(\cdot) \biggr)  \ast \Phi_{\sqrt{R^2-r^2}}(0)\,.
\end{equation*}
Now~\eqref{e.loc.berror} can be shown by following the computation for~\eqref{e.correctorloc}. Indeed, we may choose~$\rho$ and~$\gamma$ small enough and then use~\eqref{e.localmagic} similarly as in the proof Lemma~\ref{l.correctorloc}, together with~\eqref{e.local.uglyG.cond} and the interpolation inequalities~\eqref{e.Psi.product} and~\eqref{e.Psi.interpolation}.

\smallskip

\emph{Step 2. Malliavin derivative: Basic quantities.} In view of the definition of~$\b_R'(0)$, it is clearly enough to show that 
\begin{equation}  \label{e.malliavin.loc.improved.pre}
\biggl| \partial_{\a(B_t)} \fint_{B_{R}(z_k)} \b_{r,k}''(x) \, dx \biggr| = \biggl| \partial_{\a(B_{2R}(z_k))} \fint_{B_{R}(z_k)} \b_{r,k}''(x) \, dx \biggr|  \leq C  
\,.
\end{equation}
After translation, we can take~$z_k = 0$.  We proceed to show the bound~\eqref{e.malliavin.loc.improved.pre}. For this, fix~$\tilde \a \in \Omega$ such that~$|\tilde \a -\a | \leq h \indc_{B_{2R}}$ with small~$h>0$. First,~\eqref{e.X.local} yields that
\begin{equation*}  
| \minscale_{k}(x) - \tilde \minscale_{k}(x) | \leq Ch \,.
\end{equation*}
By proceeding as in the proof of Lemma~\ref{l.correctorloc}, we also obtain that 
\begin{equation}  \label{e.Jk.Qk.initial.pre}
(\indc_{\{\minscale_{k}(x) >0\}} + \indc_{\{\tilde \minscale_{k}(x) >0\}}) \sup_{e \in B_1} \| \nabla w_{x,e,k}  - \nabla \tilde w_{x,e,k}  \|_{L^2(\Phi_{x,r})} 
\leq C h \log r
\,,
\end{equation}
and, by the Lipschitz estimate, 
\begin{equation} \label{e.Jk.Qk.initial.prepre}
(\indc_{\{\minscale_{k}(x) >0\}} + \indc_{\{\tilde \minscale_{k}(x) >0\}})
\sup_{e \in B_1} \bigl\| | \nabla w_{x,e,k}| + | \nabla \tilde w_{x,e,k}| \bigr\|_{L^2(\Phi_{x,r})}
\leq C
 \,.
\end{equation}
We then get that 
\begin{align} \notag  \label{e.Jk.Qk.initial}
\big| G_{k}(x) - \tilde G_{k}(x) \big| + \big| Q_{k}(x) - \tilde Q_{k}(x) \big| 
&
\leq
C | \minscale_{k}(x) - \tilde \minscale_{k}(x) | \sup_{e \in B_1} \| \nabla \tilde w_{x,e,k} \|_{L^2(\Phi_{x,r})}
\\  
& \quad 
+ \minscale_{k}(x)  \sup_{e \in B_1} \| \nabla w_{x,e,k}  - \nabla \tilde w_{x,e,k}  \|_{L^2(\Phi_{x,r})}
\,.
\end{align}
Using this, we also deduce that
\begin{align} \label{e.bprime.vs.tildebprime}
\big| \b_{r,k}''(x)   - \tilde \b_{r,k}''(x) \big|
\leq 
C \big| G_{k}(x) - \tilde G_{k}(x) \big|  +   C \big| Q_{k}(x)  - \tilde Q_{k}(x) \big|
\,.
\end{align}
To see this, expand as
\begin{align*}  
\b_{r,k}''(x)   - \tilde \b_{r,k}''(x)
& =  
G_{k}^{-1}(x) (G_{k}(x) - \tilde G_{k}(x) ) \tilde G_{k}^{-1}(x) Q_{k}(x) + \tilde G_{k}^{-1} (x)( Q_{k} (x)- \tilde Q_{k}(x))
\\ 
& 
= 
G_{k}^{-1}(x) (G_{k}(x) - \tilde G_{k}(x) ) \tilde G_{k}^{-1}(x) Q_{k}(x)   + \tilde G^{-1}_{k}(x) (Q_{k}(x) - \tilde Q_{k}(x))\,,
\end{align*}
and then use~\eqref{e.Jk.Qk}. By~\eqref{e.Jk.Qk.initial} and~\eqref{e.bprime.vs.tildebprime}, our goal is then to show that 
\begin{align}  \label{e.localcor.goal.pre}
\sum_{j = 1}^d \fint_{B_R} \| \nabla w_{x,e_j,k}  - \nabla \tilde w_{x,e_j,k}  \|_{L^2(\Phi_{x,r})} \, dx
\leq 
C h 
\,.
\end{align}

\smallskip

\emph{Step 3. Malliavin derivative: Orthonormal basis.}
Let next~$\{v_{x,j}\}_j$ and~$\{\tilde v_{x,j}\}_j$ be orthonormal basis of~$\A_k(B_{2R})$ and~$\tilde{\A}_k(B_{2R})$, respectively, with respect to the inner product~$\langle \, ,  \rangle_x$ in such a way that~$\{v_{x,j}\}_{j=1}^d$ and~$\{\tilde v_{x,j}\}_{j=1}^d$  span~$\A_1(B_{2R})$ and~$\tilde \A_1(B_{2R})$, respectively, and~$\deg v_{x,i} \leq  \deg v_{x,j}$ if~$i\leq j$. Here,~$\deg v_{x,j}$ denotes the degree of the polynomial boundary values of~$v_{x,j}$. Set also
\begin{align} \label{e.piofx}
\pi(x) 
:=
\indc_{\{ s \geq 2\X_{R}(x) \}} 
\sum_{j=1}^d \inf_{\tilde v \in \tilde \A_1(B_{2R})} 
\|  \nabla v_{x,j} - \nabla \tilde v  \|_{L^2(\Phi_{x,2r})} .
\end{align}
Notice that~$v_{x,j} \in  \A_1(B_{2R})$ is a linear combination of basis elements~$\{v_j\}_{j=1}^d$ of~$\A_1(B_{2R})$ with respect to the inner product of~$\underline{H}^1(B_{2R})$ and, by the linear growth of~$v_{x,j}$, the corresponding representation matrix~$A_x$ in~$v_{x,j} = \sum_{k=1}^d A_{x,jk} v_k$ is bounded. Therefore, testing  the infimum in~\eqref{e.piofx} with~$\sum_{k=1}^d A_{x,jk} \tilde v_k$ for each~$j$ with~$\tilde v_k \in \tilde{\A}_1(B_{2R}) \cap (v_k + H_0^1(B_{2R}))$, we arrive at 
\begin{align}  \label{e.e.piofx.est}
\fint_{B_{R}} \pi(x) \, dx  
\leq 
C \sum_{j=1}^d \fint_{B_{R}} 
\|  \nabla v_{j} - \nabla \tilde v_j  \|_{L^2(\Phi_{x,2r})}  \, dx    \leq C h
\, .
\end{align}
Notice here that both~$v_{j}$ and~$\tilde v_j$ can be extended as a linear function outside of~$B_{2R}$, and thus~$\nabla v_{j} = \nabla \tilde v_j$ outside of~$B_{2R}$. 

\smallskip

Next, for every~$j \in \{1,\ldots,d \}$, we may choose~$\tilde v_{x,j} \in \tilde{\A}_1(B_{2R})$ in such a way that 
\begin{align*}  
\indc_{\{ s \geq 2\X_R(x) \}}  
\sum_{j=1}^d \Bigl( 
\| v_{x,j} - \tilde v_{x,j} \|_x 
+
\| \nabla v_{x,j} - \nabla \tilde v_{x,j} \|_{L^2(\Phi_{x,2r})}
\Bigr)
\leq 
C \pi(x) 
\,.
\end{align*}
Moreover, as in Step 4 of the proof of Lemma~\ref{l.correctorloc}, we may choose~$\tilde v_{x,j}$ for~$j>d$ so that 
\begin{align}  \label{e.Malliavin.basebound}
\indc_{\{ s \geq 2\X_t(x) \}}  
\bigl( 
\| v_{x,j} - \tilde v_{x,j} \|_x 
+ 
\| \nabla v_{x,j} - \nabla \tilde v_{x,j} \|_{L^2(\Phi_{x,2r})}  
\bigr)
\leq 
C h \log r
\,.
\end{align}
Letting~$\theta_{x,e} \in \A_1(B_{2R})$ be such that~$\| \theta_{x,e} - \ell_{x,e} \|_x \leq C \bigl(\frac{\X_t(x)}{r} \Bigr)^{\! \alpha}|e|$. We obtain by the minimality that, for every~$r$ such that~$r^{1-\gamma} \geq \frac12 \X_t(x)$,
\begin{align}  \label{e.Malliavin.evenmoresilly}
 \| w_{x,e} - \ell_{x,e}  \|_x  \leq  \| \theta_{x,e} - \ell_{x,e}  \|_x \leq C r^{-\gamma \alpha}
\end{align}
and~$\langle w_{x,e} - \ell_{x,e}  , v_{x,j}  \rangle_x  = 0$. Rewrite thus 
\begin{align*}  
\langle w_{x,e}  , v_{x,j}  \rangle_x 
& = 
\langle \ell_{x,e}  , v_{x,j}  \rangle_x
= \langle \theta_{x,e} , v_{x,j}  \rangle_x 
- \langle \theta_{x,e} - \ell_{x,e}  , v_{x,j}  \rangle_x 
\,.
\end{align*}
By orthogonality the first term is zero if~$j>d$ by the fact that~$\theta_{x,e} \in \spn \{v_{x,j}\}_{j=1}^d$. Thus we find the following bound for the coefficients in the basis~$\spn \{v_{x,j}\}_{j}$,  for every~$r$ such that~$r^{1-\gamma} \geq \frac12 \X_t(x)$:
\begin{equation*}  
w_{x,e} = \sum_{j} \gamma_j  v_{x,j} 
\quad \implies \quad 
| \gamma_j | 
\leq 
C \left\{
\begin{aligned}
& 1
, & &
j \in \{1,\ldots,d\},
\\ 
& r^{-\frac12 \gamma \alpha}, & & j > d . 
\end{aligned}
\right.
\end{equation*}
Consequently, we have, for every~$r^{1-\gamma} \geq \frac12 \X_t(x)$, that 
\begin{equation}  \label{e.Malliavin.silly}
\sum_{j} | \gamma_j | \bigl( \| v_{x,j} - \tilde v_{x,j} \|_x  +  \| \nabla (  v_{x,j} - \tilde v_{x,j})   \|_{L^2(\Phi_{x,2r})}   \bigr)
\leq 
C \bigl( \pi(x) + h \bigr)
\,.
\end{equation}
We can then repeat the computation in Step 4 of the proof of  Lemma~\ref{l.correctorloc}, but now using the improved bound~\eqref{e.Malliavin.silly} above, and deduce that 
\begin{equation*}  
\big\| \nabla w_x - \nabla \tilde w_x \big\|_{L^2(\Phi_{x,2r})} 
\leq 
C  \bigl( \pi(x) + h\bigr).
\end{equation*}
Therefore,~\eqref{e.e.piofx.est} yields~\eqref{e.localcor.goal.pre}, completing the proof. 
\end{proof}

\subsection{Closing the renormalization estimates}
\label{s.fluctboot}

In this section, we complete the proof of Theorem~\ref{t.optimal} by tying the estimates from the previous two sections together and applying the~$\CFS$ mixing assumptions to obtain fluctuation bounds. 
We split the main estimates into two cases: in Proposition~\ref{p.psipsi.coarse} for~$\beta \in (0,1)$ and in Proposition~\ref{p.psipsi.coarse.optimal} for~$\beta=0$. Taken together, these two propositions imply Theorem~\ref{t.optimal}. 

\begin{proposition}
\label{p.psipsi.coarse}
Let~$\beta \in (0,1)$,~$M, N \in [1,\infty)$ and~$p \in (\frac{2}{1-\beta},\infty)$.  Let~$\Psi:[1,\infty) \to [0,\infty)$ be an increasing function satisfying
\begin{equation}  \label{e.Psi.pgrowth}
t^p \leq N \frac{\Psi(t s)}{\Psi(s)}  \quad \mbox{for every~$s, t \in [1,\infty)$}\,. 
\end{equation}
Suppose that~$\P$ is a~$\Zd$--stationary measure on~$(\Omega,\F)$ satisfying~$\CFS(\beta,\Psi,\Psi(M\cdot),0)$. Then, for every~$\delta \in (0,1)$,  there exist constants~$C(\delta,p,M,N,\dataref) < \infty$ such that, for every~$R \geq 1$, 
\begin{align}
\label{e.psipsi.coarse}
\b_R(x) - \ahom 
=
\O_{\Psi} \bigl( C R^{-\frac d2(1-\beta)}  \bigr) 
+ 
\O_{\Psi}^{1-\delta} \bigl( C R^{-\frac d2(1-\beta)(1+\delta)} \bigr)
\,.
\end{align}
\end{proposition}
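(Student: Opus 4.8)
The plan is to prove \eqref{e.psipsi.coarse} by an iteration up the scales, following the strategy outlined after \eqref{e.ahomr.splitting}: we feed the additivity defect estimate \eqref{e.additivityestimate}, the localization estimate \eqref{e.localizationestimate}, and the $\CFS(\beta,\Psi,\Psi(M\cdot),0)$ assumption into one another, starting from the suboptimal bound \eqref{e.coarsen.smallness} which provides the initial smallness. First I would set up a bootstrap quantity: suppose inductively that we know $\b_r(x) - \ahom = \O_\Psi(C_0 r^{-\kappa}) + \O_\Psi^{1-\delta}(C_0 r^{-\kappa(1+\delta)})$ for all $r \le R$ and some exponent $\kappa \in [\alpha, \tfrac d2(1-\beta)]$ (with $\alpha$ from \eqref{e.coarsen.smallness} giving the base case, up to adjusting the stochastic integrability downward from $\Gamma_\infty$-type to $\Psi$-type using \eqref{e.Psi.interpolation} and the boundedness of $\b_r$). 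The goal of the induction step is to show the same holds at scale $R$ with $\kappa$ replaced by $\kappa'= \kappa(1+c\delta) \wedge \tfrac d2(1-\beta)$ for a suitable $c$, and after finitely many steps $\kappa$ saturates at $\tfrac d2(1-\beta)$, which gives \eqref{e.psipsi.coarse}.

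For the induction step I would use the decomposition \eqref{e.ahomr.splitting} with $r = R^{1-\rho}$ for a small $\rho = \rho(\delta)$ and $t = r^{1+\ep}$ with $\ep$ chosen so that $t \le R^{1-\rho/2}$, say. The \emph{systematic error} $(\E[\b_r]-\ahom)\ast\Phi_{\sqrt{R^2-r^2}}$ is handled by taking expectations in the inductive hypothesis, giving a deterministic bound of order $r^{-\kappa}$ which is actually $o(R^{-\kappa})$ once we have shown (as a consequence of the additivity estimate) that $\E[\b_r]$ converges at an improved rate; concretely, taking expectations in \eqref{e.additivityestimate} and using the inductive fluctuation bound gives $|\E[\b_R] - \E[\b_r]| \le C r^{-2\kappa}$, so the means form a Cauchy sequence converging to $\ahom$ (this identifies the limit with the homogenized matrix from Section~\ref{ss.variational}) with $|\E[\b_R]-\ahom| \le C R^{-2\kappa}$, which is much smaller than $R^{-\kappa(1+\delta)}$ for $\delta$ small. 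The \emph{additivity defect} term is controlled by \eqref{e.additivityestimate}: its left side is bounded by $C\,\mathcal G_r(x)^2$ (dropping the $\delta(\cdot)^\alpha$ alternative), and by the inductive hypothesis together with the definition \eqref{e.uglyG} of $\mathcal G_r$, the definition of $\X$ via Theorem~\ref{t.Ck1}, and the interpolation lemmas \eqref{e.Psi.product}, \eqref{e.Psi.interpolation}, we get $\mathcal G_r(x) = \O_\Psi^{1-\delta}(C r^{-\kappa(1+\delta/2)})$ roughly, so its square is $\O_\Psi^{(1-\delta)/2}(C r^{-2\kappa(1+\delta/2)})$; the loss in stochastic integrability from squaring is absorbed into the $(1-\delta)$-exponent via a further application of \eqref{e.Psi.interpolation} and boundedness, and since $r = R^{1-\rho}$ and $2\kappa(1-\rho) \ge \kappa(1+\delta)$ for $\rho, \delta$ small (using $\kappa \le \tfrac d2(1-\beta)$ is irrelevant here), this term fits into $\O_\Psi^{1-\delta}(CR^{-\kappa(1+\delta)})$. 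The \emph{localization defect} is handled by \eqref{e.localizationestimate}, which bounds $|\b_r - \b'_{r,t}|\indc_{\{r\ge\X\}}$ by $C (r/t^{1-\ep}) (\X\wedge r / r)^\alpha \mathcal G_{t^{1-\ep}}(x)$; the factor $r/t^{1-\ep} \le r^{-c\ep}$ gives exactly the extra power needed, and on the event $\{r < \X\}$ we use the tail of $\X$ from Theorem~\ref{t.Ck1}.

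The main term is the \emph{fluctuations} term $(\b'_{r,t} - \E[\b'_{r,t}])\ast\Phi_{\sqrt{R^2-r^2}}(x)$, and estimating it is where I expect the real work to lie. The idea is to discretize the convolution against $\Phi_{\sqrt{R^2-r^2}}$ into a sum over mesoscopic cubes $z + \cu_n$ with $3^n \sim t$, writing the term (up to a negligible Gaussian-tail error) as $\avsum_{z \in 3^n\Zd \cap \cu_m} X_z$ with $3^m \sim R$, where $X_z$ is (a constant multiple of) $\b'_{r,t}(z) - \E[\b'_{r,t}(z)]$, which is mean-zero, bounded, $\F(z+\cu_n)$-measurable (since $t \le c\,3^n$ and $\b'_{r,t}$ is local by Lemma~\ref{l.correctorloc}), and has Malliavin derivative bounded by $C\log t = C n$ by \eqref{e.malliavin.local}. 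After rescaling $X_z$ by $1/(Cn)$ to normalize the Malliavin derivative and noting the fluctuation size bound $|X_z| = \O_{\Psi(M\cdot)}(\text{something})$ from the inductive hypothesis, the assumption $\CFS(\beta,\Psi,\Psi(M\cdot),0)$ in the form \eqref{e.strong.CFS} with $\gamma = 0$ yields $\avsum_z X_z = \O_\Psi(C n (a\, 3^{-\frac d2(m-n)} + 3^{-\frac d2(1-\beta)m}))$; since $\beta m < n < m$ is satisfied by the choice $3^n \sim t \sim R^{1-\rho/2}$, the dominant term is $n\cdot 3^{-\frac d2(1-\beta)m} \sim R^{-\frac d2(1-\beta)}\log R$, and the logarithmic factor is absorbed either into the constant or by the growth condition \eqref{e.Psi.pgrowth} via Lemma~\ref{e.supp.you.up} (this is precisely where \eqref{e.Psi.pgrowth} is needed: it allows trading a polynomial/logarithmic prefactor against a tiny loss in the $\O_\Psi$-argument). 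This gives $\O_\Psi(CR^{-\frac d2(1-\beta)})$ for the fluctuations, which is the ``saturated'' term in \eqref{e.psipsi.coarse}. The hard part will be bookkeeping the interplay between the three sources of error so that each non-saturated term genuinely gains a factor $R^{-c\delta\kappa}$ relative to the inductive hypothesis while the stochastic integrability losses (from squaring in additivity, from the $\log t$ Malliavin bound, from the interpolation steps) are all kept within the allotted $(1-\delta)$-exponent — this requires choosing $\rho, \ep, \delta$ in the right order, all depending only on $(\delta, p, M, N, d, \lambda, \Lambda, \CFS)$, and checking that the mesoscale constraint $\beta m < n < m$ is compatible with $3^n \sim t$ throughout (which uses $\beta < 1$ crucially, and is exactly why the case $\beta = 0$ is deferred to Proposition~\ref{p.psipsi.coarse.optimal}).
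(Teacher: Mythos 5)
Your overall architecture is the same as the paper's: the decomposition \eqref{e.ahomr.splitting} with $r=R^{1-\gamma}$ and localization scale $r^{1+\rho}$, the bootstrap on the exponent anchored at \eqref{e.coarsen.smallness}, the additivity estimate \eqref{e.additivityestimate} giving a quadratic gain, the localization estimate \eqref{e.localizationestimate} giving the extra power $r^{-c\ep}$, the systematic error via expectations of the additivity estimate, and the fluctuation bound via $\CFS(\beta,\Psi,\Psi(M\cdot),0)$ applied to the localized field whose Malliavin derivative is controlled by \eqref{e.malliavin.local}. This matches Steps 1--7 of the paper's proof, and that part of your plan works: while the exponent $\kappa$ is strictly below $\tfrac d2(1-\beta)$ there is slack, and the factor $\log R$ coming from the Malliavin bound $C\log t$ is harmlessly absorbed into the gain $R^{-c\delta\kappa}$ of the induction step.

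The genuine gap is at the saturated scale. Your fluctuation estimate, run with Lemma~\ref{l.correctorloc} alone, produces at best $\O_{\Psi}\bigl(CR^{-\frac d2(1-\beta)}\log R\bigr)$, and the claim that this logarithm can be ``absorbed into the constant or by the growth condition \eqref{e.Psi.pgrowth} via Lemma~\ref{e.supp.you.up}'' does not hold: Lemma~\ref{e.supp.you.up} only says that averages of $\O_\Psi(1)$ variables are $\O_\Psi(C)$ and cannot remove a multiplicative, $R$-dependent prefactor, while the interpolation inequalities \eqref{e.Psi.product}--\eqref{e.Psi.interpolation} trade stochastic integrability for a \emph{larger} size, never a smaller one, so $\O_{\Psi}(CR^{-\theta_*}\log R)$ with $\theta_*=\tfrac d2(1-\beta)$ fits into neither term of \eqref{e.psipsi.coarse} (the first lacks the log, the second has a strictly smaller size). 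What you end up proving is the analogue of the $\gamma=0$ case of Proposition~\ref{p.psipsi.coarse.optimal}, with a spurious logarithm. The paper closes this gap in its final step: once the bootstrap gives the nearly-optimal bound $\b_r-\ahom=\O_\Psi(Cr^{-\theta})$ for $\theta$ arbitrarily close to $\tfrac d2(1-\beta)$, the hypothesis \eqref{e.local.uglyG.cond} of Lemma~\ref{l.correctorloc.new} is verified, and that lemma supplies a localization $\b_R'$ at scale $t=CR\log^{\sfrac12}R$ whose Malliavin derivative is bounded by a constant (see \eqref{e.malliavin.loc.improved}) and whose localization error is $\O_\Psi^{1-\delta}(CR^{-\theta(1+\delta+\delta^2)})$; rerunning the additivity, localization and fluctuation estimates once more with this improved localization (and with $\theta$ close enough to $\tfrac d2(1-\beta)$ so that the $\delta^2$ margin swallows the small exponents $\ep,\rho,\gamma$) gives the log-free term $\O_\Psi(CR^{-\frac d2(1-\beta)})$ and places all remaining errors into $\O_\Psi^{1-\delta}(CR^{-\frac d2(1-\beta)(1+\delta)})$. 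You need this final pass with Lemma~\ref{l.correctorloc.new}, or some substitute argument with a constant (not logarithmic) Malliavin bound, to reach the statement as written.
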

\begin{proof}
Fix~$\delta \in (0,\frac12)$,~$M,N \in (0,\infty)$ and~$p \in (\frac{2}{1-\beta},\infty)$. Throughout the proof, we extend the list of parameters in~\eqref{e.data.def} to
\begin{equation*}  
\data = (\delta,M,N,p,d,\lambda,\Lambda,\beta,\CFS,\Psi)
\,.
\end{equation*} 

\emph{Step 1. Stochastic integrability.} 
Let us first collect a few elementary results used many times in what follows. First, by~\eqref{e.Psi.pgrowth}, for every continuous (or piecewise constant) random field~$x \mapsto \Y(x)$, a straightforward real analysis argument (see Lemma~\ref{e.supp.you.up}) yields
\begin{equation}  \label{e.weakint.ave.applied}
\mathcal{Y}(x) = \O_{\Psi}^{\frac d2(1-\beta)}(C) \quad
\mbox{$\forall x \in \R^d$}
\quad \implies \quad  
\sup_{s \geq r} \Bigl( \frac{r}{s}\Bigr)^{\nicefrac12}  \left\| \Y \right\|_{\underline{L}^d \left( B_{s}(x) \right)}^{\nicefrac d2}
% \leq  \int_1^\infty \!   \left\| \Y \right\|_{\underline{L}^2 \left( B_{sr}(x) \right)}  \, \frac{ds}{s^{\nicefrac 32}}  \leq 
=
\O_{\Psi}^{1-\beta}(C) \,.
\end{equation}
By~\eqref{e.Psi.pgrowth}, we obtain 
\begin{equation}  \label{e.weakint.ave.expectation}
\mathcal{Y}(x) = \O_{\Psi}^{\frac d2(1-\beta)}(C) 
\quad \implies \quad  
\E[ \Y^2(x)]  \leq C \,.
\end{equation}
In particular, the above implications are valid for~$\Y = \X$. 

\smallskip

\emph{Step 2. Induction assumption.} Select parameters 
\begin{equation}  \label{e.varthetaupper}
\gamma, \rho \in (0,\nicefrac12), \qquad \theta \in \Bigl[ \alpha , \frac{d}{2}(1-\beta) \Big) . 
\end{equation}
Assume inductively that there exists a constant~$C(\theta,\data)<\infty$ such that, for every~$x \in \R^d$ and~$r \in [1,\infty)$, we have that 
\begin{equation} \label{e.psipsi.induction}
\b_r(x) - \ahom  = \O_{\Psi} ( C r^{-\theta} ). 
\end{equation} 
The initial assumption is provided by~\eqref{e.coarsen.smallness} with~$\theta = \alpha$ (in fact, with much better stochastic integrability). Recalling the definition of~$\mathcal{G}$ in~\eqref{e.uglyG}, this, together with~\eqref{e.weakint.ave.applied}, implies that, for every~$x \in \R^d$ and~$r \in [1,\infty)$, we have
\begin{equation} \label{e.psipsi.uglyG}
\mathcal{G}_r(x) 
= 
\O_{\Psi} ( C r^{-\theta} )
+
\O_{\Psi}^{1-\beta   } ( C r^{-\frac{d}{2}} )
\,.
\end{equation}
We now move on to bound different terms in~\eqref{e.ahomr.splitting} under the induction assumption~\eqref{e.psipsi.induction}.  Throughout, we, fix
\begin{equation*}  
t = r^{1+\rho} 
\qand
r = R^{1-\gamma}
\,.
\end{equation*}

\smallskip

\emph{Step 3. Bound for the localization error}. 
We show that there exist two small constants~$\alpha(\beta,d,\lambda,\Lambda),\delta_0(\data) \in (0,1)$ such that, for every~$\ep,\rho \in [0,\frac12)$,~$\delta \in [0,\delta_0]$ and
\begin{equation} \label{e.psipsi.gamma1}
\gamma 
\in 
\biggl( 0, \frac14 \frac{\ep \alpha + \rho \theta}{\theta + \rho \theta + \ep\alpha} \biggl]\,, 
\end{equation}
there exists another constant~$C(\ep,\rho,\data) < \infty$ such that
\begin{multline}  \label{e.psipsi.local.buckle}
\Bigl( (\b_r - \E [ \b_r ]) - (\b'_{r,r^{1+\rho} } - \E [ \b'_{r,r^{1+\rho} } ] )  \Bigr) \ast \Phi_{\sqrt{R^2-r^2}} 
\\ 
= 
\O_{\Psi} (C R^{- \theta - \frac12 \theta \rho  - \frac12 \ep \alpha }) +  \O_{\Psi}^{1-\delta}\biggl(C R^{- \frac{d}{2}(1-\beta)  \frac{(1-\ep)(1 - \gamma)}{1-\delta}}  \biggr) 
\,.
\end{multline}
The result follows from from the induction assumption~\eqref{e.psipsi.induction} by showing that
\begin{align} \label{e.psipsi.local}
% \lefteqn{
\Big| 
\b_r(x)  - \b'_{r,r^{1+\rho}}(x)
\Big|
= 
\O_{\Psi} (C R^{- \theta - \frac12 \theta \rho  - \frac12  \ep \alpha}) +  \O_{\Psi}^{1-\delta}\biggl(C R^{- \frac{d}{2}(1-\beta)  \frac{(1-\ep)(1 - \gamma)}{1-\delta}} \biggr) 
\,.
\end{align}
On the one hand, we use the localization estimate~\eqref{e.localizationestimate} and~\eqref{e.psipsi.uglyG} to obtain, for every~$\ep \in [0,\frac12)$ and~$\nu = \frac{1}{4} \frac{\ep \alpha+\rho \theta}{\theta(1+\rho)} \wedge \frac \rho2$, 
\begin{align} \notag  
\Big| 
\b_r(x)  - \b'_{r,r^{1+\rho}}(x)
\Big| 
\indc_{\{ \X(x) \leq r^{1- \ep}   \} }
&
\leq
C r^{-\ep \alpha} \frac{r}{r^{(1+\rho)(1-\nu)}}\mathcal{G}_{r^{(1+\rho)(1-\nu)}}(x)
\\ 
\notag &
= 
\O_{\Psi} \Bigl(C_\nu r^{-\ep \alpha} r^{- \theta(1+\rho)(1-\nu)}\Bigr)
+ 
\O_{\Psi}^{1-\beta}\Bigl( C_\nu r^{-\ep \alpha -\frac d2}\Bigr)
\\ 
\notag &
= 
 \O_{\Psi} (C R^{- \theta - \frac12 \theta \rho  - \frac12 \ep \alpha }) 
 + 
\O_{\Psi}^{1-\beta}\Bigl( C R^{\frac d2 (1-\gamma)}\Bigr)
\end{align}
On the other hand,
\begin{equation*}  
\Big| 
\b_r(x)  - \b'_{r,r^{1+\rho}}(x)
\Big| 
\indc_{\{ \X(x) > r^{1- \ep}   \} }
= 
\O_{\Psi}^{1-\delta}  \Bigl( C r^{-\frac d2(1-\beta)\frac{1-\ep}{1-\delta}}\Bigr) 
\leq 
\O_{\Psi}^{1-\delta}  \Bigl( C R^{- \frac{d}{2}(1-\beta)   \frac{(1-\ep)(1 - \gamma)}{1-\delta}}  \Bigr) 
\,.
\end{equation*}
By interpolation, using~\eqref{e.Psi.product} and~\eqref{e.Psi.interpolation}, we arrive at~\eqref{e.psipsi.local}. 

\smallskip 

\emph{Step 4. Bound for the fluctuations}. We show that, for every~$\rho \in (0,\frac12)$ and~$\gamma$ such that 
\begin{equation} \label{e.psipsi.gamma2}
\gamma \geq \frac{2d}{d-2\theta} \rho \,,
\end{equation}
there exists constants~$C(\rho,\gamma,\data) < \infty$ such that for
\begin{equation}  \label{e.br.flucs}
\bigl( \b'_{r,r^{1+\rho}} - \E [ \b'_{r,r^{1+\rho}} ] \bigr) \ast \Phi_{\sqrt{R^2-r^2}} 
= 
 \O_{\Psi} \Bigl(  C R^{-\theta - \frac{\gamma}{4}(d- 2\theta)}  \log R  + C R^{-\frac d2(1-\beta) } \log R  \Bigr) 
\,.
\end{equation}
To show~\eqref{e.br.flucs}, the induction assumption~\eqref{e.psipsi.induction} and the localization error~\eqref{e.psipsi.local} (applied with~$\delta = \ep = 0$) give us that 
\begin{equation*}  
 \b'_{r,r^{1+\rho}} - \E [ \b'_{r,r^{1+\rho}} ]  
 =  
 \O_{\Psi} (C r^{- \theta})
 \,.
\end{equation*}
Using then~$\CFS(\beta,\Psi,\Psi(M \cdot),0)$ condition and the logarithmic bound for the Malliavin derivative given by~\eqref{e.malliavin.local}, we deduce, for~$r = R^{1-\gamma}$ with~$\ep,\gamma\in (0,1)$ small that
\begin{align*}  
\bigl( \b'_{r,r^{1+\rho}} - \E [ \b'_{r,r^{1+\rho}} ] \bigr) \ast \Phi_{\sqrt{R^2-r^2}}
 & = 
 \O_{\Psi} \Bigl( C r^{-\theta} \Bigl( \frac{R}{r^{1+\rho}} \Bigr)^{-\nicefrac d2} \log r + C  R^{-\frac d2(1-\beta) } \log r  \Bigr)
% \\ & =  \O_{\Psi} \Bigl( C R^{-\theta(1-\gamma)} \Bigl( \frac{R}{R^{1 - \gamma +\rho(1-\gamma)}} \Bigr)^{-\nicefrac d2} \log r + C  R^{-\frac d2(1-\beta) } \log r  \Bigr)
  \\ & 
 =
 \O_{\Psi} \Bigl(  C R^{-\theta + \gamma \theta - \frac{d}{2} (\gamma - \rho(1-\gamma) ) }  \log R  + C R^{-\frac d2(1-\beta) } \log R  \Bigr) 
 \,.
\end{align*}
Thus~\eqref{e.br.flucs} follows under the condition~\eqref{e.psipsi.gamma2}.

\smallskip 

\emph{Step 5. Bound for the additivity error}. We show that there exists~$\alpha(\beta,d,\lambda,\Lambda),\delta_0(\data) \in (0,1)$ such that, for every~$\ep,\rho \in [0,\frac12)$,~$\delta \in (0,\delta_0]$ and
\begin{equation} \label{e.psipsi.gamma3}
\gamma 
\in 
\biggl( 0,  \frac{\ep \alpha}{4\theta} \wedge \frac12 \biggl]\,, 
\end{equation}
there exists another constant~$C(\ep,\rho,\delta,\gamma,\data) < \infty$ such that
\begin{equation} \label{e.psipsi.add}
\big|
\bigl( \b_R - \b_r \ast \Phi_{\sqrt{R^2-r^2}} \bigr) (x) 
\big|
 = 
\O_\Psi(C R^{-\theta - \frac12 \ep \alpha} )  + \O_{\Psi}^{1-\delta}\biggl(C R^{- \frac{d}{2}(1-\beta)   \frac{(1-\ep)(1 - \gamma)}{1-\delta}}  \biggr) 
.
\end{equation}
To prove~\eqref{e.psipsi.add}, we notice that~\eqref{e.additivityestimate} and~\eqref{e.psipsi.uglyG} give us, for every~$\ep, \delta \in [0,\frac12)$, 
\begin{align*}  
\big|
\bigl( \b_R - \b_r \ast \Phi_{\sqrt{R^2-r^2}} \bigr) (x) 
\big|
& 
\leq
C \indc_{\{ \X(x) \geq r^{1-\ep} \} } +  C r^{-\ep \alpha }\mathcal{G}_r(x)  
\\ 
&
= 
\O_{\Psi}^{1-\delta}\biggl(C r^{- \frac{d}{2}(1-\beta)  \frac{1-\ep}{1-\delta}} \biggr) 
+ 
\O_\Psi(C r^{-\theta - \ep \alpha} ) 
+ 
\O_\Psi^{1-\beta}(C r^{-\nicefrac d2 - \ep \alpha} ) 
\,.
\end{align*}
By the interpolation inequalities~\eqref{e.Psi.product} and~\eqref{e.Psi.interpolation},  we obtain~\eqref{e.psipsi.add}.

\smallskip

\emph{Step 6. Bound for the systematic error}. We show, under the induction assumption~\eqref{e.psipsi.induction} and~\eqref{e.Psi.pgrowth},  that the systematic error in~\eqref{e.ahomr.splitting} satisfies
\begin{equation}  \label{e.subopt.syserr}
\Big| \bigl( \E [ \b_r ]- \ahom \bigr) \ast \Phi_{\sqrt{R^2-r^2}} \Big| \leq C r^{-2\theta}. 
\end{equation}
To see this, rewrite as
\begin{equation*}  
\bigl( \E [ \b_r ]- \ahom \bigr) \ast \Phi_{\sqrt{R^2-r^2}} 
=
\bigl( \E [ \b_r ] - \langle \b_r \rangle\bigr) \ast \Phi_{\sqrt{R^2-r^2}} 
+ 
\langle \b_r \rangle - \ahom 
\,.
\end{equation*}
Since~$x \mapsto \E [ \b_r(x)]$ is~$\Z^d$-periodic, we have that 
\begin{equation*}  
\Big| \bigl( \E [ \b_r ]- \langle\b_r\rangle \bigr) \ast \Phi_{\sqrt{R^2-r^2}} \Big| \leq C \exp(-c R) .
\end{equation*}
To bound the second term, notice first that 
\begin{equation*}  
\langle \b_{2r} \rangle - \langle \b_r \rangle  
=
\int_{\cu_0} \E\bigl[  \b_{2r}   - \b_r  \ast \Phi_{\sqrt{3 r^2}} \bigr]
+
\int_{\cu_0} \bigl( \E[\b_r]  - \langle \b_r\rangle \bigr) \ast \Phi_{\sqrt{3 r^2}} 
\end{equation*}
The last term in the above display is again bounded by~$C \exp(-cr)$. For the first term, we use the additivity estimate~\eqref{e.additivityestimate} together with~\eqref{e.psipsi.uglyG} and~\eqref{e.weakint.ave.expectation} to obtain that 
\begin{equation*}  
\Big| \E\bigl[  \b_{2r}   - \b_r  \ast \Phi_{\sqrt{3 r^2}} \bigr]\Big| \leq C r^{-2\theta} .
\end{equation*}
Combining the above estimates and summing over scales yields~\eqref{e.subopt.syserr}. 

\smallskip 

\emph{Step 7. Induction step}. 
We show that there exists~$\theta(\data) \in (0,1)$ such that if the induction assumption~\eqref{e.psipsi.induction} is valid for some~$\theta \in \bigl[ \alpha, \frac{d}{2}(1-\beta) - \kappa \bigr]$ for some~$\kappa > 0$, then 
\begin{equation} \label{e.psipsi.buckling}
\b_r - \ahom 
=  
\O_{\Psi} 
\bigl(
C r^{- \theta - \theta \kappa} 
\bigr)
\,.
\end{equation}
This is easy to verify by~\eqref{e.ahomr.splitting} using~\eqref{e.br.flucs},~\eqref{e.psipsi.local},~\eqref{e.psipsi.add} and~\eqref{e.subopt.syserr} with~$\delta = 0$ and suitably chosen small~$\gamma$ and~$\rho$.  We can thus iterate this finitely many times and obtain that, for every~$\theta \in (0,\frac d2(1-\beta))$, there exists a constant~$C(\theta,\data)<\infty$ such that 
\begin{equation} \label{e.psipsi.nearlythere}
\b_r(x) - \ahom  = \O_{\Psi} ( C r^{-\theta} ) \,.
\end{equation} 

\smallskip 

\emph{Step 8. Buckling the result}. 
Finally, to obtain~\eqref{e.psipsi.coarse}, we fix~$\delta \in (0,\delta_0]$ and take~$\theta = \frac d2(1-\beta)-\ep$ with~$\ep$ small. Using now Lemma~\ref{l.correctorloc.new} we find, for every~$x \in \R^d$ and~$r \geq 2$, a matrix~$\b'_{r}(x)$, which is~$\F(B_{C r \log^{\nicefrac12} r})(x))$-measurable, has bounded Malliavin derivative and that, for every~$R \geq 2r$, we have that 
\begin{equation}  \label{e.add.local}
\big|
\bigl( \b_R - \b_r \ast \Phi_{\sqrt{R^2-r^2}} \bigr) (x) 
\big|
+
\big| 
\b_r(x)  - \b'_{r}(x)
\big|   
= 
\O_\Psi^{1-\delta}(C r^{-\frac d2 (1-\beta)(1+\delta+\delta^2)})
\,,
\end{equation}
The systematic error is bounded by~$Cr^{-\frac d2 (1-\beta)}$. These computations are verbatim repetitions of Steps 3,5,6. By Step 4,  for small enough~$\rho$, we may take~$\theta$ close enough to~$\frac d2 (1-\beta)$, and obtain that
\begin{equation}  \label{e.br.flucs.again}
\bigl( \b'_{r} - \E [ \b'_{r} ] \bigr) \ast \Phi_{\sqrt{R^2-r^2}} 
= 
 \O_{\Psi} \Bigl( C R^{-\frac d2(1-\beta) } \Bigr) 
\,.
\end{equation}
Pick then small enough~$\ep,\rho,\gamma \in (0,\frac12)$ by means of~$\delta$ and use the~$\delta^2$ term to absorb them. This is indeed possible since we may take~$\theta$ as close to~$\frac d2 (1-\beta)$ as we please by means of~$\delta$. The proof is complete. 
\end{proof}

We next prove the result for the case~$\beta = 0$. The proof actually provides a very general tool for treating additive and localizable bounded fields. We will indeed apply the very same proof when we will treat the non-homogeneous problems later on. Below, we will denote
\begin{equation} 
\label{e.Psi.sigma.def}
\Psi_\sigma(t) := \Psi(t^{1-\sigma})\,, \quad \sigma \in [0,1)\,.
\end{equation}
   
\begin{proposition} 
\label{p.psipsi.coarse.optimal}
Let~$\delta \in (0,\frac12)$ and~$\gamma \in [0,\infty]$,~$M,N \in [1,\infty)$ and~$p \in (2,\infty)$. 
Let~$\Psi:[1,\infty) \to [0,\infty)$ be an increasing function satisfying
\begin{equation}  \label{e.Psi.pgrowth.again}
t^p \leq N \frac{\Psi(t s)}{\Psi(s)}  \quad \mbox{for every~$s, t \in [1,\infty)$}. 
\end{equation} 
We suppose that~$\P$ satisfies, for~$\sigma \in \{0,\delta\}$,~$\CFS(0,\Psi_\sigma,\Psi_\sigma(M\cdot),\gamma)$ condition,  according to Definition~\ref{d.CFS.strong}. Then there exists a constant~$C(\delta,\gamma,M,N,\dataref) < \infty$ such that, for every~$x\in \R^d$ and~$r \in [1,\infty)$,
\begin{equation} 
\label{e.betazero}
\b_{r}(x)  - \ahom  
=  
\left\{
\begin{aligned}
& \O_{\Psi}^{1-\delta} \bigl(C r^{-\nicefrac d2}\bigr) & & \mbox{ if } \gamma>0
\, ,  
\\
& 
 \O_{\Psi}^{1-\delta} \bigl(C r^{-\nicefrac d2} \log r \bigr) & & \mbox{ if } \gamma = 0
\, .
\end{aligned}
\right.
\end{equation}
\end{proposition}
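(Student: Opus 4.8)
\textbf{Proof proposal for Proposition~\ref{p.psipsi.coarse.optimal}.} The plan is to run essentially the same induction-up-the-scales argument as in the proof of Proposition~\ref{p.psipsi.coarse}, but with the Young function~$\Psi$ replaced by the weaker~$\Psi_\sigma$ in the bookkeeping, so that at each iteration we are allowed a small loss of stochastic integrability (from~$\Psi_\sigma$ to~$\Psi_{\sigma'}$ with~$\sigma' > \sigma$), and the positivity of~$\gamma$ is used precisely to absorb the error terms that, in the~$\beta=0$ case, no longer decay strictly faster than~$r^{-d/2}$. First I would fix~$\theta < d/2$ and set up the induction hypothesis~$\b_r(x) - \ahom = \O_{\Psi_\sigma}(Cr^{-\theta})$, anchored at~$\theta = \alpha$ by~\eqref{e.coarsen.smallness} and~\eqref{e.weakint.ave.applied} (the latter is valid since~\eqref{e.Psi.pgrowth.again} implies the analogous growth condition for~$\Psi_\sigma$). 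Then, exactly as in Steps~3--6 of the proof of Proposition~\ref{p.psipsi.coarse}, I would decompose~$\b_R - \ahom$ using~\eqref{e.ahomr.splitting}: the additivity defect is controlled by~\eqref{e.additivityestimate}, the localization defect by Lemma~\ref{l.correctorloc} (and later, in the final buckling step, by the sharper Lemma~\ref{l.correctorloc.new}), the systematic error by the argument of Step~6, and the fluctuation term by the~$\CFS(0,\Psi_\sigma,\Psi_\sigma(M\cdot),\gamma)$ assumption. The key difference from Proposition~\ref{p.psipsi.coarse} is that when applying the~$\CFS$ condition to the localized field~$\b'_{r,t}$, the error term in~\eqref{e.strong.CFS} is now~$3^{-\frac d2 m - n\gamma}$; with~$r = R^{1-\gamma_0}$ this produces a contribution of order~$R^{-d/2} \cdot R^{-c\gamma}$ (for some fixed~$c>0$ depending on the scale choices), which is \emph{strictly} better than~$R^{-d/2}$ only because~$\gamma > 0$. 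This is what allows the iteration to reach~$\theta$ arbitrarily close to~$d/2$ and then, using the~$\delta^2$-improvement in~\eqref{e.loc.berror} from Lemma~\ref{l.correctorloc.new} together with the bounded Malliavin derivative~\eqref{e.malliavin.loc.improved}, to buckle up to the optimal scaling~$R^{-d/2}$ at the cost of passing from~$\Psi_0 = \Psi$ to~$\Psi_\delta$, i.e.~the~$\O_\Psi^{1-\delta}$ on the right side of~\eqref{e.betazero}.

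For the case~$\gamma = 0$, the plan is the same except that the~$\CFS$ error term is now merely~$3^{-\frac d2 m}$ with no extra decay, so the iteration can no longer gain a geometric factor at each step near~$\theta = d/2$; instead, summing the (no longer geometric) contributions over the dyadic scales between~$r$ and~$R$ produces the logarithmic factor~$\log r$ in the second line of~\eqref{e.betazero}. Concretely, I would carry out the additivity/localization/fluctuation decomposition as before, observe that each of the~$O(\log R)$ scales contributes~$O(R^{-d/2})$ to~$\b_R - \b_r\ast\Phi_{\sqrt{R^2-r^2}}$ after summing the additivity defects, and collect the logarithm; the bounded-Malliavin localization of Lemma~\ref{l.correctorloc.new} is again what makes the fluctuation term clean enough (the~$\log t$ in the cruder bound~\eqref{e.malliavin.local} would otherwise produce an additional logarithm that we cannot afford to lose twice). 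The interpolation inequalities~\eqref{e.Psi.product} and~\eqref{e.Psi.interpolation}, applied to~$\Psi_\sigma$, are used throughout to combine the ``good integrability, slightly suboptimal scaling'' pieces with the ``optimal scaling, slightly worse integrability'' pieces.

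Finally, I would note that the whole argument is the one already written out in the proof of Proposition~\ref{p.psipsi.coarse}, with~$\beta$ set to~$0$ and~$\Psi$ systematically replaced by~$\Psi_\sigma$; so rather than reproduce all eight steps I would state this explicitly and only spell out the two places where the~$\beta = 0$ situation genuinely differs, namely (i) the role of~$\gamma > 0$ in making the~$\CFS$ error strictly subleading (Step~4 analogue), and (ii) the summation over scales producing~$\log r$ when~$\gamma = 0$. The main obstacle, as in the~$\beta \in (0,1)$ case, is the final buckling step: making sure that the~$\delta$-loss in integrability incurred at the localization stage (Lemma~\ref{l.correctorloc.new}) is small enough relative to the scale exponents~$\ep,\rho,\gamma_0$ chosen for the additivity and fluctuation steps that the~$R^{-\frac d2(1+\delta+\delta^2)}$ in~\eqref{e.loc.berror} really does dominate all the error terms; this requires choosing the auxiliary exponents in the correct order ($\delta$ first, then~$\ep,\rho,\gamma_0$ small by means of~$\delta$) and using the~$\delta^2$ slack to absorb them, exactly as in Step~8 of Proposition~\ref{p.psipsi.coarse}. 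One extra technical point specific to~$\beta = 0$: the hypothesis requires~$\CFS(0,\Psi_\sigma,\Psi_\sigma(M\cdot),\gamma)$ for \emph{both}~$\sigma = 0$ and~$\sigma = \delta$, because the induction runs in~$\Psi_0$ until~$\theta$ is close to~$d/2$ and only the final step passes to~$\Psi_\delta$; I would make sure both applications are invoked at the correct stages.
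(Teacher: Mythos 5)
There is a genuine gap in the main step of your plan, namely the claim that for $\gamma>0$ one can transplant the two-scale iteration of Proposition~\ref{p.psipsi.coarse} (with $r=R^{1-\gamma_0}$) and "reach $\theta$ arbitrarily close to $\frac d2$ and then buckle up to the optimal scaling $R^{-\sfrac d2}$'' because the $\CFS$ error term is now $R^{-\sfrac d2}R^{-c\gamma}$. The parameter $\gamma$ in~\eqref{e.strong.CFS} only improves the \emph{additive} error term $3^{-\frac d2(1-\beta)m-n\gamma}$; it does nothing to the main fluctuation term $a\,3^{-\frac d2(m-n)}$, where $a$ is the size of the localized field at scale $r$. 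After the iteration one only ever has $a\simeq r^{-\theta}$ with $\theta<\frac d2$ strictly, so this main term is of order $R^{-\sfrac d2}\,r^{\frac d2-\theta}$, which is \emph{supercritical} for every choice of $\theta<\frac d2$, $\rho$ and $\gamma_0$. The buckling in Step~8 of Proposition~\ref{p.psipsi.coarse} closes at the critical exponent $\frac d2(1-\beta)$ precisely because of the slack $\frac d2\beta>0$ in the exponent; at $\beta=0$ that slack is gone and the two-scale decomposition~\eqref{e.ahomr.splitting} cannot produce the endpoint estimate, no matter how the auxiliary exponents are ordered. This is why the paper argues differently: it uses the suboptimal iteration only to show that the dyadic-scale \emph{increments} $\bhom'_{\sqrt2 r}-\bhom'_r\ast\Phi_r$ and the localization defects are subcritical, of size $\O_{\Psi}^{1-\delta}(Cr^{-\frac d2(1+\delta+\delta^2)})$ as in~\eqref{e.betazero.add}--\eqref{e.betazero.local} (via Lemma~\ref{l.correctorloc.new}), and then telescopes over \emph{all} dyadic scales through the semigroup identity~\eqref{e.scalinglimit.id}, applying $\CFS(0,\Psi_\delta,\Psi_\delta(M\cdot),\gamma)$ to each localized, subcritical increment. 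Because the increments have subcritical amplitude $a\simeq r_j^{-\frac d2(1+\delta)}$, each scale contributes $s^{-\sfrac d2}\,r_j^{-\frac{\delta d}{2}\wedge\gamma}$ as in~\eqref{e.Fiter}, and the geometric sum closes at the optimal exponent when $\gamma>0$.

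Your description of the $\gamma=0$ case is also off in the mechanism: the additivity defects are quadratic and their dyadic sum converges (it is dominated by the smallest scale), so they do not contribute "$O(R^{-\sfrac d2})$ per scale.'' The logarithm arises from the \emph{fluctuation} contributions of the telescoped increments: when $\gamma=0$ the per-scale bound in~\eqref{e.Fiter} degenerates to $\O_\Psi^{1-\delta}(Cs^{-\sfrac d2})$ with no extra decay in $r_j$, and summing over the $\simeq\log r$ dyadic scales produces the factor $\log r$ in~\eqref{e.betazero}. Your remark about using both $\sigma=0$ and $\sigma=\delta$ is essentially correct (the $\Psi$-level condition feeds the pre-optimal iteration and the localization input, while the $\Psi_\delta$-level condition is applied to the increments, which are only $\O_{\Psi_\delta}$-bounded), but without the telescoping-over-all-scales structure the argument as you propose it does not reach either line of~\eqref{e.betazero}.
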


\begin{proof}
Fix first~$\delta \in (0,\frac12)$,~$\gamma \in (0,\infty]$,~$M,N \in [1,\infty)$ and~$p \in (2,\infty)$.  

\smallskip 

\emph{Step 1. Additivity and localization defects.} 
Define, for every~$x \in \R^d$ and~$r \geq 1$, 
\begin{align*}  
\bhom_{r}(x) = \b_{r}(x) -  \E[\b_{r}(x)] \qand  \bhom_{r}'(x) = \b_{r}'(x) -  \E[\b_{r}'(x)] 
\,.
\end{align*}
By~\eqref{e.add.local} we then obtain, together with the systematic error~$|\E[\b_{r}(x)] - \ahom| \leq C r^{-\nicefrac d2}$, that   for every~$x \in \R^d$ and~$r \geq 2$,
\begin{equation} \label{e.betazero.add}
 \bhom_{r}(x) -  (\bhom_{r/\sqrt 2}  \ast \Phi_{r/\sqrt 2})(x)   = \O_{\Psi_\delta}  \bigl(C r^{-\frac{d}{2}(1+\delta + \delta^2)} \bigr) 
\end{equation}
and there is a matrix~$\b_r'(x)$, which is~$\mathcal{F}(B_{t}(x))$-measurable with~$t = H(\dataref) r \log^{\nicefrac 12} r$, has bounded Malliavin derivative and satisfies
\begin{equation}  \label{e.betazero.local}
(\bhom_{r} - \bhom_{r}')(x) = \O_{\Psi_\delta}  \bigl(C r^{-\frac{d}{2}(1+\delta+ \delta^2)} \bigr) 
\,.
\end{equation}
It turns out that, together with the~$\CFS(0,\Psi_\delta,\Psi_\delta(M\cdot),\gamma)$ condition, these two estimates are enough to prove the result. Indeed, we may apply a simple consequence of semigroup property for the heat kernels. For given~$j,k \in \N$ with~$j \leq k$ and~$r \geq 2$, set
\begin{equation*}  
r_j = 2^{\nicefrac {(j{-}k)}2} r , \quad t_j = H r_j \log^{\nicefrac12} r_j
\qand
s_j = \biggl( \sum_{n=j}^{k-1} r_n^2 \biggr)^{\!\nicefrac12}, \; s_k = 0
\,.
\end{equation*}
Then, by the semigroup property 
\begin{align}  \label{e.scalinglimit.id}
\notag
\bhom_{r}' (x)  
& = 
\bhom_{r_{k}}'  \ast \Phi_{s_{k}}(x) 
\\ \notag
& = 
\bhom_{r_{k-1}}'  \ast \Phi_{s_{k-1}}(x) 
+
\bigl(  \bhom_{r_{k}}' - \bhom_{r_{k-1}}' \ast \Phi_{r_{k-1}} \bigr) \ast \Phi_{s_{k}}(x) 
\\ \notag 
& = 
\bhom_{r_{k-2}}'  \ast \Phi_{s_{k-2} }(x) 
+  \sum_{j=k-1}^{k} \bigl(  \bhom_{r_j}' - \bhom_{r_{j-1}}' \ast \Phi_{r_{j-1}} \bigr) \ast \Phi_{s_j}(x)
\\ \notag
& =  \ldots 
\\ 
& = 
\bhom_{r_{0}}'  \ast \Phi_{s_{0}}(x)  
+ \sum_{j=0}^{k} \bigl(  \bhom_{r_j}' - \bhom_{r_{j-1}}' \ast \Phi_{r_{j-1}} \bigr) \ast \Phi_{s_j}(x)
\,.
\end{align}
The first sum is a convolution of local quantities (take~$k$ so that~$r_0 \in [1,2)$), and the summands in the sum are contributions of different length scales but with a subcritical size. We will analyze them in the next step.

\smallskip

\emph{Step 2.} We show that, for every~$r\geq 1$,~$s \in [r,\infty)$ and~$x \in \R^d$, 
\begin{equation}  \label{e.Fiter}
\bigl(  \bhom_{\sqrt{2} r}' - \bhom_{r}' \ast \Phi_{r} \bigr) \ast \Phi_{s}(x) 
= 
\O_\Psi^{1-\delta}(C s^{-\nicefrac d2} r^{-\nicefrac{ \delta d}2 \wedge \gamma}) \,.
\end{equation}
First, by~\eqref{e.betazero.add} and~\eqref{e.betazero.local}, we have that 
\begin{equation*}  
\bhom_{\sqrt{2} r}' - \bhom_{r}' \ast \Phi_{r} = \O_{\Psi}^{1-\delta}  \bigl(C r^{-\frac{d}{2}(1+\delta)} \bigr) \,.
\end{equation*}
Let~$t = H r \log^{\nicefrac 12} r$ and rewrite, for every~$y\in \R^d$, as 
\begin{equation*}  
(\bhom_{r}' \ast \Phi_{r})(y)  
- 
\bigl( \bhom_{r}' \indc_{B_{t}(y) } \bigr) \ast \Phi_{r}(y) 
= \sum_{k=0}^\infty \Bigl( \bhom_{r}' \bigl( \indc_{B_{2^{k+1} t}(y)}  - \indc_{B_{2^{k} t}(y)} \bigr)\Bigr)   \ast  \Phi_{r} (y)
\,.
\end{equation*}
The sum on the right is very small and easy to bound. Indeed, by choosing~$H(d)<\infty$ large enough in the definition of~$t$, we deduce that
\begin{equation*}  
\int_{\R^d} \Big| \bhom_{r}'(x) \bigl( \indc_{B_{2^{k+1} t}(y)}(x)  - \indc_{B_{2^{k} t}(y)}(x) \bigr) \Big|  \Phi_{r}(x-y) \, dx
\leq C r^{- 2d (1+4^k)}
 \,.
\end{equation*}
Since~$\bigl( \bhom_{r}' ( \indc_{B_{2^{k+1} t}(y)}  - \indc_{B_{2^{k} t}(y)} ) \bigr) \ast \Phi_{r}(y)$ is~$\F(y+ 2^{k+2} t \cu)$-measurable, we obtain, appealing to the~$\CFS(0,\Psi_\delta,\Psi_\delta(M\cdot),\gamma)$ condition, that 
\begin{align}  \label{e.betazero.tails}
\notag
\sum_{k=0}^\infty \Bigl( \bhom_{r}' \bigl( \indc_{B_{2^{k+1} t}(\cdot)}  - \indc_{B_{2^{k} t}(\cdot)} \bigr)  \ast  \Phi_{r} \Bigr) \ast \Phi_s(x)
& 
=
\sum_{k=0}^\infty  \O_{\Psi_\delta} \biggl( C \Bigr( \frac{s}{2^k t}\Bigr)^{-\nicefrac{d}{2}}  r^{- 2d (1+4^k)} \biggr)
\\ &
= 
 \O_{\Psi_\delta} \bigl( C s^{-\nicefrac{d}{2}} r^{-d} \bigr)
\,.
\end{align}
Furthermore,~$\bhom_{\sqrt{2} r}'(y)  -  \bigl( \bhom_{r}' \indc_{B_{t}(y) } \bigr) \ast \Phi_{r}(y)$ is~$\F(B_{2t}(y))$-measurable and, by the above estimates, it satisfies
\begin{equation*}  
\bhom_{\sqrt{2} r}'(y)  -  \bigl( \bhom_{r}' \indc_{B_{t}(y) } \bigr) \ast \Phi_{r}(y) 
=
\O_{\Psi}^{1-\delta}  \bigl(C r^{-\frac{d}{2}(1+\delta+\delta^2)} \bigr)\,.
\end{equation*}
Therefore,~$\CFS(0,\Psi_\delta,\Psi_\delta(M\cdot),\gamma)$ condition implies that 
\begin{equation*}  
\Bigl( \bhom_{\sqrt{2} r}'(x) -   \bigl( \bhom_{r}' \indc_{B_{t}(x) }  \bigr) \ast \Phi_{r} \Bigr) \ast \Phi_s 
= 
 \O_{\Psi_\delta} \Bigl(C s^{-\frac{d}{2}} ( r^{-\nicefrac{\delta d}{2}}+ r^{-\gamma} ) \Bigr) 
 \,.
\end{equation*}
Putting this together with~\eqref{e.betazero.tails} yields~\eqref{e.Fiter}. 

\smallskip

\emph{Step 3. Conclusion when~$\gamma>0$.} We apply~\eqref{e.Fiter}, together with~\eqref{e.scalinglimit.id} and~$\CFS$ condition for the first term in~\eqref{e.scalinglimit.id}, to obtain
\begin{equation*}  
\bhom_{r}' (x)  
=
\bhom_{r_{0}}'  \ast \Phi_{s_{0}}(x)   + \sum_{j=0}^k 
\O_\Psi^{1-\delta}\bigl(C s^{-\nicefrac d2} 2^{- j \frac{ \delta d}{4} \wedge \frac{\gamma}{2} } \bigr) 
=
\O_\Psi^{1-\delta}\bigl(C s^{-\nicefrac d2} \bigr) 
\,.
\end{equation*}

\smallskip

\emph{Step 4. Conclusion when~$\gamma=0$.} We can repeat the Step 2 and obtain, instead of~\eqref{e.Fiter}, that 
\begin{equation}  \label{e.Fiter.gammazero}
\bigl(  \bhom_{\sqrt{2} r}' - \bhom_{r}' \ast \Phi_{r} \bigr) \ast \Phi_{s}(x) 
= 
\O_\Psi^{1-\delta}(C s^{-\nicefrac d2}) 
\,.
\end{equation}
Summation then produces~$\log r$. 
\end{proof}

It is also possible to obtain optimal results under the much weaker assumption~$\CFS(\beta,\Psi)$ for a certain range of~$\beta$, but with a significant loss of stochastic integrability. We omit the proof of the following proposition, which is similar to that of Proposition~\ref{p.psipsi.coarse}.
 
\begin{proposition} 
\label{p.psi.coarse}
There exist constants~$C(\dataref) < \infty$ and~$c(d,\beta) \in (0,1)$ such that if~$\P$ satisfies~$\CFS(\beta,\Psi)$, with~$\Psi$ satisfying~\eqref{e.Psi.pgrowth} with~$p\geq C(d,\beta)$ and 
\begin{equation}  \label{e.Psibeta.beta.ass}
\beta \geq \frac{1}{d} \Bigl( d + 1 - \sqrt{d  + 1}  \Bigr), 
\end{equation}
then, for every~$r \geq 2$, 
\begin{equation}  \label{e.Psibeta.coarse}
\b_r(x) - \ahom 
= 
\O_{\Psi}^c \bigl( C  r^{-\frac d2(1-\beta)}   \bigr) . 
\end{equation}
\end{proposition}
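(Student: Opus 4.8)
The plan is to run the renormalization iteration of Proposition~\ref{p.psipsi.coarse}, but with the weaker input $\CFS(\beta,\Psi)$ in place of the strong condition $\CFS(\beta,\Psi,\Psi(M\cdot),0)$. The crucial difference is that $\CFS(\beta,\Psi)$ only controls averages of random variables that are \emph{deterministically} bounded by one and have Malliavin derivative bounded by one; we therefore cannot propagate the current rate $\b_r-\ahom=\O_\Psi^c(Cr^{-\theta})$ through the fluctuation step, as was done in Step~4 of the proof of Proposition~\ref{p.psipsi.coarse}. Instead, whenever we apply the mixing condition we first \emph{truncate}: if a local increment is $\O_\Psi^c(Cs^{-\vartheta})$ and also $\le\delta$ almost surely, we split it as (truncation at level $s^{-\vartheta}R^\eta$) $+$ (tail), apply $\CFS(\beta,\Psi)$ to the normalized truncation, and discard the tail by a union bound. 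The tail is negligible provided $\Psi$ grows polynomially fast enough, which is the source of the hypothesis $p\ge C(d,\beta)$; the bookkeeping of the small polynomial loss $R^\eta$ and of the factor $\tfrac12$ lost in the stochastic-integrability exponent each time an $\O_\Psi^c$ bound is squared (in the additivity and systematic error terms) is what produces the final exponent $c\in(0,1)$.

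First I would fix, given a target scale $R$, an intermediate scale $r=R^{1-\gamma}$ and a localization scale $t=r^{1+\rho}$, and use the decomposition \eqref{e.ahomr.splitting}, controlling the additivity defect by \eqref{e.additivityestimate}, the localization defect by Lemma~\ref{l.correctorloc} (or Lemma~\ref{l.correctorloc.new}, to avoid the logarithmic Malliavin bound), and the systematic error as in Step~6 of the proof of Proposition~\ref{p.psipsi.coarse}. Since $\CFS(\beta,\Psi)$ with mesoscopic scale $t$ and macroscopic scale $R$ requires $\log t>\beta\log R$ but only yields the scaling $(R/t)^{-d/2}$, a one-step fluctuation estimate cannot reach $R^{-\frac d2(1-\beta)}$; as in Proposition~\ref{p.psipsi.coarse.optimal} I would instead telescope the convolution through dyadic scales $r_0<r_1<\cdots<r_k=R$ with base $r_0=R^{\beta_1}$, $\beta_1>\beta/(1+\rho)$, estimate each increment $\bhom'_{r_j}-\bhom'_{r_{j-1}}\ast\Phi_{r_{j-1}}$ by the truncation trick above, and sum. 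Iterating this starting from the $\delta$-smallness \eqref{e.coarsen.smallness} then drives the rate exponent $\theta$ upward.

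The hard part will be to check that, under the constraint \eqref{e.Psibeta.beta.ass}, the coarsening exponents can be chosen so that each of the four terms in \eqref{e.ahomr.splitting} is at least of order $R^{-\frac d2(1-\beta)}$ at the end of the iteration. The binding constraints are: (i) for the base term and the averaged increments, the mesoscopic restriction forces $r_j\gtrsim R^{\beta/(1+\rho)}$, so the CLT scaling reaches $R^{-\frac d2(1-\beta)}$ only after the truncated prefactor $r_j^{-\vartheta}$ is used, which requires the defect exponent $\vartheta$ to exceed $\tfrac d2(1-\beta)(1+\rho)$ by the time the convolution ceases to average near $r_j\sim R^{1/(1+\rho)}$; (ii) the additivity defect $\sim\mathcal{G}_r^2\sim R^{-2\theta(1-\gamma)}$ still needs $2\theta(1-\gamma)\ge\tfrac d2(1-\beta)$ when $\gamma$ is pushed up to near $1-\beta$ by~(i). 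Balancing (i)--(ii), together with the localization gain $r^{-\rho}$ and the polynomial truncation loss, leaves feasible choices of $(\gamma,\rho,\eta)$ exactly when $d(1-\beta)^2+2(1-\beta)\le 1$, which is \eqref{e.Psibeta.beta.ass}. I expect this simultaneous-feasibility check --- and the careful tracking of how many times the integrability exponent is halved before $\theta$ reaches $\tfrac d2(1-\beta)$ --- to be the main obstacle.

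Finally, the iteration as above produces $\b_R-\ahom=\O_\Psi^{c_0}\!\big(C(\log R)^M R^{-\frac d2(1-\beta)}\big)$ for some $c_0\in(0,1)$ and $M<\infty$; I would remove the poly-logarithmic factor by interpolating against the deterministic bound $|\b_R-\ahom|\le\delta$ via the inequality \eqref{e.Psi.interpolation}, at the cost of a further harmless decrease of the exponent to some $c=c(d,\beta)\in(0,1)$. This gives \eqref{e.Psibeta.coarse} for $R\ge 2$, and the case $R<2$ is trivial, completing the proof.
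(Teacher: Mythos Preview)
The paper does not actually prove this proposition: immediately before the statement it says ``We omit the proof of the following proposition, which is similar to that of Proposition~\ref{p.psipsi.coarse}.'' Your proposal is precisely such an adaptation---running the same decomposition~\eqref{e.ahomr.splitting} and iteration, but replacing the direct use of the strong $\CFS(\beta,\Psi,\Psi(M\cdot),0)$ condition in the fluctuation step by a truncation-plus-tail argument so that only the weak $\CFS(\beta,\Psi)$ is needed---and your derivation of the feasibility constraint $d(1-\beta)^2+2(1-\beta)\le 1$, equivalent to~\eqref{e.Psibeta.beta.ass}, is exactly the kind of parameter balance one expects to produce that threshold. Since the paper gives no further detail, your outline is consistent with, and considerably more explicit than, what the authors provide.
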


We conjecture that the previous proposition can be extended to every~$\beta \in (\frac12 ,1 )$.  

\begin{conjecture}
Suppose that~$\P$ satisfies~$\CFS(\beta,\Psi)$ with~$\beta \in (\frac12 ,1 )$. Then there exist constants~$C(\dataref) < \infty$ and~$c(d,\beta) \in (0,1)$ such that 
\begin{equation*}
\b_r(x) - \ahom = \O_{\Psi}^c \bigl( C r^{-\frac d2(1-\beta)}  \bigr)\,.
\end{equation*}
\end{conjecture}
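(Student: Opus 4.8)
The plan is to run the renormalization scheme of Section~\ref{s.fluctboot} under the weakest mixing hypothesis $\CFS(\beta,\Psi)$, pushing past the dimensional restriction~\eqref{e.Psibeta.beta.ass} that enters the proof of Proposition~\ref{p.psi.coarse}. I would work with the three deterministic ingredients already established: the additivity defect estimate~\eqref{e.additivityestimate}, which is \emph{quadratic} in the fluctuation field $\mathcal{G}_r$; the localization defect estimate~\eqref{e.localizationestimate}, which gains a factor $r^{-\alpha}$ over the fluctuation size; and the logarithm-free Malliavin bound~\eqref{e.malliavin.loc.improved} of Lemma~\ref{l.correctorloc.new}, which allows the localization window to be taken as small as $t = C r \log^{\sfrac12} r$ (essentially $t\approx r$); together with the systematic-error bound $|\E[\b_r]-\ahom|\leq Cr^{-\sfrac d2}$.

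The core is a bootstrap in the scaling exponent. I would induct on a hypothesis of the form $\b_r(x)-\ahom=\O_\Psi^{c}(Cr^{-\theta})$, anchored at $\theta=\alpha$ by~\eqref{e.coarsen.smallness}, and pass from a mesoscopic scale $r$ to a final scale $R$ via the decomposition~\eqref{e.ahomr.splitting}. Under the induction hypothesis, the additivity defect is $\O_\Psi^{c'}(Cr^{-2\theta})$ (using also~\eqref{e.weakint.ave.applied}--\eqref{e.weakint.ave.expectation} to convert the tail bounds on $\mathcal{G}_r$ and $\X$ into averaged bounds), the localization defect is $\O_\Psi^{c'}(Cr^{-\theta-\alpha})$, and the remaining term $(\b'_{r,t}-\E[\b'_{r,t}])\ast\Phi_{\sqrt{R^2-r^2}}$ is handled by applying $\CFS(\beta,\Psi)$ to the localized matrix $\b'_{r,t}$, which is $\F(B_t(\cdot))$-measurable, bounded by $\delta$, and has bounded Malliavin derivative. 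Since the mesoscopic partition of $\cu_m$ ($3^m\approx R$) into cubes of side $3^n\gtrsim t\approx r$ obeys the constraint $\beta m<n$, one is forced to take $r\gtrsim R^\beta$, and the $\CFS$ condition then contributes a fluctuation term of order $(R/r)^{-\sfrac d2}$, which is $R^{-\frac d2(1-\beta)}$ at the extremal choice $r\approx R^\beta$. As in the proof of Proposition~\ref{p.psipsi.coarse.optimal}, I would realize this through a geometric cascade of scales $r_j = 2^{(j-k)/2}r$ so that the contributions of all intermediate scales telescope, tracking the (unavoidable) loss of stochastic integrability with the interpolation inequalities~\eqref{e.Psi.product}--\eqref{e.Psi.interpolation}. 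Iterating the improvement step finitely many times raises $\theta$ toward $\frac d2(1-\beta)$, and a final ``buckling'' step, using Lemma~\ref{l.correctorloc.new} to average out the localization error as in Step~8 of the proof of Proposition~\ref{p.psipsi.coarse}, should reach the endpoint exponent exactly, at the cost of leaving $\O_\Psi^{c}$ with $c=c(d,\beta)\in(0,1)$ rather than $\O_\Psi$.

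The main obstacle is the ``conservation of difficulty'' between localization and additivity, which is precisely what makes $\beta>\tfrac12$ the right threshold. With only $\CFS(\beta,\Psi)$ at hand---not the strong variant $\CFS(\beta,\Psi,\Psi',\gamma)$ used for Proposition~\ref{p.psipsi.coarse}---one cannot exploit the smallness of $\b_r-\ahom$ inside the mixing inequality, so each application of $\CFS(\beta,\Psi)$ costs a full factor $(R/r)^{-\sfrac d2}$; the fluctuation term therefore saturates at $R^{-\frac d2(1-\beta)}$ and the scheme closes only if the additivity defect at the saturated exponent is genuinely subordinate to it. With $\theta\approx\frac d2(1-\beta)$ and $r\approx R^\beta$ one has $r^{-2\theta}\approx R^{-d\beta(1-\beta)}$, and the requirement $d\beta(1-\beta)\geq\frac d2(1-\beta)$ is exactly $\beta\geq\tfrac12$; the cruder restriction~\eqref{e.Psibeta.beta.ass} should disappear once the exponent is allowed to bootstrap all the way to saturation and the localization window is pushed down to $t\approx r$ using~\eqref{e.malliavin.loc.improved}. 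The two delicate points are (i) verifying that this localization is compatible with the constraint $\beta m<n$ without worsening it, and (ii) controlling the compounding integrability loss through the (finitely many) iteration steps so that it stabilizes at some $c(d,\beta)\in(0,1)$ rather than degenerating; the latter is presumably where, as in Proposition~\ref{p.psi.coarse}, one would need $\Psi$ to have sufficiently fast polynomial growth.
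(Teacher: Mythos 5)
You should note at the outset that the statement you are addressing is stated in the paper as a \emph{conjecture}: the paper offers no proof of it. Proposition~\ref{p.psi.coarse}, whose proof is itself omitted, is asserted only under the stronger restriction~\eqref{e.Psibeta.beta.ass}, which can be rewritten as $(d+1)(1-\beta)^2\leq \beta^2$, and the extension to all $\beta\in(\tfrac12,1)$ is explicitly left open. So there is no proof in the paper to compare against, and your proposal has to stand on its own; as written, it does not close.

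The gap is in the localization term, not the additivity term. You correctly observe that under the weak condition $\CFS(\beta,\Psi)$ the mixing input cannot see the amplitude of the summands (rescaling $X_z$ by a small amplitude $a$ would require the Malliavin derivative to be $\leq a$ as well, which~\eqref{e.malliavin.loc.improved} does not provide), so each application of $\CFS(\beta,\Psi)$ yields only $(R/3^n)^{-\sfrac d2}$ with mesocubes of side $3^n\geq\max(t,R^\beta)$; to reach $R^{-\frac d2(1-\beta)}$ you must therefore take the localization radius $t\lesssim R^\beta$, and hence, since the localization lemmas require the window to be (slightly more than) the coarsening scale, the scale of the localized proxy is $r\lesssim R^{\beta}$. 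Your accounting then balances only the additivity defect $r^{-2\theta}\approx R^{-2\beta\theta}$ against the target, which gives $\beta\geq\tfrac12$; but the localization defect is ``slightly better than the fluctuations'' only \emph{at scale $r$}: by~\eqref{e.localizationestimate} it is of order $r^{-\theta-\alpha}$ (or $r^{-\theta(1+\delta)}$ with $\delta\leq\delta_0$ if one could use Lemma~\ref{l.correctorloc.new}), i.e.\ of order $R^{-\beta(\theta+\alpha)}$ after the change of scale, and it enters the decomposition~\eqref{e.ahomr.splitting} at its pointwise size, since it is neither local nor amenable to a further application of $\CFS$. At saturation $\theta=\frac d2(1-\beta)$ this is subordinate to $R^{-\theta}$ only when $\beta\alpha\gtrsim\frac d2(1-\beta)^2$ (respectively $\beta(1+\delta_0)\geq1$), i.e.\ only for $\beta$ close to $1$ in a $d$-dependent way --- which is precisely the shape of~\eqref{e.Psibeta.beta.ass}, so that restriction is not a crude bookkeeping artifact that ``should disappear''; it marks the localization bottleneck your scheme does not overcome. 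Two further ingredients you lean on do not apply here: Lemma~\ref{l.correctorloc.new} assumes full $\O_\Psi$ integrability of $\mathcal{G}_r$ at rate $r^{-\theta}$ in~\eqref{e.local.uglyG.cond} (the weak-$\CFS$ bootstrap produces only $\O_\Psi^c$) and localizes on a window comparable to the coarsening scale, incompatible with $3^n\approx R^\beta$; and the telescoping cascade of Proposition~\ref{p.psipsi.coarse.optimal} gains nothing under $\CFS(\beta,\Psi)$, because its whole point is to convert the small amplitude of the increments into decay via the strong condition. The missing idea --- a localization (or a substitute for it) whose error beats $R^{-\frac d2(1-\beta)}$ for every $\beta>\tfrac12$ --- is exactly what makes the statement a conjecture, and your proposal does not supply it.
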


\subsection{Optimal quantitative estimates of the first-order correctors}
\label{ss.correctors.optimal}

In this section we complete the proof of Theorem~\ref{t.corr.optimal} by showing that quantitative estimates on the convergence of the coarse-grained coefficients~$\b_r$ to~$\ahom$ imply quantitative bounds on the first-order correctors. The argument for obtaining the latter from the former is essentially deterministic and was observed already in Lemma~\ref{l.scalecomparison} and used in the renormalization argument. 

\smallskip

In order to prove estimates in a weak Sobolev space, we observe that it suffices to obtain estimates on the \emph{convolution of against heat kernels}. That is, we just need to obtain estimates on the stationary random field~$\nabla \phi_e \ast \Phi_R$. The reader should agree that this is at least possible in principle since any stationary random field~$f$ with zero mean can be written in terms of its convolutions with the heat kernel using the identity
\begin{equation*}
f(x) = - \int_0^\infty (\nabla f \ast \nabla \Phi(t,\cdot) ) (x) \,dt\,.
\end{equation*}
For instance, one may use a similar identity and the semigroup property to obtain that
\begin{align}
\label{e.multiscale.HK}
\| \nabla \phi_e \|_{H^{-1}(B_R)}^2 
& 
\leq 
C
\bigl\| \phi_e - \phi_e\ast \Phi_R  \bigr\|_{L^2(B_R)}^2 
\notag \\ & 
= 
\int_{B_R} \biggl| 
\int_0^{R^2} (\nabla \phi_e \ast \nabla \Phi(t,\cdot) ) (x) \,dt
\biggr|^2\,dx
\notag \\ & 
\leq 
C\int_0^{R^2} 
t^{-\nicefrac12}
\int_{B_R} 
\bigl| (\nabla \phi_e \ast \Phi(\tfrac t2,\cdot) ) (x) \bigr|^2
\,dx \,dt\,.
\end{align}
We have encountered this already, and the above inequality is nearly identical to~\eqref{e.MSP.heat.step1} and is also similar to Proposition~\ref{p.MSP}. 

\smallskip 

Negative Sobolev norms are \emph{equivalent} to certain expressions involving only convolutions with the heat kernel in the whole space. We may also localize these results and obtain, for each~$s\in (0,\infty)$, that there exists~$C(s,d) <\infty$ such that
\begin{equation}
\label{e.negsob.sint}
\big\| f \bigr\|_{H^{-s}(B_1)} 
\leq C
\biggl( 
\int_0^1 t^{s}  \int_{\Rd}  \bigl| f\ast \Phi(t,\cdot) (x) \bigr|^2 \exp(-|x|) \, dx 
\,\frac{dt}{t} 
\biggr)^{\!\nicefrac12}\,.
\end{equation}
The proof of this, 
and generalizations for~$W^{-s,p}$-spaces, appear in~\cite[Appendix D]{AKMBook}.

\smallskip

With this in mind, we return to Lemma~\ref{l.scalecomparison} and, for fixed~$e\in\Rd$, we take~$\psi = \ell_e + \phi_e$ and~$R=2r$ in~\eqref{e.scalecomparison} to obtain, for every~$r\geq 2$ and~$x\in\Rd$,
\begin{equation}
\label{e.conv.rto2r.0}
\left\| \nabla \phi_e \ast \Phi_r  - \nabla \phi_e \ast \Phi_{2r}(x)\right\|_{L^2 \left( \Phi_{x,\sqrt{3}r} \right)} 
\leq 
C |e| \mathcal{G}_r(x) \,. 
\end{equation}
This implies that, for every~$r \in [1,\infty)$, 
\begin{equation}
\label{e.conv.rto2r}
\left| \nabla \phi_e \ast \Phi_r (x) - \nabla \phi_e \ast \Phi_{2r}(x)\right| 
\leq 
C |e| \mathcal{G}_r(x) \,. 
\end{equation}
Since~$\lim_{r\to \infty} \nabla \phi_e \ast \Phi_r (x) = 0$ almost surely, we may sum over the scales to obtain that 
\begin{equation}
\label{e.conv.r}
\left| \nabla \phi_e \ast \Phi_r (x) \right| 
\leq 
C |e| \sum_{j=0}^\infty \mathcal{G}_{2^jr}(x) \,. 
\end{equation}
Recall that the random variable~$\mathcal{G}_r(x)$, defined in~\eqref{e.uglyG}, encodes the rate of convergence of~$\b_r$ to~$\ahom$. For instance, if~$\beta\in (0,1)$ and the hypothesis of Proposition~\ref{p.psipsi.coarse} are valid, then the bound~\eqref{e.psipsi.coarse} yields, for every~$x\in\Rd$ and~$r \in [1,\infty)$, 
\begin{equation}
\label{e.uglyG.bound}
\mathcal{G}_r(x)
=
\O_{\Psi} \bigl( C r^{-\frac d2(1-\beta)} \bigr) 
+ 
\O_{\Psi}^{1-\delta} \bigl( C r^{-\frac d2(1-\beta) - \gamma} \bigr)
+
\O_{\Psi}^{1-\beta} \bigl( C r^{-\nicefrac d2} \bigr)
\,.
\end{equation}
Likewise, in the case~$\beta=0$, if the hypothesis of Proposition~\ref{p.psipsi.coarse.optimal} are valid, then the bound~\eqref{e.betazero} gives the estimate
\begin{equation}
\label{e.uglyG.bound.betazero}
\mathcal{G}_r(x)
=
\O_{\Psi}^{1-\delta} (C r^{-\nicefrac d2})
\,.
\end{equation}
Now, recalling that, for every~$r \geq \X(x)$, 
\begin{align*}  
\left| \nabla \phi_e \ast \Phi_r (x) \right|  \leq C|e| \,,
\end{align*}
we obtain by the previous two displays, interpolating as in~\eqref{e.Psi.interpolation}, 
that, for every~$e \in B_1$ and~$r \in [\X(x),\infty)$, 
\begin{equation}
\label{e.nablacorr.r}
\left| \nabla \phi_e \ast \Phi_r (x) \right| 
\leq 
\left\{
\begin{aligned}
& \O_{\Psi}^{1-\delta} (C   r^{-\frac d2}) & \mbox{if} & \ \beta = 0\,,\\
&\O_{\Psi} \bigl( C    r^{-\frac d2(1-\beta)} \bigr) + \O_{\Psi}^{1-\delta} \bigl( C r^{-\frac d2(1-\beta) - \gamma} \bigr)
 & \mbox{if} & \ \beta\in (0,1).
\end{aligned}
\right.
\end{equation}
These are sharp estimates for the spatial averages of the gradients of the first-order correctors against heat kernels. 

\smallskip

We can get a similar bound for the fluxes from the bound~\eqref{e.nablacorr.r} for the gradients. We simply apply~\eqref{e.coarsened.fist} and then use~\eqref{e.psipsi.coarse},~\eqref{e.betazero} and~\eqref{e.nablacorr.r} to discover that, for every~$e \in B_1$ and~$r \in [\X(x),\infty)$, 
\begin{align}
\label{e.fluxcorr.r}
\lefteqn{
\bigl| \bigl(  ( \a (e + \nabla \phi_e) )   \ast \Phi_r(\cdot)  \bigr) (x) - \ahom e  \bigr| 
} \qquad & 
\notag \\ & 
=
\bigl| \b_r(x) ( \nabla \phi_e \ast \Phi_r) (x) + (\b_r(x) - \ahom) e \bigr|
\notag \\ & 
\leq 
C \bigl| \nabla \phi_e \ast \Phi_r (x)  \bigr| 
+
\bigl| \b_r(x) - \ahom \bigr|
\notag \\ & 
=  
\left\{
\begin{aligned}
& \O_{\Psi}^{1-\delta} (C r^{-\nicefrac d2}) & \mbox{if} & \ \beta = 0\,,\\
&\O_{\Psi} \bigl( C r^{-\frac d2(1-\beta)} \bigr) + \O_{\Psi}^{1-\delta} \bigl( C r^{-\frac d2(1-\beta) - \gamma} \bigr)
 & \mbox{if} & \ \beta\in (0,1). \\
\end{aligned}
\right.
\end{align}
These are, again, sharp estimates for the spatial averages of the fluxes of the first-order correctors against heat kernels. A similar estimate can be deduced for the energy as well. 

\smallskip

Using~\eqref{e.nablacorr.r},~\eqref{e.fluxcorr.r} and the characterization of negative Sobolev norms in~\eqref{e.negsob.sint}, we are able to complete the proof of Theorem~\ref{t.corr.optimal}.

%
%\begin{theorem}
%\label{t.corr.optimal}
%Suppose that the hypothesis of Proposition~\ref{p.psipsi.coarse} or~\ref{p.psipsi.coarse.optimal} is valid. Then, for every~$\delta \in (0,1)$ there exists~$C(\delta,\dataref)<\infty$ such that, for every~$\ep \in (0,1)$, 
%\begin{multline}
%\label{e.corr.bounds}
%\Big( \big\| \nabla \phi_e\bigl( \tfrac{\cdot}{\ep} \bigr)  \bigr\|_{H^{-s}(B_1)}  + \big\| \a\bigl( \tfrac{\cdot}{\ep} \bigr)\bigl( e + \nabla \phi_e\bigl( \tfrac{\cdot}{\ep} \bigr) \bigr) - \ahom e \bigr\|_{H^{-s}(B_1)} \Big)\indc_{\{ \X \leq \ep^{-1} \}} 
%\\
%= 
%\left\{
%\begin{aligned}
%& \O_{\Psi}^{1-\delta} (C |e| \ep^s) & \mbox{if} & \ 0 < s < \frac{d}{2}(1-\beta)\,,\\
%&\O_{\Psi}^{1-\delta}  \bigl( C  |e| \ep^{\frac d2(1-\beta)} | \log \ep|^{\nicefrac12} \bigr) & \mbox{if} & \ s = \frac{d}{2}(1-\beta) \,, \\
%&\O_{\Psi}^{1-\delta} \bigl( C |e| \ep^{\frac d2(1-\beta)} \bigr) & \mbox{if} & \ s > \frac{d}{2}(1-\beta) \,.
%\end{aligned}
%\right.
%\end{multline}
%\end{theorem}

\begin{proof}[Proof of Theorem~\ref{t.corr.optimal}]
Fix~$e \in \R^d$,~$\delta \in (0,\frac12)$,~$s \in (0,\infty)$. Let  
\begin{align*}  
f_\ep(x) := \nabla \phi_e\bigl( \tfrac{x}{\ep} \bigr)  
\quad \mbox{or} \quad 
f_\ep(x) :=  \a\bigl( \tfrac{x}{\ep} \bigr) \bigl( e + \nabla \phi_e\bigl( \tfrac{x}{\ep} \bigr) \bigr) - \ahom e \,.
\end{align*}
Then, by changing variables, we have by~\eqref{e.nablacorr.r} and~\eqref{e.fluxcorr.r} that, for every~$t \geq \ep^2 \X^2(\tfrac x\ep)$, 
\begin{align*}  
f_\ep\ast \Phi(t,\cdot) (x) 
& 
% = \int_{\R^d} \Phi\bigl( t ,x-y \bigr) f\bigl( \tfrac{y}{\ep} \bigr) \, dy 
% \\ & 
= 
\int_{\R^d} \Phi\bigl( \tfrac{t}{\ep^2} , \tfrac{x}{\ep} - y \bigr) f\bigl( y \bigr) \, dy 
=
 \O_{\Psi}^{1-\delta} (C |e| \ep^{\frac d2(1-\beta)} t^{-\frac d4(1-\beta)})
 \,.
\end{align*}
It follows by integration that 
\begin{multline}  \label{e.corr.bounds.pre}
\biggl( 
\int_{\R^d} \int_{\ep^2 \X^2\bigl(\tfrac x\ep\bigr) \wedge 1}^1 t^s   \big| \bigl( f_\ep  \ast \Phi(t,\cdot) \bigr) (x) \big|^2 \, \frac{dt}{t} \, \exp(-|x|)dx
\Biggr)^{\!\!\nicefrac12} 
\\
= 
\left\{
\begin{aligned}
& \O_{\Psi}^{1-\delta} (C |e| \ep^s) & \mbox{if} & \ 0 < s < \frac{d}{2}(1-\beta)\,,\\
&\O_{\Psi}^{1-\delta}  \bigl( C  |e| \ep^{\frac d2(1-\beta)} | \log \ep|^{\nicefrac12} \bigr) & \mbox{if} & \ s = \frac{d}{2}(1-\beta) \,, \\
&\O_{\Psi}^{1-\delta} \bigl( C |e| \ep^{\frac d2(1-\beta)} \bigr) & \mbox{if} & \ s > \frac{d}{2}(1-\beta) \,.
\end{aligned}
\right.
\end{multline}
On the other hand, we trivially get that
\begin{align} \notag  
\lefteqn{
\int_{\R^d} \int_{0}^{\ep^2 \X^2\bigl(\tfrac x\ep\bigr) \wedge 1} t^s   \big| \bigl( f_\ep  \ast \Phi(t,\cdot) \bigr) (x) \big|^2 \, \frac{dt}{t} \, \exp(-|x|) \, dx 
} \qquad &
\\ 
\notag &
\leq 
\frac{\ep^{2s}}s \int_{\R^d} \sup_{t \in (0,1)}  \big| \bigl( f_\ep \ast \Phi(t,\cdot) \bigr) (x) \big|^2 \X^{2s}(\tfrac x\ep) 
\, \exp(-|x|) \, dx 
\,.
\end{align}
It is straightforward to show, using Vitaly's covering theorem, that the maximal function above is bounded in~$L^p$: For every~$p \in (1,\infty)$ there exists~$C(p,d)<\infty$ such that 
\begin{align*}  
\Big\| \sup_{t \in (0,1)}  \big| \bigl( f_\ep \ast \Phi(t,\cdot) \bigr) \Big\|_{L^p(\exp(-|\cdot|))} 
\leq 
C \big\| f_\ep \big\|_{L^p(\exp(-|\cdot|))} 
\,.
\end{align*}
By Meyer's estimate and the bound~\eqref{e.corr.grad.bound.pre} we deduce that there exist constants~$\alpha(d,\lambda,\Lambda) \in (0,1)$ and~$C(d,\lambda,\Lambda)<\infty$ such that 
\begin{align*}  
\big\| f_\ep \big\|_{L^{2(1+\alpha)}(\exp(-|\cdot|))} 
\leq 
C \big\| f_\ep \big\|_{L^2(\exp(-|\cdot|/2))}  
\leq
C  ( \ep\X \vee 1 )^{\nicefrac d2} 
\,.
\end{align*}
In particular, the above two displays together with H\"older's inequality yield that
\begin{align} \notag  
\lefteqn{
\biggl(  \int_{\R^d} \int_{0}^{\ep^2 \X^2\bigl(\tfrac x\ep\bigr) \wedge 1} t^s   \big| \bigl( f_\ep  \ast \Phi(t,\cdot) \bigr) (x) \big|^2 \, \frac{dt}{t} \, \exp(-|x|) \, dx 
\Biggr)^{\!\!\nicefrac12} 
} \qquad &
\\ 
\notag &
\leq 
C \big\| f_\ep \big\|_{L^{2(1+\alpha)}(\exp(-|\cdot|))} 
\big\| \X^s(\tfrac \cdot\ep) \big\|_{L^{\nicefrac{2(1{+}\alpha)}{\alpha} } (\exp(-|\cdot|)) }
\\ 
\notag &
\leq 
C|e| ( \ep\X \vee 1 )^{\nicefrac d2} 
\big\| \X^s(\tfrac \cdot\ep) \big\|_{L^{\nicefrac{2(1{+}\alpha)}{\alpha} } (\exp(-|\cdot|)) }
\,.
\end{align}
Finally, combining the above inequality with~\eqref{e.negsob.sint} and~\eqref{e.corr.bounds.pre}, noticing that the left-hand side is bounded in~\eqref{e.corr.bounds} allowing interpolation as in~\eqref{e.Psi.interpolation}, we conclude the proof. 
\end{proof}

\begin{remark}
If~$\beta \in (0,1)$, the estimate~\eqref{e.corr.bounds} can be slightly improved using~\eqref{e.psipsi.coarse}. 
\end{remark}

\subsection{The shortcut to optimal estimates under~\texorpdfstring{$\LSI$}{{LSI}} or~\texorpdfstring{$\SG$}{{SG}}-type assumptions}
\label{s.LSI}

We showed in Sections~\ref{ss.LSI} and~\ref{ss.SG} that an assumption that~$\P$ satisfies a logarithmic Sobolev or spectral gap inequality implies that it also satisfies a general~$\CFS$ condition. 
In particular,~$\LSI(\beta,\rho)$ implies~$\CFS(\beta,\Gamma_2)$, and an assumption of~$\SG(\beta,\rho)$ implies~$\CFS(\beta,\Gamma_1)$.
However, in this chapter, we have proved our optimal quantitative estimates on the first-order correctors under a stronger~$\CFS$ condition, which we needed to close the loop in the renormalization argument. These stronger~$\CFS$ conditions may not be satisfied, in general, if we only assume~$\LSI(\beta,\rho)$ or~$\SG(\beta,\rho)$.\footnote{This is really an academic point, in some sense, since essentially all of the practical examples do satisfy the stronger~$\CFS$ conditions.}

\smallskip

There is, however, another way to obtain estimates on the first-order correctors using assumptions like~$\LSI$ or~$\SG$, which avoids the renormalization argument and is quite quick. This ``shortcut'' was introduced by Naddaf and Spencer~\cite{NS2} in a paper that predates the rest of the quantitative homogenization theory. It was subsequently extended and generalized in a sequence of many works of Gloria, Otto and collaborators, beginning with~\cite{GO1,GO2}. The idea is very natural and straightforward: apply the nonlinear concentration inequality to quantities involving the corrector, and compute. This quickly reduces to gradient bounds on the corrector---which, in our setting, we have already estimated optimally in Chapter~\ref{s.regularity}. Note that all of the results of Chapter~\ref{s.regularity} are applicable in the case of~$\LSI$ or~$\SG$ since they were obtained under the more general~$\CFS(\beta,\Psi)$ condition. 

\smallskip

The catch is that the stochastic integrability of the estimates one obtains in this way is sub-optimal. For instance, as we will see below, under an~$\LSI$ assumption, in the very best case, we obtain only slightly more than an exponential moment, which is far from the Gaussian moments we expect under suitable versions of the strong~$\CFS$ condition. Under a~$\SG$ assumption, we get a stretched exponential moment with an exponent slightly more than~$\nicefrac13$. 
However, in certain situations, we may not care about obtaining optimal stochastic moments, in which case the argument using~$\LSI$ or~$\SG$ are quite convenient, as they often lead to a quicker and easier proof of corrector estimates with the optimal scaling in~$\ep$. 

\smallskip

In this section, we present this shortcut to the optimal estimates on the scaling of the first-order correctors, under the assumption that~$\P$ satisfies either~$\LSI(\beta,\rho)$ or~$\SG(\beta,\rho)$. We will consider the case~$\beta = 0$ for the simplicity of the presentation. This simplifies the argument because we do not need to worry about \emph{localizing} the random variables. Recall that~$\LSI(\beta,\rho)$ and~$\SG(\beta,\rho)$ assumptions we defined in Section~\ref{s.CFS} are more general than usual and require the random variables to be measurable with respect to a fixed, bounded cube. Taking~$\beta = 0$ allows us to dispense with this constraint. Adapting the argument to general~$\beta \in [0,1)$, even under our more general~$\LSI$ and~$\SG$ conditions, can be handled with the aid of Lemma~\ref{l.correctorloc}, above. 

\smallskip

\begin{theorem}[{Optimal quantitative estimates for~$\LSI(0,\rho)$~\&~$\SG(0,\rho)$}]
\label{t.wecan.do.SG.too}
Suppose~$\P$ is a~$\Zd$--stationary measure on~$(\Omega,\F)$ that satisfies either~$\LSI(0,\rho)$ or~$\SG(0,\rho)$ for some~$\rho>0$. 
Then there exist~$C(\rho, d,\lambda,\Lambda)<\infty$ and~$\delta(d,\lambda,\Lambda)>0$ such that, for every~$\f \in H^{\nicefrac d2}(B_1;\Rd)$ with~$\int_{B_1}\f =0$ and~$\| \f \|_{H^{\nicefrac d2}(B_1)} \leq 1$, if we define 
\begin{equation*}
\f_r(x) := r^{-d} \f\Bigl( \frac\cdot r\Bigr)\,\quad r>0,\ x\in\Rd\,,
\end{equation*}
then, for every~$|e|=1$ and~$r\geq 1$, we have the estimate
\begin{equation}
\label{e.cheating}
\biggl| \int_{\Rd} 
\f_r \cdot \nabla \phi_e \,dx\biggr|
+
\biggl| \int_{\Rd} 
\f_r  \cdot \bigl( \a  \bigl( e+ \nabla \phi_e\bigr) - \ahom\bigr) \,dx\biggr|
= 
\left\{
\begin{aligned}
& \O_{\Gamma_{1+\delta}} \bigl(C r^{-\frac d2} \bigr) & \mbox{if} & \ \LSI(0,\rho) \ \mbox{holds}\,, \\
& \O_{\Gamma_{\nicefrac13+\delta}} \bigl(C r^{-\frac d2} \bigr) & \mbox{if} & \ \SG(0,\rho)\ \mbox{holds}\,.
\end{aligned}
\right.
\end{equation}
Moreover, the coarse-grained coefficients~$\b_r$ defined in Section~\ref{ss.coarsegrain.with.tempo} satisfy, for every~$r\geq 1$,
\begin{equation}
\label{e.cheating.CG}
\b_r(x) - \ahom  
= 
\left\{
\begin{aligned}
& \O_{\Gamma_{1+\delta}} \bigl(C r^{-\frac d2} \bigr) & \mbox{if} & \ \LSI(0,\rho) \ \mbox{holds}\,, \\
& \O_{\Gamma_{\nicefrac13+\delta}} \bigl(C r^{-\frac d2} \bigr) & \mbox{if} & \ \SG(0,\rho)\ \mbox{holds}\,.
\end{aligned}
\right.
\end{equation}
\end{theorem}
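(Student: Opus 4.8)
The plan is to reduce the estimates~\eqref{e.cheating} and~\eqref{e.cheating.CG} to gradient bounds on the first-order corrector, which we already control optimally via Theorem~\ref{t.optimalstochasticintegrability} (which holds under the general~$\CFS(\beta,\Psi)$ condition, hence in particular under~$\LSI(0,\rho)$ or~$\SG(0,\rho)$). The key point is the Herbst-type computation already carried out in Lemmas~\ref{l.LSI.moments} and~\ref{l.SG.moments}: for an~$\F$-measurable random variable~$X$ with mean zero, the nonlinear concentration inequality converts $2q$-th moment bounds on~$X$ into $q$-th moment bounds on the ``vertical derivative'' $Z := \sum_{z\in 3^n\Zd\cap\cu_m} |\partial_{\a(z+\cu_n)}X|^2$, and then~\eqref{e.LSI.expmoments} and~\eqref{e.SG.expmoments} turn these into exponential (resp. stretched-exponential) moment bounds. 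The exponent~$s$ appearing there is what governs the loss in stochastic integrability: taking $Z = \O_\Psi^{1}(\cdot)$ with $\Psi$ of exponential or stretched-exponential type and feeding it into~\eqref{e.LSI.expmoments}/\eqref{e.SG.expmoments} yields stochastic integrability $\Gamma_{2s/(1+s)}$ (LSI) or $\Gamma_{2s/(1+2s)}$ (SG); pushing $s$ as large as the integrability of $Z$ allows gives the exponents $1+\delta$ and $\sfrac13+\delta$ claimed.

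First I would fix~$\f$ and~$r$ and set $X := \int_{\Rd}\f_r\cdot\nabla\phi_e\,dx$ (and the analogous flux version). Since $\f$ is compactly supported in $B_1$, $\f_r$ is supported in $B_r$, so morally $X$ depends only on the coefficients in $B_r$ — this is where the restriction $\beta=0$ helps, as we do not need the localization machinery of Lemma~\ref{l.correctorloc}; strictly speaking one still needs to justify measurability/localization of $\nabla\phi_e$ tested against a compactly supported weight, but by the estimate $\|\nabla\phi_e\|_{\underline L^2(B_r)}\leq C(\X\vee r)^{\sfrac d2}$ from Theorem~\ref{t.C01} and a density argument this is standard. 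Next I would compute the vertical/Malliavin derivative of $X$. Differentiating the corrector equation $-\nabla\cdot\a(e+\nabla\phi_e)=0$ with respect to a localized perturbation of~$\a$ in a set~$U$ produces the linearized equation for $\partial_{\a(U)}\phi_e$, and testing against the ``adjoint'' corrector paired with~$\f_r$ shows that $|\partial_{\a(U)}X|$ is controlled by $\|\nabla\phi_e\|_{\underline L^2(U)}$ times $\|\nabla\psi^*\|_{\underline L^2(U)}$, where $\psi^*$ solves $-\nabla\cdot\a^t\nabla\psi^* = \nabla\cdot\f_r$. This is the standard ``sensitivity estimate'' computation (carried out in detail in~\cite[Chapter 4]{AKMBook}); I would not grind through it but would cite the structure. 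Summing the squares over the mesoscopic partition $\{z+\cu_n\}$ gives
\begin{equation*}
Z \leq C\sum_{z\in 3^n\Zd\cap\cu_m}\|\nabla\phi_e\|_{\underline L^2(z+\cu_n)}^2\,\|\nabla\psi^*\|_{\underline L^2(z+\cu_n)}^2\,.
\end{equation*}

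The main step is then to estimate $Z$ using the large-scale regularity theory. Since $\f_r$ is supported in $B_r$ and has the scaling $\|\f_r\|_{L^2} \sim r^{-\sfrac d2}$ (and better in weaker norms, using $\f\in H^{\sfrac d2}$), the adjoint solution $\psi^*$ decays: by Theorem~\ref{t.C01} applied to $\psi^*$ (which solves an equation with a divergence-form right-hand side that is small away from $B_r$) and the Green-function-type decay estimates, $\|\nabla\psi^*\|_{\underline L^2(B_s)}$ is of order $r^{-\sfrac d2}(s/r)^{-d}$ for $s\geq r$ and of order $r^{-\sfrac d2}$ for $s\lesssim r$ — this is essentially the computation underlying Lemma~\ref{l.harmdecay}. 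Meanwhile $\|\nabla\phi_e\|_{\underline L^2(z+\cu_n)}$ is $\O_\Psi(C) + \O_{\Psi}^{1-\beta}(C\,3^{-nd/2}\X(z)^{d/2})$ type bounded, uniformly on scales above $\X$, by the $C^{0,1}$ estimate. Combining, I expect $Z = \O_\Psi(C r^{-d})$ up to harmless factors, i.e. $\sqrt Z = \O_\Psi(Cr^{-\sfrac d2})$ — this reproduces the deterministic scaling, and the stochastic integrability of $Z$ is governed by that of the minimal scale $\X$, which under $\LSI$ or $\SG$ is (stretched) exponential by~\eqref{e.XLipschitz},~\eqref{e.URDyes} type statements. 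Plugging the resulting $Z$-bound into Lemma~\ref{l.LSI.moments} (resp. Lemma~\ref{l.SG.moments}) with $\theta = Cr^{-\sfrac d2}$ and optimizing over the exponent $s$ yields~\eqref{e.cheating}. The bound~\eqref{e.cheating.CG} then follows from~\eqref{e.cheating} by the definition of $\b_r$ in~\eqref{e.coarsened.fist}: testing~\eqref{e.coarsened.fist} with $\psi = \ell_e + \phi_e$ and against the heat-kernel weight $\Phi_r$ (which plays the role of $\f_r$ up to the trivial replacement of the bump weight by the Gaussian), together with the suboptimal-but-adequate smallness $|\b_r - \ahom|\leq C$, expresses $\b_r(x) - \ahom$ as a combination of the two averaged quantities estimated in~\eqref{e.cheating}, exactly as in Lemma~\ref{l.scalecomparison} and the additivity/localization computations of Section~\ref{s.fluctboot}.

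The hard part will be bookkeeping the stochastic integrability loss precisely, i.e. tracking how the exponential/stretched-exponential moments of $\X$ propagate through the interpolation inequalities~\eqref{e.Psi.product}--\eqref{e.Psi.interpolation} and the optimization over $s$ in~\eqref{e.LSI.expmoments}/\eqref{e.SG.expmoments}, so as to land exactly on $\Gamma_{1+\delta}$ and $\Gamma_{\sfrac13+\delta}$ rather than something worse; one has to be a little careful that the product $\|\nabla\phi_e\|\cdot\|\nabla\psi^*\|$ entering $Z$ combines two quantities, one $\O_\Psi$-bounded with good exponent and one involving $\X$, and that the H\"older-interpolation~\eqref{e.Psi.product} of the corresponding exponents gives the claimed $\delta$. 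A secondary technical point is justifying the sensitivity estimate $|\partial_{\a(U)}X|\lesssim \|\nabla\phi_e\|_{\underline L^2(U)}\|\nabla\psi^*\|_{\underline L^2(U)}$ rigorously — this is where one uses the De Giorgi--Nash estimate to make sense of pointwise corrector quantities and the Caccioppoli/Meyers estimates to handle the $L^2$-norms, all of which is available from Section~\ref{s.qualitativetheory} and Section~\ref{s.regularity}. Everything else is routine once these two pieces are in place; in particular no iteration up the scales is needed, which is precisely why this route is a ``shortcut'' compared to the renormalization argument of Sections~\ref{s.fluctboot}.
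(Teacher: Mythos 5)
Your skeleton matches the paper's ``shortcut'' argument: set $X:=\int_{\Rd}\f_r\cdot\nabla\phi_e$, bound its Malliavin derivative by the sensitivity estimate through the adjoint solution $v$ of $-\nabla\cdot\a^t\nabla v=\nabla\cdot\f_r$ (this is exactly Lemma~\ref{l.compute.mall.maul}), estimate $Z=\sum_{z}|\partial_{\a(z+\cu_n)}X|^2$ by combining the large-scale $C^{0,1}$ estimate and hole-filling for $\nabla\phi_e$ (the minimal scale $\X$ entering through a H\"older/Meyers decoupling, cf.~\eqref{e.go.Meyersin}--\eqref{e.go.Meyersout}) with the deterministic decay of $\nabla v$, feed the result into Lemmas~\ref{l.LSI.moments} and~\ref{l.SG.moments}, and finally read off~\eqref{e.cheating.CG} from~\eqref{e.coarsened.fist}. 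Up to two minor slips --- your intermediate decay claim $\|\nabla\psi^*\|_{\underline{L}^2(B_s)}\sim r^{-\sfrac d2}(s/r)^{-d}$ has the wrong normalization (the correct volume-normalized bounds are $\sim r^{-d}$ near the support and $\sim s^{-d}$ on the annulus at distance $s$, although your conclusion $Z\lesssim r^{-d}$ is right), and the passage to~\eqref{e.cheating.CG} needs no renormalization input from Section~\ref{s.fluctboot}, just the algebra of~\eqref{e.coarsened.fist} plus the negligibility of the event $\{r<\X\}$ --- this is the paper's proof.

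There is, however, one genuine gap: Lemmas~\ref{l.LSI.moments} and~\ref{l.SG.moments} control only the fluctuation $X-\E[X]$, whereas~\eqref{e.cheating} is a bound on $|X|$, and your proposal never estimates the expectation. This is precisely the step where the hypotheses $\int_{B_1}\f=0$ and $\|\f\|_{H^{\sfrac d2}(B_1)}\leq 1$ are actually used (you invoke them only in passing and not for this purpose). By $\Zd$-stationarity, $x\mapsto \E[\nabla\phi_e(x)]$ and $x\mapsto \E[\a(x)(e+\nabla\phi_e(x))-\ahom e]$ are $\Zd$-periodic with zero mean over the period cell, and the elementary estimate $\bigl|\int_{\Rd} f_r\, g\bigr|\leq Cr^{-s}$ for mean-zero $f\in H^{s}$ and mean-zero periodic $g\in L^2_{\mathrm{loc}}$ then gives $|\E[X]|\leq Cr^{-\sfrac d2}$; only after adding this systematic-error bound to the fluctuation bound does~\eqref{e.cheating} follow. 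Without it, your argument proves the estimate only for $X-\E[X]$, and the dependence of the statement on the zero-mean and $H^{\sfrac d2}$ assumptions on $\f$ is left unexplained.
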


The estimates of~Theorem~\ref{t.corr.optimal} are valid, by the same proof, under~$\LSI(0,\rho)$ or~$\SG(0,\rho)$, with~$\beta = 0$ and with the obvious weakening of the stochastic integrability inherited from the estimates in the statement above. 

\smallskip

To prove Theorem~\ref{t.wecan.do.SG.too}, 
we will apply the concentration inequalities to 
random variables measuring the spatial averages of the fields~$\nabla \phi_e$ and of~$\a(e+\nabla \phi_e) - \ahom e$. Given a vector field~$\f\in L^2(\Rd)$ with sufficiently fast decay (perhaps compact support), we consider the random variable 
\begin{equation}
\label{e.rvX}
X:=
\int_{\Rd} \f(x)\cdot \nabla \phi_e(x)\,dx
\,.
\end{equation}
In order to use~$\LSI(0,1)$ or~$\SG(0,1)$ to estimate the fluctuations of this random variable, we need to estimate the quantity 
\begin{equation}
\label{e.Zderv}
Z := \sum_{z\in 3^n\Zd} \left|  \partial_{\a(z+\cu_n)} \int_{\Rd} \f(x)\cdot \nabla \phi_e(x)\,dx
\right|^2
\,.
\end{equation}
As we will show, this boils down to applying the large-scale~$C^{0,1}$ estimate. After obtaining an appropriate estimate, we will then plug this estimate into any nonlinearity concentration inequality, such as Lemma~\ref{l.LSI.moments} or Lemma~\ref{l.SG.moments}, to obtain bounds on the fluctuations of~$X$ itself. 

\smallskip

We begin with the following deterministic lemma, highlighting the importance of the large-scale~$C^{0,1}$ estimate. 

\begin{lemma}
\label{l.compute.mall.maul}
There exists~$C(d,\lambda,\Lambda)<\infty$ such that, for~$e\in \Rd$,~$z\in\Zd$,~$n\in\N$ and~$\mathbf{f}\in L^2(\Rd;\Rd)$,
\begin{equation}
\label{e.malliavin.gradcorr}
\left| \partial_{\a(z+\cu_n)}
\int_{\Rd} \mathbf{f}(x)\cdot \nabla \phi_e(x)\,dx
\right| 
\leq 
\int_{z+\cu_n}
\left| \nabla v(x) \right| \left|e+ \nabla \phi_e(x) \right|\,dx \,,
\end{equation}
where~$v$ is the solution of 
\begin{equation}
-\nabla \cdot \a^t \nabla v = \nabla \cdot \mathbf{f} \quad \mbox{in} \ \Rd. 
\end{equation}
\end{lemma}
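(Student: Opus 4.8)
The plan is to follow the comparison strategy of Lemma~\ref{l.malliavin}, using the adjoint solution $v$ as a test function. Fix $e\in\Rd$, $z\in\Zd$, $n\in\N$, and write $U:=z+\cu_n$; set $X(\a):=\int_{\Rd}\mathbf{f}\cdot\nabla\phi_e$ for the random variable in~\eqref{e.rvX}, with $\phi_e$ the corrector associated with $\a$ (which exists by Proposition~\ref{p.qual.correctors}) and $v$ the adjoint solution in the statement. Given $\a_1,\a_2\in\Omega$ with $\a_1=\a_2=\a$ on $\Rd\setminus U$ and $|\a-\a_i|\le t\indc_U$, let $\phi_e^{(i)}$ be the corrector for $\a_i$, $v^{(i)}$ the solution of $-\nabla\cdot\a_i^t\nabla v^{(i)}=\nabla\cdot\mathbf{f}$, and $X(\a_i):=\int_{\Rd}\mathbf{f}\cdot\nabla\phi_e^{(i)}$. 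Since $\mathbf{f}$ is compactly supported and mean-zero (the only case needed, cf.\ the application in Theorem~\ref{t.wecan.do.SG.too}), each $v^{(i)}$ is well defined up to an additive constant, with $\nabla v^{(i)}\in L^2(\Rd)$. The goal is to show $\tfrac1{2t}\,|X(\a_1)-X(\a_2)|\le\int_U|e+\nabla\phi_e^{(1)}|\,|\nabla v^{(1)}|$ and then pass to the limit $t\to0$.

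The algebraic heart of the argument is the identity $X(\a_1)-X(\a_2)=\int_U(\a_1-\a_2)(e+\nabla\phi_e^{(2)})\cdot\nabla v^{(1)}$. To obtain it, set $\eta:=\phi_e^{(1)}-\phi_e^{(2)}$; subtracting the two corrector equations $-\nabla\cdot\a_i(e+\nabla\phi_e^{(i)})=0$ and regrouping gives $-\nabla\cdot\a_1\nabla\eta=\nabla\cdot\mathbf{g}$ with $\mathbf{g}:=(\a_1-\a_2)(e+\nabla\phi_e^{(2)})$ supported in $U$. Then, at least formally, $X(\a_1)-X(\a_2)=\int\mathbf{f}\cdot\nabla\eta=-\int\nabla v^{(1)}\cdot\a_1\nabla\eta=\int\mathbf{g}\cdot\nabla v^{(1)}$, where the second equality is the weak formulation of the equation for $v^{(1)}$ tested against $\eta$ (using $\int\a_1^t\nabla v^{(1)}\cdot\nabla\eta=\int\nabla v^{(1)}\cdot\a_1\nabla\eta$) and the third is the weak formulation of the equation for $\eta$ tested against $v^{(1)}$. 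Since $|\a_1-\a_2|\le2t$ on $U$ and $|\mathbf{g}\cdot\nabla v^{(1)}|\le|\a_1-\a_2|\,|e+\nabla\phi_e^{(2)}|\,|\nabla v^{(1)}|$ pointwise, this yields $\tfrac1{2t}\,|X(\a_1)-X(\a_2)|\le\int_U|e+\nabla\phi_e^{(2)}|\,|\nabla v^{(1)}|$. Finally I would take the supremum over admissible $\a_1,\a_2$ and send $t\to0$: the gradients of the perturbed correctors and adjoint solutions converge to the unperturbed ones in $L^2(U)$, since $\nabla(\phi_e^{(1)}-\phi_e)$ solves an equation with right-hand side $O(t)$ supported in $U$, so $\|\nabla(\phi_e^{(1)}-\phi_e)\|_{L^2(\Rd)}\le Ct\|e+\nabla\phi_e\|_{L^2(U)}$ by the energy estimate, and similarly $\|\nabla(v^{(1)}-v)\|_{L^2(\Rd)}\le Ct\|\nabla v\|_{L^2(U)}$; this gives $|\partial_{\a(U)}X|\le\int_U|\nabla v|\,|e+\nabla\phi_e|$, which is~\eqref{e.malliavin.gradcorr}.

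I expect the main obstacle to be making the integration-by-parts chain in the previous paragraph rigorous, because $\nabla\eta$ does not obviously decay: $\phi_e^{(1)},\phi_e^{(2)}$ are only known to be sublinear, so a priori $\nabla\eta\in L^2_{\mathrm{loc}}$ with at most volume growth, and the absolute convergence of $\int_{\Rd}\nabla v^{(1)}\cdot\a_1\nabla\eta$ is not immediate. There are two ways around this. One may invoke the (standard but nontrivial) fact that a compactly supported perturbation of the coefficient field produces a decaying perturbation of the corrector, so that in fact $\nabla\eta\in L^2(\Rd)$ and all pairings converge absolutely; this also legitimizes the energy estimates used in the limit $t\to0$. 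Alternatively, and more in the spirit of the arguments already employed in this text (cf.\ Step~1 of the proof of Lemma~\ref{l.stat.HH}), one inserts a cutoff $\chi_R$ equal to $1$ on $B_R$, supported in $B_{2R}$, with $\|\nabla\chi_R\|_{L^\infty}\le CR^{-1}$, performs the integrations by parts on $\chi_R(\eta-(\eta)_{B_{2R}})$, and lets $R\to\infty$; the commutator terms are bounded by $\|\nabla\chi_R\|_{L^\infty}\|\eta-(\eta)_{B_{2R}}\|_{L^2(B_{2R})}\|\nabla v^{(1)}\|_{L^2(B_{2R}\setminus B_R)}$ and analogous quantities, which vanish because $\eta$ is strictly sublinear as a difference of correctors (using~\eqref{e.corrector.qualbound.L2}) while $\nabla v^{(1)}$ decays. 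A minor secondary point is simply to record that the compact support and zero-mean hypotheses on $\mathbf{f}$ are what guarantee the existence of a finite-energy $v$ in all dimensions $d\ge2$; this is automatic in the application, where $\mathbf{f}=\f_r$.
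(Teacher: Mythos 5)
Your proof is correct and follows essentially the same route as the paper's: the perturbation identity obtained by testing the equation for the corrector difference against the adjoint solution $v$, followed by the $O(t)$ energy estimates to send $t\to0$. The only differences are cosmetic — you work with two perturbed fields $\a_1,\a_2$ as in the definition~\eqref{e.Mall} and spell out the justification of the integration by parts (which the paper leaves implicit, effectively defining the perturbed corrector through its finite-energy correction), while your restriction to compactly supported, mean-zero $\f$ is unnecessary, since Lax--Milgram in the space of $L^2$ gradients yields $\nabla v\in L^2(\Rd)$ for any $\f\in L^2(\Rd;\Rd)$.
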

\begin{proof}
The computation is straightforward, and we follow~\cite[proof of Proposition 3]{GNO2}. We assume without loss of generality that~$z=0$. Let~$t>0$ and~$\a,\tilde{\a}\in \Omega$ with~$\left| \a- \tilde{\a}\right| \leq t \indc_{\cu_n}$. Let~$\tilde v$ solve~$-\nabla \cdot \tilde \a^t \nabla \tilde v = \nabla \cdot \mathbf{f}$ in~$\Rd$. Then~$\| \nabla v - \nabla \tilde v \|_{L^2(\R^d)}  \leq C t \|\f \|_{L^2(\R^d)}$.  Also fix~$e\in \Rd$ and let~$\tilde{\phi}_e$ denote the first-order corrector corresponding to~$\tilde{\a}$. We have that 
\begin{equation}
-\nabla \cdot \tilde{\a} \nabla \bigl( \phi_e-\tilde{\phi}_e \bigr) 
= 
\nabla \cdot\bigl( ( \a - \tilde{\a} )( e+ \nabla \phi_e ) \bigr) \quad \mbox{in} \ \Rd. 
\end{equation}
Testing this equation with~$\tilde v$, and the equation of~$\tilde v$ with~$\phi_e-\tilde{\phi}_e$, yields the identity
\begin{align*}
\int_{\Rd} \f \cdot \nabla \bigl( \phi_e-\tilde{\phi}_e \bigr) 
&
=
\int_{\Rd} \nabla \tilde v \cdot \tilde \a \nabla ( \phi_e-\tilde{\phi}_e )
%\\ & 
=
\int_{\Rd} \nabla \tilde v \cdot \bigl( \a - \tilde{\a} \bigr) ( e+ \nabla  \phi_e ).
\end{align*}
Therefore
\begin{equation}
\left| 
\int_{\Rd} \f\cdot \nabla \phi_e - \int_{\Rd} \f \cdot \nabla \tilde{\phi}_e 
\right|
\leq 
t \int_{\cu_n} \left| \nabla \tilde v \right| \left| e+\nabla {\phi}_e \right|
\,.
\end{equation}
This yields the lemma since~$\| \nabla v - \nabla \tilde v \|_{L^2(\R^d)}  \leq C t \|\f \|_{L^2(\R^d)}$. 
\end{proof}

\begin{proof}[{Proof of Theorem~\ref{t.wecan.do.SG.too}}]
We begin with the input from the regularity theory. Let~$\X$ be the minimal scale for the large-scale~$C^{0,1}$ estimate, given by Theorem~\ref{t.C01}. 
For every~$n\in\N$ with~$3^n \geq \X$, we have 
\begin{equation}
\fint_{\cu_n} \left| e+ \nabla \phi_e(x) \right|^2\,dx
\leq 
C.
\end{equation}
Moreover, by the hole-filling estimate, there exists~$\alpha(d,\lambda,\Lambda)>0$ such that, for every~$n\in\N$ with~$3^n\geq \X(z)$,~$k\in\N$ with~$k\leq n$ and~$z\in 3^k\Zd\cap\cu_n$, we have
\begin{equation}
\int_{z+\cu_k} 
\left| e+ \nabla \phi_e(x) \right|^2\,dx
\leq 
3^{-d\alpha(n-k)}
\int_{z+\cu_{n+1}}  \left| e+ \nabla \phi_e(x) \right|^2\,dx
\leq 
C 3^{-\alpha(n-k)} \left| \cu_n \right|.
\end{equation}
In particular, for every~$z\in\Zd$ and~$k\in\N$, 
\begin{equation*}
\| e + \nabla \phi_e \|_{\underline{L}^2(z+\cu_k)}
\leq 
C \bigl( 1+ \X(z) 3^{-k} \bigr)^{\frac12(d-\alpha)}
\,.
\end{equation*}
Now let~$v$ be the function in Lemma~\ref{l.compute.mall.maul}. 
We may estimate the right side of~\eqref{e.malliavin.gradcorr} by 
\begin{align}
\label{e.thisisverydifficult}
\int_{z+\cu_n}
\left| \nabla v(x) \right| \left|e+ \nabla \phi_e(x) \right|\,dx
&
\leq 
| \cu_n | 
\| \nabla v \|_{{L}^2(z+\cu_n)} \| e+\nabla \phi_e \|_{{L}^2(z+\cu_n)}
\notag \\ &
\leq 
C |\cu_n|^{\nicefrac12} \bigl(1+\X(z) 3^{-n} \bigr)^{\frac12(d-\alpha)}
\| \nabla v \|_{{L}^2(z+\cu_n)} 
\notag \\ &
= 
C |\cu_n| \bigl(1+\X(z) 3^{-n} \bigr)^{\frac12(d-\alpha)}
\| \nabla v \|_{\underline{L}^2(z+\cu_n)} 
\,.
\end{align}
We will use deterministic elliptic estimates for the function~$v$.  
To work with a fixed length scale~$r\geq 1$, we fix~$\f_1\in L^{q}(\Rd)$ with support in~$B_1$, for some~$q>2$, and we stretch it by defining
\begin{equation}
\label{e.fr.LSI}
\f_r(x):=  r^{-d} \f_1 \Bigl( \frac xr \Bigr)\,, 
\quad x\in\Rd, \  r\in [1,\infty). 
\end{equation}
Let~$v_r$ be the solution of~$-\nabla \cdot \a^t \nabla v_r = \nabla \cdot \f_r$ in~$\Rd$. By standard elliptic estimates, 
\begin{equation*}
\left\| v_r \right\|_{\underline{L}^2(B_{2R}\setminus B_R )}
\leq 
CR^{1-d} \left\| \f_r \right\|_{\underline{L}^1(\Rd)} 
=
CR^{1-d} \left\| \f_1 \right\|_{L^1(B_1)} \,,
\quad \forall R\geq 2r
\end{equation*}
and
\begin{equation*}
\left\| v_r \right\|_{\underline{L}^2(B_{2r})}
\leq 
Cr  \| \f_r \|_{\underline{L}^2(B_r)}
\leq
Cr^{1-\frac d2}  \left\| \f_r \right\|_{L^2(\Rd)}
=
Cr^{1-d} \left\| \f_1 \right\|_{L^2(B_1)} .
\end{equation*}
Furthermore, by the interior Meyers estimate (see~\cite[Theorem C.1]{AKMBook}), we have that, assuming~$q \in [2, 2+c(d,\lambda,\Lambda)]$,  
\begin{equation}
\label{e.go.Meyersout}
\left\| \nabla v_r \right\|_{\underline{L}^q(B_{2R}\setminus B_R )}
\leq 
CR^{-d} \left\| \f_1 \right\|_{L^1(B_1)} \,,
\quad \forall R\geq 2r
\end{equation}
and
\begin{equation}
\label{e.go.Meyersin}
\left\| \nabla v_r \right\|_{\underline{L}^q(B_{4r})}
\leq 
C\| \f_r \|_{\underline{L}^q(B_r)}
=
C r^{-d} \left\| \f_1 \right\|_{L^q(B_1)} .
\end{equation}
Combining these with~\eqref{e.malliavin.gradcorr} and~\eqref{e.thisisverydifficult} and using H\"older's inequality, we obtain, for every exponent~$q\in (2,2+c)$, 
\begin{align*}
\lefteqn{ \sum_{z\in 3^n\Zd\cap B_{2r}} \left|  \partial_{\a(z+\cu_n)} \int_{\Rd} \f_r(x)\cdot \nabla \phi_e(x)\,dx
\right|^2
} \qquad & 
\notag \\ & 
\leq
C r^d |\cu_n| 
\avsum_{z\in 3^n\Zd\cap B_{2r}}
 \bigl(1+\X(z) 3^{-n} \bigr)^{\! d-\alpha}
\| \nabla v_r \|_{\underline{L}^2(z+\cu_n)}^2
\notag \\ & 
\leq 
Cr^d |\cu_n|
\biggl( \avsum_{z\in 3^n\Zd\cap B_{2r}}
 \bigl(1+\X(z) 3^{-n} \bigr)^{\! \frac{q(d-\alpha)}{q-2}}
\biggr)^{\!\!\frac{q-2}{q} }
\biggl( 
\avsum_{z\in 3^n\Zd\cap B_{2r}}
\| \nabla v_r \|_{\underline{L}^2(z+\cu_n)}^{q}
\biggr)^{\!\!\nicefrac 2q}
\notag \\ & 
\leq 
Cr^d |\cu_n| 
\| \nabla v_r \|_{\underline{L}^q(B_{2r})}^2 
\biggl( \avsum_{z\in 3^n\Zd\cap B_{2r}}
 \bigl(1+\X(z) 3^{-n} \bigr)^{\! \frac{q(d-\alpha)}{q-2}}
\biggr)^{\!\!\frac{q-2}{q} }
\notag \\ & 
\leq 
C r^{-d} |\cu_n| \| \f_1\|_{L^q(B_1)}^2
\biggl( \avsum_{z\in 3^n\Zd\cap B_{2r}}
 \bigl(1+\X(z) 3^{-n} \bigr)^{\! \frac{q(d-\alpha)}{q-2}}
\biggr)^{\!\!\frac{q-2}{q} }
\,.
\end{align*}
We proceed similarly for~$z$ outside of~$B_{2r}$, using~\eqref{e.go.Meyersout} instead of~\eqref{e.go.Meyersin}, to get, for every~$R\geq 2r$, 
\begin{align*}
\lefteqn{ \sum_{z\in 3^n\Zd\cap B_{2R} \setminus B_R} \left|  \partial_{\a(z+\cu_n)} \int_{\Rd} \f(x)\cdot \nabla \phi_e(x)\,dx
\right|^2
} \qquad & 
\notag \\ & 
\leq
CR^{-d}  |\cu_n|  \| \f_1\|_{L^1(B_1)} 
\biggl( \avsum_{z\in 3^n\Zd\cap B_{2R} \setminus B_R}
%\Bigl( \frac{|z|}{r} \Bigr)^{\!-d}
\bigl(1+\X(z) 3^{-n} \bigr)^{\! \frac{q(d-\alpha)}{q-2}}
\biggr)^{\!\!\frac{q-2}{q} }
\,.
\end{align*}
Combining these, we get
\begin{align*}
\lefteqn{ \sum_{z\in 3^n\Zd} \left|  \partial_{\a(z+\cu_n)} \int_{\Rd} \f(x)\cdot \nabla \phi_e(x)\,dx
\right|^2
} \quad & 
\notag \\ & 
\leq
C r^{-d} |\cu_n|  \| \f_1\|_{L^q(B_1)} 
\biggl( 
\frac{|\cu_n|}{r^d}
\sum_{z\in 3^n\Zd}
\biggl( 1+ 
\frac{|z|}{r} 
\biggr)^{\!\!-d-\frac{dq}{q-2}}
\bigl(1+\X(z) 3^{-n} \bigr)^{\! \frac{q(d-\alpha)}{q-2}}
\biggr)^{\!\!\frac{q-2}{q} }
\,.
\end{align*}
Since 
\begin{equation*}
\frac{|\cu_n|}{r^d}
\sum_{z\in 3^n\Zd}
\biggl( 1+ 
\frac{|z|}{r} 
\biggr)^{\!\!-d-\frac{dq}{q-2}}  \leq C\,,
\end{equation*}
if we define the random variable~$\mathcal{Z}_r$ by
\begin{equation*}
\mathcal{Z}_r
:= 
\biggl( 
\frac{|\cu_n|}{r^d}
\sum_{z\in 3^n\Zd}
\biggl( 1+ 
\frac{|z|}{r} 
\biggr)^{\!\!-d-\frac{dq}{q-2}}
\bigl(1+\X(z) 3^{-n} \bigr)^{\! \frac{q(d-\alpha)}{q-2}}
\biggr)^{\!\!\frac{q-2}{q} }\,,
\end{equation*}
then we observe have that, by~\eqref{e.LSI.okay},~\eqref{e.SG.okay} and~\eqref{e.XLipschitz},
\begin{equation*}
\left\{
\begin{aligned}
& \LSI(0,\rho) 
\implies \quad 
\mathcal{Z}_r^{\frac{d}{2(d-\alpha)}  } 
= 
\O_{\Gamma_2}(C)\,, \\
& 
\SG(0,\rho) 
\implies \quad 
\mathcal{Z}_r^{\frac{d}{2(d-\alpha)}  } 
= 
\O_{\Gamma_1}(C) \,.
\end{aligned}
\right.
\end{equation*}
In other words, we have just more than an exponential moment for~$\mathcal{Z}_r$ under~$\LSI(0,\rho)$, and just more than~$\nicefrac12$ of an exponential moment under~$\SG(0,\rho)$: for some~$\delta(d,\lambda,\Lambda)>0$,
\begin{equation*}
\left\{
\begin{aligned}
& \LSI(0,\rho) 
\implies \quad 
\mathcal{Z}_r = \O_{\Gamma_{1+\delta}}(C)\,, \\
& 
\SG(0,\rho) 
\implies \quad 
\mathcal{Z}_r
= 
\O_{\Gamma_{d/2(d-\alpha)}}(C)\,.
\end{aligned}
\right.
\end{equation*}
Therefore, we have shown that the random variable~$Z$ in~\eqref{e.Zderv} satisfies
\begin{equation*}
\left\{
\begin{aligned}
& \LSI(0,\rho) 
\implies \quad 
Z = \O_{\Gamma_{1+\delta}}
\bigl(Cr^{-d} |\cu_n| \| \f_1\|_{L^q(B_1)}\bigr)
\,, \\
& 
\SG(0,\rho) 
\implies \quad 
Z = \O_{\Gamma_{\nicefrac12+\delta}}
\bigl(Cr^{-d} |\cu_n| \| \f_1\|_{L^q(B_1)}\bigr)
\,.
\end{aligned}
\right.
\end{equation*}
We now use the~$\LSI(0,\rho)$ and~$\SG(0,\rho)$ assumptions a second time---using Lemma~\ref{l.LSI.moments} with~$s$ slightly larger than the one for~$\LSI$, and Lemma~\ref{l.SG.moments} with~$s$ slightly larger than~$\nicefrac13$ for~$\SG$---to obtain a fluctuation bound on the random variable~$X$ in~\eqref{e.rvX}: for some~$\delta>0$, 
\begin{equation*}
\left\{
\begin{aligned}
& \LSI(0,\rho) 
\implies \quad 
X - \E \bigl[ X \bigr] = \O_{\Gamma_{1+\delta}}(Cr^{-\frac d2})
\,, \\
& 
\SG(0,\rho) 
\implies \quad 
X - \E \bigl[ X \bigr] = \O_{\Gamma_{\nicefrac13+\delta}}(Cr^{-\frac d2})
\,.
\end{aligned}
\right.
\end{equation*}
This takes care of the fluctuations of the spatial average of the gradient of a corrector. The same estimate for the spatial averages of the flux of a corrector is obtained in the same way.

\smallskip

We next use the following simple fact: if~$g \in L^2_{\mathrm{loc}}(\Rd)$ is a periodic function with zero mean and~$f \in H^{s}(\Rd)\cap L^1(\Rd)$ such that~$\int_{\Rd} f = 0$, and we define~$f_r:= r^{-d} f(\frac\cdot r)$, then 
\begin{equation*}
\biggl| \int_{\Rd} f_r(x) g(x)\,dx \biggr| \leq Cr^{-s}\,.
\end{equation*}
We leave a proof of this fact to the reader. 
Using it, we can obtain that, in the case that~$\f_1$ has entries in~$H^{\nicefrac d2}(B_1)$ which have zero mean, then 
\begin{equation*}
\biggl| \E \biggl[ \int_{\Rd} \f_r(x)\cdot \nabla \phi_e(x)\,dx \biggr] \biggr| 
=
\biggl|  \int_{\Rd} \f_r(x)\cdot \E \bigl[\nabla \phi_e(x)\bigr] \,dx  \biggr| 
\leq Cr^{-\nicefrac d2}\,,
\end{equation*}
since~$x\mapsto \E \bigl[ \nabla \phi_e(x) \bigr]$ is a mean zero, it is a periodic function by the assumption of~$\Zd$--stationarity. 
Similarly, 
\begin{equation*}
\biggl| \E \biggl[ \int_{\Rd} \f_r(x)\cdot \bigl( \a(x)(e+ \nabla \phi_e(x)) -\ahom e \bigr) \,dx \biggr] \biggr| 
\leq 
Cr^{-\nicefrac d2}\,.
\end{equation*}
Using these together with the fluctuation bounds, we obtain the estimates~\eqref{e.cheating}. The estimate~\eqref{e.cheating.CG} for the coarse-grained coefficients~$\b_r(x)$ follows from~\eqref{e.cheating} immediately. 
This completes the proof. 
\end{proof}

\subsection{Renormalization with a divergence-form forcing term}
\label{ss.rhs.optimal}

In this section, we explain how to extend the renormalization argument presented here to the case of a divergence-form right-hand side. In particular, we obtain estimates on the ``zero-slope'' corrector~$\psi_{\f}$ in the statement of Theorem~\ref{t.f.optimalstochasticintegrability}. 

\smallskip

We work under a generalized~$\CFS$ condition defined in Section~\ref{ss.CFS.gen}. In particular, we use the definition of Malliavin derivative given in~\eqref{e.Mall.Maul.yes} with~$\b = (\a,\f)$ and the norm~$\vertiii\cdot \vertiii_n$ defined in~\eqref{e.triple.your.bar.meso} and rewritten here for convenience: 
\begin{equation}
\label{e.triple.your.bar.meso.again}
\vertiii (\a,\f) \vertiii_n:= \| \a \|_{L^\infty(\cu_n)} 
+
\sup_{z\in 3^{k_n} \Zd \cap \cu_n}
\| \f \|_{\underline{L}^2(z+\cu_{k_n})}
\,,
\end{equation}
where we take~$k_n =  \lceil n - (\log n)^{\nicefrac12} \rceil$.
This is slightly stronger than the~$\| \cdot \|_{\underline{L}^2(\cu_m)}$ norm used in Section~\ref{ss.rhs} because it requires that the~$L^2$ norm does not concentrate in any mesocube, with the mesoscale taken to be very large (almost the macroscopic scale). 
The reason for this mesoscale is technical, but it seems necessary. It has to do with obtaining estimates at the optimal scale~$r^{-\frac d2(1-\beta)}$: if we would be willing to settle for slightly suboptimal estimates (i.e.,~$r^{-\alpha}$ for every~$\alpha<\frac d2(1-\beta)$) then we would not need to fuss. 

\smallskip

As in Section~\ref{ss.rhs}, in addition to the~$\CFS$ condition, we also need to assume some quantitative version of the condition~``$\f \in L^2_{\mathrm{loc}}(\Rd)$,'' which is the one already stated in~\eqref{e.f.dumbbound}.

\smallskip

Our assumptions are, therefore, the following. 
Let~$\delta,\gamma> 0$ and~$\beta \in [0,1)$,~$M, N \in [1,\infty)$ and~$p \in (\frac{2}{1-\beta},\infty)$.  
Let~$\Psi:[1,\infty) \to [0,\infty)$ be an increasing function satisfying~\eqref{e.Psi.pgrowth.theorem}.
We assume that 
\begin{itemize}
\item If~$\beta \in (0,1)$, then~$\P$ satisfies~$\CFS(\beta,\Psi,\Psi(M\cdot),0)$.
\item If~$\beta = 0$, then~$\P$ satisfies~$\CFS(0,\Psi_\sigma,\Psi_\sigma(M\cdot),\gamma)$ for each~$\sigma \in \{0,\delta\}$.
\end{itemize}
Here, the~$\CFS$ conditions are to be understood in the sense of Sections~\ref{ss.CFS.gen} and~\ref{ss.rhs}, as explained above. 
We also require that~$\P$ satisfy~\eqref{e.f.dumbbound}, which we recall here for convenience: for every~$m\in\N$, 
\begin{equation}
\label{e.f.dumbbound.again}
\| \f \|_{\underline{L}^2(\cu_m)}^2 
\leq 
1 +  \O_\Psi\bigl(\mathsf{K}  3^{-\frac d2 (1-\beta) m} \bigr)
\,.
\end{equation}

Our main result is the following improvement of Theorem~\ref{t.f.optimalstochasticintegrability}. 

\begin{theorem} \label{t.f.optimal}
Suppose that~$\beta,\gamma,\delta,M,N,p,\Psi$ and~$\P$ are as above. Then there exists a constant vector~$\overline{\f}\in \Rd$, a constant~$C(\delta,M,N,p,\gamma,\dataref)<\infty$, and, for every~$x \in \R^d$, a random variable~$\X(x)$ satisfying~$\X(x) = \O_{\Psi}(C)$ such that
\begin{equation} \label{e.f.optimal}
\Bigl(
\bigl| \bigl(\Phi_r \ast \nabla \psi_{\f} \bigr)(x) \bigr| + 
\bigl| \bigl( \Phi_r \ast (\a \nabla \psi_{\f} + \f - \overline{\f})\bigr) (x) \bigr| 
\Bigr)
\indc_{\{r \geq \X(x)\}}
= 
\O_\Psi^{1-\delta}\bigl(C r^{\frac d2(1-\beta)}\bigr) \,.
\end{equation}
In the case~$\beta=0$ the estimate~\eqref{e.f.optimal} is valid for every~$r\geq 1$. 
\end{theorem}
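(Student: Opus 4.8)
The plan is to transplant the heat-kernel renormalization of Sections~\ref{ss.coarsegrain.with.tempo}--\ref{s.fluctboot}, carried out there for the coarse-grained matrices~$\b_r$, to the coarse-grained \emph{forcing}. Write~$\psi_\f$ for the zero-slope corrector (whose existence is provided by Theorem~\ref{t.f.optimalstochasticintegrability}) and~$x\mapsto\X(x)$ for the~$\Zd$-stationary minimal scale appearing there. Introduce the stationary random vector field
\begin{equation*}
\bfF_r(x) := \minscale_r(x)\, \bigl( ( \a\nabla\psi_\f + \f)\ast\Phi_r \bigr)(x) + (1-\minscale_r(x))\, \overline{\f}\,,
\end{equation*}
with~$\minscale_r$ the minimal-scale cutoff of~\eqref{e.minscalefct.def}. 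Theorem~\ref{t.f.optimalstochasticintegrability} supplies the initial smallness~$\bfF_r(x)-\overline{\f}=\O_\Psi\bigl(C\delta((\X(x)\wedge r)/r)^\alpha\bigr)$, and likewise for~$(\nabla\psi_\f)\ast\Phi_r$, which anchors the induction exactly as~\eqref{e.coarsen.smallness} does for~$\b_r$; from Theorem~\ref{t.optimal} we already have the optimal bound~$\b_r(x)-\ahom=\O_\Psi^{1-\delta}(Cr^{-\frac d2(1-\beta)})$ together with the additivity and localization estimates for~$\b_r$ produced in its proof, which we use as black boxes. The constant vector~$\overline{\f}$ is the one already identified in Theorem~\ref{t.f.optimalstochasticintegrability}.

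First, an \emph{additivity defect estimate}. Using the semigroup identity~$\Phi_R=\Phi_r\ast\Phi_{\sqrt{R^2-r^2}}$ and the equation~$-\nabla\cdot\a\nabla\psi_\f=\nabla\cdot\f$, one derives, as in~\eqref{e.semigroup.identity} and~\eqref{e.f.coarseF.additive}, an identity showing that~$\bfF_R(x)-(\bfF_r\ast\Phi_{\sqrt{R^2-r^2}})(x)$ is a product of the fluctuations of~$\b_r$ and the spatial oscillation of~$\nabla(\psi_\f\ast\Phi_r)$, plus negligible~$\minscale$-tails. The latter oscillation is controlled by Lemma~\ref{l.harmdecay}: the function~$w=\psi_\f\ast\Phi_r$ solves a constant-coefficient equation whose right-hand side involves only~$\minscale_r(\b_r-\ahom)\nabla w$, $\bfF_r-\overline{\f}$, and small tails, all of which are small. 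Hence the additivity defect is \emph{quadratically small}, bounded (as in~\eqref{e.additivityestimate}) by~$C\,\mathcal{G}_r(x)\,\mathcal{H}_r(x)$ where~$\mathcal{G}_r$ is the~$\b$-fluctuation field of~\eqref{e.uglyG} and~$\mathcal{H}_r$ the analogous~$\f$-fluctuation field; the heat-kernel smoothing destroys the boundary-layer build-up that would otherwise cap the rate at~$r^{-1}$.

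Next, \emph{localization and fluctuations}. Construct~$\bfF'_{r,t}(x)$, which is~$\F(B_t(x))$-measurable with~$t=r^{1+\rho}$ (or~$t=CR\log^{1/2}R$ for the optimal-integrability step, as in Lemma~\ref{l.correctorloc.new}), from the localized finite-volume zero-slope correctors of Theorem~\ref{t.zeroslope} and Corollary~\ref{c.Jf.minsetE} together with the localized~$\mathcal{A}_1$-correctors of Lemma~\ref{l.correctorloc}; the localization error~$|\bfF_r-\bfF'_{r,t}|$ is then~$r^{-\alpha}$ times~$\mathcal{G}/\mathcal{H}$-type quantities. The main obstacle is the Malliavin derivative: since variations of~$\f$ are measured in the~$L^2$-type norm~\eqref{e.triple.your.bar.meso.again}, one has only~$|\partial_{(\a,\f)(B_t(x))}\bfF'_{r,t}(x)|\le C(1+\|\f\|_{\underline{L}^2(B_t(x))})$ (cf.\ Lemma~\ref{l.Malliavin.f.base}), not a deterministic bound. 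Before applying~$\CFS$ one must therefore multiply by a smooth cutoff~$\rho(\|\f\|_{\underline{L}^2(B_t(x))})$ as in the proof of Lemma~\ref{l.bF.converge}, estimating the discarded heavy-tail piece by the boundedness assumption~\eqref{e.P.fboundedness.again}. This is precisely where the mesoscale~$k_n=\lceil n-(\log n)^{1/2}\rceil$ in~\eqref{e.triple.your.bar.meso.again} is needed: it prevents the~$L^2$-mass of~$\f$ from concentrating, so the cutoff removes only a negligible event while keeping the localization error at the scale~$r^{-\frac d2(1-\beta)}$.

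Finally, the \emph{iteration and conclusion}. Decompose~$\bfF_R-\overline{\f}$ exactly as in~\eqref{e.ahomr.splitting} (additivity defect, localization defect, fluctuations, systematic error), and observe that the systematic error~$\E[\bfF_r]-\overline{\f}$ decays like~$r^{-\frac d2(1-\beta)}$ by a periodicity argument combined with the additivity estimate. Then run the self-improving iteration of Proposition~\ref{p.psipsi.coarse} via Lemma~\ref{l.iteration} when~$\beta\in(0,1)$, or the dyadic telescoping of Proposition~\ref{p.psipsi.coarse.optimal} when~$\beta=0$, to conclude~$\bfF_r(x)-\overline{\f}=\O_\Psi^{1-\delta}(Cr^{-\frac d2(1-\beta)})$ and, symmetrically,~$(\nabla\psi_\f)\ast\Phi_r(x)=\O_\Psi^{1-\delta}(Cr^{-\frac d2(1-\beta)})$. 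On the event~$\{r\ge\X(x)\}$ the cutoff~$\minscale_r$ is inactive, so~$\bfF_r(x)=((\a\nabla\psi_\f+\f)\ast\Phi_r)(x)$ there, which yields~\eqref{e.f.optimal} for~$r\ge\X(x)$; and when~$\beta=0$ the bound extends to all~$r\ge1$ because for~$r<\X(x)$ the deterministic Caccioppoli/large-scale-regularity estimate~$|(\Phi_r\ast\nabla\psi_\f)(x)|\le C$ together with~$\X(x)=\O_\Psi(C)$ is already stronger than the claim, as in the remark following Theorem~\ref{t.corr.optimal}.
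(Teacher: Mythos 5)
Your overall skeleton (initial smallness from Theorem~\ref{t.f.optimalstochasticintegrability}, the four-term decomposition, localized correctors, the mesoscale~$\CFS$ norm, and the iterations of Propositions~\ref{p.psipsi.coarse} and~\ref{p.psipsi.coarse.optimal}) matches the paper, but your definition of the coarse-grained forcing contains a gap that breaks the key analytic step. You set $\bfF_r := \minscale_r\,\Phi_r\ast(\a\nabla\psi_\f+\f)+(1-\minscale_r)\overline{\f}$, whereas the paper's coarsened force~\eqref{e.fr} is the \emph{commutator-type} quantity $\f_r := \minscale_r\,\Phi_r\ast\bigl(\a\nabla\psi_\f+\f-\overline{\f}-\ahom\nabla\psi_\f\bigr)+\overline{\f}$. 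The subtraction of~$\ahom\nabla\psi_\f$ is not cosmetic: it is exactly what makes~$\f_r$ the right-hand side of the coarsened equation~\eqref{e.fr.coarse.eq}, $-\nabla\cdot\ahom\nabla(\Phi_r\ast\psi_\f)=\nabla\cdot\f_r+\nabla\cdot\g_r$, which is the mechanism (Lemma~\ref{l.fr.frtopsi}, via Lemma~\ref{l.harmdecay}) converting fluctuation bounds on the coarsened force into control of the oscillation of~$\nabla(\psi_\f\ast\Phi_r)$ across scales. Your claim that $w=\psi_\f\ast\Phi_r$ solves a constant-coefficient equation whose right-hand side involves only $\minscale_r(\b_r-\ahom)\nabla w$, $\bfF_r-\overline{\f}$ and tails rests on the identity $\Phi_r\ast(\a\nabla\psi)=\b_r\nabla(\psi\ast\Phi_r)$, which by~\eqref{e.coarsened.fist} holds only for $\psi\in\A_1$; it is false for~$\psi_\f$. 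With your~$\bfF_r$ the residual term on the right-hand side is $-\ahom\,\Phi_r\ast\nabla\psi_\f$ itself, and since the constant in Lemma~\ref{l.harmdecay} is of order one and the supremum runs over all $t\geq r$, you cannot absorb this term; the bound becomes circular. As a consequence your iteration only produces an estimate for $\Phi_r\ast(\a\nabla\psi_\f+\f)-\overline{\f}$, and the claimed ``symmetric'' estimate on $\Phi_r\ast\nabla\psi_\f$ — which is half of~\eqref{e.f.optimal} and is also needed inside the localization lemma through the fluctuation field~$\mathcal{H}_r$ — is never actually obtained. The paper instead first derives the gradient estimate from the bound on the commutator field by telescoping Lemma~\ref{l.fr.frtopsi} over dyadic scales, and only then recovers the flux bound by writing $\minscale_r\,\Phi_r\ast(\a\nabla\psi_\f+\f-\overline{\f})=(\f_r-\overline{\f})+\minscale_r\,\ahom\,\Phi_r\ast\nabla\psi_\f$.

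Two smaller points. Your additivity step imports the quadratic structure of~\eqref{e.additivityestimate}, but for the forcing no such estimate is needed (nor does the product identity you invoke exist): the main part of the coarsened force is a convolution of a fixed field with heat kernels and is therefore \emph{exactly} additive by the semigroup property, so the additivity defect consists only of the minimal-scale cutoff tails, which are handled as in~\eqref{e.fr.add} at the cost of stochastic integrability. Finally, the paper does not truncate~$\f$ with a cutoff $\rho(\|\f\|_{\underline{L}^2})$ in this section; the mesoscale norm~\eqref{e.triple.your.bar.meso.again} is used to obtain an essentially deterministic Malliavin bound~\eqref{e.fr.correctorloc.malliavin} free of the $\log^{\sfrac d2}R$ prefactor that the plain $\underline{L}^2(B_t)$ norm would produce, which is what allows the fluctuation estimate to land exactly at the scale $r^{-\frac d2(1-\beta)}$; your cutoff route is plausible but secondary to the main gap above.
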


\smallskip

Let~$\psi_{\f}$ be given by Theorem~\ref{t.f.optimalstochasticintegrability}. We define the coarsened force~$\f_r$, for every~$x \in \R^d$ and~$r >0$, as  
\begin{align}  \label{e.fr}
\f_r(x) & :=  \chi_r(x) \Phi_r \ast  \bigl( \a \nabla \psi_{\f} + \f - \overline{\f}  - \ahom \nabla \psi_{\f} \bigr) (x) 
+  \overline{\f}
\end{align}
with
\begin{equation*}  
\minscale_r(x) := \bigl(2 - r^{-1} \X(x) \bigr)_+ \wedge 1
\,.
\end{equation*}
Here the minimal-scale~$\X(\cdot)$ is piecewise constant in~$z + \cu_0$,~$z \in \Z^d$, and at~$z$ it is the maximum of corresponding random variables appearing  Theorem~\ref{t.Ck1} and Theorem~\ref{t.f.optimalstochasticintegrability} enlarged with a multiplicative constant prefactor.  The presence of the indicator function above guarantees, due to Theorem~\ref{t.zeroslope}, that~$\f_r$ is bounded. 

\smallskip

Following the outline introduced for the coarse-grained coefficients, we decompose~$\f_R$ as
\begin{align} 
\label{e.fr.splitting}
\notag
\f_R - \overline{\f}
&
=
\underbrace{
\bigl( \f'_{r,t} - \E [ \f'_{r,t} ] \bigr) \ast \Phi_{\sqrt{R^2-r^2}}
}_{\mathrm{fluctuations}}
+
\underbrace{
\Bigl( (\f_r - \E [ \f_r ]) - (\f'_{r,t} - \E [ \f'_{r,t} ] )  \Bigr) \ast \Phi_{\sqrt{R^2-r^2}} 
}_{\mathrm{localization \; defect}}
\\   & \qquad
+
\underbrace{
\Bigl( \f_R - \f_r \ast \Phi_{\sqrt{R^2-r^2}} \Bigr) 
}_{\mathrm{additivity \; defect}}
+
\underbrace{
\bigl( \E [ \f_r ]- \overline{\f} \bigr) \ast \Phi_{\sqrt{R^2-r^2}}
}_{\mathrm{systematic \;  error}}
\,,
\end{align}
where~$\f_{r,t}'(x)$ is a~$\F(B_t(x))$-measurable localization of the field~$\f_r(x)$.  
We next discuss the different terms that appear in the above identity. 

\smallskip

Let us first argue that the additivity error is negligible. Defining 
\begin{align*}  
\g_r(x) := (1-\chi_r(x)) \Phi_r \ast  \bigl( \a \nabla \psi_{\f} + \f - \overline{\f} - \ahom \nabla \psi_{\f}   \bigr) (x)   \,,
\end{align*}
we can write~$\f_r$ as
\begin{align}  \label{e.fr.rewrite}
\f_r(x) = \Phi_r \ast  \bigl( \a \nabla \psi_{\f} + \f - \ahom \nabla \psi_{\f}   \bigr) (x)  - \g_r(x)  \,.
\end{align}
The first term on the right~\eqref{e.fr.rewrite} is perfectly additive, and thus, the additivity error can be made as small as we please in the scale~$r$ with the cost of the stochastic integrability. Indeed, we obtain by Theorem~\ref{t.f.optimalstochasticintegrability} that
\begin{align} \notag \label{e.gr.est}
| \g_r(x) | 
& 
\leq 
C(1-\chi_r(x)) \Bigl( \bigl| \Phi_r \ast  \bigl( \a \nabla \psi_{\f} + \f - \overline{\f}  \bigr)(x)\bigr| + \bigl| \Phi_r \ast  \nabla \psi_{\f}(x)\bigr|  + | \overline{\f} |  \Bigr)
 \\  &
\leq
C \indc_{\{ r \leq \X(x)\} } \Bigl( \frac{\X(x)}{r} \Bigl)^{\nicefrac d2}
\end{align}
and, hence, for every~$\gamma \in (0,1]$, 
\begin{align}  \label{e.fr.add}
\big| \f_R - \f_r \ast \Phi_{\sqrt{R^2-r^2}} \big|
=
\big| \g_R - \g_r \ast \Phi_{\sqrt{R^2-r^2}} \big| 
= 
\O_\Psi^{\gamma (1-\beta)} \bigl( C r^{-\nicefrac{d}{(2\gamma)}} \bigr)  \,.
\end{align}
By interpolation~\eqref{e.Psi.interpolation} and boundedness of~$\f_r$, we thus get that, for every~$\gamma \in (0,1]$, 
\begin{equation}  \label{e.fr.additivity}
\big| \f_R - \f_r \ast \Phi_{\sqrt{R^2-r^2}} \big| = \O_\Psi^{\gamma} \bigl( C r^{-\frac{d}{2\gamma}(1-\beta)} \bigr) \,.
\end{equation}
Thus, we expect that the two main sources of the error in~\eqref{e.fr.splitting} come from the fluctuations and the localization error. As before, fluctuations will be controlled by the~$\CFS$-assumption. The localization error will also be shown to be subcritical. We  localize~$\f_r$, using the very good additivity, by making a simple ansatz that the localized force is of the form, for~$x,y \in \R^d$, 
\begin{equation*}  
\f'_{R}(x)  =  \bigl( \bigl( \indc_{B_{C R \log^{\nicefrac12} R }(x)} \f_r'' \bigr) \ast  \Phi_{\sqrt{R^2-r^2}} \bigr) (x)
\end{equation*}
with 
\begin{equation*}  
\f_r''(x) = \minscale_{r,R}(x) \Phi_r \ast  \bigl( \a \nabla \psi_{\f,x}^{(r)}  + \f - \ahom \nabla \psi_{\f,x}^{(r)} - \overline{\f}  \bigr) (x)  \,,
\end{equation*}
where~$\psi_{\f,x}^{(r)}$ is suitably chosen local quantity solving~$-\nabla \cdot \a \nabla \psi_{\f,t,y} = \nabla \cdot \f$ in a neighborhood of~$x$. To see the sub-criticality in scaling, we first estimate simply as
\begin{equation} \label{e.fvsfloc.pre}
| \f_r(x) - \f'_{r,t}(x) | 
\leq  
C \indc_{\{r \geq \X(x)\}} \Big| \Phi_r \ast  \bigl(  (\a - \ahom) \nabla(\psi_{\f} -  \psi_{\f,t,x}) \bigr)(x) \Big| 
+ 
C  \indc_{\{r < \X(x)\}}
\,,
\end{equation}
and then use the fact that~$\psi_{\f} + \psi - \psi_{\f,t,x}$ belongs to~$\A(B_t(x))$ for every~$\psi \in \A_1$, together with~\eqref{e.coarsened.fist} and~\eqref{e.genfluxmaps.heatscale}, and choose then~$\psi \in \A_1$ to get additional smallness. The details can be found in Step 2 of the proof of Lemma~\ref{l.fr.correctorloc}.

\smallskip

We now move on to formalize the discussion above. First, as in the case of coarse-grained coefficients, we can control the fluctuations of~$\Phi_r \ast \nabla \psi_{\f}$ by means of the fluctuations of the coarsened function~$\f_r$.

\begin{lemma} \label{l.fr.frtopsi}
There exists~$C(\datareff)<\infty$ such that if~$R \geq 2r > 0$, then 
\begin{equation}  \label{e.fr.frtopsi}
\left\| \Phi_r {\ast}  \nabla \psi_{\f}  - \Phi_R {\ast} \nabla \psi_{\f}  \right\|_{L^2 ( \Phi_{\sqrt{R^2-r^2}})}
\leq 
C \sup_{t \geq r} \Bigl( \frac{r}{t} \Bigr)^{\!\nicefrac12}  \Bigl( \| \f_r - \overline{\f} \|_{\underline{L}^2 \left( B_{t} \right)} 
+ 
 r^{-\nicefrac d2}  \| \X \|_{\underline{L}^d \left( B_{t} \right)}^{\nicefrac d2} \Bigr)
\,.
\end{equation}
\end{lemma}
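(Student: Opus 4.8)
The plan is to mirror the proof of Lemma~\ref{l.scalecomparison}, with the coarse-grained coefficient field $\b_r$ replaced by the coarse-grained forcing $\f_r$. Set $w_r := \Phi_r \ast \psi_\f$, so that $\nabla w_r = \Phi_r \ast \nabla \psi_\f$; by the semigroup identity $\Phi_R = \Phi_r \ast \Phi_{\sqrt{R^2-r^2}}$ we have $\Phi_R \ast \nabla\psi_\f = \Phi_{\sqrt{R^2-r^2}} \ast \nabla w_r$, so (reading $\Phi_R \ast \nabla\psi_\f$ at the center of the Gaussian weight, as in Lemma~\ref{l.scalecomparison}) the left-hand side of~\eqref{e.fr.frtopsi} is exactly the $\Phi_{\sqrt{R^2-r^2}}$-oscillation $\bigl\| \nabla w_r - (\nabla w_r)_{\Phi_{\sqrt{R^2-r^2}}} \bigr\|_{L^2(\Phi_{\sqrt{R^2-r^2}})}$ of $\nabla w_r$. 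The first step is to derive the constant-coefficient equation satisfied by $w_r$, the analogue of~\eqref{e.coarsened.equation}. Since $-\nabla\cdot\a\nabla\psi_\f = \nabla\cdot\f$, the field $\a\nabla\psi_\f + \f$ is solenoidal, hence so is $\Phi_r\ast(\a\nabla\psi_\f) + \Phi_r\ast\f$. Unwinding the definition~\eqref{e.fr} of $\f_r$ and the definition of $\g_r$ — both built from the same bracket $\Phi_r\ast(\a\nabla\psi_\f) + \Phi_r\ast\f - \overline{\f} - \ahom\nabla w_r$, with prefactors $\minscale_r$ and $1-\minscale_r$ respectively — one finds the pointwise identity $\Phi_r\ast(\a\nabla\psi_\f) + \Phi_r\ast\f = \ahom\nabla w_r + \f_r + \g_r$, and taking the divergence gives $-\nabla\cdot\ahom\nabla w_r = \nabla\cdot(\f_r + \g_r)$, with $\g_r$ a negligible error supported in $\{r < \X\}$.

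Next I would apply the deterministic excess-decay estimate, Lemma~\ref{l.harmdecay}, to $w_r$ with forcing $\f_r + \g_r$ at scale $\sqrt{R^2-r^2}$. Its growth hypothesis~\eqref{e.harmdecay.ass} holds because $\nabla w_r$ is $\Zd$-stationary with zero mean and finite variance (convolution against the deterministic kernel $\Phi_r$ preserves stationarity and the vanishing mean of $\nabla\psi_\f \in V^2_{\pot,0}(\P)$), so the multiparameter Wiener ergodic theorem, Proposition~\ref{p.ergodic}, gives $t^{-1}\|\nabla w_r\|_{\underline{L}^2(B_t)}^2 \to 0$, $\P$-a.s. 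Lemma~\ref{l.harmdecay} then controls the oscillation of $\nabla w_r$ by a weighted supremum of the oscillations $\|\f_r + \g_r - (\f_r+\g_r)_{B_t}\|_{\underline{L}^2(B_t)}$ of the forcing. Estimating $\|\f_r - (\f_r)_{B_t}\|_{\underline{L}^2(B_t)} \leq 2\|\f_r - \overline{\f}\|_{\underline{L}^2(B_t)}$ by the triangle inequality, and $\|\g_r\|_{\underline{L}^2(B_t)} \leq C r^{-d/2}\|\X\|_{\underline{L}^d(B_t)}^{d/2}$ from the pointwise bound~\eqref{e.gr.est}, and then reorganizing the heat-kernel scale factors using $R \geq 2r$ (so that $\sqrt{R^2-r^2}$ is comparable to $R$, exactly as in the corresponding step of the proof of Lemma~\ref{l.scalecomparison}), one arrives at~\eqref{e.fr.frtopsi}.

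The main obstacle is the algebraic bookkeeping in the derivation of the equation for $w_r$ — keeping track of $\minscale_r$ so that $\f_r$ and $\g_r$ recombine precisely into the flux $\Phi_r\ast(\a\nabla\psi_\f) + \Phi_r\ast\f$ modulo $\ahom\nabla w_r$ — together with confirming that $\g_r$ is genuinely of the size claimed in~\eqref{e.gr.est}, which relies on the sublinearity of $\psi_\f$ established in Theorem~\ref{t.f.optimalstochasticintegrability} to control $\Phi_r\ast(\a\nabla\psi_\f) + \Phi_r\ast\f - \overline{\f}$ on the exceptional set $\{r < \X\}$ (this is the estimate already recorded in~\eqref{e.gr.est}). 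Once these two points are in place, the remainder is a routine transcription of the proof of Lemma~\ref{l.scalecomparison}.
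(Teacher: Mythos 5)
Your proposal follows essentially the same route as the paper's proof: the identity~\eqref{e.fr.rewrite} gives the coarsened equation~\eqref{e.fr.coarse.eq} for $w_r=\Phi_r\ast\psi_\f$, which is fed into Lemma~\ref{l.harmdecay} at scale $\sqrt{R^2-r^2}$, with the forcing split into $\f_r-\overline{\f}$ and $\g_r$ and the latter absorbed via~\eqref{e.gr.est}. The only (harmless) deviation is your verification of the qualitative hypothesis~\eqref{e.harmdecay.ass} via stationarity of $\nabla\psi_\f$ and the Wiener ergodic theorem, whereas the paper invokes the Caccioppoli inequality together with Theorem~\ref{t.f.optimalstochasticintegrability}.
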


\begin{proof}
First, by~\eqref{e.fr.rewrite} we deduce the following coarsened equation
\begin{equation}  \label{e.fr.coarse.eq}
- \nabla \cdot \ahom \nabla \Phi_r \ast  \psi_{\f}  
= 
\nabla \cdot \f_r + \nabla \cdot \g_r 
\,. 
\end{equation}
Hence, by Lemma~\ref{l.harmdecay}, using the Caccioppoli estimate with Theorem~\ref{t.f.optimalstochasticintegrability} to obtain the qualitative assumption~\eqref{e.harmdecay.ass}, we have that 
\begin{equation*}  
\left\| \Phi_r \ast  \psi_{\f}  - \Phi_R \ast  \psi_{\f}  \right\|_{L^2 ( \Phi_{\sqrt{R^2-r^2}})} 
\leq 
C \sup_{t \geq r}  \Bigl( \frac{r}{t} \Bigr)^{\!\nicefrac12} 
\Bigl(  \| \f_r - \overline{\f} \|_{\underline{L}^2 \left( B_{t} \right)}  +  \| \g_r \|_{\underline{L}^2 \left( B_{t} \right)} \Bigr) \,.
\end{equation*}
The first term on the right appears in~\eqref{e.fr.frtopsi}, and the second is estimated using~\eqref{e.gr.est}.  
\end{proof}

Given the previous lemma, we define a random variable controlling various errors in what follows:
\begin{align}  \label{e.uglyH}
\mathcal{H}_r(x) := 
\sup_{t \geq r}  \Bigl( \frac{r}{t} \Bigr)^{\!\nicefrac12} 
\Bigl( 
\| \f_r - \overline{\f}  \|_{\underline{L}^2 \left( B_{t}(x) \right)} 
+ 
\| \b_r - \ahom  \|_{\underline{L}^2 \left( B_{t}(x) \right)} 
+ 
r^{-\nicefrac d2}  \| \X \|_{\underline{L}^d \left( B_{t} \right)}^{\nicefrac d2} 
\Bigr)
\,.
\end{align}
Notice that by Theorem~\ref{t.optimal}, we already know that the second term on the right is of the size~$\O_\Psi^{1-\delta}(Cr^{-\frac d2(1-\beta)})$ and the third term is of the size~$\O_\Psi^{1-\beta}(Cr^{-\nicefrac d2})$. Actually, we could have done the bootstrap directly for coarse-grained coefficients and coarsened forcing, but we feel that our approach is pedagogically correct; forcing is a separate question from homogenization of coefficients and should be treated like that.

\begin{lemma}[Localization estimates]
\label{l.fr.correctorloc}
Fix~$\ep \in (0,\frac12]$ and~$\delta \in [0,\frac12]$. There exist constants~$\alpha(\beta,d,\lambda,\Lambda) \in (0,1)$ and~$C(\ep,\delta,\datareff)<\infty$ and, for every~$R \in [1,\infty)$, a random field~$\f_R'(\cdot)$ such that, for every~$x \in \R^d$,~$\f_R'(x)$ is~$\F(B_t(x))$-measurable with~$t = 20 d R \log^{\nicefrac 12} R$ satisfying
\begin{equation} \label{e.fr.correctorloc}
| \f_R(x) - \f'_{R}(x) |  
\leq 
C
\biggl( \frac{\X(x)}{R^{1-\ep}} \biggr)^{\alpha} 
\Bigl( 
\mathcal{H}_{R}(x)  + \big| \ahom - \b_{R}(x)\big|
\Bigr) 
+ 
C \indc_{\{ R^{1-\delta} \leq \X_{R}(x) \}}
\end{equation}
and
\begin{equation}  \label{e.fr.correctorloc.malliavin}
\bigl| \partial_{(\a,\f)(B_t(x))} \f'_{R}(x)\bigr| 
\leq 
C ( 1+  R^{-\alpha \delta} \log R ) \,.
\end{equation}
\end{lemma}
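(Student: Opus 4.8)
The plan is to mimic closely the proof of the localization estimate for the coarse-grained coefficients, namely Lemma~\ref{l.correctorloc} (for the crude bound) and Lemma~\ref{l.correctorloc.new} (for the improved Malliavin bound), replacing throughout the corrected affine functions~$\ell_e+\phi_e\in\A_1$ by the infinite-volume zero-slope corrector~$\psi_\f$ from Theorem~\ref{t.f.optimalstochasticintegrability}. Fix~$x\in\Rd$, set~$t = 20d\,R\log^{\sfrac12}R$ and~$r = R^{1-\gamma}$ for a small~$\gamma = \gamma(\ep,\delta)$ to be chosen, and let~$k = k(\ep,\delta)$ be a large integer. I would first build a \emph{local zero-slope corrector}~$\psi_{\f,x}^{(r)}\in\A_k(B_{t/3d})$ which is~$\F(B_t(x))$--measurable and which, after mollification by the heat kernel at scale~$\sim r$, is as close as possible in~$L^2$ to an affine function: this is the exact analogue of the minimization defining~$w_e$ in Step~1 of the proof of Lemma~\ref{l.correctorloc}, but now~$\psi_{\f,x}^{(r)}$ solves~$-\nabla\cdot\a\nabla\psi_{\f,x}^{(r)} = \nabla\cdot\f$ near~$x$. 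One then sets~$\f_r''(x) := \minscale_{r,R}(x)\,\Phi_r\ast(\a\nabla\psi_{\f,x}^{(r)} + \f - \ahom\nabla\psi_{\f,x}^{(r)} - \overline{\f})(x)$, with~$\minscale_{r,R}$ an averaged minimal-scale cutoff as in~\eqref{e.minscalefct.def}, and, following the ansatz stated just before the lemma,~$\f_R'(x) := ((\indc_{B_t(x)}\f_r'')\ast\Phi_{\sqrt{R^2-r^2}})(x)$. Since~$t/R\gtrsim\log^{\sfrac12}R$, truncating the convolution against~$\Phi_{\sqrt{R^2-r^2}}$ to~$B_t(x)$ costs only~$\exp(-cR)$, so~$\f_R'(x)$ is~$\F(B_t(x))$--measurable up to a negligible error that is absorbed into the right-hand sides.

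For the localization estimate~\eqref{e.fr.correctorloc} I would start from the elementary bound~\eqref{e.fvsfloc.pre}, i.e. on the event~$\{r\ge\X(x)\}$,
\[
|\f_r(x) - \f'_{r,t}(x)| \le C\,\bigl|\Phi_r\ast\bigl((\a-\ahom)\nabla(\psi_\f - \psi_{\f,t,x})\bigr)(x)\bigr| + C\indc_{\{r<\X(x)\}},
\]
and observe that for every~$\psi\in\A_1$ the function~$\psi_\f - \psi_{\f,t,x} + \psi$ is a solution of~$-\nabla\cdot\a\nabla(\cdot)=0$ in~$B_t(x)$. Choosing~$\psi$ so as to cancel the leading part of~$\Phi_r\ast\nabla(\psi_\f - \psi_{\f,t,x})(x)$ exactly as in~\eqref{e.localmagic}, and applying the gradient-to-flux map estimate~\eqref{e.genfluxmaps.heatscale} to~$w = \psi_\f - \psi_{\f,t,x} + \psi$, one gains the factor~$(\X(x)/R^{1-\ep})^{\alpha}$; the surviving factor is the heat-mollified~$L^2$ norm of~$\nabla(\psi_\f - \psi_{\f,t,x})$, which is controlled via the localized analogue of Lemma~\ref{l.scalecomparison} (and the approximation property built into the construction of~$\psi_{\f,x}^{(r)}$) by~$\mathcal{H}_R(x) + |\ahom - \b_R(x)|$, the second term entering when one compares~$\b_r$, its localization~$\b'_{r,t}$, and~$\ahom$. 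The event~$\{R^{1-\delta}\le\X_R(x)\}$ is handled by the trivial bound~$|\f_r(x)| + |\f'_{r,t}(x)|\le C$ together with the interpolation inequality~\eqref{e.Psi.interpolation}; combining these and using the semigroup property then yields~\eqref{e.fr.correctorloc}.

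For the Malliavin derivative bound~\eqref{e.fr.correctorloc.malliavin}, the naive perturbation argument — comparing~$\psi_{\f,x}^{(r)}$ for~$\a$ and a variation~$\tilde\a$ supported in~$B_t(x)$, using stability of the local problem and the large-scale~$C^{0,1}$ estimate on the difference — gives a bound of order~$\log t$, exactly as~\eqref{e.malliavin.local}. To upgrade this to the essentially-bounded~\eqref{e.fr.correctorloc.malliavin}, I would repeat the argument of Lemma~\ref{l.correctorloc.new}: use the excellent additivity~\eqref{e.fr.additivity} of~$\f_R$ to rewrite~$\f_R'$ as a convolution against~$\Phi_{\sqrt{R^2-r^2}}$ of a partition of unity, as in~\eqref{e.localization.eta}, of local pieces~$\f_{r,k}''$ defined on balls~$B_R(z_k)$, so that it suffices to bound the Malliavin derivative of each average~$\fint_{B_R(z_k)}\f_{r,k}''$ by~$O(1)$. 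This last bound is obtained by expanding~$\psi_{\f,x}^{(r)}$ in an orthonormal basis of the local solution space adapted to the heat-mollified inner product of Lemma~\ref{l.MSP.heat}, with the coefficients of the non-affine basis elements of size~$O(r^{-\sfrac12\gamma\alpha})$ as in~\eqref{e.Malliavin.silly}, so that the per-scale logarithmic losses are averaged out; the residual~$r^{-\alpha\delta}\log R$ in the statement is exactly this averaged error after a suitable choice of~$\gamma$ in terms of~$\delta$.

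I expect the main obstacle to be, first, constructing~$\psi_{\f,x}^{(r)}$ together with the sharp approximation estimate — that, after mollification, it is close to~$\psi_\f$ with error controlled by~$\mathcal{H}_R(x)+|\ahom-\b_R(x)|$ — which requires the weak-norm minimization trick combined with the large-scale~$C^{k,1}$ regularity (Theorems~\ref{t.Ck1} and~\ref{t.Ck1.local}) and the multiscale Poincaré inequality with heat kernels (Lemma~\ref{l.MSP.heat}) to pass between the heat-mollified norm and the strong~$L^2$ norm; and second, and more delicate, the bookkeeping behind~\eqref{e.fr.correctorloc.malliavin}, which demands carefully tracking the dependence on the coefficient variation through the orthonormalization procedure, exactly as in Steps~2--4 of the proof of Lemma~\ref{l.correctorloc.new}. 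All the ingredients are already available — \eqref{e.fvsfloc.pre}, \eqref{e.genfluxmaps.heatscale}, \eqref{e.fr.additivity}, Lemmas~\ref{l.MSP.heat}, \ref{l.correctorloc}, \ref{l.correctorloc.new}, and the interpolation inequalities~\eqref{e.Psi.product}--\eqref{e.Psi.interpolation} — so the work is essentially one of assembling them while keeping every error exponent sharp.
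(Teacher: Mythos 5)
Your construction and the first half of your argument coincide with the paper's: the local inhomogeneous corrector is obtained by a weak-norm minimization over the local solution space (the paper writes it as $u_\f+w_{\f,x}$ with $u_\f$ the finite-volume zero-slope corrector of Theorem~\ref{t.zeroslope} and $w_{\f,x}$ minimizing the heat-mollified energy $E_x$ over $\A_k$), the localized force is exactly your $\f_r''$ cut off by $\minscale_{r,R}$ and re-convolved with $\Phi_{\sqrt{R^2-r^2}}$ over (essentially) $B_{t/2}(x)$, and the estimate~\eqref{e.fr.correctorloc} is assembled exactly as you say: start from~\eqref{e.fvsfloc.pre}, add a well-chosen $\psi\in\A_1$ to cancel the mollified gradient, apply~\eqref{e.genfluxmaps.heatscale}, and control the remaining mollified gradient by $\mathcal{H}$ together with the term $|\ahom-\b_\cdot|$ coming from the coarse-grained relation for $\psi$, with the bad event handled by the indicator.

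The gap is in the Malliavin bound~\eqref{e.fr.correctorloc.malliavin}. The derivative there is taken jointly in $(\a,\f)$ with the mesoscale norm~\eqref{e.triple.your.bar.meso}, and your sketch only tracks the $\a$-variation through the orthonormalization machinery of Lemma~\ref{l.correctorloc.new} ("exactly as in Steps 2--4" of that proof, and even your naive bound perturbs only $\tilde\a$). But Lemma~\ref{l.correctorloc.new} has no forcing: the genuinely new channel here is the sensitivity of the inhomogeneous part $u_\f$ to variations of $\f$ itself, which enters both the coefficients $\langle w_{j,x},u_\f\rangle_x$ and the field $\f_r''$ directly. In the paper this is handled by the stability estimate for the local zero-slope corrector (as in Theorem~\ref{t.zeroslope} and Lemma~\ref{l.Malliavin.f.base}), followed by integrating over each cube of the partition and summing with the heat-kernel weight; it is precisely at this point that the $\ell^\infty$--$L^2$ mesoscale norm is needed, since measuring the $\f$-variation in $\underline{L}^2(B_t)$ instead would cost an extra factor $\log^{\sfrac d2}R$ and destroy~\eqref{e.fr.correctorloc.malliavin}. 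Relatedly, the smallness of the coefficients is not the affine/non-affine split of Lemma~\ref{l.correctorloc.new}: here \emph{all} coefficients of $w_{\f,x}$ are small, of order $(\X_R(x)/r)^\alpha\le C r^{-\alpha\delta}$, because the zero-slope corrector is sublinear above $\X_R$ (see~\eqref{e.fr.localization.minscale.prop}) and the cutoff $\minscale_{r,R}$ is built with the exponent $\delta$ from the statement; introducing a separate mesoscale parameter $\gamma$ and "choosing $\gamma$ in terms of $\delta$" does not by itself produce the residual $R^{-\alpha\delta}\log R$, which is tied to the same event $\{R^{1-\delta}\le\X_R(x)\}$ appearing in~\eqref{e.fr.correctorloc}.
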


\begin{proof}
To begin with, fix~$R \geq 1$ and~$\ep,\delta \in (0,\frac12]$, and set~$r = R^{1-\ep}$ and~$t = 20 d R \log^{\nicefrac 12} R$. We let~$m$ be the largest integer such that~$\cu_m \subset B_R$ and let the following set be the approximation of~$B_{t/2}$:
\begin{equation*}  
U_t := \bigcup_{z \in 3^m \Z^d \cap B_{t/2}} (z + \cu_m)\,.
\end{equation*}
By~\eqref{e.fr.add} and boundedness of~$\f_r$, we have that
\begin{align*}  
\big| \f_R(0) - (\indc_{U_t} \f_r) \ast \Phi_{\sqrt{R^2-r^2}}(0) \big|   
= 
\O_\Psi^{1-\delta} \bigl( C r^{-\frac{d}{2}\frac{1-\beta}{1-\delta}} \bigr)\,.
\end{align*}
We look for the localization in the form
\begin{equation*}  
\f_R'(0) =  (\indc_{U_t} \f_r''(x)) \ast \Phi_{\sqrt{R^2-r^2}}(0)\,,
\end{equation*}
where~$\f_r''(x)$ is~$\F(x + \cu_{m+3})$-measurable, making~$\f_R'(0)$ clearly~$\F(B_t)$-measurable.

\smallskip

Next, we define the piecewise constant minimal scale as
\begin{equation*}  
\X_R(x) = H \sum_{z \in 3^m \Z^d \cap U_t} \sum_{y \in \Z^d \cap (z + \cu_m)} 
\indc_{y + \cu_0}(x) \Bigl( \X_{\f,m}(y) + \X_{m+1}(y) + \X_{\f,m+1}(z) \Bigr)\,.
\end{equation*}
Here~$H$ is a large constant to be fixed,~$\X_{m+1}(y)$ is the~$\Z^d$-stationary extension of the localized minimal scale from Theorem~\ref{t.Ck1.local}, and~$\X_{\f,m+1}(y)$,~$y \in \Z^d$,  is the~$\Z^d$-stationary extension of the localized minimal scale from Corollary~\ref{c.Jf.minsetE}. It is then~$\F(z + \cu_{m+2})$-measurable, and hence also~$B_{t}$-measurable. Moreover, its Malliavin derivative satisfies
\begin{equation*} 
%\label{e.}
|\partial_{(\a,\f)(z+\cu_{m+2})}\X_R(z)| \leq C(1 + \X_R(z)) \,.
\end{equation*}
The random minimal scale~$\X(x)$ is obtained similarly, but now by taking~$m = \infty$ in the above definition. We also have that
\begin{equation*}  
\X + \X_R 
= 
\O_\Psi^{\frac d2(1-\beta)}(C)
\,.
\end{equation*}
We define the approximation of the indicator function~$\indc_{\{ r^{1-\delta} \geq  \X_{R}(x)\}}$ as
\begin{equation}  \label{e.fr.localization.minscale}
\minscale_{r,R}(x) = \min\big\{1, \bigl(2 -    r^{\delta-1} \X_{R}(x) \bigr)_+ \big\} \,.
\end{equation}

\smallskip 

After translation, we may momentarily assume that~$z_k = 0$ and fix~$x \in \cu_m$.  We let~$u_{\f} = \psi_{\f,m+1}$, where~$\psi_{\f,m+1}$ is as in Theorem~\ref{t.zeroslope}, and extend it to the whole~$\R^d$ periodically. Letting~$\psi_{\f,y,n}$ stand for~$\Z^d$ stationary extension of~$\psi_{\f,n}$, we write  
\begin{equation*}  
u_{\f} 
= 
\psi_{\f,y,n} 
+ \sum_{k=n}^{m} (\psi_{\f,y,n+1}  - \psi_{\f,y,n}) 
+ \psi_{\f,m+1} - \psi_{\f,y,m+1}\,.
\end{equation*}
If~$R \geq \X_R(x)$, then, for every~$s \in [\X_R(x),R]$ and~$s \in [3^{n-1},3^{n})$, we obtain by Remark~\ref{r.zeroslope} and Corollary~\ref{c.Jf.minsetE} that 
\begin{equation}  \label{e.fr.localization.minscale.prop}
\frac1s \| u_{\f} - (u_{\f})_{B_s(x)}  \|_{\underline{L}^2(B_s(x))}  
\leq 
C 
\Bigl( \frac{\X_R(x)}{s}  \Bigr)^{\! \alpha}
\,.
\end{equation}

\smallskip

Next, for every~$x \in \cu_m$, we define the inner product
\begin{align}  \label{e.fr.innerproduct}
\langle v,w\rangle_x
:= 
r^{-2}
\fint_{ B_{r}(x) } (v \ast \Phi_{\sigma r}) (w \ast \Phi_{\sigma r})
\qand
%E(w) :=  
%I (w \indc_{B_{t}} ) 
%+ r^{-10d} \| \nabla w \|_{\underline{L}^2(B_t)}^2 
E_x(w) = \| w\|_x^2 := \langle w,w\rangle_x
\,. 
\end{align}
Denote~$\A_{k,x}^{0}(\cu_{m+1}) = \{ v \in \A_k(\cu_{m+1}) \, : \, E_x(v) = 0 \}$. As in the proof of Lemma~\ref{l.correctorloc.new}, we let~$\{w_{j,x}\}_j$ be the orthonormal basis of~$\A_{k}(\cu_{m+1}) \backslash \A_{k,x}^0(\cu_{m+1})$ with respect to the inner product~$\langle \cdot , \cdot \rangle_x$ defined above. 

\smallskip

\emph{Step 1.} Construction of~$\psi_{\f,x}^{(r)}$.  Let~$w_{\f,x}$ be the unique minimizer of~$E_x(\cdot + u_{\f})$ over the quotient space~$\A_{k}(\cu_{m+1}) \backslash \A_{k,x}^0(\cu_{m+1})$.  Then~$w_{\f,x}$ is unique and~$\F(\cu_{m+1})$-measurable.  Set 
\begin{equation}  \label{e.fr.localcorrdef}
\psi_{\f,x}^{(r)}  : = \minscale_{r,R}(x)( w_{\f,x} + u_{\f}) 
\,,
\end{equation}
so that~$\psi_{\f,x}^{(r)}$ solves~$-\nabla \cdot \a \nabla \psi_{\f,x}^{(r)}  = \nabla \cdot \f$ in~$\cu_{m+1}$. Notice that~$\psi_{\f,x}^{(r)} = 0$ if~$r^{1-\delta} \leq 2\X_{R}(x)$. We set
\begin{equation*}  
\f_{r}''(x) 
:= 
\minscale_{r,R}(x) \Phi_r \ast  \bigl( \a \nabla \psi_{\f,x}^{(r)}  + \f - \ahom \nabla \psi_{\f,x}^{(r)} - \overline{\f}  \bigr) (x) \,.
\end{equation*}
Then
\begin{align*}  
\f_r(x) - \f_{r}''(x)
& = 
\minscale_{r}(x) \Phi_r \ast  \bigl( (\a - \ahom)( \nabla \psi_{\f}  -  \nabla \psi_{\f,x}^{(r)}  \bigr) (x) 
\\ & \quad 
+ 
(\minscale_{r}(x)-\minscale_{r,R}(x)) \Phi_r \ast  \bigl( \f - \overline{\f}  \bigr) (x)\,.
\end{align*}
In the next step, we estimate the first term on the right.

\smallskip

\emph{Step 2.} We prove that~$\psi_{\f,x}^{(r)}$ constructed in~\eqref{e.fr.localcorrdef} satisfies
\begin{equation}  \label{e.fr.correctorloc.pre}
\minscale_{r}(x) 
\bigl| 
\Phi_r \ast  \bigl( (\a - \ahom)( \nabla \psi_{\f}  -  \nabla \psi_{\f,x}^{(r)}  \bigr) (x) 
\bigr| 
\\ 
\leq
C \Bigl( \frac{\X(x)}{r} \Bigr)^{\! \alpha}  \mathcal{H}_r(x) 
+ C  \indc_{\{ r^{1-\delta} < \X_{R}(x) \}}
\,.
\end{equation}
Due to the presence of~$\minscale_{r}(x)$ above, we assume during the rest of the step that~$r \geq \X(x)$. 

\smallskip

Consider first the case~$r^{1-\delta} \geq \X_{R}(x)$. Then~$\psi_{\f,x}^{(r)} = w_{\f,x} + u_{\f}$.  We may use~\eqref{e.coarsened.fist} to get
\begin{multline}   \label{e.fr.correctorloc.pre0}
\minscale_{r}(x)  
\bigl| 
\Phi_r \ast  \bigl( (\a - \ahom)( \nabla \psi_{\f}  -  \nabla \psi_{\f,x}^{(r)}  \bigr) (x) 
\bigr| 
\\ 
\leq 
\inf_{\psi \in \A_1} 
\Bigl\{ 
\bigl| 
\Phi_r \ast  \bigl(  (\a - \ahom) \nabla (\psi_{\f}  + \psi -  \psi_{\f,x}^{(r)})  \bigr)(x) 
\bigr| 
+
\bigl|  (\ahom - \b_{r}'(x)) (\Phi_r \ast \nabla \psi)(x) \bigr| 
\Bigr\}
\,.
\end{multline}
Since~$\psi_{\f}  + \psi -  \psi_{\f,x}^{(r)} \in \A(\cu_{m+1})$,~\eqref{e.genfluxmaps.heatscale} yields that
\begin{align}  \label{e.fr.correctorloc.pre1}
\Bigl| 
\Phi_r \ast  \bigl(  (\a - \ahom) \nabla (\psi_{\f}  + \psi -  \psi_{\f,x}^{(r)})  \bigr)(x) 
\Bigr| 
& 
\leq 
C \Bigl( \frac{\X_R(x)}{r} \Bigr)^{\! \alpha} \big\| \nabla (\psi_{\f}  + \psi -  \psi_{\f,x}^{(r)} )\big\|_{L^2(\Phi_{x,2r})} 
\notag \\ 
&  
\quad 
+ 
C \exp\Bigl( - c r^{2\ep} \Bigr)\big\| \nabla (\psi_{\f}  + \psi -  \psi_{\f,x}^{(r)})\big\|_{\underline{L}^2(B_{R})} 
\,.
\end{align}
Our intermediate task is to estimate the first term on the right.  

\smallskip

We fix now~$\psi \in \A_1$ in~\eqref{e.fr.correctorloc.pre0} to be such that~$\fint_{B_r(x)} \Phi_r \ast (\psi  + \psi_{\f}) = 0$ and~$\nabla \psi  \ast \Phi_{s}(x) = - \nabla \psi_{\f} \ast \Phi_{s}(x)$, and consider~$v_{\f,x} = \psi  + \psi_{\f} - u_{\f,x}$. Then~$v_{\f,x} \in \A(\cu_{m+1})$.  We have, using the Caccioppoli estimate,~\eqref{e.MSP.heat.really},~\eqref{e.fr.localization.minscale.prop},  and~\eqref{e.f.optimalstochasticintegrability}, that
\begin{equation}  \label{e.fr.local.bnd}
\max_{\tau \in [\X(x) , 2R]}\big\| |\nabla w_{\f,x} | +  |\nabla \psi | + |\nabla \psi_{\f} - \nabla u_{\f,x}| \big\|_{\underline{L}^2(B_\tau(x) \cap \cu_{m+1})} 
\leq 
C 
\,.
\end{equation}
Notice that the above estimate continues to hold also in the case~$r^{1-\delta} < \X_{R}(x)$. 
By applying Theorem~\ref{t.Ck1.local}, 
we find~$\eta \in  \A_{k}(\cu_{m+1})$ such that, for every~$\tau  \in [r,2R]$, 
\begin{equation} \label{e.f.loc.corrclose2}
\frac1\tau \left\|  v_{\f,x} -  \eta \right\|_{\underline{L}^2 \left( B_{\tau}(x) \cap \cu_{m+1} \right)} 
+
\left\|  \nabla v_{\f,x} -   \nabla  \eta \right\|_{\underline{L}^2 \left( B_{\tau}(x) \cap \cu_{m+1} \right)} 
\leq 
C\left( \frac{\tau}{R}\right)^{\!k} .
\end{equation}
By minimality and the triangle inequality, we get, recalling that~$v_{\f,x} + u_{\f,x} = \psi  + \psi_{\f}$,
\begin{equation*}  
E_x(w_{\f}-\eta) 
= 
E_x(\eta + u_{\f}) - E_x(w_{\f} + u_{\f}) 
\leq 
2 E_x(\psi  + \psi_{\f}) +  2 E_x(\eta - v_{\f}) 
\,.
\end{equation*}
Since~$\fint_{B_r(x)} (\psi + \psi_{\f}) \ast \Phi_{\sigma r} = 0$ and~$\nabla (\psi  + \psi_{\f}) \ast \Phi_{r}(x) = 0$, the Poincar\'e inequality, the triangle inequality and~\eqref{e.fr.frtopsi} give us
% after an application of~Theorem~\ref{t.Ck1}, that
\begin{equation*}  
E_x(\psi  + \psi_{\f}) 
\leq
C  \big\| \nabla (\psi  + \psi_{\f}) \ast \Phi_{\sigma r} \|_{L^2(\Phi_{x,(1-\sigma^2)^{\nicefrac12} r})}^2
\leq
C \mathcal{H}_r^2(x) \,.
\end{equation*}
Using~\eqref{e.f.loc.corrclose2}, on the other hand, we obtain 
\begin{equation*}  
E_x(\eta - v_{\f}) 
\leq 
C \Bigl( \frac st \Bigr)^k 
 \leq 
 C t^{-20d}
 \leq
C \mathcal{H}_r^2(x)
\,.
\end{equation*}
Since~$w_{\f} - \eta$ belongs to~$\A_k(\cu_{m+1})$,~\eqref{e.MSP.heat.really} implies that 
\begin{equation*}  
\big\| \nabla (w_{\f} - \eta)\big\|_{L^2(\Phi_{x,2r})} 
\leq 
C \mathcal{H}_r(x)
\,.
\end{equation*}
Appealing to the previous display,~\eqref{e.f.loc.corrclose2} and the triangle inequality, we get that 
\begin{equation*}  
\big\| \nabla (\psi_{\f} + \psi - \psi_{\f,x}^{(r)} )\big\|_{L^2(\Phi_{x,2r})}  
\leq 
C \mathcal{H}_r(x)
 \,.
\end{equation*}
This can be used to control the first term on the right in~\eqref{e.fr.correctorloc.pre1}. 
Moreover, by~\eqref{e.fr.local.bnd}, we also deduce that
\begin{equation*}  
 \exp\Bigl( - c r^{2\rho} \Bigr)\big\| \nabla (\psi_{\f}  + \psi -  \psi_{\f,t} )\big\|_{\underline{L}^2(B_{t/2})}  
 \leq 
 C t^{-20d} 
 \leq 
 C \Bigl( \frac{\X(x)}{r} \Bigr)^{\! \alpha}   \mathcal{H}_r(0) \,.
\end{equation*}
Hence, we conclude with
\begin{equation*}  
\Bigl| 
\Phi_r \ast  \bigl(  (\a - \ahom) \nabla (\psi_{\f}  + \psi -  \psi_{\f,x}^{(r)})  \bigr)(x) 
\Bigr| 
\leq 
 C \Bigl( \frac{\X(x)}{r} \Bigr)^{\! \alpha}   \mathcal{H}_r(0)
 \,.
\end{equation*}

We then estimate the last terms in~\eqref{e.fr.correctorloc.pre0}. Since~$r \geq \X(x)$, we have by Theorem~\ref{t.Ck1} that 
\begin{equation*}  
\big| \nabla  \psi \ast \Phi_{r}(x)  \big|  
=
\big| \nabla \psi_{\f} \ast \Phi_{r}(x) \big| 
\leq 
C \Bigl( \frac{\X(x)}{r} \Bigr)^{\! \alpha} 
\,.
\end{equation*}
Thus,
\begin{equation*}  
\Bigl|  (\ahom - \b_{r}(x)) (\Phi_r \ast \nabla \psi)(x) \Bigr|  \leq C | \ahom - \b_{r}(x) | \Bigl( \frac{\X(x)}{r} \Bigr)^{\! \alpha} \,.
\end{equation*}

Finally, we still need to consider the case~$r^{1-\delta} < \X_{R}(x)$. In this case have have the following bound due to~\eqref{e.fr.local.bnd} and Theorem~\ref{t.zeroslope}:
\begin{equation} 
\label{e.fr.correctorloc2}
\Bigl| 
\Phi_r \ast  \bigl(  (\a - \ahom) \nabla (\psi_{\f}  -  \psi_{\f,x}^{(r)})  \bigr)(x) 
\Bigr| \indc_{\{ r \geq \X(x)\} } \indc_{\{ r^{1-\delta} < \X_{R}(x) \}}
\leq 
C 
\indc_{\{ r^{1-\delta} < \X_{R}(x) \}}
\,.
\end{equation}
Putting pieces together yields~\eqref{e.fr.correctorloc.pre}.

\smallskip

\emph{Step 4.}
Bounds for the Malliavin derivatives. We take~$\a,\tilde \a$ and~$\f,\tilde \f$ such that
\begin{equation} \label{e.fr.Malliavin.ass}
\|  \a - \tilde \a \|_{L^\infty(B_t)}  +  \vertiii \f - \tilde \f \vertiii_{B_t}
\leq 
h 
\end{equation} 
for small~$h>0$.  Here we denote
\begin{equation*} 
%\label{e.}
\vertiii \f \vertiii_{B_t}:= 
\sup_{z\in 3^{k_t} \Zd \cap B_t}
\| \f \|_{\underline{L}^2(z+\cu_{k_t})} \,, \quad k_t := \lceil \log_3 t - (\log \log_3  t)^{\nicefrac12} \rceil\,.
\end{equation*}
Notice also the implications 
\begin{equation*}  
 \vertiii \f - \tilde \f \vertiii_{B_t} \leq  C (\log R)^{\nicefrac d2}  \| \f - \tilde \f \|_{\underline{L}^2(B_t)} 
\qand 
\| \f - \tilde \f \|_{L^2(\Phi_R)}  \leq C \vertiii \f - \tilde \f \vertiii_{B_t} \,,
\end{equation*}
so if we would compute the Malliavin derivative with respect to~$\underline{L}^2(B_t)$ norm, we would get a bound, but with a multiplicative prefactor~$C (\log R)^{\nicefrac d2} $. 

\smallskip

Let~$z \in 3^m \Z^d \cap U_t$, and let~$u_z = \psi_{\f,z,m+1}$ and~$\tilde u_z = \tilde \psi_{\tilde \f,z,m+1}$, with~$\tilde \psi_{\tilde \f,z,m+1}$ corresponding the pair~$(\tilde \a,\tilde \f)$. Let~$x\in z + \cu_{m+1}$, and suppress~$z$ from the notation. Following the computations in the proofs of Theorem~\ref{t.zeroslope} and Lemma~\ref{l.Malliavin.f.base}, we obtain that 
\begin{equation*}  
\| u  - \tilde u \|_{\underline{L}^2(z + \cu_{m+1})} 
\leq 
C \bigl( \| \f\|_{\underline{L}^2(z + \cu_{m+1})} \| \a - \tilde{\a} \|_{L^\infty(z + \cu_{m+1})}
+
\| \f - \tilde{\f} \|_{\underline{L}^2(z + \cu_{m+1})} 
\bigr).
\end{equation*}
Let the corresponding minimizer be~$w_{x}$ of~$E_x(\cdot + u)$ over~$\A_k(z + \cu_{m+1})$ and~$\tilde w_x$ of~$E_x(\cdot + \tilde u)$ over~$\tilde \A_k(z + \cu_{m+1})$, and define~$\phi_{\f,x}^{(r)}$ and~$\tilde \phi_{\f,x}^{(r)}$ as in~\eqref{e.fr.localcorrdef}.  
Then
\begin{align*}  
 \f_{r}''(x) - \tilde \f_{r}''(x) 
& = 
\chi_{r,R}(x) \Phi_r \ast  \Bigl( (\a - \tilde \a) \nabla \psi_{\f,x}^{(r)} + \f - \tilde \f  + \tilde \a \nabla (\psi_{\f,x}^{(r)} - \tilde \psi_{\f,x}^{(r)}) \Bigr) (x) 
\\ &
\quad + 
(\chi_{r,R}(x) - \tilde \chi_{r,R}(x))  \Phi_r \ast  \Bigl( (\tilde \a - \ahom) \nabla \tilde \psi_{\f,x}^{(r)}  + \tilde \f - \overline{\f}  \Bigr) (x) 
\,.
\end{align*}
We see from here that 
\begin{equation*}  
\big| \f_{r}''(x) - \tilde \f_{r}''(x)  \big| 
\leq 
C h 
+ \| \f - \tilde \f \|_{L^2(\Phi_{x,r})} 
+ 
\chi_{r}(x)\| \nabla \phi_{\f,x}^{(r)} - \nabla \tilde \phi_{\f,x}^{(r)} \|_{L^2(\Phi_{x,r})}\,.
\end{equation*}
Here we also used the combination of~\eqref{e.fr.local.bnd},~\eqref{e.fr.Malliavin.ass} and Theorem~\ref{t.Ck1.local} and Corollary~\ref{c.Jf.minsetE}, which give us 
\begin{equation*}  
\big| \chi_{r,R}(x) - \tilde \chi_{r,R}(x) \big| \leq C h \,.
\end{equation*}
To estimate the last term, we observe that
\begin{align*}  
\chi_{r}(x) \| \nabla \phi_{\f,x}^{(r)} - \nabla \tilde \phi_{\f,x}^{(r)} \|_{L^2(\Phi_{x,r})}
&
\leq
\chi_{r}(x)\| \nabla (w_x - \tilde w_x)\|_{L^2(\Phi_{x,r})}
+
\| \nabla (u - \tilde u)\|_{L^2(\Phi_{x,r})}
\,.
\end{align*}
Recall that in the proof of Lemma~\ref{l.correctorloc.new} we found the orthonormal basis~$\{w_{j,x}\}$ and~$\{\tilde w_{j,x}\}$ for~$\A_k(\cu_{m+1})$ and~$\tilde \A_k(\cu_{m+1})$, respectively, such that 
\begin{equation*}  
\| w_{j,x}  \|_x = \| \tilde w_{j,x}  \|_x = 1
\qand 
\| w_{j,x} - \tilde w_{j,x} \|_x 
+  \| \nabla w_{j,x} - \nabla \tilde w_{j,x}\|_{L^2(\Phi_{x,2r})} 
\leq 
C h \log R
\,.
\end{equation*}
We have, up to~$o(h)$-error in~$L^2(B_R)$-norm, that 
\begin{align*}  
\tilde w_{x} - w_x 
= 
\sum_{j} \Bigl(\langle w_{j,x} , u \rangle_x  (w_{j,x} {-} \tilde w_{j,x}) 
+  \langle w_{j,x} {-}  \tilde w_{j,x} , u \rangle_x  w_{j,x}   
+  \langle w_{j,x} , u {-}  \tilde u \rangle_x w_{j,x} \Bigr)
+ o(h)
\,.
\end{align*}
By~\eqref{e.fr.localization.minscale.prop}, we obtain, under the event~$r^{1-\delta} \geq \frac14 \X_{R}(x)$, that
\begin{equation*}  
| \langle w_{j,x} , u \rangle_x| + | \langle \tilde w_{j,x} , u \rangle_x|  
\leq  
2 \inf_{a \in \R}\| u - a \|_x 
\leq 
C \Bigl( \frac{\X_{R}(x)}{r} \Bigr)^{\! \alpha}  
\leq
C r^{-\alpha \delta}
\,.
\end{equation*}
Therefore,
\begin{equation*}  
\chi_{r}(x)\| \nabla (w_x - \tilde w_x)\|_{L^2(\Phi_{x,r})}
\leq
C r^{-\alpha \delta} \log R  + \| u {-}  \tilde u \|_x + o(h)\,.
\end{equation*}
Next, we integrate
\begin{multline*}  
\biggl| \int_{z + \cu_m} ( \f_{r}''(y) - \tilde \f_{r}''(y)) \Phi_{\sqrt{R^2-r^2}}(x-y) \, dy  \biggr|
\\
\leq 
\biggl| \int_{z + \cu_m} \bigl(Ch +  \| \nabla (u - \tilde u)\|_{L^2(\Phi_{y,r})} + \| u {-}  \tilde u \|_y \bigr)
\Phi_{\sqrt{R^2-r^2}}(x-y)  \, dy
\biggr|
\,.
\end{multline*}
Using H\"older's inequality and the fact that~$c \Phi_{\sqrt{R^2-r^2}}(x)  \leq \Phi_{\sqrt{R^2-r^2}}(z)  \leq C \Phi_{\sqrt{R^2-r^2}}(x)$ for every~$x \in z + \cu_{m+1}$, we obtain
\begin{align} \notag  
\lefteqn{
\biggl| \int_{z + \cu_m} \bigl( \| \nabla (u - \tilde u)\|_{L^2(\Phi_{y,r})} + \| u {-}  \tilde u \|_y \bigr)
\Phi_{\sqrt{R^2-r^2}}(x-y)  \, dy
} \qquad &
\\ 
\notag &
\leq 
C 
\Phi_{\sqrt{R^2-r^2}}(x-z)
\sum_{k=0}^\infty \exp(-c 4^k) \int_{z + \cu_m} 
\| \nabla (u - \tilde u)\|_{L^2(B_{2^k r}(y))} \, dy  
\\ 
\notag &
\leq 
C 
\Phi_{\sqrt{R^2-r^2}}(x-z) \| \nabla (u - \tilde u)\|_{L^2(z+\cu_{m+1})} 
\\ 
\notag &
\leq 
\biggl( \int_{z + \cu_{m+1}} ( t |\f(y)|^2 +   |\f(y) - \tilde \f(y)|^2) \Phi_{\sqrt{R^2-r^2}}(x-y) \, dy \biggr)^{\!\!\nicefrac12} 
\\ 
\notag & \qquad \times 
\biggl( \int_{z + \cu_m} \Phi_{\sqrt{R^2-r^2}}(x) \, dy
\biggr)^{\!\!\nicefrac12} 
\,.
\end{align}
Therefore, by H\"older's inequality, we get that by taking smaller~$\alpha$, if necessary, that
\begin{equation*}  
\sum_{z \in 3^m \Z^d \cap U_t} \biggl| \int_{z + \cu_m} ( \f_{r}''(y) - \tilde \f_{r}''(y)) \Phi_{\sqrt{R^2-r^2}}(x-y) \, dy  \biggr|
\leq 
C(1 + R^{-\alpha \delta} \log R)  h +  \|\f - \tilde \f \|_{L^2(\Phi_R)} 
\,.
\end{equation*}
This concludes the proof. 
\end{proof}

By repeating the proofs of Propositions~\ref{p.psipsi.coarse} and~\ref{p.psipsi.coarse.optimal}, we obtain the following two propositions for the case with a divergence-form right-hand side.

\begin{proposition}
\label{p.fr.coarse.positivebeta}
Assume~$\beta>0$. 
There exists a constant~$C(\delta,p,M,N,\datareff) < \infty$ such that, for every~$r \geq 1$, 
\begin{equation}
\label{e.fr.coarse.positivebeta}
\f_r(x) - \overline{\f} 
=
\O_{\Psi} \bigl( C r^{-\frac d2(1-\beta)} \bigr) 
+ 
\O_{\Psi}^{1-\delta} \bigl( C r^{-\frac d2(1-\beta)(1+\delta)} \bigr)
\,.
\end{equation}
\end{proposition}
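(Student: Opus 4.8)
The proof runs along the same lines as that of Proposition~\ref{p.psipsi.coarse}, with the coarse-grained coefficients~$\b_r$ replaced by the coarsened force~$\f_r$ defined in~\eqref{e.fr}, the space~$\A_1$ of corrected affine functions replaced by the zero-slope corrector~$\psi_\f$, and the role of Lemma~\ref{l.correctorloc.new} played by Lemma~\ref{l.fr.correctorloc}. The plan is to run an induction up the scales based on the decomposition~\eqref{e.fr.splitting}: I would assume~$\f_r(x) - \overline{\f} = \O_\Psi(C r^{-\theta})$ for some~$\theta \in [\alpha, \tfrac d2(1-\beta))$ (the base case~$\theta = \alpha$ being supplied by Theorem~\ref{t.f.optimalstochasticintegrability}), and show that this estimate self-improves to exponent~$\theta + \theta\kappa$ for a fixed~$\kappa(\delta,\data)>0$ as long as~$\theta \leq \tfrac d2(1-\beta) - \kappa$. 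Iterating finitely many times yields~$\f_r(x)-\overline{\f} = \O_\Psi(C r^{-\theta})$ for every~$\theta < \tfrac d2(1-\beta)$, after which a final buckling step—using the improved, \emph{bounded}-Malliavin-derivative version of Lemma~\ref{l.fr.correctorloc} and one more application of~$\CFS(\beta,\Psi,\Psi(M\cdot),0)$—pushes~$\theta$ up to~$\tfrac d2(1-\beta)$ at the cost of the~$(1{+}\delta)$ loss recorded in~\eqref{e.fr.coarse.positivebeta}.

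The first ingredient is to convert the induction hypothesis into a bound on the composite field~$\mathcal{H}_r$ of~\eqref{e.uglyH}: its three constituents are controlled, respectively, by the induction hypothesis, by Theorem~\ref{t.optimal} applied to~$\b_r - \ahom$, and by the trivial bound~$\X = \O_\Psi^{\sfrac d2(1-\beta)}(C)$ together with the averaging Lemma~\ref{e.supp.you.up}, giving~$\mathcal{H}_r(x) = \O_\Psi(Cr^{-\theta}) + \O_\Psi^{1-\delta}(Cr^{-\frac d2(1-\beta)-\gamma}) + \O_\Psi^{1-\beta}(Cr^{-\sfrac d2})$, in analogy with~\eqref{e.psipsi.uglyG}. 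With this in hand, each of the four terms in~\eqref{e.fr.splitting} is estimated as in the proof of Proposition~\ref{p.psipsi.coarse}: (i) the \emph{additivity defect} is negligible by~\eqref{e.fr.additivity} and the interpolation inequality~\eqref{e.Psi.interpolation}; (ii) the \emph{localization defect}, after setting~$t = r^{1+\rho}$, is bounded by~\eqref{e.fr.correctorloc} by~$Cr^{-\ep\alpha}$ times~$\mathcal{H}_r(x)+|\ahom-\b_r(x)|$ (plus a term of very high stochastic integrability carrying~$\indc_{\{R^{1-\delta}\leq \X_R\}}$), which by the~$\mathcal{H}_r$ bound and interpolation is~$\O_\Psi(CR^{-\theta-\frac12\ep\alpha}) + \O_\Psi^{1-\delta}(\cdots)$; (iii) the \emph{fluctuations} are handled by the~$\CFS(\beta,\Psi,\Psi(M\cdot),0)$ assumption and the logarithmic Malliavin bound~\eqref{e.fr.correctorloc.malliavin}, producing~$\O_\Psi(CR^{-\theta+\gamma\theta-\frac d2(\gamma-\rho(1-\gamma))}\log R + CR^{-\frac d2(1-\beta)}\log R)$; and (iv) the \emph{systematic error} is bounded by~$Cr^{-2\theta}$ by splitting off~$\E[\f_r]-\langle\f_r\rangle$, which is exponentially small against a heat kernel at scale~$R$ since~$x\mapsto\E[\f_r(x)]$ is~$\Zd$-periodic, and estimating~$\langle\f_{2r}\rangle-\langle\f_r\rangle$ via~\eqref{e.fr.additivity} and~\eqref{e.weakint.ave.expectation} and then summing over scales. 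Choosing~$\ep,\gamma,\rho$ small in the appropriate order delivers the buckling inequality in the form~\eqref{e.psipsi.buckling} for~$\f_r$ and closes the induction loop.

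For the final buckling step I would take~$\theta = \tfrac d2(1-\beta)-\ep$ with~$\ep$ small and invoke Lemma~\ref{l.fr.correctorloc} with~$\delta,\ep$ chosen so that the localized field~$\f_R'(x)$ is~$\F(B_{CR\log^{\sfrac12}R}(x))$-measurable, has uniformly bounded Malliavin derivative, and satisfies~$|\f_R-\f_R'| = \O_\Psi^{1-\delta}(CR^{-\frac d2(1-\beta)(1+\delta+\delta^2)})$; combined with the additivity defect at the same precision and the systematic error~$Cr^{-\frac d2(1-\beta)}$, the~$\CFS(\beta,\Psi,\Psi(M\cdot),0)$ condition gives~$(\f_R'-\E[\f_R'])\ast\Phi_{\sqrt{R^2-r^2}} = \O_\Psi(CR^{-\frac d2(1-\beta)})$, and the remaining room in the~$\delta^2$ term absorbs the losses from the choices of~$\ep,\rho,\gamma$, exactly as in Step~8 of the proof of Proposition~\ref{p.psipsi.coarse}. \textbf{The main obstacle} is not any individual estimate—the additivity, localization, and the link between the fluctuations of~$\f_r$ and those of~$\Phi_r\ast\nabla\psi_\f$ are already prepared in~\eqref{e.fr.additivity}, Lemma~\ref{l.fr.correctorloc}, and Lemma~\ref{l.fr.frtopsi}—but rather the bookkeeping of the interpolation exponents and the order in which~$\ep,\rho,\gamma,\delta$ are sent to zero, so that the localization and additivity errors beat the induction exponent~$\theta$ by a fixed power of~$r$ while the fluctuation term saturates precisely at the~$\CFS$ floor~$r^{-\frac d2(1-\beta)}$. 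This is where the hypothesis~$\beta>0$ enters: it leaves a genuine gap between~$\tfrac d2(1-\beta)$ and~$\tfrac d2$ into which the sub-critical contribution~$\O_\Psi^{1-\beta}(Cr^{-\sfrac d2})$ coming from the minimal-scale tails of~$\mathcal{H}_r$ fits without an extra concentration assumption, whereas for~$\beta=0$ that contribution is borderline and one must instead assume~$\gamma>0$ and argue as in Proposition~\ref{p.psipsi.coarse.optimal}.
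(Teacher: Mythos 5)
Your proposal is correct and follows essentially the same route as the paper, which simply declares the proof to be a near-verbatim repetition of the argument for Proposition~\ref{p.psipsi.coarse} with the additivity estimate~\eqref{e.fr.additivity}, Lemma~\ref{l.fr.correctorloc} and the assumed mixing condition substituted in, exactly as you have laid out (induction up the scales with the decomposition~\eqref{e.fr.splitting}, then a final buckling step with the bounded-Malliavin version). The one point the paper singles out—that the strong mesoscale norm~\eqref{e.triple.your.bar.meso.again} in the Malliavin derivative is what removes an otherwise unavoidable $\log^{\sfrac d2}R$ loss—is implicitly handled in your argument through the bound~\eqref{e.fr.correctorloc.malliavin}, so no gap remains.
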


\begin{proof}
The proof is almost a verbatim repetition of the proof of Proposition~\ref{p.psipsi.coarse} after changing notation and using additivity estimate~\eqref{e.fr.additivity}, Lemma~\ref{l.fr.correctorloc} together with the assumed mixing condition.

\smallskip

The only slightly different technical point is in the norm we use for the sensitivity of~$\f$ in the definition of the Malliavin derivative. We can use the weaker norm and corresponding bound with a logarithmic prefactor as in~\eqref{e.fr.correctorloc.malliavin} to obtain that
\begin{equation*}  
\f_R(x) - \overline{\f} 
=
\O_{\Psi} \bigl( C R^{-\frac d2(1-\beta)}   (\log R)^{\nicefrac d2}  \bigr) 
+ 
\O_{\Psi}^{1-\delta} \bigl( C R^{-\frac d2(1-\beta)(1+\delta)} \bigr).
\end{equation*}
On the other hand, if we have the stronger norm to control the Malliavin derivative, we can get on top of the exponent and obtain~\eqref{e.fr.coarse.positivebeta}. We omit the straightforward modifications in the proof of Proposition~\ref{p.psipsi.coarse}. 
\end{proof}

We also obtain the result for~$\beta=0$. We will allow~$\gamma=0$ in the statement since the proof gives an estimate for it. 

\begin{proposition}
\label{p.fr.coarse.zerobeta}
Assume~$\beta=0$. Then there exists~$C(\delta,\gamma,p,M,N,\datareff) < \infty$ such that, for every~$r \geq 1$, 
\begin{equation}
\label{e.fr.coarse.zerobeta}
\f_r(x) - \overline{\f} 
=
\left\{
\begin{aligned}
& \O_{\Psi}^{1-\delta} \bigl(C r^{-\nicefrac d2}\bigr) & & \mbox{ if } \gamma>0
\, ,  
\\
& 
 \O_{\Psi}^{1-\delta} \bigl(C r^{-\nicefrac d2} \log r\bigr) & & \mbox{ if } \gamma = 0
\, .
\end{aligned}
\right.
\end{equation}
\end{proposition}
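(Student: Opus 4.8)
The plan is to transcribe the proof of Proposition~\ref{p.psipsi.coarse.optimal}, replacing the coarse-grained coefficients $\b_r$ and their localization $\b_r'$ by the coarsened forcing field $\f_r$ from~\eqref{e.fr} and the localized field $\f_r'$ supplied by Lemma~\ref{l.fr.correctorloc}. The ingredients are exactly the forcing analogues of those used there: the additivity estimate~\eqref{e.fr.additivity} (which, via~\eqref{e.fr.add} and~\eqref{e.gr.est}, holds at an exponent arbitrarily close to $\tfrac d2$, costing only stochastic integrability, and uses only the integrability $\X^{d/2}=\O_\Psi(C)$ of the minimal scale); the localization estimate of Lemma~\ref{l.fr.correctorloc}, producing an $\F(B_t(x))$-measurable field $\f_R'(x)$ at scale $t=CR\log^{1/2}R$ whose Malliavin derivative is $O(1)$ once the mesoscopic norm~\eqref{e.triple.your.bar.meso.again} is used in~\eqref{e.Mall.Maul.yes} (so that the $\log R$ in~\eqref{e.fr.correctorloc.malliavin} is absorbed); and the systematic error $|\E[\f_r]-\overline{\f}|\le Cr^{-d/2}$, which follows from~\eqref{e.fr.rewrite} together with $\E[\a\nabla\psi_\f+\f]=\overline{\f}$, $\E[\nabla\psi_\f]=0$, and the bound $\E[|\g_r|]\le Cr^{-d/2}$ coming from~\eqref{e.gr.est} and the integrability of $\X$.

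To control the random variable $\mathcal{H}_r$ of~\eqref{e.uglyH} appearing in Lemma~\ref{l.fr.correctorloc}, I first need the near-optimal estimate $\f_r(x)-\overline{\f}=\O_\Psi(Cr^{-\theta})$ for every $\theta<\tfrac d2$. This is obtained by running the bootstrap of Proposition~\ref{p.fr.coarse.positivebeta} (equivalently, Steps~2--7 of the proof of Proposition~\ref{p.psipsi.coarse}) with $\beta=0$: those steps use only a $\CFS(\beta,\Psi,\Psi(M\cdot),0)$-type condition, which follows here from the $\sigma=0$ instance of the assumed hypotheses; they are anchored by the algebraic estimate coming from Theorem~\ref{t.f.optimalstochasticintegrability} via Lemma~\ref{l.fr.frtopsi}, and at each step the current exponent controls $\mathcal{H}_r$, which is fed back through additivity, localization, the fluctuation bound (via the mixing condition, using that $\f_r$ and $\f_r'$ are bounded) and the systematic error to improve the exponent until it saturates just below $\tfrac d2$. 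With this in hand the $\|\f_r-\overline{\f}\|$ term in $\mathcal{H}_r$ is $\O_\Psi^{1-\delta}(Cr^{-\theta})$ for every $\theta<\tfrac d2$, the $\|\b_r-\ahom\|$ term is $\O_\Psi^{1-\delta}(Cr^{-d/2})$ by Theorem~\ref{t.optimal}, and the $\X$ term is $\O_\Psi(Cr^{-d/2})$; feeding these into Lemma~\ref{l.fr.correctorloc} with $\ep,\delta$ small and interpolating via~\eqref{e.Psi.product}--\eqref{e.Psi.interpolation} against the deterministic boundedness of $\f_r,\f_r'$ yields, for $R\ge 2r\ge 2$,
\begin{align*}
\bigl(\f_R-\E[\f_R]\bigr)-\bigl(\f_{R/\sqrt2}-\E[\f_{R/\sqrt2}]\bigr)\ast\Phi_{R/\sqrt2} &= \O_{\Psi_\delta}\bigl(C R^{-\frac d2(1+\delta+\delta^2)}\bigr),\\
\bigl(\f_R-\E[\f_R]\bigr)-\bigl(\f_R'-\E[\f_R']\bigr) &= \O_{\Psi_\delta}\bigl(C R^{-\frac d2(1+\delta+\delta^2)}\bigr),
\end{align*}
where the $\delta^2$ slack swallows the $\ep$- and $\rho$-losses.

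The last step promotes the exponent from slightly above $\tfrac d2$ to exactly $\tfrac d2$ by the heat-semigroup telescoping identity~\eqref{e.scalinglimit.id} along a dyadic sequence $r_j=2^{(j-k)/2}r$ with $r_0\in[1,2)$: the fully local first term is handled by a direct application of $\CFS(0,\Psi_\delta,\Psi_\delta(M\cdot),\gamma)$, and each scale increment satisfies, exactly as in Step~2 of the proof of Proposition~\ref{p.psipsi.coarse.optimal} (cf.~\eqref{e.Fiter},~\eqref{e.Fiter.gammazero}), a bound of the form $\O_\Psi^{1-\delta}(Cs^{-d/2}r^{-(\delta d/2)\wedge\gamma})$ when $\gamma>0$ and $\O_\Psi^{1-\delta}(Cs^{-d/2})$ when $\gamma=0$, the tails of the heat-kernel convolutions being dispatched as there using the boundedness of $\f_r,\f_r'$ and, where the unboundedness of $\f$ enters the local correctors $\psi_{\f,x}^{(r)}$, the bound~\eqref{e.P.fboundedness.again}. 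Summing the increments (a geometric series when $\gamma>0$, losing a factor $\log r$ when $\gamma=0$) and adding back the systematic error $Cr^{-d/2}$ gives~\eqref{e.fr.coarse.zerobeta}; note that for $\beta=0$ the construction of $\f_r$ is valid for all $r\ge1$, so there is no indicator restriction to track.

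The main obstacle I anticipate is the circularity hidden in $\mathcal{H}_r$, which simultaneously bounds $\f_r-\overline{\f}$ and contains $\|\f_r-\overline{\f}\|$ in its definition: the preliminary near-optimal bootstrap must be carried out first and checked to go through verbatim with $\beta=0$. A secondary but delicate point is the stochastic-integrability bookkeeping — the $\delta^2$ slack in the additivity and localization exponents must, after interpolation against the deterministic boundedness of $\f_r$ and $\f_r'$, absorb all the losses from the parameters $\ep,\rho$ and from the truncation of the unbounded field $\f$ via~\eqref{e.P.fboundedness.again} — and one must insist on the mesoscopic norm~\eqref{e.triple.your.bar.meso.again} in the Malliavin derivative so that the sensitivity bound for $\f_R'$ is genuinely $O(1)$ rather than $O(\log R)$, which (as in Proposition~\ref{p.fr.coarse.positivebeta}) is precisely what lets one reach the exponent $\tfrac d2$ rather than $\tfrac d2$ with a logarithmic correction coming from the Malliavin bound.
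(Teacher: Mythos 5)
Your proposal is correct and follows essentially the route the paper intends: the proof is the forcing-field transcription of Proposition~\ref{p.psipsi.coarse.optimal}, anchored by a preliminary near-optimal bootstrap (as in Proposition~\ref{p.fr.coarse.positivebeta} with $\beta=0$) to control $\mathcal{H}_r$, then the additivity estimate~\eqref{e.fr.additivity}, Lemma~\ref{l.fr.correctorloc} with the mesoscopic norm giving an $O(1)$ Malliavin bound, the telescoping identity~\eqref{e.scalinglimit.id}, and the $\CFS(0,\Psi_\sigma,\Psi_\sigma(M\cdot),\gamma)$ condition, with the geometric sum for $\gamma>0$ and the $\log r$ loss for $\gamma=0$. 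Your flagged concerns (the bootstrap breaking the apparent circularity in $\mathcal{H}_r$, and the stochastic-integrability bookkeeping absorbed by the $\delta^2$ slack) are exactly the points the paper's argument handles the same way.
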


The proof of Proposition~\ref{p.fr.coarse.zerobeta} is omitted. 

\smallskip

We are finally ready to give the proof of Theorem~\ref{t.f.optimal}.

\begin{proof}[Proof of Theorem~\ref{t.f.optimal}]
Appealing to~\eqref{e.fr.frtopsi} and Propositions~\ref{p.fr.coarse.positivebeta} and~\ref{p.fr.coarse.zerobeta}, we find that
\begin{equation*}  
\left\| \Phi_r {\ast}  \nabla \psi_{\f}  - \Phi_R {\ast} \nabla \psi_{\f}  \right\|_{L^2 ( \Phi_{\sqrt{R^2-r^2}})}
= \O_{\Psi}^{1-\delta}(C r^{-\frac d2(1-\beta)}) + \O_{\Psi}^{1-\beta}(C r^{-\nicefrac d2})
\end{equation*}
It follows that 
\begin{equation*}  
\bigl| \Phi_r {\ast}  \nabla \psi_{\f}   - \Phi_{2r} {\ast}  \nabla \psi_{\f}  \bigr|
= 
\O_{\Psi}^{1-\delta}(C r^{-\frac d2(1-\beta)}) + \O_{\Psi}^{1-\beta}(C r^{-\nicefrac d2})\,.
\end{equation*}
Thus, summing over the scales and interpolating then yields that 
\begin{equation}  \label{e.f.corr.optimal}
\minscale_r(x)  \bigl| \Phi_r {\ast}  \nabla \psi_{\f}   \bigr| 
= 
\O_{\Psi}^{1-\delta}(C r^{-\frac d2(1-\beta)}) 
\,.
\end{equation}
By the definition of~$\f_r$ in~\eqref{e.fr}, we see that 
\begin{equation*}  
\minscale_r(x) \bigl( \Phi_r \ast (\a \nabla \psi_{\f} + \f - \overline{\f})\bigr) (x)
=
\f_r(x) -  \overline{\f} + \minscale_r(x) \, \ahom (\Phi_r \ast  \nabla \psi_{\f})(x)
\,,
\end{equation*}
so~\eqref{e.f.optimal} follows for~$r\geq \X(x)$ by~\eqref{e.f.corr.optimal} and Propositions~\ref{p.fr.coarse.positivebeta} and~\ref{p.fr.coarse.zerobeta}. Finally, for~$\beta = 0$ we see that we can remove the condition~$r\geq \X(x)$ by giving up the volume factor, since~$\X(x)^{\nicefrac d2} = \O_\Psi(C)$ in this case. The proof is complete. 
\end{proof}

\subsection*{Historical remarks and further reading}

The first optimal quantitative estimates in stochastic homogenization (in dimension at least two) were proved by Naddaf and Spencer~\cite{NS2}.
They observed that very strong gradient bounds on the first-order correctors imply quantitative bounds with optimal scaling, following the argument we have presented in Section~\ref{s.LSI}. Their setting was that of the discrete lattice~$\Zd$ with~i.i.d.~conductances on the nearest-neighbor edges, and their argument was based on a spectral gap inequality. They essentially circumvented the need for large-scale~$C^{0,1}$ estimate by assuming that the gradient bounds they needed were satisfied by working in the regime of small ellipticity contrast: that is,~$\Lambda\leq \lambda (1+\delta)$ for a sufficiently small~$\delta>0$, which guarantee~$L^p$--type gradient bounds for solutions with large~$p$ by Calder\'on-Zygmund theory. 

\smallskip

A decade later, this work served as inspiration for Gloria, Otto, and collaborators, who used it to develop a pretty complete theory of stochastic homogenization under~$\LSI$ and~$\SG$ conditions. Their early papers were formalized in the setting of the discrete lattice~\cite{GO1,GO2,GNO,MO}, with extensions to the continuum coming later~\cite{GO3}. These works pre-dated~\cite{AS} and, therefore, the development of the large-scale regularity theory. Still, they managed to obtain~$L^p$-type estimates on the gradients of the correctors via the concentration inequalities, in a kind of proto-regularity theory.  

\smallskip

The paper~\cite{AS} was the first to propose the strategy followed in Section~\ref{ss.correctors.optimal}: namely, that we first obtain a large-scale~$C^{0,1}$-type estimate and then quickly thereafter apply concentration inequalities to the first-order correctors directly to obtain optimal estimates under~$\LSI$ and~$\SG$-type conditions. This idea was subsequently formalized in the earlier arXiv versions of~\cite{GNO3} before being eventually published in~\cite{GNO4}. 

\smallskip

The coarse-grained coefficient field~$\b_r$ defined in this chapter is a variant of the ones appearing in Chapter~\ref{s.subadd}, which were central to the analysis of~\cite{AS,AM,AKM1}. The use of the heat kernel for the coarse-graining and in the definition of the coefficients we call~$\b_r$ first appeared in~\cite{AKM}, and the renormalization argument presented in this chapter is a simplification of the original one in that paper and of our first rewriting of the argument in~\cite[Chapter 4]{AKMBook}.
After~\cite{AKM}, the coarse-grained coefficients were subsequently re-introduced in~\cite{DGO} under the name ``homogenization commutator,'' which became the basis of a series of works by these authors and their collaborators~\cite{DGO2,DFG} for analyzing equations satisfying~$\LSI$ or~$\SG$-type ergodic assumptions. Gloria and Otto~\cite{GO6} later obtained similar results to the ones in this chapter under the assumption of a finite range of dependence (in a substantially revised version of their earlier arXiv posting, which had obtained sub-optimal estimates but was posted before~\cite{AKM}). 
As mentioned previously, most of these works are sub-optimal in terms of stochastic integrability, even though they are optimal in terms of the scaling of the fluctuations. Prior to the present work, estimates optimal in both stochastic integrability and in the size of the fluctuations were only known in the case of finite range dependence~\cite{AKM,GO6} (see also~\cite{Fischer}). 

\smallskip

A topic we have not touched on in this manuscript is that of the scaling limit of the first-order correctors to a variant of the Gaussian free field. We refer to~\cite{AKM,DGO} for results in this direction.

\renewcommand{\sectiontitle}{References}
\renewcommand{\subsectiontitle}{References}
{\small
\bibliographystyle{alpha}
\bibliography{homogenization}
}

\end{document}